\DeclareFontFamily{U}{mathc}{}
\DeclareFontShape{U}{mathc}{m}{it}%
{<->s*[1.03] mathc10}{}
\DeclareMathAlphabet{\mathcal}{U}{mathc}{m}{it}
\newcommand{\Bl}{\mathsf{Bl}}
\newcommand{\lie}{\mathfrak{lie}}
\newcommand{\Sig}{\mathsf{Sig}}
\newcommand{\D}{\mathbb{D}}
\newcommand{\N}{\mathbb{N}}
\newcommand{\Q}{\mathbb{Q}}
\newcommand{\A}{\mathbb{A}}
\newcommand{\R}{\mathbb{R}}
\newcommand{\Z}{\mathbb{Z}}
\newcommand{\C}{\mathbb{C}}
\renewcommand{\H}{\mathbb{H}}
\newcommand{\E}{\mathbb{E}}
\newcommand{\F}{\mathbb{F}}
\renewcommand{\S}{\mathbb{S}}
\renewcommand{\P}{\mathbb{P}}
\newcommand{\RP}{\mathbb{R}\mathsf{P}}
\newcommand{\CP}{\mathbb{C}\mathsf{P}}
\newcommand{\Hs}{\mathbb{H}\mathbbm{s}}
\newcommand{\Hyp}{\mathbb{H}}
\newcommand{\ep}{\varepsilon}
\renewcommand{\tilde}{\widetilde}
\renewcommand{\bar}{\overline}
\renewcommand{\hat}{\widehat}
\newcommand{\mat}[1]{\begin{matrix} #1 \end{matrix}}
\newcommand{\pmat}[1]{\begin{pmatrix} #1 \end{pmatrix}}
\newcommand{\smat}[1]{\left( \begin{smallmatrix} #1 \end{smallmatrix}\right )}
\newcommand{\set}[1]{\left\lbrace #1\right\rbrace}
\renewcommand{\det}{\mathsf{det}}
\newcommand{\tr}{\mathsf{tr}}
\newcommand{\dd}[1]{\frac{\mathsf{d}}{\mathsf{d}\mathsf{#1}}}
\newcommand{\Isom}{\mathsf{Isom}}
\newcommand{\Euc}{\mathsf{Euc}}
\newcommand{\Heis}{\mathsf{Heis}}
\newcommand{\Diag}{\mathsf{Diag}}
\newcommand{\diag}{\mathsf{diag}}
\newcommand{\M}{\mathsf{M}}
\newcommand{\GL}{\mathsf{GL}}
\newcommand{\PGL}{\mathsf{PGL}}
\newcommand{\SL}{\mathsf{SL}}
\newcommand{\PSL}{\mathsf{PSL}}
\renewcommand{\O}{\mathsf{O}}
\newcommand{\PO}{\mathsf{PO}}
\newcommand{\SO}{\mathsf{SO}}
\newcommand{\PSO}{\mathsf{PSO}}
\newcommand{\U}{\mathsf{U}}
\newcommand{\SU}{\mathsf{SU}}
\newcommand{\PSU}{\mathsf{PSU}}
\newcommand{\Sym}{\mathsf{Sym}}
\newcommand{\PSym}{\mathsf{PSym}}
\newcommand{\Herm}{\mathsf{Herm}}
\newcommand{\SkHerm}{\mathsf{SkHerm}}
\newcommand{\St}{\mathsf{St}}
\newcommand{\USt}{\mathsf{USt}}
\newcommand{\Fam}{\mathsf{Fam}}
\newcommand{\PDiag}{\mathsf{PDiag}}
\newcommand{\euc}{\mathfrak{euc}}
\newcommand{\heis}{\mathfrak{heis}}
\newcommand{\gl}{\mathfrak{gl}}
\newcommand{\so}{\mathfrak{so}}
\renewcommand{\u}{\mathfrak{u}}
\newcommand{\su}{\mathfrak{su}}
\newcommand{\inject}{\hookrightarrow}
\newcommand{\surject}{\twoheadrightarrow}
\def\acts{\curvearrowright}
\newcommand{\cat}[1]{\mathsf{#1}} 
\DeclareMathOperator{\Hom}{Hom}
\newcommand{\Aut}{\mathsf{Aut}}
\newcommand{\End}{\mathsf{End}}
\newcommand{\dev}{\mathsf{dev}}
\newcommand{\hol}{\mathsf{hol}}
\newcommand{\Area}{\mathsf{Area}}
\newcommand{\Cl}{\mathfrak{C}}
\newcommand{\inv}{^{-1}}
\newcommand{\id}{\mathsf{id}}
\renewcommand{\fam}[1]{\mathcal{#1}}
\newcommand{\Gr}{\mathsf{Gr}}
\renewcommand{\span}{\mathsf{span}}
\newcommand{\opnorm}[1]{{\left\vert\kern-0.25ex\left\vert\kern-0.25ex\left\vert #1 
    \right\vert\kern-0.25ex\right\vert\kern-0.25ex\right\vert}}
\newcommand{\subg}{\leqslant}
\newcommand{\tpitchfork}{%
  \vbox{
    \baselineskip\z@skip
    \lineskip-.52ex
    \lineskiplimit\maxdimen
    \m@th
    \ialign{##\crcr\hidewidth\smash{$-$}\hidewidth\crcr$\pitchfork$\crcr}
  }%
}
\newcommand{\timesd}{\!\times_{\!\scriptscriptstyle\Delta}\!}    
\newcommand{\piO}{\pi_{\!\scriptscriptstyle\mathcal{O}}}
\newcommand{\labelarrow}[1]{\stackrel{#1}{\to}}
\declaretheoremstyle[
    spaceabove=3pt, 
    spacebelow=3pt, 
    headfont=\sffamily\bfseries, 
    bodyfont = \normalfont\itshape,
    postheadspace=0.5em, 
    headpunct={:}]{definitionstyle} %<---- change this name
 \declaretheoremstyle[
    spaceabove=3pt, 
    spacebelow=0pt, 
    headfont=\sffamily\bfseries, 
    bodyfont =\normalfont\itshape,
    postheadspace=0.5em, 
    headpunct={:}]{theoremstyle} %<---- change this name
\declaretheoremstyle[
    spaceabove=-3pt, 
    spacebelow=3pt, 
    headfont=\sffamily\bfseries, 
    bodyfont = \normalfont,
    postheadspace=0.5em, 
    headpunct={:}]{examplestyle} %<---- change this name
\declaretheoremstyle[
    spaceabove=-6pt, 
    spacebelow=3pt, 
    headfont=\sffamily\itshape, 
    bodyfont = \normalfont,
    postheadspace=0.5em, 
    headpunct={:}]{remarkstyle} %<---- change this name
\declaretheorem[name=Definition,style=definitionstyle,
%numberwithin=part
]{definition}
\declaretheorem[numbered=no,
name=Definition,style=definitionstyle]{definition*}
\declaretheorem[name=Theorem,style=theoremstyle,
%numberwithin=part
]{theorem}
\declaretheorem[sibling=theorem,style=theoremstyle]{lemma}
\declaretheorem[sibling=theorem,style=theoremstyle]{corollary}
\declaretheorem[numbered=no,
name=Theorem,style=theoremstyle]{theorem*}
\declaretheorem[numbered=no,
name=Corollary,style=theoremstyle]{corollary*}
\declaretheorem[name=Proposition,sibling=theorem, style=theoremstyle]{proposition}
\declaretheorem[numbered=no,
name=Proposition,style=theoremstyle]{proposition*}
\declaretheorem[name=Calculation, style=theoremstyle]{calculation}
\declaretheorem[style=examplestyle, name=Example]{example}
\declaretheorem[style=examplestyle,numbered=no,
name=Example]{example*}
\declaretheorem[name=Observation, style=examplestyle]{observation}
\declaretheorem[name=Remark,style=examplestyle,sibling=observation]{remark}
\declaretheorem[name=Remark,numbered=no,
style=examplestyle]{remark*}
\declaretheorem[numbered=no,
name=Exercise, style=examplestyle]{exercise*}
\declaretheorem[numbered=no,style=examplestyle, name=Question]{question*}
\declaretheorem[numbered=no,style=examplestyle, name=Claim]{claim}
\newcommand{\MyName}{Steve J. Trettel}
\newcommand{\ThesisTitle}{Families of Geometries, Real Algebras, and Transitions}
\newcommand{\MyAdvisor}{Darren Long}
\newcommand{\MyCommitteeA}{Daryl Cooper}
\newcommand{\MyCommitteeB}{Jon McCammond}
\newcommand{\GradMonth}{June}
\newcommand{\GradYear}{2019}
\newcommand\YUGE{\fontsize{36}{50}\selectfont}
\DeclareFontFamily{U}{mathc}{}
\DeclareFontShape{U}{mathc}{m}{it}%
{<->s*[1.03] mathc10}{}
\DeclareMathAlphabet{\mathcal}{U}{mathc}{m}{it}
\DeclareSymbolFontAlphabet{\mathbbm}{bbold}
\DeclareSymbolFontAlphabet{\mathbb}{AMSb}%
\begin{document}

%*******************************************
%FRONTMATTER
%*******************************************
\frontmatter

%CHOOSE WHICH FRONTMATTER IS INCLUDED
%\include{Style/UCSBFrontMatter}

%MAKE FIRST TITLE PAGE
\begin{titlingpage}

%move the text down on the page
\null \vspace{\stretch{2}}

{\noindent\Huge\bfseries\scshape Families of}
\\[1.1\baselineskip]
{\noindent\YUGE\bfseries\scshape  GEOMETRIES,}
\\[0.8\baselineskip]
{\noindent\YUGE\bfseries\scshape  REAL ALGEBRAS,}
\\[1.1\baselineskip]
{\noindent\Huge\bfseries\scshape and Transitions}

\vspace{\stretch{1}}\null

\par
\vspace{0.5\textheight}
{\noindent\sffamily\scshape\bfseries Steve Trettel}\\[\baselineskip]

\end{titlingpage}

%Make Official Title Page
   \thispagestyle{empty}
    \null
    \vfill
    {
        \begin{center}
       {\scshape University of California \\ Santa Barbara }\\
        \vspace{0.35in}
        \textbf{\Large \sffamily\scshape \ThesisTitle} \\
        \vspace{0.35in}
        A dissertation submitted in partial satisfaction\\
        of the requirements for the degree \\ 
        {\scshape Doctor of Philosophy} \\ in { Mathematics} \\ by \\ {\scshape \bfseries \MyName}
                \end{center}
        \vspace{0.35in}
        Committee in charge: \\
       {
            \indent \indent {Professor}~{\MyAdvisor}, Chair \\
        }
        \indent \indent {Professor}~\MyCommitteeA\\
        \indent \indent {Professor}~\MyCommitteeB \\
        \begin{center} {\GradMonth}~{\GradYear} \end{center}
    }
    \vfill
    \clearpage

%COPYRIGHT
 \thispagestyle{plain}
    \null
    \vspace{2cm}
    {
        \begin{center}
        {\bfseries\sffamily\ThesisTitle}
        
        \vspace{2cm}
        
        \vspace{3cm}
        
       {\scshape Copyright} \GradYear\\
        by\\
        \MyName
        
        \end{center}
        
    }
    \vfill
    \null
    \newpage

%DEDICATION
 \thispagestyle{plain}
    \null
    \vspace{5cm}
    {
        \begin{center}
       {\Huge\bfseries\scshape Dedication}
        \end{center}
        
        \vspace{1cm}
        
       \begin{center}
To Mr. Dorner\\
Thank you for believing in me \\
and helping create ways to learn in high school.
\end{center}

    }
    \vfill
    \null
    \newpage

%ACKNOWLEDGEMENTS
 \thispagestyle{plain}
    \null
    \vspace{2cm}
    {
        \begin{center}
       {\Huge\bfseries\scshape Acknowledgements}
        \end{center}
        
        \vspace{1cm}

My time at UC Santa Barbara for graduate school has been an amazing learning experience, in a large part thanks to my advisor, Darren Long.
Thanks Darren, for your continued patience as I attempted to articulate my ideas, and for allowing me to try things on my own for weeks at a time.
While simultaneously being chair of the department and having five graduate students, Darren always made time for me in his crazy schedule; I hope to have that level of organization some day.
I am also incredibly grateful to Daryl Cooper and Jeff Danciger, for the many mathematical conversations we have had these past years.  
Thank you for sharing your knowledge, helping me clean up my ideas, guiding me in my writing, and all the other little things that have helped me grow as a mathematician.

I have benefitted immensely over the years from the kindness and generosity of many in the geometric topology community.
Thanks to Jon McCammond, Jesse Wolfson, Sara Maloni, Jason Manning, and Kelly Delp for entertaining my ideas, sharing your insight, discussing mathematics for hours on end, and providing me opportunities to speak and meet others.
Thanks also to Benson Farb, Dan Margalit, and Ken Millet not only for sharing your mathematical knowledge, but also good advice more generally (including my first experience with the job market).

The past six years at Santa Barbara were productive and memorable in a large part because of my wonderful cohort and the larger graduate community.
To Joe, Kate, Nancy, Kyle \& Jay - thanks for being my first friends in California and working with me on homework/quals until we were delirious.
%too tired to too drunk.
To Gordon, Michael, Nic, Nadir and Tom - I've learned, created, and been introduced to more mathematics because of conversations we've had than likely any other source.
I look forward to our future as grownups together.
Christian and Joey, I will miss all the times we accidentally got distracted for an hour or three learning cool things that sprung from some simple question.
And thanks once more to Nancy, and also Wade and Sheri; our get-a-job-support-group helped me through my hardest time in graduate school.
Thanks to the other friends I've made during my time in Santa Barbara for working around my crazy schedule; and to Zizzos for all the coffee and beer while I was hiding and typing this.
\begin{comment}
Finally, my undergraduate roommates, thanks Prawn, Buttbird, Grewbomb, Grandpa and Snarl, for letting me keep my desk in the living room all those years so I could prepare for graduate school while still hanging out.
I look forward to becomming Professor Skeef.
\end{comment}

    }
    \vfill
    \null
    \newpage

%ABSTRACT
 \thispagestyle{plain}
    \null
    \vspace{2cm}
    {
   
        \begin{center}
		{\Huge\bfseries\scshape Abstract}
		
        \vspace{1cm}
        {\large \bfseries \ThesisTitle}\\
        by\\
        {\scshape \MyName}
        \end{center}
        
        \vspace{1cm}
        
       This thesis details the results of four interrelated projects completed during my time as a graduate student at University of California, Santa Barbara.
The first of these presents a new proof of the theorem of Cooper, Danciger and Wienhard classifying the limits under conjugacy of the orthogonal groups in $\GL(n;\R)$.
The second provides a detailed investigation into \emph{Heisenberg geometry}, which is the maximally degenerate such limit in dimension two.

The remaining two projects concern understanding geometric transitions which do not occur naturally as limits under conjugacy in some ambient geometry.
The third project describes a new degeneration of complex hyperbolic space, formed by degenerating the complex numbers as a real algebra, into the algebra $\R\oplus\R$.
Inspired by this example, the final project attempts to build the beginnings of a framework for studying transitions between geometries abstractly.
As a first application of this, we generalize the previous result and describe a collection of new geometric transitions, defined by constructing analogs of familiar geometries (projective geometry, hyperbolic geometry, etc) over real algebras, and then allowing this algebra to vary.
    }
    \vfill
    \null
    \newpage

%LIST OF FIGURES
\listoffigures*
\newpage

\tableofcontents*

%*******************************************
%BACKMATTER
%*******************************************
\mainmatter

%NEWCHAPTER: INTRODUCTION
\chapter*{Summary of Results}
\addcontentsline{toc}{chapter}{Summary of Results}
\label{chp:Introduction}

This thesis is a combination of four projects, all connected to the theory of transitional geometry in geometric topology.
Thurston's Geometrization Conjecture 
%\cite{Thurston80}
 placed the study of geometric structures on manifolds at the heart of low dimensional topology.
 The deformation spaces of such structures are intimately related to representation varieties via the Ehresmann-Thurston principle 
 %\cite{Goldman88,Goldman10}
 .
In particular, this connection has inspired higher Teichm\"uller theory 
%\cite{Wienhard18}
 and the growing area of convex projective structures influenced by Goldman \& Choi \cite{Goldman90,Choi05}, Benoist  \cite{Benoist1,Benoist2,Benoist3,Benoist4}, 
Ballas \& Danciger \cite{BallasBending,Ballas17} and others.
More extreme deformations, which connect different \emph{kinds} of geometric structures are the subject of \emph{transitional geometry}.
A geometric transition is a continuous path $(G_t, X_t)$ of geometries where the isomorphism type is discontinuous in $t$.
  The example that inspires the theory is the continuous family of simply connected model spaces $\mathbb{M}_\kappa$ of constant curvature $\kappa$, which are isomorphic to the hyperbolic space for $\kappa<0$ and the sphere for $\kappa>0$, transitioning through Euclidean space at $\kappa=0$. 

Transitional geometries provide a means to \say{save} geometric structures from collapse - often a collapsing path of geometric structures can be rescaled to converge to a geometric structure of a different type.
Hodgson \cite{Hodgson86} and Porti 
\cite{Porti98} analyze Euclidean limits resulting from hyperbolic conemanifolds collapsing to a point, which plays an important role in the Orbifold Theorem of Cooper, Hodgson, \& Kerckhoff \cite{CooperHK00} and Boileau, Leeb \& Porti \cite{PortiLB05}
generalizing geometrization to certain singular spaces.
Further work of Porti studies the nonuniform collapse of hyperbolic structures to Nil \cite{Porti03} and Sol \cite{PortiHS01}.
Collapsing structures may even have non-Riemannian regenerations, as the transition from hyperbolic to Anti de Sitter space discovered by Danciger \cite{Danciger11,Danciger11Ideal,Danciger13}.

Geometric transitions arise naturally in Riemannian geometry and physics. 
The work of Umehara and Yamada study constant mean curvature tori along the $\Hyp^3\leftrightarrow\S^3$ transition \cite{UmeharaY92}, and Morabito analyzes minimal surfaces along the $\Hyp^2\times\R\leftrightarrow \S^2\times\R$ transition \cite{Morabito11}.
In Lorentzian geometry, transitions give means of realizing the Galilean group as the $c\to\infty$ limit of special relativity \cite{ChoK14}.
Other transitions arise in physics 
%\cite{ReyQS14,Kong08}
, including connections to $\mathsf{AdS}$ geometry and supergravity \cite{Concha17,Rajabov18,Gradechi92}.
There are even applications to classical geometry; Danciger, Maloni, and Schlenker \cite{DancigerMS14} used Half Pipe geometry to classify the polyhedra which inscribe in a quadric.

\begin{comment}
Beginning with the observation that $\Hyp^n$ limits to $\E^n$ as the curvature tends to zero, \emph{transitional geometry} studies continuous paths of geometries and geometric structures which abruptly change in isomorphism type.
Some main topics of interest include classifying all possible degenerations of a geometry $(H,Y)$ within some fixed ambient geometry $(G,X)$ \cite{CooperDW14,FillastreS16} producing new examples of transitions between existing geometries of interest \cite{Danciger11}, and studying the regeneration of geometric structures on closed orbifolds along an existing transition \cite{Danciger11Ideal,Porti03,Porti98,PortiHS01}.
\end{comment}

\section*{Structure of Thesis}
%This introductory chapter provides a quick summary of the most relevant results in this thesis, to provide a roadmap for reading.
Structurally, this thesis is composed of three parts.
The first part contains the necessary background material, including a brief review of orbifolds, homogeneous geometries, geometric structures, and their deformation / moduli spaces.
The second part contains results pertaining to limits of groups / geometries / geometric structures, which can be modeled within some ambient Lie group / homogeneous space.
The third part contains results pertaining to limits of groups / geometries which are not modeled within some ambient Lie group, but instead as \emph{continuous families}, in a formalism inspired by algebraic geometry.

Of the four projects contained in this thesis, three of them are detailed in Part II.
In \emph{The Space of Orthogonal Groups}, a re-proof of the classification of limits of $\SO(p,q)$ in $\SL(p+q;\R)$ is given, independent of the original argument of Cooper, Danciger and Wienhard in \cite{CooperDW14}.
This classification reveals that the degenerations of the constant curvature geometries in $\RP^n$ form a poset under the relation 'is a degeneration of,' with the most degenerate limit given by the projective action of upper triangular unipotent matrices on an affine patch.
The following chapter, \emph{The Heisenberg Plane}, investigates in detail the two-dimensional case of this geometry.
The classification of compact 2-orbifolds admitting Heisenberg structures is given, and their deformation spaces are computed.
The regeneration of Heisenberg tori as constant-curvature cone tori is investigated, and we classify precisely which Heisenberg tori regenerate.
The final two chapters of Part II concern a new transition of complex hyperbolic space.
Inspired by Danciger's description of the boundaries of $\Hyp^3$, $\mathsf{HP}^3$ and $\mathsf{AdS}^3$ in \cite{Danciger11Ideal}, using the algebras $\C$, $\R[\ep]/\ep^2$, and $\R\oplus\R$, we study the analogs of hyperbolic space over these algebras, in addition to $\Hyp_\C^n$ defining the geometries $\Hyp_{\R_\ep}^n$ and $\Hyp_{\R\oplus\R}^n$.
We show by two different arguments that $\Hyp_\C^n$ transitions to $\Hyp_{\R\oplus\R}^n$ through $\Hyp_{\R_\ep}^n$, and investigate an interesting connection between $\Hyp_{\R\oplus\R}^n$ and real projective space.

Part III concerns the final project, which aims to provide a framework for discussing transitions between homogeneous spaces that does not rely on any ambient homogeneous space / Lie group.
Taking inspiration from the theory of Lie groupoids we introduce the notion of a \emph{family of geometries}, and lay the very basic groundwork of a theory of such families, mimicking to the extent possible the foundational observations in the classical theory of geometries in the sense of Klein.
We then use this framework to uncover a connection between families of real algebras and families of generalized unitary geometries, generalizing the construction of Chapter \ref{chp:HC_To_HRR_Transition} degenerating $\Hyp_\C^n$. 
The following four sections summarize the main results of each of these projects in more detail.

\section*{The Space of Orthogonal Groups}

In their 2014 paper \emph{Limits of Geometries}, Cooper, Danciger, and Wienhard showed that all conjugacy limits of $\SO(p,q)$ in $\SL(p+q;\R)$ may be described as \emph{isometry groups of partial flags of quadratic forms}.
The space of all paths $A_t\in\GL(n;\R)$ with which one may attempt to take conjugacy limits $\lim A_t\SO(p,q)A_t\inv$ is infinite dimensional, and their original argument completes the classification by using the theory of affine symmetric spaces to show that in fact it suffices to check a finite dimensional space of paths, in order recover all limits up to conjugacy.

This project presents an alternative argument producing the same classification but from a different perspective; replacing the difficulty of computing with the space of all paths with the difficulty of computing a closure in the space of closed subgroups.
Every conjugacy limit of $\SO(p,q)$ arises, up to conjugacy, as a limit $D_t\SO(p,q)D_t\inv$ for $D_t$ diagonal.
As diagonal conjugates of $\SO(p,q)$ are isometry groups of diagonal quadratic forms, we are interested in the set $\fam{D}_n$ of subgroups of $\SL(n;\R)$, defined below.

\begin{definition*}
The collection $\fam{O}_n$ of orthogonal groups in $\GL(n;\R)$ is the following 
$\fam{O}_n=\{\O(J)\in\Cl(\GL(n;\R))\mid J=J^T,\det(J)\neq 0\}.$

\noindent
The subcollection $\fam{D}_n\subset\fam{O}_n$ of orthogonal groups preserving a quadratic form diagonal in the standard basis is
$\fam{D}_n=\{\O(D)\in\fam{O}_n\mid D\textrm{ is diagonal}\}.$
\end{definition*}

\begin{figure}
\centering
\includegraphics[width=0.5\textwidth]{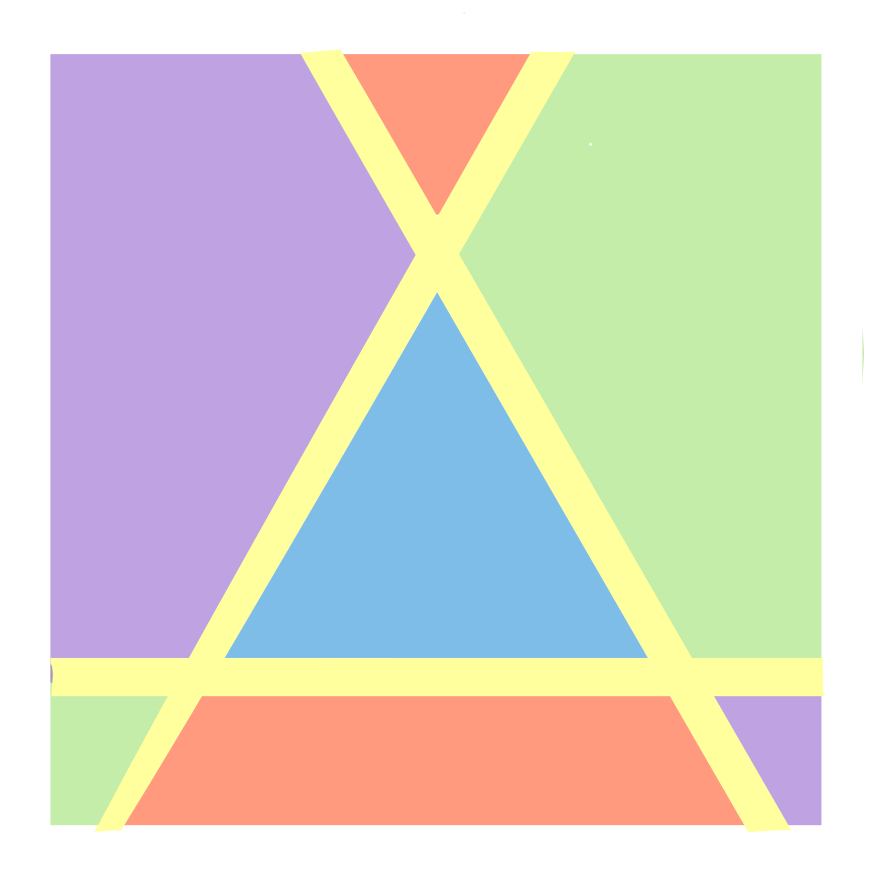}	
\caption{The space $\fam{D}_3$ is isomorphic to the coordinate hyperplane complement in $\RP^2$; the projectivization of the 2-cells of the octahedron.
Thus $\fam{D}_2$ is the disjoint union of four triangles, one containing diagonal conjugates of $\O(3)$ and the others parameterzing diagonal conjugates of $\O(\diag(1,1,-1))$, $\O(\diag(1,-1,1))$ and $\O(\diag(-1,1,1))$.}
\end{figure}

\noindent
The space $\overline{\fam{D}_n}$ contains all degenerations of diagonal orthogonal groups in $\GL(n;\R)$, and hence by the previous observation all degenerations of orthogonal groups up to conjugacy.
$\fam{D}_n$ is homeomorphic to the projectivized coordinate hyperplane arrangement in $\RP^{n-1}$, and the closure $\overline{\fam{D}_n}$ is equipped with a natural map $\pi\colon\overline{\fam{D}_n}\to\RP^{n-1}$, sending each orthogonal group $\O(J)$ to $[J]$, and each degeneration $L=\lim \O(J_t)$ to the limit of the associated forms $[B]=\lim [J_t]$.
A first coarse description of $\overline{\fam{D}_n}$ can be recovered from studying the fibers of $\pi$, which gives an inductive description of $\overline{\fam{D}_n}$, and a method of computing a natural cellulation of $\overline{\fam{D}_n}$ from the cellulation of $\RP^{n-1}$ by coordinate hyperplanes and the cellulation of the $\fam{D}_m$ for $m<n$.

\begin{theorem*}
The fiber of $\overline{\fam{D}_n}\to\RP^{n-1}$ above a point $p\in\RP^{n-1}$ lying in a $k$-dimensional cell of the coordinate hyperplane arrangement is homeomorphic to $\overline{\fam{D}_{n-k}}$.
\end{theorem*}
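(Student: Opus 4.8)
The plan is to analyze the fiber by moving the cell to a standard position, computing the limits of diagonal orthogonal groups explicitly, and then recognizing the degeneration data that survives in the limit as exactly a point of a lower copy $\overline{\fam{D}_{n-k}}$.

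First I would exploit the permutation symmetry. The symmetric group $S_n$ acts on $\GL(n;\R)$ by conjugation by permutation matrices, carrying $\fam{D}_n$ to itself and covering the coordinate action on $\RP^{n-1}$; since $\pi$ is natural, this action is fiber-preserving over the induced action on the arrangement. A $k$-dimensional cell is cut out by the vanishing of some $n-k$ of the coordinates, so after permuting I may assume $p=[p_1:\cdots:p_k:0:\cdots:0]$ with $p_i\neq 0$. Write $\R^n=V\oplus W$ with $V=\span(e_1,\ldots,e_k)$ and $W=\span(e_{k+1},\ldots,e_n)$, so $\dim W=n-k$. Any element of the fiber is a limit $L=\lim_t\O(\diag(d^{(t)}))$ in $\Cl(\GL(n;\R))$ with $[\diag(d^{(t)})]\to p$; normalizing the overall scale I may take $d_i^{(t)}\to p_i$ for $i\le k$, so that the $V$-block of the form stays nondegenerate while $d_j^{(t)}\to 0$ for $j>k$, so the $W$-block degenerates.

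Next I would set up the candidate homeomorphism. Conjugating $\O(\diag(d^{(t)}))$ by the diagonal matrix that fixes $V$ and rescales each degenerating direction by $\sqrt{|d_j^{(t)}|}$ turns the form into $\diag(p_1,\ldots,p_k)$ on $V$ and $\diag(\pm1,\ldots,\pm1)$ on $W$, at the cost of conjugating by a path that runs off to the boundary of $\GL(n;\R)$. Carrying out this limit, I expect every $L$ in the fiber to be a semidirect product of three pieces: the fixed nondegenerate group $\O(\diag(p_1,\ldots,p_k))$ on $V$, a unipotent part coming from the $\Hom(W,V)$-block, and a group on $W$ whose isomorphism type is governed precisely by the relative rates and signs of the degenerating coordinates, that is, by $\lim_t[\,d_{k+1}^{(t)}:\cdots:d_n^{(t)}\,]\in\RP^{n-k-1}$ together with the limiting signs. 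This last datum is exactly a point of $\overline{\fam{D}_{n-k}}$, the closure of the diagonal orthogonal groups of $W\cong\R^{n-k}$. I would therefore define $\Phi$ from the fiber to $\overline{\fam{D}_{n-k}}$ by $L=\lim\O(\diag(d^{(t)}))\mapsto\lim\O(\diag(d_{k+1}^{(t)},\ldots,d_n^{(t)}))$, the induced degeneration of the $W$-block, and first check that this is well defined, independent of the path realizing $L$. Surjectivity is then the easy direction: any point of $\overline{\fam{D}_{n-k}}$ realized as $\lim\O(\diag(e^{(t)}))$ is hit by the path $\diag(p_1,\ldots,p_k,e^{(t)})$ over $p$. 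For injectivity I would show the three pieces reconstruct $L$: the $V$-block is pinned by $p$, the $W$-block is $\Phi(L)$, and the unipotent block is forced, filling the full $\Hom$-space compatible with the form and hence common to the entire fiber.

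The hard part will be the limit computation underlying $\Phi$: proving that the limit of $\O(\diag(d^{(t)}))$ in $\Cl(\GL(n;\R))$ depends on the degenerating coordinates only through $\lim[d_{k+1}^{(t)}:\cdots:d_n^{(t)}]$ and the signs, no more and no less. The two delicate points are, first, showing the $\Hom(W,V)$-translations are always present in full, so they neither distinguish fiber points nor obstruct injectivity, and second, controlling how the differing rates $\sqrt{|d_j^{(t)}|}$ enter the $W\times W$ off-diagonal entries, since it is exactly these relative rates that must reassemble into the map $\pi$ for $\overline{\fam{D}_{n-k}}$ and must match convergence in $\Cl(\GL(n-k;\R))$ with convergence in $\Cl(\GL(n;\R))$. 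Once this rate analysis is in hand, the self-similar statement follows, and with it the inductive cellulation of $\overline{\fam{D}_n}$ obtained by feeding in the cellulation of each $\overline{\fam{D}_{m}}$ with $m<n$.
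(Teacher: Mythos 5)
Your overall architecture is sound, and once completed it would amount to a Chabauty-side repackaging of the paper's argument: the paper first identifies $\overline{\fam{D}_n}$ with the closure of the graph of the pairwise-ratio map $\Psi([\lambda_1:\cdots:\lambda_n])=([\lambda_i:\lambda_j])_{i<j}$ into $(\RP^1)^{\binom{n}{2}}$, and the fiber statement then falls out by splitting the coordinates $\psi_{ij}$ at $p$ into those that extend continuously at $p$ (at least one index nonvanishing) and those that do not (both indices in the vanishing set), the latter reproducing verbatim the same problem in dimension $n-k$. Your three-block description of a limit group over $p$ --- the fixed $\O(\diag(p_1,\ldots,p_k))$, a full unipotent block (which sits in the lower-left positions, spanned by the $e_{ji}$ with $j>k\geq i$, so maps $V\to W$ rather than your $\Hom(W,V)$ --- a convention slip, but symptomatic of not having run the computation), and a variable $W$-block --- is correct, as is your surjectivity argument after one repair: a path $e^{(t)}$ realizing an interior point of $\fam{D}_{n-k}$ need not tend to $0$, so $\diag(p_1,\ldots,p_k,e^{(t)})$ need not converge to $p$; rescale by scalars $\epsilon_t\to 0$, which changes neither $\O(\diag(\epsilon_t e^{(t)}))$ nor the $W$-limit.

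The genuine gap is your stated key lemma: that the limit of $\O(\diag(d^{(t)}))$ depends on the degenerating coordinates ``only through $\lim[d_{k+1}^{(t)}:\cdots:d_n^{(t)}]$ and the signs, no more and no less.'' This is false, so the step you flag as the hard part would fail as formulated. Take $n=4$, $p=[1:0:0:0]$, and compare $d^{(t)}=(1,t,t^2,t^3)$ with $\tilde{d}^{(t)}=(1,t,t^3,t^2)$ as $t\to 0^+$: both degenerating blocks have the same projective limit, since $[t:t^2:t^3]=[1:t:t^2]\to[1:0:0]$ and $[t:t^3:t^2]=[1:t^2:t]\to[1:0:0]$, with all signs positive; yet the $(3,4)$ generator $d_4e_{34}-d_3e_{43}$ of $\so(\diag(d))$ limits to the line $\R e_{43}$ in the first case and to $\R e_{34}$ in the second, so the paths land at distinct points of the fiber. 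The data that survives is the full collection of pairwise ratio limits $\lim[d_j:d_{j'}]$ among the vanishing coordinates --- equivalently a point of $\overline{\fam{D}_{n-k}}$ itself, carrying iterated rate information invisible to the single projective limit (this is exactly the paper's partition-plus-projective-points description of limit points). Your proof mechanisms also need replacing at this juncture: conjugating by the divergent diagonal rescaling $\diag(1,\ldots,1,\sqrt{|d_j^{(t)}|},\ldots)$ changes Chabauty limits, so it is only a heuristic; and well-definedness of your fiber map $L\mapsto\lim\O(\diag(d_{k+1}^{(t)},\ldots,d_n^{(t)}))$ requires the observation --- the paper's engine --- that each generator $\lambda_je_{ij}-\lambda_ie_{ji}$ lies in its own $2$-plane orthogonal to all the others, so limits of these Lie algebras split as direct sums of independent limits of lines. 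That splitting simultaneously pins the unipotent block, makes the $W$-block limit a function of $L$ alone, and identifies the fiber with the image closure of the dimension-$(n-k)$ ratio map; proving it is tantamount to rederiving the paper's factorization of $J\mapsto\so(J)$ through $(\RP^1)^{\binom{n}{2}}$, after which your bijection is a continuous bijection of compact Hausdorff spaces, hence the claimed homeomorphism --- a topological point your outline should also make explicit.
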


\noindent
This allows us to deduce the projection $\overline{\fam{D}_3}\to\RP^2$ is $1$ to $1$ away from three points, and the preimage of each of those points is homeomorphic to $\overline{\fam{D}_2}$, which is easily shown to be a circle.
This suggests the topology of $\overline{\fam{D}_3}$ is potentially the blowup of $\RP^2$ at three points, which is confirmed after some work recasting the problem in an algebro-geometric framework.

\begin{theorem*}[The Space of Orthogonal Groups]
$\overline{\fam{D}_n}$ is homeomorphic to the maximal De Concini-Procesi wonderful compactification of the coordinate hyperplane arrangement in $\RP^{n-1}$.	
\end{theorem*}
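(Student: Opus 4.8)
The plan is to realise $\overline{\fam{D}_n}$ as the very iterated real blow-up of $\RP^{n-1}$ that defines the maximal wonderful model $Y_n$, and then to match the two spaces by a homeomorphism commuting with their projections to $\RP^{n-1}$. Both spaces are compact Hausdorff: $\overline{\fam{D}_n}$ is a closed subset of the compact metrizable Chabauty space $\Cl(\GL(n;\R))$, and $Y_n$, built from the closed manifold $\RP^{n-1}$ by successive blow-ups along smooth centres, is itself a closed manifold. Since a continuous bijection between compact Hausdorff spaces is automatically a homeomorphism, it suffices to construct a continuous bijection $\Phi\colon Y_n\to\overline{\fam{D}_n}$ respecting the projections (the iterated blow-down on the one side, $\pi$ on the other). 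I would run the whole argument as an induction on $n$, taking the circle $\overline{\fam{D}_2}\cong\RP^1\cong Y_2$ as the base case.

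First I would determine exactly what data a diagonal conjugacy limit records. A point of $\overline{\fam{D}_n}$ is a limit $\lim_t \O(\diag(d(t)))$, and I would show this limit is controlled precisely by the relative orders of magnitude of the entries $d_i(t)$: grouping the indices into blocks of common decay rate produces a flag of coordinate subspaces, while the surviving ratios inside each block record a diagonal form on that block up to scale. By the Cooper--Danciger--Wienhard description the limit is then the isometry group of this flag of forms, so it depends on exactly this \emph{scale-flag together with graded forms} and nothing more. This is the same bookkeeping carried by the fiber theorem, which identifies the fiber of $\pi$ over a stratum on which $m$ coordinates collapse with the lower space $\overline{\fam{D}_m}$ governing the residual diagonal degeneration of those $m$ directions.

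Next I would write down $Y_n$ explicitly, as the closure of the embedding $\fam{D}_n\hookrightarrow \RP^{n-1}\times\prod_S \P(\R^n/V_S)$ over the coordinate subspaces $V_S$, realised by blowing up $\RP^{n-1}$ along the coordinate points, then the proper transforms of the coordinate lines, and so on in order of increasing dimension. A point of $Y_n$ is a point of $\RP^{n-1}$ together with compatible normal-direction data along a nested chain of coordinate subspaces, and its fiber over the stratum where $m$ coordinates vanish is the wonderful model of the coordinate arrangement in the projectivized normal space $\P(\R^n/V_S)=\RP^{m-1}$, namely $Y_m$. The map $\Phi$ is now forced: read the normal data as the scale-flag and graded forms it names, and send it to the corresponding isometry group of a flag of forms. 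By induction $\Phi$ restricts on each stratum to the homeomorphism $Y_m\to\overline{\fam{D}_m}$, so it is a projection-preserving bijection; injectivity is clean because the normal data lives in the \emph{projective} spaces $\P(\R^n/V_S)$, which already forgets the sign of a collapsing block exactly as the conjugacy limits do (a one-dimensional block contributes only $\O(1)=\{\pm1\}$ regardless of sign).

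The hard part, and the last step, is continuity of $\Phi$ across the boundary strata --- equivalently, that near a degenerate group the Chabauty topology on $\overline{\fam{D}_n}$ agrees with the normal-crossing (manifold) topology of the blow-up. Concretely I would take a sequence in $Y_n$ converging to a boundary point, read off from a normal-crossing chart the monomial rescalings $A_t=\diag(\ldots)$ that its blow-up coordinates prescribe, and verify directly that the conjugates $A_t\,\O(D)\,A_t\inv$ of a fixed reference form $D$ converge in the Chabauty sense to the group named by the limiting normal data. The point is that the charts of the iterated blow-up supply precisely the monomial rescalings that make these conjugation limits converge one block-transition at a time, which is what lets a single continuous map resolve all of the strata simultaneously. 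Granting this, $\Phi$ is a continuous bijection of compact Hausdorff spaces, hence the asserted homeomorphism.
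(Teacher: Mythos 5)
Your overall architecture --- a stratumwise bijection built by induction on $n$, with the homeomorphism then extracted from compactness of both sides --- is a reasonable plan, and it genuinely differs from the paper's route: the paper never constructs a map between the two compactifications at all. Instead it shows both spaces are the closure of \emph{one and the same} embedding, the pairwise-ratio map $\Psi([\lambda_1:\cdots:\lambda_n])=([\lambda_i:\lambda_j])_{i<j}$ into $(\RP^1)^{\binom{n}{2}}$: on the group side this is done by first replacing $\overline{\fam{D}_n}$ with its image under $\lie$ (using that the basis $\lambda_je_{ij}-\lambda_ie_{ji}$ of $\so(\diag(\lambda_1,\ldots,\lambda_n))$ is orthogonal, so spans vary continuously in the Grassmannian), and on the wonderful-model side by proving that the De Concini--Procesi graph closure projects injectively --- hence, by compactness, homeomorphically --- onto its $\RP^1$ factors. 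With that identification the theorem is immediate and no induction over strata is needed.

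The genuine gap in your proposal sits exactly where you locate the ``hard part,'' and it is twofold. First, ``verify directly that $A_t\,\O(D)\,A_t\inv$ converges in the Chabauty sense'' is not a direct computation: what one computes directly is the limit of the conjugated \emph{Lie algebras}, and Lie-algebra convergence does not imply Chabauty convergence of the groups (the paper's Barber Pole example is precisely this failure). Upgrading requires the Cooper--Danciger--Wienhard theorem that conjugacy limits of algebraic groups preserve dimension, together with component bookkeeping (each component of each limit group contains a diagonal sign matrix, which is fixed by diagonal conjugation); this is the content of the paper's lemma that $\lie$ is continuous and injective on $\overline{\fam{D}_n}$, and your sketch nowhere supplies it. Second, and more structurally: checking convergence along the monomial one-parameter rescalings that a normal-crossing chart ``prescribes'' only tests continuity of $\Phi$ along special paths into each stratum. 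Sequential continuity at a boundary point $y$ must handle arbitrary sequences $y_k\to y$, including sequences lying in \emph{other} boundary strata, and your ``one block-transition at a time'' resolution is an iterated-limit argument that does not obviously commute in the Chabauty topology (CDW's transitivity of the limit relation gives existence of some degenerating path, not continuity of the assignment). The uniform mechanism you would need is that the blow-up chart coordinates determine all $\binom{n}{2}$ pairwise ratios continuously, and that the assignment from ratio data to closed subgroups is continuous and injective --- but once you prove that, you have reconstructed the paper's identification of both spaces as the closure of the graph of $\Psi$, and the stratumwise induction (along with the separate injectivity claim for $\Phi$, which as stated also needs an argument that the flag-and-graded-forms data is recoverable from the limit group) becomes superfluous.
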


\noindent
Reference material on this compactification is included in Chapter \ref{chp:Orthogonal_Groups}, and additionally in \cite{DavisJS98}.
This realization as an iterated blowup implies some useful corollaries about the space $\overline{\fam{D}_n}$:

\begin{corollary*}
$\overline{\fam{D}_n}$ is a connected smooth manifold for all $n$.
The top dimensional open simplices of the coordinate hyperplane arrangement in $\RP^{n-1}$ lift homeomorphically to the blowup, with boundary in $\overline{\fam{D}_n}$ isomorphic to the $n-2$ dimensional permutohedron.
\end{corollary*}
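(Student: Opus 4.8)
The plan is to deduce all of the assertions from the preceding identification of $\overline{\fam{D}_n}$ with the maximal De Concini-Procesi wonderful model of the coordinate (Boolean) arrangement in $\RP^{n-1}$. That identification realizes $\overline{\fam{D}_n}$ as an iterated blowup of $\RP^{n-1}$ along the proper transforms of the projectivized coordinate subspaces $L_S=\{x_i=0 : i\in S\}$, processed in order of increasing dimension as $S$ ranges over the proper nonempty subsets of $\{1,\dots,n\}$. With this picture in hand, connectedness and smoothness are essentially free, and the content lies entirely in the statement about chambers.

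For smoothness I would observe that each stage of the construction blows up a smooth manifold along a smooth closed submanifold, an operation that remains in the smooth category, so $\overline{\fam{D}_n}$ is a smooth closed manifold; this is exactly the smoothness of the real wonderful model developed in the cited reference material and \cite{DavisJS98}, transported across the homeomorphism. For connectedness I would instead use the projection $\pi\colon\overline{\fam{D}_n}\to\RP^{n-1}$: by the Fiber Theorem every fiber is homeomorphic to some $\overline{\fam{D}_m}$ with $m<n$, hence connected by induction (with base cases $\overline{\fam{D}_1}$ a point and $\overline{\fam{D}_2}$ a circle). Since $\pi$ is a continuous surjection of compact Hausdorff spaces with connected fibers onto the connected base $\RP^{n-1}$, the total space $\overline{\fam{D}_n}$ is connected.

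The statement about chambers is where the real work lies. The blowups are supported entirely on the total transform of the arrangement, so $\pi$ is an isomorphism over the complement $\fam{D}_n$; since each top-dimensional open simplex is a connected component of this complement, it lifts homeomorphically, which is the first half. For the second half I would track the closure of a single chamber $\Delta$ — say the standard simplex $\{[x]\colon x_i\ge 0\}$ with vertices $[e_j]$ — through the successive blowups. The face of $\Delta$ on which exactly the coordinates indexed by $S$ vanish lies in $L_S$, so blowing up the transform of $L_S$ truncates that face; performing these truncations in order of increasing dimension — vertices first, then edges, and so on — carries $\Delta$ to its total truncation. The goal is then to identify this total truncation combinatorially with the permutohedron: its faces should biject with the ordered set partitions of $\{1,\dots,n\}$ (equivalently, with the chains, or nested sets, in the Boolean lattice that index the boundary strata of the wonderful model), with the facet cut out along $L_S$ recursively a product of the permutohedron on $S$ and the permutohedron on the complementary set.

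The main obstacle is precisely this last combinatorial identification. One must make precise, in the \emph{real} blowup, the local effect of replacing the link of a codimension-$c$ face by the projectivized normal directions — being careful that only the half of the exceptional projective fiber on the positive side bounds $\overline{\Delta}$ — and then verify that the iterated truncations assemble into the face lattice of the permutohedron rather than some other nestohedron arising from a different building set. I expect the cleanest route is induction on $n$ driven again by the Fiber Theorem: the new facet created by blowing up $L_S$ fibers over that blowup with fiber a copy of $\overline{\fam{D}_m}$ of lower rank, and matching this fibration against the recursive product structure $P_S\times P_{S^c}$ of the permutohedron's facets closes the induction, with the circle $\overline{\fam{D}_2}$ supplying the one-dimensional permutohedron at the base.
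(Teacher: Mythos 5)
Your proposal is correct and follows essentially the same route as the paper: everything is deduced from the identification of $\overline{\fam{D}_n}$ with the maximal De Concini--Procesi model, with smoothness coming from the iterated blowup construction, the homeomorphic lifting of chambers from the fact that the projection is one-to-one off the total transform of the arrangement, and the permutohedral boundary from the iterated truncation of the simplex --- this last combinatorial identification, which you rightly flag as the main obstacle, being exactly the point the paper also handles by appeal to the wonderful-model literature (\cite{ConciniP96,DavisJS98}) and low-dimensional computation rather than a self-contained verification. Your fiber-theorem argument for connectedness is valid (a continuous surjection of compact Hausdorff spaces is closed, so connected fibers over the connected base $\RP^{n-1}$ force a connected total space) and is in fact slightly more careful than the paper, which simply reads connectedness off the blowup of the connected $\RP^{n-1}$.
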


\noindent
Of particular interest are the low dimensional cases $\overline{\fam{D}_3}$ and $\overline{\fam{D}_4}$, which parameterize the limits of pseudo-Riemannian subgeometries of $\RP^2$ and $\RP^3$, respectively.

\begin{example*}
The closure $\overline{\fam{D}_3}\subset\Cl(\GL(3;\R))$ is homeomorphic to the blow up of $\RP^2$ at three points; equivalently the connect sum of four copies of $\RP^2$.
\end{example*}

\begin{example*}
The closure $\overline{\fam{D}_4}\subset\Cl(\GL(4;\R))$ is a 3-manifold cellulated by 8 permutohedra.
\end{example*}

\noindent
In \cite{CooperDW14} it is shown that the limits of $\SO(p,q)$ in $\SL(p+q;\R)$ form a poset under the operation \emph{is a limit of}: which in this construction can be read directly off of the cellulation of $\fam{D}_n$.
In particular, a group parameterized by a point in a cell $C_1$ is a conjugacy limit of a group in a cell $C_2$ by diagonal conjugacy if and only if the cell $C_2$ lies in the boundary of $C_1$.
Up to isomorphism there is a unique \emph{most degenerate geometry} in each dimension, represented by the vertices in the cellulation of $\overline{\fam{D}_n}$.
This geometry has automorphisms the unipotent group of upper triangular matrices in $\GL(n;\R)$, and is called \emph{$n$-dimensional Heisenberg geometry} in this thesis as for $n=3$ the isometries are the real Heisenberg group.

\begin{figure}
\centering
\includegraphics[width=\textwidth]{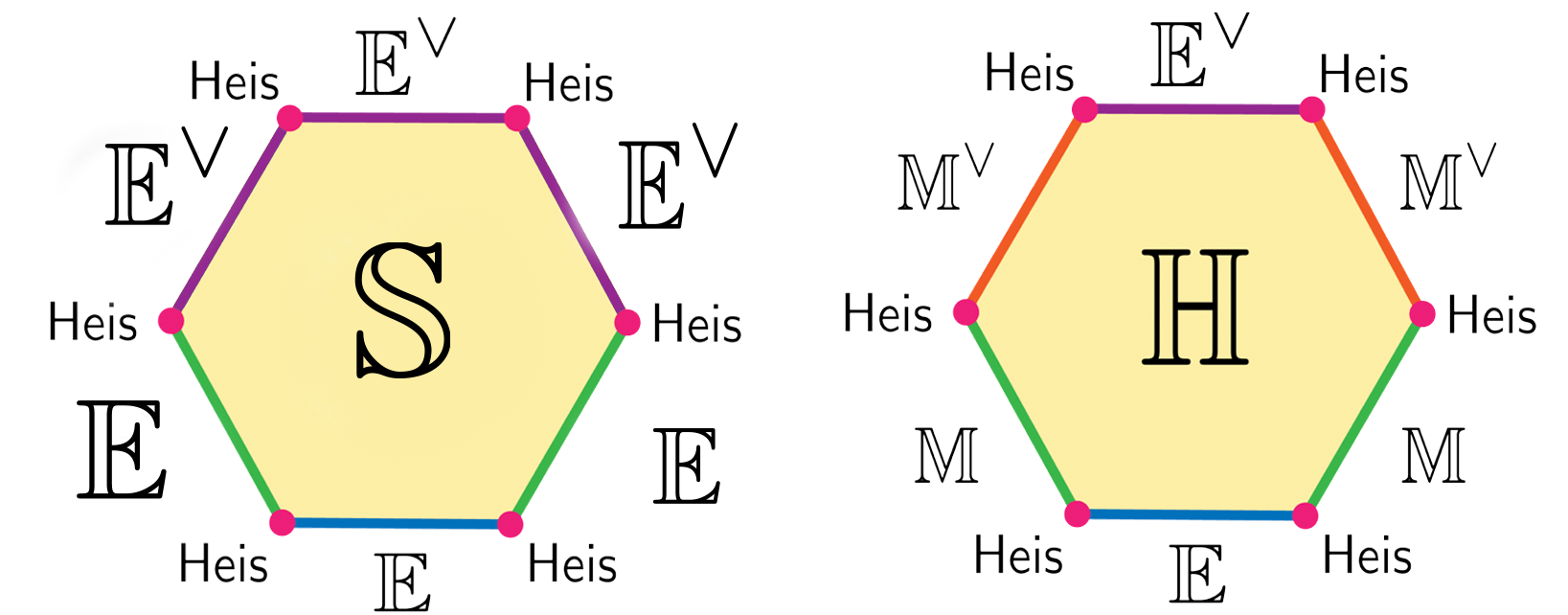}
\caption{The limits of orthogonal groups in $\GL(3;\R)$, parameterized by the closure of simplices containing conjugates of $\O(3)$ and $\O(2,1)$.	}
\end{figure}

\section*{The Heisenberg Plane}

This project provides an in depth study of the degenerate Heisenberg plane
considering the deformation and regeneration of Heisenberg structures on orbifolds. 
In particular, the closed orbifolds admitting Heisenberg structures are classified, and their deformation spaces are computed. Considering the regeneration problem, which Heisenberg tori arise as rescaled limits of collapsing paths of constant curvature cone tori is completely determined in the case of a single cone point.

\begin{definition*}
Heisenberg geometry is the $(G,X)$ geometry $\Hs^2:=(\Heis,\A^2)$ where
$$\Heis=\left\{\pmat{\pm1 & a & c\\0 &\pm1& b\\0&0&1}\;\Bigg | \; a,b,c\in\R\right\} \hspace{0.2cm}\textrm{and}
\hspace{0.2cm}
\A^2=\left\{ [x:y:1]\in\RP^2\mid x,y,\in\R\right\}.$$
\end{definition*}

\noindent
As a subgeometry of the affine plane, every Heisenberg structure on an orbifold $\mathcal{O}$ canonically weakens to an affine structure, which provides strong restrictions on which orbifolds can possibly admit Heisenberg structures.

\begin{theorem*}
Every closed Heisenberg 	orbifold is finitely covered by a Heisenberg torus with holonomy into the identity component of the isometry group $\Heis_0<\Heis$.
\end{theorem*}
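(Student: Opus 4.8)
The plan is to sharpen the affine weakening using the upper-triangular shape of $\Heis$. Writing a general element of $\Heis$ as $\smat{\epsilon_1 & a & c\\ 0 & \epsilon_2 & b\\ 0 & 0 & 1}$ with $\epsilon_1,\epsilon_2\in\{\pm1\}$, its linear part $\smat{\epsilon_1 & a\\ 0 & \epsilon_2}$ is triangular and so preserves the horizontal line $\langle e_1\rangle\subset\R^2$. Thus \emph{every} element of $\Heis$ fixes the constant horizontal direction on $\A^2$ as an (unoriented) line, and this line field is therefore invariant under the holonomy of any Heisenberg structure and pushes forward through the developing map $\dev$ to a parallel, nowhere-zero line field on the underlying orbifold $\mathcal{O}$. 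The strategy is to read off from this line field that $\mathcal{O}$ is Euclidean, hence finitely covered by a torus, and then to correct the holonomy of that torus by one further finite cover.

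First I would pin down the local structure at singular points. The only finite-order elements of $\Heis$ are involutions: an element with $\epsilon_1=\epsilon_2=1$ is unipotent, hence torsion-free unless trivial, while if some $\epsilon_i=-1$ then the square is unipotent, so any torsion element squares to the identity. Consequently the holonomy around a cone point has order dividing $2$, forcing every cone point to have order at most two, and the resulting local involution (being $-I$ at an orientable cone point, or a triangular reflection at a reflector) still preserves $\langle e_1\rangle$. The invariant line field therefore extends across the entire singular locus without zeros. Applying the Poincar\'e--Hopf theorem for line fields on closed $2$-orbifolds, the vanishing of the total index of a nowhere-zero field gives $\chi_{\mathrm{orb}}(\mathcal{O})=0$. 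Since among closed $2$-orbifolds only the Euclidean ones have zero orbifold Euler characteristic, and each of these is good and finitely covered by a torus $T^2$, I obtain a finite orbifold covering $T^2\to\mathcal{O}$; this is where I invoke the classification of flat $2$-orbifolds.

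Next I pull the Heisenberg structure back along $T^2\to\mathcal{O}$; as $T^2$ is a manifold this produces a genuine Heisenberg torus whose holonomy is a representation $\rho\colon\pi_1(T^2)=\Z^2\to\Heis$. The assignment $\smat{\epsilon_1&a&c\\0&\epsilon_2&b\\0&0&1}\mapsto(\epsilon_1,\epsilon_2)$ is a homomorphism $\Heis\to(\Z/2)^2$ whose kernel is exactly $\Heis_0$, so $\phi=(\epsilon_1,\epsilon_2)\circ\rho\colon\Z^2\to(\Z/2)^2$ has kernel of index at most $4$. The subgroup $\ker\phi\cong\Z^2$ determines a finite cover of $T^2$ — again a torus — on which the restricted holonomy lands in $\Heis_0$ by construction. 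Composing the two coverings exhibits $\mathcal{O}$ as finitely covered by a Heisenberg torus with holonomy in $\Heis_0$, which is the claim.

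I expect the main obstacle to be the orbifold Poincar\'e--Hopf/Euler-characteristic step: making precise that the holonomy-invariant horizontal line field descends to a nowhere-zero, possibly non-orientable line field across the order-two singular points, that its index bookkeeping indeed computes $\chi_{\mathrm{orb}}(\mathcal{O})$, and that the conclusion $\chi_{\mathrm{orb}}=0$ may be promoted to an honest torus cover via the classification of Euclidean $2$-orbifolds. By comparison, the torsion computation bounding cone orders and the final component-group covering are elementary once the Euclidean conclusion is secured.
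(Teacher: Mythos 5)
Your proposal is correct in outline, but it takes a genuinely different route from the paper, and the comparison is instructive. The paper never works with an Euler-characteristic argument at the orbifold level: it first observes that the developing map immerses $\widetilde{\mathcal{O}}$ in the plane, so $\mathcal{O}$ is \emph{good}, hence (being neither a teardrop nor a bad spindle) finitely covered by a honest surface $\Sigma$; it then passes to an at most $4$-sheeted cover killing the component group $\Heis/\Heis_0\cong(\Z/2)^2$, and only \emph{then} uses invariance of the $1$-form $dy$ under $\Heis_0$ to produce a nonvanishing vector field on $\Sigma$, giving $\chi(\Sigma)=0$ and, since $\Heis_0$ is orientation-preserving, $\Sigma\cong T^2$. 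You instead exploit the unoriented horizontal \emph{line} field, which is invariant under all of $\Heis$ (not just $\Heis_0$) — a nice economy that lets you postpone the component-group cover to the very end — and you conclude $\chi_{\mathrm{orb}}(\mathcal{O})=0$ directly via a Poincar\'e--Hopf theorem for line fields on closed $2$-orbifolds, then invoke the classification of flat orbifolds. Your torsion computation, the injectivity of isotropy into $\Heis$ (which holds by analyticity of the $G$-action), and the final $\ker\bigl(\Z^2\to(\Z/2)^2\bigr)$ cover are all sound. The one step you rightly flag as heavy — orbifold Poincar\'e--Hopf for line fields, including fractional indices at order-two points, mirror reflectors, and a priori bad orbifolds — is true but nonstandard, and it is worth noting you can sidestep it entirely with an observation already implicit in your setup: since the developing map is an immersion into $\A^2$, the universal cover is a manifold, so $\mathcal{O}$ is good and hence very good, and Poincar\'e--Hopf need only be applied on a finite manifold cover. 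That shortcut is essentially the paper's argument; adopting it would make your line-field version fully rigorous with only classical inputs, while still preserving your improvement of deferring the $\Heis_0$ reduction until after the torus cover is in hand.
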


\noindent
To classify tori with holonomy into $\Heis_0$ we compute the representation variety $\Hom(\Z^2,\Heis_0)$.
In the interest of computing the deformation space, we are particularly interested in the quotients of $\mathcal{R}$ by homothety and Heisenberg conjugacy.

\begin{proposition*}
$\Hom(\Z^2,\Heis_0)$ is isomorphic to $V(x_1y_2-x_2y_1)\times\R^2$. 
\end{proposition*}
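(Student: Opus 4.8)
The plan is to identify $\Hom(\Z^2,\Heis_0)$ with the variety of commuting pairs in $\Heis_0$, and then read off the commuting condition by a direct matrix computation.

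Since $\Z^2$ is free abelian on two generators $e_1,e_2$ subject only to the relation $e_1e_2=e_2e_1$, a homomorphism $\rho\colon\Z^2\to\Heis_0$ is determined freely by the pair $(A,B)=(\rho(e_1),\rho(e_2))$, with the single constraint $AB=BA$. So the first step is to establish the identification $\Hom(\Z^2,\Heis_0)\cong\{(A,B)\in\Heis_0\times\Heis_0\mid AB=BA\}$, where $\Heis_0\times\Heis_0$ carries the coordinates coming from the unipotent entries of each factor. Next I would parametrize, writing
\[
A=\pmat{1 & x_1 & z_1\\ 0 & 1 & y_1\\ 0 & 0 & 1},\qquad
B=\pmat{1 & x_2 & z_2\\ 0 & 1 & y_2\\ 0 & 0 & 1},
\]
so that $\Heis_0\times\Heis_0\cong\R^6$ with coordinates $(x_1,y_1,z_1,x_2,y_2,z_2)$. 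Multiplying out $AB$ and $BA$, the diagonal and superdiagonal entries agree automatically (they are symmetric sums), and the only nontrivial comparison occurs in the top-right corner, where $AB$ produces $z_1+z_2+x_1y_2$ while $BA$ produces $z_1+z_2+x_2y_1$. Hence $AB=BA$ if and only if $x_1y_2-x_2y_1=0$.

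The key structural observation is that this condition does not involve $z_1$ or $z_2$: the commutator $[A,B]$ always lands in the center of $\Heis_0$ (the top-right entry), and in these coordinates equals $x_1y_2-x_2y_1$ times the central generator, which is precisely the symplectic pairing of the $(x,y)$-data. Consequently the two central coordinates $z_1,z_2$ are entirely unconstrained, and the commuting variety splits as the product of the quadric $V(x_1y_2-x_2y_1)\subset\R^4$ in the coordinates $(x_1,y_1,x_2,y_2)$ with the free $(z_1,z_2)$-plane $\R^2$. This yields the claimed isomorphism $\Hom(\Z^2,\Heis_0)\cong V(x_1y_2-x_2y_1)\times\R^2$.

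The computation itself is routine; I do not expect a genuine obstacle. The only point requiring care is confirming that the set-level identification of the representation variety with commuting pairs is compatible with the coordinates in which the target $V(x_1y_2-x_2y_1)\times\R^2$ is described, so that the resulting bijection is genuinely an isomorphism of spaces (equivalently, a homeomorphism, and in fact an isomorphism of real algebraic sets) rather than merely a set map. Beyond this bookkeeping, the essential content is exactly the single commutator identity isolated in the middle step.
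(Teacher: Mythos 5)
Your proof is correct, and it takes a mildly but genuinely different route from the paper. You work entirely at the group level: evaluation identifies $\Hom(\Z^2,\Heis_0)$ with commuting pairs in $\Heis_0\times\Heis_0$, and multiplying out $AB$ and $BA$ isolates the single constraint $x_1y_2-x_2y_1=0$ in the top-right corner, with the central coordinates $z_1,z_2$ free. The paper instead first invokes the fact that $\heis$ is nilpotent, so $\exp\colon\heis\to\Heis_0$ is an isomorphism of varieties with polynomial inverse $\log$, and uses this to identify $\Hom(\Z^2,\Heis_0)$ with $\Hom(\R^2,\heis)=\ker[\cdot,\cdot]$; the bracket computation $\left[\smat{x_1&z_1\\&y_1},\smat{x_2&z_2\\&y_2}\right]=\smat{0&x_1y_2-x_2y_1\\&0}$ then yields the same defining equation. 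The mathematical heart is identical in both arguments --- the commutator (respectively bracket) lands in the center and equals the symplectic pairing of the $(x,y)$-data --- and since the coordinate change $\log$ only shifts $z$ by $z\mapsto z-\tfrac{1}{2}xy$ while fixing $x$ and $y$, both presentations cut out literally the same subvariety $V(x_1y_2-x_2y_1)\times\R^2$ of $\R^6$. What your version buys is self-containment: no appeal to the algebraicity of $\exp$ for unipotent groups. What the paper's detour buys is the system of \emph{Lie algebra coordinates} used throughout the rest of that chapter: in those coordinates the homothety action is linear scaling (so $V$ is visibly a cone) and $\Heis_0$-conjugation acts affinely, $(\vec{x},\vec{y},\vec{z})\mapsto(\vec{x},\vec{y},\vec{z}+g\vec{y}-h\vec{x})$, whereas in your group coordinates the subsequent quotient computations would pick up quadratic correction terms or force the same change of coordinates later. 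Your final bookkeeping concern is easily discharged exactly as you suspect: the identification with commuting pairs is the standard algebraic embedding of the representation variety, and the resulting bijection is an equality of real algebraic sets up to permuting coordinates.
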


\begin{theorem*}[Heisenberg $\Z^2$ Conjugacy Variety]
The quotient space of representations $\Z^2\to\Heis_0$ with image not into the center, up to homothety and conjugacy, is isomorphic to the following variety.

$$\mathcal{U}^\star=
V\left(\mat{
\|x\|^2+\|y\|^2=1,& \vec{z}\cdot\vec{x}=0\\
x_1y_2-x_2y_1=0,& \vec{z}\cdot\vec{y}=0
}\right)\subset\R^6$$

\noindent
This is a line bundle over $T^2$, twisted over each generator of $\pi_1(T^2)$.
\end{theorem*}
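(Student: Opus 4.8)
The plan is to feed the explicit description of $\Hom(\Z^2,\Heis_0)$ from the preceding proposition into an analysis of the two group actions, produce a global slice cut out exactly by the defining equations of $\mathcal{U}^\star$, and then read off the topology of that slice. First I would fix coordinates: a homomorphism $\rho\colon\Z^2\to\Heis_0$ is determined by the commuting pair $\rho(e_1),\rho(e_2)$, and recording the superdiagonal entries as $\vec{x}=(x_1,x_2)$, $\vec{y}=(y_1,y_2)$ and the corner entries as $\vec{z}=(z_1,z_2)$ identifies $\rho$ with a point of $\R^6$. By the proposition the commuting relation is exactly $x_1y_2-x_2y_1=0$, so $\vec{x}$ and $\vec{y}$ are parallel, and $\rho$ is non-central precisely when $(\vec{x},\vec{y})\neq 0$, which I assume henceforth.

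Next I would compute the two actions directly. A short matrix calculation shows that conjugation by the element of $\Heis_0$ with superdiagonal parameters $(p,q)$ fixes $\vec{x}$ and $\vec{y}$ and sends $\vec{z}\mapsto\vec{z}+p\vec{y}-q\vec{x}$, with the centre acting trivially; since $\vec{x}\parallel\vec{y}$, this translates $\vec{z}$ along the single line $\span\{\vec{x},\vec{y}\}$. The homothety automorphism $\delta_\lambda$ acts by $(\vec{x},\vec{y},\vec{z})\mapsto(\lambda\vec{x},\lambda\vec{y},\lambda^2\vec{z})$ for $\lambda>0$.

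These computations make the slice transparent. Using $\delta_\lambda$ I can uniquely normalise $\|x\|^2+\|y\|^2=1$, and using conjugation I can uniquely translate $\vec{z}$ into the orthogonal complement of $\span\{\vec{x},\vec{y}\}$, that is, impose $\vec{z}\cdot\vec{x}=\vec{z}\cdot\vec{y}=0$. The crucial point is that these conditions meet every non-central orbit exactly once: the normalisation pins down $\lambda$, the orthogonal projection pins down the conjugating translation, and the two are compatible because $\delta_\lambda$ maps $\span\{\vec{x},\vec{y}\}$ to itself and preserves orthogonality. The slice is then cut out by precisely the four equations defining $\mathcal{U}^\star$, which gives the claimed isomorphism of the quotient with $\mathcal{U}^\star$.

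Finally I would identify the topology. Projection to $(\vec{x},\vec{y})$ fibres $\mathcal{U}^\star$ over the base $B=\{\|x\|^2+\|y\|^2=1,\ \vec{x}\parallel\vec{y}\}$; writing the common direction as a unit vector $\vec{u}$ and the coefficients as $\vec{w}\in S^1$ through $\vec{x}=w_1\vec{u}$, $\vec{y}=w_2\vec{u}$ identifies $B$ with $(S^1\times S^1)/\{\pm1\}$ under the free involution $(\vec{u},\vec{w})\mapsto(-\vec{u},-\vec{w})$, a quotient of $T^2$ by a free orientation-preserving involution and hence again a torus. Over $B$ the fibre is the line $\vec{u}^\perp$, so $\mathcal{U}^\star$ is a line bundle; trivialising it on the double cover $S^1\times S^1$ by the section $J\vec{u}$ (a quarter-turn of $\vec{u}$) and noting that the involution flips $J\vec{u}\mapsto-J\vec{u}$ shows the bundle has nontrivial first Stiefel--Whitney class, with both members of a suitable basis of $\pi_1(T^2)$ pairing nontrivially, i.e.\ the bundle restricting to a M\"obius band over each. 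I expect the main obstacle to lie in this last step rather than in the algebra: confirming that the involution is free and orientation-preserving so that $B$ is genuinely a torus, checking that the fibre never drops dimension on the non-central locus, and correctly identifying the twisting over each generator, including the choice of basis of $\pi_1(T^2)$ realising it.
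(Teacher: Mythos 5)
Your proposal is correct and follows essentially the same route as the paper: identify $\Hom(\Z^2,\Heis_0)$ with $(\vec{x},\vec{y},\vec{z})\in V(x_1y_2-x_2y_1)\times\R^2$, compute that conjugation fixes $(\vec{x},\vec{y})$ and translates $\vec{z}$ along $\span\{\vec{x},\vec{y}\}$, cut a global slice by normalising $\|x\|^2+\|y\|^2=1$ and taking $\vec{z}$ in the orthogonal line (meeting each non-central orbit exactly once), and then identify the base with a torus and compute the twisting of the line bundle. The only cosmetic differences are your graded homothety $(\lambda\vec{x},\lambda\vec{y},\lambda^2\vec{z})$ versus the paper's uniform scaling $(t\vec{x},t\vec{y},t\vec{z})$ on Lie-algebra coordinates (both produce the identical slice and quotient), and your double-cover/deck-transformation argument for nontriviality of $w_1$, where the paper instead passes to Clifford-torus coordinates via $(x_1,x_2,y_1,y_2)=(u_1+v_1,v_2+u_2,v_2-u_2,u_1-v_1)$ and checks the M\"obius restriction over the explicit generators $\alpha(\theta)=(\vec{e_1},\vec{p_\theta})$ and $\beta(\theta)=(\vec{p_\theta},\vec{e_1})$, which pins down the class $(1,1)$ in that specific basis rather than a \emph{suitable} one.
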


\noindent
This description of the conjugacy variety (after removing the singular collection of representations into the center) allows us to construct all Heisenberg structures on the torus by actually constructing a developing map for each representation, or proving that no developing map exists.

\begin{theorem*}[Teichm\"uller Space of Heisenberg Tori]
The subset $\mathcal{F}\subset\mathcal{U}^\star$ of conjugacy classes which are the holonomies of Heisenberg tori
 is a trivial $\R^\times$ bundle over the cylinder $\mathsf{Cyl}=T^2\smallsetminus S$, for $S$ the circle defined by the intersection of  $T^2=V(x_1y_2-x_2y_1)\cap\S^3$ with the plane $V(y_1,y_2)$.
The projection onto holonomy identifies the Teichm\"uller space of unit area tori with the quotient of $\mathcal{F}$ by the free $\Z_2$ action of conjugacy by $\diag(-1,-1,1)$ and $\mathcal{T}_{\Hs^2}(T^2)\cong \mathcal{F}/\Z^2\cong \R^2\times \S^1$.
\end{theorem*}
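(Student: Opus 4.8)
The plan is to make the holonomy correspondence explicit through developing maps, using that $\smat{1 & a & c\\ 0 & 1 & b\\ 0 & 0 & 1}\in\Heis_0$ acts on $\A^2$ by the affine shear $(x,y)\mapsto(x+ay+c,\,y+b)$. A Heisenberg torus is a developing pair $(\dev,\hol)$ with $\dev\colon\R^2\to\A^2$ an immersion on the universal cover of $T^2$, equivariant for $\hol\colon\Z^2\to\Heis_0$; by the preceding reductions I may take $\hol$ from the conjugacy variety $\mathcal{U}^\star$. First I would fix generators of $\pi_1(T^2)\cong\Z^2$, record their images through the coordinates $(x,y,z)=((a_1,a_2),(b_1,b_2),(c_1,c_2))$ cutting out $\mathcal{U}^\star$, and write the equivariance of $\dev=(P,Q)$ as a system of difference equations in the deck coordinates.

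Second, I would solve this system. The $y$-component forces $Q$ to be affine up to a $\Z^2$-periodic term, and substituting into the $x$-component produces a quadratic solution $P$; the consistency of the two mixed second differences is exactly the commuting relation $a_1b_2=a_2b_1$ defining $\mathcal{U}^\star$, so the construction is automatically integrable. A direct computation then shows the Jacobian of this quadratic developing map is \emph{constant}, equal to $c_1b_2-c_2b_1-\tfrac12 b_1b_2(a_1-a_2)$, and hence that the developing map is an immersion --- i.e. defines a genuine Heisenberg torus --- precisely when this constant is nonzero. Imposing the conjugacy-normalization $\vec z\cdot\vec x=\vec z\cdot\vec y=0$ used in $\mathcal{U}^\star$, this quantity becomes an affine function along each fiber of the line bundle $\mathcal{U}^\star\to T^2$, with leading coefficient controlled by the $y$-parameter.

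Third, I would read off $\mathcal{F}$. Over the circle $S=T^2\cap V(y_1,y_2)$, where the holonomy has no $y$-translation, equivariance forces $Q$ to descend to $T^2$ and hence to have a critical point, so the Jacobian vanishes identically and no immersion exists --- explaining the deletion of $S$. Over the complementary cylinder $\mathsf{Cyl}$ the Jacobian is a nonconstant affine function on each fiber, vanishing along a single section and nonzero off it, so $\mathcal{F}$ is the complement of a section of $\mathcal{U}^\star|_{\mathsf{Cyl}}$ and thus an $\R^\times$ bundle; this bundle is trivial because the core circle of $\mathsf{Cyl}$ is isotopic to $S$, around which $\mathcal{U}^\star$ has trivial monodromy. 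It then remains to analyze the residual involution given by conjugation by $\diag(-1,-1,1)$, which acts by $(x,y,z)\mapsto(x,-y,-z)$; I would check that it is free on $\mathcal{F}$ (as $y\neq0$ throughout $\mathsf{Cyl}$), compute the quotient, and match it against the unit-area normalization to obtain $\mathcal{F}/\Z_2\cong\R^2\times\S^1$.

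The step I expect to be the main obstacle is the rigidity needed to upgrade this from a surjection onto $\mathcal{F}$ to the claimed homeomorphism of $\mathcal{T}_{\Hs^2}(T^2)$ with $\mathcal{F}/\Z_2$. The general equivariant solution of the developing equations differs from the quadratic model by $\Z^2$-periodic corrections to both $P$ and $Q$, and I must show these either destroy the immersion property or are absorbed by isotopy and the $\Heis$-action, so that each holonomy in $\mathcal{F}$ is realized by an essentially unique marked unit-area structure. Concretely, the crux is to prove that any equivariant immersion has $Q$ affine with nowhere-vanishing differential over $\mathsf{Cyl}$, rigidifying the structure to the quadratic model; carrying this out, together with the careful bookkeeping that reconciles the area normalization with the $\Z_2$-quotient so as to land on $\R^2\times\S^1$ rather than a twisted quotient, is where the real work lies.
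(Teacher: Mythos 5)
Your construction is, up to bookkeeping, the paper's own: your quadratic model is exactly the orbit map $f_\rho(s,t)=\widehat{\rho}(s,t).\vec{0}$ of the unique extension $\widehat{\rho}\colon\R^2\to\Heis_0$, and your constant Jacobian $c_1b_2-c_2b_1-\tfrac12 b_1b_2(a_1-a_2)$ equals $z_1y_2-z_2y_1$ in Lie-algebra coordinates, whose nonvanishing on $\mathcal{U}^\star$ (where $\vec{z}\perp\vec{y}$) is equivalent to the paper's criterion that $\vec{y}\neq 0$ and $\vec{z}\neq 0$, i.e.\ that $\widehat{\rho}$ is faithful and acts freely. Your critical-point argument over $S$ is correct and genuinely excludes \emph{all} equivariant immersions there, not just the model. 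But there is a first genuine gap over $\mathsf{Cyl}$: along the zero section $\vec{z}=0$ you have only shown that the quadratic model fails to immerse, whereas the theorem requires that these non-faithful classes are not holonomies of \emph{any} Heisenberg structure. The paper closes this by noting such $\rho$ factors through a one-parameter subgroup, hence preserves each leaf of a fibration $\Hs^2\to\R$; the developing image projects to an open subset of $\R$ that equals the compact projection of a fundamental domain, a contradiction. (Your own trick in fact extends: for $\vec{z}=0$, $\vec{y}\neq 0$ the leaf coordinate $F(u,v)=u-\tfrac{x}{2y}v^2$ is orbit-invariant, so $F\circ\dev$ descends to the compact torus and has a critical point, forcing $\mathrm{rank}\, d(\dev)\leq 1$ there.)

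The second gap is the crux you flag, and as formulated it cannot be repaired: it is \emph{false} that every equivariant immersion has $Q$ affine, since precomposing the quadratic model with any $\pi_1$-equivariant self-diffeomorphism of $\widetilde{T}$ produces an equivariant immersion with non-affine $Q$; no pointwise rigidity of the developing map holds, only rigidity up to isotopy. The correct route, and the paper's, is completeness: for $\rho\in\mathcal{F}$ the quadratic model is a diffeomorphism onto $\A^2$ --- in your framework $Q$ is an affine submersion, and the relation $a_1b_2=a_2b_1$ cancels the quadratic terms of $P$ along each fiber of $Q$, so $P$ restricts to an affine bijection fiberwise --- and complete structures are determined by holonomy: if $(\phi,\rho)$ is any other structure with the same holonomy, then $f^{-1}\phi$ is equivariant and descends to a self-map of $T$ inducing the identity on $\pi_1$, hence isotopic to the identity since $T$ is a $K(\pi,1)$. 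With completeness and both nonexistence statements in hand, your bundle-topology analysis, the freeness of the $\Z_2$ action (using $\vec{y}\neq 0$ on $\mathcal{F}$), and the identification $\mathcal{T}_{\Hs^2}(T^2)\cong\mathcal{F}/\Z_2\cong\R^2\times\S^1$ go through exactly as the paper's do.
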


\begin{figure}
\centering
\includegraphics[width=0.75\textwidth]{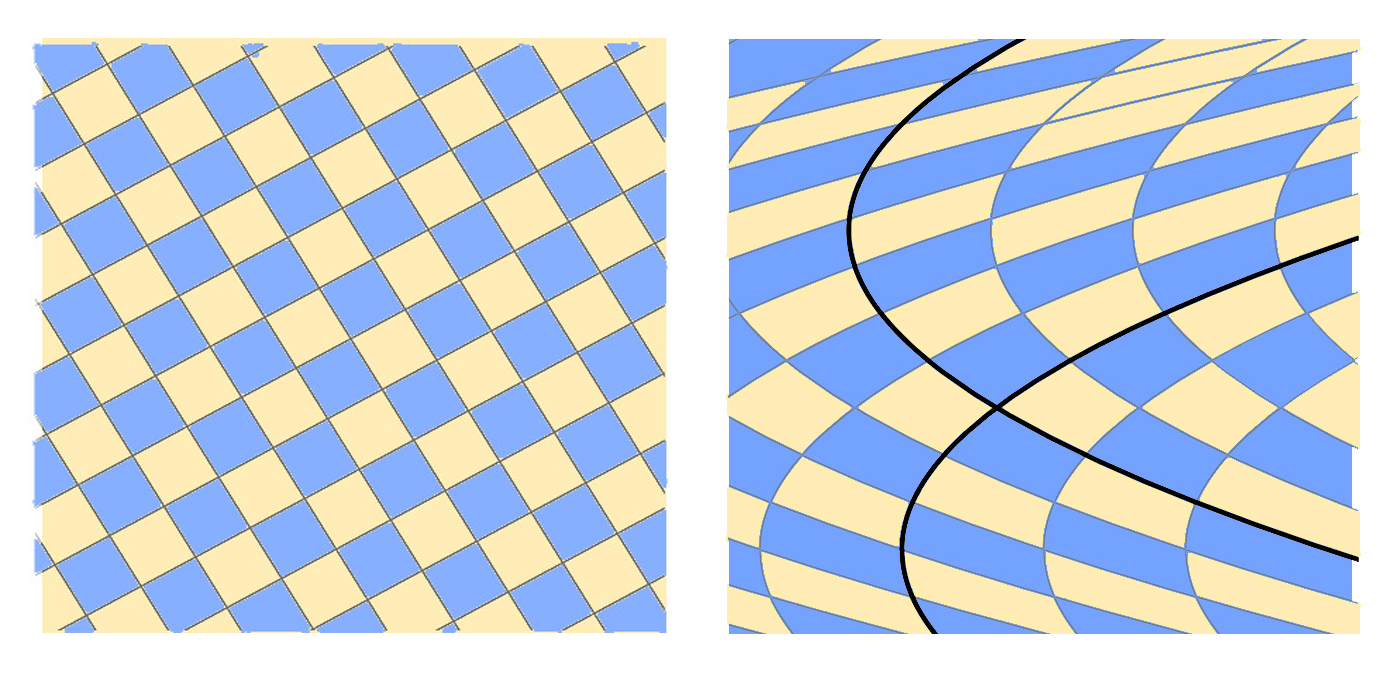}
\caption{The developing map for a Heisenberg translation torus (left) and a shear torus (right).}	
\end{figure}

\noindent
Examining the deformation space $\fam{F}/\Z_2$ reveals that there are essentially two different \emph{kinds} of Heisenberg structures on the torus: \emph{translation tori}, whose holonomy has image strictly contained in the subgroup of $\Heis$ acting by translations on the plane, and \emph{shear tori}, whose holonomy contains a nontrivial shear.

As every Heisenberg orbifold is finitely covered by one of the Heisenberg tori described in the previous theorem, points of the deformation spaces $\mathcal{D}_{\Hs^2}(\fam{O})$ may be parameterized by \emph{extensions} of holonomies $\rho\colon\pi_1(T^2)\to\Heis$ to $\pi_1(\fam{O})>\pi_1(T^2)$.

\begin{theorem*}[Classification of Heisenberg Orbifolds]
All Heisenberg structures on orbifolds are complete, and projection onto the holonomy is an embedding $\mathcal{D}_{\Hs^2}(\mathcal{O})\inject \Hom(\pi_1(\mathcal{O}),\Heis)/\Heis_+$.

The orbifolds admitting Heisenberg structures and their Teichm\"uller spaces are given by the following table:
	
\begin{table}[h]
\centering
\begin{tabular}{ccc}
\toprule
$\mathcal{O}$ &\hspace{1cm}& $\mathcal{T}_{\Hs^2}(\mathcal{O})$ \\
\midrule
$\S^1\times\S^1$ &\hspace{1cm}& $\R^2\times\S^1$ 
\\
$\S^1\widetilde{\times}\S^1$, $\S^1\times I$, $\S^1\widetilde{\times}I$ &\hspace{1cm}& $\R^2\sqcup\R$  
\\
$\S^2(2,2,2,2)$ &\hspace{1cm}& $\R\times\S^1$ 
\\
$\D^2(2,2;\varnothing),\; \D^2(\varnothing; 2,2,2,2),\;\RP^2(2,2)$ &\hspace{1cm}& $\R\sqcup\R$
\\
$\D^2(2;2,2)$ &\hspace{1cm}& $\R\sqcup\R$
\\
\bottomrule
\end{tabular}
\end{table}
\end{theorem*}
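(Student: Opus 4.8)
The plan is to bootstrap from the structure theory already established for the torus. By the finite-cover theorem, every closed Heisenberg orbifold $\mathcal{O}$ admits a Heisenberg torus $T^2$ as a finite cover, so $\pi_1(\mathcal{O})$ contains the lattice $\Z^2 = \pi_1(T^2)$ as a finite-index subgroup, and any Heisenberg structure on $\mathcal{O}$ restricts to one of the structures classified in the Teichm\"uller Space of Heisenberg Tori theorem. The whole argument therefore reduces to two questions: which finite extensions of $\Z^2$ can act by Heisenberg isometries (this determines the list of orbifolds), and how the already-understood torus holonomies extend over each such group (this determines the Teichm\"uller spaces).

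First I would dispatch completeness and the embedding into the character variety together. The developing maps built for the translation and shear tori are diffeomorphisms onto the affine plane $\A^2$, so those structures are complete; since completeness of a $(G,X)$-structure is inherited from any finite cover, every Heisenberg orbifold structure is complete as well. For a complete structure the developing map is determined by the holonomy up to the $\Heis$-action, so conjugate holonomies give equivalent structures and the map $\mathcal{D}_{\Hs^2}(\mathcal{O}) \inject \Hom(\pi_1(\mathcal{O}),\Heis)/\Heis_+$ is injective --- this is the Ehresmann--Thurston principle in the presence of rigidity.

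Next comes the torsion constraint, which is the mechanism that produces the table. Since every element of $\Heis$ carries $\pm 1$ on the diagonal, a direct computation shows that a nontrivial finite-order element of $\Heis$ must have order exactly $2$. Consequently every cone point and every corner reflector of a Heisenberg orbifold has order $2$, and combined with the fact that $\mathcal{O}$ is finitely covered by the torus (hence Euclidean) this singles out exactly the nine closed Euclidean $2$-orbifolds all of whose singular data is of order $2$: the torus, the Klein bottle $\S^1\widetilde{\times}\S^1$ and the mirrored strips $\S^1\times I$, $\S^1\widetilde{\times}I$, the pillowcase $\S^2(2,2,2,2)$, and the reflective orbifolds $\D^2(2,2;\varnothing)$, $\D^2(\varnothing;2,2,2,2)$, $\RP^2(2,2)$, $\D^2(2;2,2)$.

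It remains --- and this is the heart of the argument --- to realize each candidate and compute its Teichm\"uller space. For each $\mathcal{O}$ I would present $\pi_1(\mathcal{O})$ as an extension of a finite group by the lattice $\Z^2$ and then parameterize Heisenberg structures by the extensions of a classified torus holonomy $\rho\colon \Z^2 \to \Heis$ to $\pi_1(\mathcal{O}) \to \Heis$, taken up to conjugacy and homothety. Existence of such an extension confirms that $\mathcal{O}$ appears, and the space of extensions, cut down by the residual conjugation (for the torus itself this was the free $\Z_2$ generated by $\diag(-1,-1,1)$), yields each table entry. The main obstacle is precisely this case-by-case extension and quotient computation: the disconnected answers $\R^2 \sqcup \R$ and $\R \sqcup \R$ arise from genuinely distinct conjugacy classes of order-$2$ extensions together with the translation-versus-shear dichotomy, and the delicate bookkeeping is to verify that these discrete choices, under the homothety normalization, produce exactly the listed spaces and nothing more.
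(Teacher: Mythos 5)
Your proposal follows the paper's argument essentially step for step: completeness and the injectivity of the holonomy map are deduced by pulling the structure back to a finite torus cover where completeness is already known, the observation that nontrivial finite-order elements of $\Heis$ have order exactly $2$ (combined with faithfulness of the holonomy and $\chi(\mathcal{O})=0$) cuts the candidates down to the same nine Euclidean orbifolds with order-$2$ singular data, and each Teichm\"uller space is obtained by extending the classified torus holonomies $\rho\colon\Z^2\to\Heis$ over $\pi_1(\mathcal{O})$ up to $\Heis_+$-conjugacy and homothety. The only content you defer is the same case-by-case extension bookkeeping the paper actually carries out (e.g.\ that $\pi$-rotations conjugate only translations to their inverses, forcing pillowcase quotients to come from translation tori, while cylinders and Klein bottles also admit shear tori with a horizontal generator, producing the $\R^2\sqcup\R$ answers), and your description of where the disconnected components come from agrees with those computations.
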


\noindent
The second half of this project studies the regeneration of Heisenberg structures, restricting for convenience to Heisenberg tori.
As in many cases considering regenerations of limit geometric structures, conemanifold structures are the important objects to consider.
In particular, we search for collapsing sequences of constant curvature cone tori, which when viewed as projective structures, converge to a Heisenberg torus in the limit.
Restricting to the case of a single cone point, we may represent a constant curvature cone torus as a constant curvature geodesic parallelogram with side identifications.
A collection of arguments in projective geometry then allow us to completely understand the regenerations of Heisenberg tori whose holonomy acts by pure translations.

\begin{theorem*}[Regeneration of Translation Tori]
Let $\mathbb{X}\in\{\S^2,\E^2,\Hyp^2\}$ and $\mathbb{X}_t=D_t.\mathbb{X}$ be a sequence of diagonal conjugates converging to $\Hs^2$.
Given any translation torus $T$ there is a sequence of $\mathbb{X}_t$ cone tori with at most one cone point converging to $T$.	
\end{theorem*}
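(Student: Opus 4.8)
The plan is to realize $T$ as an explicit limit of cone tori cut out by degenerating geodesic parallelograms. First I would record the holonomy data supplied by the classification of Heisenberg tori: a translation torus has holonomy $\rho\colon\pi_1(T^2)=\langle a,b\rangle\to\Heis$ sending $a,b$ to commuting \emph{pure translations} $A,B$ of the affine patch $\A^2$, and develops by tiling $\A^2$ with the flat parallelogram $P_0$ spanned by the two translation vectors. The aim is to thicken this flat picture inside each $\mathbb{X}_t$, producing for every $t$ a geodesic parallelogram whose opposite sides are paired by isometries approximating $A$ and $B$ and which glues up to a cone torus with at most one cone point.

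The core construction is to choose side-pairing isometries $A_t,B_t\in G_t:=D_t\,\Isom(\mathbb{X})\,D_t\inv$ (where $G_t\to\Heis$) with $A_t\to A$ and $B_t\to B$, realized as transvections of $\mathbb{X}_t$ along geodesics approximating the edge directions of $P_0$. Together with a basepoint $p_t\to p_0$ these determine a geodesic quadrilateral $P_t\subset\mathbb{X}_t$ with vertices $p_t,\,A_tp_t,\,A_tB_tp_t,\,B_tp_t$, whose opposite sides are identified by $A_t$ and $B_t$. Gluing sends the four corners to a single point and yields a torus with at most one cone point; here I would check that $P_t$ is embedded and that the edge identifications are consistent, so that it is an honest $\mathbb{X}_t$ cone torus rather than a formal gluing.

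I would then control the cone angle. Since the four corners coincide, the cone angle $\theta_t$ is their angular sum, which for a geodesic quadrilateral in constant curvature $\kappa$ is $\theta_t=2\pi+\kappa\,\Area(P_t)$ by Gauss--Bonnet. Because $D_t$ is an isometry of $\mathbb{X}_t$ conjugating $A_t,B_t$ back to transvections $D_t\inv A_tD_t,\,D_t\inv B_tD_t\to\id$ of the fixed space $\mathbb{X}$, the intrinsic translation lengths of $A_t,B_t$ tend to $0$, so $P_t$ collapses intrinsically, $\Area(P_t)\to0$, and $\theta_t\to2\pi$. Equivalently the commutator $[A_t,B_t]=\rho_t([a,b])$ is an elliptic rotation about the cone point whose angle tends to $0$, hence tends to $\id$ — matching the fact that the limiting translations commute. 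In the Euclidean case $\kappa=0$ the defect vanishes identically and the approximating tori are already smooth.

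Finally I would promote convergence of holonomy to convergence of projective structures: having arranged $A_t\to A$, $B_t\to B$, $[A_t,B_t]\to\id$, and $P_t\to P_0$, I would show the developing maps $\dev_t$ converge uniformly on compact sets to that of $T$, so the cone tori converge to $T$ as marked structures. I expect the main obstacle to lie in the core construction, not this last step: one must produce $A_t,B_t\in G_t$ that \emph{simultaneously} converge to the prescribed translations, genuinely pair the opposite sides of an \emph{embedded} geodesic parallelogram, and have elliptic commutator fixing a single interior point. The non-Euclidean constraint that two opposite sides be matched by one isometry forces relations among the side lengths and angles of $P_t$, and the family $\{P_t\}$ must be threaded through these relations uniformly over \emph{all} translation tori — this is where the projective-geometric arguments do the real work.
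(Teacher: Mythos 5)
Your strategy inverts the paper's, and as written it has a genuine gap at exactly the step you yourself flag. You propose to prescribe the holonomy first — choose $A_t,B_t\in\Isom(\mathbb{X}_t)$ with $A_t\to A$, $B_t\to B$ — and only then build the quadrilateral $P_t$ with vertices $p_t,\,A_tp_t,\,A_tB_tp_t,\,B_tp_t$. But that quadrilateral closes up into a cone torus only if $A_tB_tp_t=B_tA_tp_t$, i.e.\ only if the commutator $[A_t,B_t]$ is elliptic with fixed point $p_t$; for a generic perturbation of the commuting pair $(A,B)$ inside $\Isom(\mathbb{X}_t)$ the commutator is merely close to the identity and fixes no suitable point, so the side identifications are inconsistent and no cone torus results. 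Producing, for every translation torus and every $t$, a triple $(A_t,B_t,p_t)$ solving this nonlinear constraint while simultaneously converging to the prescribed limit data is the entire content of the theorem, and your proposal defers it rather than resolving it (your own closing sentence concedes as much). Everything downstream — the Gauss--Bonnet control of the cone angle, the convergence of developing maps — is fine but either routine or not required by the statement once the core construction exists; note also that cone-angle convergence to $2\pi$ is automatic and never needs separate verification.

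The paper avoids the constraint problem entirely by fixing the domain and deriving the holonomy, rather than the reverse. It takes the fundamental parallelogram $Q$ of $T$ centered at the origin of the affine patch and observes that the $\pi$-rotation $R=\diag(-1,-1,1)$ lies in $\O(3)\cap\O(2,1)$ and is invariant under diagonal conjugacy, hence is an isometry of every $\mathbb{X}_t$ preserving $Q$. Consequently, once $Q\subset\mathbb{X}_t$ (automatic for the sphere, eventually true for the expanding hyperbolic domains), $Q$ is an $\mathbb{X}_t$-parallelogram with opposite sides of equal $\mathbb{X}_t$-length, and it carries \emph{canonical} side pairings $A_t,B_t\in\Isom(\mathbb{X}_t)$ whose gluing constraints are satisfied by construction. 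The real work then goes into showing $A_t\to A$ and $B_t\to B$: each side pairing preserves the projective line through the $\mathbb{X}_t$-midpoints of the paired sides (a synthetic congruence argument valid in all constant curvatures); area-distortion estimates show $A_t,B_t$ converge in $\PGL(3;\R)$; the $\mathbb{X}_t$-midpoints converge to the Euclidean midpoints; and the limit is then identified with $A$ (resp.\ $B$) by its action on a pointed non-horizontal line, using the simply transitive action of $\Heis_0$ on such pairs. (In the Euclidean case both approaches trivialize: the paper uses the constant sequence of smooth translation tori, matching your remark.) To salvage your holonomy-first route you would need an implicit-function or deformation argument solving the commutator constraint uniformly in $t$ and uniformly over the deformation space of translation tori — possible in principle, but that is precisely the hard step and cannot be left as an aside.
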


\noindent
Translation tori form a codimension-1 subset of the deformation space, with the rest being shear tori.
In fact \emph{no shear tori regenerate} as constant curvature cone tori with a single cone point, and the argument showing this nonexistence uses a particularly geometric characterization of shear tori.

\begin{theorem*}
A Heisenberg torus $T$ has a shear in its holonomy if and only if all simple geodesics on $T$ are pairwise disjoint.
\end{theorem*}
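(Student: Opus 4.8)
The plan is to identify the geodesics of $\Hs^2$ with affine line segments: since $\Hs^2$ is a subgeometry of the projective (affine) plane, the $\Heis$-invariant curves are exactly the projective lines meeting $\A^2$, and I will call a complete geodesic \emph{simple} when the corresponding line in the universal cover $\A^2$ projects injectively to $T=\A^2/\Gamma$, where $\Gamma=\rho(\Z^2)\subset\Heis_0$. I would organize the whole argument around the dichotomy of lines into \emph{horizontal} ones (direction $(1,0)$, i.e. $y=$ const) and the rest, expecting the theorem to reduce to the elementary observation that the linear part $\smat{1 & a \\ 0 & 1}$ of a genuine shear ($a\neq 0$) changes the direction of every non-horizontal line.

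Before the main argument I would record two bookkeeping facts about the generators, writing $(a,b,c)$ for the matrix with shear $a$, $y$-translation $b$, and $x$-translation $c$, so $A=(a_1,b_1,c_1)$, $B=(a_2,b_2,c_2)$. First, as in the preceding proposition the relation $AB=BA$ is precisely $a_1b_2=a_2b_1$, so the shear vector $(a_1,a_2)$ and the $y$-translation vector $(b_1,b_2)$ are linearly dependent. Second, if $(b_1,b_2)=0$ then every element of $\Gamma$ preserves each horizontal line and moves points only horizontally, so $\A^2/\Gamma$ is noncompact, which is impossible for a torus; hence $(b_1,b_2)\neq 0$. On a shear torus the two vectors are then nonzero and proportional, so there is $\mu\neq 0$ with $a=\mu b$ for every $(a,b,c)\in\Gamma$; in particular every element with nonzero shear also has nonzero $y$-translation and is therefore fixed-point-free. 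This last point, which lets planar transverse crossings descend to honest double points, is the most delicate piece of the setup.

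For the implication that a shear forces pairwise disjointness, I fix a shear element $\gamma_0\in\Gamma$ (nonzero $a$). Given any non-horizontal line $\ell$, the line $\gamma_0\ell$ has a different direction by the displayed observation, so $\ell$ and $\gamma_0\ell$ are non-parallel and meet in a single transverse point $p$; since $\gamma_0$ is fixed-point-free, $p$ and $\gamma_0^{-1}p$ are distinct points of $\ell$ with the same image in $T$, so $\bar\ell$ has a self-crossing and is not simple. Hence every simple geodesic is horizontal. Finally the $\Gamma$-image of a horizontal line is again horizontal, so any two distinct simple geodesics are projections of disjoint horizontal lines and are therefore disjoint.

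For the converse I argue the contrapositive: a translation torus ($a_1=a_2=0$) is a flat torus $\A^2/\Lambda$ for a genuine lattice $\Lambda$, on which the straight lines through the origin in the two primitive generating directions are simple closed geodesics that cross. Hence not all simple geodesics are disjoint. The main obstacle is not in either implication, as each is short, but in the foundational care: pinning down the correct notion of geodesic in this degenerate, non-metric geometry, checking that simplicity is equivalent to injectivity of the projection of the line (including for non-closed geodesics, which the direction-change argument dispatches uniformly), and nailing the fixed-point-free step so that the transverse planar intersection genuinely yields a self-intersection on $T$.
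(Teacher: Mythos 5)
Your argument is in substance the paper's: the engine in both is the observation that a shear changes the slope of every non-horizontal line, so simple geodesics are forced into the invariant horizontal foliation, whose leaves project to pairwise disjoint geodesics; and both converses exhibit the two crossing closed geodesics in the translation directions of a flat torus. The execution of the forward direction differs slightly. The paper notes that the full preimage of a simple geodesic is a $\pi_1$-invariant family of pairwise disjoint, hence parallel, lines, and invariance of this family under the shear forces horizontality at the level of the common direction; you instead manufacture an explicit double point by intersecting $\ell$ with $\gamma_0\ell$, which obliges you to prove the fixed-point-free lemma (via $a_1b_2=a_2b_1$, $(b_1,b_2)\neq 0$ from compactness, and additivity of the $(a,b)$-components under multiplication — all of which you carry out correctly). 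The paper's version is shorter because working with the whole invariant collection sidesteps fixed points entirely; yours buys a completely explicit self-intersection.

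One blemish you should fix: your definition of \emph{simple} — that the line in $\A^2$ projects injectively to $T$ — excludes every simple \emph{closed} geodesic, since a line covering a closed geodesic wraps around and is never injective. Read literally, this breaks your converse: the two crossing geodesics on the flat torus are closed, so under your definition they are not simple and witness nothing. The correct notion is that the geodesic has no transverse self-intersection, equivalently that the projection of a lift $\ell$ is injective modulo the stabilizer of $\ell$ in $\Gamma$. With this definition your forward argument goes through verbatim — indeed your fixed-point-free step is exactly what is needed there: if some $\gamma_1$ stabilizing $\ell$ satisfied $\gamma_1 p=\gamma_0^{-1}p$, then $\gamma_0\gamma_1$ would fix $p$, forcing $\gamma_1=\gamma_0^{-1}$, which does not preserve $\ell$; so $p$ and $\gamma_0^{-1}p$ are identified by an element outside the stabilizer and the planar crossing of $\ell$ with $\gamma_0\ell$ descends to an honest transverse double point — and the converse becomes correct as written.
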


Hyperbolic, spherical and Euclidean (cone) tori behave quite differently than this; every pair of generators of $\pi_1$ has intersecting geodesic representatives.
Thus, to show that shear tori do not regenerate, it suffices to see that \emph{any limit} of constant curvature cone tori has intersecting geodesics, and thus is a translation torus.

\begin{theorem*}[Non-regeneration of Shear Tori]
Let $\mathbb{X}\in\{\S^2,\E^2,\H^2\}$ and $\mathbb{X}_t=D_t \mathbb{X}$ a sequence of conjugate geometries converging to the Heisenberg plane.
Let $T_t$ be a sequence of $\mathbb{X}_t$ cone tori with at most one cone point converging to some Heisenberg torus $T$.
Then $T$ has a pair of intersecting geodesics.
\end{theorem*}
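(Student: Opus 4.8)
The plan is to show that the intersecting geodesics present on each cone torus $T_t$ persist under the degeneration, yielding a pair of intersecting geodesics on the limit $T$. By the constant curvature fact recorded above, for each $t$ the generators $a,b$ of $\pi_1(T^2)=\Z^2$ are carried by closed geodesics $\alpha_t,\beta_t$ on $T_t$ meeting at a point $p_t$. Lifting to the universal cover, these are the axes of the holonomy isometries $\hol_t(a)$ and $\hol_t(b)$, and under $\dev_t$ they develop to honest projective line segments in $\mathbb{X}_t\subset\RP^2$ crossing at the developed image of $p_t$. The goal is to pass to the limit $t\to\infty$ and recover two distinct, intersecting geodesics on $T$.

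First I would organize the limit. Convergence $T_t\to T$ means $\hol_t\to\hol$ and $\dev_t\to\dev$ uniformly on compacta; normalizing each structure by an isometry of $\mathbb{X}_t$ so that the developed image of $p_t$ is the origin $o\in\A^2$, and using compactness of $T$ and of the space of projective lines $(\RP^2)^\ast$, after passing to a subsequence I obtain $p_t\to p\in T$ and developed axes converging to lines $\ell^a,\ell^b$ through $o$, invariant under $A:=\hol(a)$ and $B:=\hol(b)$ by continuity of the invariance relation. Suppose for the moment that $\ell^a,\ell^b$ lie in the affine patch $\A^2$ and descend to closed geodesics $\alpha,\beta$ on $T$ in the classes $a,b$. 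Then the conclusion is immediate and clean: since $[\alpha]=a\neq b=[\beta]$ the geodesics are distinct, and since the algebraic intersection number $a\cdot b=\pm1$ they must cross. Combined with the characterization theorem, this also shows $T$ carries no shear and is a translation torus.

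The real content, and the step I expect to be the main obstacle, is justifying that provisional hypothesis: that the limiting geodesics remain \emph{finite}, i.e.\ that $\ell^a,\ell^b$ do not escape to the line at infinity $\partial\A^2$ (equivalently, that $A$ and $B$ are not genuine shears, for which the unique invariant line is $\partial\A^2$ and no closed affine geodesic exists). The danger is genuine: in the Heisenberg limit the holonomy generators become unipotent, so the two fixed points of each $\hol_t(a)$ on the conic $\partial\mathbb{X}_t$ collide, the axis is the limit of the secant line through two colliding points, and the degenerating conjugation $D_t$ can in principle shear these secant lines out to $\partial\A^2$ --- precisely the scenario in which $T$ would be a shear torus with all its simple geodesics parallel and disjoint. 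To control this I would work with the geodesic parallelogram model: represent $T_t$ as a constant curvature geodesic parallelogram $P_t$ with side identifications and a single cone point, so that $\alpha_t,\beta_t$ are its sides, and show $P_t$ converges to a nondegenerate finite parallelogram in $\A^2$ rather than collapsing onto a segment or running off to infinity.

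The estimates keeping $P_t$ nondegenerate are where the bulk of the work lies, and here the single cone point hypothesis is essential. Gauss--Bonnet gives $\int_{T_t}K\,d\Area=\theta_t-2\pi$ for a lone cone point of angle $\theta_t$, which constrains the cone angle and, together with the uniform convergence of $\dev_t$ on a compact fundamental domain, bounds the side lengths and the area of $P_t$ away from $0$ and $\infty$. I would combine this with a projective analysis of the four colliding fixed points of $\hol_t(a),\hol_t(b)$ on $\partial\mathbb{X}_t$ --- tracking their pairwise cross ratios --- to prove that the limiting secant lines $\ell^a,\ell^b$ meet inside $\A^2$ rather than on the line at infinity. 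Once that is in hand the provisional hypothesis holds, the homological argument of the previous paragraph applies verbatim, and $T$ has the required pair of intersecting geodesics.
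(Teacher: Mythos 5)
Your overall strategy---intersecting geodesics on each $T_t$ persisting in the limit, then invoking the shear characterization---is the same as the paper's, and you have correctly located the crux: preventing the limiting invariant lines from degenerating onto the line at infinity. But your proposal contains a genuine gap at exactly that crux, plus an internal muddle. First, the muddle: the geodesic representatives of $a,b$ through the cone point (the sides of your parallelogram $P_t$) are \emph{broken} geodesics at the cone point and are generally not projections of the axes of $\hol_t(a),\hol_t(b)$; you cannot simultaneously take ``$\alpha_t,\beta_t$ are the sides of $P_t$'' and ``their lifts are the axes.'' Second, and more seriously, the step you flag as the main obstacle---that the limit lines stay affine and the limit parallelogram is nondegenerate---is never executed: the Gauss--Bonnet bound and the cross-ratio tracking of the four colliding fixed points on $\partial\mathbb{X}_t$ are only named, not proved, and it is far from clear the cross-ratio analysis closes as stated. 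Note also that half of this worry is solved before it arises: in the paper, convergence $T_t\to T$ \emph{means} (via the quadrilateral-convergence proposition) that there are embeddings with $Q_t\to Q_\infty$ in the Hausdorff topology, where $Q_\infty$ is a nondegenerate affine fundamental domain for the Heisenberg torus $T$, and side pairings $A_t\to A$, $B_t\to B$ in $\PGL(3;\R)$. So there is no collapse or escape to rule out at the level of domains; nondegeneracy is built into the hypothesis, and your Gauss--Bonnet estimates are addressing a non-problem.

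What genuinely remains is to produce invariant lines whose intersection point is pinned inside the affine patch, and here the paper's device is both elementary and exactly the rigorous version of your ``track the axis'' plan. Its Claim 2 (a synthetic congruent-triangles argument valid uniformly in $\S^2,\E^2,\Hyp^2$) shows each side pairing $A_t$ preserves the projective line through the $\mathbb{X}_t$-midpoints of the sides it pairs; since this midline crosses the interior of the domain, in the hyperbolic case it is precisely the axis of $A_t$. These two midlines cut $Q_t$ into four congruent quadrilaterals, hence meet at the center of mass of $Q_t$; passing to the limit, the lines $\alpha_\infty,\beta_\infty$ are invariant under $A,B$, pass through the Euclidean midpoints of the sides of $Q_\infty$ (the midpoint-convergence claim), and meet at the \emph{center} of $Q_\infty$---an interior point of $\A^2$. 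The escape-to-infinity scenario is thus excluded by location of the intersection point, with no fixed-point or cross-ratio estimates at all. The limit lines therefore descend to closed geodesics on $T$ that intersect, and your concluding homological remark then applies. To salvage your own route you would have to actually prove the deferred cross-ratio estimates; the midline observation makes that work unnecessary.
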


\section*{A transition of Complex Hyperbolic Space}

This next project concerns the construction of a new transition of geometries beginning with complex hyperbolic space, and degenerating the geometric structure of $\Hyp_\C^n$ by degenerating the algebraic structure of $\C$.
The results of this project are spread over the final two chapters of Part II, as the work divides neatly into \emph{constructing generalized Hyperbolic geometries} and \emph{proving these geometries form a geometric transition}.

\subsection*{Hyperbolic Geometry Over Algebras}

In the first of these chapters, we generalize the usual definition of complex hyperbolic space, to hyperbolic space defined over a real algebra with involution, and focus on the simplest, two dimensional examples $\C$, $\R_\ep=\R[\ep]/\ep^2$ and $\R\oplus\R$.
Let $q=x_1\overline{x_1}+x_2\overline{x_2}+\cdots+x_n\overline{x_n}-x_{n+1}\overline{x_{n+1}}$, recall that we may define a  model of complex hyperbolic space in $\CP^n$ as follows.

\begin{definition*}
Complex Hyperbolic space is the geometry given by the action of $\U(n,1;\C)$ on the projectivized unit sphere of radius $-1$ for $q$ in $\C^{n+1}$;
$\Hyp_\C^n=(\U(n,1;\C),\mathcal{S}_{\C}(n,1)/\U(\C))$.
\end{definition*}

\noindent
Each of $\R_\ep$, $\R\oplus\R$ can be equipped with the involutions, $a+\ep b\mapsto a-\ep b$ and $(a,b)\mapsto (b,a)$ respectively.
Interpreting the form $q$ with these involutions providing conjugation,
we mimic the construction of $\Hyp_\C^n$ as closely as possible, producing analogous unitary groups and spaces on which they act transitively.

\begin{definition*}
$\R_\ep$ Hyperbolic space is the geometry given by the action of $\U(n,1;\R_\ep)$ on the projectivized unit sphere of radius $-1$ for $q$ in $\R_\ep^{n+1}$;
$\Hyp_{\R_\ep}^n=(\U(n,1;\R_\ep),\mathcal{S}_{\R_\ep}(n,1)/\U(\R_\ep))$.
\end{definition*}

\begin{definition*}
$\R\oplus\R$ Hyperbolic space is the geometry given by the action of $\U(n,1;\R\oplus\R)$ on the projectivized unit sphere of radius $-1$ for $q$ in $\R\oplus\R^{n+1}$;
$\Hyp_{\R\oplus\R}^n=(\U(n,1;\R),\mathcal{S}_{\R\oplus\R}(n,1)/\U(\R\oplus\R))$.
\end{definition*}

\begin{figure}
\centering
\includegraphics[width=0.95\textwidth]{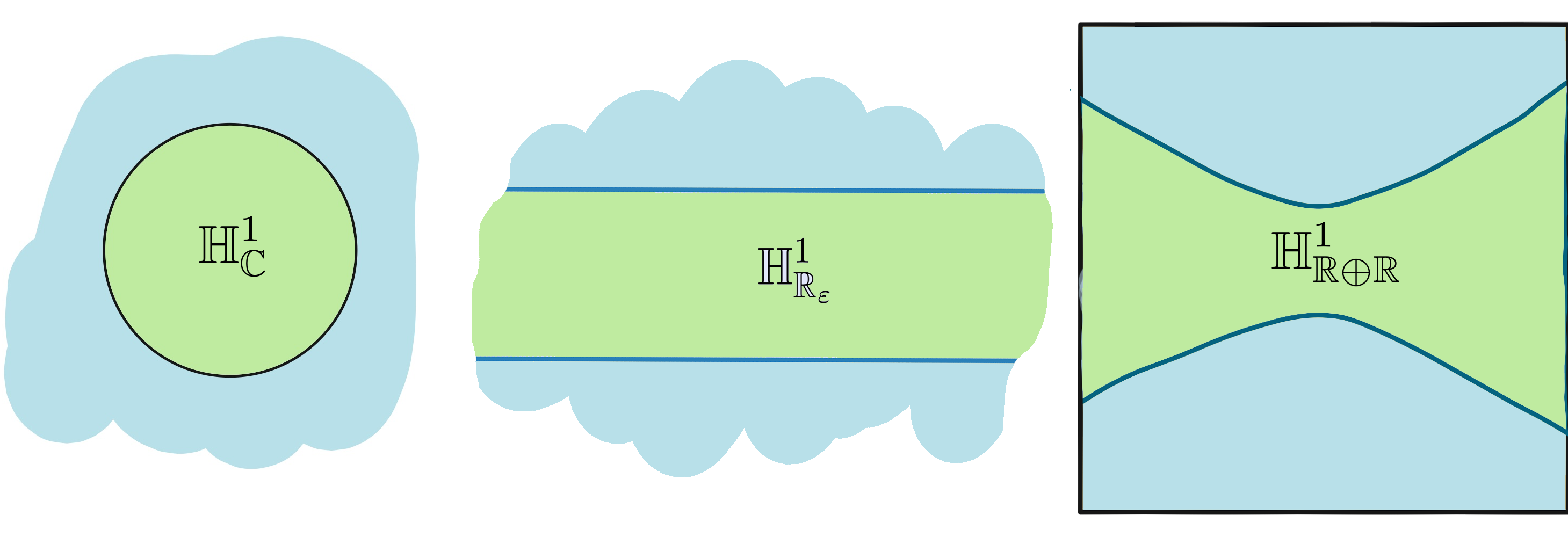}
\caption{The underlying spaces for $\Hyp_{\C}$, $\Hyp_{\R_\ep}$ and $\Hyp_{\R\oplus\R}$ in dimension 1.}	
\end{figure}

\noindent
Over $\R_\ep$, 
the domain of $\Hyp_{\R_\ep}^n$ is a product $\Hyp_\R^n\times\R^n$ in the affine patch $\R_\ep^n$.
The geometry $\Hyp_{\R_\ep}^n$ is not a product of the geometry of $\Hyp_\R^n$ with the geometry of $\R^n$, however 
the embedding $\R\inject\R_\ep$ induces an embedding $\Hyp_\R^n\inject\Hyp_{\R_\ep}^n$, with domain $\mathbb{B}^n\times\{0\}$ in $\H_{\R_\ep}^n=\mathbb{B}^n\times\R^n$.
This, together with some analysis of the automorphism group $\U(n,1;\R_\ep)$ gives the following.

\begin{theorem*}
The group homomorphism $\GL(n+1;\R_\ep)\to\GL(n+1;\R)$ dropping the imaginary part induces an 
 epimorphism of geometries $\Hyp_{\R_\ep}^n\surject \Hyp_\R^n$; thus $\Hyp_{\R_\ep}^n$ fibers over real hyperbolic space.
\end{theorem*}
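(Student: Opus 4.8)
The plan is to realize the claimed epimorphism as a pair $(\pi,f)$ consisting of a surjective homomorphism $\pi$ of automorphism groups together with a surjective $\pi$-equivariant map $f$ of the underlying spaces, both induced by the map $\delta\colon\R_\ep\to\R$, $a+\ep b\mapsto a$ that drops the imaginary part. The first thing I would record is that $\delta$ is not merely a homomorphism of $\R$-algebras but is compatible with the involutions, since $\delta(\bar z)=\delta(a-\ep b)=a=\bar{\delta(z)}$ (conjugation on $\R$ being trivial). Applying $\delta$ entrywise yields the group homomorphism $\pi\colon\GL(n+1;\R_\ep)\to\GL(n+1;\R)$, and because the matrix $J=\diag(1,\dots,1,-1)$ of $q$ has real entries (so $\pi(J)=J$) and $\pi$ commutes with conjugate-transpose, the defining relation $M^\ast J M=J$ of $\U(n,1;\R_\ep)$ is carried to the analogous relation over $\R$. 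Hence $\pi$ restricts to a homomorphism $\U(n,1;\R_\ep)\to\U(n,1;\R)$ onto the isometry group of $\Hyp_\R^n$.

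Next I would verify this restriction is onto. The inclusion $\iota\colon\R\inject\R_\ep$, $a\mapsto a$, is an involution-preserving ring homomorphism and a section of $\delta$; applied entrywise it sends a real unitary matrix $A$ to $A\in\U(n,1;\R_\ep)$ with $\pi(A)=A$. Thus $\pi$ admits a section on unitary groups and is in particular surjective.

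For the spaces I would define $f$ by dropping imaginary parts of homogeneous coordinates: a representative $v=a+\ep b\in\R_\ep^{n+1}$, with $a,b\in\R^{n+1}$, maps to $a$. The key computation is that $\ep^2=0$ forces $v_i\bar{v_i}=(a_i+\ep b_i)(a_i-\ep b_i)=a_i^2$, so $q(v)=q(a)$ depends only on the real part; hence $f$ automatically carries the radius $-1$ sphere $\mathcal{S}_{\R_\ep}(n,1)$ into $\mathcal{S}_\R(n,1)$. To descend to the projectivized quotients I would note that $\U(\R_\ep)=\{z:z\bar z=1\}=\{\pm1+\ep c:c\in\R\}$ acts by $z\cdot v=\pm a+\ep(\pm b+ca)$, whose real part is $\pm a$, matching exactly the action $a\mapsto\pm a$ of $\U(\R)=\{\pm1\}$ downstairs; so $f$ descends to a well-defined map of quotients. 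It is surjective because $\iota$ again supplies preimages (take $b=0$), and it is $\pi$-equivariant because dropping imaginary parts is multiplicative: for $M=A+\ep B$ one has $Mv=Aa+\ep(Ab+Ba)$, whose real part is $\pi(M)f(v)$. Together $(\pi,f)$ is the desired epimorphism $\Hyp_{\R_\ep}^n\surject\Hyp_\R^n$.

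Finally, for the fibration I would invoke the earlier identification of the domain of $\Hyp_{\R_\ep}^n$ with $\mathbb{B}^n\times\R^n$ in the affine patch $\R_\ep^n$: under it $f$ is precisely the projection onto the $\mathbb{B}^n=\Hyp_\R^n$ factor, a trivial bundle with fiber $\R^n$. Concretely the fiber over $[a]$ consists of the classes $[a+\ep b]$, and the residual action $b\mapsto b+ca$ quotients $b\in\R^{n+1}$ by $\span(a)$, giving $\R^{n+1}/\span(a)\cong\R^n$. The hard part will be the bookkeeping inside these quotients — checking that the $\U(\R_\ep)$-action descends compatibly with the $\U(\R)$-action and that the fiber is exactly $\R^n$ — together with confirming that the image of $\pi$ is all of $\U(n,1;\R)$ rather than a proper subgroup; the section $\iota$ is what makes both of these go through cleanly.
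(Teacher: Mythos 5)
Your proof is correct, and its algebraic heart is identical to the paper's: expand the unitarity relation over $\R_\ep$ and use $\ep^2=0$ to see that it only constrains the real part (your $v_i\bar{v_i}=a_i^2$ is the space-level shadow of the paper's expansion of $(X+\ep Y)^\dagger Q(X+\ep Y)=Q$). Where you diverge is in packaging. The paper proves a purely group-level lemma -- that dropping the $\ep$-part gives a surjective homomorphism $\U(n,1;\R_\ep)\to\O(n,1;\R)$ whose kernel is the additive group of matrices $I+\ep QS$ with $S$ symmetric, exhibiting $\U(n,1;\R_\ep)$ as an extension of $\O(n,1;\R)$ by a vector group -- and then records the epimorphism of geometries as an immediate corollary in the Automorphism--Stabilizer formalism, never writing down a space-level map at all. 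You instead verify the Group--Space data directly: the splitting $\iota\colon\R\inject\R_\ep$ gives surjectivity of $\pi$ without any kernel computation, and you construct the equivariant map $f$ on the projectivized spheres, check descent through the $\U(\R_\ep)$-action, and identify the fiber as $\R^{n+1}/\span(a)\cong\R^n$. Your route buys an explicit description of the fibration, consistent with the paper's earlier identification of the domain as $\mathbb{B}^n\times\R^n$; the paper's route buys the precise extension structure (the kernel), which your section argument deliberately sidesteps. Two points worth tightening: the paper's definition of an epimorphism of geometries requires $f$ and $\pi$ to be \emph{submersions}, not merely surjections -- immediate here, since $f$ is induced by a linear projection and $\pi$ is a surjective Lie group homomorphism that your section even exhibits as split -- and the image of $\pi$ is $\O(n,1;\R)$ rather than $\SO(n,1;\R)$, a mismatch between non-effective models of $\Hyp_\R^n$ that the paper's own statement of the corollary glosses over in exactly the same way.
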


\noindent
Over $\R\oplus\R$, the analog of hyperbolic space no longer fibers over $\Hyp_\R^n$, but much like $\Hyp_\C^n$ and above, contains $\Hyp_\R^n$ as a codimension $n$ subset, arising from the diagonal embedding $\R\inject\R\oplus\R$.
An investigation of the automorphism group of $\Hyp_{\R\oplus\R}^n$ suggests a possible connection to real projective geometry:

\begin{theorem*}
The group $\U(n,1;\R\oplus\R)$ is abstractly isomorphic to $\GL(n+1;\R)$, and $\SU(n,1;\R\oplus\R)\cong\SL(n+1;\R)$.
\end{theorem*}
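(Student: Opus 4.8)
The plan is to exploit the idempotent splitting of the algebra $\R\oplus\R$ to convert the single Hermitian datum over $\R\oplus\R$ into a pair of real matrices coupled by the standard bilinear form. Write $e=(1,0)$ and $f=(0,1)$, so that $e+f=1$, $ef=0$, $e^2=e$, $f^2=f$, and the involution $\sigma\colon(a,b)\mapsto(b,a)$ interchanges $e$ and $f$. Every module element $x\in(\R\oplus\R)^{n+1}$ then decomposes uniquely as $x=eu+fv$ with $u,v\in\R^{n+1}$, and since $\sigma$ swaps $e,f$ the conjugate is $\bar x=ev+fu$. The key computation is that $x_i\bar x_i=(eu_i+fv_i)(ev_i+fu_i)=e\,u_iv_i+f\,u_iv_i=u_iv_i$ lands in the diagonal copy of $\R$, so with $J=\diag(1,\dots,1,-1)$ the Hermitian form becomes the ordinary bilinear pairing
\[
q(x)=\sum_i\ep_i\,u_iv_i=u^TJv.
\]
Thus the indefinite Hermitian form of signature $(n,1)$ over $\R\oplus\R$ is precisely the split pairing of $\R^{n+1}$ against a second copy of $\R^{n+1}$.

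Next I would parametrize an $\R\oplus\R$-linear endomorphism as $A=eP+fQ$ with $P,Q\in\Mat(n+1;\R)$, acting by $A(eu+fv)=e\,Pu+f\,Qv$. The unitarity condition $q(Ax)=q(x)$ for all $u,v$ reads $(Pu)^TJ(Qv)=u^TJv$, that is $P^TJQ=J$. Because $J$ is invertible this forces both $P$ and $Q$ to be invertible and pins down $Q=J(P\inv)^TJ$ as a function of $P$ alone. Since multiplication in $A=eP+fQ$ is componentwise in the idempotents, $A\mapsto P$ is a group homomorphism, and the explicit formula for $Q$ shows it is a bijection onto $\GL(n+1;\R)$; this proves the first isomorphism.

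For the special unitary group I would compute the $\R\oplus\R$-valued determinant $\det_{\R\oplus\R}A=e\,\det P+f\,\det Q$ and use the relation $\det Q=\det\big(J(P\inv)^TJ\big)=(\det P)\inv$. The condition $\det A=1=e+f$ therefore amounts to $\det P=1$ (which automatically yields $\det Q=1$), so the isomorphism above restricts to an identification of $\SU(n,1;\R\oplus\R)$ with $\SL(n+1;\R)$.

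The computations themselves are short; the main obstacle is conceptual bookkeeping rather than technical difficulty. The delicate point is handling the zero divisors in $\R\oplus\R$ correctly: one must check that the module-theoretic notions (conjugation, the form $q$, $\R\oplus\R$-linearity, and the determinant) behave as expected despite the absence of a division-algebra structure, and in particular that unitarity \emph{alone} forces invertibility of the blocks $P,Q$, so that the image of $A\mapsto P$ really is all of $\GL(n+1;\R)$ and not a proper subset. A secondary subtlety is confirming that the chosen definition of the $\R\oplus\R$-determinant makes the special-unitary condition collapse to the single constraint $\det P=1$ rather than to two independent ones.
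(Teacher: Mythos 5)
Your proof is correct and takes essentially the same route as the paper's: decompose along the idempotents of $\R\oplus\R$, observe that unitarity couples the two real blocks via $P^TJQ=J$ so that $Q$ is determined by $P$, and compare determinants componentwise to get the $\SU\cong\SL$ refinement. The only cosmetic difference is that the paper first conjugates by $\diag(I_n,\lambda)$ to trade the indefinite form for the identity, so its formula reads $Y=X^{-T}$ where yours reads $Q=JP^{-T}J$.
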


\noindent
In fact, we are able to build a model of $\Hyp_{\R\oplus\R}^n$ as a subgeometry of $\RP^n\times\RP^n$, and think of the points of $\R\oplus\R$ hyperbolic space as given by the data of pairs of a point and a disjoint hyperplane in $\RP^n$.

\begin{definition*}
The \emph{point-hyperplane geometry} of $\RP^n$ has as underlying space the collection of all pairs $(H,p)$ of hyperplanes $H\subset \RP^n$ and points $p\in \RP^n$ such that $p\not\in H$.
The automorphisms of this geometry are the full automorphism group of $\RP^n$, acting by $A.(p,H)=(Ap,A^{-T}H)$ if $H$ is the projective covector representing the hyperplane as its kernel.
\end{definition*}

\begin{theorem*}[$\Hyp_{\R\oplus\R}^n$ and $\RP^n$]
Point-Hyperplane projective geometry is locally isomorphic hyperbolic geometry over $\R\oplus\R$.	
\end{theorem*}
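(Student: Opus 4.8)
The plan is to make the algebra $\R\oplus\R$ completely explicit and then watch the definition of $\Hyp_{\R\oplus\R}^{n}$ turn into that of the point--hyperplane geometry. First I would use the two primitive idempotents $e_1=(1,0)$, $e_2=(0,1)$ to split every $(\R\oplus\R)$--module; on $(\R\oplus\R)^{n+1}$ this writes a vector as a pair $(u,v)\in\R^{n+1}\times\R^{n+1}$, with the conjugation $\overline{(u,v)}=(v,u)$ interchanging the two summands. A direct computation then shows that the Hermitian form $q$ lands in the diagonal subalgebra $\Delta\cong\R$ and is given there by $q(u,v)=u^{T}Jv$ with $J=\diag(1,\dots,1,-1)$; hence the sphere of radius $-1$ becomes the real quadric $\mathcal{S}=\{(u,v):u^{T}Jv=-1\}$, and the unit scalars $\U(\R\oplus\R)=\{(a,a\inv):a\in\R^{\times}\}\cong\R^{\times}$ act by $(u,v)\mapsto(au,a\inv v)$.

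With this dictionary in hand, I would define the comparison map $\Phi$ sending the class of $(u,v)$ to the pair $(p,H)$, where $p=[u]\in\RP^{n}$ and $H\subset\RP^{n}$ is the hyperplane with covector $Jv$. The condition $u^{T}Jv\neq 0$ is exactly $p\notin H$, so $\Phi$ lands in the point--hyperplane space. The key verification is that $\Phi$ is a bijection: over a fixed $(p,H)$ the fibre of $(u,v)\mapsto([u],[v])$ inside $\mathcal{S}$ is a single hyperbola $\{(su_0,tv_0):st=\text{const}\}$, and the two--component group $\R^{\times}$ acts simply transitively on it, so each point--hyperplane pair has exactly one preimage class.

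Next I would compute the group. Since an $(\R\oplus\R)$--linear map must commute with $e_1,e_2$, it preserves the splitting and has the form $(u,v)\mapsto(Au,Bv)$; imposing invariance of $q$ forces $A^{T}JB=J$, i.e. $B=JA^{-T}J$, which recovers the isomorphism $\U(n,1;\R\oplus\R)\cong\GL(n+1;\R)$ already established. Transporting the action through $\Phi$, the point $p=[u]$ goes to $[Au]=A\cdot p$, while the covector transforms as $Jv\mapsto J(Bv)=A^{-T}(Jv)$ because $J^{2}=I$; thus $(p,H)\mapsto(Ap,A^{-T}H)$, which is verbatim the automorphism action in the definition of point--hyperplane geometry.

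Putting these together, $\Phi$ is an equivariant bijection intertwining $\U(n,1;\R\oplus\R)\cong\GL(n+1;\R)$ with the automorphisms of the point--hyperplane configurations of $\RP^{n}$. The one point that prevents this from being a literal isomorphism of $(G,X)$ pairs, and the reason the statement asserts only a \emph{local} isomorphism, is that the scalar matrices $\lambda I$ act trivially on point--hyperplane space (fixing both $[u]$ and the covector direction), so the structure group $\GL(n+1;\R)$ and the effective automorphism group $\PGL(n+1;\R)$ of $\RP^{n}$ differ by the central $\R^{\times}$ that acts trivially. I expect the main obstacle to be precisely this careful handling of the projectivization by $\U(\R\oplus\R)\cong\R^{\times}$ --- verifying that its two components act simply transitively on the fibres so that $\Phi$ is a genuine global bijection, and then correctly articulating that the surviving discrepancy is only the ineffective central kernel, which is what makes the two geometries locally but not globally identical.
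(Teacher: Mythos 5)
Your proposal is correct, and in its main steps it runs parallel to the paper's own argument: the paper likewise passes to the idempotent basis $e_\pm$ (its Calculation identifying the sphere of radius $-1$ with the level set $\mathcal{L}_1$ of the dual pairing is exactly your coordinate change $(\phi_i,v_i)=(x_i+y_i,x_i-y_i)$), and its identification $\U(n;\R\oplus\R)=\set{Xe_++X^{-T}e_-\mid X\in\GL(n;\R)}$ is your computation $B=JA^{-T}J$ --- you simply keep the signature-$(n,1)$ form and twist the covector by $J$, whereas the paper first conjugates by $\diag(I_n,\lambda)$ to the definite-looking model ``to avoid negative signs''; these are equivalent, as the paper's general proposition on unitary groups over $A\oplus A$ with the swap involution (where $Y=(JX\inv J\inv)^T$) already shows. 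Where you genuinely diverge is the projectivization bookkeeping, and there your count is the correct one. The paper asserts that the point-hyperplane domain is a \emph{two-fold} quotient of $\mathcal{S}_{\R\oplus\R}(n,1)/\U(\R\oplus\R)$, attributing the ``local'' in the theorem to this covering; your simple-transitivity argument shows the comparison map is in fact a bijection. Indeed the norm of $(a,b)\in\R\oplus\R$ is $ab$, so $\U(\R\oplus\R)=\set{(a,a\inv)\mid a\in\R^\times}$ has two components and contains $-1$; the fibre $\set{(su_0,s\inv v_0)\mid s\in\R^\times}$ over a pair $(p,H)$ is therefore a single orbit, and equivalently, units of negative norm carry the negative cone of $q$ onto the positive cone and so induce no further identifications within it. A low-dimensional check confirms this: for $n=1$ the point-hyperplane space of $\RP^1$ is the torus minus a $(1,1)$-curve, an open annulus, which matches the unprojectivized one-sheeted hyperboloid model of $\mathsf{dS}^2$ that the paper itself identifies with $\Hyp_{\R\oplus\R}^1$ --- not the M\"obius band that a further two-fold quotient would produce. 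So both routes certainly deliver the stated local isomorphism, but yours locates the only genuine discrepancy where it belongs --- the ineffective central $\R^\times\subset\GL(n+1;\R)$, i.e.\ the scalars $\U(\R\oplus\R)\cdot I$, which act trivially on both domains --- and in doing so quietly repairs a slip in the final paragraph of the paper's proof.
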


\noindent
Just as complex hyperbolic space of dimension 1 is isomorphic to the hyperbolic plane ($\Hyp_\C^1\subset\CP^1$ is the Poincare Disk model), one dimensional $\R_\ep$ hyperbolic space is also an already known geometry: its \emph{half-pipe} 2-space!
Over $\R\oplus\R$, the generalization of hyperbolic space
$\Hyp_{\R\oplus\R}^1$ is the familiar de Sitter space of dimension two, which itself identifies with Anti de Sitter space $\mathsf{AdS}^2$ as a coincidence of low dimensionality.
Thus, in dimension one, the three geometries we have produced in this way coincide exactly with the geometries occurring in the transition studied by Danciger \cite{Danciger11}.
These exceptional isomorphisms fail to continue in any higher dimensions, but in the following chapter we show that there is a way to generalize Danciger's geometric transition, and produce a continuous collection of geometries connecting $\Hyp_\C^n$ to $\Hyp_{\R\oplus\R}^n$ through $\Hyp_{\R_\ep}^n$.

\subsection*{Producing a Transition of $\Hyp_\C$ to $\Hyp_{\R\oplus\R}$}

For each $\delta$, the algebra $\Lambda_\delta=\R[\lambda]/(\lambda^2=\delta)$ is a two dimensional algebra over $\R$, isomorphic to $\C$ for $\delta<0$, to $\R_\ep$ when $\delta=0$ and to $\R\oplus\R$ for $\delta>0$.
Following exactly the methods the previous chapter, it is easy to construct the analogs $\Hyp_{\Lambda_\delta}^n$ of hyperbolic geometry over the algebra $\Lambda_\delta$, and it is clear these are isomorphic to $\Hyp_\C^n$, $\Hyp_{\R_\ep}^n$ and $\Hyp_{\R\oplus\R}^n$ for $\delta<0,=0$, and $>0$ respectively.
The main difficulty is formalizing the \emph{continuity} of this collection, as the geometries involved do not all obviously embed in some ambient projective space.
We show the continuity of this path of geometries in two ways.

First, we try to follow as closely to the standard formalism of conjugacy limits as possible, while acknowledging the lack of an ambient geometry.
We consider a collection of matrix representations of the relevant algebras $\iota_\delta\colon\Lambda_\delta\to\M(2;\R)$, and use these to produce matrix representations of the automorphism groups $\Isom(\Hyp_{\Lambda_\delta}^n)=\SU(n,1;\Lambda_\delta)$ in $\M(2n;\R)$, which we also denote by $\iota_\delta$ for convenience.
As the data of a homogeneous space is captured by its automorphism group together with a stabilizing subgroup, we use the Chabauty topology on the closed subgroups of $\GL(2n;\R)$ as an ambient Lie group to study this transition.

\begin{theorem*}[Continuity of Unitary Groups]
Let $\U(n,1;\Lambda_\delta)$ be the unitary group of signature $(n,1)$ over $\Lambda_\delta$, and $\USt(n,1;\Lambda_\delta)=\smat{\U(n;\Lambda_\delta)&\\& U(\Lambda_\delta)}$ the point stabilizer of a point in $\Hyp_{\Lambda_\delta}^n$.
Then the maps $\R\to\Cl(\GL(2n;\R))$ defined by 
$$\delta\mapsto\iota_\delta(\U(n,1;\Lambda_\delta))
\hspace{1cm}
\delta\mapsto \iota_\delta(\USt(n,1;\Lambda_\delta))
$$
are continuous into the Chabauty space $\Cl(\GL(2n;\R))$.
\end{theorem*}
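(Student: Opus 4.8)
The plan is to verify the two conditions defining Chabauty (geometric) convergence — every element of the limit group is a limit of elements of the approximating groups, and every convergent sequence of elements has its limit in the limit group — as $\delta$ approaches a basepoint. A first orienting reduction isolates the only interesting basepoint. For $\delta\neq 0$ the algebra $\Lambda_\delta$ is isomorphic to $\C$ or to $\R\oplus\R$, and one checks that $\iota_\delta$ differs from a fixed representation ($\iota_{-1}$ on the left, $\iota_{1}$ on the right) by conjugation by an explicit block-diagonal rescaling $P_\delta$ depending continuously and invertibly on $\delta$ on each open half-line; concretely $\iota_\delta(A)=P_\delta\,\iota_{\mp 1}(\phi_\delta(A))\,P_\delta^{-1}$ for the algebra isomorphism $\phi_\delta$. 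Since conjugating a fixed closed subgroup by a continuously varying element of the ambient group is Chabauty-continuous, both maps are automatically continuous on $(-\infty,0)$ and on $(0,\infty)$, and the entire content of the theorem is continuity at $\delta=0$, where $P_\delta$ degenerates.

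Near $\delta=0$ I would realize each image as an explicit real algebraic set. A real matrix lies in $\iota_\delta(\M(n+1;\Lambda_\delta))$ exactly when it commutes with the blockwise action of $\iota_\delta(\lambda)$, a linear condition; among such matrices, membership in the unitary group is the equation $\iota_\delta(A)^*\iota_\delta(J)\iota_\delta(A)=\iota_\delta(J)$. Packaging both into one map $F_\delta$ gives $\iota_\delta(\U(n,1;\Lambda_\delta))=F_\delta^{-1}(0)$, the unitary equation already forcing invertibility. The coefficients of $F_\delta$ are polynomial in $\delta$, so the second Chabauty condition is immediate: if $\delta_k\to 0$ and $A_k\in\iota_{\delta_k}(\U(n,1;\Lambda_{\delta_k}))$ converge to $A$, then $F_0(A)=\lim F_{\delta_k}(A_k)=0$ and $A\in\iota_0(\U(n,1;\R_\ep))$.

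The first Chabauty condition is the heart of the matter: every $A_0\in\iota_0(\U(n,1;\R_\ep))$ must be approximated by elements of $\iota_\delta(\U(n,1;\Lambda_\delta))$ as $\delta\to 0$ from either side. I would obtain this from the implicit function theorem with $\delta$ as a parameter, which applies as soon as $\partial_A F_\delta$ has locally constant rank along the zero set. Left multiplication by a group element is a linear automorphism of the ambient space preserving all defining conditions, so this rank equals its value at the identity, where the kernel of $\partial_A F_\delta$ is the Lie algebra $\iota_\delta(\u(n,1;\Lambda_\delta))$. The decisive observation is that, written in the basis $\{1,\lambda\}$, the skew-Hermitian conditions cutting out $\u(n,1;\Lambda_\delta)$ — namely $X_0\in\so(n,1)$ and $JX_1$ symmetric for $X=X_0+\lambda X_1$ — do not involve $\delta$; the parameter enters only through the continuous embedding $\iota_\delta$. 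Hence $\iota_\delta(\u(n,1;\Lambda_\delta))$ has the fixed dimension $(n+1)^2$ and varies continuously, $\partial_A F_\delta$ has constant rank, and the implicit function theorem produces a solution $A(\delta)\to A_0$ for all small $\delta$ of either sign — giving the approximation simultaneously from both sides and at every $A_0$, irrespective of which connected component contains it.

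The step requiring the most care is the degenerate fiber $\delta=0$ itself, where $\R_\ep$ carries a nilpotent and $\iota_0(\lambda)=\smat{0&0\\1&0}$ is a nonzero nilpotent rather than a semisimple matrix. This is exactly the fiber at which a carelessly posed family of real varieties can fail to vary continuously — the analogue of $\{x^2=\delta\}$ degenerating at $\delta=0$ — so one must confirm that the rank of $F_0$ does not drop. I would check directly that the centralizer of $\iota_0(\lambda)$ is still two-dimensional, equal to $\iota_0(\R_\ep)$, so that the space of $\R_\ep$-linear matrices keeps its dimension, and that the unitary differential remains a submersion onto the Hermitian matrices; both follow from the same $\delta$-free bookkeeping. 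What makes the stability of dimension the right invariant to track — rather than any finer one — is that the number of connected components genuinely jumps across the transition (one for $\U(n,1;\C)$, four at $\R_\ep$, two for $\GL(n+1;\R)$) while the common dimension $(n+1)^2$ does not; the implicit-function argument needs only the latter. Finally, the identical reasoning applies to the stabilizer $\USt(n,1;\Lambda_\delta)$, whose equations are those of $\U(n;\Lambda_\delta)$ and $\U(\Lambda_\delta)$ imposed on the two diagonal blocks.
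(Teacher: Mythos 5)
Your proposal is correct, but its core mechanism differs from the paper's proof of this statement. Both arguments share the first reduction (off $\delta=0$, conjugating by the continuous path $C_\delta=\diag(1,\sqrt{|\delta|},\ldots)$ handles each half-line) and both rest on the same key computation: written in the basis $\{1,\lambda\}$, the equations cutting out $\u(n,1;\Lambda_\delta)$ --- your ``$X_0\in\so(n,1)$ and $QX_1$ symmetric'' --- never see $\lambda^2=\delta$, which is exactly the paper's calculation that $\u(n,1;\Lambda_\delta)$ is \emph{constant} as a subspace of $\M(n+1;\R)\oplus\lambda\M(n+1;\R)$. From there the routes diverge. The paper shows the images $\iota_\delta(\U(n,1;\Lambda_\delta))$ are algebraic subgroups of $\GL(2n;\R)$, invokes Cooper--Danciger--Wienhard's theorem that conjugacy limits of algebraic groups preserve dimension to reduce the Chabauty limit to a Lie algebra limit, and then computes that limit in a Grassmannian using the continuously varying orthogonal frame $R_{jk}, I^\delta_{jk}$; this controls identity components directly and outsources the rest to the algebraic-groups machinery. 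You instead verify the two Chabauty conditions by hand: the outer condition is immediate because the defining equations are polynomial in $(A,\delta)$ and pass to limits, and the inner condition comes from a parametric implicit-function argument applied \emph{at every point} $A_0$ of the limit group. This buys two real things: no appeal to algebraicity or the CDW dimension theorem, and a direct treatment of the non-identity components --- your observation that all four components of $\iota_0(\U(n,1;\R_\ep))$ are reached as limits of the \emph{connected} groups $\iota_\delta(\U(n,1;\C))$ is correct (already visible for $n=0$, where elongating ellipses $a^2+|\delta|b^2=1$ Chabauty-converge to the pair of lines $a=\pm1$) and is something the Lie-algebra route sees only after the dimension theorem. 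In effect you have transplanted the paper's \emph{second} proof (the slicewise-submersion argument for the 1-parameter family $\fam{U}(n,1;\Lambda_\R)$) into the Chabauty-space setting, which is a legitimately cleaner hybrid.

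One precision point: ``locally constant rank of $\partial_A F_\delta$ along the zero set'' is not literally enough for the constant-rank/implicit-function machinery, since rank is only lower semicontinuous and can jump up off the zero set. The repair is already latent in your two-stage setup: first restrict to the centralizer bundle of $\iota_\delta(\lambda)$, which is a vector bundle over $\delta$-space near $0$ precisely because of your computation that the centralizer has constant dimension through the nilpotent fiber ($2(n+1)^2$ in general, realized by $\sum\min(2,2)$ over the $(n+1)$ size-2 Jordan blocks at $\delta=0$); on this bundle the map $A\mapsto A^\dagger \iota_\delta(J) A$ is a genuine slicewise submersion onto the Hermitian family, where surjectivity of the differential at a point is an open condition, so the parametric IFT applies with no constant-rank hypothesis at all. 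With that rewording your argument is complete.
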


The main result of this chapter is an alternative approach to showing this collection of geometries varies continuously, inspired by the notion of a \emph{bundle of groups} in \cite{Morabito11} and \emph{families of groups} in algebraic geometry.
With a suitable definition of \emph{continuous family} of algebraic groups or Lie groups, one might hope to study the collection $\SU(n,1;\Lambda_\delta)$
as a 1-parameter family abstractly,
 without making use of the embeddings $\iota_\delta$ into $\GL(2n;\R)$.
We begin with the definition of a one parameter family of algebras.

\begin{definition*}
A one parameter family of algebras $\fam{A}$ is a real vector bundle $\fam{A}\to\R$ together with a section $\fam{1}\to \fam{A}$ selecting point $\fam{1}(\delta)$ for each vector space $\fam{A}_\delta$, and a smooth map $\mu\colon\fam{A}\times_\R\fam{A}\to\fam{A}$ such that for each $\delta\in\R$ the restriction $\mu_\delta\colon \fam{A}_\delta\times\fam{A}_\delta\to\fam{A}_\delta$ is the multiplication of a real algebra structure on $\fam{A}_\delta$ with identity $\fam{1}(\delta)$.
\end{definition*}

\noindent
The algebras $\Lambda_\delta$ form a 1-parameter family, which we denote $\Lambda_\R$ in what follows.
The matrix algebras $\M(n;\Lambda_\delta)$ also form a 1-parameter family.

\begin{figure}
\centering
\includegraphics[width=0.4\textwidth]{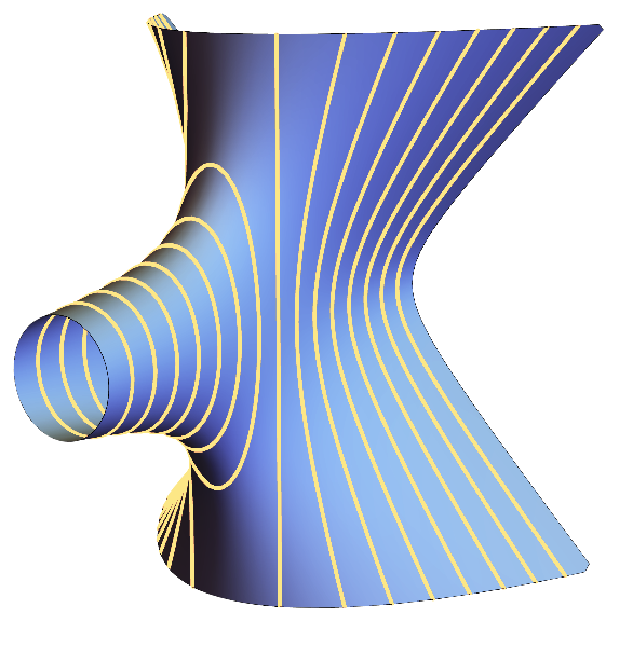}
\caption{The elements of norm $1$ in the algebras $\Lambda_\delta$ together form a 1-parameter family of groups (the vertical slices in the total space above).}	
\end{figure}

\begin{definition*}[1-Parameter Family]
A \emph{one parameter family of Lie groups} is a Lie groupoid $\fam{G}$ with $\mathsf{Ob}(\fam{G})=\R$ and equal source, target maps $s=t\colon G\to\R$.
The fibers $\fam{G}_\delta=s\inv(\delta)=t\inv(\delta)$ each come equipped with the structure of a Lie group, by restricting the composition operation of the groupoid $\fam{G}$.	
\end{definition*}

\noindent
This suggests a definition for continuity of subgroups of $\M(n;\Lambda_\delta)$.

\begin{definition*}
A collection $G_\delta<\GL(n;\Lambda_\delta)$ varies continuously if $\bigcup_\delta G_\delta\times\{\delta\}$ is a 1-parameter family of groups.
\end{definition*}

\noindent
We then set off to develop a particular set of tools to show that certain nice, algebraically defined subgroups of $\GL(n;\Lambda_\delta)$ fit together to form 1-parameter families.
This has many corollaries, such as below.

\begin{corollary*}[Continuity of Unitary Groups]
The collection $\fam{U}(n,1;\Lambda_\R)$ is a 1-parameter family of groups.
The collection $\fam{SU}(n,1;\Lambda_\R)$ is a 1-parameter family of groups.	
The collection of point stabilizers $\fam{USt}(n,1;\Lambda_\R)$ forms a 1-parameter family of groups.
\end{corollary*}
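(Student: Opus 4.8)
The three assertions are applications of the recognition machinery for algebraically defined subgroups built earlier in the chapter, so the plan is to exhibit each collection as such a subgroup and verify the hypotheses. I would first record the structural input: $\Lambda_\R$ is a $1$-parameter family of algebras and, as already noted, so is $\M(n+1;\Lambda_\R)$. The only $\delta$-dependence in the algebra enters through $\lambda\cdot\lambda=\delta$, so the structure constants of $\mu_\delta$ are affine (indeed polynomial) in $\delta$, while the involution $a+b\lambda\mapsto a-b\lambda$ is independent of $\delta$. Consequently the fiberwise operations that define the relevant groups — multiplication, conjugate transpose $A\mapsto\bar{A}^T$, and the determinant $\det\colon\M(n+1;\Lambda_\delta)\to\Lambda_\delta$ (well defined since $\Lambda_\delta$ is commutative) — all have coefficients depending smoothly, in fact polynomially, on $\delta$.

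Next I would express each group as a vanishing locus over the total space $\bigcup_\delta\GL(n+1;\Lambda_\delta)\times\{\delta\}$. A matrix lies in $\U(n,1;\Lambda_\delta)$ precisely when $\bar{A}^T J A=J$ for the fixed form $J=\diag(1,\dots,1,-1)$; writing this out in the real coordinates on $\M(n+1;\Lambda_\delta)\cong\R^{2(n+1)^2}$ produces a system of real polynomial equations $P_i(\delta,A)=0$ whose coefficients are smooth in $\delta$ by the previous paragraph. The collection $\fam{U}(n,1;\Lambda_\R)$ is then this vanishing locus, with $s=t$ the projection to the $\delta$-coordinate and composition the fiberwise matrix product. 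For $\fam{SU}(n,1;\Lambda_\R)$ one appends the two real equations recording $\det A=\fam{1}(\delta)$, and for $\fam{USt}(n,1;\Lambda_\R)$ one appends the linear equations setting the off-diagonal blocks to zero (equivalently, realizes it as the fiberwise product $\U(n;\Lambda_\delta)\times\U(\Lambda_\delta)$). In every case the equations vary smoothly in $\delta$, so the groupoid axioms — smoothness of $s=t$, of the partial multiplication on $\fam{G}\times_\R\fam{G}$, of inversion, and of the unit section — all reduce to the single statement that the total space is a smooth manifold on which the $\delta$-projection is a submersion.

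The heart of the matter, and the step I expect to be the main obstacle, is exactly this regularity: one must show the defining system has constant rank across all $\delta$, so that no fiber jumps dimension and the total space stays smooth. The danger is concentrated at the singular fiber $\delta=0$, where $\Lambda_0=\R_\ep$ acquires a nilpotent and transversality of the naive count could fail. I would control it with a dimension count, checking that the fiber dimension is constant: $\dim_\R\U(n,1;\Lambda_\delta)=(n+1)^2$ for every $\delta$ (consistent with the standard $\U(n,1;\C)$ and with the recorded isomorphism $\U(n,1;\R\oplus\R)\cong\GL(n+1;\R)$), and likewise $\dim\U(\Lambda_\delta)=1$ and $\dim\SU(n,1;\Lambda_\delta)=(n+1)^2-1$ for all $\delta$, verified directly in each of the three algebra types. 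Constancy of fiber dimension together with the smoothness of the equations is precisely the input required by the recognition tool, which then certifies that each vanishing locus is an embedded submanifold submersing onto $\R$, with the fiberwise group operations supplying the groupoid structure. Once $\fam{U}$ is established in this way, $\fam{SU}$ and $\fam{USt}$ follow with no new ideas — the former as the locus where the smooth determinant map equals the unit section, the latter as a fiberwise product of lower-rank unitary families — the only subtlety being, once again, to confirm that these constructions remain regular as one passes through the degenerate algebra at $\delta=0$.
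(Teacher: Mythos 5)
Your overall architecture — realize each collection as the vanishing locus of equations varying smoothly in $\delta$, then invoke a recognition theorem to get a smooth total space submersing onto $\R$ — matches the paper's. But the step you yourself flag as the heart of the matter is where the argument breaks: you propose to certify regularity by checking that the \emph{fiber} dimensions are constant ($\dim_\R\U(n,1;\Lambda_\delta)=(n+1)^2$ for all $\delta$, etc.), and you assert that constancy of fiber dimension plus smoothness of the defining equations ``is precisely the input required by the recognition tool.'' It is not. The paper's recognition theorem (the slicewise-submersion theorem) requires that the defining map $\Phi\colon(A,\delta)\mapsto(A^\dagger JA,\delta)$ restrict to a \emph{submersion} on each slice, i.e.\ that the Jacobian of your system $P_i(\delta,A)=0$ have full rank at every point of the locus — a condition on the linearization, which no count of fiber dimensions can deliver. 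A constant-dimensional zero set of smooth equations need not be a smooth submanifold (the cuspidal cubic $y^2=x^3$ is one-dimensional everywhere yet singular at the origin, where the gradient vanishes), and even when every fiber is individually a smooth Lie subgroup of the same dimension — which for these groups is automatic from the closed subgroup theorem — the union can still fail to be a family: the paper's own Barber Pole example is exactly a union of equidimensional smooth subgroups $H_t\subset\S^1\times\R$ that is not a subfamily. So your dimension count establishes nothing about transversality to the slice $\delta=0$, which is the actual danger you identified.

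What the paper does instead is verify submersivity directly and uniformly in $\delta$: for $B\in\U(n,1;\Lambda_\delta)$ the derivative of $\Phi_\delta$ along $B_t=B+tX$ is $X\mapsto X^\dagger JB+B^\dagger JX$, whose kernel is $(B^\dagger J)\inv\SkHerm(n)$; since $\dim\M(n+1;\Lambda_\delta)=\dim\Herm+\dim\SkHerm$, the image is all of $\Herm(n)$ — a dimension count of the \emph{linearization}, valid at $\delta=0$ as well because it uses only invertibility of $B$ and of the real diagonal $J$, never the absence of nilpotents. This is the computation your proposal is missing, and it is the whole content of the corollary. Two smaller gaps of the same kind: for $\fam{SU}$, appending ``$\det A=\fam{1}(\delta)$'' requires knowing $\det$ is a submersion onto the family $\fam{U}(\Lambda_\R)$, whose fibers themselves vary with $\delta$ — the paper proves this via the split exact sequence $1\to\SU\to\U\to\U(\Lambda_\delta)\to1$ (section $\alpha\mapsto\diag(\alpha,1,\dots,1)$) together with a tangent-space decomposition along sections, not by counting equations; and for $\fam{USt}$ your fiberwise-product description is the right idea, but one still needs that $\fam{U}(n;\Lambda_\R)$ and $\fam{U}(\Lambda_\R)$ are themselves families (same submersion argument with $J=I$) and that block-diagonal products of $1$-parameter families are families, which the paper checks separately.
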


\noindent
Together, this implies that the homogeneous spaces of interest are given by a 1-parameter family of automorphism groups and a 1-parameter family of point stabilizers, which we take as the definition of an abstract, 1-parameter family of geometries.

\begin{theorem*}[Continuity of Hyperbolic Geometries]
The geometries $\Hyp_{\Lambda_\R}^n=(\fam{U}(n,1;\Lambda_\R),\fam{USt}(n,1;\Lambda_\R))$ form a 1-parameter family of geometries.	
\end{theorem*}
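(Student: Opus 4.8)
The plan is to verify directly that the pair $(\fam{U}(n,1;\Lambda_\R),\fam{USt}(n,1;\Lambda_\R))$ satisfies the working definition of a 1-parameter family of geometries: a 1-parameter family of automorphism groups together with a 1-parameter family of point stabilizers, the latter sitting inside the former as a closed subfamily, whose fiberwise quotient recovers the homogeneous spaces $\Hyp_{\Lambda_\delta}^n$. Essentially all of the analytic content has already been isolated in the Corollary on Continuity of Unitary Groups, so the task here is to assemble those pieces and to check the one relation between them that the Corollary does not address on its own, namely that the stabilizer family includes into the automorphism family as a closed sub-Lie-groupoid over $\R$.

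First I would invoke the Corollary to record that $\fam{G}:=\fam{U}(n,1;\Lambda_\R)$ and $\fam{H}:=\fam{USt}(n,1;\Lambda_\R)$ are each genuine 1-parameter families of groups in the sense of the earlier definition, i.e.\ Lie groupoids with $\mathsf{Ob}=\R$, equal source and target $s=t$, and fiberwise Lie-group structure. The key point is that both are constructed abstractly inside the common ambient 1-parameter family of matrix algebras with fibers $\M(n+1;\Lambda_\delta)$, with no recourse to the embeddings into $\GL(2n;\R)$, so their total spaces sit inside one smooth ambient space and can be compared directly.

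Next I would establish the subfamily relation. Fiberwise, $\USt(n,1;\Lambda_\delta)=\smat{\U(n;\Lambda_\delta)&\\&\U(\Lambda_\delta)}$ is by definition the block-diagonal subgroup of $\U(n,1;\Lambda_\delta)$, so $\fam{H}_\delta<\fam{G}_\delta$ for every $\delta$, realizing the point stabilizer of the basepoint of $\Hyp_{\Lambda_\delta}^n$. Within the total space of $\fam{G}$, the subset $\fam{H}$ is cut out by the closed algebraic condition of vanishing off-diagonal blocks, uniformly in $\delta$; hence the inclusion $\fam{H}\hookrightarrow\fam{G}$ is a closed embedding compatible with $s,t$ and with the groupoid composition, exhibiting $\fam{H}$ as a closed sub-1-parameter-family of $\fam{G}$. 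Finally I would identify the fibers: for each $\delta$ the quotient $\fam{G}_\delta/\fam{H}_\delta=\U(n,1;\Lambda_\delta)/\USt(n,1;\Lambda_\delta)$ is, by the defining description of these geometries in the previous chapter, the projectivized unit pseudo-sphere of radius $-1$ on which $\U(n,1;\Lambda_\delta)$ acts transitively with stabilizer exactly the block-diagonal subgroup, that is, it is $\Hyp_{\Lambda_\delta}^n$. Thus $(\fam{G},\fam{H})$ realizes the collection $\{\Hyp_{\Lambda_\delta}^n\}_\delta$ as a 1-parameter family of geometries.

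I expect the main obstacle to be the behavior at $\delta=0$, where $\Lambda_0=\R_\ep$ is the degenerate, non-semisimple algebra: one must be sure that the smooth structure on the total spaces of $\fam{G}$ and $\fam{H}$ does not jump there, so that the inclusion remains a closed embedding across the degeneration and the groupoid axioms hold uniformly in $\delta$. This is precisely the delicate point that the Corollary's machinery was built to control, so here it enters only as a citation; the genuinely new verification, that $\fam{H}$ is a closed subfamily of $\fam{G}$ and that the fiberwise quotients are the advertised homogeneous spaces, is comparatively routine given the explicit block forms.
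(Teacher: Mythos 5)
Your proposal is correct and follows essentially the same route as the paper: the theorem is obtained by assembling the Corollary's two conclusions---that $\fam{U}(n,1;\Lambda_\R)$ and $\fam{USt}(n,1;\Lambda_\R)$ are each 1-parameter families of groups built inside the common ambient family $\M(n+1;\Lambda_\R)$---since the Automorphism-Stabilizer definition of a 1-parameter family of geometries is exactly such a pair. Your two added checks (that $\fam{USt}$ sits in $\fam{U}$ as a closed subfamily, cut out by the vanishing of off-diagonal blocks, and that the fibers are by definition the geometries $\Hyp_{\Lambda_\delta}^n$) are left implicit in the paper and are harmless, with the caveat that you correctly restrict the quotient identification to individual fibers---the paper explicitly defers the construction of the quotient \emph{as a family of spaces} to the general Quotient Family Theorem of Part III.
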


\section*{Families of Geometries}

The third part of this thesis deals with extending and fleshing out an abstract theory of continuity for collections of geometries, based on the \emph{1-parameter family of groups} introduced above.
This project is broken up over four chapters: the first introduces the relevant objects \emph{families of spaces} and \emph{families of groups}, the second provides means of constructing examples of these, as well as beginning the theory of \emph{families of geometries}.
The third is disjoint from this theory of continuity, and studies various notions of homogeneous spaces that can be constructed over finite dimensional algebras, generalizing projective spaces, as well as geometries with orthogonal and unitary groups of automorphisms.
The fourth chapter ties these threads together and produces a multitude of examples of families of geometries containing new geometric transitions, directly generalizing the techniques utilized to produce the family $\Hyp_{\Lambda_\R}^n$ as a 1-parameter family of geometries previously.

\subsection*{Families of Spaces, Groups}

Taking inspiration from the fields of complex geometry and algebraic geometry, we define a \emph{family of manifolds} as a bundle like construction.

\begin{definition*}[Family of Smooth Manifolds]
A smooth family of manifolds parameterized by a smooth manifold $\Delta$ is a triple $(\fam{X},\Delta,\pi)$ of smooth manifolds $\fam{X},\Delta$ equipped with a smooth submersion $\pi:\mathcal{X}\to\Delta$.
The space $\fam{X}$ is the \emph{total space} and $\Delta$ is the \emph{base} of the family.
The fibers $\fam{X}_\delta:=\pi\inv\set{\delta}$ are the \emph{members} of the family, and are said to vary smoothly over $\Delta$.
\end{definition*}

\begin{figure}
\centering
\includegraphics[width=0.6\textwidth]{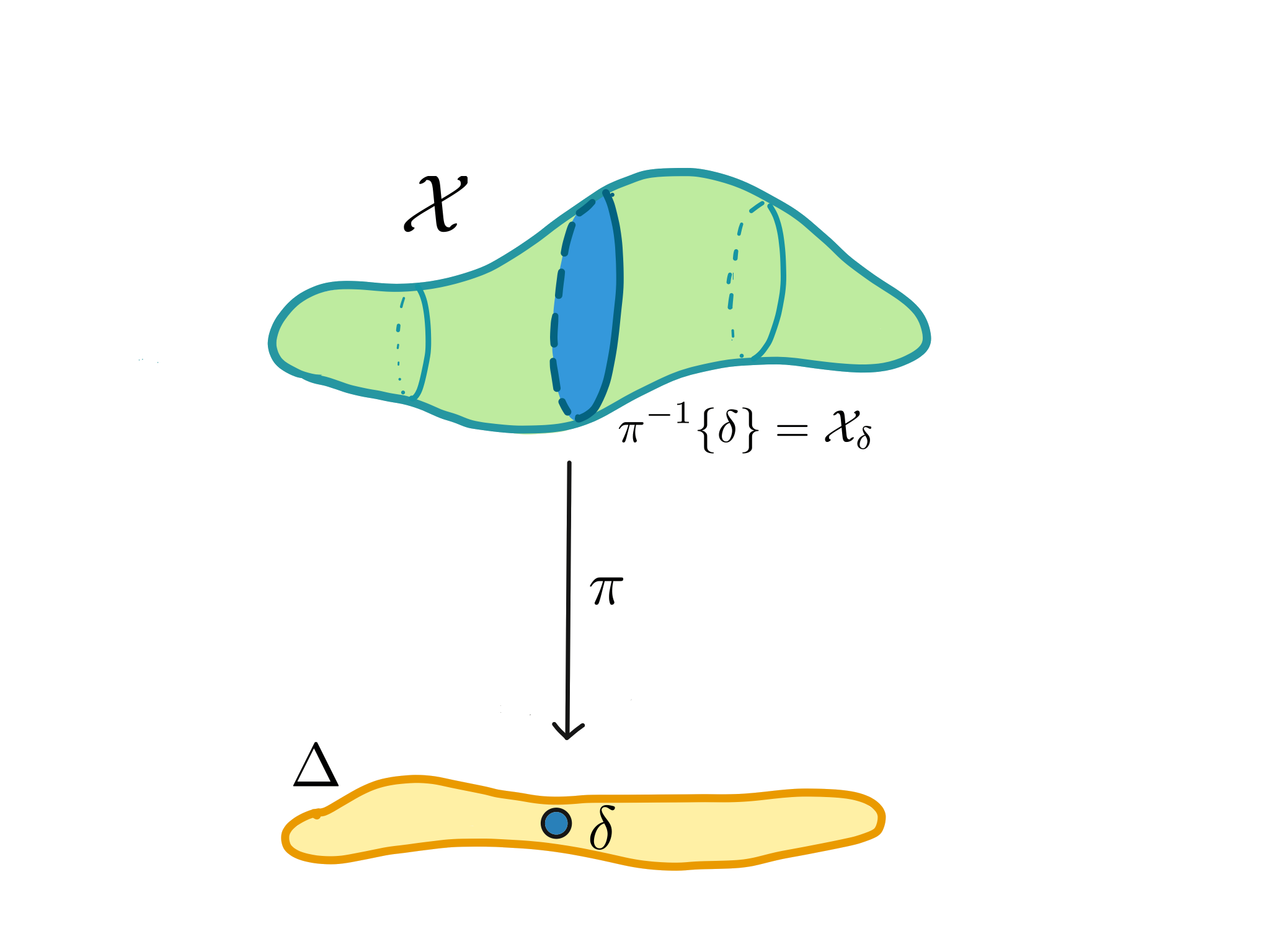}
\caption{A family of spaces is a generalized fiber bundle, consisting of a total space foliated by members varying over a base.}
\end{figure}

\noindent
A family contains a \emph{transition} if there are non-isomorphic members over a single connected component of the base. An object $X$ \emph{has transitions} if it is a member of a transitioning family.  Otherwise $X$ is \emph{rigid}.
Restricting to a fixed base space $\Delta$, we define the category of families.

\begin{definition}[The Category of Families]
The category $\Fam_\Delta$ has as objects all families $\pi_\fam{X}\colon\fam{X}\to\Delta$, with morphisms $\phi\in\Hom(\fam{X}\labelarrow{\pi_\fam{X}}\Delta, \fam{Y}\labelarrow{\pi_\fam{Y}}\Delta)$ given by maps $\phi\in C^\infty(\fam{X},\fam{Y})$ such that $\pi_\fam{X}=\pi_\fam{Y}\phi$.
\end{definition}

\noindent
This category has finite products and a terminal object, so we may speak of \emph{group} objects, and other \emph{algebraic objects } of the category $\Fam_\Delta$.

\begin{definition}[Family of Groups]
A family of groups over $\Delta$ is a group object in $\cat{Fam}_\Delta$.  
That is, a family $\fam{G}\to\Delta$ equipped with a global section $\fam{e}\colon\Delta\to\fam{G}$ and smooth maps $\mu\colon\fam{G}\timesd\fam{G}\to\fam{G}$, $\iota\colon\fam{G}\to\fam{G}$ equipping each fiber $\fam{G}_\delta$ with the structure of a Lie group with identity $\fam{e}(\delta)$ and multiplication, inversion restrictions of $\mu,\iota$ respectively.
\end{definition}

\begin{definition*}[Family of Algebras]
A family of  $\mathbb{F}$-algebras over $\Delta$ 
is an $\F$-algebra object in the category $\Fam_\Delta$.
It is given by the data of a $\mathbb{F}$-vector bundle $\fam{A}\to\Delta$ together with a multiplication $\mu\colon\fam{A}\timesd\fam{A}\to\fam{A}$ giving each fiber the structure of a $\mathbb{F}$-algebra.
\end{definition*}

\begin{definition*}[Family of Lie Algebras]
A family of Lie algebras $\fam{g}\to\Delta$ is a Lie algebra object in $\cat{Fam}_\Delta$.  That is	, it is a family of vector spaces equipped with a bilinear map $[\cdot,\cdot]\colon\fam{g}\timesd\fam{g}\to\fam{g}$ giving each fiber the structure of a Lie algebra.
\end{definition*}

\subsection*{Constructing Families}

This next chapter takes on the task of developing the bare bones of a theory of families, suitable at least to construct basic examples and define the object of interest, a \emph{family of geometries}.
As a geometry in the sense of Klein is given by a transitive group action of a Lie group on a smooth manifold, to define families of such objects we will need a notion of an \emph{action of a family of groups}.

\begin{definition*}[Action of Families]
An action of $\fam{G}$ on $\fam{X}$ in $\Fam_\Delta$ is given by a morphism $\alpha:\fam{G}\timesd\fam{X}\to\fam{X}$ denoted $\alpha(g,x)=g.x$ such that $\alpha(\fam{e},\cdot)=\id_{\fam{X}}$ and $g.(h.(-))=gh.(-)$ as maps $\fam{X}_\delta\to \fam{X}_\delta$, for all $g,h\in\fam{G}_\delta$.
\end{definition*}

\noindent
An action $\fam{G}\acts\fam{X}$ is \emph{proper} if the map $\fam{G}\timesd\fam{X}\to\fam{X}\timesd\fam{X}$ defined by $(g,x)\mapsto (x,g.x)$ is a proper map. 
Proper actions are important, as proper free actions are precisely those with well behaved quotients, as we show shortly.
We show that the action of a family of subgroups by translation is always proper, which underlies some foundational observations in the theory of families of geometries.

\begin{proposition*}[Translation is a Proper Free Action]
Let $\fam{G}\to\Delta$ be a family and $\fam{H}\subg\fam{G}$ a family of closed subgroups.  Then the action of $\fam{H}$ on $\fam{G}$ by translation is proper.
\end{proposition*}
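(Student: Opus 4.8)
The plan is to show that the map defining the action,
$$\Phi\colon \fam{H}\timesd\fam{G}\to\fam{G}\timesd\fam{G},\qquad \Phi(h,g)=(g,hg),$$
is a closed embedding; since a closed embedding has compact preimages of compact sets, properness follows immediately. The guiding intuition is the classical fact that a closed subgroup $H$ of a Lie group acts properly on it by translation: from the pair $(g,hg)$ one recovers the acting element as $h=(hg)g\inv$, so $h$ is controlled by the two factors. I would make this recovery precise and, crucially, \emph{uniform over the base} $\Delta$ by performing it with the smooth family operations.

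First I would build a fibre-preserving \emph{shear} diffeomorphism of the total space $\fam{G}\timesd\fam{G}$. Using the multiplication $\mu$ and inversion $\iota$ of the family $\fam{G}\to\Delta$, set
$$\Theta\colon\fam{G}\timesd\fam{G}\to\fam{G}\timesd\fam{G},\qquad \Theta(a,b)=\bigl(a,\,\mu(b,\iota(a))\bigr).$$
Because $\mu$ and $\iota$ are morphisms in $\Fam_\Delta$, the map $\Theta$ is smooth and carries each fibre $\fam{G}_\delta\times\fam{G}_\delta$ to itself; it has the smooth inverse $(a,c)\mapsto\bigl(a,\mu(c,a)\bigr)$, so $\Theta$ is a diffeomorphism. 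A direct computation using associativity and $\mu\bigl(g,\iota(g)\bigr)=\fam{e}(\delta)$ gives the key identity $\Theta\circ\Phi(h,g)=(g,h)$. Hence $\Phi=\Theta\inv\circ j$, where $j(h,g)=(g,h)$ is the swap map.

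The swap $j$ is a diffeomorphism of $\fam{H}\timesd\fam{G}$ onto the subset $\fam{G}\timesd\fam{H}\subset\fam{G}\timesd\fam{G}$, which is closed precisely when $\fam{H}$ is closed in the total space $\fam{G}$. Granting that, $j$ is a closed embedding, and post-composing with the diffeomorphism $\Theta\inv$ preserves this, so $\Phi$ is a closed embedding and therefore proper. The same argument works verbatim for right translation, replacing $\Theta$ by $(a,b)\mapsto\bigl(a,\mu(\iota(a),b)\bigr)$.

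The step requiring the most care -- and the \emph{main obstacle} -- is justifying that $\fam{G}\timesd\fam{H}$ is closed, i.e.\ that $\fam{H}$ is closed as a subset of the total space $\fam{G}$, not merely fibrewise closed in each $\fam{G}_\delta$. If the definition of a family of closed subgroups already requires $\fam{H}\inject\fam{G}$ to be a closed embedded subfamily, the proof is complete as stated. Otherwise I would supply a short limiting argument: a convergent sequence $h_n\to g$ in $\fam{G}$ projects to $\pi(h_n)\to\pi(g)=\delta$, and one must show the limit lies in the fibre $\fam{H}_\delta$; this follows by combining the fibrewise closedness of each $\fam{H}_\delta<\fam{G}_\delta$ with the fact that $\fam{H}$ varies as a genuine family over $\Delta$ (so that its total space is closed along the submersion). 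Everything else in the argument is formal manipulation of the family operations $\mu$ and $\iota$.
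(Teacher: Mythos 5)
Your proof is correct, and it reaches the conclusion by a genuinely different route than the paper. The paper's proof is a direct sequential-compactness argument: given compact $K\subset\fam{G}\timesd\fam{G}$ and a sequence $(g_i,h_i)$ in the preimage of $K$ under the action map $(g,h)\mapsto(g,gh)$, it subconverges $(g_i,g_ih_i)$ inside $K$, recovers $h_i=\mu(\iota(g_i),\,g_ih_i)$ using continuity of the family operations, places the limit $h_\infty=g_\infty\inv k$ in $\fam{H}$ by closedness, and finally passes from sequential compactness to compactness via metrizability of the total spaces. Your argument globalizes the same mechanism: the recovery identity $h=(hg)g\inv$ becomes the fibrewise shear $\Theta$, and the factorization $\Phi=\Theta\inv\circ j$ exhibits the action map as a homeomorphism onto the closed set $\Theta\inv(\fam{G}\timesd\fam{H})$, hence a closed embedding, hence proper. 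This buys two things: you bypass the metrizability step entirely, so the argument transfers verbatim to weak families in general topological categories where total spaces need not be metrizable, and you obtain the strictly stronger statement that the image of the action map (the graph of the orbit relation) is closed --- which is exactly what the paper re-derives from properness later when proving the orbit space $\fam{X}/\fam{G}$ is Hausdorff. The paper's version is in exchange more elementary, constructing no global diffeomorphism. As for the obstacle you flagged, it dissolves: the paper defines a subfamily to be a \emph{closed} subset of the total space on which the projection restricts to a submersion (categorically, the monomorphism is required to be a closed map), and its own proof invokes precisely this (``as $\fam{H}$ is a subfamily, it is closed''). One caution about your fallback sketch, though: fibrewise closedness of the $\fam{H}_\delta$ together with the family structure alone would \emph{not} yield closedness of the total space --- the paper's Barber Pole example is a union of fibrewise-closed subgroups admitting sections through every point whose total space is not closed over $t=0$. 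Closedness is genuinely imposed by the definition, not deducible from it, which is exactly why the definition is formulated that way.
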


\noindent
Before defining families of geometries, we cover three means of constructing new families from old: namely, pullbacks, exponentials, and quotients.
In addition to being a useful way to produce new families, the pullback construction also allows us to phrase other useful constructions, such as restrictions and subfamilies categorically.

\begin{definition*}[Pullbacks in the Category of Families]
Let $\fam{X}\to\Delta$ be a family, and $f\colon D\to\Delta$	 be a morphism.  
Then the pullback family $f^\star\fam{X}\to D$, if it exists, has total space $\fam{X}\timesd D=\set{(x,d)\mid f(d)=\pi(x)}$ and projection 
$f^\star\fam{X}=\fam{X}\timesd D\stackrel{\pi^\star}{\longrightarrow} D$
defined by $(x,d)\mapsto d$.
\end{definition*}

\begin{theorem*}[Existence of Pullback Families]
Pullbacks always exist along any smooth morphism  $D\labelarrow{f}\Delta$ and any such $f$ induces a functor $\Fam_\Delta\labelarrow{f^\star}\Fam_D$.
\end{theorem*}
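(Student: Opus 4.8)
The plan is to reduce the existence claim to the standard differential-topology fact that the fibered product of an arbitrary smooth map with a submersion is again a smooth manifold, and then to verify the functor axioms by direct computation on the explicit total space $\fam{X}\timesd D$.

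First I would establish that $f^\star\fam{X}=\fam{X}\timesd D$ is a smooth manifold and that the projection $\pi^\star\colon\fam{X}\timesd D\to D$ is a submersion, so that the output is genuinely an object of $\Fam_D$ and not merely a smooth map. The key observation is that, because $\pi\colon\fam{X}\to\Delta$ is a submersion, its differential is surjective at every point, so $\pi$ is automatically transverse to the arbitrary smooth map $f\colon D\to\Delta$. By the transverse (fibered) preimage theorem, $\fam{X}\timesd D=\{(x,d)\mid f(d)=\pi(x)\}$ is then an embedded submanifold of $\fam{X}\times D$ of the expected dimension $\dim\fam{X}+\dim D-\dim\Delta$. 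To see that $\pi^\star$ is a submersion, I would compute the tangent space $T_{(x,d)}(\fam{X}\timesd D)=\{(v,w)\in T_x\fam{X}\times T_d D\mid d\pi(v)=df(w)\}$ and note that, given any $w\in T_d D$, surjectivity of $d\pi$ supplies a $v$ with $d\pi(v)=df(w)$; hence the projection $(v,w)\mapsto w$ is onto $T_d D$.

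Next I would define $f^\star$ on morphisms. Given $\phi\colon\fam{X}\to\fam{Y}$ in $\Fam_\Delta$, so that $\pi_\fam{X}=\pi_\fam{Y}\circ\phi$, I would set $f^\star\phi\colon f^\star\fam{X}\to f^\star\fam{Y}$ by $(x,d)\mapsto(\phi(x),d)$. Three checks are needed: well-definedness, since $f(d)=\pi_\fam{X}(x)=\pi_\fam{Y}(\phi(x))$ guarantees $(\phi(x),d)\in\fam{Y}\timesd D$; smoothness, since $f^\star\phi$ is the restriction of the smooth map $\phi\times\id_D$ to an embedded submanifold; and compatibility with the projections to $D$, since $\pi_\fam{Y}^\star(\phi(x),d)=d=\pi_\fam{X}^\star(x,d)$, which exhibits $f^\star\phi$ as a morphism in $\Fam_D$.

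Finally, functoriality is immediate from the formula $(x,d)\mapsto(\phi(x),d)$: one has $f^\star(\id_\fam{X})=\id_{f^\star\fam{X}}$, and $f^\star(\psi\circ\phi)=f^\star\psi\circ f^\star\phi$ because composition acts only on the first coordinate. I expect the only real content to lie in the first step: verifying that the submersion hypothesis on $\pi$ is precisely what makes the fibered product both a smooth manifold and an object of $\Fam_D$ (via $\pi^\star$ being a submersion). Everything after that is bookkeeping, and if desired the universal property justifying the name \emph{pullback} can be read off the same explicit description.
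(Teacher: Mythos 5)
Your proof is correct, but it takes a different route from the paper's on the one substantive step. The paper never computes a tangent space: it proves that $\pi^\star\colon f^\star\fam{X}\to D$ is a family projection by showing it \emph{admits local sections} --- given $(x,d)$, one chooses a local section $\sigma\colon V\to\fam{X}$ of $\pi$ through $x$ and pulls it back to $f^\star\sigma=(\sigma\circ f,\id_D)$ on $f\inv(V)$ --- and then invokes the fact (stated earlier in the paper) that in the smooth category admitting smooth local sections through every point is equivalent to being a submersion; the existence of the fibered product as a smooth manifold is simply quoted as the standard fact about pulling back along a submersion. You instead justify the existence via transversality (a submersion is transverse to any smooth map, so $\fam{X}\timesd D$ is an embedded submanifold of $\fam{X}\times D$) and verify submersivity of $\pi^\star$ by the direct linear-algebra argument on $T_{(x,d)}(\fam{X}\timesd D)=\{(v,w)\mid d\pi(v)=df(w)\}$. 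Each approach buys something: yours gives the dimension count for free and avoids appealing to the sections/submersion equivalence, while the paper's section-based argument is deliberately infinitesimal-free --- it is the reason the paper can remark that the same lemma holds verbatim for \emph{weak} families in arbitrary topological categories, where ``admits local sections'' is the defining condition and no tangent spaces exist. Your treatment of the functor on morphisms, $(x,d)\mapsto(\phi(x),d)$, together with the identity and composition checks, coincides with the paper's.
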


\noindent
A potential means of producing a family of groups is to exponentiate a family of Lie algebras.
Abstractly this is fraught with difficulty, as apparent from the existing literature on Lie groupoids and weak Lie algebra bundles. 
We focus on a more narrow scope: when does a family of Lie \emph{subalgebras} exponentiate to a family of Lie \emph{subgroups}?
The theorem below specializes a slightly more general result, but is already sufficient to construct many transitioning families (such as the transitions between the $\SO(p,q)$ in $\SL(p+q;\R)$ mentioned previously).

\begin{theorem*}[Closed Exponentials are Families]
Let $\fam{H}\subset\fam{G}$ be a closed subset such that each fiber $\fam{H}_\delta$ is a connected group, and the Lie algebras $\fam{h}\to\Delta$ form a subfamily of $\fam{g}\to\Delta$.
Then $\fam{H}$ is a subfamily of $\fam{G}\to\Delta$.
\end{theorem*}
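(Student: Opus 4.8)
The plan is to show that $\fam{H}$ is an embedded submanifold of $\fam{G}$ on which $\pi$ restricts to a surjective submersion; once this is done, the group structure maps of $\fam{G}$ restrict automatically to make $\fam{H}$ a family of groups. The engine is a fiberwise exponential assembled into a smooth morphism of families $\exp\colon\fam{g}\to\fam{G}$ restricting to the Lie-group exponential $\exp_\delta$ on each fiber; smoothness in $\delta$ comes from smooth dependence on parameters of the defining left-invariant flow, which is the general construction this theorem specializes. Since $\fam{h}\subset\fam{g}$ is a subbundle, I first choose a complementary subbundle $\fam{m}$, giving $\fam{g}=\fam{h}\oplus\fam{m}$, and define $F\colon\fam{h}\oplus\fam{m}\to\fam{G}$ fiberwise by $(\xi,\eta)\mapsto\exp(\xi)\exp(\eta)$. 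By the inverse function theorem with parameters, $F$ is a diffeomorphism from a neighborhood of the zero section onto a neighborhood $\fam{V}$ of the identity section $\fam{e}(\Delta)$; in these slice coordinates $\exp(\fam{h})$ is the locus $\fam{m}=0$, an embedded submanifold, and it lies in $\fam{H}$ because $\exp_\delta(\fam{h}_\delta)\subset\fam{H}_\delta$ fiberwise.

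The crux — and the step I expect to be the main obstacle — is the key lemma that, after shrinking, $\fam{H}\cap\fam{V}=\exp(\fam{h})\cap\fam{V}$. This is where closedness of $\fam{H}$ in the total space $\fam{G}$ enters, and it must be made uniform across fibers. I would argue by contradiction: if it fails there is a sequence $h_n\in\fam{H}$ with $h_n\to\fam{e}(\delta_0)$ and $h_n\notin\exp(\fam{h})$; writing $h_n=\exp(\xi_n)\exp(\eta_n)$ in $F$-coordinates and left-translating by $\exp(-\xi_n)\in\fam{H}_{\delta_n}$ reduces to $h_n=\exp(\eta_n)$ with $0\neq\eta_n\in\fam{m}_{\delta_n}$, $\eta_n\to0$. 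In a local trivialization of $\fam{m}$ near $\delta_0$ the unit vectors $\eta_n/\lVert\eta_n\rVert$ subconverge to a unit $u_\star\in\fam{m}_{\delta_0}$; taking $k_n=\lfloor t/\lVert\eta_n\rVert\rfloor$ gives $k_n\eta_n\to tu_\star$ in $\fam{g}$, and since $\exp(\eta_n)^{k_n}=\exp(k_n\eta_n)\in\fam{H}_{\delta_n}$ with $\fam{H}$ closed in $\fam{G}$ and $\exp$ continuous, the limit $\exp(tu_\star)$ lies in $\fam{H}_{\delta_0}$ for every $t$. Hence $u_\star$ belongs to the Lie algebra $\fam{h}_{\delta_0}$ of $\fam{H}_{\delta_0}$, contradicting $0\neq u_\star\in\fam{m}_{\delta_0}$. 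This is precisely the Cartan closed-subgroup argument, and the only delicate point is running it across varying fibers, which the local trivialization of $\fam{m}$ and the total-space continuity of $\exp$ handle. The conclusion is that $\fam{H}$ is an embedded submanifold on a neighborhood of $\fam{e}(\Delta)$, with $\pi$ restricting to a submersion there.

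It remains to propagate this local picture around all of $\fam{H}$ by translation. Given $h_0\in\fam{H}_{\delta_0}$, I use connectedness of $\fam{H}_{\delta_0}$ to write $h_0=\exp(\zeta_1)\cdots\exp(\zeta_r)$ with $\zeta_i\in\fam{h}_{\delta_0}$, extend each $\zeta_i$ to a smooth local section of the subbundle $\fam{h}$, and set $\sigma(\delta)=\exp(\zeta_1(\delta))\cdots\exp(\zeta_r(\delta))$, a smooth local section of $\fam{G}$ that lands in $\fam{H}$ and passes through $h_0$. Left translation by $\sigma$, namely $T(g)=\mu(\sigma(\pi(g)),g)$, is a base-preserving family diffeomorphism (its inverse built from $\iota$) carrying $\fam{H}$ to $\fam{H}$ and the identity section to $\sigma$; transporting $\fam{V}$ by $T$ exhibits $\fam{H}$ as an embedded submanifold near $h_0$ with $\pi$ still a submersion. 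Thus $\fam{H}\to\Delta$ is a family of manifolds. Finally, since each fiber $\fam{H}_\delta$ is a subgroup, the structure maps $\mu,\iota$ of $\fam{G}$ send $\fam{H}\timesd\fam{H}\to\fam{H}$ and $\fam{H}\to\fam{H}$ and restrict to smooth maps because $\fam{H}$ is embedded; together with $\fam{e}(\Delta)\subset\fam{H}$ this makes $\fam{H}$ a group object in $\Fam_\Delta$ and the inclusion $\fam{H}\inject\fam{G}$ a morphism of families, so $\fam{H}$ is a subfamily of $\fam{G}\to\Delta$.
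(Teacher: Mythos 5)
Your proof is correct, but it runs along a genuinely different track than the paper's. The paper first proves (Proposition \ref{prop:Exp_Family}) that $\pi|_{\fam{H}}$ admits local sections: any $A\in\langle\exp(\fam{h}_\delta)\rangle$ is a finite product $\exp(X_1)\cdots\exp(X_n)$, each $X_i$ extends to a local section of $\fam{h}$, and exponentiating and multiplying yields a smooth section through $A$; connectedness then identifies $\langle\exp(\fam{h}_\delta)\rangle$ with $\fam{H}_\delta$, and closedness finishes. Note that the body of the paper actually hypothesizes that $\fam{H}$ is a closed \emph{submanifold} (Proposition \ref{prop:Exponential_Check}): local sections through every point upgrade to the required submersion only once a smooth structure on $\fam{H}$ is in hand, so for a bare closed subset, as in the statement above, the paper's argument leaves the manifold structure unaddressed, deferring to the ``slightly more general result'' it alludes to. Your parametrized Cartan slice argument supplies exactly that missing piece: the splitting $\fam{g}=\fam{h}\oplus\fam{m}$, the chart $F(\xi,\eta)=\exp(\xi)\exp(\eta)$, and the contradiction sequence $\exp(\eta_n)^{k_n}=\exp(k_n\eta_n)\to\exp(tu_\star)\in\fam{H}_{\delta_0}$ prove $\fam{H}$ is embedded near the identity section, with closedness of $\fam{H}$ entering precisely where it must; your translation step is then essentially the paper's products-of-exponentials section construction, reused as a family diffeomorphism to propagate the slice chart. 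The paper's route buys brevity at the cost of an extra hypothesis; yours buys a self-contained proof of the statement as literally stated. The only points to make explicit are the two you flagged, plus one you assumed silently: smoothness of the assembled $\exp\colon\fam{g}\to\fam{G}$ via smooth dependence of the left-invariant flows on the base parameter, the parametrized inverse function theorem, and the fact that $\fam{h}$, having constant fiber dimension and submersive projection, is locally spanned by smooth sections, so a complementary subbundle $\fam{m}$ exists (orthogonally, for any fiber metric on $\fam{g}$).
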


\begin{figure}
\centering 
\includegraphics[width=0.8\textwidth]{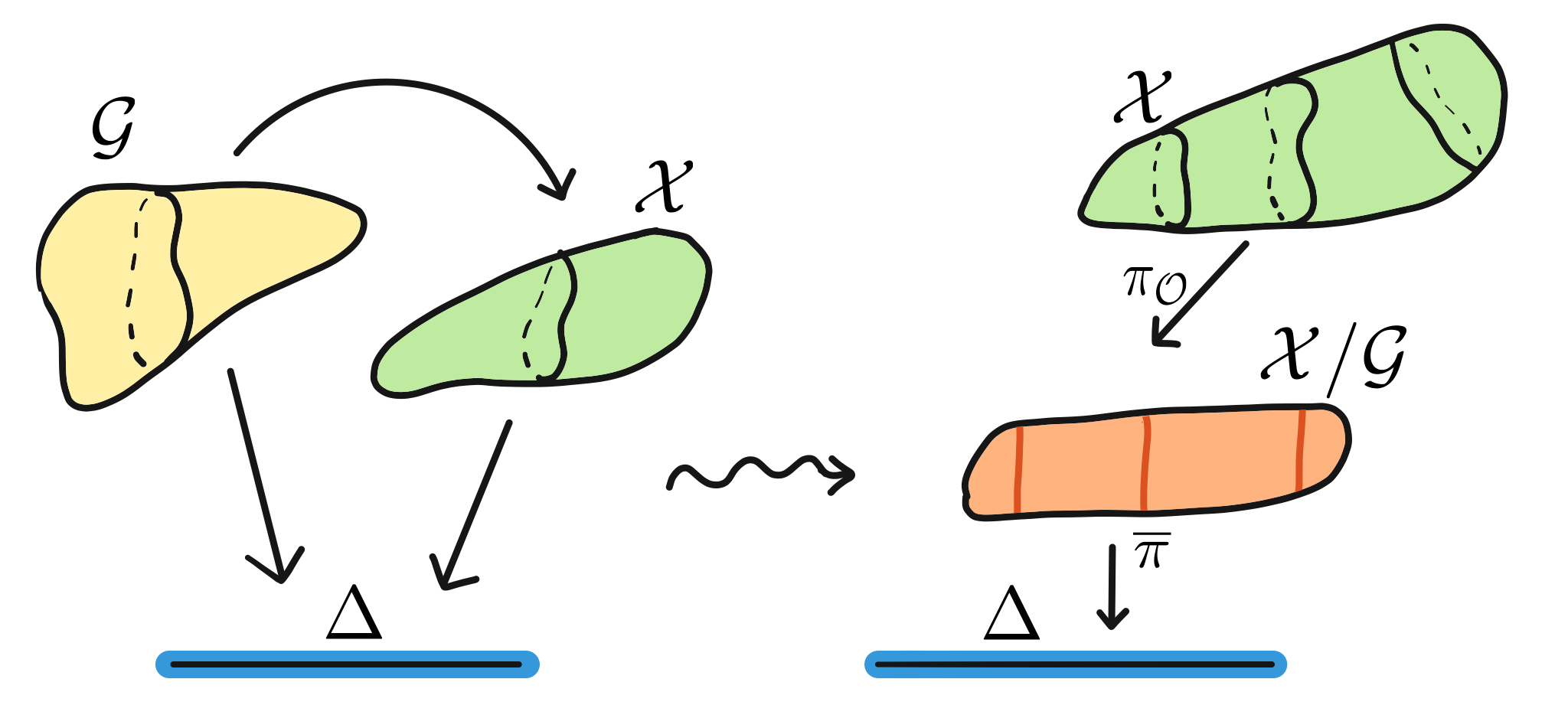}
\caption{The Quotient Family Theorem determines a sufficient condition to take the quotient of a family of spaces by a family of groups in the category of families.}	
\end{figure}

\noindent
Finally, we consider quotients in the category of families.
When does an action of a family of groups on a family of spaces have a quotient in the category of families?
That the action being proper and free suffices is an exceedingly useful fact, termed the \emph{Quotient Family Theorem} throughout this thesis.
The proof of this theorem is rather technical, but is modeled closely on the Quotient Manifold Theorem of smooth topology \cite{Lee}, using familiar techniques.

\begin{theorem*}[Quotient Family Theorem]
Let $\fam{G}\acts\fam{X}$ be a proper free action in $\Fam_\Delta$.  
Then $\fam{X}\labelarrow{\pi}\Delta$ factors as $\fam{X}\labelarrow{\piO}\fam{X}/\fam{G}\labelarrow{\bar{\pi}}\Delta$ with $\fam{X}/\fam{G}\to\Delta$ in $\Fam_\Delta$, as a family of families $\piO\colon\fam{X}\to\fam{X}/\fam{G}$ and $\bar{\pi}\colon\fam{X}/\fam{G}\to\Delta$.
\end{theorem*}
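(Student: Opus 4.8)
The plan is to run the Quotient Manifold Theorem argument of \cite{Lee} fiberwise and then upgrade the fiberwise output to a smooth family over $\Delta$. Since the action $\fam{G}\acts\fam{X}$ is fiberwise (each $g\in\fam{G}_\delta$ acts as a map $\fam{X}_\delta\to\fam{X}_\delta$), every orbit lies inside a single member $\fam{X}_\delta$; hence the orbit space $\fam{X}/\fam{G}$ decomposes as the disjoint union of the fiberwise orbit spaces $\fam{X}_\delta/\fam{G}_\delta$, and the original projection descends to a well-defined map $\bar\pi\colon\fam{X}/\fam{G}\to\Delta$ with $\pi=\bar\pi\circ\piO$. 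Because a proper free action restricts to a proper free action of each $\fam{G}_\delta$ on $\fam{X}_\delta$, the classical theorem already endows each member $\fam{X}_\delta/\fam{G}_\delta$ with a smooth structure making $\fam{X}_\delta\to\fam{X}_\delta/\fam{G}_\delta$ a submersion. The real content is to glue these members into a single smooth manifold $\fam{X}/\fam{G}$ over which $\bar\pi$ and $\piO$ are submersions.

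First I would establish the point-set topology of $\fam{X}/\fam{G}$, equipped with the quotient topology from $\piO$. As in the manifold case, $\piO$ is an open map since the $\fam{G}$-saturation of an open set is a union of its translates. Hausdorffness is exactly where properness enters: the defining map $\fam{G}\timesd\fam{X}\to\fam{X}\timesd\fam{X}$, $(g,x)\mapsto(x,g.x)$, is proper, hence closed, so the orbit equivalence relation is a closed subset of $\fam{X}\timesd\fam{X}$; an open quotient by a closed relation is Hausdorff. Second countability and local compactness are inherited from $\fam{X}$ through the open map $\piO$.

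The heart of the proof is the construction of smooth charts on $\fam{X}/\fam{G}$ adapted to $\Delta$, that is, an \emph{adapted family slice} through each point. Fix $x_0\in\fam{X}_{\delta_0}$; freeness together with properness makes the orbit map $\fam{G}_{\delta_0}\to\fam{X}_{\delta_0}$ an embedding, so the orbit is an embedded submanifold of its member. I would then produce a submanifold $\fam{S}\subset\fam{X}$ containing $x_0$ that is itself a subfamily $\fam{S}\to\Delta$, with each $\fam{S}_\delta$ transverse within $\fam{X}_\delta$ to the $\fam{G}_\delta$-orbits and of complementary fiber dimension. Concretely one chooses a slice chart for the embedded orbit inside $\fam{X}_{\delta_0}$ and spreads it smoothly over a neighborhood of $\delta_0$ in $\Delta$, the existence of a smoothly varying transverse complement being guaranteed by the family-of-Lie-groups structure (equivalently, via the family exponential supplied by the \emph{Closed Exponentials are Families} theorem, or via the normal exponential of a fiberwise $\fam{G}_\delta$-invariant metric varying smoothly over $\Delta$). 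Properness then lets me shrink $\fam{S}$ so that each nearby orbit meets the corresponding $\fam{S}_\delta$ in exactly one point, whereupon $\piO|_{\fam{S}}$ is a homeomorphism onto an open subset of $\fam{X}/\fam{G}$. In the resulting coordinates $(\delta,s)$ the map $\bar\pi$ is the projection $(\delta,s)\mapsto\delta$ and $\piO$ reads $(\delta,s,g)\mapsto(\delta,s)$, so both are manifestly submersions.

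Finally I would check that two overlapping slice charts have a smooth transition. The comparison is mediated by the unique group element (unique by freeness) carrying a point of one slice to the corresponding point of the other; this element depends smoothly on the point and on $\delta$ by the implicit function theorem applied to the smooth family action map, with well-definedness guaranteed by the closedness of the orbit relation. This produces a smooth atlas exhibiting $\fam{X}/\fam{G}\to\Delta$ as an object of $\Fam_\Delta$ and both $\piO$ and $\bar\pi$ as smooth submersions factoring $\pi$. \textbf{The main obstacle} is precisely this smooth dependence on the base parameter: the fiberwise Quotient Manifold Theorem is immediate, but assembling the slices so that they vary smoothly in $\delta$ (rather than merely fiber by fiber) and verifying smoothness of the transitions with $\delta$ carried along as extra coordinates is the one genuinely new ingredient, and it is exactly what the preceding theory of families of Lie groups is designed to supply.
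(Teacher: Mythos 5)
Your proposal is correct in outline and shares the paper's overall skeleton: both adapt the Quotient Manifold Theorem of \cite{Lee} to families, and your point-set stage (openness of $\piO$; Hausdorffness of $\fam{X}/\fam{G}$ because properness makes the orbit relation a closed subset of $\fam{X}\timesd\fam{X}$) is exactly the paper's topological preliminaries. Where you genuinely diverge is the chart construction. The paper builds $\fam{G}$-adapted cube charts $\phi\colon U\to I^k\times I^\ell\times I^m$ by applying the Frobenius theorem: the orbit tangent spaces form a smooth integrable distribution on $\fam{X}$ (local frames of the Lie algebra family $\fam{g}\to\Delta$ induce smooth vector fields $p\mapsto \tfrac{d}{dt}\big|_{t=0}\exp(tX_{\pi(p)}).p$ spanning it), a second flattening aligns the members $\fam{X}_\delta$, and properness shrinks the chart so each orbit meets it in one slice; smooth compatibility of the induced quotient charts is then verified by direct computation, showing the first two coordinate blocks of a transition map are independent of the orbit coordinate, with recentering at a common point handled by the diffeomorphisms induced by sections of $\fam{G}\to\Delta$. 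You instead take the slice route: spread a fiberwise slice over $\Delta$, shrink by properness so orbits meet each $\fam{S}_\delta$ once, and obtain smooth transitions from the smooth division map via the implicit function theorem. Your route buys the tube/local-product picture directly and replaces the paper's explicit transition computations with a standard smoothness-of-translation argument; the paper's route avoids needing the division map at all and makes both submersions $(x,y,z)\mapsto(x,y)\mapsto x$ visible in a single chart. Two of your justifications need repair, though neither is fatal: the appeal to \emph{Closed Exponentials are Families} is off-target (that result concerns when exponentiated Lie algebra subfamilies form subgroup families, not the orbit distribution), and a smoothly varying $\fam{G}_\delta$-invariant metric is neither clearly available nor necessary --- the correct ingredient, which is precisely the paper's distribution lemma, is that local triviality of $\fam{g}\to\Delta$ supplies smoothly varying vector fields spanning the orbit directions, after which any auxiliary metric yields a transverse complement. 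Also make explicit that the uniform one-intersection shrinking across nearby $\delta$ uses properness of the family action map $\fam{G}\timesd\fam{X}\to\fam{X}\timesd\fam{X}$ over a compact neighborhood in $\Delta$ (as the paper does), not merely fiberwise properness of each $\fam{G}_\delta\acts\fam{X}_\delta$.
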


\subsection*{Families of Geometries}

These construction techniques provide enough background to define families of homogeneous spaces, and work out their basic theory.
A homogeneous space for Lie group $G$ is encoded either by a choice of a transitive action of $G$ on a smooth manifold $X$, or equivalently by a choice of a closed subgroup $K$ of $G$ (which is the point stabilizer for the translation action on its cosets $X=G/K$).
Accordingly, there are two natural definitions of a \emph{family of homogeneous spaces}; either a family of groups acting on a family of spaces, or a family of groups together with a subfamily of closed subgroups.

\begin{definition}[Group-Space Geometries]
A \emph{family of Klein geometries over 	$\Delta$} is given by a triple $(\fam{G},(\fam{X},\fam{x}))$ of a family of groups $\fam{G}\to\Delta$ acting fiberwise-transitively on a family of spaces $\fam{X}\to\Delta$ over the same base, equipped with a global section $\fam{x}\colon \Delta\to \fam{X}$ choosing a basepoint in each fiber.  
A morphism of geometries $\Phi\colon (\fam{G},(\fam{X},\fam{x}))\to(\fam{G}',(\fam{X}',\fam{x}'))$ is given by a family homomorphism $\phi_{\fam{Grp}}\colon\fam{G}\to\fam{G}'$ together with an equivariant map $\phi_{\fam{Sp}}\colon\fam{X}\to\fam{X}'$ such that $\phi_{\fam{Sp}}\circ\fam{x}=\fam{x}'$.
The category of such geometries is denoted $\cat{GrpSp}$.
\end{definition}

\begin{figure}
\centering 
\includegraphics[width=0.75\textwidth]{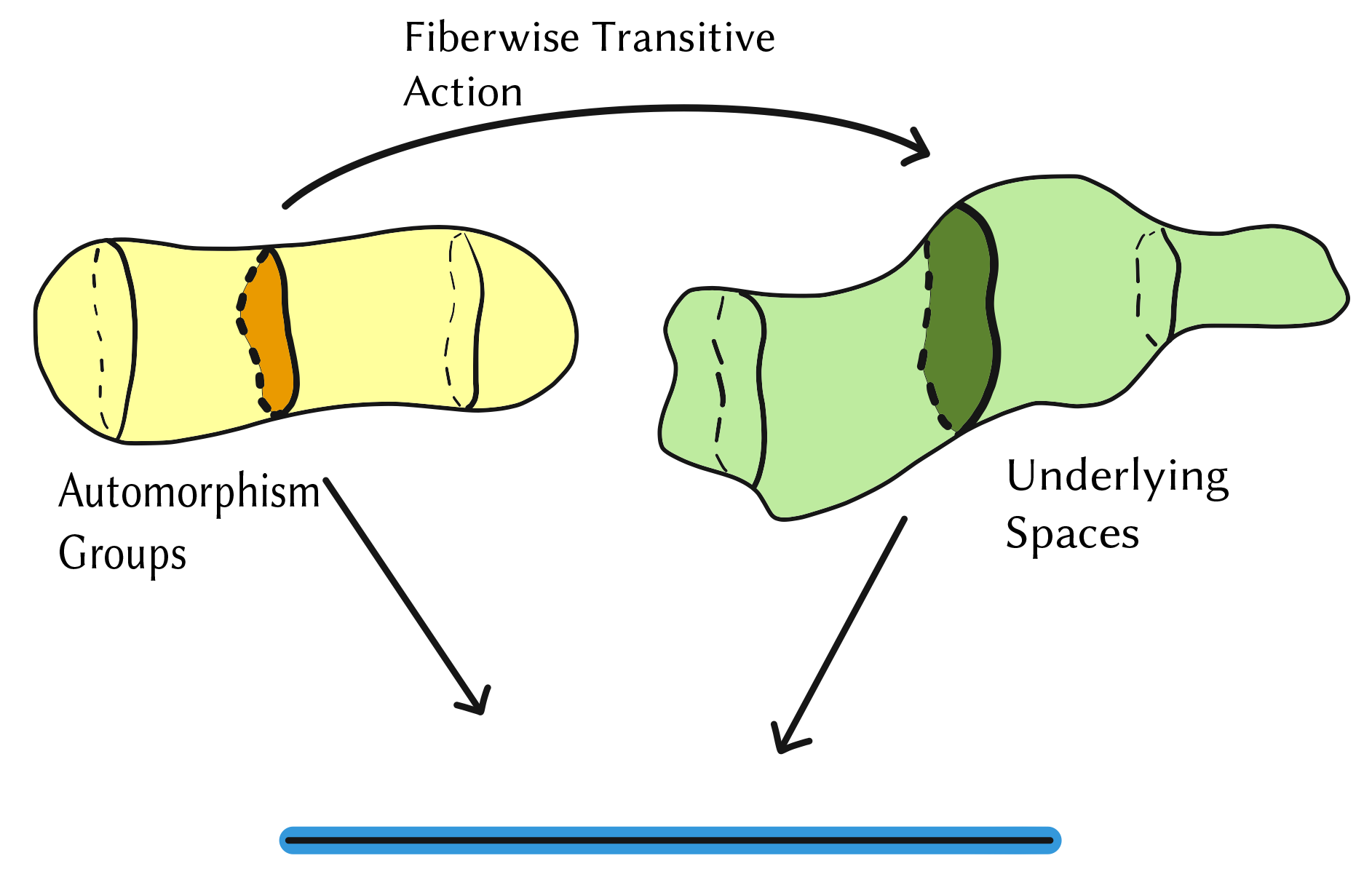}
\caption{A family of geometries is a family of groups acting fiberwise transitively on a family of spaces.}	
\end{figure}

\begin{definition}[Automorphism-Stabilizer Geometries]
A family of Klein geometries over $\Delta$ is given by a pair $(\fam{G,C})$ of a family of groups $\fam{G}\to\Delta$ and a closed subfamily $\fam{C}\subg\fam{G}$.	  
A morphism $\Phi:(\fam{H,K})\to (\fam{G,C})$ is a homomorphism of families $\Phi:\fam{H}\to\fam{G}$ with $\Phi(\fam{K})\subset\fam{C}$.
The category of these geometries is denoted $\cat{AutStb}$.
\end{definition}

\noindent
In practice, we are freely able to pass between these two formalisms when convenient, as there is an equivalence of categories in the background.
Proving this equivalence of categories by explicit construction of functors in each direction is a primary motivation for some of the tools developed earlier in the chapter, including pullbacks and the Quotient Family Theorem.

\begin{theorem*}[Equivalence of Perspectives on Homogeneous Families]
The map $\cat{F}:\cat{AutStb}\to\cat{GrpSp}$ sending a group-stabilizer geometry $(\fam{G,K})$ to the group-space geometry $\fam{(G,(G/K,K))}$ defines a functor.
Likewise, $\Psi\colon \cat{GrpSp}\to\cat{AutStb}$ defined by sending a geometry $(\fam{G,(X,x)})$ to $(\fam{G},\fam{stab}_\fam{G}(\fam{x}))$ defines a functor.	
The functors $\cat{F},\Psi$ define an equivalence of categories $\cat{GrpSp}\cong\cat{AutStb}$.	
\end{theorem*}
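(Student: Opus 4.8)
The plan is to verify that $\cat{F}$ and $\Psi$ are each well-defined functors, and then to exhibit natural isomorphisms $\Psi\circ\cat{F}\cong\id_{\cat{AutStb}}$ and $\cat{F}\circ\Psi\cong\id_{\cat{GrpSp}}$ witnessing the equivalence. The two workhorses throughout will be the proposition that translation by a closed subfamily is a proper free action, together with the Quotient Family Theorem; these are precisely what allow us to form quotients such as $\fam{G}/\fam{K}$ inside $\Fam_\Delta$ and to guarantee that the quotient projection is a surjective submersion of families.

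First I would check that $\cat{F}$ is well-defined. Given $(\fam{G},\fam{K})\in\cat{AutStb}$, the right-translation action of $\fam{K}$ on $\fam{G}$ is proper and free by the translation proposition, so the Quotient Family Theorem produces the family $\fam{G}/\fam{K}\to\Delta$ with a smooth quotient map $\piO\colon\fam{G}\to\fam{G}/\fam{K}$. The left-translation action of $\fam{G}$ on itself commutes with the right $\fam{K}$-action, hence descends to a fiberwise action $\fam{G}\acts\fam{G}/\fam{K}$; this action is fiberwise-transitive because left translation on each $G_\delta$ is transitive on cosets, and the identity section $\delta\mapsto\piO(\fam{e}(\delta))$ supplies the distinguished basepoint $\fam{x}=\fam{K}$. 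On morphisms, a map $\Phi\colon(\fam{H},\fam{K})\to(\fam{G},\fam{C})$ with $\Phi(\fam{K})\subset\fam{C}$ descends to a map $\fam{H}/\fam{K}\to\fam{G}/\fam{C}$ precisely because of that containment; smoothness of the descended map follows from the universal property of the quotient furnished by the Quotient Family Theorem, while equivariance and basepoint-preservation are immediate. Preservation of identities and composites is then formal.

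Next I would check that $\Psi$ is well-defined, where the real content is that the fiberwise stabilizers $\fam{stab}_\fam{G}(\fam{x})$ assemble into a genuine closed subfamily of $\fam{G}$. Let $\Theta\colon\fam{G}\to\fam{X}$ be the family orbit map $\Theta(g)=g.\fam{x}(\pi(g))$, obtained by composing the action with the basepoint section; then $\fam{stab}_\fam{G}(\fam{x})=\Theta\inv(\fam{x}(\Delta))$. Fiberwise transitivity makes the restriction of $\Theta$ to each fiber a submersion, so $\Theta$ is transverse to the embedded section image $\fam{x}(\Delta)$, and hence $\fam{stab}_\fam{G}(\fam{x})$ is a closed embedded subfamily of the correct (locally constant) codimension, with its fiberwise group structure restricted from $\fam{G}$. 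On morphisms, $\Psi$ sends $(\phi_{\fam{Grp}},\phi_{\fam{Sp}})$ to $\phi_{\fam{Grp}}$, and the containment $\phi_{\fam{Grp}}(\fam{stab}_\fam{G}(\fam{x}))\subset\fam{stab}_{\fam{G}'}(\fam{x}')$ follows directly from equivariance of $\phi_{\fam{Sp}}$ and the identity $\phi_{\fam{Sp}}\circ\fam{x}=\fam{x}'$.

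Finally I would construct the two natural isomorphisms. For $\Psi\circ\cat{F}\cong\id_{\cat{AutStb}}$, applying the functors to $(\fam{G},\fam{K})$ yields $(\fam{G},\fam{stab}_\fam{G}(\fam{K}))$, and the fiberwise identity $\mathsf{stab}_{G_\delta}(e K_\delta)=K_\delta$ shows these stabilizers agree with $\fam{K}$ on the nose, giving an identity natural transformation. The harder direction is $\cat{F}\circ\Psi\cong\id_{\cat{GrpSp}}$, where from $(\fam{G},(\fam{X},\fam{x}))$ one must produce a family isomorphism $\fam{G}/\fam{stab}_\fam{G}(\fam{x})\biject\fam{X}$; the natural candidate is the descent of the orbit map $\Theta$ through $\piO$, and this is exactly where I expect the main obstacle. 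Fiberwise the descended map is the classical orbit–stabilizer diffeomorphism, but concluding that it is an isomorphism of families requires producing a smooth bijection with smooth inverse simultaneously over all of $\Delta$. The strategy is to invoke that $\piO$ is a surjective submersion of families (Quotient Family Theorem) and that $\Theta$ is a fiberwise submersion (transitivity), so the induced map is a family morphism that is a fiberwise bijective local diffeomorphism, hence a family isomorphism; naturality in $(\fam{G},(\fam{X},\fam{x}))$ then follows from the naturality of $\Theta$. Assembling these two natural isomorphisms establishes the equivalence $\cat{GrpSp}\cong\cat{AutStb}$.
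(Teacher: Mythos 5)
Your proposal is correct and follows essentially the same route as the paper: $\cat{F}$ is obtained from the properness of translation by a closed subfamily together with the Quotient Family Theorem, $\Psi$ from the fiberwise-submersion criterion showing the stabilizers $\fam{stab}_\fam{G}(\fam{x})$ form a closed subfamily, and the equivalence from $\Psi\cat{F}=\id$ together with a natural isomorphism built from the orbit map. The only cosmetic differences are that you pull the orbit map $\Theta$ back along the basepoint section directly (the paper first establishes the stronger fact that stabilizers of \emph{all} points form a family over $\fam{X}$ via $(g,x)\mapsto(x,g.x)$ and then restricts along $\fam{x}$), and your natural isomorphism $\fam{G}/\fam{stab}_\fam{G}(\fam{x})\to\fam{X}$ runs in the direction opposite to the paper's map $\xi$, with your bijective-fiberwise-submersion check making explicit a smoothness-of-inverse detail the paper leaves implicit.
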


\subsection*{Geometries over Algebras}

The original inspiration for defining families of geometries was the 1-parameter families of groups utilized in formalizing the transition of complex Hyperbolic space to $\R\oplus\R$-Hyperbolic space.
As such, for a first application of this theory we will attempt to generalize this as far as possible, in the end proving a collection of theorems saying \emph{continuously varying families of algebras induce continuously varying families of geometries}.
To have such a theorem, we must first extend the definitions of various familiar types of geometries (say, projective geometry, the geometries of $\SO(p,q)$ in $\RP^n$, and the geometries of $\U(p,q)$ in $\CP^n$) to more general algebras.

The correct definition of projective space over an algebra $A$ is subtle, as the existence of zero divisors causes the group of units to fail to act freely on $A^n\smallsetminus\{0\}$.
Letting $Z(A^n)$ be the \emph{generalized zeroes}, the points of $A^n$ such that the $A^\times$ action is not free, provides a reasonable analog of $K\mathsf{P}^n$.

\begin{definition*}[Generalized Projective Space]
The $n-1$ dimensional projective geometry over $A$ has domain $\mathsf{AP}^{n-1}=(A^n\smallsetminus Z(A^n))/\sim$ for 
$\vec{v}\sim\vec{w}$ if there is an $a\in A^\times$ such that $a\vec{v}=\vec{w}$.  The (non-effective) automorphism group is $\GL(n;A)$ or $\SL(n;A)$.
\end{definition*} 

\noindent
This notion of projective space behaves nicely with respect to direct sums of algebras, allowing us for example to easily understand the projective geometries over $\R\oplus\R$.

\begin{proposition*}
Let $A=A_1\oplus A_2$ be a direct sum of commutative algebras.  Then  the projective geometry $A\mathsf{P}^n=(A_1\oplus A_2)\mathsf{P}^n$ decomposes as a direct product of the projective geometries over $A_1\mathsf{P}^n\times A_2\mathsf{P}^n$.
\end{proposition*}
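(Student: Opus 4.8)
The plan is to reduce the claim to three elementary facts about a direct sum $A=A_1\oplus A_2$ of commutative unital algebras: that its units split as a product, that its generalized zero set splits correspondingly, and that its matrix groups split as products. Throughout I identify $A^{n+1}$ with $A_1^{n+1}\oplus A_2^{n+1}$ by regrouping coordinates, so a vector reads $\vec{v}=(\vec{v}_1,\vec{v}_2)$ and scalar multiplication by $a=(a_1,a_2)$ is componentwise, $a\vec{v}=(a_1\vec{v}_1,a_2\vec{v}_2)$; recall that $A\mathsf{P}^n$ is the quotient of $A^{n+1}\smallsetminus Z(A^{n+1})$ by this scaling action of $A^\times$.

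First I would record the ring-theoretic input: an element $(a_1,a_2)$ of $A_1\oplus A_2$ is invertible if and only if each $a_i$ is, so $A^\times=A_1^\times\times A_2^\times$ with identity $(1_{A_1},1_{A_2})$. The key step is then the analysis of the generalized zeroes. Because scaling is componentwise, the stabilizer in $A^\times$ of $\vec{v}=(\vec{v}_1,\vec{v}_2)$ factors as $\mathsf{Stab}_{A_1^\times}(\vec{v}_1)\times\mathsf{Stab}_{A_2^\times}(\vec{v}_2)$, and such a product is trivial exactly when both factors are. Hence the $A^\times$-action fails to be free at $\vec{v}$ precisely when the $A_1^\times$-action fails at $\vec{v}_1$ or the $A_2^\times$-action fails at $\vec{v}_2$; taking complements yields the decomposition I regard as the heart of the proof,
$$A^{n+1}\smallsetminus Z(A^{n+1})=\left(A_1^{n+1}\smallsetminus Z(A_1^{n+1})\right)\times\left(A_2^{n+1}\smallsetminus Z(A_2^{n+1})\right).$$

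With this in hand I would show the defining equivalence relation also splits: $\vec{v}\sim\vec{w}$ means $a\vec{v}=\vec{w}$ for some $a\in A^\times$, which by the factorizations of $A^\times$ and of the action is equivalent to $\vec{v}_1\sim\vec{w}_1$ and $\vec{v}_2\sim\vec{w}_2$ holding separately. Quotienting a product of sets by a product of groups acting coordinatewise returns the product of quotients, giving the underlying-space isomorphism $A\mathsf{P}^n\cong A_1\mathsf{P}^n\times A_2\mathsf{P}^n$. To promote this to an isomorphism of geometries I would run the automorphism side in parallel: the matrix algebra splits, $\M(n+1;A)\cong\M(n+1;A_1)\oplus\M(n+1;A_2)$, so passing to units $\GL(n+1;A)\cong\GL(n+1;A_1)\times\GL(n+1;A_2)$, and similarly $\SL$ after noting that $\det(M_1,M_2)=(\det M_1,\det M_2)$. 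Since $(M_1,M_2)$ acts on $(\vec{v}_1,\vec{v}_2)$ componentwise, the product group acts as the product of the two factor actions, which is precisely the structure of a direct product of geometries.

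I expect the only genuine subtlety to be the generalized-zero computation, as it is the single place where the refined definition of $A\mathsf{P}^n$ (rather than naive projectivization by $A^\times$) actually enters; the remaining steps are bookkeeping with the product decomposition. Some care is also warranted in confirming that the product-of-quotients identification is an isomorphism in the ambient category of spaces and not merely a set bijection, but this is routine once both the equivalence relation and the acting group have been shown to factor as products.
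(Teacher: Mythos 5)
Your proof is correct, but it runs along a genuinely different track from the paper's. The paper works entirely in its Automorphism--Stabilizer formalism: writing $A=A_1e_1+A_2e_2$ with orthogonal idempotents, it checks that $\GL(n;A)\cong\GL(n;A_1)\oplus\GL(n;A_2)$ and $\St(n;A)\cong\St(n;A_1)\oplus\St(n;A_2)$, observes that translation preserves this splitting, and concludes immediately that the pair $(\GL(n;A),\St(n;A))$ is the product of the two factor pairs --- so the product structure on the underlying space $\GL/\St$ comes for free from the coset-space definition, and the proof is essentially three lines. You instead take the Group--Space definition at face value and do the work the paper only gestures at in its follow-up remark: your stabilizer factorization $\mathsf{Stab}_{A^\times}(\vec{v})=\mathsf{Stab}_{A_1^\times}(\vec{v}_1)\times\mathsf{Stab}_{A_2^\times}(\vec{v}_2)$ gives the splitting $A^{n+1}\smallsetminus Z(A^{n+1})=\bigl(A_1^{n+1}\smallsetminus Z(A_1^{n+1})\bigr)\times\bigl(A_2^{n+1}\smallsetminus Z(A_2^{n+1})\bigr)$, and then the equivalence relation and the acting group both split, yielding the product of quotients. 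This is exactly the content of the paper's informal ``to understand this in terms of spaces'' paragraph, except that the paper phrases generalized zeroes via the ideal condition $I_p\neq A$ while you use failure of freeness; for finite-dimensional commutative algebras these agree (if $I_p=A$ then $1=\sum c_ip_i$ forces any stabilizing unit to be $1$; conversely a proper $I_p$ has nonzero annihilator in an Artinian algebra, producing a nontrivial unit stabilizer $1+z$), and your stabilizer-factorization argument is the cleaner way to see the splitting under either characterization. What the paper's route buys is brevity and automatic smoothness, since each $\mathsf{AP}^n$ is already a manifold as a coset space; what yours buys is an explicit identification of the underlying space and the defining action, at the modest cost of the product-of-quotients verification you correctly flag (which is routine: the orbit maps are open, so the product of the two quotient maps is again a quotient map).
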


\noindent
To construct generalized orthogonal / unitary groups, we need to consider algebras with involutions $\sigma\colon A\to A$.
Such an involution induces an analog of conjugate-transpose on the matrix algebras $\M(n,A)$, and a matrix $J$ is said to be Hermitian if $\sigma(J)^T=J$.

\begin{definition*}[Generalized Unitary Groups]
A matrix $X$ \emph{preserves} a hermitian $J$ if $\sigma(X)^T JX=J$. 
The generalized unitary group $\U(J,A,\sigma)$ consists of the matrices preserving $J$: $\U(J;A,\sigma)=\set{X\mid \sigma(X)^T JX=J}$.
\end{definition*}

\noindent
This generalized notion of unitary group encompasses both the classical orthogonal and unitary groups (the involution is allowed to be trivial), together with many new examples.
As subgroups of $\GL(n;A)$, these generalized unitary groups define \emph{generalized unitary geometries}, which have models naturally constructed within $\mathsf{AP}^n$.

\begin{definition*}[Generalized Unitary Geometry]
A unitary geometry over $(A,\sigma)$ is given by the pair $(G,C)=(\U(J;A),\mathsf{S}_J)$ for $J\in\Herm(n;A)$ and $\mathcal{S}_J$ the orbit of $[0:\cdots 0:1]\in\mathsf{AP}^n$.
\end{definition*}

\noindent
As with projective geometry, we investigate the isomorphism type of the unitary geometries over decomposable algebras.

\begin{proposition*}
If $A=A_1\oplus A_2$ and $\sigma$ preserves the factors $\sigma_1\oplus\sigma_2: A_1\oplus A_2\to A_1\oplus A_2$, then $\U(J;A,\sigma)\cong \U(J_1;A_1,\sigma_1)\times \U(J_2;A_2,\sigma_2)$ decomposes as a product for $J=J_1e_1+ J_2e_2\in\M(n,A)$.
\end{proposition*}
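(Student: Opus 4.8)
The plan is to reduce everything to the central idempotent decomposition of $A = A_1 \oplus A_2$. Write $e_1 = (1,0)$ and $e_2 = (0,1)$ for the orthogonal idempotents, so that $e_1 + e_2 = 1$, $e_i^2 = e_i$, and $e_1 e_2 = 0$. First I would record the induced decomposition of the matrix algebra: since $A = A_1 \oplus A_2$ as algebras, entrywise splitting gives a ring isomorphism $\M(n;A) \cong \M(n;A_1) \times \M(n;A_2)$, under which every $X \in \M(n;A)$ is written uniquely as $X = X_1 e_1 + X_2 e_2$ with $X_i \in \M(n;A_i)$, and matrix multiplication respects this splitting because the $e_i$ are central and orthogonal. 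This is the same mechanism underlying the product decomposition of projective space over $A_1 \oplus A_2$ established above.

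The crucial compatibility step is that the conjugate-transpose operator respects this decomposition. Because $\sigma = \sigma_1 \oplus \sigma_2$ preserves the factors, applying $\sigma$ entrywise and then transposing commutes with the idempotent splitting: $\sigma(X)^T = \sigma_1(X_1)^T e_1 + \sigma_2(X_2)^T e_2$. This is precisely where the hypothesis that $\sigma$ preserve the factors is essential; an involution mixing $A_1$ and $A_2$ (such as the swap $(a,b) \mapsto (b,a)$ realizing $\R\oplus\R$) would not split in this way, and the conclusion would genuinely fail — consistent with the earlier identification of $\U(n,1;\R\oplus\R)$ with $\GL(n+1;\R)$, which is not a product. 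I would also note here that the Hermitian condition $\sigma(J)^T = J$ splits as $\sigma_i(J_i)^T = J_i$, so each $J_i$ is Hermitian over $A_i$ and the factor groups $\U(J_i;A_i,\sigma_i)$ are well-defined.

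With these two facts in hand the defining equation decouples. Writing $J = J_1 e_1 + J_2 e_2$ and substituting, the orthogonality relations $e_i e_j = \delta_{ij} e_i$ collapse every cross term, so that
$$\sigma(X)^T J X = \sigma_1(X_1)^T J_1 X_1\, e_1 + \sigma_2(X_2)^T J_2 X_2\, e_2.$$
Comparing componentwise with $J = J_1 e_1 + J_2 e_2$, the single relation $\sigma(X)^T J X = J$ is equivalent to the pair $\sigma_1(X_1)^T J_1 X_1 = J_1$ and $\sigma_2(X_2)^T J_2 X_2 = J_2$. Hence $X \in \U(J;A,\sigma)$ if and only if $X_1 \in \U(J_1;A_1,\sigma_1)$ and $X_2 \in \U(J_2;A_2,\sigma_2)$, and the assignment $X \mapsto (X_1, X_2)$ is the desired isomorphism. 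Finally I would check that this map is a group homomorphism, which is immediate since $\M(n;A) \cong \M(n;A_1) \times \M(n;A_2)$ is a ring isomorphism carrying products to componentwise products; bijectivity is just uniqueness of the idempotent decomposition. The main obstacle is not any single hard computation but the bookkeeping of verifying that $\sigma(X)^T$ splits correctly and that no cross terms survive — once that is clean, the result is essentially forced.
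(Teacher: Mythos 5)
Your proposal is correct and follows essentially the same route as the paper: decompose along the central orthogonal idempotents $e_1,e_2$, observe that the factor-preserving hypothesis on $\sigma$ makes the conjugate-transpose and the Hermitian condition split componentwise, and conclude that $X^\dagger J X = J$ decouples into the two equations $X_i^\dagger J_i X_i = J_i$, so $X \mapsto (X_1,X_2)$ is the desired isomorphism. Your additional bookkeeping (the explicit ring isomorphism $\M(n;A)\cong\M(n;A_1)\times\M(n;A_2)$ and the remark contrasting this with the swap involution on $\R\oplus\R$) is a slightly more detailed rendering of the same argument.
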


\noindent
The familiar case of $\R\oplus\R$ is generalized by algebras $\Lambda=A\oplus A$ equipped with the coordinate swap map as an involution.
Unitary geometries over these algebras are also closely tied to projective geometry (in fact, one can build a \emph{generalized point hyperplane projective geometry} for each), as we see below on the level of automorphism groups.

\begin{proposition*}
Let $\Lambda=A\oplus A$ and $\sigma:\Lambda\to\Lambda$ be the coordinate swap map.  
Then $\U(J;\Lambda,\sigma)\cong\GL(n,A)$ for any non-degenerate $\sigma$-hermitian matrix $J$, and the corresponding unitary geometry is point-hyperplane geometry over $A$.
\end{proposition*}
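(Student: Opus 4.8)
The plan is to exploit the idempotent decomposition of $\Lambda=A\oplus A$ to trivialize both the unitary condition and the underlying projective space. Write $e_1=(1,0)$ and $e_2=(0,1)$ for the orthogonal idempotents, so that every matrix $X\in\M(n;\Lambda)$ decomposes uniquely as $X=X_1e_1+X_2e_2$ with $X_1,X_2\in\M(n;A)$, and multiplication is componentwise, $(XY)_i=X_iY_i$. Since the swap involution sends $e_1\leftrightarrow e_2$, applying $\sigma$ entrywise gives $\sigma(X)=X_2e_1+X_1e_2$, hence $\sigma(X)^T=X_2^Te_1+X_1^Te_2$. First I would feed this into the Hermitian condition $\sigma(J)^T=J$: writing $J=J_1e_1+J_2e_2$ it forces $J_2=J_1^T$, so a $\sigma$-Hermitian matrix is exactly $J=J_1e_1+J_1^Te_2$ with $J_1\in\M(n;A)$ arbitrary, and non-degeneracy of $J$ amounts to invertibility of $J_1$.

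Next I would compute the defining relation $\sigma(X)^TJX=J$ in components. A direct expansion using $e_ie_j=\delta_{ij}e_i$ gives $\sigma(X)^TJX=(X_2^TJ_1X_1)e_1+(X_1^TJ_1^TX_2)e_2$, and since the $e_2$-component is the transpose of the $e_1$-component, the whole system collapses to the single equation $X_2^TJ_1X_1=J_1$. As $J_1$ is invertible this shows $X_1$ is invertible and pins down the second component, $X_2=J_1^{-T}X_1^{-T}J_1^T$. Consequently the componentwise projection $X\mapsto X_1$ is a bijection $\U(J;\Lambda,\sigma)\to\GL(n;A)$: it is surjective because for any $X_1\in\GL(n;A)$ the matrix $X_1e_1+(J_1^{-T}X_1^{-T}J_1^T)e_2$ is unitary, and it is a homomorphism because multiplication in $\Lambda$ is componentwise. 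This establishes $\U(J;\Lambda,\sigma)\cong\GL(n;A)$.

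For the identification of geometries I would use the decomposition $\Lambda\mathsf{P}^{n-1}\cong A\mathsf{P}^{n-1}\times A\mathsf{P}^{n-1}$ from the direct-sum proposition, under which $[v]=[v_1e_1+v_2e_2]$ becomes the pair $([v_1],[v_2])$ and the unitary element $X$ acts by $v_1\mapsto X_1v_1$ on the first factor and by $v_2\mapsto X_2v_2=J_1^{-T}X_1^{-T}J_1^Tv_2$ on the second. Setting $u:=J_1^Tv_2$ absorbs the fixed conjugation: in the $u$-coordinate the second factor transforms by the contragredient action $u\mapsto X_1^{-T}u$. Thus, reading $[v_1]$ as a point $p$ and $[u]$ as the covector of a hyperplane $H$, the element $A=X_1$ acts precisely by $A.(p,H)=(Ap,A^{-T}H)$, matching the automorphisms of point-hyperplane geometry over $A$. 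To pin down the domain I would compute the Hermitian form on a vector: $\sigma(v)^TJv=(v_2^TJ_1v_1)(e_1+e_2)$ is the diagonal embedding of $v_2^TJ_1v_1=u^Tv_1$, and projective rescaling by a unit of $\Lambda$ scales this entry by an arbitrary unit of $A$. Hence the orbit $\mathsf{S}_J$ of the base point is exactly $\set{[v]\mid u^Tv_1\in A^\times}$, which under the above dictionary is the non-incidence locus $\set{(p,H)\mid p\notin H}$, the domain of point-hyperplane geometry.

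I expect the main obstacle to be this last identification of the orbit with the non-incidence locus, rather than the group isomorphism, which is essentially formal. Two points require care. First, the transpose-based involution $X\mapsto\sigma(X)^T$ is an anti-homomorphism on matrices only when the entries of $A$ commute, so the componentwise collapse of the unitary equation, and the clean reading of $u^Tv_1$ as the point-hyperplane pairing, rests on the commutativity already assumed for $A$; I would verify each transpose identity with this in mind. Second, for the stated base point $[0:\cdots:0:1]$ to actually lie in $\mathsf{S}_J$ one needs $(J_1)_{nn}$ to be a unit; I would either normalize $J$ within its congruence class so that this holds, or equivalently observe that $\GL(n;A)$ acts transitively on non-incident pairs, so the orbit of any non-incident base point is the entire non-incidence locus and the choice of representative is immaterial.
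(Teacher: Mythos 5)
Your proof is correct, and it splits against the paper's treatment in an instructive way. For the group isomorphism your argument is essentially the paper's own: the paper likewise computes $\Herm(n;\Lambda,\sigma)=\set{J_1e_1+J_1^Te_2}$, expands $\sigma(X)^TJX=J$ into the two redundant componentwise equations, extracts invertibility of the components from the determinant, solves $X_2=J_1^{-T}X_1^{-T}J_1^T$, and exhibits the isomorphism with $\GL(n;A)$ (the paper writes the inverse map $X\mapsto Xe_1+(J X\inv J\inv)^Te_2$ rather than your projection onto the first component, an immaterial difference). For the geometric half, however, you take a genuinely different and strictly more general route. The paper only carries out the point-hyperplane identification in the special case $\Lambda=\R\oplus\R$: it conjugates to the definite-type form, identifies the unit sphere $\mathcal{S}_{\R\oplus\R}(n,1)$ with the level set $\mathcal{L}_1$ of the dual pairing via the linear change of coordinates $(\phi_i,v_i)=(x_i+y_i,x_i-y_i)$, matches the idempotent description $\set{Xe_++X^{-T}e_-}$ of the unitary group with the twisted diagonal action, and then has to track a two-fold discrepancy between quotienting by unit-norm elements and by all of $(\R\oplus\R)^\times$, which is why it only concludes a \emph{local} isomorphism there. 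You instead work for arbitrary commutative $A$ and arbitrary nondegenerate $J$ directly inside $\Lambda\mathsf{P}^{n-1}\cong A\mathsf{P}^{n-1}\times A\mathsf{P}^{n-1}$, absorbing $J$ by the substitution $u=J_1^Tv_2$ so that the second factor carries the contragredient action and the Hermitian pairing descends to $u^Tv_1$; this proves the general statement as asserted (the paper's body establishes only the group isomorphism at this level of generality) and sidesteps the quotient bookkeeping, since the unitary geometry of that chapter is already defined in $\mathsf{AP}^n$, i.e.\ modulo all of $\Lambda^\times$. Your two flagged care points are the right ones, and your fixes work: the basepoint normalization is exactly the paper's lemma absorbing the basepoint into the congruence class of $J$, and here it is especially clean because $C=C_1e_1+C_2e_2$ with $C_1,C_2$ independently invertible sends $J_1\mapsto C_2^TJ_1C_1$, so taking $C_1=I$, $C_2=J_1^{-T}$ makes every nondegenerate $\sigma$-Hermitian $J$ congruent to the identity. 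If you instead use the transitivity fallback, note that it generalizes the paper's explicit calculation (given $\phi^Tv=1$, build $X$ with first column $v$ and first row of $X\inv$ equal to $\phi$), whose only nonformal ingredient over a general $A$ is completing a unimodular column to an invertible matrix; this holds for finite-dimensional real algebras since they are Artinian, but deserves the one line you would need to add.
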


\subsection*{Applications}

Finally, having developed the terminology of families of geometries and potential interesting participants (familiar geometries defined over algebras), we look to provide some first motivating applications of this theory.
In particular, we focus on generalizations of the transition from $\Hyp_\C^n$ to $\Hyp_{\R\oplus\R}^n$, showing that any given family of algebras produces corresponding families of projective, unitary and orthogonal geometries.
We then turn briefly to another application, and study transitions that occur from a group action on a space, when we may interpret the collection of orbits as a smoothly transitioning family of spaces.
This will, among other things, provide a means of transitioning between Hyperbolic and de Sitter geometry, which does not arise within an ambient projective geometry.

\begin{comment}
Recall that a family of algebras may be thought of as a vector bundle together with a map $\mu:\fam{A}\timesd\fam{A}\to\fam{A}$ restricting slicewise to the multiplication of an algebra structure on $\fam{A}_\delta$.
A family $\fam{A}\to\Delta$ gives rise to a family of matrix algebras $\fam{M}(n,\fam{A})\to\Delta$, constructed on the underlying space $\fam{A}^{n^2}\to\Delta$ by imposing matrix multiplication.  
An involution $\sigma$ on $\fam{A}$ can be promoted to an involution $\dagger\colon\fam{M}(n;\fam{A})\to\fam{M}(n;\fam{A})$ given by $X^\dagger=\sigma(X)^T$.  
Applying \cref{prop:Fix_Family} to $\dagger$ gives the families $\fam{Fix}(\dagger)=\fam{Herm}(n;\fam{A},\sigma)$ and $\fam{Neg}(\dagger)=\fam{SkHerm}(n;A,\sigma)$ of Hermitian and skew-Hermitian matrices, respectively. 
The usual formula for the determinant provides a $\cat{C}$-map of families $\fam{det}\colon\fam{M}(n,\fam{A})\to\fam{A}$.
\end{comment}

\begin{theorem*}[Linear Groups Vary Continuously]
Let $\fam{A}\to\Delta$ be a smooth family of algebras.  Then $\fam{GL}(n,\fam{A})\to\Delta$ is a family of Lie groups.
The groups $\fam{SL}(n;\fam{A})$ are a subfamily of $\fam{GL}(n;\fam{A})\to\Delta$.
\end{theorem*}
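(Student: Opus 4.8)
The plan is to recognize $\fam{GL}(n,\fam{A})$ as the fiberwise group of units of the family of matrix algebras $\fam{M}(n,\fam{A})\to\Delta$ and to prove the general fact that the unit group of any family of finite-dimensional unital associative algebras is again a family of groups; the special linear groups are then cut out inside by the determinant. First I would record that $\fam{M}(n,\fam{A})\to\Delta$ is itself a family of algebras: its total space is the vector bundle $\fam{A}^{n^2}\to\Delta$, its multiplication is assembled from the algebra multiplication $\mu$ of $\fam{A}$ by the matrix-product formula (smooth across $\Delta$ because $\mu$ is), and its unit section sends $\delta$ to the identity matrix built from the unit section $\fam{1}$ of $\fam{A}$. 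It then suffices to prove that for a family $\fam{B}\to\Delta$ of finite-dimensional unital associative algebras the group of units $\fam{B}^\times=\bigcup_\delta \fam{B}_\delta^\times\times\{\delta\}$ is a family of groups, and to apply this with $\fam{B}=\fam{M}(n,\fam{A})$, giving $\fam{B}^\times=\fam{GL}(n,\fam{A})$.

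The central tool is the regular representation. For $X\in\fam{B}_\delta$ let $L_X\colon\fam{B}_\delta\to\fam{B}_\delta$ be left multiplication $Y\mapsto\mu_\delta(X,Y)$, a linear endomorphism of the fiber, and set $N(X)=\det(L_X)$. In a finite-dimensional unital associative algebra an element is a unit precisely when $L_X$ is invertible (the right inverse $L_X^{-1}(\fam{1})$ is automatically two-sided), so $N(X)\neq 0$ characterizes units. Working in a local trivialization of $\fam{B}\to\Delta$, the entries of $L_X$ are smooth in $(\delta,X)$ (being values of $\mu$), and $N$ is a polynomial in them; hence $N\colon\fam{B}\to\R$ is smooth on the total space and $\fam{B}^\times=\{N\neq 0\}$ is open. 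An open subset of the total space of a family, with the restricted projection, is again a family, and the projection is surjective since every fiber contains $\fam{1}(\delta)$; so $\fam{B}^\times\to\Delta$ is a family of spaces whose fiber over $\delta$ is exactly $\fam{B}_\delta^\times$.

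Next I would verify the group-object structure maps of $\Fam_\Delta$. Multiplication is the restriction of $\mu$ and lands in $\fam{B}^\times$ because a product of units is a unit; the identity is the unit section; both are manifestly smooth and fiber-preserving. The only real point is smoothness of inversion across the whole family, and here Cramer's rule applied to the regular representation does the work: $\iota(X)=L_X^{-1}(\fam{1}(\delta))=\tfrac{1}{N(X)}\,\operatorname{adj}(L_X)\,\fam{1}(\delta)$. Since $\operatorname{adj}(L_X)$ is polynomial in the (smooth) entries of $L_X$ and $N(X)\neq 0$ on $\fam{B}^\times$, this is a single smooth expression in $(\delta,X)$, valid uniformly across fibers even where the fiber groups change isomorphism type. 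This is the step I expect to be the main obstacle: fiberwise each $\fam{B}^\times_\delta$ is a Lie group with smooth inversion, but membership in $\Fam_\Delta$ demands joint smoothness of $(\delta,X)\mapsto X^{-1}$ on the total space, which cannot be read off fiber by fiber and instead requires the explicit Cramer formula. With $\mu$, the unit section, and $\iota$ smooth, this exhibits $\fam{GL}(n,\fam{A})\to\Delta$ as a group object, i.e.\ a family of Lie groups.

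Finally, for the special linear groups I would use the determinant family morphism $\det\colon\fam{M}(n,\fam{A})\to\fam{A}$ given by the usual permutation-sum formula (polynomial in $\mu$, hence a morphism in $\Fam_\Delta$). Assuming $\fam{A}$ is fiberwise commutative, $\det$ restricts to a family homomorphism $\fam{GL}(n,\fam{A})\to\fam{A}^\times$ onto the unit-group family (itself the case $n=1$ of the construction above), and $\fam{SL}(n;\fam{A})=\det^{-1}(\fam{1}(\Delta))$ is its kernel, a closed subset with each fiber the subgroup $\SL(n,\fam{A}_\delta)$. To see it is a subfamily I would check that $\fam{1}(\Delta)$ is a fiberwise regular value: at the identity $d(\det)$ is the trace $\tr\colon\gl(n,\fam{A}_\delta)\to\fam{A}_\delta$, which is surjective onto $\mathrm{Lie}(\fam{A}^\times_\delta)=\fam{A}_\delta$, and equivariance propagates this to every point of each fiber, uniformly in $\delta$. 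Transversality of the morphism $\det$ to the section $\fam{1}$ then makes $\det^{-1}(\fam{1}(\Delta))$ a submanifold on which the projection restricts to a submersion, i.e.\ a subfamily; when the fibers $\SL(n,\fam{A}_\delta)$ happen to be connected (as for the algebras $\Lambda_\delta$) one may alternatively invoke the Closed Exponentials are Families theorem with the trace-zero matrices as the Lie-algebra subfamily. Either way $\fam{SL}(n;\fam{A})$ is a subfamily of $\fam{GL}(n;\fam{A})\to\Delta$, completing the proof.
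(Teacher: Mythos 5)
Your proposal is correct, and the overall architecture (units of the family of matrix algebras, then $\fam{SL}$ cut out by the determinant) is the paper's; the interesting divergence is in \emph{how} you establish that the units form a family. The paper proves the complementary statement that the zero divisors $\fam{A}_Z$ form a closed subset, by a soft compactness argument: given zero divisors $z_i\to z$, choose norm-one witnesses $w_i$ with $z_iw_i=0$ in a local trivialization of the underlying vector bundle, subconverge $w_i\to w$, and conclude $zw=0$ by continuity of multiplication. That argument uses only continuity of $\mu$ and the finite-dimensional dichotomy \say{unit or zero divisor}, so it works verbatim for merely topological families. You instead exhibit the units as the nonvanishing locus of the regular-representation norm $N(X)=\det(L_X)$, which is smooth in $(\delta,X)$; this buys you something the paper leaves implicit, namely a single closed-form inversion $\iota(X)=N(X)^{-1}\operatorname{adj}(L_X)\,\fam{1}(\delta)$ that is \emph{jointly} smooth across fibers and, notably, requires no commutativity hypothesis --- the paper's cofactor-based inversion is only set up for commutative algebras, so your route verifies the group-object axioms for $\fam{GL}(n,\fam{A})$ over noncommutative families (e.g.\ the quaternion family $\fam{H}\to\R^2$) more directly. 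You are also right to flag joint smoothness of inversion as the real content; the paper's terse \say{restricted projection gives a smooth family} glosses this. For the $\fam{SL}$ part your argument is essentially the paper's: your \say{trace surjective at the identity, propagated by equivariance} is exactly the paper's computation $\tfrac{d}{dt}\big|_{t=0}\det((I+tX)B)=\tr(X)\det(B)$ with $X=\tfrac{\alpha}{n\det(B)}I$, after which both of you conclude via the fiberwise-submersion criterion and pullback along the unit section $\fam{1}\colon\Delta\to\fam{A}^\times$; your commutativity caveat matches the paper's hypothesis in the corresponding corollary, and your alternative appeal to the closed-exponentials theorem for connected fibers is a legitimate, if narrower, fallback.
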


\noindent
This has an interesting corollary, in the world of geometries defined over $\C$ and $\R\oplus\R$, providing a new transition between familiar geometries.

\begin{corollary*}
The projective spaces $\Lambda_\delta\mathsf{P}^n$ form a continuous family of geometries, transitioning from $\CP^n$ to $(\R\oplus\R)\mathsf{P}^n\cong\RP^n\times\RP^n$
\end{corollary*}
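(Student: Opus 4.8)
The plan is to realize $\Lambda_\R\mathsf{P}^n$ directly as a quotient family, using the family of general linear groups supplied by the preceding theorem together with the Quotient Family Theorem. Write $\fam{V}\to\R$ for the fibrewise complement of the generalized zeroes inside the vector bundle $\Lambda_\R^{n+1}\to\R$, so that $\fam{V}_\delta=\Lambda_\delta^{n+1}\smallsetminus Z(\Lambda_\delta^{n+1})$. The first task is to check that $\fam{V}$ really is a family of spaces, i.e. that the total set $Z=\bigcup_\delta Z(\Lambda_\delta^{n+1})\times\{\delta\}$ is closed in $\Lambda_\R^{n+1}$; then $\fam{V}$ is open and the bundle projection restricts to a submersion $\fam{V}\to\R$. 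First I would verify closedness by a sequence argument, the only interesting case being $\delta_k\to 0$: writing vectors in $1,\lambda$–coordinates, a generalized zero over $\delta>0$ is one whose $e_\pm$–component vanishes (for the idempotents $e_\pm$ of $\Lambda_\delta\cong\R\oplus\R$), i.e. $a_j=\mp\sqrt{\delta_k}\,c_j$ for all $j$; letting $\delta_k\to 0$ forces the real part $a_j\to 0$, so the limit is a purely-$\ep$ vector, which is exactly $Z(\R_\ep^{n+1})=\ep\,\R^{n+1}$ over $\delta=0$.

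Next I would produce the action to quotient by. By the theorem on linear groups, $\fam{GL}(1;\Lambda_\R)=\Lambda_\R^\times$ is a family of groups, sitting inside $\fam{GL}(n+1;\Lambda_\R)$ as the central scalars; it acts on $\fam{V}$ by scalar multiplication, a morphism in $\Fam_\R$, and by the very definition of $Z$ this action is free on each fibre. I would then invoke the Quotient Family Theorem to conclude that $\fam{V}/\Lambda_\R^\times=:\Lambda_\R\mathsf{P}^n$ is again a family of spaces, with basepoint section $\delta\mapsto[0:\cdots:0:1]$. Since scalars are central and each $\Lambda_\delta$ is commutative, the left multiplication action of $\fam{GL}(n+1;\Lambda_\R)$ on $\fam{V}$ commutes with the scalar action and therefore descends to $\Lambda_\R\mathsf{P}^n$; as $\GL(n+1;\Lambda_\delta)$ acts transitively on $\Lambda_\delta\mathsf{P}^n$ fibrewise, the triple $(\fam{GL}(n+1;\Lambda_\R),(\Lambda_\R\mathsf{P}^n,\fam{x}))$ is a family of geometries in the sense of $\cat{GrpSp}$.

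It then remains to identify the members at the two ends. For $\delta<0$ the algebra isomorphism $\Lambda_\delta\cong\C$ is immediate, and naturality of the generalized-projective-space construction in the algebra gives $\Lambda_\delta\mathsf{P}^n\cong\CP^n$. For $\delta>0$ the isomorphism $\Lambda_\delta\cong\R\oplus\R$ together with the Proposition on direct sums of commutative algebras yields $\Lambda_\delta\mathsf{P}^n\cong(\R\oplus\R)\mathsf{P}^n\cong\RP^n\times\RP^n$. This exhibits the asserted transition.

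The step I expect to be the real obstacle is properness of the scalar action \emph{as a family}, which the Quotient Family Theorem requires but which is not automatic here, since the fibre groups $\Lambda_\delta^\times$ change topological type ($\C^\times$ versus $(\R^\times)^2$ versus $\R^\times\ltimes\R$) as $\delta$ crosses $0$. Fibrewise properness is routine — a coordinate with invertible entry pins down the scalar and bounds it away from $0$ and $\infty$ — so the genuine content is uniformity across the transition: for a compact $K\subset\fam{V}\timesd\fam{V}$, lying over a compact $\delta$–interval, one must bound the scalars $g$ with $g.v=w$ for $(v,w)\in K$, and here the closedness of $Z$ is what supplies a uniform lower bound on the norm of an invertible coordinate of $v$ over $K$, hence a compact set of admissible scalars. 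A clean alternative that sidesteps this is to work in $\cat{AutStb}$: take $\fam{G}=\fam{GL}(n+1;\Lambda_\R)$ and $\fam{C}$ the closed subfamily stabilizing the basepoint line, whose translation action on $\fam{G}$ is automatically proper and free by the proposition to that effect; applying the equivalence $\cat{AutStb}\cong\cat{GrpSp}$ then produces the same family, at the cost of separately matching $\fam{G}/\fam{C}$ with the scalar quotient $\Lambda_\R\mathsf{P}^n$ fibrewise.
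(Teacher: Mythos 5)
Your route is genuinely different from the paper's. The paper never constructs the quotient directly: it works entirely in $\cat{AutStb}$, showing $\fam{GL}(n+1;\Lambda_\R)$ is a family of groups (units of the matrix algebra family are open), exhibiting the stabilizer collection $\fam{St}(n+1;\Lambda_\R)$ as a closed subfamily by pulling back the zero section along the fiberwise-submersive map that reads off the first $n$ entries of the last column, and then \emph{defining} the family of projective geometries as the pair $(\fam{GL},\fam{St})$; the corollary is the specialization to $\Lambda_\R$, with the fibre identifications exactly as you give them. Note that the ``matching'' step you worry about at the end is a non-issue in the paper, since $\mathsf{AP}^n$ is there defined as the coset space $\GL(n+1;A)/\St(n+1;A)$, so the Quotient Family Theorem (through the functor $\cat{F}$ and the properness of translation by a closed subfamily) hands over the group-space version for free. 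Your primary construction --- cutting out $\fam{V}=\Lambda_\R^{n+1}\smallsetminus Z$ and quotienting by the scalar family $\Lambda_\R^\times$ --- buys a concrete model of the total space of $\Lambda_\R\mathsf{P}^n$, at the cost of two verifications the paper never needs: closedness of $Z$ (your sequence argument across $\delta\to 0$ is correct, including the identification of the limit locus with $\ep\,\R^{n+1}$) and properness of the scalar action as a \emph{family} action, which is indeed the real content and which you rightly isolate.

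There is one concrete flaw in your properness sketch: it is false that every $v\in\fam{V}_\delta$ has an invertible coordinate. Over $\delta>0$ the point $v=(e_+,e_-,0,\dots,0)$ lies outside $Z$ (its coordinates generate all of $\Lambda_\delta$, and the $\Lambda_\delta^\times$-action on it is free), yet every coordinate is a zero divisor; so ``a uniform lower bound on the norm of an invertible coordinate of $v$ over $K$'' is not available. The fix is to argue per idempotent component: freeness outside $Z$ says that for each of $e_\pm$ \emph{some} coordinate has nonvanishing $e_\pm$-part, and $\max_j$ of these parts is continuous and positive on $\fam{V}_\delta$, pinning down each scalar component $g_\pm$ separately. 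But this decomposition degenerates as $\delta\to 0$ (the idempotents $e_\pm=\tfrac12(1\pm\lambda/\sqrt\delta)$ blow up in the fixed $\{1,\lambda\}$ frame), so uniformity across the transition still requires a direct estimate in $\{1,\lambda\}$-coordinates: if $g_k=a_k+\lambda b_k$ with $g_k.v_k=w_k$, $(v_k,w_k)$ in a compact set and $\delta_k\to 0$, use a coordinate of the limit $v$ with nonzero real part to show that $|(a_k,b_k)|\to\infty$ forces one of the two components of $g_kv_{k,j}$ to blow up. This is entirely doable, but it is the step your sketch elides, and it is precisely the delicacy the paper's $\cat{AutStb}$ argument --- your own stated fallback --- avoids, since properness of translation by a closed subfamily is automatic there.
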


\noindent
Showing the individual projective spaces $\mathsf{AP}^n$ form a continuous family given any continuous family $\fam{A}\to\Delta$ of algebras is equivalent, by the Quotient Family Theorem, to showing that the associated point stabilizer subgroups form a subfamily of $\fam{GL}(n;\fam{A})$.

\begin{theorem*}[Projective Geometries Vary Continuously]
A smooth family of algebras $\fam{A}\to\Delta$ determines a smooth family of projective geometries $\fam{A}\mathsf{P}^n\to\Delta$ for each $n\in\N$. 	
\label{thm:Proj_Geo_Families}
\end{theorem*}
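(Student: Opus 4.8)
The plan is to reduce the statement, as the preceding remark suggests, to the construction of a single closed subfamily and then feed it through the machinery already in place. By the Theorem ``Equivalence of Perspectives on Homogeneous Families'' the categories $\cat{GrpSp}$ and $\cat{AutStb}$ are equivalent, so to produce the family of group--space geometries $\fam{A}\mathsf{P}^n$ it suffices to produce the corresponding automorphism--stabilizer datum: a family of groups together with a closed subfamily serving as the point stabilizer. The family of groups is already available, since ``Linear Groups Vary Continuously'' gives that $\fam{GL}(n+1;\fam{A})\to\Delta$ is a family of Lie groups whose fiber over $\delta$ is the automorphism group $\GL(n+1;\fam{A}_\delta)$ of $\fam{A}_\delta\mathsf{P}^n$. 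It therefore remains only to exhibit the fiberwise point stabilizers of a basepoint as a closed subfamily $\fam{Stab}\subg\fam{GL}(n+1;\fam{A})$.

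First I would fix the basepoint in each fiber to be $[0:\cdots:0:1]$, realized as the class of the section $\delta\mapsto(0,\dots,0,\fam{1}(\delta))$ of the vector bundle $\fam{A}^{n+1}$, using the algebra--identity section $\fam{1}$. Computing the stabilizer fiberwise, a matrix $X\in\GL(n+1;\fam{A}_\delta)$ fixes this class exactly when $Xe_{n+1}=a\,e_{n+1}$ for some unit $a$, i.e. when the last column of $X$ equals $(0,\dots,0,a)$; for such a block--lower--triangular $X$, invertibility already forces the corner entry $a$ to be a unit, so ``$a\in\fam{A}_\delta^\times$'' is automatic (and if one does not wish to rely on this one simply intersects with the subfamily on which the corner lies in $\fam{GL}(1;\fam{A})=\fam{A}^\times$). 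Thus $\fam{Stab}$ is cut out inside $\fam{GL}(n+1;\fam{A})$ purely by the vanishing of the $n$ coordinate functions $X\mapsto X_{i,n+1}$, $1\le i\le n$, which are globally defined smooth maps on the total space restricting to $\fam{A}_\delta$--linear maps fiberwise. The step I expect to be the main obstacle is precisely showing that this common zero locus is a genuine subfamily: one must check that $\fam{Stab}=\bigcup_\delta\mathrm{Stab}_\delta\times\set{\delta}$ is an embedded submanifold submersing onto $\Delta$ with each fiber a Lie subgroup, uniformly over the base even though the unit group $\fam{A}_\delta^\times$ can change topology as $\delta$ varies (compare $\C^\times$ with $(\R\oplus\R)^\times$). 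The saving feature is that the cutting equations are the \emph{same} fixed linear conditions in every fiber, so the rank is constant along $\Delta$ and the submanifold/submersion criterion applies; this is the same mechanism underlying the fixed-- and negated--locus constructions used elsewhere.

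With the subfamily in hand, the translation action of $\fam{Stab}$ on $\fam{GL}(n+1;\fam{A})$ is free, since translation actions of subgroups always are, and proper by the Proposition ``Translation is a Proper Free Action.'' The Quotient Family Theorem then yields $\fam{GL}(n+1;\fam{A})/\fam{Stab}\to\Delta$ as an object of $\Fam_\Delta$ carrying a fiberwise--transitive $\fam{GL}(n+1;\fam{A})$--action and the descended basepoint section; equivalently this is the image of $(\fam{GL}(n+1;\fam{A}),\fam{Stab})$ under the functor $\cat{F}\colon\cat{AutStb}\to\cat{GrpSp}$. It remains to identify each quotient fiber $\GL(n+1;\fam{A}_\delta)/\mathrm{Stab}_\delta$ with the projective space $\fam{A}_\delta\mathsf{P}^n$, which amounts to transitivity of $\GL(n+1;A)$ on $(A^{n+1}\smallsetminus Z(A^{n+1}))/{\sim}$, i.e. to the statement that every vector outside the generalized--zero locus completes to an invertible matrix over $A$; for the finite--dimensional real algebras in play this is the unimodularity property built into the definition of $Z(A^{n+1})$, and it holds in the motivating cases $\C$ and $\R\oplus\R$. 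Once this identification is made, all constituent pieces ($\fam{GL}$, the stabilizer subfamily, and the quotient) vary smoothly over $\Delta$ by construction, so $\fam{A}\mathsf{P}^n\to\Delta$ is a smooth family of Klein geometries for every $n\in\N$, as claimed.
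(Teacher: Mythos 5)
Your proposal is correct and follows essentially the same route as the paper: the paper likewise cuts out the stabilizer family $\fam{St}(n+1;\fam{A})$ inside $\fam{GL}(n+1;\fam{A})$ as the pullback of the zero section along the map sending a matrix to the first $n$ entries of its last column (a fiberwise submersion, hence a family map by the fiberwise criterion), and then invokes the Quotient Family Theorem and the $\cat{AutStb}\cong\cat{GrpSp}$ equivalence to obtain $\fam{A}\mathsf{P}^n\to\Delta$. Your closing identification of each quotient fiber $\GL(n+1;\fam{A}_\delta)/\mathrm{St}_\delta$ with $\fam{A}_\delta\mathsf{P}^n$ via transitivity on the complement of $Z(\fam{A}_\delta^{n+1})$ is a point the paper treats earlier in its chapter on geometries over algebras and leaves implicit here, so making it explicit is a harmless refinement rather than a departure.
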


\noindent
Given a non-degenerate section $\fam{J}\colon\Delta\to\fam{Herm}(n;\fam{A},\sigma)$, one can define for each $\delta$ the unitary group $\U(\fam{J}_\delta;\fam{A}_\delta,\sigma_\delta)\subg\GL(n;\fam{A}_\delta)$.  The union of these is the \emph{generalized unitary family} corresponding to $\fam{J}$ over $\Delta$.

\begin{theorem*}[Unitary Groups Vary Continuously]
Let $(\fam{A},\sigma)\to\Delta$ be a family of algebras and $\fam{J}\colon\Delta\to\fam{Herm}(n;\fam{A},\sigma)$ a smooth non-degenerate section.  
Then $\fam{U(J;A)}$ is a smooth subfamily of $\fam{GL}(n;\fam{A})$.
The special unitary groups $\fam{SU(J;A)}$ are a subfamily of $\fam{U(J;A)}$.
\end{theorem*}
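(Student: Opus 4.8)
The plan is to exhibit $\fam{U}(\fam{J};\fam{A})$ as the fiberwise preimage of the section $\fam{J}$ under a morphism of families, and then obtain its smooth structure from a relative regular-value argument. I would first define the family map
$$\fam{F}\colon\fam{GL}(n;\fam{A})\to\fam{Herm}(n;\fam{A},\sigma),\qquad \fam{F}(X)=\sigma(X)^T\fam{J}X,$$
where all products are taken in the fiber $\fam{A}_\delta$ using $\mu$ and the value $\fam{J}=\fam{J}(\delta)$. This is a morphism in $\Fam_\Delta$ because $\mu$, $\sigma$ and $\fam{J}$ are smooth, and its values are genuinely Hermitian: using $\sigma^2=\id$ and $\sigma(\fam{J})^T=\fam{J}$ one verifies $\sigma(\fam{F}(X))^T=\fam{F}(X)$ fiberwise. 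By the definition of the generalized unitary group, $\fam{U}(\fam{J};\fam{A})=\fam{F}\inv(\fam{J})$; shifting by $\fam{J}$ inside the vector bundle $\fam{Herm}(n;\fam{A},\sigma)$ lets me regard it equivalently as the preimage of the zero section.

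The crux is a single fiberwise computation showing that $\fam{J}$ is a fiberwise regular value along $\fam{U}$. At $X\in\fam{U}(\fam{J};\fam{A})_\delta$ the vertical differential is $d\fam{F}_X(V)=\sigma(V)^T\fam{J}X+\sigma(X)^T\fam{J}V$ for $V\in\M(n;\fam{A}_\delta)$. Substituting $V=XW$ — legitimate since $X$ is invertible — and using $\sigma(X)^T\fam{J}X=\fam{J}$, this collapses to $\sigma(W)^T\fam{J}+\fam{J}W$, and for any Hermitian $H$ the choice $W=\tfrac12\fam{J}\inv H$ gives $\sigma(W)^T\fam{J}+\fam{J}W=H$. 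Here the invertibility of $\fam{J}\inv$ is precisely the non-degeneracy hypothesis, and $\tfrac12$ is available because $\fam{A}_\delta$ is an $\R$-algebra. Thus $d\fam{F}_X$ is surjective onto $\fam{Herm}(n;\fam{A}_\delta,\sigma)$ at every $X\in\fam{U}_\delta$ and every $\delta$. I expect this to be the load-bearing step: it is where non-degeneracy enters, and it provides the constant vertical rank which lets a relative regular-value theorem present $\fam{U}(\fam{J};\fam{A})=\fam{F}\inv(\fam{J})$ as a smooth submanifold of $\fam{GL}(n;\fam{A})$ meeting each fiber in the subgroup $\fam{U}_\delta$, with $\pi$ restricting to a submersion onto $\Delta$ — that is, a subfamily.

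A second, equivalent route reuses the same computation at the identity section: $\fam{u}=\ker d\fam{F}_{\fam{e}}$ then has locally constant rank, so the unitary Lie algebras assemble into a subbundle of $\fam{gl}(n;\fam{A})$ which, being fiberwise bracket-closed, is a family of Lie algebras. Since $\fam{U}(\fam{J};\fam{A})$ is closed in $\fam{GL}(n;\fam{A})$ (cut out by $\sigma(X)^T\fam{J}X=\fam{J}$), feeding $\fam{u}$ into \emph{Closed Exponentials are Families} would yield the identity-component subfamily at once.

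The main obstacle is disconnectedness: over $\R$ the unitary groups are orthogonal groups, which are not connected, so \emph{Closed Exponentials are Families} as stated does not apply to the full group. To bridge this I would take the identity-component subfamily $\fam{U}_0$ and recover $\fam{U}(\fam{J};\fam{A})$ as a union of translates $\fam{g}_i\cdot\fam{U}_0$ by a locally constant family of coset representatives, each translate being a subfamily because translation is a family isomorphism; this is exactly the gap that the regular-value route sidesteps, which is why I would present that route as primary. Finally, for the special unitary family I would intersect with the known subfamily $\fam{SL}(n;\fam{A})$, equivalently impose $\det X=1$; the identical constant-rank argument applied to the trace functional on $\fam{u}$ produces the Lie algebra subfamily of $\fam{SU}(\fam{J};\fam{A})$, and the same two routes then show $\fam{SU}(\fam{J};\fam{A})$ is a subfamily of $\fam{U}(\fam{J};\fam{A})$.
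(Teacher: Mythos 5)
Your primary route is essentially the paper's own proof: the paper defines $\Phi_{\fam{J}}\colon\fam{GL}(n;\fam{A})\to\fam{Herm}(n;\fam{A},\sigma)$ by $X\mapsto X^\dagger \fam{J}X$, shows it is fiberwise a submersion (there by a dimension count identifying $\ker d\Phi_J$ at $B$ with $(B^\dagger J)\inv\SkHerm(n;A,\sigma)$, where you instead solve explicitly via $V=XW$, $W=\tfrac{1}{2}J\inv H$ --- an equivalent, slightly more constructive verification), then applies its fiberwise-submersion-implies-family proposition and pulls back along the section $\fam{J}$, and for $\fam{SU(J;A)}$ pulls back the identity section under the fiberwise-submersive $\fam{det}\colon\fam{U(J;A)}\to\fam{U(A)}$, exactly the $\det X=1$ constraint you impose. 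Your secondary exponential route (and its coset-translate repair for disconnectedness) does not appear in the paper, but since you correctly flag its gap and make the regular-value argument primary, the proposal is sound and matches the paper's approach.
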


\noindent
Again, showing further that the point stabilizer subgroups of this families action on $\fam{A}\mathsf{P}^n$ form a closed subfamily suffices to prove the corresponding collection of unitary geometries forms a family.

 \begin{theorem*}[Unitary Geometries Vary Continuously]
 Given a smooth family of algebras $\fam{A}\to\Delta$ and a smooth (diagonal) section $\fam{J}\colon\Delta\to\fam{Herm}(n;\fam{A})$, there is a corresponding smooth family of unitary geometries $(\fam{U(J,A)},\fam{UST(J;A)})$.
 \end{theorem*}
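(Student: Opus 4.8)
The plan is to produce the family of unitary geometries in the automorphism-stabilizer formalism $\cat{AutStb}$, where by definition a family of geometries is simply a pair $(\fam{G},\fam{C})$ consisting of a family of groups together with a closed subfamily. By the theorem \emph{Unitary Groups Vary Continuously} the collection $\fam{U(J;A)}$ is already a family of Lie groups (a subfamily of $\fam{GL}(n;\fam{A})$), so the entire content of the statement reduces to showing that the point stabilizers $\fam{UST(J;A)}$ constitute a \emph{closed subfamily} of $\fam{U(J;A)}$ --- exactly the reduction indicated in the remark preceding the statement. Once this is in hand, the pair $(\fam{U(J,A)},\fam{UST(J;A)})$ is by definition an object of $\cat{AutStb}$.

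First I would identify the stabilizer fiberwise. Because $\fam{J}$ is a diagonal section, in each fiber the basepoint $[0:\cdots:0:1]\in\fam{A}_\delta\mathsf{P}^{n-1}$ is $\fam{J}_\delta$-orthogonal to the coordinate hyperplane $\span(e_1,\dots,e_{n-1})$. A short fiberwise computation shows that any $X\in\U(\fam{J}_\delta)$ fixing the projective point $[e_n]$ must preserve this splitting: if $Xe_n=\lambda e_n$ with $\lambda\in\fam{A}_\delta^\times$, then $\fam{J}_\delta$-invariance of $X$ together with nondegeneracy of $\fam{J}_\delta$ and invertibility of $\lambda$ forces the off-diagonal blocks of $X$ to vanish. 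Splitting $\fam{J}=\fam{J}'\oplus\fam{J}_n$ into its leading $(n-1)\times(n-1)$ block and its final diagonal entry, this yields a fiberwise identification
\[
\fam{UST(J;A)}_\delta \;=\; \U(\fam{J}'_\delta;\fam{A}_\delta,\sigma)\times\U((\fam{J}_n)_\delta;\fam{A}_\delta,\sigma),
\]
embedded block-diagonally into $\U(\fam{J}_\delta)$.

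Next I would assemble these fibers into a subfamily. Applying \emph{Unitary Groups Vary Continuously} to the truncated section $\fam{J}'\colon\Delta\to\fam{Herm}(n-1;\fam{A})$ and to the rank-one section $\fam{J}_n\colon\Delta\to\fam{Herm}(1;\fam{A})$ shows that $\fam{U}(\fam{J}';\fam{A})$ and $\fam{U}(\fam{J}_n;\fam{A})$ are themselves families of groups over $\Delta$. Their fiber product $\fam{U}(\fam{J}';\fam{A})\timesd\fam{U}(\fam{J}_n;\fam{A})$ exists and is a family, since $\Fam_\Delta$ has finite products, and the block-diagonal assignment $(Y,\lambda)\mapsto\smat{Y&0\\0&\lambda}$ is a smooth, fiberwise-injective homomorphism of families whose image lands in $\fam{U(J;A)}$ and realizes $\fam{UST(J;A)}$. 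Closedness holds because block-diagonality is a closed fiberwise condition --- vanishing of the off-diagonal entries inside the ambient matrix bundle $\fam{M}(n;\fam{A})\to\Delta$ --- and the embedding is proper; the submersion (subfamily) condition is inherited from the two factors, each of which is already a submersion over $\Delta$ by the cited theorem.

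Finally, to confirm that this $\cat{AutStb}$ object is the honest family of homogeneous spaces whose members are the orbits $\mathcal{S}_{\fam{J}_\delta}$, I would pass to the group-space picture: by \emph{Translation is a Proper Free Action} the closed subfamily $\fam{UST(J;A)}$ acts properly and freely on $\fam{U(J;A)}$, so the \emph{Quotient Family Theorem} produces $\fam{U(J;A)}/\fam{UST(J;A)}$ as a family of spaces, and the \emph{Equivalence of Perspectives} identifies the resulting $\cat{GrpSp}$ geometry with $(\fam{U(J,A)},\fam{UST(J;A)})$, its fibers being exactly the orbits $\mathcal{S}_{\fam{J}_\delta}$. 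The main obstacle is the third step: verifying that the stabilizers genuinely form a \emph{subfamily} (a submersion onto $\Delta$), not merely a closed fiberwise-group subset, across points where $\fam{A}_\delta$ changes isomorphism type. This is precisely why the product-of-lower-rank-unitary-groups description is valuable --- it reduces the smoothness and constant-rank behavior of the stabilizers entirely to the already-established continuity of the smaller unitary families, sidestepping any direct analysis of the degenerating algebra.
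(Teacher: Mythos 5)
Your proposal is correct, and it makes the right reduction (once $\fam{USt(J;A)}$ is a closed subfamily of $\fam{U(J;A)}$, the pair is by definition an $\cat{AutStb}$ family of geometries), but your route to the stabilizer subfamily is genuinely different from the paper's. The paper argues by pullback: it takes the map $\Psi\colon\fam{U(J;A)}\to\fam{A}^{n-1}$ recording the first $n-1$ entries of the last column, verifies that $\Psi$ is fiberwise a submersion --- this is where the diagonal hypothesis enters, via the lemma computing $\ker(D\Phi_J)_B=(B^\dagger J)\inv\SkHerm(n;A)$ and showing arbitrary vectors occur as last columns there --- then upgrades $\Psi$ to a family structure $\fam{U(J;A)}\to\fam{A}^{n-1}$ by the fiberwise-submersion criterion, and finally pulls back along the zero section $\fam{0}\colon\Delta\to\fam{A}^{n-1}$ to realize $\fam{USt(J;A)}=\fam{U(J;A)}\cap\fam{St}(n;\fam{A})$ as a subfamily. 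You instead make diagonality do structural work: the block computation with $Xe_n=\lambda e_n$ and $X^\dagger \fam{J}_\delta X=\fam{J}_\delta$ kills the bottom row (note this forcing uses that every diagonal entry of $\fam{J}_\delta$ is a unit, i.e.\ nondegeneracy of the section --- worth stating explicitly, since the theorem's wording omits the word nondegenerate), identifying $\fam{USt(J;A)}$ fiberwise with the block-diagonal product $\U(\fam{J}'_\delta;\fam{A}_\delta)\times\U((\fam{J}_n)_\delta;\fam{A}_\delta)$, after which two lower-rank applications of the unitary-family theorem and closedness of block-diagonality finish the argument. Amusingly, your route is exactly the one the paper uses in the special case of the transition $\Hyp_{\Lambda_\delta}^n$, where $\fam{USt}(n,1;\Lambda_\R)=\smat{\fam{U}(n;\Lambda_\R)&\\&\fam{U}(\Lambda_\R)}$ is shown to be a $1$-parameter family precisely via the observation that block-diagonal products of $1$-parameter families are $1$-parameter families --- but it is not the argument given for the general theorem. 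The trade-off: your splitting avoids the kernel dimension-count lemma entirely and yields the explicit product structure of the stabilizer as a byproduct, but it is wedded to the diagonal hypothesis, whereas the paper's pullback argument needs only fiberwise submersivity of $\Psi$ and is the version the author flags as the candidate for removing the diagonal restriction (the parenthetical ``surely this restraint can be removed''). Your concluding pass through the Quotient Family Theorem and the equivalence $\cat{GrpSp}\cong\cat{AutStb}$ to recover the orbits $\mathcal{S}_{\fam{J}_\delta}$ is sound and consistent with the paper's framework, though the paper leaves that step implicit.
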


\noindent
As a final application, we use the developed techniques to construct a collection of new transitions between familiar subgeometries of real projective space.

The case of most interest concerns $\Hyp^n$, de Sitter space, and the geometry of the lightcone.
There is no transition between these geometries \emph{as subgeometries of $\RP^2$}, as the lightcone loses a dimension under projectivization.  But there is a transition abstractly, as a family.

\begin{theorem*}
There is a transition from $\H^n$ to $\mathsf{d}\S^n$ through the geometry of the canonical line bundle to the conformal $n-1$ sphere.	
\end{theorem*}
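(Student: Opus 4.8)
The plan is to realize the transition not inside a projective space but \emph{one dimension up}, as the family of orbit-hypersurfaces cut out by a fixed Lorentzian form, deleting the origin so that the degenerate null level set survives as an honest $n$-dimensional member. Fix $q(x)=x_1^2+\cdots+x_n^2-x_{n+1}^2$ of signature $(n,1)$ on $\R^{n+1}$, let $G=\O(n,1)$ act linearly (preserving $q$), and take base $\Delta=\R$. As total space I would take $\fam{X}=\R^{n+1}\smallsetminus\{0\}$ with projection $\pi=q\colon\fam{X}\to\Delta$. Since $dq$ vanishes only at the origin, $\pi$ is a submersion on $\fam{X}$, so $(\fam{X},\Delta,\pi)$ is a smooth family of spaces whose member over $c$ is the punctured level set $\{q=c\}\smallsetminus\{0\}$. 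The group enters as the constant family $\fam{G}=G\times\Delta$, acting fiberwise by the linear action; this is well defined precisely because $q(gx)=q(x)$, so each member is a union of orbits.

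First I would verify fiberwise transitivity, which is exactly where this construction beats the projectivized picture. For $c<0$ the member is the two-sheeted hyperboloid, a single $\O(n,1)$-orbit (the two sheets exchanged by $-I$), recovering the hyperboloid model of $\H^n$; for $c>0$ it is the one-sheeted hyperboloid, the de Sitter space $\mathsf{d}\S^n$. The delicate case is $c=0$: the punctured null cone is a \emph{single} orbit, because a boost in the plane of a null vector $v$ scales $v$ by $e^t$ for arbitrary $t$, and $-I$ supplies the remaining sign, so every nonzero null vector is reached. I would then exhibit a smooth global basepoint section $\fam{x}\colon\Delta\to\fam{X}$, e.g. $\fam{x}(c)=(a(c),0,\dots,0,b(c))$ with $a^2-b^2=c$ and $a^2+b^2=\sqrt{c^2+1}$, so that $\fam{x}(c)\neq 0$ for all $c$. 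These data make $(\fam{G},(\fam{X},\fam{x}))$ a $1$-parameter family of Klein geometries in $\cat{GrpSp}$, and the Equivalence of Perspectives then produces the associated $\cat{AutStb}$ family $(\fam{G},\fam{C})$ with $\fam{C}=\fam{stab}_{\fam{G}}(\fam{x})$.

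The payoff is read off this stabilizer subfamily. The point stabilizers are $\O(n)$ for $c<0$, $\O(n-1,1)$ for $c>0$, and the stabilizer of a null vector — isomorphic to the Euclidean group $\Euc(n-1)\cong\O(n-1)\ltimes\R^{n-1}$ — for $c=0$. These have distinct isomorphism types, so the members are pairwise non-isomorphic over the single connected base and the family genuinely transitions; yet every stabilizer has the same dimension $\binom{n}{2}$, the consistency check confirming $\fam{C}$ is a smooth closed subfamily, as the equivalence theorem guarantees. It remains to identify the central member with the stated model: I would construct an $\O(n,1)$-equivariant diffeomorphism from the punctured null cone onto the total space of the canonical (tautological) line bundle over the conformal sphere, using that the projectivized null cone is the conformal $S^{n-1}=\partial\H^n$ on which $\O(n,1)$ acts conformally, and that the null cone is exactly the restriction of the tautological bundle of $\RP^n$ to this quadric with the zero section deleted.

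I expect the main obstacle to be conceptual rather than computational: making precise, within the family framework, that the null fiber survives as a full $n$-dimensional member on equal footing with its neighbours. This is precisely what deleting the origin buys — under projectivization the null cone collapses to the $(n-1)$-dimensional sphere and no family exists, whereas here $q$ stays a submersion and all fibers are $n$-dimensional. A secondary, genuinely geometric step is the equivariant identification of the $c=0$ member with the canonical line bundle; and a routine bookkeeping point is normalizing the $c<0$ member to honest $\H^n$ rather than its two-sheeted double, handled either by quotienting the timelike region by the central $\{\pm I\}$ via the Quotient Family Theorem, or simply by adopting the two-sheeted hyperboloid model throughout.
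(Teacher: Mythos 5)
Your proposal is correct and is essentially the paper's own argument: the paper likewise takes the level sets of the $(n,1)$ form $q$ on $\R^{n+1}\smallsetminus\{0\}$ (with $q$ a submersion away from the origin) as a family over $\R$, lets the constant family $G\times\R$ act fiberwise transitively on the orbit-hypersurfaces, and identifies the null member with the canonical line bundle over the conformal $\S^{n-1}\subset\RP^n$. Your additions --- the explicit basepoint section, the stabilizer computation $\O(n)\to\Euc(n-1)\to\O(n-1,1)$ with the constant-dimension check, and the care about deleting the zero section and about the two-sheeted cover of $\Hyp^n$ --- only flesh out details the paper leaves implicit.
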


\noindent
More generally, if $G$ is any orthogonal or unitary subgroup of $\GL(n;\R)$ or $\GL(n;\C)$  the associated quadratic / hermitian form defines a positive and negative cone, whose projectivizations $X_+$ and $X_-$ are the domains for the two projective geometries $(G,X_+)$, $(G,X_-)$ with automorphism group $G$.  The isomorphism type of the geometries depend on the signature $(p,q)$ of the form: $X_+$ is not isomorphic to $X_-$ unless $p=q$.  

\begin{theorem*}
There is a transition from $(G,X_+)$ to $(G,X_-)$ for any orthogonal or unitary group $G$.	
\end{theorem*}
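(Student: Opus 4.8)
The plan is to exhibit the transition as a single family of geometries over $\Delta=\R$ whose automorphism side is the \emph{constant} family $\fam{G}=G\times\R$, so that the group never changes and only the underlying space moves; this matches the preceding remark that no such transition lives inside an ambient $\RP^n$, only abstractly as a family. Let $V$ be $\R^n$ or $\C^n$ carrying the quadratic or hermitian form $q$ of signature $(p,q)$ preserved by $G$, and assume the form is indefinite ($p,q\geq 1$), since otherwise one of $X_\pm$ is empty. The central scalars $Z<G$ (with $Z=\set{\pm I}$ in the orthogonal case and $Z=\U(1)\cdot I$ in the unitary case) preserve every level set $\set{q=\delta}$ and act freely on $V\smallsetminus\set 0$. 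First I would form the total space $\fam{X}:=(V\smallsetminus\set 0)/Z$ together with the map $\pi\colon\fam{X}\to\R$ induced by $q$ (well defined because $Z$ preserves values of $q$), and check that $\pi$ is a submersion: away from the origin $dq\neq 0$, and passing to the free proper $Z$-action through the Quotient Family Theorem preserves this, so $\fam{X}\to\R$ is a family of spaces.

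Next I would let $\fam{G}=G\times\R$ act fiberwise. Since $G$ preserves $q$ it preserves $\pi$, and since $Z$ is central the action descends to $\fam{X}$, giving an action of families $\fam{G}\acts\fam{X}$. The essential point is \emph{fiberwise transitivity}. For $\delta\neq 0$ this is exactly the classical Witt theorem, which makes $G$ transitive on vectors of any fixed nonzero norm; quotienting by $Z$ identifies $\set{q=\delta}/Z$ with the projectivization of the corresponding cone, so $\pi\inv(1)\cong X_+$ and $\pi\inv(-1)\cong X_-$ as $G$-spaces. Hence $(\fam{G},\fam{X})$ is an object of $\cat{GrpSp}$ whose members at $\delta=1$ and $\delta=-1$ are precisely $(G,X_+)$ and $(G,X_-)$.

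Because $\R$ is connected and, by hypothesis, $X_+\not\cong X_-$ whenever $p\neq q$, this family has non-isomorphic members over a single component and is therefore a transition, proving the theorem. As a consistency check, in signature $(n,1)$ the orthogonal case recovers the earlier statement: $\pi\inv(-1)=\Hyp^n$ and $\pi\inv(1)=\mathsf{d}\S^n$, while the central fiber is the null cone modulo $Z$, an $\R$-line-bundle over the projectivized null quadric, i.e. the canonical line bundle over the conformal $(n-1)$-sphere through which the transition passes.

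I expect the main obstacle to be the central fiber $\delta=0$. Unlike the generic fibers, transitivity there is not immediate from Witt's theorem: I must show $G$ acts transitively on nonzero null vectors, so that the member over $0$ is genuinely a single homogeneous $G$-orbit. For an indefinite form this follows by combining Witt transitivity on isotropic lines with the hyperbolic-plane boost that rescales a null vector by any positive factor, together with $-I$ (respectively the $\U(1)$-phase) to absorb sign; but I must also confirm that $\pi$ remains a submersion across $\delta=0$ and that the central member assembles into the claimed line-bundle geometry rather than degenerating. The unitary case requires the analogous hermitian Witt theorem and the observation that a hermitian form is real-valued, so that $\pi\colon\fam{X}\to\R$ is again a smooth submersion admitting the same fiberwise analysis.
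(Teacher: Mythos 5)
Your proposal is correct and follows essentially the same route as the paper: there, too, the transition is realized by viewing the level sets of the invariant form as the fibers of a family over $\R$ (identified with the $G$-orbit space of $\F^{n+1}\smallsetminus\{0\}$), with the constant family $G\times\R$ acting fiberwise transitively, the generic fibers projectivizing to $X_+$ and $X_-$ and the transition passing through the null-cone fiber. The only differences are cosmetic refinements on your part: you quotient by the central scalars $Z$ first, so that the generic members are literally $X_\pm$ rather than level sets identified with them after projectivization (the paper deliberately keeps the \emph{non-projectivized} lightcone as the central member), and you explicitly supply via Witt's theorem, plus the rescaling argument on null vectors, the fiberwise transitivity that the paper simply asserts in the sentence ``the level sets of $J$ are precisely the orbits of $G$.''
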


\part{Geometries}

%NEWCHAPTER: MANIFOLDS ORBIFOLDS AND CONEMANIFOLDS
\chapter{Manifolds \& Orbifolds}
\label{chp:Man_Orb_Cone}

This first chapter provides a brief review of the main objects of study in geometric topology; namely smooth manifolds, and their slightly more subtle cousins the orbifolds.
Due to the introductory nature of this material this review will be succinct, with most proofs left to the references.
Additional reading on this material is highlighted throughout, but some particularly comprehensive sources include \cite{Scott80,Lee,CooperHK00}.

\section{Manifolds}
\label{sec:Manifolds}
\index{Manifold}
\index{Smooth Manifold}
\index{Manifold!Smooth}

\begin{definition}
A \emph{manifold} is a second-countable Hausdorff topological space $M$ which is \emph{locally Euclidean} in the sense that each point $p\in M$ has a neighborhood $U\ni p$ homeomorphic to some subset of $\R^n$.
\end{definition}

\noindent
This data of local homeomorphisms about each point of $M$ is often packaged together into an \emph{atlas of charts}, an open cover $\set{U_\alpha}$ of $M$ together with a collection of maps $\phi_\alpha\colon U_\alpha\to \R^n$ which are homeomorphisms onto their images.
This captures the intuitive idea that a topological manifold $M$ is `glued together out of pieces of $\R^n$' in a precise way: namely we can build $M$ out of the disjoint union of the open sets $\set{\phi_\alpha(U_\alpha)}$ under the quotient topology identifying the subsets $\phi_\alpha(U_\alpha\cap U_\beta)$ and $\phi_\beta(U_\alpha\cap U_\beta)$ by the homeomorphism $\phi_\beta\phi_\alpha\inv$.

\begin{example}[Topological Sphere]
\label{Ex:TopSphere}
Let $\S^2$ be the unit 2-sphere in $\R^3$.
The open sets $U_S=\S^2\smallsetminus (0,0,1)$, $U_N=\S^2\smallsetminus(0,0,-1)$ cover $\S^2$, and together with the charts $\phi_{S/N}\colon U_{S/N}\to\R^2$ given by stereographic projection define an atlas.
\end{example}

\begin{figure}
\centering\includegraphics[width=0.5\textwidth]{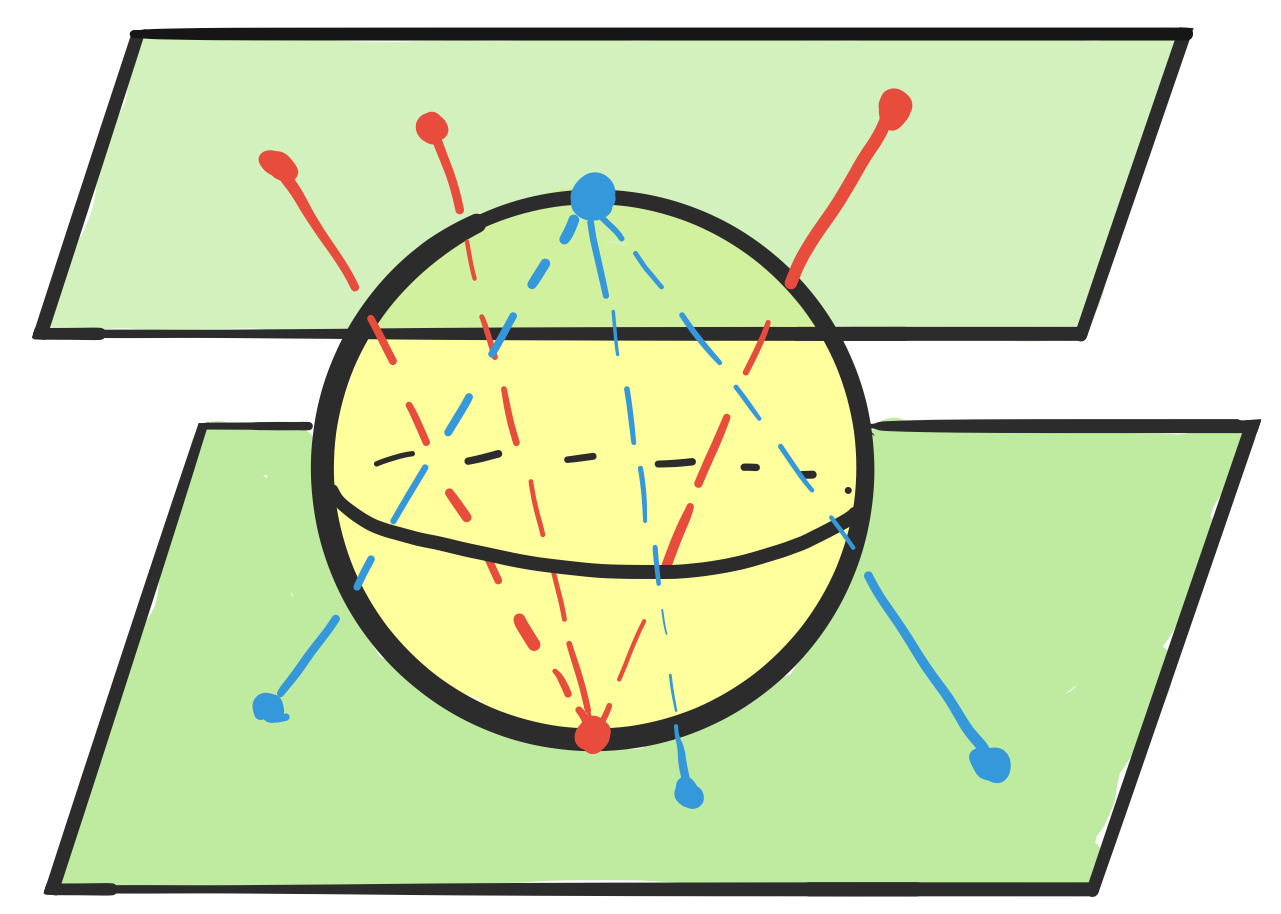}
\caption{The sphere as a manifold with two charts.}	
\end{figure}

\noindent
There is no calculus on a topological manifold, as notions such as differentiability are not invariant under homeomorphism.
To make available the tools of analysis requires more structure, namely a \emph{smooth manifold} with \emph{smoothly compatible charts}.

\begin{definition}
Two charts $(U_\alpha, \phi_\alpha)$ and $(U_\beta,\phi_\beta)$ are \emph{smoothly compatible} if the associated \emph{transition map} $\phi_\beta\phi_\alpha\inv\colon \phi_\alpha(U_\alpha\cap U_\beta)\to\phi_\beta(U_\alpha\cap U_\beta)$ is smooth as a map between subsets of $\R^n$ (with the standard smooth structure).
\end{definition}

\begin{definition}
A smooth manifold $M$ is a topological manifold equipped with a \emph{smooth atlas}; an atlas of \emph{smoothly compatible charts} $\mathcal{A}=\set{(U_\alpha,\phi_\alpha)}$. 
\end{definition}

\begin{example}[Incompatible Charts]
The global charts $(\R,\phi)$ and $(\R,\psi)$ on $\R$ given by $\phi(x)=x^3$ and $\psi(x)=x$ are not smoothly compatible as $\psi\phi\inv(x)=\sqrt[3]{x}$ is not smooth at $0$.
\end{example}

\noindent
To avoid worries about uniqueness, a smooth manifold is often defined to come equipped with a \emph{maximal atlas} of the type described above.
Two atlases $\fam{A}$ and $\fam{A}'$ on $M$ are \emph{compatible} if their union is again an atlas (transition maps corresponding to overlaps of charts in $\fam{A},\fam{A}'$ are also diffeomorphisms).
A quick application of Zorn's lemma shows that every atlas is contained in a unique maximal atlas defining the smooth manifold $M$.

\begin{example}[Smooth Sphere]
\label{Ex:SmoothSphere}
The charts $\phi_S(x,y,z)=(\tfrac{x}{1-z},\tfrac{y}{1-z})$ and $\phi_N(x,y,z)=(\tfrac{x}{1+z},\tfrac{y}{1+z})$ of Example \ref{Ex:TopSphere} are smoothly compatible, and define a smooth structure on $\S^2$.
\end{example}

\noindent
Instead of calculus, one may wish to import more combinatorial notions to the study of manifolds, such as triangulations.
To do so also requires more than a bare topological manifold, with the relevant additional structure no longer being smooth but \emph{piecewise linear}.

\begin{definition}
A \emph{piecewise linear}, or \emph{PL} manifold $M$ is a topological manifold together with a piecewise linear atlas.
That is, an atlas of charts $\fam{A}=\set{(U_\alpha,\phi_\alpha)}$ such that the transition maps $\phi_\beta\phi_\alpha\inv$ are piecewise linear functions between subsets of $\R^n$.
\end{definition}

\noindent
Again, a simple argument shows each piecewise linear atlas is contained in a unique maximal atlas defining the structure.
Throughout this thesis we work in the smooth category unless otherwise specified.
In the case of $(G,X)$ geometries most work will actually take place in the subcategory of \emph{real analytic manifolds}, predictably defined as follows.

\begin{definition}
A \emph{real analytic manifold} is a topological manifold $M$ together with an atlas of charts $\mathcal{A}=\set{(U_\alpha,\phi_\alpha)}$ with transition maps given by restrictions of real analytic functions between subsets of $\R^n$.	
\end{definition}

\begin{observation}
These categories of manifolds coincide in low dimensions.  
More precisely, each topological $n$ manifold for $n\in\set{0,1,2,3}$ admits a unique smooth structure, a unique real analytic structure, and a unique PL structure.
\end{observation}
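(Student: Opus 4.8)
The plan is to recognize this observation as an assembly of several classical rigidity theorems rather than a single computation, and to organize the proof around three \emph{bridges} between the relevant categories: topological $\leftrightarrow$ PL, PL $\leftrightarrow$ smooth, and smooth $\leftrightarrow$ real analytic. In each fixed dimension $n\le 3$ I would establish existence and uniqueness for one bridge at a time and then compose them. The smooth $\leftrightarrow$ real analytic bridge is dimension independent: by Whitney's theorem every smooth manifold admits a compatible real analytic structure, and any two compatible real analytic structures are related by a diffeomorphism isotopic to the identity, so once smooth existence and uniqueness are in hand the real analytic statement follows formally. Thus the real content is to produce, for each $n\in\set{0,1,2,3}$, a unique PL structure and a unique compatible smooth structure on every topological $n$-manifold.

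The low dimensions are immediate or classical. For $n=0$ a manifold is a discrete countable set and all structures are tautological. For $n=1$ the classification of $1$-manifolds shows every connected topological $1$-manifold is homeomorphic to $\R$ or $\S^1$, each of which carries an evident PL and smooth structure, while uniqueness reduces to an elementary reparametrization of monotone transition maps. For $n=2$ I would invoke Rad\'o's theorem that every topological surface is triangulable, which gives existence of a PL structure; uniqueness of the PL structure (the two-dimensional Hauptvermutung) then follows from the classification of surfaces together with the fact that a homeomorphism of triangulated surfaces can be promoted to a PL homeomorphism.

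For $n=3$ the corresponding facts are genuinely deep and are the crux of the statement. Here I would quote Moise's theorem: every topological $3$-manifold admits a triangulation (existence of a PL structure), and any two triangulations have isomorphic subdivisions (the three-dimensional Hauptvermutung), yielding uniqueness of the PL structure. To pass from PL to smooth I would appeal to the smoothing theory of Whitehead, Munkres, and Cerf, which shows that in dimensions $\le 3$ every PL manifold admits a compatible smooth structure, unique up to diffeomorphism; equivalently, the obstructions to smoothing and to uniqueness of smoothings live in cohomology groups with coefficients in the groups $\Gamma_k$, which vanish for $k\le 3$. Composing the three bridges in this dimension then delivers the unique smooth and unique real analytic structures.

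The main obstacle is squarely the three-dimensional case, and specifically Moise's theorem --- both the existence of a triangulation and the Hauptvermutung for $3$-manifolds. Unlike the surface case there is no shortcut through a classification, and the argument rests on delicate point-set and combinatorial topology: the theory of regular neighborhoods, the \emph{Bing-shrinking}-style analysis of wild embeddings, and careful control of $2$-spheres in $3$-space. I would therefore present the $0$-, $1$-, and $2$-dimensional cases in full, and treat the $3$-dimensional PL existence and uniqueness as black boxes cited from Moise, deducing the smooth and analytic refinements from the standard smoothing and Whitney-approximation machinery.
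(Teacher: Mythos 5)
The paper states this observation without proof, as a pointer to classical results, and your assembly --- Rad\'o's triangulation theorem for $n=2$, Moise's theorem for $n=3$, Whitehead--Munkres--Cerf smoothing of PL manifolds in dimensions $\le 3$, and Whitney's existence plus uniqueness of compatible real analytic structures --- is exactly the standard justification, so the proposal is correct and matches the paper's (implicit) route. One small caution: for uniqueness of PL structures on surfaces, your appeal to the classification of surfaces covers only the compact case, whereas the observation concerns \emph{all} topological $n$-manifolds; for non-compact surfaces you should either invoke Ker\'ekj\'art\'o's classification or, more cleanly, quote the two-dimensional Hauptvermutung (Rad\'o, Papakyriakopoulos) directly, which holds without compactness assumptions.
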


\noindent{\sffamily \bfseries Warning!}
These categories of	manifolds differ in dimensions 4 and above.
There are topological manifolds which admit no smooth structures, topological manifolds admitting multiple smooth structures, and topological manifolds with admitting no triangulations.

\section{Orbifolds}
\label{sec:Orbifolds}
\index{Orbifold}

The quotient space $M/\Gamma$  of a finite group $\Gamma$  acting on a (smooth) manifold $M$ inherits the structure of a (smooth) manifold when the action is free.
When the $\Gamma$ action has fixed points, the quotient space is no longer necessarily a manifold, but has only mild singularities with neighborhoods homeomorphic to $\R^n/\Gamma_x$ for point-stabilizing subgroups $\Gamma_x<\Gamma$.
Such a space is the prototypical example of an \emph{orbifold}, 
a convenient generalization of manifolds being locally modeled on the quotient spaces of $\R^n$ by finite group actions.
The definition of orbifolds is rather involved, and is given by an \emph{orbifold atlas} on an underlying topological space much as a smooth structure is a smooth atlas on a manifold.
From this we can build a theory of orbifolds that mimics closely the familiar manifold theory; complete with notions of orbifold covering spaces, orbifold fundamental groups and orbifold Euler characteristics.

\subsection{The Category of Orbifolds}

Locally, an orbifold is pieced together out of \emph{orbifold charts}, which are pieces of $\R^n$ quotiented out by the action of finite groups.
These pieces are glued together in a way that is compatible with the group actions, as laid out explicitly below.

\begin{definition}
An \emph{orbifold chart} is a quadruple $(\tilde{U},U,\Gamma, \phi)$ such that $\tilde{U}\subset\R^n$ is open, $\Gamma$ acts on $\widetilde{U}$ by diffeomorphisms and $\phi\colon \widetilde{U}/\Gamma\to U$ is a homeomorphism.
\begin{center}
\begin{tikzcd}
&\hspace{-0.75cm}\Gamma\acts\widetilde{U}\ar[d]\\
U\ar[r,"\phi"]&\widetilde{U}/\Gamma
\end{tikzcd}	
\end{center}

\end{definition}

\noindent
We call $\widetilde{U}$ together with the action of $\Gamma$ the \emph{local model}, $\widetilde{U}$ the \emph{local cover} and $\Gamma$ the \emph{local action}.
To construct an atlas, we need a notion of \emph{compatibility} between different local models.

\begin{definition}
\label{Def:Orbifold_Compatible}
If $V\subset U$, an orbifold chart $(V,\tilde{V},G,\psi)$ is compatible with $(U,\widetilde{U},\Gamma,\phi)$ if the inclusion map $\iota\colon V\inject U$ lifts to an embedding $\tilde{\iota}\colon\widetilde{V}\inject\widetilde{U}$, equivariant with respect to a homomorphism $\rho\colon G\to\Gamma$ making the following diagram commute.
\begin{center}
\begin{tikzcd}
\widetilde{V}\ar[r,"\widetilde{\iota}"]\ar[d]&\widetilde{U}\ar[d]\\
\widetilde{V}/G\ar[r]\ar[dd]&\widetilde{U}/\rho(G)\ar[d]\\
&\widetilde{U}/\Gamma\ar[d]\\
V\ar[r,"\iota"]&U
\end{tikzcd}
\end{center}	
\end{definition}

\begin{definition}
An orbifold atlas of charts on a topological space $X$ is an open covering $\mathcal{U}=\set{U_\alpha}$ of $X$, closed under finite intersection	s, and an orbifold chart $(U_\alpha,\widetilde{U}_\alpha,\Gamma_\alpha,\phi_\alpha)$ for each $U_\alpha\in\mathcal{U}$ such that when $U_\alpha\subset U_\beta$ the associated orbifold charts are compatible in the sense of Definition \ref{Def:Orbifold_Compatible}.
\end{definition}

\begin{definition}
An orbifold $\mathcal{O}$ is a pair $(X_\mathcal{O},\mathcal{A})$ consisting of an underlying paracompact Hausdorff topological space $X_\mathcal{O}$ and a maximal \emph{orbifold atlas} $\mathcal{A}$ of orbifold charts on $X_\mathcal{O}$.
\end{definition}

\noindent
Oftentimes we will abuse notation and write $\mathcal{O}$ for the underlying space as well.
The covering by orbifold charts defines an \emph{isotropy} subgroup at each point $x\in\mathcal{O}$ by taking the point stabilizer of a lift of $x$ in any orbifold chart containing it.
The compatibility condition for charts ensures this is well-defined, though only up to isomorphism.
We denote the isotropy group $\mathsf{Is}(x)$.

\begin{observation}
If $x\in\mathcal{O}$ then $x$ is contained in some orbifold chart $(U,\widetilde{U},\Gamma,\phi)$ with $\Gamma=\mathsf{Is}(x)$ the isotropy group.
\end{observation}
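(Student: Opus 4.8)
The plan is to start from an arbitrary chart around $x$ and shrink it equivariantly until the local group collapses onto the stabilizer of a chosen lift. Since the orbifold atlas $\mathcal{A}$ covers $X_\mathcal{O}$, the point $x$ lies in some chart $(U,\widetilde{U},\Gamma,\phi)$; I would choose a lift $\tilde{x}\in\widetilde{U}$ with $\phi([\tilde{x}])=x$ and set $H=\Gamma_{\tilde{x}}\leqslant\Gamma$, the stabilizer of $\tilde{x}$, which by definition represents the isotropy group $\mathsf{Is}(x)$. The goal is to produce a \emph{compatible} subchart whose local cover is an $H$-invariant neighborhood of $\tilde{x}$ on which only the elements of $H$ act as symmetries; maximality of $\mathcal{A}$ then forces this subchart into the atlas.

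The key step is the construction of the neighborhood. Because $\Gamma$ is finite, the set $\{g\tilde{x}\mid g\in\Gamma\smallsetminus H\}$ is a finite collection of points, none equal to $\tilde{x}$. Using the Hausdorff property of $\widetilde{U}\subseteq\R^n$ together with continuity of the finitely many diffeomorphisms $g$, I would first find, for each $g\in\Gamma\smallsetminus H$, an open $W_g\ni\tilde{x}$ with $W_g\cap gW_g=\varnothing$, and then take $W=\bigcap_{g}W_g$ so that $W$ is disjoint from $gW$ for all $g\in\Gamma\smallsetminus H$. To make the cover genuinely $H$-invariant I would replace $W$ by $\widetilde{V}=\bigcap_{h\in H}hW$, a finite intersection of open sets still containing $\tilde{x}$ and $H$-stable, which retains the disjointness property $g\widetilde{V}\cap\widetilde{V}=\varnothing$ for all $g\in\Gamma\smallsetminus H$ since $\widetilde{V}\subseteq W$.

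This disjointness is precisely what identifies the quotient: two points of $\widetilde{V}$ lie in the same $\Gamma$-orbit exactly when they lie in the same $H$-orbit, so the natural map $\widetilde{V}/H\to\widetilde{U}/\Gamma$ is an open topological embedding, and composing with $\phi$ gives a homeomorphism onto an open set $V\subseteq U$ containing $x$. This yields the candidate chart $(V,\widetilde{V},H,\psi)$. I would then verify compatibility in the sense of Definition~\ref{Def:Orbifold_Compatible}: the inclusion $V\inject U$ lifts to the inclusion $\widetilde{V}\inject\widetilde{U}$, equivariant with respect to the subgroup inclusion $\rho\colon H\inject\Gamma$, and the required diagram commutes by construction. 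Since $\mathcal{A}$ is maximal, this compatible chart already belongs to $\mathcal{A}$; it contains $x$ and has local group $H=\mathsf{Is}(x)$, as desired.

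The main obstacle I anticipate is the neighborhood construction together with the verification that $\widetilde{V}/H\to V$ is a homeomorphism rather than merely a continuous bijection. The delicate point is reconciling two competing requirements: $\widetilde{V}$ must be exactly $H$-invariant, yet all translates by $\Gamma\smallsetminus H$ must be pushed completely off of it. These pull the neighborhood in opposite directions and must be balanced using only the finiteness of $\Gamma$ and the local injectivity of the finitely many group elements near $\tilde{x}$.
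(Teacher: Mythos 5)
Your argument is correct: the paper states this result as an unproved Observation, and your equivariant-shrinking construction (lift $x$, stabilize at $\tilde{x}$, separate the finitely many translates $g\tilde{x}$ for $g\in\Gamma\smallsetminus H$, then average over $H$ to get an $H$-invariant $\widetilde{V}$ disjoint from its $\Gamma\smallsetminus H$ translates) is precisely the standard justification. The delicate point you flag --- that $\widetilde{V}/H\to V$ is a homeomorphism rather than merely a continuous bijection --- is resolved with no compactness at all: the quotient map $\pi_\Gamma\colon\widetilde{U}\to\widetilde{U}/\Gamma$ by a finite group is open (the saturation $\bigcup_{g\in\Gamma}gO$ of an open set is open), so the induced injection $\widetilde{V}/H\to\widetilde{U}/\Gamma$ is an open embedding, and composing with the homeomorphism $\phi$ finishes it. One cosmetic remark: if your chart convention demanded $\widetilde{U}$ connected (the paper's definition does not), you would replace $\widetilde{V}$ by its connected component containing $\tilde{x}$, which is still $H$-invariant since $H$ fixes $\tilde{x}$ and hence permutes components preserving that one.
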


\begin{definition}
A point $x\in\mathcal{O}$ with $\mathsf{Is}(x)=\set{1}$ is called a \emph{smooth point}, as there is a neighborhood of $x$ homeomorphic to an open set in $\R^n$.
If $\mathsf{Is}(x)\neq\set{1}$, then $x$ is called a \emph{singular point} of the orbifold $\mathcal{O}$.
The \emph{singular locus} of $\mathcal{O}$ is the set of singular points $\Sigma(\mathcal{O})=\set{x\in X_\mathcal{O}\mid \mathsf{Is}(x)\neq\set{1}}$.
\end{definition}

\begin{example}
Any smooth manifold is an orbifold, replacing the manifold charts $(U,\phi)$ with the orbifold charts $(U,\phi(U),\set{1},\phi)$.	
\end{example}

\begin{example}
The Euclidean cone with cone angle $\pi$ is an orbifold, defined by the single chart $(\R^2/\Z^2,\R^2,\Z_2,\id)$ where $\Z_2$ acts by a $\pi$ rotation on $\R^2$ about the origin.
The singular locus of this orbifold is a single point.
\end{example}

\begin{figure}
\centering\includegraphics[width=0.5\textwidth]{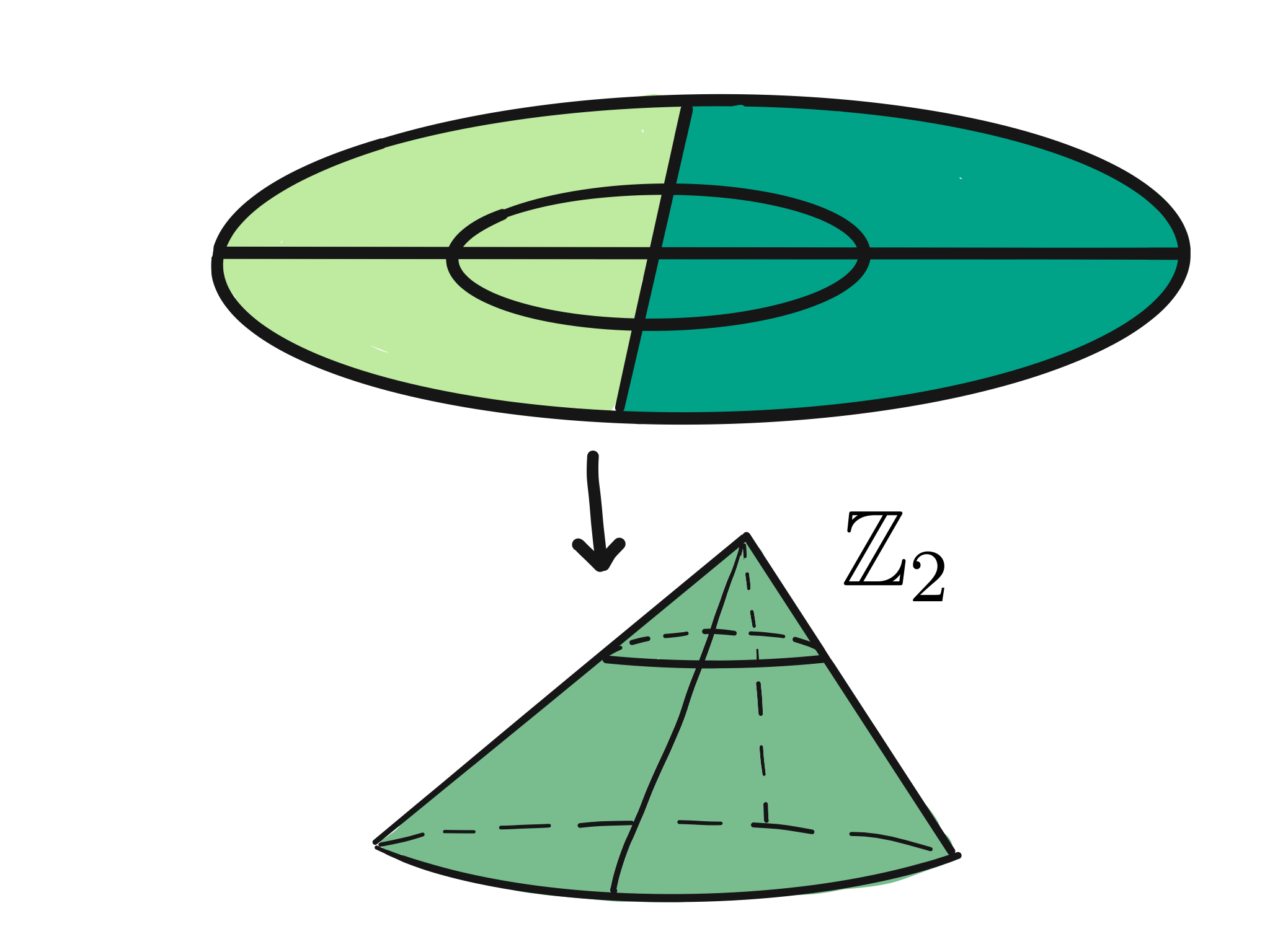}
\caption{The Euclidean right angle cone as an orbifold.}	
\end{figure}

\begin{definition}
An orbifold is \emph{locally orientable} if each local action is by orientation preserving diffeomorphisms on the local models in $\R^n$.
It is \emph{orientable} if in addition the inclusion maps $V\inject U$ are induced by orientation preserving embeddings $\widetilde{V}\inject\widetilde{U}$.
\end{definition}

\begin{example}
\label{Ex:Cone_RP}
The three dimensional analog of the cone, $\mathcal{O}=\R^3/\Z_2$ with the $\Z_2$ action by the antipodal map $x\mapsto -x$ is an example of an orbifold which is not locally orientable.
\end{example}

\begin{example}
The Klein bottle is an orbifold which is locally orientable, but not orientable.	
\end{example}
 
\noindent
Some non locally-orientable orbifolds have underlying space a manifold with boundary, such as the closed upper half plane, viewed as an orbifold quotient of $\R^2$ by reflection across the $x$ axis.
There is an additional notion of \emph{orbifold with boundary} which we do not need in this thesis but we nonetheless mention briefly here.

\begin{definition}
An \emph{orbifold with boundary} is defined similarly to an orbifold, replacing the local models with subsets of the closed upper half space $\R_+^n$ via finite group actions.
The \emph{boundary} of an orbifold $\partial_{\mathsf{orb}}\mathcal{O}$ is the set of points $x\in X_\mathcal{O}$  whose lifts to local models lie in the boundary of upper half space in some chart.
An orbifold is \emph{closed} if it is compact and its orbifold boundary is empty.
\end{definition}

\noindent
We have succeeded in defining the objects in the category of orbifolds.
We now move on to describe the morphisms, or \emph{orbifold maps} between them.

\begin{definition}
An \emph{local orbifold map} between two local models $(U,\widetilde{U},\Gamma,\phi)$ and $(V,\widetilde{V},G,\psi)$ is a pair $(\widetilde{\eta},\gamma)$ for $\gamma\colon \Gamma\to G$   a group homomorphism and $\widetilde{\eta}\colon \widetilde{U}\to\widetilde{V}$ a $\gamma$-equivariant smooth map.
This induces a map $\eta\colon U\to V$.
Conversely, a map $\eta\colon U\to V$ \emph{lifts to a local orbifold map} if there are local models for $U,V$ and a local orbifold map $(\tilde{\eta},\gamma)$ as above.
\end{definition}

\noindent
A local orbifold map is called a \emph{local orbifold isomorphism} when $\gamma$ is faithful and $\widetilde{\eta}$ is an immersion.
This terminology allows a more succinct description of the compatibility condition for orbifold charts: charts based on $V\subset U$ are compatible if the inclusion $V\inject U$ lifts to a local orbifold isomorphism.

\begin{definition}
An orbifold map $f\colon\mathcal{O}\to\mathcal{Q}$ is given by a map between underlying spaces $\overline{f}\colon X_\mathcal{O}\to X_\mathcal{Q}$ such that for each $x\in X_\mathcal{O}$, $\overline{f}(x)\in X_\mathcal{Q}$, there are open neighborhoods $x\in U$, $\overline{f}(x)\in V$ such that $\overline{f}$ lifts to a local orbifold map in local models for $U,V$.
\end{definition}

\noindent
An \emph{orbifold diffeomorphism} is an orbifold map which is bijective between underlying spaces, and whose inverse is also an orbifold map.
A local orbifold map $(\widetilde{\eta},\gamma)$ is a \emph{local immersion} if $\widetilde{\eta}$ is an immersion; an orbifold map $\mathcal{Q}\to\mathcal{O}$ is an immersion if it is locally a local immersion.

\begin{definition}
A \emph{suborbifold} of an orbifold $\mathcal{O}$ is the image of an injective immersion $\mathcal{Q}\inject\mathcal{O}$ from some closed orbifold $\mathcal{Q}$.
\end{definition}

\begin{figure}
\centering\includegraphics[width=0.5\textwidth]{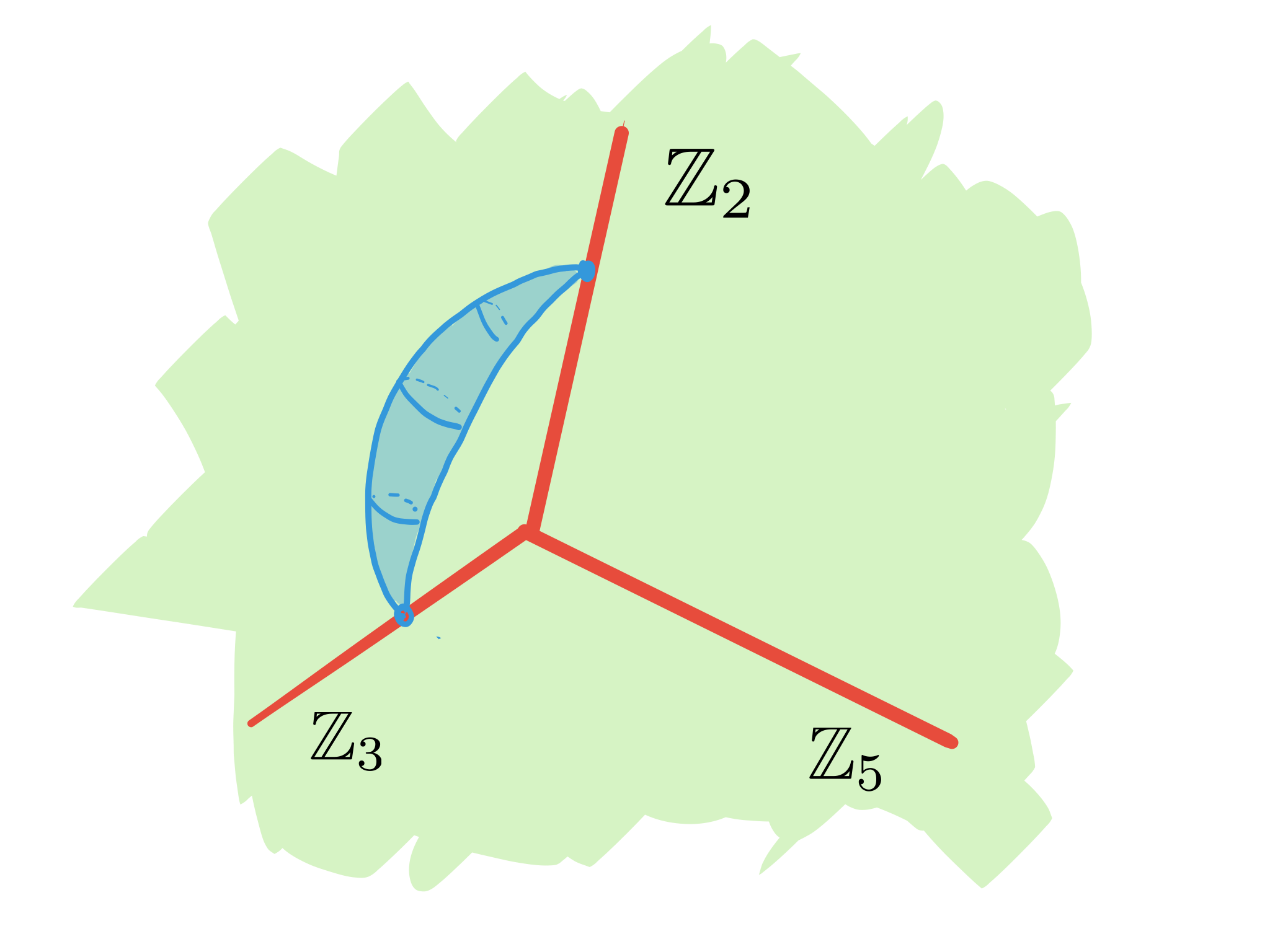}
\caption{An immersion of a 2-orbifold in a 3-orbifold, with singular sets of each labeled by their isotropy groups.}	
\end{figure}

\subsection{Examples Of Orbifolds}

To make the following discussion of the theory of orbifolds more concrete it will be helpful to have a list of examples available.
To start, we list some local models for orbifolds in small dimensions (we will see this list is comprehensive in dimensions 1 and 2 later on).

\begin{example}
The $\Z_2$ action $x\mapsto -x$ on $\R$ has orbifold quotient with underlying space a closed ray $[0,\infty)$.
The point $0$ has $\Z_2$ isotropy group, and is called a \emph{mirror reflector}.	
Similarly the action $(x,y)\mapsto (x,-y)$ on $\R^2$ has quotient the upper half plane with a line $\R\times\set{0}$ of mirror points with isotropy group $\Z_2$.
\end{example}

\begin{figure}
\centering\includegraphics[width=0.5\textwidth]{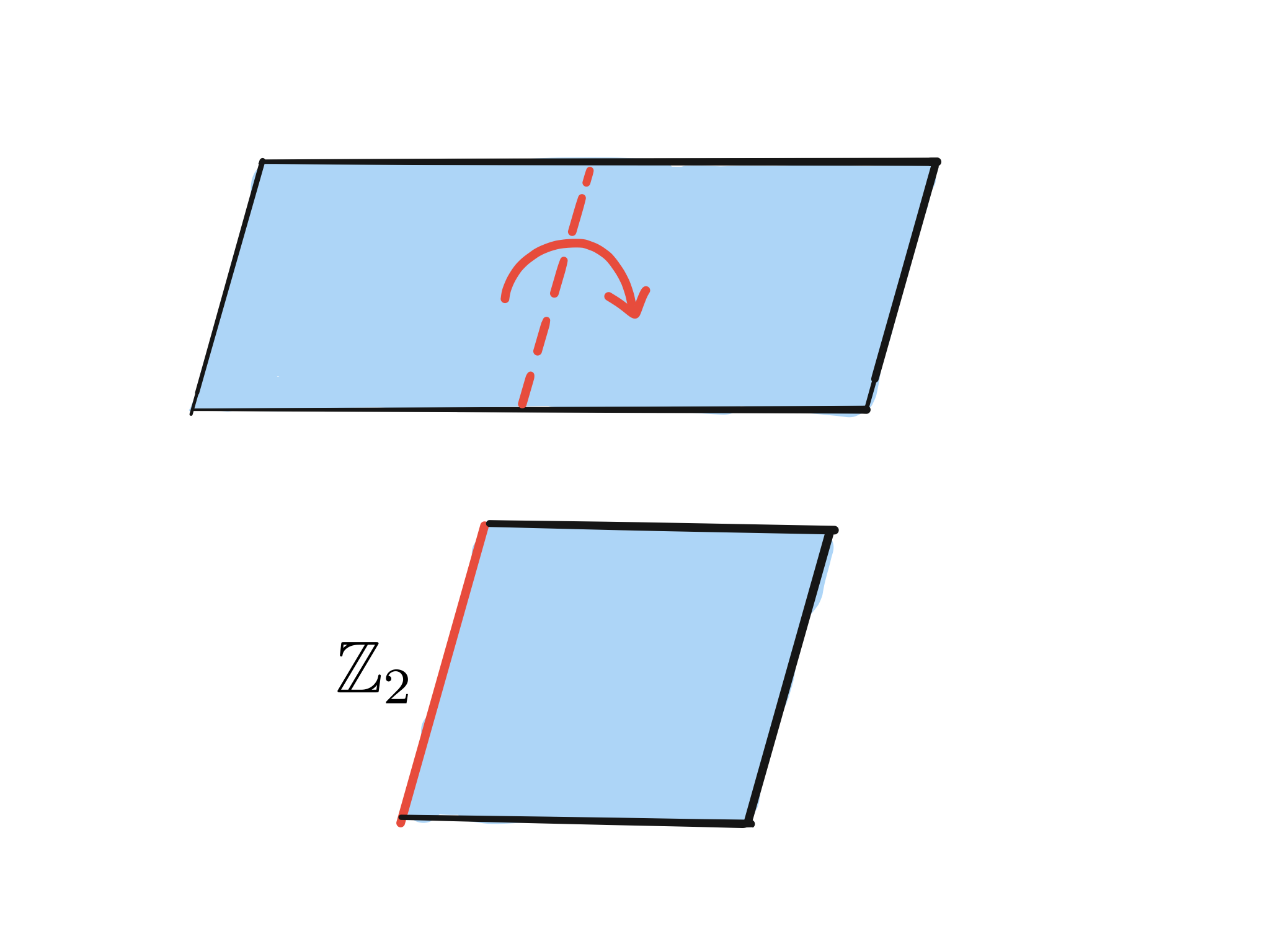}
\caption{Orbifold mirror reflector locus.}	
\end{figure}

\begin{example}
The quotient of $\C$ by the $\Z_2$ action of multiplication by $-1$ is homeomorphic to $\R^2$, but does not inherit a smooth structure in the quotient as $0$ is fixed by the action.
As an orbifold, the quotient $\C/\Z_2$ has an isolated singular point $\set{0}$ with isotropy group $\Z_2$, and is thought of as a cone point of cone angle $\pi$ at $0$.  
Similarly, the quotient $\C/\Gamma$ for $\Gamma$ any finite subgroup of $\U(1)$ is a cone, with singular locus $\set{0}$ and isotropy subgroup $\Gamma\cong\Z_n$.
We already saw an example of this above.
\end{example}

\begin{example}
The action of the dihedral group $D_{2n}$ on $\C$ preserving a regular $n$-gon centered at $0$ has orbifold quotient with underlying space a wedge $X_\mathcal{O}=\set{re^{i\theta}\mid \theta\in [0,\pi/n]}$.
Points on the boundary of this wedge have isotropy group $\Z_2$ and are mirror reflectors, the corner $r=0$ has isotropy group the full dihedral group $D_n$ and is called a \emph{corner reflector}.	
\end{example}

\begin{figure}
\centering\includegraphics[width=0.5\textwidth]{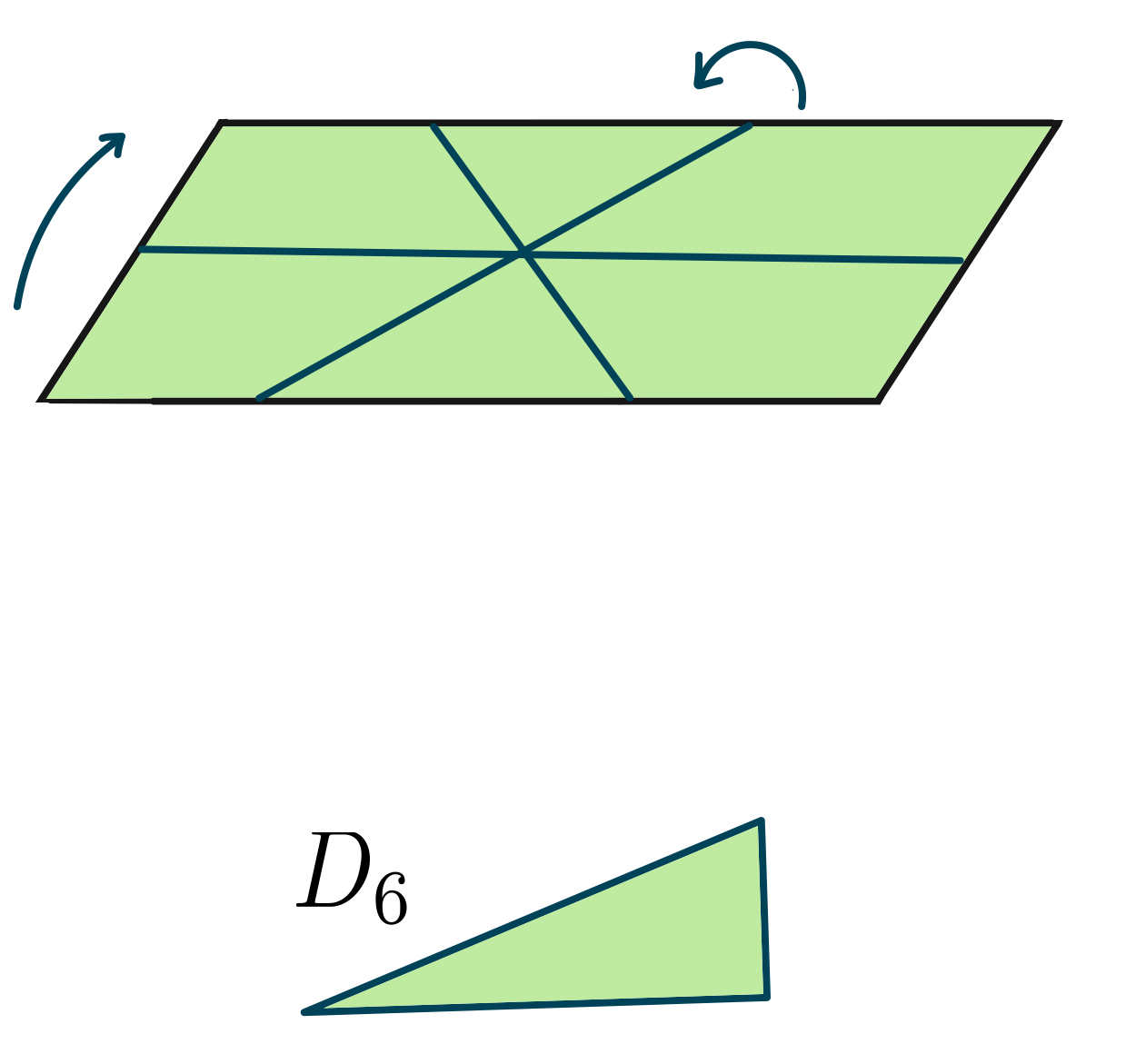}
\caption{Orbifold corner reflector locus.}	
\end{figure}

\begin{example}
The action of $\Z_n$ on $\R^3$ by rotation about the $z$-axis by angle $2\pi/n$ has orbifold quotient with underlying space homeomorphic to $\R^3$, but singular locus $\set{(0,0)}\times\R$ with isotropy group $\Z_n$.
This is the product of a cone $\C/\Z_n$ with $\R$, and the singular locus is called a \emph{cone axis}.
\end{example}

\begin{figure}
\centering\includegraphics[width=0.5\textwidth]{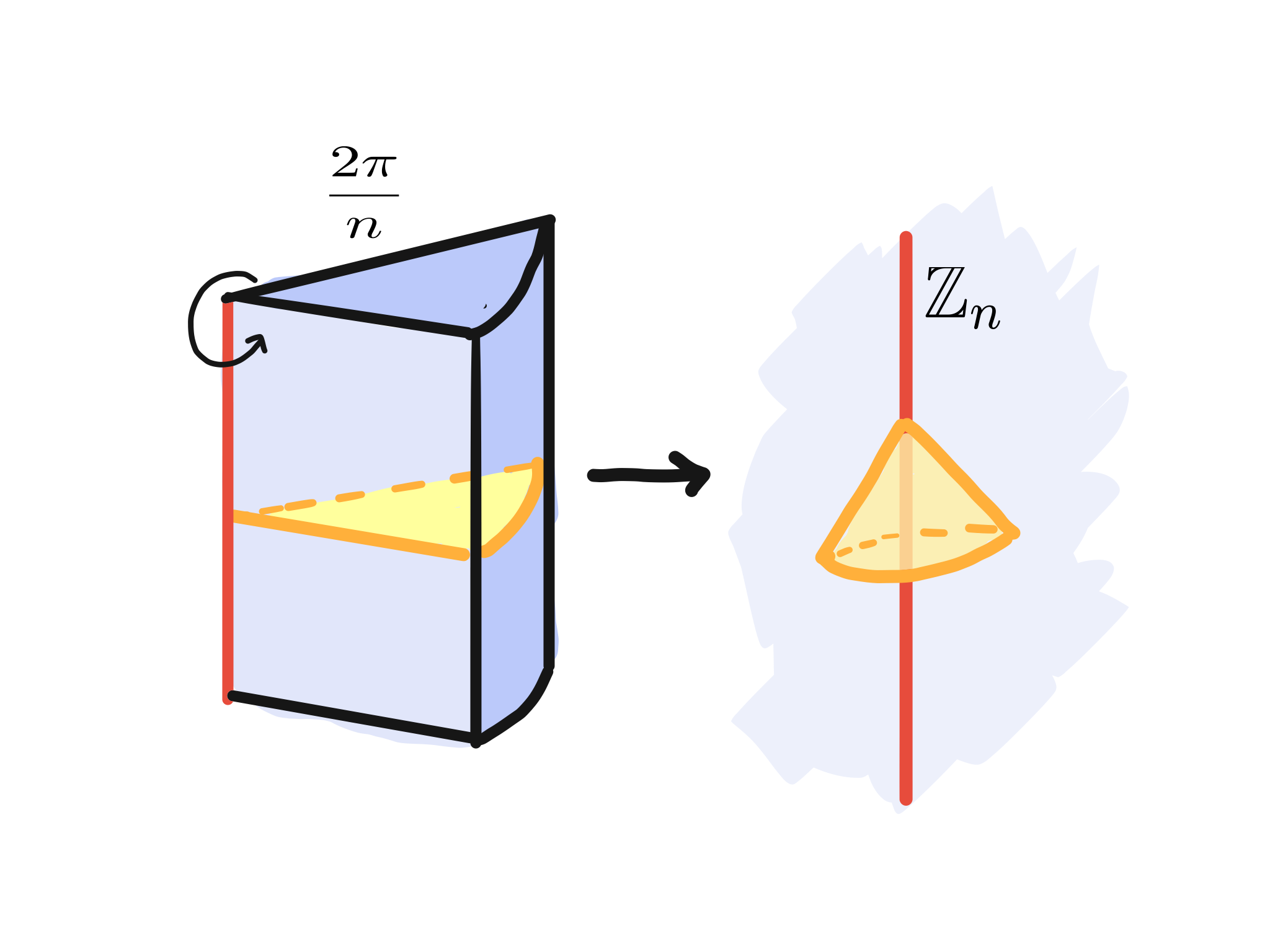}
\caption{A cone axis singularity in a 3-orbifold.}	
\end{figure}

\begin{example}
The symmetries of a dodecahedron form the $(2,3,5)$ triangle group $\Delta(2,3,5)$ and act on $\R^3$ fixing the origin.
The orbifold quotient is again homeomorphic to $\R^3$, but singular set the union of the positive $x,y$ and $z$ axes.
The positive $x$ axis is a cone axis of cone angle $\pi/2$, the $y$ axis has cone angle $\pi/3$ and the $z$-axis angle $\pi/5$.  
The origin has $\Delta(2,3,5)$ as its isotropy subgroup.
We will see shortly that this is the \emph{cone on the $(2,3,5)$ triangle pillowcase orbifold}.
Similarly, any spherical triangle group $\Delta(p,q,r)$ acts on $\S^2$ with quotient a cone on the $(p,q,r)$ triangle pillowcase.
\end{example}

\noindent
In this thesis as in much of geometric topology, we won't actually have to use this definition directly.
The theory of orbifolds was designed to accommodate the quotients of manifolds by finite group actions, and this will be our primary means of creation.

\begin{proposition}
If $M$ is a smooth manifold and $\Gamma$ a finite group of diffeomorphisms acting on $M$, then the orbit space $M/\Gamma$ inherits the natural structure of an orbifold.	
\end{proposition}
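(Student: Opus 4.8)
The plan is to build an orbifold atlas on the quotient $X=M/\Gamma$ directly from the local structure of the action, taking the local models to be small invariant balls modulo the point-stabilizers. Write $\pi\colon M\to X$ for the quotient map, and for $x\in M$ let $\Gamma_x\subg\Gamma$ denote its (finite) stabilizer; this will be the local action $\mathsf{Is}(\pi(x))$.

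First I would linearize the action near each point. Averaging an arbitrary Riemannian metric over the finite group $\Gamma$ produces a $\Gamma$-invariant metric on $M$, so every element acts by isometries. The Riemannian exponential map at $x$ then conjugates the $\Gamma_x$-action on a ball in $T_xM$ (via the linear isotropy representation) to the $\Gamma_x$-action on a geodesic ball $B_x$ about $x$; in particular $B_x$ is $\Gamma_x$-invariant and the action is smooth. This supplies the local cover $\tilde U:=B_x$ together with its local action $\Gamma_x$.

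Second I would separate the remaining group elements. Since $\Gamma$ is finite and $M$ is Hausdorff, for each $\gamma\in\Gamma\smallsetminus\Gamma_x$ we have $\gamma x\neq x$, so after shrinking $B_x$ we may arrange $\gamma B_x\cap B_x=\varnothing$ for all such $\gamma$. Consequently two points of $B_x$ lie in the same $\Gamma$-orbit if and only if they lie in the same $\Gamma_x$-orbit, so $\pi|_{B_x}$ descends to a homeomorphism $\phi\colon B_x/\Gamma_x\biject U$ onto the open set $U=\pi(B_x)\subset X$, yielding an orbifold chart $(B_x,U,\Gamma_x,\phi)$. Taking all such $U$ together with their finite intersections gives a cover closed under intersection. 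A short separate argument then shows $X$ is paracompact Hausdorff: Hausdorffness follows by separating distinct $\Gamma$-orbits with invariant neighborhoods (again using finiteness and the invariant metric), and paracompactness descends from $M$ along the proper finite quotient $\pi$.

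The main obstacle, and the step deserving the most care, is verifying the compatibility condition of Definition \ref{Def:Orbifold_Compatible} on overlaps. Given charts based on $V\subset U$, with centers $x'$ and $x$, I must produce an injective homomorphism $\rho\colon\Gamma_{x'}\to\Gamma_x$ of isotropy groups together with a $\rho$-equivariant embedding $\tilde\iota\colon B_{x'}\inject B_x$ lifting the inclusion $V\inject U$ and making the diagram of Definition \ref{Def:Orbifold_Compatible} commute. The idea is that choosing a lift in $B_x$ of a point of $V$ singles out, via the covering $\pi$, a preferred group element carrying $B_{x'}$ into $B_x$; conjugation by this element embeds $\Gamma_{x'}$ as a subgroup of $\Gamma_x$ and simultaneously furnishes $\tilde\iota$. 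The delicate point is checking that these choices can be made coherently and are independent of the chosen lift up to the group action, so that $\rho$ is a well-defined injection and $\tilde\iota$ is a genuine equivariant embedding rather than merely an immersion. Once compatibility is established, passing to the maximal atlas containing this collection exhibits $M/\Gamma$ as an orbifold.
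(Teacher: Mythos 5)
Your proposal is correct and follows essentially the same route as the paper: both build the orbifold atlas from small $\Gamma_x$-invariant neighborhoods modulo the finite point stabilizers, after shrinking so that elements outside $\Gamma_x$ move the neighborhood off itself. You simply supply more detail than the paper's brief proof --- the averaged invariant metric and exponential-map linearization, the Hausdorff/paracompactness check, and a sketch of the chart compatibility that the paper merely asserts --- so your write-up is, if anything, more complete.
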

\begin{proof}
Let $\pi\colon M\to M/\Gamma$ be the projection onto the orbit space, and $x\in M$. 
If $x$ is not fixed by $\Gamma$ then there is some small neighborhood $U\ni x$ moved off of itself	by all elements of $\Gamma$, and $U$ descends to a chart $(\pi(U),U,\set{1},\pi)$ based at $\pi(x)$.
If $x$ is fixed by $\Gamma$, let $\Gamma_x<\Gamma$ be the stabilizing subgroup, and $U\ni x$ a neighborhood of $x$ preserved by $\Gamma_x$.
Then $(\pi(U),U,\Gamma_x,\pi)$ is a local model at $\pi(x)$.
These local models satisfy the compatibility condition for orbifold charts and so determine an orbifold structure on $M/\Gamma$.
\end{proof}

\begin{observation}
The same result holds for properly discontinuous actions of infinite groups.  The quotient by a free properly discontinuous action is a manifold; removing the freeness assumption results in an orbifold quotient.	
\end{observation}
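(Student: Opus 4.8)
The plan is to reduce the infinite-group situation entirely to the preceding Proposition by exploiting the fact that proper discontinuity makes the action \emph{locally finite}: near any point only finitely many group elements are relevant, and the local geometry is governed by the (finite) stabilizer. First I would fix the working form of proper discontinuity: for each $x\in M$ there is an open $V\ni x$ with $F:=\set{\gamma\in\Gamma\mid \gamma V\cap V\neq\varnothing}$ finite. An immediate consequence is that the stabilizer $\Gamma_x=\set{\gamma\mid \gamma x=x}$ is finite, since $\Gamma_x\subset F$; this is exactly the finite group that will serve as the local action of an orbifold chart at $\pi(x)$, where $\pi\colon M\to M/\Gamma$ is the projection.

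The key step is to manufacture a \emph{good neighborhood}, i.e.\ a $\Gamma_x$-invariant open $U\ni x$ satisfying $\gamma U\cap U\neq\varnothing\Rightarrow\gamma\in\Gamma_x$. Starting from $V$ as above, the finitely many $\gamma\in F\smallsetminus\Gamma_x$ all satisfy $\gamma x\neq x$, so using the Hausdorff property of $M$ I would shrink $V$ to an open $W\ni x$ with $\gamma W\cap W=\varnothing$ for every such $\gamma$ (a finite intersection of shrinkings, one per element of $F\smallsetminus\Gamma_x$). Because $W\subset V$, any $\gamma$ with $\gamma W\cap W\neq\varnothing$ lies in $F$ but not in $F\smallsetminus\Gamma_x$, hence in $\Gamma_x$. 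Finally I would symmetrize over the finite stabilizer, setting $U=\bigcap_{g\in\Gamma_x}gW$; this is open, contains $x$, is $\Gamma_x$-invariant, and inherits the good-neighborhood property from $W$ since $U\subset W$.

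With $U$ in hand the local picture collapses to a genuine finite group action: the restriction $\pi|_U\colon U\to\pi(U)$ identifies precisely the points in a common $\Gamma_x$-orbit, so $\pi(U)\cong U/\Gamma_x$ and $(\pi(U),U,\Gamma_x,\pi)$ is an orbifold chart of \emph{exactly} the form produced in the finite-group Proposition. From here the compatibility of overlapping charts (via equivariant lifts of inclusions) is verified verbatim as in that Proposition. The point-set hypotheses of the orbifold definition are then disposed of quickly: $\pi$ is open, so $M/\Gamma$ inherits second countability and paracompactness from $M$, and proper discontinuity yields Hausdorffness of $M/\Gamma$ by separating two distinct orbits using good neighborhoods of representatives. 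For the free case, every $\Gamma_x$ is trivial, so each chart has trivial local group and is an ordinary manifold chart, whence $M/\Gamma$ is a manifold; dropping freeness reintroduces the finite stabilizers and produces the orbifold.

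The hard part will be the good-neighborhood construction, and specifically reconciling the two competing requirements on $U$: separating $x$ from the finitely many translates $\gamma x$ with $\gamma\in F\smallsetminus\Gamma_x$ (which wants $U$ small) while keeping $U$ invariant under $\Gamma_x$. The symmetrization $U=\bigcap_{g\in\Gamma_x}gW$ is what resolves this tension, and it works precisely because proper discontinuity has already forced $\Gamma_x$ to be finite so the intersection is open. Once this slice exists, the infinite action contributes nothing new and the argument is the finite one.
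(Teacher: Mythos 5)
Your overall strategy is the right one and is exactly what the paper intends (the Observation is stated without proof, leaning on the finite-group Proposition): finite stabilizers, the shrink-and-symmetrize construction of a $\Gamma_x$-invariant good neighborhood $U$ with $\gamma U\cap U\neq\varnothing\Rightarrow\gamma\in\Gamma_x$, and the identification $\pi(U)\cong U/\Gamma_x$ reducing everything to the finite case. That part of your argument is complete and correct.

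However, there is a genuine gap in the point-set step, and it is caused by the definition of proper discontinuity you fixed at the outset. The pointwise condition — for each $x$ there is $V\ni x$ with $\set{\gamma\mid \gamma V\cap V\neq\varnothing}$ finite — does \emph{not} imply that $M/\Gamma$ is Hausdorff, so your claim that one can "separate two distinct orbits using good neighborhoods of representatives" fails. Concretely, let $\Gamma=\Z$ act on $M=\R^2\smallsetminus\set{0}$ by the diffeomorphism $(x,y)\mapsto(2x,\tfrac{1}{2}y)$. This action is free and satisfies your pointwise condition at every point, yet the orbits of $(1,0)$ and $(0,1)$ cannot be separated in the quotient: the points $p_n=(1,2^{-n})$ satisfy $p_n\to(1,0)$ while $\gamma^{-n}p_n=(2^{-n},1)\to(0,1)$, so every saturated neighborhood of one orbit meets every saturated neighborhood of the other. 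The problem is structural: your good neighborhood $U$ controls $\gamma U\cap U$ for the finitely many $\gamma$ near the stabilizer of a \emph{single} point, but Hausdorffness of the quotient requires controlling $\gamma U\cap V$ for \emph{all} $\gamma\in\Gamma$ and neighborhoods $U,V$ of points in \emph{distinct} orbits, and the pointwise condition gives no grip on that. Since the paper's definition of an orbifold demands a paracompact Hausdorff underlying space, this step cannot be waved through. The repair is standard: take proper discontinuity in the strong (Thurston) sense, that $\set{\gamma\in\Gamma\mid \gamma K\cap K\neq\varnothing}$ is finite for every compact $K\subset M$ (equivalently, properness of the action map for discrete $\Gamma$). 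Applying this to a compact set containing neighborhoods of representatives $x,y$ of two distinct orbits leaves only finitely many $\gamma$ to worry about, and these are handled by Hausdorffness of $M$ as in your $F\smallsetminus\Gamma_x$ shrinking step; the example above is excluded because the compact-set condition fails there. With that definition substituted, the remainder of your argument — charts $(\pi(U),U,\Gamma_x,\pi)$, compatibility verbatim from the Proposition, trivial local groups in the free case — goes through unchanged.
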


\begin{example}
\label{Ex:Torus_Branched_Cover}
The torus is a branched cover of the sphere over 4 points \emph{skewering}: take a donut and pierce it all the way through with a chopstick, the quotient under a $\pi$ rotation is a topological sphere.
The quotient sphere inherits an orbifold structure with four cone points with isotropy group $\Z_2$.	
Similarly, the hyperelliptic involution of a genus $g$ surface has orbifold quotient a sphere with $2g+2$ cone points of cone angle $\pi$.
\end{example}

\begin{example}
Let $T^2=\S^1\times\S^1$ and consider the $\Z_2$ action by complex conjugation on the first factor.  The quotient is an annulus with boundary components $\set{1}\times\S^1$ and $\set{-1}\times\S^1$, and inherits an orbifold structure where these are circles of mirror points in the singular locus with isotropy groups $\Z_2$.
\end{example}

\begin{example}
Let $(p,q,r)$ be a triple of natural numbers and $\Delta(p,q,r)$ the corresponding triangle group $\Delta(p,q,r)=\langle \alpha,\beta,\gamma\mid \alpha^p=\beta^q=\gamma^r=\alpha\beta\gamma=1\rangle$.
Then the sphere with three cone points of order $p,q,r$ is an orbifold, arising as a quotient $X/\Delta(p,q,r)$ for $X=\S^2$ when $\tfrac{1}{p}+\tfrac{1}{q}+\tfrac{1}{r}>1$, $X=\E^2$ when this sum is equal to $1$, and $X=\Hyp^2$ otherwise.
\end{example}

\begin{figure}
\centering\includegraphics[width=0.70\textwidth]{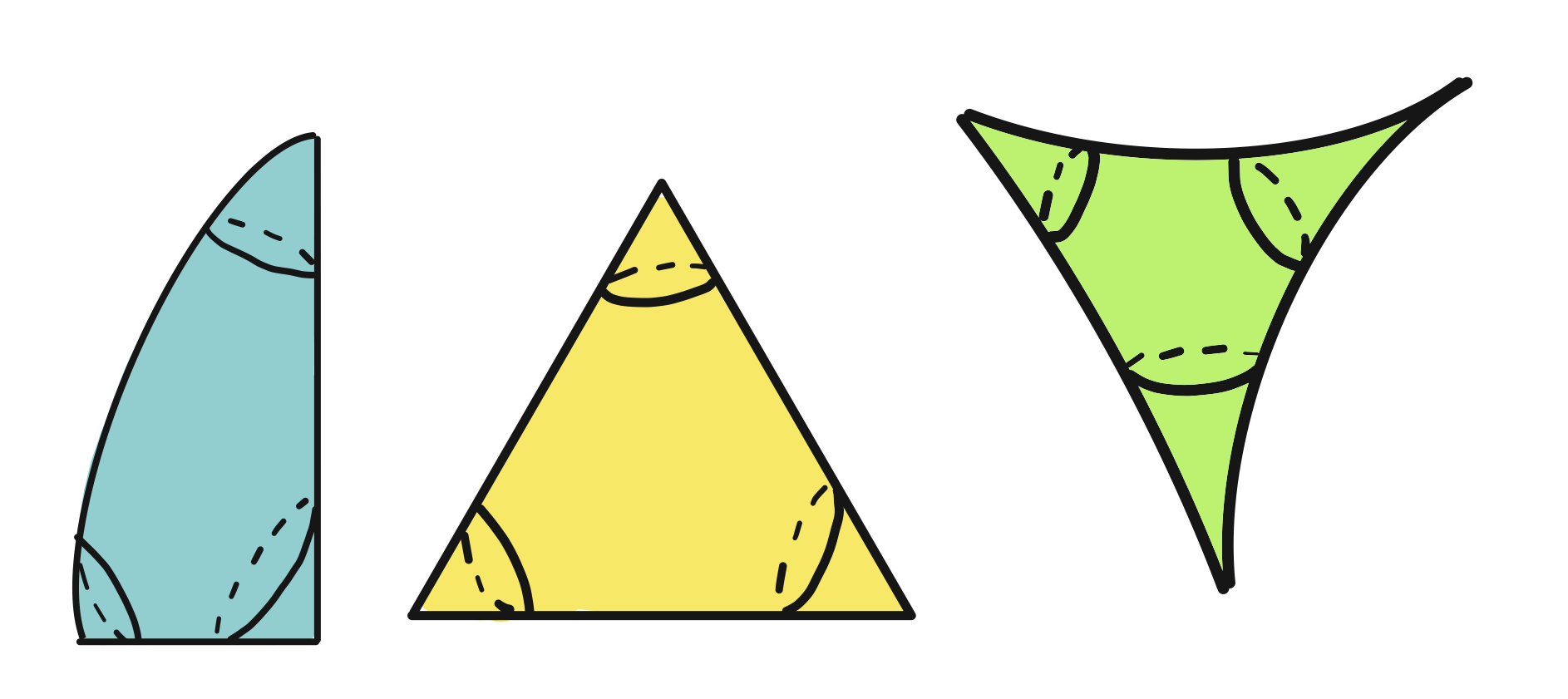}
\caption{Orbifolds and triangle groups.}
\end{figure}

\noindent
To make the discussions surrounding 2-dimensional examples easier, we will denote by $S(n_1,\ldots, n_r)$ the orbifold with underlying space a surface $S$ and $r$ cone points of order $n_1,\ldots n_r$.

\begin{figure}
\centering\includegraphics[width=0.75\textwidth]{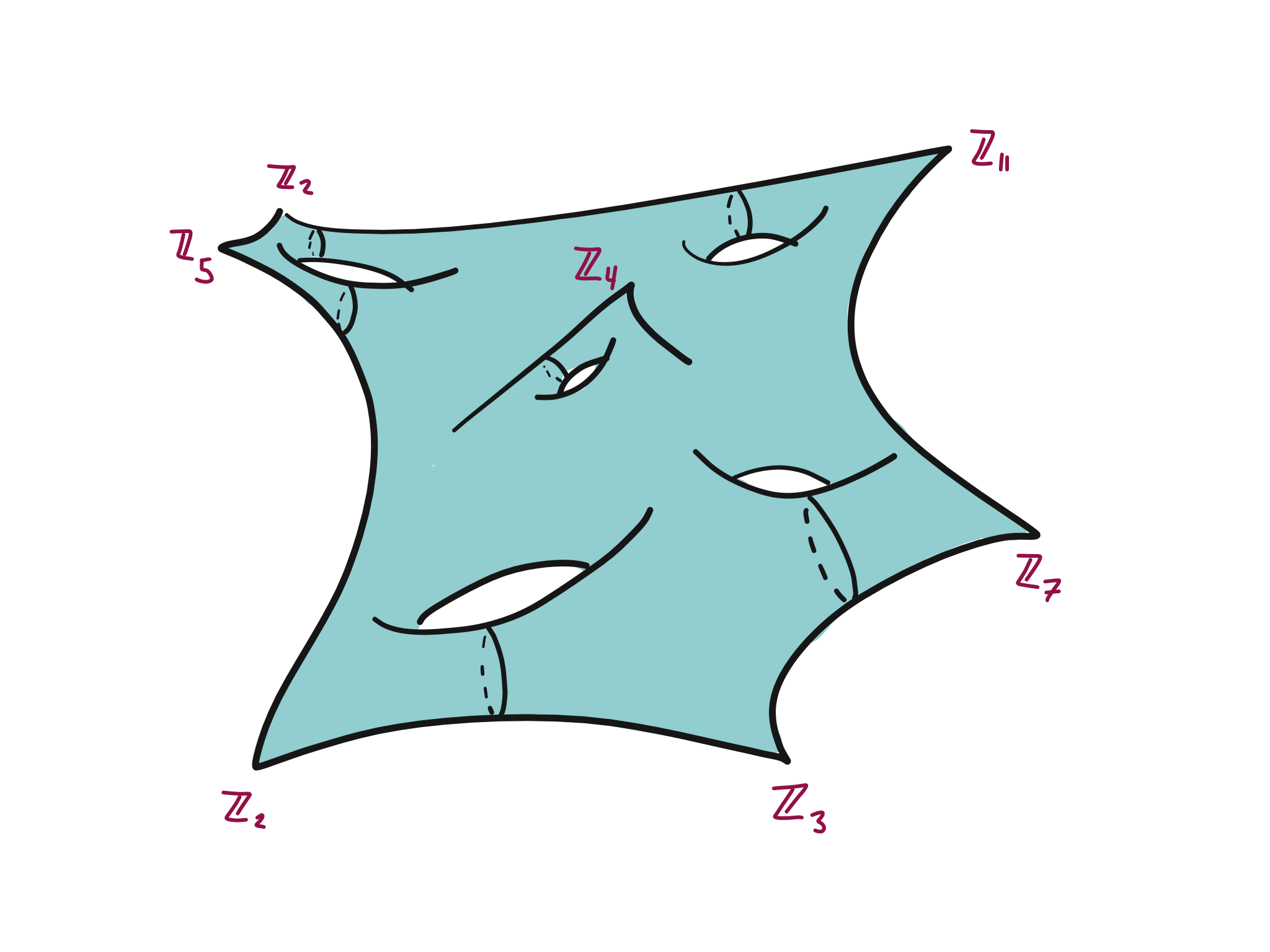}
\caption{Some more generic 2-orbifolds.}	
\end{figure}

\subsection{The Theory of Orbifolds}

Many things carry over from manifold theory to orbifolds, though the definitions become more technical the fundamental theorems remain true.
We give a short review of this theory here, starting with the notion of orbifold covering spaces.

\begin{definition}
An orbifold cover $\pi\colon\widetilde{\mathcal{O}}\to\mathcal{O}$ is an orbifold map such that each point $x\in\mathcal{O}$ has a neighborhood $U$ with local model $(U,\widetilde{U},\Gamma,\phi)$ and the preimage $\overline{\pi}\inv(U)$ is a disjoint union of components $V_i$, each with local models $(V_i,\widetilde{U}, G_i,\psi_i)$ with $G_i<\Gamma$ and $\bar{\pi}\colon V_i\to U$ the natural projection $\widetilde{U}/G_i\to\widetilde{U}/\Gamma$.
\end{definition}

\begin{example}
The branched covering $T^2\to \S^2$ of Example \ref{Ex:Torus_Branched_Cover} is an orbifold covering map of $\S^2(2,2,2,2)$ by the torus.
This cover \emph{unwraps} all the cone points.
\end{example}

\begin{figure}
\centering\includegraphics[width=0.60\textwidth]{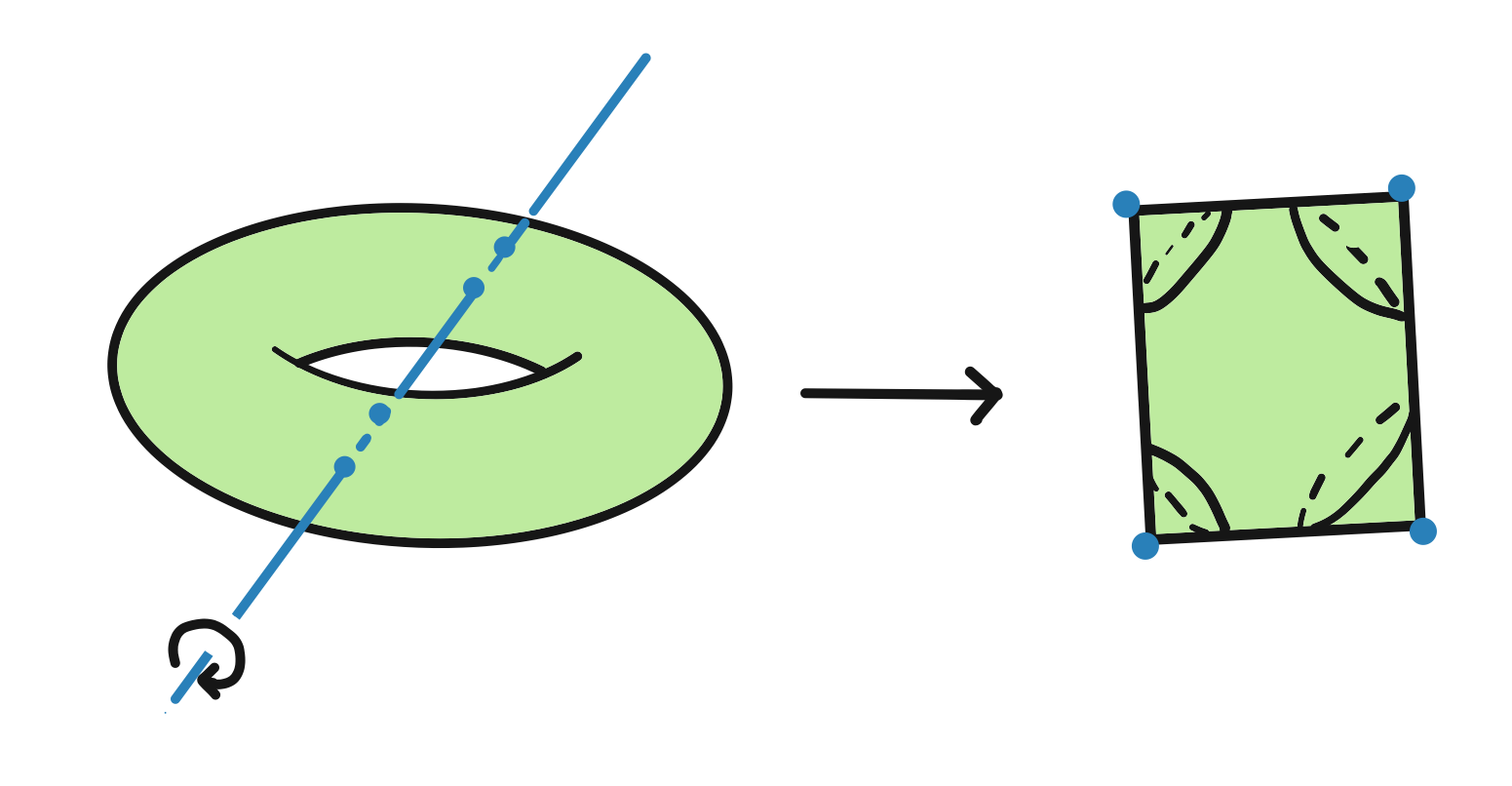}
\caption{Torus Branch cover of the sphere.}	
\end{figure}

\begin{example}
Consider the orbifold $T^2(n,n)$ and
let $\Z_2$ act on $\mathcal{O}$ freely by rotation sending one cone point to the other.
The quotient map $T^2(n,n)\to T^2(n)$ an orbifold covering map.
Note this cover does not \emph{unwrap} the cone point of $\mathcal{O}/\Z_2$ but rather \emph{doubles it}.
\end{example}

\begin{example}
Consider the orbifold $\S^2(n,n)$, and think of the cone points as being at the north and south poles.
Let $\Z_2$ act on $\mathcal{O}$ by a rotation about some axis through the equator, exchanging the cone points.
The quotient map is an orbifold covering $\S^2(n,n)\to\S^2(2,2,n)$.
This cover unwraps two of the cone points and doubles the other.
\end{example}

\begin{example}
If $\mathcal{O}$ is an orbifold with mirror singular locus $\Sigma$, there is a 2-fold cover $\widetilde{\mathcal{O}}$ of $\mathcal{O}$ obtained identifying two copies of $\mathcal{O}$ along the mirror singular locus. 
This is the \emph{local-orientation double cover}.	
\end{example}

\noindent
As for manifolds, we may define an orbifold version of \emph{universal covering space} as a cover which covers all other covers.

\begin{definition}
The \emph{orbifold universal cover} of an orbifold $\mathcal{O}$ is a cover $\pi\colon\widetilde{\mathcal{O}}\to\mathcal{O}$ such that if $p\colon\mathcal{Q}\to\mathcal{O}$ is any other cover, there is a covering map $r\colon \widetilde{\mathcal{O}}\to\mathcal{Q}$ such that $p\circ r=\pi$.
\end{definition}

\begin{observation}
Every orbifold has an orbifold universal cover; for a proof consult \cite{CooperHK00}, Theorem 2.9.
\end{observation}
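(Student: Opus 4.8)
The plan is to adapt the classical construction of the universal cover by homotopy classes of paths to the orbifold setting, building $\widetilde{\mathcal{O}}$ exactly as one builds the universal cover of a topological space but carrying along the extra local data needed to track how paths thread through the singular locus.

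First I would fix a notion of \emph{orbifold path} and \emph{orbifold homotopy rel endpoints}. An orbifold path is a path $\gamma$ in the underlying space $X_{\mathcal{O}}$ together with a choice of lift into a local cover in each chart that $\gamma$ meets, the lifts agreeing up to the local group action on overlaps; this records precisely how $\gamma$ passes through points of nontrivial isotropy. Orbifold homotopy is then a continuous family of such paths with fixed endpoints. Fixing a smooth basepoint $x_0$, I would define the candidate universal cover $\widetilde{\mathcal{O}}$ to be the set of orbifold-homotopy classes of orbifold paths starting at $x_0$, with projection $\pi\colon\widetilde{\mathcal{O}}\to\mathcal{O}$ sending a class to the endpoint of a representative.

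Next I would endow $\widetilde{\mathcal{O}}$ with a topology and an orbifold atlas making $\pi$ an orbifold covering in the sense defined above. Over a chart $(U,\widetilde{U},\Gamma,\phi)$ one analyzes how a fixed path ending in $U$ can be extended within $U$: the extensions modulo homotopy are governed by $\widetilde{U}$ together with a subgroup of $\Gamma$, which produces local models $(V_i,\widetilde{U},G_i,\psi_i)$ for the components of $\pi^{-1}(U)$ with $G_i<\Gamma$, matching the definition of an orbifold cover. One then checks these charts are mutually compatible, so that $\widetilde{\mathcal{O}}$ is a genuine orbifold and $\pi$ a genuine cover.

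Finally I would verify the universal property. Given any cover $p\colon\mathcal{Q}\to\mathcal{O}$, I would lift orbifold paths from $x_0$ into $\mathcal{Q}$ by the orbifold path-lifting property and send the class of $\gamma$ to the endpoint of its lift; the orbifold homotopy-lifting property shows this is well defined and yields a covering $r\colon\widetilde{\mathcal{O}}\to\mathcal{Q}$ with $p\circ r=\pi$. The hard part will be the local analysis at the singular locus: one must establish a homotopy-lifting lemma for orbifold charts that correctly accounts for the finite group actions, and confirm that the isotropy groups of $\widetilde{\mathcal{O}}$ at lifts of singular points are exactly the subgroups remaining after ``unwrapping,'' so that $\widetilde{\mathcal{O}}$ is simply connected as an orbifold and genuinely dominates every other cover.
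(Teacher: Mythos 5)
First, a framing point: the paper offers no argument of its own here---the statement is an observation that defers entirely to \cite{CooperHK00}, Theorem~2.9, whose proof does not go through a path space at all but builds the universal cover directly from covers of the local models. So even a completed version of your plan would be a different route: what you are sketching is essentially Haefliger's construction via $G$-paths, which is a legitimate alternative. As written, however, it has two genuine gaps, both sitting exactly where you say ``the hard part will be,'' and they are the actual content of the theorem rather than routine verifications.

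The first gap is that your notion of orbifold path is under-specified. Recording a lift in each chart, ``the lifts agreeing up to the local group action on overlaps,'' discards precisely the holonomy data that makes $\pi_1^{\mathsf{orb}}$ differ from $\pi_1$ of the underlying space: whenever a lift meets the singular locus, the element of $\Gamma$ matching the end of one lift to the start of the next is \emph{not} determined by the lifts (any element of the isotropy of the fixed set will do), and inequivalent choices lead to inequivalent continuations of the path. The correct object is a sequence of chart-lifts \emph{together with} the specific connecting group elements, and the equivalence relation must include subdivision and refinement moves as well as homotopies; without this, concatenation and the homotopy relation are not well defined. The second gap is that the lifting lemmas you invoke are false as stated at the level of underlying maps: orbifold covers admit path lifting but not \emph{unique} path lifting, and homotopy lifting genuinely fails---for the mirror $\R^2\to\R^2/\Z_2$ by reflection, a path that runs along the mirror locus and then departs has two lifts with the same initial point. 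You also cannot dodge this by restricting to paths in the regular part: the homotopy exhibiting that the $n$-th power of a loop about a $\Z_n$ cone point is trivial must sweep across the cone point (working only in the regular part of $\C/\Z_n$ would wrongly produce the infinite cyclic cover rather than $\C$). So the path- and homotopy-lifting statements have to be formulated and proved for $G$-paths, including the computation of isotropy at lifts of singular points; once that machinery is in place your outline does close up, but supplying it is the theorem.
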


\begin{definition}
The \emph{orbifold fundamental group} of an orbifold $\pi_1(\mathcal{O})$ is defined as the deck group of the universal covering $\pi_1(\mathcal{O})=\Aut(\widetilde{\mathcal{O}}\to\mathcal{O})$.
\end{definition}

\noindent
The alternative notation $\pi_1^{\mathsf{orb}}(\mathcal{O})$ is used when there is a risk of confusion with the fundamental group of the underlying space $\pi_1(X_\mathcal{O})$.
An orbifold is called \emph{good} if it is covered by a manifold.
In particular, the universal cover of a good orbifold is a manifold, and so good orbifolds are quotients of manifolds by properly discontinuous group actions.
An orbifold $\mathcal{O}$ is called \emph{very good} if it is \emph{finitely covered} by a manifold.

\begin{observation}
If $\mathcal{O}$ is a very good orbifold, $\mathcal{O}=M/\Gamma$ for $M$ a manifold and $|\Gamma|<\infty$	, then $\pi_1(\mathcal{O})$ is an extension of $\pi_1(M)$ by $\Gamma$.
\end{observation}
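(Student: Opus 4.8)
The plan is to exhibit the orbifold universal cover of $\mathcal{O}$ explicitly as the ordinary manifold universal cover $\widetilde{M}$ of $M$, and then to realize $\pi_1(\mathcal{O})$ concretely as the group of all lifts to $\widetilde{M}$ of the $\Gamma$-action on $M$. Throughout I assume, as is implicit in the presentation $\mathcal{O} = M/\Gamma$, that $\Gamma$ acts effectively, so that $M \to \mathcal{O}$ is a (regular) orbifold cover with deck group $\Gamma = \Aut(M \to \mathcal{O})$; this is the content of the Proposition above applied to a properly discontinuous finite action.

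First I would let $q \colon \widetilde{M} \to M$ be the manifold universal cover, with deck group $\Aut(\widetilde{M} \to M) = \pi_1(M)$, and form the composite $\pi \colon \widetilde{M} \to M \to \mathcal{O}$. Since a composition of orbifold covers is again an orbifold cover, and since $q$ is a genuine covering (trivial isotropy everywhere) while $M \to \mathcal{O}$ is the orbifold cover coming from the $\Gamma$-quotient, $\pi$ is an orbifold cover. As $\widetilde{M}$ is a simply connected manifold it has trivial orbifold fundamental group, so by the existence and uniqueness of universal orbifold covers (cited above from \cite{CooperHK00}) the cover $\pi$ is \emph{the} universal orbifold cover of $\mathcal{O}$. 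By definition of the orbifold fundamental group this identifies $\pi_1(\mathcal{O}) = \Aut(\widetilde{M} \to \mathcal{O})$.

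Next I would build the short exact sequence $1 \to \pi_1(M) \to \pi_1(\mathcal{O}) \to \Gamma \to 1$. Define $\Phi \colon \Aut(\widetilde{M} \to \mathcal{O}) \to \Gamma$ by sending a deck transformation $g$ to the unique $\gamma \in \Gamma$ with $q \circ g = \gamma \circ q$; such a $\gamma$ exists and is unique because $g$ covers the identity of $\mathcal{O}$, so $q \circ g$ and $q$ descend to the same map $\widetilde{M} \to \mathcal{O}$ and therefore differ by a deck transformation of the Galois cover $M \to \mathcal{O}$. One checks $\Phi$ is a homomorphism directly from this defining relation. It is surjective because any $\gamma \colon M \to M$ lifts through the universal cover $q$ to a homeomorphism $\tilde{\gamma}$ of $\widetilde{M}$ with $q\circ\tilde\gamma = \gamma\circ q$, and $\tilde{\gamma}$ then lies in $\Aut(\widetilde{M} \to \mathcal{O})$ with $\Phi(\tilde\gamma)=\gamma$. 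Finally $\ker\Phi$ consists of those $g$ with $q \circ g = q$, which is exactly $\Aut(\widetilde{M} \to M) = \pi_1(M)$, embedded as a normal subgroup of $\pi_1(\mathcal{O})$. This is precisely the assertion that $\pi_1(\mathcal{O})$ is an extension of $\pi_1(M)$ by $\Gamma$.

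The main obstacle is the first step: verifying that $\widetilde{M} \to \mathcal{O}$ genuinely satisfies the local-model definition of an orbifold cover near the singular locus (one must see that the sheets over a chart $(\pi(U),U,\Gamma_x,\pi)$ unwrap the isotropy correctly), and that orbifold simple-connectivity of $\widetilde{M}$ promotes $\pi$ to the \emph{universal} orbifold cover via the universal property. Once this geometric input is in hand, the construction of the extension is formal covering-space bookkeeping.
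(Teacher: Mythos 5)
Your proof is correct, and since the paper records this statement as an \emph{observation} with no proof supplied, your argument is exactly the standard one the paper tacitly invokes: it uses only the paper's own definitions ($\pi_1(\mathcal{O})$ as the deck group $\Aut(\widetilde{\mathcal{O}}\to\mathcal{O})$ of the universal orbifold cover, together with the Galois correspondence cited from \cite{CooperHK00}), identifying $\widetilde{M}\to\mathcal{O}$ as that universal cover and realizing $\pi_1(\mathcal{O})$ as the group of lifts of the $\Gamma$-action, which yields $1\to\pi_1(M)\to\pi_1(\mathcal{O})\to\Gamma\to 1$. Your explicit flagging of the effectiveness hypothesis and of the local-model verification near the singular locus is appropriate, as those are precisely the points the one-line observation suppresses.
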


\noindent
There is a version of Van Kampen's theorem for orbifolds; splitting along a connected suborbifold realizes the orbifold fundamental group as an amalgamated free product of the components.

\begin{proposition}
If $\mathcal{O}$ is an orbifold and suborbifolds $\mathcal{O}_1,\mathcal{O}_2$ such that 
$\mathcal{O}=\mathcal{O}_1\cup\mathcal{O}_2$ and
 $\mathcal{O}_1\cap\mathcal{O}_2$ is connected, $\pi_1(\mathcal{O})=\pi_1(\mathcal{O}_1)\ast_{\pi_1(\mathcal{O}_1\cap\mathcal{O}_2)}\pi_1(\mathcal{O}_2)$.
\end{proposition}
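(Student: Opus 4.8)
The plan is to verify the isomorphism through orbifold covering space theory, exploiting the definition of $\pi_1^{\mathsf{orb}}$ as the deck group of the universal cover. Write $G_1=\pi_1(\mathcal{O}_1)$, $G_2=\pi_1(\mathcal{O}_2)$, and $H=\pi_1(\mathcal{O}_1\cap\mathcal{O}_2)$, and let $G=G_1\ast_H G_2$ denote the amalgamated free product formed along the homomorphisms $H\to G_i$ induced by the inclusions $\mathcal{O}_1\cap\mathcal{O}_2\inject\mathcal{O}_i$. First I would produce a homomorphism $\Phi\colon G\to\pi_1(\mathcal{O})$: the inclusions $\iota_i\colon\mathcal{O}_i\inject\mathcal{O}$ induce maps $(\iota_i)_\ast\colon G_i\to\pi_1(\mathcal{O})$ which agree after restriction to $H$, since both factor through the map induced by $\mathcal{O}_1\cap\mathcal{O}_2\inject\mathcal{O}$. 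The universal property of the amalgamated product then yields a unique $\Phi$ restricting to $(\iota_i)_\ast$ on each factor, and the claim reduces to showing $\Phi$ is an isomorphism.

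The core of the argument is an explicit construction of the universal cover $\widehat{\mathcal{O}}$ of $\mathcal{O}$ on which $G$ acts with quotient $\mathcal{O}$, assembled as a tree of covers. I would take the Bass--Serre tree $T$ of the splitting $G=G_1\ast_H G_2$, whose vertices are the cosets of $G_1$ and $G_2$ and whose edges are the cosets of $H$, and build $\widehat{\mathcal{O}}$ by placing a copy of the universal cover $\widetilde{\mathcal{O}_i}$ over each vertex of type $G_i$ and a copy of $\widetilde{\mathcal{O}_1\cap\mathcal{O}_2}$ over each edge, gluing each edge piece to its two incident vertex pieces along lifts of the inclusions $\mathcal{O}_1\cap\mathcal{O}_2\inject\mathcal{O}_i$. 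The group $G$ acts on this assembly by permuting the pieces according to left multiplication on cosets, compatibly with the deck actions on each copy, and the quotient recovers $\mathcal{O}$ precisely because $\mathcal{O}=\mathcal{O}_1\cup\mathcal{O}_2$ is reconstructed from its two pieces by regluing along the connected intersection.

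It then remains to check that $\widehat{\mathcal{O}}$ is a connected, simply connected orbifold cover, so that by the defining universal property it is the orbifold universal cover and $\Aut(\widehat{\mathcal{O}}\to\mathcal{O})=G$; tracing through the identifications shows this isomorphism is exactly $\Phi$. Connectedness follows from connectedness of $T$ together with connectedness of each piece and of the intersection. Simple connectedness is the crux: any connected orbifold cover of $\widehat{\mathcal{O}}$ restricts to a trivial cover over each simply connected piece $\widetilde{\mathcal{O}_i}$ and over each edge copy, and since $T$ is a tree these local trivializations patch unambiguously to a global one, forcing the cover to be trivial. I would also remark that, in full generality, one may instead mimic the classical combinatorial proof directly in terms of orbifold paths and orbifold homotopies, which avoids committing to the tree picture.

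I expect this last verification to be the main obstacle, since it must be carried out within the orbifold covering formalism rather than for ordinary spaces: because $\mathcal{O}$ need not be good, one cannot pass to a manifold universal cover, and the local models meeting the intersection (cone points, mirror and corner reflectors) must be matched consistently under the gluing so that $\widehat{\mathcal{O}}$ genuinely carries an orbifold atlas refining those of the pieces in the sense of \cref{Def:Orbifold_Compatible}. The hypothesis that $\mathcal{O}_1\cap\mathcal{O}_2$ is \emph{connected} enters precisely here: it guarantees a single amalgamating subgroup $H$, so that the splitting is a plain amalgamated product with Bass--Serre tree $T$ rather than a more general graph of groups, and the tree combinatorics above are the correct ones.
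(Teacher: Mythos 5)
The paper states this proposition without proof (it appears as background, with the surrounding references doing the work), so there is no in-paper argument to compare against; judging your proposal on its own terms, there is a genuine gap. Your tree-of-covers construction silently assumes that the inclusion-induced maps $H\to G_i$ (and hence $G_i\to G$) are injective, which is neither a hypothesis of the proposition nor true in the cases the paper actually uses it for. If $H\to G_i$ has nontrivial kernel, the lift of $\mathcal{O}_1\cap\mathcal{O}_2\inject\mathcal{O}_i$ to universal covers is \emph{not} an embedding onto a component of the preimage: the component of the preimage of the intersection in $\widetilde{\mathcal{O}_i}$ is the cover corresponding to $\ker(H\to G_i)$, not $\widetilde{\mathcal{O}_1\cap\mathcal{O}_2}$, so gluing edge copies of $\widetilde{\mathcal{O}_1\cap\mathcal{O}_2}$ to the vertex pieces does not produce a cover of $\mathcal{O}$ at all. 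A minimal test case is $\S^2$ decomposed as two disks overlapping in an equatorial annulus: here $G=1\ast_{\Z}1$ is trivial, there is no Bass--Serre tree with vertex stabilizers $G_i$, and your recipe would glue two disks along copies of $\R$ wrapping around their boundaries. The same failure occurs in the paper's own applications, e.g.\ computing $\pi_1(S(n_1,\ldots,n_r))$ by capping punctures with cone neighborhoods, where $H=\Z$ surjects onto $\Z_{n_i}$ with kernel $n_i\Z$; moreover in the general pushout $G_1\ast_H G_2$ the factors $G_i$ need not inject into $G$, so even the correct vertex pieces are the covers of $\mathcal{O}_i$ corresponding to $\ker(G_i\to G)$ rather than the universal covers, and one cannot know these kernels in advance without circularity.

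Two repairs are available. If you add the hypothesis that the intersection is $\pi_1$-injective in each side, your argument is the standard tree-of-spaces construction transplanted to orbifolds, and it goes through with the chart-matching care you already flag; but it then proves a strictly weaker statement than the one quoted. For the proposition as stated, the natural route --- and the one best adapted to the paper's definition of $\pi_1^{\mathsf{orb}}$ as a deck group --- is through the Galois correspondence the paper records: a connected orbifold cover of $\mathcal{O}$ is exactly a pair of orbifold covers of $\mathcal{O}_1$ and $\mathcal{O}_2$ together with an identification of their restrictions over the connected intersection, so the category of covers of $\mathcal{O}$ is the fiber product of the covering categories of $\mathcal{O}_1$ and $\mathcal{O}_2$ over that of $\mathcal{O}_1\cap\mathcal{O}_2$; translating covers into $\pi_1$-sets then exhibits $\pi_1(\mathcal{O})$ as the pushout $G_1\ast_H G_2$ with no injectivity needed. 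Your closing suggestion to redo the combinatorial path-homotopy proof is also problematic in this setting, since the paper never develops a loop-based model of orbifold $\pi_1$, so that route would first require building the Haefliger path formalism from scratch.
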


\noindent
In particular, if $\mathcal{O}=\mathcal{O}_1\cup\mathcal{O}_2$ with $\mathcal{O}_2$ a simply connected manifold, then $\pi_1^\mathsf{orb}(\mathcal{O})=\pi_1^\mathsf{orb}(\mathcal{O}_1)$.
In particular, if the underlying space of $\mathcal{O}$ is simply connected and $\Sigma(\mathcal{O})=\set{x}$, then $\pi_1^\mathsf{orb}(\mathcal{O})=\mathsf{Is}(x)$.
This makes particularly simple the computation of orbifold $\pi_1$ in two dimensions.

\begin{observation}
\label{Obs:2Orbifold_Pi1}
Let $\mathcal{O}=S(n_1,\ldots n_r)$.  
Then $\pi_1(\mathcal{O})$ is a quotient of $\pi_1(S\smallsetminus \set{p_1,\ldots, p_r})$ by the relations that the loops $\gamma_i$ around the punctures $p_i$ have order $n$.  
If $\mathcal{O}$ has mirror reflector boundary, its mirror double is a $\Z_2$ cover of the form above.
\end{observation}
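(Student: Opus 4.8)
The plan is to compute $\pi_1^{\mathsf{orb}}(\mathcal{O})$ by decomposing $\mathcal{O}$ into cone disks around the singular points together with the complementary honest surface, and then applying the orbifold Van Kampen theorem repeatedly. First I would fix small disk neighborhoods $D_i$ of each cone point $p_i$, so that each $D_i$ is a cone orbifold with underlying space a disk and singular locus $\set{p_i}$. Since the underlying disk is simply connected and $\Sigma(D_i)=\set{p_i}$, the earlier observation identifying $\pi_1^{\mathsf{orb}}$ with the isotropy group gives $\pi_1^{\mathsf{orb}}(D_i)=\mathsf{Is}(p_i)\cong\Z_{n_i}$. The key geometric point to record here is that the local cover of $D_i$ projects onto the boundary circle as an $n_i$-to-one map, so that the boundary loop $\gamma_i$ of $D_i$ represents the \emph{generator} of $\Z_{n_i}$.

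Next I would let $S_0$ be the complement in $S$ of the open cone disks. This is a surface with boundary containing no singular points, hence a manifold, so $\pi_1^{\mathsf{orb}}(S_0)=\pi_1(S_0)$; moreover $S_0$ is homotopy equivalent to $S\smallsetminus\set{p_1,\ldots,p_r}$, and each boundary circle represents the corresponding puncture loop $\gamma_i$. I would then rebuild $\mathcal{O}$ by gluing the cone disks back one at a time, setting $\mathcal{O}^{(k)}=S_0\cup D_1\cup\cdots\cup D_k$ with $\mathcal{O}^{(0)}=S_0$ and $\mathcal{O}^{(r)}=\mathcal{O}$. At the $k$-th stage, writing $\mathcal{O}^{(k)}=\mathcal{O}^{(k-1)}\cup D_k$, the overlap is an annular collar of $\partial D_k$, which is connected with fundamental group $\Z$ generated by $\gamma_k$. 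Applying the two-piece orbifold Van Kampen theorem, $\pi_1^{\mathsf{orb}}(\mathcal{O}^{(k)})$ is the amalgamated free product of $\pi_1^{\mathsf{orb}}(\mathcal{O}^{(k-1)})$ and $\Z_{n_k}$ over this $\Z$, where the generator is sent to $\gamma_k$ on one side and to the generator of $\Z_{n_k}$ on the other. Because the $\Z_{n_k}$ factor then contributes no new generator (its generator has been identified with $\gamma_k$) but still imposes its defining relation, this amalgamation is precisely the quotient by the normal closure of $\gamma_k^{n_k}$. Induction over $k$ yields
\[
\pi_1^{\mathsf{orb}}(\mathcal{O})=\pi_1\big(S\smallsetminus\set{p_1,\ldots,p_r}\big)\big/\langle\!\langle \gamma_1^{n_1},\ldots,\gamma_r^{n_r}\rangle\!\rangle,
\]
which is the asserted presentation.

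I expect the main obstacle to be the bookkeeping in the inductive Van Kampen step: one must check that at each stage the overlap stays connected with fundamental group generated by the single loop $\gamma_k$, and that this loop is simultaneously the puncture loop in $\pi_1^{\mathsf{orb}}(\mathcal{O}^{(k-1)})$ and the generator of $\Z_{n_k}$ in $\pi_1^{\mathsf{orb}}(D_k)$, so that the amalgamation genuinely collapses to imposing $\gamma_k^{n_k}=1$ rather than adjoining a new generator. Finally, for the case of mirror reflector boundary I would invoke the local-orientation double cover: gluing two copies of $\mathcal{O}$ along the mirror locus produces a $\Z_2$-cover $\widetilde{\mathcal{O}}$ in which the mirror singularities become interior and the corner reflectors become ordinary cone points, so $\widetilde{\mathcal{O}}$ is again a surface with cone points of the form $S'(m_1,\ldots,m_s)$. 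The first part then computes $\pi_1^{\mathsf{orb}}(\widetilde{\mathcal{O}})$, and $\pi_1^{\mathsf{orb}}(\mathcal{O})$ is recovered as the degree-two extension determined by the deck action of the cover.
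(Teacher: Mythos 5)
Your proposal is correct and follows essentially the same route the paper intends: the observation is stated there as an immediate consequence of the preceding orbifold Van Kampen proposition together with the fact that a cone disk with simply connected underlying space has $\pi_1^{\mathsf{orb}}(D_i)=\mathsf{Is}(p_i)\cong\Z_{n_i}$, which is exactly the decomposition and inductive amalgamation you carry out (your identification of the amalgam $G\ast_{\Z}\Z_{n_k}$ along a surjection with the quotient $G/\langle\!\langle\gamma_k^{n_k}\rangle\!\rangle$ is the standard fact that makes the bookkeeping go through). Your treatment of the mirror case via the local-orientation double cover likewise matches the paper's intent.
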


\begin{example}
The orbifold fundamental group of $\S^2(2,2,2,2)$ is a $\Z_2$-extension of the fundamental group of the torus, as $\S^2(2,2,2,2)=T^2/\Z^2$ as in Example \ref{Ex:Torus_Branched_Cover}.
Computing via the procedure above gives another presentation, as a quotient of the free group on 3 generators.
Letting $\alpha,\beta,\gamma,\delta$ be the loops about the punctures on a 4-punctured sphere (so $\alpha\beta\gamma\delta=1$), we have
$\pi_1(\S^2(2,2,2,2))=\langle \alpha,\beta,\gamma,\delta\mid \alpha^2=\beta^2=\gamma^2=\delta^2=\alpha\beta\gamma\delta=1\rangle$.
\end{example}

\begin{example}
An orbifold is simply connected if $\pi_1(\mathcal{O})$ is trivial.
Note this is different than having simply connected underlying space, as $\pi_1(\S^2(p,q,r))=\Delta(p,q,r)$ but $\pi_1(X_{\S^2(p,q,r)})=1$ as the underlying space is a sphere.
\end{example}

\noindent
The existence of universal covers and the definition of orbifold $\pi_1$ as the corresponding deck group allows standard results of covering space theory to carry over without change.
In particular, orbifold covers exhibit a \emph{Galois correspondence} with subgroups of their fundamental groups.

\begin{proposition}
Let $\mathcal{O}$ be an orbifold.  
Then there is a $1-1$ correspondence between covers of $\mathcal{O}$ and conjugacy classes of subgroups of $\pi_1(\mathcal{O})$: for each $\Gamma<\pi_1(\mathcal{O})$ there is some covering space $p\colon\mathcal{Q}\to\mathcal{O}$ with $p_\ast \pi_1(Q)$ conjugate to $\Gamma$.
\end{proposition}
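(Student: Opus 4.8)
The plan is to transport the classical Galois correspondence for covering spaces to the orbifold setting, leveraging the two facts already in hand: the existence of the orbifold universal cover $\pi\colon\tilde{\mathcal{O}}\to\mathcal{O}$, and the identification $\pi_1(\mathcal{O})=\Aut(\tilde{\mathcal{O}}\to\mathcal{O})$ of the fundamental group with its deck group. The key structural input is that, as the deck group of the universal cover, $\pi_1(\mathcal{O})$ acts properly discontinuously on $\tilde{\mathcal{O}}$ with quotient $\mathcal{O}$.

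First I would construct the map from subgroups to covers. Given $\Gamma<\pi_1(\mathcal{O})$, the restriction of the properly discontinuous deck action to $\Gamma$ is again properly discontinuous, so by the observation extending the earlier proposition to properly discontinuous actions of infinite groups, the quotient $\mathcal{O}_\Gamma:=\tilde{\mathcal{O}}/\Gamma$ inherits an orbifold structure. The inclusion $\Gamma<\pi_1(\mathcal{O})$ induces a natural projection $\tilde{\mathcal{O}}/\Gamma\to\tilde{\mathcal{O}}/\pi_1(\mathcal{O})=\mathcal{O}$, and I would verify directly from the local models of $\tilde{\mathcal{O}}$ that this projection satisfies the defining local-model condition for an orbifold cover, with the local groups $G_i$ arising as the appropriate $\Gamma$-stabilizer subgroups of the ambient isotropy.

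Next I would build the reverse assignment. Given any cover $p\colon\mathcal{Q}\to\mathcal{O}$, the universal property of $\tilde{\mathcal{O}}$ supplies a covering $r\colon\tilde{\mathcal{O}}\to\mathcal{Q}$ with $p\circ r=\pi$; thus $\tilde{\mathcal{O}}$ is simultaneously the universal cover of $\mathcal{Q}$, and $\pi_1(\mathcal{Q})=\Aut(\tilde{\mathcal{O}}\to\mathcal{Q})$ embeds into $\pi_1(\mathcal{O})$ as precisely those deck transformations of $\tilde{\mathcal{O}}$ descending through $r$, which is the image $p_\ast\pi_1(\mathcal{Q})$. The only choice made is that of the lift $r$ (equivalently a basepoint lift), and replacing $r$ by $\gamma\circ r$ for $\gamma\in\pi_1(\mathcal{O})$ conjugates this subgroup by $\gamma$; this is exactly why the correspondence is with conjugacy classes rather than subgroups on the nose.

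Finally I would check the two assignments are mutually inverse up to isomorphism of covers. Starting from $\Gamma$, the deck group of $\tilde{\mathcal{O}}\to\tilde{\mathcal{O}}/\Gamma$ is $\Gamma$ itself, so $p_\ast\pi_1(\mathcal{O}_\Gamma)=\Gamma$ recovers the subgroup; conversely, starting from a cover $\mathcal{Q}$, the map $r$ descends to an orbifold isomorphism $\tilde{\mathcal{O}}/p_\ast\pi_1(\mathcal{Q})\biject\mathcal{Q}$ commuting with the projections to $\mathcal{O}$. The main obstacle I anticipate is not the bijection bookkeeping, which follows the classical template once the universal cover is in hand, but rather the careful verification that $\tilde{\mathcal{O}}/\Gamma\to\mathcal{O}$ meets the technical local-model definition of an orbifold covering map---tracking how the isotropy groups of $\tilde{\mathcal{O}}$, of $\mathcal{O}_\Gamma$, and of $\mathcal{O}$ interact under the quotient---together with confirming proper discontinuity of the $\Gamma$-action so that the earlier quotient construction genuinely applies.
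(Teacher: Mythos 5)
Your proposal is correct, and it is essentially the argument the paper has in mind: the paper states this proposition without proof in its background review, remarking just beforehand that once the orbifold universal cover exists and $\pi_1(\mathcal{O})$ is defined as its deck group $\Aut(\tilde{\mathcal{O}}\to\mathcal{O})$, the standard Galois correspondence of covering space theory carries over without change, deferring details to the references. You have spelled out exactly that transport, and you correctly identify the one place where orbifold care is genuinely needed --- the deck action is properly discontinuous but not free, so verifying that $\tilde{\mathcal{O}}/\Gamma\to\mathcal{O}$ satisfies the local-model definition of an orbifold cover (tracking isotropy) is the real content, not the bijection bookkeeping.
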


\noindent
This allows us to prove that there are examples of orbifolds which do not arise as quotients of manifolds, although they do in each local model.

\begin{proposition}
If $\mathcal{O}$ be a simply connected orbifold.
Then $\mathcal{O}$ admits no nontrivial covers via the Galois correspondence, and so if $\mathcal{O}$ has nonempty singular locus, $\mathcal{O}$ is a \emph{bad orbifold}.
\end{proposition}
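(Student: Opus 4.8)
The plan is to deduce both assertions directly from the Galois correspondence together with the given fact that the universal cover of a good orbifold is a manifold. First I would observe that \emph{simply connected} means $\pi_1(\mathcal{O})=\{1\}$, so that the trivial group has a single subgroup, namely itself. By the Galois correspondence, isomorphism classes of covers of $\mathcal{O}$ are in bijection with conjugacy classes of subgroups of $\pi_1(\mathcal{O})$; hence there is exactly one cover up to isomorphism. Since the identity map $\mathcal{O}\to\mathcal{O}$ is always a cover, this unique class must be the trivial one, and $\mathcal{O}$ therefore admits no nontrivial covers. In particular the orbifold universal cover $\pi\colon\widetilde{\mathcal{O}}\to\mathcal{O}$, which exists by the preceding observation, is forced to be this trivial cover, so $\widetilde{\mathcal{O}}\cong\mathcal{O}$.

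For the second assertion I would argue by contradiction. Suppose $\mathcal{O}$ has nonempty singular locus but is nevertheless \emph{good}. Then by definition $\mathcal{O}$ is covered by a manifold, and the universal cover of a good orbifold is a manifold; thus $\widetilde{\mathcal{O}}$ is a manifold. But from the first step $\widetilde{\mathcal{O}}\cong\mathcal{O}$, and an orbifold isomorphism carries the isotropy group of each point to an isomorphic group, hence preserves the singular locus. Since $\mathcal{O}$ has a point with nontrivial isotropy, so would $\widetilde{\mathcal{O}}$, contradicting that every point of a manifold has trivial isotropy. Therefore no simply connected orbifold with nonempty singular locus can be good, and any such orbifold is \emph{bad}.

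The routine checks here are mild and essentially definitional. The one step deserving care is the identification $\widetilde{\mathcal{O}}\cong\mathcal{O}$: one must confirm that under the Galois correspondence the orbifold universal cover corresponds to the trivial subgroup while the base $\mathcal{O}$ itself corresponds to the full group $\pi_1(\mathcal{O})$, so that the two coincide precisely when $\pi_1(\mathcal{O})$ is trivial. This bookkeeping of the correspondence is the only place where anything subtle enters; once it is in place, the contradiction with the manifold property of the universal cover is immediate. I expect this bookkeeping, rather than any genuine geometric difficulty, to be the main (and essentially the only) obstacle.
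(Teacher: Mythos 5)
Your argument is correct and is precisely the reasoning the paper leaves implicit: the proposition appears there without proof as an immediate corollary of the Galois correspondence, with triviality of $\pi_1(\mathcal{O})$ forcing all covers to be trivial and hence forcing a good orbifold to coincide with its manifold universal cover, contradicting nonempty singular locus. Your detour through $\widetilde{\mathcal{O}}\cong\mathcal{O}$ is sound (orbifold isomorphisms preserve isotropy groups), though you could shorten it by applying the no-nontrivial-covers conclusion directly to the hypothesized manifold cover $M\to\mathcal{O}$, concluding $M\cong\mathcal{O}$ at once.
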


\noindent
Such examples already exist in dimension two.

\begin{example}
The \emph{teardrop orbifolds} $\S^2(n)$ are simply connected but have nonempty singular locus, and thus are bad.
\end{example}

\begin{example}
The \emph{spindle orbifolds} $\S^2(m,n)$ have fundamental group $\pi_1(\S^2(m,n))=\langle \gamma\mid \gamma^m=\gamma^n=1\rangle\cong\Z_{\gcd(m,n)}$.
When $n=m$ the corresponding cover is the sphere.
When $\gcd(m,n)=m$ the corresponding cover is a teardrop $\S^2(n/m)$, which is a bad orbifold.
In general the $\Z_{\gcd(m,n)}$ cover is another spindle $\S^2(m',n')$ with conepoints of coprime orders.
This is orbifold simply connected and so a bad orbifold.
Thus spindles are good orbifolds if and only if the cone points are of the same order.
\end{example}

\begin{proposition}
Every 2-dimensional orbifold which is not a teardrop or a bad spindle is good, and in fact very good.	
\end{proposition}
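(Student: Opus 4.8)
The plan is to equip every such orbifold $\mathcal{O}$ with a constant-curvature geometric structure, thereby realizing it as a quotient $X/\Gamma$ with $X\in\{\S^2,\E^2,\Hyp^2\}$ a simply connected (hence manifold) model space and $\Gamma=\pi_1^{\mathsf{orb}}(\mathcal{O})<\Isom(X)$ discrete. Since $X$ is a manifold covering $\mathcal{O}$, this exhibits $\mathcal{O}$ as good at once; the passage from \emph{good} to \emph{very good} will then follow by producing a torsion-free finite-index subgroup of $\Gamma$.

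Before constructing structures I would put $\mathcal{O}$ in a convenient normal form. Using the mirror double (gluing two copies of $\mathcal{O}$ along its mirror locus) to remove mirror and corner reflectors, together with the orientation double to remove non-orientability, produces a finite orbifold cover $\hat{\mathcal{O}}\to\mathcal{O}$ whose underlying space is a closed orientable surface $S_g$ carrying only cone points $n_1,\dots,n_k$. These doubles are purely topological, so they exist independently of goodness; and since the composite of a finite manifold cover of $\hat{\mathcal{O}}$ with $\hat{\mathcal{O}}\to\mathcal{O}$ is again a finite orbifold cover by a manifold, very-goodness of $\hat{\mathcal{O}}$ descends to $\mathcal{O}$. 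Because orbifold Euler characteristic is multiplicative under finite covers and the teardrops and unequal spindles are exactly the \emph{positive}-characteristic surface-with-cone-point orbifolds isolated below, this reduction does not manufacture new exceptions.

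With $\mathcal{O}$ now a surface with cone points, I would split on the sign of
\[
\chi^{\mathsf{orb}}(\mathcal{O})=2-2g-\sum_{i=1}^{k}\Bigl(1-\tfrac{1}{n_i}\Bigr).
\]
When $\chi^{\mathsf{orb}}\le 0$, Gauss--Bonnet makes the prescribed angle/area data realizable: one assembles a Euclidean ($\chi^{\mathsf{orb}}=0$) or hyperbolic ($\chi^{\mathsf{orb}}<0$) fundamental polygon with the required cone angles $2\pi/n_i$ --- for three cone points this is precisely the triangle-group structure $\Delta(p,q,r)\acts X$ of the earlier example, and the general case is glued from such pieces --- yielding a discrete faithful holonomy $\Gamma<\Isom(\E^2)$ or $\Isom(\Hyp^2)$ with $X/\Gamma\cong\mathcal{O}$. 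The case $\chi^{\mathsf{orb}}>0$ is where all the bad behaviour lives: positivity forces $g=0$ and the signature into the short finite list $(n),(m,n),(2,2,n),(2,3,3),(2,3,4),(2,3,5)$. The teardrop $(n)$ and unequal spindle $(m,n)$ are removed by hypothesis, and each survivor $(n,n),(2,2,n),(2,3,3),(2,3,4),(2,3,5)$ is literally a finite quotient $\S^2/\Gamma$ for $\Gamma$ a cyclic, dihedral, or platonic subgroup of $\SO(3)$, hence good and --- being finitely covered by $\S^2$ --- already very good.

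Finally, for the $\chi^{\mathsf{orb}}\le 0$ structures I would invoke Selberg's lemma: $\Gamma$ is a finitely generated linear group, so it contains a torsion-free finite-index subgroup $\Gamma'$, and then $X/\Gamma'$ is a manifold finitely covering $\mathcal{O}=X/\Gamma$, proving very-goodness. I expect the genuine obstacle to be the spherical regime $\chi^{\mathsf{orb}}>0$, since this is exactly where goodness can fail: the argument there cannot be soft and must pass through the explicit enumeration of positive-characteristic signatures and their identification with finite rotation groups, verifying that the teardrops and unequal spindles are precisely the leftover combinatorial types that the hypothesis excludes.
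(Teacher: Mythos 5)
Your overall route is the standard one, and it is in fact the argument the paper itself points to rather than writes out: the text defers the proof to Scott \cite{Scott80}, whose proof is exactly your geometrization scheme --- enumerate the positive-characteristic signatures and realize them as $\S^2/\Gamma$ for $\Gamma<\SO(3)$ cyclic, dihedral, or platonic; realize $\chi^{\mathsf{orb}}=0$ and $\chi^{\mathsf{orb}}<0$ orbifolds as $\E^2/\Gamma$ and $\Hyp^2/\Gamma$ via fundamental polygons with cone angles $2\pi/n_i$; conclude goodness from the quotient description and very-goodness from Selberg's lemma (or, in the flat case, Bieberbach), with the spherical cases already finitely covered by $\S^2$. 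All of those steps are sound: the holonomy groups are finitely generated subgroups of the linear groups $\Isom(\Hyp^2)<\PGL(2;\R)$ and $\Isom(\E^2)<\GL(3;\R)$, so Selberg applies, and passing a finite manifold cover of $\hat{\mathcal{O}}$ down through the finite orbifold cover $\hat{\mathcal{O}}\to\mathcal{O}$ does establish very-goodness of $\mathcal{O}$.

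There is, however, one concrete false step in your reduction: the claim that doubling ``does not manufacture new exceptions.'' The disk with a single corner reflector of order $n$, i.e.\ $\D^2(\varnothing;n)$, is neither a teardrop nor a spindle, yet its mirror double is precisely the teardrop $\S^2(n)$; likewise $\D^2(\varnothing;m,n)$ with $m\neq n$ doubles to the bad spindle $\S^2(m,n)$. These two disk families are themselves bad (a manifold cover of $\D^2(\varnothing;n)$ would pull back over the double cover $\S^2(n)\to\D^2(\varnothing;n)$ to a manifold cover of the teardrop), so they must be added to the excluded list --- this is why Scott's statement lists four bad families, not two. Your appeal to multiplicativity of $\chi^{\mathsf{orb}}$ does not rescue the claim, since these disks have positive characteristic exactly like their bad doubles. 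The defect traces to the proposition's own wording, which should be read as excluding the teardrops, the bad spindles, \emph{and} their mirrored quotients $\D^2(\varnothing;n)$ and $\D^2(\varnothing;m,n)$, $m\neq n$; with that amendment your doubling step, and hence the whole proof, goes through.
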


\noindent
The proof of this proposition is not difficult and relies on orbifold covering theory; for reference consult \cite{Scott80}.
The notion of Euler characteristic carries over to the category of orbifolds as well.
For very good orbifolds, we may simply extend the usual Euler characteristic for manifolds to continue to be multiplicative with respect to covers in the category of orbifolds.
An extension to general orbifolds can be created from this together with an extension of the usual relationship with connect sum.

\begin{definition}
The orbifold Euler characteristic is a $\Q$-valued function $\chi$ on the class of orbifolds, defined to extend the usual Euler characteristic of manifolds and satisfy the following: $\chi{\widetilde{\mathcal{O}}}=d\chi(\mathcal{O})$ if there exists a $d$-fold orbifold cover $\widetilde{\mathcal{O}}\to\mathcal{O}$ and $\chi(\mathcal{O})=\chi(\mathcal{O}_1)+\chi(\mathcal{O}_2)-\chi(\mathcal{O}_1\cap\mathcal{O}_2)$ when $\mathcal{O}_1\cup\mathcal{O}_2=\mathcal{O}$.
\end{definition}

\begin{example}
We compute the orbifold Euler characteristic of a surface $S$ with $r$ cone points of order $n_1,\ldots, n_r$ as follows.
A small neighborhood $U_i$ of each cone point is a good orbifold, $n_i$-fold covered by the disk; thus $\chi(U_i)=1/n_i$.
The complement of these disks is a surface of genus $g$ with $r$ punctures, and as a manifold $\chi(S_{g,r})=2-2g-r$.
The intersections of each disk neighborhood with the surface are circles, with manifold Euler characteristic zero.
Thus $\chi(\mathcal{O})=\chi(S_{g,n})+\sum_{i=1}^r\frac{1}{n_i}=2-2g-\sum_{i=1}^r 1-\frac{1}{n_i}$.

\end{example}

The Euler characteristic of an orbifold with mirror and corner reflectors can be doubled to give a locally orientable orbifold such as the above, then again using multiplicativity of covers its Euler characteristic is half that of its double.
This is a powerful tool for understanding the geometrization of orbifolds in dimension two.

To understand orbifolds a bit better it is useful to understand their singular loci.
One reduction theorem that is useful here is that we may without loss of generality consider local models based on $\R^n/\Gamma$ for $\Gamma<\O(n+1)$ instead of $\Gamma<\mathsf{Diffeo}(\R^n)$.

\begin{proposition}
If $\mathcal{O}$ is an orbifold and $x\in\Sigma(\mathcal{O})$ then there is a chart $(U,\mathbb{B}^n,\Gamma,\phi)$ with the action of $\Gamma$ on a ball in $\R^n$ by orthogonal transformations, $\mathsf{Is}(x)<\O(n+1)$.
\end{proposition}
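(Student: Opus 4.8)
The plan is to upgrade the given orbifold chart around $x$ to one whose local action is \emph{linear}, and then to one whose local action is \emph{orthogonal}, using the standard averaging (Bochner linearization) technique available for finite group actions. First I would invoke the Observation preceding this proposition to obtain an orbifold chart $(U,\widetilde{U},\Gamma,\phi)$ with $\Gamma=\mathsf{Is}(x)$ and a lift $\tilde{x}\in\widetilde{U}$ of $x$; after translating I may assume $\tilde{x}=0$. The finite group $\Gamma$ then acts by diffeomorphisms on the open set $\widetilde{U}\subset\R^n$ fixing $0$, and differentiating at this fixed point yields a linear representation $\rho\colon\Gamma\to\GL(n;\R)$ given by $\rho(\gamma)=D_0\gamma$.

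The heart of the argument is to linearize this action by averaging. I would define the smooth map
$$\Phi(p)=\frac{1}{|\Gamma|}\sum_{\gamma\in\Gamma}\rho(\gamma)\inv\,\gamma(p).$$
A direct re-indexing of the sum (substituting $\eta=\gamma\delta$ and using that $\rho$ is a homomorphism) shows that $\Phi$ intertwines the two actions, $\Phi\circ\delta=\rho(\delta)\circ\Phi$ for every $\delta\in\Gamma$. Moreover $D_0\Phi=\tfrac{1}{|\Gamma|}\sum_\gamma\rho(\gamma)\inv\rho(\gamma)=\id$, so by the inverse function theorem $\Phi$ restricts to a diffeomorphism from a neighborhood of $0$ onto a neighborhood of $0$ (with $\Phi(0)=0$). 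Pulling back the chart along $\Phi$ then produces a new orbifold chart about $x$, compatible with the atlas, on which $\Gamma$ acts \emph{linearly} through $\rho$.

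It remains to make the linear action orthogonal and to restrict to a ball. Averaging the standard inner product gives a $\Gamma$-invariant inner product $\langle u,v\rangle_\Gamma=\tfrac{1}{|\Gamma|}\sum_\gamma\langle\rho(\gamma)u,\rho(\gamma)v\rangle$, and choosing a linear change of coordinates $T$ carrying $\langle\cdot,\cdot\rangle_\Gamma$ to the standard inner product conjugates $\rho$ into the orthogonal group, so that $T\rho(\gamma)T\inv\in\O(n)$ for all $\gamma$. In these coordinates the action preserves round balls centered at the origin; choosing the radius small enough that the ball lies in the region where the linearizing diffeomorphism is valid, and letting $U$ be the corresponding open set in $\mathcal{O}$, yields the desired chart $(U,\mathbb{B}^n,\Gamma,\phi)$ with $\mathsf{Is}(x)=\Gamma<\O(n)$.

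The main obstacle is the linearization step: verifying that the averaged map $\Phi$ \emph{both} intertwines the actions and is a local diffeomorphism at the fixed point. Once that is established, the passage to an orthogonal action is routine linear algebra (invariance of the averaged inner product and the existence of an orthonormalizing change of basis), and the final restriction to an invariant ball is bookkeeping with the compatibility conditions for orbifold charts from Definition \ref{Def:Orbifold_Compatible}.
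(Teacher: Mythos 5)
Your proof is correct, but it takes a genuinely different route from the paper. You linearize first, by the Bochner averaging trick: the map $\Phi(p)=\tfrac{1}{|\Gamma|}\sum_{\gamma}\rho(\gamma)\inv\gamma(p)$ intertwines the actions (your re-indexing argument is right, and $\rho(\gamma)=D_0\gamma$ is indeed a homomorphism by the chain rule at the common fixed point), has $D_0\Phi=\id$, and so is an equivariant local diffeomorphism; you then orthogonalize by averaging an inner product and conjugating $\rho$ into $\O(n)$. The paper instead averages a \emph{Riemannian metric} over $\Gamma$ to make the action isometric, and observes that the derivative representation at the fixed point lands in the orthogonal group of $g_x$ — implicitly, the $\Gamma$-equivariant Riemannian exponential map at $\tilde{x}$ then does both the linearization and the orthogonalization at once on a geodesic ball, though the paper leaves that chart-producing step unstated. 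What each buys: the paper's argument is shorter and conceptually cleaner (one averaging, and orthogonality is automatic for isometries fixing a point), while yours is more elementary and more complete as written — it needs only the chain rule and the inverse function theorem, no Riemannian machinery, and it actually exhibits the equivariant chart rather than gesturing at it. Two small points of care in your version: you should fix a $\Gamma$-invariant neighborhood of $0$ (take $\bigcap_{\gamma\in\Gamma}\gamma(V)$) before defining $\Phi$, so that equivariance makes sense and $\Phi(V)$ is $\rho(\Gamma)$-invariant; and note that your conclusion $\Gamma<\O(n)$ is the correct form — the $\O(n+1)$ in the statement appears to be a typo for an $n$-dimensional local model.
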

\begin{proof}
Let $x\in\Sigma(\mathcal{O})$ and $(U,\tilde{U},\Gamma,\phi)$ be a local model containing $x$, and $\widetilde{x}\in\widetilde{U}$ a point covering $x$.
Choose a Riemannian metric on $\widetilde{U}$ and average by $\Gamma$ to get a $\Gamma$-invariant Riemannian metric $g$.
As $\widetilde{x}$ is fixed by the action of $\mathsf{Is}(x)$, the derivative of this action gives a representation $\mathsf{Is}(x)\to\GL(T_{\widetilde{x}}\widetilde{U})$; and 
as this action is by isometries this has image in the orthogonal group for $g_x$.
\end{proof}

\begin{corollary}
The local structure of the singular locus of a an $n$-dimensional orbifold is the cone on 	the singular set of an $n-1$-dimensional spherical orbifold (a quotient of $\S^{n-1}$ by isometries).
\end{corollary}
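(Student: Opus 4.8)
The plan is to linearize at the singular point using the preceding Proposition, and then pass to polar coordinates, where an orthogonal action manifestly exhibits the local model as a cone.

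First I would apply the Proposition to a singular point $x \in \Sigma(\mathcal{O})$: there is a chart $(U, \mathbb{B}^n, \Gamma, \phi)$ in which $\Gamma = \mathsf{Is}(x)$ acts on the ball $\mathbb{B}^n \subset \R^n$ by orthogonal transformations fixing the origin (the lift of $x$). Hence a neighborhood of $x$ in $\mathcal{O}$ is orbifold-isomorphic to $\mathbb{B}^n/\Gamma$, and the singular locus of $\mathcal{O}$ near $x$ is the image under the quotient map of the non-free locus $\Sigma_{\mathbb{B}} = \{v \in \mathbb{B}^n \mid \mathsf{Stab}_\Gamma(v) \neq \{1\}\}$.

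Next I would exploit linearity. Since each $g \in \Gamma$ is a linear map, $gv = v$ holds if and only if $g(\lambda v) = \lambda v$ for every $\lambda > 0$; thus $\mathsf{Stab}_\Gamma(v)$ is constant along each radial ray, and $\Sigma_{\mathbb{B}} \smallsetminus \{0\}$ is a union of such rays. Writing $\mathbb{B}^n \smallsetminus \{0\} \cong \S^{n-1} \times (0,1)$ via $v \mapsto (v/\|v\|, \|v\|)$, the action becomes the product of the restricted action of $\Gamma$ on $\S^{n-1}$ by isometries of the round metric, together with the trivial action on the radial factor. The quotient is therefore $(\S^{n-1}/\Gamma) \times (0,1)$, and adjoining the image of the origin realizes $\mathbb{B}^n/\Gamma$ as the open cone on the spherical orbifold $\S^{n-1}/\Gamma$. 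Because $\Sigma_{\mathbb{B}}$ is radial, its image is the cone on the non-free locus of $\S^{n-1}$, that is, on $\Sigma(\S^{n-1}/\Gamma)$, which gives the claim once we note that $\S^{n-1}/\Gamma$ is by definition an $(n-1)$-dimensional spherical orbifold (a quotient of the round sphere by a finite group of isometries).

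The only genuine content is checking that the cone structure descends compatibly to the quotient: that off the apex the action is a product with the trivial action on $(0,1)$, so nothing beyond the radial identification is collapsed, and that the apex is a single point carrying full isotropy $\Gamma$. Both are immediate from the product form of the action, so I expect no serious obstacle here; the one mild subtlety is the bookkeeping at the apex, where the cone fails to be a product, but that point is exactly $x$ itself and so requires no separate argument.
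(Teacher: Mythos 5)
Your proposal is correct and follows essentially the same route as the paper's proof: both apply the preceding linearization proposition to get an orthogonal chart $(U,\mathbb{B}^n,\Gamma,\phi)$, observe that $\Gamma$ preserves the concentric spheres (equivalently, acts as a product in polar coordinates), and conclude that $\mathbb{B}^n/\Gamma$ is the cone on $\S^{n-1}/\Gamma$ with singular locus the cone on $\Sigma(\S^{n-1}/\Gamma)$. Your version merely spells out the radial-invariance of stabilizers and the apex bookkeeping, which the paper leaves implicit.
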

\begin{proof}
Let $x\in\Sigma(\mathcal{O})$ and $(U,\mathbb{B}^n,\Gamma,\phi)$ be a chart with an orthogonal local action as above.
Then $\Gamma$ preserves the concentric radial spheres in $\mathbb{B}^n$, and the quotient $\mathbb{B}^n/\Gamma$ is the cone on the quotient of $\S^{n-1}/\Gamma$.
The singular locus of $U\cong \mathbb{B}^n/\Gamma$ is thus the cone on the singular locus of $\S^{n-1}/\Gamma$.
\end{proof}

\noindent
This leads to a classification of 1- and 2-dimensional orbifolds, which we do not pursue here but state for reference.  Details may be found in \cite{Scott80}.

\begin{theorem}[Classification of 1-orbifolds]
The closed $1$ orbifolds are the circle $\S^1$ and the interval $I=[-1,1]$ with reflector boundary, arising as a quotient $\S^1/\Z_2$ by complex conjugation.
\end{theorem}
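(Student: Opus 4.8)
The plan is to determine every possible local model for a point in a $1$-orbifold, read off what this forces on the underlying space, and then reconstruct the orbifold structure in each case.

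First I would pin down the local models. By the Proposition reducing orbifold charts to orthogonal local actions, every point $x$ of a $1$-orbifold has a chart $(U,\mathbb{B}^1,\Gamma,\phi)$ with $\Gamma=\mathsf{Is}(x)$ acting on $\mathbb{B}^1\subset\R$ by orthogonal transformations, so $\Gamma<\O(1)=\set{\pm 1}\cong\Z_2$. Using the cone description from the preceding Corollary, a neighborhood of $x$ is the cone on $\S^0/\Gamma$, and there are exactly two cases. If $\Gamma=\set{1}$ then $\S^0/\Gamma=\S^0$ and the cone is an open interval: $x$ is a \emph{smooth point}. If $\Gamma=\Z_2$ acts by $t\mapsto -t$, then $\S^0/\Gamma$ is a single point and the cone is the ray $[0,\infty)$ with $\Z_2$-isotropy at its apex: $x$ is a \emph{mirror reflector}. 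These are the only local pictures.

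Next I would observe that these two models exhibit the underlying space $X_\mathcal{O}$ as a topological $1$-manifold with boundary, the boundary being exactly the mirror reflector set $\Sigma(\mathcal{O})$, with every interior point smooth. Here is the one step that deserves care, and is the only real subtlety in the argument: a mirror reflector is an \emph{interior} point of the orbifold (it carries nontrivial isotropy but is not an orbifold boundary point in the sense of $\partial_{\mathsf{orb}}$), even though it is a boundary point of the underlying topological space. Thus the hypothesis that $\mathcal{O}$ is \emph{closed} means precisely that $X_\mathcal{O}$ is compact with $\partial_{\mathsf{orb}}\mathcal{O}=\varnothing$, which does \emph{not} prevent $X_\mathcal{O}$ from having topological boundary at reflectors. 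Invoking the classification of compact $1$-manifolds with boundary, each connected component of $X_\mathcal{O}$ is homeomorphic either to $\S^1$ (no boundary) or to $[-1,1]$ (two boundary points).

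Finally I would reconstruct the orbifold and identify the interval example. If a component is $\S^1$ there are no boundary points, hence no reflectors, and it is the smooth circle. If a component is $[-1,1]$, both endpoints carry $\Z_2$-isotropy while the interior is smooth, giving the interval orbifold $I$. To see $I\cong\S^1/\Z_2$, realize $\S^1\subset\C$ and let $\Z_2$ act by complex conjugation $z\mapsto\bar z$; this fixes $\pm 1$ and acts freely elsewhere, so by the Proposition that finite quotients of manifolds are orbifolds the quotient is a $1$-orbifold whose underlying space is a closed half-circle (an interval) with mirror reflectors exactly at the images of $\pm 1$, which is precisely $I$. Beyond the bookkeeping above there is no genuine obstacle; the only point requiring attention is the distinction, flagged in the previous paragraph, between the orbifold boundary and the boundary of the underlying space.
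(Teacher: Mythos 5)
Your proof is correct and follows precisely the route the paper sets up: the theorem is stated there without proof (details deferred to Scott), but the preceding Proposition giving orthogonal local models $\mathsf{Is}(x)<\O(1)\cong\Z_2$ and its Corollary on the cone structure of the singular locus are exactly the tools you invoke, so your argument supplies the intended details. Your explicit handling of the one genuine subtlety --- that mirror reflectors lie in the topological boundary of $X_\mathcal{O}$ but not in $\partial_{\mathsf{orb}}\mathcal{O}$, so a closed $1$-orbifold may still have underlying space $[-1,1]$ --- is correct and well placed.
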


\begin{theorem}[Classification of 2-orbifolds]
Every locally orientable $2$-orbifold has underlying space a closed surface, together with a finite number of marked points (cone points) labeled by natural numbers $n_i>1$ (the order of the isotropy subgroups).
Non locally-orientable $2$-orbifolds have underlying space a surface with boundary, which is orbifold mirror singular locus, and in addition to marked points in the interior has finitely many marked points on the boundary (the corner reflectors) labeled by natural numbers $m_i>1$ (relating to the dihedral isotropy groups $D_{2m_i}$).
\end{theorem}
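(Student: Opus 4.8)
The plan is to reduce the global classification to a short list of local models, using the preceding proposition and corollary: every point of a 2-orbifold admits a chart $(U,\mathbb{B}^2,\Gamma,\phi)$ with $\Gamma<\O(2)$ acting orthogonally, so that $U$ is the cone on the spherical 1-orbifold $\S^1/\Gamma$. The first step is therefore group-theoretic — classify the finite subgroups of $\O(2)$. These are exactly the cyclic rotation groups $\Z_n$ and the dihedral groups $D_{2n}$ generated by rotations together with a reflection; correspondingly, by the classification of 1-orbifolds already established, the link $\S^1/\Gamma$ is a circle when $\Gamma$ is cyclic and an interval with reflector boundary when $\Gamma$ is dihedral.

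Next I would read off the topology and singular structure of each cone $\mathbb{B}^2/\Gamma$. For $\Gamma=\{1\}$ the model is an open subset of $\R^2$, a smooth point. For $\Gamma=\Z_n$ acting by rotation the cone on $\S^1$ is homeomorphic to $\R^2$, and the image of the origin is an isolated interior singular point with isotropy $\Z_n$: a cone point. For $\Gamma=\Z_2$ acting by a reflection the quotient is the closed half-plane, whose boundary line is a mirror reflector locus with isotropy $\Z_2$. For $\Gamma=D_{2n}$ the cone on an interval is again a half-plane, now with two mirror edges meeting at the origin, an isolated corner reflector with isotropy $D_{2n}$. Thus every local model has underlying space either $\R^2$ or a closed half-plane, and its singular locus is an isolated interior point, a boundary line, or an isolated boundary point.

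I would then globalize. Since the rotation quotients are topologically $\R^2$, the underlying space $X_{\mathcal{O}}$ is a topological surface away from the reflection charts, while the reflection charts contribute precisely its boundary; hence $X_{\mathcal{O}}$ is a surface with (possibly empty) boundary, the boundary being the mirror singular locus, decorated by isolated interior cone points and isolated boundary corner reflectors. In the locally orientable case every local action preserves orientation, hence is a rotation, so no mirror or corner reflectors occur, the boundary is empty, and $X_{\mathcal{O}}$ is a closed surface carrying only cone points labeled by the orders $n_i$ of their isotropy groups — the first assertion. In the general case the reflection and dihedral charts appear, yielding the mirror boundary and the corner reflectors labeled by the dihedral orders of the second assertion. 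Finiteness of the cone points and corner reflectors follows from compactness of a closed orbifold together with the fact that these singular points are isolated.

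The step I expect to be the main obstacle is the globalization: verifying that the pointwise-local pictures genuinely patch into a single surface with boundary, with the three kinds of decoration disjoint and consistently defined. Concretely, one must check that the chart compatibility condition forces the isotropy type to be locally constant along each stratum — so that a mirror line cannot suddenly become a cone axis, nor a cone order jump — and that the homeomorphisms $\mathbb{B}^2/\Z_n\cong\R^2$ and $\mathbb{B}^2/D_{2n}\cong$ half-plane respect the chart overlaps, so that the boundary and the marked points are well defined globally. Once this local-to-global bookkeeping of strata is in place, every remaining statement is immediate from the classification of finite subgroups of $\O(2)$.
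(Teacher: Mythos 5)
Your proposal is correct, and it is precisely the argument the paper leaves implicit: the paper states this theorem for reference only, deferring details to Scott \cite{Scott80}, but the two results immediately preceding it (orthogonal local models, and the corollary that the local singular structure is the cone on a spherical $(n-1)$-orbifold) are exactly the inputs you use. Your reduction to the finite subgroups of $\O(2)$, the case analysis of $\Z_n$-rotations, reflections, and $D_{2n}$, and the globalization via local constancy of isotropy type along strata constitute the standard completion of that sketch, and your flagging of the local-to-global patching as the one step requiring care (chart compatibility via equivariant lifts forces the stratification to be well defined) is accurate; note only that, as you observe, the finiteness of cone points and corner reflectors uses compactness, so the theorem is implicitly about closed orbifolds.
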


\noindent
In particular, each of these orbifolds has underlying space a topological manifold together with an additional orbifold structure.
This is not true in general, in higher dimensions the underlying space of an orbifold need not be a manifold as we have already seen in Example \ref{Ex:Cone_RP}.
Locally orientable $3$-orbifolds are easily classified, and all have underlying spaces a manifold.

\begin{observation}
The finite subgroups of $\SO(3)$ are infinite cyclic $\Z_n$, dihedral $\Delta(2,2,n)$, or the orientation-preserving symmetry groups of the platonic solids $\Delta(2,3,3),\Delta(2,3,4)$ or $\Delta(2,3,5)$.
The singular locus of $\S^2/\Gamma$ for $\Gamma$ in the list above is either two points with the same isotropy group $\Z_n$ or a triple of points with isotropy groups of orders $(2,2,n)$, $(2,3,3),(2,3,4)$ or $(2,3,5)$.
\end{observation}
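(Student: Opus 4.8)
The plan is to prove this in two stages: first obtain the abstract classification of finite rotation groups via the classical pole-counting argument, and then read off the singular loci directly from the orbit structure that argument produces.

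First I would recall that every nontrivial element of $\SO(3)$ is a rotation about some axis, and so fixes exactly two antipodal points of $\S^2$, which I will call its \emph{poles}. Given a finite subgroup $\Gamma<\SO(3)$ of order $N$, let $P$ be the set of all poles of nontrivial elements; then $\Gamma$ acts on $P$. I would count the incidences $\set{(g,p)\mid g\neq e,\ p \text{ a pole of } g}$ in two ways. On one hand each of the $N-1$ nontrivial elements contributes exactly two poles, giving $2(N-1)$. On the other hand, grouping by $\Gamma$-orbits of poles, an orbit of a pole with stabilizer of order $n_i$ has size $N/n_i$ and contributes $(n_i-1)$ nontrivial stabilizing elements per pole. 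Equating and dividing by $N$ yields the key identity
\[
2-\frac{2}{N}=\sum_i\left(1-\frac{1}{n_i}\right),
\]
the sum running over pole-orbits, with each $n_i\geq 2$.

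Next I would extract the classification from this identity. Since each summand is at least $1/2$ while the left side is strictly less than $2$, there are at most three orbits; and since the left side is at least $1$ for $N\geq 2$, there are at least two. The two-orbit case forces $2/N=1/n_1+1/n_2$ with $n_i\mid N$, which is possible only when $n_1=n_2=N$, giving two fixed poles and the cyclic group $\Z_N$. The three-orbit case reduces to the inequality $1/n_1+1/n_2+1/n_3=1+2/N>1$ with each $n_i\geq 2$, whose only integer solutions are $(2,2,n)$ with $N=2n$, together with $(2,3,3)$, $(2,3,4)$, $(2,3,5)$ with $N=12,24,60$ respectively.

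The main obstacle, and the step demanding the most care, is passing from these numerical solutions to the assertion that each is realized by a \emph{unique} conjugacy class of subgroups isomorphic to the claimed group. For each solution I would exhibit a concrete realization --- the rotational dihedral symmetries of a regular $n$-gon for $(2,2,n)$, and the orientation-preserving symmetry groups of the tetrahedron, octahedron, and icosahedron for the remaining three (noting that the dual pairs cube/octahedron and dodecahedron/icosahedron yield the same groups) --- and then argue uniqueness by showing that the orbit of a pole of maximal stabilizer order determines, through its convex hull, the corresponding regular polytope up to rotation, pinning $\Gamma$ down up to conjugacy in $\SO(3)$. The abstract isomorphism type then follows by matching generators of the pole-stabilizers against the triangle group presentation $\Delta(n_1,n_2,n_3)=\langle a,b,c\mid a^{n_1}=b^{n_2}=c^{n_3}=abc=1\rangle$.

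Finally, for the singular locus statement, I would observe that a point of $\S^2/\Gamma$ is singular precisely when a representative has nontrivial stabilizer, that is, when it is a pole; and distinct pole-orbits descend to distinct singular points, each carrying isotropy group equal to the corresponding stabilizer $\Z_{n_i}$. Thus the orbit data already computed immediately yields the singular locus: two points sharing isotropy $\Z_n$ in the cyclic case, and a triple of points with isotropy orders $(2,2,n)$, $(2,3,3)$, $(2,3,4)$, or $(2,3,5)$ in the three-orbit cases.
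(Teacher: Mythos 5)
Your proposal is correct: it is the classical pole-counting (Klein) argument, which is precisely the standard proof behind this statement --- the paper records it as an unproven Observation, so your write-up simply supplies the canonical argument it implicitly invokes, including the orbit data $(n_1,\ldots)$ that immediately gives the singular loci with cyclic isotropy $\Z_{n_i}$. One cosmetic remark: the paper's phrase ``infinite cyclic $\Z_n$'' is evidently a slip for \emph{finite} cyclic, and your argument correctly produces the finite groups $\Z_N$ with two fixed poles.
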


\begin{theorem}
Locally orientable $3$-orbifolds $\mathcal{O}$ have underlying space $\mathcal{X}_\mathcal{O}$ a 3-manifold and singular locus $\Sigma(\mathcal{O})$ a 3-regular (possibly disconnected) graph $G\inject X_\mathcal{O}$ equipped with a an admissible labeling of edges: any three edges incident to a vertex are labeled $(2,2,n)$, $(2,3,3), (2,3,4)$ or $(2,3,5)$.
\end{theorem}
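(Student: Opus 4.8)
The plan is to work entirely locally, reducing each point of $\mathcal{O}$ to a standard model and then reading off both the underlying topology and the singular locus from the classification of finite subgroups of $\SO(3)$. First I would apply the proposition above to replace an arbitrary chart around a point $x$ by an orthogonal one $(U,\mathbb{B}^3,\Gamma,\phi)$ with $\Gamma=\mathsf{Is}(x)<\O(3)$. Since $\mathcal{O}$ is locally orientable the local action is by orientation-preserving maps, so in fact $\Gamma<\SO(3)$. The observation recalled above then lists the possibilities for $\Gamma$: the trivial group, a cyclic group $\Z_n$, a dihedral group $\Delta(2,2,n)$, or one of the polyhedral groups $\Delta(2,3,3),\Delta(2,3,4),\Delta(2,3,5)$.

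For the underlying space, I would use that $\mathbb{B}^3/\Gamma$ is, by the corollary above, the cone on $\S^2/\Gamma$. The quotient $\S^2/\Gamma$ is a closed orientable surface: away from the finitely many singular points the quotient map is an honest covering, and each singular point has a neighborhood $\mathbb{B}^2/\Z_k$, which is a cone on $\S^1/\Z_k\cong\S^1$ and hence a disk. By the classification of $2$-orbifolds (or directly, a closed orientable surface of orbifold Euler characteristic $2/|\Gamma|>0$) this surface is topologically $\S^2$. The cone on $\S^2$ is $\mathbb{B}^3$, so every point of $X_\mathcal{O}$ has a Euclidean neighborhood and $X_\mathcal{O}$ is a $3$-manifold.

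It remains to describe $\Sigma(\mathcal{O})$. Again by the corollary, near $x$ the singular locus is the cone on the singular set of $\S^2/\Gamma$, and the observation records exactly what that singular set is in each case. When $\Gamma$ is trivial $x$ is smooth; when $\Gamma=\Z_n$ the singular set of $\S^2/\Z_n$ is two points, so its cone is a single arc through $x$ labeled $n$, meaning $x$ is an interior point of an edge along which the isotropy is the constant group $\Z_n$. When $\Gamma$ is dihedral or polyhedral the singular set of $\S^2/\Gamma$ is three points with isotropy orders $(2,2,n)$, $(2,3,3)$, $(2,3,4)$ or $(2,3,5)$, so its cone is three arcs meeting at $x$; thus $x$ is a trivalent vertex whose incident edges carry exactly those labels. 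Assembling these local pictures, $\Sigma(\mathcal{O})$ is a graph embedded in $X_\mathcal{O}$ in which every vertex has degree three and the labels around a vertex are admissible, as claimed.

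The main obstacle I expect is the topological input that $\S^2/\Gamma\cong\S^2$, on which the manifold claim rests; the graph structure and its labeling then follow mechanically from the corollary together with the list of singular loci in the observation. A secondary point requiring care is verifying that the edge label $n$ is genuinely locally constant along each one-dimensional stratum, which holds because the isotropy type is locally constant away from the vertices.
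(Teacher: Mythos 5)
Your proof is correct and is exactly the argument the paper intends: the theorem is stated there without a written proof, immediately after the proposition providing orthogonal local models $(U,\mathbb{B}^3,\Gamma,\phi)$ with $\Gamma<\O(3)$, the corollary that the singular locus is locally the cone on the singular set of a spherical orbifold $\S^2/\Gamma$, and the observation listing the finite subgroups of $\SO(3)$ together with the singular sets of their sphere quotients --- precisely the three ingredients you assemble, with local orientability forcing $\Gamma<\SO(3)$. Your one supplementary step, that the underlying surface of $\S^2/\Gamma$ is $\S^2$ (a closed orientable surface with disk neighborhoods at cone points and positive Euler characteristic), so that the cone is $\mathbb{B}^3$ and $X_\mathcal{O}$ is a manifold, correctly fills in the detail the paper leaves implicit.
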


\begin{example}
Any link in $\S^3$, together with any labeling, is a 3-orbifold with only cone axis singularities.	
An arbitrarily knotted theta graph embedded in $\S^3$ labeled by an admissible triple gives a 3-orbifold.
\end{example}

\begin{figure}
\centering\includegraphics[width=0.85\textwidth]{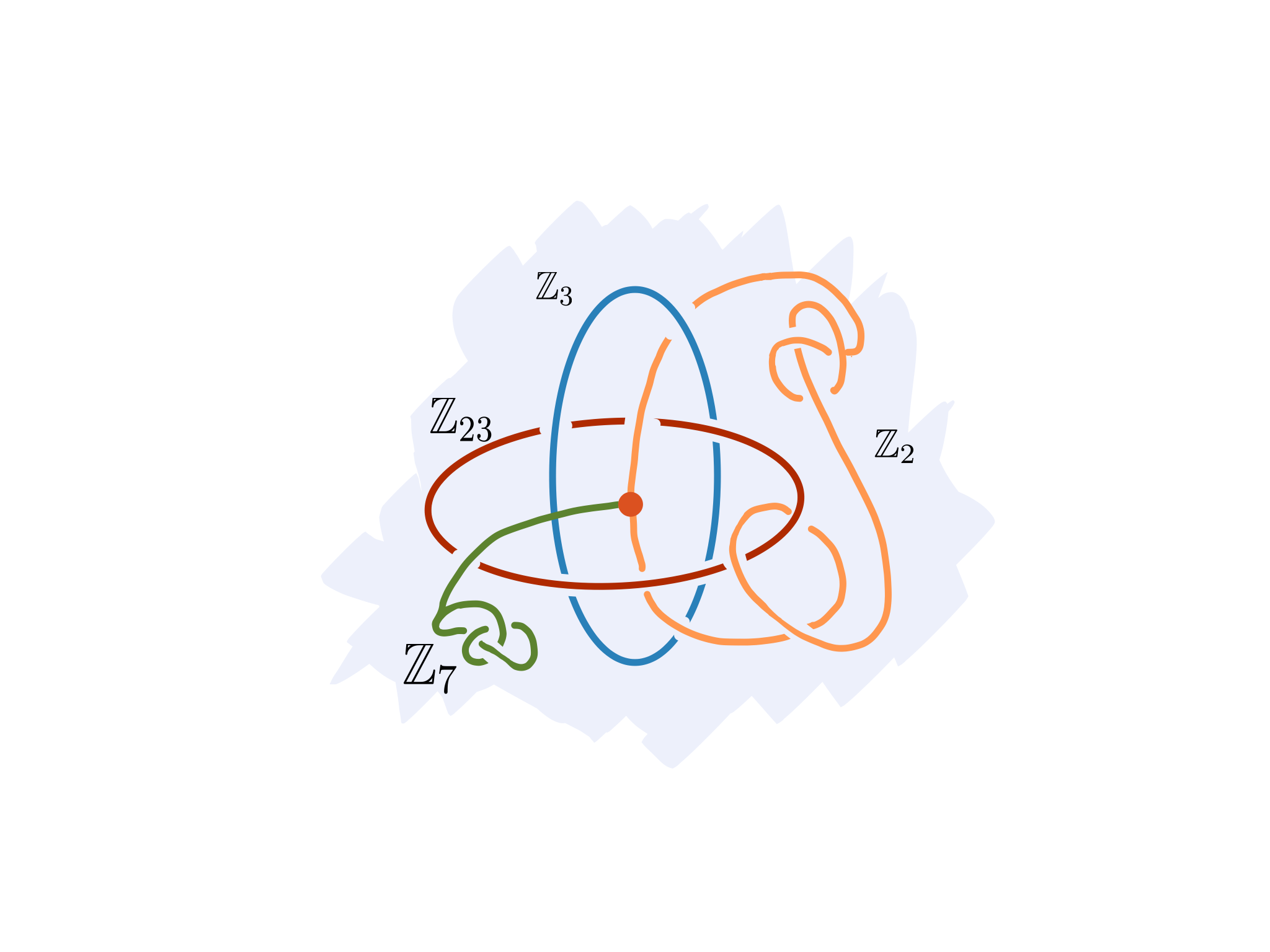}
\caption{An example 3-orbifold with underlying space $\S^3$ and singular locus labeled.}	
\end{figure}

%\input{Geometries/ManOrb/ConeManifolds}

%NEWCHAPTER: KLEIN GEOMETRIES
\chapter{Klein Geometries}
\label{chp:Klein_Geo}
\index{Klein Geometry}
\index{Homogeneous Space}

Euclidean space is homogeneous, which means that it looks the same from every point.
More precisely, for any pair of points $p,q\in\E^n$, there is an isometry $\phi_{p,q}\colon\E^n\to\E^n$ such that $\phi_{p,q}(p)=q$.
Given some fixed basepoint $x\in \E^n$, this is implies the orbit of $x$ under $\Isom(\E^n)$ is all of $\E^n$; or that the automorphisms of Euclidean space act transitively.
In thinking about the foundations of geometry, Klein in his Erlagen Program suggested that a fruitful notion of \emph{geometry} more naturally is a direct generalization of this.
Geometries \emph{are} homogeneous spaces: manifolds equipped with a notion of 'rigid transformation' or \emph{automorphism}, which are symmetric enough that the group of automorphisms acts transitively.

Here we give two standard formalizations of homogeneous geometry, and treat the basic theory in detail.
We then discuss some useful notions of equivalence for geometries, and prove some basic results justifying the common practice of switching between different models at will.

\section{Perspectives on Homogeneous Geometry}
\label{sec:Perspectives_Homogeneous}

\subsection{The Group-Space Perspective}
\label{subsec:Grp_Sp_Geometries}
\index{Geometry!Group-Space}

Our first perspective on geometries formally encodes a homogeneous space for a Lie group $G$ by keeping track of the group, smooth manifold and action.

\begin{definition}
A geometry is a triple $(G,(X,x),\alpha)$ of a Lie group $G$ and pointed smooth manifold $(X,x)$	equipped with an analytic and transitive action $\alpha\colon G\times X\to X$.
Encoding geometries this way is called the \emph{Group-Space} perspective in this thesis.
\end{definition}

\noindent
By the transitivity of the $G$ action the particular choice of basepoint is immaterial and serves the technical purpose of selecting a canonical point stabilizer $G_x=\mathsf{stab}_{G}(x)$.
Both the basepoint $x\in X$ and the action map $\alpha$ are omitted from the notation when understood, and a geometry is denoted by the pair $(G,X)$.

\begin{example}
Spherical geometry is given by the linear action of $\SO(n+1)$ on the sphere $\S^n=V(x_1^2+\cdots+ x_{n+1}^2=1)\subset\R^{n+1}$.
Choosing a basepoint, say $p=(0,\cdots,0,1)$ gives the pointed geometry $(\SO(n+1),(\S^n,p))$.
\end{example}

\begin{observation}
\label{Obs:Unique_Extension}
Let $(G,X)$ be a geometry, and $g,h\in G$.
As the $G$ action on $X$ is analytic, 
if for any open $U\subset X$ the restricted actions $g.\colon U\to X$ and $h.\colon U\to X$ agree, then in fact $g=h$.
\end{observation}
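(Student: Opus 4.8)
The plan is to reduce the claim to a single unique-continuation statement for real-analytic maps. First I would replace the pair $g,h$ by the single element $k=h\inv g$: the hypothesis that $g.x=h.x$ for every $x$ in a nonempty open set $U\subset X$ is equivalent, after applying $h\inv$, to $k.x=x$ for all $x\in U$. So it suffices to prove that if $k\in G$ fixes some nonempty open $U\subset X$ pointwise, then $k$ acts as the identity on all of $X$; combined with effectiveness of the action this gives $k=e$, i.e. $g=h$.

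The core of the argument is analyticity. Since the action $\alpha\colon G\times X\to X$ is analytic, for each fixed $k$ the translation map $\ell_k\colon X\to X$, $x\mapsto k.x$ is a real-analytic self-map of the real-analytic manifold $X$, and $\id_X$ is analytic as well. The coincidence set $F=\set{x\in X\mid \ell_k(x)=x}$ is therefore the locus on which two real-analytic maps agree. I would then invoke the identity theorem for real-analytic maps: on a connected real-analytic manifold, two analytic maps that agree on a nonempty open set agree everywhere. Concretely, $F$ is closed by continuity, and around any point of $F$ lying in the interior of $F$ one can pass to an analytic chart, observe that all partial derivatives of $\ell_k$ and of $\id$ coincide there, conclude their power series agree, and deduce that $F$ is also open; connectedness of $X$ then forces $F=X$.

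Since $U\subseteq F$ with $U$ open and nonempty, this yields $\ell_k=\id_X$, so $k$ lies in the kernel of the action; effectiveness of the geometry then gives $k=e$ and hence $g=h$.

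The main obstacle—indeed essentially the only nontrivial ingredient—is the unique-continuation (identity) theorem for real-analytic maps, which is exactly where the word \emph{analytic} in the definition of a geometry does all the work; a merely smooth action would not suffice, since a bump-function perturbation could agree with the identity on $U$ yet differ elsewhere. Two hypotheses deserve explicit mention when carrying this out: the argument uses connectedness of $X$ (on a disconnected model an element could fix one component pointwise while moving another, so the conclusion can genuinely fail), and it uses effectiveness of the action to promote the equality of maps $\ell_g=\ell_h$ to the equality of group elements $g=h$. Both are harmless for the geometries under consideration here, but should be recorded.
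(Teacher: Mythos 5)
Your argument is correct, and since the paper states this Observation without proof, yours is precisely the standard argument being left implicit: reduce to $k=h\inv g$ fixing the open set $U$ pointwise, apply the identity theorem for real-analytic maps on a connected manifold to get $k.x=x$ for all $x\in X$, and conclude $g=h$. Two remarks are worth recording. First, the caveats you flag are not pedantry but genuinely necessary, and the paper's own definitions make them bite: the Definition of a geometry does not require effectiveness (the paper explicitly works with non-effective models such as $(\GL(n+1;\R),\RP^n)$, where $g$ and $\lambda g$ agree on \emph{all} of $X$ yet differ in $G$), and it allows transitive analytic actions on disconnected spaces (e.g.\ $\SO(n,1)$ on the two-sheeted hyperboloid); for a concrete failure of the Observation without connectedness, take $G=(\R\times\R)\rtimes\Z_2$ acting on $X=\R\sqcup\R$ by independent translations of the two lines plus the swap --- this is transitive, analytic, and effective, yet the element translating only the second line fixes the first component pointwise without being the identity. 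So the Observation as literally stated should be read with $X$ connected and the action effective, exactly as you say. Second, a cosmetic repair to your sketch of the identity theorem: the coincidence set $F=\set{x\mid k.x=x}$ of two analytic maps need not be open (they can agree on a proper closed set), so the open-and-closed argument should be run not on $F$ but on $W=\mathrm{int}\,F$ (or on the set where all derivatives of $\ell_k$ and $\id$ agree in analytic charts): one shows $W$ is nonempty (it contains $U$), open (trivially), and closed (derivatives agree in the limit, so the power series agree, so the maps coincide on a neighborhood of the limit point), whence $W=X$ by connectedness. Since you invoke the identity theorem itself as the key lemma, this does not affect the validity of your proof.
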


\begin{definition}
\label{Def:Geo_Morphism}
A \emph{morphism of geometries} $(G,X)\to(H,Y)$ is a pair $(\Phi, F)$ consisting of a group homomorphism $\Phi\colon G\to H$ with $\Phi(G_x)<H_y$
 together with a $\Phi$-equivariant basepoint-preserving smooth map $F\colon (X,x)\to (Y,y)$.
 A morphism $(H,Y)\to (G,X)$ is an isomorphism if it has an inverse.
\end{definition}

\begin{example}[Klein and Poincare Models]
Let $\Hyp^2_\mathsf{K}$ be the Klein model of hyperbolic space, given by the projectivized linear action of $\SO(2,1)$ on the hyperboloid $\mathcal{H}=V(x^2+y^2-z^2+1)$.
Let $\Hyp^2_\mathsf{P}$ be the Poincare model, given by the action of $\SU(1,1)$ on the unit disk $\D^2=\set{z\mid \|z\|<1}\subset\C$ by linear fractional transformations.
These two geometries are isomorphic, with an explicit isomorphism given by two different projections of the hyperboloid model of hyperbolic space, as shown below.
\end{example}

\begin{figure}
\centering\includegraphics[width=0.85\textwidth]{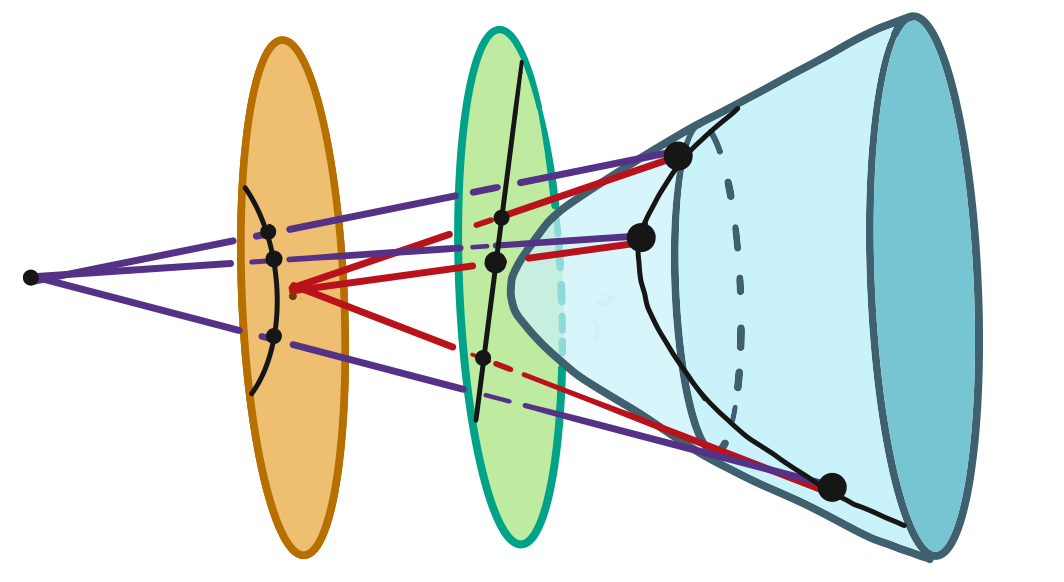}
\caption{The Hyperboloid, Klein Disk, and Poincare Disk models of hyperbolic space.}	
\end{figure}

\noindent
Given a notion of geometry and morphisms between them, we have formed the category of Klein geometries, from the group-space perspective.

\begin{definition}
The category of Klein geometries has as objects the homogeneous spaces $(G,(X,x),\alpha)$ and as morphisms the pairs $(\Phi,F)$ as in Definition \ref{Def:Geo_Morphism}.
\end{definition}

\subsection{The Automorphism-Stabilizer Perspective}
\label{subsec:Aut_Stab_Geometries}
\index{Geometry!Automorphism-Stabilizer}

Alternatively, a homogeneous $G$-space $X$ can be encoded purely algebraically, remembering only a point stabilizer $K$ of the action $G\acts X$ (the space can then be recovered up to diffeomorphism as $G/K$).
This gives an alternate definition of homogeneous space, and together with a corresponding notion of morphism, a different category of homogeneous geometries.

\begin{definition}
A geometry is a pair $(G,K)$ of a Lie group $G$ and a closed subgroup $K$.
Encoding geometries this way is called the \emph{Automorphism-Stabilizer} perspective in this thesis.
\end{definition}

\begin{example}
Spherical geometry is modeled by the automorphism group $\SO(n+1)$ together with the stabilizer of a point under its action on $\S^n$.  Taking the point to be $p=(0,\ldots, 0,1)\in\S^n$ gives $\mathsf{stab}(p)=\smat{\SO(n)&0\\0&1}$.  Abusing notation and calling this $\SO(n)$, we may describe the geometry of the $n$-sphere in the Automorphism-Stabilizer formalism as $(\SO(n+1),\SO(n))$.	
\end{example}

\noindent
Note that we do not require that the closed subgroup be compact, as this is not necessary for stabilizers of homogeneous geometries; the stabilizer of a point in the affine plane is isomorphic to $\GL(2;\R)$ for example.

\begin{definition}
A morphism $\Phi\colon(H,C)\to(G,K)$ of geometries from the Automorphism-Stabilizer perspective is a Lie group homomorphism $\Phi\colon H\to G$ such that $\Phi(C)<K$.
\end{definition}

\begin{example}[Klein and Poincare Models]
From the Automorphism - Stabilizer perspective, the Klein model of hyperbolic space is the pair $(\SO(2,1), S)$ for $S=\smat{\SO(2)&0\\0&1}<\SO(2,1)$.
The Poincare model is given by the pair $(\SU(1,1),\SO(2))$ of subgroups of $\GL(2;\R)$.
\end{example}

\begin{definition}
The category of Klein geometries has as objects the homogeneous spaces $(G,K)$ and as morphisms the Lie group homomorphisms-of-pairs $\Phi:(H,C)\to(G,K)$ as above.
\end{definition}

\subsection{Equivalence}
\label{subsec:GrpSP_AutStab_Equivalence}

Each of these perspectives is useful to have available at times, and it is of little surprise given their definitions that they encode precisely the same information.
In this section we record this fact precisely, by constructing an equivalence of categories between the category of geometries from the Group-Space perspective, (denoted here $\mathsf{GrpSp}$) and the category of geometries constructed from the Automorphism-Stabilizer perspective (denoted $\mathsf{AutStb}$).

\begin{lemma}
The map $\cat{F}:\cat{GrpSp}\to\cat{AutStb}$ sending a group-stabilizer geometry $(G,K)$ to the group-space geometry $(G,(G/K,K))$ defines a functor.
\end{lemma}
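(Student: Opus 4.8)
The plan is to verify the two functor axioms after first checking that $\cat{F}$ is well defined on objects and on morphisms. Throughout I take $G$ to act on the coset space $G/K$ by left translation, $g.(hK)=(gh)K$, and I take the basepoint of $G/K$ to be the identity coset $eK$. The only substantial analytic input is the standard homogeneous-space theorem: for a closed subgroup $K$ of a Lie group $G$, the quotient $G/K$ carries a unique analytic manifold structure for which the projection $\pi_G\colon G\to G/K$ is an analytic submersion. Everything else will be formal diagram-chasing.

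First I would confirm that $\cat{F}(G,K)=(G,(G/K,eK))$ is a genuine object of $\cat{GrpSp}$. The homogeneous-space theorem supplies the analytic structure on $G/K$; the translation action is \emph{analytic} because the composite $(g,h)\mapsto ghK$ is analytic and factors through the surjective submersion $\id\times\pi_G$, and it is \emph{transitive} since $g_2g_1\inv$ carries $g_1K$ to $g_2K$. Finally $\mathsf{stab}_G(eK)=\set{g\mid gK=K}=K$, so the selected basepoint has exactly the prescribed stabilizer, matching the original pair $(G,K)$.

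Next I would define $\cat{F}$ on a morphism $\Phi\colon(H,C)\to(G,K)$, that is, a Lie group homomorphism with $\Phi(C)\subseteq K$, by setting $\cat{F}(\Phi)=(\Phi,F)$ where $F\colon H/C\to G/K$ is $F(hC)=\Phi(h)K$. I would check, in order: $F$ is well defined (if $h'=hc$ with $c\in C$ then $\Phi(h')K=\Phi(h)\Phi(c)K=\Phi(h)K$ since $\Phi(c)\in K$); $F$ is $\Phi$-equivariant ($F(h.h'C)=\Phi(hh')K=\Phi(h).F(h'C)$); $F$ preserves basepoints ($F(eC)=eK$); and $\Phi$ carries the stabilizer $H_{eC}=C$ into $G_{eK}=K$, which is precisely the hypothesis. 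The hard part is the \emph{smoothness} of $F$, the one place where analysis rather than bookkeeping is needed: from $F\circ\pi_H=\pi_G\circ\Phi$ and the fact that $\pi_H\colon H\to H/C$ is a surjective submersion, $F$ is smooth by the universal property of submersions. This makes $\cat{F}(\Phi)$ a morphism of $\cat{GrpSp}$ in the sense of Definition \ref{Def:Geo_Morphism}.

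Finally I would verify functoriality. For the identity, $\cat{F}(\id_{(G,K)})=(\id_G,F)$ with $F(gK)=gK$, so $F=\id_{G/K}$ and $\cat{F}(\id)=\id$. For composition, given $\Phi\colon(H,C)\to(G,K)$ and $\Psi\colon(G,K)\to(L,M)$, both components of $\cat{F}(\Psi\circ\Phi)$ agree with those of $\cat{F}(\Psi)\circ\cat{F}(\Phi)$: the group components are both $\Psi\circ\Phi$, and on space components $(F_\Psi\circ F_\Phi)(hC)=F_\Psi(\Phi(h)K)=\Psi\Phi(h)M=F_{\Psi\Phi}(hC)$. I expect the only genuine obstacle to be invoking the homogeneous-space theorem correctly, so that both the analyticity of the translation action and the smoothness of the induced map $F$ are justified; every remaining step is a direct computation.
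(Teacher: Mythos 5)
Your proposal is correct and follows essentially the same route as the paper's proof: both invoke the quotient manifold theorem (your homogeneous-space theorem) to put a smooth structure on $G/K$ with $\pi_G$ a submersion, take the identity coset as basepoint, and define $\cat{F}$ on a morphism $\Phi$ by $hC\mapsto\Phi(h)K$, checking well-definedness and equivariance from $\Phi(C)\subset K$. The only difference is one of thoroughness — you additionally verify smoothness of the induced map via the universal property of submersions and spell out the identity and composition axioms, details the paper's terser proof leaves implicit.
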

\begin{proof}

As $K$ is a closed subgroup of $G$, the $K$ action on $G$ by left translation by is a free and proper action.  
Thus by the quotient manifold theorem of smooth topology \cite{Lee}, the orbit space $G/K$ is a smooth manifold.
The action of $G$ on $G/K$ is just the usual action of $G$ on itself followed by the quotient map, which is transitive and thus defines a geometry of the Group-Space variety.  
The inclusion $K\inject G/K$ provides a cannonical choice of basepoint.
Given a morphism $\Phi:(H,K)\to (G,C)$ we define $\cat{F}(\Phi)=(\Phi, \bar{\Phi})$ where $\bar{\Phi}(gC)=\Phi(g)K$.
This is $\Phi$-equivariant and well-defined as $\Phi(C)\subset K$.  %Note $\bar{\Phi}(\fam{C})=\fam{K}$ as required.
\end{proof}

\begin{lemma}
The map $\Psi\colon \cat{Grp-Sp}\to\cat{AutStb}$ sending a geometry $(G,(X,x))$ to $(G,\mathsf{stab}_G(x))$ defines a functor.	
\end{lemma}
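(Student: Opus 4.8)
The plan is to verify the three requirements of a functor: that $\Psi$ carries objects of $\cat{GrpSp}$ to legitimate objects of $\cat{AutStb}$, that it carries morphisms to legitimate morphisms, and that it respects identities and composition.

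On objects $\Psi$ is defined by $(G,(X,x))\mapsto(G,\mathsf{stab}_G(x))$, so to land in $\cat{AutStb}$ I must check that $\mathsf{stab}_G(x)$ is a \emph{closed} subgroup of $G$. That it is a subgroup is immediate from the action axioms. For closedness I would use that the orbit map $\ev_x\colon G\to X$, $g\mapsto g.x$ is continuous (indeed analytic, by the standing assumption defining a geometry) and that the point $\set{x}$ is closed in the Hausdorff manifold $X$; hence $\mathsf{stab}_G(x)=\ev_x\inv(\set{x})$ is closed. This is the only substantive verification in the proof.

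On morphisms $\Psi$ forgets the space-level data, sending $(\Phi,F)\mapsto\Phi$. I must check that $\Phi$ is a morphism $(G,\mathsf{stab}_G(x))\to(H,\mathsf{stab}_H(y))$ in $\cat{AutStb}$, that is, that $\Phi(\mathsf{stab}_G(x))<\mathsf{stab}_H(y)$. But this is precisely the condition $\Phi(G_x)<H_y$ already built into Definition \ref{Def:Geo_Morphism}, so no work is required. I would remark in passing that this condition is in any case forced by the remaining morphism data: if $g.x=x$, then $\Phi$-equivariance together with basepoint preservation gives $\Phi(g).y=\Phi(g).F(x)=F(g.x)=F(x)=y$, so $\Phi(g)\in\mathsf{stab}_H(y)$; this confirms the two formalisms are compatible even though reproving it here is unnecessary.

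Finally, functoriality is formal. Composition in $\cat{GrpSp}$ is coordinatewise, $(\Phi',F')\circ(\Phi,F)=(\Phi'\circ\Phi,\;F'\circ F)$, whence $\Psi\big((\Phi',F')\circ(\Phi,F)\big)=\Phi'\circ\Phi=\Psi(\Phi',F')\circ\Psi(\Phi,F)$; and the identity morphism $(\id_G,\id_X)$ is sent to $\id_G$, the identity of $(G,\mathsf{stab}_G(x))$ in $\cat{AutStb}$. Thus the only genuine content is the point-set observation that stabilizers are closed, and I expect no real obstacle beyond that.
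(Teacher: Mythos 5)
Your proposal is correct and follows essentially the same route as the paper's proof: both rest on the fact that the stabilizer of an analytic Lie group action is a closed subgroup (you derive it as $\ev_x\inv(\set{x})$, the paper cites it directly), and both verify the morphism condition $\Phi(\mathsf{stab}_G(x))<\mathsf{stab}_H(y)$ via equivariance together with basepoint preservation --- you additionally observe that this condition is already built into Definition \ref{Def:Geo_Morphism}, which is accurate and makes the derivation redundant, as you say. No gaps.
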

\begin{proof}
The stabilizer of an analytic action of a Lie group on a smooth manifold is a closed Lie subgroup.
Thus $(G,\mathsf{stab}_G(x))$ is a geometry of the group-stabilizer variety.
Recalling that a morphism $\Phi:(G,(X,x))\to(H,(Y,y))$ consists of a group homomorphism $\Phi_{\cat{Grp}}$ and an equivariant map $\Phi_{\cat{Sp}}$ 
between the spaces, the image $\Psi(\Phi)=\Phi_{\cat{Grp}}$ is simply the group homomorphism, which is well-defined as $\Phi_\cat{Sp}\circ\fam{x}=\fam{y}$ together 
with equivariance implies that $\Phi_\cat{Grp}(\mathsf{stab}_G(x))\subset\mathsf{stab}_H(y)$.   
\end{proof}

\begin{proposition}
The functors $\cat{F},\Psi$ above define an equivalence of categories $\cat{GrpSp}\cong\cat{AutStb}$.	
\end{proposition}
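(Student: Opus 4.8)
The plan is to verify that $\cat{F}$ and $\Psi$ are mutually quasi-inverse by exhibiting natural isomorphisms $\Psi\circ\cat{F}\cong\id_{\cat{AutStb}}$ and $\cat{F}\circ\Psi\cong\id_{\cat{GrpSp}}$. The first composite should be the identity essentially on the nose: starting from $(G,K)$, the functor $\cat{F}$ produces $(G,(G/K,K))$ with basepoint the identity coset $eK$, and applying $\Psi$ returns $(G,\mathsf{stab}_G(eK))$. Since $g\cdot eK=eK$ precisely when $g\in K$, the stabilizer of the basepoint is exactly $K$, so $\Psi\cat{F}(G,K)=(G,K)$; unwinding the definitions on morphisms gives $\Psi\cat{F}(\Phi)=\Phi$, whence $\Psi\circ\cat{F}=\id_{\cat{AutStb}}$ strictly.

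The content lies in the other composite. Starting from a group-space geometry $(G,(X,x))$, the functor $\Psi$ records $G_x=\mathsf{stab}_G(x)$ and $\cat{F}$ returns $(G,(G/G_x,G_x))$, which is not literally $(X,x)$. I would therefore define a natural transformation $\eta\colon\cat{F}\circ\Psi\Rightarrow\id_{\cat{GrpSp}}$ whose component at $(G,(X,x))$ is the morphism $(\id_G,\bar\alpha)$, where $\bar\alpha\colon G/G_x\to X$ is the orbit map $gG_x\mapsto g.x$. By construction $\bar\alpha$ is $\id_G$-equivariant and carries the basepoint $eG_x$ to $x$, so it is a morphism of geometries; the crux is to see it is an \emph{isomorphism}.

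For that I would invoke the homogeneous space theorem: transitivity of the $G$-action makes $\bar\alpha$ a smooth $G$-equivariant bijection, and the quotient manifold theorem (already used in constructing $\cat{F}$, cf.\ \cite{Lee}) guarantees that $\bar\alpha$ is an equivariant diffeomorphism onto $X$, so its inverse is again a morphism of geometries and $\eta_{(G,(X,x))}$ is an isomorphism in $\cat{GrpSp}$. Naturality amounts to checking, for a morphism $(\Phi,F)\colon(G,(X,x))\to(H,(Y,y))$, that $F\circ\bar\alpha=\bar\beta\circ\overline\Phi$ as maps $G/G_x\to Y$, where $\bar\beta\colon H/H_y\to Y$ is the target orbit map and $\overline\Phi$ is the space-map component of $\cat{F}\Psi(\Phi,F)$; this is a one-line diagram chase, since both composites send $gG_x$ to $\Phi(g).y=F(g.x)$ using the $\Phi$-equivariance of $F$ and $F(x)=y$, with the group components matching trivially as each equals $\Phi$.

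I expect the main obstacle to be the diffeomorphism claim for $\bar\alpha$: its bijectivity and smoothness are immediate, but identifying it as a diffeomorphism rather than a bare continuous bijection requires the regularity supplied by the equivariant rank theorem / homogeneous space construction, and this is where the analyticity and second-countability hypotheses built into the definition of a geometry are genuinely used. Once that standard fact is in hand, both the isomorphism property of $\eta$ and its naturality reduce to the routine equivariance computations above, completing the equivalence $\cat{GrpSp}\cong\cat{AutStb}$.
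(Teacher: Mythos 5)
Your proof is correct and takes essentially the same route as the paper: both verify $\Psi\circ\cat{F}=\id_{\cat{AutStb}}$ on the nose and then exhibit a natural isomorphism between $\cat{F}\circ\Psi$ and $\id_{\cat{GrpSp}}$ via the canonical identification $G/\mathsf{stab}_G(x)\cong X$, with the same one-line naturality computation — the only cosmetic difference being that you direct the transformation $\cat{F}\Psi\Rightarrow\id$ using the orbit map $gG_x\mapsto g.x$, while the paper uses its inverse $p\mapsto g\,\mathsf{stab}_G(x)$. Your explicit appeal to the homogeneous-space theorem to see that the orbit map is an equivariant diffeomorphism (not merely a smooth bijection) spells out a step the paper leaves implicit.
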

\begin{proof}

The composition $\Psi \cat{F}$ is the identity on $\cat{AutStb}$, and the composition $F\Psi$ takes the geometry 
$(G,(X,x))$ to 
$(G,(G/\mathsf{stab}_G(x)),\mathsf{stab}_G(x))$.  

The collection of maps $\eta|_{(G,X)}\colon \left(G,(X,x)\right) \to \left( G, (G/\mathsf{stab}_G(x),\mathsf{stab}_G(x))\right) $ forms a natural transformation from $\mathsf{id}_{\cat{GrpSp}}$ to $\cat{F}\Psi$.
In more detail, $\eta$ is
given by $\eta=(\mathsf{id}_G,\xi_{(G,X)})$ where $\xi_{(G,X)}$ assigns to a point $p\in X$ the coset $g\mathsf{stab}_G(x)$ of the basepoint stabilizer, for $g$ such that $\mathsf{stab}_G(p)=g\mathsf{stab}_G(x)g\inv$.

 To see this it suffices to check that $\bar{\Phi_\cat{Grp}}\circ\xi_{(G,X)}=\xi_{(H,Y)}\circ\Phi_\cat{Sp}$.  
Let $p\in X$ and $g\in G$ be such that $g.x=p$.  Then $\xi_{(G,X)}(p)=g \mathsf{stab}_G(x)$ and $\bar{\Phi_\cat{Grp}}(g\mathsf{stab}_G(x))=\Phi_\cat{Grp}(g)\mathsf{stab}_H(y))$.
Computing the other way around we find $\Phi_\cat{Sp}(p)=\Phi_\cat{Sp}(g.x)=\Phi_\cat{Grp}(g)\Phi_\cat{Sp}(x)=\Phi_\cat{Grp}(g)y$ and $\xi_{(H,Y)}(\Phi_\cat{Grp}(g)y))=\Phi_\cat{Grp}(g)\mathsf{stab}_H(y)$.

\begin{center}
\begin{tikzcd}
\left(G,(X,x)\right) 
\arrow{d}[swap]{(\Phi_{\cat{Grp}},\Phi_{\cat{Sp}})}
\arrow{rr}{(\mathsf{id}_G,\xi_{(G,X)})}
&& 
\left(G, (G/\mathsf{stab}_G(x),\mathsf{stab}_G(x))\right) 
\arrow{d}{(\Phi_\cat{Grp},\bar{\Phi_\cat{Grp}})} 
\\
\left(H,(Y,y)\right)
\arrow{rr}[swap]{(\mathsf{id}_H,\xi_{(H,Y)})}
&& \left( H, (H/\mathsf{stab}_H(y),\mathsf{stab}_H(y))\right) 
\end{tikzcd}
\end{center}
	
\end{proof}

\noindent
Thus we are justified in moving freely between these perspectives at will when convenient.
In particular, we feel free to define a concept for whichever notion of geometry it is more convenient to do so, and leave it to the reader to transport this definition to the other formalism if desired.

\section{Notions of Equivalence}
\label{sec:Notions_Equivalence}

Oftentimes it is advantageous to be slightly looser with our notion of isomorphism for geometries than what arises from the above definitions.
In particular, there are two common situations where we may want to think of two geometries as being `essentially the same,' even when the groups or spaces differ slightly.
The first case involves a trade-off between two 'good' properties that the automorphism group of a geometry could enjoy.

\subsection{Effective Geometries}
\index{Geometry!Effective}

\begin{definition}
A geometry $(G,X)$ is effective if $g.x=x$ for all $x\in X$ implies that $g=e$.  
That is, the only element of $g$ acting trivially on all of $X$ is the identity.
\end{definition}

Equivalently, a geometry $(G,X)$ is effective if the induced homomorphism $G\to \mathsf{Diffeo}(X)$ given by the action is faithful.
\emph{Effectiveness} is a property of $(G,X)$ geometries, capturing that the action of each element of $G$ on $X$ is distinct.
Oftentimes it is useful to consider \emph{non-effective versions} of a geometry, corresponding to choices of groups $\widetilde{G}$ which surject onto $G$ as automorphisms.
One reason for doing so is that the effective geometry $(G,X)$ has a difficult-to-work with automorphism group, but $G$ is covered by a nice (say, linear) group $\widetilde{G}$.
This allows us to work with matrices, at the cost of dealing with a non-effective action.

\begin{example}
The geometry $(\PGL(3;\R),\RP^2)$ is the effective version of projective geometry in dimension two.
In practice, it is often easier to work with the non-effective versions $(\SL(3;\R),\RP^2)$ or even $(\GL(3;\R),\RP^2)$.
\end{example}

\begin{definition}
Two geometries $(G,X)$ and $(H,X)$ are \emph{effectively equivalent} if the action of $G$ on $X$ and the action of $H$ on $X$ induce homomorphisms $G\to\mathsf{Diffeo}(X)$, $H\to\mathsf{Diffeo}(X)$ with the same image.	
\end{definition}

\noindent
Given any geometry $(G,X)$, it is clear from the above definition that there is a unique effective geometry equivalent to it: if $\Phi\colon G\to\mathsf{Diffeo}(X)$ is the map induced by the action, then $(\Phi(G),X)$ is effective and equivalent to $(G,X)$. 
Denote by $\ker(G,X)$ the subgroup of $G$ acting trivially on $X$.
There is a natural map sending any geometry $(G,X)$ to its corresponding effective version, called \emph{effectivization}, sending $(G,X)$ to $(G/\ker(G,X),X)$.
This is used implicitly to justify passing freely between effective and non-effective versions of the same geometry when convenient in much of the literature.

\begin{observation}
The effectivization map $\cat{Eff}\colon \cat{GrpSp}\to\cat{GrpSp}$ defined by $(G,(X,x))\mapsto (G/\ker(G,X),(X,x))$ is a natural transformation between the identity on $\cat{GrpSp}$ and the effectivization endofunctor.
\end{observation}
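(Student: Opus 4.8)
The plan is to check the three ingredients hidden in the statement, in the order in which they unlock one another: first that effectivization is a genuine endofunctor $\mathcal{E}$ of $\cat{GrpSp}$, then that the quotient projections furnish candidate components $\eta_{(G,X)}$, and finally that these components are natural. I would take them in this order, since the naturality square collapses to a tautology once $\mathcal{E}$ and the $\eta$'s are in hand.

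For the object level I would record that $\ker(G,X)$ is precisely the kernel of the homomorphism $G\to\mathsf{Diffeo}(X)$ induced by the action, equivalently the intersection $\bigcap_{p\in X}\mathsf{stab}_G(p)$ of closed stabilizers; hence it is a closed normal subgroup, $G/\ker(G,X)$ is a Lie group, and the action descends to a transitive, analytic (the quotient map being an analytic submersion) and now effective action fixing the same basepoint. Thus $\mathcal{E}(G,(X,x))=(G/\ker(G,X),(X,x))$ is again an object of $\cat{GrpSp}$. On a morphism $(\Phi,F)$ I would set $\mathcal{E}(\Phi,F)=(\bar{\Phi},F)$, with $\bar{\Phi}$ the homomorphism induced by $\Phi$ on the quotients; granting that $\bar{\Phi}$ exists, preservation of identities and composites is immediate from uniqueness of induced maps.

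The components are $\eta_{(G,X)}=(\pi_G,\id_X)$ for $\pi_G\colon G\to G/\ker(G,X)$ the projection. These are morphisms: $\pi_G$ carries $\mathsf{stab}_G(x)$ onto the stabilizer of $x$ in the quotient, and $\id_X$ is $\pi_G$-equivariant exactly because the descended action was defined to agree with the original one. Naturality is then formal --- in each composite $\mathcal{E}(\Phi,F)\circ\eta_{(G,X)}$ and $\eta_{(H,Y)}\circ(\Phi,F)$ the space-part is $F$, and the group-parts $\bar{\Phi}\circ\pi_G$ and $\pi_H\circ\Phi$ agree by the defining property of $\bar{\Phi}$.

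The only substantive point, and the step I expect to be the main obstacle, is the well-definedness of $\bar{\Phi}$: the containment $\Phi(\ker(G,X))\subseteq\ker(H,Y)$. For $g\in\ker(G,X)$, equivariance gives $F(p)=F(g.p)=\Phi(g).F(p)$ for all $p$, so $\Phi(g)$ fixes the image $F(X)=\Phi(G).y$ pointwise. When that image contains an open subset of the connected space $Y$ --- in particular when $F$ is surjective, hence for every isomorphism, which is exactly the situation that licenses replacing a geometry by its effective model --- analytic continuation (the rigidity behind Observation~\ref{Obs:Unique_Extension}) forces $\Phi(g)$ to act trivially on all of $Y$, i.e. to lie in $\ker(H,Y)$. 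I would stress that for general, non-dominant morphisms this containment genuinely fails, so the honest content of the observation is the naturality of $\cat{Eff}$ over the subcategory of dominant morphisms (isomorphisms included); pinning down the correct hypothesis on $F$ is where the real care lies.
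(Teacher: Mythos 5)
The paper records this statement as a bare observation with no accompanying proof, so there is no argument of the paper's to compare against line by line; your write-up supplies exactly the missing content, and its architecture --- quotienting by the closed normal subgroup $\ker(G,X)=\bigcap_{p\in X}\mathsf{stab}_G(p)$, taking components $\eta_{(G,X)}=(\pi_G,\id_X)$, and reducing naturality to the factorization $\bar{\Phi}\circ\pi_G=\pi_H\circ\Phi$ --- is clearly the intended one.

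More importantly, the obstruction you isolate is genuine, and it is a gap in the paper's statement rather than in your proof. The containment $\Phi(\ker(G,X))\subseteq\ker(H,Y)$ can fail for legitimate morphisms in the sense of Definition \ref{Def:Geo_Morphism}: take $(G,(X,x))=(\SO(2),(\{x\},x))$, a transitive analytic action on a point with $\ker(G,X)=\SO(2)$, and map it to $(\SO(3),(\S^2,y))$ with $y$ the north pole, letting $\Phi$ include $\SO(2)$ as the rotations about the axis through $y$ and setting $F(x)=y$. Every condition of a morphism holds ($\Phi(G_x)\le H_y$, and $F$ is $\Phi$-equivariant since $\Phi(g)$ fixes $y$), yet $\Phi(\ker(G,X))=\SO(2)\not\subseteq\ker(\SO(3),\S^2)=\{e\}$, and no choice of $\bar{\Phi}$ can make the naturality square commute, because $\pi_H\circ\Phi$ does not factor through $\pi_G$. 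So $\cat{Eff}$ is an endofunctor, and $\eta$ a natural transformation, only after restricting the morphisms as you propose --- $F$ surjective, or more generally $F(X)$ meeting a set with nonempty interior, at which point your analytic-continuation step (the rigidity behind Observation \ref{Obs:Unique_Extension}) closes the argument. One refinement: since the paper's geometries may have disconnected domains (e.g.\ the two-sheeted hyperboloid $\mathcal{S}_{\R}(n,1)$ under $\O(n,1)$), your appeal to connectedness of $Y$ should be sharpened to requiring $F(X)$ to meet \emph{every} component of $Y$ in a set with nonempty interior. This restricted statement covers all isomorphisms, which is the only use the paper makes of the observation when passing between effective and non-effective versions of a single geometry.
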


\subsection{Local Morphisms}
\index{Geometry!Local Isomorphism}

The second notion of equivalence between geometries that is often useful to consider is \emph{local isomorphism}.
This is most naturally motivated by wanting to pass between a geometry and covers of that geometry when convenient.

\begin{example}
The geometry of the sphere is given by $(\SO(3),\S^2)$.
The action of $\SO(3)$ is equivariant with respect to the antipodal map and so we may use this $\SO(3)$ action to define a geometry on the quotient, $(\SO(3),\RP^2)$.
Locally, this geometry is similar to the geometry of the sphere.
\end{example}

\noindent
We formalize this notion of 'being the same on a small enough subset' via the concept of a \emph{local morphism}.

\begin{definition}
A local map $X\dashrightarrow Y$ is a map from some open set $U\subset X$ into $Y$.
A local homomorphism $\phi\colon G\dashrightarrow H$ is a local map defined on a neighborhood $U\ni e$ such that $\phi(gh)=\phi(g)\phi(h)$ and $\phi(g\inv)=\phi(g)\inv$ whenever all terms are defined.
\end{definition}

\noindent
A local homomorphism is injective if it is injective as a map of sets when restricted to some sufficiently small neighborhood of the identity.
It is locally surjective if the image contains some open set of the identity of the target group, and a local isomorphism if it is both locally injective and surjective.
Local morphisms are conveniently captured by Lie algebra maps, as in the following observation.

\begin{observation}
If $\phi\colon G\dashrightarrow H$ is a local morphism, its derivative $\phi_\ast\colon \mathfrak{g}\to\mathfrak{h}$ is a morphism of Lie algebras.
Conversely, any Lie algebra morphism $\psi\colon\mathfrak{g}\to\mathfrak{h}$ induces a local morphism $\Psi\colon G\dashrightarrow H$ defined on $\exp(\mathfrak{g})\subset G$	
\end{observation}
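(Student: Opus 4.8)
The plan is to treat the two directions separately, reducing each to a standard property of the exponential map. For the forward direction, I would first observe that a local homomorphism carries local one-parameter subgroups to local one-parameter subgroups: given $X\in\mathfrak{g}$, the curve $t\mapsto\phi(\exp_G(tX))$ satisfies the one-parameter group law $c(s+t)=c(s)c(t)$ for small $s,t$ and has velocity $\phi_\ast(X)$ at $t=0$, so by uniqueness of one-parameter subgroups it equals $t\mapsto\exp_H(t\,\phi_\ast(X))$. Hence $\phi$ intertwines the exponentials, $\phi\circ\exp_G=\exp_H\circ\phi_\ast$ near the identity, and linearity of $\phi_\ast$ is automatic since it is a derivative. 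To obtain compatibility with brackets I would differentiate the conjugation identity $\phi(ghg\inv)=\phi(g)\phi(h)\phi(g)\inv$ twice. Differentiating in $h$ at $h=e$ gives $\phi_\ast\circ\Ad_g=\Ad_{\phi(g)}\circ\phi_\ast$ for $g$ near $e$; setting $g=\exp_G(tX)$ and differentiating again at $t=0$, using $\tfrac{d}{dt}\big|_{t=0}\Ad_{\exp_G(tX)}=\mathsf{ad}_X$ together with the intertwining relation $\phi(\exp_G tX)=\exp_H(t\,\phi_\ast X)$, yields $\phi_\ast\circ\mathsf{ad}_X=\mathsf{ad}_{\phi_\ast X}\circ\phi_\ast$. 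Applied to $Y$ this is precisely $\phi_\ast[X,Y]=[\phi_\ast X,\phi_\ast Y]$.

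For the converse, given a Lie algebra morphism $\psi\colon\mathfrak{g}\to\mathfrak{h}$, I would choose a star-shaped neighborhood $V\subset\mathfrak{g}$ of $0$ on which $\exp_G$ restricts to a diffeomorphism onto an open $U\ni e$, and define $\Psi:=\exp_H\circ\psi\circ(\exp_G|_V)\inv$ on $U$. To verify the homomorphism property on a smaller neighborhood I would invoke the Baker--Campbell--Hausdorff formula: for $X,Y$ small, $\exp_G(X)\exp_G(Y)=\exp_G(\mathrm{BCH}(X,Y))$, where $\mathrm{BCH}(X,Y)$ is a convergent series built entirely from iterated brackets of $X$ and $Y$. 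Since $\psi$ is linear and preserves brackets it commutes termwise with this universal series, so $\psi(\mathrm{BCH}(X,Y))=\mathrm{BCH}(\psi X,\psi Y)$, and then
\[ \Psi(\exp_G X\,\exp_G Y)=\exp_H(\psi\,\mathrm{BCH}(X,Y))=\exp_H(\mathrm{BCH}(\psi X,\psi Y))=\exp_H(\psi X)\exp_H(\psi Y), \]
which equals $\Psi(\exp_G X)\Psi(\exp_G Y)$. Compatibility with inversion follows by taking $h=\exp_G(-X)$, since then $\Psi(\exp_G X)\Psi(\exp_G(-X))=\Psi(e)=e$. Alternatively one may bypass BCH by noting that the graph $\{(Z,\psi Z)\}$ is a Lie subalgebra of $\mathfrak{g}\oplus\mathfrak{h}$, integrating it to a local subgroup of $G\times H$ that projects diffeomorphically onto a neighborhood in $G$, and reading off $\Psi$ from the second projection.

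The main obstacle in each direction is the single step asserting bracket compatibility; everything else—linearity, the intertwining of exponentials, and the bookkeeping needed to keep all maps \emph{local}—is routine. In the forward direction the genuine content is the double differentiation of the conjugation identity and the justification that $\Ad$ of an exponential differentiates to $\mathsf{ad}$. In the converse the nontrivial input is the BCH formula itself (equivalently, the integrability of the graph subalgebra), which is exactly where the substance of Lie theory enters; the preservation of brackets by $\psi$ is what allows that input to be transported through $\Psi$.
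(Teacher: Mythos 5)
Your proof is correct; the paper states this result as an observation without proof (it is standard Lie theory), and your argument --- differentiating the conjugation identity to obtain $\phi_\ast\circ\Ad_g=\Ad_{\phi(g)}\circ\phi_\ast$ and then $\phi_\ast\circ\mathsf{ad}_X=\mathsf{ad}_{\phi_\ast X}\circ\phi_\ast$ in the forward direction, and Baker--Campbell--Hausdorff (equivalently, integrating the graph subalgebra in $\mathfrak{g}\oplus\mathfrak{h}$) for the converse --- is exactly the standard route the paper implicitly invokes. If anything, your version is slightly more careful than the statement itself: restricting $\Psi$ to a neighborhood on which $\exp_G$ is a diffeomorphism, rather than all of ``$\exp(\mathfrak{g})\subset G$,'' is the honest domain, since the homomorphism identity is only guaranteed where the BCH series converges and all products are defined.
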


\noindent
Here we take advantage of the above observation, and the equivalence of categories $\cat{GrpSp}\cong\cat{AutStb}$ to succinctly define the equivalence relation of \emph{local isomorphism} between geometries.

\begin{definition}
A local morphism of geometries $(G,K)\to (H,C)$ is a morphism of Lie groups $\phi\colon \mathfrak{g}\to\mathfrak{h}$ such that $\phi(\mathfrak{k})\subset\mathfrak{c}$.
Two geometries $(G,K)$ and $(H,C)$ are locally isomorphic if there is an isomorphism of Lie algebras $\phi\colon\mathfrak{g}\to\mathfrak{h}$ carrying $\mathfrak{k}$ to $\mathfrak{c}$.	
\end{definition}

\noindent
Unpacking this in the more traditional Group-Space formalism gives the following.

\begin{definition}
A local morphism of geometries $(G,(X,x))\dashrightarrow (H,(Y,y))$ is a local homomorphism $\phi\colon G\dashrightarrow H$ such that the restriction of $\phi$ to $G_x$ is a local morphism $G_x\dashrightarrow H_y$.
This local homomorphism induces a local analytic map $f\colon X\dashrightarrow Y$ defined on a neighborhood of $x$ which is locally $\phi$-equivariant, meaning that $f(g.p)=\phi(g).f(p)$ whenever all terms are defined.
The local morphism is a local isomorphism if $\phi$ is, and additionally $\phi|_{G_x}$ is a local isomorphism $G_x\dashrightarrow H_y$.
\end{definition}

\noindent
Under this notion of equivalence, $(\SO(3),\S^2)$ and its quotient $(\SO(3),\RP^2)$ are locally isomorphic geometries.
We will not freely identify geometries up to local isomorphism as we have done with the previous notions of equivalence, but we will often abuse notation and call a geometry such as $(\SO(3),\RP^2)$ \emph{spherical geometry}, instead of the more correct \emph{a subgeometry of $\RP^2$ locally isomorphic to spherical geometry}.

\section{Properties of Klein Geometries}
\label{sec:Prop_Klein}

This brief section covers some additional miscellaneous terminology that proves useful when discussing homogeneous spaces.

\subsection{Subgeometries and Fibered Geometries}

Spherical geometry of dimension $n$ can be modeled exactly within Euclidean space of one dimension higher: take any round sphere $S\subset\E^{n+1}$, and the subgroup $G<\Isom(\E^{n+1})$ fixing that sphere set-wise is isomorphic to $\SO(n+1)$, acting transitively and thus making $(G,S)$ a model of spherical geometry.
Similarly hyperbolic $n$-space naturally arises as a codimension 1-subset of Minkowski space (a hyperboloid of 2 sheets orthogonal to the time-like axis), with isometries a subset of the automorphisms of $\M^{n+1}$.
In general such constructions are \emph{subgeometries} of the ambient space.

\begin{definition}
A subgeometry $(H,Y)$ of a geometry $(G,X)$ is a	 closed subgroup $H<G$ acting transitively on a subset $Y\subset X$.
Alternatively, a subgeometry of $(G,X)$ is the image of a monomorphism $\iota\colon (H,Y)\to (G,X)$.
\end{definition}

\noindent
Alternatively, we say that $(G,X)$ is a \emph{supergometry of} or a \emph{containing geometry for} the geometry $(H,Y)$.
Oftentimes we are interested in a more narrowly defined collection of \emph{open subgeometries}.

\begin{definition}
An open subgeometry of $(G,X)$ is a geometry $(H,Y)$ with $H<G$ closed and $Y\subset X$ open.	
\end{definition}

\begin{example}
The Klein ball model of hyperbolic space is an open subgeometry of $\RP^n$, but the hyperboloid model is not an open subgeometry of $\M^{n+1}$.	
\end{example}

\noindent
Dually to the notion of a subgeometry is that of a \emph{fibered geometry}, or a geometry $(G,X)$ which \emph{fibers over} a geometry $(H,Y)$.
These are the epimorphisms, as opposed to the monos, in the category of geometries.

\begin{definition}
A geometry $(G,X)$ fibers over a geometry $(H,Y)$ if there is an epimorphism of geometries $\pi\colon (G,X)\to (H,Y)$.  That is, a submersion of spaces $X\to Y$ equivariant with respect to a submersion of Lie groups $G\to H$.
\end{definition}

\noindent
Basic examples of fibered geometries are the products, but more interesting examples occur as degenerations when studying limits of geometries.

\begin{example}
$\Hyp^2\times\R$ fibers over $\Hyp^2$.
Heisenberg geometry, $(\Heis,\R^2)$ is given by the projective action of the real Heisenberg group on the plane, acting by all translations, and shears parallel to a fixed line.
This geometry fibers over the Euclidean line by quotienting the direction of shear.
\end{example}

\subsection{Metric Geometry}

Nowhere in the definition of homogeneous geometry is there a requirement that there exists some invariant metric, only that there is a transitive group of automorphisms.
Oftentimes of course there is such a metric, such as in Euclidean, hyperbolic and spherical geometry.
But there are many cases without as well.
For example, Minkowski, de Sitter, and Anti-de Sitter space are homogeneous spaces admitting a Lorentzian metric, but no invariant Riemannian metric.
Moreover, real projective geometry $(\SL(n+1;\R),\RP^n)$ admits no invariant pseudo-Riemannian metric of any signature.
Clearly nothing can be said about metrics and homogeneous geometry in any generality, but we record a few useful observations below.

\begin{lemma}
If a geometry $(G,X)$ admits a Riemannian metric, it has constant scalar curvature.
\end{lemma}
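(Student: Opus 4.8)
The plan is to observe that an invariant metric promotes the transitive $G$-action to an action by isometries, and then to invoke the standard fact that scalar curvature is an isometry invariant. First I would clarify the hypothesis: here ``admits a Riemannian metric'' should mean that $X$ carries a $G$-invariant Riemannian metric, so that each $g \in G$ acts as an isometry of $X$. This is precisely the content of invariance --- $g^\ast$ fixes the metric tensor for every $g$.

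Next, recall that the scalar curvature is a smooth function $s \colon X \to \R$ built naturally and pointwise from the metric: one forms the Levi-Civita connection, its Riemann curvature tensor, the Ricci contraction, and finally the metric trace of Ricci. Every step in this construction is determined by the metric alone, so it commutes with any isometry. Consequently $s(\phi(p)) = s(p)$ for every isometry $\phi \colon X \to X$ and every $p \in X$.

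Finally, transitivity closes the argument. Given arbitrary points $p, q \in X$, transitivity of the action provides $g \in G$ with $g.p = q$; since $g$ acts isometrically we get $s(q) = s(g.p) = s(p)$. As $p$ and $q$ were arbitrary, $s$ is constant on $X$, which is the claim.

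The argument has no genuine obstacle; the only point requiring care is the naturality of scalar curvature under isometries, which is the routine assertion that the Levi-Civita connection, and hence the full curvature tensor together with all of its traces, are intrinsic invariants of the metric. I would state this as a standard fact from Riemannian geometry rather than reprove it. The same reasoning shows more generally that \emph{any} scalar curvature invariant (sectional curvature bounds, Ricci eigenvalue multiplicities, and so on) is constant across the homogeneous space, though only the scalar curvature statement is needed here.
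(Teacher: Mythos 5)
Your proof is correct and follows essentially the same route as the paper: both reduce to the fact that scalar curvature is an isometry invariant (you via naturality of the curvature construction, the paper via isometries preserving sectional curvatures of $2$-planes) and then apply transitivity of the $G$-action, with the same implicit reading that the metric is $G$-invariant.
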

\begin{proof}
Let $p\in X$ have scalar curvature $k$, and $q\in X$ be any other point.
There is isometry $g\in G$ such that $g.p=q$, and this sends 2-planes through $p$ to 2-planes through $q$ of the same section al curvature.
Thus the scalar curvature at $q$, defined as the integral average of the sectional curvatures through all 2-planes at $q$, is also equal to $k$.
\end{proof}

\begin{observation}
The sectional curvature, or even Ricci curvature of a homogeneous space need not be constant: consider  $\Hyp^2\times\S^2$ for example.
The sectional curvature of a geometry $(G,X)$ is only forced to be constant if $G$ acts transitively on the bundle of 2-planes over $TX$.
\end{observation}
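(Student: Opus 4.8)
The plan is to treat the Observation as two independent assertions: an existence statement, that homogeneity alone does not force constant sectional or Ricci curvature, witnessed by a product example; and a sufficiency statement, that transitivity of $G$ on tangent $2$-planes does force constant sectional curvature. The second is proved by the same mechanism as the scalar-curvature lemma above, lifted from the base $X$ to the Grassmann bundle of tangent $2$-planes.

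For the counterexample I would set $X=\Hyp^2\times\S^2$ with the product metric and $G=\Isom(\Hyp^2)\times\Isom(\S^2)$ acting factorwise; since each factor is transitive on its factor, $G$ is transitive on $X$, and the product metric is automatically $G$-invariant, so $(G,X)$ is a homogeneous geometry admitting a Riemannian metric. The sectional curvature of a Riemannian product takes the value $-1$ on a plane tangent to the $\Hyp^2$ factor, $+1$ on a plane tangent to the $\S^2$ factor, and $0$ on any mixed plane spanned by one vector from each factor; this alone shows sectional curvature is nonconstant. For Ricci, I would use that the Ricci tensor of a product is the direct sum $(-1)g_{\Hyp^2}\oplus(+1)g_{\S^2}$, so the Ricci curvature $\mathrm{Ric}(v,v)$ of a unit vector $v=\cos\theta\,u_1+\sin\theta\,u_2$ equals $-\cos 2\theta$ and sweeps out $[-1,1]$, hence is also nonconstant.

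For the sufficiency statement I would introduce the sectional-curvature function $\kappa\colon\Gr_2(TX)\to\R$ on the bundle of tangent $2$-planes. Each $g\in G$ is an isometry of the invariant metric, and an isometry carries a plane $\Pi\subset T_pX$ to the plane $dg_p(\Pi)\subset T_{g.p}X$ of equal sectional curvature; hence $\kappa$ is constant on every $G$-orbit in $\Gr_2(TX)$. If $G$ acts transitively on $\Gr_2(TX)$ there is a single orbit, so $\kappa$ is globally constant, i.e.\ $X$ has constant sectional curvature. This is verbatim the argument of the preceding lemma, with ``points of $X$'' replaced by ``$2$-planes in $TX$''.

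The only real subtlety is interpretive, and I would flag it explicitly. First, ``admits a Riemannian metric'' must be read as admitting a $G$-invariant metric, so that the elements of $G$ are genuine isometries; this is the convention already in force in the scalar-curvature lemma. Second, the phrase ``forced to be constant if $G$ acts transitively on the bundle of $2$-planes'' should be read as a sufficient condition, transitivity implying constancy: the converse fails, since for example the translation subgroup of $\Isom(\E^n)$ is transitive on $\E^n$ and yields constant curvature without being transitive on $2$-planes. With these readings fixed, both halves are routine, the curvature formulas for products being standard.
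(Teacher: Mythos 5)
Your proposal is correct and is essentially the argument the paper intends: the paper states this Observation without proof, relying on the same example $\Hyp^2\times\S^2$ (where the product metric has sectional curvatures $-1$, $0$, $+1$ and nonconstant Ricci, exactly as you compute) and on the orbit argument of the immediately preceding scalar-curvature lemma, transplanted from points of $X$ to $2$-planes in $TX$. Your explicit flagging of the two interpretive conventions ($G$-invariance of the metric, and transitivity on $\Gr_2(TX)$ as a sufficient but not necessary condition) is accurate and consistent with the paper's usage.
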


\noindent
If a homogeneous geometry admits a Riemannian geometry it need not be unique (for instance, the homogeneous space $\Hyp^n=(\SO(n,1),\mathbb{B}^n)$ admits an invariant metric of constant curvature $\kappa$ for each $\kappa<0$), and there are few instances in which actually utilizing the invariant metric is required (though we will have some use for it in Chapter \ref{chp:Heisenberg_Plane}).
Nonetheless, there is a very quick check to tell if a given homogeneous geometry admits \emph{some} invariant Riemannian metric; for a proof consult Thurston's book \cite{Thurston80}.

\begin{proposition}
Let $(G,(X,x))$ be a homogeneous geometry with point stabilizer $K=\mathsf{stab}_G(x)$.
Then $X$ admits a $G$-invariant Riemannian metric if and only if the  the image of $K\inject \GL(T_xX)$ given by $k\mapsto dk_x$ has compact closure.
In particular, any geometry with compact point stabilizers admits an invariant Riemannian metric.
\end{proposition}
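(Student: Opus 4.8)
The plan is to exploit the dictionary between $G$-invariant Riemannian metrics on the homogeneous space $X\cong G/K$ and inner products at the basepoint that are invariant under the \emph{isotropy representation} $\rho\colon K\to\GL(T_xX)$, $\rho(k)=dk_x$. Because $G$ acts transitively, a $G$-invariant metric is determined by its value $g_x$ at the single point $x$, and $G$-invariance forces $g_x$ to be fixed by every $\rho(k)$; conversely any such $\rho(K)$-invariant inner product can be spread over all of $X$ by the group action. Thus the entire statement reduces to a fact about compact groups and averaging.

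For the forward direction, suppose $g$ is a $G$-invariant Riemannian metric. Every $k\in K$ fixes $x$, so the invariance relation $g_x(dk_x v, dk_x w)=g_x(v,w)$ holds for all $v,w\in T_xX$; that is, $\rho(k)\in\O(g_x)$, the orthogonal group of the inner product $g_x$. Since $\O(g_x)$ is a compact subgroup of $\GL(T_xX)$, the image $\rho(K)$ lies in a compact set, and therefore its closure $\bar{\rho(K)}$ is compact.

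For the reverse direction, suppose $\bar{\rho(K)}$ is compact. Beginning with any inner product $B_0$ on $T_xX$ and averaging it over $\bar{\rho(K)}$ against a Haar measure produces a positive-definite inner product $\langle\cdot,\cdot\rangle_x$ invariant under $\bar{\rho(K)}$, hence under $\rho(K)$. I would then transport it around $X$ by the transitive action: for $p=g.x$ set $g_p(v,w)=\langle (dg\inv)_p v,(dg\inv)_p w\rangle_x$. The crucial point is \emph{well-definedness}. If $g.x=h.x$ then $k:=g\inv h\in K$, and from $h\inv=k\inv g\inv$ one computes $(dh\inv)_p=\rho(k)\inv\circ(dg\inv)_p$; substituting this into the formula for $h$ and using the $\rho(K)$-invariance of $\langle\cdot,\cdot\rangle_x$ recovers exactly the formula for $g$, so the value at $p$ does not depend on the chosen group element. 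By construction the resulting tensor is positive-definite and $G$-invariant.

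The main obstacle is verifying \emph{smoothness} of the transported metric, since its definition invokes an arbitrary $g$ with $g.x=p$. I would handle this by choosing a smooth local section $\sigma\colon U\to G$ of the projection $\pi\colon G\to G/K=X$ over a neighborhood $U$ of $x$, which exists because $\pi$ is a submersion (indeed a principal $K$-bundle, as in the construction of the functor $\cat{F}$ above). On $U$ the metric agrees, by well-definedness, with $p\mapsto\langle (d\sigma(p)\inv)_p\,\cdot\,,(d\sigma(p)\inv)_p\,\cdot\,\rangle_x$, which is manifestly smooth, and $G$-invariance then propagates smoothness to all of $X$. Finally, the ``in particular'' clause is immediate: if $K$ is compact, then $\rho(K)$ is the continuous image of a compact set, so it is already closed and compact and the criterion is automatically met.
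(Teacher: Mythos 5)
Your proposal is correct and is essentially the standard averaging argument; the paper itself omits the proof and defers to Thurston's book, whose proof proceeds exactly this way (reduce to the isotropy representation at the basepoint, average an inner product over the Haar measure of the compact closure $\overline{\rho(K)}$, transport by the transitive action, and check well-definedness and smoothness via local sections of the orbit map). Your handling of the two subtle points --- well-definedness under the change $g\mapsto h=gk$ and smoothness via a local section of the submersion $G\to X$ --- is exactly what is needed, so there is nothing to add.
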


\section{Examples}
\label{sec:Examples_Klein_Geos}

This section details many of the common examples of $(G,X)$ geometries, especially those relevant to this thesis.
When a particular geometry from the list below is mentioned in the following chapters, it will be assumed to be the particular model specified below, when such a distinction is relevant and unless otherwise specified.

\begin{example}
\emph{Real Projective Space}, $\RP^n$ is the $(G,X)$ geometry usually given by the projective action of $\PSL(n+1;\R)$ on $\RP^n$.
The alternative presentations, with automorphisms group $\SL(n+1;\R)$ or $\GL_+(n+1;\R)$ are not effective as multiplies of the identity act trivially on $\RP^n$.
Allowing for transformations of determinant $-1$ gives the locally isomorphic geometry $(\PGL(n+1;\R),\RP^n)$ or its (potentially) non-effective forms  $(\GL(n+1;\R),\RP^n)$ or $(\SL^\pm(n+1;\R),\RP^n)$.
The universal covering geometry is \emph{positive projective space}.
\end{example}

\begin{example}
Positive projective space, or $\widetilde{\RP^n}$, is given by the action of $\SL(n+1;\R)$ on $\S^n=\set{x\in\R^{n+1}\mid \|x\|=1}$.
\end{example}

\begin{example}
\emph{Spherical Space}, $\S^n$ is the $(G,X)$ geometry given by $(\SO(n+1),\S^n)$ for $\SO(n+1)=\set{A\in\GL(n+1;\R)\mid A^TA=I}$ and $\S^n=\set{x\in\R^{n+1}\mid \|x\|=1}$.
Allowing for orientation reversing isometries gives the locally isomorphic geometry $(\O(n+1),\S^n)$.
\end{example}

\begin{example}
\emph{Elliptic Space}, $\S^n$, is the $(G,X)$ geometry given by $(\PSO(n+1),\RP^n)$.
More commonly we work with the model $(\SO(n+1),\RP^n)$ which is not effective in odd dimensions.
This geometry is locally isomorphic to spherical space as defined above via the $2:1$ covering projection, which is its universal cover.
Following convention, we will often refer to this model as \emph{spherical space} as well.
\end{example}

\begin{example}
\emph{Affine Space}, $\A^n$ is the $(G,X)$ geometry given by the effective action of the \emph{affine group} $\mathsf{Aff}(n)=\GL(n;\R)\rtimes\R^n$ on $\R^n$.  
The usual model is as a subgeometry of projective space, with $\A^n$ the affine patch $\A^n=\set{[x_1:\cdots x_n: 1]}\subset\RP^n$ acted on by $\mathsf{Aff}(n)=\smat{\GL(n;\R)&\R^n\\0&1}$.
This model \emph{is} effective, and has orientation preserving automorphisms given by the index two subgroup $\mathsf{Aff}_+(n)=\GL_+(n;\R)\rtimes\R^n$.
\end{example}

\begin{example}
\emph{Euclidean Space} $\E^n$ is the $(G,X)$ geometry given by the effective action of the Euclidean group $\Euc(n)=\SO(n)\rtimes\R^n$ on $\R^n$.
This is a subgeometry of affine space, and so admits a similar projective model with $\Euc(n)=\smat{\SO(n)&\R^n\\0&1}$ and underlying space the affine patch $\set{[x_1:\cdots x_n: 1]}\subset\RP^n$.
Allowing reflections gives the locally isomorphic geometry $(\O(n)\rtimes\R^n,\R^n$).
\end{example}

\begin{example}
\emph{Similarity Geometry} is a weakening of Euclidean space to allow homotheties as geometric transformations with effective automorphism group $\Sym(n)=\R_+\times\Euc(n)$.  This is also a subgeometry of affine space with projective model $\Sym(n)=\smat{\R_+\SO(n)&\R^n\\0&1}$ acting on the affine patch.
\end{example}

\begin{example}
\emph{Hyperbolic Space} is the $(G,X)$ geometry given by $(\SO(n,1),\Hyp^n)$ for $\Hyp^n$ projectivization of the hyperboloid $V(x_1^2+\cdots x_n^2-x_{n+1}^2=-1)$, which is the unit disk in the affine patch $x_{n+1}\neq 0$.
This model is not effective as $\SO(n,1)$ has two components, switching the two sheets of the hyperboloid which are identified under projectivization; the effective model has automorphisms $\PSO(n,1)$.
Allowing for orientation reversing automorphisms extends the isometry group to $\O(n,1)$ or its effective version $\PSO(n,1)$.
\end{example}

\begin{example}
In dimension 2, there are two additional models of the hyperbolic plane which will be of use, arising as subgeometries of $\CP^1$.
The Poincare disk is a model of $\Hyp^2=(\SU(1,1),\D^2)$ with underlying space the unit disk in $\C$ and automorphism group $\SU(1,1)$ acting effectively on $\D^2$ by linear fractional transformations.
The M\"obius transformation $\smat{i&i\\-1&1}$ maps the disk to the upper half plane, conjugating $\SU(1,1)$ to $\SL(2,\R)$ and giving the \emph{upper half plane model} of $\Hyp^2=(\SL(2,\R),\R^2_+)$.
These models are not effective, as $\smat{-1&\\&-1}$ acts as the identity; the effective versions have automorphism groups $\PSU(1,1)$ and $\PSL(2,\R)$.
\end{example}

\begin{example}
\emph{Minkowski Space} is the Lorentzian analog of Euclidean space, given by the action of the Poincare group $\mathsf{Poin(n)}=\SO(n,1)\rtimes\R^n$ on $\R^n$.  This admits a projective model with $\R^n$ the affine patch $x_{n+1}\neq 0$ and automorphisms $\smat{\SO(n,1)&\R^n\\0&1}$.
\end{example}

\begin{example}
De Sitter space is the complement of the Klein model of hyperbolic space in $\RP^n$.  This is given by the projectivization of a hyperboloid of one sheet, 
$\mathsf{dS}^n=(\SO(n,1),V(x_1^2+\cdots x_n^2-x_{n+1}^2=1))$.
\end{example}

\begin{example}
Anti-de Sitter space is the Lorentzian analog of hyperbolic space, in the sense that we form a signature $(n,1)$ space of negative curvature by embedding it as a sphere of radius $-1$ in the space of signature $(n,2)$.
That is , $\mathsf{AdS}^n=(\SO(n-1,2),$.
\end{example}

\begin{example}
Heisenberg geometry is the $(G,X)$ geometry $\Hs^2:=(\Heis,\A^2)$ where
$\Heis$ is the real Heisenberg group.
\end{example}

\begin{example}
\emph{Complex Projective Space} is the $(G,X)$ geometry $(\SL(n+1;\C),\CP^n)$ with the automorphism group $\SL(2,\C)$ acting projectively.  This is a simply connected geometry with effective version $\PSL(n+1;\C)$.
\end{example}

\begin{example}
\emph{Unitary geometry} is a strengthening of complex projective geometry, acting on the underlying space $\CP^n$ only by unitary transformations $\SU(n+1;\C)\subset\SL(n+1;\C)$.
\end{example}

\begin{example}
\emph{Complex Hyperbolic Space} is the $(G,X)$ geometry with $G=\SU(n,1;\C)$ acting on the complex projectivization $X=\P V$ for $V$ the real algebraic variety $V=V(x_1\overline{x_1}+\cdots+x_n\overline{x_n}-x_{n+1}\overline{x_{n+1}}=-1)$, the analog of the hyperboloid model of hyperbolic space.
This is not effective, and $\SU(n,1;\C)$ $n+1$-fold covers $\PSU(n,1;\C)$ the effective automorphism group.
\end{example}

%NEWCHAPTER: GEOMETRIC STRUCTURES
\chapter{Geometric Structures}
\label{chp:Geo_Strs}
\index{Geometric Structures}

A $(G,X)$ structure on a manifold $M$ locally identifies $M$ with small patches of the geometry $(G,X)$ in a compatible way.
Geometric structures are a direct generalization of \emph{smooth structures}, which themselves are a specialization of the notion of \emph{topological manifold} defined via an atlas of charts.
Below we review this atlas-and-transition approach to defining geometric manifolds, followed by the more modern approach via \emph{developing pairs}.
We then review the \emph{deformation space} and \emph{moduli space} of $(G,X)$ structures on a manifold, which directly generalize the familiar  Teichm\"uller spaces for Riemann surfaces.
Finally, we consider how different geometric structures modeled on different geometries can interact - through \emph{strengthening} and \emph{weakening} as well as \emph{degeneration} and \emph{regeneration}.

\section{Charts and Atlases}
\label{sec:Charts_Atlases}
\index{Manifolds!Charts}
\index{Manifolds!(G,X) Manifolds}

To formalize the notion that a manifold $M$ should `locally look like $\R^n$ we require that each point of $M$ has a neighborhood homeomorphic to some open subset of $\R^n$.
Likewise, each point in a \emph{hyperbolic manifold} $M$ should have a neighborhood isometric to some open subset of $\Hyp^n$.
Writing this down precisely leads directly to the definition of an \emph{atlas of charts} for $M$ together with differing \emph{compatibility conditions} depending on the topological/smooth/geometric structure to be imparted on $M$.

We may (roughly) think of the \emph{topological space} $\R^n$ as a $(G,X)$ geometry with underlying space $\R^n$ and allowable transformations given by all self-homeomorphisms $\mathsf{Homeo}(\R^n)$.
Similarly, the smooth geometry of $\R^n$ has underlying space $\R^n$ and automorphism group the collection of all diffeomorphisms $\mathsf{Diffeo}(\R^n)$.
From this perspective, a \emph{topological manifold} is a topological space equipped with an atlas of charts into $\R^n$, with transition maps in $\mathsf{Homeo}(\R^n)$, and a \emph{smooth manifold} is given by an atlas of $\R^n$-valued charts with transition maps in $\mathsf{Diffeo}(\R^n)$.
This rephrasing of the above definitions suggests an immediate generalization to structures modeled on any homogeneous space $(G,X)$.

\begin{definition}
Let $(G,X)$ be a geometry and $M$ a topological manifold.
A $(G,X)$ structure on $M$ is a maximal atlas of $X$-valued charts on $M$ with transition maps in $G$.
\end{definition}

\begin{observation}
A $(G,X)$ manifold $M$ has an underlying real analytic structure as the action of $G$ on $X$ is analytic by definition.
\end{observation}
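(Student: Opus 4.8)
The plan is to promote the given $(G,X)$-atlas on $M$ to an honest real analytic atlas by post-composing its $X$-valued charts with real analytic charts on $X$ itself. The engine driving everything is the hypothesis, built into the definition of a geometry, that the action $\alpha\colon G\times X\to X$ is real analytic; in particular $X$ carries a real analytic structure and each $g\in G$ acts as a real analytic diffeomorphism $g.\colon X\to X$.

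First I would fix a real analytic atlas $\set{(V_i,\psi_i)}$ on $X$, with $\psi_i\colon V_i\to\R^n$ real analytic charts. Writing the $(G,X)$-structure on $M$ as a maximal atlas $\set{(U_\alpha,\phi_\alpha)}$ with $\phi_\alpha\colon U_\alpha\to X$ a homeomorphism onto an open subset of $X$, and with transition maps $\phi_\beta\phi_\alpha\inv=g_{\alpha\beta}.$ for suitable $g_{\alpha\beta}\in G$, I would form the refined collection of $\R^n$-valued charts $\psi_i\circ\phi_\alpha$ on the open sets $W_{\alpha,i}:=\phi_\alpha\inv(V_i)$. Since $\set{V_i}$ covers $X$, the sets $W_{\alpha,i}$ cover $U_\alpha$, and as the $U_\alpha$ cover $M$ these refined charts give a genuine atlas of $\R^n$-valued charts on $M$.

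Next I would verify real analytic compatibility. The transition map between two such charts is
\[
(\psi_j\circ\phi_\beta)\circ(\psi_i\circ\phi_\alpha)\inv=\psi_j\circ(g_{\alpha\beta}.)\circ\psi_i\inv,
\]
a composition of the real analytic chart inverse $\psi_i\inv$, the real analytic diffeomorphism $g_{\alpha\beta}.$ supplied by the analyticity of $\alpha$, and the real analytic chart $\psi_j$. Hence every transition map is real analytic as a map between subsets of $\R^n$, so the refined atlas is a real analytic atlas. By the uniqueness of maximal atlases it is contained in a unique maximal real analytic atlas, which is the asserted real analytic structure underlying $M$.

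The only real subtlety — and the step I would treat most carefully — is the well-definedness of the group elements $g_{\alpha\beta}$ and the claim that each acts analytically. The former is guaranteed on connected overlaps by the unique-extension rigidity established earlier for analytic actions: an element of $G$ is determined by its restriction to any open subset of $X$, so the transition datum of the $(G,X)$-structure genuinely records elements of $G$ rather than arbitrary local diffeomorphisms. The latter is immediate once one unwinds that an analytic action means each partial map $g.(-)\colon X\to X$ is real analytic. Everything else is the routine bookkeeping of composing analytic charts.
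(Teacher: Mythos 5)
Your proposal is correct and is essentially the paper's own argument: the observation is justified in the text by the single remark that transition maps of a $(G,X)$-atlas are (restrictions of) the analytic $G$-action on $X$, and your refinement of the atlas by analytic charts $\psi_i$ on $X$, making the transitions $\psi_j\circ(g_{\alpha\beta}.)\circ\psi_i^{-1}$ visibly analytic, is the standard bookkeeping that makes this precise. Your treatment of the one genuine subtlety — that on disconnected overlaps the transition datum is only locally-$G$, which is harmless since analyticity is local and each component's transition agrees with some $g\in G$, unique by the paper's unique-extension observation — matches the paper's stated conventions, so nothing further is needed.
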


\begin{figure}
\centering\includegraphics[width=0.5\textwidth]{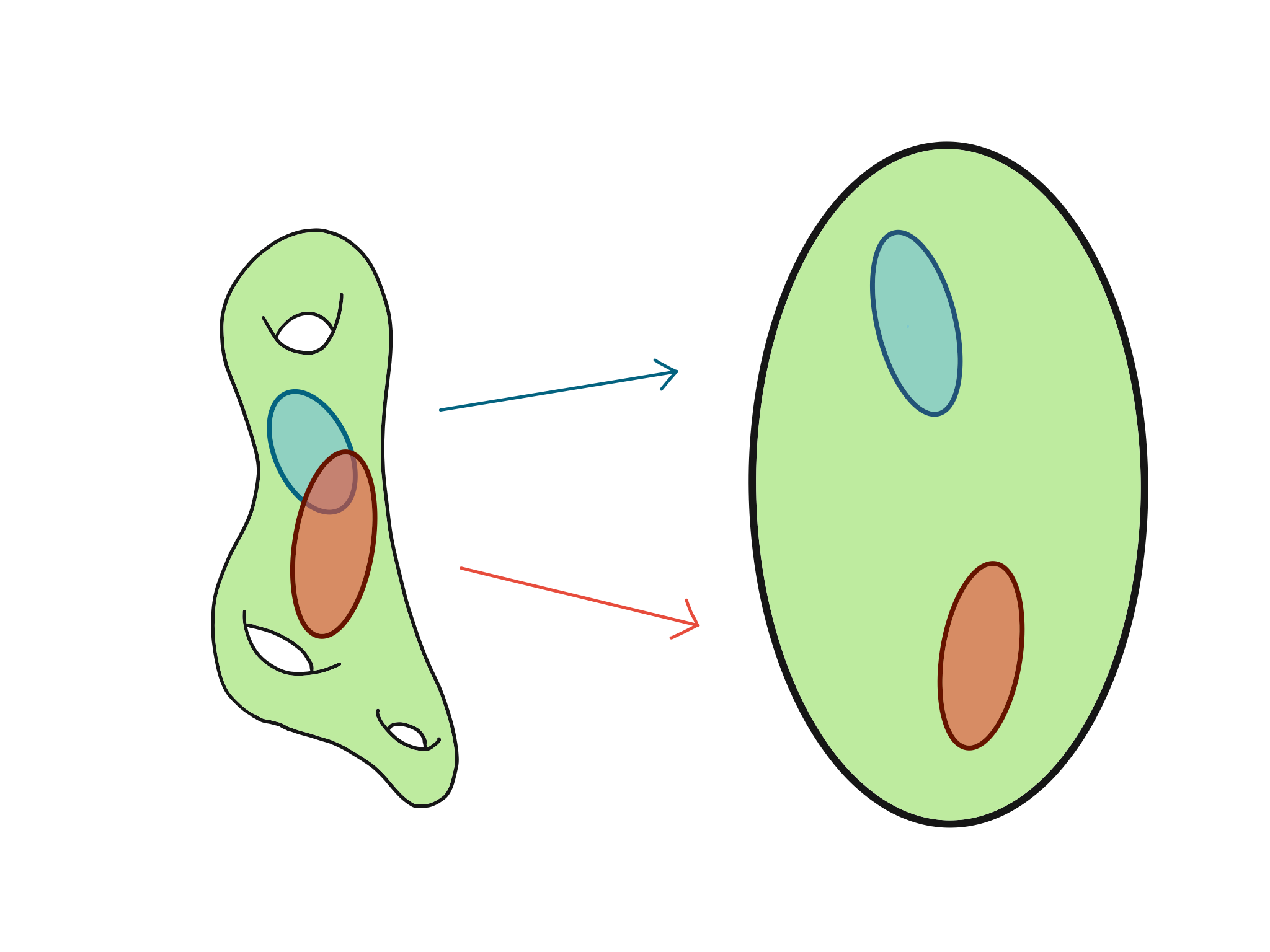}
\caption{An atlas of charts on a hyperbolic surface.}	
\end{figure}

\noindent There is a slight technical annoyance when discussing transition maps for potentially disconnected intersections $U_\alpha\cap U_\beta$ which we address presently.
Given a subset $U\subset X$, a map $f\colon U\to X$ is said to be \emph{locally-$G$} if the restriction $f|_{U_i}$ to each connected component $U_i\subset U$ agrees with the action of some element $g\in G$ restricted to $U_i$.
Such an $f$ is in the \emph{pseudogroup generated by $G$}; see Thurston's book \cite{Thurston80} for example.
Following convention we abuse terminology and say such a map is \emph{in $G$}, as in the definition above.

\begin{example}
The first example of a $(G,X)$ manifold is $X$ itself, with the single chart $\mathsf{id}_X\colon X\to X$.	
\end{example}

\noindent As in the topological case, a $(G,X)$ atlas of charts on $M$ allows us to reconstruct $M$ out of little pieces of $X$, described below.
Let $V=\coprod_\alpha \phi_\alpha(U_\alpha)$ be the disjoint union of images under charts, and define the equivalence relation $\sim$ on $V$ as follows.
If $x\in \phi_\alpha(U_\alpha\cap U_\beta)$ and $y\in \phi_\beta(U_\alpha\cap U_\beta)$, then $x\sim y$ if $\phi_\beta\phi_\alpha\inv(x)=y$.
The details showing this construction appropriately reproduces $M$ can be found in \cite{Goldman10}, Section 5.1.

\subsection{$(G,X)$ Maps}
\index{Geometry!(G,X) Maps}

For a fixed geometry $(G,X)$, constructing the category of $(G,X)$ manifolds requires a notion of $(G,X)$ morphisms between them.

\begin{definition}
Suppose that $M$ and $N$ are two $(G,X)$ manifolds and $f\colon M\to N$ is a map.  
Then $f$ is a $(G,X)$ morphism if for all charts $\phi_\alpha\colon U_\alpha\to X$ on $M$
 and $\psi_\beta\colon V_\beta\to X$ on $N$ the restriction $\psi_\beta f\phi_\alpha\inv$ is in (the pseudogroup generated by) $G$.
\end{definition}

\noindent Note that as $G$ acts on $X$ by diffeomorphisms and the charts $\phi_\alpha,\psi_\beta$ are diffeomorphisms, every $(G,X)$ map is a \emph{local diffeomorphism} by definition.
The set of $(G,X)$ automorphisms $M\to M$ forms a group, which we denote $\Aut_{(G,X)}(M)$, and the automorphism group of a geometry itself $\Aut_{(G,X)}(X)$ is $G$.

To determine if two atlases for $(G,X)$ structures on $M$ actually define the same structure we must determine whether or not both generate the same maximal atlas: that is, whether transition maps between charts from each are in $G$.
The notion of a $(G,X)$ map allows us to phrase this succinctly and 
provides a definition for the \emph{space of $(G,X)$ structures} on a manifold $M$.

\begin{definition}
Let $M_1$, $M_2$ denote two $(G,X)$ structures on a manifold $M$.  Then $M_1$ and $M_2$ are equivalent if the identity map $\id_M\colon M_1\to M_2$ is a $(G,X)$ map.
The set of distinct $(G,X)$ structures on $M$  is denoted $\mathcal{S}_{(G,X)}(M)$.
\end{definition}

\noindent
A $(G,X)$ structure on a manifold $M$ induces a canonical $(G,X)$ structure on its covers and quotients.
More precisely, a chart $(U,\phi)$ on $M$ pulls back to the charts $(\tilde{U_i},\phi\pi)$ for $U_i$ a connected component of $\pi\inv(U)$ when $U\subset M$ is small enough (evenly covered), and conversely small enough charts $(V,\psi)$ on $\tilde{M}$ push forward under $\pi$ to charts $(\pi(V),\psi\pi\inv)$ when $\pi(V)$ is evenly covered.
We record both of these below for future use.

\begin{observation}
\label{Obs:CoveringStructure}
Let $M$ be a $(G,X)$ manifold and $\pi\colon\tilde{M}\to M$ a covering space.  Then $\tilde{M}$ has a canonical $(G,X)$ structure for which the covering projection is a $(G,X)$ map. 
\end{observation}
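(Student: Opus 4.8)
The plan is to pull the maximal atlas on $M$ back along $\pi$, using that every covering map is a local homeomorphism admitting evenly covered neighborhoods. First I would refine the given $(G,X)$ atlas on $M$ so that each chart domain $U_\alpha$ is evenly covered by $\pi$; this is harmless, since any restriction of a chart to a smaller open set still lies in the maximal atlas, and the evenly covered open sets form a basis for the topology of $M$. For such a $U_\alpha$, write $\pi\inv(U_\alpha)=\coprod_i \tilde{U}_{\alpha,i}$ with each $\pi|_{\tilde{U}_{\alpha,i}}\colon \tilde{U}_{\alpha,i}\to U_\alpha$ a homeomorphism, and define the pulled-back chart $\tilde{\phi}_{\alpha,i}:=\phi_\alpha\circ\pi|_{\tilde{U}_{\alpha,i}}\colon\tilde{U}_{\alpha,i}\to X$. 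Since $\pi$ is surjective and the $U_\alpha$ cover $M$, these domains cover $\tilde{M}$.

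The key step is verifying that the transition maps between the new charts lie in $G$. Given a point $p$ in the image $\tilde{\phi}_{\alpha,i}(\tilde{U}_{\alpha,i}\cap\tilde{U}_{\beta,j})$, set $\tilde{q}=\tilde{\phi}_{\alpha,i}\inv(p)$; by construction $\pi(\tilde{q})=\phi_\alpha\inv(p)$, so
\[
\tilde{\phi}_{\beta,j}\circ\tilde{\phi}_{\alpha,i}\inv(p)=\phi_\beta(\pi(\tilde{q}))=\phi_\beta\circ\phi_\alpha\inv(p).
\]
Thus every transition map for the new atlas is a restriction of a transition map $\phi_\beta\circ\phi_\alpha\inv$ of the original atlas, hence locally-$G$, i.e.\ in $G$ in the pseudogroup sense. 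This exhibits a $(G,X)$ atlas on $\tilde{M}$, whose generated maximal atlas I take to be the canonical $(G,X)$ structure.

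It then remains to check that $\pi$ is a $(G,X)$ map and that the structure is canonical. For the first point, the same computation shows that for charts $\tilde{\phi}_{\alpha,i}$ on $\tilde{M}$ and $\psi_\beta$ on $M$ the composite $\psi_\beta\circ\pi\circ\tilde{\phi}_{\alpha,i}\inv$ agrees with $\psi_\beta\circ\phi_\alpha\inv$ on its domain, again a transition map in $G$; hence $\pi$ is a $(G,X)$ map, and indeed a local $(G,X)$ isomorphism. Canonicity follows because requiring $\pi$ to be a $(G,X)$ map forces every admissible chart on $\tilde{M}$ to be $G$-compatible with the pulled-back charts, so any two structures making $\pi$ a $(G,X)$ map generate the same maximal atlas.

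The only genuinely delicate point is the bookkeeping around disconnected overlaps $\tilde{U}_{\alpha,i}\cap\tilde{U}_{\beta,j}$: the transition maps must be read as locally-$G$ maps, i.e.\ elements of the pseudogroup generated by $G$, rather than honest single elements of $G$, exactly as flagged in the definition of a $(G,X)$ structure. Once this convention is in place the argument is a direct unwinding of definitions, with the evenly-covered refinement of the atlas being the only nontrivial input beyond the one-line transition-map computation above.
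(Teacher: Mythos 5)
Your proof is correct and takes essentially the same approach as the paper, which pulls each chart $(U,\phi)$ back over an evenly covered $U$ to the charts $(\tilde{U}_i,\phi\circ\pi)$ on the components of $\pi\inv(U)$; your transition-map computation, the canonicity argument, and the explicit pseudogroup caveat for disconnected overlaps simply make precise what the paper only sketches in the paragraph preceding the observation. Nothing further is needed.
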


\noindent 
A particularly simple case of this pullback, that is often useful in practice is the specialization to covers of one sheet, or diffeomorphisms.

\begin{observation}
\label{Obs:Pullback_Structure}
Let $\Sigma$ be a smooth manifold and $M$ a $(G,X)$ manifold.
If $\phi\colon\Sigma\to M$ is a diffeomorphism, there is a unique $(G,X)$ structure on $\Sigma$ making $\phi$ into a $(G,X)$ isomorphism.	
\end{observation}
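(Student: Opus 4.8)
The plan is to transport the $(G,X)$ atlas from $M$ to $\Sigma$ along $\phi$ and check the three requirements: that the result is a genuine $(G,X)$ atlas, that $\phi$ becomes a $(G,X)$ isomorphism, and that no other structure on $\Sigma$ has this property. This is precisely the one-sheeted specialization of the pullback in Observation \ref{Obs:CoveringStructure}, so in principle one could simply cite that; but since a diffeomorphism is even simpler than a covering (there is no component-counting in the fibers), I would give the argument directly.

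First I would fix a representative atlas $\set{(U_\alpha,\psi_\alpha)}$ of the maximal $(G,X)$ atlas on $M$, with each $\psi_\alpha\colon U_\alpha\to X$ a chart and all transition maps $\psi_\beta\psi_\alpha\inv$ in $G$. Define $V_\alpha=\phi\inv(U_\alpha)$ and $\chi_\alpha=\psi_\alpha\circ\phi|_{V_\alpha}\colon V_\alpha\to X$. Since $\phi$ is a diffeomorphism the $V_\alpha$ form an open cover of $\Sigma$, and each $\chi_\alpha$ is a diffeomorphism onto the open set $\psi_\alpha(U_\alpha)\subset X$, being a composite of the restriction of $\phi$ with the chart $\psi_\alpha$. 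Thus $\set{(V_\alpha,\chi_\alpha)}$ is an atlas of $X$-valued charts on $\Sigma$.

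Next I would verify the compatibility condition. On an overlap the transition map computes as $\chi_\beta\chi_\alpha\inv=\psi_\beta\circ\phi\circ\phi\inv\circ\psi_\alpha\inv=\psi_\beta\psi_\alpha\inv$, which lies in (the pseudogroup generated by) $G$ by hypothesis. Hence the pulled-back atlas defines a $(G,X)$ structure on $\Sigma$; I would call this structure $\Sigma_\phi$ and take its maximal atlas. Here the convention for disconnected overlaps causes no trouble, because the transition functions we produce are literally those of the already-valid atlas on $M$. That $\phi$ is then a $(G,X)$ isomorphism is immediate from the same computation read the other way: in the charts $\chi_\alpha$ on $\Sigma_\phi$ and $\psi_\beta$ on $M$ one has $\psi_\beta\circ\phi\circ\chi_\alpha\inv=\psi_\beta\psi_\alpha\inv\in G$, and symmetrically for $\phi\inv$, so both $\phi$ and its inverse are $(G,X)$ maps.

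Finally, for uniqueness I would invoke the equivalence criterion for $(G,X)$ structures: two structures on a fixed manifold agree iff the identity map between them is a $(G,X)$ map. If $\Sigma'$ is any $(G,X)$ structure on $\Sigma$ for which $\phi$ is also a $(G,X)$ isomorphism, then $\id_\Sigma=\phi\inv\circ\phi$ factors as the composite of the $(G,X)$ isomorphisms $\phi\colon\Sigma_\phi\to M$ and $\phi\inv\colon M\to\Sigma'$, hence is a $(G,X)$ map; applying the same reasoning to $\id_\Sigma=\phi\inv\circ\phi$ in the other direction shows it is a $(G,X)$ isomorphism, so $\Sigma_\phi$ and $\Sigma'$ determine the same maximal atlas. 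I do not expect any genuine obstacle here; the entire content is bookkeeping with maximal atlases, and the only point demanding a moment's care—the pseudogroup convention for disconnected chart overlaps—is automatically satisfied because every transition function produced on $\Sigma$ coincides with a transition function already present on $M$.
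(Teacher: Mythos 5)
Your proposal is correct and matches the paper's approach exactly: the paper records this observation without a formal proof, justifying it by the same chart-pullback construction it sketches just before Observation \ref{Obs:CoveringStructure} (charts $(V,\psi)$ transported along the map, with transition functions unchanged). Your write-up simply makes explicit the bookkeeping the paper leaves implicit, including the uniqueness step via the criterion that two structures agree iff $\id_\Sigma$ is a $(G,X)$ map.
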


\begin{observation}
Let $M$ be a $(G,X)$ manifold on which a group $\Gamma$ acts properly and freely by $(G,X)$ maps.  Then the quotient $M/\Gamma$ inherits a $(G,X)$ structure such that the quotient map $\pi\colon M\to M/\Gamma$ is a $(G,X)$ covering.	
\end{observation}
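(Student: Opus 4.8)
The plan is to build a $(G,X)$ atlas on $M/\Gamma$ by pushing the charts of $M$ forward through local sections of the quotient map, then to check that the resulting transition maps lie in (the pseudogroup generated by) $G$ using the hypothesis that each deck transformation is a $(G,X)$ map. This is the ``push-down'' companion to Observation~\ref{Obs:CoveringStructure}.

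First I would record that the quotient is a manifold and that $\pi$ is a covering. Since $\Gamma$ acts freely and properly, and is discrete, it acts properly discontinuously; hence $M/\Gamma$ is a smooth manifold and $\pi\colon M\to M/\Gamma$ is a smooth covering map, so every $\bar{p}\in M/\Gamma$ has an evenly covered neighborhood $\bar U$ whose preimage is a disjoint union of sheets, each carried diffeomorphically onto $\bar U$ by $\pi$. Next I would construct the charts: fix such a $\bar U\ni\bar p$ and a sheet $U\subset\pi\inv(\bar U)$, shrinking $\bar U$ if necessary so that $U$ lies inside the domain of a chart $\phi_\alpha\colon U_\alpha\to X$ of $M$, and set $\bar\phi:=\phi_\alpha\circ(\pi|_U)\inv\colon\bar U\to X$. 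Because $\pi|_U$ is a diffeomorphism onto $\bar U$, each $\bar\phi$ is a homeomorphism onto its image, and such charts cover $M/\Gamma$.

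The heart of the argument is the transition computation. Given two charts $\bar\phi=\phi_\alpha\circ(\pi|_U)\inv$ on $\bar U$ and $\bar\psi=\psi_\beta\circ(\pi|_V)\inv$ on $\bar V$, on a connected component of $\bar U\cap\bar V$ the composite $(\pi|_V)\inv\circ\pi|_U$ sends a piece of the sheet $U$ to a piece of the sheet $V$ lying in the same $\Gamma$-orbit; since the action is free, this map agrees there with the restriction of a single deck transformation $\gamma\in\Gamma$. Consequently, on that component,
\[
\bar\psi\circ\bar\phi\inv=\psi_\beta\circ\gamma\circ\phi_\alpha\inv.
\]
By hypothesis $\gamma$ is a $(G,X)$ map of $M$, so by the definition of $(G,X)$ morphism the expression $\psi_\beta\circ\gamma\circ\phi_\alpha\inv$ lies in $G$. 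Hence every transition map is locally-$G$, and the atlas $\set{\bar\phi}$ determines a $(G,X)$ structure on $M/\Gamma$. To finish I would read $\pi$ in the chart $\phi_\alpha$ upstairs and $\bar\phi$ downstairs, obtaining $\bar\phi\circ\pi\circ\phi_\alpha\inv=\phi_\alpha\circ(\pi|_U)\inv\circ\pi\circ\phi_\alpha\inv$, which on each component equals $\phi_\alpha\circ\gamma\circ\phi_\alpha\inv$ (with $\gamma=\id$ on the sheet $U$ itself) and so lies in $G$; combined with the covering property already noted, this exhibits $\pi$ as a $(G,X)$ covering.

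The hard part will be the careful bookkeeping over possibly disconnected overlaps, specifically justifying the componentwise identity $(\pi|_V)\inv\circ\pi|_U=\gamma|$ and confirming that the chosen charts are mutually compatible independently of the sheets selected. Everything hinges on interpreting ``in $G$'' as membership in the pseudogroup generated by $G$, exactly as set up preceding the definition of $(G,X)$ maps, so that component-by-component agreement with elements of $\Gamma$ (themselves $(G,X)$ maps) is enough to conclude.
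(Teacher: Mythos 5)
Your proof is correct, and it is exactly the argument the paper intends: the paper records this as an observation without proof, and your construction --- pushing charts down through sheets of the covering and using freeness plus local constancy of the comparison deck transformation to see that transitions are locally-$G$ --- is precisely the mechanism sketched in the paper's Euclidean torus example and in the definition of locally-$G$ maps. Your reading of the hypotheses (a discrete $\Gamma$, so that proper and free means properly discontinuous and $\pi$ is a covering) also matches the paper's conventions from its orbifold chapter, so there is nothing to add.
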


\noindent
This allows us to produce examples of geometric structures from quotients of $X$ by suitable subgroups of $G$.

\begin{example}[Euclidean Torus]
Consider the $\Z^2$ subgroup of $\Isom(\E^2)$ given by translations along the integer lattice in the plane.  
Then $T=\E^2/\Z^2$ is topologically a torus, and inherits a canonical Euclidean structure as the $\Z^2$ action is by $(\Isom(\E^2),\E^2)$-maps.
The atlas of charts is defined as follows: for each point $p\Z^2\in T$ a choice of representative $p\in\E^2$ and a sufficiently small open neighborhood $U\ni p$ provides a chart on $U\Z^2\subset T$ sending each point $q\Z^2$ to the unique representative in $U\subset\E^2$.
\end{example}

\section{Developing Pairs}
\label{sec:Developing_Pairs}
\index{Geometric Structures!Developing Pairs}
\index{Developing Map}
\index{Holonomy}
\index{Geometry!Developing Pairs}

The data of a maximal atlas of $X$ valued charts with transitions in $G$ is unweildly to work with in practice.
The analyticity of the $G$ action on $X$ allows one to globalize the atlas of charts via a \emph{developing map} and the transitions via an associated \emph{holonomy homomorphism}, encoding the entire $(G,X)$ structure as a \emph{developing pair}.
Briefly, back the $(G,X)$ structure on $M$ to the universal cover $\tilde{M}$ and analytically continuing a chosen base chart to a $(G,X)$ map $f\colon \tilde{M}\to X$ called the developing map, and the $\pi_1(M)$ action by covering transformations induces an action on $f(\tilde{M})$ by elements of $G$.

\begin{definition}
A \emph{developing pair} for a $(G,X)$ structure on a manifold $M$ is a pair $(f,\rho)$ of an immersion $f\colon\tilde{M}\to X$, equivariant with respect to the representation $\rho\colon\pi_1(M)\to G$.
\end{definition}

\begin{figure}
\centering\includegraphics[width=0.7\textwidth]{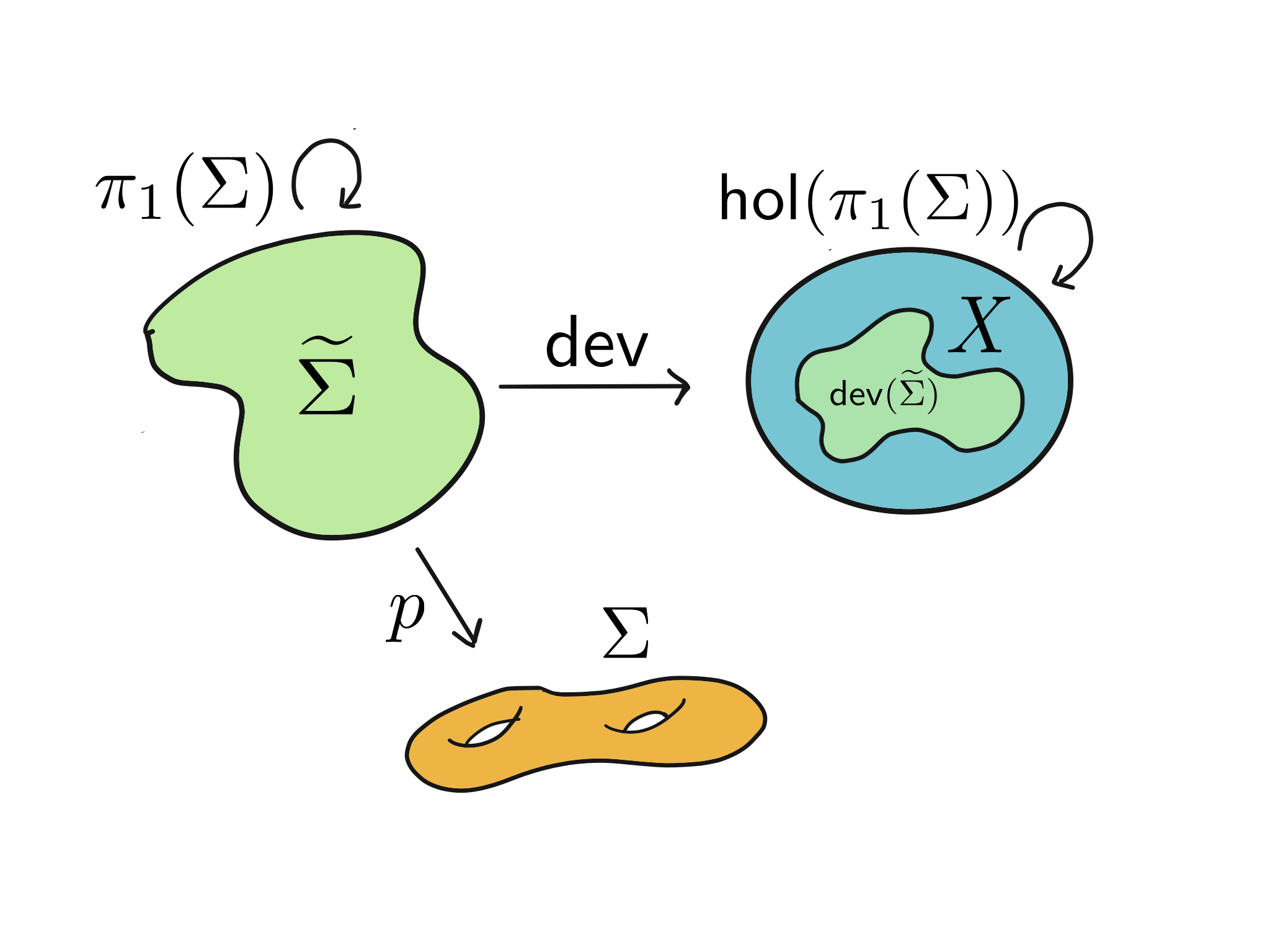}
\caption{A developing pair for a geometric structure.}	
\end{figure}

\noindent
We denote the space of all $(G,X)$ developing pairs for $M$ by $\mathsf{Dev}_{(G,X)}(M)$.
Note that a developing map $\colon\tilde{M}\to X$ uniquely determines the associated holonomy so we may alternatively think of $\mathsf{Dev}_{(G,X)}(M)$ as simply the \emph{space of developing maps}, the subset of immersions in $C^\infty(\tilde{M},X)$ which are equivariant with respect to some homomorphism $\rho\in\Hom(\pi_1(M),G)$.
Topologizing $C^\infty(\tilde{M},X)$ by smooth uniform convergence of all partial derivatives on compact sets provides $\mathsf{Dev}_{(G,X)}(M)$ with the subspace topology.
This agrees with the subspace topology inherited from the full developing pairs in $C^\infty(\tilde{M},X)\times\Hom(\pi_1(M),G)$.

\begin{example}[Euclidean Torus]
Let $T$ be the Euclidean torus represented by the Euclidean metric $ds^2=\tfrac{4}{3}(dx^2-dxdy+dy^2)$ on $\R^2/\Z^2$.
  A developing pair for this structure into the Euclidean plane with metric $ds^2=dx^2+dy^2$ is given by the linear map $f\colon \R^2\to\E^2$, $f(x,y)=(x,\tfrac{x}{2}+y\tfrac{\sqrt{3}}{2})$ and the holonomy $\rho\colon\Z^2\to\Euc(2)$ defined by $\rho(e_1)=\smat{1&0&1\\0&1&1\\0&0&1}$, $\rho(e_2)=\smat{1&0&\tfrac{1}{2}\\0&1&\tfrac{\sqrt{3}}{2}\\0&0&1}$.
\end{example}

\begin{figure}
\centering\includegraphics[width=0.85\textwidth]{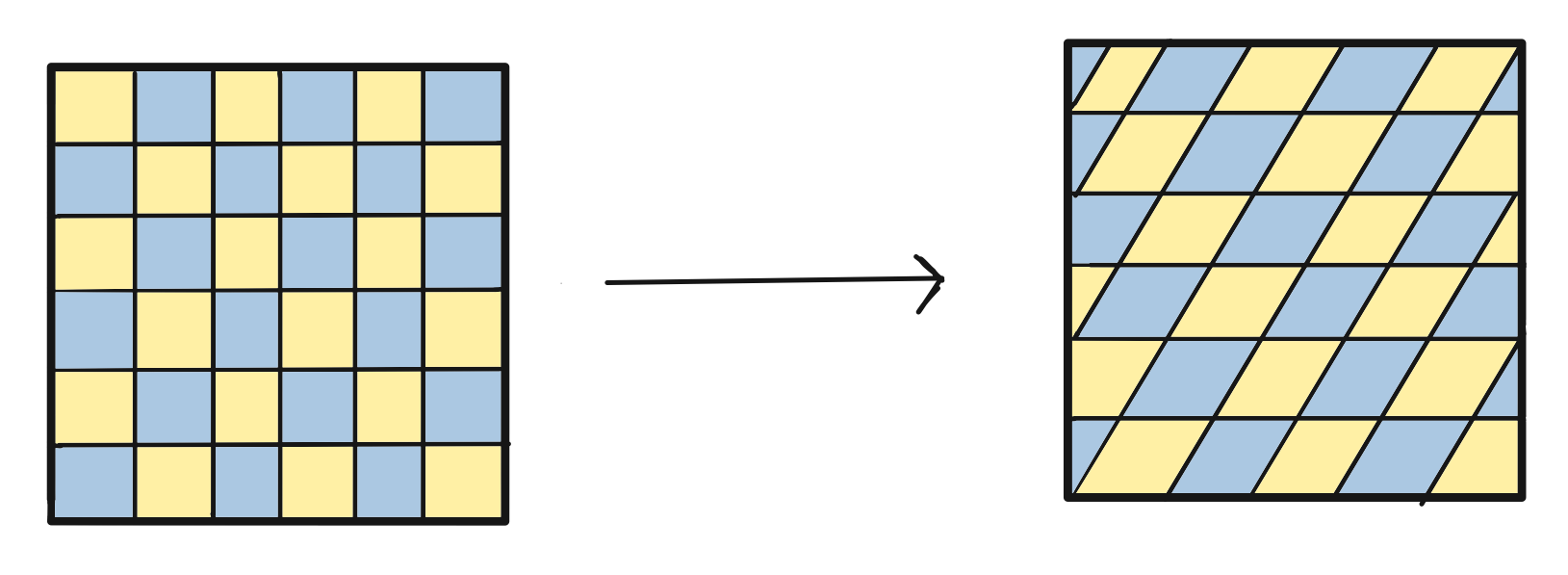}
\caption{The developing map for the hexagonal torus.}	
\end{figure}

\begin{example}[Hopf Torus]
\label{Ex:Hopf_Torus}
The Hopf torus is a similarity structure on $T^2$ with developing map $f\colon \R^2\to \C$ given by $f(x,y)=e^{x+ 2\pi iy}$ and (non-faithful) holonomy $\rho\colon \Z^2\to \Sym(2)$ defined by $\rho(e_1)=e\cdot \mathsf{Id}$ and $\rho(e_2)=\mathsf{Id}$.
\end{example}

\begin{figure}
\centering\includegraphics[width=0.85\textwidth]{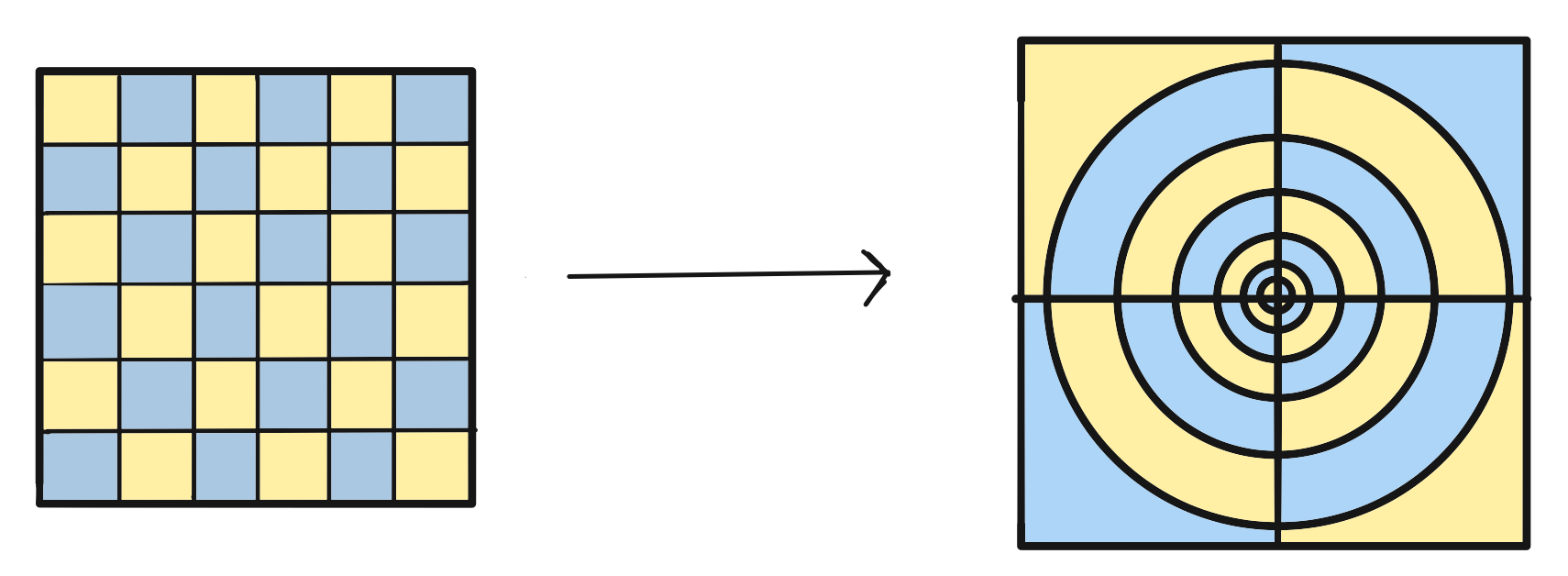}
\caption{A developing map for a similarity torus.}	
\end{figure}

\noindent
Developing pairs provide a useful means of topologizing the space $\mathsf{S}_{(G,X)}(M)$ of all $(G,X)$ structures on $M$.
To do so we need to understand better the construction of developing pairs from atlases to quantify the lack of uniqueness and the choices required in such a construction.
As noted in Observation \ref{Obs:CoveringStructure}, an atlas charts for a $(G,X)$ structure on $M$ pulls back to an atlas on the universal cover $\tilde{M}$.
This structure induces a $(G,X)$ immersion of $\tilde{M}$ into $X$.  
For the details of this construction see \cite{Goldman18}, Proposition 5.2. Here we provide a quick sketch.

\begin{proposition}
Let $M$ be a simply connected $(G,X)$ manifold.
Then there exists a $(G,X)$ map $f\colon M\to X$	, and furthermore $f$ is unique in the following sense: if $f'\colon M\to X$ is any other $(G,X)$ map then there is a $(G,X)$ automorphism $\phi$ of $M$ and a $g\in G$ such that $gf=f'\phi$. 
\end{proposition}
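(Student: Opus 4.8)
The plan is to establish existence by analytically continuing a single base chart over all of $M$, and to establish uniqueness by a rigidity argument resting on the unique-extension property of analytic actions recorded in Observation \ref{Obs:Unique_Extension}. The simple connectivity of $M$ enters only once, to guarantee that the continuation is path-independent; everywhere else the key mechanism is that an element of $G$ is determined by its restriction to any connected open set.

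For existence, I would fix a basepoint $p_0\in M$ and a chart $(U_0,\phi_0)$ containing it, and continue the germ of $\phi_0$ along paths. Given a path $\gamma$ from $p_0$ to a point $p$, I cover $\gamma$ by finitely many chart domains $U_0,\dots,U_n$ arranged so that consecutive domains meet $\gamma$ in a connected overlap. Since each transition $\phi_{k+1}\phi_k^{-1}$ is in $G$ on such an overlap, I can set $g_0=e$ and recursively choose $g_{k+1}\in G$ so that $g_{k+1}\phi_{k+1}$ agrees with $g_k\phi_k$ on $U_k\cap U_{k+1}$. The resulting locally $(G,X)$ map near $p$ has a well-defined germ, which I call the continuation of $\phi_0$ along $\gamma$, and I would provisionally define $f(p)$ as its value at $p$.

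The crux of the existence half is showing that this continuation depends only on the homotopy class of $\gamma$. Here Observation \ref{Obs:Unique_Extension} is decisive: if two chains of charts agree on an open set, the $G$-elements used to patch them agree on that open set and hence coincide, so a sufficiently small perturbation of $\gamma$ leaves the terminal germ unchanged; a standard subdivision-and-compactness argument then upgrades this to invariance under an arbitrary homotopy. Because $M$ is simply connected, every path from $p_0$ to $p$ is homotopic to every other, so the value at $p$ is unambiguous and $f$ is globally well defined. As $f$ coincides near each point with some $g_\alpha\phi_\alpha$ — a chart followed by an element of $G$ — it is a $(G,X)$ map. I expect this homotopy-invariance step to be the main obstacle, precisely because the ``in $G$'' condition is only pseudogroup-local and must be handled with care on the connected pieces of chart overlaps.

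For uniqueness, suppose $f'\colon M\to X$ is another $(G,X)$ map. On each connected chart domain $U_\alpha$ the comparison map $f'\circ(f|_{U_\alpha})^{-1}$ is a transition between two $(G,X)$ charts on $M$ and so lies in $G$; since $U_\alpha$ is connected there is a single $g_\alpha\in G$ with $f'|_{U_\alpha}=g_\alpha\cdot f|_{U_\alpha}$. On a connected overlap $U_\alpha\cap U_\beta$ both $g_\alpha$ and $g_\beta$ realize $f'$ as the same $G$-translate of $f$, hence they agree on an open set and, by Observation \ref{Obs:Unique_Extension}, are equal. Connectedness of $M$ then assembles these into one element $g\in G$ with $f'=gf$ everywhere. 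Taking the $(G,X)$ automorphism $\phi=\id_M$ gives $gf=f'=f'\phi$, which is the asserted relation.
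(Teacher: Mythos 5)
Your proposal is correct, and its existence half is essentially the paper's argument: analytic continuation of a single base chart along paths, with simple connectivity invoked exactly once to make the continuation path-independent, and the homotopy-invariance handled by a subdivision-and-compactness argument. Where you genuinely depart from the paper is in the uniqueness half. The paper's sketch only tracks the choices made \emph{inside} the construction: it observes that a different initial chart alters the continuation by a transition map, and concludes uniqueness up to post-composition by $G$ for maps arising from the construction. You instead compare $f$ against an \emph{arbitrary} $(G,X)$ map $f'$ directly, via the comparison maps $f'\circ(f|_{U_\alpha})^{-1}$, a single patched $g_\alpha$ per connected chart domain, and Observation \ref{Obs:Unique_Extension} to glue these into one global $g\in G$ over the connected manifold $M$. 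This buys you a strictly stronger conclusion than the statement demands --- $f'=gf$ on the nose, so the automorphism $\phi$ can be taken to be $\id_M$ --- and it closes a small logical gap in the sketch, which never rules out $(G,X)$ maps not produced by continuation of some chart. Two minor points of care that your write-up leaves implicit but that are easily supplied: shrink the $U_\alpha$ so that $f$ restricted to each is injective with connected image (legitimate since $f$ agrees locally with $g_\alpha\phi_\alpha$ for a chart $\phi_\alpha$, hence is a local diffeomorphism), and note that the agreement of $g_\alpha$ and $g_\beta$ takes place on the open set $f(U_\alpha\cap U_\beta)\subset X$, which is exactly the setting in which Observation \ref{Obs:Unique_Extension} applies.
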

\begin{proof}[Sketch]
Choose a basepoint $x_0\in M$ and a chart $U_0$ containing it.  
We then `analytically continue' this base chart $U_0$ to a $(G,X)$ map defined on all of $M$.
For $x\in M$, we define $f(x)$ by choosing a path $\gamma\colon I\to X$ with $\gamma(0)=x_0$, $\gamma(1)=x$ and sequence of charts $U_1, U_2, \ldots U_n$ covering the image $\gamma(I)$ with $U_i\cap U_{i+1}\neq \varnothing$.
Then the chart $(U_1,\phi_1)$ may be adjusted by the transition map $g_{01}\in G$ such that $g_{01}\phi_1=\phi_0$ on $U_0\cap U_1$ and thus $\phi_0\cup g_{01}\phi_1$ is well-defined on the union $U_0\cup U_1$.
Continuing this way, we adjust the charts $U_i$ by the corresponding transition maps $g_{i-1,i}\in G$ to extend the domain of $\phi_0$ to the union $\cup_{j} U_j$.
Upon reaching $i=n$, the original chart $U_0$ has been extended to the domain $\cup_{i=1}^nU_i$ containing $x$; we define $f(x)$ to be the image of $x$ under this extended chart.

This definition of $f(x)$ requires many choices, but turns out to be independent of all choices other than the original chart $U_0$.
To see this it suffices to prove that the definition of $f(x)$ is invariant under refinement of the covering of $\gamma(I)$ - and thus under choice of cover alltogether as any two covers in a maximal atlas have a common refinement.
We then need to see that the definition of $f(x)$ is independent of the choice of path $\gamma$.
As $M$ is simply connected any two paths from $x_0$ to $x$ are homotopic, and its easy to show that the definition of $f(x)$ is invariant under small homotopies, thus all homotopies of $\gamma$.
Choosing a different initial chart $U_0'$ alters the initial chart, and hence the entire construction, by the transition map $g'\in G$ for $U_0\cap U'_0\ni x_0$.
Thus the developing map $f\colon M\to X$ is uniquely defined only up to post-composition by automorphisms in $G$.
\end{proof}

\noindent
In the context of interest this provides a $(G,X)$ map from the universal cover $\tilde{M}$ of any $(G,X)$ manifold $M$ into $X$, globalizing the atlas of coordinate charts.
This is the main ingredient in the \emph{development theorem} allowing us to study geometric structures strictly from the perspective of developing pairs.

\begin{figure}
\centering\includegraphics[width=0.7\textwidth]{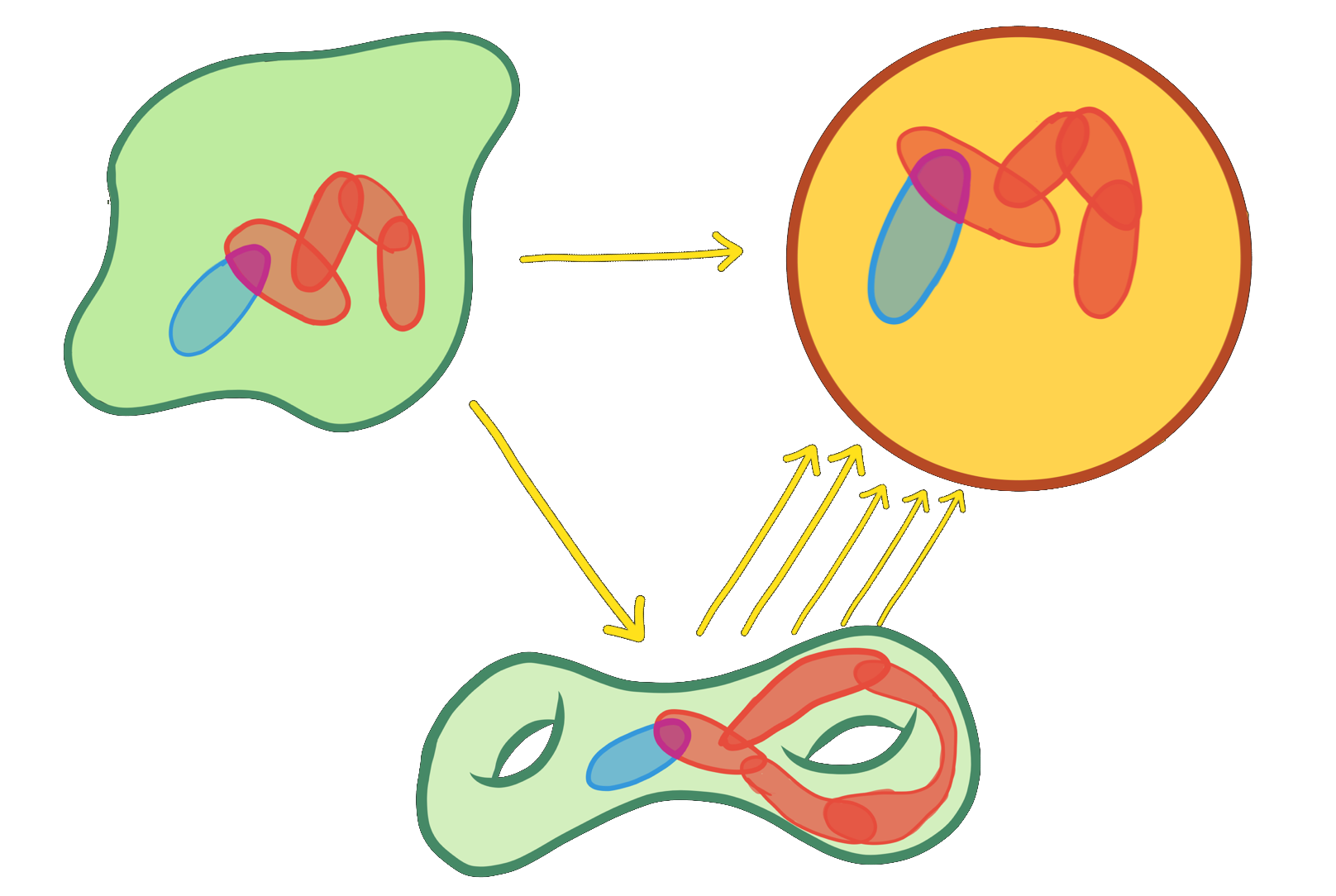}
\caption{Creating the developing map via analytic continuation of a chart.}	
\end{figure}

\begin{theorem}[Development Theorem]
Let $M$ be a $(G,X)$ manifold with universal covering space $\pi\colon\tilde{M}\to M$ and deck group $\pi_1(M)<\Aut(\tilde{M}\to M)$.
Then there exists a \emph{developing pair}
$(f,\rho)$ consisting of a $(G,X)$ map $f\colon \tilde{M}\to X$ and a homomorphism $\rho\colon \pi_1(M)\to G$ such that for each $\gamma\in\pi_1(M)$ and each $m\in\tilde{M}$, $\rho(\gamma).f(m)=f(\gamma.m)$.
Furthermore if $(f',\rho')$ is another such pair, then there is some $g\in G$  such that for all $\gamma\in\pi_1(M)$,
$f'=g\circ f$ and $\rho'(\gamma)=\mathsf{Inn}(g)\circ\rho(\gamma)$.

\begin{center}
\begin{tikzcd}
\tilde{M}\ar[d,"\gamma"]\ar[r,"f"]&X\ar[r,"g"]\ar[d,"\rho(\gamma)"]&X\ar[d,"\rho'(\gamma)"]\\
\tilde{M}\ar[r,"f"]&X\ar[r,"g"]&X
\end{tikzcd}
\end{center}
\end{theorem}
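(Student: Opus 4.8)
The plan is to reduce everything to two facts already in hand: that the $(G,X)$ structure pulls back to the universal cover (Observation \ref{Obs:CoveringStructure}), and that a simply connected $(G,X)$ manifold carries a developing map, unique up to post-composition by $G$ (the preceding proposition). First I would pull the structure back along $\pi\colon\tilde M\to M$, so that $\tilde M$ becomes a connected, simply connected $(G,X)$ manifold and $\pi$ is a $(G,X)$ map. The preceding proposition then produces a $(G,X)$ immersion $f\colon\tilde M\to X$, which I take to be the developing map.

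The central step is a rigidity lemma: any two $(G,X)$ immersions $f_1,f_2\colon\tilde M\to X$ of a connected $(G,X)$ manifold differ by a unique global $g\in G$, that is $f_2=g\circ f_1$. I would prove this by a clopen argument. Near any point both maps are charts, so on a small connected neighborhood the transition $f_2\circ f_1\inv$ agrees with some $g\in G$; fixing the element $g_0$ valid near a basepoint, the set where $f_2=g_0\circ f_1$ holds locally is open by construction, and closed because at a limit point the local comparison element must coincide with $g_0$ on an open set and hence equal it by Observation \ref{Obs:Unique_Extension}. Connectedness of $\tilde M$ then forces $f_2=g_0\circ f_1$ everywhere, and uniqueness of $g_0$ is again Observation \ref{Obs:Unique_Extension}.

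With the lemma available the holonomy is immediate. Each deck transformation $\gamma$ satisfies $\pi\circ\gamma=\pi$ and is therefore a $(G,X)$ automorphism of $\tilde M$, so $f\circ\gamma$ is another $(G,X)$ immersion; the lemma yields a unique $\rho(\gamma)\in G$ with $f(\gamma.m)=\rho(\gamma).f(m)$. To see that $\rho$ is a homomorphism I would expand $f(\gamma_1\gamma_2.m)$ in two ways, obtaining $\rho(\gamma_1\gamma_2).f(m)=\rho(\gamma_1)\rho(\gamma_2).f(m)$ for all $m$; since $f$ is an immersion its image is open, so Observation \ref{Obs:Unique_Extension} gives $\rho(\gamma_1\gamma_2)=\rho(\gamma_1)\rho(\gamma_2)$.

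For the uniqueness clause, given a second developing pair $(f',\rho')$ the lemma supplies $g\in G$ with $f'=g\circ f$. Substituting into the equivariance relations and comparing on the open image of $f'$ via Observation \ref{Obs:Unique_Extension} yields $\rho'(\gamma)=g\,\rho(\gamma)\,g\inv=\mathsf{Inn}(g)\circ\rho(\gamma)$, completing the claim. The only genuinely delicate point is the rigidity lemma, specifically the closedness half of the clopen argument, where analyticity of the $G$-action (Observation \ref{Obs:Unique_Extension}) is exactly what upgrades agreement on an open set to equality of group elements; the remainder is bookkeeping with equivariance.
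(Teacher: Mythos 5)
Your proposal is correct and follows essentially the route the paper intends: the paper proves existence and $G$-uniqueness of the developing map for simply connected $(G,X)$ manifolds (via analytic continuation) and leaves the equivariance and uniqueness bookkeeping for the theorem implicit, which is exactly what you supply. Your clopen rigidity lemma is just a careful formalization of the same analyticity-based uniqueness (Observation \ref{Obs:Unique_Extension}) that the paper's sketch invokes when it concludes the developing map is unique up to post-composition by $G$, so the two arguments coincide in substance.
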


\noindent
Thus, the $G$-orbits of developing pairs uniquely determine $(G,X)$ structures and we may use this description to provide a natural topology to the space $\mathcal{S}_{(G,X)}(M)$.

\begin{corollary}
The space $\mathcal{S}_{(G,X)}(M)$ of $(G,X)$ structures on a manifold $M$ is a topologized as the quotient of the space of developing pairs $\mathcal{S}_{(G,X)}(M)=\mathsf{Dev}_{(G,X)}(M)/G$ by the $G$ action $g.(f,\rho)=(g\circ f, \mathsf{Inn}(g)\circ \rho)$.
\end{corollary}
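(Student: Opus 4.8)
The plan is to use the Development Theorem to exhibit a bijection between $\mathcal{S}_{(G,X)}(M)$ and the orbit space $\mathsf{Dev}_{(G,X)}(M)/G$, and then to transport the topology already placed on $\mathsf{Dev}_{(G,X)}(M)$ (smooth uniform convergence of all partial derivatives on compact sets) across this bijection. First I would set up the forward map. Given a $(G,X)$ structure on $M$, the Development Theorem produces a developing pair $(f,\rho)$, and its uniqueness clause says any two such pairs differ by the action $g.(f,\rho)=(g\circ f,\mathsf{Inn}(g)\circ\rho)$. Thus each structure determines a well-defined point of $\mathsf{Dev}_{(G,X)}(M)/G$.

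Next I would construct the inverse on the level of pairs. Given $(f,\rho)\in\mathsf{Dev}_{(G,X)}(M)$, the map $f\colon\tilde{M}\to X$ is a local diffeomorphism, so restricting it to neighborhoods on which it is injective yields an atlas of $X$-valued charts on $\tilde{M}$ whose transition maps are identities (restrictions of $\id_X\in G$); this is a $(G,X)$ atlas. The equivariance $f(\gamma.m)=\rho(\gamma).f(m)$ shows that in these charts each deck transformation $\gamma\in\pi_1(M)$ acts as $\rho(\gamma)\in G$, hence is a $(G,X)$ map. Since $\pi_1(M)$ acts freely and properly discontinuously by $(G,X)$ maps, the earlier observation on quotients by proper free $(G,X)$ actions endows $M=\tilde{M}/\pi_1(M)$ with a $(G,X)$ structure.

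I would then verify this descends to orbits and is a two-sided inverse. Replacing $(f,\rho)$ by $g.(f,\rho)$ post-composes every chart by $g\in G$, leaving the pulled-back atlas on $\tilde{M}$, and hence the descended structure on $M$, unchanged; so we obtain a well-defined map $\mathsf{Dev}_{(G,X)}(M)/G\to\mathcal{S}_{(G,X)}(M)$. Starting from a structure, applying the Development Theorem, and then running this construction recovers the original structure, since the charts rebuilt from $f$ are precisely the restrictions of the charts that defined the developing map. Conversely, starting from a pair $(f,\rho)$ and building the structure, the map $f$ is itself a developing map for that structure, so by the uniqueness clause of the Development Theorem the extracted pair lies in the $G$-orbit of $(f,\rho)$. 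Hence the two assignments are mutually inverse bijections.

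Finally, for the topology, $\mathsf{Dev}_{(G,X)}(M)$ carries the subspace topology inherited from $C^\infty(\tilde{M},X)\times\Hom(\pi_1(M),G)$, the $G$-action $g.(f,\rho)=(g\circ f,\mathsf{Inn}(g)\circ\rho)$ is continuous because post-composition and inner automorphism are, and one declares $\mathcal{S}_{(G,X)}(M)$ to carry the quotient topology via the bijection above, which is exactly the assertion of the corollary. The only genuinely delicate point is the inverse construction: checking that an \emph{arbitrary} developing pair descends to a structure and that the two constructions cancel. This is where the uniqueness half of the Development Theorem does the real work, and where one must be careful with the pseudogroup convention for transition maps on disconnected overlaps; everything else is routine bookkeeping with charts.
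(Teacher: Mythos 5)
Your proof is correct and takes essentially the same route as the paper: the corollary is deduced directly from the uniqueness clause of the Development Theorem, identifying structures with $G$-orbits of developing pairs and then declaring the quotient topology, exactly as you do. The only difference is that you carefully spell out the inverse construction (that an arbitrary equivariant immersion in $\mathsf{Dev}_{(G,X)}(M)$ pulls back to a $(G,X)$ atlas on $\tilde{M}$ and descends through the free, properly discontinuous deck action to a structure on $M$), a point the paper leaves implicit.
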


\noindent
This perspective has some immediate consequences, such as the following.

\begin{observation}
If $M$ is a closed manifold with finite fundamental group, then $M$ admits no $(G,X)$ structures when the underlying space $X$ is noncompact.	
\end{observation}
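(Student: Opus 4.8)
The plan is to argue by contradiction, using the Development Theorem together with the compactness forced by a finite fundamental group. First I would suppose that $M$ carries a $(G,X)$ structure. Since $\pi_1(M)$ is finite, the universal covering $\pi\colon\tilde{M}\to M$ is a finite-sheeted cover; as $M$ is closed, and hence compact, $\tilde{M}$ is compact as well. By the Development Theorem there is then a developing pair $(f,\rho)$ with $f\colon\tilde{M}\to X$ a $(G,X)$ map, which (as recorded earlier) is necessarily a local diffeomorphism and in particular an open map.

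Next I would examine the image $f(\tilde{M})\subset X$. Being the continuous image of the compact space $\tilde{M}$, it is compact, hence closed in the Hausdorff space $X$. Being the image under an open map of the open set $\tilde{M}$, it is also open, and it is clearly nonempty. Since $X$ is connected, the only nonempty subset that is simultaneously open and closed is $X$ itself, so $f(\tilde{M})=X$. Therefore $X$ is compact, contradicting the hypothesis that the underlying space $X$ is noncompact.

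The argument is short, and the only inputs I would flag explicitly are two: that $\tilde{M}$ is compact (which uses both that $M$ is closed and that $\pi_1(M)$ is finite, so that the cover has finitely many sheets), and that the developing map is open (which follows from the fact, noted above, that every $(G,X)$ map is a local diffeomorphism). The one hypothesis hiding in the background is the connectedness of $X$, but since a geometry $(G,X)$ is a homogeneous space on which $G$ acts transitively, $X$ is connected and no extra work is needed. I expect no real obstacle here: the whole statement is essentially a repackaging of the observation that a compact manifold cannot map by a local diffeomorphism onto a proper open subset of a connected noncompact space.
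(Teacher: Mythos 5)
Your proof is correct and follows essentially the same route as the paper's: compactness of $\tilde{M}$ from the finite cover, openness of the developing image since every $(G,X)$ map is a local diffeomorphism, and connectedness of $X$ forcing $f(\tilde{M})=X$, contradicting noncompactness. Your explicit remark that connectedness of $X$ is guaranteed by transitivity of the $G$-action is a small clarification the paper leaves implicit, but the argument is otherwise identical.
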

\begin{proof}
This follows as the universal cover $\tilde{M}$ is compact by the finiteness of $\pi_1(M)$ and thus any continuous image $f(\tilde{M})\subset X$ is compact.
But were $f$ the developing map of a $(G,X)$ structure it is a local diffeomorphism so $f(\tilde{M})$ is open, and thus equal to $X$ by connectedness.
\end{proof}

\begin{observation}
If $X$ is compact and simply connected then every $(G,X)$ manifold is $(G,X)$ isomorphic to a quotient of $X$ by a finite subgroup of $G$.	
\end{observation}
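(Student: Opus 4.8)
The plan is to promote the developing pair into a global identification of $M$ with a quotient of $X$. Throughout I take $M$ to be closed, which is the natural hypothesis here: an open subset $U\subsetneq X$ is itself a $(G,X)$-manifold but is not a finite quotient of $X$, so some compactness on $M$ is needed, exactly as in the preceding observation. First I would invoke the Development Theorem to fix a developing pair $(f,\rho)$, with $f\colon\tilde M\to X$ a $(G,X)$-immersion, hence a local diffeomorphism, and $\rho\colon\pi_1(M)\to G$ the holonomy satisfying $f(\gamma.m)=\rho(\gamma).f(m)$.

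The heart of the argument is to show that $f$ is in fact a \emph{covering map}; since $X$ is simply connected this forces $f$ to be a one-sheeted cover, i.e.\ a diffeomorphism $\tilde M\biject X$. To reach this I would pass through completeness. Equip $X$ with a $G$-invariant Riemannian metric $g$, available via the Proposition of Section~\ref{sec:Prop_Klein}, and automatically complete since $X$ is compact. Pulling $g$ back along the local diffeomorphism $f$ gives a metric $\tilde g=f^\star g$ on $\tilde M$ for which $f$ is a local isometry; the computation $\gamma^\star\tilde g=(\rho(\gamma)f)^\star g=f^\star g=\tilde g$ shows $\tilde g$ is invariant under the deck action of $\pi_1(M)$, so it descends to a metric $g_M$ on the closed manifold $M$. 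Then $(M,g_M)$ is compact, hence complete, and its Riemannian universal cover $(\tilde M,\tilde g)$ is complete as well. A local isometry out of a complete Riemannian manifold has the geodesic path-lifting property and is therefore a covering map, which is precisely the conclusion I want for $f$.

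With $f\colon\tilde M\biject X$ a diffeomorphism in hand, the remaining steps are formal. The equivariance $f\circ\gamma=\rho(\gamma)\circ f$ shows $f$ conjugates the free deck action of $\pi_1(M)$ on $\tilde M$ to the action of $\Gamma:=\rho(\pi_1(M))$ on $X$ through $G$; since $f$ is injective and the deck action is free, $\rho$ is injective and $\Gamma$ acts freely and properly discontinuously on $X$. Transporting the quotient along $f$ then yields a $(G,X)$-isomorphism $M\cong\tilde M/\pi_1(M)\cong X/\Gamma$. Finally, a group acting freely and properly discontinuously on the \emph{compact} space $X$ must be finite, so $\Gamma$ is the desired finite subgroup of $G$.

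I expect the completeness-to-covering step to be the main obstacle, and it is also where the hypotheses genuinely do their work: compactness of $X$ is what upgrades an invariant metric to a complete one (and, at the end, forces $\Gamma$ finite), while simple connectivity is what collapses the covering $f$ down to an isomorphism. The one point requiring care — and the reason a metric hypothesis is implicitly lurking — is the \emph{existence} of a $G$-invariant metric on $X$; this is guaranteed when the point stabilizer acts with relatively compact image on the tangent space, which I would either take as part of the ambient convention for the model geometries at hand or verify directly in the cases of interest.
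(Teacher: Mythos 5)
Your argument is correct under the two hypotheses you flag, and it takes a genuinely different route from the paper's. The paper's proof disposes of the key step in one line: the developing map is a local diffeomorphism into the closed manifold $X$, ``which is then necessarily a covering map,'' after which its endgame agrees with yours (simple connectivity forces a one-sheeted cover, the developing diffeomorphism conjugates the deck action to the holonomy action, and compactness of $X$ forces $\Gamma=\rho(\pi_1(M))$ to be finite). Your detour through an invariant metric --- pull $g$ back along $f$, descend it to the closed manifold $M$, get completeness from compactness via Hopf--Rinow, and conclude that $f$ is a covering because it is a local isometry out of a complete Riemannian manifold --- is essentially the proof of the paper's later Proposition that compact $(G,X)$-manifolds have covering developing maps when $X$ carries a $G$-invariant metric, fused here with the quotient endgame. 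What your extra care buys is substantial: the paper's one-line claim is not valid in the stated generality, since a local diffeomorphism into a compact manifold need not be a covering map. Concretely, a closed hyperbolic surface carries an $(\SL(3;\R),\S^2)$-structure (lift the Klein model to positive projective space) whose developing map is a diffeomorphism onto an open disk in the compact, simply connected $\S^2$; that surface has infinite fundamental group and is no finite quotient of $\S^2$. So both of your caveats are necessary rather than cosmetic: closedness of $M$ rules out open subsets of $X$, and the existence of a $G$-invariant metric is a genuine additional hypothesis that compactness of $X$ does not supply, since point stabilizers can be noncompact (again $(\SL(n+1;\R),\S^n)$). The paper's version is shorter and nominally more general, but as written that generality is illusory; yours is correct where it applies --- in particular for $(\SO(n+1),\S^n)$, surely the intended case --- and makes transparent exactly where each hypothesis does its work.
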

\begin{proof}
A developing map $f\colon\tilde{M}\to X$ of a $(G,X)$ structure on $M$ is a local diffeomorphism into the closed manifold $X$, which is then necessarily a covering map.
As $X$ is simply connected this must be a diffeomorphism, so the holonomy is faithful.
Then
 $M=\tilde{M}/\pi_1(M)\cong f(\tilde{M})/\rho(\pi_1(M))=X/\rho(\pi_1(M))$, realizing $M$ as a quotient of $X$.
The compactness of $X$ implies that $\rho(\pi_1(M))$, and hence $\pi_1(M)$, is finite.
\end{proof}

\section{Completeness}
\label{sec:Completeness}
\index{Geometric Structures!Completeness}

Geometric structures which arise as quotients of the underlying space $X$ have particularly nice algebraic and geometric properties.
In this section we define \emph{completeness}, show that complete structures are determined by their holonomy, and relate this notion of completeness to the familiar metric notion in cases where $(G,X)$ admits an invariant Riemannian metric.

\begin{definition}
A $(G,X)$ structure on $M$ is \emph{complete} if the developing map $f\colon \tilde{M}\to X$ is a covering map.
\end{definition}

\noindent
We begin by noting the two most important properties of complete structures.
When the underlying space $X$ of the geometry is simply connected, the developing map of a complete structure provides a diffeomorphism $\tilde{M}\to X$, which we often use to identify the two spaces.
The action of $\pi_1(M)$ by deck transformations is conjugate by the developing diffeomorphism to the holonomy action on $X$.

\begin{proposition}[Complete Structures are Quotients]
A complete $(G,X)$ structure on a manifold $M$ is $(G,X)$ isomorphic to a quotient $X/\Gamma$ for $\Gamma$ a discrete subgroup of $G$ acting freely and properly discontinuously on $X$, when $X$ is simply connected.
\end{proposition}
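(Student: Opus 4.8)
The plan is to upgrade the developing map to a global diffeomorphism and then transport the deck action of $\pi_1(M)$ across it to a subgroup $\Gamma<G$ acting on $X$ itself. First I would invoke the Development Theorem to fix a developing pair $(f,\rho)$, recalling that $f\colon\tilde M\to X$ is a $(G,X)$ map and hence a local diffeomorphism. Completeness means $f$ is a covering map; since $\tilde M$ is connected and $X$ is simply connected, any such covering is single-sheeted, i.e.\ a homeomorphism, and being simultaneously a local diffeomorphism it is a diffeomorphism $f\colon\tilde M\biject X$. From here on I would use $f$ to identify $\tilde M$ with $X$.

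Next I would exploit the equivariance $f(\gamma.m)=\rho(\gamma).f(m)$ guaranteed by the Development Theorem. Setting $\Gamma=\rho(\pi_1(M))\subset G$, this relation reads $f\circ\gamma=\rho(\gamma)\circ f$, so conjugation by the diffeomorphism $f$ carries the deck action of $\pi_1(M)$ on $\tilde M$ onto the action of $\Gamma$ on $X$. Injectivity of $\rho$ is then immediate: if $\rho(\gamma)=e$ then $f\circ\gamma=f$, and since $f$ is a bijection $\gamma$ fixes every point of $\tilde M$, forcing $\gamma=\id$ because the deck action is free. Thus $\Gamma\cong\pi_1(M)$, and the $\Gamma$-action on $X$ inherits freeness from the deck action through the conjugating diffeomorphism $f$.

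The one step requiring genuine care is promoting the statement that $\Gamma$ acts properly discontinuously on $X$ to the statement that $\Gamma$ is a discrete subgroup of $G$. Proper discontinuity of the deck action is standard and, being a purely topological property of a group action, is preserved under conjugation by the homeomorphism $f$; hence $\Gamma$ acts properly discontinuously on $X$. To extract discreteness I would argue by contradiction: were there distinct $\gamma_n\in\Gamma$ with $\gamma_n\to e$ in $G$, then for any compact $K\subset X$ with nonempty interior, continuity of the action map $G\times X\to X$ (the set $\{\gamma_n\}\cup\{e\}$ being compact in $G$) gives a point $x$ in the interior of $K$ with $\gamma_n x\to x$, so $\gamma_n x\in \gamma_n K\cap K$ for all large $n$. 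This produces infinitely many elements $\gamma$ with $\gamma K\cap K\neq\varnothing$, contradicting proper discontinuity. This is the part I expect to be the main obstacle, since it is the only place where the group structure of $G$ and its action on $X$ — rather than mere covering-space bookkeeping — really enters.

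Finally I would assemble the pieces. Since $\Gamma<G$ acts on $X$ by elements of $G$, hence by $(G,X)$-automorphisms, freely and properly discontinuously, the quotient $X/\Gamma$ carries the canonical $(G,X)$ structure supplied by the observation on quotients of $(G,X)$ manifolds. The diffeomorphism $f$ intertwines the $\pi_1(M)$-action on $\tilde M$ with the $\Gamma$-action on $X$, so it descends to a diffeomorphism $\bar f\colon M=\tilde M/\pi_1(M)\to X/\Gamma$; because $f$ is a $(G,X)$ map and both quotient structures are pushed forward from it, $\bar f$ is a $(G,X)$ isomorphism, completing the identification $M\cong X/\Gamma$.
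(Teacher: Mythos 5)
Your proof is correct and follows essentially the same route as the paper's: completeness plus simple connectivity makes the developing map a diffeomorphism, conjugation by $f$ transports the deck action to the holonomy action (giving faithfulness, freeness, and proper discontinuity of $\Gamma=\rho(\pi_1(M))$), and $f$ then descends to a $(G,X)$ isomorphism $M\to X/\Gamma$. The only difference is that you spell out the deduction of discreteness of $\Gamma<G$ from proper discontinuity, a step the paper simply asserts; your argument for it is sound.
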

\begin{proof}
If $(f,\rho)$ is a developing pair for a complete $(G,X)$ structure on $M$, then $f\colon\tilde{M}\to X$ is a covering map by definition, and as $X$ is simply connected this is a $1$-sheeted cover, so $f$ is a diffeomorphism.
The holonomy homomorphism is conjugate to the action of the deck group $\pi_1(M)$ on $\tilde{M}$ by the developing diffeomorphism $\rho(\gamma).x=f(\gamma.f\inv(x))$; thus $\rho$ is faithful and acts freely and properly discontinuously on $X$, with discrete image $\Gamma<G$.
Pulling back via $f$ equips $\tilde{M}$ with a $(G,X)$ structure for which $f$ is a $(G,X)$ isomorphism intertwining the covering action with the holonomy action.
Thus $f$ descends to a $(G,X)$ isomorphism on the respective quotients $M=\tilde{M}/\pi_1(M)$ and $X/\Gamma$.

\begin{center}
\begin{tikzcd}
\widetilde{M}\ar[d]\ar[r,"f"]& X\ar[d]\\
\widetilde{M}/\pi_1(M)\ar[r,"\bar{f}"]& X/\Gamma
\end{tikzcd}	
\end{center}

\end{proof}

\noindent 
Every $(G,X)$ geometry is locally isomorphic to its universal cover $(\tilde{G},\tilde{X})$, %and every $(G,X)$ structure induces a $(\tilde{G},\tilde{X})$ structure 
so in the following we assume that the underlying space $X$ is simply connected when convenient.
When $X$ is contractible, complete $(G,X)$ manifolds have universal cover diffeomorphic to $X$ and thus are classifying spaces for their fundamental groups.
In fact, as noted by Thurston in \cite{Thurston80}, the holonomy of a complete structure is enough to reproduce the structure itself.

\begin{proposition}[Holonomy Determines Complete Structures]
Let $(G,X)$ be a geometry with contractible underlying space $X$, and $M$ a complete $(G,X)$ manifold with holonomy $\rho$.
Then any other $(G,X)$ manifold with holonomy $\rho$, is $(G,X)$ isomorphic to $M$.
\end{proposition}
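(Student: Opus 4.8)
The plan is to use completeness of $M$ to pin down its developing map exactly, and then exploit the shared holonomy to manufacture a comparison map $N\to M$ which I will argue is a $(G,X)$-isomorphism. Since $X$ is contractible it is in particular simply connected, so I may assume $X$ simply connected throughout. By the Development Theorem fix developing pairs $(f,\rho)$ for $M$ and $(f',\rho)$ for $N$ realizing the common holonomy; here ``same holonomy $\rho$'' means $\rho$ is the same representation of the same group, so $\pi_1(N)=\pi_1(M)=:\Gamma$. Because $M$ is complete, $f\colon\tilde{M}\to X$ is a covering map by the definition of completeness, and as $X$ is simply connected $f$ is in fact a diffeomorphism. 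This identifies $\tilde{M}\cong X$ and (by the proposition that complete structures are quotients) realizes $M\cong X/\rho(\Gamma)$ with $\rho$ faithful and $\rho(\Gamma)$ acting freely and properly discontinuously on $X$.

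Next I would build the comparison map. The composite $F:=f\inv\circ f'\colon\tilde{N}\to\tilde{M}$ is a local diffeomorphism, and it is equivariant with respect to the identity automorphism of $\Gamma$: the relations $f'(\gamma.\tilde{n})=\rho(\gamma).f'(\tilde{n})$ and $f(\gamma.\tilde{m})=\rho(\gamma).f(\tilde{m})$ transport the deck action on $\tilde{M}$ through $f$ so that $F(\gamma.\tilde{n})=\gamma.F(\tilde{n})$. Consequently $F$ descends to a map $\Phi\colon N=\tilde{N}/\Gamma\to\tilde{M}/\Gamma=M$, and unwinding the charts shows $\Phi$ is a $(G,X)$-map inducing the identity isomorphism on $\pi_1$. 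At this point the entire problem reduces to a single assertion: that the local diffeomorphism $\Phi$ (equivalently the equivariant local diffeomorphism $F\colon\tilde{N}\to X$) is bijective.

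The main obstacle is exactly this last step, upgrading $\Phi$ from a local diffeomorphism to a diffeomorphism, and it is genuinely the delicate point precisely because completeness of $N$ is \emph{not} assumed. The route I would take is to show $F$ is a covering map: since $X$ is simply connected, any covering $\tilde{N}\to X$ is automatically a diffeomorphism, and then $\Phi$ is a diffeomorphism because it already induces an isomorphism on $\pi_1$. To establish the covering property I would try to prove $F$ is proper, using that the deck action of $\Gamma$ on $\tilde{N}$ and the action $\rho(\Gamma)$ on $X$ are both free and properly discontinuous (the latter supplied by completeness of $M$) and that $F$ intertwines them over a compact fundamental domain for $\rho(\Gamma)$. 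A proper local diffeomorphism onto the connected, locally compact space $X$ is a covering, which closes the argument. I expect this properness verification to be the crux: it is exactly here that one must prevent the developing map of $N$ from folding onto a proper open subset of $X$, and it is the free proper discontinuity furnished by completeness of $M$ that provides the structural control. This properness is immediate when $N$ is compact, so I would isolate the covering claim as a lemma and record that the essential hypothesis it rests on is completeness of $M$ together with the resulting rigidity of $X$, rather than any assumption on $N$.
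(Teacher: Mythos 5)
You should first be aware that the paper offers no proof of this proposition at all: it is stated as a quotation of Thurston \cite{Thurston80}, so your proposal is being measured against the standard argument that citation points to rather than against anything in the text. Your first two paragraphs reproduce that standard argument correctly: completeness of $M$ plus simple connectivity of $X$ makes $f\colon\tilde{M}\to X$ a diffeomorphism, $F=f\inv\circ f'$ is a $\rho$-equivariant local diffeomorphism, and it descends to a $(G,X)$-map $\Phi\colon N\to M$ inducing the identity on $\Gamma$. Up to there everything is fine.

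The gap is in the final step, and it is not merely delicate as you suspect: it is unprovable, because the proposition as literally stated (with no hypothesis on $N$) is false. Take $G=\Isom(\E^2)$, $X=\E^2$, $\gamma$ a nontrivial translation, and $M=\E^2/\langle\gamma\rangle$ the complete cylinder with holonomy $\rho\colon\Z\to G$. Now let $S=\R\times(0,1)$ be a $\gamma$-invariant open strip and $N=S/\langle\gamma\rangle$. Then $N$ is a $(G,X)$ manifold whose developing map is the inclusion $S\inject\E^2$; it is diffeomorphic to the cylinder, has the same $\pi_1=\Z$ and literally the same holonomy $\rho$, yet it is an incomplete structure not isomorphic to $M$. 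In this example your map $F$ is exactly the inclusion $S\to\E^2$: an equivariant local diffeomorphism that is neither proper nor surjective. This shows that the free, properly discontinuous action of $\rho(\Gamma)$ on $X$ furnished by completeness of $M$ gives no control whatsoever over the image of the developing map of $N$, which can be a proper $\rho$-invariant open subset of $X$; your hoped-for properness lemma has a genuine counterexample rather than a missing argument. A secondary slip in the same sketch: a complete $M$ need not admit a compact fundamental domain for $\rho(\Gamma)$ (cusped finite-volume hyperbolic manifolds already fail this), so that ingredient is unavailable even before the counterexample intervenes. The statement must be read, as Thurston proves it and as the paper actually uses it (for instance in the Heisenberg chapter, where completeness of the structures being compared is established before invoking holonomy rigidity), with the second structure also assumed complete --- in which case $f'$ is likewise a diffeomorphism and your $F$ is already an equivariant diffeomorphism, leaving nothing to prove --- or alternatively with $N$ closed, in which case your properness plan does go through verbatim, since a local diffeomorphism out of a compact manifold is automatically proper, hence a covering onto $M$, and triviality on $\pi_1$ makes it one-sheeted.
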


\noindent 
We now relate this notion of completeness to the more familiar metric notion from Riemannian geometry via the Hopf-Rinow theorem.

\begin{theorem}[Hopf-Rinow]
Let $(M,g)$ be a connected Riemannian manifold.  Then the following statements are equivalent:
\begin{itemize}
\item Closed and bounded subsets of $M$ are compact.
\item $M$ is complete as a metric space.
\item $M$ is geodesically complete. That is, for each $p\in M$ the exponential map $\exp_p\colon T_p M\to M$ is defined on the entire tangent space.	
\end{itemize}
	
\end{theorem}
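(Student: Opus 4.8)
The plan is to establish the three stated conditions as equivalent by proving a cycle of implications, with the technical heart being a separate lemma guaranteeing the existence of minimizing geodesics. The cleanest route runs (geodesic completeness) $\Rightarrow$ (Heine--Borel) $\Rightarrow$ (metric completeness) $\Rightarrow$ (geodesic completeness), so that all three become equivalent; the first arrow is where essentially all the work lies.

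First I would isolate the key lemma: if $p \in M$ is a point for which $\exp_p$ is defined on all of $T_pM$, then every $q \in M$ is joined to $p$ by a geodesic whose length realizes $d(p,q)$. To prove it, set $r = d(p,q)$ and pick $\delta > 0$ small enough that $\exp_p$ restricts to a diffeomorphism on the ball of radius $\delta$ and the geodesic sphere $S_\delta = \exp_p(\{|v| = \delta\})$ is an embedded sphere. By compactness of $S_\delta$, choose $x_0 = \exp_p(\delta v)$ (with $|v|=1$) minimizing the distance to $q$ among points of $S_\delta$, and let $\gamma(t) = \exp_p(tv)$ be the resulting unit-speed geodesic, defined for all $t$ by hypothesis. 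The goal is to show $\gamma(r) = q$. I would consider the set $A = \{ t \in [0,r] : d(\gamma(t), q) = r - t \}$ and show it is nonempty, closed, and has supremum $r$ which is attained. Nonemptiness and the initial value $d(x_0, q) = r - \delta$ come from the triangle inequality together with the minimizing choice of $x_0$ on $S_\delta$; closedness is immediate by continuity of $d$ and $\gamma$. The crux is the \emph{opening up} step: if $t_0 = \sup A < r$, one applies the same geodesic-sphere argument at the point $\gamma(t_0)$ to a small sphere of radius $\delta'$, uses the triangle inequality to locate a nearest point on it to $q$, and invokes local minimization of short geodesics to conclude that this nearest point lies on the continuation of $\gamma$, pushing $t_0$ forward to $t_0 + \delta' \in A$ --- a contradiction. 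Hence $\sup A = r$ and $\gamma$ is the desired minimizing geodesic.

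With the lemma in hand, the implications close quickly. Assuming geodesic completeness, fix any $p$; the lemma shows the closed metric ball $\overline{B}(p,R)$ equals $\exp_p(\overline{B}(0,R))$, the continuous image of a compact set, hence compact, and any closed bounded set sits inside such a ball, giving Heine--Borel. Heine--Borel implies metric completeness because a Cauchy sequence is bounded, so it lies in a compact set and therefore has a convergent subsequence, forcing convergence. Finally, for metric completeness $\Rightarrow$ geodesic completeness, suppose a unit-speed geodesic $\gamma$ is defined on a maximal interval $[0,b)$ with $b < \infty$; for $t_n \to b$ the points $\gamma(t_n)$ satisfy $d(\gamma(t_n),\gamma(t_m)) \le |t_n - t_m|$, so they form a Cauchy sequence converging to some $x \in M$, and a uniform lower bound on the existence time of geodesics with initial data near $(x, \gamma'(t_n))$ --- obtained from the smooth dependence of the geodesic ODE on initial conditions over a compact neighborhood of $x$ --- lets one extend $\gamma$ beyond $b$, contradicting maximality.

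The main obstacle is the \emph{opening up} step in the key lemma: verifying that the supremum $t_0$ of $A$ can always be advanced when $t_0 < r$. This requires combining the triangle inequality, compactness of small geodesic spheres, and the fact that sufficiently short geodesics are distance-minimizing (a consequence of the Gauss lemma and the local diffeomorphism property of $\exp$), together with the subtle point that the concatenation of $\gamma|_{[0,t_0]}$ with the short minimizing geodesic to the chosen sphere point is forced to be an unbroken geodesic, since its length equals $d(p, \cdot)$ and length-minimizers are smooth geodesics. Everything else is routine metric-space and ODE bookkeeping.
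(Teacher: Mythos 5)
Your proposal is correct, and it is the standard classical proof of Hopf--Rinow (the one found in do Carmo or Cheeger--Gromoll): the key lemma on existence of minimizing geodesics from a point where $\exp_p$ is globally defined, proved via the set $A=\{t : d(\gamma(t),q)=r-t\}$ and the \emph{opening up} argument, followed by the cycle of implications. The paper itself states this theorem as background material without proof, so there is no competing argument to compare against; your write-up supplies exactly what a reference would. The only cosmetic point worth noting is that in the key lemma you implicitly assume $r>\delta$ (so that every curve from $p$ to $q$ must cross the sphere $S_\delta$, giving $d(x_0,q)=r-\delta$); the case $r\le\delta$ is handled trivially inside the normal neighborhood, and everything else is as you say.
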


\noindent 
Thus the \emph{geodesic completeness} of a Riemannian manifold is equivalent to its \emph{metric completeness}.
As a consequence, we can show that our definition of completeness as $(G,X)$ structures is equivalent to the usual metric notion when $X$ admits a $G$-invariant Riemannian metric.

\begin{proposition} 
Let $(G,X)$ have $G$-invariant Riemannian metric $ds_X^2$, and $M$ be a compact $(G,X)$ manifold.
Then the developing map $f\colon\tilde{M}\to X$ is a covering map.
\end{proposition}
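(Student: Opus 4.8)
The plan is to transport the statement into Riemannian geometry, where completeness is controlled by compactness through Hopf--Rinow, and then invoke the principle that a local isometry out of a complete manifold is automatically a covering map.

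First I would use the developing map to endow $\tilde M$ with a Riemannian metric. Since $f\colon\tilde M\to X$ is an immersion between manifolds of the same dimension it is a local diffeomorphism, so the pullback $g:=f^\ast ds_X^2$ is a genuine Riemannian metric on $\tilde M$, and by construction $f$ is a local isometry $(\tilde M,g)\to(X,ds_X^2)$. The key point is that $g$ is invariant under the deck group: for $\gamma\in\pi_1(M)$ the equivariance $f\circ\gamma=\rho(\gamma)\circ f$ from the Development Theorem, together with the fact that $\rho(\gamma)\in G$ acts on $X$ by isometries of $ds_X^2$, gives $\gamma^\ast g=(f\circ\gamma)^\ast ds_X^2=(\rho(\gamma)\circ f)^\ast ds_X^2=f^\ast\big(\rho(\gamma)^\ast ds_X^2\big)=f^\ast ds_X^2=g$. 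Hence $g$ descends to a metric $\bar g$ on $M=\tilde M/\pi_1(M)$ for which the universal covering $\pi\colon\tilde M\to M$ is a local isometry.

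Next I would deduce completeness. Since $M$ is compact, $(M,\bar g)$ is complete as a metric space, and by Hopf--Rinow it is geodesically complete. Because $\pi\colon(\tilde M,g)\to(M,\bar g)$ is a Riemannian covering, the geodesics of $\tilde M$ are exactly the lifts of geodesics of $M$; as the latter extend for all time, so do the former, and therefore $(\tilde M,g)$ is geodesically, hence metrically, complete.

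The final and hardest step is to promote the local isometry $f\colon(\tilde M,g)\to(X,ds_X^2)$ to a covering map using completeness of the domain. The argument I would give is the standard geodesic-lifting one: given any point $q=f(p)$ and any geodesic $\sigma$ of $X$ issuing from $q$, lift its initial velocity through the isomorphism $df_p$ and let $\tilde\sigma$ be the geodesic of $\tilde M$ with that initial data; completeness guarantees $\tilde\sigma$ is defined for all time, and since $f$ is a local isometry $f\circ\tilde\sigma=\sigma$. This path-lifting property, combined with the fact that $f$ is a local diffeomorphism and that $X$ is connected, shows that a small metric ball about each $q\in X$ is evenly covered, so $f$ is a covering map. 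The main obstacle is precisely this implication ``complete local isometry $\Rightarrow$ covering map'': verifying the even covering of neighborhoods rigorously requires the geodesic lifting above rather than following formally from Hopf--Rinow alone.
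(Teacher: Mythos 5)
Your proof is correct and follows essentially the same route as the paper: pull back $ds_X^2$ along $f$, use $\rho$-equivariance to descend the metric to the compact quotient $M$, deduce completeness of $\tilde M$ via Hopf--Rinow, and conclude with the standard fact that a local isometry out of a complete Riemannian manifold is a covering map. The only difference is cosmetic --- the paper cites this last fact while you sketch its geodesic-lifting proof, which is a fine (and standard) way to fill in that step.
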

\begin{proof}
The riemannian metric $ds^2_X$ pulls back under the developing map to a metric $f^\ast ds^2_X$ on $\tilde{M}$, 
which is invariant under the deck group $\pi_1(M)$ and so descends to a metric $ds^2_M$ on the quotient $M=\tilde{M}/\pi_1(M)$.
Since $M$ is compact, it is complete as a metric space, and so the metric $f^\ast ds^2_X$ on $\tilde{M}$ is complete as well.
By Hopf-Rinow, $\tilde{M}$ is geodesically complete.
Finally the developing map $f\colon\tilde{M}\to X$ is a local isometry from a complete Riemannian manifold into a Riemannian manifold is a covering map \cite{Sosh63}.
\end{proof}

\noindent
This has some strong implications for $(G,X)$ structures, such as the following.

\begin{corollary}
Every hyperbolic structure on a closed surface is complete, and all hyperbolic surfaces are isomorphic to quotients of $\H^2$ by discrete subgroups of $\PSL(2;\R)$.
\end{corollary}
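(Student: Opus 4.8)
The plan is to read this off as a direct corollary of the two preceding propositions, specialized to the hyperbolic plane. First I would pin down the model: take $(G,X)=(\PSL(2;\R),\Hyp^2)$, the effective model of hyperbolic geometry in dimension two, with $\Hyp^2$ realized as (say) the upper half plane. The point stabilizer here is $\PSO(2)\cong\SO(2)$, which is compact, so the criterion recorded earlier (any geometry with compact point stabilizers admits an invariant Riemannian metric) guarantees a $G$-invariant Riemannian metric on $\Hyp^2$ --- concretely the hyperbolic metric of constant curvature $-1$. Crucially, $\Hyp^2$ is simply connected (indeed contractible), so both preceding propositions will apply without modification.

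With the model fixed, the first clause is immediate. A closed surface $M$ is compact, so the hypotheses of the proposition stating that a compact $(G,X)$ manifold equipped with a $G$-invariant Riemannian metric has developing map a covering map are satisfied. Hence the developing map $f\colon\tilde{M}\to\Hyp^2$ is a covering map, which is exactly the definition of completeness for the hyperbolic structure on $M$. This establishes that every hyperbolic structure on a closed surface is complete.

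For the second clause I would invoke \emph{Complete Structures are Quotients}. Since $\Hyp^2$ is simply connected, that proposition identifies the complete structure on $M$ with a quotient $\Hyp^2/\Gamma$, where the holonomy $\rho$ is faithful and $\Gamma=\rho(\pi_1(M))$ is a discrete subgroup of $G=\PSL(2;\R)$ acting freely and properly discontinuously. This is precisely the asserted description. The only point requiring any real care is the choice of model: one must work with the $\PSL(2;\R)$-model of $\Hyp^2$, rather than, say, $\SO(2,1)$ acting on the hyperboloid, so that the deck group lands in $\PSL(2;\R)$ as stated. No genuinely new argument is needed beyond the two cited propositions; the content is entirely in verifying that the invariant metric exists and that the underlying space is simply connected, after which the result is pure bookkeeping.
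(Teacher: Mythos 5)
Your proposal is correct and follows exactly the route the paper intends: the corollary is stated as an immediate consequence of the preceding two propositions (compact $(G,X)$ manifolds with invariant metric have covering developing maps, and complete structures on simply connected $X$ are quotients by discrete subgroups acting freely and properly discontinuously), which is precisely how you argue. Your added care about fixing the $\PSL(2;\R)$ model and verifying the compact point stabilizer criterion for the invariant metric is sound bookkeeping that the paper leaves implicit.
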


\noindent
We conclude this section with examples of complete and incomplete structures for reference.

\begin{example}[Hyperbolic Cylinders]
\label{Ex:Hyp_Cylinders}
The representations $\rho_i\colon\Z\to\SL(2;\R)$ given by $\rho_1(1)=\smat{1&1\\0&1}$, $\rho_2(1)=\smat{x&x\\x&x}$ are the holonomies of hyperbolic structures on the cylinder.
The first is the holonomy of a complete structure, with developing map onto the entire upper half plane.
The second represents an incomplete structure, with fundamental domains accumulating on to a vertical geodesic in the model.
\begin{figure}
\centering
\includegraphics[width=0.6\textwidth]{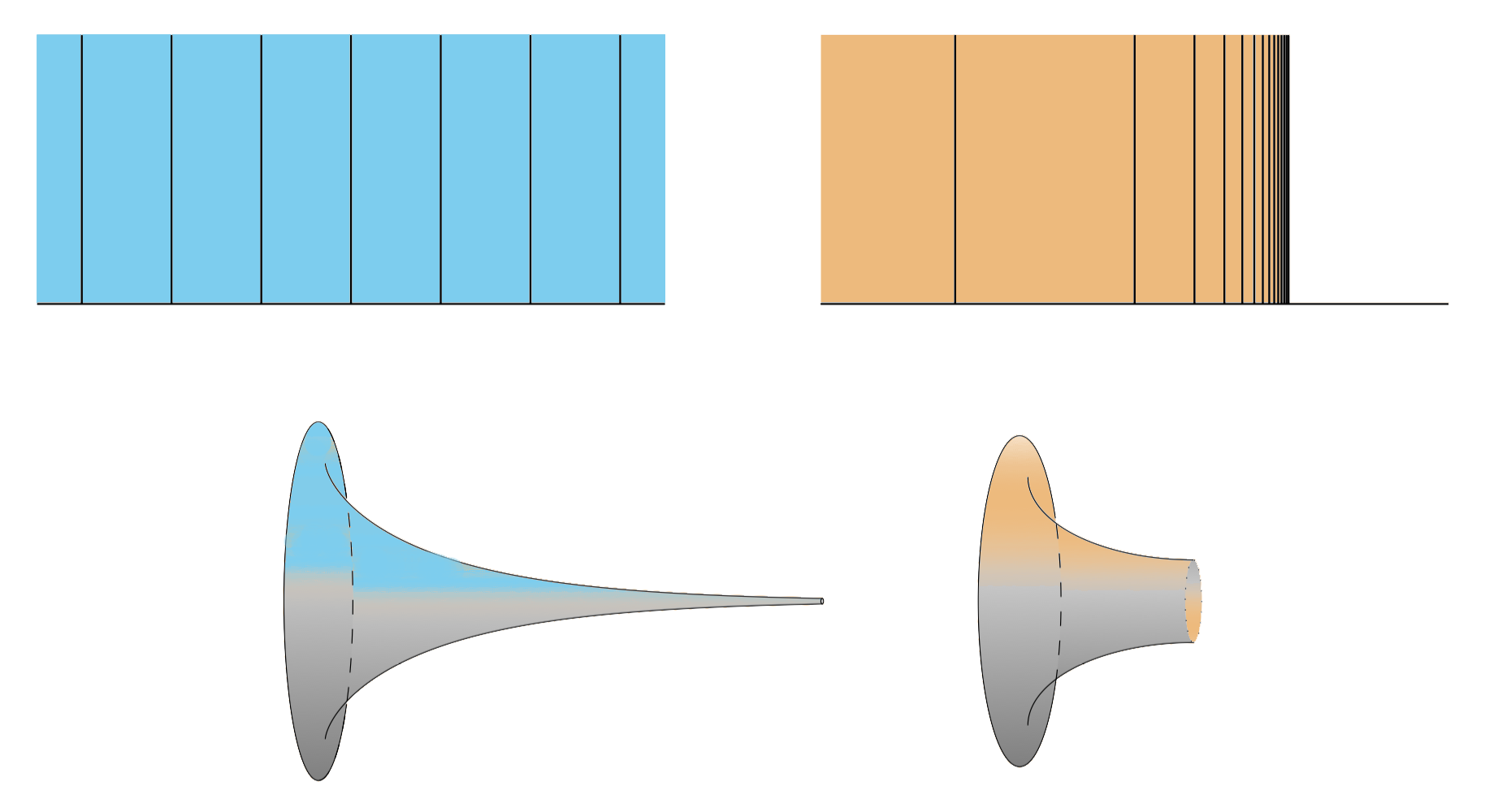}	
\caption{The developing maps of complete (left) and incomplete (right) hyperbolic structures on a cylinder.}
\end{figure}

\end{example}

In the example above, the holonomy of the incomplete structure fails to act properly discontinuously on $\Hyp^2$, but is still a faithful representation $\Z\to\Isom(\Hyp^2)$.  
This is not always the case however; the Hopf torus of Example \ref{Ex:Hopf_Torus} is incomplete as the complex exponential $\exp\colon\C\to \C$ is not a covering map and the holonomy $\Z^2\to\Sym(2)$ is not faithful.
The completeness of a structure depends heavily on the $(G,X)$ geometry under consideration, as further analysis of the Hopf torus reveals.

\begin{example}
\label{Ex:Hopf_Torus_Two}
The Hopf torus of Example \ref{Ex:Hopf_Torus} as an incomplete similarity structure, as $\exp\colon\C\to\C$ is not a covering map.
Restricting the codomain $\C^\times$, the exponential is a covering, and as the holonomy acts by complex multiplication on the plane $\rho(e_1)=1$, $\rho(e_2)=e$; we may consider the Hopf torus as a complete $(\C^\times,\C^\times)$ structure on $T^2$.
\end{example}

%\input{Geometries/GeoStr/Geometrization}

%NEWCHAPTER: MODULI AND COMPACTIFICATION
\chapter{Moduli \& Degeneration}
\label{chap:Moduli_Degen}
\index{Moduli}

The moduli space of $(G,X)$ structures on a manifold $M$ is a space $\fam{M}_{(G,X)}(M)$ whose points represent inequivalent $(G,X)$ structures on $M$.
Unfortunately these spaces are typically quite complicated and often non-Hausdorff.
Thus we replace this goal with an easier one; parameterizing \emph{marked $(G,X)$ structures} on $M$ by the \emph{deformation space} $\mathcal{D}_{(G,X)}(M)$, whose further quotient by forgetting the marking solves the moduli problem.

Given a topological space parametrizing $(G,X)$ structures on $M$, it is natural to consider the possible \emph{degenerations}, when a sequence of structures leaves every compact set in $\mathcal{D}_{(G,X)}(M)$.
While these sequences fail to converge as $(G,X)$ structures, they may converge as $(H,Y)$ structures for some containing geometry $(H,Y)$.
In such cases, we say that this degenerating path of $(G,X)$ structures limits to an $(H,Y)$ structure, and we will have reason to often consider such limits throughout this thesis.

Sometimes, a uniform construction provides endpoints for all degenerating paths in a deformation or moduli space, resulting in a \emph{compactification} with the boundary points parameterizing limiting structures.
We additionally discuss some techniques from algebraic geometry which will be useful in constructing compactifications in Part II.

\section{Deformation Space}
\label{sec:Def_Space}
\index{Geometric Structures! Deformation Space}
\index{Deformation Space}

\emph{Symmetries correspond to singularities} is a good one-phrase introduction to moduli theory.

\begin{example}
\label{Ex:Conformal_Tori}
The moduli space of conformal structures on the torus is the \emph{modular curve}, the quotient of $\Hyp^2$ by the isometric action of $\SL(2,\Z)$.
This is topologically a disk, equipped with an orbifold structure with two cone points of orders $2,3$ representing the square and hexagonal tori respectively.
\end{example}

\begin{figure}
\centering\includegraphics[width=0.95\textwidth]{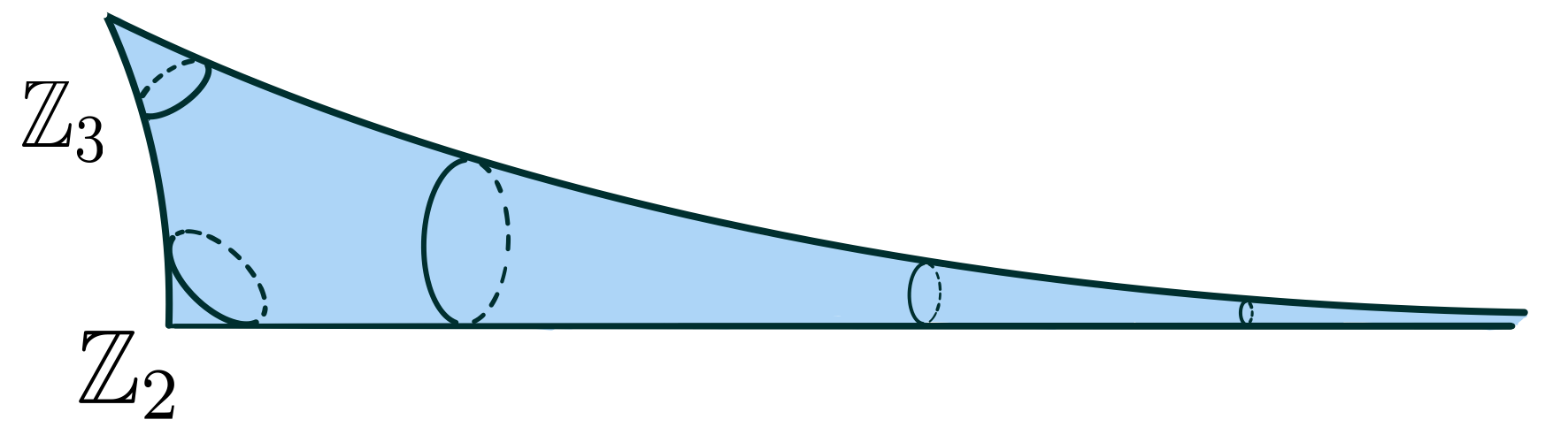}
\caption{The moduli space of conformal tori.}	
\end{figure}

The \emph{deformation space of structures} encodes geometric structures together with some kind of \emph{marking} to break the exceptional symmetries enjoyed by particular structures, and thus preclude the singularities caused by them.
We begin by reviewing the motivating and likely familiar case of Teichm\"uller theory, of which deformation space is a direct generalization.

\noindent{\bfseries\sffamily Teichm\"uller Theory:\;}
Let $\Sigma_g$ denote the closed surface of genus $g$.
A \emph{genus $g$ Riemann surface} is a complex algebraic curve $M$ homeomorphic to $\Sigma_g$.
A \emph{marked Riemann surface} is a pair $(\phi,M)$ of a Riemann surface $M$ together with a fixed homeomorphism $\phi\colon \Sigma_g\to M$.
The Teichm\"uller space $\mathcal{T}_g$ is defined as the space of marked genus $g$ Riemann surfaces up to equivalence, where $(\phi,M)\sim (f',M')$ when there is a biholomorphism $\psi\colon M\to M'$ such that $\psi\phi$ and $\phi'$ are isotopic.
The Teichm\"uller space is a smooth manifold, diffeomorphic to a ball of dimension $6g-6$ when $g>1$ and $\mathcal{T}_1\cong\H^2$.
The moduli space of biholomorphism classes of complex structures on $\Sigma_g$ is $\mathcal{M}_g$ is the quotient of $\mathcal{T}_g$ sending pairs $(\phi,M)$ to the underlying Riemann surface $M$.
Distinct markings $(\phi, M)$ and $(\phi',M)$ give nontrivial self-homeomorphisms $\phi\inv \phi'\colon\Sigma_g\to\Sigma_g$ and so quotient forgetting markings corresponds to the action of the mapping class group $\mathsf{Mod}_g$ on Teichm\"uller space, $\mathcal{M}_g=\mathcal{T}_g/\mathsf{Mod}_g$.
As Riemann surfaces are classifying spaces for their fundamental groups the mapping class group identifies with outer automorphisms of the fundamental group, so $\mathcal{M}_g=\mathcal{T}_g/\mathsf{Out}(\pi_1(\Sigma_g))$.

We develop a very similar story in the more general context of $(G,X)$ structures, defining \emph{deformation space} as equivalence classes of marked $(G,X)$ structures and  realize \emph{moduli space} as the quotient after forgetting the markings.

\begin{definition}
Let $\Sigma$ be a smooth manifold.
A marked $(G,X)$ structure on $\Sigma$ is a pair $(\phi,M)$ of a $(G,X)$ manifold $M$ and a diffeomorphism $\phi\colon\Sigma\to M$.
Two marked $(G,X)$ structures $(\phi,M)$ and $(\phi',M')$ on $\Sigma$ are equivalent if there is a $(G,X)$ map $\psi\colon M\to M'$ where the following triangle commutes up to isotopy.
\begin{center}
\begin{tikzcd}
M\ar[rr,"\psi"]&& M'\\
&\Sigma\ar[ul,"\phi"]\ar[ur,swap,"\phi'"]&
\end{tikzcd}
\end{center}
\end{definition}

\begin{figure}
\centering\includegraphics[width=0.75\textwidth]{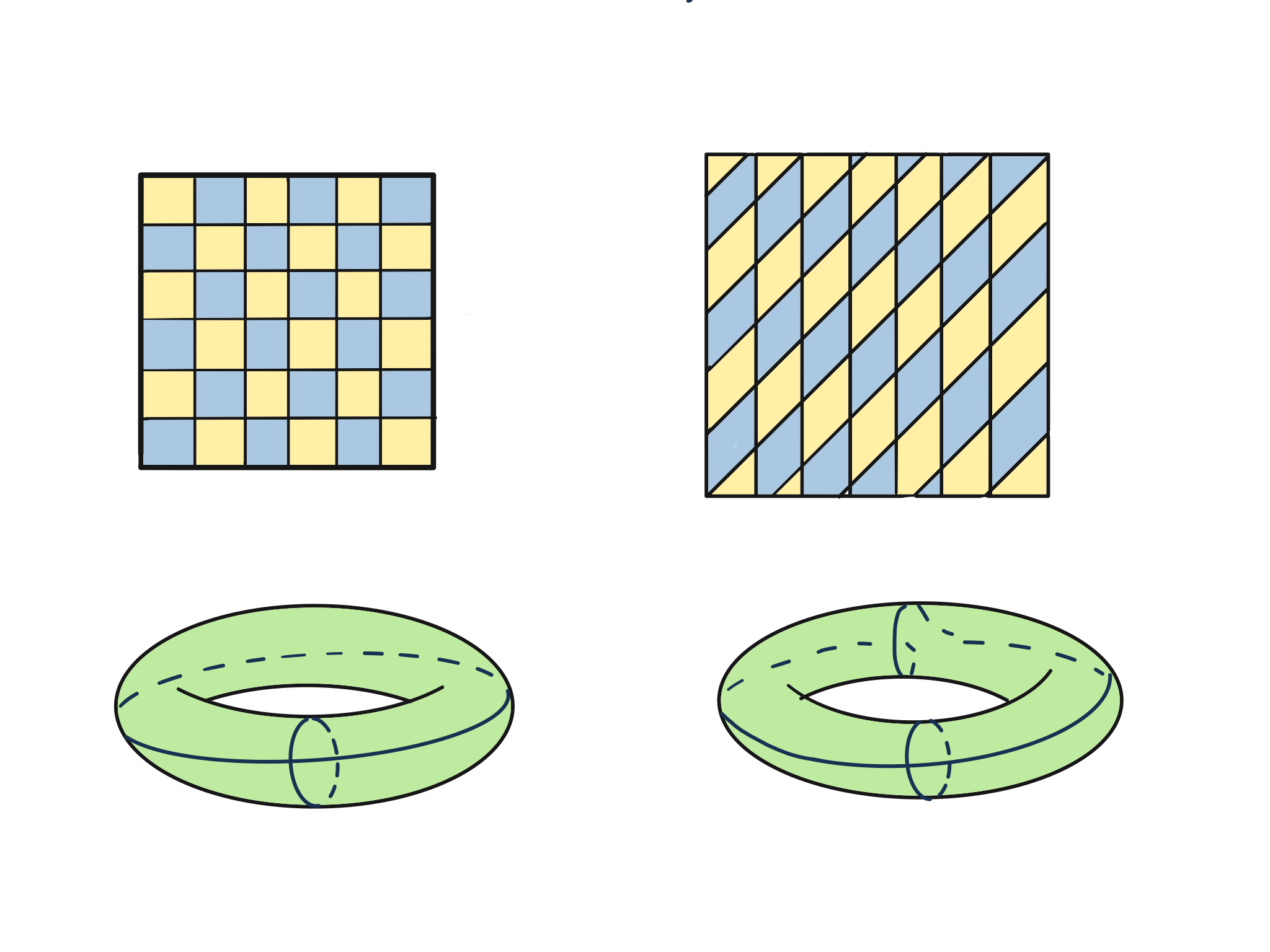}
\caption{Different markings on the same conformal (rectangular) torus.}	
\end{figure}

\noindent
Let $\mathsf{Diffeo}(M)$ denote the group of self-diffeomorphisms of $M$ equipped with the compact-open topology, and $\mathsf{Diffeo}_0(M)$ the connected component of the identity.
Then $\mathsf{Diffeo}(M)$ acts on the space $\mathcal{S}_{(G,X)}(M)$ of $(G,X)$ structures by composition with the marking, $\alpha.(\Sigma\to M)=\Sigma\stackrel{\alpha}{\to}\Sigma\to M$, and two marked structures are isotopic if they differ by the action of some element in $\mathsf{Diffeo}_0(M)$.

\begin{definition}
The \emph{deformation space of $(G,X)$ structures on $M$} is the quotient of the space of marked structures by diffeomorphisms isotopic to the identity.
\end{definition}

\noindent 
Taking a different perspective on marked structures, we may realize deformation space as a quotient of the familiar space $\mathcal{S}_{(G,X)}$ of developing pairs up to $G$-conjugacy.

\begin{proposition}
Pullback of $(G,X)$ structures defines a bijection between the space of marked structures $(\phi,M)$ on $\Sigma$ and the space $\mathcal{S}_{(G,X)}(\Sigma)$ of developing pairs for $(G,X)$ structures on $\Sigma$, up to $G$-conjugacy.
\end{proposition}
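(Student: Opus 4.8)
The plan is to exhibit the pullback construction as the forward map, build an inverse from the Development Theorem, and check the two are mutually inverse once we pass to the appropriate equivalence classes (marked structures modulo the equivalence in the preceding definition on one side, developing pairs modulo $G$-conjugacy on the other). Given a marked structure $(\phi,M)$, Observation~\ref{Obs:Pullback_Structure} furnishes a unique $(G,X)$ structure on $\Sigma$ making $\phi\colon\Sigma\to M$ a $(G,X)$ isomorphism, and the Development Theorem assigns to this structure a developing pair well-defined up to $G$-conjugacy. I would record this pair explicitly: choosing a developing pair $(g,\sigma)$ for $M$ and a lift $\tilde{\Sigma}\to\tilde{M}$ of $\phi$ to universal covers, the pulled-back pair is $(g\circ\tilde{\phi},\ \sigma\circ\phi_\ast)$, where $\phi_\ast\colon\pi_1(\Sigma)\to\pi_1(M)$ is the induced isomorphism. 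This defines the forward map $(\phi,M)\mapsto[(g\tilde{\phi},\sigma\phi_\ast)]$ into $\mathcal{S}_{(G,X)}(\Sigma)=\mathsf{Dev}_{(G,X)}(\Sigma)/G$.

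First I would check this map is well-defined. Independence of the chosen developing pair $(g,\sigma)$ for $M$ is immediate from the uniqueness clause of the Development Theorem: replacing $(g,\sigma)$ by $(hg,\mathsf{Inn}(h)\sigma)$ replaces $(g\tilde{\phi},\sigma\phi_\ast)$ by its $h$-conjugate. The substantive point is that the forward map respects the equivalence of marked structures. Suppose $(\phi,M)\sim(\phi',M')$ via a $(G,X)$ isomorphism $\psi\colon M\to M'$ with $\psi\phi\simeq\phi'$. A $(G,X)$ isomorphism relates developing pairs by some $h\in G$, so that $g'\tilde{\psi}=hg$ and $\sigma'\psi_\ast=\mathsf{Inn}(h)\sigma$; and the isotopy $\psi\phi\simeq\phi'$ alters the induced map on $\pi_1$ by an inner automorphism $\mathsf{Inn}(c)$, with the lift $\tilde{\phi'}$ differing from $\tilde{\psi}\tilde{\phi}$ by the corresponding deck transformation $c$. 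Carrying these through, both the holonomy $\sigma'\phi'_\ast=\mathsf{Inn}(\sigma'(c)h)(\sigma\phi_\ast)$ and the developing map $g'\tilde{\phi'}=(\sigma'(c)h)(g\tilde{\phi})$ of the $\phi'$-pullback differ from those of the $\phi$-pullback by the single element $\sigma'(c)h\in G$. Hence the two lie in one $G$-conjugacy class, and in particular an isotopy of the marking is absorbed into $G$-conjugacy of the developing pair.

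For the inverse I would send a $G$-conjugacy class $[(f,\rho)]$ to the marked structure $(\id_\Sigma,\Sigma_{(f,\rho)})$, where $\Sigma_{(f,\rho)}$ is $\Sigma$ equipped with the $(G,X)$ structure the pair determines via the Corollary $\mathcal{S}_{(G,X)}(\Sigma)=\mathsf{Dev}_{(G,X)}(\Sigma)/G$; since $G$-conjugate pairs determine the same structure, this is well-defined. The two round trips are then routine. Pulling back along $\id_\Sigma$ returns $(f\circ\widetilde{\id},\rho\circ\id_\ast)=(f,\rho)$, so one composite is the identity. Conversely, starting from $(\phi,M)$ with pullback structure $\phi^\star M$ on $\Sigma$, the map $\phi\colon\Sigma_{\phi^\star M}\to M$ is by construction a $(G,X)$ isomorphism with $\phi\circ\id_\Sigma=\phi$, so it exhibits $(\id_\Sigma,\Sigma_{\phi^\star M})\sim(\phi,M)$; thus the other composite is the identity on equivalence classes. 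Together these show the forward map is a bijection.

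I expect the main obstacle to be the bookkeeping in the well-definedness step: keeping track simultaneously of a chosen lift $\tilde{\phi}$ of the marking, the induced homomorphism $\phi_\ast$, and the basepoint ambiguity in the isotopy $\psi\phi\simeq\phi'$, and then confirming that the net effect on the developing pair is a genuine single $G$-conjugation rather than a conjugation only "up to correction." The structural fact that makes everything close up is that an isotopy of the marking changes the holonomy only by an inner automorphism of $\pi_1(M)$, which $\sigma$ converts into conjugation by an element of $G$ — exactly the freedom already quotiented out in $\mathsf{Dev}_{(G,X)}(\Sigma)/G$.
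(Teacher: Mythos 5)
Your overall route is the same as the paper's: pull back via Observation \ref{Obs:Pullback_Structure} to get the forward map, invert via the identity marking $(\id_\Sigma,\Sigma_{(f,\rho)})$, and close the round trips using a triangle that commutes on the nose. Those parts are correct. The genuine error is in your well-definedness paragraph, in the claim that the lift $\tilde{\phi'}$ differs from $\tilde{\psi}\tilde{\phi}$ by a deck transformation $c$. Lifting the isotopy $\psi\phi\simeq\phi'$ only produces a $\pi_1$-equivariant \emph{homotopy} from $\widetilde{\psi\phi}$ to a deck translate of $\tilde{\phi'}$, not an equality of maps. Your holonomy computation survives this (holonomies of homotopic markings do differ only by $\mathsf{Inn}(\sigma'(c)h)$), but the developing maps do not: $g'\tilde{\phi'}$ equals $(\sigma'(c)h)\,g\tilde{\phi}\circ\tilde{\alpha}$ for $\tilde{\alpha}$ the lift of some $\alpha\in\mathsf{Diffeo}_0(\Sigma)$, and a pair $(f\circ\tilde{\alpha},\rho)$ is in general \emph{not} $G$-conjugate to $(f,\rho)$ — $G$-conjugacy acts by postcomposition on $f$, while the isotopy acts by precomposition. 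So your conclusion that ``an isotopy of the marking is absorbed into $G$-conjugacy'' is false, and it would contradict the paper's own framework: immediately after this proposition the paper defines a nontrivial action of $\mathsf{Diffeo}_0$ on $\mathcal{S}_{(G,X)}(\Sigma)$ by exactly this precomposition, with quotient the deformation space. If your claim held, $\mathcal{D}_{(G,X)}(\Sigma)$ would equal $\mathcal{S}_{(G,X)}(\Sigma)$ and, say, the Teichm\"uller space of a surface would coincide with the set of all hyperbolic structures on it.

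A concrete counterexample: take $\Sigma=M$ a closed hyperbolic surface, $\psi=\id_M$, $\phi=\id_\Sigma$, and $\phi'=\alpha$ any element of $\mathsf{Diffeo}_0(M)$ that is not an isometry. Then $(\phi,M)\sim(\phi',M)$ as marked structures, but the pullback developing pairs are $(f,\rho)$ and $(f\circ\tilde{\alpha},\rho)$, which are $G$-conjugate only if $\alpha$ is a $(G,X)$-automorphism of $M$ — generically it is not. The repair is to notice that the bijection with $\mathcal{S}_{(G,X)}(\Sigma)$ should be verified against the \emph{strict} equivalence in which the triangle commutes on the nose (this is all the paper's proof uses: its final step observes that $(\id_\Sigma,\Sigma_{(\phi,M)})\sim(\phi,M)$ because the triangle commutes exactly); the isotopy ambiguity in the definition of marked equivalence is precisely the residual $\mathsf{Diffeo}_0$-action that is quotiented out afterward to obtain $\mathcal{D}_{(G,X)}(\Sigma)=\mathcal{S}_{(G,X)}(\Sigma)/\mathsf{Diffeo}_0(\Sigma)$. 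With the well-definedness step restricted to strict equivalence, the rest of your argument — independence of the chosen developing pair for $M$ via the Development Theorem, and both round trips — is sound and matches the paper.
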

\begin{proof}
If $\Sigma$ is a smooth manifold, $M$ a $(G,X)$ manifold and $\phi\colon \Sigma\to M$ a diffeomorphism, then recalling Observation \ref{Obs:Pullback_Structure} there is a unique $(G,X)$ structure on $\Sigma$ for which $\phi$ is a $(G,X)$ isomorphism.
We denote this structure $\Sigma_{(\phi,M)}$ to limit confusion.
This associates to each marked structure a unique $(G,X)$ structure on $\Sigma$ itself.
Conversely, if $[f,\rho]_{(G,X)}$ is a developing pair for a geometric structure on $\Sigma$, we may think of the identity map $\id_\Sigma\colon\Sigma\to\Sigma$ as a diffeomorphism from the smooth manifold $\Sigma$ to the $(G,X)$ manifold $\Sigma_{(\phi,M)}$.
Clearly the geometric structure associated to the marked structure $(\id_\Sigma,\Sigma_{(\phi,M)})$ is $\Sigma_{(\phi,M)}$ itself.
Composing the other way, if $(\phi,M)$ is a marked structure, the pullback $\Sigma_{(\phi,M)}$ gets associated to the marked structure $(\id_\Sigma,\Sigma_{(\phi,M)})$ which is equivalent as a marked structure to $(\phi,M)$ as the relevant triangle commutes on the nose.

\begin{center}
\begin{tikzcd}
M&& \Sigma_{(\phi,M)}\ar[ll,swap,"\phi"]\\
&\Sigma\ar[ul,"\phi"]\ar[ur,swap,"\id_\Sigma"]&
\end{tikzcd}
\end{center}

\end{proof}

\noindent
Under this identification with $\mathcal{S}_{(G,X)}(M)$, the action of $\mathsf{Diffeo}_0(M)$ can be described as follows.
Let $\tilde{M}\to M$ be a fixed universal cover.
Then any $\alpha\in\mathsf{Diffeo}_0(M)$ lifts to a $\pi_1(M)$-equivariant map $\tilde{\alpha}\colon\tilde{M}\to\tilde{M}$ which is isotopic to $\id_{\tilde{M}}$ through a sequence of $\pi_1(M)$-equivariant automorphisms.
Choosing basepoints $m\in M$, $\tilde{m}\in\tilde{M}$ this lift can be chosen uniquely, which provides an embedding $\mathsf{Diffeo}_0(M)\to\mathsf{Diffeo}(\tilde{M})$.
The lift of $\alpha\in\mathsf{Diffeo}_0(M)$ is denoted $\tilde{\alpha}$ and the action of $\mathsf{Diffeo}_0(M)$ on the set of developing pairs is by precomposing the developing map with the lifted diffeomorphism.

\begin{observation}
In terms of developing pairs, the deformation space $\mathcal{D}_{(G,X)}(M)$ of $(G,X)$ structures on $M$ is the quotient space $\mathcal{D}_{(G,X)}(M)=\mathcal{S}_{(G,X)}(M)/\mathcal{Diffeo}_0(M)$ by the action $\alpha.[f,\rho]_G=[f\circ\tilde{\alpha},\rho]_G$
\end{observation}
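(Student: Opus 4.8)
The plan is to assemble this identification from the preceding proposition together with the explicit description of the $\mathsf{Diffeo}_0(M)$ action on developing pairs. By definition the deformation space $\mathcal{D}_{(G,X)}(M)$ is the quotient of the space of marked structures on $M$ by isotopy of markings, which is exactly the orbit relation of $\mathsf{Diffeo}_0(M)$. The previous proposition supplies a bijection between marked structures and the space $\mathcal{S}_{(G,X)}(M)$ of developing pairs up to $G$-conjugacy. So the statement reduces to checking that this bijection is equivariant: that the isotopy action on marked structures transports, under pullback, to the action $\alpha.[f,\rho]_G=[f\circ\tilde\alpha,\rho]_G$ on developing pairs. Once equivariance is in hand, passing to quotients yields the claimed identification $\mathcal{D}_{(G,X)}(M)=\mathcal{S}_{(G,X)}(M)/\mathsf{Diffeo}_0(M)$.

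First I would check that the proposed action is well defined. Given $\alpha\in\mathsf{Diffeo}_0(M)$ with its unique pointed lift $\tilde\alpha\colon\tilde M\to\tilde M$, I would verify that $f\circ\tilde\alpha$ is again a developing map with the same holonomy $\rho$. This is the one genuine computation: since $\tilde\alpha$ is $\pi_1(M)$-equivariant and $f$ is $\rho$-equivariant, for each $\gamma\in\pi_1(M)$ we have
\[
(f\circ\tilde\alpha)(\gamma.m)=f(\gamma.\tilde\alpha(m))=\rho(\gamma).(f\circ\tilde\alpha)(m),
\]
so the holonomy is unchanged. Well-definedness on $G$-conjugacy classes is then immediate, because precomposition by $\tilde\alpha$ acts on the source $\tilde M$ while the $G$-action $(f,\rho)\mapsto(g\circ f,\mathsf{Inn}(g)\circ\rho)$ acts on the target $X$, so the two operations commute.

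Next I would match the two orbit relations. In one direction, if two marked structures are isotopic then the isotopy is a path in $\mathsf{Diffeo}_0(M)$, and lifting it to a path of $\pi_1(M)$-equivariant self-maps of $\tilde M$ starting at $\id_{\tilde M}$ produces the lift $\tilde\alpha$ realizing the stated formula; pulling back, the corresponding developing pairs lie in the same $\mathsf{Diffeo}_0(M)$-orbit. Conversely, any $\tilde\alpha$ arising from $\alpha\in\mathsf{Diffeo}_0(M)$ descends to an isotopy of markings by construction, so the orbits coincide. Combined with the equivariant bijection, this identifies the two quotients.

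I expect the main obstacle to be bookkeeping around the lift $\tilde\alpha$ rather than anything deep: one must confirm that an isotopy from $\id_M$ to $\alpha$ lifts to a $\pi_1(M)$-equivariant isotopy from $\id_{\tilde M}$ to $\tilde\alpha$, using the pointed lift to pin down the unique choice, and that this is precisely what forces the holonomy to be preserved. A secondary point of care is that the earlier proposition gives only a set bijection; to read the final statement as an identification of topological spaces, one should check that pullback and the $\mathsf{Diffeo}_0(M)$-action are continuous for the compact-open and $C^\infty$ topologies already placed on $\mathsf{Dev}_{(G,X)}(M)$, so that the two quotient topologies agree.
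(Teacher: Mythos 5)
Your proposal is correct and follows essentially the same route as the paper, which presents this as an observation flowing from the preceding pullback bijection together with the description of the unique $\pi_1(M)$-equivariant lift $\tilde{\alpha}$ and the precomposition action on developing pairs. Your equivariance computation $(f\circ\tilde{\alpha})(\gamma.m)=\rho(\gamma).(f\circ\tilde{\alpha})(m)$ and the remark that precomposition on $\tilde{M}$ commutes with the $G$-action on $X$ are exactly the details the paper leaves implicit.
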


\noindent 
The quotient of $\mathcal{S}_{(G,X)}(M)$ by this precomposition of the developing map factor by $\mathsf{Diffeo}_0(M)$ yields deformation space.

\begin{example}
The following path of Euclidean tori	, realized as a continuous map $[0,1]\to\mathcal{D}_{\E^2}(T^2)$ smoothly transitions from the square torus to the hexagonal torus.
$$\hol_t:\pi_1(T)=\langle A,B\rangle\to\Isom(\E^2) \hspace{1cm}
\mat{ 
A\mapsto \smat{1&0&1\\0&1&0\\0&0&1} \\
B\mapsto \smat{1&0&\cos\left(\frac{2\pi}{3}t\right)\\0&1&\sin\left(\frac{2\pi}{3}t\right)\\0&0&1}
}
$$

$$\dev_t:\tilde{T}=\R^2\to \R^2 \hspace{1cm} \pmat{x\\y}\mapsto \pmat{x+y\cos\left(\frac{2\pi}{3}t\right)\\ y\sin\left(\frac{2\pi}{3}t\right)}$$
\end{example}

\subsection{Representation Varieties}
\index{Representation Varieties}

Here we quickly review the basic theory of representation varieties.
For a more detailed account, consult \emph{Geometric Structures and Varieties of Representations} by Goldman \cite{Goldman88}.
Given a finitely presented group $\Gamma=\langle s_1,\ldots s_m\mid r_1\ldots r_m\rangle$, evaluation on the generators naturally embeds the space $\Hom(\Gamma,G)$ of representations into $G^m$.
In particular, when $G<\GL(p;\R)$ is a matrix Lie group, the image is a real algebraic set in $\R^{m p^2}$ cut out by the $n p^2$ polynomials arising from the relations $r_1\ldots r_n$ written out in $p\times p$ matrices.
The variety structure inherited from this construction is independent of choice of generating set, and thus is intrinsic to the \emph{representation variety} $\Hom(\Gamma,G)$.
We give $\Hom(\Gamma,G)$ the classical topology as a subset of $\R^{mp^2}$.
The group $G$ acts on this representation variety by conjugacy, and $\Hom(\Gamma,G)/G$ inherits the quotient topology from this.

\begin{example}
The character variety of representations of the free group $\mathbb{F}_2$ on two generators into $\SL(2;\R)$ is the real two dimensional variety $V(x^2+y^2+z^2-xyz)$.
Each component of this variety is an open disk, and one of them identifies with the Teichm\"uller space of complete finite volume hyperbolic structures on the punctured torus.	
\end{example}

In contrast to the example above, the resulting space $\Hom(\Gamma,G)/G$ may be rather ill-behaved, and a selection of `bad behavior' which occurs in practice is listed below.
\begin{SingleSpace}
\begin{itemize}
\item The variety $\Hom(\Gamma,G)$ may not be smooth, and $\Hom(\Gamma,G)/G$ may inherit the singularities of an algebraic variety.
\item The quotient $\Hom(\Gamma,G)\to\Hom(\Gamma,G)/G$ may be nontrivially branched so $\Hom(\Gamma,G)/G$ has orbifold singularities.
\item The action of $G$ on the $\Hom(\Gamma,G)$ may not be proper, so the quotient $\Hom(\Gamma,G)/G$ is not Hausdorff.
\end{itemize}
\end{SingleSpace}

\subsection{Moduli Space}
\index{Moduli Space}

The \emph{moduli space} of $(G,X)$ structures is the further quotient forgetting marking, which is realized by the action of \emph{all} diffeomorphisms of $M$ on $\mathcal{S}_{(G,X)}(M)$, or equivalently by the action of $\mathsf{Diffeo}(M)/\mathsf{Diffeo}_0(M)$ on deformation space.

\begin{definition}
The moduli space $\mathcal{M}_{(G,X)}(M)$ of $(G,X)$ structures on $M$ is the set of all $(G,X)$ structures on $M$ up to $(G,X)$ equivalence.
This naturally identifies with the quotient of deformation space by the diffeotopy group $\mathcal{M}_{(G,X)}(M)=\mathcal{D}_{(G,X)}(M)/\pi_0(\mathsf{Diffeo}(M))$.
\end{definition}

The fact that the action of diffeomorphisms isotopic to the identity have no effect on the holonomy makes this an attractive coordinate on deformation space.
The projection onto holonomy from the space of developing pairs $\mathsf{Dev}_{(G,X)}(M)\subset C^\infty(\tilde{M},X)\times\Hom(\pi_1(M),G)$ induces a projection $\hol\colon \mathcal{S}_{(G,X)}(M)\to\Hom(\pi_1(M),G)/G$ onto representations modulo $G$ conjugacy.
This directly descends to the quotient by isotopy giving a well-defined projection $\mathcal{D}_{(G,X)}(M)\to\Hom(\pi_1(M),G)/G$ associating to each marked structure its conjugacy class of holonomies.

The fact that small deformations in holonomy correspond to small deformations in geometric structure was first noticed by Thurston, and with the work of many others 
is captured by the following theorem.

\begin{theorem}
Let $(G,X)$ be a geometry and $M$ a compact $(G,X)$ manifold with holonomy representative $\rho\colon\pi_1(M)\to G$.  
Then for all $\rho'$ sufficiently near to $\rho$ in the representation variety $\Hom(\pi_1(M), G)$, there exists a nearby $(G,X)$ structure with holonomy $\rho'$.
Furthermore if $M'$ is a $(G,X)$ manifold near $M$ in deformation space which has the same holonomy $\rho$, then $M'$ is isomorphic to $M$ by a $(G,X)$ isomorphism isotopic to the identity.
\end{theorem}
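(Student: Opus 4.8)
The plan is to establish both assertions — openness (nearby representations are holonomies of nearby structures) and local injectivity (structures with equal holonomy near a fixed one are isotopic) — by working directly with developing pairs in the $C^\infty$-on-compacta topology introduced above, exploiting that $M$ is compact so that a single compact $K\subset\tilde M$ satisfies $\pi_1(M)\cdot K=\tilde M$. The organizing principle throughout is that being an immersion is an \emph{open} condition and that compactness upgrades this pointwise openness to a uniform statement over all of $M$; this is precisely where the hypothesis that $M$ is compact becomes indispensable.

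For openness, let $(f,\rho)$ be a developing pair for the given structure. First I would fix a finite generating set $\gamma_1,\ldots,\gamma_k$ of $\pi_1(M)$ and a compact $K\subset\tilde M$ with $\pi_1(M)\cdot K=\tilde M$, arranged so that $f|_K$ is an immersion and the equivariance relations $f\circ\gamma_i=\rho(\gamma_i)\circ f$ on the relevant overlaps determine $f$ up to the finite gluing data $\rho(\gamma_i)\in G$. Given $\rho'$ near $\rho$, I would build a new developing map $f'$ by modifying $f$ on collar neighborhoods of the identification loci via a partition of unity, interpolating between $f$ and $\rho'(\gamma_i)\circ f\circ\gamma_i^{-1}$ so that $f'\circ\gamma_i=\rho'(\gamma_i)\circ f'$ holds exactly. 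Since $\rho'$ is close to $\rho$, the map $f'$ is $C^\infty$-close to $f$ on $K$; as $f|_K$ is an immersion and immersions are open in the $C^1$ topology, $f'|_K$ is again an immersion, hence by equivariance an immersion on all of $\tilde M$. Thus $(f',\rho')$ is a developing pair for a $(G,X)$ structure near the original with holonomy $\rho'$.

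For local injectivity, suppose $(f,\rho)$ and $(f',\rho)$ are developing pairs with the \emph{same} holonomy and with $f'$ close to $f$. Interpolating fiberwise within small geodesically convex neighborhoods (using any auxiliary Riemannian metric on $X$) yields a smooth path $f_t$ from $f_0=f$ to $f_1=f'$ of $\rho$-equivariant maps; closeness together with compactness of $M$ keeps every $f_t$ an immersion, so $(f_t,\rho)$ is a path of $(G,X)$ structures with constant holonomy. I would then show any such constant-holonomy path is an isotopy: the variation $\tfrac{d}{dt}f_t$, pulled back by the local diffeomorphism $f_t$, is a vector field $V_t$ on $\tilde M$, and the shared equivariance of all $f_t$ forces $V_t$ to be $\pi_1(M)$-invariant, so it descends to a vector field on the closed manifold $M$. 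Its flow is complete and produces $h_t\in\mathsf{Diffeo}_0(M)$ with $f_t=f_0\circ\tilde h_t$, exhibiting the two structures as isotopic, hence equal in $\mathcal{D}_{(G,X)}(M)$.

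I expect the main obstacle to be the uniformity in the openness step: immersivity and the equivariance patching are controlled pointwise, but concluding that the globally assembled $f'$ is still an immersion requires the perturbation to be uniformly small across a fundamental domain, which is exactly what compactness of $M$ supplies. Care must also be taken that the partition-of-unity modification does not disturb the relations among the $\rho'(\gamma_i)$; organizing the gluing data along a spanning tree of a good cover, so that only the finitely many generator-identifications must be adjusted, is the cleanest way to manage this bookkeeping.
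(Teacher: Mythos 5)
The paper does not actually prove this statement: it records the Ehresmann--Thurston theorem as classical, with the proof deferred to the literature, so your proposal must stand on its own. Your openness half follows the standard Thurston/Canary--Epstein--Green line — patch the developing map over a compact fundamental region so that $\rho'$-equivariance holds exactly on a generating set (whence on all of $\pi_1(M)$, since equivariance is multiplicative), then use openness of immersions in the $C^1$ topology on the compact set $K$ together with equivariance to globalize — and with the spanning-tree bookkeeping you flag, this can be made rigorous. One small caution there: ``interpolating'' between $f$ and $\rho'(\gamma_i)\circ f\circ\gamma_i^{-1}$ has no intrinsic meaning in $X$; it must be performed in local charts, which is legitimate only because the two maps are uniformly close on the collars when $\rho'$ is close to $\rho$, and since the interpolation happens inside a fundamental region before extending equivariantly, no invariance of the interpolation scheme is needed at this stage.

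The genuine gap is in the injectivity half. You construct the path $f_t$ by geodesic interpolation between $f$ and $f'$ with respect to an \emph{arbitrary} auxiliary Riemannian metric on $X$, and then assert ``the shared equivariance of all $f_t$.'' That equivariance is false in general: geodesic interpolation commutes with the $G$-action only when the metric is $G$-invariant, and most geometries treated in this thesis — $\RP^n$, affine space, the Heisenberg plane — admit \emph{no} invariant Riemannian metric (as the paper itself notes in the section on metric geometry). Without equivariance of $f_t$, the vector field $V_t=(df_t)^{-1}\,\partial_t f_t$ need not be $\pi_1(M)$-invariant, it does not descend to $M$, and the flow argument collapses. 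The standard repair: since $f$ is an equivariant local diffeomorphism and $f'$ is equivariant and $C^1$-close to $f$, for each $m\in\tilde{M}$ there is a unique point $h(m)$ near $m$ with $f(h(m))=f'(m)$; uniqueness forces $h$ to be a deck-equivariant diffeomorphism of $\tilde{M}$, $C^1$-close to the identity, so it descends to a diffeomorphism of the compact manifold $M$ isotopic to the identity, and $f'=f\circ h$ exhibits the two structures as equal in $\mathcal{D}_{(G,X)}(M)$. If you prefer to keep your vector-field mechanism, run the interpolation upstairs instead: interpolate between $\mathsf{id}_{\tilde{M}}$ and $h$ using a $\pi_1(M)$-invariant metric pulled back from the compact quotient $M$ (such a metric always exists), and set $f_t=f\circ h_t$, which is $\rho$-equivariant by construction.
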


\begin{corollary}
Let $M$ be a closed manifold.  Then the set of representations which are holonomies of some $(G,X)$ structure on $M$ is open in the classical topology on $\Hom(\pi_1(M),G)$.	
\end{corollary}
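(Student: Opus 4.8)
The plan is to read off the corollary directly from the first half of the preceding theorem, so that all of the genuine work will already have been done in establishing that representations near a given holonomy are themselves realized by nearby structures. Write $\mathcal{H} \subseteq \Hom(\pi_1(M),G)$ for the set of representations that occur as the holonomy of some $(G,X)$ structure on $M$; the goal is to show $\mathcal{H}$ is open in the classical topology on the representation variety.

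First I would fix an arbitrary $\rho \in \mathcal{H}$ and unwind the definition of membership: by hypothesis there is a $(G,X)$ structure on $M$ with a developing pair $(f,\rho)$ whose holonomy is $\rho$. Since $M$ is \emph{closed} it is in particular compact, so $M$ equipped with this structure is a compact $(G,X)$ manifold, and the preceding theorem applies with $\rho$ taken as its distinguished holonomy representative.

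Next I would invoke the first assertion of that theorem: there is a neighborhood $U \ni \rho$ in $\Hom(\pi_1(M),G)$ such that every $\rho' \in U$ is the holonomy of some (nearby) $(G,X)$ structure on $M$. By the very definition of $\mathcal{H}$ this says $U \subseteq \mathcal{H}$, exhibiting a classical open neighborhood of $\rho$ contained in $\mathcal{H}$. As $\rho$ was an arbitrary point of $\mathcal{H}$, this shows $\mathcal{H}$ is open, which is exactly the claim.

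There is essentially no obstacle at the level of the corollary itself: it is a pointwise application of the deformation theorem together with the observation that a closed manifold is compact. All of the real difficulty — the Ehresmann--Thurston-style argument that a small perturbation of the holonomy can be realized by a genuine nearby developing map, via an equivariant isotopy and the openness of the immersion condition on the compact quotient — is packaged inside the theorem being quoted. The only points one must be careful to record are that \emph{closed} supplies the compactness hypothesis the theorem requires, and that "is a holonomy of some $(G,X)$ structure" is precisely the membership condition defining $\mathcal{H}$, so that the theorem's conclusion translates without further argument into openness of $\mathcal{H}$.
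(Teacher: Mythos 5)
Your proposal is correct and matches the paper's (implicit) argument exactly: the corollary is stated without separate proof precisely because it is the pointwise application of the first assertion of the preceding theorem, with the closed hypothesis supplying the compactness the theorem requires. You correctly identify that all substantive work lives in the theorem and that the corollary is just the observation that each holonomy $\rho$ has a classical open neighborhood consisting entirely of holonomies.
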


\noindent 
Thus given that a representation $\rho\colon\pi_1(M)\to G$ is the holonomy of \emph{some} geometric structure, then nearby holonomies \emph{actually correspond} to nearby $(G,X)$ structures.
From this, one may hope that the holonomy actually locally determines everything, and $\hol$ is a local homeomorphism from deformation space.
This is called the Ehresmann-Thurston Principle, which holds in many cases, but is not true in complete generality (as Goldman notes in \cite{Goldman18}, Section 7.4, it was noticed by Kapovich \cite{Kap90} and Baues \cite{Baues10Crys} that this fails in specific cases, where local isotropy groups acting on $\Hom(\pi_1(M),G)$ may not fix marked structures on the corresponding fibers).

\noindent {\sffamily \bfseries Ehresmann-Thurston Principle:} The projection onto holonomy from deformation space $\hol\colon\mathcal{D}_{(G,X)}(M)\to\Hom(\pi_1(M),G)/G$ is a local homeomorphism, with respect to the described topology on $\mathcal{D}_{(G,X)}(M)$ and the quotient topology on $\Hom(\pi_1(M),G)/G$ induced from the classical topology on the real algebraic set $\Hom(\pi_1(M),G)$.

%A particularly nice collection of $(G,X)$ structures (called \emph{complete structures}) are completely determined by their holonomy, and so identify directly with their image under $\hol$.
%We discuss this special case in detail below.

\subsection{Example Deformation \& Moduli Spaces}

We conclude this section with some example deformation spaces of geometric structures.

\begin{example}
Deformation space of Riemannian metrics on $\S^1$ is diffeomorphic to $(0,\infty)$, parameterized by circumference.  The moduli space is $\R_+$ as well.
\end{example}

\begin{example}
The deformation space of conformal tori is the Hyperbolic plane, thought of as rotation-classes of unit co-area lattices $\mathsf{D}_{\E^2}(T^2)=\Hyp^2=\SL(2;\R)/\SO(2)$.
The moduli space is the modular curve $\mathcal{M}_{\E^2}(T^2)=\SL(2,\Z)\textbackslash\SL(2;\R)/\SO(2)$.
\end{example}

\begin{example}
Deformation space of unit area Euclidean $n$-tori is the homogeneous space $\mathcal{D}_{\E^n}(T^n)=\SL(n;\R)/\SO(n)$, and the moduli space is the double quotient by the orientation preserving mapping class group $\mathsf{Mod}(T^n)=\SL(n;\Z)$.
\end{example}

\begin{example}
The deformation space of hyperbolic structures on a genus $g$ surface is homeomorphic to an open ball $\mathcal{D}_{\Hyp^2}(\Sigma_g)\cong\R^{6g-6}$.  The moduli space is the quotient by the action of the mapping class group.	
\end{example}

\begin{example}
The deformation space of hyperbolic structures on a compact manifold of dimension $\geq 3$ is empty or a single point, by Mostow Rigidity.	
\end{example}

\begin{example}
The deformation space of complete affine structures on the torus is diffeomorphic to $\R^2$ \cite{Baues14}.
\end{example}

\begin{example}
The moduli space of complete affine structures on the torus natural identifies with the quotient of $\R^2$ by the linear action of $\SL(2;\Z)$.
This space is non-Hausdorff, and in fact admits no noncontsant continuous maps into any Hausdorff space.
The deformation space is much better behaved, and is diffeomorphic to the plane \cite{BauesGoldman05}.
\end{example}

\section{Degenerations and Regenerations}
\label{sec:Degen_and_Regen}
\index{Geometric Structures! Regeneration}
\index{Regeneration}

Example \ref{Ex:Hopf_Torus_Two} shows, in the context of similarity vs. $\C^\times$ structures,  that a particular developing pair may be fruitfully be viewed as providing a geometric structure into distinct geometries, and its properties depend on the chosen geometry.
This is an example of a more general phenomenon which occurs whenever a geometry $(H,Y)$ arises as a subgeometry of $(G,X)$.
Any $(H,Y)$ structure on $M$ is determined by a developing pair $(f\colon\tilde{M}\to Y, \rho\colon\pi_1(M)\to H)$ which under the inclusions $Y\subset X$, $H<G$ determines a $(G,X)$ structure.

\begin{definition}
Let $\mathbb{Y}=(H,Y)$ and $\mathbb{X}=(G,X)$ be geometries, and $\iota=(\iota_G,\iota_X)\colon (H,Y)\inject (G,X)$ be a fixed monomorphism.
Then $\iota$ induces a map $\iota_\star\colon\mathcal{D}_{(H,Y)}(M)\to\mathcal{D}_{(G,X)}(M)$ defined by $\iota_\ast[f,\rho]_{\mathbb{Y}}=[\iota_X f,\iota_G \rho]_{\mathbb{X}}$ called \emph{weakening}, allowing all $\mathbb{Y}$ structures to be canonically viewed as $\mathbb{X}$ structures.
\end{definition}

\noindent 
Note that if $Y\neq X$ then complete $(H,Y)$ structures are never complete as $(G,X)$ structures.
While the structure $\iota_\ast[f,\rho]$ is determined by the same developing pair as the original; the notion of equivalence has changed and developing pairs must be considered up to the action of $G$ and not just $H$.

\begin{example}
The deformation space of Euclidean tori is homeomorphic to $\R^3$, parameterized by rotation classes of marked planar lattices.
All planar lattices are conjugate by affine transformations so the image of $\mathcal{D}_{\E^2}(T^2)$ under weakening in $\mathcal{D}_{\A^2}(T^2)$ is a point.
\end{example}

\begin{remark}
We often say strengthening for the reverse process...which isn't a well-defined map on deformation space but is only defined for particular developing pairs.	
\end{remark}

\noindent
Weakening into a more flexible ambient geometry is often useful when considering collapse of geometric structures.
A sequence of geometric structures \emph{degenerates} if the developing maps fail to converge to an immersion even after adjusting by diffeomorphisms of $M$ and coordinate changes in $G$.  Of particular interest are \emph{collapsing degenerations}, defined below.

\begin{definition}
A sequence $\{[f_n,\rho_n]\}\subset\mathcal{D}_{(G,X)}(M)$ \emph{collapses} if, after possibly adjusting by diffeomorphisms of $M$ and coordinate changes in $G$, the developing maps converge to a submersion $f_\infty$ into a lower-dimensional submanifold, which is preserved by the action of the algebraic limit $\rho_\infty$ of the holonomy homomorphisms.
\end{definition}

\begin{example}
A trivial example is given by the collapse of Euclidean manifolds under volume rescaling.  Given a Euclidean structure $(f,\rho)$ on a manifold $M^n$ and any $r\in\R_+$, the developing pair $(rf, r\rho)$ describes the rescaled manifold with volume $r^n$ times that of the original.  As $r\to 0$ these structures collapse to a constant map and the trivial holonomy.
\end{example}

More interesting examples include the collapse of hyperbolic structures onto a codimension-1 hyperbolic space as studied by Danciger \cite{Danciger11,Danciger11Ideal,Danciger13} and the collapse of hyperbolic and spherical structures in \cite{Porti10,Porti98}.

\noindent
Collapsing geometric structures can often be `saved' by allowing more flexible coordinate changes.
If a geometry $(H,Y)$ can be realized as an open subgeometry of $(G,X)$ then a sequence $(f_n,\rho_n)$ of collapsing $(H,Y)$ structures may actually converge \emph{as $(G,X)$ structures}, meaning there are $g_n\in G$ such that the developing pairs $g_n.(f_n,\rho_n)$ converge to a $(G,X)$ developing pair $(f_\infty, \rho_\infty)$.

\begin{example}
The sphere $\S^2(\alpha,\beta,\gamma)$ with three cone points of cone angles $\alpha,\beta,\gamma$ has a hyperbolic structure if $\alpha+\beta+\gamma<2\pi$ and a spherical structure when their sum is greater than $2\pi$.  The area of these structures collapse to $0$ (in metrics of constant curvature $\pm 1$) as $\alpha+\beta+\gamma\to 2\pi$, but this collapse may be averted by conjugation in $\RP^2$, limiting to a Euclidean structure.  The picture below shows this for the case $\alpha=\beta=\gamma=t$ for $t\in [0,2\pi/3)\cup (2\pi/3,2\pi]$.
\end{example}

\begin{figure}
\centering\includegraphics[width=0.85\textwidth]{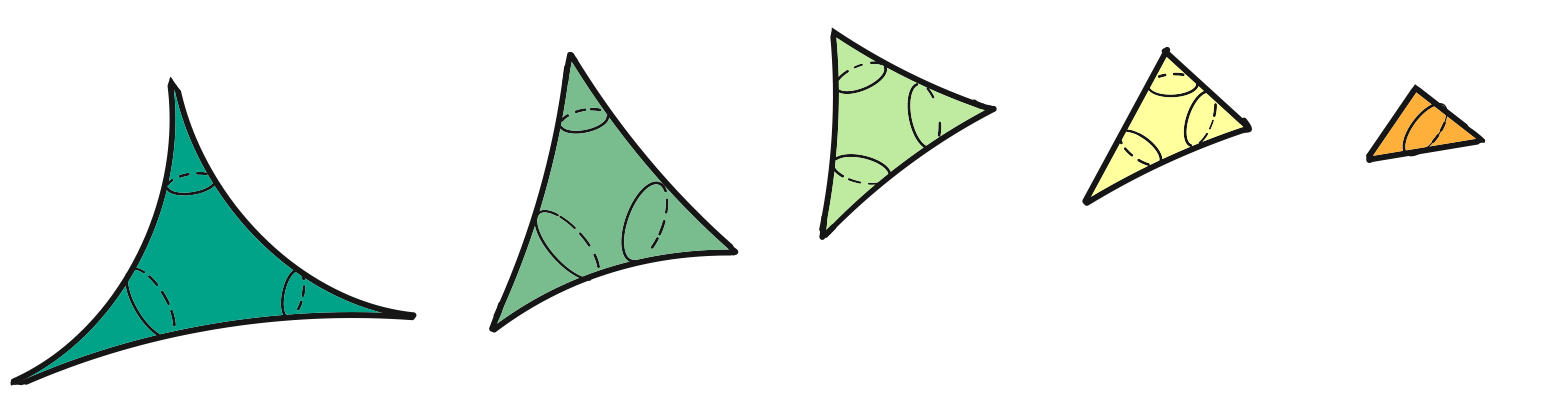}
\caption{Collapsing triangle orbifolds.}	
\end{figure}

\begin{example}
\label{Ex:Degen_Euc_Tori}
Let $\gamma\colon[0,\infty)\to\mathcal{T}_{\E^2}(T^2)$ be a collapsing path of unit area Euclidean structures on the torus (necessarily collapsing onto a circle). 
Weakening to affine geometry this is the constant path of affine translation tori, and so converges in $\mathcal{D}_{\A^2}(T^2)$ to the unique affine translation torus.
\end{example}

\noindent
When $f_\infty$ has image in an open subset $Z\subset X$ and $\rho_\infty$ maps into the subgroup $L<G$ of $Z$-preserving transformations, this $(G,X)$ strengthens to an $(L,Z)$ structure.
It is tempting to say that \emph{within} $(G,X)$ these $(H,Y)$ structures converge to an $(L,Z)$ structure.
Formalizing this notion motivates the field of \emph{transitional geometry}, discussed in \ref{chp:Limits_of_Geos}, and we will revisit Example \ref{Ex:Degen_Euc_Tori} again in Chapter \ref{chp:Heisenberg_Plane}, showing collapsing Euclidean tori rescale to a limit in the \emph{Heisenberg Plane}.

\begin{example}
Let $f\colon(0,1]\to\mathcal{D}_{\Hyp^2}(\S^1\times\R)$ be the path of hyperbolic cylinders with $f(x)$ the cylinder with geodesic neck of circumference $x$.
Viewed in the Klein model as a subgeometry of projective space, this sequence can be rescaled to have limiting projective structure the quotient of an affine patch by translation, which we may then view as a Euclidean cylinder.
\end{example}

\begin{figure}
\centering\includegraphics[width=0.5\textwidth]{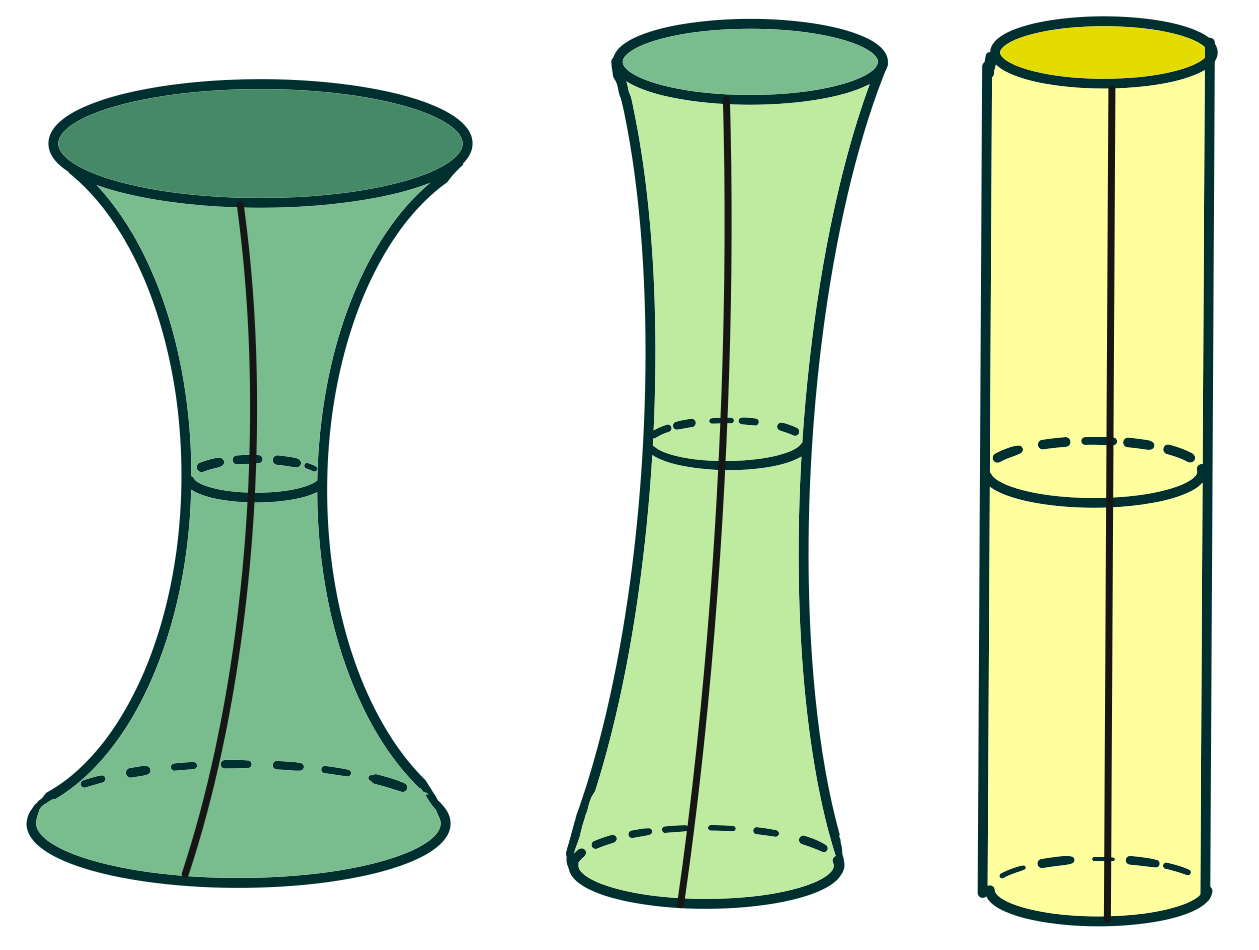}
\caption{Hyperbolic cylinders converging to a Euclidean cylinder.}	
\end{figure}

\begin{definition}
Let $(H,Y)$ and $(L,Z)$ be open subgeometries of $(G,X)$, and $[f_n,\rho_n]_{\mathbb{Y}}$ a collapsing sequence of $(H,Y)$ structures on a manifold $M$.
This sequence \emph{degenerates to an $(L,Z)$ structure in $(G,X)$} if there are representatives of the weakened structures $[f_n,\rho_n]_{\mathbb{X}}$ converging to a limiting $(G,X)$ developing pair $(f_\infty,\rho_\infty)$ with $f(X)\subset Z$ and $\rho_\infty(\pi_1(M))<L$.
\end{definition}

\begin{definition}
Let $(H,Y)$ and $(L,Z)$ be open subgeometries of $(G,X)$ and $[f,\rho]_\mathbb{Z}$ a $(L,Z)$ structure on a manifold $M$.
Then $[f,\rho]$ \emph{regenerates into $(H,Y)$} if there is a collapsing path of $(H,Y)$ structures on $M$ degenerating to $M$ in $(G,X)$.
\end{definition}

\begin{comment}
\begin{figure}
\centering\includegraphics[width=0.5\textwidth]{img}
\caption{Torus regenerating as cone tori, developing maps.}	
\end{figure}
\end{comment}

\section{Compactification}
\label{sec:Compactification}
\index{Moduli!Compactification}
\index{Blow Ups}

\begin{definition}
A \emph{compactification} of a space $X$ is a compact space $C$ together with an embedding $\iota\colon X\inject C$ with$\iota(X)$ open and dense in $C$.
\end{definition}

\begin{example}
The \emph{Thurston Compactification} of Teichm\"uller space adds to $\mathcal{T}(\Sigma_g)\cong\mathbb{B}^{6g-6}$ a sphere at infinity $\mathbb{S}^{6g-7}$ of points parameterizing degenerations of hyperbolic metrics, as singular measured foliations.	
\end{example}

\noindent
A compactification is connected if $X$ is, but disconnected spaces can also have connected compactifications (one compactification of the disjoint union of two open hemispheres is two closed disks, another is the sphere).
We call such connected compactifications \emph{simultaneous compactifications}, as they will be important in our discussion of the moduli of orthogonal groups in Chapter \ref{chp:Orthogonal_Groups}.

\begin{definition}
A \emph{simultaenous compactification} of a collection of spaces $\{X_i\}$ is a compact connected space $C$ together with an embedding $\iota\colon \sqcup_i X_i\inject C$ as an open dense subset.
\end{definition}

We will be thinking about compactifications of spaces of geometries, and thus mainly about compactification in the context of compactifying some parameter space of Lie groups.

\begin{observation}
Let $G$ be a locally compact topological group and $X\subset \Cl(G)$ a collection of closed subgroups.
The closure $\overline{X}\subset\Cl(G)$ is a compact space, called the \emph{Chabauty compactification} of $X$.	
\end{observation}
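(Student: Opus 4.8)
The plan is to exhibit $\overline{X}$ as a closed subspace of a compact space; the natural ambient compact space is $\Cl(G)$ itself, so the real content is to prove that the entire space of closed subgroups is compact in the Chabauty topology. First I would recall that the Chabauty topology is the subspace topology inherited from the Fell topology on the hyperspace $\mathcal{F}(G)$ of \emph{all} closed subsets of $G$, a subbasis for which is given by the \emph{miss-sets} $M_K=\{A\in\mathcal{F}(G)\mid A\cap K=\varnothing\}$ for $K\subset G$ compact, together with the \emph{hit-sets} $H_U=\{A\in\mathcal{F}(G)\mid A\cap U\neq\varnothing\}$ for $U\subset G$ open.

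The heart of the argument is the Fell compactness theorem: $\mathcal{F}(G)$ is compact whenever $G$ is locally compact Hausdorff. I would prove this with the Alexander subbasis lemma, so that it suffices to extract a finite subcover from an arbitrary cover by subbasic sets $\{M_{K_i}\}_{i\in I}\cup\{H_{U_j}\}_{j\in J}$. Assuming no finite subcover exists, set $A_0=G\smallsetminus\bigcup_{j\in J}U_j$, a closed set. By construction $A_0\cap U_j=\varnothing$, so $A_0\notin H_{U_j}$ for every $j$. If some $M_{K_i}$ contained $A_0$ then $A_0\cap K_i=\varnothing$, i.e. $K_i\subseteq\bigcup_j U_j$; compactness of $K_i$ yields finitely many $U_{j_1},\dots,U_{j_m}$ covering $K_i$, and then $M_{K_i}\cup H_{U_{j_1}}\cup\dots\cup H_{U_{j_m}}$ already covers $\mathcal{F}(G)$ (any closed set either misses $K_i$ or meets one of the $U_{j_\ell}$), contradicting the no-finite-subcover assumption. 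Hence $A_0$ lies in none of the covering sets, contradicting that they cover $\mathcal{F}(G)$; this establishes compactness.

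Next I would check that $\Cl(G)\subset\mathcal{F}(G)$ is \emph{closed}, for which the convenient tool is the Kuratowski characterization of convergence in the Fell topology: $A_\alpha\to A$ precisely when every point of $A$ is a limit of points drawn from the $A_\alpha$, and every subnet-limit of such points lies in $A$. Given a net of closed subgroups $A_\alpha\to A$, pick $a,b\in A$ and approximating $a_\alpha,b_\alpha\in A_\alpha$; since each $A_\alpha$ is a subgroup, $a_\alpha b_\alpha\inv\in A_\alpha$, and continuity of the group operations forces $a_\alpha b_\alpha\inv\to ab\inv$, whence $ab\inv\in A$. As $e\in A_\alpha$ converges to $e\in A$ as well, $A$ contains the identity and is closed under $(a,b)\mapsto ab\inv$, so $A$ is a subgroup. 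Thus $\Cl(G)$ is closed in $\mathcal{F}(G)$, hence itself compact. Finally $\overline{X}$ is by definition closed in $\Cl(G)$, so it is a closed subset of the compact space $\Cl(G)$ and therefore compact, which is the claim.

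The main obstacle is the Fell compactness step: although the Alexander-lemma computation above is short, it is the only place where local compactness of $G$ is genuinely used, and keeping the hit/miss bookkeeping correct is the delicate point. Verifying that $\Cl(G)$ is closed is comparatively routine once the Kuratowski description of Chabauty convergence is in hand, reducing entirely to continuity of multiplication and inversion.
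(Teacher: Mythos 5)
Your proof is correct, but it takes a genuinely different (and more self-contained) route than the paper. The paper treats this as an observation resting on two facts established nearby: compactness and metrizability of the Chabauty space $\Cl(X)$, obtained by restricting the Hausdorff metric on the hyperspace of the one-point compactification (with a citation for compactness), and a separate lemma showing the closed subgroups form a closed subset of the closed subsets --- proved there by a \emph{sequential} argument, which is legitimate precisely because metrizability has already been secured, and which is why the paper's lemma carries a second-countability hypothesis. You instead prove Fell compactness of the full hyperspace from scratch via the Alexander subbasis lemma on the hit-and-miss subbasis, and replace the sequential closedness argument with a net argument through the Kuratowski characterization of Fell convergence. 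What your approach buys is generality and independence: it needs no second countability and no metrization, so it proves the observation exactly as stated (for an arbitrary locally compact group), whereas the paper's sequential route silently imports the second-countable hypothesis of its closedness lemma. What the paper's approach buys is economy --- once the metric is in hand, sequences suffice and the whole observation is a two-line consequence. One small inaccuracy in your closing remark: local compactness is not actually consumed in your Alexander-lemma computation (only compactness of the sets $K_i$ is used, and Fell's hyperspace is compact for \emph{arbitrary} topological spaces); local compactness is what makes the Fell topology Hausdorff, and in the paper's framework metrizable, and what underwrites the Kuratowski description of convergence you invoke in the closedness step. This misattribution does not affect the validity of the proof.
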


\begin{definition}
Let $(G,X)$ be a geometry.
The natural map $\mathsf{st}\colon X\to\Cl(G)$ sending each $x\in X$ to its stabilizer $\mathsf{stab}_G(x)$ under the $G$ action is a continuous injection, and the closure of the image $\overline{\mathsf{st}(X)}\subset\Cl(G)$ is the \emph{Chabauty compactification} of the homogeneous $G$-space $X$.
\end{definition}

\noindent
Different compactifications of a space are suited to different purposes, and we will informally call a certain compactification \emph{good} when it respects particular additional structure inherent to the problem.

\begin{example}
The sphere, viewed as the Riemann Sphere $\widehat{\C}$ is a good compactification of the plane from the context of complex projective geometry.
The real projective plane is a good compactification of the plane in real projective geometry, as here we require a full circle of directions to go to infinity, instead of just one.	
\end{example}

\begin{example}
The Thurston compactification of Teichm\"uller space is a \emph{good compactification} of $\mathcal{D}_{\H^2}(\Sigma_g)$ 
as the closed ball $\mathbb{B}^{6g-6}$
in the sense that the action of the mapping class group extends continuously.	
\end{example}

\noindent
Our particular use of compactifications is in Chapter \ref{chp:Orthogonal_Groups}, where we seek to understand the possible degenerations of orthogonal groups as subgroups of $\GL(n;\R)$.
The particular context is described in detail there, but to compute such compactifications we will make use of elementary tools from Real algebraic geometry, including the theory of blow-ups, which we recount below.

\subsection{Blow Ups over $\R$}

The material in this section is all certainly standard, but is included in relative detail as there seems to be few good sources for topologists to learn to use blowup constructions in their work.
In particular, I could not find a suitable source, and developed the following perspective in collaboration with Nadir Hajouji.
Intuitively, \emph{blowing up a space $X$ about a subspace $Y$} produces a space which \emph{remembers} infintesimal information about paths in $X$ limiting onto points of $Y$.
This replaces $X$ with a new space, $\mathsf{Bl}_Y(X)$ formed from  $X\smallsetminus Y$ and the space of directions approaching $Y$ in $X$.

\noindent
Our approach differs from the usual algebro-geometric introduction, and defines blowups as the topological closure of a graph rather than a vanishing set of polynomials.
As a first introduction to this approach, we reconsider the blowup of $\R^n$ at a point.

\begin{definition}
The blow up of $\R^n$ at $0$ is the closure of the graph of $\iota\colon\R^n\smallsetminus\{0\}\to\RP^{n-1}$ defined by $\iota(x,y)=[x:y]$.
\end{definition}

The map $\phi$ associates to each $\vec{x}\in\R^n$ the point in $\RP^n\smallsetminus\{0\}$ represented by $\span(\vec{x})$, and so the 
graph $\Gamma(\iota)$ of $\iota$ contains all pairs $(\vec{x},[\vec{x}])$.
Note $\iota$ is constant on all lines through the origin, and so cannot have a well defined limit at $0$ as $\iota$ is not the constant map.
Instead, the closure of $\Gamma(\iota)$ contains the entire $\RP^{n-1}$ factor above $0\in\R^n$, corresponding to each direction from which one can approach $0$ in $\R^n$.
Defining $\Bl_0(\R^n)=\overline{\Gamma(\iota)}$ as a graph closure provides a natural map $\Bl_0(\R^n)\to\R^n$ projecting onto the original domain, which is naturally $1-1$ away from $0$, but collapses the entire $\RP^{n-1}$ there to a point.

\begin{figure}
\centering\includegraphics[width=0.7\textwidth]{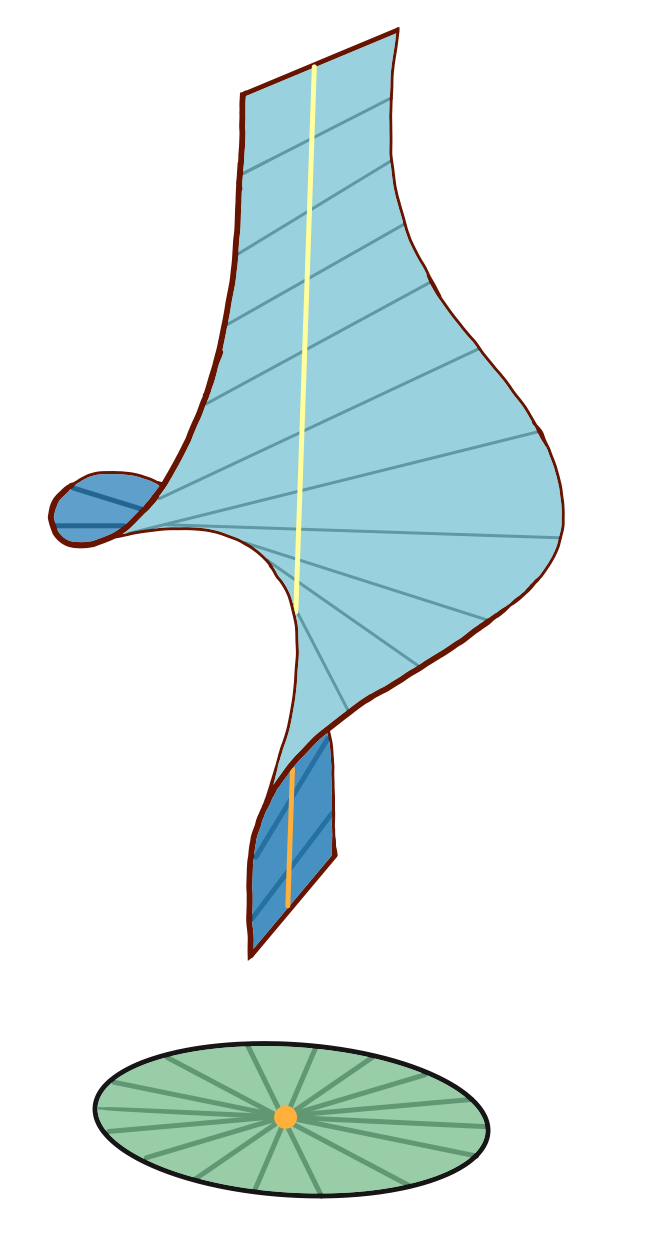}
\caption{The blow up in dimension 2.}
\end{figure}

\begin{observation}
The blow up $\mathsf{Bl}_{0}(\R^n)$ is an algebraic subvariety of $\R^n\times\RP^{n-1}$.
$$\mathsf{Bl}_{0}(\R^n)=\{((x_1,\ldots, x_n),[y_1,\ldots y_n])\mid x_iy_j-x_jy_i=0\}$$
\end{observation}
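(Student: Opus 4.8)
The plan is to prove the two sets agree by a direct double inclusion, exploiting the fact that the defining equations $x_iy_j - x_jy_i = 0$ are precisely the condition that the vectors $\vec{x}$ and $\vec{y}$ be linearly dependent. Write $V = \{(\vec{x},[\vec{y}]) \mid x_iy_j - x_jy_i = 0 \text{ for all } i,j\}$ for the candidate variety. Since each polynomial $x_iy_j - x_jy_i$ is homogeneous of degree one in the $y$-variables, its vanishing is independent of the representative chosen for $[\vec{y}]$, so $V$ is a well-defined closed subset of $\R^n \times \RP^{n-1}$.

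First I would show $\overline{\Gamma(\iota)} \subseteq V$. For any $\vec{x} \neq 0$ the point $(\vec{x}, [\vec{x}]) \in \Gamma(\iota)$ satisfies $x_ix_j - x_jx_i = 0$ trivially, so $\Gamma(\iota) \subseteq V$; as $V$ is closed, the closure of the graph is contained in $V$ as well.

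The reverse inclusion $V \subseteq \overline{\Gamma(\iota)}$ splits into two cases according to whether $\vec{x}$ vanishes. If $(\vec{x},[\vec{y}]) \in V$ with $\vec{x} \neq 0$, the equations $x_iy_j = x_jy_i$ force $\vec{y}$ to be a scalar multiple of $\vec{x}$, hence $[\vec{y}] = [\vec{x}]$ and the point already lies in $\Gamma(\iota)$. If instead $\vec{x} = 0$, then $(\vec{0},[\vec{y}])$ satisfies the equations for every $[\vec{y}] \in \RP^{n-1}$, and I would exhibit it as a limit of graph points by choosing a representative $\vec{y}$ and letting $\vec{x}_t = t\vec{y}$ with $t \to 0^+$: each $(\vec{x}_t, [\vec{x}_t]) = (t\vec{y}, [\vec{y}])$ lies in $\Gamma(\iota)$ and converges to $(\vec{0}, [\vec{y}])$. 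This shows the entire fiber $\{0\} \times \RP^{n-1}$ lies in the closure, completing the inclusion.

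I do not expect any serious obstacle here; the one point requiring a little care is verifying that the exceptional fiber over $0$ is recovered in full, which is exactly what the approaching sequences $\vec{x}_t = t\vec{y}$ accomplish, one direction at a time. The only remaining bookkeeping is to confirm the collinearity equations are well-posed on the projective factor, which follows from their $y$-homogeneity noted at the outset.
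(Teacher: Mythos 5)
Your proof is correct and follows essentially the same route the paper sketches around this observation: graph points trivially satisfy the collinearity equations, the equations force $[\vec{y}]=[\vec{x}]$ when $\vec{x}\neq 0$, and the full exceptional fiber $\{0\}\times\RP^{n-1}$ is recovered via the radial approach paths $\vec{x}_t=t\vec{y}$. The only point worth a half-sentence more is that in the case $\vec{x}\neq 0$ the forced scalar is nonzero (since $\vec{y}\neq 0$ as a projective representative), which is what legitimizes $[\vec{y}]=[\vec{x}]$.
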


\noindent
This special case directly generalizes to define the blowup of $Y$ in the product manifold $X=Y\times\R^k$.
Projecting onto the $\R^k$ factor collapses $Y$ to a point, and the blowup of $X$ along $Y$ is simply the product of the blowup of $\R^k$ above with $Y$.

\begin{definition}
\label{Def:Blowup_Product}
Let $Y$ be a smooth manifold, then the blowup of $Y\subset Y\times \R^k$ is the closure of the graph of $\iota\colon Y\times\R^k\smallsetminus Y\times\{0\}\to \RP^{k-1}$ defined by $\iota((y_1,\ldots y_n),(x_1,\ldots, x_k))=[x_1:\cdots x_k]$.
\end{definition}

\begin{observation}
This is just $Y$ times the blowup of $\R^k$ at $0$.	
\end{observation}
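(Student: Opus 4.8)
The plan is to reduce the statement to two elementary point-set facts: a homeomorphism of the ambient product spaces carries graph closures to graph closures, and taking the closure of a product commutes with the product whenever one factor is the whole space. Concretely, I want to exhibit the canonical factor-reshuffling homeomorphism between the two ambient spaces in which the respective blowups are defined, and check it restricts to the desired identification.

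First I would write down the two ambient spaces explicitly. By \cref{Def:Blowup_Product}, $\mathsf{Bl}_{Y\times\{0\}}(Y\times\R^k)$ is the closure of the graph $\Gamma(\iota)$ inside $(Y\times\R^k)\times\RP^{k-1}$, where $\iota(y,x)=[x]$ depends only on the $\R^k$ coordinate. On the other hand, $Y\times\mathsf{Bl}_0(\R^k)$ sits inside $Y\times(\R^k\times\RP^{k-1})$, where $\mathsf{Bl}_0(\R^k)=\overline{\Gamma(\iota_0)}$ for $\iota_0(x)=[x]$. There is a canonical homeomorphism reshuffling factors,
$$\tau\colon (Y\times\R^k)\times\RP^{k-1}\longrightarrow Y\times(\R^k\times\RP^{k-1}),\qquad \tau((y,x),\ell)=(y,(x,\ell)).$$
Because $\iota(y,x)=\iota_0(x)$ ignores the $Y$-coordinate, a direct computation gives
$$\tau(\Gamma(\iota)) = \{(y,(x,[x])) : y\in Y,\ x\in\R^k\smallsetminus\{0\}\} = Y\times\Gamma(\iota_0).$$

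Next I would transport closures through $\tau$. Since $\tau$ is a homeomorphism it commutes with closure, so $\tau(\mathsf{Bl}_{Y\times\{0\}}(Y\times\R^k)) = \overline{\tau(\Gamma(\iota))} = \overline{Y\times\Gamma(\iota_0)}$. Now I invoke the product-closure identity $\overline{A\times B}=\overline{A}\times\overline{B}$ with $A=Y$ (already closed, being the entire first factor) and $B=\Gamma(\iota_0)$, which yields $\overline{Y\times\Gamma(\iota_0)} = Y\times\overline{\Gamma(\iota_0)} = Y\times\mathsf{Bl}_0(\R^k)$. Composing, $\tau$ restricts to a homeomorphism $\mathsf{Bl}_{Y\times\{0\}}(Y\times\R^k)\cong Y\times\mathsf{Bl}_0(\R^k)$, and by construction it intertwines the two projections down to $Y\times\R^k$.

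There is essentially no deep obstacle here; the argument is formal once the diagram of ambient spaces is set up correctly. The only point genuinely requiring care is that the projectivization occurring in \emph{both} blowups uses only the $\R^k$ directions, so that the $\RP^{k-1}$ factor is literally common to the two constructions; this is precisely the observation that $\iota$ is pulled back from $\iota_0$ along the projection $Y\times\R^k\to\R^k$, and it is what makes the reshuffling map $\tau$ send one graph onto the product of the other graph with $Y$. Once that is recorded, the remainder is standard topology of product spaces.
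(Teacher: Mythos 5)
Your argument is correct: since $\iota$ is pulled back from $\iota_0$ along the projection to $\R^k$, the factor-reshuffling homeomorphism carries $\Gamma(\iota)$ to $Y\times\Gamma(\iota_0)$, and the identities $\tau(\overline{S})=\overline{\tau(S)}$ and $\overline{Y\times B}=Y\times\overline{B}$ (with $Y$ the full, hence closed, first factor) finish the job. The paper states this as an unproved observation, and your write-up is exactly the routine verification it leaves implicit, with the one genuinely salient point --- that the $\RP^{k-1}$ target is common to both graph constructions --- correctly identified.
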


\noindent 
Here similarly $\iota$ associates to a point $p\in X$ the point $[v]\in\RP^{k-1}$ giving the direction of the line segment connecting $p$ to $Y$ in a fixed slice $\R^k$.
Geometrically, this is the projective tangent vector $[v]\in \P T_y \R^k$ of the shortest geodesic connecting $x$ to the closest point $y\in Y$, for the product metric of Euclidean $\R^k$ with any Riemannian metric on $Y$.

\begin{comment}
\begin{figure}
\centering\includegraphics[width=0.5\textwidth]{img}
\caption{Blowing up a submanifold.}	
\end{figure}
\end{comment}

\noindent 
This in turn, is a special case of the blow up $\Bl_Y(\fam{E})$ of a vector bundle over $Y$. 

\begin{definition}
\label{Def:Blowup_VecBundle}
Let $\mathcal{E}\to Y$ be a $k$-dimensional real vector bundle over $Y$, and $\mathcal{P}\to Y$ the associated fiber bundle of projective spaces, with projection $\pi\colon\fam{E}\to\fam{P}$ over $Y$.
Then the blowup of $\fam{E}$ along $Y$ (identified with the zero section) is the closure of the graph of $\pi$ restricted to the submanifold $\fam{E}\smallsetminus Y$.
\end{definition}

\begin{observation}
This results in a fiber bundle $\Bl_Y(\fam{E})\to Y$ which effectively replaces each fiber $\R^k$ in $\fam{E}$ with $\Bl_0(\R^k)$.	
\end{observation}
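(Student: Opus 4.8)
The plan is to establish the claim by reducing everything to the local model via trivializations of $\fam{E}$, since the blow-up construction is manifestly natural with respect to the general linear group. First I would fix a cover of $Y$ by open sets $U_\alpha$ over which $\fam{E}$ trivializes, $\fam{E}|_{U_\alpha}\cong U_\alpha\times\R^k$, which simultaneously trivializes the associated projective bundle $\fam{P}|_{U_\alpha}\cong U_\alpha\times\RP^{k-1}$. Under these identifications the projection $\pi\colon\fam{E}\smallsetminus Y\to\fam{P}$ of \cref{Def:Blowup_VecBundle} restricts to the map $(y,v)\mapsto(y,[v])$, which is exactly the fiberwise form of the map $\iota$ appearing in \cref{Def:Blowup_Product}. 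The essential observation is that passing to the closure is a local condition: a point lies in $\overline{\Gamma(\pi)}$ precisely when every neighborhood of it meets $\Gamma(\pi)$, so the closure over $U_\alpha$ agrees with the closure of the restricted graph inside $(U_\alpha\times\R^k)\times_{U_\alpha}(U_\alpha\times\RP^{k-1})$.

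Next I would compute this local closure explicitly. The restricted graph is $\{(y,v,[v])\mid y\in U_\alpha,\ v\neq 0\}$, and since the $U_\alpha$-coordinate is free while the remaining data ranges over the graph of $v\mapsto[v]$ on $\R^k\smallsetminus\{0\}$, its closure is the product $U_\alpha\times\Bl_0(\R^k)$, with $\Bl_0(\R^k)\subset\R^k\times\RP^{k-1}$ the point blow-up defined earlier. This simultaneously proves local triviality of $\Bl_Y(\fam{E})\to Y$ with model fiber $\Bl_0(\R^k)$ and identifies the fiber over each $y\in U_\alpha$: it is $\{y\}\times\Bl_0(\R^k)$, canonically the blow-up $\Bl_0(\fam{E}_y)$ of the single vector space $\fam{E}_y$ at its origin. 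In particular the closure cannot ``leak'' between distinct fibers, since each local picture is a genuine product over $U_\alpha$.

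Finally I would upgrade local triviality to a fiber bundle structure by exhibiting the structure group. The point is that $\GL(k;\R)$ acts on $\R^k\times\RP^{k-1}$ by $g.(v,[w])=(gv,[gw])$, and this action preserves $\Bl_0(\R^k)$: it fixes the excised origin ($gv=0\iff v=0$) and preserves the incidence locus $\{(v,[w])\mid v\in\span(w)\}$ because $v=\lambda w$ forces $gv=\lambda\,gw$. Thus each transition function $g_{\alpha\beta}\colon U_\alpha\cap U_\beta\to\GL(k;\R)$ of $\fam{E}$ induces, via this action on the model fiber, a transition function for the charts $U_\alpha\times\Bl_0(\R^k)$, and these inherit the cocycle condition and smoothness directly from those of $\fam{E}$. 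This realizes $\Bl_Y(\fam{E})$ as a fiber bundle over $Y$ with fiber $\Bl_0(\R^k)$ and structure group a quotient of $\GL(k;\R)$, proving the observation. The only genuinely delicate point is the local-to-global compatibility of the closure used in the first step; everything else is a routine verification that the point blow-up is functorial under linear isomorphisms.
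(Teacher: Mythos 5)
Your argument is correct and is essentially the one the paper intends: the Observation is stated without proof precisely because the product case $\Bl_Y(Y\times\R^k)=Y\times\Bl_0(\R^k)$ of Definition~\ref{Def:Blowup_Product} is already in hand, so your combination of local trivializations with the locality of taking closures is exactly the implicit argument. Your closing verification that the transition functions act on the model fiber through the $\GL(k;\R)$ action on $\R^k\times\RP^{k-1}$ preserving the incidence locus (an action which is in fact faithful on $\Bl_0(\R^k)$, so the structure group is $\GL(k;\R)$ itself rather than a proper quotient) is a useful detail the paper leaves tacit.
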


\noindent 
This immediately allows a (relatively) coordinate-free description of the blow up about a submanifold $Y$ of a manifold $X$ via the tubular neighborhood theorem.

\begin{theorem}[Tubular Neighborhoods]
Let $X$ be a smooth manifold, and $Y\subset X$ a smooth submanifold with normal bundle $N_Y(X)\to Y$.
Then there is an open neighborhood $U\subset X$ of $Y$, a convex open neighborhood $V$ of the zero section $\iota_0\colon Y\to N_Y(X)$ of the normal bundle, and a diffeomorphism $\phi\colon V\to U$ such that the following commutes:

\begin{center}
\begin{tikzcd}
Y\ar[d,hook]\ar[dr,"\iota_0"]\\
X&N_Y(X)\ar[l,"\phi"]
\end{tikzcd}	
\end{center}

Rescaling, we may take $V=N_Y(X)$ without loss of generality.
Such a neighborhood $U$ is called a \emph{tubular neighborhood} of $Y$ in $X$.
\end{theorem}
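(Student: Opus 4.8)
The plan is to run the classical construction via the normal exponential map, and the only genuine work will be in upgrading a fiberwise local diffeomorphism into an honest diffeomorphism onto an open neighborhood of $Y$. First I would equip $X$ with a Riemannian metric $g$, which exists by paracompactness together with a partition-of-unity argument. With such a metric fixed, the normal bundle $N_Y(X)$ is canonically realized as the orthogonal complement $(TY)^\perp$ inside $TX|_Y$, so that a normal vector is literally a tangent vector to $X$ based at a point of $Y$ and perpendicular to $Y$ there.

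Next I would define the normal exponential map $E\colon N_Y(X)\dashrightarrow X$ by $E(y,v)=\exp_y(v)$, using the Riemannian exponential of $(X,g)$, which is defined on a neighborhood of the zero section. Along the zero section the tangent space to the total space splits canonically as $T_yY\oplus N_Y(X)_y$, and a direct computation shows $dE_{(y,0)}$ is the identity under this splitting: it restricts to the inclusion $T_yY\inject T_yX$ on the first factor, and to the canonical identification of $N_Y(X)_y$ with $(T_yY)^\perp$ on the second. By the inverse function theorem, $E$ is therefore a local diffeomorphism near each point of the zero section. Since $\exp_y(0)=y$, the map $E$ already restricts to the inclusion $Y\inject X$ along the zero section, which will supply the commutativity of the diagram.

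The hard part is to pass from this purely local statement to a diffeomorphism defined on a single open neighborhood of the whole zero section, and this is where noncompactness of $Y$ forces some care. The injectivity radius of $E$ may shrink to zero along $Y$, so no uniform tube $\set{|v|<\varepsilon}$ need work; instead I would produce a positive continuous function $\varepsilon\colon Y\to\R_{>0}$, built from paracompactness and a locally finite cover on which $E$ is injective, so that $E$ restricted to $V=\set{(y,v)\in N_Y(X)\mid |v|<\varepsilon(y)}$ is injective. A standard shrinking/tube-lemma argument then promotes $E|_V$ to a diffeomorphism onto an open set $U\supset Y$, and $V$ is fiberwise an open ball, hence convex in each fiber.

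Finally, for the rescaling remark I would compose with a fiberwise radial diffeomorphism $N_Y(X)\to V$, for instance $(y,v)\mapsto \left(y,\ \varepsilon(y)\,v/\sqrt{1+|v|^2}\right)$, which carries the full total space diffeomorphically onto $V$ while fixing the zero section pointwise. Precomposing $E|_V$ with this map yields the desired $\phi\colon N_Y(X)\to U$ with $V=N_Y(X)$, completing the construction; commutativity of the diagram persists since both maps restrict to the inclusion on the zero section.
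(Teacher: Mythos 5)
The paper does not actually prove this statement: the Tubular Neighborhood Theorem is quoted there as standard background (the section opens by declaring its material standard) in order to define the blowup $\Bl_Y(X)$ along a submanifold, so there is no in-paper argument to compare against. Your proposal is the classical proof via the normal exponential map, as in Lee's \emph{Introduction to Smooth Manifolds}, and it is essentially correct: the choice of metric, the identification $N_Y(X)\cong (TY)^\perp$, the computation that $dE_{(y,0)}$ is an isomorphism under the splitting $T_yY\oplus N_Y(X)_y$, and the fiberwise radial compression for the rescaling remark are all right.

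Two steps deserve tightening. First, injectivity of $E$ on $V=\{(y,v)\mid |v|<\varepsilon(y)\}$ does not follow merely from having a locally finite cover on whose patches $E$ is injective: two points in \emph{different} patches could still collide. The standard repair is the estimate $d_g\bigl(y,E(y,v)\bigr)\le |v|$, which shows that if $E(y,v)=E(y',v')$ then $d_g(y,y')<\varepsilon(y)+\varepsilon(y')$; choosing $\varepsilon$ to be at most half a locally uniform injectivity radius then forces both points into a single patch where $E$ is injective, so they coincide. Your appeal to a ``shrinking/tube-lemma argument'' gestures at this but omits the metric control that makes it work. Second, for the radial map $(y,v)\mapsto\bigl(y,\varepsilon(y)v/\sqrt{1+|v|^2}\bigr)$ to be a diffeomorphism you need $\varepsilon$ smooth, not merely continuous; replacing a continuous positive function by a smooth positive minorant fixes this and is routine.
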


\begin{definition}
Let $Y\subset X$ be an embedded submanifold of a smooth manifold $X$, and $U\subset X$ a tubular neighborhood of $Y$ identified with the normal bundle $N_Y(X)\to Y$ via the homeomoprhism $\phi\colon N_Y(X)\to U$.
Then the blowup $\Bl_Y(X)$ is defined as follows.  Form the blowup $\Bl_Y(N_Y(X))$ as in Definition \ref{Def:Blowup_VecBundle}, and note that the projection onto the domain $p\colon \Bl_Y(N_Y(X))\to N_Y(X)$ is a homeomorphism away from $Y$.  Thus $\phi \circ p\colon \Bl_Y(N_Y(X))\to U$ is a homeomorphism away from $Y$, and we define
$$\Bl_Y(X)=\Bl_Y(N_Y(X))\sqcup (X\smallsetminus Y)/\sim$$
where $x\in \Bl_Y(N_Y(X))\smallsetminus Y$ is related to $\phi(p(x))\in X\smallsetminus Y$. 
\end{definition}

\noindent 
We will have no direct need for this general construction here, as working locally in any coordinate chart every submanifold $Y\subset X$ looks like $\R^k\subset \R^n$ and we may construct a local model of the blowup directly using Definition \ref{Def:Blowup_Product}.
In fact, in our applications in Chapter \ref{chp:Orthogonal_Groups}, we do not set out with the intent of constructing a blowup but rather the closure of some embedding, and only after realize in coordinate charts that the result is actually a sequence of blowups.

\part{Geometric Transitions}

\thispagestyle{plain}

\noindent
%Part II contains the main results of the this thesis, which come from a combination of three inter-related projects involving geometric structures, deformation spaces and transitional geometry.

\vspace{0.5cm}
{\sffamily\bfseries\LARGE Limits of Geometries}
Reviews the standard definitions and examples of \emph{geometric transitions} in low dimensional topology.
We review the construction of the topology on the space of subgeometries of a Klein geometry $(G,X)$ through the Chabauty topology on its automorphism group $G$, and methods of computing in this space; particularly in the special case of \emph{conjugacy limits}.
We then review the classic example of a transition: the degeneration of both hyperbolic and spherical space to Euclidean in the limit as curvature approaches zero.
We provide a detailed exposition of formalizing this transition as a collection of \emph{subgeometries of projective space} as this is a model for more general conjugacy limits in $\GL(n;\R)$ such as those studied by Cooper, Danciger and Wienhard, which we review next.

\vspace{0.5cm}
{\sffamily\bfseries\LARGE Orthogonal Groups in $\GL(n;\R)$}
This chapter presents a new approach to the classification of conjugacy limits of the quadratic form geometries in $\RP^n$, recovering the results of Cooper, Danciger and Wienhard in \cite{CooperDW14}, while also providing a description of the \emph{Chabauty closure} of the set of orthogonal groups in $\GL(n;\R)$.
Most notably, the techniques utilized in this alternative approach do not require actually computing conjugacy limits along paths, and so may be applicable even in cases where it is no longer true that all limits occur via conjugation by one parameter subgroups.

\vspace{0.5cm}
{\sffamily\bfseries\LARGE The Heisenberg Plane}
The classification of limits of the quadratic form geometries $(\O(p,q),X(p,q))$ shows that each dimension has a unique \emph{most degenerate} geometry, to which all quadratic form geometries can degenerate to through conjugacy.
This chapter presents a detailed case study of this geometry in dimension two, which is given by the projective action of the Heisenberg group on the affine plane.
In particular, the closed orbifolds admitting Heisenberg structures are classified, and their deformation spaces are computed. 
Considering the regeneration problem, which Heisenberg tori arise as rescaled limits of collapsing paths of constant curvature cone tori is completely determined in the case of a single cone point.

\vspace{0.5cm}
{\sffamily\bfseries\LARGE $\Hyp_\C$ and $\Hyp_{\R\oplus\R}$}
Generalizing the construction of complex hyperbolic space, this chapter investigates the other analogs of hyperbolic geometry which can be created through substituting $\R$ with other two dimensional real algebras.
Up to isomorphism there are three such geometries, the familiar $\Hyp_\C^n$, together with $(\R\oplus\R)$ hyperbolic space and hyperbolic space over $\R[\ep]/(\ep^2)$.
A surprising connection between $\R\oplus\R$ hyperbolic space and the geometry of $\RP^n$ is unearthed as well.

\vspace{0.5cm}
{\sffamily\bfseries\LARGE The Transition $\Hyp({\R[\sqrt{\delta}]})^n$}
The algebras $\C$, $\R[\ep]/(\ep^2)$ and $\R\oplus\R$ represent three algebraic structures on $\R^2$ which can be deformed into one another.
In this chapter we show this continuity actually implies the existence of a new transition of geometries connection $\Hyp_\C$ to $\Hyp_{\R\oplus\R}$ through $\Hyp_{\R_\ep}$.
Together with the relationship between $\Hyp_{\R\oplus\R}^n$ and $\RP^n$, this provides a means of relating real projective and complex hyperbolic deformations of hyperbolic manifolds.

\clearpage

%NEWCHAPTER: SPACE OF SUBGEOMETRIES
\chapter{Limits of Geometries}
\label{chp:Limits_of_Geos}
\index{Limits of Geometries}
\index{Geometric Limits}
\index{Geometry!Limits}

\section{The Space of Closed Subgroups}
\label{sec:Closed_Subgroups}
\index{Limits of Geometries!Chabauty}
\index{Chabauty Topology}
\index{Space of Closed Subgroups}

Given a topological space $X$, the \emph{hyperspace of closed subsets} is denoted $\Cl(X)$.
When $X$ is a compact metric space, $\Cl(X)$ inherits a topology from the \emph{Hausdorff metric}.

\begin{definition}
\label{def:Hausdorff_Metric}
Let $(X,d)$ be a compact metric space and $\Cl(X)$ the hyperspace of closed subsets.
The metric $d$ induces a \emph{Hausdorff distance} on $\Cl(X)$, given by

$$d_H(A,B)=\max 
\big\{ 
\sup_{a\in A}\inf_{b\in B}d(a,b),\sup_{b\in B}\inf_{a\in A}d(a,b)
 \big\}
 $$
 $$
 =\inf\{\ep\geq 0\mid A\subset N_{\ep}(B) \textrm{ and } B\subset N_\ep(A)\}
 $$
 for $N_\ep(Y)$ the set of points lying at most distance $\ep$ in $(X,d)$ from some point of $Y$.
\end{definition}

\begin{figure}
\centering\includegraphics[width=0.5\textwidth]{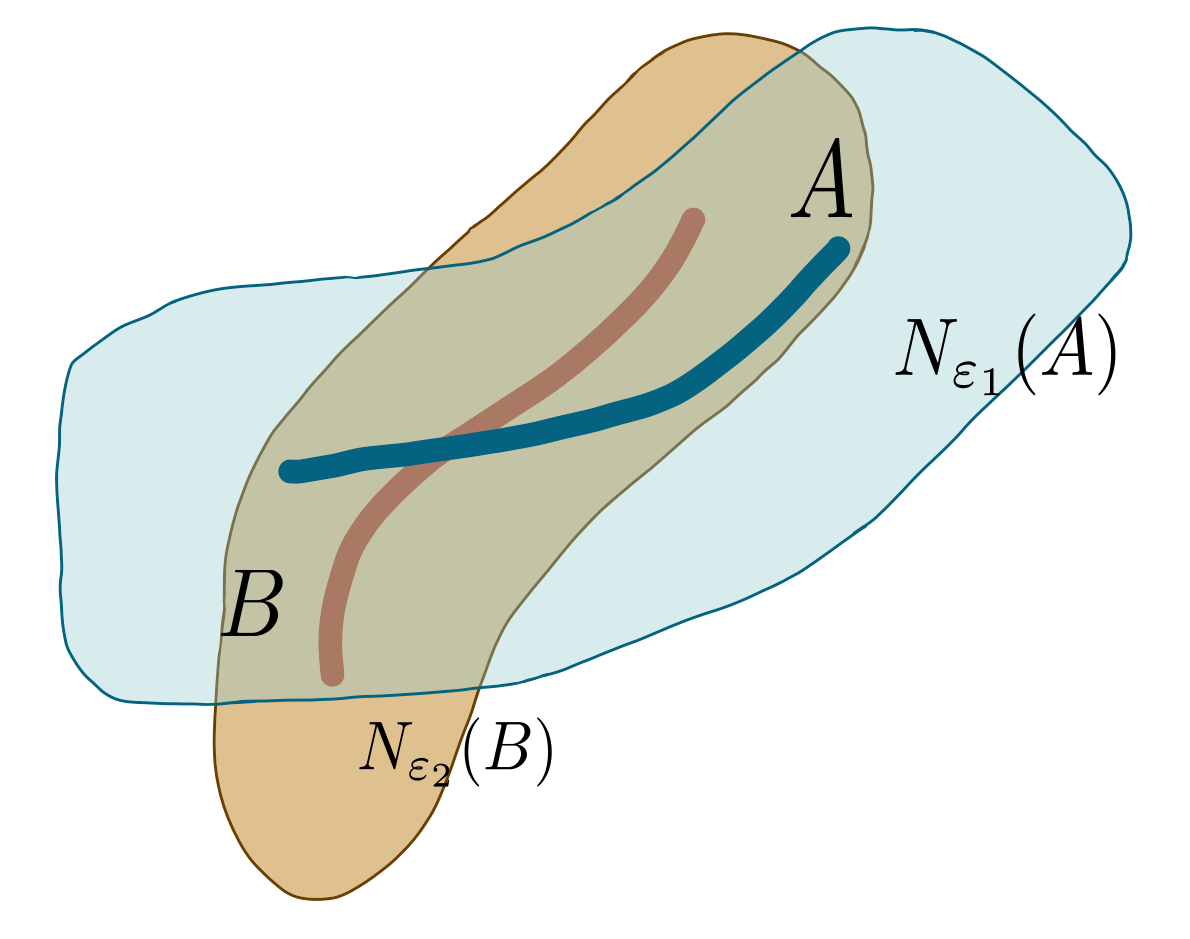}
\caption{Determining Hausdorff distance.}	
\end{figure}

\noindent
The \emph{Hausdorff topology} induced by this metric makes $\Cl(X)$ into a compact space.
More surprisingly perhaps, this topology is independent of the original metric on $X$, and so all metrizable compact spaces $X$ have a natural topology on $\Cl(X)$.
When $X$ is noncompact the formula given in Definition \ref{def:Hausdorff_Metric} fails to define a metric, as distances between sets can be infinite and disjoint closed sets can fail to be separated by any $\ep$ neighborhoods.

\begin{example}
Any two nonparallel lines in the plane are not contained in any $\ep$ neighborhood of each other and so have infinite Hausdorff distance.
\end{example}

\noindent
One method of extending the Hausdorff topology to noncompact spaces restricts the Hausdorff metric on the one point compactification.
This is justified by the lemma below, whose proof appears in Section 2 of \cite{BaikC13}.

\begin{lemma}
Let X be a second-countable, locally compact metrizable space.
Then the one point compactification $\overline{X}=X\cup\{\infty\}$ is metrizable.
\end{lemma}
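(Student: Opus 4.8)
The statement to prove is: \emph{If $X$ is a second-countable, locally compact metrizable space, then the one-point compactification $\overline{X}=X\cup\{\infty\}$ is metrizable.}

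\textbf{The plan.} The cleanest route is to invoke the Urysohn metrization theorem, which states that a topological space is metrizable provided it is regular (indeed $T_3$, i.e. Hausdorff and regular) and second-countable. So the strategy reduces to verifying three properties of $\overline{X}$: that it is Hausdorff, that it is regular, and that it is second-countable. First I would recall the definition of the one-point compactification: the open sets of $\overline{X}$ are the open sets of $X$ together with all sets of the form $\{\infty\}\cup(X\smallsetminus K)$ for $K\subset X$ compact. It is standard that $\overline{X}$ is compact and Hausdorff exactly when $X$ is locally compact Hausdorff, so Hausdorffness of $\overline{X}$ comes for free from our hypotheses; moreover a compact Hausdorff space is automatically regular (in fact normal), so regularity is also immediate once Hausdorffness is in hand.

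\textbf{The key step.} The genuine content, then, is showing that $\overline{X}$ is second-countable, and this is where the hypotheses must be used carefully. Let $\fam{B}$ be a countable basis for $X$. The naive guess is to adjoin neighborhoods of $\infty$, but there are a priori uncountably many compact sets $K$, so one cannot simply throw in all sets $\{\infty\}\cup(X\smallsetminus K)$. The fix is to exploit local compactness together with second-countability of $X$ to extract a countable basis of \emph{relatively compact} open sets. Concretely, I would first argue that the subfamily $\fam{B}'\subset\fam{B}$ consisting of those basic open sets $U$ with compact closure $\overline{U}$ in $X$ is itself a basis for $X$: given any point $p$ and open $W\ni p$, local compactness furnishes a compact neighborhood, and intersecting with a basic set inside $W$ produces a relatively compact basic open set sandwiched between $p$ and $W$. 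Then I would form finite unions of members of $\fam{B}'$; there are only countably many such finite unions, and each has compact closure.

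\textbf{Assembling the countable basis for $\overline{X}$.} With the countable collection $\fam{K}$ of finite unions $V$ from $\fam{B}'$ in hand, I would take as a proposed countable basis for $\overline{X}$ the family
\[
\fam{B}_{\overline{X}}=\fam{B}\;\cup\;\bigl\{\{\infty\}\cup(X\smallsetminus \overline{V})\;\mid\; V\in\fam{K}\bigr\}.
\]
This is a countable union of countable families, hence countable. To check it is a basis, points of $X$ are handled by $\fam{B}$ directly, so the only thing to verify is that every neighborhood of $\infty$ contains a member of the second family containing $\infty$. A basic neighborhood of $\infty$ has the form $\{\infty\}\cup(X\smallsetminus K)$ with $K$ compact; covering $K$ by finitely many relatively compact basic sets and letting $V$ be their union gives $K\subset V\subset\overline{V}$ with $\overline{V}$ compact, so $\{\infty\}\cup(X\smallsetminus\overline{V})$ is one of our basic sets and sits inside the given neighborhood of $\infty$. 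The main obstacle is precisely this countability bookkeeping at $\infty$: ensuring the basis stays countable while still generating all neighborhoods of the point at infinity, which is exactly what forces the passage to relatively compact basic sets and their finite unions. Once second-countability is established, Urysohn's theorem delivers metrizability of $\overline{X}$, completing the proof.
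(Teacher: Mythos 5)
Your proof is correct. Note that the paper itself does not prove this lemma — it simply cites Section 2 of \cite{BaikC13} — so there is no in-text argument to compare against; your write-up supplies the standard self-contained proof, and it holds up. The reduction to Urysohn is sound: $\overline{X}$ is compact Hausdorff because $X$ is locally compact Hausdorff (metrizable gives Hausdorff), hence normal and in particular regular, so metrizability hinges entirely on second countability of $\overline{X}$. You correctly identify the one place where care is needed — the neighborhood filter at $\infty$ — and your fix is the right one: local compactness plus second countability lets you pass to the countable subbasis of relatively compact basic open sets, and the countably many finite unions of these generate all neighborhoods of $\infty$, since any compact $K$ is covered by finitely many such sets $V$ with $K\subset V\subset\overline{V}$ and $\overline{V}$ compact, giving $\{\infty\}\cup(X\smallsetminus\overline{V})\subset\{\infty\}\cup(X\smallsetminus K)$. (Two pedantic points, neither a gap: closures of the $V$ taken in $X$ agree with closures in $\overline{X}$ because compact sets are closed in the Hausdorff space $\overline{X}$; and an open set of $\overline{X}$ meeting $X$ intersects $X$ in an open set of $X$, so your original countable basis of $X$ does handle all points of $X$.) An equivalent packaging of the same idea, worth knowing, is to observe that a second-countable locally compact space is hemicompact, i.e. admits a compact exhaustion $K_1\subset K_2\subset\cdots$ cofinal among all compacta, and then $\{\infty\}\cup(X\smallsetminus K_n)$ is a countable neighborhood basis at $\infty$; this is presumably the shape of the argument in the cited reference, and your version via finite unions of relatively compact basic sets proves exactly that fact along the way.
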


\begin{proposition}
Let $M$ be any manifold.  
The Hausdorff topology on $\Cl(\overline{M})$ restricts to $\Cl(M)$, and extends the Hausdorff topology on $\Cl(K)\subset\Cl(M)$ for every compact $K\subset M$.
\end{proposition}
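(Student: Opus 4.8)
The plan is to topologize $\Cl(M)$ by embedding it into the compact metrizable space $\Cl(\overline{M})$ and taking the subspace topology, and then to check this is compatible with the intrinsic Hausdorff topology on $\Cl(K)$ for each compact $K$. By the preceding Lemma the one-point compactification $\overline{M}=M\cup\set{\infty}$ is compact and metrizable, so fixing any compatible metric $\rho$ makes $(\Cl(\overline{M}),d_H^\rho)$ a compact metric space whose topology is, as already noted, independent of the choice of $\rho$.

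First I would pin down the comparison map, the key point being that the naive inclusion fails: a set $A$ closed in $M$ need not be closed in $\overline{M}$, since $\infty$ lies in the $\overline{M}$-closure of $A$ precisely when $A$ escapes every compact subset of $M$, i.e. when $A$ is noncompact. I would therefore define $\iota\colon\Cl(M)\to\Cl(\overline{M})$ by $\iota(A)=\overline{A}$ (closure in $\overline{M}$), so that $\iota(A)=A$ when $A$ is compact and $\iota(A)=A\cup\set{\infty}$ when $A$ is closed but noncompact. This $\iota$ is injective: since $M$ is a subspace of $\overline{M}$ and $A$ is closed in $M$, one has $\overline{A}\cap M=A$, so $A$ is recovered from $\iota(A)$. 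Declaring $\iota$ a topological embedding equips $\Cl(M)$ with the topology restricted from the Hausdorff topology on $\Cl(\overline{M})$, which is the first assertion.

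Next I would verify compatibility with compact pieces. Fix a compact $K\subset M$. The members of $\Cl(K)\subset\Cl(M)$ are the closed subsets of $M$ contained in $K$, hence compact, so $\iota$ restricts on $\Cl(K)$ to the ordinary inclusion $\Cl(K)\hookrightarrow\Cl(\overline{M})$. By transitivity of subspace topologies, the topology $\Cl(K)$ inherits through $\Cl(M)$ coincides with the one it inherits directly from $\Cl(\overline{M})$. Finally, because $K$ sits inside $(\overline{M},\rho)$ as a compact subspace and the Hausdorff distance is computed purely from points of the sets involved, the restriction of $d_H^\rho$ to $\Cl(K)$ is literally the Hausdorff metric $d_H^{\rho|_K}$ of $(K,\rho|_K)$. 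The metric-independence of the Hausdorff topology on the compact metrizable space $K$ then identifies this with the Hausdorff topology coming from any metric on $K$, in particular the one induced by the original metric on $M$; hence the restricted topology extends the intrinsic Hausdorff topology on every $\Cl(K)$.

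I expect the only genuine subtlety to be the correct choice of $\iota$, namely recognizing that passing to closures in $\overline{M}$ repairs the failure of closed-in-$M$ sets to be closed in $\overline{M}$, and checking that this repair is still injective. Once $\iota$ is seen to be an injective embedding, the compatibility with compact $K$ is routine, resting entirely on the already-cited fact that the Hausdorff topology on a compact metrizable space does not depend on the chosen metric.
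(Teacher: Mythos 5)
Your proposal is correct and follows essentially the same route as the paper: both topologize $\Cl(M)$ by pulling back along the injection $\Cl(M)\to\Cl(\overline{M})$ that fixes compact sets and sends a noncompact closed $F$ to $F\cup\{\infty\}$ (your $\iota(A)=\overline{A}$, with the same injectivity check via $\overline{A}\cap M=A$), and then verify compatibility over each compact $K\subset M$. The only cosmetic divergence is in that last verification: where the paper argues that the inclusion $\Cl(K)\hookrightarrow\Cl(M)$ is a continuous bijection from a compact space into a Hausdorff space, you instead observe directly that the Hausdorff metric of $\overline{M}$ restricted to $\Cl(K)$ is literally the Hausdorff metric of $(K,\rho|_K)$ and invoke metric-independence of the Hausdorff topology on compact metrizable spaces --- an equally valid, and if anything more explicit, version of the same step.
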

\begin{proof}
$M$ is second countable locally compact and metrizable, so the one point compactification $\overline{M}$ is metrizable, with metric 
$d_{\overline{M}}$.
Topologize $\Cl(\overline{M})$ with respect to the Hausdorff metric 
induced by $d_{\overline{M}}$.
The natural inclusion $M\inject \overline{M}$ induces an inclusion $\Cl(M)\inject\Cl(\overline{M})$ sending compact sets to themselves and noncompact closed sets $F\subset M$ to $F\cup\{\infty\}$.
We use this inclusion to pull back the topology on $\Cl(\overline{M})$ to a topology $\mathcal{T}_M$ on $\Cl(M)$.

For any compact $K\subset M$, choosing a metric on $K$ topologizes   $\Cl(K)$ via the Hausdorff topology.
Note that subset $U\subset\Cl(K)$ is open if and only if it is open in $\Cl(M)$ as everything is occurring in a compact set away from $\infty$.
That is, the natural inclusion map $\Cl(K)\inject\Cl(M)$ is continuous, and in fact a continuous bijection onto its image from the compact space $\Cl(K)$ into the Hausdorff space $\Cl(M)$.
Thus the inclusion is a homeomorphism, and $\mathcal{T}_M$ extends the Hausdorff topology on $K$.
\end{proof}

\begin{definition}
The Chabauty topology on $\Cl(M)$ is the restriction of the Hausdorff topology on $\Cl(\overline{M})$.	
\end{definition}

\noindent
This topology was introduced by Chabauty in 1950 \cite{Chabauty50} and independently by Fell in 1962 %CITE
.
Over the years it has went by a number of names, including the Chabauty Topology, Fell Topology, and geometric topology (due to Thurston).
For additional reference material, consult \cite{Fisher09,Fisher09-II}.
Some properties of the hyperspace $\Cl(X)$ topologized by the Chabauty topology are that it is compact and metrizable \cite{Biringer18}, independent of any further assumptions on the topology of $X$.
The Chabauty topology is a so-called \emph{hit-and-miss} topology on the hyperspace of closed sets, due to a particularly convenient description in terms of subbasic open sets.

\begin{definition}
The Chabauty topology on $\Cl(X)$ is generated by the subbasis $\mathcal{O}_{K,U}$ of open sets indexed by pairs of a compact $K$ and open $U$ in $X$.
$$\fam{O}_{K,U}=\{Z\in\Cl(M)\mid Z\cap U\neq \varnothing, Z\cap K=\varnothing\}$$
\end{definition}

\begin{figure}
\centering\includegraphics[width=0.5\textwidth]{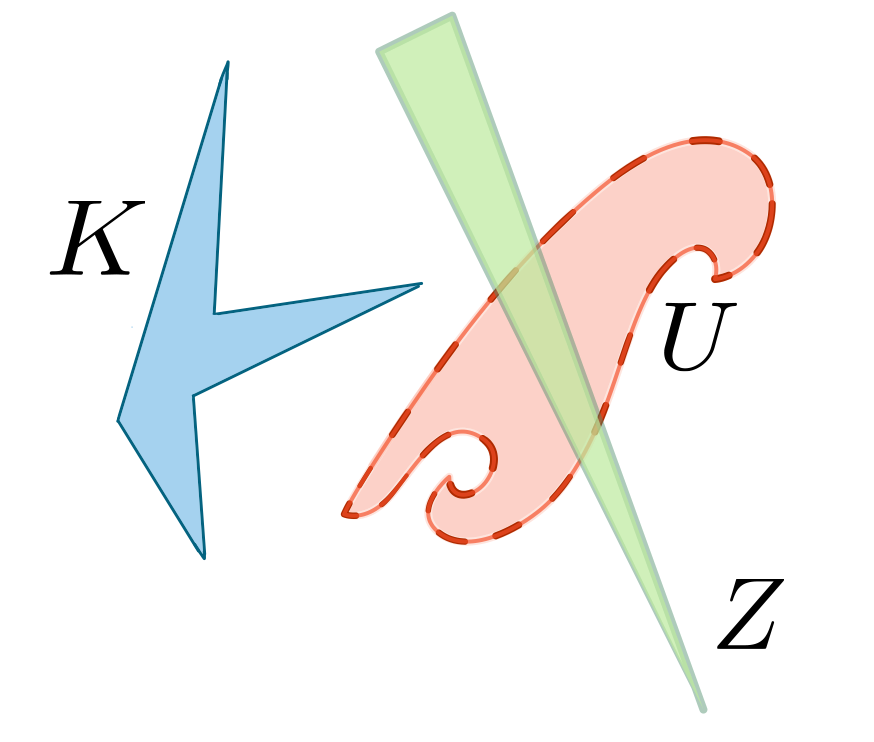}
\caption{Elements of the subbasic open set $\fam{O}_{K,U}$.}	
\end{figure}

\noindent
As $\mathfrak{C}(M)$ is metrizable, it is a sequential space and the Chabauty topology may be completely described by the convergence of sequences instead of specifying the open sets.

\begin{definition}
\label{def:Chab_Seqs}
The Chabauty topology on $\Cl(X)$ is the topology of \emph{subsequential convergence}: a sequence $\{Z_n\}\subset \Cl(X)$ converges to $Z_\infty$ 
if every subsequence $z_{n_k}\in Z_{n_k}$ of points converging in $X$ has limit $z_\infty\in Z_\infty$, 
and $Z_\infty$ is minimal with respect to this: every $z\in Z_\infty$ is the limit of some convergent subsequence $z_{n_k}\in Z_{n_k}$.
\end{definition}

\noindent
Continuity with respect to the Chabauty topology captures the notion closed subsets evolving into nearby closed sets.

\begin{example}
Let $f\colon[-1,1]\to\Cl(\R^3)$ be the function sending $t$ to the closed subvariety $f(t)=V(x^2+y^2-z^2-t)$.
Then $f$ is Chabauty continuous, and the hyperboloid of 2 sheets can transition to the hyperboloid of one sheet through a cone in $\R^3$.
\end{example}

\begin{figure}
\centering\includegraphics[width=0.85\textwidth]{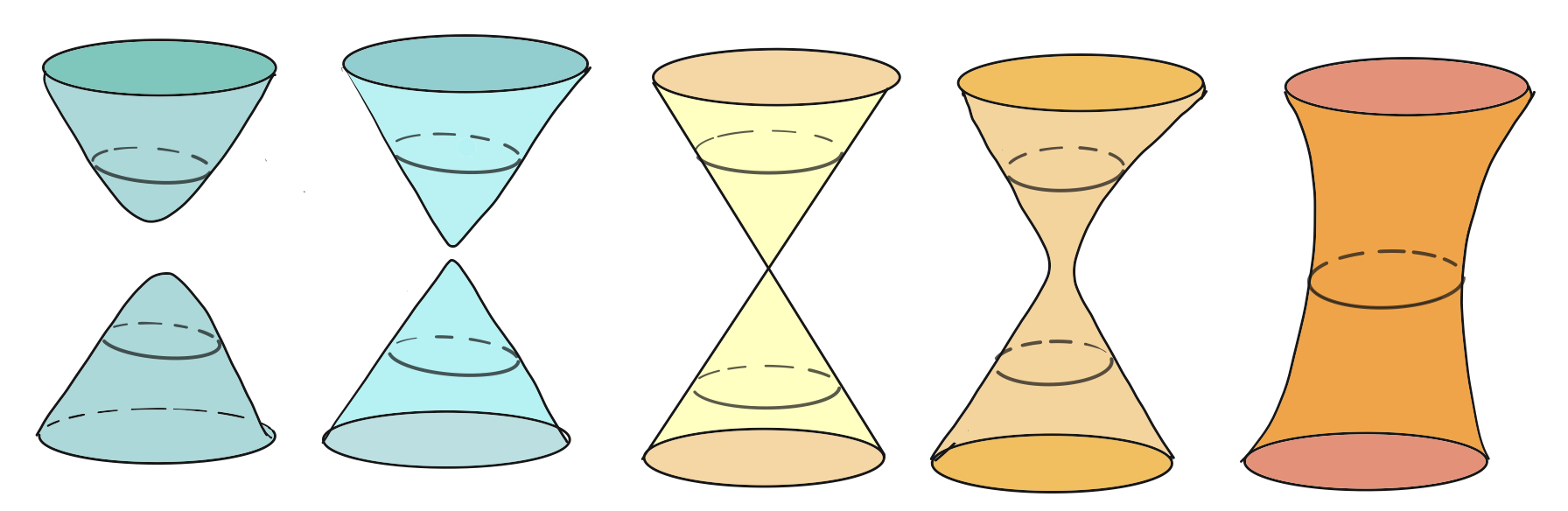}
\caption{The continuous path $V(x^2+y^2-z^2-t)$ of subvarieties of $\R^3$.}
\end{figure}

\noindent
Much wilder behavior is also possible, making the Chabauty space challenging to work with.
As an extreme case, the limit of a sequence of points can become a cube of arbitrary dimension.
The 1-dimensional case is given below.

\begin{example}
Let $f\colon (0,1]\to\R$ be the topologists' sine curve $f(t)=\sin(1/t)$ and	 consider associated map $\hat{f}\colon(0,t]\to\Cl(\R)$ given by $t\mapsto\{f(t)\}$.
Then $\widehat{f}$ extends continuously $0$ with $\widehat{f}(0)=[-1,1]$ the entire closed interval.
Thus in the Chabauty space of the line, a sequence of points can converge to a closed interval.
\end{example}

\noindent
When $G$ additionally has the structure of a topological group, our main interest is in the subset of $\Cl(G)$ of \emph{closed subgroups}.
This is closed in the full hyperspace of closed subsets, so limit points, closures, and compactification can be taken with respect to either space.

\begin{lemma}
The space of closed subgroups is closed in the space of closed subsets, for a second countable locally compact topological group $G$.
\end{lemma}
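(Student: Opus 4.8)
The plan is to show that the collection of closed subgroups is \emph{sequentially} closed in $\Cl(G)$, which suffices since the Chabauty topology on $\Cl(G)$ is metrizable (as recorded above for second countable locally compact $G$). So I would take a sequence $H_n$ of closed subgroups with $H_n\to H_\infty$ in the Chabauty topology and verify that the limit set $H_\infty$ is again a subgroup; being an element of $\Cl(G)$ it is automatically closed.

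First I would verify the three group axioms for $H_\infty$ using the sequential description of Definition \ref{def:Chab_Seqs} together with continuity of the group operations. For the identity, the constant sequence $e\in H_n$ converges to $e$, so the minimality half of Chabauty convergence places $e\in H_\infty$. For inversion, given $a\in H_\infty$ I would choose points $a_n\in H_n$ with $a_n\to a$; then $a_n^{-1}\in H_n$ and $a_n^{-1}\to a^{-1}$ by continuity of inversion, and the upper half of Chabauty convergence (every convergent sequence of points drawn from the $H_n$ has its limit in $H_\infty$) forces $a^{-1}\in H_\infty$. For multiplication, given $a,b\in H_\infty$ I would similarly pick $a_n,b_n\in H_n$ approximating $a$ and $b$, form $a_nb_n\in H_n$, use continuity of multiplication to get $a_nb_n\to ab$, and conclude $ab\in H_\infty$.

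The main obstacle is precisely the phrase ``pick $a_n,b_n\in H_n$'' in the multiplication step: I need $a_n$ and $b_n$ to live in the \emph{same} group $H_n$ for a common index $n$, so that the product $a_nb_n$ makes sense inside a single subgroup. This requires the lower half of Chabauty convergence to supply \emph{full} approximating sequences (indexed by all $n$), not merely subsequential approximants along possibly different index sets. I would address this by invoking the standard equivalent formulation of Chabauty convergence in terms of Kuratowski $\liminf$/$\limsup$, with $H_\infty=\liminf H_n=\limsup H_n$; the $\liminf$ condition provides exactly such full sequences.

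Alternatively --- and this is the cleanest way to sidestep the index-matching entirely --- I would argue directly that the complement is open using the hit-and-miss subbasis $\mathcal{O}_{K,U}$. Given a closed $F$ that is not a subgroup, it fails one of the axioms at witnessing points: either $e\notin F$, or some $a\in F$ has $a^{-1}\notin F$, or some $a,b\in F$ have $ab\notin F$. In each case I would use local compactness to choose a compact neighborhood $K$ of the missing point disjoint from $F$, and continuity of the operations to shrink neighborhoods $U$ of the witnesses so their image under inversion (resp.\ multiplication) lands inside $K$; then a finite intersection of subbasic sets $\mathcal{O}_{K,U}$ is an open neighborhood of $F$ consisting entirely of non-subgroups. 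This exhibits the non-subgroups as an open set, hence the closed subgroups as closed, and it never requires aligning two sequences.
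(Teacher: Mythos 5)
Your first argument is essentially the paper's own proof: take $H_n\to H_\infty$, pick approximating sequences $g_\ell,h_\ell\in H_\ell$, and push products and inverses through continuity of the group operations and the sequential description of Chabauty convergence. Where you differ is in honesty about the one delicate point: the paper simply writes ``let $g_\ell,h_\ell\in G_\ell$ be sequences converging to $g,h$'' even though its Definition of Chabauty convergence only guarantees convergent \emph{subsequences}, so the index-matching needed to form $g_\ell h_\ell$ inside a single $H_\ell$ is silently assumed there. Your observation that this requires the Kuratowski $\liminf$ formulation (or a short diagonal argument upgrading subsequential approximants to full sequences, using that every subsequence of $H_n$ still converges to $H_\infty$) is correct and patches a genuine gloss in the paper's write-up. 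Your alternative hit-and-miss argument is a genuinely different route the paper does not take: exhibiting the non-subgroups as open via finite intersections of subbasic sets $\mathcal{O}_{K,U}$, with $K$ a compact neighborhood of the missing point ($e$, $a^{-1}$, or $ab$) and $U$ (and $V$) shrunk by continuity so that inverses or products land in $K$. That version needs no metrizability and no sequences at all, so it proves closedness of the subgroups in $\Cl(G)$ for any locally compact Hausdorff group, whereas the sequential argument --- yours and the paper's --- leans on the metrizability of $\Cl(G)$ recorded earlier in the chapter. Both are valid; the second is the cleaner and more general of the two.
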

\begin{proof}
Let $G_n$ be a sequence of closed subgroups of $G$, converging in $\Cl(G)$ to a limiting point $G_\infty$.
Let $g,h\in G_\infty$.  We now show that $gh$ and $g\inv\in G_\infty$, so that $G_\infty<G$ is a subgroup.
Let $g_\ell,h_\ell\in G_\ell$ be sequences converging to $g,h$ in $G$, and consider their product $g_\ell h_\ell\in G_\ell$.
This sequence converges as both factors do; and as $G_n\to G$, the limit $gh\in G_\infty$.
Similarly, for each $\ell$ the sequence $g_\ell\inv$ lies in $G_\ell$ and converges to $g\inv$ in $G$; thus $g\inv\in G_\infty$ so $G_\infty$ is a group.
\end{proof}

\noindent
We repurpose the notation $\Cl(G)$ to mean the hyperspace of \emph{closed subgroups} when $G$ is a topological group.
While much more manageable than the entire hyperspace of closed subsets, the topology on $\Cl(G)$ is still difficult to work with in general.

\begin{example}[The space $\Cl(\R)$]
A closed subgroup of $\R$ is either $\R$ itself or discrete and so  either trivial or isomorphic to $\Z$.
Thus the Chabauty space is 
homeomorphic to the closed interval $[0,\infty]$, via the map $f\colon[0,\infty]\to\Cl(\R)$ with $f(0)=\R$, $f(\infty)=\{0\}$ and $f(\alpha)=\alpha \Z$.
\end{example}

\begin{figure}
\centering\includegraphics[width=0.5\textwidth]{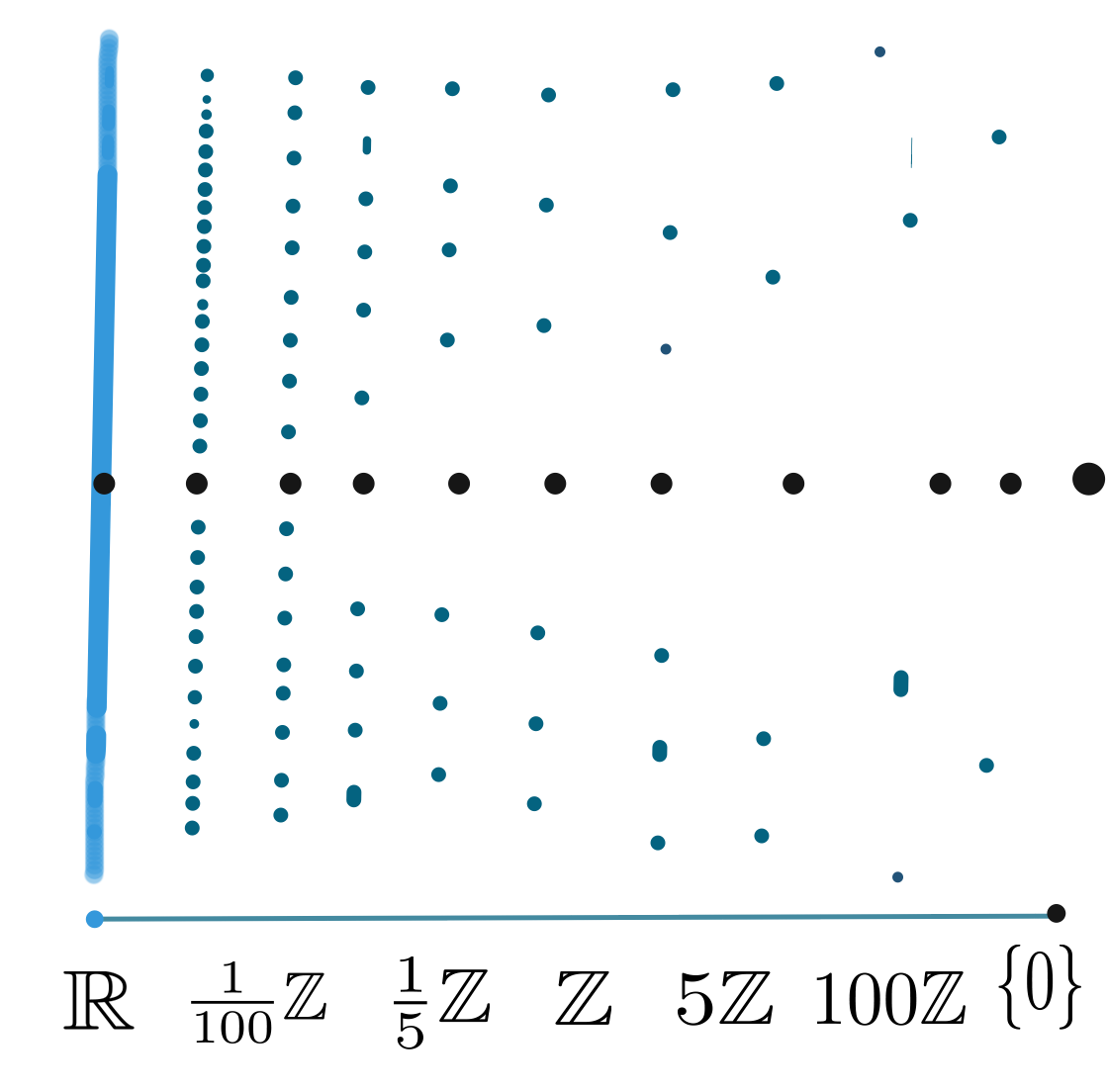}
\caption{Points in the Chabauty space $\Cl(\R)$.}	
\end{figure}

\begin{example}[The space $\Cl(\C)$]
\label{ex:Cl_C}
A closed subgroup of the plane is either $\{0\},\R,\R^2$, or $\Z,\Z^2,\Z\times\R$.
By the work of Hubbard and Pourezza \cite{PourezzaH79}, $\Cl(\C)$ is homeomorphic to the 4-sphere, realized as the suspension of $\S^3$ with suspension points $\{0\}$ and $\R^2$.
The lattices form an open dense subset, and their complement is a non-flatly embedded $2$ sphere of degenerations, which is the suspension of a trefoil knot in t $\S^3$.
The Chabauty spaces of $\R^n$ have been studied by Kloeckner \cite{Kloeckner09}, though are no longer manifolds for $n>2$.
\end{example}

\begin{figure}
\centering\includegraphics[width=0.5\textwidth]{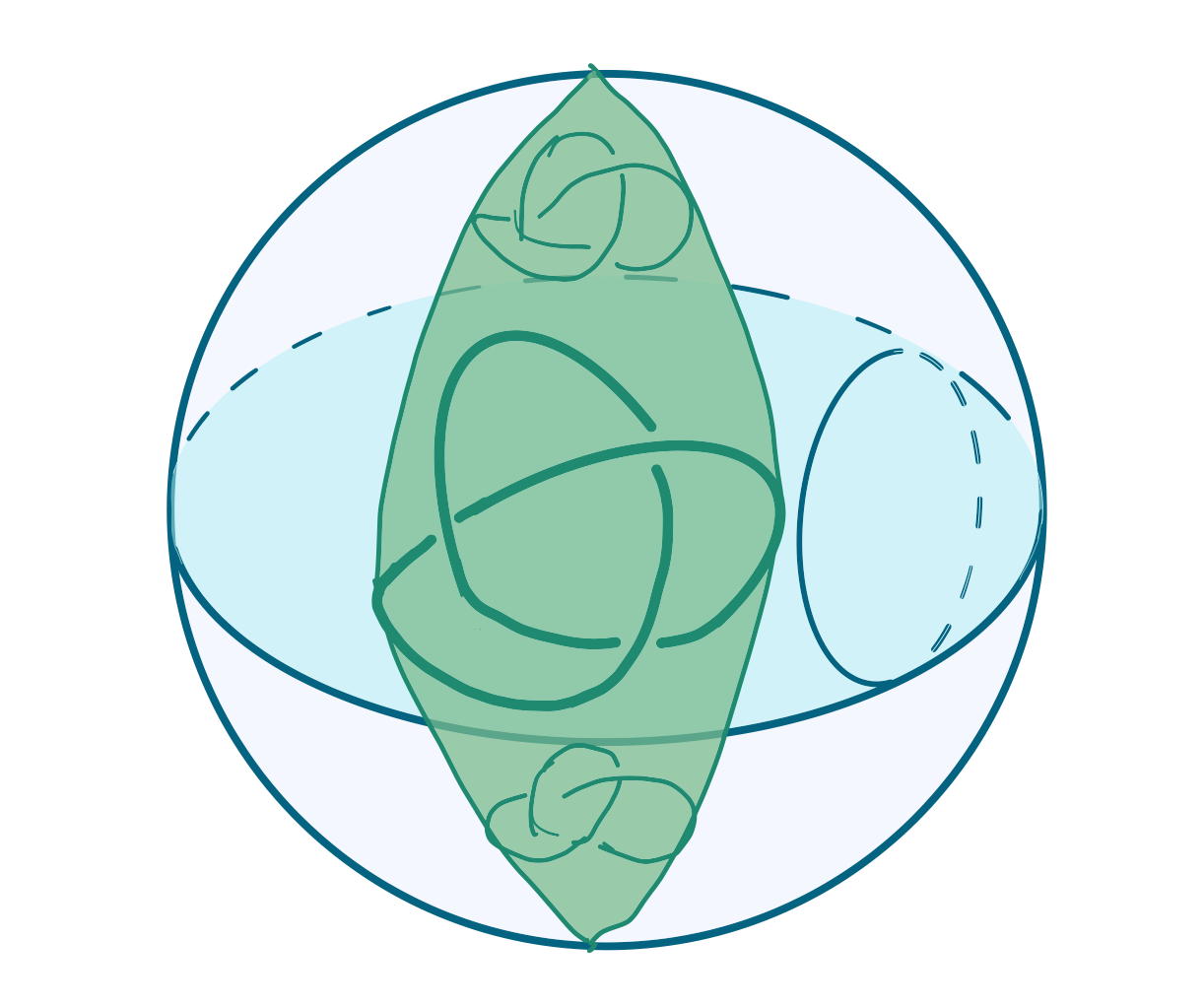}
\caption{The Chabauty space $\Cl(\C)$.
The suspension of the trefoil knot, in green, represents the subgroups of $\C$ which are not lattices.}	
\end{figure}

\noindent
Limit points of a collection $\mathcal{S}\subset\Cl(G)$ represent ways that the elements of $\fam{S}$ can \emph{degenerate} inside of $G$.
A common use for this is understanding the limiting behavior of subgroups of a Lie group $G$ under conjugacy, as studied by Haettel \cite{Haettel10,Haettel11,Haettel12}, as well as Leitner \cite{Leitner14,Leitner16,Leitner16-Cusp}.
Focusing on the Cartan subgroup of $\SL(n;\R)$ this work has been applied by Ballas, Cooper and Leitner to the study of cusps on real projective manifolds \cite{BallasCL17}.
Our use for the Chabauty space $\Cl(G)$ is as a means of topologizing the space of subgeometries of a fixed geometry $(G,X)$.
This allows us to talk about continuous variation of subgeometries, as well as take limits.

\section{The Space of Subgeometries}
\label{sec:Space_of_Subgeos}
\index{Geometries!Space of Subgeometries}
\index{Limits of Geometries! Space of Subgeometries}

Fixing a geometry $(G,(X,x))$, recall that a \emph{subgeometry} is a pair $(H,(Y,x))$ of a closed subgroup $H$ acting transitively on a submanifold $Y\subset X$. 
The set of subgeometries of $(G,(X,x))$ is denoted $\mathfrak{S}_{(G,X)}$.
An \emph{open subgeometry} of $(G,(X,x))$ is a pair $(H,(Y,x))$ of a closed subgroup $H<G$ acting transitively on an \emph{open submanifold} $Y\subset X$, with the set of open subgeometries of $(G,X)$ denoted $\mathfrak{S}^O_{(G,X)}\subset\mathfrak{S}_{(G,X)}$.
Limits of open subgeometries of the group-space variety were first formalized by Cooper, Danciger and Wienhard in \cite{CooperDW14}.
Utilizing the equivalence of categories between the Group-Space and Automorphism-Stabilizer perspectives, we find it more convenient to topologize the space of subgeometries of $(G,(X,x))\cong(G,\mathsf{stab}_G(x))$ using only the topology of $\Cl(G)$.

\begin{definition}
\label{def:Subgeo_Space}
The space of subgeometries of $(G,K)$ is given by $\mathfrak{S}_{(G,K)}=\{(H,C)\mid H\in\Cl(G),\; C=H\cap K\}$, topologized as a subset of $\Cl(G)\times\Cl(K)$
\end{definition}

\begin{definition}
The space of open subgeometries of $(G,X)$ is given by $\mathfrak{S}^O_{(G,K)}=\{(H,C)\mid H\in\Cl(G),\; C=H\cap K,\;\dim{G}-\dim K=\dim H-\dim(H\cap K)\}$, topologized as a subset of $\mathfrak{S}_{(G,K)}\subset\Cl(G)\times\Cl(K)$.
\end{definition}

\noindent
Having a topology on the set of subgeometries allows us to make precise the notion of a \emph{limit of geometries}: a sequence $(H_n,Y_n)$ of subgeometries of $(G,X)$ is \emph{convergent} if it converges in $\mathfrak{S}_{(G,X)}$.
The particular limits of interest here are \emph{conjugacy limits}, as studied by Cooper Danciger and Wienhard in \emph{Limits of Geometries}.
The definition in \emph{Limits of Geometries} differs from this in wording but is equivalent in practice, as we show below.

\begin{definition}[Conjugacy Limit]
\label{Def:Conj_Lim}
A sequence $(H_n,Y_n)$ converges in as subgeometries if it converges in $\mathfrak{S}_{(G,X)}$.
A subgeometry $(L,Z)$ is a \emph{conjugacy limit} of $(H,Y)$ in $(G,X)$ if there is a sequence $\{g_n\}\subset G$ such that $g_n.(H,Y)=(g_n Hg_n\inv, g_n Y)$ converges in $\mathfrak{S}_{(G,X)}$.
\end{definition}

\begin{definition}[Conjugacy Limit: Cooper Danciger \& Wienhard]
\label{Def:CDW_Limit}
A sequence of subgeometries $(H_n,Y_n)<(G,X)$ converges to the subgeometry $(L,Z)<(G,X)$
if $H_n$ converges geometrically to $L$ and there exists $z\in Z\subset X$ such that for all $n$ sufficiently
large $z\in Y_n$.
We say that a subgeometry $(L,Z)$ is a \emph{conjugacy limit} (or just \emph{limit}) of $(H,Y)$ in $(G,X)$ if there is a sequence $g_n\in G$ such that the conjugate subgeometries $(g_nHg_n\inv, g_nY)$ converge to $(L,Z)$.	
\end{definition}

\begin{proposition}
Let $(L,Z)$ be a conjugacy limit of $(H,Y)$ in $(G,X)$ by the original definition of Cooper Danciger and Wienhard.
Then there is a choice of basepoints such that $(L,(Z,z))$ is a conjugacy limit of $(H,(Y,z))$ in $(G,(X,z))$ in the sense of Definition \ref{Def:Conj_Lim}. 
\end{proposition}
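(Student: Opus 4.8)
The plan is to unpack both definitions down to a single concrete assertion about Chabauty convergence of point stabilizers, and then verify that assertion through a standard upper/lower semicontinuity argument. Suppose $(L,Z)$ is a conjugacy limit of $(H,Y)$ in the sense of Definition \ref{Def:CDW_Limit}: there is a sequence $g_n\in G$ with $H_n:=g_nHg_n\inv\to L$ geometrically in $\Cl(G)$, together with a point $z\in Z$ such that $z\in Y_n:=g_nY$ for all large $n$. Since replacing $(H,Y)$ by a fixed conjugate alters neither the hypothesis nor the conclusion, I would first normalize so that $z\in Y$ as well (pre-conjugate by some $g_N$ with $z\in g_NY$, and replace $g_n$ by $g_ng_N\inv$). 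I then take $z$ itself as the common basepoint, setting $K=\mathsf{stab}_G(z)$, so that $(G,(X,z))\cong(G,K)$ and the pointed subgeometries $(H,(Y,z))$ and $(L,(Z,z))$ are represented in $\mathfrak{S}_{(G,K)}$ by $(H,H\cap K)$ and $(L,L\cap K)$ (Definition \ref{def:Subgeo_Space}).

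The reduction step is to observe that for all large $n$ the condition $z\in Y_n$ forces $Y_n=H_n.z$, so the conjugate $g_n.(H,Y)$ is exactly the subgeometry represented by $(H_n,H_n\cap K)$, with point stabilizer $H_n\cap K=\mathsf{stab}_{H_n}(z)$. Thus convergence in $\mathfrak{S}_{(G,K)}\subset\Cl(G)\times\Cl(K)$ amounts to two conditions: $H_n\to L$, which is the given geometric convergence, and $H_n\cap K\to L\cap K$ in $\Cl(K)$. Everything therefore reduces to this second convergence, which I would check against the sequential criterion of Definition \ref{def:Chab_Seqs}.

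The easy half is that no mass escapes: if a subsequence $c_{n_j}\in H_{n_j}\cap K$ converges in $G$, its limit lies in $L$ because $H_n\to L$ and in $K$ because $K$ is closed, hence in $L\cap K$. The substantive half is the reverse, that each $\ell\in L\cap K$ is approximated by elements $c_n\in H_n\cap K$. I would choose $h_n\in H_n$ with $h_n\to\ell$, available since $H_n\to L$. As $\ell\in K$ fixes $z$, the points $p_n:=h_n.z$ converge to $z$, and both $p_n$ and $z$ lie in the orbit $Y_n$. I then seek a correction $a_n\in H_n$ with $a_n.p_n=z$ and $a_n\to e$; granting this, $c_n:=a_nh_n$ satisfies $c_n.z=z$ (so $c_n\in H_n\cap K$) and $c_n\to e\cdot\ell=\ell$, completing the argument.

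The main obstacle is producing the correcting elements $a_n$ with $a_n\to e$. Existence of \emph{some} $a_n\in H_n$ with $a_n.p_n=z$ is immediate from transitivity of $H_n$ on $Y_n$; the content is uniform control as $p_n\to z$. I would obtain this from a uniform local section of the orbit submersions $\beta_n\colon H_n\to Y_n$, $h\mapsto h.z$. Concretely, geometric convergence $H_n\to L$ yields convergence of Lie algebras $\mathfrak{h}_n\to\mathfrak{l}$ in the appropriate Grassmannian; the differential $(d\beta_n)_e$ is the fundamental-vector-field map $\xi\mapsto\xi_z$ with kernel $\mathfrak{h}_n\cap\mathfrak{k}$; and choosing complements converging to a complement of $\mathfrak{l}\cap\mathfrak{k}$ in $\mathfrak{l}$ lets one apply the inverse function theorem with estimates uniform in $n$. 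This produces $\xi_n\to 0$ with $\exp(\xi_n).z=p_n$, whence $a_n:=\exp(\xi_n)\inv\in H_n$ works. Controlling the possible drop in orbit dimension in the limit, so the chosen complements do not degenerate, is the delicate point, and is exactly where the assumption that $z$ is a common orbit point of all the $Y_n$ and of $Z$ is essential.
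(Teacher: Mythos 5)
Your reduction is the same as the paper's: normalize so that $z\in Y$, pass to the automorphism--stabilizer picture at the common basepoint $z$, and reduce everything to the single Chabauty assertion $\mathsf{stab}_{H_n}(z)=H_n\cap K\to L\cap K=\mathsf{stab}_L(z)$ in $\Cl(K)$. The paper disposes of that assertion in one parenthetical clause (``converges, as a sequence of subgroups of a convergent sequence of groups, to the limiting stabilizer of $z$''), whereas you correctly observe that the upper inclusion is formal and the lower inclusion $L\cap K\subset\lim (H_n\cap K)$ is the entire content. But your proof of the lower inclusion has a genuine gap at exactly the spot you flag: the uniform inverse-function-theorem step needs local sections of the orbit maps $\beta_n\colon H_n\to Y_n$ of size bounded below uniformly in $n$, i.e.\ complements $\mathfrak{m}_n$ of $\mathfrak{h}_n\cap\mathfrak{k}$ in $\mathfrak{h}_n$ on which $(d\beta_n)_e$ is uniformly bounded below, ``converging to a complement of $\mathfrak{l}\cap\mathfrak{k}$ in $\mathfrak{l}$.'' No such choice need exist: $\dim\mathfrak{m}_n=\dim Y_n$ while a complement of $\mathfrak{l}\cap\mathfrak{k}$ in $\mathfrak{l}$ has dimension $\dim Z$, and nothing in Definition \ref{Def:CDW_Limit} as quoted forces these to agree. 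The common-orbit-point hypothesis, which you say is ``exactly'' what controls this, does not.

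Indeed the statement, read against the quoted definition, fails outright in the degenerate case your uniformity claim would have to handle. In $G=\SL(2;\R)$ acting on $X=\RP^1$, take $(H,Y)=(\SO(2),\RP^1)$ and $g_n=\diag(n,1/n)$; then $H_n=g_nHg_n\inv\to L=\left\{\pm\smat{1&t\\0&1}\right\}$, and $(L,Z)$ with $Z=\{[1:0]\}=L.z$ is a conjugacy limit of $(H,Y)$ in the sense of Definition \ref{Def:CDW_Limit}, since $z=[1:0]$ lies in every $Y_n=\RP^1$. But $\mathsf{stab}_{g\SO(2)g\inv}(z)=g\,\mathsf{stab}_{\SO(2)}(g\inv z)\,g\inv=\{\pm I\}$ for \emph{every} $g$, while $\mathsf{stab}_L(z)=L$ is one-dimensional (here $L\subset K$, the upper triangular group), so $H_n\cap K\to\{\pm I\}\neq L\cap K$ for any conjugating sequence, and pointed convergence in $\mathfrak{S}_{(G,X)}$ is impossible. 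In your notation: the only elements of $H_n$ carrying $p_n=h_n.z$ back to $z$ are $\pm h_n\inv\to\ell\inv$, so the correction $a_n$ can never be made small. What rescues the proposition --- and your scheme --- is the setting CDW actually work in: $Y$ and $Z$ open orbits (the paper's $\mathfrak{S}^O$), together with $\dim L=\dim H$ (automatic for the algebraic groups CDW treat, by Theorem \ref{thm:CDW_AlgGrps}). Under those hypotheses $\dim(\mathfrak{h}_n\cap\mathfrak{k})=\dim(\mathfrak{l}\cap\mathfrak{k})$, so the kernels converge to $\mathfrak{l}\cap\mathfrak{k}$ by a dimension count, the orthogonal complements $\mathfrak{m}_n$ converge to a genuine complement, the lower bound on $(d\beta_n)_e|_{\mathfrak{m}_n}$ is uniform, and your argument closes; that hypothesis should be invoked explicitly, and it is also what the paper's own parenthetical is silently using.
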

\begin{proof}
Let $g_n$ be such that $H_n=g_n H g_n\inv$ converges to $L$ in $\mathfrak{C}(G)$, and $z\in Z$ such that $z\in g_n Y$ for all sufficiently large (and thus, without loss of generality, all) $n$.
Let $C=\mathsf{stab}_H(z)$, $C_n=\mathsf{stab}_{H_n}(z)$, and $K=\mathsf{stab}_G(z)$.
Then $(H_n,C_n)$ is a subgeometry of $(G,K)$ for all $n$, and as $n\to \infty$ the stabilizing subgroup $	C_n=g_nCg_n\inv$ converges (as a sequence of subgroups of a convergent sequence of groups) to the limiting stabilizer of $z$ under the action of $L$.
Thus $(H,C)=(H,\mathsf{stab}_H(z))$ converges under $g_n$ conjugacy to $(L, \mathsf{stab}_L(z))$.
The $L$ orbit of $z$ is $Z\subset X$ (as $(L,Z)$ is a geometry, $L$ acts transitively on $Z$).
\end{proof}

\noindent
The set of all \emph{conjugacy limits} of $(H,Y)$ in $(G,X)$ is the collection of all limit points of sequences $g_n.(H,C)$ in $\mathfrak{S}_{(G,X)}$.
Geometrically, this collection of points represents the boundary of the set of conjugates of $(H,Y)$ in $(G,X)$, providing us a topological object (\emph{the Chabauty compactification}) parameterizing all conjugates of $(H,Y)$ together with all limits.

\begin{definition}
Let $(H,Y)$ be a subgeometry of $(G,X)$.  Then $G.(H,Y)\subset \mathfrak{S}_{(G,X)}$ is the set of all conjugate geometries
$G.(H,Y)=\{g.(H,Y)\mid g\in G\}$ and its Chabauty compactification $\overline{G.(H,Y)}$ is its closure in the Chabauty space $\mathfrak{S}_{(G,X)}$.	
\end{definition}

\noindent
There are many natural questions one can ask about the limits of subgeometries of $(G,X)$ which can be phrased geometrically from this perspective.

\begin{SingleSpace}
\begin{itemize}
\item What are all the possible conjugacy limits of $(H,Y)$ = calculate the Chabauty closure $\overline{G.(H,Y)}$.
\item Which geometries are conjugacy limits of $(H,Y)$ = what are the isomorphism types of points in $\partial(G.(H,Y))=\overline{G.(H,Y)}\smallsetminus G.(H,Y)$?
\item Do $(H,Y)$ and $(H',Y')$ share a common conjugacy limit = do the Chabauty closures $\overline{G.(H,Y)}$ and $\overline{G.(H',Y')}$ intersect?
\end{itemize}
\end{SingleSpace}

\noindent
Restricting to algebraic groups (which, for example, covers the classical subgeometries of projective space) Cooper, Danciger and Wienhard additionally observed that there was a natural poset structure on the set of limit groups, and thus on limits of subgeometries.

\begin{theorem}[Cooper, Danciger, Wienhard]
Let $G$ be an algebraic Lie group. The relation of being a connected geometric limit
induces a partial order on the connected, algebraic, sub-groups of $G$. Moreover the length of
every chain is at most $\dim G$.
\end{theorem}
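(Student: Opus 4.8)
The plan is to show that the relation $\preceq$, where $L\preceq H$ means that $L$ is a connected geometric (conjugacy) limit of $H$, is a preorder on connected algebraic subgroups which descends to a partial order on their conjugacy classes, and then to bound the length of chains by exhibiting a strictly monotone $\Z$-valued invariant taking values in $\{0,1,\dots,\dim G\}$. Throughout I work inside the Chabauty space $\Cl(G)$: by Definition \ref{Def:Conj_Lim}, $L\preceq H$ holds exactly when $L$ lies in the closure of the conjugation orbit $G\cdot H=\{gHg\inv\mid g\in G\}$, so the whole argument is really a statement about orbit closures of the conjugation action of $G$ on $\Cl(G)$.

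First I would dispose of reflexivity and transitivity, which need no algebraicity. Reflexivity is immediate from the constant sequence $g_n=e$. For transitivity the key observation is that $\overline{G\cdot H}$ is closed and $G$-invariant: conjugation by each $g$ is a homeomorphism of $\Cl(G)$ carrying $G\cdot H$ to itself, hence fixing $\overline{G\cdot H}$. Consequently, whenever $M\in\overline{G\cdot H}$ the whole orbit $G\cdot M$, and so its closure $\overline{G\cdot M}$, lies in $\overline{G\cdot H}$. Thus $L\preceq M$ and $M\preceq H$ give $L\in\overline{G\cdot M}\subseteq\overline{G\cdot H}$, i.e. $L\preceq H$. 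The same nesting reduces antisymmetry to a single point: $L\preceq H$ and $H\preceq L$ force $\overline{G\cdot H}=\overline{G\cdot L}$, and it remains to see that two orbits with equal closure coincide, which I take up below.

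The heart of the matter, and the source of both antisymmetry and the chain bound, is to equip the connected algebraic subgroups with an algebraic structure on which these orbit closures stratify correctly. I would pass to Lie algebras, sending a connected algebraic $H<G$ of dimension $d$ to $\mathfrak{h}=\operatorname{Lie}(H)\in\Gr(d,\mathfrak{g})$; this intertwines conjugation by $g$ with the algebraic action $\Ad(g)$ on the projective Grassmannian. Granting that conjugacy limits preserve dimension, so that a limit stays in the same $\Gr(d,\mathfrak{g})$ and is recorded faithfully by its Lie algebra, the orbit $\Ad(G)\cdot\mathfrak{h}$ is locally closed and its boundary $\overline{\Ad(G)\cdot\mathfrak{h}}\smallsetminus\Ad(G)\cdot\mathfrak{h}$ is a union of $\Ad(G)$-orbits of \emph{strictly smaller dimension}, by the standard stratification of orbits of an algebraic group action. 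Set $f(H)=\dim(G\cdot H)=\dim G-\dim N_G(H)$, an integer in $\{0,\dots,\dim G\}$. If $L\preceq H$ properly, meaning $L$ is not conjugate to $H$, then $L$ sits in the boundary stratum, so $f(L)<f(H)$; and if $L\preceq H$ with $H\preceq L$ then $f(L)=f(H)$ forces $L\in G\cdot H$, giving antisymmetry on conjugacy classes. Finally, along any strictly decreasing chain the values of $f$ drop by at least one at each step while staying in $\{0,\dots,\dim G\}$, so every chain has at most $\dim G$ steps.

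The step I expect to be the main obstacle is precisely this last ingredient: justifying that the relevant parameter space of connected algebraic subgroups is a variety carrying an algebraic $G$-action, and that conjugacy limits do not raise dimension, so the Grassmannian model detects the limit. Lower semicontinuity of dimension under Chabauty limits, namely that $\mathfrak{h}_n\to V$ in $\Gr(d,\mathfrak{g})$ implies $V\subseteq\operatorname{Lie}(\lim g_nHg_n\inv)$, already yields $\dim L\ge\dim H$ for free; but the reverse inequality, together with the ambient variety structure, is where the algebraicity of $G$ must be used in earnest, for instance via a Chevalley-type parameter scheme for algebraic subgroups, or by reducing the conjugating sequence to a one-parameter split torus through $KAK$ and invoking the Hilbert--Mumford picture. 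Once the orbit stratification is secured, the partial order and the bound $\dim G$ follow formally as above.
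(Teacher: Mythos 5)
This theorem is imported from \cite{CooperDW14}: the thesis quotes it without proof, so your proposal can only be measured against the original CDW argument --- and what you have written is essentially that argument, correctly reconstructed. The heart of both is the same: pass to Lie algebras, so that connected subgroups of dimension $d$ become points of $\Gr(d,\mathfrak{g})$ with its algebraic $\Ad(G)$-action, and use the orbit stratification (orbits locally closed, classical closure contained in Zariski closure, boundary a union of strictly lower-dimensional orbits, valid for real points with the usual semialgebraic caveats) to get antisymmetry on conjugacy classes and the chain bound from strict decrease of $f(H)=\dim G-\dim N_G(H)\in\{0,\dots,\dim G\}$. Two remarks on your flagged ``main obstacle.'' First, the reverse dimension inequality and the bridge between Chabauty limits and Grassmannian limits are exactly Theorem \ref{thm:CDW_AlgGrps} (Proposition 3.11 of \cite{CooperDW14}) together with its corollary, both quoted in this chapter just above: a conjugacy limit of an algebraic subgroup is algebraic of the same dimension, and the connected limit is locally isomorphic to the Lie-algebra limit taken in $\Gr(\dim\mathfrak{h},\mathfrak{g})$. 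You may cite these as black boxes rather than rebuild them via Chevalley parameter schemes or a $KAK$/Hilbert--Mumford reduction. Second, no parameter scheme of subgroups is needed at all: a connected Lie subgroup is determined by its Lie algebra, so the ambient variety is simply the Grassmannian, and the stratification applies to the single orbit $\Ad(G)\cdot\mathfrak{h}$.

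One small inaccuracy worth fixing: transitivity is \emph{not} purely topological for the relation actually in the theorem, which concerns \emph{connected} limits (identity components of Chabauty limits). Your orbit-closure nesting handles full Chabauty limits, but if $M=(\lim h_nHh_n\inv)_0$ and $L=(\lim g_nMg_n\inv)_0$, exhibiting $L$ as the identity component of a limit of conjugates of $H$ requires a diagonal extraction in the metrizable space $\Cl(G)$ to produce $\hat{M}=\lim g_{n_k}h_{m_k}H(g_{n_k}h_{m_k})\inv$ containing $\lim g_nMg_n\inv$, followed by the dimension count from Theorem \ref{thm:CDW_AlgGrps} to conclude that the two nested connected groups of equal dimension coincide. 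Since you grant dimension preservation anyway, this is a patch rather than a hole, but the claim that transitivity ``needs no algebraicity'' is overstated for the connected relation. Finally, note that antisymmetry can only hold on conjugacy classes (every conjugate of $H$ is a limit of $H$), which is how you, and CDW, correctly read the statement; and your strictly decreasing integer invariant in $\{0,\dots,\dim G\}$ gives chains of at most $\dim G+1$ elements, i.e.\ length at most $\dim G$, as required.
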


\noindent
Geometrically, this means the partition of the Chabauty closure $\overline{G.(H,Y)}$ into conjugacy classes can be equipped with a partial ordering, stratifying the space of limits into "more degenerate" and "less degenerate" geometries.
We see in Chapter \ref{chp:Orthogonal_Groups} that this stratification actually arises naturally when studying orthogonal groups; division into conjugacy classes gives a cellulation of $\overline{G.(H,Y)}$ and the partial ordering is by inclusion of lower dimensional cells in the boundary of higher dimensional ones.

Recalling the notions of equivalence in Chapter \ref{chp:Klein_Geo}, there are many models of Klein geometries that at times we want to consider equivalent, it's natural that we have a weaker notion of limit available as well.
In particular, if we are only concerned with geometries up to \emph{local isomorphism} then we should only be concerned with the local isomorphism class of limit as well.
Two locally isomorphic geometries may have non-isomorphic automorphism groups in two ways: they may differ in the number of connected components and the components of one may be covers of the components of the other.
However, two locally isomorphic \emph{subgeometries} of $(G,X)$ have automorphism groups differing only in number of connected components, and so the isomorphism type connected component of the identity is an easy local-isomorphism invariant.

\begin{definition}
The \emph{connected geometric limit} of a sequence of geometries $(H_n, Y_n)$ with limit $(L,Z)$ in $\mathfrak{S}_{(G,X)}$ is the geometry $(L_0,Z)$ for $L_0$ the connected component of $\mathsf{id}\in L$.	
\end{definition}

\noindent
W have laid all the necessary ground to speak precisely about geometric limits without any examples, as the space $\mathfrak{S}_{(G,X)}\subset\Cl(G)\times\Cl(G)$ is difficult to work with directly.
Before providing our first example, we will discuss a useful computational simplification which will often allow us to exchange taking limits in $\mathfrak{C}(G)$ with taking limits in an appropriate Grassmannian.

\subsection{Computing with the Grassmannian}
\index{Limits of Geometries!Lie Algebra Limits}

\noindent
Given a vector space $V$, the Grassmannian variety $\Gr(n,V)$ is the set of all vector subspaces of $V$ of dimension $n$.
Choosing an inner product on $V$, sending each subspace to its intersection with the unit sphere identifies $\Gr(n,V)$ with the set of great $n-1$ spheres in $\S^{\dim V-1}$.
Thus the natural topology on $\Gr(n,V)$ inherited from the Chabauty space $\Cl(V)$ is equivalent to the Hausdorff topology on the set of great spheres in $\S^{\dim V-1}$.
We may realize the Grassmannians as homogeneous spaces via the automorphism-stabilizer perspective.
The group $\GL(V)$ acts transitively on the space of $n$ dimensional vector subspaces of $V$, and so $\Gr(n,V)=\GL(V)/S$ for $S$ the stabilizer of a fixed subspace.
Choosing a basis/inner product to identify $V$ with $(\R^m,\langle,\rangle)$ we note that $\O(m)$ also acts transitively on the space of $n$-dimensional subspaces, so $\Gr(n,V)\cong \O(m)/S'$ for $S'$ the stabilizer of a subspace under this action.
Taking this fixed subspace to be the span of the first $n$ coordinate vectors, we see that $S'=\O(n)\times \O(m-n)$ and realize the Grassmannian as the homogeneous space $\Gr(n,m)=\O(m)/\O(n)\times\O(m-n)$.

Our use of Grassmannians will be in thinking about the space of Lie subalgebras of a Lie algebra $\mathfrak{g}$.
As in the case of groups, we will abuse notation and use $\Cl(\mathfrak{g})$ to denote this space.

\begin{definition}
The space $\Cl(\mathfrak{g})$ is the space of Lie subalgebras of the Lie algebra $\mathfrak{g}$, topologized with respect to the Chabauty topology on the closed subsets of $\mathfrak{g}$.
\end{definition}

\begin{proposition}
$\Cl(\mathfrak{g})$ is a disjoint union of closed subsets of grassmannians.
\end{proposition}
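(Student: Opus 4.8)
The plan is to exploit the fact that a Lie subalgebra is first of all a linear subspace, and to stratify $\Cl(\mathfrak{g})$ by dimension so that each stratum embeds as a closed subset of a single Grassmannian. Write $N=\dim\mathfrak{g}$ and let $\mathcal{L}_k\subseteq\Cl(\mathfrak{g})$ denote the set of $k$-dimensional subalgebras, so that set-theoretically $\Cl(\mathfrak{g})=\bigsqcup_{k=0}^{N}\mathcal{L}_k$. The goal is to upgrade this partition to a genuine topological disjoint union in which each $\mathcal{L}_k$ sits as a closed subset of $\Gr(k,\mathfrak{g})$.

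First I would record, following the great-sphere identification recalled above, that the Chabauty topology on the collection of \emph{all} linear subspaces of a vector space $V$ agrees with the usual topology on $\bigsqcup_k\Gr(k,V)$. Concretely, sending a subspace $W$ to $W\cap\S^{N-1}$ identifies the $k$-dimensional subspaces with the great $(k-1)$-spheres, and this is a homeomorphism onto its image for the Chabauty topology. Since the set of great $d$-spheres is compact, being a continuous image of the compact Grassmannian, and the ambient hyperspace of closed subsets of $\S^{N-1}$ is metrizable hence Hausdorff, the finitely many images for distinct $d$ are pairwise separated compact sets. Thus each $\Gr(k,V)$ is clopen inside the space of all linear subspaces, and that space is the honest topological disjoint union $\bigsqcup_k\Gr(k,V)$. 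Applying this with $V=\mathfrak{g}$ shows that the dimension function is locally constant on $\Cl(\mathfrak{g})$, so each $\mathcal{L}_k$ is clopen in $\Cl(\mathfrak{g})$ and carries the subspace topology induced from $\Gr(k,\mathfrak{g})$.

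It then remains to see that $\mathcal{L}_k$ is closed in $\Gr(k,\mathfrak{g})$. This is the condition that the bracket preserve the subspace: a $k$-plane $W$ lies in $\mathcal{L}_k$ exactly when $[x,y]\in W$ for all $x,y\in W$. Over $\Gr(k,\mathfrak{g})$ the tautological $k$-plane bundle $\tau$ and the bracket assemble into a section of the bundle $\Hom(\Lambda^2\tau,\mathfrak{g}/\tau)$, and $\mathcal{L}_k$ is precisely its vanishing locus; being the zero set of a continuous (indeed algebraic) section, it is closed. Equivalently, one can check directly from Definition \ref{def:Chab_Seqs} that a Chabauty limit of $k$-dimensional subalgebras is again closed under the bracket, using the same product-of-convergent-sequences argument that proves closed subgroups form a closed subset of the hyperspace. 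Combining the two steps exhibits $\Cl(\mathfrak{g})=\bigsqcup_{k=0}^{N}\mathcal{L}_k$ as a disjoint union of closed subsets $\mathcal{L}_k\subseteq\Gr(k,\mathfrak{g})$ of Grassmannians, as claimed.

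The step I expect to be the main obstacle is the first one: verifying rigorously that dimension cannot jump in a Chabauty limit of linear subspaces, so that the decomposition is a true disjoint union rather than a mere partition whose strata accumulate onto one another. The cleanest route is the great-sphere identification together with compactness of each Grassmannian, which forces the strata to be separated; here I would take some care with the behavior at the point at infinity of the one-point compactification, but since linear subspaces are cones, their compactified intersections with the sphere introduce no extra subtlety there.
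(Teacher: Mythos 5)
Your proof is correct and follows essentially the same route as the paper's: stratify by dimension, use the great-sphere/compactness description of the Grassmannians to see that the strata are topologically separated (so dimension cannot jump in a Chabauty limit), and then observe that closure under the bracket is a closed (indeed algebraic) condition in each $\Gr(k,\mathfrak{g})$ — your formulation via the vanishing locus of a section of $\Hom(\Lambda^2\tau,\mathfrak{g}/\tau)$ is just a coordinate-free phrasing of the paper's ``polynomial conditions in each dimension.''
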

\begin{proof}
Each Grassmannian $\Gr(n,m)$ is compact by its description as $\O(m)/\O(n)\times\O(n-m)$ above, and so any convergent path of fixed dimensional subspaces of a vector space converges to a vector subspace of the same dimension.
Also, the description of $\Gr(n,m)$ in terms of great $n-1$ spheres in $\S^{m-1}$ with the Hausdorff metric shows that subspaces of distinct dimension cannot be arbitrarily close.

Thus, the space of vector subspaces of $\R^m$ is a disjoint union of Grassmannians $\sqcup_{n=1}^m\Gr(n,m)$, and forgetting the Lie bracket embeds the space of Lie subalgebras of $\mathfrak{g}$ into the space of vector subspaces of $\mathfrak{g}$, that is $\Cl(\mathfrak{g})\subset\coprod_{n=1}^{\dim \mathfrak{g}}\Gr(n,\mathfrak{g})$.
Lie subalgebras of $\mathfrak{g}$ are closed under the Lie bracket, which is a set of polynomial conditions in each dimension.
Thus the set of $n$-dimensional Lie subalgebras of $\mathfrak{g}$ is an algebraic subvariety of $\Gr(n,\mathfrak{g})$, and so closed in the classical topology.
\end{proof}

\noindent
The Chabauty space $\Cl(\mathfrak{g})$ is actually quite reasonable to work with: if a sequence $\mathfrak{h}_n$ of Lie algebras has a limit in $\Cl(\mathfrak{g})$ then we may actually forget the bracket and consider convergence within a fixed Grassmannian - all convergent paths must have eventually constant dimension, and as the subset of Lie algebras is closed we may continue ignoring the bracket as if the underlying spaces converge so do the inherited Lie algebra structures.

\begin{corollary}
Any limit in $\Cl(\mathfrak{g})$ can be taken in the appropriate Grassmannian $\Gr(k,\mathfrak{g})$.	
\end{corollary}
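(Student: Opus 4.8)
The plan is to read this off directly from the preceding proposition, which decomposes $\Cl(\mathfrak{g})$ as a disjoint union $\coprod_k \left(\Cl(\mathfrak{g})\cap\Gr(k,\mathfrak{g})\right)$ of closed subsets of Grassmannians, one for each dimension $k\in\{1,\ldots,\dim\mathfrak{g}\}$. The entire content of the corollary is that a convergent sequence cannot oscillate between these components, so that its limit is already governed by a single Grassmannian and the Lie bracket may be ignored when computing it.

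First I would fix a convergent sequence $\mathfrak{h}_n\to\mathfrak{h}_\infty$ in $\Cl(\mathfrak{g})$, with each $\mathfrak{h}_n$ of dimension $k_n$, so that $\mathfrak{h}_n\in\Gr(k_n,\mathfrak{g})$. The key step is to invoke the separation estimate established in the proof of the proposition: under the identification of subspaces with great spheres in $\S^{\dim\mathfrak{g}-1}$ equipped with the Hausdorff metric, subspaces of distinct dimensions are bounded away from one another by a fixed positive constant. Consequently, if the $k_n$ were to change value infinitely often, the sequence would contain terms remaining a fixed positive Hausdorff distance apart, contradicting convergence; hence $k_n$ is eventually equal to some fixed $k$. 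Discarding finitely many terms, the tail of the sequence lies entirely in the single Grassmannian $\Gr(k,\mathfrak{g})$.

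Next I would observe that $\Gr(k,\mathfrak{g})$ is compact, via its realization as $\O(m)/(\O(k)\times\O(m-k))$ with $m=\dim\mathfrak{g}$, and that the Chabauty topology restricts on it to the usual manifold topology. Hence the Chabauty limit $\mathfrak{h}_\infty$ agrees with the Grassmannian limit of a sequence of $k$-dimensional subspaces and is itself $k$-dimensional. Finally, since the Lie subalgebras form a closed algebraic subvariety of $\Gr(k,\mathfrak{g})$, cut out by the polynomial conditions expressing closure under the bracket, the limit subspace $\mathfrak{h}_\infty$ automatically satisfies these conditions and is a subalgebra. Thus the whole limit may be computed inside $\Gr(k,\mathfrak{g})$, forgetting the bracket throughout. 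The only mild obstacle is the eventual constancy of the dimension $k_n$, and I would emphasize that this reduces precisely to the separation of the Grassmannian components already verified through the great-sphere picture, so no new estimate is required.
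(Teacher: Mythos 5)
Your proposal is correct and follows essentially the same route as the paper: the preceding proposition's decomposition of $\Cl(\mathfrak{g})$ into closed subsets of Grassmannians, the separation of distinct-dimension subspaces in the great-sphere/Hausdorff picture forcing eventually constant dimension, and the closedness of the Lie-subalgebra locus under the polynomial bracket conditions. The paper treats the corollary as an immediate consequence of exactly this chain of observations, so nothing in your argument departs from or adds to its approach.
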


\noindent
We will make use of this to study conjugacy limits of subgroups of an algebraic group $G$, via analyzing conjugacy limits of Lie algebras.

\begin{definition}
Let $G$ be a Lie group, and $H\subset G$ a Lie subgroup with Lie algebras $\mathfrak{g},\mathfrak{h}$ respectively.
If $g_n\in G$ is a sequence, the \emph{Lie Algebra limit} of $g_n h g_n\inv$ is its limit in $\Cl(\mathfrak{g})$.
We say that the Lie algebra limit of $g_n H g_n\inv$ is the group generated by the exponentiation of this $\langle \lim g_n \mathfrak{h} g_n\inv\rangle$.
\end{definition}

\noindent
When the Lie algebra limit of $g_n Hg_n\inv$ agrees with the Chabauty limit in $\Cl(G)$, this provides a powerful means of computing conjugacy limits.
Of course, this often fails, as the Lie algebra limit is connected by definition, whereas there are many examples of Chabauty limits being disconnected.
By the work of Cooper Danciger and Wienhard, the connected geometric limit of conjugates $g_n H g_n\inv$ is exactly the Lie algebra limit when $G,H$ are algebraic.

\begin{theorem}[Cooper, Danciger, Wienhard]
\label{thm:CDW_AlgGrps}
Let $G$ be an algebraic group (defined over $\C$ or $\R$). Suppose that $H$ is an algebraic subgroup and $L$ a conjugacy limit of $H$. Then $L$ is algebraic and $\dim L = \dim H$.
\end{theorem}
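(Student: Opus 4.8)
The plan is to split the statement into a soft dimension lower bound, which is purely Lie-theoretic, and a dimension upper bound together with algebraicity, which is the genuinely hard algebro-geometric input. Throughout I would fix a linear model $G\subseteq\GL(N;\mathbb{K})$ with $\mathbb{K}\in\{\R,\C\}$ and write $L=\lim_n g_n H g_n\inv$ for the Chabauty limit realizing $L$ as a conjugacy limit of $H$.

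First I would establish $\dim L\geq\dim H$ using the Grassmannian machinery already developed. Set $\mathfrak{h}_n=\Ad(g_n)\mathfrak{h}$. Since each $\Ad(g_n)$ is a linear isomorphism, every $\mathfrak{h}_n$ has dimension exactly $\dim H$ and lies in the compact Grassmannian $\Gr(\dim H,\mathfrak{g})$. By the Corollary that any limit in $\Cl(\mathfrak{g})$ may be taken in a fixed Grassmannian, after passing to a subsequence $\mathfrak{h}_n\to\mathfrak{h}_\infty$ with $\dim\mathfrak{h}_\infty=\dim H$ and $\mathfrak{h}_\infty$ again a subalgebra. A one-parameter-subgroup argument then shows $\mathfrak{h}_\infty\subseteq\mathrm{Lie}(L)$: if $X=\lim X_n$ with $X_n\in\mathfrak{h}_n$, then $\exp(tX)=\lim\exp(tX_n)\in L$ for all $t$ because $g_nHg_n\inv\to L$ in the Chabauty topology, so $X\in\mathrm{Lie}(L)$. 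Hence $\dim L\geq\dim\mathfrak{h}_\infty=\dim H$.

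For the reverse inequality and algebraicity I would pass to limits of algebraic cycles. Embedding a compactification $\overline{G}$ into projective space so that algebraic subgroups become projective subvarieties, conjugation $X\mapsto gXg\inv$ is a projective-linear automorphism of the ambient space and therefore preserves both dimension and degree; thus all $g_nHg_n\inv$ share the dimension $d=\dim H$ and the degree of $H$, so their cycle classes lie in a single component of the Chow variety of $\overline{G}$, which is projective and hence compact. Passing to a further subsequence, $[\,g_nHg_n\inv\,]$ converges to a limit cycle $Z$ of pure dimension $d$ with support $W=\mathrm{supp}(Z)$, an algebraic set of dimension $d$. The comparison between Chabauty (set-theoretic) limits and limits of cycles yields $L\subseteq W$, since any $\ell\in L$ is a limit of points $h_n\in g_nHg_n\inv\subseteq W$ and such limit points land in $\mathrm{supp}(Z)$. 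Therefore $\dim L\leq\dim W=d$, which combined with the first step gives $\dim L=\dim H$. Finally the Zariski closure $\overline{L}^{\,\mathrm{Zar}}$ is an algebraic subgroup contained in $W$, so $\dim L\leq\dim\overline{L}^{\,\mathrm{Zar}}\leq\dim W=\dim L$ forces equality; then $L_0$ is open, and being closed also, equal to the identity component of $\overline{L}^{\,\mathrm{Zar}}$, and as $L$ meets only finitely many of the finitely many components of $\overline{L}^{\,\mathrm{Zar}}$ it is a finite union of cosets of the algebraic group $L_0$, hence algebraic.

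The main obstacle is the second step, and specifically the comparison lemma matching the analytic Chabauty limit with the support of the limiting algebraic cycle. The lower bound alone leaves open the possibility that $\mathrm{Lie}(L)$ is strictly larger than $\mathfrak{h}_\infty$, i.e.\ that the conjugates accumulate transversally and the set-theoretic limit spreads into higher dimension; ruling this out is exactly what the uniform degree bound buys us, since it confines the conjugates to a compact Chow variety whose limit cycle caps the dimension. Making precise that a Hausdorff limit of the supports coincides with (or is contained in) the support of the cycle limit is where the algebraicity of both $G$ and $H$ is indispensable, and I expect this to be the technical heart of the argument.
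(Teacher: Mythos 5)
You should know at the outset that the thesis contains no proof of this statement to compare against: it is imported with attribution from Cooper--Danciger--Wienhard (cited in the text as Proposition 3.11 of \cite{CooperDW14}) and then used as a black box, so your proposal can only be measured on its own merits and against that external source. On those terms, your first step is correct, and it is the same Grassmannian mechanism the thesis itself exploits (the surrounding corollaries deduce from this theorem that the Lie algebra limit computes the connected limit; your argument is that deduction run in the cheap direction). Your Chow-variety route to the upper bound is also sound over $\C$: conjugation by $g_n$ is linear on $\M(N;\C)$ and so extends to a projective-linear automorphism of $\P(\M(N;\C)\oplus\C)$, preserving dimension and degree; and the comparison lemma you flag as the technical heart is true and provable with Chow forms --- if $x_n\in\mathrm{supp}(Z_n)$ with $x_n\to x\notin\mathrm{supp}(Z)$, choose a codimension-$(d+1)$ linear section through $x$ missing $\mathrm{supp}(Z)$, perturb it to pass through $x_n$, and continuity of the Chow coordinates forces $R_{Z_n}$ to be nonzero on a configuration on which it must vanish. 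Over $\R$ (the case this thesis actually needs, for limits of $\O(p,q)$ in $\GL(n;\R)$) you must complexify before invoking Chow varieties, since these are complex-projective objects; this is routine --- the $g_n$ are real, so the limit cycle is conjugation-invariant, and $\dim_\R H(\R)=\dim_\C H_\C$ by smoothness of algebraic groups --- but it is a step your outline skips.

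The genuine gap is the final algebraicity claim over $\R$. Your sandwich correctly yields $\dim L=\dim\overline{L}^{\,\mathrm{Zar}}=\dim H$ and that $L$ is open, closed, and of finite index in $\overline{L}^{\,\mathrm{Zar}}(\R)$. Over $\C$ this would finish the proof, because classical and Zariski components of a complex algebraic group coincide, so any subgroup containing the identity component is a union of Zariski-closed cosets. Over $\R$ it does not: the classical identity component of the real points is in general not Zariski closed --- $\R_{>0}\subset\GL(1;\R)$ already fails --- so your phrase ``a finite union of cosets of the algebraic group $L_0$'' is wrong as written: $L_0$ need not be algebraic, and a union of classical components of $\overline{L}^{\,\mathrm{Zar}}(\R)$ need not be an algebraic set. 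What is missing is the reverse inclusion $\overline{L}^{\,\mathrm{Zar}}(\R)\subseteq L$, i.e.\ that the Chabauty limit actually absorbs every real component of its Zariski closure, and your cycle bound cannot supply this: it only confines $L$ inside an algebraic set and gives no lower control on which components the limit contains. Note in particular that real points of a limit cycle need not be limits of real points --- the real loci of $V(x^2+y^2+\tfrac{1}{n})$ are empty while the limit cycle has real support $\{0\}$ --- so this direction requires a genuinely different idea, and it is exactly the part of the Cooper--Danciger--Wienhard argument that your outline does not replace.
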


\begin{corollary}
The Lie algebra limit is locally isomorphic to the conjugacy limit when $H,G$ are algebraic.
\end{corollary}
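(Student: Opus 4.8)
The plan is to prove the stronger, cleaner statement that the Lie algebra limit $\mathfrak{l}_\infty := \lim g_n\mathfrak{h}g_n\inv$, taken in $\Cl(\mathfrak{g})$, is literally equal to $\mathrm{Lie}(L)$, where $L = \lim g_n H g_n\inv$ is the Chabauty conjugacy limit. Granting this, the group generated by $\exp(\mathfrak{l}_\infty)$ is exactly the identity component $L_0$ of $L$; since $L_0$ and $L$ share a Lie algebra, they are locally isomorphic, which is precisely the assertion.

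First I would assemble two dimension counts. On the group side, the preceding Theorem \ref{thm:CDW_AlgGrps} of Cooper, Danciger and Wienhard tells us that $L$ is algebraic with $\dim L = \dim H$, hence $\dim\mathrm{Lie}(L) = \dim H$. On the algebra side, the preceding corollary reduces any limit in $\Cl(\mathfrak{g})$ to a limit inside a single Grassmannian $\Gr(k,\mathfrak{g})$; as conjugation preserves dimension, the subspaces $g_n\mathfrak{h}g_n\inv$ all sit in $\Gr(k,\mathfrak{g})$ with $k = \dim\mathfrak{h} = \dim H$, and compactness of the Grassmannian forces every subsequential limit to again have dimension exactly $k$.

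The core of the argument is the inclusion $\mathfrak{l}_\infty \subseteq \mathrm{Lie}(L)$, which I would establish by passing through one-parameter subgroups. Given $X \in \mathfrak{l}_\infty$, the minimality clause of the Chabauty description (Definition \ref{def:Chab_Seqs}) furnishes $X_{n_k} \in g_{n_k}\mathfrak{h}g_{n_k}\inv$ with $X_{n_k} \to X$. Because conjugation intertwines $\exp$ with the adjoint action, $X_{n_k} \in \mathrm{Lie}(g_{n_k}Hg_{n_k}\inv)$, so $\exp(tX_{n_k}) \in g_{n_k}Hg_{n_k}\inv$ for every real $t$. Continuity of $\exp$ then gives $\exp(tX_{n_k}) \to \exp(tX)$, a convergent sequence of points chosen from the groups $g_nHg_n\inv$; since these converge to $L$ in $\Cl(G)$, the convergence clause of the same definition places $\exp(tX)$ in $L$ for all $t$. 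As $L$ is a closed subgroup — Chabauty limits of closed subgroups are closed subgroups, by the earlier lemma — its Lie algebra is recovered as $\{Y \mid \exp(tY) \in L \text{ for all } t\}$, whence $X \in \mathrm{Lie}(L)$.

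Combining this inclusion with the matching dimensions $k$ forces $\mathfrak{l}_\infty = \mathrm{Lie}(L)$. A pleasant byproduct is that every subsequential Grassmannian limit of $g_n\mathfrak{h}g_n\inv$ equals $\mathrm{Lie}(L)$, so the Lie algebra limit exists outright rather than merely along a subsequence. The exponentiated limit $\langle\exp(\mathfrak{l}_\infty)\rangle = L_0$ then shares its Lie algebra with $L$, delivering the local isomorphism. I expect the only genuinely delicate point to be the subsequence bookkeeping in the Chabauty definition: making sure the one-parameter subgroups manufactured on the Lie-algebra side are correctly detected as elements of the group-level limit $L$, rather than any hard analytic estimate.
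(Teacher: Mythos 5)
Your proof is correct and follows essentially the same route as the paper's: exhibit the exponentiated Lie algebra limit inside $L$ via the sequential characterization of the Chabauty topology, then use Theorem \ref{thm:CDW_AlgGrps} ($\dim L = \dim H$) to conclude it fills out the identity component. Your write-up merely makes two points explicit that the paper leaves implicit --- the one-parameter-subgroup argument behind ``$\langle\exp(\mathfrak{a})\rangle$ is a subgroup of $L$ by definition of the Chabauty topology,'' and the observation that all subsequential limits in the Grassmannian coincide with $\mathrm{Lie}(L)$, so the Lie algebra limit exists outright.
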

\begin{proof}
Let $H<G$ be algebraic groups with Lie algebras $\mathfrak{h}$, $\mathfrak{g}$ respectively.
Let $g_n\in G$ be a sequence such that $\lim g_n Hg_n\inv=L$ in $\mathfrak{C}(G)$.
By compactness of $\Cl(\mathfrak{g})$, the path $g_n\mathfrak{h}g_n\inv$ converges to some Lie algebra $\mathfrak{a}<\mathfrak{g}$, and the Lie algebra limit $\langle \exp a\rangle$ is a subgroup of $L$ by the definition of the Chabauty topology on $\Cl(G)$.
But, by Theorem \ref{thm:CDW_AlgGrps} above, this subgroup is of the same dimension as $L$ and so is the entire connected component of the identity.
Thus the Lie algebra limit is the connected geometric limit, as claimed.
\end{proof}

\begin{corollary}
If $G$ is an algebraic group and $H<G$ an algebraic subgroup, any conjugacy limit $L=\lim A_t HA_t\inv$ is locally isomorphic to the Lie algebra limit $\mathfrak{l}=\lim A_t\mathfrak{h} A_t\inv$ taken with respect to the standard topology on $\Gr(\dim\mathfrak{h},\mathfrak{g})$.	
\end{corollary}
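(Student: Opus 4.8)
The plan is to reduce this statement to the preceding corollary, which already establishes that the Lie algebra limit is locally isomorphic to the conjugacy limit when $H,G$ are algebraic; the only genuinely new content here is twofold: passing from a continuous path $A_t$ to a sequence, and identifying the Chabauty limit in $\Cl(\mathfrak{g})$ with a limit in the fixed Grassmannian $\Gr(\dim\mathfrak{h},\mathfrak{g})$ under its standard topology.

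First I would observe that, since conjugation by $A_t$ acts on $\mathfrak{g}$ by the linear isomorphism $\Ad(A_t)$, each conjugate $A_t\mathfrak{h}A_t\inv$ is a $\dim\mathfrak{h}$-dimensional subspace of $\mathfrak{g}$. Hence the entire path lies in the single Grassmannian $\Gr(\dim\mathfrak{h},\mathfrak{g})$. This Grassmannian is compact, so the path $A_t\mathfrak{h}A_t\inv$ is precompact and any limit along a sequence $t_n$ realizing $L$ remains in $\Gr(\dim\mathfrak{h},\mathfrak{g})$; by the earlier proposition the subset of Lie subalgebras is an algebraic subvariety and hence closed, so the limit $\mathfrak{l}$ is again a Lie subalgebra of dimension $\dim\mathfrak{h}$. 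The corollary stating that any limit in $\Cl(\mathfrak{g})$ can be taken in the appropriate Grassmannian then guarantees that the Chabauty limit and the Grassmannian limit coincide, removing any ambiguity about which topology is in use.

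Next I would extract from the continuous family a sequence $A_{t_n}$ with $A_{t_n}HA_{t_n}\inv\to L$ in $\Cl(G)$; such a sequence exists by the definition of a conjugacy limit. Applying the preceding corollary to this sequence yields that $\mathfrak{l}=\lim A_{t_n}\mathfrak{h}A_{t_n}\inv$ is locally isomorphic to $L$, that is, the subgroup $\langle\exp\mathfrak{l}\rangle$ has Lie algebra agreeing with that of the identity component $L_0$. The essential input here is Theorem \ref{thm:CDW_AlgGrps}, which forces $\dim L=\dim H=\dim\mathfrak{h}=\dim\mathfrak{l}$, so that the connected Lie-algebra limit fills out the entire identity component of $L$ rather than a proper subgroup.

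The step I expect to require the most care is confirming that the limit is genuinely independent of the choice of subsequence $t_n$, so that speaking of \emph{the} Lie algebra limit along $A_t$ is well-defined: a priori, different sequences could converge to different subalgebras. This is controlled precisely because the conjugacy limit $L$ is assumed to exist in $\Cl(G)$, so its identity component $L_0$ has a fixed Lie algebra; by the dimension equality above, every subsequential Grassmannian limit must equal this fixed Lie algebra. Thus all subsequential limits agree, the path $A_t\mathfrak{h}A_t\inv$ converges in $\Gr(\dim\mathfrak{h},\mathfrak{g})$, and its limit is locally isomorphic to $L$ as claimed.
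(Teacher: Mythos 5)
Your proposal is correct and follows essentially the same route as the paper: the paper states this corollary without separate proof, deriving it from the preceding corollary (whose proof uses exactly your ingredients — compactness of $\Gr(\dim\mathfrak{h},\mathfrak{g})$, closedness of the locus of Lie subalgebras, the containment $\langle\exp\mathfrak{l}\rangle\subset L$ from the Chabauty topology, and the dimension count from Theorem \ref{thm:CDW_AlgGrps}). Your explicit verification that all subsequential limits coincide, so the path itself converges in the Grassmannian, is a detail the paper leaves implicit, and it is handled correctly.
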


\noindent
A word of warning; it is crucially important that the limit is of \emph{algebraic groups} and \emph{by conjugacy} as the Lie algebra limit can be of strictly smaller dimension than the Chabauty limit in general.
Below are two examples of sequences of 1-dimensional Lie subgroup converging to a 2-dimensional group.

\begin{example}
Recall the discussion in Example \ref{ex:C_to_C} of the Chabauty space $\Cl(\C)$.
The sequence of subgroups $\frac{1}{n}\Z\times\R$ converges to $\R^2$ as $n\to\infty$.
\end{example}

\noindent
As the next example shows, this behavior can occur even when all the groups involved are all connected.
In fact, this example informs the theory greatly enough that we will name it the \emph{Barber Pole Example} for future reference.

\begin{example}[Barber Pole Example]
Consider the sequence of subgroups $H_n=\{(t/n, e^{it})\mid t\in\R\}$ of the cylinder $G=\R\times\S^1$.  The geometric limit of $H_n$ is the entire cylinder, but the Lie algebra limit is a circle, $\{(0,e^{it})\mid t\in\R \}$.
\begin{figure}[h!]
\centering
\includegraphics[width=0.85\textwidth]{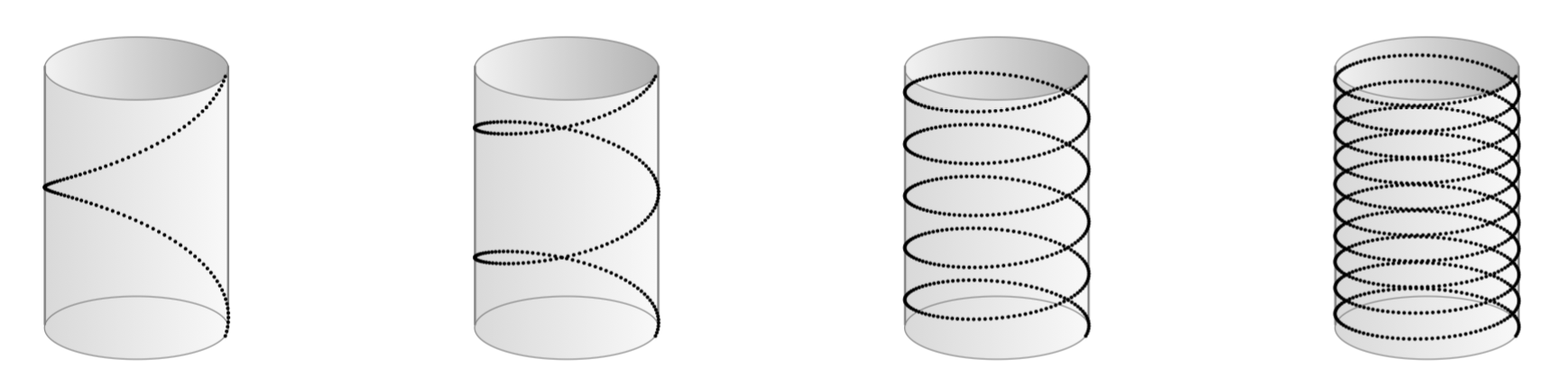}
\caption{A sequence of subgroups isomorphic to $\R$ converging geometrically to $\S^1\times\R$.  The Lie algebras converge to a horizontal line in the tangent space, and so the Lie algebra limit is a single horizontal circle.
}
\end{figure}
\end{example}

\section{The $\Hyp^2\to\E^2\leftarrow\S^2$ Transition}
\label{sec:Hyp_Sph_Transition}
\index{Limits of Geometries! Hyperbolic to Spherical}
\index{Geometric Limits! Hyperbolic to Spherical}

As a first example, we formailze the familiar transition of $\Hyp^2$ to $\S^2$ through Euclidean space in this framework.
The standard projective models of $\Hyp^2$, $\S^2$ are 
$\Hyp^2=(\SO(2,1),\P V(z^2-x^2-y^2-1))$ and $(\SO(3),\P V(z^2+x^2+y^2-1))$ 
are naturally subgeometries of $\RP^2$, so we will work in the Chabauty space $\mathfrak{S}_{\RP^2}$ of subgeometries.
The point $[0:0:1]$ lies in the domain of each geometry, and in the point stabilizers $\mathsf{stab}_{\SO(3)}[0:0:1]=\mathsf{stab}_{\SO(2,1)}[0:0:1]$ are equal, both to the block diagonal group $\smat{\SO(2)&\\&1}$.
Denoting this copy of $\SO(2)$ in $\GL(3;\R)$ by $S$ for the rest of this argument, we record these geometries in the automorphism stabilizer formalism as $\S^2=(\SO(3),S)$ and $\Hyp^2=(\SO(2,1),S)$.

For each $t\in (0,1)$, let $C_t=\diag(1,1,\sqrt{t})$, and use $C_t$ to define conjugate models of both $\S^2$ and $\Hyp^2$.
Recalling that the isomorphism type of a geometry is invariant under conjugacy, this gives a model of $\S^2$ and of $\Hyp^2$ for each $t\in(0,1)$.

\begin{definition}
For each $t\in (0,1)$, the $C_t$-conjugate of spherical geometry is $\gamma(t)=C_t.\S^2=(C_t\SO(3)C_t\inv),C_t SC_t\inv)$ and the $C_t$ conjugate of hyperbolic space is $\eta(t)=C_t.\Hyp^2=(C_t\SO(2,1)C_t\inv,C_t S C_t\inv)$.	
\end{definition}

\begin{figure}
\centering\includegraphics[width=0.5\textwidth]{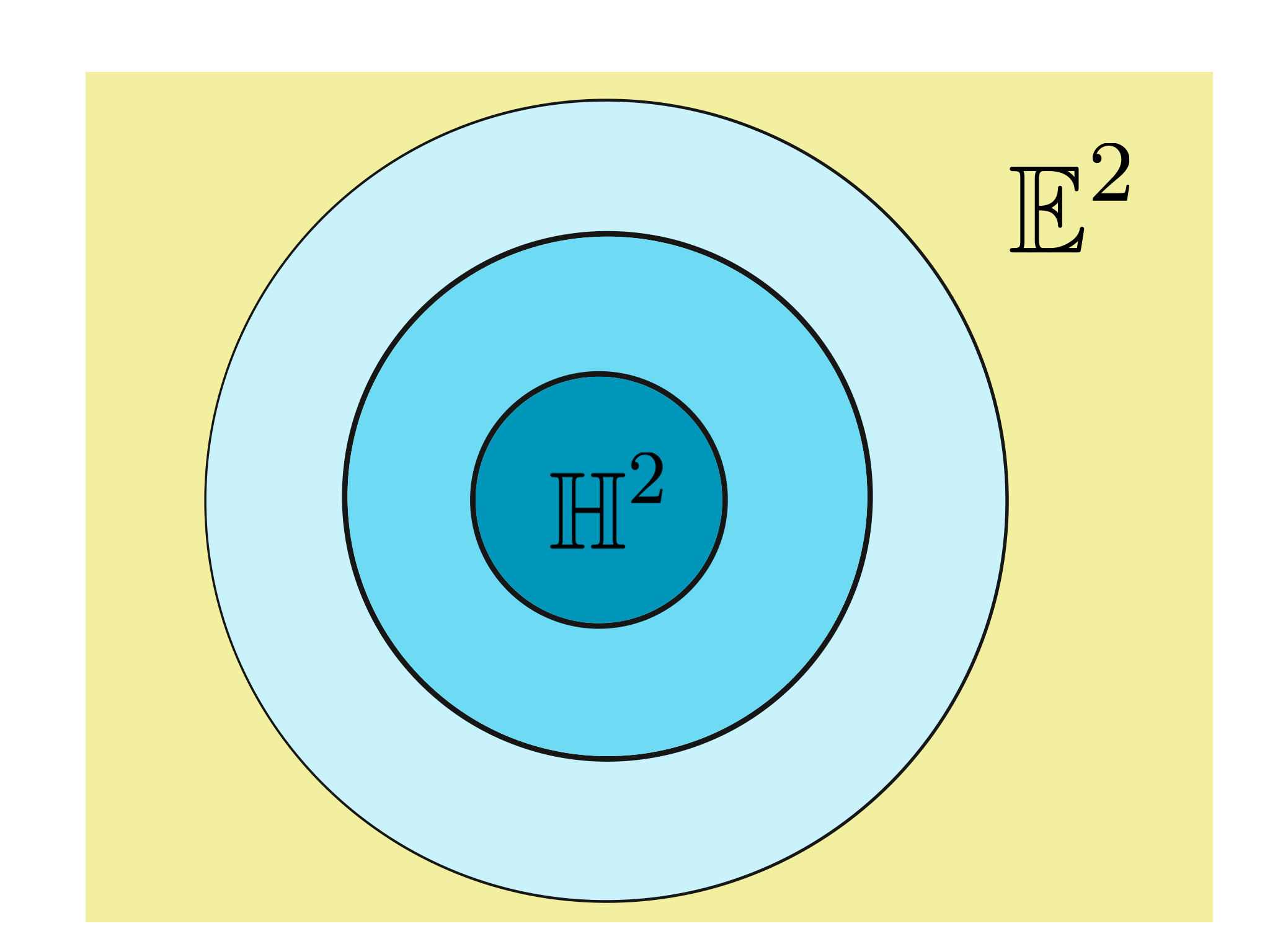}
\caption{Domains for the models $C_t.\Hyp^2$ in an affine patch of $\RP^2$.}
\end{figure}

\begin{observation}
The action of $\GL(3;\R)$ on itself by conjugation induces a continuous action on $\Cl(\GL(3;\R))$.
Thus, the paths $\gamma(t)=C_t.(\SO(2,1),S)$ and $\eta(t)=C_t.(\SO(3),S)$ are continuous functions $(0,1)\to \mathfrak{S}_{\RP^2}$. 
\end{observation}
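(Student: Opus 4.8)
The plan is to split the statement into its two assertions and prove them in turn: first, that conjugation induces a \emph{continuous} action of $\GL(3;\R)$ on the Chabauty space $\Cl(\GL(3;\R))$; and second, that the specific paths $\gamma,\eta$ inherit their continuity from this. Since $\GL(3;\R)$ is second countable, locally compact and metrizable, the space $\Cl(\GL(3;\R))$ is metrizable, as is the product $\GL(3;\R)\times\Cl(\GL(3;\R))$, so I would verify continuity \emph{sequentially} via Definition~\ref{def:Chab_Seqs} rather than through the subbasic sets $\fam{O}_{K,U}$. The action axioms are immediate (they are inherited from the conjugation action of $\GL(3;\R)$ on itself), and the map is well defined because conjugation by a fixed $g$ is a self-homeomorphism of $\GL(3;\R)$, hence carries closed subgroups to closed subgroups.

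For the first assertion I would establish \emph{joint} continuity of $(g,H)\mapsto gHg\inv$. Take $g_n\to g$ in $\GL(3;\R)$ and $H_n\to H$ in $\Cl(\GL(3;\R))$, and check the two clauses of Chabauty convergence for the sequence $g_nH_ng_n\inv$. For the first clause, if a subsequence $x_{n_k}\in g_{n_k}H_{n_k}g_{n_k}\inv$ converges to $x$, I write $x_{n_k}=g_{n_k}h_{n_k}g_{n_k}\inv$ with $h_{n_k}\in H_{n_k}$; then $h_{n_k}=g_{n_k}\inv x_{n_k}g_{n_k}$ converges to $g\inv x g$ since multiplication and inversion are continuous on $\GL(3;\R)$, so $g\inv x g\in H$ because $H_n\to H$, whence $x\in gHg\inv$. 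For the minimality clause, any $y=ghg\inv\in gHg\inv$ is approximated by $g_{n_k}h_{n_k}g_{n_k}\inv\in g_{n_k}H_{n_k}g_{n_k}\inv$, where $h_{n_k}\in H_{n_k}$, $h_{n_k}\to h$, is the convergent subsequence of points furnished by $H_n\to H$.

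For the second assertion I would use the automorphism--stabilizer presentation of $\RP^2$ as $(\GL(3;\R),K)$ with $K=\mathsf{stab}_{\GL(3;\R)}([0:0:1])$, so that $\mathfrak{S}_{\RP^2}\subset\Cl(\GL(3;\R))\times\Cl(K)$ and a path into $\mathfrak{S}_{\RP^2}$ is continuous precisely when both of its components are. The map $t\mapsto C_t=\diag(1,1,\sqrt t)$ is continuous into $\GL(3;\R)$ on $(0,1)$, so composing with the conjugation action of the first part, with the fixed subgroup $\SO(2,1)$, shows $t\mapsto C_t\SO(2,1)C_t\inv$ is continuous into $\Cl(\GL(3;\R))$; this is the first component of $\gamma$. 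For the stabilizer component I would observe that $C_t$ fixes $[0:0:1]$, hence $C_t\in K$, so conjugation by $C_t$ preserves $K$ and $(C_t\SO(2,1)C_t\inv)\cap K=C_t(\SO(2,1)\cap K)C_t\inv=C_tSC_t\inv$, which is again continuous in $t$. Replacing $\SO(2,1)$ by $\SO(3)$ gives the identical argument for $\eta$.

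The main obstacle will be the first assertion, and specifically the careful bookkeeping in the sequential argument: ensuring that the conjugated points $h_{n_k}=g_{n_k}\inv x_{n_k}g_{n_k}$ genuinely converge in $\GL(3;\R)$ and lie in the correct groups $H_{n_k}$, so that both clauses of Definition~\ref{def:Chab_Seqs} for $H_n\to H$ may legitimately be invoked. By contrast, the second assertion reduces entirely to the elementary fact that $C_t\in K$, which is what makes intersection with $K$ commute with conjugation by $C_t$.
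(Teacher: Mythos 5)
Your proposal is correct, and since the paper records this only as an unproved Observation (the continuity of the conjugation action on $\Cl(\GL(n;\R))$ is likewise asserted without proof elsewhere, e.g.\ in the lemma that $J\mapsto\O(J)$ is Chabauty-continuous), your argument supplies exactly the reasoning the paper leaves implicit: joint sequential continuity of $(g,H)\mapsto gHg\inv$, legitimate because $\Cl(\GL(3;\R))$ is metrizable, composed with the continuous path $t\mapsto C_t$. Your treatment of the stabilizer coordinate via $C_t\in K$, so that intersection with $K$ commutes with conjugation, is consistent with (and marginally more structural than) the paper, which in the subsequent propositions simply computes $C_tSC_t\inv=S$ directly.
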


\noindent
These two intervals of subgeometries of $\RP^2$, one of distorting models of $\Hyp^2$ and the other models of $\mathbb{S}^2$ have a common limit in the space of subgeometries, which is a model of the Euclidean plane.
We compute this limit in detail below.

\begin{proposition}
The limit $\lim_{t\to 0^+}\gamma(t)=(\Isom(\E^2),\E^2)$ is the standard model of the Euclidean plane as a subgeometry of $\RP^2$ with domain the affine patch $z=1$.	
\end{proposition}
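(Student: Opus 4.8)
The plan is to reduce the computation of this Chabauty limit in $\mathfrak{S}_{\RP^2}\subset\Cl(\GL(3;\R))\times\Cl(\GL(3;\R))$ to a pair of elementary computations: a limit of Lie algebras in a Grassmannian for the automorphism group, and a direct limit for the point stabilizer. Since $\SO(3)$ is an algebraic group and $\gamma(t)=(C_t\SO(3)C_t\inv, C_tSC_t\inv)$ is obtained by conjugation, Theorem \ref{thm:CDW_AlgGrps} and its corollary let me replace the connected geometric limit of the groups $C_t\SO(3)C_t\inv$ with the Lie algebra limit of $C_t\so(3)C_t\inv$, taken in the Grassmannian $\Gr(3,\gl(3;\R))$. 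Thus the core of the argument is to identify $\lim_{t\to 0^+}C_t\so(3)C_t\inv$ as the affine Lie algebra $\isom(\E^2)=\euc(2)$.

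To carry this out, I would fix the standard basis $L_1,L_2,L_3$ of $\so(3)$ by skew-symmetric generators, where $L_3=\smat{0&-1&0\\1&0&0\\0&0&0}$ generates rotations in the $xy$-plane and $L_1,L_2$ involve the third coordinate. Conjugating by $C_t=\diag(1,1,\sqrt t)$ scales the off-diagonal entries in the third row and column by $\sqrt t$ and $1/\sqrt t$ respectively; rescaling the divergent generators by $\sqrt t$ (which does not change the spanned subspace) gives $\sqrt t\,C_tL_1C_t\inv\to\smat{0&0&0\\0&0&-1\\0&0&0}$ and $\sqrt t\,C_tL_2C_t\inv\to\smat{0&0&1\\0&0&0\\0&0&0}$, while $C_tL_3C_t\inv=L_3$ is fixed since $L_3$ commutes with $C_t$. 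The limiting three-dimensional subspace is therefore spanned by $L_3$ and the two translation generators, which is exactly the affine representation $\isom(\E^2)=\{\smat{A&v\\0&0}\mid A\in\so(2),\,v\in\R^2\}$. The identical computation with $\so(2,1)$ in place of $\so(3)$ handles the hyperbolic path $\eta(t)$ and shows the two paths share this limit.

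For the point stabilizer I note that $S=\smat{\SO(2)&0\\0&1}$ commutes with $C_t=\diag(1,1,\sqrt t)$, since $C_t$ acts as a scalar on the $xy$-block fixed by $S$; hence $C_tSC_t\inv=S$ for all $t$ and the stabilizer limit is $S=\SO(2)$ on the nose. Combining these, the connected geometric limit of $\gamma(t)$ is $(\SO(2)\ltimes\R^2,\SO(2))$ in the automorphism-stabilizer formalism. Finally I would identify this with the standard model: exponentiating $\isom(\E^2)$ gives the affine action $\smat{R&v\\0&1}$ preserving the patch $z=1$, the orbit of the basepoint $[0:0:1]$ under the translations is the full affine patch $\{[x:y:1]\}=\E^2$, and the stabilizer of $[0:0:1]$ is exactly $S=\SO(2)$, yielding $(\Isom(\E^2),\E^2)$ as claimed.

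The main obstacle is justifying that the Grassmannian Lie-algebra limit genuinely computes the connected Chabauty limit of the groups rather than a proper subgroup of it; this is precisely what Theorem \ref{thm:CDW_AlgGrps} secures, since the matching of dimensions forces the Lie algebra limit to be the identity component of the limit group. Once the explicit rescaling above is in hand, the remaining verifications of the stabilizer, orbit, and domain are routine.
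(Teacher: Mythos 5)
Your proof is correct and follows essentially the same route as the paper's: the constancy of the stabilizer $C_tSC_t\inv=S$ from the block structure of $C_t$, reduction of the group limit to the Lie algebra limit via Theorem \ref{thm:CDW_AlgGrps}, the explicit computation of $\lim_{t\to 0^+}C_t\so(3)C_t\inv=\euc(2)$ in $\Gr(3,9)$, and exponentiation to the projective model of $\Euc(2)$ on the affine patch $z=1$. Your explicit rescaling of the divergent generators by $\sqrt t$ is the same computation the paper performs with the $t$'s absorbed into the basis, and your closing identification of orbit and stabilizer of $[0:0:1]$ merely spells out what the paper states tersely.
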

\begin{proof}
Recall that $C_t.\S^2=C_t.(\SO(3), S)$ for $S=\smat{\SO(2)&0\\0&1}$ the stabilizer of $[0:0:1]$ under $\SO(3)$.
Because $C_t=\diag(I_2,\sqrt{t})$ is block diagonal with scalar matrices of the same size as the blocks of $S$, it is easy to see that $C_tSC_t\inv=S$ is constant under conjugacy.
Thus the limit of $\gamma(t)=C_t.\S^2$ depends only on the limit of the automorphism group $\lim_{t\to 0^+}C_t\SO(3)C_t\inv$ under conjugacy.
As $\SO(3)$ is an algebraic subgroup of the algebraic group $\GL(3;\R)$, the identity component of the geometric limit is exactly the Lie algebra limit.
As we only care about geometries up to local isomorphism, it suffices to compute $\lim_{t\to 0^+}C_t\so(3)C_t\inv$.

The Lie algebra $\so(3)$ is a 3-dimensional subspace of $\gl(3;\R)\cong\R^9$ given by
$\so(3)=\left\{
\smat{0&x&y\\-x&0&z\\-y&-z&0}
\right\},$
where $x,y,z$ range over $\R$.
The conjugate Lie algebra $C_t\so(3)C_t\inv$ is then the following element of $\Gr(3,9)$.
$$\so(Q_t)=C_t\so(3)C_t\inv=
\R\pmat{0&1&0\\-1&0&0\\0&0&0}
\oplus
\R\pmat{0&0&1\\0&0&0\\-t&0&0}
\oplus
\R\pmat{0&0&0\\0&0&1\\0&-t&0}
$$

\noindent
As $t\to 0$ this path of points converges in $\Gr(3,9)$ to the lie algebra spanned by $\smat{0&1&0\\-1&0&0\\0&0&0}$, $\smat{0&0&1\\0&0&0\\0&0&0}$, and $\smat{0&0&0\\0&0&1\\0&0&0}$, which is the Lie algebra of the Euclidean group $\euc(2)=\smat{0&x&y\\-x&0&z\\0&0&0}$, exponentiating to $\Euc(2)=\smat{\SO(2)&\R^2\\0&1}$.
Together with the limiting point stabilizer $\smat{\SO(2)&0\\0&1}$ this is the automorphism-stabilizer description of the familiar projective model of Euclidean space, acting on the affine patch $z=1$ in $\RP^2$.
\end{proof}

\begin{proposition}
The limit $\lim_{t\to 0^+}\eta(t)=(\Isom(\E^2),\E^2)$ is the \emph{same} standard model of the Euclidean plane as a subgeometry of $\RP^2$ with domain the affine patch $z=1$.	
\end{proposition}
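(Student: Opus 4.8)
The plan is to follow the proof of the preceding proposition essentially verbatim, exploiting the fact that hyperbolic and spherical geometry are recorded here with the \emph{same} point stabilizer $S=\smat{\SO(2)&0\\0&1}$ and are being conjugated by the \emph{same} path $C_t=\diag(1,1,\sqrt{t})$. First I would observe, exactly as in the spherical case, that since $C_t$ is block diagonal with a scalar block matching the $\SO(2)$ block of $S$, conjugation fixes the stabilizer: $C_tSC_t\inv=S$ for all $t$. Consequently the limit $\lim_{t\to 0^+}\eta(t)$ is governed entirely by the limit of the automorphism groups $\lim_{t\to 0^+}C_t\SO(2,1)C_t\inv$ in $\Cl(\GL(3;\R))$.

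Next, since $\SO(2,1)$ is an algebraic subgroup of the algebraic group $\GL(3;\R)$, I would invoke the corollary to the Cooper--Danciger--Wienhard theorem identifying the connected component of a conjugacy limit with the Lie algebra limit. Working up to local isomorphism, it then suffices to compute $\lim_{t\to 0^+}C_t\,\so(2,1)\,C_t\inv$ inside $\Gr(3,9)$. I would write $\so(2,1)$ out explicitly as the space of matrices $\smat{0&x&y\\-x&0&z\\y&z&0}$, and note that it differs from $\so(3)=\smat{0&x&y\\-x&0&z\\-y&-z&0}$ only in the signs of the lower-left $2\times 1$ block. Conjugating the natural basis by $C_t$ and rescaling so the limit is visible, the two translational generators become $\smat{0&0&1\\0&0&0\\t&0&0}$ and $\smat{0&0&0\\0&0&1\\0&t&0}$, whose lower entries carry a factor of $t$ (the analogous computation for $\so(3)$ produced the same entries with a factor of $-t$).

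The key observation -- and the only place this computation diverges from the spherical case -- is that the sign discrepancy between $\so(2,1)$ and $\so(3)$ lives precisely in those lower-left entries, which are multiplied by $t$ after conjugation and rescaling. Hence as $t\to 0^+$ both conjugate families converge in $\Gr(3,9)$ to the \emph{same} subspace $\euc(2)=\smat{0&x&y\\-x&0&z\\0&0&0}$, which exponentiates to $\Euc(2)=\smat{\SO(2)&\R^2\\0&1}$. Together with the constant stabilizer $S$, this is the standard projective model of Euclidean space acting on the affine patch $z=1$, identical to the limit of $\gamma(t)$. I do not anticipate a genuine obstacle: the entire content is recognizing that the indefiniteness of the form $Q=\diag(1,1,-1)$ affects only terms that are annihilated in the degeneration, so that hyperbolic and spherical geometry are forced to share the Euclidean limit.
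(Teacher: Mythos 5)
Your proposal is correct and takes essentially the same route as the paper's own proof: both note that $C_t S C_t\inv=S$ so the limit reduces to the automorphism groups, pass to the Lie algebra limit via the algebraicity of $\SO(2,1)$ in $\GL(3;\R)$, and compute that the conjugated basis of $\so(2,1)$ differs from the spherical computation only in the signs attached to the entries carrying the factor $t$, which die as $t\to 0^+$, yielding $\euc(2)$ and hence the same model $(\Euc(2),\{[x:y:1]\})$. There is no gap.
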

\begin{proof}
The point stabilizers are again constantly equal to $S=\smat{\SO(2)&0\\0&1}$ so the computation reduces to the limit $\lim_{t\to 0^-}C_t\SO(2,1)C_t\inv$ which may likewise be computed via the Lie algebra.
In this case, the conjugate Lie algebras are

$$C_t\so(2,1)C_t\inv=
\R\pmat{0&1&0\\-1&0&0\\0&0&0}
\oplus
\R\pmat{0&0&1\\0&0&0\\t&0&0}
\oplus
\R\pmat{0&0&0\\0&0&1\\0&t&0},
$$
which differ from the spherical case only in the lack of minus signs attached to the $t$'s in the second two basis vectors.
As $t\to 0$ the limit is identical to the above, $\euc(2)=\smat{0&x&y\\-x&0&z\\0&0&0}$, which exponentiates to the usual representation of the Euclidean group.
\end{proof}

\noindent
The two paths $\gamma$ and $\eta$ have a common limit as $t\to 0$, and we may use this to define a single continuous path of geometries.

\begin{corollary}
The map $f\colon[-1,1]\to\mathfrak{S}_{\RP^2}$ below is continuous providing a transition from $f(1)=(\SO(3),\RP^2)$ to $f(-1)=(\SO(2,1),\Hyp^2))$.

$$f(t)=
\begin{cases}
\gamma(t) & t>0\\
(\Euc(2),\{[x:y:1]\}) & t=0\\
\eta(-t) & t<0
\end{cases}
$$
\end{corollary}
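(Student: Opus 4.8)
The plan is to reduce the statement to a pasting argument, since the two preceding Propositions already supply the only substantive content: the coincidence of the right-hand limits of the two arcs at $t=0$. Everything else is the routine verification that a piecewise-defined map into $\mathfrak{S}_{\RP^2}$ is continuous, followed by reading off the endpoint values.

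First I would treat the two halves separately. On $(0,1]$ the map $f$ is literally the path $\gamma(t)=C_t.(\SO(3),S)$; because $C_t=\diag(1,1,\sqrt{t})$ varies continuously and is invertible for each $t\in(0,1]$, and because conjugation induces a continuous action of $\GL(3;\R)$ on $\Cl(\GL(3;\R))$ (the Observation above), the restriction $f|_{(0,1]}$ is continuous. Since $C_1=I$, the endpoint value is $f(1)=\gamma(1)=(\SO(3),\RP^2)$. Symmetrically, on $[-1,0)$ we have $f(t)=\eta(-t)$, the composite of the continuous reflection $t\mapsto -t$ with the continuous arc $\eta(s)=C_s.(\SO(2,1),S)$, so $f|_{[-1,0)}$ is continuous with endpoint $f(-1)=\eta(1)=(\SO(2,1),\Hyp^2)$. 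Here it is worth noting explicitly that although $\gamma$ and $\eta$ were introduced only on the open interval $(0,1)$, each extends continuously to $t=1$ precisely because $C_1$ is the identity, which is what pins down the two ends of the transition.

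It then remains to glue at $t=0$. The first Proposition states $\lim_{t\to 0^+}\gamma(t)=(\Euc(2),\E^2)=f(0)$, which says exactly that $f|_{[0,1]}$ is continuous at $0$; combined with the previous paragraph, $f|_{[0,1]}$ is continuous on the whole closed interval. The second Proposition gives $\lim_{s\to 0^+}\eta(s)=(\Euc(2),\E^2)$, and substituting $s=-t$ yields $\lim_{t\to 0^-}f(t)=f(0)$, so $f|_{[-1,0]}$ is continuous. Since $[-1,0]$ and $[0,1]$ are closed sets covering $[-1,1]$ and the two restrictions agree at the common point $0$, the pasting lemma gives continuity of $f$ on all of $[-1,1]$. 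The main, and essentially only, obstacle was the matching of the two Euclidean limits, which is already discharged in the Propositions; what remains is formal, the one point of care being the continuous extension of each arc to its undistorted endpoint at $t=1$.
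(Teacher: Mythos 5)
Your proposal is correct and follows the same route the paper intends: the paper offers no separate argument for the corollary beyond observing that $\gamma$ and $\eta$ share the common limit $(\Euc(2),\E^2)$ computed in the two preceding propositions, with continuity on each half coming from the continuity of the conjugation action on $\Cl(\GL(3;\R))$ and the gluing being the standard pasting lemma. Your extra remark that each arc extends continuously to $t=1$ because $C_1=I$ is a minor but legitimate point of care that the paper leaves implicit.
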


\noindent
The behavior of the domains of these geometries throughout the transition may seem mysterious at first, as on one side $C_t.\Hyp^2$ is a sequence of disks in $\RP^2$ converging on an affine patch, but on the other $C_t.\S^2$ is independent of $t$ and equal to the entire projective space.
The transition of domains is easier to visualize directly in the double cover before projectivization, identifying $\S^2$ with the unit sphere in $\R^3$ and $\Hyp^2$ with the unit hyperboloid of two sheets.
Then the models $C_t.\S^2$ and $C_t.\Hyp^2$ are their images under the linear action of $C_t$.
As $t\to 0$, the sphere $C_t.\S^2$ flattens out like a pancake, converging to the union of two affine planes $z=\pm 1$, which are the simultaneous limit of the two sheets of the hyperboloids $C_t.\Hyp^2$ as the flatten out.

\begin{figure}
\centering\includegraphics[width=0.65\textwidth]{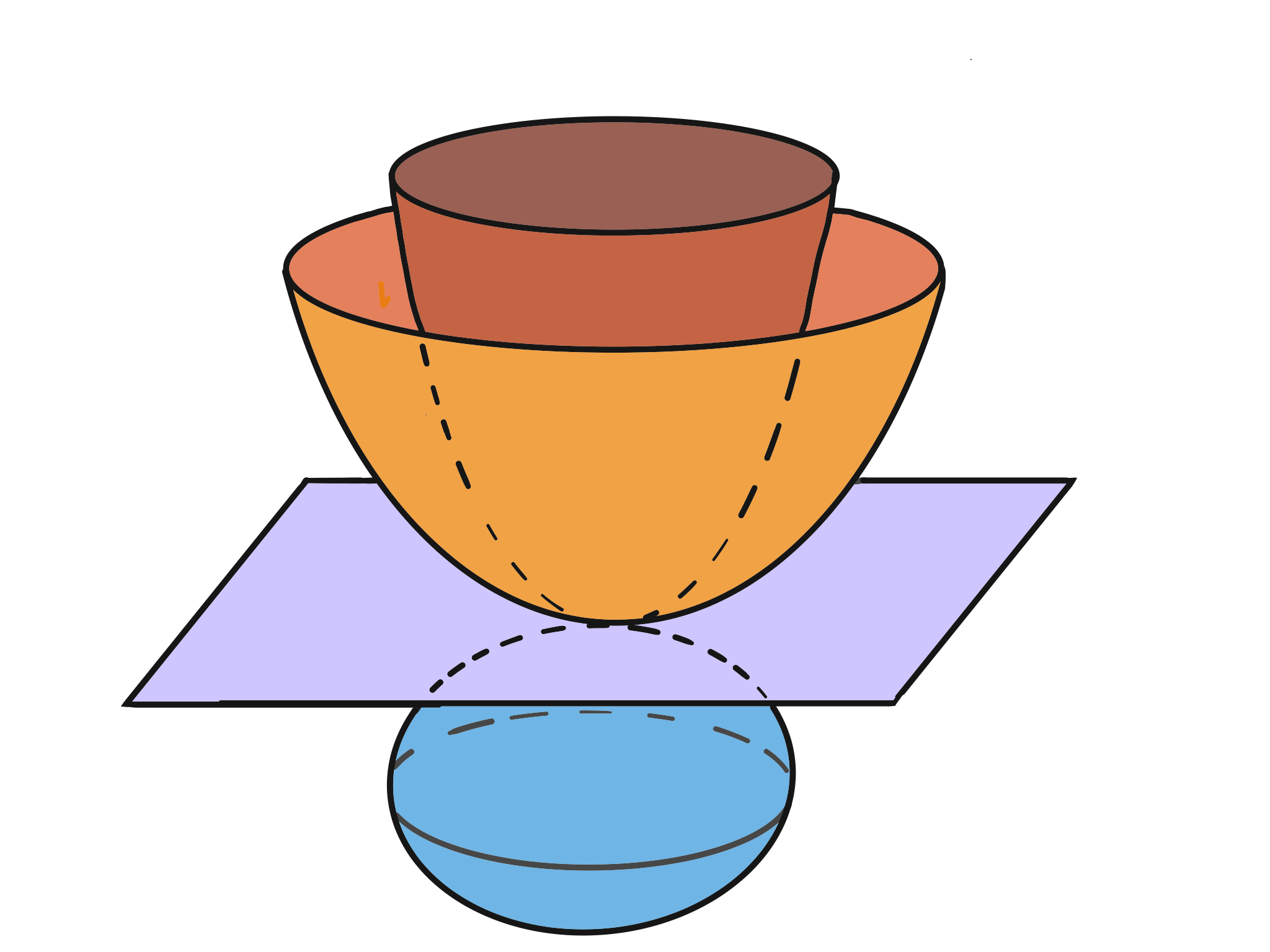}
\caption{The surfaces $C_t.\S^2$ and $C_t.\Hyp^2$ in $\R^3$.}
\end{figure}

\section{Limits of Orthogonal Subgeometries}
\label{sec:Lims_Orthog_Subgeos}
\label{Limits of Geometries!Cooper-Long-Thistlethwaite}

Beyond the classically - understood degeneration of $\Hyp^n$ to $\E^n$, the next well studied conjugacy limit of hyperbolic space was discovered by Jeff Danciger during his PhD work at Stanford \cite{Danciger11,Danciger11Ideal,Danciger13}.
Whereas a Euclidean limit is reached by uniformly stretching the ball model of $\Hyp^n$ in the affine patch $\R^n\subset\RP^n$ in all directions, Danciger considered conjugacy limits which stretch $\Hyp^n$ only in one direction, fixing a codimension-1  copy of $\Hyp^{n-1}$ in $\Hyp^n$.
The limiting geometry under this sequence of conjugacies has domain a cylinder $\mathbb{B}^{n-1}\times\R$, and is variously called Half Pipe, or co-Minkowski geometry in the literature
\footnote{The name Half-Pipe comes from the hyperboloid model of the limiting geometry in dimension two \cite{Danciger11}.  
The term co-Minkowski arises as the automorphism group is the contragredient representation of the automorphisms of Minkowski spacetime \cite{FillastreS16}.}

\begin{figure}
\centering\includegraphics[width=0.85\textwidth]{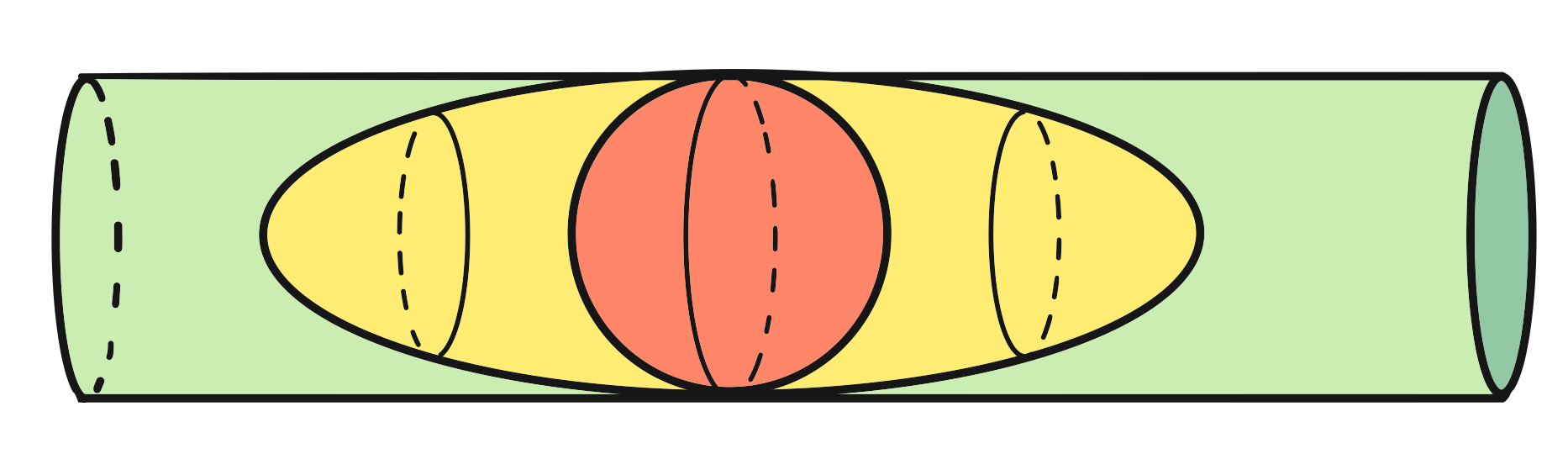}
\caption{The degeneration of $\Hyp^3$ to Half-Pipe, or co-Minkowski geometry via conjugacy limit in $\RP^3$.}	
\end{figure}

\noindent
This conjugacy limit appears as part of a new \emph{geometric transition}, connecting $\Hyp^n$ to its Lorentzian analog, Anti-de Sitter space $\mathsf{AdS}^n$, much as $\E^n$ interpolates between $\Hyp^n$ and $\S^n$.
Danciger has used this transition to study the collapse of singular hyperbolic, as well as Anti-de Sitter structures, as well as to answer questions in classical geometry \cite{DancigerMS14}.

From this stems multiple possible generalizations: what about stretching some other number of directions in $\Hyp^n$ to produce a limit?
What about stretching in multiple different directions \emph{and} at multiple rates?
What about other geometries, such as Anti-de Sitter and its pseudo-Riemannian relatives, besides hyperbolic space?
All of these potential generalizations were taken on, and completed by the aforementioned joint work of Cooper, Danciger and Wienhard, \emph{Limits of Geometries} \cite{CooperDW14}.
Below we review the main results of this work as a prelude to Chapter \ref{chp:Orthogonal_Groups}.

Hyperbolic and spherical geometry, along with their Lorentzian analogs de Sitter and Anti-de Sitter space, are special cases of \emph{orthogonal geometries}, or \emph{geometries of quadratic forms}.

\begin{definition}
Let $\beta$ be a nondegenerate quadratic form on $\R^n$, and $\Isom(\beta)<\GL(n;\R)$ the group of linear transformations preserving $\beta$ in the sense that $\beta(x,y)=\beta(Ax,Ay)$ when $A\in\Isom(\beta)$.
Let $\mathsf{X}(\beta)\subset\RP^{n-1}$ be the projectivized negative cone for $\beta$; $\mathsf{X}(\beta)=\{[x]\in\RP^{n-1}\mid \beta(x)<0\}$.
Then $(\mathsf{P}\Isom(\beta),\mathsf{X}(\beta))$ is a Group-Space subgeometry of projective space.	
\end{definition}

\begin{remark}
When $\beta$ is of signature $(p,q)$, meaning $\beta$ is similar to $-I_p\oplus I_q$, the group $\mathsf{P}\Isom(\beta)$ is conjugate to $\PO(p,q)$ and $(\mathsf{P}\Isom(\beta),\mathsf{X}(\beta))$ is a projective model for a semi-Riemannian geometry of constant curvature of dimension $p+q-1$	and signature $(p-1,q)$.
In the cases $(p,q)=(n,0),(1,n-1),(n-1,1),(2,n-2)$ we obtain spherical, hyperbolic, de Sitter and Anti-de Sitter space respectively.
When the particular choice of $\beta$ is irrelevant to present discussion, we will use the notation $\mathsf{X}(p,q)$ to denote the semi-Riemannian geometry arising from a signature $(p,q)$ form.
\end{remark}

In \emph{Limits of Geometries}, Cooper, Danciger and Wienhard manage to classify \emph{all} conjugacy limits of the geometries of quadratic forms, as subgeometries of $\RP^n$.
In general it is quite difficult to compute the totality of conjugacy limits of $H$ in $G$, as one has no control over which possible paths $C_t\in C^\infty(\R_+,G)$ give distinct limits $C_t HC_t\inv$.
This difficulty is averted for the study of orthogonal groups in $\GL(n;\R)$ via the following result of \cite{CooperDW14} regarding limits of symmetric subgroups of semisimple Lie groups.

\begin{theorem}[Theorem 1.1 in \emph{Limits of Geometries}]
Let $H$ be a symmetric subgroup of a semisimple Lie group $G$ with finite center.
Then any limit of $H$ in $G$ is the limit under conjugacy of a one parameter subgroup.
More precisely, let $L'$ be a conjugacy limit of $H$.
Then there is an $X\in\mathfrak{g}$ such that $L'$ is conjugate to $L=\lim_{t\to\infty} \exp(t X)H\exp(-t X)$.
\end{theorem}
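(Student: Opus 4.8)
The plan is to exploit the defining feature of a symmetric subgroup, namely that $H$ is an open subgroup of the fixed-point group $G^\sigma$ of an involution $\sigma\colon G\to G$, in order to replace the infinite-dimensional space of conjugating paths by a finite-dimensional family of directions. First I would fix a Cartan involution $\theta$ commuting with $\sigma$, producing a maximal compact $K=G^\theta$ and the two compatible eigenspace decompositions $\mathfrak{g}=\mathfrak{h}\oplus\mathfrak{q}$ (the $\pm 1$ eigenspaces of $d\sigma$, with $\mathfrak{h}$ the Lie algebra of $H$) and $\mathfrak{g}=\mathfrak{k}\oplus\mathfrak{p}$ (of $d\theta$). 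The key object is a maximal abelian subspace $\mathfrak{a}\subset\mathfrak{p}\cap\mathfrak{q}$, together with the generalized Cartan decomposition $G=K\,\exp(\mathfrak{a})\,H$ available for semisimple symmetric spaces: every $g\in G$ can be written $g=k\exp(Y)h$ with $k\in K$, $Y\in\mathfrak{a}$, $h\in H$.

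The main reduction is then almost formal. Given a sequence $g_n\in G$ with $g_n Hg_n\inv$ convergent in $\mathfrak{C}(G)$, write $g_n=k_n\exp(Y_n)h_n$ as above. Because $h_n\in H$ normalizes $H$, we have $g_n Hg_n\inv=k_n\exp(Y_n)H\exp(-Y_n)k_n\inv$, so the $H$-factor drops out entirely. Compactness of $K$ lets me pass to a subsequence with $k_n\to k_\infty$, and since conjugation by $k_\infty$ does not change the limit up to conjugacy it suffices to analyze $\exp(Y_n)H\exp(-Y_n)$ with $Y_n\in\mathfrak{a}$. If $\{Y_n\}$ is bounded the limit is a conjugate of $H$ and there is nothing to prove; otherwise I would write $Y_n=t_nX_n$ with $\lvert X_n\rvert=1$ and $t_n\to\infty$, and use compactness of the unit sphere of the finite-dimensional space $\mathfrak{a}$ to extract $X_n\to X$. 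This is the payoff of symmetry: the limiting direction $X$ lives in the fixed finite-dimensional subspace $\mathfrak{a}$, and $X\in\mathfrak{p}$ guarantees that $\operatorname{ad}(X)$ is diagonalizable with real eigenvalues.

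At the Lie algebra level I would decompose $\mathfrak{g}=\bigoplus_\lambda\mathfrak{g}_\lambda$ into restricted root spaces for $\operatorname{ad}(X)$, so that $\Ad(\exp tX)$ scales $\mathfrak{g}_\lambda$ by $e^{t\lambda}$. A standard computation in the Grassmannian $\Gr(\dim\mathfrak{h},\mathfrak{g})$ then shows that $\lim_{t\to\infty}\Ad(\exp tX)\mathfrak{h}$ exists and equals the subalgebra $\mathfrak{l}$ obtained by replacing each vector of $\mathfrak{h}$ by its component in the highest-eigenvalue slots in which it is supported. The hard part will be the perturbation step: I must show that the limit of $\Ad(\exp(t_nX_n))\mathfrak{h}$ along the genuine sequence of directions $X_n$ coincides with the one-parameter limit $\mathfrak{l}$ taken along the single direction $X$. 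This is delicate precisely when $X$ lies on a wall of the restricted Weyl chamber, since then arbitrarily small perturbations $X_n$ can reshuffle which root spaces dominate; the resolution is to track the eigenvalue orderings uniformly, using that $t_n\to\infty$ forces the dominant $e^{t_n\lambda}$ terms to align with those selected by $X$ while the subdominant contributions are crushed in the Grassmannian limit.

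Finally I would transfer the computation back to groups. Since $G$ is semisimple with finite center and $H=G^\sigma$ is algebraic, the corollary relating Lie algebra limits to conjugacy limits, together with the dimension-preservation statement that an algebraic $H$ has $\dim L=\dim H$, upgrades the Lie-algebra equality to the Chabauty equality $\lim_n\exp(t_nX_n)H\exp(-t_nX_n)=\lim_{t\to\infty}\exp(tX)H\exp(-tX)=L$, with no extra components appearing. Re-incorporating the convergent compact factor gives $L'=k_\infty L k_\infty\inv$, so $L'$ is conjugate to the one-parameter-subgroup limit along $X\in\mathfrak{g}$, as claimed.
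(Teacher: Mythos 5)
A preliminary remark: the thesis itself does not prove this statement --- it is quoted as Theorem 1.1 of \cite{CooperDW14} and used as a black box --- so the relevant comparison is with Cooper--Danciger--Wienhard's own argument. Your reduction reproduces its skeleton faithfully: commuting involutions $\sigma,\theta$, a maximal abelian $\mathfrak{a}\subset\mathfrak{p}\cap\mathfrak{q}$, the decomposition $G=K\exp(\mathfrak{a})H$, discarding the $H$-factor of $g_n=k_n\exp(Y_n)h_n$ since it normalizes $H$, subconverging the compact factor, and thereby reducing to sequences $\exp(Y_n)H\exp(-Y_n)$ with $Y_n\in\mathfrak{a}$. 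All of this is sound and is exactly how the finite-dimensionality of the problem is exploited in \cite{CooperDW14}.

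The gap is your final perturbation step, and it is not merely delicate: it fails as you have set it up. You take $X=\lim Y_n/\lvert Y_n\rvert$ and assert that the subdominant contributions are crushed in the Grassmannian limit. They are not. A restricted root $\lambda$ with $\lambda(X)=0$ may still satisfy $\lambda(Y_n)\to\infty$ (growing sublinearly in $\lvert Y_n\rvert$), and such roots make the limit along $Y_n$ strictly more degenerate than the limit along the ray through $X$. Concretely, take $G=\SL(3;\R)$, $H=\SO(3)$, $\mathfrak{a}$ the traceless diagonals, and $Y_n=\diag(2n,\,-n+\sqrt{n},\,-n-\sqrt{n})$. The unit directions converge to $X$ proportional to $\diag(2,-1,-1)$, and the root $\lambda_{23}(Y)=y_2-y_3$ vanishes on $X$, so $\lim_{t\to\infty}\exp(tX)\,\so(3)\,\exp(-tX)=\span\{E_{12},\,E_{13},\,E_{23}-E_{32}\}$, which retains a rotation block; but $\lambda_{23}(Y_n)=2\sqrt{n}\to\infty$, so $\lim_n \exp(Y_n)\,\so(3)\,\exp(-Y_n)=\span\{E_{12},\,E_{13},\,E_{23}\}$, the strictly upper-triangular (Heisenberg) algebra. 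The two limits differ, so no amount of ``tracking eigenvalue orderings uniformly'' can rescue the choice $X=\lim X_n$; note the theorem only promises \emph{some} $X\in\mathfrak{g}$, not the limiting direction. The repair --- and this is precisely the step CDW supply --- is to pass to a further subsequence along which $\lambda(Y_n)$ converges in $[-\infty,+\infty]$ for every restricted root $\lambda$, and then to choose $X\in\mathfrak{a}$ with $\lambda(X)>0$, $\lambda(X)=0$, or $\lambda(X)<0$ according as $\lambda(Y_n)\to+\infty$, stays bounded, or $\to-\infty$; a short linear-algebra argument shows such an $X$ exists (in the example above $X=\diag(1,0,-1)$ works). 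With this adapted $X$ your root-space computation and the concluding algebraicity/dimension upgrade from Lie-algebra limits to Chabauty limits go through as you describe.
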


\noindent
Thus, the space one must search for conjugacy limits can be reduced from the infinite dimensional space $C^\infty(\R_,G)$ to the one parameter subgroups, which is parameterized by the unit sphere in $\mathfrak{g}$ via $[X]_+\mapsto \{\exp t X\}_{t\in\R}$.
This already reduces the problem for conjguacy limits of  $\O(p,q)<\GL(n;R)$ to understanding the map $\S^{n^2-1}\to \Cl(\GL(n;\R))$ given by $[X]\mapsto \lim_{t\to\infty} \exp(t X)\O(p,q)\exp(-tX)$, but further reduction is still possible.
Indeed, via various matrix factorization theorems, we have the following.

\begin{observation}
Every conjugacy limit of $\O(p,q)$ in $\GL(p+q;\R)$ is conjugate to a conjugacy limit $\lim_{t\to\infty} D_t \O(p,q)D_t\inv$ for $D_t$ diagonal matrices.
Furthermore, the path $D_t$ can be taken to be a one parameter subgroup.
\end{observation}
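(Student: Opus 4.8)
The plan is to start from the strong input already available, namely the cited theorem of Cooper–Danciger–Wienhard (Theorem~1.1 of \emph{Limits of Geometries}): since $\O(p,q)$ is a symmetric subgroup of $\SL^{\pm}(n;\R)$ --- the fixed points of the involution $\sigma(g)=Jg^{-T}J$ with $J=I_{p,q}$ --- and since scalar matrices commute with everything and so play no role in conjugating $\O(p,q)$, every conjugacy limit $L$ of $\O(p,q)$ in $\GL(n;\R)$ is already realized inside the semisimple group $\SL(n;\R)$ as $L=\lim_{t\to\infty}\exp(tX)\,\O(p,q)\,\exp(-tX)$ for a single $X\in\mathfrak{g}=\mathfrak{sl}(n;\R)$. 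Thus a one-parameter subgroup is available for free; the entire content of the observation is to replace this $\exp(tX)$ by a \emph{diagonal} one-parameter subgroup, at the cost of conjugating $L$ by a fixed element (which the statement permits).

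First I would reduce to a diagonal \emph{path}. The key structural fact is the generalized Cartan decomposition $G=K\,A\,H$ for the reductive symmetric pair $(\GL(n;\R),\O(p,q))$, where $K=\O(n)$ is maximal compact and $A=\exp(\mathfrak{a})$ with $\mathfrak{a}$ a maximal abelian subspace of $\mathfrak{p}\cap\mathfrak{q}$. A direct computation identifies $\mathfrak{p}\cap\mathfrak{q}$ (symmetric matrices commuting with $J$) and shows that $\mathfrak{a}$ may be taken to be the full algebra of diagonal matrices. Writing $\exp(tX)=k_t\,a_t\,h_t$ with $k_t\in\O(n)$, $a_t\in A$ diagonal and $h_t\in H=\O(p,q)$, the conjugate subgroup simplifies as $\exp(tX)\,H\,\exp(-tX)=k_t\,a_t\,H\,a_t^{-1}\,k_t^{-1}$, since the right $H$-factor is absorbed. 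Because $\O(n)$ is compact and the Chabauty space $\Cl(\GL(n;\R))$ is compact, I may pass to a subsequence along which $k_t\to k_\infty$ and $a_t\,H\,a_t^{-1}\to L'$; then $L=k_\infty L' k_\infty^{-1}$ exhibits $L$ as conjugate to $L'=\lim a_{t}\,\O(p,q)\,a_{t}^{-1}$, a limit by diagonal matrices. Routing the $H$-factor to the right in the $KAH$ decomposition is precisely what avoids the illegitimate step of conjugating a convergent-but-nonconstant family by a diverging diagonal family.

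It remains to upgrade the diagonal path $a_t=\diag(e^{\lambda_1(t)},\dots,e^{\lambda_n(t)})$ to a diagonal one-parameter subgroup. Here I would use the weight-space description from the Grassmannian discussion earlier in this chapter: conjugation by $a_t$ scales the $(i,j)$ entry of $\mathfrak{g}$ by $e^{\lambda_i(t)-\lambda_j(t)}$, so the Lie-algebra limit $\mathfrak{l}'=\lim\Ad(a_t)\,\mathfrak{o}(p,q)$ is governed entirely by the asymptotics of the differences $\lambda_i(t)-\lambda_j(t)$. After a further subsequence I may assume each difference tends to $+\infty$, to $-\infty$, or to a finite constant $c_{ij}$; the last case partitions $\{1,\dots,n\}$ into blocks (within which differences stay bounded) that are totally ordered by divergence, and the bounded data $(c_{ij})$ assembles into a fixed diagonal matrix $\delta$. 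I would then take $Y=\diag(y_1,\dots,y_n)$ constant on blocks and strictly decreasing across them, so that the diagonal one-parameter subgroup $D_s=\exp(sY)$ induces the same ordered partition, and argue that $\lim_s\Ad(D_s)\,\mathfrak{o}(p,q)=\Ad(\delta^{-1})\,\mathfrak{l}'$, whence $L'$ --- and therefore $L$ --- is conjugate to $\lim_s D_s\,\O(p,q)\,D_s^{-1}$.

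The hard part will be the last claim: that the limit $\mathfrak{l}'$ depends on $a_t$ only through the ordered partition into blocks (together with the conjugation $\delta$), and \emph{not} on the individual divergence rates or on the fine ordering of the weights $\lambda_i-\lambda_j$. This rate-independence fails for a generic subalgebra, and it is exactly here that the special structure of $\mathfrak{o}(p,q)$ must enter. I expect to prove it by a dominant-weight principle: for each $v\in\mathfrak{o}(p,q)$ the image $\Ad(a_t)v$ is asymptotically controlled by its components of largest weight, and taking spans and Chabauty closures shows the limiting subalgebra is cut out by the block-graded pieces of the form $J$. This simultaneously recovers the Cooper–Danciger–Wienhard description of the limits as isometry groups of \emph{partial flags of quadratic forms} --- an ordered partition being exactly such a flag, and every flag being realized by a diagonal one-parameter subgroup. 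Verifying that refining a block (an apparent new phenomenon coming from a second divergence scale inside a block) produces only a finer flag, still of this form, is the technical heart of the argument.
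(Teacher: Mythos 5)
Your proposal is correct, and it is essentially the argument the paper intends: the Observation is stated without proof, deferred to Cooper--Danciger--Wienhard via ``various matrix factorization theorems,'' and your route --- their Theorem~1.1, then the $KAH$ decomposition of the symmetric pair $(\GL(n;\R),\O(p,q))$ with the compact factor absorbed by Chabauty subconvergence, then a rate analysis in the diagonal flat --- is exactly that factorization argument. The rate-independence you single out as the technical heart is genuine (it does fail for general subalgebras, since combinations such as $(\lambda_i-\lambda_j)-(\lambda_k-\lambda_l)$ need not converge even when all pairwise differences do), but for $\so(J)$ it follows from the decomposition into independent lines $\span\left\{J_{jj}e_{ij}-J_{ii}e_{ji}\right\}$ lying in pairwise-orthogonal coordinate $2$-planes, which is precisely the structure developed in Chapter~\ref{chp:Orthogonal_Groups} (the factorization through $(\RP^1)^{\binom{n}{2}}$ and the limit-partition description), so the limit depends only on the ordered partition and the finite within-block constants, as you conjecture.
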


\noindent
This further reduces the search space, and to classify all conjugacy limits one must only understand the map $\S^{n-1}\to \Cl(\GL(n;\R))$ taking a point $\vec{v}\in\S^{n-1}$ to the conjugacy limit $\lim e^{t\vec{v}}\O(p,q) e^{-t\vec{v}}$ for $e^{\vec{w}}$ the diagonal matrix with entries $e^{w_i}$.
As all limits under consideration are conjugacy limits of algebraic subgroups of an algebraic group, it is admissable to compute using the Lie algebra limit.

\begin{corollary}
All connected limits of the orthogonal group $\O(p,q)$ in $\GL(p+q;\R)$ are conjugate to the exponential of $\lim_{t\to\infty}e^{t\vec{v}}\so(p,q)e^{-t\vec{v}}$ for some $\vec{v}\in\S^{n-1}$.	
\end{corollary}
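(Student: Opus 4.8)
The plan is to assemble the two reductions and the algebraicity corollary established immediately above into a single chain of conjugacies. Let $L$ be a connected limit of $\O(p,q)$ in $\GL(p+q;\R)$; by definition this is the identity component of some Chabauty limit of conjugates of $\O(p,q)$. First I would invoke the preceding Observation, which guarantees that any such conjugacy limit is itself conjugate in $\GL(p+q;\R)$ to a limit of the form $\lim_{t\to\infty} D_t\,\O(p,q)\,D_t\inv$ where $D_t$ is a one parameter subgroup of the diagonal torus. Writing $D_t = e^{t\vec{v}}$ for $\vec{v}\in\R^n$ the vector of diagonal exponents (so that $e^{t\vec v}$ is the diagonal matrix with entries $e^{tv_i}$), I would then normalize: since scaling $\vec{v}$ by a positive constant only reparametrizes the variable $t$, and the limit is taken as $t\to\infty$, the limiting group is unchanged upon replacing $\vec{v}$ by $\vec{v}/\|\vec{v}\|$, so we may assume $\vec{v}\in\S^{n-1}$. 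The degenerate case $\vec v=0$ returns $\SO(p,q)$, which is in any event the identity component of the limit along a generic unit direction, so it is absorbed into the statement.

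The second step is to replace the group limit by its Lie algebra limit. Because $\O(p,q)$ is an algebraic subgroup of the algebraic group $\GL(p+q;\R)$, the Corollary following Theorem \ref{thm:CDW_AlgGrps} applies: the connected geometric limit of $e^{t\vec v}\,\O(p,q)\,e^{-t\vec v}$ coincides with the exponential of the Lie algebra limit $\lim_{t\to\infty} e^{t\vec v}\,\so(p,q)\,e^{-t\vec v}$, computed in the Grassmannian $\Gr(\dim\so(p,q),\gl(p+q;\R))$. Taking identity components throughout and composing the two conjugating elements — the one supplied by the Observation, and the conjugacy implicit in passing to the normalized diagonal path — yields that $L$ is conjugate to $\langle\exp(\lim_{t\to\infty} e^{t\vec v}\,\so(p,q)\,e^{-t\vec v})\rangle$ for this $\vec v\in\S^{n-1}$, which is exactly the asserted form.

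The only genuine subtlety, and the step I would flag as the conceptual heart of the argument, is this passage from the group limit to the Lie algebra limit. As the Barber Pole Example warns, a Chabauty limit of connected subgroups can strictly contain the exponential of the corresponding Lie algebra limit, with the dimension dropping in the process; one cannot in general trade a limit in $\Cl(\GL(p+q;\R))$ for a limit in the Grassmannian. What rescues us is precisely the algebraicity hypothesis: Theorem \ref{thm:CDW_AlgGrps} forces $\dim L = \dim\O(p,q) = \dim\so(p,q)$, so the Lie algebra limit, which is a priori only a subgroup of the connected limit $L$, must exhaust the entire identity component by a dimension count. Everything else is bookkeeping with reparametrization and composition of conjugating elements; the substantive input is borrowed wholesale from the cited results of Cooper, Danciger and Wienhard.
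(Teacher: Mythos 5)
Your proposal is correct and follows essentially the same route the paper intends: the corollary appears there without a separate proof precisely because it is the concatenation of the preceding Observation (every conjugacy limit of $\O(p,q)$ is conjugate to a limit along a diagonal one-parameter subgroup), the harmless normalization of $\vec{v}$ to $\S^{n-1}$ by reparametrizing $t$, and the earlier corollary to Theorem \ref{thm:CDW_AlgGrps} identifying the connected geometric limit with the exponential of the Lie algebra limit via the dimension count. You assemble exactly these three ingredients in the same order, and you correctly isolate the algebraicity/dimension argument as the step that rules out the Barber Pole pathology.

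One side remark needs repair. You claim the degenerate case $\vec{v}=0$ is absorbed because $\SO(p,q)$ is \emph{the identity component of the limit along a generic unit direction}. This is backwards: a generic $\vec{v}$ (all entries distinct) produces the \emph{most} degenerate limit, the unipotent Heisenberg-type group, not $\SO(p,q)$. The trivial limit is instead realized by the scalar direction $\vec{v}=\tfrac{1}{\sqrt{n}}(1,\ldots,1)\in\S^{n-1}$, for which $e^{t\vec{v}}$ is central in $\GL(p+q;\R)$, so the conjugated path, and hence its limit, is constantly $\so(p,q)$. Your conclusion that the degenerate case is absorbed into the statement survives, but with this justification substituted; as written the parenthetical is false and a reader checking it against the classification (where generic directions give the vertex of the poset of limits) would stumble.
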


\noindent
In the resulting analysis, Cooper, Danciger and Wienhard describe these limits as the geometries of \emph{partial flags of quadratic forms}.
Their definition, description, and the resulting classification are below.

\begin{definition}
A \emph{partial flag} $\fam{F}=\{V_0,V_1,\ldots, V_k,V_{k+1}\}$ of $\R^n$ is a descending chain of vector subspaces $\R^n=V_0\supset V_1\cdots V_k\supset V_{k+1}=\{0\}$.
A \emph{partial flag of quadratic forms}
$\beta=(\beta_0,\beta_1,\ldots, \beta_k)$ on $\fam{F}$ is a collection of nondegenerate quadratic forms $\beta_i$, defined on each quotient $V_i/V_{i+1}$ of the partial flag, respectively.
The group $\Isom(\beta,\fam{F})$ contains all linear transformations of $\R^n$ which preserve $\fam{F}$ and induce isometries of $\beta_i$ on each of the respective quotients.
\end{definition}

\begin{definition}
The $(G,X)$ geometry associated to a partial flag of quadratic forms $(\beta,\fam{F})$ has domain $\mathsf{X}(\beta)\subset\RP^{n-1}$ defined by $\mathsf{X}(\beta)=\{[x]\in\RP^{n-1}\mid \beta_0(x)<0\}$, and automorphism group $\mathsf{P}\Isom(\beta,\fam{F})$.
\end{definition}

\begin{observation}
For any partial flag of quadratic forms $(\beta,\fam{F})$, the group $\Isom(\beta,\fam{F})$ is conjugate to the group of matrices of the form, below, where $\star$ denotes an arbitrary block.
$$\pmat{ \O(p_0,q_0) &0&0 &0\\
\star &\O(p_1,q_1) &0 &0 \\
\star &\star &\ddots &0\\
\star&\star &\star &\O(p_k,q_k)}
$$
\end{observation}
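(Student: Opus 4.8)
The plan is to exhibit a single basis of $\R^n$ in which $\Isom(\beta,\fam{F})$ is literally equal to the displayed block lower-triangular group; since any two bases of $\R^n$ differ by an element of $\GL(n;\R)$, passing to this adapted basis is a conjugation and the statement follows at once.

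First I would split the flag. Choosing complements $W_i$ with $V_i = W_i \oplus V_{i+1}$ produces a direct sum decomposition $\R^n = W_0 \oplus \cdots \oplus W_k$ together with canonical isomorphisms $W_i \cong V_i/V_{i+1}$. Ordering a basis so that the vectors spanning $W_0$ come first, then those of $W_1$, and so on, every linear map $A$ acquires a block form $A = (A_{ij})$ with $A_{ij}\colon W_j \to W_i$. I would then verify the elementary equivalence that $A$ preserves the flag (i.e. $A(V_i) \subseteq V_i$ for all $i$) if and only if $A_{ij} = 0$ whenever $i < j$, that is, $A$ is block lower-triangular. This is a direct computation using that a vector of $V_i = W_i\oplus\cdots\oplus W_k$ has vanishing $W_j$-components for $j < i$, so that $A(V_i)\subseteq V_i$ forces the strictly-upper blocks to annihilate $V_i$.

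Next I would pin down the diagonal blocks. For a flag-preserving $A$, the induced map on $V_i/V_{i+1} \cong W_i$ is exactly the diagonal block $A_{ii}$: representing a class by its $W_i$-component and applying $A$, the contributions of the remaining blocks land in $V_{i+1}$ and hence vanish modulo $V_{i+1}$. Invoking Sylvester's law of inertia, I would choose the basis of each $W_i$ so that the nondegenerate form $\beta_i$ of signature $(p_i,q_i)$ takes the standard diagonal shape; the requirement that $A$ induce an isometry of $\beta_i$ then becomes precisely $A_{ii} \in \O(p_i,q_i)$.

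Finally I would observe that these are the only constraints imposed by the definition: a matrix lies in $\Isom(\beta,\fam{F})$ if and only if it is block lower-triangular with each diagonal block in $\O(p_i,q_i)$, the strictly-lower blocks being entirely unconstrained (any such matrix is automatically invertible, since its diagonal blocks are). This is exactly the displayed group, so in the adapted basis $\Isom(\beta,\fam{F})$ equals it and in the original basis is conjugate to it. I do not expect a genuine obstacle here; the only points demanding care are the bookkeeping that flag-preservation corresponds to lower- rather than upper-triangularity under the chosen ordering of the $W_i$, and the verification that the map induced on each quotient is the diagonal block alone rather than some combination of blocks.
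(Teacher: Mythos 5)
Your proof is correct and is exactly the argument the paper leaves implicit (the statement appears there as an unproved Observation): split the flag by complements $W_i$ with $V_i=W_i\oplus V_{i+1}$, note that flag-preservation is equivalent to block lower-triangularity in the adapted ordered basis, identify the induced map on $V_i/V_{i+1}$ with the diagonal block $A_{ii}$, and normalize each $\beta_i$ by Sylvester's law so the diagonal constraint reads $A_{ii}\in\O(p_i,q_i)$. Nothing further is needed; your two flagged bookkeeping points (lower- versus upper-triangularity, and that only the diagonal block survives modulo $V_{i+1}$) are handled correctly.
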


\begin{theorem}[Theorem 1.2 in \emph{Limits of Geometries}]
The limits of the constant curvature semi-Riemannian geometries	$(\PO(p,q),\mathsf{X}(p,q))$ in $\RP^{p+q-1}$ are all of the form $(\mathsf{P}\Isom(\beta,\fam{F}),\mathsf{X}(\beta,\fam{F}))$ for $(\beta,\fam{F})$ a partial flag of quadratic forms on $\R^{p+q}$.
Further, $\mathsf{X}(\beta)$ is a limit of $\mathsf{X}(p,q)$ if and only if $p_0\neq 0$ and the signatures $((p_0,q_0),(p_1,q_1),\ldots (p_k,q_k))$ of $\beta$ partition the signature $(p,q)$ in the sense that 
$$p_0+p_1+\ldots +p_k=p
\hspace{1cm}
q_0+q_1+\ldots q_k=q
$$
after exchanging $(p_i,q_i)$ with $(q_i,p_i)$ for some collection of indicies $i\in\{1,\ldots, k\}$ (the first signature $(p_0,q_0)$ cannot be reversed as it determines the domain $\mathsf{X}(\beta,\fam{F})$.
\label{thm:Main_Limit_Thm_CDW}
\end{theorem}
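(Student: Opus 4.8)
The plan is to follow the reductions already assembled and collapse the entire classification to an explicit Lie-algebra limit computation inside a Grassmannian, followed by a combinatorial analysis of signatures. By Theorem 1.1 together with the subsequent observation and corollary, every connected conjugacy limit of $\O(p,q)$ in $\GL(p+q;\R)$ is conjugate to the exponential of a Lie-algebra limit $\lim_{t\to\infty} e^{t\vec{v}}\so(p,q)e^{-t\vec{v}}$ for some diagonal one-parameter subgroup $\vec{v}=(v_1,\ldots,v_n)\in\R^n$, the limit being taken in $\Gr(\dim\so(p,q),\gl(n;\R))$. It therefore suffices to (i) compute this limit for arbitrary $\vec{v}$, (ii) recognize it as the Lie algebra of some $\Isom(\beta,\fam{F})$, and (iii) determine exactly which signature data occur.

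For the computation I would decompose $\gl(n;\R)=\bigoplus_\alpha\mathfrak{g}_\alpha$ into $\mathrm{ad}_{\vec{v}}$-weight spaces, where the elementary matrix $E_{ij}$ has weight $v_i-v_j$, so that conjugation by $e^{t\vec{v}}$ scales the $(i,j)$ entry by $e^{t(v_i-v_j)}$. Sorting the distinct values among the $v_i$ into levels partitions the index set into coordinate blocks $I_0,\ldots,I_k$ and induces a coordinate flag $\fam{F}$. The key structural lemma is that, in the Grassmannian, the limit of $\so(p,q)$ under this scaling is the \emph{associated graded} of $\so(p,q)$ for the filtration by $\mathrm{ad}_{\vec{v}}$-weights: within-block entries ($v_i=v_j$) survive subject to the orthogonality constraint, yielding a copy of $\so(p_m,q_m)$ on each diagonal block, where $(p_m,q_m)$ is the signature of the form restricted to $I_m$; the entries that blow up contribute the full off-diagonal triangular blocks as free parameters; and the entries that decay vanish. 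This is precisely $\isom(\beta,\fam{F})$ in the block-triangular normal form recorded in the excerpt, establishing that every limit has the claimed shape.

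It remains to characterize the achievable signature data, which is the ``if and only if.'' Since the decomposition merely redistributes the fixed signature $(p,q)$ of the ambient form $J$ among coordinate blocks, a single limit produces signatures with $\sum_m p_m=p$ and $\sum_m q_m=q$, an honest partition; the domain $\mathsf{X}(\beta)=\{[x]\mid\beta_0(x)<0\}$ is nonempty and genuinely limits the negative cone $\mathsf{X}(p,q)$ exactly when the top block carries a negative direction, i.e.\ $p_0\neq 0$. For the converse I would, given any target profile with $p_0\neq 0$, construct an explicit $\vec{v}$ grouping the negative and positive coordinate axes of $J$ into the prescribed blocks with strictly decreasing weights, realizing that profile. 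The reversal freedom $(p_i,q_i)\leftrightarrow(q_i,p_i)$ for $i\geq 1$ then follows from the observation that negating $\beta_i$ on a lower quotient $V_i/V_{i+1}$ changes neither the isometry group (since $\O(p_i,q_i)=\O(q_i,p_i)$ as a set of matrices) nor the domain (which depends only on $\beta_0$); negating $\beta_0$, by contrast, replaces the negative cone by the positive cone, so the top signature is rigid.

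The main obstacle I anticipate is making the signature analysis airtight in both directions at once: proving that the limit process captures \emph{every} partition-with-lower-reversals and \emph{only} those, and in particular justifying the asymmetry between the rigid top block and the reversible lower blocks. The associated-graded computation, while it requires care to show that the blow-up directions contribute exactly the free triangular blocks and nothing more, is essentially weight-space bookkeeping once the filtration-to-grading lemma is in place; the genuinely delicate point is translating the algebraic limit back into the geometry of domains in $\RP^{n-1}$ and verifying the realizability of each admissible configuration.
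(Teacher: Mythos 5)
Your outline is correct, but it is essentially the original Cooper--Danciger--Wienhard argument rather than the proof this thesis actually runs. You lean on their Theorem 1.1 (limits of symmetric subgroups are achieved along one-parameter subgroups) to reduce to diagonal $\exp(t\vec{v})$, then compute $\lim_{t\to\infty}e^{t\vec{v}}\so(p,q)e^{-t\vec{v}}$ as the associated graded for the $\mathrm{ad}_{\vec{v}}$-weight filtration and finish with signature bookkeeping; the text surrounding the quoted statement merely \emph{reviews} exactly that reduction, while the thesis's own derivation (the chapter on orthogonal groups in $\GL(n;\R)$) is deliberately independent of it. There one never chooses paths at all: every limit is conjugate to a limit of diagonal orthogonal groups $\O(J)$, the map $\mathfrak{lie}$ is shown continuous on $\overline{\fam{D}_n}$ (using algebraicity to rule out Barber-Pole behavior), and $\overline{\mathfrak{lie}(\fam{D}_n)}$ is computed inside the torus $(\RP^1)^{\binom{n}{2}}$ via $\Psi([\lambda_1:\cdots:\lambda_n])=([\lambda_i:\lambda_j])_{i<j}$; limit points are classified by ordered partitions of the coordinates decorated with projective points, and feeding these back through $\eta$ recovers precisely the block-triangular algebras $\isom(\beta,\fam{F})$, with the partition data realizing your signature combinatorics cell by cell. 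The trade-off is clear: your route's heavy external input is Theorem 1.1 --- without it your computation classifies only limits along one-parameter subgroups --- whereas the thesis's route avoids any such path reduction (so it can apply where limits are not realized along one-parameter subgroups) and yields strictly more, namely the full Chabauty compactification as the maximal De Concini--Procesi wonderful compactification, a manifold cellulated by permutohedra whose face poset is the ``is a limit of'' partial order; the cost is the blowup machinery. Two points you flagged do need the inputs you suspected: your filtration-to-grading lemma is safe here for the specific reason the thesis isolates, namely that the basis vectors $\lambda_j e_{ij}-\lambda_i e_{ji}$ of $\so(J)$ lie in pairwise orthogonal coordinate $2$-planes $\span\{e_{ij},e_{ji}\}$, so the limit of the span is the span of the limits; and passing from the connected Lie-algebra limit to the full Chabauty limit group requires the algebraicity theorem ($\dim L=\dim H$ for conjugacy limits of algebraic subgroups), which both arguments must invoke. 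Your handling of the reversal asymmetry (lower blocks $(p_i,q_i)\leftrightarrow(q_i,p_i)$ free since neither group nor domain changes, top block rigid since $\beta_0$ determines $\mathsf{X}(\beta)$, and $p_0\neq 0$ for nonemptiness) matches the intended statement.
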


\begin{figure}
\label{fig:Limits_of_H3}
\centering
\begin{tikzcd}
& \mathsf{X}(1,3)\arrow[dl]\arrow[d]\arrow[dr] &\\
\mathsf{X}((1,0)(3))\arrow[d]\arrow[dr] & \mathsf{X}((1,2)(1))\arrow[d]\arrow[dr] & \mathsf{X}((1,1)(2))\arrow[d]\arrow[dll]\\
\mathsf{X}((1,0)(1)(2))\arrow[dr] & \mathsf{X}((1,0)(2)(1))\arrow[d] &\mathsf{X}((1,1)(1)(1))\arrow[dl]\\
&\mathsf{X}(1,0)(1)(1)(1) &	
\end{tikzcd}
\label{fig:Hyp3_Limits}
\caption{The limits of $\Hyp^3=\mathsf{X}(1,3)$ as a subgeometry of $\RP^3$.}	
\end{figure}

\noindent
In Figure \ref{fig:Limits_of_H3}, the limits of $\Hyp^3=\mathsf{X}(1,3)$ appear to form a poset, which is intuitively plausible: if $L$ is a limit of $H$ and $K$ is a limit of $L$, then $K$ should be achievalbe as a limit of $H$ as well.
That this is in fact the case is another theorem of \cite{CooperDW14}, reproduced below.

\begin{theorem}[Theorem 3.3 in \emph{Limits of Geometries}]
Let $G$ be an algebraic Lie group.
Then the relation of being a connected conjugacy limit induces a partial order on the set $\mathsf{Grp}_0(G)$ of all connected algebraic subgroups of $G$.
Moreover the length of every chain is at most $\dim G$.
\end{theorem}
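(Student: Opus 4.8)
The plan is to translate the entire statement into the language of adjoint orbits in a Grassmannian, where both transitivity and the chain bound become statements about orbit closures. Fix $G$ algebraic with Lie algebra $\mathfrak{g}$. By the corollary above that every conjugacy limit of an algebraic subgroup is locally isomorphic to its Lie algebra limit, a connected algebraic subgroup $L$ is a connected conjugacy limit of a connected algebraic subgroup $H$ if and only if $\mathfrak{l}=\lim_n \Ad(g_n)\mathfrak{h}$ for some sequence $g_n\in G$, the limit taken in $\Gr(d,\mathfrak{g})$ with $d=\dim\mathfrak{h}$. Since $\Gr(d,\mathfrak{g})$ is compact, every such limit again has dimension $d$ (recovering the dimension statement of Theorem \ref{thm:CDW_AlgGrps}), so comparable subgroups share a dimension and I may work one Grassmannian at a time. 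Writing $\mathcal{O}_{\mathfrak{h}}$ for the $\Ad(G)$-orbit of the point $[\mathfrak{h}]\in\Gr(d,\mathfrak{g})$, the set of connected conjugacy limits of $H$ is exactly $\{L : \mathfrak{l}\in\overline{\mathcal{O}_{\mathfrak{h}}}\}$, so the relation $L\preceq H$ is precisely $\mathfrak{l}\in\overline{\mathcal{O}_{\mathfrak{h}}}$.

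With this reformulation the order axioms are nearly immediate. Reflexivity holds since $[\mathfrak{h}]\in\mathcal{O}_{\mathfrak{h}}\subseteq\overline{\mathcal{O}_{\mathfrak{h}}}$ (the constant sequence). For transitivity, note $\overline{\mathcal{O}_{\mathfrak{h}}}$ is $\Ad(G)$-invariant, being the closure of an invariant set under the homeomorphisms $\Ad(g)$; hence $\mathfrak{l}\in\overline{\mathcal{O}_{\mathfrak{h}}}$ forces $\mathcal{O}_{\mathfrak{l}}\subseteq\overline{\mathcal{O}_{\mathfrak{h}}}$ and therefore $\overline{\mathcal{O}_{\mathfrak{l}}}\subseteq\overline{\mathcal{O}_{\mathfrak{h}}}$. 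Thus $L\preceq H$ is equivalent to the inclusion $\overline{\mathcal{O}_{\mathfrak{l}}}\subseteq\overline{\mathcal{O}_{\mathfrak{h}}}$, and transitivity reduces to transitivity of $\subseteq$. The one genuine subtlety is antisymmetry: conjugate subgroups are mutual conjugacy limits, so $\preceq$ is only a preorder on $\mathsf{Grp}_0(G)$. Declaring $H\sim L$ when $H\preceq L$ and $L\preceq H$ --- equivalently when $\overline{\mathcal{O}_{\mathfrak{h}}}=\overline{\mathcal{O}_{\mathfrak{l}}}$ --- the assignment $H\mapsto\overline{\mathcal{O}_{\mathfrak{h}}}$ descends to an order embedding of $(\mathsf{Grp}_0(G)/\!\sim,\preceq)$ into the inclusion poset of closed $\Ad(G)$-invariant subsets of the Grassmannians $\Gr(d,\mathfrak{g})$, which is the partial order the relation induces.

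It remains to bound chain length, for which I would use the dimension of the orbit $\mathcal{O}_{\mathfrak{h}}$ as a strictly monotone invariant. A strict relation $L\prec H$ means $\overline{\mathcal{O}_{\mathfrak{l}}}\subsetneq\overline{\mathcal{O}_{\mathfrak{h}}}$; since two orbits that meet coincide, $\mathcal{O}_{\mathfrak{l}}$ must lie in the boundary $\overline{\mathcal{O}_{\mathfrak{h}}}\setminus\mathcal{O}_{\mathfrak{h}}$. The key input is that, because $G$ is algebraic and its adjoint action on $\Gr(d,\mathfrak{g})$ is algebraic, every orbit is locally closed and its boundary is a union of orbits of strictly smaller dimension; hence $\dim\mathcal{O}_{\mathfrak{l}}<\dim\mathcal{O}_{\mathfrak{h}}$ at every strict step. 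A chain $H_0\succ H_1\succ\cdots\succ H_m$ then yields $\dim\mathcal{O}_{\mathfrak{h}_0}>\dim\mathcal{O}_{\mathfrak{h}_1}>\cdots>\dim\mathcal{O}_{\mathfrak{h}_m}\ge 0$, and since $\mathcal{O}_{\mathfrak{h}_0}$ is the image of $G$ under the orbit map, $\dim\mathcal{O}_{\mathfrak{h}_0}\le\dim G$, giving $m\le\dim G$. The main obstacle is precisely this last geometric input --- local closedness of the adjoint orbits and the strict drop of dimension across the boundary; I would obtain it from the standard structure theory of algebraic group actions (orbits are locally closed, and orbit closures are unions of lower-dimensional orbits) applied to the real-algebraic action of $G$ on the Grassmannian, rather than reproving it by hand.
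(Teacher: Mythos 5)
There is no in-paper proof to compare against: the thesis quotes this theorem from \cite{CooperDW14} without argument, so your proposal stands or falls on its own. Judged that way, it is correct, and it is essentially the argument of Cooper--Danciger--Wienhard: reduce to $\Ad(G)$-orbit closures of Lie algebras in $\Gr(d,\mathfrak{g})$, get reflexivity and transitivity from monotonicity of orbit closures, and get the chain bound from a strictly decreasing dimension invariant. Two hinges deserve to be made explicit. First, your equivalence ``$L\preceq H$ iff $\mathfrak{l}\in\overline{\mathcal{O}_{\mathfrak{h}}}$'' needs both directions: the forward direction is the corollary you cite, but the converse requires passing from convergence of $\Ad(g_n)\mathfrak{h}$ in the Grassmannian back to convergence of the groups $g_nHg_n\inv$; this works by compactness of $\Cl(G)$ (extract a Chabauty-convergent subsequence, note its limit contains $\exp(\mathfrak{l})$, and use the dimension statement of Theorem \ref{thm:CDW_AlgGrps} to conclude the identity component is exactly $\langle\exp\mathfrak{l}\rangle$), and you should say so rather than fold it into the biconditional. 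Second, since $G$ is a \emph{real} algebraic group, you cannot invoke Chevalley's locally-closed-orbit theorem verbatim, as it is a statement over algebraically closed fields; the correct substitute is that orbits of real algebraic actions are semialgebraic (Tarski--Seidenberg), and the semialgebraic frontier theorem gives $\dim(\overline{\mathcal{O}}\smallsetminus\mathcal{O})<\dim\mathcal{O}$, from which local closedness of orbits follows by homogeneity. With that input your strict dimension drop, and hence the bound $m\leq\dim G$, is sound. Your handling of antisymmetry is also the right reading of ``induces'': the relation is only a preorder on subgroups, and one can sharpen your equivalence classes to conjugacy classes, since $\overline{\mathcal{O}_{\mathfrak{l}}}=\overline{\mathcal{O}_{\mathfrak{h}}}$ forces $\mathcal{O}_{\mathfrak{l}}=\mathcal{O}_{\mathfrak{h}}$ by the same dimension-drop argument, and connected subgroups with $\Ad$-conjugate Lie algebras are conjugate.
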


\noindent
With the classification of limits of the semi-Riemannian geometries $\mathsf{X}(p,q)$ above, we notice the following.

\begin{corollary}
Each semi-Riemannian geometry $\mathsf{X}(p,q)$ has the geometry $\mathsf{X}((1,0)(1)\cdots (1))$ as a common, 'most degenerate' limit.
\end{corollary}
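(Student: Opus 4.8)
The plan is to deduce this immediately from the classification of limits in Theorem~\ref{thm:Main_Limit_Thm_CDW}, reducing the statement to a combinatorial check on signatures. Recall that $\mathsf{X}((1,0)(1)\cdots(1))$ is the geometry attached to a \emph{complete} flag of quadratic forms on $\R^{n}$ (with $n=p+q$): a maximal chain $\R^n=V_0\supset V_1\supset\cdots\supset V_{n}=\{0\}$ whose successive quotients $V_i/V_{i+1}$ are all one-dimensional, carrying a nondegenerate form $\beta_i$ on each. The leading block is prescribed to have signature $(1,0)$, while each remaining one-dimensional block carries a form of signature $(1,0)$ or $(0,1)$ (the ambiguity being exactly the swap freedom permitted in Theorem~\ref{thm:Main_Limit_Thm_CDW}). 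By the Observation preceding that theorem, the automorphism group $\Isom(\beta,\fam{F})$ of such a complete flag is conjugate to the lower-triangular matrices with $\O(1)=\{\pm1\}$ blocks on the diagonal, whose identity component is conjugate to the upper-triangular unipotent group $\UT(n)$; this is the \emph{most degenerate} geometry in the poset of limits.

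First I would fix the signature. Since $\mathsf{X}(p,q)$ has nonempty domain (its projectivized negative cone), we have $p\geq 1$, so the requirement $p_0=1\neq 0$ of Theorem~\ref{thm:Main_Limit_Thm_CDW} is met by the leading $(1,0)$ block. It then remains to realize the partition condition: after the allowed swaps, the $n$ one-dimensional signatures must sum to $(p,q)$. Setting aside the fixed leading block (which contributes $(1,0)$), there are $n-1=(p-1)+q$ remaining blocks, each freely assignable signature $(1,0)$ or $(0,1)$. Assigning $(1,0)$ to exactly $p-1$ of them and $(0,1)$ to the remaining $q$ yields total signature $(1+(p-1),\,0+q)=(p,q)$, as required. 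Hence the hypotheses of Theorem~\ref{thm:Main_Limit_Thm_CDW} are satisfied for every signature $(p,q)$ with $p\geq 1$, so $\mathsf{X}((1,0)(1)\cdots(1))$ is a conjugacy limit of each $\mathsf{X}(p,q)$; being a single isomorphism type, it is a \emph{common} limit.

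To make the degeneration concrete I would also exhibit the diagonal path realizing it, following the reduction to diagonal one-parameter subgroups recorded in the Observation of Section~\ref{sec:Lims_Orthog_Subgeos}. Take $D_t=\exp\big(t\,\diag(\lambda_1,\ldots,\lambda_n)\big)$ with $\lambda_1<\lambda_2<\cdots<\lambda_n$ generic, and compute the Lie algebra limit $\lim_{t\to\infty}D_t\,\so(p,q)\,D_t\inv$ inside the Grassmannian $\Gr(\binom{n}{2},\gl(n;\R))$. Each basis element of $\so(p,q)$ is supported on a linked pair of off-diagonal entries $(i,j),(j,i)$; under conjugation these scale by $e^{t(\lambda_i-\lambda_j)}$ and $e^{t(\lambda_j-\lambda_i)}$, so after projective normalization the dominant entry survives and the subordinate one is killed. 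The surviving lines span a strictly triangular Lie algebra $\mathfrak{n}$, which exponentiates to a conjugate of $\UT(n)$, matching the group identified above (up to the transpose convention fixed by the direction of the $\lambda_i$). Invoking the equivalence of limits up to local isomorphism, together with the fact that Theorem~\ref{thm:Main_Limit_Thm_CDW} already packages the limiting domain, identifies the connected limit as $\mathsf{X}((1,0)(1)\cdots(1))$.

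The main obstacle is not any single hard step but bookkeeping: one must verify that the swap freedom in Theorem~\ref{thm:Main_Limit_Thm_CDW} is \emph{precisely} enough to absorb an arbitrary split of the $n-1$ subordinate lines into positive and negative directions, and that the non-negotiable leading block $(1,0)$ can always be supplied because $p\geq1$. If one instead insists on the explicit Lie-algebra computation, the delicate point is the projective normalization in the Grassmannian — correctly arguing that each paired basis vector limits to its dominant elementary matrix rather than collapsing to zero — and matching the upper/lower-triangular convention to the form $J$ defining $\so(p,q)$.
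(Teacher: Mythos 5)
Your proposal is correct and takes essentially the same route as the paper: the corollary there is stated as an immediate consequence of Theorem~\ref{thm:Main_Limit_Thm_CDW}, via precisely the signature-partition check you carry out (the leading $(1,0)$ block is available since $p\geq 1$, and the remaining $n-1$ one-dimensional blocks absorb $p-1$ positive and $q$ negative directions using the swap freedom, with the isomorphism type unaffected since each one-dimensional quotient carries $\O(1)=\{\pm 1\}$ regardless of sign). Your supplementary diagonal one-parameter computation is also consistent with the explicit Lie-algebra limits the paper performs later in Chapter~\ref{chp:Orthogonal_Groups}, where the vertices of the cellulation of $\overline{\fam{D}_n}$ — corresponding to strict orderings of the eigenvalue divergence rates — are shown to parameterize conjugates of the unipotent triangular group.
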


\noindent
The autmorphisms of this geometry are the unipotent group of upper triangular $n\times n$ matrices acting projectively on the affine patch $x_{n}=1$ in $\RP^{n-1}$.
When $n=3$, this geometry is given by the action of the Heisenberg group on the affine plane, and is studied extensively in Chapter \ref{chp:Heisenberg_Plane}.

\begin{definition}
Heisenberg geometry of dimension $n$ is given by the projective action of the upper triangular unipotent group of matrices in $\M(n+1;\R)$ on the affine patch $\A^n=\{x_{n+1}=1\}$.
\end{definition}

In Chapter \ref{chp:Heisenberg_Plane}, we study the two dimensional version of this geometry in detail.

%NEWCHAPTER: ORTHOGONAL SUBGROUPS
\chapter{Orthogonal Groups in $\GL(n;\R)$}
\label{chp:Orthogonal_Groups}
\index{Limits of Geometries!Orthogonal Groups}
\index{Orthogonal Geometries}

The main difficulty in computing all degenerations the geometries of quadratic forms $(\O(p,q),X(p,q))$ as subgeometries of projective space is the computation of conjugacy limits of their automorphism groups $\O(p,q)$.
Topologically, these degenerations are the limit points of the space $\fam{O}_n$ of orthogonal groups in $\GL(n;\R)$.

\begin{definition}
A group $G<\GL(n;\R)$ is an \emph{orthogonal group} if there is a nondegenerate quadratic form $q$ on $\R^n$ such that $g^\ast q=q$ for all $g\in G$.
The set of all orthogonal groups in $\GL(n;\R)$ is $\fam{O}_n\subset\GL(n;\R)$.
\end{definition}

\noindent
In section \ref{sec:Hyp_Sph_Transition} of the previous chapter,  we explicitly showed that $\Euc(2)$ is a conjugacy limit of both $\SO(3)$ and $\SO(2,1)$ in $\GL(3;\R)$, and in section 
\ref{sec:Lims_Orthog_Subgeos}, reviewed the classification of all conjugacy limits up to isomorphism by Cooper, Danciger and Wienhard.
Their result can be reprhased geometrically as below.

\begin{theorem}[Limits of Geometries]
Every point in the closure $\overline{\fam{O}_n}\subset\Cl(\GL(n;\R))$ is the isometry group of some partial flag of quadratic forms.
\end{theorem}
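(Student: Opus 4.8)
The plan is to reduce any point of the Chabauty closure $\overline{\fam{O}_n}$ to a single conjugacy limit of a fixed $\O(p,q)$, at which point the classification of Cooper, Danciger and Wienhard (Theorem \ref{thm:Main_Limit_Thm_CDW}) applies directly. Concretely, a point $L\in\overline{\fam{O}_n}$ is either already an element of $\fam{O}_n$ — in which case $L=\O(J)$ is the isometry group of the length-one partial flag $\R^n\supset\{0\}$ carrying the single nondegenerate form $J$ — or it is a genuine limit point, i.e.\ $L=\lim_k\O(J_k)$ in $\Cl(\GL(n;\R))$ for some sequence of nondegenerate symmetric forms $J_k$. It is the second case that requires work.

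First I would arrange that the whole approximating sequence consists of conjugates of one fixed orthogonal group. By Sylvester's law of inertia each $\O(J_k)$ is conjugate in $\GL(n;\R)$ to $\O(p_k,q_k)$, where $(p_k,q_k)$ is the signature of $J_k$; since there are only $n+1$ possible signatures of a nondegenerate form on $\R^n$, after passing to a subsequence I may assume the signature is a constant $(p,q)$. Because the Chabauty topology is metrizable and Hausdorff, limits are unique and passing to a subsequence does not change $L$. Writing $\O(J_k)=A_k\,\O(p,q)\,A_k\inv$ for suitable $A_k\in\GL(n;\R)$ then exhibits $L=\lim_k A_k\,\O(p,q)\,A_k\inv$ as a conjugacy limit of $\O(p,q)$ in the sense of Definition \ref{Def:Conj_Lim}.

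Next I would apply the CDW machinery. The reduction of arbitrary conjugacy limits of $\O(p,q)$ to limits along diagonal one-parameter subgroups recorded in the previous chapter shows $L$ is conjugate to some $L_0=\lim_{t\to\infty}D_t\,\O(p,q)\,D_t\inv$ with $D_t$ a diagonal one-parameter subgroup, and Theorem \ref{thm:Main_Limit_Thm_CDW} identifies every such $L_0$ with $\Isom(\beta,\fam{F})$ for a partial flag of quadratic forms $(\beta,\fam{F})$ on $\R^n$. Finally, the class of partial-flag isometry groups is invariant under conjugation: if $L=AL_0A\inv$ and $L_0=\Isom(\beta,\fam{F})$, then $L=\Isom(A_\ast\beta,A\fam{F})$, where $A\fam{F}$ is the image flag and $A_\ast\beta$ the forms transported to the quotients of $A\fam{F}$. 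This would exhibit $L$ itself as the isometry group of a partial flag of quadratic forms.

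The genuinely new content here is the topological packaging rather than any hard analysis, since Cooper, Danciger and Wienhard have already classified the conjugacy limits. I expect the only real subtlety to be the reduction step: one must be careful that $\fam{O}_n$ is the union over \emph{all} signatures and \emph{all} nondegenerate forms, not merely the conjugates of a single $\O(p,q)$, so that the finiteness-of-signatures and subsequence argument (together with uniqueness of Chabauty limits) is what makes the problem collapse cleanly onto the CDW setting. A secondary point to verify carefully is the conjugation-invariance of the target class — that transporting a flag and its forms by $A$ indeed conjugates the isometry group — which is routine but should be spelled out so that $L$, and not merely a conjugate of $L$, is seen to have the desired form.
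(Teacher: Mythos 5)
Your proof is correct and is essentially the argument the paper intends: the theorem is stated there with no separate proof, as a geometric rephrasing of the Cooper--Danciger--Wienhard classification (Theorem \ref{thm:Main_Limit_Thm_CDW}) combined with the reduction of arbitrary conjugacy limits of $\O(p,q)$ to diagonal one-parameter limits. Your constant-signature subsequence step and the conjugation-invariance bookkeeping for $\Isom(\beta,\fam{F})$ are precisely the moves the paper itself deploys nearby (for instance in proving $\overline{\fam{O}_n}=\O(n).\overline{\fam{D}_n}$, where one also passes to a subsequence within a single signature component and transports by a convergent conjugator), so nothing is missing.
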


\noindent
Here we refine this result and study the full Chabauty compactification $\overline{\fam{O}_n}$, through an argument independent of the methods of \cite{CooperDW14}.
The motivation for exhibiting this is twofold: this recovers more information about the space of degenerations than simply listing the isomorphism type of boundary points, and second, the ideas here likely have further applications and this provides a well-studied testing ground to exhibit them.

\begin{definition}
$\fam{D}_n\subset\Cl(\GL(n;\R))$ is the subset of $\fam{O}_n$ containing the orthogonal groups $\O(J)$ for $J=\diag(\lambda_1,\ldots,\lambda_n)$ a diagonal quadratic form.
\end{definition}

\noindent
We show in \ref{sec:Space_O} that the full closure $\overline{\fam{O}_n}$ can be recovered from knowledge of $\fam{\overline{D}_n}$, which can be described combinatorially.

\begin{theorem}
\label{thm:Orthog_Closure_Main}
$\overline{\fam{D}_n}$ is homeomorphic to the maximal de Concini Procesi blowup of the coordinate hyperplane arrangement in $\RP^{n-1}$, equipped with a natural cellulation by $2^{n-1}$ permutohedra.
Any two groups in the same facet of this cellulation are conjugate, and the codimension of the cell gives the length of the partial flag of quadratic forms associated to the limit group.
\end{theorem}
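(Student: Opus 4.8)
The plan is to promote the coarse fiberwise picture already in hand into an explicit realization of $\overline{\fam{D}_n}$ as a graph closure, and then to recognize that graph closure as the maximal De Concini--Procesi model. The starting point is the blowdown $\pi\colon\overline{\fam{D}_n}\to\RP^{n-1}$, which is a homeomorphism over the open complement of the coordinate arrangement, since there $\O(J)\mapsto[J]$ is a bijection onto nondegenerate diagonal forms up to scale. The essential computation is the degeneration limit of $\O(J_t)$. First I would reduce, using the simplifications of Section~\ref{sec:Lims_Orthog_Subgeos} together with Theorem~\ref{thm:Main_Limit_Thm_CDW}, to monomial paths $J_t=\diag(\ep_1 t^{a_1},\dots,\ep_n t^{a_n})$ with $\ep_i\in\{\pm1\}$ and real exponents $a_i$, and show that $\lim_{t\to0^+}\O(J_t)$ is the isometry group of the partial flag of quadratic forms whose flag groups the coordinates by the value of $a_i$ (blocks of equal exponent), ordered by magnitude, with forms given by the sign pattern $\ep_i$ on each block. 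The qualitative heart of this step is that the limit depends only on the \emph{ordered set partition with signs} --- a discrete invariant --- and not on the exponents themselves nor on the ratios within a block.

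To see that this is exactly the data a blowup records, I would work in an affine chart adapted to a coordinate stratum and express the closure as the graph closure of an iterated-ratio map. For each coordinate subspace $W$ of the arrangement (equivalently, each element of the maximal building set) one records the projectivized normal direction of $[J]$ as it approaches $W$; the product of these projections is defined on the arrangement complement, and the closure of its graph is, by iterating Definition~\ref{Def:Blowup_Product}, the maximal De Concini--Procesi blowup. I would then show that the limit group $\lim\O(J_t)$ determines and is determined by the limits of all these projective normal directions, so that the Chabauty closure $\overline{\fam{D}_n}$ and the graph closure coincide. The fiber theorem supplies the inductive bookkeeping: over a rank-$r$ stratum the fiber is $\overline{\fam{D}_{n-r}}$, which by induction is the blowup of the coordinate arrangement in $\RP^{n-r-1}$, precisely the exceptional fiber of the De Concini--Procesi model normal to that stratum. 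Matching these fiber by fiber and checking compatibility across charts yields the global homeomorphism, and I would cite \cite{DavisJS98} to identify this iterated blowup as the maximal wonderful model.

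With the homeomorphism established, the cellulation is read off combinatorially. Each of the $2^{n-1}$ sign components of the arrangement complement is an open projective simplex, and blowing up its faces in increasing dimension truncates it, in the familiar way, into a permutohedron; these $2^{n-1}$ permutohedra glue along the shared degenerate strata, where the signs on a vanishing block no longer affect the limit. The faces of each permutohedron are indexed by ordered set partitions of $\{1,\dots,n\}$, which is exactly the indexing of partial flags of quadratic forms from Section~\ref{sec:Lims_Orthog_Subgeos}. For the conjugacy claim I would observe that the points of a single open cell correspond to flag-isometry groups of one fixed combinatorial type and signature, differing only by the sign-fixed positive ratios defining the block forms; since $P\,\O(J)\,P^{-1}=\O(P^{-T}JP^{-1})$, conjugation by a diagonal matrix rescales these ratios arbitrarily, so all groups in a common facet are conjugate. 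For the final assertion, a cell of codimension $c$ corresponds to an ordered partition into $c+1$ blocks, hence to a flag with $c$ proper subspaces, giving codimension equal to the flag length.

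The main obstacle is the middle step: proving that the Chabauty limit faithfully records the full iterated-ratio data --- neither forgetting it, so that distinct blowup points might collapse to the same group, nor recording extra information, so that the closure might properly contain the blowup. Concretely, one must show that $\lim\O(J_t)$ is continuous and injective as a function of the projectivized normal directions along \emph{every} approach, including non-monomial paths, and that the local graph-closure charts glue to the globally defined Chabauty closure. This is where the explicit matrix analysis of the degenerating orthogonal groups, combined with the inductive fiber structure, must do all the work; everything after it is combinatorial bookkeeping on the permutohedral complex.
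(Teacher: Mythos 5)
Your overall architecture agrees with the paper's in outline: both realize $\overline{\fam{D}_n}$ and the maximal wonderful model as closures of graphs of ratio maps, use the fiber theorem as inductive bookkeeping, and finish with the same permutohedral and conjugacy combinatorics (that part of your write-up is correct). But the step you yourself flag as ``the main obstacle'' is a genuine gap, and the route you propose into it is already off track. Reducing to monomial paths $J_t=\diag(\ep_1 t^{a_1},\dots,\ep_n t^{a_n})$ via the Cooper--Danciger--Wienhard one-parameter-subgroup theorem yields limits only \emph{up to conjugacy}, whereas the theorem concerns actual points of the closure in $\Cl(\GL(n;\R))$. Worse, your claim that the limit depends only on the signed ordered partition and ``not \dots on the ratios within a block'' is false at the level of Chabauty points: the fibers of $\overline{\fam{D}_n}\to\RP^{n-1}$ over deeper strata are positive-dimensional copies of $\overline{\fam{D}_{n-k}}$, and what separates points within a fiber is exactly the within-block ratio data --- a limit point records an ordered partition \emph{together with} a projective point with nonzero entries for each block. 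Pure $\pm1$-coefficient monomial paths hit only countably many points of each fiber, so they suffice to classify limits up to conjugacy (the CDW result) but cannot compute the closure as a topological space; and if your discreteness claim were right, the injectivity needed for your homeomorphism would in fact fail.

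The missing mechanism is to abandon group-level limits entirely and pass to Lie algebras. One first shows $\lie$ is continuous and injective when restricted to $\overline{\fam{D}_n}$ --- continuity is not automatic (the Barber Pole example) and uses the CDW theorem that a conjugacy limit of algebraic subgroups of an algebraic group has the same dimension --- so that $\overline{\fam{D}_n}\cong\overline{\lie(\fam{D}_n)}$ inside a single Grassmannian $\Gr\bigl(\binom{n}{2},n^2\bigr)$. The decisive computation is then that $\so(\diag(\lambda_1,\dots,\lambda_n))$ is the direct sum over $i<j$ of the lines $\span\{\lambda_j e_{ij}-\lambda_i e_{ji}\}$, each lying in its own coordinate $2$-plane and pairwise orthogonal for \emph{every} nondegenerate diagonal $J$. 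This orthogonality forces the Grassmannian limit along an arbitrary sequence to equal the coordinatewise limit of the pairwise ratios $[\lambda_i:\lambda_j]\in\RP^1$, which is precisely the continuity-plus-faithfulness statement you needed, with no restriction to special paths (path-independence being the advertised point of the method). Finally, where you propose matching against the full building-set data, one must still prove that the pairwise-ratio projection of the De Concini--Procesi data is injective on the wonderful model --- this follows from the same partition bookkeeping --- after which your fiber-by-fiber matching and the concluding cellulation arguments go through essentially as you wrote them.
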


\noindent
The main advantage of this argument is that it does not rely on a priori finding a 'nice' collection of paths and proving that every conjugacy is achieved (up to isomorphism) along one of these.
Thus these techniques can be employed even in cases where not all limits are achieved along 1-parameter subgroups, or no other suitable collection of paths is known.

\section{The Space of Orthogonal Groups}
\label{sec:Space_O}
\index{Orthogonal Groups!Space of}

\noindent
A group $G<\GL(n;\R)$ is an \emph{orthogonal group} if it is the isometries of some nondegenerate quadratic form on $\R^n$.
Choosing a basis for $\R^n$ identifies these quadaratic forms with nondegenerate symmetric matrices $\Sym^\times(n;\R)=\{A\in\GL(n;\R)\mid A^T=A\}$, as $A$ determines the map $x\mapsto x^TAx$.
We use this here to identify $\fam{O}_n$ with projective classes of nondegenerate symmetric matrices, and give insight into the topology of $\fam{O}_n\subset\Cl(\GL(n;\R))$.

\begin{observation}
The map $\phi\colon\Sym^\times(n;\R)\to \fam{O}_n$ sending a symmetric matrix $J\mapsto \O(J)$ to its orthogonal group is surjective, by definition.
\end{observation}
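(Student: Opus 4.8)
The plan is to prove surjectivity by unwinding the two definitions in play and invoking the standard correspondence between quadratic forms and symmetric matrices. First I would recall that, under the definition adopted at the start of this section, a subgroup $G<\GL(n;\R)$ lies in $\fam{O}_n$ precisely when it is the \emph{full} group of isometries of some nondegenerate quadratic form $q$ on $\R^n$; that is, $G=\set{X\in\GL(n;\R)\mid q(Xv,Xw)=q(v,w)\text{ for all }v,w}$.

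Next I would use the standard basis on $\R^n$ to identify each quadratic form $q$ with the symmetric matrix $J$ that represents its polarization, via $q(v,w)=v^T J w$. The form $q$ is nondegenerate exactly when $\det(J)\neq 0$, so this identification restricts to a bijection between nondegenerate forms and elements of $\Sym^\times(n;\R)$. Under this identification the isometry group of $q$ is exactly $\O(J)=\set{X\mid X^T J X=J}$, which is by construction the value $\phi(J)$.

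The surjectivity then follows in one line: given any $G\in\fam{O}_n$, choose a nondegenerate form $q$ of which $G$ is the isometry group, let $J$ be its matrix in the standard basis, and observe that $G=\O(J)=\phi(J)$. Hence every element of $\fam{O}_n$ lies in the image of $\phi$, so $\phi$ is surjective.

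There is essentially no obstacle here, since the content is entirely definitional; this is why the statement is recorded as an observation and annotated \say{by definition}. The only point deserving the slightest care is to confirm that the two characterizations of $\fam{O}_n$ appearing in the thesis, namely as the groups of the form $\O(J)$ (in the introduction) and as the full isometry groups of nondegenerate forms (in this chapter), genuinely coincide; that coincidence is precisely the matrix–form correspondence recalled in the second step above, and once it is in place the surjectivity of $\phi$ is immediate.
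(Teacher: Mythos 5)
Your proposal is correct and matches the paper exactly: the paper records this as an observation "by definition," and your argument simply spells out the same definitional unwinding, using the standard-basis correspondence $q(v,w)=v^TJw$ between nondegenerate quadratic forms and matrices in $\Sym^\times(n;\R)$ to see that every $G\in\fam{O}_n$ is $\O(J)=\phi(J)$ for the matrix $J$ of its defining form. Your closing remark about reconciling the two characterizations of $\fam{O}_n$ (as groups $\O(J)$ versus full isometry groups of nondegenerate forms) is exactly the small point the paper's "choosing a basis" sentence is meant to settle, so nothing further is needed.
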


\begin{lemma}
The	map $\phi\colon J\mapsto \O(J)$ above is continuous into the Chabauty space.
\end{lemma}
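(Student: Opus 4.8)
The plan is to check continuity sequentially, which is legitimate since $\Cl(\GL(n;\R))$ is metrizable. So I would take a sequence $J_k \to J$ in $\Sym^\times(n;\R)$ and show $\O(J_k) \to \O(J)$ in the sense of the subsequential characterization of the Chabauty topology (Definition~\ref{def:Chab_Seqs}). The closed, or ``no escaping limit points,'' direction is immediate: if $X_k \in \O(J_k)$ and $X_k \to X_\infty$ in $\GL(n;\R)$, then letting $k \to \infty$ in $X_k^{T} J_k X_k = J_k$ gives $X_\infty^{T} J X_\infty = J$, so $X_\infty \in \O(J)$. Moreover every $X \in \O(J_k)$ has $\det(X) = \pm 1$ (since $\det(X)^2 \det(J_k) = \det(J_k)$ and $\det(J_k) \neq 0$), so no subsequence drawn from the $\O(J_k)$ can converge in $\M(n;\R)$ while running off to $\infty$ in the one-point compactification; thus this direction carries no hidden content.

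The substance is the other direction, and my plan is to obtain it all at once by exhibiting the $\O(J_k)$ as conjugates of $\O(J)$ by matrices tending to $I$. The key lemma I would prove is: for all large $k$ there exist $P_k \to I$ in $\GL(n;\R)$ with $P_k^{T} J_k P_k = J$. Granting this, a one-line substitution shows $Y^{T} J_k Y = J_k$ if and only if $(P_k^{-1} Y P_k)^{T} J (P_k^{-1} Y P_k) = J$, so that $\O(J_k) = P_k\, \O(J)\, P_k^{-1}$ exactly. To build $P_k$ I would apply the implicit function theorem to $F(P,K) = P^{T} K P$ near $(I,J)$: its partial derivative in $P$ at $(I,J)$ is $H \mapsto H^{T} J + J H$, and writing $S = JH$ (which ranges over all of $\M(n;\R)$ as $J$ is invertible) this is $S + S^{T}$, hence surjects onto $\Sym(n;\R)$. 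Restricting $P$ to a linear complement of the kernel of this differential makes the restricted derivative an isomorphism onto $\Sym(n;\R)$, so the implicit function theorem supplies a continuous local solution $K \mapsto P(K)$ with $P(J) = I$ and $P(K)^{T} K P(K) = J$. Setting $P_k = P(J_k)$ then forces $P_k \to I$ (discarding the finitely many $k$ for which $J_k$ lies outside the neighborhood of definition).

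Finally I would conclude using that conjugation induces a continuous action of $\GL(n;\R)$ on $\Cl(\GL(n;\R))$ (the observation already invoked in Section~\ref{sec:Hyp_Sph_Transition}): from $P_k \to I$ together with $\O(J_k) = P_k\, \O(J)\, P_k^{-1}$ we get $\O(J_k) \to \O(J)$, which simultaneously yields both Chabauty conditions. The main obstacle is precisely the congruence lemma — the lower-semicontinuous statement that nearby nondegenerate forms can be carried to the fixed form $J$ by matrices converging to the identity. Once the surjectivity of the differential $H \mapsto H^{T} J + J H$ onto $\Sym(n;\R)$ is established, the implicit function theorem does the real work and the remainder of the argument is formal.
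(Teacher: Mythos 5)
Your proposal is correct and follows essentially the same route as the paper's proof: both reduce continuity to the fact that forms near $J$ are congruent to $J$ by matrices near the identity, and then invoke the continuity of the conjugation action of $\GL(n;\R)$ on $\Cl(\GL(n;\R))$, using the identity $\O(P^{T}JP)=P^{-1}\O(J)P$. The only difference is presentational — you argue sequentially and justify the key congruence lemma via the implicit function theorem (surjectivity of $H\mapsto H^{T}J+JH$ onto $\Sym(n;\R)$), whereas the paper phrases the same fact as the assertion that a ball $B\ni J$ equals $U.J$ for a neighborhood $U$ of the identity and descends through the orbit map; your version in fact supplies the analytic substance behind that assertion.
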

\begin{proof}
Let $J\in\Sym^\times(n;\R)$, we show that $\phi$ is continuous at $J$.
As a nondegenerate real symmetric matrix, $J$ has nonzero eigenvalues, and there is a sufficiently small euclidean ball $B\subset \Sym^\times(n;\R)$ such that $J\in B$ and all eigenvalues of $J'\in B$ are of the same sign as those of $J$.
Then in fact all matrices in $B$ are similar to $J$; there is an open neighborhood $U$ of the identity in $\GL(n;\R)$ such that $B=U.J=\{A^TJA\mid A\in U\}$.
As $\O(M^TJM)=M\inv \O(J)M$ and the conjugation action of $\GL(n;\R)$ on $\Cl(\GL(n;\R))$ is continuous, the map $U\to \Cl(\GL(n;\R))$ given by $M\mapsto M\inv\O(J)M$ is continuous.
This descends through the orbit map $\pi\colon U\to B$ to a continuous map $B\to \Cl(\GL(n;\R))$, which is $\phi|_B$ by definition.
Thus $\phi$ is continuous at $J$.
\end{proof}

\noindent
The map $\phi$ is not injective, as $\O(J)=\O(\lambda J)$ for $\lambda\neq 0$, for instance.
However, this is the only obstruction; if $\O(K)=\O(J)$ then $K=\lambda J$ for some $\lambda\in\R^\times$.	

\begin{corollary}
The continuous map $\phi\colon \Sym^\times\to \Cl(\GL(n;\R))$ factors through projectivization to a continuous bijection $\iota\colon\PSym^\times\to\fam{O}_n$, and we implicitly identify $\PSym^\times$ and $\fam{O}_n$ via this map.
\end{corollary}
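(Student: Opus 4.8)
The corollary packages four assertions: that $\phi$ descends through projectivization, that the resulting map $\iota$ is continuous, that it is surjective, and that it is injective. The first three are formal consequences of what precedes, while the real content is injectivity, which is exactly the algebraic statement that a nondegenerate symmetric form is determined up to a scalar by its orthogonal group. The plan is to dispatch the formal points and then concentrate on injectivity.

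That $\phi$ factors through projectivization is immediate: for $\lambda\in\R^\times$ the identity $A^T(\lambda J)A=\lambda(A^TJA)$ shows $A^TJA=J$ if and only if $A^T(\lambda J)A=\lambda J$, so $\O(\lambda J)=\O(J)$ and $\phi$ is constant on each ray $\R^\times J$. Hence $\phi$ descends to a set map $\iota$ on $\PSym^\times$. Writing $p\colon\Sym^\times\to\PSym^\times$ for the quotient projection, we have $\phi=\iota\circ p$; since $p$ is a quotient map and $\phi$ is continuous by the preceding lemma, the universal property of the quotient topology makes $\iota$ continuous. Surjectivity of $\iota$ is inherited from surjectivity of $\phi$ (the Observation), again via $\phi=\iota\circ p$.

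The heart of the argument, and the main obstacle, is injectivity: if $\O(J)=\O(K)$ for nondegenerate symmetric $J,K$, I must produce $\lambda\in\R^\times$ with $K=\lambda J$. The plan is to pass to Lie algebras. Since $\O(J)$ and $\O(K)$ coincide as subgroups of $\GL(n;\R)$, they share a Lie algebra, so $\so(J)=\so(K)$, where $\so(J)=\{X\mid X^TJ+JX=0\}$. A short computation (substitute $Y=JX$ and use $J=J^T$) identifies $\so(J)$ with $J^{-1}\mathsf{Skew}(n)$, where $\mathsf{Skew}(n)$ denotes the space of skew-symmetric matrices, and likewise $\so(K)=K^{-1}\mathsf{Skew}(n)$. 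Equality of the two Lie algebras then reads $M\,\mathsf{Skew}(n)=\mathsf{Skew}(n)$ for $M:=KJ^{-1}$; that is, left multiplication by $M$ preserves the skew-symmetric matrices.

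It then remains to establish the elementary fact that if $L_M\colon X\mapsto MX$ carries $\mathsf{Skew}(n)$ into itself then $M$ is a scalar matrix. I would prove this by testing $L_M$ on the basis elements $E_{ij}-E_{ji}$: demanding that $M(E_{ij}-E_{ji})$ be skew forces the off-diagonal entries of $M$ to vanish (for $n\geq 3$ this follows by intersecting the vanishing conditions over the choices of the second index, while the cases $n=1,2$ are checked by hand), and comparing the surviving diagonal entries then forces them all to agree. This yields $M=\lambda I$, hence $K=\lambda J$ with $\lambda\neq 0$ by nondegeneracy, so $[J]=[K]$ in $\PSym^\times$ and $\iota$ is injective. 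I would finally remark that the corollary claims only a continuous bijection and not a homeomorphism: $\PSym^\times$ is an open, noncompact subset of $\mathbb{P}(\Sym(n;\R))$, so compactness is unavailable to upgrade $\iota$, and the implicit identification of $\PSym^\times$ with $\fam{O}_n$ is understood at this level.
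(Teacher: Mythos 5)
Your proposal is correct, and it actually supplies more than the paper does: the paper states this corollary with no proof at all, having only \emph{asserted} in the preceding sentence that ``if $\O(K)=\O(J)$ then $K=\lambda J$'' without justification. Your formal steps (descent through the quotient $p\colon\Sym^\times\to\PSym^\times$, continuity of $\iota$ via the universal property of the quotient topology, surjectivity inherited from $\phi$) match the paper's implicit route exactly. The genuine content you add is the injectivity argument, and it checks out: the identification $\so(J)=J^{-1}\mathsf{Skew}(n)$ via the substitution $Y=JX$ is correct (using $J^{-T}=J^{-1}$ for symmetric $J$), the reduction to $M\,\mathsf{Skew}(n)=\mathsf{Skew}(n)$ for $M=KJ^{-1}$ is right, and the basis computation with $E_{ij}-E_{ji}$ does force $M$ scalar --- I verified the skewness condition is equivalent to $MS=SM^T$ for all skew $S$, which for $n\geq 3$ kills the off-diagonal entries of $M$ column by column and then equates the diagonal entries, with $n=2$ a one-line direct check and $n=1$ trivial since $\PSym^\times(1;\R)$ is a point. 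Note that your argument proves the strictly stronger statement that $\so(J)=\so(K)$ already forces $[J]=[K]$, which is pleasantly consonant with the paper's later strategy of working with Lie algebra limits (e.g.\ the injectivity of $\mathfrak{lie}$ on $\overline{\fam{D}_n}$). Your closing caveat is also well taken: the corollary genuinely claims only a continuous bijection into the non-Hausdorff-friendly space $\Cl(\GL(n;\R))$, and since $\PSym^\times$ is noncompact no compactness argument upgrades $\iota$ to a homeomorphism --- the paper's ``implicit identification'' should indeed be read at that level.
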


\begin{example}
The subspace of $2\times 2$ symmetric matrices is three dimensional, and $\det\inv\{0\}\subset\Sym(2,\R)$ is the quadratic cone $x^2+y^2=z^2$ in the coordinates $\smat{z-x&y\\y&z+x}$.
Thus $\PSym^\times(2;\R)$ is the complement of a separating circle in $\RP^2$.
\end{example}

\begin{comment}
\begin{figure}
\centering\includegraphics[width=0.5\textwidth]{}
\caption{The space $\PSym^\times(2;\R)$ in the double cover, and $\fam{O}_2$ as a union of symmetric spaces.}
\end{figure}
\end{comment}

\noindent
In general, 
$\fam{O}_n$ is disconnected, and is a disjoint union of $\lceil (n+1)/2 \rceil$ components, one for each unordered partition $\{p,q\}$ such that $p+q=n$.
Each component $\fam{O}_{p,q}$ corresponds to orthogonal groups of signature $(p,q)$, and is homeomorphic to the coset space of $\SO(p,q)$ in $\SL(n;\R)$.

\begin{example}
$\fam{O}_2=\SL(2;\R)/\SO(2)\sqcup \SL(2;\R)/\SO(1,1)$ is the union of a disk and a M\"obius band.	We can see this directly from the fact that $\fam{O}_2\cong\PSym^\times(2;\R)\cong\RP^2\smallsetminus V(x^2+y^2=z^2)$
\end{example}

\begin{comment}
\begin{example}
$\fam{O}_3\cong\SL(3;\R)/\SO(3)\sqcup\SL(3;\R)/\SO(2,1)$ is the disjoint union of the Riemannian symmetric space $\SL(3;\R)/\SO(3)\cong\R^5$ and the Lorentzian symmetric space $\SL(3;\R)/\SO(2;1)$ as a dense subset of $\PSym(3;\R)\cong\RP^5$.	
\end{example}
\end{comment}

\noindent
At this point it may appear that the natural move is to restrict individually to each component $\mathcal{O}_{p,q}$ and study their Chabauty compactifications separately.
However, from our computation in Section \ref{sec:Hyp_Sph_Transition} of $\Euc(2)$ as a common conjugacy limit of both $\SO(3)$ and $\SO(2,1)$ in $\SL(3;\R)$, we see that the closures are not necessarily disjoint.
In fact, only slightly modifying the argument of Section \ref{sec:Hyp_Sph_Transition}, we can produce a transition between $\O(p,q)$ and $\O(p',q')$ for any $p+q=p'+q'$.
Thus there is a  compelling reason to study the entire collection $\fam{O}_n$ and its closure together.

\begin{observation}
The closure $\overline{\fam{O}}$ is connected.
Even stronger, the boundaries $\partial \fam{O}_{p,q}$ and $\partial\fam{O}_{p',q'}$ of any two components have nontrivial intersection.	
\end{observation}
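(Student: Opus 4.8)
The plan is to show that the boundaries of any two components $\fam{O}_{p,q}$ and $\fam{O}_{p',q'}$ (with $p+q = p'+q' = n$) meet by producing an explicit common conjugacy limit, generalizing the computation in Section \ref{sec:Hyp_Sph_Transition} where $\Euc(2)$ was exhibited as a limit of both $\SO(3)$ and $\SO(2,1)$. By the classification in Theorem \ref{thm:Main_Limit_Thm_CDW}, a group $\mathsf{Isom}(\beta,\fam{F})$ associated to a partial flag of quadratic forms is a limit of $\O(p,q)$ whenever the signatures $((p_0,q_0),\ldots,(p_k,q_k))$ of the flag partition $(p,q)$ \emph{after allowing each $(p_i,q_i)$ with $i\geq 1$ to be reversed}. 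The key observation is that reversing signatures on the lower flag pieces is exactly the freedom that lets a single degenerate group sit in the closure of several distinct $\fam{O}_{p,q}$.

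First I would reduce to the most degenerate limit. Recall from the previous chapter's corollary that each $\mathsf{X}(p,q)$ degenerates to the unique most degenerate geometry $\mathsf{X}((1,0)(1)\cdots(1))$, whose automorphism group is the upper-triangular unipotent group $U < \GL(n;\R)$ (independent of $(p,q)$). This group is visibly a point of $\overline{\fam{O}_{p,q}}$ for \emph{every} signature $(p,q)$ with $p\geq 1$: the flag has pieces of signature $(1,0),(1),\ldots,(1)$, and by reversing the $1$-dimensional pieces freely one realizes any partition of any $(p,q)$ with $p_0 = 1$, $p\geq 1$. Thus $U \in \partial\fam{O}_{p,q}$ whenever $p \geq 1$, and symmetrically (by reversing the role of the positive and negative cone, or equivalently running the same argument with a signature-$(0,1)$ leading block) the transpose unipotent group lies in $\partial\fam{O}_{p,q}$ whenever $q\geq 1$. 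The concrete realization would mimic the $\lim_{t\to 0} C_t \so(p,q) C_t\inv$ Lie-algebra computation of Section \ref{sec:Hyp_Sph_Transition}, using a diagonal one-parameter family $D_t$ that collapses all the off-diagonal structure to the unipotent limit.

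To handle \emph{arbitrary} pairs of components simultaneously, rather than only passing through the maximally degenerate point, I would instead exhibit a common limit whose flag partitions both signatures. Given $(p,q)$ and $(p',q')$ with $p+q=p'+q'=n$, I would choose a partial flag with all pieces one-dimensional, $((1,0),(\epsilon_2),\ldots,(\epsilon_n))$ where each $\epsilon_i \in \{(1,0),(0,1)\}$, whose automorphism group is again the full unipotent $U$; since the signs $\epsilon_i$ on pieces $i\geq 2$ may be chosen freely when testing membership in either closure, $U$ partitions $(p,q)$ and $(p',q')$ alike (as long as both have a nonzero positive part, i.e. $p,p'\geq 1$; the remaining cases follow by the orientation-reversal symmetry noted above). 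Hence $U \in \overline{\fam{O}_{p,q}} \cap \overline{\fam{O}_{p',q'}}$, giving $\partial\fam{O}_{p,q}\cap\partial\fam{O}_{p',q'}\neq\varnothing$ and, as a consequence, connectedness of $\overline{\fam{O}_n}$.

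The main obstacle I anticipate is bookkeeping the boundary cases in the signature arithmetic: when one of $p,q,p',q'$ vanishes (so a component is the definite form $\O(n)$), the leading block $(p_0,q_0)=(1,0)$ constraint and the non-reversibility of that first block must be checked carefully to confirm $U$ still lies in the relevant closure; I expect this to require treating the definite and indefinite components slightly separately, using the orientation symmetry $q\leftrightarrow p$ to cover $\O(0,n)$-type components. A secondary technical point is to verify rigorously that the Lie-algebra limit computed in a fixed Grassmannian genuinely equals the Chabauty limit of the groups — but this is exactly guaranteed by the corollary following Theorem \ref{thm:CDW_AlgGrps}, since all groups involved are algebraic and the limits are conjugacy limits, so no new work is needed there.
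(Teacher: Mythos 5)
Your proposal is correct, but it takes a genuinely different route from the paper's. The paper treats the observation as a direct, self-contained generalization of the computation in Section \ref{sec:Hyp_Sph_Transition}: given $\O(p,q)$ and $\O(p',q')$ with $p+q=p'+q'$, choose diagonal forms that agree except in the coordinates where the signs differ, and conjugate by a diagonal path collapsing exactly those coordinates; the Lie-algebra limits then coincide because the differing signs appear only in terms that vanish in the limit (exactly as the $\pm t$ terms did for $\so(3)$ versus $\so(2,1)$), producing a common limit of \emph{minimal} degeneracy, and notably without invoking \cite{CooperDW14} --- in keeping with the chapter's stated aim of independence from that classification. You instead route all components through the single \emph{maximally} degenerate limit, citing Theorem \ref{thm:Main_Limit_Thm_CDW} to see that the all-one-dimensional flag partitions every signature after sign reversals. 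This buys uniformity (one group witnessing every pairwise intersection simultaneously) at the cost of importing the CDW classification into a chapter trying to avoid it; your own fallback computation rescues this, however, since conjugating $\so(\diag(\lambda_1,\ldots,\lambda_n))$ by $D_t=\diag(t^{n-1},\ldots,t,1)$ sends each basis line $\R(\lambda_j e_{ij}-\lambda_i e_{ji})$, $i<j$, to a line converging to $\R e_{ij}$ regardless of the signs $\lambda_i$, so the strictly-upper-triangular limit is sign-blind with no appeal to the classification at all. Two small points of rigor: first, the literal Chabauty limit is the full flag isometry group (upper triangular with $\pm 1$ diagonal entries), not just its unipotent identity component, but since the isometries of a one-dimensional form are $\{\pm 1\}$ independent of its sign, this full group is also independent of the sign assignments and serves equally well as the common boundary point; second, your careful treatment of the definite components and the ``non-reversible leading block'' is handled more simply by noting $\O(J)=\O(-J)$, so the components $\fam{O}_{p,q}$ are indexed by \emph{unordered} pairs $\{p,q\}$ and every component has a representative with positive part at least $1$ --- the constraint $p_0\neq 0$ in Theorem \ref{thm:Main_Limit_Thm_CDW} concerns the domained geometries, while the boundary-intersection question is purely group-level, where flipping $J\mapsto -J$ is free.
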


\begin{comment}

\noindent
We call a compact connected space $C$ a \emph{simultaneous compactification} of the spaces $X_1,\ldots X_n$ if $\sqcup X_i$ embeds in $C$ as an open dense subset. 
The proposition above provides additional motivation to study the space $\overline{\fam{O}}$ from the theory of symmetric spaces.

\begin{corollary}
The space $\overline{\fam{O}}$ is a simultaneous compactification of the pseudo-Riemannian symmetric spaces $\SL(n;\R)/\SO(p,q)$ for $\SL(n;\R)$.	
\end{corollary}

\noindent
In order to compute the compactification $\overline{\fam{O}}$ it is useful to record one more fact about the topology of $\fam{O}$: the topological components 

\begin{lemma}
\label{lem:Orthog_Conj_Classes}
Each component $\fam{O}_{p,q}$ is a conjugacy class of subgroups of $\GL(n;\R)$.
\end{lemma}
\begin{proof}
\textcolor{red}{\Large Type Proof}	
\end{proof}

\end{comment}

\noindent
Instead of restricting to each signature component individually, it turns out that a rather efficient route to recovering the result of Theorem \ref{thm:Main_Limit_Thm_CDW} is to consider the subcollection of \emph{diagonal orthogonal groups}.
An orthogonal group $\O(J)$ is said to be \emph{diagonal} if it is the isometries of a diagonal quadratic form $J=\diag(\lambda_1,\ldots, \lambda_n)$.
The collection of diagonal orthogonal groups is denoted $\mathcal{D}_n$.

\begin{definition}
$\mathcal{D}_n\subset\mathcal{O}_n$ is the subcollection of isometry groups of nondegenerate diagonal quadratic forms.
$\mathcal{D}_n=\{\O(J)\mid J=\diag(\lambda_1,\ldots, \lambda_n),\lambda_i\in\R^\times\}$.	
\end{definition}

\noindent
The diagonal orthogonal groups are a useful subset of $\fam{O}_n$, as every symmetric matrix over $\R$ can be orthogonally diagonalized.
In fact, to classify the possible conjugacy limits of orthogonal groups in $\GL(n;\R)$ it suffices to understand the closure of $\fam{D}_n$:
if $G\in\overline{\fam{O}_n}$, then there is some $Q\in\O(n)$ such that $QDQ\inv\in\overline{\fam{D}_n}$.	
Rephrased geometrically, the action of $\O(n)$ on $\GL(n;\R)$ by conjugation induces a continuous $\O(n)$ action on $\Cl(\GL(n;\R))$, and the above observation is equivalent to the proposition below.

\begin{proposition}
$\overline{\fam{O}}=\O(n).\overline{\fam{D}}$ in $\Cl(\GL(n;\R))$
\end{proposition}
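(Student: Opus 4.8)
The plan is to prove the two inclusions separately, exploiting three ingredients that are already available: the conjugation action of $\O(n)$ on $\Cl(\GL(n;\R))$ is continuous, the Chabauty space $\Cl(\GL(n;\R))$ is compact and metrizable, and every nondegenerate real symmetric matrix can be orthogonally diagonalized (the spectral theorem). Write $a\colon\O(n)\times\Cl(\GL(n;\R))\to\Cl(\GL(n;\R))$, $a(Q,H)=Q\inv HQ$, for the action map, so that $\O(n).\overline{\fam{D}}$ is the union of the $\O(n)$-orbits of the points of $\overline{\fam{D}}$.

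First I would dispatch the easy inclusion $\O(n).\overline{\fam{D}}\subseteq\overline{\fam{O}}$. Since $\fam{D}\subset\fam{O}$ we have $\overline{\fam{D}}\subseteq\overline{\fam{O}}$, so it suffices to observe that $\overline{\fam{O}}$ is $\O(n)$-invariant. The collection $\fam{O}$ itself is invariant under conjugation by all of $\GL(n;\R)$: for $M\in\GL(n;\R)$ and nondegenerate symmetric $J$ one has $M\inv\O(J)M=\O(M^TJM)$, which is again the orthogonal group of a nondegenerate symmetric form. As conjugation acts by homeomorphisms, it carries $\overline{\fam{O}}$ onto $\overline{\fam{O}}$, whence $\O(n).\overline{\fam{O}}=\overline{\fam{O}}$ and therefore $\O(n).\overline{\fam{D}}\subseteq\O(n).\overline{\fam{O}}=\overline{\fam{O}}$.

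The substantive direction is $\overline{\fam{O}}\subseteq\O(n).\overline{\fam{D}}$, and the key is to show that the right-hand side is \emph{closed}. The set $\O(n)\times\overline{\fam{D}}$ is compact, being the product of the compact group $\O(n)$ with the closed — hence compact — subset $\overline{\fam{D}}$ of the compact metrizable space $\Cl(\GL(n;\R))$. Since $a$ is continuous, its image $a(\O(n)\times\overline{\fam{D}})=\O(n).\overline{\fam{D}}$ is compact, and a compact subset of a Hausdorff space is closed. Now the spectral theorem gives $\fam{O}=\O(n).\fam{D}$: writing a nondegenerate symmetric $J$ as $J=Q^TDQ=Q\inv DQ$ with $Q\in\O(n)$ and $D$ diagonal yields $\O(J)=Q\inv\O(D)Q=a(Q,\O(D))\in\O(n).\fam{D}$. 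Consequently $\fam{O}\subseteq\O(n).\fam{D}\subseteq\O(n).\overline{\fam{D}}$, and because the last set is closed it must contain the closure $\overline{\fam{O}}$. Combining the two inclusions yields the claimed equality.

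The one point genuinely requiring care — and the natural place the argument would break if the hypotheses were weakened — is the closedness of $\O(n).\overline{\fam{D}}$, which is exactly where compactness of the orthogonal group is indispensable. The analogous saturation $\GL(n;\R).\fam{D}$ also equals $\fam{O}$, but the non-compactness of $\GL(n;\R)$ means one cannot conclude its closure is captured this way; this is precisely why it matters that diagonalization be achievable by an \emph{orthogonal}, rather than merely invertible, change of basis. Everything else is a formal consequence of the continuity of the action and the compactness of the ambient Chabauty space.
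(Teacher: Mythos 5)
Your proof is correct and rests on the same two ingredients as the paper's own argument --- compactness of $\O(n)$ together with continuity of the conjugation action on $\Cl(\GL(n;\R))$ --- so it is essentially the same proof. The only difference is packaging: where the paper works sequentially at a boundary point $H=\lim Q_kD_kQ_k\inv$ and extracts a convergent subsequence $Q_k\to Q$, you globalize this into the observation that $\O(n).\overline{\fam{D}}$ is the continuous image of the compact set $\O(n)\times\overline{\fam{D}}$ and hence closed, which is slightly cleaner and also makes explicit the easy reverse inclusion $\O(n).\overline{\fam{D}}\subseteq\overline{\fam{O}}$ that the paper leaves implicit.
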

\begin{proof}
Let $H\in\partial\fam{O}_n=\overline{\fam{O}}_n\smallsetminus\fam{O}_n$.
Then $H=\lim H_k$ for some sequence $H_k\subset \fam{O}_n$,	but each $H_k\in\fam{O}_n$ is conjugate to some $D_k\in\fam{D}_n$ by some element $Q_k\in\O(n)$; that is $H_k=Q_k D_kQ_k\inv$.
As $\O(n)$ is compact, the sequence $Q_k$ subconverges $Q_k\to Q\in\O(n)$, and so 
$H=\lim H_k=\lim Q_k D_kQ_k\inv=Q(\lim D_k)Q\inv$.
Thus $D_k\to D$ converges and to a limiting group, conjugate to $H$ by $Q$.
Said another way, the arbitrary limit point $H$ lies in the same $\O(n)$ orbit as some group $D\in\overline{\fam{D}_n}$, completing the proof.
\end{proof}

\begin{comment}
\noindent
A \emph{slice} generalizes a section of the orbit map $X\mapsto X/G$, and are useful as such sections need not always exist.
As an example, the closed upper half plane in $\C$ is a slice of $\C$ with respect to the $\Z$ action $z\mapsto z^2$, even though $\C\to\C/z\sim z^2$ admits no continuous section.
In the context at hand, this fact of linear algebra suggests a particularly nice slice of $\fam{O}$.

\begin{corollary}
Let $\fam{D}\subset\fam{O}$ be the subset of isometry groups of diagonal quadratic forms (the image of the nondegenerate diagonal matrices $\PDiag^\times\subset\PSym^\times$ under the homeomorphism $\PSym^\times\to\fam{O}$).
Then $\fam{D}$ is a slice of $\fam{O}$ with respect to the action of $\O(n)$ by conjugacy.
\end{corollary}
\end{comment}

\begin{observation}
The space $\fam{D}_n\cong\PDiag^\times(n;\R)$ is the projectivization of the space $\R^n$ of diagonal matrices, less those with determinant zero, corresponding to the union of the coordinate hyperplanes.
That is, $\fam{D}\cong\RP^{n-1}\smallsetminus \fam{A}$ is the projectivized complement of the coordinate hyperplane arrangement $\fam{A}$.
Any two orthogonal groups in the same connected component of $\fam{D}_n$ are conjugate, and in fact the connected components are conjugacy classes by \emph{diagonal conjugacy}.
\end{observation}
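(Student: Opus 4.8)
The plan is to reduce the entire statement to an explicit computation with sign patterns by restricting the continuous bijection $\iota\colon\PSym^\times\to\fam{O}_n$ of the preceding corollary to the projective diagonal forms. Since by definition $\PDiag^\times(n;\R)$ consists exactly of the projective classes $[\diag(\lambda_1,\ldots,\lambda_n)]$ with all $\lambda_i\in\R^\times$, it is literally the set of points of $\RP^{n-1}$ with no vanishing coordinate, i.e. $\RP^{n-1}\smallsetminus\fam{A}$ for $\fam{A}$ the union of coordinate hyperplanes; and $\iota$ carries $\PDiag^\times$ bijectively onto $\fam{D}_n$. So the first identification is essentially definitional. The remaining work is (a) upgrading this to a homeomorphism so that connected components match, and (b) computing the diagonal conjugacy classes directly on the level of forms and checking they coincide with those components.

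For (a), I would establish continuity of $\iota^{-1}$ on $\fam{D}_n$ by a compactness argument. Given $\O(J_k)\to\O(J)$ in $\Cl(\GL(n;\R))$ with $J,J_k$ diagonal and nondegenerate, normalize representatives to the unit sphere of $\Sym$, extract a convergent subsequence $J_{k_m}\to J_\infty$, and apply continuity of $\phi$ together with uniqueness of Chabauty limits to get $\O(J_\infty)=\O(J)$, hence $[J_\infty]=[J]$ by the uniqueness of the preserved form up to scalar. A subsequence-of-every-subsequence argument in the metrizable space $\PDiag^\times$ then forces $[J_k]\to[J]$. Thus $\fam{D}_n\cong\RP^{n-1}\smallsetminus\fam{A}$ as spaces, and its connected components are the $2^{n-1}$ open orthants of $(\R^\times)^n$ taken up to the global sign identification $x\sim-x$ (each sign vector $\sigma$ being paired with $-\sigma\neq\sigma$), indexed by sign patterns modulo a global flip.

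For (b), I would invoke the relation $\O(D^{T}JD)=D^{-1}\O(J)D$ with $D=\diag(d_1,\ldots,d_n)$ to compute
$$\O(\diag(d_1^{2}\lambda_1,\ldots,d_n^{2}\lambda_n))=D^{-1}\,\O(\diag(\lambda_1,\ldots,\lambda_n))\,D,$$
so diagonal conjugation rescales each entry $\lambda_i$ by the positive factor $d_i^{2}$. Combined with $\O(J)=\O(cJ)$ for $c\in\R^\times$, this shows $\O(\diag(\lambda_i))$ and $\O(\diag(\mu_i))$ are diagonally conjugate if and only if the ratios $\mu_i/\lambda_i$ all share a common sign, i.e. if and only if the sign vectors of $(\lambda_i)$ and $(\mu_i)$ agree up to a global flip — precisely the condition that $[J]$ and $[J']$ lie in the same component found in (a). Hence the diagonal conjugacy classes are exactly the connected components, which proves the final two sentences, with the weaker conjugacy assertion following a fortiori.

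The step I expect to be the \emph{main obstacle} is (a): a continuous bijection need not send connected components to connected components, so I cannot simply transport the component structure across $\iota$ without genuinely proving that $\iota^{-1}$ is continuous. The delicate point inside that argument is ruling out a degenerate limiting form $J_\infty$, which I would handle by noting that the stabilizer of a degenerate quadratic form is strictly larger (it contains nontrivial unipotents, raising its dimension above $n(n-1)/2$) than any $\O(p,q)$, so $\O(J_\infty)$ cannot equal the nondegenerate limit $\O(J)$.
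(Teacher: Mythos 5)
Your overall route is sound, and in fact more careful than the paper itself, which records this statement as an unproved Observation resting on the implicit identification of $\fam{O}_n$ with $\PSym^\times$ via the continuous bijection $\iota$. Your part (b) is correct and complete: $\O(D^TJD)=D\inv\O(J)D$ with $D^TJD=\diag(d_1^2\lambda_1,\ldots,d_n^2\lambda_n)$, combined with $\O(J)=\O(cJ)$, shows the diagonal conjugacy classes are exactly the sign patterns modulo a global flip, which are exactly the $2^{n-1}$ components of $\RP^{n-1}\smallsetminus\fam{A}$.

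However, the step you yourself flag as the main obstacle contains a genuine flaw. In (a) you write that from $J_{k_m}\to J_\infty$ you ``apply continuity of $\phi$ together with uniqueness of Chabauty limits to get $\O(J_\infty)=\O(J)$,'' and you then rule out degenerate $J_\infty$ by arguing that the stabilizer of a degenerate form is too large to \emph{equal} the limit $\O(J)$. The problem is that the paper's continuity lemma for $\phi$ is proved only at nondegenerate forms, and continuity genuinely fails at degenerate ones: the paper's own computation of the hyperbolic-to-Euclidean transition shows $\lim_{t\to 0}\SO(\diag(1,1,t))$ is the $3$-dimensional group $\Euc(2)$, a \emph{proper} subgroup of the $4$-dimensional isometry group of the degenerate form $\diag(1,1,0)$. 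So when $J_\infty$ is degenerate you never obtain the equality $\O(J_\infty)=\lim\O(J_{k_m})$ that your dimension/unipotent argument is designed to refute; all that survives is the containment $\O(J)\subseteq\set{A\mid A^TJ_\infty A=J_\infty}$ (take $A\in\O(J)=\lim\O(J_{k_m})$, write $A=\lim A_m$ with $A_m\in\O(J_{k_m})$, and pass to the limit in $A_m^TJ_{k_m}A_m=J_{k_m}$), and a containment of a small group in a large one carries no dimension obstruction. The repair is easy and makes the argument cleaner: the containment says $J_\infty$ is an $\O(J)$-invariant symmetric form; for diagonal nondegenerate $J$ the coordinate sign-flips $S_i=\diag(1,\ldots,-1,\ldots,1)\in\O(J)$ force $J_\infty$ to be diagonal, and invariance under the rotation or boost in each coordinate $2$-plane forces $(J_\infty)_{ii}/\lambda_i$ to be independent of $i$, so $J_\infty\in\R J$. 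Since $J_\infty$ has unit norm it is a \emph{nonzero} multiple of $J$, which simultaneously rules out degeneracy and yields $[J_\infty]=[J]$ without ever invoking continuity of $\phi$ at the limit point. With that substitution your subsequence argument closes, and the whole proposal goes through.
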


\begin{comment}
\noindent
Different points have $\O(n)$ orbits intersecting $\fam{D}$ different numbers of times: for instance the point of $\fam{O}$ representing $\O(n)$ is fixed under $\O(n)$ conjugacy, and this orbit is a single point contained in $\fam{D}$.
Given $J_{p,q}$ a diagonal quadratic form with $p$ eigenvalues of $1$ and $q$ of $-1$, the orbit $\O(n).J$ intersects $\fam{D}$ in $\smat{p+q\\p}$ many points, one for each permutation distinct permutation of the list $\{1^p, -1^q\}$.
\end{comment}

\begin{example}
For $n=2$, $\fam{O}_2$ is $\RP^2$ less a circle, and $\fam{D}_2$ is a twice punctured projective line (in the double cover $\fam{O}_2$ is a sphere minus the north and south arctic circles, and $\fam{D}_2$ is a great circle of longitude).
The action of $\O(2)$ by conjugation fixes a single point and is free on the complement of this point (in the double cover, this action is by rotation along the polar axis of $\S^2$)
\end{example}

\begin{figure}
\centering\includegraphics[width=0.6\textwidth]{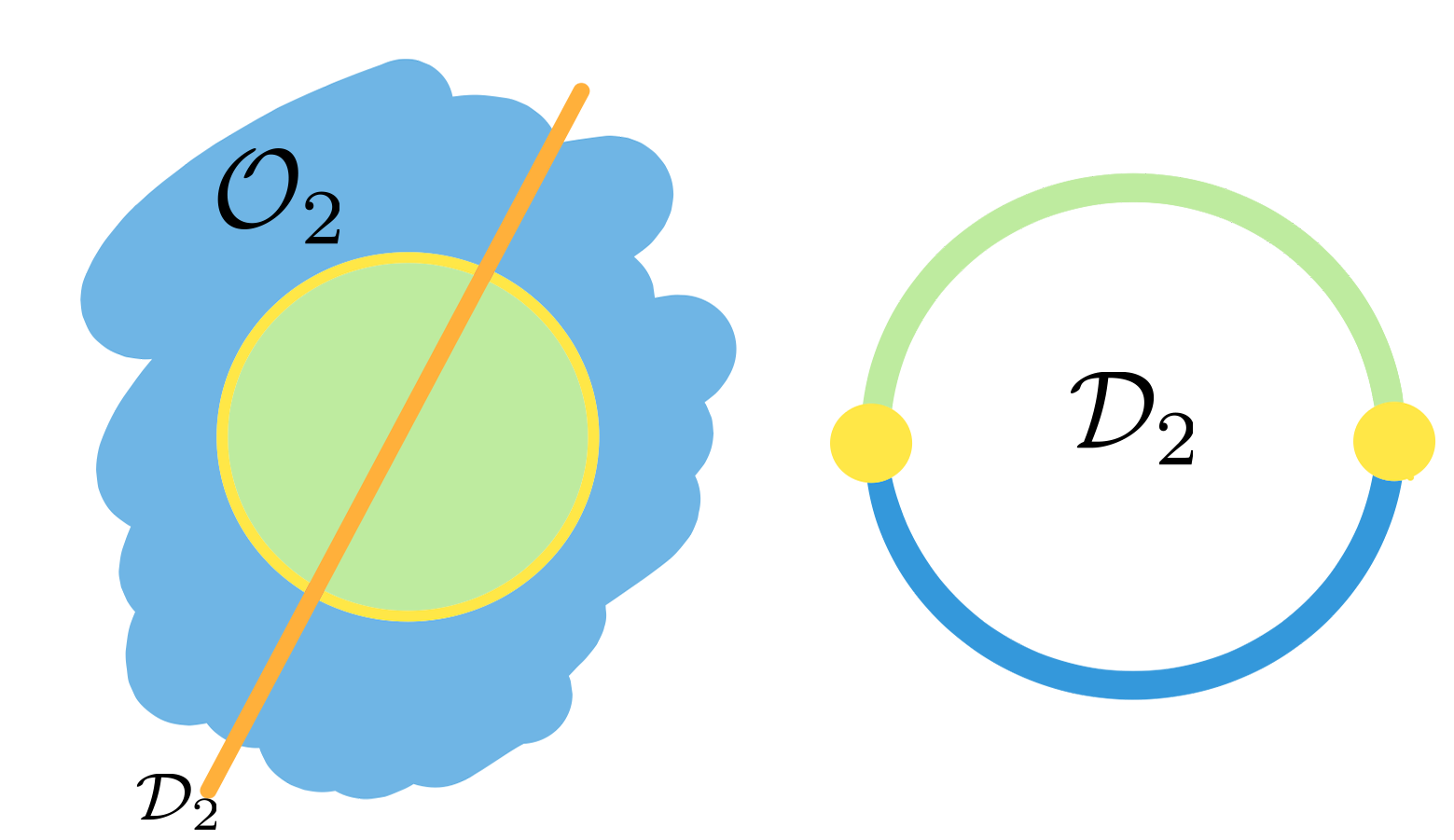}
\caption{The space $\fam{O}_2$ and the slice $\fam{D}_2$.}
\end{figure}

\begin{example}
For $n=3$, $\fam{O}_3$ is an open $5$-manifold and $\fam{D}_3$ is the complement of the coordinate hyperplanes in $\RP^2$.
The action of $\O(3)$ on $\fam{O}_3$ fixes the point representing $\O(3)$, and generic orbits $\O(3).\O(J)$ pass through $\fam{D}_3$ three times, corresponding to the three permutations of the diagonal entries of $J=\diag(x,y,z)$.
\end{example}

\begin{figure}
\centering\includegraphics[width=0.65\textwidth]{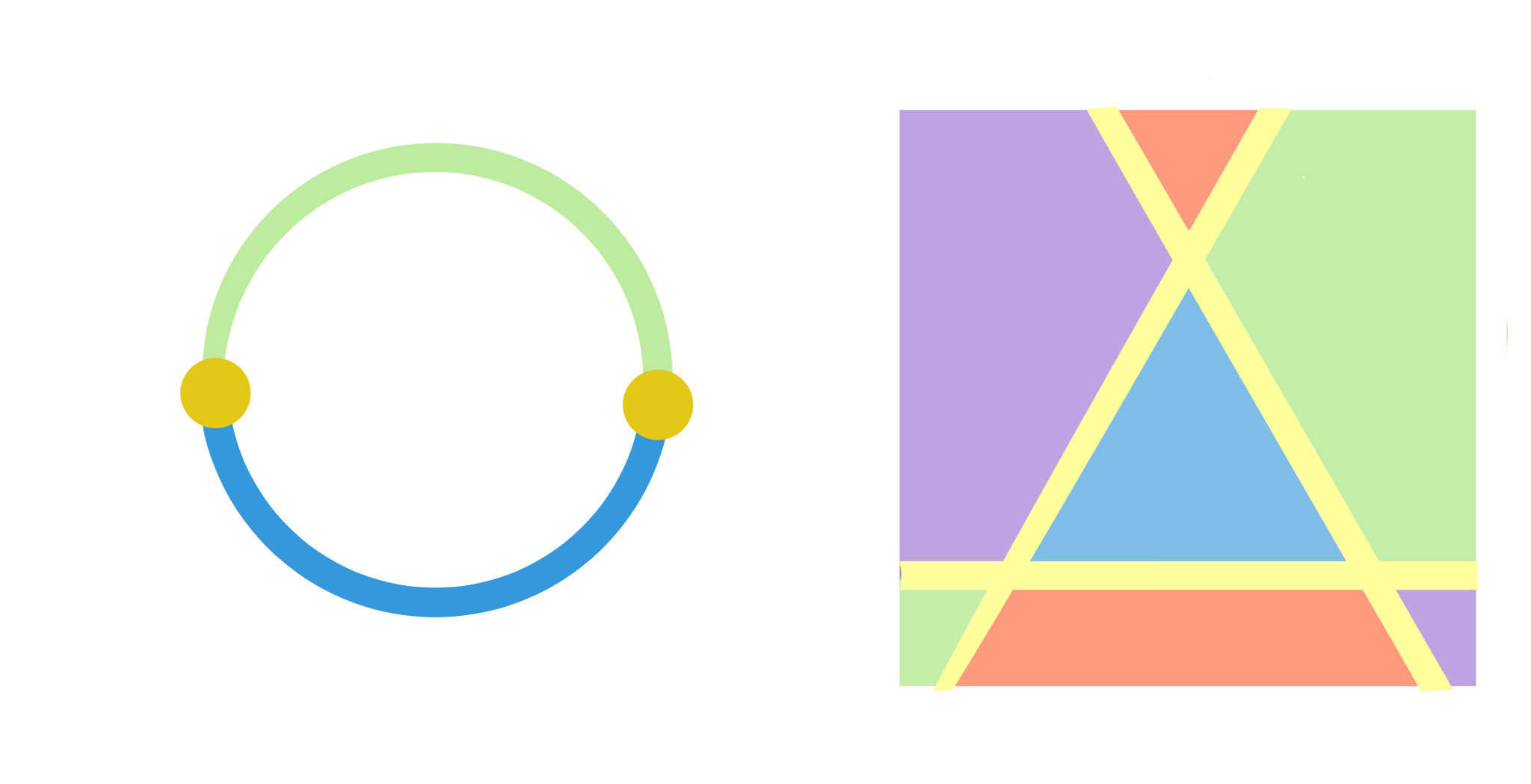}
\caption{The slices $\fam{D}_2\cong\PDiag^\times(2;\R)$ and  $\fam{D}_3\cong\PDiag^\times(3;\R)$.	}
\end{figure}

\begin{comment}
These diagonal slices also have geometric meaning, under the interpretation of $\fam{O}_n$ as a disjoint union of symmetric spaces for $\SL(n;\R)$.
In fact, each diagonal slice of $\fam{O}_{p,q}$ forms a totally geodesic subspace of dimension $n-1$, (at least for $\fam{O}_{n,0}$) this is a maximal flat of the symmetric space $\SL(n;\R)/\SO(n)$.
As a result of the discussion above, we have the following geometric corollary.

\begin{corollary}
Every point $p\in \fam{O}_n$ is contained in some maximal flat of its connected component	$\fam{O}_{p,q}$.
\end{corollary}
\begin{proof}
Let $[K]\in\PSym^\times$, say $[K]\in\Sig(p,q)$.
Then there is some $Q\in\O(n)$ such that $Q\inv K Q=D$ is diagonal, so $\O(D)$ lies in the maximal flat $\fam{F}=\fam{O}_{p,q}\cap \fam{D}_n$ and $K=QDQ\inv$ lies in the flat $Q\fam{F}Q\inv$.	
\end{proof}

\end{comment}

\section{Simplifying the Problem}
\label{sec:Simplifying_Orthogonal_Problem}

The remainder of this chapter is aimed at computing the closure $\overline{\fam{D}_n}$, proving Theorem \ref{thm:Orthog_Closure_Main}.
To do so, we proceed by a sequence of simplifications, aimed at reducing the complexity of the codomain of the embedding $\iota\colon\PSym^\times(n;\R)\to\Cl(\GL(n;\R))$.
We begin by replacing the hyperspace $\Cl(\GL(n;\R))$ with the space of closed Lie subalgebras of $\gl(n;\R)$.
We then carefully consider the image of $\fam{D}_n$ in $\Gr(\smat{n\\2},n^2)$ and show it lies in an $\smat{n\\2}$-dimensional torus.
Studying this embedding allows us to compute the closure $\overline{\fam{D}_n}$ using algebro-geometric techniques.

\subsection{From $\Cl(\GL)$ to $\Cl(\gl)$}

\begin{comment}
Geometrically, the action of $\GL(n;\R)$ on itself by conjugacy induces an action on $\Cl(\GL(n;\R))$ with orbits being conjugacy classes of groups.
By Lemma \ref{lem:Orthog_Conj_Classes}, the  space of interest $\fam{O}$ is a union of orbits of this action, and thus the compactification $\overline{\fam{O}}$ is a union of \emph{orbit closures}.
Were $\Cl(\GL(n;\R))$ a variety, we would be in the familiar situation of studying orbit closures of an algebraic group acting on an algebraic space, where much is known (for instance, the boundary is a union of orbits, each of strictly smaller dimension).
However $\Cl(\GL(n;\R))$ is quite poorly behaved topologically, and is naturally neither a manifold nor a real algebraic set.
\end{comment}

\noindent
For any Lie group $G$, the closed subgroups of $G$ are precisely the Lie subgroups (by Lie's theorem), and so there is a natural map $\mathfrak{lie}\colon\Cl(G)\to\Cl(\mathfrak{g})$ sending each closed subgroup $H$ to its tangent space $\mathfrak{lie}(H)=\mathfrak{h}$ at the identity.
Because the space $\Cl(\mathfrak{g})$ is much easier to work with than $\Cl(G)$ (recall Section \ref{sec:Space_of_Subgeos}, it is a union of closed subsets of Grassmannians), one may hope to attempt an understanding of the closure of $X\subset\Cl(G)$ by computing not $\overline{X}$, but $\overline{\mathfrak{lie}(X)}$.
Unfortunately there are some severe problems with this: the map $\mathfrak{lie}$ is obviously not injective (the groups $\O(3)$ and $\SO(3)$ have the same Lie algebras in $\GL(3;\R)$ for example), but even worse $\mathfrak{lie}$ is not even \emph{continuous} with respect to the topologies on $\Cl(G), \Cl(\mathfrak{g})$ (Recall the Barber Pole Example \ref{ex:Spiraling_Cyl}).
Thus, in general $\mathfrak{lie}(\overline{X})\neq \overline{\mathfrak{lie}(X)}$, but as we show below, in the special case $X=\fam{O}_n$ or $X=\fam{D}_n$, this holds.

\begin{lemma}
Restricted to $\overline{\fam{D}_n}$, the map $\mathfrak{lie}$ is continuous.
\end{lemma}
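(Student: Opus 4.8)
The plan is to isolate the single source of discontinuity for $\lie$ — namely the dimension jump exhibited in the Barber Pole Example, where the Chabauty limit of the groups is strictly larger than the limit of their Lie algebras — and to rule it out on $\overline{\fam{D}_n}$ by showing that \emph{every} group appearing in this closure has the same dimension $\binom{n}{2}$. Once dimension is constant, a routine compactness argument in a fixed Grassmannian upgrades the automatic one-sided containment $\lim\mathfrak{h}_k\subseteq\lie(\lim H_k)$ to an equality, giving continuity.

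First I would prove a constant-dimension lemma: every $H\in\overline{\fam{D}_n}$ satisfies $\dim H=\binom{n}{2}$. Since $\Cl(\GL(n;\R))$ is metrizable, $\overline{\fam{D}_n}$ is a sequential closure, so $H=\lim_k\O(J_k)$ with each $J_k$ a nondegenerate diagonal form. The space $\fam{D}_n\cong\RP^{n-1}\smallsetminus\fam{A}$ has finitely many connected components, each a single diagonal-conjugacy class on which the signature is constant; indeed, writing $J=SES$ with $S=\diag(\sqrt{|\lambda_i|})$ and $E=\diag(\operatorname{sgn}\lambda_i)$ shows $\O(J)=S^{-1}\O(E)S$, so every group in a fixed component is a diagonal conjugate of one fixed $\O(E)\cong\O(p,q)$. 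Passing to a subsequence lying in a single component (which still converges to $H$), we exhibit $H$ as a conjugacy limit of the algebraic group $\O(p,q)<\GL(n;\R)$, and Theorem \ref{thm:CDW_AlgGrps} yields $\dim H=\dim\O(p,q)=\binom{n}{2}$, independently of the signature.

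Then I would deduce continuity. Fix $H\in\overline{\fam{D}_n}$ and a sequence $H_k\to H$ in $\overline{\fam{D}_n}$, and set $\mathfrak{h}_k=\lie(H_k)$, $\mathfrak{h}=\lie(H)$, all of dimension $d=\binom{n}{2}$ by the lemma. The algebras $\mathfrak{h}_k$ lie in the compact Grassmannian $\Gr(d,\gl(n;\R))$, so any subsequence has a further subsequence converging to some $\mathfrak{a}\in\Gr(d,\gl)$. For $X\in\mathfrak{a}$, pick $X_k\in\mathfrak{h}_k$ with $X_k\to X$; then $\exp(tX_k)\in H_k$ and $\exp(tX_k)\to\exp(tX)$, so by the sequential description of the Chabauty topology (Definition \ref{def:Chab_Seqs}) $\exp(tX)\in H$ for all $t$, whence $X\in\mathfrak{h}$. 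Thus $\mathfrak{a}\subseteq\mathfrak{h}$, and since $\dim\mathfrak{a}=d=\dim\mathfrak{h}$ we get $\mathfrak{a}=\mathfrak{h}$. Every convergent subsequence of $\mathfrak{h}_k$ therefore has the same limit $\mathfrak{h}$, and metrizability of $\Gr(d,\gl)$ forces $\mathfrak{h}_k\to\mathfrak{h}$, i.e. $\lie(H_k)\to\lie(H)$.

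I expect the main obstacle to be the constant-dimension lemma — precisely, confirming that an arbitrary boundary point of $\fam{D}_n$ really does arise as a conjugacy limit of a \emph{single} algebraic orthogonal group, so that the Cooper--Danciger--Wienhard dimension-preservation theorem applies, rather than as some more pathological Chabauty limit that could jump dimension. The subsequence reduction to a fixed component is what makes this legitimate, and the two points to verify carefully are that restricting to a subsequence does not lose the limit $H$ and that diagonal rescaling genuinely realizes every group in a component as a conjugate of one fixed $\O(p,q)$. After that, the final paragraph is the standard lower-semicontinuity-of-dimension argument and presents no real difficulty.
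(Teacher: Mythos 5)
Your proof is correct, and its core is the same as the paper's: both arguments hinge on the same three ingredients, namely the reduction (by passing to a subsequence) to a single component of $\fam{D}_n$, which is one diagonal-conjugacy class of some fixed $\O(p,q)$; Theorem \ref{thm:CDW_AlgGrps} to force $\dim H=\binom{n}{2}$ at the limit; and the standard exponentiation argument giving the containment $\lim\mathfrak{h}_k\subseteq\lie(\lim H_k)$, upgraded to equality by dimension count. The organization differs slightly, and in a way that buys you something. The paper splits into two cases: continuity at points of $\fam{D}_n$ is handled directly by writing $H_k=A_kHA_k\inv$ with diagonal conjugators $A_k\to I$ and pulling the conjugation through $\lie$, while the dimension-count argument is reserved for boundary points, and there the paper's subsequence step tacitly assumes the approximating sequence lies in $\fam{D}_n$ itself rather than in $\overline{\fam{D}_n}$. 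Your version instead front-loads a constant-dimension lemma valid on \emph{all} of $\overline{\fam{D}_n}$ and then runs one uniform compactness-and-dimension argument in $\Gr\left(\binom{n}{2},\gl(n;\R)\right)$; this treats interior and boundary points identically and, more importantly, legitimately handles sequences $H_k$ that may themselves be boundary points, which sequential continuity on the closure genuinely requires and which the paper's stated case analysis glosses over. Your two flagged verification points are both sound: restricting to a subsequence in one of the finitely many components preserves the limit, and the factorization $J=SES$ with $S=\diag(\sqrt{|\lambda_i|})$ does realize $\O(J)=S\inv\O(E)S$, so the Cooper--Danciger--Wienhard theorem applies exactly as you intend.
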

\begin{proof}
Recall that $\fam{D}_n$ is a disjoint union of connected components, each a conjugacy class of orthogonal groups up to diagonal conjugacy.
Note that as $\Cl(\GL(n;\R))$, $\Cl(\gl(n;\R))$ are metrizable, it suffices to check continuity using sequences.
Let $G\in\fam{D}_n$ and $G_k$ a sequence converging to $G$ in $\fam{D}_n$.
Passing to a subsequence if necessary, we may assume that each $D_k$ lies in the same component of $\fam{D}_n$ as $G$.
Then each $G_k$ is in the same conjugacy orbit as $G$, so $G_k=A_kGA_k\inv$ for some sequence $A_k\in\Diag(n;\R)$, converging to the identity $I$ as $k\to \infty$.
As conjugate Lie groups have conjugate Lie algebras, we have
$$\lim\lie(G_k)=\lim\lie(A_kGA_k\inv)=\lim A_k\lie(G)A_k\inv=\lie(G)$$
using $A_k\to I$.
Thus, $\lie(\lim G_k)=\lim \lie (G_k)$ for all convergent sequences $G_k\in \fam{D}_n$, so $\lie$ is continuous on $\fam{D}_n$.

It only remains to show $\lie$ is continuous at the points of $\partial\fam{D}_n=\overline{\fam{D}_n}\smallsetminus\fam{D}_n$.
Let $H\in\partial\fam{D}_n$ and let $H_k$ be a sequence of groups converging to $H$.
Again passing to a subsequence if necessary, we may assume that all of the $H_k$ lie in a single component of $\subset\fam{D}_n$, and thus that all $H_k$ are in the same conjugacy class.
By compactness, the sequence $\mathfrak{h}_k=\lie(H_k)$ subconverges in $\Cl(\mathfrak{gl}(n;\R))$ to some limiting Lie algebra $\mathfrak{h}$, and it suffices to show that $\mathfrak{h}=\lie(H)$.

First, we note $\mathfrak{h}\subset\lie(H)$ follows from the general fact that the exponential of a Lie algebra limit is a subgroup of the geometric limit, which we review here.
Let $X\in\mathfrak{h}=\lim \mathfrak{h}_k$.
Then $X=\lim X_{k_j}$ for some $X_{k_j}\in \mathfrak{h}_{k_j}$, and as the exponential map $\exp\colon\gl(n;\R)\to\GL(n;\R)$ is continuous, $\exp(X_{k_j})\to \exp(X)$.
But $\exp(X_{k_j})\in H_{k_j}$, so we have exhibited $\exp(X)$ as the limit of a convergent sequence of elements of $H_{k_j}$, as $k_j\to\infty$.
Thus, by the sequential definition of the Chabauty topology (Definition \ref{def:Chab_Seqs}), $\exp(X)\in\lim H_k=H$.
Equivalently, $X\in\lie(H)$ as required.

To show in fact $\mathfrak{h}=\lie(H)$, we show the reverse inclusion by dimension count.
As each $H_k$ are conjugate, the Lie algebras $\mathfrak{h}_k$ are all of the same dimension $\smat{n\\2}$, and thus $\mathfrak{h}=\lim \mathfrak{h}_k$ is of dimension $\smat{n\\2}$.
But as all of the $H_k$ are conjugate, and in fact conjugate to $\O(p,q)<\GL(n;\R)$ for some fixed $p+q=n$, we note that $H=\lim H_k$ is a conjugacy limit of algebraic subgroups of the algebraic group $\GL(n;\R)$.
Thus by Theorem \ref{thm:CDW_AlgGrps} (Proposition 3.11 of \cite{CooperDW14}), $\dim H=\dim H_k$, and so $\lie(H)$ is of the same dimension as its subalgebra $\mathfrak{h}$.
So, $\mathfrak{h}=\lie(H)$ as claimed and $\lie$ is continuous at $H$.
\end{proof}

\begin{theorem}
$\overline{\fam{D}_n}\cong \mathfrak{lie}(\overline{\fam{D}_n})=\overline{\mathfrak{lie}(\fam{D}_n)}.$
\end{theorem}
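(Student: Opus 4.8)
The plan is to show that the restriction of $\mathfrak{lie}$ to $\overline{\fam{D}_n}$ is itself the asserted homeomorphism, and that its image is exactly $\overline{\mathfrak{lie}(\fam{D}_n)}$. Three ingredients are needed: continuity of $\mathfrak{lie}$ on $\overline{\fam{D}_n}$, which is the content of the preceding lemma; injectivity on $\overline{\fam{D}_n}$; and a point-set argument identifying the image. For the topological half, recall that $\Cl(\GL(n;\R))$ is compact, so the closed subset $\overline{\fam{D}_n}$ is compact, while $\Cl(\gl(n;\R))$ is metrizable and hence Hausdorff. A continuous injection from a compact space to a Hausdorff space is automatically a homeomorphism onto its image, so once injectivity is in hand the homeomorphism $\overline{\fam{D}_n}\cong\mathfrak{lie}(\overline{\fam{D}_n})$ follows formally.

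The equality $\mathfrak{lie}(\overline{\fam{D}_n})=\overline{\mathfrak{lie}(\fam{D}_n)}$ will then be pure point-set topology. Since $\overline{\fam{D}_n}$ is compact and $\mathfrak{lie}$ is continuous on it, the image $\mathfrak{lie}(\overline{\fam{D}_n})$ is compact, hence closed; as it contains $\mathfrak{lie}(\fam{D}_n)$ this gives $\overline{\mathfrak{lie}(\fam{D}_n)}\subset\mathfrak{lie}(\overline{\fam{D}_n})$. Conversely, any $\mathfrak{h}=\mathfrak{lie}(H)$ with $H=\lim H_k\in\overline{\fam{D}_n}$ satisfies $\mathfrak{h}=\lim\mathfrak{lie}(H_k)$ by continuity, exhibiting $\mathfrak{h}$ as a limit of points of $\mathfrak{lie}(\fam{D}_n)$ and giving the reverse inclusion.

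The real work is injectivity, and here the key observation is that every diagonal orthogonal group contains the finite group $\Delta=\{\diag(\epsilon_1,\ldots,\epsilon_n)\mid\epsilon_i=\pm1\}$ of sign matrices, since $S^T\! J S=J$ for any diagonal $J$ and any $S\in\Delta$. As $\Delta$ is a fixed finite subgroup contained in every member of $\fam{D}_n$, the sequential description of the Chabauty topology (Definition \ref{def:Chab_Seqs}) forces $\Delta\subset H$ for every $H\in\overline{\fam{D}_n}$. I would then recover $H$ from its Lie algebra $\mathfrak{h}$ as $\langle\Delta,\exp(\mathfrak{h})\rangle$. The identity component is controlled by the dimension count of Cooper--Danciger--Wienhard (Theorem \ref{thm:CDW_AlgGrps}): since every $H\in\overline{\fam{D}_n}$ is a conjugacy limit of the algebraic groups $\O(p,q)$, one has $\dim H=\binom{n}{2}=\dim\langle\exp(\mathfrak{h})\rangle$, so the connected group $\langle\exp(\mathfrak{h})\rangle$ is exactly $H_0$. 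Because $\Delta$ surjects onto the component group $\pi_0(\O(J))$ of each diagonal orthogonal group, the same should persist in the limit, yielding $H=\Delta\cdot H_0=\langle\Delta,\exp(\mathfrak{h})\rangle$; as $\Delta$ is fixed and $\mathfrak{h}$ determines $H_0$, the group $H$ is determined by $\mathfrak{h}$, which is injectivity.

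The main obstacle is this last step: a priori the Chabauty limit of the connected groups $\O(J_k)_0$ could acquire extra connected components not accounted for by $\Delta$, precisely the pathology of the Barber Pole Example. To rule this out I would factor a general element $g=\lim g_k$ of $H$ as $g_k=s_k h_k$ with $s_k\in\Delta$ and $h_k\in\O(J_k)_0$, pass to a subsequence on which $s_k$ is constant, and verify that the resulting limit of the $h_k$ lands in $H_0$ rather than in a spurious component. The dimension bound of Theorem \ref{thm:CDW_AlgGrps} applied to the connected limit, together with the fact that each $h_k$ stays in the identity component, is what should prevent barber-pole spiraling and pin down $H_0=\langle\exp(\mathfrak{h})\rangle$ as the only component structure beyond $\Delta$. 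Once injectivity is secured, the three ingredients assemble into the stated homeomorphism together with the image identification.
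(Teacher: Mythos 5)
Your topological skeleton---continuity of $\lie$ from the preceding lemma, the continuous-bijection-from-compact-to-Hausdorff trick, and the two-inclusion identification $\lie(\overline{\fam{D}_n})=\overline{\lie(\fam{D}_n)}$---is exactly the paper's argument, so the only divergence is in how injectivity is handled, and that is where your proposal has a genuine gap. The paper disposes of injectivity in one line (equal Lie algebras force equal identity components, with the component structure left implicit); you rightly sense that the components are the real issue, but the step you propose to control them fails. Specifically, the claim that after factoring $g_k=s_kh_k$ with $s_k\in\Delta$ constant along a subsequence, the limit of the $h_k\in\O(J_k)_0$ ``lands in $H_0$'' is false, and Theorem \ref{thm:CDW_AlgGrps} cannot rescue it: that theorem constrains the \emph{dimension} of a conjugacy limit, not its component group. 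Concretely, take $n=2$ and $D_t=\diag(t,t^{-1})$. The rotations $R_{\theta_t}\in\SO(2)=\O(2)_0$ with $\theta_t=\pi-s/t^2$ satisfy $D_tR_{\theta_t}D_t\inv\to\smat{-1&-s\\0&-1}$, so the Chabauty limit of the \emph{connected} groups $D_t\SO(2)D_t\inv$ is the disconnected group $\pm N$, where $N=\set{\smat{1&u\\0&1}\mid u\in\R}$, while the Lie algebra limit is the strictly upper triangular line and $\langle\exp(\mathfrak{h})\rangle=N=H_0$. Every $h_k$ lies in the identity component, all dimensions agree as Theorem \ref{thm:CDW_AlgGrps} demands, and yet $\lim h_k=\smat{-1&-s\\0&-1}\notin H_0$. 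So dimension counting does not ``prevent barber-pole spiraling'' here---disconnected limits of connected groups genuinely occur inside $\overline{\fam{D}_n}$.

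What saves the desired formula $H=\Delta\cdot\langle\exp(\mathfrak{h})\rangle$ in this example is that the spurious component is the $\Delta$-translate $(-I)\cdot N$, since $-I\in\Delta$; and proving that \emph{every} component of $\lim(H_k)_0$ is a $\Delta$-translate of $\langle\exp(\mathfrak{h})\rangle$ is precisely the missing content, which your factoring argument does not supply and no dimension bound will. One workable patch is to import the structure theorem of Cooper--Danciger--Wienhard quoted earlier in the chapter: every point of $\overline{\fam{O}_n}$ is the isometry group of a partial flag of quadratic forms, hence (for diagonal data) block-triangular with orthogonal diagonal blocks, and the component group of such a group is represented by sign matrices, giving $H=\Delta\cdot H_0$ and injectivity of $\lie$ on $\overline{\fam{D}_n}$---though this leans on the classification the chapter is trying to re-derive, so a self-contained component analysis would be preferable. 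For the record, your other ingredients are all sound: compactness of $\overline{\fam{D}_n}$, Hausdorffness of $\Cl(\gl(n;\R))$, the inclusion $\Delta\subset H$ for every $H\in\overline{\fam{D}_n}$ via constant sequences in the Chabauty topology, and the surjection of $\Delta$ onto $\pi_0(\O(J))$ for diagonal $J$.
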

\begin{proof}
Note that $\lie$ is injective on $\fam{D}_n$, and in fact on its closure: if $G,H$ are both limits of orthogonal groups with the same lie algebra in $\GL(n;\R)$, they must have the same connected component of the identity.
To see $\overline{\fam{D}_n}$ is homeomorphic to $\mathfrak{lie}(\overline{\fam{D}_n})$, note that by continuity proven above, $\mathfrak{lie}$ is a continuous bijection onto its image from the compact space $\overline{\fam{D}_n}$ into the Hausdorff space $\Cl(\mathfrak{g})$.
By continuity of $\mathfrak{lie}$ when restricted to $\overline{\mathcal{D}_n}$, we have that $\mathfrak{lie}(\overline{\fam{D}_n})\subset \overline{\mathfrak{lie}(\fam{D}_n)}$.
But by the compactness of $\overline{\mathcal{D}_n}$ the image $\mathfrak{lie}(\overline{\fam{D}_n})$ is compact and thus closed, and obviously contains $\mathfrak{lie}(\fam{D}_n)$ so $\overline{\mathfrak{lie}(\fam{D}_n)}\subset\mathfrak{lie}(\overline{\fam{D}_n})$ proving equality.
\end{proof}

\subsection{From $\Cl(\gl)$ to $(\RP^1)^M$}

The following simplification is just an extended observation about where the image of $\PDiag^\times$ under $\lie\circ\iota$ in $\Cl(\gl(n;\R))$ lies.
Recall the space of Lie subalgebras of a Lie algebra $\mathfrak{g}$ under the Chabauty topology is homeomorphic to a disjoint union of subsets of grassmannians over $\gl(n;\R)$.
In fact, $\overline{\lie(\fam{D}_n)}$ lies in a single Grassmannian by connectedness, and as $\dim \O(p,q)=\smat{n\\2}$, this gives
$\overline{\lie(\fam{D}_n)}\subset\Gr\left(\smat{n\\2},n^2\right)$.
But as the set of $\smat{n\\2}$-dimensional Lie subalgebras of $\gl(n;\R)$ is closed subset of $\Gr(\smat{n\\2},n^2)$, the closure of $\lie(\fam{D}_n)$ in $\Cl(\gl(n;\R))$ and in $\Gr(\smat{n\\2},n^2)$ agree.

\begin{corollary}
$\overline{\fam{D}_n}$ is homeomorphic to $\overline{\lie(\fam{D}_n)}\subset\Gr(\smat{n\\2},n^2)$.	
\end{corollary}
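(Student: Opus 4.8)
The plan is to leverage the homeomorphism $\overline{\fam{D}_n}\cong\overline{\lie(\fam{D}_n)}$ just established, and merely to pin down \emph{where} the set $\overline{\lie(\fam{D}_n)}$ lives. First I would observe that every group in $\fam{D}_n$ is an orthogonal group $\O(J)$ of some signature $(p,q)$ with $p+q=n$, and that the Lie algebra $\so(p,q)$ has dimension $\smat{n\\2}$ independent of the signature. Consequently $\lie(\fam{D}_n)$ consists entirely of $\smat{n\\2}$-dimensional subspaces of $\gl(n;\R)\cong\R^{n^2}$, so that $\lie(\fam{D}_n)\subset\Gr(\smat{n\\2},n^2)$.

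Next I would argue that the closure cannot escape this single Grassmannian. Recall from the earlier Proposition that $\Cl(\gl(n;\R))$ is a disjoint union of closed subsets of the Grassmannians $\Gr(k,n^2)$, and that subspaces of distinct dimensions are bounded away from one another in the Chabauty topology (a convergent sequence of $k$-dimensional subspaces has a $k$-dimensional limit). Hence any Chabauty limit of a sequence in $\lie(\fam{D}_n)$ is again $\smat{n\\2}$-dimensional, giving $\overline{\lie(\fam{D}_n)}\subset\Gr(\smat{n\\2},n^2)$, where the closure is computed in $\Cl(\gl(n;\R))$.

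It then remains to check that this closure agrees with the closure taken intrinsically inside the Grassmannian. The Chabauty topology restricted to $\Gr(\smat{n\\2},n^2)$ coincides with the usual Hausdorff (manifold) topology on the Grassmannian, as recorded when we identified $\Gr(k,V)$ with the space of great spheres in $\S^{\dim V-1}$. Moreover the set of $\smat{n\\2}$-dimensional \emph{Lie} subalgebras is cut out by the polynomial conditions expressing closure under the bracket, hence is a closed subvariety of $\Gr(\smat{n\\2},n^2)$; so the closure of $\lie(\fam{D}_n)$ taken within the Lie-subalgebra locus or within the ambient Grassmannian yields the same set. Combining these observations with the homeomorphism $\overline{\fam{D}_n}\cong\overline{\lie(\fam{D}_n)}$ of the previous theorem finishes the argument.

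The hard part here is essentially bookkeeping rather than a genuine difficulty: I must keep the notion of ``closure'' consistent across the three ambient spaces. The one place that would need the algebraic-group input of Theorem \ref{thm:CDW_AlgGrps} is the dimension count ensuring limits remain $\smat{n\\2}$-dimensional, but that was already folded into the proof that $\lie$ is continuous on $\overline{\fam{D}_n}$; so at this stage the work reduces to the elementary fact that subspaces of distinct dimension cannot be Chabauty-close.
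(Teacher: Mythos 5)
Your proposal is correct and takes essentially the same route as the paper: both arguments place $\lie(\fam{D}_n)$ in $\Gr(\smat{n\\2},n^2)$ via the constant dimension $\dim\so(p,q)=\smat{n\\2}$, use the fact that $\Cl(\gl(n;\R))$ decomposes into closed subsets of Grassmannians (so subspaces of distinct dimension are never Chabauty-close and the closure stays in one Grassmannian), and note that the locus of Lie subalgebras is closed, so the closure taken in $\Cl(\gl(n;\R))$ agrees with the closure in $\Gr(\smat{n\\2},n^2)$, after which the previously established homeomorphism $\overline{\fam{D}_n}\cong\overline{\lie(\fam{D}_n)}$ finishes the claim. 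Your added remark that the algebraic-group dimension count was already absorbed into the continuity of $\lie$ is accurate bookkeeping, not a departure from the paper's argument.
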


\noindent
We can do even better however; the image of $\lie\circ\iota$ lies not just in this Grassmannian, but in a particularly nice closed subset, homeomorphic to a high dimensional torus.
To see this, we first recall the particular form of the Lie algebra of $\so(J)$ for $J$ a diagonal matrix.

\begin{remark}
The Lie algebra $\so(J)$ for $J=\diag(\lambda_1,\ldots,\lambda_n)$ is 
$$\so(J)=\span \left\{ \lambda_j e_{ij}-\lambda_i e_{ji}\right\}_{i<j}$$
for $e_{ij}$ the standard basis for $\M(n;\R)$.	
\end{remark}

\begin{example}
$$\so\smat{x&&\\&y&\\&&z}=\span\left\{ 
\pmat{0& y &0\\-x&0&0\\0&0&0},
\pmat{0&0&z\\0&0&0\\-x&0&0}
\pmat{0&0&0\\0&0&z\\0&-y&0}
\right\}$$
\end{example}

\noindent
Note that the basis chosen above for $\so(J)$ consists of pairwise orthogonal vectors, for all nonzero choices of $\lambda_1,\ldots, \lambda_n$.
Moreover, each basis vector $\lambda_j e_{ij}-\lambda_i e_{ji}$ lies in the 2-plane $\span\{e_{ij}, e_{ji}\}$ which is orthogonal to the span of the remaining basis vectors.
This already provides useful information, as in taking the closure of $\lie(\fam{D}_n)$ we are interested in looking at limits of the vector subspaces $\so(J)$ as some of the eigenvalues of $J$ limit to $0,\infty$.
Describing a path of linear subspaces as the span of a path of vectors is in general problematic, as if in the limit the chosen basis vectors become linearly dependent, there are many continuous ways to regain linear dependence, but this does not always translate to continuity of their span.
Knowing that our chosen basis always consists of orthogonal vectors ensures us this cannot happen.

\begin{observation}
For each $1\leq i<j\leq n$, let $E_{ij}\colon \PDiag^\times\to \Gr(1,n^2)$ be the map $[\lambda_1,\ldots:\lambda_n]\mapsto \span\{\lambda_j e_{ij}-\lambda_i e_{ji}\}$.
Then we may express the Lie algebra $\so(J)$ for $J=\diag(\lambda_1\ldots, \lambda_n)$ as
$$\so(J)=\bigoplus_{i<j}E_{ij}([J])$$
\end{observation}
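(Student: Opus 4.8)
The plan is to prove the displayed decomposition $\so(J)=\bigoplus_{i<j}E_{ij}([J])$ by verifying both the set-theoretic equality of the spans and the directness of the sum. The statement to establish is that, for $J=\diag(\lambda_1,\ldots,\lambda_n)$ with all $\lambda_i\neq 0$, the Lie algebra $\so(J)$ decomposes as the internal direct sum of the $\binom{n}{2}$ one-dimensional subspaces $E_{ij}([J])=\span\{\lambda_j e_{ij}-\lambda_i e_{ji}\}$ for $1\le i<j\le n$. Since everything here is already finite-dimensional linear algebra once we have the explicit basis, this should be a short argument rather than a substantive theorem.

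First I would invoke the formula recalled in the preceding remark, namely $\so(J)=\span\{\lambda_j e_{ij}-\lambda_i e_{ji}\}_{i<j}$, which identifies a spanning set of exactly $\binom{n}{2}$ vectors, one for each pair $i<j$. By the very definition of $E_{ij}$, the vector $\lambda_j e_{ij}-\lambda_i e_{ji}$ spans $E_{ij}([J])$, so $\so(J)$ is the sum $\sum_{i<j}E_{ij}([J])$; the only thing left is to upgrade this sum to a direct sum. For that I would use the orthogonality observation emphasized just before the statement: each generator $\lambda_j e_{ij}-\lambda_i e_{ji}$ lies in the coordinate $2$-plane $\span\{e_{ij},e_{ji}\}$, and these $2$-planes, indexed by distinct unordered pairs $\{i,j\}$, are mutually orthogonal with respect to the standard inner product on $\M(n;\R)\cong\R^{n^2}$ (in which the $e_{k\ell}$ form an orthonormal basis). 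Distinct generators therefore lie in orthogonal subspaces and are in particular linearly independent.

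The main step is then the directness, which follows immediately from this orthogonality: since each $E_{ij}([J])$ is contained in $\span\{e_{ij},e_{ji}\}$ and the family of planes $\{\span\{e_{ij},e_{ji}\}\}_{i<j}$ is orthogonal, the subspaces $E_{ij}([J])$ pairwise intersect trivially and together span a space of dimension $\binom{n}{2}$, matching $\dim\so(J)=\binom{n}{2}$. Concretely, if $\sum_{i<j}c_{ij}(\lambda_j e_{ij}-\lambda_i e_{ji})=0$, then reading off the coefficient of each basis matrix $e_{ij}$ gives $c_{ij}\lambda_j=0$, and since $\lambda_j\neq 0$ we conclude $c_{ij}=0$ for all $i<j$. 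This confirms that the sum is direct and establishes the decomposition.

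I do not anticipate a genuine obstacle here; the content is entirely bookkeeping, and the one point worth stating carefully is that nondegeneracy of $J$ (all $\lambda_i\neq 0$) is exactly what guarantees each $E_{ij}([J])$ is genuinely one-dimensional and that the coefficient-extraction argument forces $c_{ij}=0$. If $J$ were degenerate, some generator could vanish and the dimension count would fail, which is precisely why the ambient space is $\PDiag^\times$ rather than all diagonal matrices. Thus the only thing to be mildly attentive to is flagging the role of nondegeneracy; the rest is the orthogonal-basis argument sketched above.
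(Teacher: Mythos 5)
Your proof is correct and matches the paper's reasoning exactly: the paper treats this observation as immediate from the preceding remark giving the spanning set $\{\lambda_j e_{ij}-\lambda_i e_{ji}\}_{i<j}$ together with the noted pairwise orthogonality of these generators (each lying in its own coordinate $2$-plane $\span\{e_{ij},e_{ji}\}$), which is precisely your argument with the bookkeeping written out. Your explicit flagging of nondegeneracy as the reason each $E_{ij}([J])$ is one-dimensional and the coefficient-extraction forces $c_{ij}=0$ is a welcome, if routine, addition.
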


\noindent
We now use this to show that $\lie(\fam{D}_n)$ lies in a $\smat{n\\2}$-dimensional torus inside of $\Gr(\smat{n\\2},n^2)$.
For convenience in what follows, we will index vectors of length $\smat{n\\2}$ by $\vec{x}=(x_{ij})_{i<j}$ with two indices $i,j$ subject to the constraint $1\leq i<j\leq n$.

\begin{proposition}
The map $\Phi\colon \PDiag^\times\to \Gr(\smat{n\\2},n^2)$ defined by $J=\diag(\lambda_1,\ldots, \lambda_n)\mapsto \so	(J)$ factors through an inclusion 
$\eta\colon(\RP^1)^{\smat{n\\2}}\inject \Gr(\smat{n\\2},n^2).$
\label{prop:Factor_Eta}
\end{proposition}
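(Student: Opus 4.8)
The plan is to realize $\eta$ explicitly as the \emph{direct-sum-of-lines} map on the pairwise orthogonal coordinate $2$-planes appearing in the formula $\so(J)=\bigoplus_{i<j}E_{ij}([J])$ recorded just above, and then to verify the three properties that make it an embedding: well-definedness, continuity, and injectivity. The factorization of $\Phi$ then falls out immediately from that same formula.

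First I would fix notation: for each pair $i<j$ set $P_{ij}=\span\{e_{ij},e_{ji}\}\subset\gl(n;\R)$, a $2$-plane, and observe that the $\binom{n}{2}$ planes $\{P_{ij}\}_{i<j}$ are pairwise orthogonal (they are spanned by disjoint pairs of standard basis matrices), so their sum $\bigoplus_{i<j}P_{ij}$ is direct. Identifying each $\RP(P_{ij})\cong\RP^1$, I define
$$\eta\colon \prod_{i<j}\RP(P_{ij})\longrightarrow \Gr(\smat{n\\2},n^2),\qquad (\ell_{ij})_{i<j}\longmapsto \bigoplus_{i<j}\ell_{ij},$$
and note $\prod_{i<j}\RP(P_{ij})\cong(\RP^1)^{\binom{n}{2}}$. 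This is well-defined because one line chosen from each $P_{ij}$ supplies $\binom{n}{2}$ vectors lying in planes in direct-sum position, hence linearly independent and spanning a subspace of the correct dimension $\binom{n}{2}$; continuity is immediate from continuity of the direct-sum operation on mutually transverse subspaces.

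The key step is injectivity of $\eta$, and this is where the orthogonality of the $P_{ij}$ is used essentially. Given $W=\eta((\ell_{ij}))=\bigoplus_{i<j}\ell_{ij}$, I claim $\ell_{ij}=W\cap P_{ij}$, which recovers the whole tuple from $W$. The inclusion $\ell_{ij}\subseteq W\cap P_{ij}$ is clear. Conversely, if $v\in W\cap P_{ij}$, write $v=\sum_{k<l}w_{kl}$ with $w_{kl}\in\ell_{kl}\subset P_{kl}$; since $v\in P_{ij}$ and the $P_{kl}$ are in direct sum, uniqueness of the decomposition forces $w_{kl}=0$ for $(k,l)\neq(i,j)$ and $v=w_{ij}\in\ell_{ij}$. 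Hence $W\cap P_{ij}=\ell_{ij}$ and $\eta$ is injective. Because $(\RP^1)^{\binom{n}{2}}$ is compact and $\Gr(\smat{n\\2},n^2)$ is Hausdorff, this continuous injection is automatically a homeomorphism onto its image, i.e.\ an embedding.

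Finally, to produce the factorization, I would define $\psi\colon\PDiag^\times\to(\RP^1)^{\binom{n}{2}}$ by $\psi([J])=(E_{ij}([J]))_{i<j}$, where $E_{ij}([\lambda_1:\cdots:\lambda_n])=[\lambda_j:-\lambda_i]\in\RP(P_{ij})$; this is well-defined and continuous precisely because every $\lambda_i\neq0$ on $\PDiag^\times$, so $(\lambda_j,-\lambda_i)\neq0$. By the displayed observation preceding the statement, $\Phi([J])=\so(J)=\bigoplus_{i<j}E_{ij}([J])=\eta(\psi([J]))$, so $\Phi=\eta\circ\psi$ factors through the embedding $\eta$ as claimed. (Note $\psi$ need not be surjective, since each $E_{ij}$ misses the two coordinate lines of $\RP(P_{ij})$; the full torus is retained because the closure of $\lie(\fam{D}_n)$ will be computed inside its image.) I expect the recovery $\ell_{ij}=W\cap P_{ij}$ to be the only genuinely substantive point, and the orthogonality from the preceding remark is exactly what makes it go through.
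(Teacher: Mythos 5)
Your proposal is correct and follows essentially the same route as the paper: both realize $\eta$ as the direct-sum-of-lines map on the pairwise orthogonal coordinate $2$-planes $\span\{e_{ij},e_{ji}\}$, deduce the embedding from compactness of $(\RP^1)^{\binom{n}{2}}$ and Hausdorffness of the Grassmannian, and obtain the factorization $\Phi=\eta\circ\Psi$ with $\Psi$ recording the pairwise ratios $[\lambda_i:\lambda_j]$. The only difference is that you explicitly verify injectivity via the recovery $\ell_{ij}=W\cap P_{ij}$, a point the paper asserts without proof, so your write-up is if anything slightly more complete.
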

\begin{proof}
For each $i<j$, define the map $\eta_{ij}\colon\RP^1\to \Gr(1,n^2)$ by $\eta_{ij}([x:y])=\span\{ ye_{ij}-x e_{ji}\}$.
The produce of these maps defines a map 
$\eta=\prod_{i<j}\eta_{ij}\colon\left(\RP^1\right)^{\smat{n\\2}}\to\prod_{i<j}\Gr(1,n^2)$
Noting that for all $\vec{x}\in(\RP^1)^{\smat{n\\2}}$ the image $\eta(\vec{x})$ consists of $\smat{n\\2}$ pariwise orthogonal vectors, we may take their direct sum to get a well-defined vector space of dimension $\smat{n\\2}$, providing a map
$$\eta\colon \left(\RP^1\right)^{\smat{n\\2}}\to\Gr\left(\smat{n\\2},n^2\right)\hspace{1cm}
\left([x_{ij},y_{ij}]\right)_{i<j}\mapsto \bigoplus_{i<j}\eta_{ij}\left([x_{ij}:y_{ij}]\right)$$
This map is a continuous bijection, and thus a homeomorphism onto its image as $(\RP^1)^{\smat{n\\2}}$ is compact and $\Gr(\smat{n\\2},n^2)$ is Hausdorff.
Now, looking at the map $\Phi\colon \PDiag^\times(n;\R)\to\Gr(\smat{n\\2},n^2)$ given in the proposition statement, we see that $\Phi=\eta\circ\Psi$ for $\Psi$ the map $\Psi\colon\PDiag^\times\to (\RP^1)^{\smat{n\\2}}$ with components $\Psi=(\psi_{ij})_{i<j}$ given by $
\psi_{ij}(\diag(\lambda_1,\ldots,\lambda_n))=[\lambda_i:\lambda_j]$.

\begin{figure*}
\centering
\begin{tikzcd}
\PDiag^\times(n;\R)\arrow[rr,"\Phi"]\arrow[dr,"\Psi"]&&\Gr\left(\smat{n\\2},n^2\right)\\	
&\left(\RP^1\right)^{\smat{n\\2}}\arrow[ur,"\eta"]&
\end{tikzcd}
\end{figure*}

\end{proof}

\begin{corollary}
As $\eta((\RP^1)^{\smat{n\\2}})$ is closed in $\Gr(\smat{n\\2},n^2)$,  the space of interest $\overline{\fam{D}_n}\cong\overline{\lie(\fam{D}_n)}$ may be computed either as $\overline{\Phi(\PDiag^\times)}\subset\Gr(\smat{n\\2},n^2)$ or $\overline{\Psi(\PDiag^\times)}\subset(\RP^1)^{\smat{n\\2}}$.
\end{corollary}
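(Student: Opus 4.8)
The plan is to exploit the factorization $\Phi = \eta\circ\Psi$ from Proposition \ref{prop:Factor_Eta}, using that $\eta$ is a homeomorphism onto a \emph{closed} subset of the Grassmannian to transport the closure computation freely between $\Gr\left(\smat{n\\2},n^2\right)$ and the torus $(\RP^1)^{\smat{n\\2}}$. The two asserted descriptions of $\overline{\fam{D}_n}$ then fall out of one standard point-set principle applied across $\eta$.

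First I would identify $\Phi(\PDiag^\times)$ with $\lie(\fam{D}_n)$. Unwinding definitions, $\iota([J])=\O(J)$ and $\lie(\O(J))=\so(J)=\Phi([J])$, so $\Phi$ is exactly the restriction of $\lie\circ\iota$ to $\PDiag^\times$ and hence $\Phi(\PDiag^\times)=\lie(\fam{D}_n)$. (This is consistent, since scaling all eigenvalues of $J$ leaves $\so(J)$ unchanged, so $\Phi$ indeed descends to projective classes.) Combining this with the established homeomorphism $\overline{\fam{D}_n}\cong\overline{\lie(\fam{D}_n)}$ and the fact that this closure lives in the single Grassmannian $\Gr\left(\smat{n\\2},n^2\right)$, I obtain the first description: $\overline{\Phi(\PDiag^\times)}=\overline{\lie(\fam{D}_n)}\cong\overline{\fam{D}_n}$, with the closure taken in the Grassmannian.

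The remaining step is to show $\eta\!\left(\overline{\Psi(\PDiag^\times)}\right)=\overline{\Phi(\PDiag^\times)}$, where the left-hand closure is taken in $(\RP^1)^{\smat{n\\2}}$ and the right-hand one in the Grassmannian. Since $\eta$ is a homeomorphism onto its image, it carries the closure of $\Psi(\PDiag^\times)$ inside $(\RP^1)^{\smat{n\\2}}$ onto the closure of $\eta(\Psi(\PDiag^\times))$ computed \emph{relative to} the subspace $\eta\!\left((\RP^1)^{\smat{n\\2}}\right)$. Because $(\RP^1)^{\smat{n\\2}}$ is compact and $\Gr\left(\smat{n\\2},n^2\right)$ is Hausdorff, this image is closed in the Grassmannian (as already noted in the proof of Proposition \ref{prop:Factor_Eta}); hence by the elementary fact that for $A\subset C\subset X$ with $C$ closed in $X$ one has $\overline{A}^{\,C}=\overline{A}^{\,X}$, the relative closure coincides with the closure in the full Grassmannian. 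Thus $\eta\!\left(\overline{\Psi(\PDiag^\times)}\right)=\overline{\eta(\Psi(\PDiag^\times))}=\overline{\Phi(\PDiag^\times)}$, and applying the homeomorphism $\eta^{-1}$ gives $\overline{\Psi(\PDiag^\times)}\cong\overline{\Phi(\PDiag^\times)}\cong\overline{\fam{D}_n}$.

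There is no deep obstacle here; the argument is purely topological. The single point demanding care is the interchange of closures across $\eta$, which genuinely requires $\eta\!\left((\RP^1)^{\smat{n\\2}}\right)$ to be closed in the Grassmannian — this is precisely why the corollary is stated as a consequence of that closedness, and it is exactly the hypothesis supplied by compactness of the torus together with Hausdorffness of $\Gr\left(\smat{n\\2},n^2\right)$.
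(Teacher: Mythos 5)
Your proof is correct and is exactly the argument the paper intends: the corollary is stated as an immediate consequence of Proposition \ref{prop:Factor_Eta}, and you have simply made explicit the standard point-set step that a homeomorphism onto a closed subset (compact torus into Hausdorff Grassmannian) lets one interchange relative and absolute closures, so $\eta\bigl(\overline{\Psi(\PDiag^\times)}\bigr)=\overline{\Phi(\PDiag^\times)}$. Your identification of $\Phi(\PDiag^\times)$ with $\lie(\fam{D}_n)$ and the appeal to the previously established homeomorphism $\overline{\fam{D}_n}\cong\overline{\lie(\fam{D}_n)}$ inside the single Grassmannian also match the paper's setup, so there is nothing to add.
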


\noindent
After this collection of simplifications, we have replaced the original question of calculating $\overline{\fam{D}_n}$ in $\Cl(\GL(n;\R))$ with something significantly easier:

\begin{theorem}
Let $\PDiag^\times$ be the coordinate hyperplane complement in $\RP^{n-1}$, thought of as the projective space of nondegenerate diagonal matrices, and let $\Psi\colon\PDiag^\times\to(\RP^1)^{\smat{n\\2}}$ be the map
$\Psi([\lambda_1:\ldots:\lambda_n])=\left([\lambda_i:\lambda_j]\right)_{1\leq i<j\leq n}$
Then $\overline{\Psi(\PDiag^\times)}$ is homeomorphic to $\overline{\fam{D}_n}$.
\end{theorem}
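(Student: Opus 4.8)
The plan is to concatenate the homeomorphisms already assembled in this section, so that the argument is essentially a single diagram chase; the one point requiring care is that passing to closures commutes with the embedding $\eta$.

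First I would invoke the theorem $\overline{\fam{D}_n}\cong\lie(\overline{\fam{D}_n})=\overline{\lie(\fam{D}_n)}$ together with the subsequent corollary placing $\overline{\lie(\fam{D}_n)}$ inside the single Grassmannian $\Gr(\smat{n\\2},n^2)$. Under the identification $\fam{D}_n\cong\PDiag^\times$ this reads $\overline{\fam{D}_n}\cong\overline{\Phi(\PDiag^\times)}$, where $\Phi\colon\PDiag^\times\to\Gr(\smat{n\\2},n^2)$ is the map $J=\diag(\lambda_1,\ldots,\lambda_n)\mapsto\so(J)$ of Proposition \ref{prop:Factor_Eta}.

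Next I would use the factorization $\Phi=\eta\circ\Psi$ from that proposition to transport the computation from the Grassmannian down to the torus $(\RP^1)^{\smat{n\\2}}$. Here $\eta$ is a continuous injection from the compact space $(\RP^1)^{\smat{n\\2}}$ into the Hausdorff space $\Gr(\smat{n\\2},n^2)$, hence a homeomorphism onto its image, and that image is \emph{closed}. A closed topological embedding carries the closure of any subset onto the closure of its image, so $\eta(\overline{\Psi(\PDiag^\times)})=\overline{\eta(\Psi(\PDiag^\times))}=\overline{\Phi(\PDiag^\times)}$; restricting $\eta$ then gives a homeomorphism $\overline{\Psi(\PDiag^\times)}\cong\overline{\Phi(\PDiag^\times)}$. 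Stringing the pieces together yields $\overline{\Psi(\PDiag^\times)}\cong\overline{\Phi(\PDiag^\times)}=\overline{\lie(\fam{D}_n)}\cong\overline{\fam{D}_n}$, which is the assertion.

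The hard part is not any single computation but the verification that the two ambient closures agree, i.e.\ that the closure of $\Psi(\PDiag^\times)$ inside the torus is carried exactly onto the closure of $\Phi(\PDiag^\times)$ inside the Grassmannian. This is precisely where the closedness of $\eta\big((\RP^1)^{\smat{n\\2}}\big)$ is indispensable: were the image of $\eta$ merely locally closed, new limit points of $\Phi(\PDiag^\times)$ could appear in the Grassmannian outside $\eta\big((\RP^1)^{\smat{n\\2}}\big)$ and the identification of the two closures would fail. Everything else is a direct appeal to results already in hand — the continuity of $\lie$ on $\overline{\fam{D}_n}$, the constancy of dimension of the subalgebras $\so(J)$, and the explicit factorization of $\Phi$.
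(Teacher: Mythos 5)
Your argument is correct and is exactly the route the paper takes: it chains $\overline{\fam{D}_n}\cong\overline{\lie(\fam{D}_n)}$ (computed in $\Gr(\smat{n\\2},n^2)$, using that the $\smat{n\\2}$-dimensional Lie subalgebras form a closed subset) with the factorization $\Phi=\eta\circ\Psi$ of Proposition \ref{prop:Factor_Eta}, where $\eta$ is a homeomorphism onto a closed image by compactness of $(\RP^1)^{\smat{n\\2}}$. Your emphasis on the closedness of $\eta\bigl((\RP^1)^{\smat{n\\2}}\bigr)$ as the point guaranteeing the two ambient closures agree is precisely the observation the paper records in the corollary preceding the theorem.
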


\section{Computing the Closure $\overline{\mathcal{D}}$}
\label{sec:Computing_D_Closure}

We've succeeded in describing the Chabauty compactification $\overline{\fam{D}_n}$ not as the closure of $\fam{D}_n$ in the poorly behaved space $\Cl(\GL(n;\R))$ but instead as the closure of a particular embedding in the $\smat{n\\2}$ torus!
This already provides a wealth of information, as calculating limit points of $\fam{D}_n$ explicitly along any path is now a trivial exercise.

\begin{example}
\label{ex:Computing_Limits}
Consider the path $[xt:yt:t^2:1]\in\PDiag^\times(4;\R)$, which leaves every compact set of $\PDiag^\times$ as $t\to\infty$.
Its image under $\Psi$ is the path
$$\Psi([xt:yt:t^2:1])=\left(
[xt:yt],[xt:t^2],[xt:1],[yt:t^2],[yt:1],[t^2:1]
\right)
$$
\begin{comment}$$=\left(
[x:y],[1:t/x],[xt:1],[1:t/y],[yt:1],[t^2:1]
\right)
$$\end{comment}
Which as $t\to\infty$ has limit,
$$\lim_{t\to\infty}\Psi([xt:yt:t^2:1])=\left(
[x:y],[1:0],[0:1],[1:0],[0:1],[0:1]
\right).$$
And the information encoded by this limit point is easily converted to the actual limiting Lie algebra in $\gl(4;\R)$ via $\eta\colon (\RP^1)^6\to\Gr(6;16)$.
$$\lim\Phi([xt:yt:t^2:1])=$$
$$\span\left\{
\smat{0&y&0&0\\-x&0&0&0\\0&0&0&0\\0&0&0&0},
\smat{0&0&0&0\\0&0&0&0\\1&0&0&0\\0&0&0&0},
\smat{0&0&0&1\\0&0&0&0\\0&0&0&0\\0&0&0&0},
\smat{0&0&0&0\\0&0&0&0\\0&1&0&0\\0&0&0&0}
\smat{0&0&0&0\\0&0&0&1\\0&0&0&0\\0&0&0&0},
\smat{0&0&0&0\\0&0&0&0\\0&0&0&1\\0&0&0&0}
\right\}
$$
\end{example}

Computing this example gives some intuition for the information captured by limit points: the limit preserves information about the \emph{pairwise relative rate of divergence} of the eigenvalues of a path of matrices in $\PDiag^\times$.
In the computation above, this information encodes that $\lambda_1\sim\lambda_2$ and their ratio is $[x:y]$, and that $\lambda_1>\lambda_3$,$\lambda_1<\lambda_4$, $\lambda_2>\lambda_3$, $\lambda_2<\lambda_4$, $\lambda_3<\lambda_4$.
This information can be summarized by $\lambda_4>\lambda_1\sim\lambda_2>\lambda_3$ together with the extra information that $[\lambda_1:\lambda_2]=[x:y]$.

\begin{proposition}
Limit points of the image $\Psi(\PDiag^\times)$ correspond to an ordered partition of $\{\lambda_1,\ldots,\lambda_n\}$ together with the additional data of a point $x\in\RP^{k-1}$ with all nonzero entries, for each collection in the partition of size $k$.
\label{prop:Limit_Partitions}
\end{proposition}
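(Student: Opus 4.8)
The plan is to analyze the closure $\overline{\Psi(\PDiag^\times)}$ inside the compact space $(\RP^1)^{\smat{n\\2}}$ by examining what limit data a generic divergent path can produce. By the corollary at the end of the previous subsection, computing $\overline{\fam{D}_n}$ reduces to computing this closure, so I would work entirely with the map $\Psi([\lambda_1:\cdots:\lambda_n]) = ([\lambda_i:\lambda_j])_{i<j}$. A point of the closure is a limit $\lim_{t}\Psi([\lambda_1(t):\cdots:\lambda_n(t)])$ for some path leaving every compact set; the content of Example \ref{ex:Computing_Limits} is that each coordinate $[\lambda_i(t):\lambda_j(t)]$ limits either to a finite nonzero ratio (when $\lambda_i,\lambda_j$ diverge at comparable rates) or to $[1:0]$ or $[0:1]$ (when one dominates the other). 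The goal is to show precisely which tuples of such limiting ratios are achievable, and that the achievable ones are exactly those coming from an ordered partition plus projective ratio data.

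First I would set up the correspondence in the forward direction: given a path, define an equivalence relation on the indices $\{1,\ldots,n\}$ by declaring $i\sim j$ when $[\lambda_i(t):\lambda_j(t)]$ converges to a finite nonzero ratio, and a preorder by declaring $i$ to dominate $j$ when the ratio limits to $[1:0]$. I would verify that $\sim$ is genuinely an equivalence relation and that the domination relation descends to a total order on the equivalence classes — this uses a transitivity-of-rates argument, which is the technical heart and the main obstacle: one must rule out inconsistent limiting data, e.g. showing that if $i$ dominates $j$ and $j$ dominates $k$ then $i$ dominates $k$, and that comparability is forced so the classes are totally (not just partially) ordered. Passing to logarithms $\log|\lambda_i(t)|$ and comparing growth rates makes the transitivity and totality transparent, though care is needed because the closure allows limits of limits, so I would phrase the argument purely in terms of the limiting coordinates in $(\RP^1)^{\smat{n\\2}}$ rather than the paths themselves, checking that the constraints $[\lambda_i:\lambda_j], [\lambda_j:\lambda_k], [\lambda_i:\lambda_k]$ satisfy a compatibility (cocycle-like) condition that forces this combinatorial structure.

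Next I would establish the converse, that every ordered partition with the stated ratio data is realized. Given an ordered partition into blocks $B_1 \succ B_2 \succ \cdots \succ B_m$ and a point $x^{(r)}\in\RP^{|B_r|-1}$ with all nonzero entries for each block, I would construct an explicit path: assign within block $B_r$ the ratios dictated by $x^{(r)}$ and give distinct blocks separating rates, e.g. $\lambda_i(t) = x^{(r)}_i\, t^{N_r}$ with $N_1 > N_2 > \cdots > N_m$ chosen so the ordering is respected, mirroring the $[xt:yt:t^2:1]$ construction of Example \ref{ex:Computing_Limits}. A direct computation then shows $\Psi$ of this path converges to the prescribed limit point, so the map from (ordered partition, ratio data) to limit points is surjective. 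Finally I would check injectivity of this correspondence — distinct combinatorial data yield distinct coordinate tuples in $(\RP^1)^{\smat{n\\2}}$ — which is immediate by reading off the partition and ratios from the finite-versus-$[1:0]$-versus-$[0:1]$ pattern of the coordinates. Assembling these, the limit points of $\Psi(\PDiag^\times)$ biject with the claimed data, establishing the proposition.
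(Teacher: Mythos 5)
Your proposal is correct, and it reorganizes the argument in a way that differs from the paper's proof while proving slightly more. The paper constructs the partition from a convergent path by an inductive renormalization: choose a norm-one representative, peel off the set $I_1$ of coordinates decaying to zero, rescale the discarded coordinates, and repeat, producing $\{J_\ell\}$ and the points $L_\ell$ directly from the path (implicitly passing to subsequences so each rescaled projection converges); it then verifies in two further steps that this data determines, and is determined by, the limit point $p\in(\RP^1)^{\smat{n\\2}}$. You instead extract the combinatorics directly from the limiting coordinates $p_{ij}$, via the equivalence relation ($p_{ij}$ finite nonzero) and domination order ($p_{ij}=[1:0]$ or $[0:1]$); your observation that totality is forced is right, since every point of $\RP^1$ is of exactly one of the three types, and the transitivity and mixed compatibility checks you flag do go through, because the exact identity $(\lambda_i/\lambda_j)(\lambda_j/\lambda_k)=\lambda_i/\lambda_k$ on the image passes to any limit in which the relevant factors stay finite and nonzero --- this is also why the within-block ratios assemble into a well-defined point of $\RP^{k-1}$. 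Your explicit realization step, $\lambda_i(t)=x^{(r)}_i t^{N_r}$ with separated exponents, is a genuine addition: the paper's proof establishes only that limit points inject into the set of data tuples (with the inverse computed on the image), leaving surjectivity implicit apart from the template of Example \ref{ex:Computing_Limits}, whereas your construction makes the stated correspondence an honest bijection. What each approach buys: yours avoids the subsequence and renormalization bookkeeping entirely and works purely with the coordinates of the limit point; the paper's inductive peeling is not wasted effort, though, as it sets up exactly the recursive structure reused later when the fiber of $\overline{\fam{D}_n}\to\RP^{n-1}$ over a $k$-cell is identified with $\overline{\fam{D}_{n-k}}$, and when proving injectivity of the projection in Lemma \ref{lem:Psi_Injective}.
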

\begin{proof}
We construct this partition and the accompanying projective points inductively.
Let $p\in\overline{\mathsf{im}\Psi}$, that is $p=\lim \Psi(\alpha(t))$ for $\alpha(t)\in\RP^{n-1}\smallsetminus \fam{A}$.
In the trivial case, $\lim\alpha(t)$ also lies in the hyperplane complement, in which case there is a single set in the partition $J_0=\{1,\ldots,n\}$ and $\lim[\alpha(t)]$ is the corresponding projective point.
Otherwise, $\lim[\alpha(t)]\in\fam{A}$ and some coordinates of $\alpha$ limit to $0$ as $t\to\infty$.
Chose a representative $\alpha(t)$ of $[\alpha(t)]$ chosen so all coordinates remain bounded but do not all converge to $0$ (say, a norm 1 representative).
Then let $I_1\subset\{1,\ldots, n\}$ be the set of indices such that $\alpha_i(t)\to 0$, and $J_1=J_0\smallsetminus I_1$.
Let $\alpha_{J_1}$ denote the projection of $\alpha$ onto the coordiantes in $J_1$; and note $\lim [\alpha_{J_1}]=\ell_1$ is a point of $\RP^{|J_1|-1}$ with nonzero coordinates by construction.
The remaining coordinates $\alpha_{I_1}$ all converge to zero, but we may begin the process again with the projective point $[\alpha_{I_1}]$: after suitably rescaling either all coordinates converge to a nonzero value in the limit; or there is a further division of rates.
In the first case, $J_2=I_1$ and our partition is $\{1,\ldots, n\}=J_1\cup J_2$ with corresponding projective points $\lim[\alpha_{J_1}]$ and $\lim[\alpha_{J_2}]$.
In the second case, we divide $I_2=I_3\cup J_2$ into the indices converging to zero / not zero respectively, and repeat.
This terminates in a partition $\{1,\ldots n\}=J_1\cup\cdots \cup J_k$ and a collection of projective points $L_i=\lim [\alpha_{J_i}]$.

We now show that this data is equivalent to, that is, \emph{uniquely determines} and \emph{is uniquely determined by} the limiting point $p=\lim \Phi(\alpha(t))$.
For each $1\leq i<j\leq n$ the limit $p$ has a coordinate $p_{ij}=\lim [\alpha_i:\alpha_j]$ by definition, encoding the pairwise limiting behavior.
These values are determined by the partition \& projective points as follows:
if $i\in J_\ell$ and $j\in J_m$, then $p_{ij}=\lim [\alpha_i:\alpha_j]$ is $[0:1]$ if $j<i$ and $[1:0]$ if $i<j$ by the definition of the partition $\{J_\ell\}$.
This determines all the coordinates of the limit point $p$ except for those $p_{ij}$ with $i,j$ in the same partition.
But, if $i,j\in J_m$ then $p_{ij}$ is directly determined by the associated projective point $L_m\in\RP^{|J_m|-1}$, by simply selecting the elements corresponding to the $i^{th}$ and $j^{th}$ coordinates.

Conversely, let $p=(p_{ij})_{1\leq i<j\leq n}$ be the limit of $\Psi(\alpha(t))$, and we see that we may reconstruct the data $(\{J_i\},\{L_i\})$ from $p$ directly, without reference to the path $\alpha$.
The set $J_1$ contains the coordinates of $\alpha(t)$ not limiting to $0$, which is recovered from $p$ by noting $i\in J_1$ if and only if $[p_i:p_j]=[1:x]$ for all $j\in \{1,\ldots, n\}$.
Continuing inductively, $J_2=\{i\mid p_{ij}=[1:x]\;\mid\; j\not\in J_1\}$, and so on.
The points $L_k\in\RP^{|J_k|-1}$ are produced easily from the set $\{p_{ij}\mid i,j\in J_k\}$ as follows:
choose some index $\ell\in J_k$, and choose the representatives $p_{i\ell}=[x_i:1]$ for each $i\neq \ell$ in $J_k$.
Then $L_k$ has as coordinates $x_i$ for each $i^{th}$ coordinate, and $1$ for the $\ell^th$.
This is well defined and independent of the choice of $\ell$, and recovers the limit point of the original construction.
\end{proof}

\noindent
We will have much use for this description in what follows.
Below we show that $\overline{\fam{D}_n}$ can be described as a compactification of the hyperplane complement $\RP^{n-1}\smallsetminus\fam{H}$ achieved via a sequence of blowups.
To begin this analysis, we aim to re-express the closure of the image under $\Psi$ as the closure of the \emph{graph of $\Psi$}, as this is a common framework in algebraic geometry.

\begin{lemma}
Let $\Gamma_\Psi\subset\RP^{n-1}\times(\RP^1)^{\smat{n\\2}}$ be the graph of $\Psi$, and $\overline{\Gamma_\Psi}$ its topological closure as a subspace.
Then the projection $\pi\colon\RP^{n-1}\times(\RP^1)^{\smat{n\\2}}(\RP^1)^{\smat{n\\2}}$ restricts to a homeomorphism
 $\overline{\Gamma_\Psi}\to \overline{\Psi(\PDiag^\times)}$ of the graph closure onto the image closure.
 \label{lem:Psi_Injective}
\end{lemma}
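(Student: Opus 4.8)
The plan is to show that $\pi$ restricted to $\overline{\Gamma_\Psi}$ is a continuous bijection from a compact space onto a Hausdorff space, hence a homeomorphism. The target $\overline{\Psi(\PDiag^\times)}$ sits inside the compact Hausdorff space $(\RP^1)^{\smat{n\\2}}$, so both the continuity and the standard compact-to-Hausdorff argument (already used repeatedly in the excerpt, e.g.\ in \Cref{prop:Factor_Eta}) are available once I establish the two key facts: that $\pi$ maps $\overline{\Gamma_\Psi}$ \emph{into} $\overline{\Psi(\PDiag^\times)}$, and that this map is injective.

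First I would record the easy structural facts. The projection $\pi$ is continuous, and $\overline{\Gamma_\Psi}$ is a closed subset of the compact space $\RP^{n-1}\times(\RP^1)^{\smat{n\\2}}$, hence compact. On the open dense subset $\Gamma_\Psi$ itself, $\pi$ restricts to the correspondence $(\,[\lambda], \Psi([\lambda])\,)\mapsto \Psi([\lambda])$, which lands in $\Psi(\PDiag^\times)$; since $\pi$ is continuous and $\overline{\Psi(\PDiag^\times)}$ is closed, $\pi(\overline{\Gamma_\Psi})\subset\overline{\Psi(\PDiag^\times)}$. Surjectivity is immediate because $\Psi(\PDiag^\times)=\pi(\Gamma_\Psi)$ is dense in the image of the compact set $\pi(\overline{\Gamma_\Psi})$, which is therefore all of $\overline{\Psi(\PDiag^\times)}$.

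The substantive step, and the one I expect to be the main obstacle, is \textbf{injectivity} of $\pi$ on $\overline{\Gamma_\Psi}$. A point of $\overline{\Gamma_\Psi}$ is a pair $(p_0, q)$ with $p_0\in\RP^{n-1}$ and $q\in(\RP^1)^{\smat{n\\2}}$ arising as a limit $(\,[\alpha(t)], \Psi(\alpha(t))\,)$; two such pairs can collide under $\pi$ only if they share the same second coordinate $q$ but have distinct first coordinates $p_0\neq p_0'$. I would rule this out by showing the $\RP^{n-1}$-coordinate is recoverable from $q$. The description of limit points in \Cref{prop:Limit_Partitions} is exactly the tool: the limiting data $q=(p_{ij})$ determines the ordered partition $\{J_1,\dots,J_k\}$ of $\{1,\dots,n\}$ and the projective points $L_m\in\RP^{|J_m|-1}$, and in particular recovers $J_1$ (the coordinates not tending to zero) and $L_1=\lim[\alpha_{J_1}]$. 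The point $p_0=\lim[\alpha(t)]\in\RP^{n-1}$ is obtained from $L_1$ by placing its entries in the slots indexed by $J_1$ and zeros elsewhere, so $p_0$ is a function of $q$ alone. Hence $q$ determines $p_0$, giving injectivity.

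Finally I would assemble the pieces: $\pi|_{\overline{\Gamma_\Psi}}$ is a continuous bijection from the compact space $\overline{\Gamma_\Psi}$ onto the Hausdorff space $\overline{\Psi(\PDiag^\times)}$, and a continuous bijection out of a compact space into a Hausdorff space is a homeomorphism. The only delicate bookkeeping is making the recovery $q\mapsto p_0$ rigorous for an \emph{arbitrary} boundary point rather than one coming from a preferred norm-$1$ representative; I would handle this by invoking the uniqueness clause already proved in \Cref{prop:Limit_Partitions} (that the partition-plus-points data is uniquely determined by $q$ independently of the approximating path $\alpha$), so that the first coordinate of any limit pair in $\overline{\Gamma_\Psi}$ is forced.
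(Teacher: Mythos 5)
Your proposal is correct and follows essentially the same route as the paper's proof: both reduce the problem to injectivity of $\pi$ on $\overline{\Gamma_\Psi}$ (compactness plus the Hausdorff target then gives the homeomorphism), and both establish injectivity by invoking Proposition \ref{prop:Limit_Partitions} to recover the $\RP^{n-1}$-coordinate from the limit point $q$ via the first block $J_1$ and its projective point $L_1$, with zeros in the remaining slots. Your additional bookkeeping on surjectivity and on $\pi(\overline{\Gamma_\Psi})\subset\overline{\Psi(\PDiag^\times)}$ is routine and correctly handled; the paper simply leaves it implicit.
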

\begin{proof}
It suffices to show the restriction of $\pi$ is injective, as this implies it is a continuous bijection onto its image, and thus a homeomorphism by compactness of the graph closure $\overline{\Gamma_\Psi}$.
Let $(x,p)$ and $(y,p)$ be points of $\overline{\Gamma_\Psi}$, and choose representative paths $\alpha, \beta$ such that $\alpha(t)\to x$, $\beta(t)\to y$ and $\lim \Psi(\alpha(t))=\lim\Psi(\beta(t))=p$ as $t\to\infty$.
By Proposition \ref{prop:Limit_Partitions}, the point $p$ encodes a partition $J_1\cup\cdots\cup J_k=\{1,\ldots,n\}$ and corresponding values $L_m\in\RP^{|J_m|-1}$, which describe the limiting behavior of any path $\gamma$ with $\lim\Psi(\gamma(t))=p$.
The actual limiting value of the path in $\RP^{n-1}$ is completely determined by the first partition $J_1$ and associated value $L_1$: in the limit of the $j^{th}$ coordinate is $0$ if $j\not\in J_1$, and the full limit point is simply $L_1$ with these $0$'s sprinkled in.
Thus, as $\alpha(t), \beta(t)$ both have limit $p$, they have the same $J_1, L_1$ and thus $\lim\alpha(t)=x=y=\lim(\beta(t))$ so $(x,p)=(y,p)$ as desired.
\end{proof}

Thus, we may think of $\overline{\fam{D}}=\overline{\Gamma_\Psi}$ as coming equipped with a projection down onto $\PDiag\cong \RP^{n-1}$.
We analyze the closure in terms of this projection below.

\subsection{The Structure of $\overline{\fam{D}_n}$}

To understand $\overline{\fam{D}_n}$, we decompose it into smaller pieces, determined by the structure of the coordinate hyperplane arrangement $\fam{A}\subset\RP^{n-1}$.

\begin{definition}
The coordinate hyperplane arrangement $\fam{A}$ in $\RP^{n-1}$ consists of the projectivized coorinate hyperplanes themselves $A_i=\{[\vec{\lambda}]\mid \lambda_i=0\}$ together with all intersections.
We denote these via multi-index notation: for $I\subset \{1,\ldots n\}$ let $A_I=\cap_{i\in I} A_i$.
\end{definition}

\begin{observation}
This provides $\RP^{n-1}$ with the cell structure of the projectivized cross polytope of dimension $n-1$.
We denote the set of all open cells (of all dimensions) by $\fam{S}_n$ and $\fam{S}_n^k$ the subset containing cells of dimension $k$, and note that $\fam{S}_n^k$ consists of  $2^k\smat{n\\k+1}$ regular open $k$-simplicies.
\end{observation}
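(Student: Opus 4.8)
The plan is to lift the problem to the double cover $\S^{n-1}\to\RP^{n-1}$, where the arrangement $\fam{A}$ becomes transparent in terms of sign vectors, and then to descend the count through a free antipodal action. First I would work in $\R^n$: the coordinate hyperplanes $\{\lambda_i=0\}$ partition $\R^n$ into $2^n$ open orthants, indexed by sign vectors $\ep\in\{\pm 1\}^n$. Radially retracting onto $\S^{n-1}$, I would identify the decomposition of the sphere cut out by the great subspheres $\S^{n-1}\cap A_i$ with the radial image of the boundary of the standard cross polytope $\diamond^{n}=\mathsf{conv}\{\pm e_1,\ldots,\pm e_n\}$. Concretely, every face of $\diamond^{n}$ has the form $\mathsf{conv}\{\ep_i e_i\mid i\in S\}$ for a support $S\subseteq\{1,\ldots,n\}$ and a choice of signs $(\ep_i)_{i\in S}$, and its radial image is precisely the open spherical simplex $\sigma_{S,\ep}=\{x\in\S^{n-1}\mid \mathsf{supp}(x)=S,\ \mathsf{sgn}(x_i)=\ep_i\ (i\in S)\}$. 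These $\sigma_{S,\ep}$ are exactly the open cells of the arrangement on $\S^{n-1}$, which establishes the identification with the cross-polytope cell structure.

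Next I would read off the count and the regularity upstairs. Since $\dim\sigma_{S,\ep}=|S|-1$, the $k$-cells of $\S^{n-1}$ are indexed by pairs $(S,\ep)$ with $|S|=k+1$, numbering $\smat{n\\k+1}2^{k+1}$. Moreover $\sigma_{S,\ep}$ is a \emph{regular} spherical $k$-simplex: its vertices $\{\ep_i e_i\}_{i\in S}$ form part of an orthonormal basis, so any two are at spherical distance $\pi/2$, and permutations of $S$ act as isometries transitive on the vertices.

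Finally I would push down to $\RP^{n-1}=\S^{n-1}/\{\pm 1\}$. The antipodal map carries $\sigma_{S,\ep}$ to $\sigma_{S,-\ep}$, and since no sign vector equals its own negation it acts \emph{freely} on the set of cells; hence it pairs the $k$-cells two-to-one and restricts to an isometry (the covering being a local isometry) from each cell onto its image. Therefore the quotient polytopal structure on $\RP^{n-1}$ has $\tfrac12\smat{n\\k+1}2^{k+1}=\smat{n\\k+1}2^{k}$ regular open $k$-simplices, which is exactly the asserted count for $\fam{S}_n^k$. The one step demanding genuine care — and the main, if modest, obstacle — is this freeness: one must verify that the antipodal involution never folds a cell onto itself, so that the images in $\RP^{n-1}$ are honest simplices and the count truly halves. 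As consistency checks, $\sum_k(-1)^k\smat{n\\k+1}2^k$ evaluates to $\chi(\RP^{n-1})$, and at $n=3$ this recovers the $3$ vertices, $6$ edges, and $4$ triangles depicted for $\fam{D}_3$.
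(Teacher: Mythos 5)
Your argument is correct and is essentially the one the paper leaves implicit in the phrase \say{projectivized cross polytope}: the statement appears only as an unproved observation, and its intended justification is exactly your identification of the arrangement cells on $\S^{n-1}$ with the radially projected open faces $\sigma_{S,\ep}$ of the cross polytope, followed by the antipodal quotient. Your count $\tfrac{1}{2}\cdot 2^{k+1}\smat{n\\k+1}=2^k\smat{n\\k+1}$, the freeness check guaranteeing each open cell descends injectively, and the $n=3$ sanity check all match the paper's assertions, so nothing is missing.
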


\begin{example}
For $n=2$, $\fam{S}_1$ is $\RP^1$ divided into two open intervals by the points $[0:1]$ and $[1,0]$.
\end{example}
\begin{example}
For $n=3$, $\fam{S}_3=\fam{S}_3^0\cup\fam{S}_3^1\cup\fam{S}_3^2$ consists of four triangles, six edges and 3 vertices.
\end{example}

\begin{figure}
\centering\includegraphics[width=\textwidth]{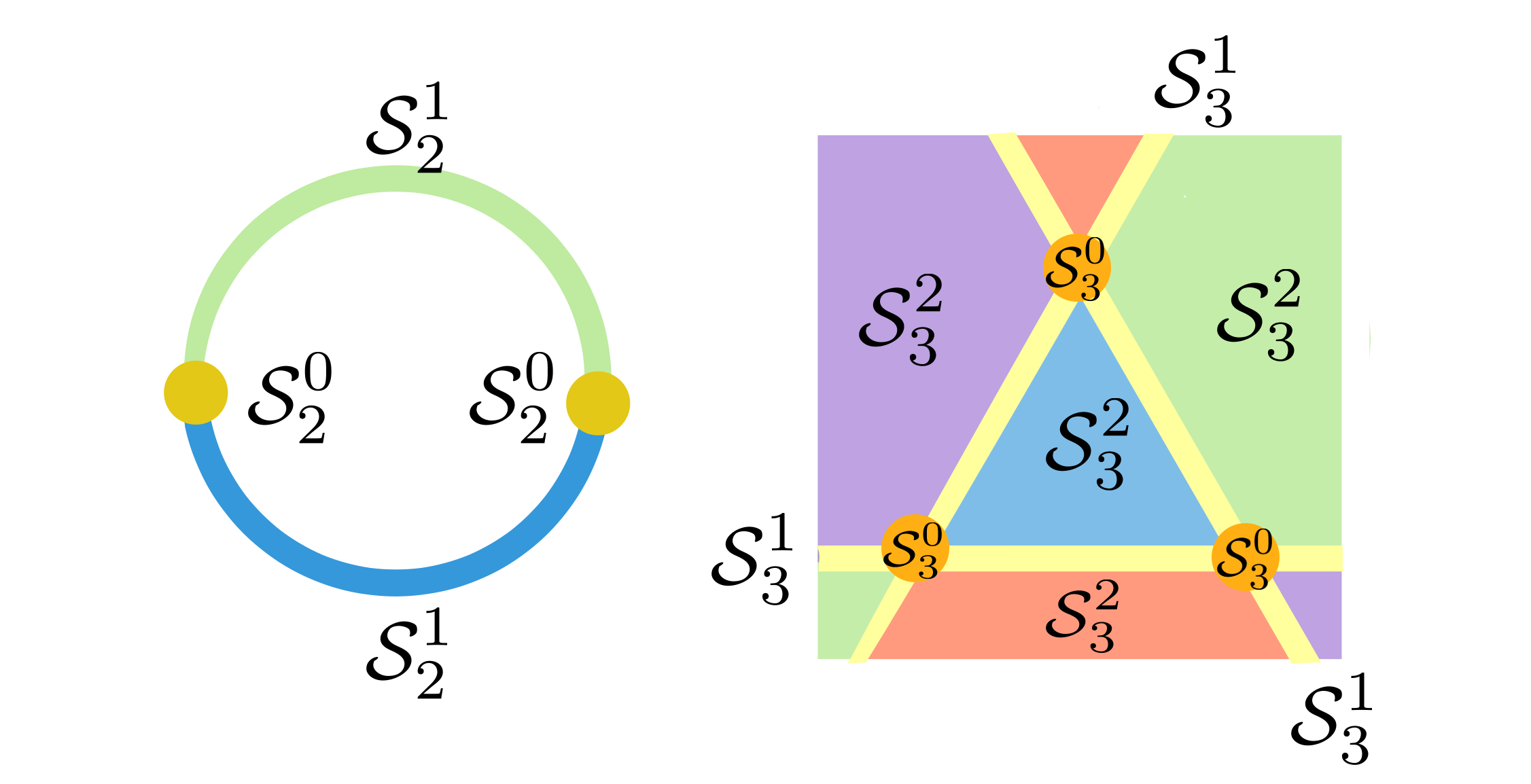}
\caption{Cellulation of $\RP^1$ and $\RP^2$.}	
\end{figure}

\begin{example}
When $n=4$, the arrangement $\fam{A}$ contains the four coordinate hyperplanes $x=0,y=0,z=0,w=0$ as well as their six intersections of dimension two, and their additional four intersections of dimension $1$, the coordinate axes.
Broken into cells, there are four points in $\fam{S}^0_4$, twelve edges in $\fam{S}^1_4$, sixteen triangles in $\fam{S}^2_4$, and eight tetrahedral cells in $\fam{S}^3_4$.
\end{example}

\begin{figure}
\label{fig:16Cell}
\centering
\includegraphics[width=0.5\textwidth]{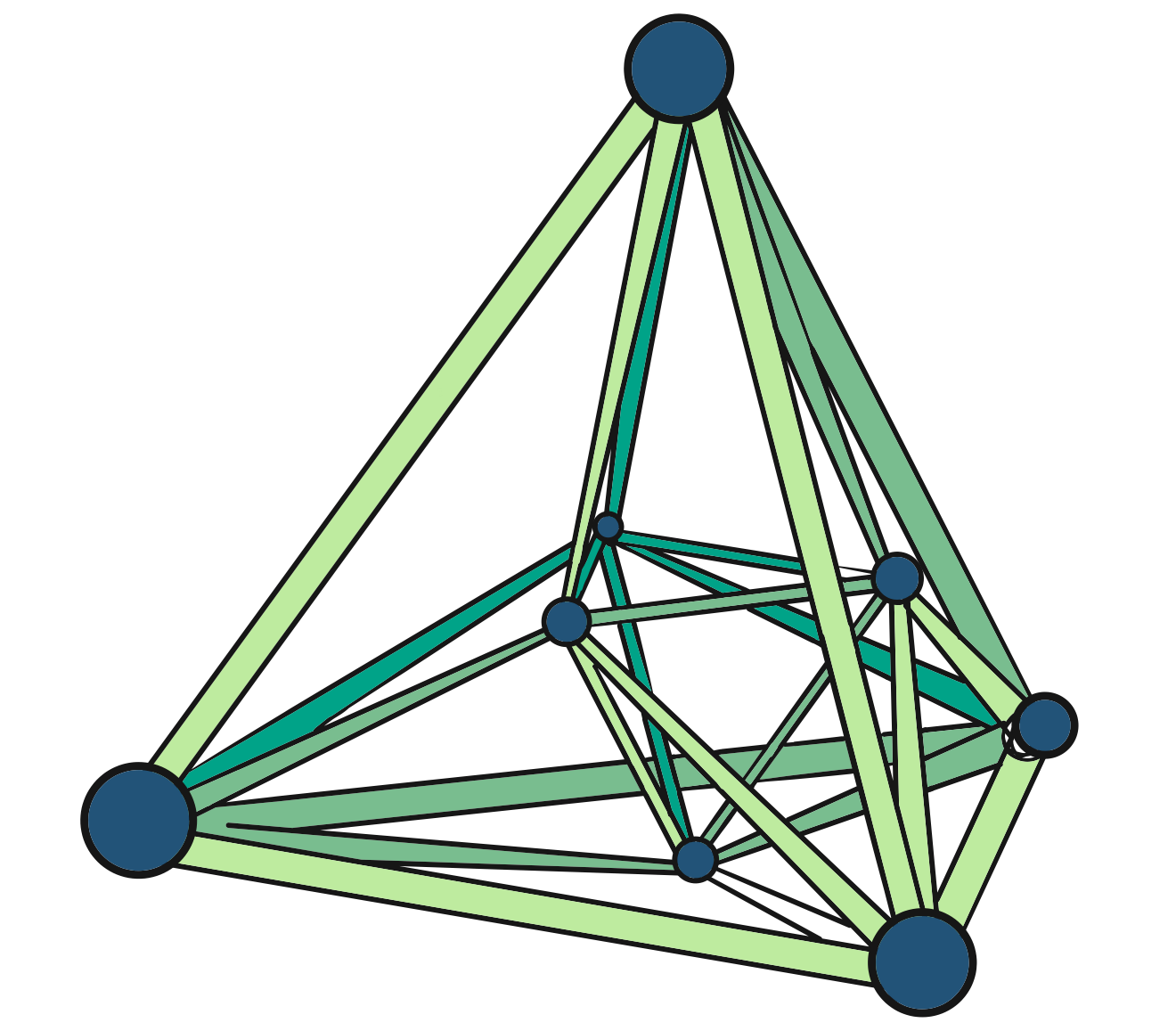}
\caption{The Cellulation of $\RP^3$, in the double cover; isomorphic to the 16-cell.}	
\end{figure}

\noindent
This cellulation of $\PDiag=\RP^{n-1}$ is useful for understanding the global structure of $\overline{\fam{D}_n}$ as any two points lying in the same face have isomorphic fibers under the projection map $\pi\colon\overline{\fam{D}_n}=\overline{\Gamma_\Psi}\to\RP^{n-1}$.
The first case, classifying fibers over the points in the top dimensional cells follows immediately from the fact that $\Psi$ is a well defined function on $\RP^{n-1}\smallsetminus \fam{A}$.

\begin{observation}
The fibers of $\overline{\fam{D}}$ over a point 	in $\fam{S}_n^{n-1}=\RP^{n-1}\smallsetminus\fam{A}$ are singletons.
\end{observation}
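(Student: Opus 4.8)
The plan is to read this off directly from the graph-closure description of $\overline{\fam{D}_n}$ established in Lemma~\ref{lem:Psi_Injective}, using only that $\Psi$ is a genuine (single-valued, continuous) function on the open set $\PDiag^\times=\RP^{n-1}\smallsetminus\fam{A}$. Recall that $\fam{S}_n^{n-1}$ is precisely this open complement, so a point $p$ of a top cell is an honest point of the domain of $\Psi$, and the fiber in question is $\pi\inv(p)\subset\overline{\Gamma_\Psi}$, where $\pi$ denotes the projection of $\overline{\fam{D}_n}=\overline{\Gamma_\Psi}$ onto the $\RP^{n-1}$ factor.

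First I would note that the fiber is nonempty and contains $(p,\Psi(p))$: since $p\in\PDiag^\times$, the pair $(p,\Psi(p))$ already lies in $\Gamma_\Psi\subset\overline{\Gamma_\Psi}$. The content is then the reverse inclusion. Suppose $(p,q)\in\overline{\Gamma_\Psi}$. Both factors $\RP^{n-1}$ and $(\RP^1)^{\smat{n\\2}}$ are compact metrizable, so the ambient product is metrizable and we may choose a sequence $(\alpha_k,\Psi(\alpha_k))\in\Gamma_\Psi$ with $\alpha_k\to p$ in $\RP^{n-1}$ and $\Psi(\alpha_k)\to q$. Every $\alpha_k$ lies in $\PDiag^\times$, and the limit $p$ also lies in $\PDiag^\times$; since $\Psi$ is continuous there, $\Psi(\alpha_k)\to\Psi(p)$. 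As the target is Hausdorff, limits are unique and $q=\Psi(p)$. Hence $\pi\inv(p)=\{(p,\Psi(p))\}$ is a singleton.

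The reason this is the easy case, and where I expect no genuine obstacle, is that no rescaling or partition data (as in Proposition~\ref{prop:Limit_Partitions}) is needed: because $p$ avoids the coordinate hyperplane arrangement $\fam{A}$, none of its homogeneous coordinates vanish, so each ratio $[\lambda_i:\lambda_j]$ is already pinned down by $p$ alone, leaving no room for the degenerations that enlarge the fibers over $\fam{A}$. The only point demanding care is that continuity of $\Psi$ is being invoked at a point interior to its domain, which is automatic since $\PDiag^\times$ is open. The substantive work lies in the fibers over the lower-dimensional cells of $\fam{A}$, where the partition-and-projective-point degenerations actually occur; this observation is precisely the base case for that subsequent inductive fiber computation.
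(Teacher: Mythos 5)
Your proof is correct and is essentially the paper's argument: the paper dismisses this case in one line (``follows immediately from the fact that $\Psi$ is a well defined function on $\RP^{n-1}\smallsetminus\fam{A}$''), and you have simply spelled out the sequential graph-closure details, correctly noting along the way that it is the \emph{continuity} of $\Psi$ at interior points of its open domain (not mere well-definedness) that pins the fiber down to $\{(p,\Psi(p))\}$.
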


\noindent
The interesting points (unsurprisingly) are the points of the closure projecting to points in the hyperplanes $\fam{A}$.
These correspond to actual degenerations of orthogonal groups, as $[J]$ approaches a degenerate quadratic form lying in the union of the hyperplanes.
Before working more generally, we give the two smallest-dimensional examples for motivation.

\begin{example}
When $n=2$, the domain is $\RP^1\smallsetminus \fam{A}$ for $\fam{A}=\{[0:1],[1:0]\}$ and the map $\Psi\colon \RP^1\smallsetminus\fam{A}\to\RP^1$ is $[x:y]\mapsto[x:y]$.
This formula extends continuously to the two points missing, and so the graph closure  $\overline{\Gamma_\Psi}=\overline{\fam{D}_2}$ is all of $\RP^1$.
\end{example}

\begin{example}
When $n=3$, the domain is $\RP^2\smallsetminus\fam{A}$ for $\fam{A}=\{[x:y:z]\mid x=0 \vee y=0\vee z=0\}$.
The map $\phi$ embeds this in $(\RP^1)^3$ via
$$[x:y:z]\mapsto ([x:y],[y:z],[x:z])$$
When only one of $x,y,z$ is zero, $\phi$ is still well-defined and so extends continuously to the complement of the three points $\{[0:0:1],[0:1:0],[1:0;0]\}$ on $\RP^2$.
At these three points $\psi$ is undefined, but taking for example $p=[0:0:1]$ there are curves $p_t=[x_t:y_t:1]$ limiting to $p$ such that $\phi(p_t)\to ([u:v],[0:1],[0:1])$ for all $[u:v]\in\RP^1$ (take for example $x_t=ut$, $y_t=v_t$).
Thus the closure of the graph near $[0:0:1]$ is given by the blow up at this point, and $\overline{\fam{D}_3}$ is the blowup of $\RP^2$ at three points.
\end{example}

If $[p]\in\fam{S}^{n-1}_n$ is a point of a top-dimensional cell, then as previously noted the fiber above $[p]$ is a singleton as this is in the domain of the function $\psi$.
If $[p]\in\fam{S}^{n-2}_n$ then one coordinate of $[p]$ is zero.
The definition of $\Psi$ here extends without issue to $[p]$ as even with a single coordinate zero; each pair of coordinates represents a well-defined point of $\RP^1$, as visible in the $n=3$ example above.

The first interesting cases arise when more than one coordinate of $[p]$ is zero. 
For $[p]\in\fam{S}^{n-3}_n$, two coordinates are zero, say $p=(0,0,x_3,\ldots, x_n)$.
Then each $\psi_{ij}$ is well defined as one of $x_i,x_j\neq 0$ with the exception of $\psi_{12}$.
Thus, to understand the closure, it suffices to understand the limiting values of $\psi_{12}$ as we approach $p$.
For any $(x_1,x_2)\in\R^2\smallsetminus 0$ the path $(tx_1,tx_2)$ limits to $0$ as $t\to 0$, and so the path $[p_t]=[tx_1:tx_2:x_3:\cdots:x_n]$ limits to $[p]$.
But $\psi_{12}([p_t])=[x_1:x_2]$ is constant so $([p],[x_1:x_2])$ is in the graph closure of $\psi_{12}$.
Thus, the fiber above $[p]$ in $\overline{\fam{D}}$ is a copy of $\RP^1$.
This continues more generally, and the fiber above a point with multiple zeroes is determined by the graph closure of a restricted number of the functions $\psi_{ij}$.
This leads to an inductive description of the full space $\overline{\fam{D}_n}$.

\noindent
As a first step towards this, we observe that while $\Psi$ is not well-defined at any point in the arrangement $\fam{A}$, for each $[p]\in\fam{A}$ we may divide $\Psi$ into two parts $\Psi=\Psi_A\times\Psi_B$ where $\Psi_A$ contains all components ill-defined at $[p]$ and $\Psi_B$ contains all components which extend continuously over $[p]$.

\begin{lemma}
If $p\in\fam{S}^k_n$ then $\Psi\colon\RP^{n-1}\smallsetminus \fam{A}\to(\RP^1)^{\smat{n\\2}}$ factors as $\Psi=\Psi_A\times\Psi_B$ for $\Psi_A=(\psi_{ij})_{ij\in A}$ and $\Psi_B=(\psi_{ij})_{ij\in B}$ and $\Psi_B$ has a continuous extension to $[p]$.
\label{lem:Factoring_Psi}
\end{lemma}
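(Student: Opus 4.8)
The plan is to read the factorization directly off the zero/nonzero pattern of the coordinates of $p$, which is constant on each cell $\fam{S}^k_n$. First I would fix a representative vector $(p_1,\ldots,p_n)$ for $p$ and set $Z=\set{i\mid p_i=0}$ and $N=\set{1,\ldots,n}\smallsetminus Z$. Since $p$ lies in a $k$-cell of the coordinate arrangement, exactly $k+1$ of its coordinates are nonzero, so $|N|=k+1$ and $|Z|=n-k-1$; note this data depends only on the cell containing $p$ and not on $p$ itself, so the resulting splitting is uniform across $\fam{S}^k_n$. I would then define $A=\set{(i,j)\mid i<j,\ i,j\in Z}$ and let $B$ be the complementary set of pairs, those $(i,j)$ for which at least one of $i,j$ lies in $N$. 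By construction $\Psi=\Psi_A\times\Psi_B$ with $\Psi_A=(\psi_{ij})_{ij\in A}$ and $\Psi_B=(\psi_{ij})_{ij\in B}$.

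The key step is to show that each component $\psi_{ij}$ with $ij\in B$ extends continuously across $p$. I would use that the ratio map $r\colon\R^2\smallsetminus\set{0}\to\RP^1$, $r(a,b)=[a:b]$, is continuous, and that in any affine chart of $\RP^{n-1}$ about $p$ the assignment $[\lambda]\mapsto(\lambda_i,\lambda_j)$ is continuous up to an overall nonzero scalar, hence descends to a continuous map into $\RP^1$ wherever $(\lambda_i,\lambda_j)\neq(0,0)$. For $ij\in B$ at least one of $p_i,p_j$ is nonzero, so $(\lambda_i,\lambda_j)$ is nonzero at $p$ and throughout a neighborhood; thus $\psi_{ij}=r\circ(\lambda_i,\lambda_j)$ is defined and continuous on that neighborhood, supplying the desired extension. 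Since a finite product of continuous maps is continuous, $\Psi_B$ extends continuously to $p$.

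For completeness I would also record the (not strictly required) complementary observation that each $\psi_{ij}$ with $ij\in A$ genuinely fails to extend: here $p_i=p_j=0$, and for any $(a,b)\in\R^2\smallsetminus\set{0}$ the paths whose $i,j$ entries are $ta,tb$ and whose remaining entries equal those of $p$ approach $p$ as $t\to 0$ while $\psi_{ij}$ stays constantly $[a:b]$. Hence every point of $\RP^1$ is a subsequential limit, confirming that $A$ is exactly the set of ill-defined components and justifying the name of $\Psi_A$ as the \emph{ill-defined part}.

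The main obstacle is essentially bookkeeping rather than mathematics: the content is the elementary continuity of the ratio map, and the only points requiring care are to phrase continuity correctly in projective coordinates (where representatives are defined only up to scale) and to confirm that the index split is governed solely by the zero-set $Z$ of $p$. I expect no genuine difficulty here; this lemma is a structural preliminary whose purpose is to set up the inductive blow-up description of $\overline{\fam{D}_n}$ developed in the following pages.
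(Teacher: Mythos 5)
Your proposal is correct and matches the paper's argument: the paper likewise splits the index pairs according to the zero set of $p$ (taking $[p]=[0:\cdots:0:x_{n-k}:\cdots:x_n]$ without loss of generality, with $A$ the pairs of indices among the zero coordinates) and observes that $\psi_{ij}([\lambda])=[\lambda_i:\lambda_j]$ extends continuously wherever $p_i,p_j$ are not simultaneously zero. Your explicit appeal to the continuity of the ratio map $(a,b)\mapsto[a:b]$ on $\R^2\smallsetminus\set{0}$, and your aside showing $\Psi_A$ genuinely fails to extend, only make explicit what the paper leaves implicit (the latter point resurfaces in the paper's subsequent fiber computation), so no substantive difference.
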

\begin{proof}
If $[p]\in\fam{A}$ is on a $k$-dimensional component, meaning $k+1$ entries of $p$ are nonzero, and so $n-(k+1)$ entries are zero.
Without loss of generality we consider $[p]=[0\cdots :0:x_{n-k}:\cdots :x_n]$; all other possibilities are simply permutations of this.
The definition of $\psi_{ij}[p]=[p_i:p_j]$ extends continuously to $[p]$ so long as both $p_i$ and $p_j$ are not simultaneously zero.
Defining $A=\{(i,j)\mid i<j<n-k\}$ and $B$ to be the remaining indices, this means that $\Psi_B=\prod_{ij\in B}\psi_{ij}$ extends continuously to $[p]$ and all 
 $\smat{n-k-1\\2}$ functions $\psi_{ij}$ with $1\leq i<j\leq n-k-1$, are undefined at $p$.	
\end{proof}

\noindent
This simplifies the problem of computing the closure, at each point selecting out a subcollection $\Psi_A$ to study in more detail.
Understanding which values actually occur as limiting values of $\Psi_A$ results in an inductive description of the fibers of $\overline{\fam{D}_n}\to\RP^{n-1}$.

\begin{proposition}
The fiber over a point of $\fam{S}^k_n$ is homeomorphic to $\overline{\fam{D}_{n-k}}$.
\end{proposition}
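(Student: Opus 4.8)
The plan is to compute the fiber $\pi^{-1}(p)$ of the projection $\pi\colon\overline{\fam{D}_n}\cong\overline{\Gamma_\Psi}\to\RP^{n-1}$ over a point $p$ of the stratum $\fam{S}^k_n$ by isolating exactly those coordinates of $\Psi$ that fail to extend over $p$ and recognizing them as a copy of the defining map for the smaller problem. First I would fix notation: let $Z\subset\{1,\ldots,n\}$ be the set of indices on which $p$ vanishes, so that $|Z|=n-k$ is the defining feature of the stratum, while the remaining coordinates $p_\ell$, $\ell\notin Z$, are nonzero. Since permuting coordinates is induced by conjugation by a permutation matrix in $\O(n)$ — which acts homeomorphically on $\Cl(\GL(n;\R))$ and hence on $\overline{\fam{D}_n}$, carrying fibers to fibers and respecting the cellulation — I may assume without loss of generality that $Z=\{1,\ldots,n-k\}$.

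Next I would invoke the factorization $\Psi=\Psi_A\times\Psi_B$ of \cref{lem:Factoring_Psi}: here $\Psi_A=(\psi_{ij})_{i<j,\;i,j\in Z}$ collects exactly the components indexed by two vanishing coordinates, which are the ones undefined at $p$, while $\Psi_B$ extends continuously to $p$ with a single well-defined value $\Psi_B(p)$. The fiber $\pi^{-1}(p)$ consists of all $(p,q)$ with $q=\lim_t\Psi(\alpha(t))$ for some path $\alpha(t)\to p$ in $\RP^{n-1}\smallsetminus\fam{A}$; because the $\Psi_B$-block converges to the constant $\Psi_B(p)$ along every such path, the coordinate projection onto the $\Psi_A$-block restricts to a homeomorphism of the fiber onto the set of limiting values of $\Psi_A$. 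Thus it suffices to determine that set.

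The key step is identifying these limits with $\overline{\fam{D}_{n-k}}$. Writing $\alpha(t)=[\alpha_Z(t):\alpha_N(t)]$ with $\alpha_N(t)$ converging to the nonzero block $(p_\ell)_{\ell\notin Z}$, for $i,j\in Z$ one has $\psi_{ij}(\alpha(t))=[\alpha_i(t):\alpha_j(t)]$, which depends only on $\alpha_Z$; normalizing $\alpha_Z(t)$ shows $\Psi_A(\alpha(t))=\Psi^{(n-k)}([\alpha_Z(t)])$, where $\Psi^{(n-k)}\colon\RP^{n-k-1}\smallsetminus\fam{A}\to(\RP^1)^{\binom{n-k}{2}}$ is precisely the map defining $\fam{D}_{n-k}$. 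Since $\alpha(t)$ avoids $\fam{A}$, the path $[\alpha_Z(t)]$ lies in the complement of the coordinate arrangement in $\RP^{n-k-1}$, so every limit of $\Psi_A$ lies in $\overline{\Psi^{(n-k)}(\PDiag^\times(n-k))}\cong\overline{\fam{D}_{n-k}}$. For the reverse inclusion I would run the construction backwards: given a target realized by a path $\beta(t)$ in $\RP^{n-k-1}\smallsetminus\fam{A}$, lift $\beta$ to a norm-one representative, scale it by $\varepsilon(t)\to0^+$, and insert it into the $Z$-slots of a point whose $N$-slots equal $p_N$; for small $\varepsilon(t)$ this yields a path in $\RP^{n-1}\smallsetminus\fam{A}$ tending to $p$ along which $\Psi_A$ realizes the prescribed limit and $\Psi_B\to\Psi_B(p)$. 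This exhibits a continuous bijection from the compact fiber onto the Hausdorff space $\overline{\fam{D}_{n-k}}$, hence a homeomorphism.

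The main obstacle I anticipate is the decoupling in this key step: one must check that the cross terms $\psi_{ij}$ with $i\in Z$, $j\notin Z$ contribute nothing to the fiber — they all converge to the fixed value $[0:1]$ and are absorbed into $\Psi_B$, so confirming that $\Psi_B$'s limit is path-independent (via \cref{lem:Factoring_Psi}) is precisely what makes the fiber depend only on the $Z$-block. A secondary technical point is ensuring the scaled lifts in the reverse construction stay in the arrangement complement and converge to $p$ rather than to another boundary point; here the characterization of limit points by ordered partitions and projective data in \cref{prop:Limit_Partitions} is a convenient bookkeeping device, since the leading partition block determines the actual limit in $\RP^{n-1}$ and can be pinned to $p$ through the choice of $\varepsilon(t)$.
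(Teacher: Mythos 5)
Your proof is correct and follows essentially the same route as the paper: permute the vanishing coordinates to the front, split $\Psi=\Psi_A\times\Psi_B$ via Lemma \ref{lem:Factoring_Psi}, observe that the $\Psi_B$-block is forced to the constant value $\Psi_B(p)$ so the fiber is exactly the set of limiting values of $\Psi_A$, and recognize $\Psi_A$ as the defining map of the lower-dimensional problem; your explicit scaled-lift construction for the reverse inclusion and the compact-to-Hausdorff argument for the homeomorphism supply details the paper compresses into ``this is exactly the original problem.'' One remark: your count $|Z|=n-k$ matches the proposition as stated, whereas the paper's own definition of $\fam{S}^k_n$ (open $k$-cells, hence $k+1$ nonzero coordinates) would give $|Z|=n-k-1$, consistent with the later computation $\overline{\fam{D}_3}=\fam{S}_3^0\times\overline{\fam{D}_2}\cup\fam{S}_3^1\times\overline{\fam{D}_1}\cup\fam{S}_3^2\times\overline{\fam{D}_0}$ --- an off-by-one ambiguity internal to the paper, to which your argument is insensitive since it proves the fiber is $\overline{\fam{D}_{|Z|}}$ under either convention.
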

\begin{proof}
Again, if $[p]\in \fam{S}^k_n$ is on a $k$-dimensional component, after possibly permuting entries we may assume $[p]=[0\cdots :0:x_{n-k}:\cdots :x_n]$.
By Lemma \ref{lem:Factoring_Psi}, we may write $\Psi=\Psi_A\times\Psi_B$ where $\Psi_A$ is undefined at $[p]$ but $\Psi_B$ extends continuously over $[p]$.
Thus, we are concerned only with the $\smat{n-k-1\\2}$ undefined functions of $\Psi_A\colon\PDiag^\times\to(\RP^1)^{\smat{n-k-1//2}}$.
These functions are independent of the final $k+1$ components of $[p]$, by definition, and so $\Psi_A$ factors through the projection $\RP^{n-1}\smallsetminus\fam{A}_n\to\RP^{n-k-2}\smallsetminus\fam{A}_{n-k}$ onto the first $n-k$ components.

$$\widetilde{\Psi}_A\colon \RP^{n-k-1}\smallsetminus \fam{A}\to \left(\RP^1\right)^{\smat{n-k\\2}}$$

$$[x_1:\cdots:x_{n-k}]\mapsto \left([x_i:x_j]\right)_{1\leq i<j\leq n-k}$$

Points in the closure $\overline{\Gamma_{\Psi}}$ above $[p]$ are in $1-1$ correspondence with in the closure of the graph of $\widetilde{\Psi_A}$.
But this is exactly the original problem, now of dimension $n-k$ instead of $n$.
Thus by definition, the closure of this image $\overline{\Gamma_{\widetilde{\Psi}_A}}\cong\fam{D}_{n-k}$.
\end{proof}

\begin{corollary}
The cellulation $\RP^{n-1}=\coprod_{k} \fam{S_k}$ induces a division of $\overline{\fam{D}_n}$ into components $\overline{\fam{D}_n}=\coprod_k \fam{S}_k\times\fam{D}_{n-k}$.
The component $\fam{S}_n^{n-1}\times\fam{D}_1\cong\fam{S}^{n-1}_n$ has dimension $n-1$, and is open and dense in the resulting space; all other components $\fam{S}_k\times\overline{\fam{D}_{n-k-1}}$ have dimension $n-2$.
\end{corollary}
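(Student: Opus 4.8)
The plan is to read off the decomposition directly from the fibration $\pi\colon\overline{\fam{D}_n}=\overline{\Gamma_\Psi}\to\RP^{n-1}$ supplied by Lemma \ref{lem:Psi_Injective}, by pulling back the cellulation of the base. First I would record that the coordinate hyperplane arrangement partitions $\RP^{n-1}$ as $\coprod_k\fam{S}_n^k$, so that as a set $\overline{\fam{D}_n}=\coprod_k\pi\inv(\fam{S}_n^k)$; the content of the corollary is then to identify each stratum $\pi\inv(\fam{S}_n^k)$ with a product and to compute its dimension. The fiber over any single point of $\fam{S}_n^k$ is homeomorphic to $\overline{\fam{D}_{n-k-1}}$ by the preceding proposition, and this fiber depends only on which of the $n-k-1$ coordinates vanish. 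The remaining work is therefore to promote this pointwise identification to a global product homeomorphism over the whole cell.

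For this I would invoke the factorization $\Psi=\Psi_A\times\Psi_B$ of Lemma \ref{lem:Factoring_Psi}, which holds at every point of $\fam{S}_n^k$ with one common index set: $\Psi_B$ extends continuously over the cell and is a function of the nonvanishing coordinates alone, whereas the undefined block $\Psi_A$ is indexed by pairs drawn entirely from the vanishing coordinates, and its graph closure reproduces the lower-dimensional problem $\overline{\Gamma_{\widetilde{\Psi}_A}}\cong\overline{\fam{D}_{n-k-1}}$. Since the index set controlling $\Psi_A$ and the coordinates controlling $\Psi_B$ are disjoint, the two pieces of data are decoupled: over $\fam{S}_n^k$ the graph of $\Psi_B$ is a copy of the cell itself, and the attached $\Psi_A$-limit data sweeps out a fixed copy of $\overline{\fam{D}_{n-k-1}}$ that does not depend on the basepoint. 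This yields a canonical bijection $\fam{S}_n^k\times\overline{\fam{D}_{n-k-1}}\to\pi\inv(\fam{S}_n^k)$; because each total fiber is a closed subset of the compact space $\overline{\fam{D}_n}$ and the target is Hausdorff, a routine continuity check in each direction upgrades this bijection to a homeomorphism.

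Finally I would carry out the dimension count. Since $\fam{D}_m\cong\RP^{m-1}\smallsetminus\fam{A}$ is an $(m-1)$-manifold that is open and dense in its compactification $\overline{\fam{D}_m}$, we have $\dim\overline{\fam{D}_m}=m-1$; hence $\dim\bigl(\fam{S}_n^k\times\overline{\fam{D}_{n-k-1}}\bigr)=k+(n-k-2)=n-2$ for every $k\le n-2$. For the top stratum $k=n-1$ the fiber is a singleton, by the observation that $\Psi$ is already well defined on $\fam{S}_n^{n-1}=\RP^{n-1}\smallsetminus\fam{A}$, so this piece is $\fam{S}_n^{n-1}$ itself, of dimension $n-1$; it coincides with $\fam{D}_n$, which is open and dense in $\overline{\fam{D}_n}$ by the very definition of the Chabauty compactification.

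I expect the main obstacle to be exactly the global triviality step: one must ensure that the pointwise fibers assemble without monodromy, rather than into a twisted bundle over the cell. The cleanest way to dispatch this is the observation above that $\Psi_A$ and $\Psi_B$ read off \emph{disjoint} sets of coordinates, so no coupling between base and fiber can arise and the product is honestly untwisted; the compactness-versus-Hausdorff argument then delivers the homeomorphism, and the dimension bookkeeping is routine.
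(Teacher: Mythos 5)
Your proposal is correct and takes essentially the same route the paper does (implicitly, since the paper states this corollary as an immediate consequence): the stratification is pulled back from the cellulation via $\pi$, the fibers are identified by the preceding proposition, the untwisted product over each cell comes from the disjoint index sets in the factorization $\Psi=\Psi_A\times\Psi_B$ of Lemma \ref{lem:Factoring_Psi}, and the dimension count $k+(n-k-2)=n-2$ matches. One minor caution: the stratum $\pi\inv(\fam{S}_n^k)$ over an open cell is not compact, so your compact-to-Hausdorff remark only applies fiberwise, but this costs nothing since your coordinate-decoupling observation already makes the two-sided continuity check direct (each coordinate $p_{ij}$ of the assembled map is a continuous function of either the base point or the fiber point alone, and the inverse is a coordinate projection).
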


\noindent
This division leads us to a natural cellulation of the closure, defined inductively, which we explore through examples here.
By convention $\overline{\fam{D}_0}=\{\star\}$ is a singleton.

\begin{example}
$\overline{\fam{D}_1}=\fam{D}_1=\{\star\}$ is a single point, representing the Orthogonal group $\O(1)=\{\pm 1\}\subset\R^\times$.	
\end{example}

\begin{example}
As $\RP^1=\fam{S}^0_2\cup \fam{S}^1_2$ is the union of two intervals and two points, the corresponding decomposition of $\overline{\fam{D}_2}$ is 
$\overline{\fam{D}_2}=\fam{S}_2^0\times\overline{\fam{D}_1}\cup \fam{S}^1_2\times\overline{\fam{D}_0}\cong \fam{S}_2^0\cup \fam{S}_2^1=\RP^1$.
Thus, as we know from previous discussion nothing strange happens above codimension-1 faces of the cellulation of $\RP^{n-1}$, and in this first nontrivial case, $\overline{\fam{D}_2}\cong\RP^1$.
\end{example}

\begin{example}
Inductively using the above,
$
\overline{\fam{D}_3}=
\fam{S}_3^0\times\overline{\fam{D}_2}
\cup
\fam{S}_3^1\times\overline{\fam{D}_1}
\cup
\fam{S}_3^0\times\overline{\fam{D}_0}
$, and

$$
\overline{\fam{D}_3}=\fam{S}_3^0\times
\left(\fam{S}_2^0\cup \fam{S}_2^1\right)
\cup
\fam{S}_3^1\times\left(\{\star\}\right)
\cup
\fam{S}_3^2\times\left(\{\star\}\right)
$$
$$=
(\fam{S}_3^0\times\fam{S}_2^0)\cup (\fam{S}_3^0\times\fam{S}_2^1)\cup \fam{S}_3^1\cup\fam{S}_3^2
$$
Altogether, this is a collection of $|\fam{S}_3^0||\fam{S}_2^0|=6$ vertices, $|\fam{S}_3^0||\fam{S}_2^1|+\fam{S}_3^1=6+6=12$ edges, and $|\fam{S}_3^2|=4$ two-cells.
\end{example}

A more detailed analysis here gives the attaching maps for these cells, allowing us to construct $\overline{\fam{D}_n}$ combinatorially.
Working this out in low dimensions shows that the resulting space $\overline{\fam{D}_n}$ is a manifold, and the closed top dimensional cells are permutohedra.
Below we justify this in an alternative way, by realizing our construction as a familiar object from algebraic geometry.

\section{$\overline{\fam{D}_n}$ as a Blowup}

In their 1996 paper \emph{Wonderful Models of Subspace Arrangements}, De Concini and Procesi defined the \emph{wonderful compactification} of a hyperplane arrangement complement \cite{ConciniP96}, inspired by the compactification of Fulton and MacPherson \cite{Fulton94}.
This compactification has many nice algebro-geometric properties, replacing replacing the arrangment with a divisor with normal crossings.
The compactification is a well-behaved geometric-topological object as well; it naturally carries the structure of a smooth manifold into which the original hyperplane complement embeds as an open dense subset.
The remainder of this section is devoted to (1) a brief introduction to the wonderful compactification, followed by (2) a proof of the following identificaiton.

\begin{theorem}
The Chabauty compactification $\overline{\fam{D}_n}$	 is the maximal wonderful compactification of the projectivized coordinate hyperplane arrangement in $\RP^{n-1}$.
Consequently, $\overline{\fam{D}_n}$ is a smooth manifold, cellulated by $2^{n-1}$ permutohedra.
\end{theorem}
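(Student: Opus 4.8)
The plan is to prove the theorem by identifying the graph closure $\overline{\Gamma_\Psi}\cong\overline{\fam{D}_n}$ with the maximal wonderful model of the coordinate arrangement, and then importing the manifold and cellulation statements from the established theory of De Concini and Procesi \cite{ConciniP96} together with its real analogue \cite{DavisJS98}. First I would recall the two equivalent descriptions of the maximal wonderful model $W_n$ of the arrangement $\fam{A}\subset\RP^{n-1}$: on one hand it is the closure of the embedding of $\RP^{n-1}\smallsetminus\fam{A}$ into $\RP^{n-1}\times\prod_{I}\P(N_{A_I})$ recording the projective normal direction to each coordinate subspace $A_I$ in the maximal building set; on the other hand it is the iterated blowup of $\RP^{n-1}$ obtained by blowing up the closed strata of $\fam{A}$ in order of increasing dimension, i.e. first the $n$ coordinate points, then the strict transforms of the coordinate lines, and so on up through the codimension-two subspaces. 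That these two descriptions agree, and that the result is a smooth manifold, is exactly the content of the wonderful-compactification theorem.

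The heart of the argument is to show that $\overline{\Gamma_\Psi}=W_n$. I would do this by comparing the pairwise-ratio map $\Psi([\lambda_1:\cdots:\lambda_n])=([\lambda_i:\lambda_j])_{i<j}$ with the building-set map above. Both are embeddings on the open part $\RP^{n-1}\smallsetminus\fam{A}$, so it suffices to prove their graph closures coincide, and for this the decisive input is Proposition \ref{prop:Limit_Partitions}: a boundary point of $\overline{\Gamma_\Psi}$ is precisely an ordered partition of $\{\lambda_1,\ldots,\lambda_n\}$ together with a projective point (with nonzero entries) on each block. This is exactly the combinatorial data of a point in the nested-set stratification of $W_n$ — the ordered partition records which building-set subspaces are being approached and in what nested order, while the projective point on each block records the residual direction transverse to the next stratum. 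Translating this dictionary carefully gives a bijection of boundary strata compatible with the two projections down to $\RP^{n-1}$, which by compactness upgrades to the desired homeomorphism.

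Rather than match the simultaneous resolution $\overline{\Gamma_\Psi}$ against the sequential blowups all at once, I would proceed by induction on $n$, using the fiber computation already established (the fiber of $\overline{\fam{D}_n}\to\RP^{n-1}$ over a point of $\fam{S}^k_n$ is homeomorphic to $\overline{\fam{D}_{n-k}}$) together with the factorization $\Psi=\Psi_A\times\Psi_B$ of Lemma \ref{lem:Factoring_Psi}. The base cases $\overline{\fam{D}_1}=\{\star\}$ and $\overline{\fam{D}_2}\cong\RP^1$ are the order-$1$ and order-$2$ wonderful models. For the inductive step I would work in an affine chart about a coordinate point, say $\lambda_n=1$, where $\Psi_B=(\psi_{in})_i$ extends continuously and $\Psi_A=([\lambda_i:\lambda_j])_{i<j<n}$ carries the indeterminacy; reading $\Psi_A$ through the graph-closure definition of the blowup (Definition \ref{Def:Blowup_Product}) exhibits this layer of $\overline{\Gamma_\Psi}$ as the blowup of the chart at the deepest stratum, while the fibers over the exceptional locus are copies of the lower model $\overline{\fam{D}_{n-k}}\cong W_{n-k}$ supplied by the inductive hypothesis. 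Matching the order in which strata appear with the increasing-dimension blowup order of $W_n$ then identifies the two spaces.

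The main obstacle I anticipate is precisely this matching step: a priori the graph closure of a product of $\binom{n}{2}$ rational maps can differ from, or be more singular than, any iterated blowup, so the real content is verifying that recording all pairwise ratios simultaneously neither loses information (so that the projection to $\RP^{n-1}$ has the correct fibers) nor introduces extra components or non-separated behavior. Proposition \ref{prop:Limit_Partitions} is what controls this, since it shows the boundary data is exactly the nested-set data and no more. Once $\overline{\fam{D}_n}\cong W_n$ is established, the two consequences are immediate from wonderful-model theory: $W_n$ is a smooth manifold \cite{DavisJS98}, and its tautological cellulation has top-dimensional cells indexed by the chambers of $\fam{A}$. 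The coordinate arrangement has $2^{n-1}$ chambers in $\RP^{n-1}$ — the $2^n$ open orthants of $\R^n$ identified in antipodal pairs, matching $|\fam{S}^{n-1}_n|=2^{n-1}$ — and truncating each simplicial chamber along its flag of coordinate faces in increasing dimension turns it into a permutohedron. This recovers the $2^{n-1}$ permutohedra and agrees with the earlier corollary that each top simplex lifts with boundary the $(n-2)$-dimensional permutohedron.
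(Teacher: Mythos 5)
Your proposal is correct and is essentially the paper's argument: it identifies $\overline{\fam{D}_n}\cong Y_{\fam{A}}$ by comparing the graph closure of $\Psi$ with the graph-closure description of the maximal wonderful model, with Proposition \ref{prop:Limit_Partitions} as the decisive input, and then imports smoothness and the $2^{n-1}$ permutohedral cells from De Concini--Procesi theory, exactly as the paper does. One step should be sharpened, though: a ``bijection of boundary strata compatible with the projections'' does not by itself upgrade by compactness to a homeomorphism. What the paper actually does is exhibit a single globally continuous map --- the coordinate projection $\P V\times\prod_{X\in\fam{L}}\P(V/X)\to\P V\times\prod\P(V/X)$ onto the $\RP^1$-factors --- prove that its restriction to $Y_{\fam{A}}$ is injective (using the partition data $(\{J_\ell\},\{L_\ell\})$ to reconstruct $\lim[\alpha_S]$ for \emph{every} index set $S$, hence the position relative to every stratum, from the pairwise ratios alone), and then conclude via continuous bijection from a compact space to a Hausdorff one; your dictionary is precisely this injectivity statement and should be packaged that way. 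With that in place, the induction of your third paragraph is redundant: the equality of the graph closure with the iterated blowup is already supplied by the cited theorem of De Concini and Procesi, so there is no need to match $\overline{\Gamma_\Psi}$ against the sequential blowups chart by chart --- the paper uses the fiberwise recursion (fiber over $\fam{S}_n^k$ is $\overline{\fam{D}_{n-k}}$) only for its earlier coarse description of $\overline{\fam{D}_n}$, not in the proof of this theorem.
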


\noindent

Our presentation of the wonderful compactification closely follows the treatment in \cite{Feit}.
A hyperplane arrangement in a real or complex vector space $V$ is a finite family $\fam{A}=\{U_1,\ldots, U_n\}$ of linear subspaces.
The combinatorial data associated to such an arrangement is the \emph{intersection lattice} $\mathcal{L}(\fam{A})$, which is the set of all nonempty\footnote{This deviates from the exposition of \cite{Feit} where $\fam{L}$ is the collection of \emph{all} intersections and $\fam{L}_{>0}$ is the collection of \emph{nonempty} intersections.} intersections of subspaces in $\fam{A}$, ordered by inclusion\footnote{This also differs from \cite{Feit}, where $\fam{L}_{\geq 0}$ is ordered by reverse inclusion but the sequence of blowups is indexed by $\mathcal{L}_{>0}^\mathsf{op}$.}.

\begin{example}
The coordinate hyperplane arrangement $\fam{A}_2\subset\R^2$	 is the union of the coordinate axes, and $\mathcal{L}(\fam{A}_2)$ contains the empty intersection $\R^2$, both axes and their intersection $\{0\}$.
The arrangement $\fam{A}_3\subset\R^3$ contains three coordinate hyperplanes; and the intersection lattice $\fam{L}(\fam{A}_3)$
additionally contains the 3 coordinate axes and the origin.
\end{example}

\noindent
A hyperplane arrangement $\fam{A}$ is \emph{central} if all hyperplanes in $\fam{A}$ pass through $\vec{0}$.
A \emph{projective hyperplane arrangement} is the projectivization of a central hyperplane arrangement, and the intersection poset is defined identically as the set of nonempty intersections of projective hyperplanes; which identifies with the intersection poset of the original arrangement after removing $\{0\}$.

We now give two descriptions of the maximal De Concini Procesi wonderful model for an arrangment $\fam{A}$: a definition as the closure of a graph, which we will use to connect with our previous work, and a definition as an iterated sequence of blow ups which is useful for intuition and inductive arguments.
In both cases, we have adapted the definitions of \cite{ConciniP96,Feit} to the case of a projective hyperplane arrangement.

\begin{definition}[Graph Closure Construction]
\label{def:Wonderful_Closure}
Let $\fam{A}$ be an arrangement of linear subspaces of a real vector space $V$.
The map $\Psi$ the map 
$$F \colon\mathbb{P}(V\smallsetminus \fam{A})\to \prod_{X\in\fam{L}(\fam{A})}\mathbb{P}\left(V/X\right)$$
encodes the relative position of each point in the arrangement complement with respect to the intersection of subspaces of $\fam{A}$.
The map $F$ is an open embedding; the closure of its graph is called the (maximal) De Concini-Procesi wonderful model for $\fam{A}$, and is denoted $Y_\fam{A}$.
\end{definition}

\begin{definition}[Blow Up Construction]
\label{def:Wonderful_Blow_Up}
Let $\fam{A}$ be a projective hyperplane arrangement in $\mathbb{P}V$ and let $X_1<X_2<\ldots X_t$ be a linear extension of the partial ordering on $\fam{L}(\fam{A})$.
Then the (maximal) De Concini-Procesi wonderful model for $\fam{A}$ is the result $Y_\fam{A}$ of successively blowing up the subspaces $X_1,\ldots, X_t$; respectively their proper transforms.	
\end{definition}

\begin{theorem}[De Concini Procesi]
The constructions of definitions \ref{def:Wonderful_Closure} and \ref{def:Wonderful_Blow_Up} give isomorphic algebraic varieties.
The resulting arrangement model $Y_\fam{A}$ is a smooth algebraic variety with a natural projection map to the original ambient space $\pi\colon Y_\fam{A}\to\mathbb{P} V$, which is one-to-one on the original arrangement complement $\mathbb{P}(V\smallsetminus\fam{A})$.
\end{theorem}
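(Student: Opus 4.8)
The plan is to establish the equivalence of the two constructions together with the geometric assertions (smoothness and the behavior of $\pi$) in turn, with the bulk of the effort devoted to matching the graph closure of Definition \ref{def:Wonderful_Closure} with the iterated blow-up of Definition \ref{def:Wonderful_Blow_Up}.

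The statement about $\pi$ is the cheapest and I would dispatch it first. The projection $\pi\colon Y_\fam{A}\to\mathbb{P}V$ is the restriction of the coordinate projection $\mathbb{P}V\times\prod_X\mathbb{P}(V/X)\to\mathbb{P}V$. Over the complement $\mathbb{P}(V\smallsetminus\fam{A})$ the classifying map $F$ is a genuine morphism, so its graph meets each fiber of $\pi$ in exactly one point; since every point adjoined in forming the closure projects into $\fam{A}$, the restriction $\pi|_{\pi\inv(\mathbb{P}(V\smallsetminus\fam{A}))}$ is a bijection, in fact an isomorphism onto $\mathbb{P}(V\smallsetminus\fam{A})$.

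The heart of the argument is the identification of the two models, which I would carry out by induction along a fixed linear extension $X_1<\cdots<X_t$ of $\mathcal{L}(\fam{A})$. The base case is the single-subspace fact that blowing up $\mathbb{P}V$ along a linear subspace $X$ is exactly the closure of the graph of the projection $\mathbb{P}(V\smallsetminus X)\to\mathbb{P}(V/X)$ away from $X$; this is the projective analogue of the identification of $\Bl_0(\R^n)$ with a graph closure recorded earlier in the excerpt. For the inductive step I set $Y_k=\overline{\Gamma(F_{\le k})}\subset\mathbb{P}V\times\prod_{i\le k}\mathbb{P}(V/X_i)$, so that $Y_0=\mathbb{P}V$ and $Y_t=Y_\fam{A}$, and aim to prove $Y_k=\Bl_{\tilde X_k}Y_{k-1}$, where $\tilde X_k$ denotes the proper transform of $X_k$ in $Y_{k-1}$. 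Adjoining the factor $\mathbb{P}(V/X_k)$ to the graph records precisely the direction of approach to $X_k$, which away from $X_k$ is already determined and over $\tilde X_k$ fills in the projectivized normal directions, i.e. the exceptional divisor of the blow-up.

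The crucial point, and the step I expect to be the main obstacle, is controlling the proper transforms so that each successive center is smooth. Because the ordering refines inclusion, by the time we reach $X_k$ every deeper stratum $X_j\subsetneq X_k$ has already been blown up; one must show that this is exactly what is needed to make $\tilde X_k$ a smooth subvariety of $Y_{k-1}$ meeting the accumulated exceptional divisors transversally, so that $\Bl_{\tilde X_k}Y_{k-1}$ is again smooth and the $k$-th component of $F$ is resolved into a morphism on it. This is the combinatorial engine of the De Concini--Procesi construction: the arrangement, after the preliminary blow-ups, is converted into a normal-crossings configuration, which simultaneously guarantees smoothness at every stage and lets the induction proceed. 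Once this local normal-crossings structure is established, working in affine charts adapted to a flag in $\mathcal{L}(\fam{A})$ through a given point exactly as in the coordinate computations of the previous sections, smoothness of $Y_\fam{A}$ follows since each blow-up has a smooth center in a smooth ambient space, and the two descriptions agree by construction.
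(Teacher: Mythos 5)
You should know at the outset that the paper does not prove this theorem at all: it is imported verbatim from the literature, attributed to De Concini and Procesi \cite{ConciniP96} (with \cite{Feit} as the expository source), and the thesis only verifies the equivalence of the two constructions by hand in the worked examples $Y_{\fam{A}_2}$ (blow-up of $\RP^2$ at three points) and $\fam{A}_4$. So your proposal is not an alternative to the paper's proof but a sketch of the proof the paper deliberately omits. On its merits, your outline is the standard De Concini--Procesi argument and is correct in structure: the one-to-one claim for $\pi$ over $\mathbb{P}(V\smallsetminus\fam{A})$ is exactly as cheap as you say, since $F$ is a morphism on the open complement, so the graph there is closed in $\pi\inv(\mathbb{P}(V\smallsetminus\fam{A}))$ and the closure only adjoins points over $\fam{A}$; and the induction $Y_k=\Bl_{\tilde X_k}Y_{k-1}$ along an inclusion-refining linear extension, with base case $\Bl_{\mathbb{P}X}\mathbb{P}V\cong\overline{\Gamma\bigl(\mathbb{P}(V\smallsetminus X)\to\mathbb{P}(V/X)\bigr)}$, is precisely how the identification is established in the literature. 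The honest caveat is the one you flag yourself: as written, the entire mathematical content sits in the unproven claim that each proper transform $\tilde X_k$ is smooth and meets the accumulated exceptional divisors transversally, so that the $k$-th rational map resolves and the blow-up stays smooth. That claim is true --- $\tilde X_k$ is the iterated blow-up of the linear space $\mathbb{P}X_k$ along its intersections with the earlier (smaller) centers, hence smooth by the same induction --- but proving it, together with independence of the chosen linear extension (which the paper's $\fam{A}_4$ example asserts via locality of blow-ups and disjointness of the proper transforms of incomparable centers), is the substantial part of De Concini and Procesi's paper. So your proposal is a correct blueprint rather than a complete proof; given that the thesis itself defers these points to \cite{ConciniP96}, that is a reasonable place to stop, but you should not present the transversality step as expected to follow formally --- it is the theorem's engine, not a routine verification.
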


\noindent
Additionally, the following theorem collects some of the nice algebro-geometric properties of the wonderful arrangement models.

\begin{theorem}[De Concini and Procesi, Theorems in 3.1 and 3.2]

\begin{enumerate}
\item The preimage $\pi\inv(\mathbb{P}\fam{A})$ in $Y_\fam{A}$ is a divsior with normal crossings; its irreducible components are the proper transforms $D_X$ of intersections of $X$ in $\fam{L}$,
$$\pi\inv(\mathbb{P}\fam{A})=\bigcup_{X\in\mathcal{L}}D_X.$$
\item Irreducible components $D_X$ for $X\in\Sigma\subset\fam{L}$ in a subset $\Sigma$ of the intersection poset intersect in $Y_\fam{A}$ if and only if $\Sigma$ is a linearly ordered subset of $\fam{L}$.  If we think of $Y_\fam{A}$ as stratified by the irreducible components of the normal crossing divisor and their intersections, then the poset of strata coincides with the face poset of the order complex of $\fam{L}^\mathsf{op}$.
\end{enumerate}

\end{theorem}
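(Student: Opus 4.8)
The plan is to work entirely with the iterated blow-up description of $Y_\fam{A}$ (Definition \ref{def:Wonderful_Blow_Up}), since normal crossings and the combinatorics of divisor intersections are most transparent there, and to argue by induction on the length of the chosen linear extension $X_1 < X_2 < \cdots < X_t$ of $\fam{L}(\fam{A})$. Ordering $\fam{L}$ by inclusion makes $X_1$ a minimal (deepest, smallest-dimensional) intersection, so we blow up the most degenerate strata first. Write $\pi_k\colon Y^{(k)}\to\P V$ for the composite of the first $k$ blow-ups, let $E_{X_i}\subset Y^{(k)}$ denote the exceptional divisor created at step $i$ together with its proper transforms under later blow-ups, and let $\tilde{X}$ denote the proper transform in $Y^{(k)}$ of any $X\in\fam{L}$ not yet blown up. The inductive hypothesis I would carry is: $Y^{(k)}$ is smooth; each $E_{X_i}$ and each $\tilde{X}$ is smooth; and the total configuration $\{E_{X_1},\ldots,E_{X_k}\}\cup\{\tilde{X}\}$ has normal crossings, a subcollection meeting nonemptily exactly when the corresponding subspaces form a chain. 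The $D_X$ of the theorem are then the final proper transforms of these exceptional divisors, and the displayed equality $\pi\inv(\P\fam{A})=\bigcup_X D_X$ of part 1 falls out, because every point of the arrangement is eventually separated into a union of exceptional divisors.

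The inductive step for part 1 is the technical core. Passing from $Y^{(k)}$ to $Y^{(k+1)}$ I blow up $\tilde{X}_{k+1}$, the proper transform of $X_{k+1}$. The key local claim is that $\tilde{X}_{k+1}$ is a smooth center meeting the existing normal-crossings divisor transversally in the sense needed for blow-ups: the divisors $E_{X_i}$ it meets are exactly those with $X_i\subset X_{k+1}$ (strictly smaller, already-blown-up subspaces), and in local analytic coordinates adapted to the previous blow-ups the pair $(\tilde{X}_{k+1},E_{X_i})$ looks like a coordinate subspace inside a coordinate hyperplane. Granting this, the standard fact that blowing up a smooth center having normal crossings with a normal-crossings divisor again yields a normal-crossings divisor — now enlarged by the new exceptional divisor $E_{X_{k+1}}$ — closes the induction and simultaneously confirms smoothness of $\tilde{X}_{k+1}$ so that the next center is legitimate. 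The subtlety to verify is that this local model genuinely holds for an arbitrary intersection arrangement, which is exactly where the linear-extension order is used: blowing up $X_{k+1}$ only after every $X_i\subsetneq X_{k+1}$ ensures the deeper incidences are already resolved.

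For part 2 I would isolate two complementary statements. First, incomparable subspaces are separated: if $X,Y\in\fam{L}$ are incomparable, their meet $X\cap Y$ lies strictly below both and is blown up before either, and after that blow-up the proper transforms of $X$ and $Y$ are already disjoint, so a fortiori $D_X\cap D_Y=\varnothing$; this is a local computation in the exceptional divisor over $X\cap Y$, where the two proper transforms project to disjoint linear subspaces of the projectivized normal space. Second, chains persist: for a chain $X_{i_1}\subset\cdots\subset X_{i_m}$ the intersection $\bigcap_j D_{X_{i_j}}$ is nonempty and smooth, cut out in suitable coordinates as a transverse intersection of coordinate hyperplanes. Combining the two, $\{D_X:X\in\Sigma\}$ meets if and only if $\Sigma$ is totally ordered, i.e. a chain. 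Reading off the stratification, the nonempty strata are indexed by chains in $\fam{L}$ with incidences governed by refinement, which is precisely the face poset of the order complex of $\fam{L}$; matching variance conventions gives $\fam{L}^{\mathsf{op}}$ as stated.

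The main obstacle, and the step I would spend the most care on, is the transversality claim in the inductive step together with the disjointness-of-incomparable-transforms claim in part 2: both rest on understanding, in explicit charts, how proper transforms of linear subspaces sit relative to the exceptional divisors produced by earlier blow-ups. The clean way to package this bookkeeping — and the route I would ultimately take to avoid ad hoc coordinate arguments — is the De Concini--Procesi formalism of building sets and nested sets, in which the maximal building set has as nested sets exactly the chains of $\fam{L}$; their general nested-set intersection theorem then yields both parts uniformly, and specializing to the coordinate arrangement recovers the cellulation of $\overline{\fam{D}_n}$ by $2^{n-1}$ permutohedra.
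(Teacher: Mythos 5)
The first thing to say is that the paper contains no proof of this statement to compare against: it is quoted verbatim as background, attributed to De Concini and Procesi and cited to \cite{ConciniP96}, and the surrounding text only \emph{uses} it (to read off the permutohedral cellulation of $\overline{\fam{D}_n}$). So your proposal is being measured against the published proof, not an argument in this thesis. Measured that way, your outline is the standard one and is sound in structure: induct along the linear extension with minimal elements first, maintain that the accumulated configuration of exceptional divisors and proper transforms has normal crossings, separate incomparable $X,Y$ at the blow-up of $X\cap Y$ (your computation that $(X/Z)\cap(Y/Z)=0$ for $Z=X\cap Y$, so the proper transforms hit the exceptional $\P(V/Z)$ in disjoint linear subspaces, is the right local picture, and disjointness persists under later blow-ups), and verify that chains survive as nonempty transverse intersections. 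This is genuinely the architecture of the De Concini--Procesi argument.

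Two points keep this from being a proof. First, your inductive hypothesis asserts ``normal crossings'' for a configuration that includes the not-yet-blown-up proper transforms $\tilde{X}$, which have codimension $\geq 2$; the classical normal-crossings notion does not apply to these, and the statement you need (that the next center meets every stratum of the existing configuration cleanly, in a simultaneous local linear model) is precisely the unproved technical core. Making it precise requires either explicit adapted charts at every stage --- including over points lying on several earlier exceptional divisors, where your ``generic point of $Z$'' picture does not suffice --- or a formalized notion of an arrangement of subvarieties with a building-set condition. Second, and more seriously, your closing move is circular: the ``general nested-set intersection theorem'' of De Concini--Procesi that you propose to invoke, specialized to the maximal building set (whose nested sets are exactly the chains of $\fam{L}$), \emph{is} the theorem being proved. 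As a plan it correctly identifies where the content lives, but as written the proof defers its hardest step to the result itself; to complete it you would need to carry out the chart-level transversality bookkeeping you flag in your fourth paragraph, rather than route around it.
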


\noindent
First, we look at a familiar case; $\fam{A}_2$ the coordinate hyperplane arrangement in $\RP^2$, which illustrates the equality of these two definitions.

\begin{example}[$Y_{\fam{A}_2}$]
\label{ex:BlowUp_RP2}
The elements of $\fam{L}(\fam{A}_2)$ are the coordinate hyperplanes $A_x,A_y,A_z$ and the coordinate axes $A_xy,A_yz,A_xz$.
The codomain of $\Psi$ in the graph closure construction is the product of the six projective spaces $\P(\R^3/A_I)$ for $I\in\{x,y,z,xy,xz,yz\}$; but noting that the quotient of $\R^3$ by a coordinate hyperplane is 1 dimensional so has trivial projectivization, we may write 
$F\colon\RP^2\smallsetminus\fam{A}_2\to \RP^1\times\RP^1\times\RP^1$,
$$F([x:y:z])=\left([x:y],[y:z],[x:z]\right)$$
But this is exactly the map $\Psi$ defining $\overline{\fam{D}_3}$!

From the blow-up construction, we see that we also get the correct answer, $Y_{\fam{A}_2}$ is the blow up of $\RP^2$ at three points.
Linearlizing the partial order on $\fam{L}$ means to place the projective points before projective lines, and otherwise order arbitrairly.
Blowing up at each of the projective points corresponding to a coordinate axis gives $\RP^2$ blown up at 3 points, and then blowing up along codimension-1 edges does nothing.
\end{example}

\noindent
Below, we consider the first really nontrivial case of each of these constructions, which occurs for coordinate hyperplane arrangement in $\RP^3$.
The projective arrangement here consists of the four coordinate hyperplanes $\fam{A}=\{A_x,A_y,A_z,A_w\}$, and the intersection poset additionally contains all six projectivized coordinate $2-planes$ and four vertices  (projectivized coordinate axes)

\begin{observation}
For $\fam{A}_4$ the projectivized coordinate hyperplane arrangement in $\RP^3$, $\fam{L}_4=\fam{L}(\fam{A}_4)$ is as below.

\begin{figure}
\centering
\begin{tikzcd}
                                                & A_x \arrow[ld, no head] \arrow[d, no head] \arrow[rd, no head] & A_y \arrow[lld, no head] \arrow[rd, no head] \arrow[rrd, no head] & A_z \arrow[lld, no head] \arrow[d, no head] \arrow[rrd, no head] & A_w \arrow[lld, no head] \arrow[d, no head] \arrow[rd, no head] &                                                 \\
A_{xy} \arrow[rd, no head] \arrow[rrd, no head] & A_{xz} \arrow[d, no head] \arrow[rrd, no head]                 & A_{xw} \arrow[d, no head] \arrow[rd, no head]                     & A_{yz} \arrow[rd, no head] \arrow[lld, no head]                  & A_{yw} \arrow[d, no head] \arrow[lld, no head]                  & A_{zw} \arrow[ld, no head] \arrow[lld, no head] \\
                                                & A_{xyz}                                                        & A_{xyw}                                                           & A_{xzw}                                                          & A_{yzw}                                                         &                                                
\end{tikzcd}
\caption{The intersection poset $\fam{L}_4$}
\label{fig:Intersection_Poset}
\end{figure}

\end{observation}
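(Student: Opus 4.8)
The plan is to identify $\fam{L}_4$ explicitly as the poset of coordinate projective subspaces of $\RP^3$ and then match it rank-by-rank and edge-by-edge against the displayed Hasse diagram. First I would index everything by subsets of the coordinate labels. Working in $\RP^3 = \P(\R^4)$ with homogeneous coordinates $[x:y:z:w]$, each coordinate hyperplane is $A_i = \P(\{v_i = 0\})$ for $i \in \{x,y,z,w\}$, and for any $S \subseteq \{x,y,z,w\}$ the intersection $A_S := \bigcap_{i \in S} A_i$ equals $\P$ of the linear subspace $\{v \in \R^4 : v_i = 0 \text{ for } i \in S\}$, which has dimension $4 - |S|$. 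Projectivizing, $A_S$ is a projective subspace of dimension $3 - |S|$; it is nonempty precisely when $|S| \leq 3$, while the total intersection $A_{xyzw} = \P(\{0\})$ is empty and hence excluded from $\fam{L}_4$, which by definition contains only nonempty intersections.

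Next I would count the elements by rank. The nonempty intersections correspond bijectively to proper nonempty index sets $S$, of which there are $\binom{4}{1} + \binom{4}{2} + \binom{4}{3} = 4 + 6 + 4 = 14$. These fall into three ranks: the four hyperplanes $A_x, A_y, A_z, A_w$ (projective planes, $|S| = 1$); the six lines $A_{xy}, A_{xz}, A_{xw}, A_{yz}, A_{yw}, A_{zw}$ (projective lines, $|S| = 2$); and the four points $A_{xyz}, A_{xyw}, A_{xzw}, A_{yzw}$ ($|S| = 3$), where each $A_{ijk}$ is the coordinate point whose only nonzero entry is the index omitted from $\{i,j,k\}$. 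This reproduces exactly the three rows of the figure.

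It remains to verify the order relations. Since $A_S = \P(W_S)$ with $W_S = \{v_i = 0,\ i \in S\}$ and $W_S \supseteq W_T$ if and only if $S \subseteq T$, inclusion of subspaces is reverse inclusion of index sets: $A_S \supseteq A_T \iff S \subseteq T$. Hence $\fam{L}_4$, ordered by inclusion of subspaces and drawn with larger subspaces on top, is anti-isomorphic to the truncated Boolean lattice of proper nonempty subsets of a four-element set, and its covering relations are exactly the single-index enlargements $S \subsetneq S \cup \{i\}$ (no intermediate subspace can exist since the ranks differ by one). Translating back, each hyperplane $A_i$ covers the three lines $A_{ij}$ with $j \neq i$, and each line $A_{ij}$ covers the two points $A_{ijk}$ with $k \neq i,j$; dually each line lies in two hyperplanes and each point lies in three lines. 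Listing these edges (for instance $A_x$ joined to $A_{xy}, A_{xz}, A_{xw}$, and $A_{xy}$ joined to $A_{xyz}, A_{xyw}$) and comparing against the arrows of the tikz diagram confirms the observation.

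There is no genuine obstacle here: the statement is a finite combinatorial verification, and the only point requiring care is the direction of the order — inclusion of subspaces versus inclusion of index sets — together with the bookkeeping convention flagged in the earlier footnotes that distinguishes this poset from the one used in \cite{Feit}. I would fix the convention once at the outset, after which the enumeration above matches the displayed Hasse diagram verbatim.
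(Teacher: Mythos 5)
Your verification is correct and matches what the paper does: the Observation is justified there only by the same enumeration you carry out (proper nonempty index sets $S\subseteq\{x,y,z,w\}$ giving the $4+6+4$ strata, with $A_S\supseteq A_T\iff S\subseteq T$), so your rank-by-rank and edge-by-edge check against the Hasse diagram is essentially the paper's argument made explicit. Your care with the order convention (inclusion of subspaces versus reverse inclusion of index sets, and the exclusion of the empty total intersection) correctly tracks the footnoted deviations from \cite{Feit}.
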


\begin{example}[Graph Closure Construction]
First we construct the codomain of $F$, the space $\P(\R^4)\times\prod_{X\in\fam{L}_4}\P(\R^4/X)$.
Recalling that $A_I$ denotes the projective hyperplane with $x_i=0$ for all $i\in I$, the quotient space $\R^4/A_I$ naturally identifies with the orthogonal complement $A_{\{x,y,z,w\}\smallsetminus I}$, and its projectivization with the corresponding projective space.
As a bit of notation, denote by $\mathbb{P}_I$ the projective space $\P\{(x_i)_{i\in I}\}$; then $\P(\R^4/A_I)=\P_I$, and we the codomain of $F$ is
$$
\left(\P^2_{xyz}\times\P^2_{xyw}\times\P^2_{xzw}\times\P^2_{yzw}\right)
\times\left(\P^1_{xy}\times\P^1_{xz}\times\P^1_{xw}\times\P^1_{yz}\times\P^1_{yw}\times\P^1_{zw}\right)\times
$$
$$
\times
\left(\P^0_x\times\P^0_y\times\P^0_z\times\P^0_w\right)$$

\noindent
The map $F$ itself, defined on the complement $\RP^3\smallsetminus\fam{A}$, is as follows
$$\Psi([x:y:z:w])=
\pmat{
[x:y:z],\;[x:y:w],\;[x:z:w],\;[y:z:w]\\
[x:y],\;[x:z],\;[x:w],\;[y:z],\;[y:w],\;[z:w]\\
[x],\;[y],\;[z],\;[w]
}
$$
Then $\fam{Y}_4=Y_{\fam{A}_4}$ is the graph closure $\overline{\Gamma_{F}}$.
Noting that the projective space $\RP^0=(\R\smallsetminus 0)/\R^\times=\{\star\}$ is a singleton, the four final factors of the codomain are all points and the four last coordinates of $F$ are constant maps: thus we may leave them out for simplicity if desired.
\end{example}

\begin{example}[Blow Up Construction]
The partial order on $\fam{L}_4$ by inclusion can be extended to a linear order by choosing arbitrary orderings on the subspaces of each fixed dimension, and then ordering the resulting blocks by dimension.
For example, the bottom-to-top, left-to-right dictionary ordering on the intersection poset of Figure \ref{fig:Intersection_Poset} gives
$$A_{xyz}<A_{xyw}<A_{xzw}<A_{yzw}<$$
$$<A_{xy}<A_{xz}<A_{xw}<A_{yz}<A_{yw}<A_{zw}<$$
$$<A_x<A_y<A_z<A_w$$
With respect to this order, the iterated blow-up is constructed as follows.
Beginning with $\RP^3$, blow up at the vertex $[A_{xyz}]$, the projectivization of the $w$-axis.
This procedure is local, and does not affect the topology of $\RP^3$ outside of a small neighborhood of $[A_{xyz}]$.
We successively blow up at the points $[A_{xyw}]$, $[A_{xzw}]$ and $[A_{yzw}]$ respectively (note that the order this is done does not affect the end result, which is why we were allowed to choose \emph{any} linearization of the partial order in Definition \ref{def:Wonderful_Blow_Up}).
Following this, we blow up the resulting space along the proper transform of the circle $A_{xy}\subset\RP^3$, and follow this by similar blow ups along the remaining five circles $[A_{ij}]$.
Again, the order in which this is completed is specified by our chosen linear ordering, but the final topology is independent of this choice, as the blow up operation is local and the proper transforms of the circles $[A_{ij}]$ do not intersect.
This point is worth thinking a bit about before moving on - below we illustrate in a figure the point $[A_{xyz}]$ in $\RP^3$ (visualized in the affine patch $w=1$) together with the circles $A_{xy},A_{xz},A_{yz}$ passing through it, followed by a depiction of their proper transforms after blowing up at $[A_{xyz}]$.
\begin{figure}
\centering\includegraphics[width=0.65\textwidth]{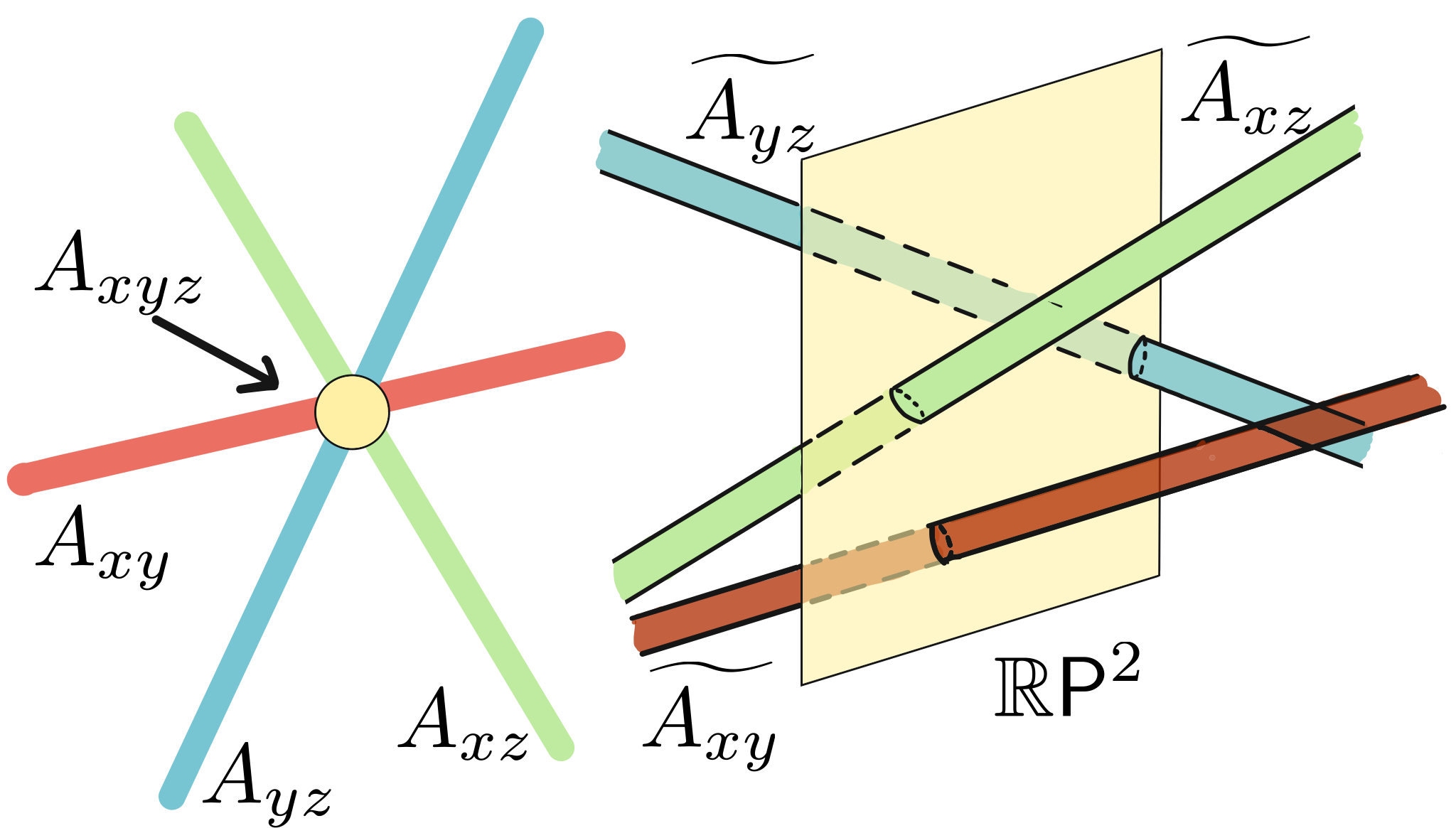}
\caption{
The circles $[A_{ij}]$ and their proper transforms.
Blowing up at $[A_{xyz}]$ introduces a copy of $\RP^2$, and the proper transforms of $[A_{ij}]$ meet this $\RP^2$ at a point encoding the original angle at which they were incident to $[A_{xyz}]$.
}	
\end{figure}

\noindent
Topologically, the blow up of a 3-manifold along a simple closed curve $\gamma$ is homeomorphic to the space resulting from deleting a small regular neighborhood of $\gamma$ and identifying the resulting boundary torus by the map fixing the longitude direction and acting as the antipodal meridianally \ref{sec:Degen_and_Regen}.

Finally, we blow up along the remaining spaces in the intersection lattice: the coordinate hyperplanes themselves.
As codimension one objects, blowing up along these does not change the topology of the space and so we may ignore this step.
\end{example}

\noindent
To connect these constructions to the space $\overline{\fam{D}_n}$, we exploit that both are defined as graph closures into products of projective space.
In fact, the defining map for $\overline{\fam{D}_n}$ is actually a \emph{factor} of the map $\Psi$ in Definition \ref{def:Wonderful_Closure}, recording only projections onto $\RP^1$ factors.
Below, we show that this information is actually enough: if we know only the projection of a point $p\in \fam{Y}_\fam{A}$ onto the 1-dimensional factors, we can recover the point exactly.

\begin{proposition}
The projection 
$$\mathsf{proj}\colon \P V\times \prod_{X\in\fam{L}}\P(V/X)\to \P V\times \prod_{\scaleto{\mat{X\in\fam{L}\\\dim X=1}}{14pt}}\P(V/X)$$
is an injective when restricted to the arrangement model $Y_\fam{A}$.
\end{proposition}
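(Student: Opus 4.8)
The plan is to deduce the injectivity from a single \emph{reconstruction} statement: on $Y_\fam{A}$, the coordinate in every factor $\P(V/X)$ is already determined by the coordinates in the one-dimensional factors $\P(V/X)\cong\RP^1$ together with the $\P V$ factor. Recall from Definition \ref{def:Wonderful_Closure} that a point $y\in Y_\fam{A}$ is a limit of pairs $(p_t,F(p_t))$ with $p_t=[\lambda(t)]\in\P(V\smallsetminus\fam{A})$, so its coordinate $\nu_X(y)$ in $\P(V/X)$ is exactly $\lim_t F_X(p_t)$. For the coordinate arrangement every flat is $A_I=\bigcap_{i\in I}A_i$ with $V/A_I\cong\R^I$ the corresponding coordinate subspace, and $F_{A_I}([\lambda])=[\lambda_i : i\in I]$; the retained $\RP^1$-factors are precisely the codimension-two flats $A_{ij}=A_i\cap A_j$, whose coordinate is the pairwise ratio $[\lambda_i:\lambda_j]$ — that is, exactly the components of the map $\Psi$ from Lemma \ref{lem:Psi_Injective}.

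The reconstruction claim I would prove is: for each $I$, the point $\nu_{A_I}(y)\in\P(\R^I)$ is supported on the set of indices $i\in I$ whose coordinate is of \emph{slowest decay} within $I$, a condition that can be read off from the pairwise limits $\nu_{A_{ij}}(y)$ alone, and on this support its homogeneous coordinates agree with the corresponding $\nu_{A_{ij}}(y)$. To verify this, I would lift the convergent projective sequence $[\lambda^I(t)]$ to a sphere-normalized representative, pass to a subsequence converging to some $w_\infty$ with $[w_\infty]=\nu_{A_I}(y)$, and observe that an index $i$ lies in the support of $w_\infty$ exactly when $\lambda_i(t)$ does not decay strictly faster than every other coordinate in $I$; this is detected by whether the relevant $\nu_{A_{ij}}(y)$ is degenerate. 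The ratios among the surviving coordinates are then literally the pairwise limits. This is the same leading-block bookkeeping carried out in Proposition \ref{prop:Limit_Partitions}, now run inside the index set $I$, and it exhibits $\nu_{A_I}(y)$ as a single-valued function of the retained data.

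Granting the claim, injectivity is immediate: if $y,y'\in Y_\fam{A}$ have the same image under $\mathsf{proj}$, then they agree in $\P V$ and in every $\RP^1$-factor, hence by reconstruction in every factor $\P(V/X)$, so $y=y'$. I would also note that $\P V$ is itself the factor $\P(V/A_{\{1,\dots,n\}})$, so retaining it is harmless (indeed for $n\geq 3$ it too is recovered from the $\RP^1$-factors). The step I expect to be the crux is the reconstruction claim, and specifically the fact that it is special to the coordinate (Boolean) arrangement: because each $V/A_I$ is spanned by genuine coordinate functions, the leading direction of approach to $A_I$ is recoverable from pairwise comparisons, whereas for a general arrangement the higher factors of a wonderful model carry strictly more information than their $\RP^1$ projections. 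The delicate bookkeeping is to handle the closure rigorously — confirming that $\nu_{A_I}(y)$ is the simultaneous limit $\lim[\lambda_i:i\in I]$ along the \emph{same} sequence that produces the pairwise limits, so that the normalization and subsequence choices introduce no ambiguity (they cannot, since the values $\nu_{A_{ij}}(y)$ are already pinned down by $y$).
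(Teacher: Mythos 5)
Your proposal is correct and is essentially the paper's own argument: the proof in the text likewise reduces injectivity to the claim that the retained data determine the limit $\lim[\alpha_S]$ for every index set $S$, using the partition description of Proposition \ref{prop:Limit_Partitions} to identify the support as the slowest-decaying block within $S$ and to recover the values on that support from the pairwise ratios $p_{ij}$. The only cosmetic difference is that the paper organizes the bookkeeping through the global partition $J_1\cup\cdots\cup J_k$ and the points $L_m$ (noting, as you do, the equivalent direct reconstruction from the pairs $p_{ij}$ with $i,j$ in the leading block), whereas you run it inside $S$ directly.
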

\begin{proof}
This argument is just a finer analysis in the spirit of Lemma \ref{lem:Psi_Injective} again relying on the partition description of Proposition \ref{prop:Limit_Partitions}.
Let $([x_1],U_1)$ and $([x_2],U_2)$ be two points of $Y_\fam{A}$ projecting onto the same point $([y],p)$ of $\RP^{n-1}\times(\RP^1)^{\smat{n\\2}}$.
Comparing first coordinates, clearly $[x_1]=[x_2]=[y]$ and if $[y]\in\RP^{n-1}\smallsetminus \fam{A}$ then additionally $U_1=U_2=\F(y)$ as above the hyperplane the graph closure is simply the graph of $F$.

Thus, we assume $[y]\in\fam{A}$.
To show $U_1=U_2$, it suffices to show that the data $([y], V)$ completely determines the limiting value $\lim [\alpha_S]$ of the projective point with coordinates in $S\subset\{1,\ldots,n\}$ an arbitrary subset, for any path $\alpha$ with $\lim \Psi(\alpha)=([y],V)$.
Let $J_1\cup\cdots J_k=\{1,\ldots n\}$ and $L_m\in\RP^{|J_m|-1}$ be the partition and projective points corresponding to $V\in\overline{\mathsf{im}\Psi}$ as in Proposition \ref{prop:Limit_Partitions}, and let $\ell$ be the minimal value such that $S_\ell=S\cap J_\ell$ is nonempty.
Let $\alpha$ be any path with $\lim\Psi(\alpha(t))=V$.
Then $S_\ell$ contains the indices $i\in S$ for which $\alpha_i(t)$ goes to zero slowest, so $\lim [\alpha_S]$ has zeroes at all other indices.
The values corresponding to indices in $S_\ell$ can be read off of the limit point $L_\ell$ by simply projecting from $\RP^{|J_\ell|-1}$ to $\RP^{|S_\ell|-1}$ (equivalently, they may be reconstructed from the pairs $p_{ij}$ for $i,j\in S_\ell$ as in the proof of Proposition \ref{prop:Limit_Partitions}.
\end{proof}

\noindent
Because $Y_\fam{A}$ is compact and the codomain is Hausdorff, this immediately implies the following important corollary.

\begin{corollary}
The projection above restricts to a homeomorphism on $Y_\fam{A}$.
That is, $Y_\fam{A}$ is the closure of the graph of NEW NOTATION $\mathsf{proj}\circ F$, which records the position of points relative the $n-2$ dimensional coordinate hyperplanes.
\end{corollary}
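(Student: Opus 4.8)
The plan is to deduce this corollary formally from the injectivity proven in the preceding proposition, treating it as an application of the standard principle that a continuous bijection from a compact space onto a Hausdorff space is a homeomorphism. First I would record the two topological hypotheses. The domain $Y_\fam{A}$ is compact: by Definition \ref{def:Wonderful_Closure} it is the closure of a graph inside the finite product $\P V\times\prod_{X\in\fam{L}}\P(V/X)$ of projective spaces, hence a closed subset of a compact space. The target $\P V\times\prod_X\P(V/X)$, taken only over the one-dimensional factors $\P(V/X)\cong\RP^1$, is again a finite product of projective spaces and therefore Hausdorff. The map $\mathsf{proj}$ is a coordinate projection, so it is continuous, and its restriction to $Y_\fam{A}$ is injective by the proposition. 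A continuous injection from a compact space into a Hausdorff space is a homeomorphism onto its image, which gives the first assertion at once.

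For the second assertion I would identify the image $\mathsf{proj}(Y_\fam{A})$ with the graph closure of the composite. Writing $\hat F:=\mathsf{proj}\circ F$ for the map recording only the $\RP^1$ coordinates, note that on the arrangement complement $\mathsf{proj}$ carries the graph of $F$ onto the graph of $\hat F$, so $\mathsf{proj}(\Gamma_F)=\Gamma_{\hat F}$. Since $Y_\fam{A}=\overline{\Gamma_F}$ is compact and the target Hausdorff, the set $\mathsf{proj}(Y_\fam{A})$ is compact, hence closed, and it contains $\Gamma_{\hat F}$; therefore it contains $\overline{\Gamma_{\hat F}}$. Conversely, continuity of $\mathsf{proj}$ yields $\mathsf{proj}(\overline{\Gamma_F})\subseteq\overline{\mathsf{proj}(\Gamma_F)}=\overline{\Gamma_{\hat F}}$. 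The two inclusions force $\mathsf{proj}(Y_\fam{A})=\overline{\Gamma_{\hat F}}$, and the homeomorphism of the first part then identifies $Y_\fam{A}$ with the closure of the graph of $\hat F$.

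Finally I would spell out the payoff for the larger argument. The factors $\P(V/X)$ that are projective lines are exactly those indexed by the codimension-two coordinate subspaces $A_i\cap A_j$, and on these $\hat F$ returns precisely the pairwise ratios $[x_i:x_j]$. Thus $\hat F$ is literally the map $\Psi$ whose graph closure defines $\overline{\fam{D}_n}$, so this corollary completes the identification $\overline{\fam{D}_n}\cong\overline{\Gamma_\Psi}=Y_\fam{A}$, and hence establishes Theorem \ref{thm:Orthog_Closure_Main}.

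The only genuine obstacle is the injectivity of $\mathsf{proj}$ on $Y_\fam{A}$, but this is already handled in the proposition through the partition bookkeeping of Proposition \ref{prop:Limit_Partitions}; everything else here is the compact-to-Hausdorff principle together with the elementary observation that a continuous map sends a graph closure into the closure of the pushed-forward graph. The one point warranting a little care is the image computation above, where both inclusions must be checked, which is why I isolate it as its own step rather than asserting it.
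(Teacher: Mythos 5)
Your proposal is correct and follows essentially the same route as the paper: the paper derives the corollary in one line from the injectivity proposition via the compact-to-Hausdorff principle, exactly as you do. Your explicit two-inclusion verification that $\mathsf{proj}(Y_{\fam{A}})=\overline{\Gamma_{\mathsf{proj}\circ F}}$, and the identification of the $\RP^1$ factors with the codimension-two subspaces so that $\mathsf{proj}\circ F=\Psi$, are details the paper leaves implicit but are filled in correctly.
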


\noindent
But this map, as mentioned above, is precisely the map $\Psi$ defining $\overline{\fam{D}_n}$ as a graph closure, proving the main theorem.

\begin{theorem}
The Chabauty compactification $\overline{\fam{D}_n}$ is homeomorphic to the maximal De Concini Procesi wonderful compactification of the coordinate hyperplane arrangement in $\RP^{n-1}$.	
\end{theorem}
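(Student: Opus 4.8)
The plan is to recognize that the final theorem is essentially an assembly of the graph-closure descriptions already in hand, together with the injectivity statement just proved. Recall that by the chain of reductions culminating in Lemma~\ref{lem:Psi_Injective}, the Chabauty compactification is identified with a graph closure, $\overline{\fam{D}_n}\cong\overline{\Gamma_\Psi}\subset\RP^{n-1}\times(\RP^1)^{\smat{n\\2}}$, where $\Psi([\lambda_1:\cdots:\lambda_n])=([\lambda_i:\lambda_j])_{1\le i<j\le n}$. On the other side, the maximal wonderful model $Y_{\fam{A}_n}$ of the coordinate arrangement $\fam{A}_n$ in $\RP^{n-1}$ is, by Definition~\ref{def:Wonderful_Closure}, the closure of the graph of $F\colon\P(V\smallsetminus\fam{A}_n)\to\prod_{X\in\fam{L}}\P(V/X)$ with $V=\R^n$. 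The whole proof is then to show that $\Psi$ is exactly the composite of $F$ with the projection onto its one-dimensional projective factors, and that this projection restricts to a homeomorphism of $Y_{\fam{A}_n}$.

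First I would pin down the factors of $F$ for the coordinate arrangement. Writing $A_I=\bigcap_{i\in I}A_i=\{x_i=0\,\forall i\in I\}$, the element $A_I\in\fam{L}$ has $\dim(V/A_I)=|I|$, so $\P(V/A_I)=\RP^{|I|-1}$. The codimension-one hyperplanes $A_i$ ($|I|=1$) contribute only trivial $\RP^0$ factors and may be discarded; the codimension-two intersections $A_{ij}$ ($|I|=2$) contribute the $\RP^1$ factors. Under the natural identification of $V/A_{ij}$ with the $(x_i,x_j)$-plane, the corresponding component of $F$ is precisely $[\lambda_i:\lambda_j]$. Hence, projecting $F$ onto $\P V$ together with all of its $\RP^1$ factors returns exactly $([\lambda],([\lambda_i:\lambda_j])_{i<j})$, i.e.\ the defining map of $\overline{\Gamma_\Psi}$. (The one point demanding care is exactly this identification $V/A_{ij}\cong(x_i,x_j)$-plane, to be sure the wonderful-model component is the untwisted ratio $[\lambda_i:\lambda_j]$ rather than some permuted or dualized version; this is a routine check on the coordinate arrangement.)

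Next I would invoke the injectivity proposition proved just above, which states that the projection $\mathsf{proj}$ onto $\P V$ and the $\RP^1$ factors is injective on $Y_{\fam{A}_n}$. Since $Y_{\fam{A}_n}$ is compact and the codomain Hausdorff, $\mathsf{proj}|_{Y_{\fam{A}_n}}$ is a homeomorphism onto its image. To identify that image with $\overline{\Gamma_\Psi}$ I would run the standard closure argument: $\mathsf{proj}$ carries the dense graph $\Gamma_F\subset Y_{\fam{A}_n}$ onto $\Gamma_\Psi$, so by continuity $\mathsf{proj}(Y_{\fam{A}_n})\subseteq\overline{\Gamma_\Psi}$, while $\mathsf{proj}(Y_{\fam{A}_n})$ is compact hence closed and contains $\Gamma_\Psi$, giving the reverse inclusion $\overline{\Gamma_\Psi}\subseteq\mathsf{proj}(Y_{\fam{A}_n})$. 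Thus $\mathsf{proj}\colon Y_{\fam{A}_n}\to\overline{\Gamma_\Psi}$ is a homeomorphism, and chaining with $\overline{\Gamma_\Psi}\cong\overline{\fam{D}_n}$ completes the identification.

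The only genuinely substantive step is the injectivity of $\mathsf{proj}$ on $Y_{\fam{A}_n}$ — that the relative positions with respect to the codimension-two intersections already determine the positions with respect to every deeper intersection. This is the heart of the matter, but it is precisely the preceding proposition, whose proof rests on the partition description of limit points in Proposition~\ref{prop:Limit_Partitions}; with that in hand the remaining work here is formal point-set topology (compact-to-Hausdorff bijections and graph-closure bookkeeping). I therefore expect no further obstacle: the theorem follows by combining the two graph-closure realizations, the factorization $\Psi=\mathsf{proj}\circ F$, and the established injectivity.
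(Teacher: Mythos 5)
Your proposal is correct and follows essentially the same route as the paper's own proof: identify $\overline{\fam{D}_n}$ with the graph closure $\overline{\Gamma_\Psi}$, observe that after discarding the trivial $\RP^0$ factors the wonderful-model map factors as $\Psi=\mathsf{proj}\circ F$ (with the codimension-two intersections $A_{ij}$ supplying exactly the $[\lambda_i:\lambda_j]$ coordinates), and combine the injectivity of $\mathsf{proj}$ on $Y_{\fam{A}}$ — proved via the partition description of limit points — with compactness of $Y_{\fam{A}}$ and Hausdorffness of the codomain to get a homeomorphism onto $\overline{\Gamma_\Psi}$. Your explicit closure bookkeeping identifying $\mathsf{proj}(Y_{\fam{A}})$ with $\overline{\Gamma_\Psi}$ merely spells out what the paper leaves implicit in its concluding corollary.
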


\section{$\overline{\fam{D}_3}$: An Example}

%\section{$\overline{\fam{O}}$ and $\overline{\fam{D}}$ in Low Dimensions}
\label{sec:Orthog_Low_Dims}

The space $\overline{\fam{D}_3}$ was described above in Example \ref{ex:BlowUp_RP2} as the blowup of $\RP^2$ at three points.
Here we look a bit more in detail at this space, describing its cellulation and the limit groups attached to each cell.
Consider first $p=[0:0:1]\in\RP^2$, and the $\RP^1$ fiber $\{([x:y],[0:1],[0:1])\}$ lying above $[p]$.
This $\RP^1$ is divided into two components by the points $[1:0]$ and $[0:1]$ (corresponding to the hyperplanes $y=0$ and $x=0$ intersecting at $p$ in $\RP^1$)
Locally, we can construct this space by cutting out a small neighborhood of $[p]\in\RP^2$ and identifying the boundary via the antipodal map.

\begin{figure}
\centering
\includegraphics[width=0.5\textwidth]{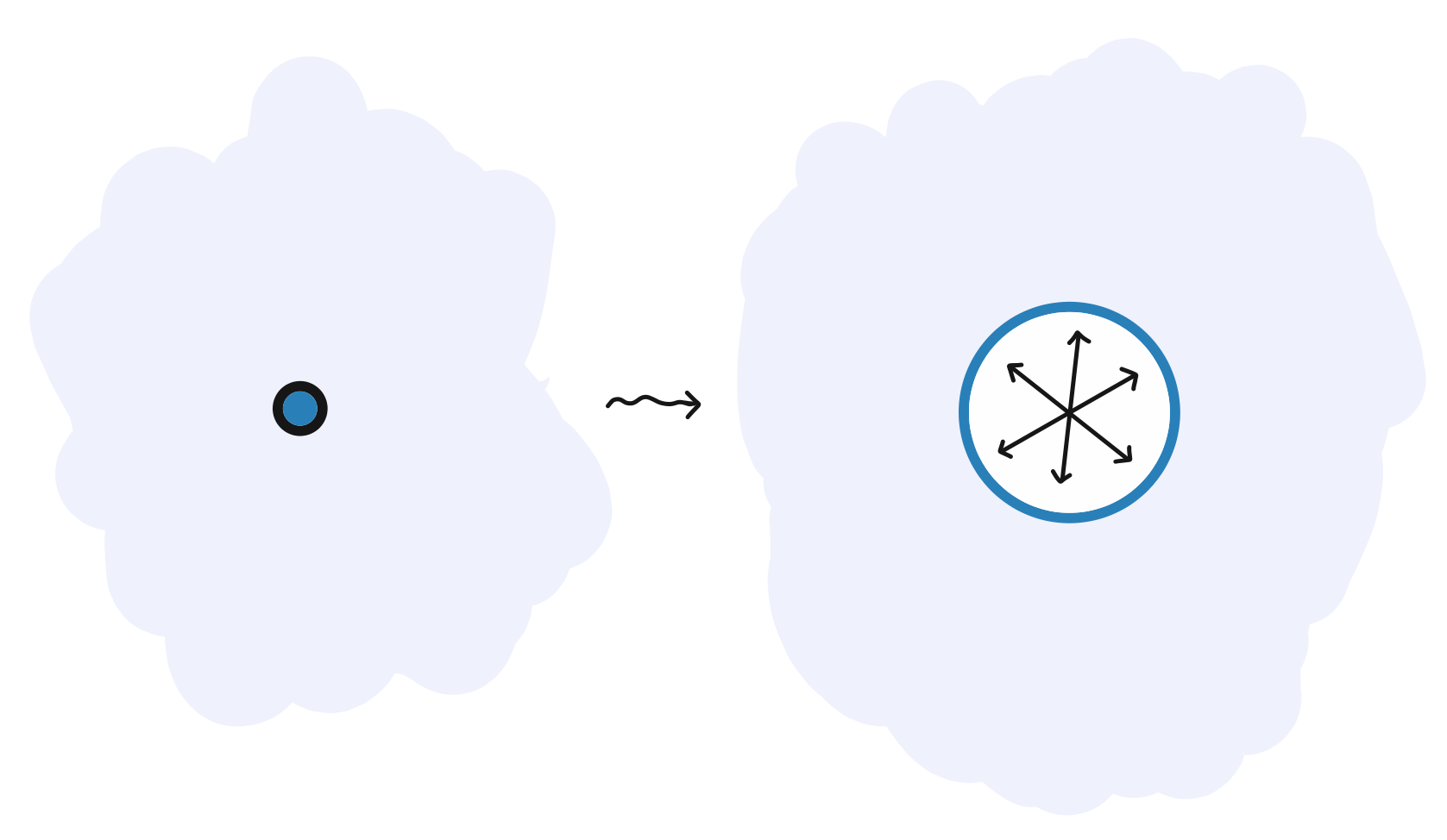}
\caption{Blow up at point construction}
\end{figure}

\begin{observation}
The four triangles from the original cellulation of $\RP^2$ appear as hexagons in the closure, as each vertex of the original tiling has been replaced with a circle subdivided into two edges, and each triangle adjacent to that vertex picks up an edge from this.
\end{observation}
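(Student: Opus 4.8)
The plan is to read off the claim directly from the local model of the real blowup (the construction of $\Bl_0(\R^2)$ in Definition~\ref{def:Blowup_Product}, and the blowup description of Definition~\ref{def:Wonderful_Blow_Up}) together with the combinatorics of the coordinate arrangement in $\RP^2$. First I would record the two facts about the uncompactified cellulation that drive everything: each of the three vertices $[1:0:0],[0:1:0],[0:0:1]$ is the intersection of exactly two of the three coordinate lines, and each of the four triangles has exactly one corner at each vertex. The corner count $4\cdot 3 = 12 = 3\cdot 4$ forces this, and it is transparent in the double cover, where the four triangles are the projectivized octants and the four octants meeting the ray through a fixed vertex represent all four triangles.

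Next I would analyze the blowup at a single vertex, say $p=[0:0:1]$, in the affine patch $z=1$ with coordinates $(x,y)$ so that $p$ is the origin and the two lines through it are the axes $\{x=0\}$ and $\{y=0\}$. The fiber of $\overline{\fam{D}_3}\to\RP^2$ over $p$ is the exceptional circle $E_p=\{([x:y],[0:1],[0:1])\}\cong\RP^1$, carrying the two marked points $[1:0]$ and $[0:1]$ that record the directions of the two lines through $p$; these subdivide $E_p$ into two arcs, the two new edges. The key local computation is that a point approaching $p$ along a direction $\theta$ lifts in $\Bl_0(\R^2)$ to the point of $E_p$ labelled $[\theta]$; hence the closure of a triangle having a corner at $p$ meets $E_p$ in exactly the arc of directions spanned by that corner's angular sector, so the corner point is replaced by an edge.

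The step I expect to be the main obstacle is bookkeeping the antipodal identification correctly, so that $E_p$ has two edges rather than four and the result is a genuine manifold cellulation. The two lines cut a punctured neighborhood of $p$ into four sectors, but since $[\theta]=[\theta+\pi]$ in $\RP^1$ the two \emph{opposite} sectors map to the \emph{same} arc of $E_p$; this is precisely the ``cut out a neighborhood and reglue the boundary by the antipodal map'' picture noted just above, i.e.\ the replacement of a disk by a M\"obius band. I would then check that two opposite sectors belong to two \emph{distinct} triangles: the sectors at $[0:0:1]$ of signs $(+,+)$ and $(-,-)$ lie in octants $(+,+,+)$ and $(-,-,+)\sim(+,+,-)$, while $(+,-)$ and $(-,+)$ lie in $(+,-,+)$ and $(-,+,+)\sim(+,-,-)$. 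Thus each of the two arcs of $E_p$ is the new edge shared by exactly two of the four triangles, confirming that each arc borders two faces as required.

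Finally I would assemble the global statement. Since each of the four triangles has one corner at each of the three vertices, and blowing up a vertex truncates that corner --- replacing the corner point by one arc of the corresponding exceptional circle --- each triangle acquires three new edges, one per vertex. Adjoining these to its three original edges turns each triangle into a hexagon, which is the assertion. As a consistency check I would match counts against the inductive decomposition $\overline{\fam{D}_3}=(\fam{S}_3^0\times\fam{S}_2^0)\cup(\fam{S}_3^0\times\fam{S}_2^1)\cup\fam{S}_3^1\cup\fam{S}_3^2$ computed earlier, in which each original vertex contributes two points and two edges on its exceptional circle.
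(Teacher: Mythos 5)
Your proposal is correct and follows essentially the same route as the paper: the local model of the blowup at each of the three vertices, with the exceptional $\RP^1$ divided into two arcs by the directions $[1:0],[0:1]$ of the two coordinate lines through that vertex, and the antipodal identification explaining why opposite sectors (which lie in distinct triangles, by the sign-pattern check) attach along the same arc, so each triangle's corner is truncated to one new edge per vertex, yielding $3+3=6$ sides. Your additional bookkeeping — the corner count $4\cdot 3=3\cdot 4$ and the match against the inductive cell decomposition of $\overline{\fam{D}_3}$ — is a careful fleshing-out of the paper's sketch rather than a different argument.
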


\begin{figure}
\centering
\includegraphics[width=0.75\textwidth]{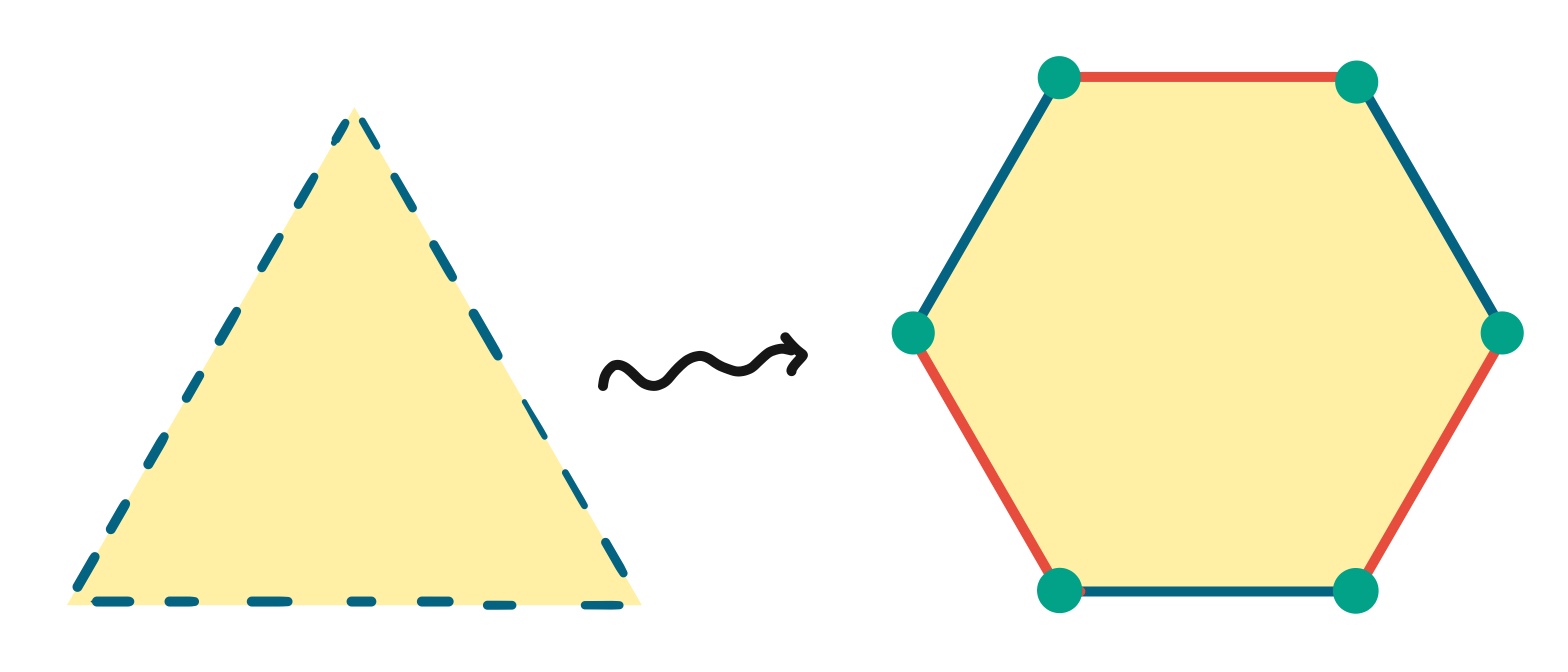}
\caption{The open trianglular cells $\fam{S}_3^2$ of $\RP^2$ have closure $\overline{\fam{S}_3^2}$ a hexagon, with the three new sides composed of half of each $\RP^1$ added in the blowup.}
\end{figure}

\begin{corollary}
The closure $\overline{\fam{D}_3}$ is tiled by four hexagons.	
\end{corollary}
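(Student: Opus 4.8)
The plan is to read off the tiling directly from the blow-up description of $\overline{\fam{D}_3}$ together with the cell decomposition already computed. Recall from Example \ref{ex:BlowUp_RP2} that $\overline{\fam{D}_3}$ is the blow-up of $\RP^2$ at the three coordinate points $[1:0:0]$, $[0:1:0]$, $[0:0:1]$, and that the projection $\pi\colon\overline{\fam{D}_3}\to\RP^2$ is a homeomorphism away from these three points, with fiber $\pi^{-1}([e_i])\cong\overline{\fam{D}_2}\cong\RP^1$ over each. The coordinate arrangement $\fam{A}$ cellulates $\RP^2$ into the four open triangles $\fam{S}_3^2$ (the sign classes $(+,+,+)$, $(+,+,-)$, $(+,-,+)$, $(-,+,+)$), the six edges $\fam{S}_3^1$, and the three vertices $\fam{S}_3^0$; a quick sign-pattern check shows each of the four triangles is incident to all three vertices, and each triangle has exactly three edges and three vertices.

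First I would analyze the blow-up locally at a single vertex, say $p=[0:0:1]$, since this is where the only genuinely projective bookkeeping occurs. The exceptional fiber $E=\pi^{-1}(p)\cong\RP^1$ parametrizes the directions (lines through $p$), and the two coordinate lines $\{x=0\}$ and $\{y=0\}$ passing through $p$ meet $E$ at the two points $[0:1]$ and $[1:0]$, cutting $E$ into exactly two arcs. The point to emphasize is that, because $[x:y]=[-x:-y]$ in $\RP^1$, the four local sectors at $p$ distribute over only two arcs: approaching $p$ from the triangles $(+,+,+)$ and $(+,+,-)$ yields directions filling the same arc $\{xy\geq 0\}$, while $(+,-,+)$ and $(-,+,+)$ fill the complementary arc $\{xy\leq 0\}$. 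Consequently the proper transform of each of the four triangles acquires exactly \emph{one} new edge on $E$.

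Finally I would assemble the global picture. Each open triangle lifts homeomorphically to its proper transform, an open $2$-cell, and its closure is bounded by the proper transforms of its three original edges alternating with the three new exceptional edges picked up at its three vertices --- six edges in all, hence a hexagon. The four proper transforms have pairwise-disjoint interiors and cover $\overline{\fam{D}_3}$, since the open triangles cover the dense set $\RP^2\smallsetminus\fam{A}$ and the blow-up only adds the exceptional divisors, which lie in the hexagons' boundaries. This matches the earlier count of four $2$-cells, twelve edges, and six vertices. The main obstacle is precisely the projective bookkeeping of the second paragraph: one must resist the affine intuition that two lines through a point cut the circle of directions into four arcs, and instead track that in $\RP^1$ they cut it into two, each shared by a pair of opposite triangles.
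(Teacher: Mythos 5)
Your proof is correct and follows essentially the same route as the paper: both realize $\overline{\fam{D}_3}$ as the blow-up of $\RP^2$ at the three coordinate points, observe that each exceptional $\RP^1$ is cut into exactly two arcs by the two coordinate lines through the blown-up point (the antipodal identification $[x:y]=[-x:-y]$ being the key projective point), and conclude that each of the four triangles picks up one exceptional edge at each of its three vertices, yielding four hexagons. Your explicit sign-pattern bookkeeping at the vertex is a slightly more detailed version of the paper's observation, but the argument is the same.
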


\noindent
These hexagons meet two to an edge, and four to a vertex, as can be seen by considering the blowup of the original triangle tiling of $\RP^2$ at a vertex.
Geometrically we may choose these to be equilateral right angled hexagons in the hyperbolic plane, and give $\overline{\fam{D}_3}$ a natural hyperbolic structure.

\begin{figure}
\centering
\includegraphics[width=0.4\textwidth]{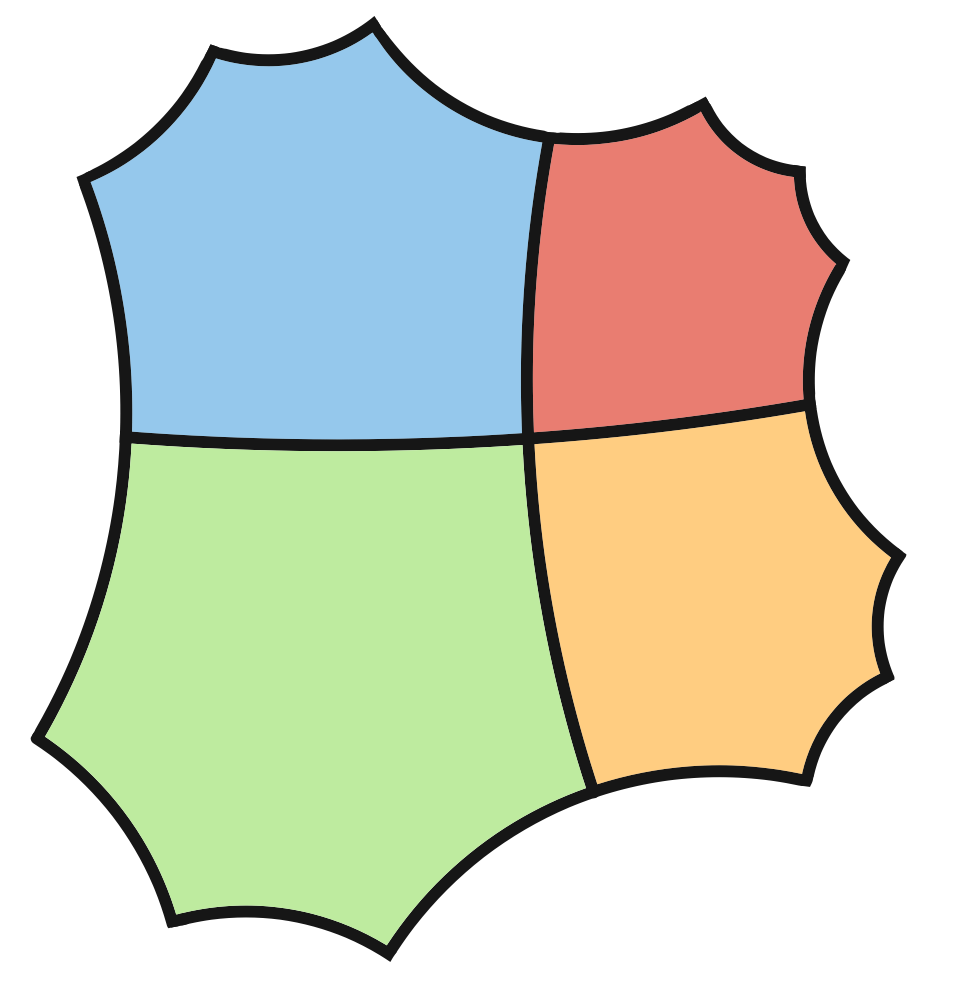}
\caption{Tiling of $\overline{\fam{D}_3}$ by Hexagons.}	
\end{figure}

\noindent
Now we turn to understand the groups parameterized by $\overline{\fam{D}_3}$, which classifies the limits of quadratic form geometries in dimension 2.
The points in the graph closure $\overline{\Gamma_\Psi}$ directly represent Lie subalgebras of $\gl(n;\R)$ under the identification $\eta\colon (\RP^1)^{\smat{n\\2}}\to\Gr(\smat{n\\2},n^2)$ of Proposition \ref{prop:Factor_Eta}.
Three of the four triangles (those containing the points $[1:1:-1]$, $[1:-1:1]$ and $[-1:1:1]$) all contain conjugates of $\SO(2,1)$ and are related by conjugation via a permutation matrix.
Thus it suffices to analyze only one of these, the diagonal conjugates of $[1:1:-1]$.
The remaining triangle containing $[1:1:1]$ parameterizes diagonal conjugates of $\O(3)$.

\begin{proposition}
The conjugacy limits of $\O(3)$ in $\GL(3;\R)$ are the Euclidean group $\Euc(2)$, its contragredient representation $\Euc(2)^{-T}$, and the real Heisenberg group.	
\end{proposition}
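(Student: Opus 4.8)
The plan is to read off the conjugacy limits of $\O(3)$ directly from the cellulation of the single hexagon whose interior parameterizes the diagonal conjugates of $\O(3)$, namely the image of the positive orthant $\{[\lambda_1:\lambda_2:\lambda_3]\mid \lambda_i>0\}$ together with the three new sides introduced by the blowup at the coordinate vertices. Since points lying in a common cell give conjugate groups, and since by the $\O(n)$-orbit proposition every conjugacy limit of $\O(3)$ is conjugate to a point of $\overline{\fam{D}_3}$, it suffices to evaluate the limiting Lie algebra at one representative of each boundary cell of this hexagon and identify its exponential. For this I would use the decomposition $\so(\diag(\lambda_1,\lambda_2,\lambda_3))=\bigoplus_{i<j}\eta_{ij}([\lambda_i:\lambda_j])$ from Proposition \ref{prop:Factor_Eta}, so that a boundary point recorded by its triple of ratios $([\lambda_i:\lambda_j])_{i<j}$ has limiting algebra $\bigoplus_{i<j}\span\{y_{ij}e_{ij}-x_{ij}e_{ji}\}$.

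I would then treat the two kinds of codimension-one cell, which are the three original triangle edges and the three new sides coming from the blowup. A point on an original edge, where one coordinate vanishes — say $([\lambda_1:\lambda_2],[1:0],[1:0])$ corresponding to $\lambda_3\to 0$ — has limit algebra $\span\{\lambda_2 e_{12}-\lambda_1 e_{21},\,e_{31},\,e_{32}\}$; the upper-left block is $\SO(2)$-conjugate to a rotation generator and the remaining two generators sit in the bottom row, so this exponentiates to the contragredient Euclidean group $\Euc(2)^{-T}$. A point on a blowup side, where the vanishing coordinate instead diverges — say $([u:v],[0:1],[0:1])$ over the vertex $[0:0:1]$ — has limit algebra $\span\{v e_{12}-u e_{21},\,e_{13},\,e_{23}\}$, the standard $\euc(2)$ with translations in the top-right column, exponentiating to $\Euc(2)$ (consistent with the direct calculation of Section \ref{sec:Hyp_Sph_Transition}). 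The crucial point to record here is that these two cells are genuinely distinct as subgeometries: one fixes a point of the affine patch and the other a hyperplane, and they are interchanged by the outer transpose-inverse automorphism, hence not conjugate in $\GL(3;\R)$.

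Next I would handle the six vertices of the hexagon, the codimension-two cells where all three ratios are degenerate. At such a vertex, e.g. $([1:0],[0:1],[0:1])$, the limit algebra is $\span\{e_{21},\,e_{13},\,e_{23}\}$; a short bracket computation gives $[e_{21},e_{13}]=e_{23}$ with all other brackets vanishing, so this is a Heisenberg Lie algebra, and conjugating by the permutation matrix exchanging the first two coordinates carries it to the strictly upper-triangular algebra $\span\{e_{12},e_{13},e_{23}\}$, whose exponential is the real Heisenberg group $\Heis$. This is exactly the unique most degenerate limit predicted by Theorem \ref{thm:Orthog_Closure_Main}.

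Assembling these, the boundary of the $\O(3)$-hexagon carries three edges of each type and six vertices, while the interior contributes only conjugates of $\O(3)$ and no new limit, so up to conjugacy the limits are precisely $\Euc(2)$, $\Euc(2)^{-T}$, and $\Heis$. I expect the main obstacle to be bookkeeping rather than any deep difficulty: one must check that diagonal conjugacy within a fixed cell never alters the isomorphism type (so a single representative per cell suffices), and one must carefully separate $\Euc(2)$ from its contragredient $\Euc(2)^{-T}$ — abstractly isomorphic yet inequivalent as subgeometries of $\RP^2$ — to be certain the list has the correct length. A minor point to note is that these Lie-algebra computations compute the connected geometric limit; since we work up to local isomorphism, this identifies the limit groups as claimed.
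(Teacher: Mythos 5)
Your proposal is correct and follows essentially the same route as the paper's proof: both read the limits off the boundary cells of the closed $\O(3)$-hexagon in $\overline{\fam{D}_3}$, using the $\Psi$/$\eta$ parameterization to compute the limiting Lie algebras on the three original triangle edges (giving $\Euc(2)^{-T}$), the three blowup edges (giving $\Euc(2)$), and the six vertices (giving $\heis$). The only differences are cosmetic --- you evaluate a single representative per cell type and invoke permutation conjugacy where the paper writes out all three edge families and six vertex algebras explicitly, and you make explicit some checks the paper leaves implicit (the Heisenberg bracket computation, and the non-conjugacy of $\Euc(2)$ with its contragredient via the fixed point/invariant hyperplane distinction).
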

\begin{proof}
The boundary of the hexagon containing diagonal conjugates of $\O(3)$ contains six line segments: three of which are lifts of the original three sides of the triangle (containing the images of $[x:y:0],[x:0:z],[0:y:z]$).
The image of these edges under $\Psi$ in $(\RP^1)^3$ are 
$$([x:y],[1:0],[1:0]),\hspace{0.25cm} ([1:0],[x:z],[0:1]),\hspace{0.25cm} ([0:1],[0:1],[y:z])$$
for $x,y,z$ all positive.
Following by $\eta\colon(\RP^1)^3\to\Gr(3;9)$ reconstitutes the corresponding Lie algebras, denoted below with $u_1,u_2,u_3$ ranging over $\R$ exactly as in Example \ref{ex:Computing_Limits}.

$$
\pmat{
0& y u_1 &0\\-xu_1 &0&0\\u_2&u_3&0
},
\hspace{0.5cm}
\pmat{
0&0& zu_1\\
u_2&0&u_3\\
-xu_1&0&0
},
\hspace{0.5cm}
\pmat{
0&0&0\\
u_2&0&zu_1\\
u_3&-yu_1&0
}
$$

\noindent
These Lie algebras are all isomorphic, and in fact conjugate in $\gl(3;\R)$ to the Lie algebra for the contragredient
representation of the Euclidean group $\smat{0&u_1&0\\-u_1&0&0\\u_2&u_3&0}$.
The three remaining edges of the hexagon lie in the blow up above the vertices $[0:0:1],[0:1:0],[1:0:0]$. 

$$
([x:y],[0:1],[0:1]),\hspace{0.25cm}
([0:1],[x:z],[1:0]),\hspace{0.25cm}
([1:0],[1:0],[y:z])
$$

These are the points of $(\RP^1)^3$, which correspond under $\eta$ to Lie algebras, all of which are conjugate to that of the Euclidean group $\smat{0&u_1&u_2\\-u_1&0&u_3\\0&0&0}$.

$$
\pmat{
0& y u_1 &u_2\\-xu_1 &0&u_3\\0&0&0
},
\hspace{0.5cm}
\pmat{
0&u_2& zu_1\\
0&0&0\\
-xu_1&u_3&0
},
\hspace{0.5cm}
\pmat{
0&u_2&u_3\\
0&0&z u_1\\
0&-y u_1& 0
}
$$

Finally we come to the vertices of the hexagon, which are represented by the points of $(\RP^1)^3$ with each coordinate equal to $[0:1]$ or $[1:0]$.
For instance the sequence $p_t=[1:t:t^2]$ limits to $[0:0:1]$ and $\psi(p_t)=([1:t],[1:t^2],[t:t^2])$, which has limit $([1:0],[1:0],[1:0])$.
The Lie algebras corresponding to these points are all conjugate, and represent the Lie algebra of the Heisenberg group.

$$
\pmat{0&u_1&u_2\\0&0&u_3\\0&0&0}
\hspace{0.5cm}
\pmat{0&u_1&0\\0&0&0\\u_2&u_3&0}
\hspace{0.5cm}
\pmat{0&u_1&u_2\\0&0&0\\0&u_3&0}
$$
 
 $$
\pmat{0&0&u_2\\u_1&0&u_3\\0&0&0}
\hspace{0.5cm}
\pmat{0&0&0\\u_1&0&u_3\\u_2&0&0}
\hspace{0.5cm}
\pmat{0&0&0\\u_1&0&0\\u_2&u_3&0}
$$
\end{proof}

\begin{figure}
\centering
\includegraphics[width=\textwidth]{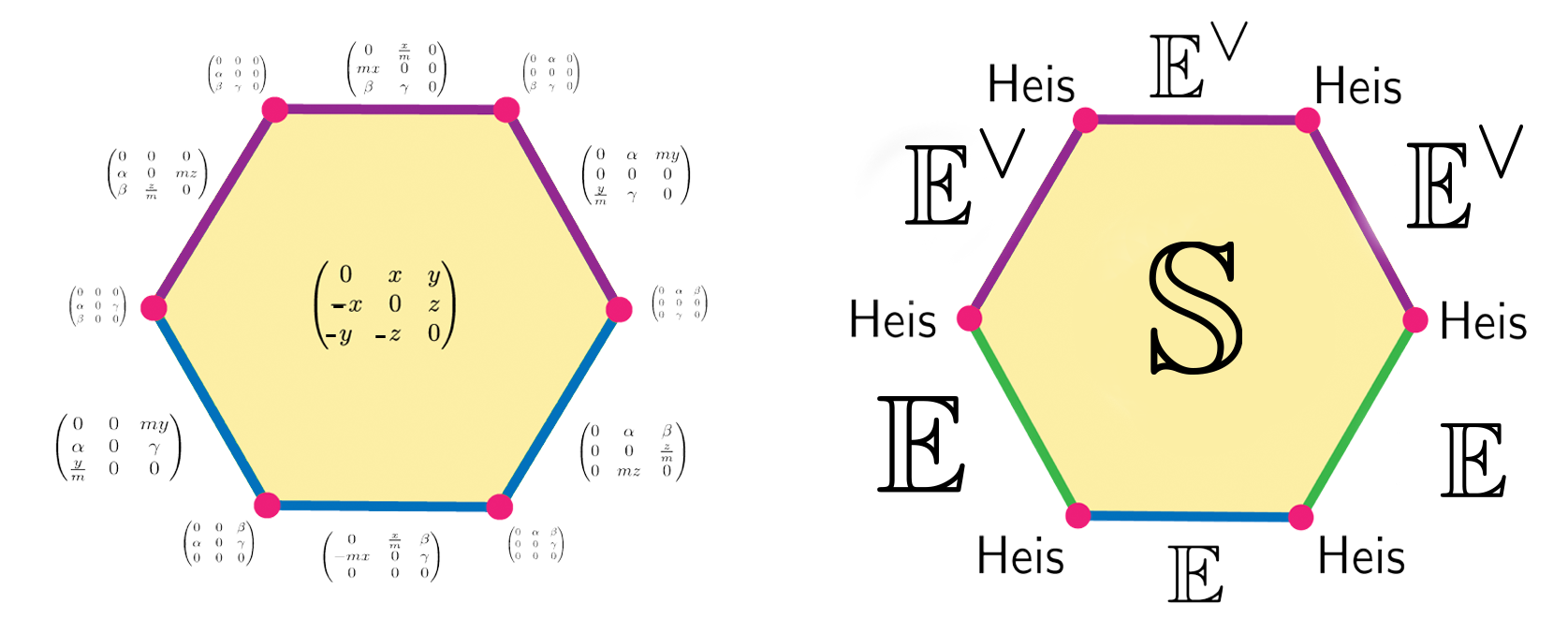}
\caption{Limits of $\SO(3)$ in $\GL(3;\R)$.  Lie algebras on the left, isomorphism types on the right.}	
\end{figure}

\noindent
This analysis gives a combinatorial description of the points lying in the boundary of the hexagon: they are given by partitions of $(x,y,z)$ into subsets which converge towards zero at different rates.
The interior corresponds to none of $x,y,z$ diverging.
The edges correspond to $(x,y,z)$ being partitioned into two sets, one going to $0$ and the other remaining bounded.
The edges from the original triangle correspond to having two remain bounded and the third go to zero.
The edges coming from the blowup construction represent the limits along paths with a single value remaining bounded and the other two going to zero.
The vertices correspond to strict orderings $x>y>z$ of which there are six.
A nearly identical story plays out for the limits of $\O(2,1)$.

\begin{proposition}
The distinct limits of $\O(2,1)$ as a subgroup of $\GL(3;\R)$ are the isometries of Euclidean and Minkowski space, their contragredient representations, and the real Heisenberg group.
\end{proposition}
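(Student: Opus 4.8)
The plan is to run the same argument as the preceding proposition, now on the portion of $\overline{\fam{D}_3}$ parameterizing diagonal conjugates of $\O(2,1)$. Recall that $\overline{\fam{D}_3}$ is the blowup of $\RP^2$ at three points, tiled by four hexagons, and that the three hexagons surrounding $[1:1:-1]$, $[1:-1:1]$ and $[-1:1:1]$ all contain diagonal conjugates of $\O(2,1)$ and are carried into one another by permutation matrices. It therefore suffices to analyze a single hexagon, say the closure of the open cell containing $\diag(1,1,-1)$, and to identify the isomorphism type of the limiting Lie algebra attached to each of its six boundary edges and six vertices.

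First I would record the relevant Lie algebra. By the Remark, for the diagonal form $J=\diag(x,y,-z)$ with $x,y,z>0$ one has $\so(J)=\span\{y e_{12}-x e_{21},\ -z e_{13}-x e_{31},\ -z e_{23}-y e_{32}\}$, which differs from $\so(3)=\so(\diag(x,y,z))$ exactly by the sign flips on the terms carrying the third coordinate --- precisely the phenomenon already seen when comparing $\lim C_t\so(2,1)C_t\inv$ with $\lim C_t\so(3)C_t\inv$ in the $\Hyp^2\to\E^2\leftarrow\S^2$ transition. I would then take limits along the six boundary edges exactly as before, reconstituting each limiting subalgebra via $\eta\colon(\RP^1)^3\to\Gr(3,9)$. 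The crucial difference from the $\O(3)$ case is that the degeneration of a \emph{positive} coordinate leaves a surviving generator of the form $z e_{23}+y e_{32}$ (a Lorentzian boost) rather than the rotation $z e_{23}-y e_{32}$ obtained in the definite case, whereas degeneration of the \emph{negative} coordinate still leaves the rotation $y e_{12}-x e_{21}$ in the surviving $2\times 2$ block.

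Tracking these signs, I expect the three original-triangle edges to split into one edge (the negative coordinate degenerating) giving the contragredient Euclidean algebra $\smat{0 & u_1 & 0\\-u_1&0&0\\u_2&u_3&0}$, and two edges (a positive coordinate degenerating, exchanged by the residual $S_2$ symmetry swapping coordinates $1$ and $2$) giving the isometry algebra of Minkowski space or its contragredient; the three blowup edges should split the same way into Euclidean- and Minkowski-type representations and their transposes, while the six vertices --- corresponding to the strict orderings $x>y>z$ --- all collapse to the Heisenberg algebra $\smat{0&u_1&u_2\\0&0&u_3\\0&0&0}$ as in the definite case, since the most degenerate limit is insensitive to signature. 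The main obstacle is purely the sign bookkeeping: one must confirm that \emph{both} the Euclidean and Minkowski isometry groups appear, together with \emph{both} of their contragredient representations, and that the reduced symmetry (the $S_2$ stabilizing $\diag(1,1,-1)$ together with the permutations relating the three hexagons) genuinely accounts for every edge. Assembling the six edge-types and the vertex-type then yields the asserted list: the isometries of Euclidean and Minkowski space, their contragredient representations, and the real Heisenberg group.
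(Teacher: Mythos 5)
Your proposal is correct and follows the paper's own argument essentially verbatim: restrict to the hexagon over $[1:1:-1]$ by permutation symmetry, track the sign flips in $\so(\diag(x,y,-z))$, reconstitute the limiting Lie algebras of the six boundary edges via $\eta\colon(\RP^1)^3\to\Gr(3,9)$, and identify all six vertices with the Heisenberg algebra. The sign bookkeeping you flag resolves exactly as you anticipate --- the three original-triangle edges yield the contragredient Euclidean algebra and two contragredient Minkowski algebras, while the three blowup edges yield the standard Euclidean and Minkowski representations --- giving precisely the asserted list of five isomorphism types.
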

\begin{proof}
Again the distinct limits correspond to distinct types of cells in the boundary, which correspond to different partial orderings on the coordinates.
By our choice to consider the triangle containing diagonal conjugates of $[1:1:-1]$, our coordinates are 
$x,y$ and $-z$ for $x,y,z>0$.
The three original edges of this triangle appear in the closure of $\Gamma_\Psi$ as the points
$$([x:y],[0:1],[0:1]),([1:0],[x:-z],[0:1]),([0:1],[0:1],[y:-z])$$
These correspond to the following Lie algebras,
which are \emph{not} all isomorphic: the first family are conjugates of the contragredient representation of the Euclidean group, and the second two families contain contragredient representations of of the isometries of $1+1$ dimensional Minkowski space.

$$
\pmat{
0& y u_1 &0\\-xu_1 &0&0\\u_2&u_3&0
},
\hspace{0.5cm}
\pmat{
0&0& zu_1\\
u_2&0&u_3\\
xu_1&0&0
},
\hspace{0.5cm}
\pmat{
0&0&0\\
u_2&0&zu_1\\
u_3&yu_1&0
}
$$

\noindent
The three remaining edges of the hexagon lie in the blow up above the vertices $[0:0:1],[0:1:0],[1:0:0]$. 
These are the following points of $(\RP^1)^3$, which correspond to Lie algebras in two isomorphism classes, depending on the relative divergence rates of $x,y,z$.
The first case gives conjugates of the Euclidean group, and the second two give conjugates of the Minkowski group.
$$
([x:y],[0:1],[0:1]),\hspace{0.25cm}
([0:1],[x:-z],[1:0]),\hspace{0.25cm}
([1:0],[1:0],[y:-z])
$$
Again, the six vertices all represent conjugates of the Lie algebra of the Heisenberg group.
\end{proof}

\begin{figure}
\centering
\includegraphics[width=\textwidth]{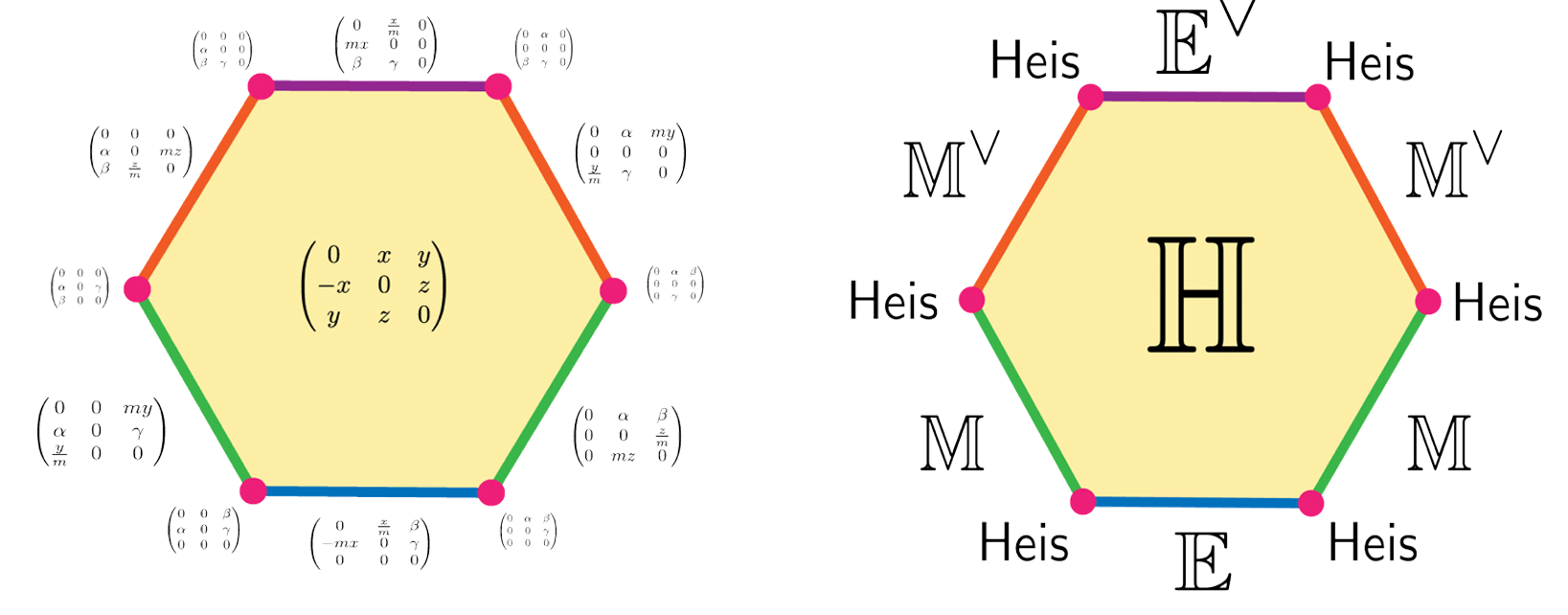}
\caption{Limits of $\SO(2,1)$ in $\GL(3;\R)$.}	
\end{figure}

This sort of analysis continues in higher dimensions.
For $n=4$, the coordinate hyperplane complement in $\RP^3$ is a union of 8 3-simplices, and taking the closure amounts to blowing up along the vertices and then the 1-cells of the cellulation in Figure \ref{fig:16Cell}.
The closure of each of the original open 3 simplices has boundary with the combinatorial structure of a permutohedron.
In general, the closure of a simplex in $\fam{S}_n^{n-1}$ of $\RP^{n-1}\smallsetminus \fam{A}$ has the structure of an omnitruncated simplex, whose boundary is the $n-1$ dimensional permutohedron.

\begin{figure}
\centering
\includegraphics[width=0.95\textwidth]{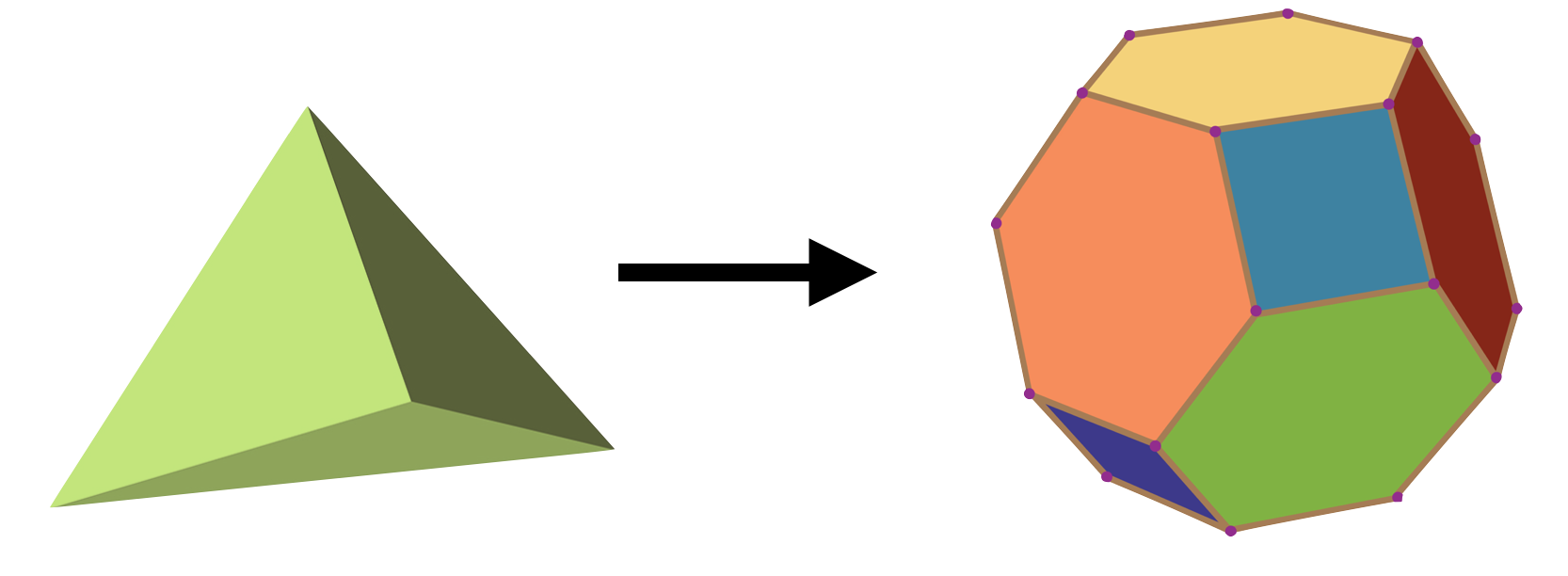}
\caption{The simplex in $\fam{D}_4$ containing all diagonal conjugates of $\O(\diag(1,1,1-1))$ in $\GL(4;\R)$, and its closure in $\overline{\fam{D}_4}$.
Recording the isomorphism type of points in the boundary recovers the limits of $\Hyp^3$ in $\RP^3$.}
\end{figure}

\begin{comment}

\begin{proposition}
Inverse transpose acts on $\overline{\fam{D}_3}$ DESCRIBE ACTION: FIXED POINTS MIDDLE OF HEXAGONS, ACTS AS ANTIPODAL MAP ON EACH.
\end{proposition}

\end{comment}

%\subsection{Orthogonal Groups in $\GL(4;\R)$}

%Write this Section.
%Don't focus on listing all the groups, more just recap of the topology of the space, and the copies of $\fam{D}_3$ inside of it.

%NEWCHAPTER: THE HEISENBERG PLANE
\chapter{The Heisenberg Plane}
\label{chp:Heisenberg_Plane}
\index{Geometries!Heisenberg}
\index{Heisenberg Geometry}

\begin{figure}
\centering
\begin{tikzcd}
& 
\mathbb{S}^2
\ar[dl]\ar[dr]
 && 
 \mathbb{H}^2 
 \ar[dl]\ar[dr]
 && 
 \mathsf{AdS}^2=\mathsf{dS}^2 
 \ar[dlllll]\ar[dl]\ar[dr]
 &\\
\widehat{\mathbb{E}}^2
\ar[drrr]
 && 
\mathbb{E}^2
 \ar[dr]
  && 
  \widehat{\mathbb{M}}^2
  \ar[dl]
   && 
   \mathbb{M}^2
   \ar[dlll]
   \\
&&&\Hs^2 &&&
\end{tikzcd}
\end{figure}

\noindent
The diagram above depicts the limits of orthogonal geometries in $\GL(3;\R)$, as previously calculated in Section \ref{sec:Orthog_Low_Dims}.  Spherical, hyperbolic and (anti)-de Sitter geometry collectively degenerate to the Euclidean \& Minkowski plane, as well as their contragredient duals.
All of these in turn degenerate to $\Hs^2$, the Heisenberg plane.

\begin{definition}
Heisenberg geometry is the $(G,X)$ geometry $\Hs^2:=(\Heis,\A^2)$ where
$$\Heis=\left\{\pmat{\pm1 & a & c\\0 &\pm1& b\\0&0&1}\;\Bigg | \; a,b,c\in\R\right\} \hspace{0.2cm}\textrm{and}
\hspace{0.2cm}
\A^2=\left\{ [x:y:1]\in\RP^2\mid x,y,\in\R\right\}.$$
The identity component $\Heis_0<\Heis$ is the real Heisenberg group, and the index 2 subgroup of orientation-preserving transformations is denoted $\Heis_+$.
\end{definition}

\noindent
Heisenberg geometry is a geometry on the plane given by all translations together with shears parallel to a fixed line.
Viewing this fixed line as `space' and any line intersecting it transversely as `time,' this is the geometry of $1+1$ dimensional Galilean relativity.
This chapter provides a detailed exploration of Heisenberg geometry, to add to the literature describing explicit geometric transitions.
We pay pay particular attention to aspects of interest to geometric topology; namely classifying Heisenberg orbifolds, calculating deformation their spaces and constructing regenerations of Heisenberg structures into familiar geometries.

\section{Heisenberg Geometry}
\label{sec:Heis_Geo}

The Heisenberg plane is not a metric geometry but supports other familiar geometric quantities.
The standard area form $dA=dx\wedge dy$ on $\R^2$ is invariant under the action of $\Heis_+$, furnishing $\Hs^2$ with a well-defined notion of area.
The one form $dy$ is $\Heis_0$ invariant, and induces a $\Heis$-invariant foliation of $\Hs^2$ by horizontal lines together with a transverse measure.  
As a subgeometry of the affine plane, $\Hs^2$ inherits an affine connection and notion of geodesic. A curve $\gamma$ is a geodesic if $\gamma''=0$, tracing out a constant speed straight line in $\Hs^2$.

Heisenberg geometry arises as a limit of the constant curvature spaces $\S^2,\Hyp^2$ and $\E^2$ by `zooming into while unequally stretching' a projective model.
Details can be reconstructed from \cite{CooperDW14}.
Here we briefly explore one degeneration of hyperbolic space to the Heisenberg plane as subgeometries of $\RP^2$.
Acting on $\Hyp^2\in\mathfrak{S}_{\RP^2}$ by the path $A_t=\diag(t^2,t,1)$ results in a path of subgeometries $A_t\Hyp^2$ isomorphic to the hyperbolic plane with underlying space the origin-centered ellipsoid in $\A^2$ with semimajor,semiminor axes of lengths $t^2,t$ parallel to the $x,y$ axes respectively.
The limit of these domains is $\A^2$ and the groups $A_t\O(2,1)A_t\inv$ limit to $\Heis$.
The aforementioned invariant foliation on $\Hs^2$ is a remnant of this stretching, and is parallel to the limiting direction of the major axes of $A_t\Hyp^2$.

\begin{figure}
\includegraphics[width=\textwidth]{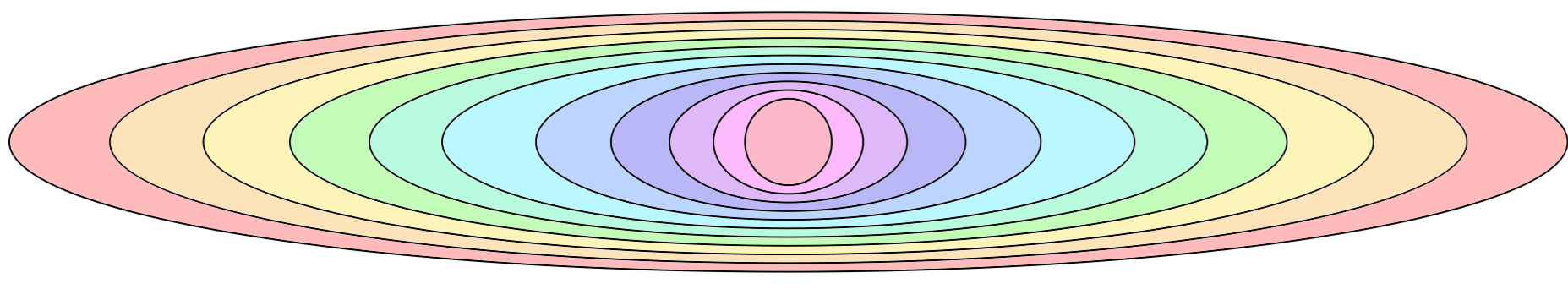}
\caption{The transition of $\Hyp^2$ to $\Hs^2$ as a conjugacy limit via the action of $A_t=\diag(t^2,t,1)$.}
\end{figure}

\noindent 
Unlike the degeneration of $\S^2$ and $\Hyp^2$ to Euclidean space, the uneven stretching required to produce a Heisenberg limit distorts even the point stabilizer subgroups, which become noncompact in the limit.
Conjugation by $A_t$ stretches the circle $S=\smat{\SO(2)&0\\0&1}\subset\mathsf{M}(3;\R)$ into ellipses of increasing eccentricity limiting to the parallel lines $\smat{1&\pm x \\0&1}$ in the upper $2\times 2$ block.
As a consequence, role of the unit tangent bundle in the constant curvature geometries is replaced for the Heisenberg plane by an appropriate space of based lines.
Indeed let $\mathcal{L}=\mathbb{P}\mathsf{T}(\Hs^2)$ be the space of pointed lines in the Heisenberg plane, and $\mathcal{H}\subset\mathcal{L}$ those belonging to the invariant horizontal foliation.  
The action of $\Heis_0$ on the plane extends to a simple transitive action on $\mathcal{L}\smallsetminus\mathcal{H}$, analogous to the action of $\Isom(\mathbb{X})$ on the unit tangent bundle $\mathsf{UT}(\mathbb{X})$ for $\mathbb{X}\in\{\Hyp^2,\E^2,\S^2\}$.
The noncompactness of point stabilizers is sufficient to preclude  an invariant Riemannian metric, but moreover the existence of shears in the automorphism group of $\Heis$ forces any continuous $\Heis$-invariant map $d:\R^2\times\R^2\to\R$ to be constant along the lines $\{x\}\times\R$ in both factors of the domain, so there are no continuous $\Heis$-invariant distance functions at all.

\subsection{Heisenberg Structures on Orbifolds}
\index{Orbifolds!Heisenberg Structures}
\index{Heisenberg Geometry!Orbifolds}

As a subgeometry of the affine plane, every Heisenberg structure on an orbifold $\mathcal{O}$ canonically weakens to an affine structure.
This provides strong restrictions on which orbifolds can possibly admit Heisenberg structures, it follows from a result of Benzecri that closed affine orbifolds have Euler characteristic zero \cite{Benz}; an additional self contained proof appears in \cite{Baues14}.
The deformation space of affine tori has been computed \cite{Baues14}, and weakening Heisenberg structures to affine structures provides a (non-injective) map $\omega\colon\mathcal{D}_{\Hs^2}(T^2)\to\mathcal{D}_{\A^2}(T^2)$.
Each Heisenberg orbifold inherits an area form from $\Hs^2$ and has a well defined finite total area.  The group $\R_+$ of homotheties of the plane acts on $\mathcal{D}_{\Hs^2}(\mathcal{O})$ sending an orbifold $\mathcal{O}$ with total area $\alpha$ to an orbifold $r.\mathcal{O}$ with area $r^2\alpha$, allowing
 the deformation space to be easily recovered from the space of unit area structures.
 
 \begin{observation}
 \label{obs:Area}
 The action of $\R_+$ by homotheties on the plane induces an action on $\mathcal{D}_{\Hs^2}(\mathcal{O})$ defined by $r.[f,\rho]=[rf,r\rho]$.  
 This gives a homeomorphism $\mathcal{D}_{\Hs^2}(\mathcal{O})=\R_+\times\mathcal{T}_{\Hs^2}(\mathcal{O})$ for $\mathcal{T}_{\Hs^2}(\mathcal{O})$ the subspace of unit area structures, analogous to the Techim\"uller space for Euclidean tori.
 \end{observation}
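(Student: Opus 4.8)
The statement to prove is Observation \ref{obs:Area}: the $\R_+$-action by homotheties on $\mathcal{D}_{\Hs^2}(\mathcal{O})$ given by $r.[f,\rho]=[rf,r\rho]$ yields a homeomorphism $\mathcal{D}_{\Hs^2}(\mathcal{O})\cong\R_+\times\mathcal{T}_{\Hs^2}(\mathcal{O})$, where $\mathcal{T}_{\Hs^2}(\mathcal{O})$ is the subspace of unit area structures.

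\textbf{Overall approach.} The plan is to exhibit an explicit homeomorphism by using the total-area functional as a coordinate. First I would verify that homothety genuinely defines an $\R_+$-action on deformation space; then I would show this action is \emph{free} and that each orbit meets the unit-area locus $\mathcal{T}_{\Hs^2}(\mathcal{O})$ in exactly one point; finally I would package these facts into the claimed product decomposition, checking continuity in both directions.

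\textbf{Key steps, in order.} First I would confirm that the homothety $h_r\colon\A^2\to\A^2$, $p\mapsto rp$, normalizes $\Heis$ so that conjugation $\rho\mapsto h_r\rho h_r^{-1}=r\rho$ carries Heisenberg holonomies to Heisenberg holonomies, and that $(rf,r\rho)$ is again an equivariant developing pair. Since $h_r$ commutes with the lifted diffeomorphism action of $\mathsf{Diffeo}_0(\mathcal{O})$ and intertwines the $G$-conjugation used to form $\mathcal{D}_{\Hs^2}(\mathcal{O})$, the assignment descends to a well-defined continuous map $\R_+\times\mathcal{D}_{\Hs^2}(\mathcal{O})\to\mathcal{D}_{\Hs^2}(\mathcal{O})$, and $r.(s.[f,\rho])=(rs).[f,\rho]$ makes it an action. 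Second, I would introduce the total-area map $\mathcal{A}\colon\mathcal{D}_{\Hs^2}(\mathcal{O})\to\R_+$ sending a structure to its total area with respect to the invariant area form $dA=dx\wedge dy$; this is well-defined and finite because $\mathcal{O}$ is a closed Heisenberg orbifold, it is continuous in the developing-pair topology (the area form pulls back and integrates continuously over a fundamental domain), and it is invariant under $\mathsf{Diffeo}_0$ and $\Heis_+$-conjugacy since $dA$ is $\Heis_+$-invariant. Third, I would compute the scaling behaviour: because $h_r$ multiplies $dA$ by $r^2$, we have $\mathcal{A}(r.[f,\rho])=r^2\,\mathcal{A}([f,\rho])$. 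This immediately gives freeness of the action (if $r.[f,\rho]=[f,\rho]$ then $r^2\mathcal{A}=\mathcal{A}$ with $\mathcal{A}>0$, forcing $r=1$) and shows each orbit meets the unit-area locus $\mathcal{T}_{\Hs^2}(\mathcal{O})=\mathcal{A}^{-1}(1)$ exactly once, at $r=\mathcal{A}([f,\rho])^{-1/2}$.

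\textbf{Assembling the homeomorphism.} Define $\Phi\colon\mathcal{D}_{\Hs^2}(\mathcal{O})\to\R_+\times\mathcal{T}_{\Hs^2}(\mathcal{O})$ by $\Phi([f,\rho])=\bigl(\mathcal{A}([f,\rho])^{1/2},\ \mathcal{A}([f,\rho])^{-1/2}.[f,\rho]\bigr)$, with inverse $\Psi(r,[g,\sigma])=r.[g,\sigma]$. Both maps are continuous: $\Phi$ because $\mathcal{A}$ and the action are continuous and $r\mapsto r^{\pm1/2}$ is continuous on $\R_+$, and $\Psi$ because it is the restriction of the action map. That $\Psi\circ\Phi=\id$ and $\Phi\circ\Psi=\id$ follows from the scaling identity $\mathcal{A}(r.[g,\sigma])=r^2$ on the unit-area locus together with the single-representative-per-orbit fact. \textbf{The main obstacle} I anticipate is the careful verification that $\mathcal{A}$ is genuinely well-defined and continuous as a function on deformation space rather than on the space of developing pairs: one must check that the integral of $f^\ast dA$ over a fundamental domain is independent of the choice of fundamental domain and of the representative within a $\mathsf{Diffeo}_0$- and $\Heis_+$-orbit, and that it varies continuously in the $C^\infty$-topology on developing maps. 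This requires invoking invariance of $dA$ under $\Heis_+$ (established in Section \ref{sec:Heis_Geo}) and a standard compactness argument on the closed orbifold $\mathcal{O}$; once continuity and the clean $r^2$-scaling are in hand, the remaining bookkeeping is routine.
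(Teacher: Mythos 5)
Your proof is correct and takes essentially the same route the paper intends: the paper records this as an unproved observation, noting only that homothety scales total area by $r^2$, and your argument is the natural fleshing-out of that remark, using the area functional $\mathcal{A}$ (well-defined by $\Heis_+$-invariance of $dA$, scaling as $\mathcal{A}(r.[f,\rho])=r^2\mathcal{A}([f,\rho])$) as the $\R_+$-coordinate and the unit-area locus as a global slice. The explicit maps $\Phi$, $\Psi$ and the continuity checks you supply are exactly the routine bookkeeping the paper omits.
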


\noindent 
As $dy$ is invariant under the action of $\Heis_0$, any Heisenberg surface with holonomy into $\Heis_0$ inherits a closed nondegenerate 1-form and corresponding foliation.  
This observation leads to a self-contained proof that every Heisenberg orbifold has vanishing Euler characteristic, simple enough that we include it for completeness.

\begin{proposition}
Every closed Heisenberg orbifold is finitely covered by a torus with holonomy in $\Heis_0$.
\end{proposition}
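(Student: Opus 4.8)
The plan is to establish the statement in two stages: first reduce to the orientation-preserving, identity-component case by passing to a finite cover, and then show that such a cover must be a torus by an Euler characteristic argument.

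\textbf{Stage 1: Reducing to holonomy in $\Heis_0$.} Given a closed Heisenberg orbifold $\mathcal{O}$ with holonomy $\rho\colon\pi_1(\mathcal{O})\to\Heis$, I would first observe that $\Heis$ has $\Heis_0$ (the real Heisenberg group, consisting of the upper-triangular unipotent matrices) as a finite-index subgroup; indeed the $\pm 1$ entries on the diagonal give a surjection $\Heis\surject\Z_2\times\Z_2$ with kernel $\Heis_0$. Pulling back the preimage $\rho\inv(\Heis_0)$ gives a finite-index subgroup $\Gamma_0<\pi_1(\mathcal{O})$, and by the Galois correspondence for orbifold covers (the correspondence between covers of $\mathcal{O}$ and conjugacy classes of subgroups of $\pi_1(\mathcal{O})$ reviewed earlier) this determines a finite orbifold cover $\mathcal{O}_0\to\mathcal{O}$ whose holonomy lands in $\Heis_0$. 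Since Euler characteristic is multiplicative under finite covers and a finite cover of a closed orbifold is closed, it suffices to prove the statement for orbifolds whose holonomy already lies in $\Heis_0$.

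\textbf{Stage 2: The Euler characteristic argument.} For an orbifold $\mathcal{O}_0$ with holonomy in $\Heis_0$, the key geometric input is that the one-form $dy$ on the affine patch $\A^2$ is invariant under the $\Heis_0$-action (as noted in the discussion of invariant structures, $dy$ is $\Heis_0$-invariant and furnishes the invariant horizontal foliation). Consequently $dy$ pulls back under the developing map to a $\pi_1(\mathcal{O}_0)$-equivariant closed one-form on the universal cover, which descends to a closed, nowhere-vanishing one-form $\omega$ on $\mathcal{O}_0$. A closed orbifold admitting a nowhere-vanishing one-form (equivalently, a nonsingular foliation of codimension one) has vanishing Euler characteristic: the one-form is a section of $T^\ast\mathcal{O}_0$ without zeros, so by the orbifold Poincar\'e--Hopf / Gauss--Bonnet principle $\chi(\mathcal{O}_0)=0$. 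Combined with Stage 1 and multiplicativity, this forces $\chi(\mathcal{O})=0$ as well. Finally, invoking the classification of closed $2$-orbifolds together with the constraint $\chi=0$ and the requirement that the holonomy be \emph{orientation-preserving} and \emph{into the identity component} (which rules out mirror/corner reflectors and the nonorientable cases), the only possibility compatible with a developing map into $\A^2$ equivariant under $\Heis_0$ is the torus $T^2$.

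\textbf{Main obstacle.} The routine parts are the covering-space reduction and the multiplicativity of $\chi$. The step requiring the most care is verifying that the invariant one-form $dy$ genuinely descends to a \emph{globally nonsingular} closed one-form on the orbifold --- one must check that equivariance of the developing map under $\rho(\pi_1)\subset\Heis_0$ is compatible with the orbifold structure at singular points, so that $\omega$ has no zeros and no orbifold-type singularities, and then that ``nowhere-vanishing closed one-form'' truly yields $\chi=0$ in the orbifold category rather than merely the manifold category. I expect that pinning down which closed orbifolds with $\chi=0$ actually carry such a structure --- and hence identifying the torus as the unique orientation-preserving $\Heis_0$ case --- will be where the argument needs the most attention, drawing on the low-dimensional orbifold classification reviewed earlier.
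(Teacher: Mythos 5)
Your overall strategy --- pass to a finite cover with holonomy in $\Heis_0$, then use the $\Heis_0$-invariant form $dy$ to force $\chi=0$ --- is the same as the paper's, but there is a genuine gap at the step where you identify the cover as a torus, and it is exactly the point you deferred in your ``main obstacle'' paragraph. Your parenthetical justification (``orientation-preserving and into the identity component rules out mirror/corner reflectors and the nonorientable cases'') does not dispose of cone points: the pillowcase $\S^2(2,2,2,2)$ is orientable, has $\chi=0$, and has no reflector locus, so your classification step as written does not exclude it, nor does it exclude singular points on $\mathcal{O}_0$ in general. What actually rules these out is that the isotropy group of any singular point must \emph{inject} into $\Heis$ under the holonomy, so that holonomy in the torsion-free group $\Heis_0$ forces the singular locus of $\mathcal{O}_0$ to be empty. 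That injectivity is not free: it comes from the observation that the developing map $f\colon\widetilde{\mathcal{O}}\to\A^2$ is an immersion, so a nontrivial deck element stabilizing a point and having trivial holonomy would act trivially near its fixed point, a contradiction; equivalently, $\widetilde{\mathcal{O}}$ has no singular points at all. This is precisely the opening line of the paper's proof, and without it your closed nowhere-vanishing $\omega$ on the orbifold and your appeal to an orbifold Poincar\'e--Hopf principle are both circular, since their validity at singular points presupposes the conclusion.

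The paper's ordering avoids all of this orbifold-level delicacy. It first uses the immersion to conclude $\widetilde{\mathcal{O}}$ is a manifold, hence $\mathcal{O}$ is good; the classification of $2$-orbifolds then says $\mathcal{O}$ is not a teardrop or bad spindle, hence very good, so it is finitely covered by an honest surface $\Sigma$. Only then does it pass to an at most $4$-sheeted cover to get holonomy in $\Heis_0$ (using $\Heis/\Heis_0\cong\Z_2\times\Z_2$, as you noted), pull back $dy$ to a nondegenerate $1$-form on $\Sigma$, dualize against a Riemannian metric to get a nonvanishing vector field, and conclude $\chi(\Sigma)=0$ by classical Poincar\'e--Hopf on a manifold; orientability from $\Heis_0$ then gives the torus. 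To repair your write-up, insert the immersion/goodness argument before Stage 2 and run the Euler characteristic computation on a manifold cover rather than on $\mathcal{O}_0$ itself; as written, Stage 2 would fail on the pillowcase.
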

\begin{proof}
Let $\mathcal{O}$ be a Heisenberg orbifold, with developing map $f\colon\widetilde{\mathcal{O}}\to\Hs^2$ and holonomy $\rho\colon\pi_1(\mathcal{O})\to\Heis$.
As $f$ immerses $\widetilde{\mathcal{O}}$ in the plane it has no singular locus; thus $\widetilde{\mathcal{O}}$ a manifold and $\mathcal{O}$ is good.
By the classification of two dimensional orbifolds then, $\mathcal{O}$ is not the spindle or teardrop, and is finitely covered by some surface $\Sigma\to\mathcal{O}$.
The Heisenberg structure on $\mathcal{O}$ pulls back to $\Sigma$ with developing pair $(f,\rho|_{\pi_1(\Sigma)})$.
Passing to an at most 4-sheeted cover, we may assume the holonomy of $\Sigma$ takes values in $\Heis_0$.
Thus $\Sigma$ inherits a nondegenerate 1-form $\omega\in\Omega^1(\Sigma)$ from $dy$ on $\Hs^2$.  Choose a Riemannian metric $g$ on $\Sigma$.  
Then $\omega$ defines a non-vanishing vector field $X_\omega$ by $\omega(\cdot)=g(X_\omega,\cdot)$, and so $\chi(\Sigma)=0$.  As $\Heis_0$ acts by orientation preserving transformations, $\Sigma$ is a torus.
\end{proof}

\noindent 
Thus Heisenberg tori with holonomy in $\Heis_0$ play a fundamental role to the classification of Heisenberg orbifolds, and it is natural to study them first.
By the previous observation, in particular it suffices to study the Teichm\"uller space of unit area structures, whose holonomy are determined up to conjugacy and homotheties of the plane.

\section{The Deformation Space of Tori}
\label{sec:Def_Space_Tori}
\index{Heisenberg Geometry!Deformation Space}

\subsection{The Representation Variety $\Hom(\Z^2,\Heis_0)$}
\label{subsec:Rep_Var_Tori}
\index{Representation Varieties!Heisenberg}
\index{Heisenberg Geometry!Representation Variety}

To classify tori with holonomy into $\Heis_0$ we compute the representation variety $\mathcal{R}=\Hom(\Z^2,\Heis_0)$.
The quotients of $\mathcal{R}$ by homothety and Heisenberg conjugacy are denoted $\mathcal{H}=\mathcal{R}/\R_+$ and $\mathcal{X}=\mathcal{R}/\Heis_0$ respectively.
The holonomies of unit area structures lie in the double quotient $\mathcal{U}=\mathcal{X}/\R_+\cong\mathcal{H}/\Heis_0$.
Representations into the center of $\Heis_0$ act by collinear translations on $\Hs^2$, and a simple argument of section 3.3 precludes these from being the holonomy of any Heisenberg structure.
Thus, we are primarily concerned with the subset $\mathcal{R}^\star\subset\mathcal{R}$ of representations not into the center, and its quotients $\mathcal{X}^\star\subset\mathcal{X},\mathcal{H}^\star\subset\mathcal{H}$ and $\mathcal{U}^\star\subset\mathcal{U}$.
Explicitly dealing with these representation spaces is easiest using coordinates from the Lie algebra, introduced below.

\begin{proposition}
The map $\log\colon \Heis_0\to\heis$ induces an isomorphism of varieties
$\Hom(\Z^2,\Heis_0)\cong\Hom(\R^2,\heis)$.
\end{proposition}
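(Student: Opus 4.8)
The statement asserts that the logarithm map on the Heisenberg group sets up an isomorphism of representation varieties $\Hom(\Z^2,\Heis_0)\cong\Hom(\R^2,\heis)$. The key structural fact I would exploit is that $\Heis_0$ is a simply connected nilpotent Lie group, so $\exp\colon\heis\to\Heis_0$ is a diffeomorphism with polynomial inverse $\log$, and crucially the Baker--Campbell--Hausdorff formula terminates after finitely many terms. First I would recall the explicit description: writing a generic element of $\heis$ as $\smat{0&a&c\\0&0&b\\0&0&0}$, one computes directly that $\exp$ and $\log$ are mutually inverse polynomial maps, so both are morphisms of real algebraic varieties.

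\textbf{Reducing to commuting pairs.} Since $\Z^2$ is generated by two commuting generators subject only to the relation $[e_1,e_2]=1$, a point of $\Hom(\Z^2,\Heis_0)$ is exactly a pair $(A,B)\in\Heis_0\times\Heis_0$ with $AB=BA$. Likewise $\Hom(\R^2,\heis)$ is determined by a pair of commuting Lie algebra elements $(X,Y)$ with $[X,Y]=0$ (a representation of the abelian Lie algebra $\R^2$ is just a choice of two commuting images of a basis, extended linearly). The map I would write down sends $\rho\mapsto\log\circ\rho$ on generators, i.e. $(A,B)\mapsto(\log A,\log B)$, with inverse $(X,Y)\mapsto(\exp X,\exp Y)$. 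Both are polynomial by the previous paragraph, so once I verify they are well-defined and mutually inverse, the isomorphism of varieties follows.

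\textbf{The main obstacle.} The real content, and the step I expect to require the most care, is showing this correspondence respects the \emph{commuting} condition on \emph{both} sides: that $A,B\in\Heis_0$ commute if and only if $\log A,\log B$ commute in $\heis$ (equivalently $[\log A,\log B]=0$), and that commuting $X,Y\in\heis$ genuinely integrate to a homomorphism of $\R^2$ rather than just of the lattice. Here the two-step nilpotency of $\heis$ is exactly what makes things work: BCH gives $\exp X\exp Y=\exp\bigl(X+Y+\tfrac12[X,Y]\bigr)$ with no higher terms, so $\exp X$ and $\exp Y$ commute precisely when $[X,Y]=0$. I would also note that a pair of commuting $X,Y$ automatically spans an abelian subalgebra, so $t\mapsto\exp(tX)$, $t\mapsto\exp(tY)$ assemble into a genuine Lie group homomorphism $\R^2\to\Heis_0$ (which restricts to the lattice homomorphism on $\Z^2$), and this assignment is the inverse functor. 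Finally I would observe that the polynomiality of $\exp,\log$ upgrades the set-theoretic bijection to an isomorphism of the affine varieties cut out inside $\Heis_0^2$ and $\heis^2$ by the respective commutativity equations, completing the proof.
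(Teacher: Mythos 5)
Your proof is correct and follows essentially the same route as the paper: nilpotency of $\heis$ makes $\exp$ and $\log$ mutually inverse polynomial maps, evaluation on generators identifies both representation varieties with subvarieties of $\Heis_0^2$ and $\heis^2$ respectively, and applying $\log$ coordinatewise gives the algebraic isomorphism. Your explicit BCH verification that $A,B$ commute iff $[\log A,\log B]=0$ is a step the paper leaves implicit, and including it is a genuine (if small) improvement in completeness rather than a different approach.
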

\begin{proof}
Inclusion in $\mathsf{M}(3;\R)$ equips $\Heis_0$ and $\heis$ with the structure of algebraic varieties.
As $\heis$ is nilpotent, the exponential $\exp\colon\heis\to\Heis_0$ is algebraic, and in fact isomorphism of varieties with inverse $\log\colon\Heis_0\to\heis$.
Recall that evaluation on the generators $e_1,e_2\in\Z^2\subset\R^2$ identifies the collections of representations with subvarieties of $\Heis_0\times\Heis_0$, $\heis\times\heis$ respectively.
Applying the exponential/logarithm coordinatewise provides the required algebraic isomorphism $\Hom(\Z^2,\Heis_0)\cong\Hom(\R^2,\heis)$.

\begin{center}
\begin{tikzcd}
\mathsf{Hom}(\mathbb{Z}^2,\mathsf{Heis}_0) \arrow[r, "\mathsf{ev}"] \arrow[d,xshift=0.7ex,"\log"]
& \mathsf{Heis}_0\times\mathsf{Heis}_0 \arrow[d, xshift=0.7ex, "\log\times\log"] \\
\mathsf{Hom}(\mathbb{R}^2,\mathfrak{heis}) \arrow[r,  "\mathsf{ev}"] \arrow[u, xshift=-0.7ex, "\exp"]
& \mathfrak{heis}\times\mathfrak{heis} \arrow[u,xshift=-0.7ex,"\exp\times\exp"]
\end{tikzcd}	
\end{center}

\end{proof}

\noindent 
We continue to denote the induced isomorphisms $\mathcal{R}\cong\Hom(\R^2,\heis)$ by $\exp$ and $\log$,
and call the vector $(\vec{x},\vec{y},\vec{z})\in\R^6$ the \emph{Lie algebra coordinates} for the representation $\rho\in\mathcal{R}$ 
%when $\mathsf{ev}(\log\rho)=\left(\smat{0& x_1 & z_1\\ 0&0&y_1\\0&0&0},\smat{0&x_2 & z_2\\0&0 &y_2\\0&0&0}\right)$.
when $\mathsf{ev}(\log\rho)=\left(\smat{x_1 & z_1\\&y_1},\smat{x_2 & z_2\\&y_2}\right)$.

\begin{proposition}
$\mathcal{R}$ is isomorphic to $V(x_1y_2-x_2y_1)\times\R^2$. 
\end{proposition}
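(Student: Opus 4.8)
The plan is to compute the multiplication in $\heis$ explicitly and translate the single relation defining $\mathcal{R}=\Hom(\Z^2,\Heis_0)$ into polynomial equations in the Lie algebra coordinates $(\vec{x},\vec{y},\vec{z})\in\R^6$. Using the isomorphism $\Hom(\Z^2,\Heis_0)\cong\Hom(\R^2,\heis)$ from the previous proposition, a representation is determined by a pair of commuting elements of $\Heis_0$, or equivalently (via $\exp$) by a pair of Lie algebra elements whose exponentials commute. So the first step is to write down when two elements $\exp(X_1),\exp(X_2)\in\Heis_0$ commute, where $X_i=\smat{x_i&z_i\\&y_i}$ in the notation of the excerpt.

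The key computation uses the Baker–Campbell–Hausdorff formula, which is exact and short for the nilpotent algebra $\heis$: since $\heis$ is two-step nilpotent, $\exp(X_1)\exp(X_2)=\exp\left(X_1+X_2+\tfrac12[X_1,X_2]\right)$. Thus $\exp(X_1)$ and $\exp(X_2)$ commute if and only if $[X_1,X_2]=0$. First I would compute the bracket $[X_1,X_2]$ of the two upper-triangular matrices; a direct calculation shows the only surviving entry is proportional to $x_1y_2-x_2y_1$ in the upper-right (central) slot. Therefore the commuting condition is exactly $x_1y_2-x_2y_1=0$, a single quadratic equation, while the central coordinates $z_1,z_2$ are entirely unconstrained. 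This immediately gives the claimed description: the $(\vec{x},\vec{y})$ data ranges over the quadric $V(x_1y_2-x_2y_1)\subset\R^4$ and the $(z_1,z_2)$ data ranges freely over $\R^2$, so $\mathcal{R}\cong V(x_1y_2-x_2y_1)\times\R^2$.

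The steps in order are: (1) invoke the established variety isomorphism to pass to $\Hom(\R^2,\heis)$, so a representation is a pair of elements of $\heis$; (2) observe that $\Z^2$ being abelian forces the images to commute, and via exact BCH this is equivalent to $[X_1,X_2]=0$; (3) compute the bracket and read off that $[X_1,X_2]=(x_1y_2-x_2y_1)e_{13}$, i.e.\ the commutator condition collapses to the single polynomial $x_1y_2-x_2y_1=0$; (4) note the $z$-coordinates are free, giving the product decomposition; (5) check this is an isomorphism of varieties, not merely a set bijection, by verifying the coordinates split cleanly into the constrained $(\vec{x},\vec{y})$ factor and the unconstrained $(\vec{z})$ factor.

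I do not expect a serious obstacle here — the main point is simply that two-step nilpotency makes BCH exact, reducing the commuting relation to the vanishing of a single bracket. The only thing to be slightly careful about is bookkeeping the correct coordinate for the commutator (confirming it is the off-diagonal $x_iy_j$ combination and not an accidental $z$-dependence), and confirming that the exponential coordinates genuinely linearize the relation so that the defining equation of $\mathcal{R}$ in $\Heis_0\times\Heis_0$ pulls back to precisely $V(x_1y_2-x_2y_1)\times\R^2$ under $\log\times\log$. Since $\log,\exp$ are mutually inverse isomorphisms of varieties, this transfer is automatic once the bracket is computed.
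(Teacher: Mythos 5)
Your proposal is correct and takes essentially the same route as the paper: both pass to $\Hom(\R^2,\heis)$ via the $\log$ isomorphism and reduce the defining relation to the single bracket computation $\left[\smat{x_1&z_1\\&y_1},\smat{x_2&z_2\\&y_2}\right]=\smat{0&x_1y_2-x_2y_1\\&0}$, with the $z$-coordinates unconstrained. The only cosmetic difference is that you justify the commuting condition at the group level via exact BCH for the two-step nilpotent $\heis$, while the paper observes directly that a Lie algebra homomorphism from the abelian $\R^2$ is precisely a pair in the kernel of the bracket map $[\cdot,\cdot]\colon\heis^2\to\heis$ --- the same content, so no further comparison is needed.
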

\begin{proof}
Evaluation on the generators identifies the representation variety $\Hom(\R^2,\heis)$ with the kernel of the Lie bracket $[\cdot,\cdot]\colon\heis^2\to\heis$.	
Indeed $\left[\smat{x_1&z_1\\&y_1},\smat{x_2&z_2\\&y_2}\right]=\smat{0 & x_1y_2-x_2y_1\\&0}$,
%$$\left[\smat{0&x_1&z_1\\0&0&y_1\\0&0&0},\smat{0&x_2&z_2\\0&0&y_2\\0&0&0}\right]=\smat{0&0 & x_1y_2-x_2y_1\\0&0&0\\0&0&0},$$
so $\ker[\cdot,\cdot]$ is cut out precisely by $x_1y_2=x_2y_1$ in $\heis^2$ and $(\vec{x},\vec{y},\vec{z})\in\R^6$ is the Lie algebra coordinates of a representation $\rho\in\mathcal{R}$ if and only if $(\vec{x},\vec{y})\in V(x_1y_2-x_2y_1)$ and $(z_1,z_2)\in\R^2$.
\end{proof}

\begin{proposition}
The homothety quotient $\mathcal{H}^\star$ of representations not into the center of $\Heis$ is homeomorphic to $\R^2\times T^2$.	
\end{proposition}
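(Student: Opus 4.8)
The plan is to unpack $\mathcal{R}^\star$ explicitly from the preceding proposition and then realize $\mathcal{H}^\star$ as the base of a slice for the homothety action. Writing $Q=V(x_1y_2-x_2y_1)\subset\R^4$ in the Lie algebra coordinates $(\vec{x},\vec{y})$, the isomorphism $\mathcal{R}\cong Q\times\R^2$ (the $\R^2$ factor being the $\vec{z}$-coordinates) identifies the representations \emph{into} the center of $\Heis_0$ — the pure $X$-translations, cut out in $\heis$ by $\vec{x}=\vec{y}=0$ — with the cone point of $Q$ crossed with the $\vec{z}$-plane. Hence $\mathcal{R}^\star\cong(Q\smallsetminus\{0\})\times\R^2$. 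First I would record how homothety acts in these coordinates: since $\log$ intertwines conjugation in $\Heis_0$ with the linear conjugation action on $\heis$, the action of $r\in\R_+$ on $(\vec{x},\vec{y},\vec{z})$ is obtained by conjugating the two generator images by the diagonal homothety matrix, giving an explicit linear rescaling of the coordinates that preserves $Q$.

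The geometric heart of the argument is the identification of the base $T^2$. The form $x_1y_2-x_2y_1$ on $\R^4$ has signature $(2,2)$, so after a linear change of variables bringing it to $u^2+v^2-s^2-t^2$ its projectivized zero locus $\P Q\subset\RP^3$ is the smooth split quadric surface. I would show directly that $\P Q\cong T^2$: normalizing $u^2+v^2=s^2+t^2=1$ exhibits the unit quadric as $\S^1\times\S^1$, and the residual projective identification is the simultaneous antipodal map $(\theta,\phi)\mapsto(\theta+\pi,\phi+\pi)$, a free $\Z_2$ action on the torus whose quotient is again a torus. Consequently the punctured cone $Q\smallsetminus\{0\}$ fibers over $\P Q\cong T^2$ as a trivial ray bundle, i.e.\ $Q\smallsetminus\{0\}\cong\widetilde{Q}\times\R_{>0}$ for the unit quadric $\widetilde{Q}$.

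With these in hand I would produce $\mathcal{H}^\star$ by choosing a global slice transverse to the homothety orbits. Normalizing the magnitude in the cone direction sends each orbit to a single representative with $(\vec{x},\vec{y})$ of unit length and $\vec{z}$ unconstrained, so the slice is $\widetilde{Q}\times\R^2\cong T^2\times\R^2$. It then remains to check that this is a homeomorphism: that each homothety orbit in $\mathcal{R}^\star$ meets the slice exactly once and that the slice carries the quotient topology, which is a compactness-and-continuity argument of the same flavor as the slice identifications used earlier in the chapter.

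The step I expect to be the main obstacle is the behavior of the homothety action along the degenerate locus where the shear part $\vec{y}$ vanishes, since there the scaling does not move $\vec{x}$; one must verify with care that restricting to \emph{non-central} representations keeps the action free and proper, so that the quotient is a genuine manifold and the slice normalization is valid uniformly across this locus rather than introducing non-Hausdorff points. Pinning down this degenerate stratum — and confirming the slice meets each orbit once through it — is the crux; once the action is known to be free and proper, the $T^2$ factor follows from the quadric computation and the $\R^2$ factor from the free $\vec{z}$-coordinates, giving $\mathcal{H}^\star\cong\R^2\times T^2$.
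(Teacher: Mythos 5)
Your route is, at its core, the paper's: excise the central representations as the fiber over the cone point of $V=V(x_1y_2-x_2y_1)$, so $\mathcal{R}^\star\cong(V\smallsetminus\{0\})\times\R^2$, then slice the homothety orbits at unit norm and identify the unit-norm locus of $V$ with the Clifford torus via the same signature-$(2,2)$ diagonalization (the paper uses $(x_1,x_2,y_1,y_2)=(u_1+v_1,v_2+u_2,v_2-u_2,u_1-v_1)$, turning $x_1y_2-x_2y_1$ into $u_1^2+u_2^2-v_1^2-v_2^2$, exactly your normal form). One presentational wrinkle: homothety is an $\R_+$-action, not an $\R^\times$-action, so the quotient of the punctured cone is the unit quadric $V\cap\S^3$ itself --- already $\S^1\times\S^1$ --- and your detour through $\P Q\subset\RP^3$ with its free antipodal $\Z_2$ is unnecessary (your sentence fibering $Q\smallsetminus\{0\}$ over $\P Q$ with fiber $\R_{>0}$ also conflates $\P Q$ with its double cover $\widetilde{Q}$, though you do land on the correct slice $\widetilde{Q}\times\R^2$ in the end).

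The genuine gap is the step you flag as the ``main obstacle,'' but you misdiagnose it: the problem is not freeness or properness on a degenerate stratum, it is that the action you propose to use is the wrong one. Conjugating the generator images by the planar homothety $\diag(r,r,1)$ fixes the shear coefficients and scales only the translation and central coefficients by $r$; in Lie algebra coordinates this fixes one of the pairs $\vec{x},\vec{y}$ outright. Under that action the pure-shear representations --- the points with the translation and central coordinates zero but shear coordinates nonzero, which are non-central and hence lie in $\mathcal{R}^\star$ --- are global fixed points, and the orbit of every nearby representation accumulates on them as $r\to 0^+$. The quotient is then non-Hausdorff, so certainly not $\R^2\times T^2$, and no verification of freeness and properness can rescue the slice normalization: orbits through that locus never meet $\{\|\vec{x}\|^2+\|\vec{y}\|^2=1\}$ at all unless they start there. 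The paper's proof uses the joint radial scaling $t.(\vec{x},\vec{y},\vec{z})=(t\vec{x},t\vec{y},t\vec{z})$ as the induced homothety action (the grading conjugation by $\diag(t^2,t,1)$, giving $(t\vec{x},t\vec{y},t^2\vec{z})$, yields the identical quotient); once $(\vec{x},\vec{y})$ scales radially, every orbit in $\mathcal{R}^\star$ meets the slice $\|\vec{x}\|^2+\|\vec{y}\|^2=1$ exactly once and transversally, so $\mathcal{H}^\star\cong(V\cap\S^3)\times\R^2\cong T^2\times\R^2$ immediately, with no degenerate stratum and no properness argument needed. What your writeup must supply, before the quadric computation can be invoked, is the correct identification of the one-parameter family of automorphisms realizing homothety on $\mathcal{R}$ and a check that it scales $(\vec{x},\vec{y})$ jointly; with the action you actually wrote down, it does not, and the proposition would fail.
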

\begin{proof}
Denote by $\R^2_{(\vec{x},\vec{y})}$ the $\R^2=\{(z_1,z_2)\}$ fiber above $(\vec{x},\vec{y})$.
The hypersurface $V=V(x_1y_2-x_2y_1)$ has one singularity at $0$, above which $\R^2_{(0,0)}$ consists of the representations into the center.
Homotheties of $\Hs^2$ induce the $\R_+$ action $t.(\vec{x},\vec{y},\vec{z})=(t\vec{x},t\vec{y},t\vec{z})$ on $\mathcal{R}$; thus $V\subset\R^4$ is a cone and $\mathcal{H}^\star$ identifies with the product of $\R^2$ with the intersection $V\cap\S^3$.
The change of coordinates on $\R^4$ given by $(x_1,x_2,y_1,y_2)=(u_1+v_1,v_2+u_2,v_2-u_2,u_1-v_1)$ provides an isomorphism $V\cong V(u_1^2+u_2^2-v_1^2-v_2^2)$ 
identifying $V\cap \S^3$ with the Clifford torus $\|\vec{u}\|=\|\vec{v}\|=1/\sqrt{2}$, so $V^\star=V\smallsetminus\vec{0}\cong \R_+\times T^2$.	
\end{proof}

\begin{corollary}
The section of $\mathcal{R}^\star\to\mathcal{H}^\star$ sending each homothety class $[\rho]_{\R_+}=[(\vec{x},\vec{y},\vec{z})]_{\R_+}$ to the representative with $(\vec{x},\vec{y})\in T^2\subset \S^3$ gives an identification of $\mathcal{H}^\ast$ with the algebraic variety
$\mathcal{H}^\star=V(x_2y_1-x_1y_2,\|x\|^2+\|y\|^2-1)\subset\R^6$.
\end{corollary}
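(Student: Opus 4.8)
The plan is to realize $\mathcal{H}^\star$ concretely as a cross-section (slice) for the $\R_+$ action on $\mathcal{R}^\star$, using the normalization already implicit in the previous proposition. Recall that homotheties act by the diagonal scaling $t.(\vec{x},\vec{y},\vec{z})=(t\vec{x},t\vec{y},t\vec{z})$, simultaneously rescaling all six Lie algebra coordinates. First I would observe that this action is \emph{free} on $\mathcal{R}^\star$: a representation lies in $\mathcal{R}^\star$ exactly when $(\vec{x},\vec{y})\neq 0$, and if some coordinate among $x_1,x_2,y_1,y_2$ is nonzero then $t.(\vec{x},\vec{y})=(\vec{x},\vec{y})$ forces $t=1$. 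Thus every $\R_+$ orbit in $\mathcal{R}^\star$ is a genuine ray.

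The key step is to check that the slice $\Sigma=\{(\vec{x},\vec{y},\vec{z})\mid \|\vec{x}\|^2+\|\vec{y}\|^2=1\}$ meets each such orbit in exactly one point. Given a representative $(\vec{x},\vec{y},\vec{z})$ with $(\vec{x},\vec{y})\neq 0$, the quantity $\|\vec{x}\|^2+\|\vec{y}\|^2$ is strictly positive, so there is a unique positive scalar $t=(\|\vec{x}\|^2+\|\vec{y}\|^2)^{-1/2}$ with $t.(\vec{x},\vec{y},\vec{z})\in\Sigma$. This prescription defines the section $s\colon\mathcal{H}^\star\to\mathcal{R}^\star$ of the statement; note that it normalizes $\vec{z}\mapsto t\vec{z}$ as well, even though the slice condition constrains only $(\vec{x},\vec{y})$. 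Because $t$ depends only on $(\vec{x},\vec{y})$ and is a continuous, nowhere-zero function off the origin, $s$ is continuous; its composite with the quotient map is $\id_{\mathcal{H}^\star}$, and the quotient map restricted to $\Sigma\cap\mathcal{R}^\star$ is a continuous two-sided inverse, so $s$ is a homeomorphism onto its image.

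It then remains to identify the image. The image of $s$ is precisely $\Sigma\cap\mathcal{R}^\star$, the points of $\mathcal{R}$ lying on the unit sphere in the $(\vec{x},\vec{y})$ coordinates; since $\mathcal{R}$ is cut out in $\R^6$ by $x_1y_2-x_2y_1=0$ and the sphere condition automatically forces $(\vec{x},\vec{y})\neq0$, this image is exactly $V(x_2y_1-x_1y_2,\,\|x\|^2+\|y\|^2-1)\subset\R^6$, with the $\vec{z}=(z_1,z_2)$ coordinates unconstrained. This recovers the claimed variety and, incidentally, exhibits the homeomorphism $\mathcal{H}^\star\cong T^2\times\R^2$ of the previous proposition in these coordinates, the torus being the Clifford torus $V\cap\S^3$ in the $(\vec{x},\vec{y})$ factor. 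I expect no serious obstacle here; the only point demanding care is bookkeeping the simultaneous scaling of $\vec{z}$, ensuring that imposing the normalization solely on $(\vec{x},\vec{y})$ still selects a unique orbit representative, which holds because the normalizing scalar is determined by $(\vec{x},\vec{y})$ alone.
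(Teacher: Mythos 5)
Your proof is correct and follows essentially the same route as the paper, which treats the corollary as immediate from the preceding proposition: the homothety action $t.(\vec{x},\vec{y},\vec{z})=(t\vec{x},t\vec{y},t\vec{z})$ is free on $\mathcal{R}^\star$, and the unique scalar $t=(\|\vec{x}\|^2+\|\vec{y}\|^2)^{-1/2}$ normalizes each orbit onto the slice $V(x_1y_2-x_2y_1,\,\|x\|^2+\|y\|^2-1)$ with $\vec{z}$ unconstrained, recovering the identification $\mathcal{H}^\star\cong\R^2\times(V\cap\S^3)$ with the Clifford torus. Your explicit verification of freeness, the uniqueness of the normalizing scalar, and the two-sided continuity of the section merely spells out details the paper leaves implicit.
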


\begin{proposition}
The conjugacy quotient $\mathcal{X}^\star$ is a line bundle over $V^\star\cong\R_+\times T^2$ twisted above each generator of $\pi_1(V^\star)$.
\end{proposition}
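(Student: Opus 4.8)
The plan is to make the $\Heis_0$-conjugation action completely explicit in the Lie algebra coordinates of \cref*{}, read off the orbits directly, and then identify the quotient with a concrete line bundle whose monodromy can be computed by hand. First I would compute the adjoint action: conjugating a Lie algebra element $\smat{0 & x & z\\ 0 & 0 & y\\ 0 & 0 & 0}$ by $g=\smat{1 & a & c\\ 0 & 1 & b\\ 0 & 0 & 1}\in\Heis_0$ gives $\smat{0 & x & z+ay-bx\\ 0 & 0 & y\\ 0 & 0 & 0}$. In the coordinates $(\vec{x},\vec{y},\vec{z})\in\R^6$ this says conjugation fixes $(\vec{x},\vec{y})$ and sends $\vec{z}\mapsto \vec{z}+a\vec{y}-b\vec{x}$; in particular the center ($a=b=0$) acts trivially, so the action descends to the abelianization $\Heis_0/Z(\Heis_0)\cong\R^2_{(a,b)}$ acting only on the $\vec{z}$-fiber.

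The structural observation I would make next is that on $V=V(x_1y_2-x_2y_1)$ the vectors $\vec{x},\vec{y}\in\R^2$ are linearly dependent, so on $\mathcal{R}^\star$ (where $(\vec{x},\vec{y})\neq 0$) the set of translation vectors $\{a\vec{y}-b\vec{x}\mid a,b\in\R\}$ is \emph{exactly} the one-dimensional line $\ell=\span\{\vec{x},\vec{y}\}\subset\R^2$, depending continuously on $(\vec{x},\vec{y})\in V^\star$. Thus conjugation preserves each $\vec{z}$-fiber $\R^2$ and acts by translation along the closed subgroup $\ell$, the orbits being the cosets $\vec{z}+\ell$, so each fiber quotient is $\R^2/\ell\cong\R$. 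Since $\ell$ is closed the quotient is Hausdorff, and orthogonal projection furnishes a canonical bundle isomorphism $\mathcal{X}^\star\cong\ell^\perp\to V^\star$ onto the normal line bundle of the tautological line bundle $\ell$; this already exhibits $\mathcal{X}^\star$ as a line bundle over $V^\star\cong\R_+\times T^2$ and sidesteps any separate check of local triviality.

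To see the twisting I would pass to the Clifford-torus coordinates $(\vec{u},\vec{v})$ of the preceding proposition. A short computation gives $\vec{x}=\vec{u}+\vec{v}$ and $\vec{y}=J(\vec{u}-\vec{v})$ with $J=\smat{0 & -1\\ 1 & 0}$; parameterizing $\vec{u}=r(\cos\alpha,\sin\alpha)$, $\vec{v}=r(\cos\beta,\sin\beta)$ and applying the sum-to-product identities shows both $\vec{x}$ and $\vec{y}$ point in the direction of angle $\tfrac{\alpha+\beta}{2}$, so $\ell$ has direction angle $\tfrac{\alpha+\beta}{2}$ (and $\ell^\perp$ the angle $\tfrac{\alpha+\beta}{2}+\tfrac{\pi}{2}$). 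The two generators of $\pi_1(V^\star)=\pi_1(T^2)$ are the loops $\alpha\mapsto\alpha+2\pi$ and $\beta\mapsto\beta+2\pi$, and along each the direction angle of $\ell$ (hence of $\ell^\perp$) advances by exactly $\pi$, i.e.\ traverses the nontrivial class of $\pi_1(\RP^1)$. Therefore the restriction of $\mathcal{X}^\star\cong\ell^\perp$ to each generating circle is a M\"obius band, which is precisely the claim that the line bundle is twisted over each generator.

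The hard part will be the monodromy bookkeeping in the last step: I must confirm the half-turn is genuinely odd (hence nontrivial) over \emph{both} generators rather than accidentally cancelling, and I must ensure $\ell$ varies continuously across the loci where one of $\vec{x},\vec{y}$ vanishes. Both are handled cleanly by the explicit formula $\tfrac{\alpha+\beta}{2}$, which is continuous everywhere on the torus and whose mod-$\pi$ winding along each factor is manifest; the canonical identification $\R^2/\ell\cong\ell^\perp$ removes any remaining worry about the quotient's bundle structure.
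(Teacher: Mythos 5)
Your proposal is correct and follows essentially the same route as the paper's proof: the same conjugation formula $\vec{z}\mapsto\vec{z}+a\vec{y}-b\vec{x}$, the same identification of orbits with cosets of $\ell=\span\{\vec{x},\vec{y}\}$, the same orthogonal-complement section realizing $\mathcal{X}^\star$ as the subbundle $\ell^\perp\subset V^\star\times\R^2$, and the same check of monodromy over the two generators of $\pi_1(T^2)$. The one point worth noting is that your half-angle formula (both $\vec{x}$ and $\vec{y}$ parallel to direction $\tfrac{\alpha+\beta}{2}$, which advances by $\pi$ around each generating circle) explicitly carries out the M\"obius-band computation that the paper merely asserts, and simultaneously settles continuity of $\ell$ across the loci where one of $\vec{x},\vec{y}$ vanishes.
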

\begin{proof}

A computation reveals the conjugation action of $\Heis_0$ on $\mathcal{R}$ in Lie algebra coordinates is expressed
$\smat{1& g&k\\&1&h\\&&1}.(\vec{x},\vec{y},\vec{z})=(\vec{x},\vec{y},\vec{z}+g\vec{y}-h\vec{x})$.
Thus $\Heis_0$ acts trivially on the first factor of $\mathcal{R}=V\times\R^2$ and the orbit of a point $\vec{z}\in\R^2_{(\vec{x},\vec{y})}$ 
is the coset of $\mathsf{span}\{\vec{x},\vec{y}\}\subset\R^2_{(\vec{x},\vec{y})}$ containing it.
In the subset $\mathcal{R}^\star$ at least one of $\vec{x},\vec{y}$ is nonzero, and the condition that $(\vec{x},\vec{y})\in V(x_1y_2-x_2y_1)=V(\det\smat{x_1&y_1\\x_2&y_2})$ implies $\vec{x}$ and $\vec{y}$ are linearly dependent.
It follows that the $\Heis_0$ orbits on $\mathcal{R}^\star$ are lines, foliating each $\R^2_{(\vec{x},\vec{y})}$ over $V^\star$ and the leaf space is a line bundle over $V^\star$.

Equipping each $\R^2_{(\vec{x},\vec{y})}$ with the standard inner Euclidean inner product, 
a canonical choice of representatives for cosets of $\ell_{(\vec{x},\vec{y})}=\mathsf{span}\{\vec{x},\vec{y}\}$ is given by the orthogonal line $\ell_{(\vec{x},\vec{y})}^\perp\subset\R^2_{(\vec{x},\vec{y})}$.
This defines a section $\mathcal{X}^\star\to\mathcal{R}^\star$ sending a conjugacy class $[\rho]_{\Heis_0}=[(\vec{x},\vec{y},\vec{z})]_{\Heis_0}$ to its representation with $\vec{z}$-coordinate on $\ell_{(\vec{x},\vec{y})}^\perp$, and identifies
 $\mathcal{X}^\star=\{(\vec{x},\vec{y},\vec{z})\mid (\vec{x},\vec{y})\in V^\star, \; \vec{z}\in\ell_{(\vec{x},\vec{y})}^\perp\}$ with a subbundle of $V^\star\times\R^2\to V^\star$.

Line bundles over $V^\star\cong\R_+\times T^2$ are in bijection with $H^1(T^2,\Z_2)\cong\Z_2^2$, determined up to isomorphism by whether pulling back along generators of $\pi_1(T)^2$ gives cylinders or M\"obius bands.
A convenient choice of generators in the $(\vec{u},\vec{v})$ coordinates introduced above are $\alpha(\theta)=(\vec{e_1},\vec{p_\theta})$ and $\beta(\theta)=(\vec{p_{\theta}},\vec{e_1})$ for $e_1=\smat{1\\0}$ and $\vec{p_\theta}=\smat{\cos\theta\\sin\theta}$.
An explicit computation using the description of $\mathcal{X}^\star$ above shows the bundle restricts to a M\"obius band above each of $\alpha,\beta$, so $\mathcal{X}^\star$ is the  line bundle over $\R_+\times T^2$ represented by $(1,1)\in H^1(T^2,\Z_2)$.
\end{proof}

\noindent 
The choice of explicit sections has identified $\mathcal{H}^\star$ and $\mathcal{X}^\star$ with subsets of $\mathcal{R}$.
The space of interest $\mathcal{U}^\star$ identifies with their intersection, $\mathcal{X}^\star\cap\mathcal{H}^\star$, which is the restriction of $\mathcal{X}^\star\to V^\star$ to the base $T^2\subset\S^3$.

\begin{corollary}
\label{cor:U}
The quotient $\mathcal{U}^\star$ by homothety and conjugacy is the doubly twisted line bundle over $T^2$, realized as the subvariety of $\mathcal{U}^\star\subset\R^6$ consisting of triples of vectors $(\vec{x},\vec{y},\vec{z})$ such that $\vec{x}$ and $\vec{y}$ are collinear, and $\vec{z}$ is orthogonal to their span.
$$\mathcal{U}^\star=
V\left(\mat{
\|x\|^2+\|y\|^2=1,& \vec{z}\cdot\vec{x}=0\\
x_1y_2-x_2y_1=0,& \vec{z}\cdot\vec{y}=0
}\right)\subset\R^6$$
\end{corollary}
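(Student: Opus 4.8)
The plan is to obtain $\mathcal{U}^\star$ by imposing the two slicing conditions already constructed for $\mathcal{H}^\star$ and $\mathcal{X}^\star$ simultaneously, exploiting that the homothety and conjugacy actions commute. Recall from the proofs of the two preceding propositions that in Lie algebra coordinates the homothety action is $t.(\vec{x},\vec{y},\vec{z}) = (t\vec{x}, t\vec{y}, t\vec{z})$, while conjugacy acts only on the last factor by $\vec{z}\mapsto \vec{z}+g\vec{y}-h\vec{x}$, i.e. by translation within the line $\ell_{(\vec{x},\vec{y})}=\span\{\vec{x},\vec{y}\}$. First I would verify that these two actions commute on $\mathcal{R}^\star$: scaling preserves the line $\ell_{(\vec{x},\vec{y})}$ and its orthogonal complement, so normalizing $(\vec{x},\vec{y})$ onto the unit sphere and projecting $\vec{z}$ onto $\ell_{(\vec{x},\vec{y})}^\perp$ may be carried out in either order with the same result. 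This makes the double quotient $\mathcal{U}^\star=\mathcal{X}^\star/\R_+ \cong \mathcal{H}^\star/\Heis_0$ well-defined and realizes it as a genuine slice of the combined action.

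Concretely, I would combine the explicit sections supplied by those two propositions. The section of $\mathcal{R}^\star\to\mathcal{H}^\star$ selects representatives with $(\vec{x},\vec{y})\in\S^3$, cut out by $\|x\|^2+\|y\|^2=1$; the section of $\mathcal{R}^\star\to\mathcal{X}^\star$ selects representatives with $\vec{z}\in\ell_{(\vec{x},\vec{y})}^\perp$, cut out by $\vec{z}\cdot\vec{x}=0$ and $\vec{z}\cdot\vec{y}=0$. Together with the ambient collinearity constraint $x_1y_2-x_2y_1=0$ inherited from $\mathcal{R}=V(x_1y_2-x_2y_1)\times\R^2$, these are precisely the four equations in the statement. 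The key step is to check that the composite normalization is a section for the full $\R_+\times\Heis_0$ action, i.e. that each double-orbit in $\mathcal{R}^\star$ meets the locus defined by these equations exactly once; this follows because the homothety orbit of a point with $(\vec{x},\vec{y})\neq 0$ hits $\S^3$ uniquely, and the conjugacy orbit of $\vec{z}$ hits $\ell_{(\vec{x},\vec{y})}^\perp$ uniquely.

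Finally I would identify this slice with the intersection $\mathcal{X}^\star\cap\mathcal{H}^\star$ inside $\mathcal{R}$: a point lies in the intersection exactly when it is fixed by both normalizations, so the intersection coincides with the image of the composite section and is therefore homeomorphic to $\mathcal{U}^\star$. The geometric description --- $\vec{x}$ and $\vec{y}$ collinear with $\vec{z}$ orthogonal to their span --- then reads off immediately, and the doubly-twisted-line-bundle-over-$T^2$ structure is inherited by restricting the bundle $\mathcal{X}^\star\to V^\star$ from $V^\star\cong\R_+\times T^2$ to the subtorus $T^2=V^\star\cap\S^3$, as recorded just before the statement. I expect the only genuinely delicate point to be confirming commutativity of the two actions cleanly enough that no residual identifications survive; everything else is bookkeeping with sections already in hand.
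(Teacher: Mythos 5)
Your proposal is correct and follows essentially the same route as the paper, which deduces the corollary directly from the two explicit sections (the $\S^3$-normalization of $\mathcal{H}^\star$ and the $\ell_{(\vec{x},\vec{y})}^\perp$-normalization of $\mathcal{X}^\star$) and identifies $\mathcal{U}^\star$ with the intersection $\mathcal{X}^\star\cap\mathcal{H}^\star$, i.e.\ the restriction of the bundle $\mathcal{X}^\star\to V^\star$ to the base $T^2\subset\S^3$. Your extra commutativity check is sound (indeed both orders send $\vec{z}\mapsto t\vec{z}+tg\vec{y}-th\vec{x}$, so the actions commute on the nose) but it is bookkeeping the paper leaves implicit rather than a different argument.
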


\noindent
The developing pair of a Heisenberg torus is only well defined up to orientation preserving transformations, so potential holonomies lie in the space $\mathcal{R}/\Heis_+$, a twofold quotient of $\mathcal{U}^\star$ computed here.
We will deal with this $\Z_2=\Heis_+/\Heis_0$ ambiguity after determining which points of $\mathcal{U}^\star$ are in fact holonomies.

%The total space of conjugacy classes consists $\mathcal{U}$ DESCRIBE HOW THERE'S ALSO A UNION OF A CIRCLE AND A POINT FROM THE REPS INTO THE CENTER.

\subsection{The space $\mathcal{D}_{\Hs^2}(T^2)$.}

As a warm-up to computing the deformation space of Heisenberg tori, we review the analogous problem for Euclidean and affine structures.
Euclidean tori are complete metric spaces, and so are determined by their holonomy, which is necessarily discrete and faithful (for instance, by Thurston's book \cite{Thurston80}, Proposition 3.4.10). 
Discrete subgroups $\Z^2<\Isom(\E^2)$ act by translations,
%The orbit map of $\vec{0}\in\E^2$ under each of these actions extends uniquely to an equivariant diffeomorphism $\R^2\to\E^2$ providing a developing map.
thus the deformation space of Euclidean tori identifies with the $\Isom(\E^2)$-conjugacy classes of marked planar lattices, $\mathcal{D}_{\E^2}(T^2)\cong \GL(2;\R)/\O(2)$.
The unit area structures parameterized by the familiar Teichm\"uller space $\Hyp^2=\SL(2;\R)/\SO(2)$.

The affine plane admits no invariant metric, which complicates the story significantly.
Complete affine structures have universal cover affinely diffeomorphic to $\A^2$, but in contrast to the Euclidean case incomplete structures abound.
The work of Baues \cite{Baues14} provides a remarkably comprehensive description of the classification of affine tori, in particular containing the following classification theorem.

\begin{theorem}[\cite{Baues14}, Theorem 5.1]
The universal cover of an affine torus is affinely diffeomorphic to one of the following spaces: the affine plane $\A^2$, the half plane $\mathcal{H}=\{(x,y)\mid y>0\}$, the quarter plane $\mathcal{Q}=\{(x,y)\in\A^2\mid x,y>0\}$ or the universal cover of the punctured plane $\mathcal{P}=\widetilde{\A^2\smallsetminus 0}$.
Furthermore the developing maps of affine structures are covering projections onto their images.	
\end{theorem}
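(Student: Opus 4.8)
The plan is to reduce the classification to an analysis of the holonomy together with the developing image, proceeding case-by-case on the conjugacy type of the (necessarily abelian) holonomy group. By the Development Theorem, an affine structure on $T^2$ supplies a developing pair $(\dev,\rho)$ with $\dev\colon\R^2\to\A^2$ an immersion equivariant under $\rho\colon\pi_1(T^2)=\Z^2\to\Aff(2)$. Since $\Z^2$ is abelian, the holonomy group $H=\rho(\Z^2)$ is an abelian subgroup of $\Aff(2)=\GL(2;\R)\ltimes\R^2$. Writing $A=\rho(e_1)$, $B=\rho(e_2)$, the relation $AB=BA$ forces the linear parts to commute and pins down the translational parts modulo the image of the linear parts minus the identity. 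First I would classify such commuting pairs up to affine conjugacy by putting the common linear part into real normal form, yielding a short list according to whether it is (i) trivial, (ii) a nontrivial unipotent shear, (iii) diagonalizable with positive real eigenvalues, or (iv) a rotation-dilation with nonreal eigenvalues.

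Next, for each normal form I would identify the invariant domain $\Omega=\dev(\R^2)$ and match it to one of the four models. In case (i) the holonomy acts by translations; when these have rank two the structure is complete, and by the Proposition that complete structures are quotients (with $X=\A^2$ simply connected) the developing map is a diffeomorphism onto $\A^2$. Cases (ii)--(iv) are incomplete, with maximal invariant domains the half-plane $\mathcal{H}$ (preserved by horizontal translations and vertical homotheties), the quarter-plane $\mathcal{Q}$ (preserved by diagonal radiant maps $\diag(\lambda,\mu)$ with $\lambda,\mu>0$), and the punctured plane $\A^2\smallsetminus 0$ respectively. The Hopf torus of Example \ref{Ex:Hopf_Torus}, whose developing map $e^{x+2\pi i y}$ covers $\C^\times=\A^2\smallsetminus 0$, is the representative of case (iv) and already exhibits $\R^2$ as the universal cover $\mathcal{P}=\widetilde{\A^2\smallsetminus 0}$; since $\mathcal{H}$ and $\mathcal{Q}$ are simply connected, the developing map there is a diffeomorphism onto its image.

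The crux, and the step I expect to be the \textbf{main obstacle}, is the second assertion: that $\dev$ is in every case a covering projection onto its image rather than merely an immersion. This property fails for general affine manifolds, so the argument must genuinely exploit the compactness of the torus quotient. I would argue by an exhaustion that simultaneously establishes that the $H$-action on $\Omega$ is proper and free (this cannot be imported directly from the deck action, precisely because $\dev$ is not yet known to be injective) and that $\dev$ tiles $\Omega$ by developed fundamental domains. Concretely, in the radiant cases (iii), (iv) one tracks the developed images of affine geodesic rays and uses convexity of the invariant cone or the completeness of the radial flow to forbid the immersion from folding back or spiralling; in case (ii) one uses the invariant horizontal foliation and completeness along leaves; and in the complete case (i) the covering property is furnished by the cited Proposition.

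Finally I would close the classification by confirming maximality: that the developed domain is exactly $\A^2$, $\mathcal{H}$, $\mathcal{Q}$, or $\A^2\smallsetminus 0$ and not some proper $H$-invariant sub-domain, and that no remaining affine conjugacy type of abelian $H$ can support a compact torus quotient. The enumeration of commuting pairs and the complete case are comparatively routine linear algebra; the substantive analytic work is concentrated entirely in the tameness argument of the preceding paragraph.
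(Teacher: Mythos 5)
First, a point of comparison: the paper does not prove this statement at all --- it is quoted verbatim from Baues (\cite{Baues14}, Theorem 5.1) and used as a black box to deduce completeness of Heisenberg tori. So your proposal can only be measured against the classical arguments (Nagano--Yagi, Furness--Arrowsmith, Baues), whose broad strategy --- normal forms for abelian holonomy, invariant domains, then the covering property --- your sketch does echo. Measured that way, there are two genuine gaps.

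The first is in your case analysis. Your claim that cases (ii)--(iv) are incomplete is false: the abelian group $\left\{(x,y)\mapsto\bigl(x+sy+t+\tfrac{s^2}{2},\,y+s\bigr)\right\}$ acts simply transitively on $\A^2$ with unipotent-shear linear parts, and quotients by lattices in it are \emph{complete} affine tori. These are exactly the generic points of the $\R^2$ of complete structures of Baues--Goldman cited in this same chapter, of which the translation tori form only a one-point affine-equivalence class; so unipotent linear part does not imply half-plane. Dually, your list (i)--(iv) omits a conjugacy type that actually occurs: the unit group of $\R[\ep]/(\ep^2)$, with linear parts $\smat{a&0\\b&a}$ (scalar times unipotent, neither unipotent nor diagonalizable), acts simply transitively on a half-plane and yields half-plane tori, alongside the $\diag(1,a)$-type you describe. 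The correspondence between linear type and developed domain is many-to-many, and the clean matching (ii)$\leftrightarrow\mathcal{H}$, (iii)$\leftrightarrow\mathcal{Q}$, (iv)$\leftrightarrow\mathcal{P}$ that your domain-identification step relies on does not hold.

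The second gap is at the crux you yourself flag, and here your proposed remedy aims at a false intermediate statement. You plan to establish that the holonomy group $H$ acts properly and freely on $\Omega=\dev(\widetilde{T})$ and then tile $\Omega$ by developed fundamental domains. But take the spiral Hopf-type torus $\C/\langle 1,\mu\rangle$ with $\dev=\exp$ and $\rho(m,n)=e^{m+n\mu}$, where $\operatorname{Im}\mu/2\pi$ is irrational: the holonomy group is non-discrete in $\Aff(2)$, its action on $\A^2\smallsetminus 0$ is \emph{not} proper, and the developed images of fundamental domains overlap infinitely often, precisely because $\dev$ is an infinite covering rather than injective. So properness of the $H$-action downstairs cannot be the route; the covering property must be extracted without it (Baues, following Nagano--Yagi, does this via the structure theory of \'etale affine representations of two-dimensional abelian groups, equivalently two-dimensional commutative associative algebras --- which also repairs the enumeration above). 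As it stands, your sketch would fail exactly in the family of cases where the theorem's second assertion is most delicate.
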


\noindent 
As $\Hs^2$ admits no invariant metric, we must be prepared for complications similar to the affine case.
Such difficulties do not materialize however, as canonically weakening Heisenberg structures to affine ones, we may use the classification above to show all Heisenberg tori are complete.

\begin{corollary}
All Heisenberg structures on the torus are complete.
\label{cor:Complete}
\end{corollary}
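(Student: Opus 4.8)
The plan is to reduce completeness of Heisenberg tori to the classification of affine tori via the weakening map $\omega\colon\mathcal{D}_{\Hs^2}(T^2)\to\mathcal{D}_{\A^2}(T^2)$. Since every Heisenberg structure canonically weakens to an affine structure, and completeness is a statement about the developing map $f\colon\widetilde{T^2}\to X$ being a covering map (which depends only on the underlying developing map, not on the ambient group), it suffices to examine which of the four possible affine universal covers listed in the cited theorem of Baues can actually arise from a Heisenberg developing map.

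First I would recall that the developing map of a Heisenberg structure is an immersion $f\colon\R^2\to\A^2$ (the Heisenberg underlying space is the full affine patch), so by the last sentence of Baues's theorem this developing map is a covering projection onto its image. The task is then to rule out the three incomplete model spaces $\mathcal{H}$, $\mathcal{Q}$, and $\mathcal{P}$, leaving only $\A^2$ itself, for which $f$ is a diffeomorphism and hence a covering of $\A^2$, i.e.\ completeness. Here is where the extra structure of $\Hs^2$ enters: by the earlier Proposition, every Heisenberg torus has holonomy conjugate into $\Heis_0$, and $\Heis_0$ preserves the invariant one-form $dy$ and the horizontal foliation. I would argue that the pullback $f^\ast dy$ is a nowhere-vanishing closed one-form on $\R^2$ that descends to the torus, and then show that the half-plane, quarter-plane, and punctured-plane models are incompatible with carrying a complete such foliation invariant under a cocompact $\Z^2$ acting by Heisenberg transformations — essentially because the shear/translation holonomy forces the image of the developing map to be all of $\A^2$ rather than a proper invariant affine subdomain.

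The cleanest route is probably to observe directly that the image $f(\R^2)\subset\A^2$ is an open subset invariant under the holonomy group $\rho(\Z^2)<\Heis_0$, and that the only $\Heis_0$-invariant open subsets on which $\rho(\Z^2)$ acts cocompactly with quotient a torus is $\A^2$ itself. One checks that $\mathcal{H}$, $\mathcal{Q}$, and $\mathcal{P}$ each fail: $\mathcal{H}$ and $\mathcal{Q}$ have boundary structure inconsistent with invariance under the translation part of $\Heis_0$, and $\mathcal{P}=\widetilde{\A^2\smallsetminus 0}$ would require holonomy with a nontrivial linear/rotational part, which unipotent Heisenberg elements do not possess. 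Since the holonomy lands in the unipotent group $\Heis_0$ (after passing to the finite cover guaranteed by the earlier proposition), every element acts by a transformation with a single eigenvalue $1$, precluding the punctured-plane model.

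The main obstacle I anticipate is the careful verification that the weakened affine structure's universal cover must be $\A^2$ and not one of the incomplete models — this requires genuinely using the constraint that the holonomy is \emph{Heisenberg}, since for general affine tori all four models do occur. In other words, the force of the argument is not the general affine classification but the special rigidity imposed by unipotent holonomy preserving the horizontal foliation; pinning down why the shear-and-translation structure excludes proper invariant subdomains is the delicate step. Once that is established, completeness follows immediately from the covering-map conclusion of Baues's theorem applied to the (now global) affine developing map.
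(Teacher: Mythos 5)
Your overall route is the same as the paper's: weaken to an affine structure, invoke Baues's classification of affine tori, and exclude the three incomplete models using special features of Heisenberg holonomy. But the exclusion step --- which you yourself flag as the delicate one --- contains a genuine gap. Your claim that $\mathcal{H}$ and $\mathcal{Q}$ have ``boundary structure inconsistent with invariance under the translation part of $\Heis_0$'' is false for the half-plane: the subgroup $\set{(x,y)\mapsto (x+ay+c,\,y)\mid a,c\in\R}<\Heis_0$ of shears and horizontal translations preserves $\mathcal{H}=\set{y>0}$, and it contains rank-two free abelian subgroups, e.g.\ the one generated by $(x,y)\mapsto(x+1,y)$ and $(x,y)\mapsto(x+y,y)$. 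So domain-invariance does not rule out $\mathcal{H}$. What actually fails there is cocompactness: invariance of $\set{y>0}$ forces the vertical translation part of every holonomy element to vanish, so the quotient fibers over $(0,\infty)$ and cannot be a torus --- this is the fibration argument the paper deploys later, for representations in $\mathcal{U}\smallsetminus\mathcal{F}$, not a boundary-invariance argument. Your treatment of $\mathcal{P}$ is likewise incomplete: holonomies of incomplete structures need not be faithful or discrete, so ``$\Z^2$ does not fit into the unipotent linear maps fixing the puncture'' requires more than an eigenvalue count. There is a further structural subtlety: Baues's models are normal forms only up to affine diffeomorphism, so to identify the developing image with $\mathcal{H}$, $\mathcal{Q}$ or $\mathcal{P}$ one must conjugate the holonomy by some $A\in\Aff(2)$, after which only conjugation-invariant features of $\Heis$ (unipotence, determinant, eigenvalues) survive; arguments appealing to the horizontal foliation or the literal shear-and-translation form of $\Heis_0$ do not transfer directly.

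The paper sidesteps every one of these case analyses with a single conjugation-invariant fact that you did not use: Baues's classification says more than which universal covers occur --- it shows that the holonomy of any affine torus modeled on $\mathcal{H}$, $\mathcal{Q}$ or $\mathcal{P}$ necessarily contains elements of determinant $\neq 1$, whereas the holonomy of a Heisenberg torus lies in $\Heis_+$, where every element has determinant $1$ (the identity component being unipotent), and determinant is unchanged by the conjugation $A$. An incomplete Heisenberg torus is therefore immediately contradictory, and the developing map must be a diffeomorphism onto $\A^2$. Your approach can be repaired along the lines sketched above (invariance forcing $b=0$, then the non-cocompact fibration for $\mathcal{H}$; the trivial-stabilizer computation for $\mathcal{Q}$; a careful deck-group analysis for $\mathcal{P}$), but as written the key step does not go through.
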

\begin{proof}
Let $(f,\rho)$ be the developing pair for a Heisenberg torus $T$, considered as an affine structure.
If $T$ is not complete, there is an affine transformation $A$ with $A.f(\widetilde{T})\in\{\mathcal{H},\mathcal{Q},\A^2\smallsetminus 0\}$ and holonomy $A\rho A\inv$ preserving this developing image.
But by the classification of affine tori, holonomies of these tori contain elements of $\det\neq 1$, whereas $\Heis$ is unipotent so $\det A\rho(\Z^2)A\inv=\{1\}$.
Thus $T$ is in fact complete, with developing map a diffeomorphism $f\colon \widetilde{T}\to \A^2$.
\end{proof}

\begin{comment}The natural map $\omega\colon\mathcal{D}_{\Hs^2}(T^2)\to\mathcal{D}_{\A^2}(T^2)$ defined by $[f,\rho]_{\Heis}\mapsto[f,\rho]_{\GL(3;\R)}$ has image contained in the subspace of complete affine tori, which Baues and Goldman \cite{BauesGoldman05} show is diffeomorphic to $\R^2$.
However $\omega$ is not injective as distinct Heisenberg structures are often affinely diffeomorphic.
\end{comment}

\subsection{Constructing Developing Maps}
\index{Heisenberg Geometry!Developing Maps}
\index{Deformation Space!Heisenberg Tori}

Here we pursue a self-contained computation the deformation space $\mathcal{D}_{\Hs^2}(T^2)$, using the understanding of representations $\Z^2\to \Heis_0$ up to conjugacy developed in section 3.1.
Specifically, for $\rho\in\Hom(\Z^2,\Heis)$ we either construct a corresponding developing map $f$ giving a Heisenberg structure $(f,\rho)$ on $T^2$ (and prove its uniqueness), or we show no developing map for $\rho$ can exist.

A developing map for $\rho\colon\Z^2\to\Heis$ is a $\rho$-equivariant immersion $f\colon\R^2\to\Hs^2$.
A natural $\rho$-equivariant self map of the plane can be constructed directly from $\rho$, relying on the fact that
each representation of $\Z^2$ extends uniquely to a representation $\hat{\rho}\colon\R^2\to\Heis_0$ via $\widehat{\rho}(x,y)=\rho(e_1)^x\rho(e_2)^y$.
The orbit map $f_\rho\colon \R^2\to \Hs^2$ defined by $(x,y)\mapsto\widehat{\rho}(x,y).\vec{0}$ for this extended representation is $\rho$-equivariant, and thus a developing map for a Heisenberg structure when it is an immersion.
%MENTION HOW THIS IS CONTINUOUS ASSIGNMENT OF HOLONOMY TO DEV
\begin{comment}
\begin{proposition}
The assignment $\rho\mapsto f_\rho$ defines a continuous map $\Hom(\Z^2,\Heis_0)\to C^\infty(\R^2,\R^2)$ with the topology of uniform convergence on compact sets.
\end{proposition}
\begin{proof}
DO WE WANT TO DO THIS?	THIS WOULD SHOW THAT WE HAVE A CTS MAP INTO DEFORMATION SPACE WITH ITS DEFINING TOPOLOGY....
\end{proof}
\end{comment}
As the following two propositions show, this construction actually produces developing maps for all complete Heisenberg tori (and thus by Corollary \ref{cor:Complete} for all Heisenberg tori, although with the aim of producing a self-contained proof we do not presume that here).

\begin{proposition}
Let $\mathcal{F}\subset\mathcal{U}$ be the subset of representations $\rho$ with extensions $\widehat{\rho}$ acting freely on $\Hs^2$.
Then each $\rho\in\mathcal{F}$ determines a unique Heisenberg structure on $T^2$, which is complete, and all complete structures with holonomy in $\Heis_0$ arise this way.
\end{proposition}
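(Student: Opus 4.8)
The plan is to prove the three claims in the proposition separately: that each $\rho \in \mathcal{F}$ yields a well-defined Heisenberg structure on $T^2$, that this structure is unique and complete, and that every complete structure with holonomy in $\Heis_0$ arises this way. The central object is the orbit map $f_\rho(x,y) = \widehat{\rho}(x,y).\vec{0}$ built from the unique one-parameter extension $\widehat{\rho}(x,y) = \rho(e_1)^x\rho(e_2)^y$.

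\textbf{Existence of the structure.} First I would verify that $f_\rho$ is an immersion precisely when $\widehat{\rho}$ acts freely. Since $\widehat{\rho}$ is a smooth action of $\R^2$ on the plane by diffeomorphisms, the orbit map $f_\rho$ is an immersion if and only if its differential is everywhere injective, which by homogeneity of the action reduces to injectivity at the origin; this in turn is equivalent to the two infinitesimal generators $\frac{\partial}{\partial x}\widehat{\rho}(x,y).\vec 0$ and $\frac{\partial}{\partial y}\widehat{\rho}(x,y).\vec 0$ being linearly independent, which holds exactly when the action has no infinitesimal stabilizer, i.e.\ is free (a connected abelian unipotent group acts freely iff it acts with discrete, hence by dimension count trivial, stabilizers). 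Granting this, $f_\rho$ is a $\rho$-equivariant immersion $\R^2 \to \Hs^2$, hence a developing map descending to a Heisenberg structure on $T^2 = \R^2/\Z^2$.

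\textbf{Completeness and uniqueness.} For completeness I would show $f_\rho$ is a diffeomorphism onto $\A^2$: as a free smooth action of $\R^2$ on the plane $\A^2 \cong \R^2$, the orbit map is a bijective local diffeomorphism (a single free orbit of $\R^2$ is open and closed in the connected plane), hence a diffeomorphism, so the structure is complete in the sense of Section~\ref{sec:Completeness}. Uniqueness then follows from the Development Theorem together with Proposition (Holonomy Determines Complete Structures): since $\A^2$ is contractible and the structure is complete, the holonomy $\rho$ determines the structure up to $(G,X)$-isomorphism isotopic to the identity, so the point $[f_\rho,\rho]$ of deformation space depends only on $\rho$.

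\textbf{Surjectivity onto complete structures.} Conversely, given any complete Heisenberg structure on $T^2$ with holonomy $\rho$ into $\Heis_0$, Proposition (Complete Structures are Quotients) presents it as $\A^2/\Gamma$ with $\Gamma = \rho(\Z^2)$ acting freely and properly discontinuously; in particular $\rho(\Z^2)$ acts freely. I would then upgrade the freeness of the lattice $\rho(\Z^2)$ to freeness of the full extension $\widehat{\rho}(\R^2)$: if some $\widehat{\rho}(x_0,y_0)$ fixed a point, then by the one-parameter structure and the explicit unipotent form of $\Heis_0$ one exhibits a nontrivial lattice element with a fixed point, contradicting proper discontinuity. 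Hence $\rho \in \mathcal{F}$ and, by the uniqueness just established, the given structure coincides with $[f_\rho,\rho]$. The main obstacle I anticipate is this last upgrade step: translating freeness of the discrete action into freeness of the ambient $\R^2$-action requires care, and the cleanest route is probably to use the explicit matrix description of $\Heis_0$ and its action on $\A^2$ to compute fixed-point sets directly, distinguishing representations whose image lies in the shear-free translation subgroup from those containing a genuine shear.
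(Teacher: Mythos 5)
Your existence, completeness, and uniqueness steps track the paper's own proof closely: the same orbit-map computation, the same ``orbits are open, so connectedness forces a single orbit'' argument for completeness, and the same $K(\pi,1)$/contractibility mechanism for uniqueness (the paper spells out the isotopy directly via $f\inv\phi$ descending to a diffeomorphism inducing the identity on $\pi_1$; your citation of the holonomy-determines-complete-structures proposition needs exactly that strengthening, which you assert and which is fine).

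The genuine gap is in your surjectivity step. You propose to contradict proper discontinuity by ``exhibiting a nontrivial lattice element with a fixed point,'' but no such element can exist under your hypotheses. If $\widehat{\rho}(u_0)$ fixes a point $p$, then by unipotency the entire one-parameter subgroup $L=\R u_0\subset\R^2$ maps into the stabilizer of $p$; and completeness of the given structure forces $\rho$ to act freely, which says precisely that $L\cap\Z^2=\set{0}$. So the fixed-point locus in parameter space systematically misses the lattice, and the mechanism you describe cannot be realized -- a lattice element with a fixed point would contradict \emph{freeness}, and freeness is already guaranteed. The paper's argument instead exploits that the irrational line $L$ is \emph{dense} in $\R^2/\Z^2$: there are lattice vectors $\vec{v}_n$ with $\widehat{\rho}(\vec{v}_n)=\rho(\vec{v}_n)$ arbitrarily close to point-stabilizing shears, and these violate \emph{proper discontinuity} of the $\rho(\Z^2)$ action, contradicting completeness. (Your fallback suggestion -- an explicit fixed-point computation distinguishing translation from shear representations -- is closer in spirit to the paper's \emph{next} proposition, where non-free representations are shown to preserve a fibration $\Hs^2\surject\R$ and fibration-preserving holonomies are ruled out by comparing the compact image of a fundamental domain with the openness of the developing image; that route could repair your step, but as sketched the step fails.)
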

\begin{proof}
If $\widehat{\rho}$ acts freely, the orbit map $f_\rho\colon\R^2\to\Hs^2$ is injective, and a computation reveals $(df_\rho)_0\colon T_0\R^2\to T_0\Hs^2$ is injective.
Furthermore $(df_\rho)_x=\widehat{\rho}(x).(df_\rho)_0$ so $f_\rho$ is an immersion of $\R^2$ and $(f_\rho,\rho)$ is a developing pair for a Heisenberg torus.
Similarly, the other orbit maps $\vec{u}\mapsto\widehat{\rho}(\vec{u}).q$ are immersions (thus open maps) for any $q\in\Hs^2$, and distinct $\widehat{\rho}(\R^2)$ orbits partition $\Hs^2$ into a disjoint union of open sets.
By connectedness then $f_\rho$ is onto, hence a diffeomorphism so the corresponding Heisenberg structure is complete.

\begin{comment}
Conversely, assume $(f_\rho,\rho)$ is a developing pair for some Heisenberg structure on $T^2$, and note that $f_\rho(\R^2)\cong \R^2/S$ for $S$ the stabilizer of $\vec{0}$ under the action defined by $\widehat{\rho}$.
As $f_\rho$ is a local diffeomorphism, $f_\rho(\R^2)$ is open and thus $\R^2/S$ is 2-dimensional so $S\subset\R^2$ is discrete.
But if $\widehat{\rho}(u)$ fixes a point, then so does the entire one parameter subgroup containing it so were $S$ nontrivial it would contain a line, contradicting discreteness.
Thus $\widehat{\rho}$ acts freely on $\Hs^2$ so $(\rho, f_\rho)$ is one of the structures above.
\end{comment}

Alternatively, let $\rho\colon\Z^2\to\Heis_0$ be the holonomy of a complete torus, but assume $\widehat{\rho}\colon\R^2\to\Heis_0$ fails to act freely.
Then some element, and hence some 1-parameter subgroup $L<\R^2$, fixes a point under the action induced by $\widehat{\rho}$.  
This line $L$ intersects $\Z^2$ only in $\vec{0}$ (as $\rho$ acts freely by completeness); and so is dense in the quotient $\R^2/\Z^2$.  
Thus there are sequences $\vec{v}_n\in\Z^2$ with $\rho(v_n)$ coming arbitrarily close to stabilizing a point, and $\widehat{\rho}$ does not act properly discontinuously, contradicting completeness.

Finally, let $(f,\rho)$ be a complete structure and $(\phi,\rho)$ another structure with the same holonomy.  
Then $f\inv \phi:\widetilde{T}\to\widetilde{T}$ is $\pi_1(T)$-equivariant and descends to a diffeomorphism $\psi:T\to T$.  
But $\psi_\ast$ is the identity on fundamental groups and as the torus is a $K(\pi,1)$, $\psi$ is isotopic to the identity.
Thus $(f,\rho)$ and $(\phi,\rho)$ are developing pairs for the same Heisenberg structure.
\end{proof}

\noindent 
Constructing developing maps from the extensions $\widehat{\rho}$ provides endows these tori with the structure of a commutative group via the identification $\widehat{\rho}(\R^2)/\rho(\Z^2)\cong f_\rho(\R^2)/\rho(\Z^2)$.
The existence of this group structure can more generally be deduced from the similar observation of Baues and Goldman concerning affine structures \cite{BauesGoldman05}.

\begin{corollary}
Complete Heisenberg tori are the group objects in the category of Heisenberg manifolds, analogous to elliptic curves in the category of Riemann surfaces.	
\end{corollary}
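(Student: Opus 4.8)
The statement asserts that complete Heisenberg tori form the group objects in the category of Heisenberg manifolds. The strategy is to upgrade the group structure on each \emph{individual} complete torus, already established via the identification $\widehat{\rho}(\R^2)/\rho(\Z^2)\cong f_\rho(\R^2)/\rho(\Z^2)$, to a categorical statement: I must exhibit the structure maps (multiplication, inversion, unit) as morphisms in the category of Heisenberg manifolds, and then verify the universal/group-object axioms hold with respect to these morphisms. The key observation driving everything is that a complete Heisenberg torus $T$ is, by the previous proposition, diffeomorphic to $\widehat{\rho}(\R^2)/\rho(\Z^2)$, the quotient of an abelian Lie group (the image of the unique extension $\widehat{\rho}\colon\R^2\to\Heis_0$) by a lattice. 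This is precisely the setup where a quotient of a group by a central/normal subgroup inherits a group structure, and I will show the resulting operations are Heisenberg maps.

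\textbf{Key steps in order.} First I would make precise what a group object in the category of Heisenberg manifolds means: an object $T$ together with morphisms $\mu\colon T\times T\to T$, $\iota\colon T\to T$, and a point $e\colon \ast\to T$ satisfying associativity, identity, and inverse diagrams, where all structure maps are $(G,X)$-maps in the sense of the developing-pair formalism. Here one must note the mild subtlety that $T\times T$ is not literally a Heisenberg manifold (the product geometry is four-dimensional), so the correct reading is that $\mu$ is a morphism with respect to the fiberwise Heisenberg structure, or equivalently that $\mu$ is realized on developing maps by a map equivariant in each variable. I would handle this by working upstairs on the universal cover. Second, I would use the abelian group structure on $A:=\widehat{\rho}(\R^2)\cong\R^2$ (abelian because the Lie algebra coordinates of a representation satisfy $x_1y_2-x_2y_1=0$, forcing the two generators to commute, as computed in the representation variety section). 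Translation in $A$ descends through the lattice $\rho(\Z^2)$ to give $\mu$, $\iota$, and $e$ on $T=A/\rho(\Z^2)$. Third, I would verify each structure map is a Heisenberg morphism: since $A<\Heis_0$ acts on $\Hs^2$ by Heisenberg transformations and the developing diffeomorphism $f_\rho\colon\R^2\biject\A^2$ intertwines the $A$-action with left translation, group multiplication on $T$ is locally modeled by composition of Heisenberg transformations, hence lies in the pseudogroup generated by $\Heis$. Fourth, the group axioms hold on $T$ because they hold on $A$ and descend to the quotient by the central lattice.

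\textbf{The main obstacle.} The genuine difficulty is not the single-torus group structure, which is essentially handed to us, but formalizing the \emph{categorical} claim: checking that the structure maps are morphisms \emph{in the category of Heisenberg manifolds} requires care because $\mu$ has domain $T\times T$, whose natural Heisenberg structure is ambiguous, and because Heisenberg morphisms are required to be local $\Heis$-maps (hence local diffeomorphisms), so one must confirm multiplication is locally a Heisenberg transformation in each slot. I expect to resolve this by phrasing the group object internally to the category via commutative diagrams on developing pairs, so that $\mu$ is checked to be $\Heis$-equivariant in each variable separately rather than demanding a product Heisenberg structure. A secondary point worth addressing is the analogy to elliptic curves: I would remark that, just as an elliptic curve is a genus-one Riemann surface that is a quotient $\C/\Lambda$ of the additive group by a lattice and thereby a group object among Riemann surfaces, a complete Heisenberg torus is $A/\rho(\Z^2)$ with $A$ abelian, making the parallel exact. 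This comparison is illustrative rather than load-bearing, so it need not be proven, only stated. The honest content of the proof is the equivariance verification of step three, and I anticipate it being short once the universal-cover picture and the commutativity of $A$ are in hand.
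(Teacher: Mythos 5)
Your proposal is correct and takes essentially the same route as the paper, which offers no formal proof at all: it simply records that the identification $\widehat{\rho}(\R^2)/\rho(\Z^2)\cong f_\rho(\R^2)/\rho(\Z^2)$ endows each complete torus with a commutative group structure (citing Baues--Goldman for the affine analog), and your descent-of-translations argument with the equivariance check is exactly that observation made explicit. The one point neither you nor the paper addresses is the converse half of ``are \emph{the} group objects'' (ruling out other group objects); among \emph{closed} Heisenberg manifolds this follows immediately from the chapter's classification, since every Heisenberg torus is complete and the Klein bottle admits no group structure, and the restriction to closed manifolds is forced, as otherwise $\A^2$ itself with its translation group is a non-toral group object.
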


\begin{proposition}
The subset $\mathcal{F}\subset\mathcal{U}$ of conjugacy classes with freely acting extensions $\widehat{\rho}\colon\R^2\to\Heis_0$ is a trivial $\R^\times$ bundle over the cylinder $\mathsf{Cyl}=T^2\smallsetminus S$, for $S$ the circle defined by the intersection of  $T^2=V(x_1y_2-x_2y_1)\cap\S^3$ with the plane $V(y_1,y_2)$.
\end{proposition}
\begin{proof}
A representation $\widehat{\rho}\in\mathcal{U}$ is faithful if and only if the logarithm of its generators $\smat{x_1&z_1\\&y_1}$ and $\smat{x_2 &z_2\\&y_2}$ are linearly independent in $\heis$.
In Lie algebra coordinates, linearly dependent elements of $\heis^2$ form the variety $\mathsf{Rk}_1\subset\mathsf{M}_{3\times 2}(\R)$ of rank one matrices $(\vec{x},\vec{y},\vec{z})=\smat{x_1&y_1&z_1\\x_2&y_2&z_2}$.
There are no faithful $\R^2$ representations into the 1-dimensional center of $\Heis$, so it suffices to consider the representations in $\mathcal{U}^\star$.
The intersection $\mathcal{U}^\star\cap\mathsf{Rk}_1$ is a torus, coming from the $\S^1$ factor and a great circle in $\S^2\times\S^1$ also described as the zero section of the bundle $\mathcal{U}^\star\to T^2$.
which is easily seen from the coordinate description.
The rank one variety is cut out by the $2\times 2$ minors $\mathsf{Rk}_1=V(x_1y_2-x_2y_1,x_1z_2-x_2z_1,y_1z_2-y_2z_1)$ and thus consists of triples of simultaneously collinear vectors $\vec{x}\parallel\vec{y}\parallel\vec{z}\in\R^2$.
Recalling \ref{cor:U}, points $(\vec{x},\vec{y},\vec{z})$ of $\mathcal{U}^\star$ satisfy $\vec{x}\parallel\vec{y}$ and $\vec{z}$ perpendicular to their span.
Thus any $(\vec{x},\vec{y},\vec{z})\in\mathcal{U}^\star\cap\mathsf{Rk}_1$ necessarily has $\vec{z}=0$, so the intersection $\mathcal{U}^\star\cap\mathsf{Rk}_1$ is the torus
$(\vec{x},\vec{y},0)\subset\mathcal{X}^\star$.
 The conjugacy classes of faithful representations constitute the complement of this zero section of $\mathcal{U}^\star\to T^2$.

A non-identity element of $\Heis_0$ stabilizes a point of $\Hs^2$ if and only if it acts trivially on the leaf space of the invariant foliation and has nontrivial shear.  In Lie algebra coordinates this forms the set $\mathcal{S}=\left\{\smat{x&z\\&0}\mid x\neq 0\right\}\subset\heis$. 
The extension $\widehat{\rho}$ acts freely if and only if in Lie algebra coordinates, each generator misses $\mathcal{S}$.
All faithful representations $(\vec{x},\vec{y},\vec{z})$ with $y_1,y_2\neq 0$ act freely, and all with $\vec{y}=0$ fail to.
If $\vec{y}=(0,y_2)$ then $\rho\in\mathcal{R}$ implies $x_1=0$ so $\rho$ acts freely, and similarly for $\vec{y}=(y_1,0)$.
Thus faithful representations fail to act freely if and only if $\vec{y}=0$, and the space of freely acting representations is $\mathcal{F}=\mathcal{U}^\star\smallsetminus V(z_1,z_2)\cup V(y_1,y_2)$.

The intersection $S=T^2\cap V(y_1,y_2)$ is a $(1,1)$ curve with respect to the $(\vec{u},\vec{v})$ coordinates, and $\mathcal{U}^\star\smallsetminus V(y_1,y_2)$ is an $\R$-bundle over $\mathsf{Cyl}=T^2\smallsetminus S$.
This bundle is trivial as the generator of $\pi_1(\mathsf{Cyl})$ is parallel to $V(y_1,y_2)$ and the restriction the doubly twisted bundle $\mathcal{X}$ to a $(1,1)$ curve in the base is a cylinder.
The subvariety $V(z_1,z_2)$ is the zero section of this bundle, thus its complement is the trivial $\R^\times$ bundle over $\mathsf{Cyl}$.
\end{proof}

\noindent 
This classification gives a simple, self contained argument that no incomplete structures exist.
An incomplete structure must have holonomy in $\mathcal{U}\smallsetminus\mathcal{F}$, but geometric reasons preclude these from being the holonomy of Heisenberg tori.
This completes the classification of tori with $\Heis_0$ holonomy, and a quick observation implies there can be no others.

\begin{proposition}
Representations $\rho\in\mathcal{U}\smallsetminus \mathcal{F}$ are not the holonomy of any Heisenberg torus.
Consequently all Heisenberg tori are complete, with holonomy into $\Heis_0$.
\end{proposition}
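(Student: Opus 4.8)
The plan is to prove both sentences of the proposition together, leveraging the explicit description of $\mathcal{U}^\star$ and $\mathcal{F}$ established in the preceding propositions. The key structural fact is the decomposition $\mathcal{U}\smallsetminus\mathcal{F} = (\mathcal{U}\smallsetminus\mathcal{U}^\star) \cup (\mathcal{U}^\star\cap(V(z_1,z_2)\cup V(y_1,y_2)))$, recalling that $\mathcal{F}=\mathcal{U}^\star\smallsetminus(V(z_1,z_2)\cup V(y_1,y_2))$ from the previous proposition. So there are three families of representations to rule out: those factoring through the center (the complement $\mathcal{U}\smallsetminus\mathcal{U}^\star$), those on the zero section $V(z_1,z_2)$ (non-faithful), and those with $\vec{y}=0$ (faithful but not acting freely). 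I would treat each in turn.

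First I would dispatch the central representations $\mathcal{U}\smallsetminus\mathcal{U}^\star$: these were already excluded informally in Section 3.3 (the text notes ``a simple argument of section 3.3 precludes these''), so I would simply invoke or briefly reproduce the observation that a representation into the center $Z(\Heis_0)$ acts by collinear translations, whose orbit map $f_\rho$ collapses one direction and hence cannot be an immersion. Next, for the non-faithful representations on $V(z_1,z_2)$ in $\mathcal{U}^\star$, the logarithms of the two generators are linearly dependent in $\heis$ (these lie in the rank-one locus $\mathsf{Rk}_1$ as identified earlier), so $\widehat{\rho}$ is not faithful; a rank-one extension cannot give an injective orbit map, again failing immersivity or properly-discontinuous action. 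Finally, for the faithful representations with $\vec{y}=0$, the previous proposition showed precisely that $\widehat{\rho}$ fails to act freely — some generator lands in the singular set $\mathcal{S}=\{\smat{x&z\\&0}\mid x\neq 0\}$ and stabilizes a point of $\Hs^2$.

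The crucial point that ties this to \emph{holonomy} rather than merely to the orbit-map construction is that a failure of $\widehat{\rho}$ to act freely, or to act properly discontinuously, obstructs \emph{every} developing map and not just the canonical $f_\rho$. Here I would argue as in the proof of the earlier proposition characterizing $\mathcal{F}$: if a one-parameter subgroup $L<\R^2$ fixes a point under $\widehat{\rho}$, then since $L$ meets $\Z^2$ only at $\vec 0$ it is dense in $\R^2/\Z^2$, forcing $\rho(\Z^2)$-elements arbitrarily close to a point stabilizer, contradicting the proper discontinuity that any complete structure requires; and by the earlier Corollary~\ref{cor:Complete} (weakening to affine structures shows all Heisenberg tori are complete), completeness is automatic, so no torus with such holonomy can exist at all. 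This reduces everything to the already-proved statement that exactly the members of $\mathcal{F}$ act freely and properly discontinuously.

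I expect the main obstacle to be making rigorous the claim that non-freeness of the \emph{abstract} action $\widehat{\rho}$ rules out \emph{all} developing pairs, not just the specific orbit map $f_\rho$. The clean way around this is to route through completeness: Corollary~\ref{cor:Complete} guarantees every Heisenberg torus is complete, the preceding propositions show a complete structure with $\Heis_0$-holonomy exists if and only if $\widehat{\rho}$ acts freely (equivalently $\rho\in\mathcal{F}$), and so any $\rho\in\mathcal{U}\smallsetminus\mathcal{F}$ is excluded outright. The only remaining loose end is confirming that a holonomy with non-$\Heis_0$ image reduces to the $\Heis_0$ case after passing to the finite-index subgroup guaranteed by the orbifold-covering proposition (every closed Heisenberg orbifold is finitely covered by a torus with $\Heis_0$ holonomy), which closes the ``consequently'' clause asserting all Heisenberg tori are complete with holonomy into $\Heis_0$.
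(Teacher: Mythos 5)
Your proof is correct, but it takes a genuinely different route from the paper's, and the difference matters for the logical architecture of the section. The paper uses the same three-way decomposition of $\mathcal{U}\smallsetminus\mathcal{F}$ (central representations, the non-faithful locus $\vec{z}=0$, and the non-free locus $\vec{y}=0$), but then resolves the obstacle you correctly flagged --- that non-freeness of $\widehat{\rho}$ only directly obstructs the orbit map $f_\rho$, not an arbitrary developing map --- by a direct geometric argument rather than by invoking completeness: each of the three classes preserves a fibration $\pi\colon\Hs^2\surject\R$ (central and $\vec{y}=0$ representations preserve the horizontal foliation, while the $\vec{z}=0$ representations factor through $\R$ with orbits foliating the plane by parabolas). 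Given \emph{any} putative developing pair $(f,\rho)$, the image $\Omega=f(\widetilde{T})$ projects under $\pi$ to an open subset of $\R$, since $f$ is a local diffeomorphism and $\pi$ a bundle projection; but fiber-preservation of $\rho$ forces $\pi(\Omega)=\pi(f(Q))$ for a compact fundamental domain $Q\subset\widetilde{T}$, so $\pi(\Omega)$ is compact and hence not open --- a contradiction that kills every developing map in one stroke, with no completeness input. Your route instead funnels everything through Corollary~\ref{cor:Complete}; this is logically valid and does prove the literal statement, but at two costs. First, that corollary rests on Baues's classification of affine tori, whereas the paper explicitly frames this proposition as completing a \emph{self-contained} argument that no incomplete structures exist --- under your route the clause ``consequently all Heisenberg tori are complete'' is no longer a consequence but the hypothesis, so the proposition forfeits its intended role as an independent, affine-classification-free re-proof of Corollary~\ref{cor:Complete}. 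Second, once you invoke completeness, your preliminary case analysis (non-immersivity of the orbit map for central representations, non-injectivity in the rank-one case) is redundant: completeness gives faithfulness, freeness, and proper discontinuity of $\rho$ at once, and the density argument for the one-parameter subgroup then excludes all of $\mathcal{U}\smallsetminus\mathcal{F}$ uniformly. Your handling of the $\Heis_0$ clause matches the paper's in substance, though the paper argues it directly for tori --- passing to the cover corresponding to $\rho(\Z^2)\cap\Heis_0$ and using that every element of $\Heis_+\smallsetminus\Heis_0$ fixes a point of $\Hs^2$ --- rather than citing the orbifold-covering proposition.
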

\begin{proof}
There are three classes of elements in $\mathcal{U}\smallsetminus \mathcal{F}$: representations into the center, representations $(\vec{x},\vec{y},\vec{z})$ with $\vec{z}=0$	 and representations with $\vec{y}=0$.
These classes are all topologically conjugate, and preserve a fibration of the plane $\Hs^2\surject \R$.
Representations into the center act by translations parallel to the $x$ axis, preserving the invariant foliation of $\Hs^2$, and similarly for those with $\vec{y}=0$.
Representations with $\vec{z}=0$ are not faithful, and factor through a representation $\R\to\Heis$ with orbits foliating the plane by parabolas.

To see these cannot be the holonomy of tori, let
$\rho\in\mathcal{U}\smallsetminus\mathcal{F}$ preserve the fibration $\pi\colon\Hs^2\surject\R$, and assume $(f,\rho)$ is a developing pair for some Heisenberg torus.
Let $\Omega=f(\widetilde{T})$ be the developing image, and note $\pi(\Omega)\subset \R$ is open as $f$ is a local diffeomorphism and $\pi$ is a bundle projection.
Let $Q\subset\widetilde{T}$ be a compact fundamental domain for the action of $\Z^2$ by covering transformations, and note that $\pi(f(Q))=\pi(f(\Omega))$ as $\rho$ is fiber preserving.
But $\pi(f(Q))$ is compact, and thus not open in $\R$, a contradiction.

It follows from this that all Heisenberg tori are complete, and have holonomy in $\Heis_0$.
Indeed $T$ be any Heisenberg torus with developing pair $(f,\rho)$ and $\widetilde{T}\to T$ the cover corresponding to the subgroup $\rho(\Z^2)\cap\Heis_0$.  
Then $\widetilde{T}$ is complete so $T$ is also, and $\rho(\Z^2)$ acts freely and properly discontinuously on $\Hs^2$.
As $T^2$ is orientable the holonomy takes values in $\Heis_+$, but every element of $\Heis_+\smallsetminus \Heis_0$ fixes a point in $\Hs^2$ so in fact $\rho$ is $\Heis_0$ valued and $T=\widetilde{T}$.
\end{proof}

\begin{comment}
\begin{proposition}
If $(f,\rho)$ is the developing pair for a Heisenberg torus, then $\rho(\Z^2)\subset\Heis_0$.	
\end{proposition}
\begin{proof}
Let $T$ be a Heisenberg torus with developing pair $(f,\rho)$ and $\widetilde{T}\to T$ the cover corresponding to the subgroup $\rho(\Z^2)\cap\Heis_0$.  
Then $\widetilde{T}$ is complete by the propositions above, so $T$ is also complete and $\rho(\Z^2)$ acts freely and properly discontinuously on $\Hs^2$.
As $T^2$ is orientable the holonomy takes values in $\Heis_+$, but every element of $\Heis_+\smallsetminus \Heis_0$ fixes a point in $\Hs^2$ so in fact $\rho$ is $\Heis_0$ valued and $T=\widetilde{T}$.
\end{proof}
\end{comment}

\noindent 
Thus a representation $\rho\colon\Z^2\to\Heis$ is either the holonomy of a unique complete structure on $T^2$, or is not the holonomy of any geometric structure at all.
After dealing with the slight annoyance of $\Heis_0$ vs. $\Heis_+$ conjugacy, this directly provides a description of the 
the Teichm\"uller space $\mathcal{T}_{\Hs^2}(T^2)$ of unit area structures and the corresponding deformation space $\mathcal{D}_{\Hs^2}(T^2)=\R_+\times\mathcal{T}_{\Hs^2}(T^2)$.

\begin{theorem}
The projection onto holonomy identifies the Teichm\"uller space of unit area tori with the quotient of $\mathcal{F}$ by the free $\Z_2$ action of conjugacy by $\diag(-1,-1,1)$ and $\mathcal{T}_{\Hs^2}(T^2)\cong \mathcal{F}/\Z^2\cong \R^2\times \S^1$.
\end{theorem}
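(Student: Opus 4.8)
The plan is to assemble the final statement from the two pieces already established: the identification of $\fam{F}$ as a trivial $\R^\times$ bundle over the cylinder $\mathsf{Cyl}=T^2\smallsetminus S$, and the fact (from the preceding propositions) that every $\rho\in\fam{F}$ is the holonomy of a unique complete Heisenberg torus, so that projection onto holonomy is a bijection onto its image. The one remaining subtlety is that developing pairs are only well-defined up to \emph{orientation-preserving} conjugacy $\Heis_+$, whereas $\fam{F}\subset\mathcal{U}^\star$ records conjugacy classes only up to the identity component $\Heis_0$ (together with homothety). Since $\Heis_+/\Heis_0\cong\Z_2$, I would first pin down a concrete generator of this quotient and compute its action on $\mathcal{U}^\star$ in the Lie algebra coordinates developed in Section \ref{subsec:Rep_Var_Tori}.

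The key computation is to verify that conjugation by $\diag(-1,-1,1)$ represents the nontrivial class of $\Heis_+/\Heis_0$ and to work out its effect on a point $(\vec{x},\vec{y},\vec{z})\in\mathcal{U}^\star$. I expect conjugation by $\diag(-1,-1,1)$ to send a Lie algebra element $\smat{x&z\\&y}$ to $\smat{x&-z\\&y}$ (the off-diagonal shear coordinate flips sign while the diagonal entries are fixed), so on coordinates it acts by $(\vec{x},\vec{y},\vec{z})\mapsto(\vec{x},\vec{y},-\vec{z})$. First I would check this action preserves $\fam{F}$: it fixes the base $T^2=V(x_1y_2-x_2y_1)\cap\S^3$ and acts as $\vec{z}\mapsto-\vec{z}$ on each fiber of the $\R^\times$ bundle, hence preserves the complement of the zero section. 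Next I would check it is \emph{free}: since every point of $\fam{F}$ has $\vec{z}\neq 0$ (that is precisely the condition excising $V(z_1,z_2)$), the map $\vec{z}\mapsto-\vec{z}$ has no fixed points on $\fam{F}$, so the generated $\Z_2$ acts freely and properly, giving a genuine manifold quotient $\fam{F}/\Z_2$.

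Having established that the $\Heis_+$-conjugacy classes of holonomies are exactly $\fam{F}/\Z_2$, the identification $\mathcal{T}_{\Hs^2}(T^2)\cong\fam{F}/\Z_2$ is then immediate: by the previous propositions projection onto holonomy is a bijection from unit-area marked structures to $\Heis_+$-conjugacy classes of freely-acting representations, and the latter is $\fam{F}/\Z_2$ by the preceding paragraph. It remains to identify the homeomorphism type. Writing $\fam{F}\cong\R^\times\times\mathsf{Cyl}$ with $\mathsf{Cyl}\cong\S^1\times\R$, I would track how the free involution acts across this product: the $\vec{z}\mapsto-\vec{z}$ map swaps the two components of the $\R^\times$ fiber while (I expect) acting trivially, or by an identification that can be absorbed, on the $\mathsf{Cyl}$ factor. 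Quotienting $\R^\times\times X$ by the involution exchanging the two half-lines of $\R^\times$ yields $\R\times X$ (collapsing $\R^\times/\pm\cong\R_+\cong\R$ as the fiber coordinate), so $\fam{F}/\Z_2\cong\R\times\mathsf{Cyl}\cong\R\times(\S^1\times\R)\cong\R^2\times\S^1$.

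The main obstacle I anticipate is the bookkeeping in this last step: confirming that the involution really does act as a free fiberwise reflection with respect to a \emph{trivializing} choice of coordinates, rather than interacting with the twisting of the bundle over the generators of $\pi_1(\mathsf{Cyl})$ in a way that changes the quotient's topology. I would handle this by using the explicit $(\vec{u},\vec{v})$ coordinates from Section \ref{subsec:Rep_Var_Tori} in which the triviality over $\mathsf{Cyl}$ was already proven, and verifying directly that $\vec{z}\mapsto-\vec{z}$ is expressed as the standard antipodal reflection of the $\R^\times$ fiber in those same coordinates. Once the action is seen to be a product of the trivial action on $\mathsf{Cyl}$ with the free antipodal action on $\R^\times$, the homeomorphism $\fam{F}/\Z_2\cong\R^2\times\S^1$ follows with no further difficulty.
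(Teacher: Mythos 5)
Your overall architecture mirrors the paper's proof (assemble the bijection from the earlier propositions, then compute the $\Heis_+/\Heis_0\cong\Z_2$ action on $\mathcal{F}$ and take the quotient), but your key computation is wrong, and the error lands exactly on the step you yourself flagged as the main obstacle. In the paper's shorthand $\smat{x&z\\&y}$ denotes the strictly upper-triangular $3\times 3$ matrix with $x$ in position $(1,2)$, $y$ in position $(2,3)$, and $z$ in position $(1,3)$ --- $x$ and $y$ are \emph{not} diagonal entries. Conjugation by $D=\diag(-1,-1,1)$ scales the $(i,j)$ entry by $d_id_j^{-1}$, so $x$ picks up $(-1)(-1)^{-1}=+1$ while $y$ and $z$ each pick up $(-1)(1)^{-1}=-1$. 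The action is therefore $(\vec{x},\vec{y},\vec{z})\mapsto(\vec{x},-\vec{y},-\vec{z})$, as the paper records, not $(\vec{x},\vec{y},-\vec{z})$. Geometrically this is forced: $\diag(-1,-1,1)$ acts on the affine patch as rotation by $\pi$, which inverts both translation coordinates (the $\vec{y}$ and $\vec{z}$ slots) and commutes with shears (fixing $\vec{x}$). Your freeness argument survives --- indeed it gets easier, since a fixed point would need $\vec{y}=0$ \emph{and} $\vec{z}=0$, both excluded from $\mathcal{F}$ --- but your claim that the involution ``fixes the base $T^2$'' is false.

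This is not a cosmetic slip, because your identification of the quotient's topology rested on the action being trivial on the $\mathsf{Cyl}$ factor and a pure sign-flip on the $\R^\times$ fiber. With the corrected action the involution moves base points by $(\vec{x},\vec{y})\mapsto(\vec{x},-\vec{y})$, which in the $(\vec{u},\vec{v})$ coordinates is the coordinate swap $(\vec{u},\vec{v})\mapsto(\vec{v},\vec{u})$ on the Clifford torus; its fixed locus is exactly the removed circle $S$, so it acts freely on $\mathsf{Cyl}$. Moreover the fibers over $(\vec{x},\vec{y})$ and $(\vec{x},-\vec{y})$ are the \emph{same} line $\ell^\perp$ (since $\mathrm{span}\{\vec{x},\vec{y}\}=\mathrm{span}\{\vec{x},-\vec{y}\}$), identified by $\vec{z}\mapsto-\vec{z}$, so whether the involution reverses or preserves the fiber coordinate relative to a global trivialization of $\mathcal{F}\to\mathsf{Cyl}$ is precisely the bookkeeping you promised to check ``directly'' --- and it must now be carried out against a nontrivial base involution, since the fundamental-domain argument yielding $\R_+\times\mathsf{Cyl}\cong\R^2\times\S^1$ is only valid if the fiber sign genuinely flips in the trivializing coordinates. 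A secondary, smaller gap: you assert the identification $\mathcal{T}_{\Hs^2}(T^2)\cong\mathcal{F}/\Z_2$ is ``immediate'' from a bijection of holonomies, whereas the paper needs the Ehresmann--Thurston principle to make $\overline{\hol}$ continuous and then verifies $\overline{\hol}\circ\overline{\dev}=\mathrm{id}$ to upgrade the continuous bijection $\overline{\dev}$ to a homeomorphism; a set-level bijection alone does not identify the topology of Teichm\"uller space.
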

\begin{proof}
The map $\hol\colon\mathsf{Dev}_{\Hs^2}(T^2)\to\mathcal{R}$ projecting a developing pair onto its holonomy is a local homeomorphism by the Ehresmann-Thurston principle, which induces a continuous map $\overline{\hol}\colon \mathcal{D}_{\Hs^2}(T^2)\to\mathcal{R}/\Heis_+$.
The work above shows the map $\dev\colon\mathcal{F}\to\mathcal{D}_{\Hs^2}(T^2)$ defined by $\rho\mapsto [f_\rho,\rho]$ is a continuous surjection onto Teichm\"uller space $\mathcal{T}_{\Hs^2}(T^2)$.
As $\mathcal{F}\subset\mathcal{U}$ was defined only up to $\Heis_0$ conjugacy, $\dev$ factors through the quotient by $(\Heis_+/\Heis_0)\cong\Z_2$ conjugacy to a continuous bijection $\overline{\dev}\colon\mathcal{F}/\Z_2\to\mathcal{T}_{\Hs^2}(T^2)$.  
The composition $\overline{\hol}\circ\overline{\dev}$ is the identity on $\mathcal{F}/\Z_2$, so $\overline{\dev}$ is a homeomorphism.

Thus, $\mathcal{T}_{\Hs^2}(T^2)\cong\mathcal{F}/\Z_2$.
The quotient $\Heis_+/\Heis_0\cong\Z_2$, generated by $\diag(-1,-1,1)$, acts by conjugation in Lie algebra coordinates as $\diag(-1,-1,1).(\vec{x},\vec{y},\vec{z})=(\vec{x},-\vec{y},-\vec{z})$.
This action is free on $\mathcal{F}$ and the quotient $\mathcal{T}_{\Hs^2}(T^2)$ is the trivial $\R_+$ bundle over $\mathsf{Cyl}$, which is homeomorphic to the open solid torus  $\R^2\times\S^1$, and $\mathcal{D}_{\Hs^2}(T^2)\cong \R^3\times\S^1$.
\end{proof}

\begin{figure}[h!]
\includegraphics[width=\textwidth]{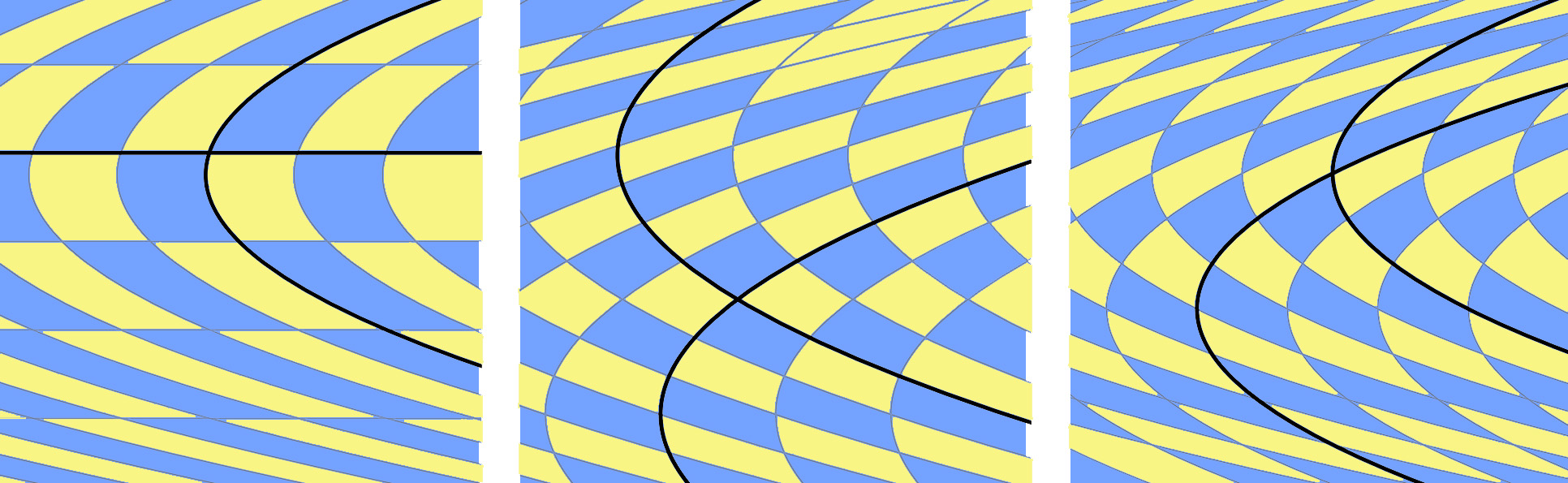}	
\caption{Some examples of developing maps for Heisenberg shear tori.}
\end{figure}

\noindent 
The identification $\mathcal{T}_{\Hs^2}(T^2)=\mathcal{F}/\Z_2$ identifies two distinct classes of Heisenberg tori; those containing a shear in their holonomy and those with holonomy into the subgroup of translations of the plane.
We will refer to these as \emph{shear tori} and \emph{translation tori} respectively.

\begin{corollary}
The space of unit-area translation tori is homeomorphic to $\R\times \S^1$, corresponding to the points of $\mathcal{F}\cap V(x_1,x_2)$.
\end{corollary}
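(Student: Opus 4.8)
The plan is to obtain the translation tori as a $\Z_2$-invariant slice of the description $\mathcal{T}_{\Hs^2}(T^2)\cong\mathcal{F}/\Z_2$ established above, and then to read off the homeomorphism type of that slice from the coordinate description of $\mathcal{U}^\star$ in Corollary \ref{cor:U}.

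First I would fix the algebraic characterization of a translation torus. Since the group element obtained by exponentiating the Lie algebra coordinates $(x_i,y_i,z_i)$ has shear parameter equal to its $x$-coordinate, a complete Heisenberg torus has holonomy inside the translation subgroup of $\Heis$ exactly when $\vec{x}=0$. Thus the translation tori are precisely the classes in $\mathcal{F}\cap V(x_1,x_2)$. As $\diag(-1,-1,1)$ acts by $(\vec{x},\vec{y},\vec{z})\mapsto(\vec{x},-\vec{y},-\vec{z})$, the locus $V(x_1,x_2)$ is preserved by the residual $\Z_2=\Heis_+/\Heis_0$, so under the homeomorphism $\overline{\dev}\colon\mathcal{F}/\Z_2\to\mathcal{T}_{\Hs^2}(T^2)$ the space of unit-area translation tori is the image of $\mathcal{F}\cap V(x_1,x_2)$.

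Next I would compute $\mathcal{F}\cap V(x_1,x_2)$ directly. Setting $\vec{x}=0$ in the equations of $\mathcal{U}^\star$ makes both $x_1y_2-x_2y_1=0$ and $\vec{z}\cdot\vec{x}=0$ automatic and collapses $\|x\|^2+\|y\|^2=1$ to $\|\vec{y}\|=1$, leaving only $\vec{z}\cdot\vec{y}=0$. Hence $\mathcal{U}^\star\cap V(x_1,x_2)$ is the total space of the line bundle over the circle $\{\vec{y}\in\S^1\}$ with fiber the orthogonal line $\vec{y}^{\perp}$; writing $\vec{y}=(\cos\theta,\sin\theta)$ and $\vec{z}=t(-\sin\theta,\cos\theta)$ shows the frame $(-\sin\theta,\cos\theta)$ closes up after $\theta\mapsto\theta+2\pi$, so the bundle is trivial. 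Intersecting with $\mathcal{F}$ removes the zero section $\vec{z}=0$, which is exactly the unfaithful rank-one locus, leaving a punctured trivial line bundle over $\S^1$.

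Finally I would identify the quotient. The step I expect to be the crux is bookkeeping orientation: the signed area of the lattice generated by $(z_1,y_1),(z_2,y_2)$ is $z_1y_2-z_2y_1=-t$, so the halves $t>0$ and $t<0$ of the punctured bundle are interchanged exactly by reversing the orientation of the marking, and the orientation convention underlying Teichm\"uller space (the same one that makes $\mathcal{T}_{\Hs^2}(T^2)$ the connected solid torus $\R^2\times\S^1$) selects a single half $\S^1\times\R_{>0}$. On it the free $\Z_2$ acts by $\theta\mapsto\theta+\pi$ with $t$ fixed, and the quotient of $\S^1\times\R_{>0}$ by this rotation is again a cylinder, giving $\R\times\S^1$ with the $\vec{y}$-direction as the $\S^1$ factor and $t$ as the $\R$ factor. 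I would emphasize that, stated without the orientation convention, $\mathcal{F}\cap V(x_1,x_2)$ has two components, and the content of the corollary is that this convention isolates the one cylinder.
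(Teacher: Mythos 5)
Your computations are correct, and your route is essentially the paper's implicit one: the corollary carries no separate proof there, being read off by slicing the description $\mathcal{T}_{\Hs^2}(T^2)\cong\mathcal{F}/\Z_2$ along $V(x_1,x_2)$. Your identification of translation holonomies with $\vec{x}=0$ (the shear parameter of $\exp$ of a Lie algebra element is its $x$-coordinate), the reduction of the defining equations of $\mathcal{U}^\star$ to $\|\vec{y}\|=1$ and $\vec{z}\perp\vec{y}$, the triviality of the orthogonal line bundle over the circle of directions $\vec{y}$, and the removal of the zero section as exactly the unfaithful locus all agree with the paper's setup. Where you genuinely diverge is the final step, and there your version is the more careful one. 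The paper's theorem asserts that the quotient of $\mathcal{F}$ by conjugation by $\diag(-1,-1,1)$ is \emph{the trivial $\R_+$ bundle over $\mathsf{Cyl}$}, i.e.\ it treats the residual $\Z_2$ as identifying the halves $t>0$ and $t<0$, and the corollary's single cylinder is inherited from that. Your computation shows the opposite: since $(\vec{x},\vec{y},\vec{z})\mapsto(\vec{x},-\vec{y},-\vec{z})$ negates \emph{both} vectors, the signed area $z_1y_2-z_2y_1=-t$ (which is also the Jacobian determinant of the orbit developing map $f_\rho$ at the origin, hence continuous and nonvanishing on all of $\mathcal{F}$) is invariant under all of $\Heis_+$; the action on the translation slice is $(\theta,t)\mapsto(\theta+\pi,t)$ and preserves the two components. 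Thus $\mathcal{F}\cap V(x_1,x_2)$, and likewise its $\Z_2$-quotient, is $\S^1\times\R^\times$, two cylinders indexed by the orientation of the marked holonomy basis, and one obtains the single cylinder $\R\times\S^1$ of the statement only after fixing an orientation convention as you do, or equivalently after conjugating by the full group $\Heis$, whose extra coset representative $\diag(1,-1,1)$ acts by $(\theta,t)\mapsto(\theta+\pi,-t)$ and does exchange the halves.

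So your closing caveat is not a defect in your argument but an accurate diagnosis of a bookkeeping slip upstream: the $\Heis_+$-invariance of $z_1y_2-z_2y_1$ is irreconcilable with the claim that the $\Z_2$-quotient is a trivial $\R_+$-bundle over $\mathsf{Cyl}$ (the same invariant shows $\mathcal{F}/\Z_2$ itself has two components, each a M\"obius band times $\R$ rather than the connected solid torus). The one soft spot in your write-up is attributing the needed convention to the paper --- no such convention is stated there, and its connectivity claim rests on the sign error rather than on an orientation restriction --- so you should present the convention (positively oriented markings, or full $\Heis$-conjugacy) as a hypothesis you are adding, under which your proof of the corollary is complete.
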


\noindent 
It is notable that the set of developing pairs for Heisenberg translation tori is the same as the set of developing pairs for Euclidean tori, but the corresponding deformation spaces are not homeomorphic, with $\mathcal{T}_{\E^2}(T^2)$ a disk and $\mathcal{T}_{\Hs^2}(T^2)$ a cylinder.
This is due to the different notion of equivalence coming from $\Heis_+$ and $\Isom_+(\E^2)$ conjugacy; the former acting by shears and the latter by rotations.
The familiar fact that Euclidean torus has a representative holonomy containing horizontal translations is a consequence of this, as is the fact that each Heisenberg translation torus has a representative holonomy translating along (Euclidean) orthogonal lines.

\begin{figure}[h]
\includegraphics[width=\textwidth]{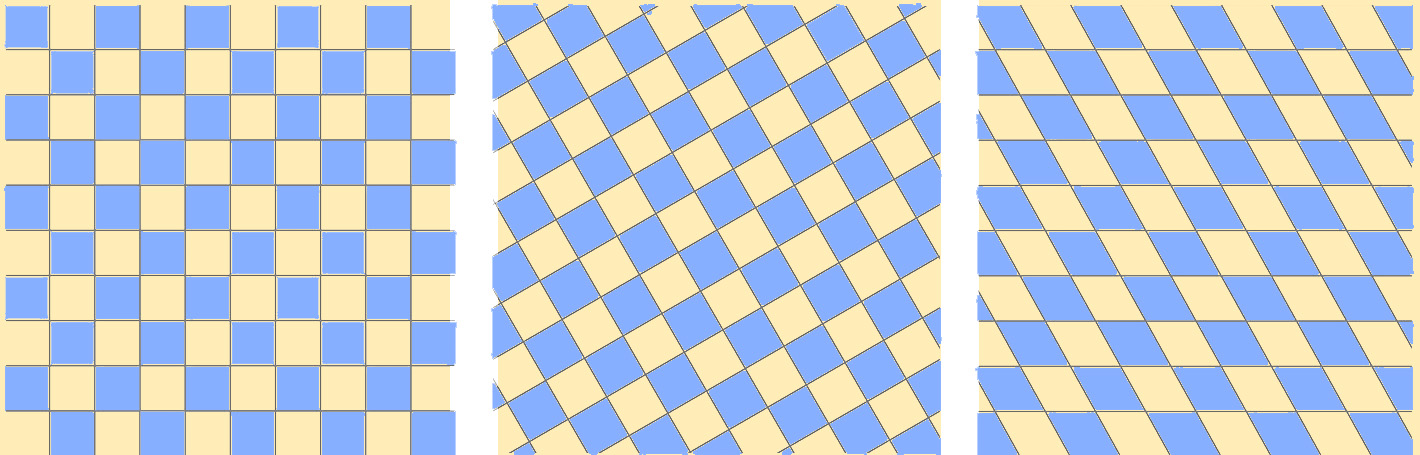}
\caption{Developing maps for translation tori.  The left two are equivalent as Euclidean structures, whereas the right two are as Heisenberg structures.
All three represent the same (unique) affine translation torus.}	
\end{figure}

\noindent 
Every Heisenberg structure canonically weakens to an affine structure, defining the map $\omega\colon\mathcal{D}_{\Hs^2}(T^2)\to\mathcal{D}_{\A^2}(T^2)$ with image in the complete structures.

\begin{corollary}
The space $\omega(\mathcal{D}_{\Hs^2}(T^2))$ of Heisenberg structures up to affine equivalence is one dimensional, homeomorphic to $\R$.
\end{corollary}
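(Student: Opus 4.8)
The plan is to identify $\omega(\mathcal{D}_{\Hs^2}(T^2))$ explicitly as a set of affine-conjugacy classes of holonomies and then read off its topology. Since every Heisenberg torus is complete (Corollary \ref{cor:Complete}) and complete structures are determined up to isotopy by their holonomy, the weakening map sends a marked Heisenberg torus $[f_\rho,\rho]$ to the marked affine torus carrying the same developing pair, and two such are identified in $\mathcal{D}_{\A^2}(T^2)$ precisely when their holonomies are conjugate in $\Aff(2)=\GL(2;\R)\ltimes\R^2$. Thus $\omega(\mathcal{D}_{\Hs^2}(T^2))$ is the image of the freely acting Heisenberg representations inside $\Hom(\Z^2,\Aff(2))/\Aff(2)$, which by the theorem of Baues and Goldman \cite{BauesGoldman05} lands in the deformation space of complete affine tori, diffeomorphic to $\R^2$. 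The goal is to prove that this image is a properly embedded line.

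First I would compute the effect of enlarging the gauge group from $\Heis_+$ to $\Aff(2)$. Working in the Lie-algebra coordinates $(\vec{x},\vec{y},\vec{z})$ of Section \ref{subsec:Rep_Var_Tori}, where $\mathcal{D}_{\Hs^2}(T^2)\cong\R^3\times\S^1$ records $\Heis_+$-conjugacy classes, I would track the two new families of identifications: conjugation by $M\in\GL(2;\R)$ and conjugation by translations of $\A^2$. A direct computation with generators $g_i=\exp\smat{0 & x_i & z_i\\0&0&y_i\\0&0&0}$ shows that $M$ acts on the linear parts $I+x_iE$ by conjugating the common shear axis $\ker E$ and independently rescaling the shear parameters $\vec{x}$ and the transverse translations $\vec{y}$, while translations of $\A^2$ shift $\vec{z}$ inside $\R\vec{x}$. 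The upshot is that the homothety factor $\R_+$ of $\mathcal{D}_{\Hs^2}(T^2)$ and all of the magnitude data in $\vec{x},\vec{y},\vec{z}$ are absorbed, so $\omega$ is constant along a codimension-one family and the number of effective parameters collapses to one. In particular the translation tori, whose linear holonomy is trivial, all become affinely equivalent (any two marked planar lattices being $\GL(2;\R)$-conjugate, exactly as in the Euclidean case), and so map to a single point.

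The cleanest way to finish is to push the Heisenberg holonomies through the explicit Baues--Goldman parameterization of complete affine tori by $\R^2$ and to compute the image curve directly: I expect the translation torus to sit at a distinguished point and the shear tori to occupy the rest of a single line, with a real parameter coming from the surviving transverse radiance class. One then checks that this parameter varies continuously and exhausts $\R$, giving a homeomorphism onto a line. The main obstacle is precisely this final identification. The residual affine invariant extracted in the coordinate computation is naturally projective, and the delicate point is to show that, in the Baues--Goldman coordinates, the family assembles into a space homeomorphic to $\R$ rather than to a circle; this requires matching the Lie-algebra coordinates $(\vec{x},\vec{y},\vec{z})$ to the Baues--Goldman coordinates on $\R^2$ \cite{Baues14,BauesGoldman05}, proving the candidate invariant is complete by ruling out any further affine identifications among shear tori, and verifying the global topology of the resulting quotient.
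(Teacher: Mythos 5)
Your reduction coincides with the paper's: both identify $\omega(\mathcal{D}_{\Hs^2}(T^2))$ with the quotient of $\mathcal{F}$ by full affine conjugacy inside the Baues--Goldman space of complete affine structures, and your coordinate computation is sound --- conjugation by $\diag(a,b)$ acts as $(\vec{x},\vec{y},\vec{z})\mapsto(\tfrac{a}{b}\vec{x},\,b\vec{y},\,a\vec{z})$, translations only shift $\vec{z}$ along directions already absorbed by $\Heis_0$-conjugacy, and all translation tori collapse to a single point, exactly as the paper notes. The problem is that your proposal stops precisely where the proof must conclude: you defer the identification of the quotient with $\R$ to an unexecuted matching of Lie-algebra coordinates with Baues--Goldman coordinates, and you explicitly flag as open both the completeness of your residual invariant (no further affine identifications among shear tori) and the global topology (line versus circle). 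Since the statement being proved \emph{is} that the image is homeomorphic to $\R$, this deferred step is the whole content, and the proposal as written is a plan with a genuine gap rather than a proof.

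The paper closes this gap without ever entering the Baues--Goldman coordinates, and the mechanism is worth internalizing: in the product structure $\mathcal{D}_{\Hs^2}(T^2)\cong\R_+\times\R^2\times\S^1$ already established, the additional affine conjugacies --- rotations together with the two independent scalings --- form a three-parameter group acting \emph{freely} on the relevant locus of $\mathcal{F}$, and its orbit directions are exactly three of the four product coordinates: the $\S^1$ factor and the $\R_+^2$ of magnitude data. Freeness is what rules out the ``further identifications among shear tori'' you were worried about (orbits are separated by the surviving coordinate), and the fact that the $\S^1$ factor is entirely an orbit direction is what settles your line-versus-circle concern: the circle is collapsed, so the quotient is the remaining $\R$ coordinate on the nose. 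To repair your write-up you therefore do not need the explicit Baues--Goldman parameterization at all; it suffices to verify freeness of this $\SO(2)\times\R_+^2$ action in the $(\vec{x},\vec{y},\vec{z})$ coordinates you already set up and to observe that its orbits sweep out the three collapsing product factors.
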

\begin{proof}
By Goldman and Baues \cite{BauesGoldman05}, the space of complete affine strutures on $T^2$ is diffeomorphic to the plane, and by completeness we identify this with its projection onto holonomy.
This realizes $\omega(\mathcal{D}_{\Hs^2}(T^2))$ as the quotient of $\mathcal{F}$ by affine conjugacy, on which the subgroups of rotations and linearly independent scalings act freely.
Thus the $\S^1$ factor and $\R_+^2$ directions of independent scalings collapse in the quotient, and $\omega(\mathcal{D}_{\Hs^2}(T^2))\cong\R$.

 \end{proof}

\section{Other Heisenberg Orbifolds}
\label{sec:Heis_Orbifolds}
\index{Heisenberg Geometry!Orbifolds}
\index{Orbifolds!Heisenberg Structures}

We may use this description of the deformation space of tori to understand all Heisenberg orbifolds. 
An orbifold covering $\pi\colon\mathcal{Q}\to\mathcal{O}$ induces a map $\pi^\ast\colon\mathcal{D}_{\Hs^2}(\mathcal{O})\to\mathcal{D}_{\Hs^2}(\mathcal{Q})$ by pullback of geometric structures,  easily expressed on developing pairs as $\pi^\ast([f,\rho])=[f,\rho|_{\pi_1(\mathcal{Q})}]$ for $\pi_1(\mathcal{Q})<\pi_1(\mathcal{O})$ the subgroup corresponding to the cover.

\begin{proposition}
All Heisenberg structures on orbifolds are complete, and projection onto the holonomy is an embedding $\mathcal{D}_{\Hs^2}(\mathcal{O})\inject \Hom(\pi_1(\mathcal{O}),\Heis)/\Heis_+$.
Under this identification, a finite sheeted covering $\mathcal{Q}\to\mathcal{O}$ describes the deformation space $\mathcal{D}_{\Hs^2}(\mathcal{O})$ 
as the preimage of $\mathcal{D}_{\Hs^2}(\mathcal{Q})$ under the restriction $\pi^\ast\colon \rho\mapsto \rho|_{\pi_1(\mathcal{Q})}$.
\label{Prop:Orbifold_Def}
\end{proposition}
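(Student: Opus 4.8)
The plan is to prove the three assertions in order: (1) completeness of all Heisenberg orbifold structures, (2) that holonomy gives an embedding of deformation space, and (3) the description of $\mathcal{D}_{\Hs^2}(\mathcal{O})$ via pullback along a finite cover. The key structural input is the proposition established just before this statement, namely that every closed Heisenberg orbifold $\mathcal{O}$ is finitely covered by a Heisenberg torus $T$ with holonomy into $\Heis_0$, together with the complete classification of such tori: every representation $\rho\colon\pi_1(T)=\Z^2\to\Heis$ that is a holonomy at all is the holonomy of a \emph{unique complete} structure, and all Heisenberg tori are complete.

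First I would establish completeness. Let $\mathcal{O}$ carry a Heisenberg structure with developing pair $(f,\rho)$, and let $\pi\colon T\to\mathcal{O}$ be the finite cover by a Heisenberg torus guaranteed by the earlier proposition. Pulling back gives a Heisenberg structure $(f,\rho|_{\pi_1(T)})$ on $T$, which is complete by the torus classification, so its developing map $f\colon\widetilde{\mathcal{O}}=\widetilde{T}\to\Hs^2$ is a diffeomorphism. Since completeness is defined in terms of the developing map being a covering, and $\widetilde{\mathcal{O}}=\widetilde{T}$ with the same developing map $f$, the structure on $\mathcal{O}$ is complete as well. (Here I use that the universal cover of the very good orbifold $\mathcal{O}$ agrees with that of its finite manifold cover $T$.) This simultaneously shows $\rho$ acts properly discontinuously, so $\mathcal{O}\cong\Hs^2/\rho(\pi_1(\mathcal{O}))$.

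Next, the embedding statement. Completeness plus the fact that $\Hs^2=\A^2$ has contractible underlying space means holonomy determines the structure: by the proposition ``Holonomy Determines Complete Structures'' from Chapter~\ref{chp:Geo_Strs}, two complete structures on $\mathcal{O}$ with conjugate holonomies are $(G,X)$-isomorphic. Thus $\hol\colon\mathcal{D}_{\Hs^2}(\mathcal{O})\to\Hom(\pi_1(\mathcal{O}),\Heis)/\Heis_+$ is injective; it is continuous and open by the Ehresmann--Thurston principle (which gives that $\hol$ is a local homeomorphism from deformation space), so it is an embedding onto its image.

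Finally, the pullback description. The map $\pi^\ast\colon\rho\mapsto\rho|_{\pi_1(\mathcal{Q})}$ realizes restriction of holonomies, and the preceding claim is that $\mathcal{D}_{\Hs^2}(\mathcal{O})$ is exactly the preimage $(\pi^\ast)^{-1}(\mathcal{D}_{\Hs^2}(\mathcal{Q}))$ inside $\Hom(\pi_1(\mathcal{O}),\Heis)/\Heis_+$. One inclusion is immediate: any Heisenberg structure on $\mathcal{O}$ restricts to one on $\mathcal{Q}$, so its holonomy lands in the preimage. The content is the reverse inclusion: given $\rho\colon\pi_1(\mathcal{O})\to\Heis$ whose restriction to $\pi_1(\mathcal{Q})$ is the holonomy of a (necessarily complete) Heisenberg structure on $\mathcal{Q}$, one must produce a Heisenberg structure on $\mathcal{O}$ itself. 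The developing diffeomorphism $f\colon\widetilde{\mathcal{Q}}\to\Hs^2$ for the restricted structure is $\rho|_{\pi_1(\mathcal{Q})}$-equivariant; since $\pi_1(\mathcal{Q})\nsubg\pi_1(\mathcal{O})$ is finite-index and $\widetilde{\mathcal{Q}}=\widetilde{\mathcal{O}}$, I expect to show $f$ is automatically equivariant for all of $\rho(\pi_1(\mathcal{O}))$. This is the main obstacle: it requires checking that the extra deck transformations in $\pi_1(\mathcal{O})$ act on $\widetilde{\mathcal{O}}$ compatibly with $\rho$ via $f$, i.e.\ that $f\circ\gamma=\rho(\gamma)\circ f$ for $\gamma\in\pi_1(\mathcal{O})\smallsetminus\pi_1(\mathcal{Q})$. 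I would argue this by uniqueness: the map $\rho(\gamma)^{-1}\circ f\circ\gamma$ is another developing map for the same complete structure on $\widetilde{\mathcal{Q}}$ with the same holonomy $\rho|_{\pi_1(\mathcal{Q})}$ (using that $\pi_1(\mathcal{Q})$ is normal, so conjugation by $\gamma$ preserves it), hence equals $f$ by the rigidity of complete structures (Observation~\ref{Obs:Unique_Extension} forces the two elements of $\Heis$ implementing this to coincide). Thus $f$ descends to a developing pair $(f,\rho)$ for $\mathcal{O}$, completing the reverse inclusion; continuity and openness of $\pi^\ast$ then upgrade the set-level identification to a homeomorphism onto the preimage.
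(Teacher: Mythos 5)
Your treatment of completeness and of the embedding statement is essentially the paper's own argument (the paper additionally records that faithfulness of $\rho|_{\pi_1(T)}$ forces faithfulness of $\rho$ since $\pi_1(T)$ is essential, and proves injectivity directly by noting $\phi f\inv$ descends to a Heisenberg self-map of $\mathcal{O}$ inducing the identity on $\pi_1$, rather than citing the general propositions — cosmetic differences only). The genuine gap is in your third part. Your key claim, that $g=\rho(\gamma)\inv\circ f\circ\gamma$ \emph{equals} $f$, is false. First, $g$ is a developing map for the pullback of the structure on $\mathcal{Q}$ under the diffeomorphism induced by $\gamma$ (and already writing that down uses normality of $\pi_1(\mathcal{Q})$ in $\pi_1(\mathcal{O})$, which an arbitrary finite-sheeted cover need not satisfy). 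Second, even after identifying that structure with the original one via holonomy-determines-complete-structures, the uniqueness clause of the Development Theorem only gives $g=h\circ f$ for some $h\in\Heis$ \emph{centralizing} $\rho(\pi_1(\mathcal{Q}))$, and this centralizer is typically large: for a translation torus it contains every translation of $\A^2$. Observation \ref{Obs:Unique_Extension} cannot rescue this, since you never produce two elements of $\Heis$ agreeing on an open set. Concretely: take the unit square translation torus with $f=\id_{\R^2}$, let $\widetilde{r}(x)=-x$ be the lift of the pillowcase involution, and set $\rho(r)=R_p$, the $\pi$-rotation about a point $p\neq 0$. All relations of $\pi_1(\S^2(2,2,2,2))$ hold and $\rho$ \emph{is} the holonomy of a pillowcase structure, yet $f\circ\widetilde{r}(x)=-x\neq 2p-x=R_p f(x)$; the equivariant developing map is $f'=T_p\circ f$ with $T_p$ translation by $p$, i.e.\ $f$ corrected by exactly such a centralizing element. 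So the given $f$ need not be $\rho$-equivariant, and your argument would otherwise "prove" that every extension shares a single developing map, which the one-parameter families of extensions appearing in the later propositions show is absurd.

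The correct route for the reverse inclusion is the one the paper uses implicitly and realizes constructively in the subsequent orbifold-by-orbifold propositions: since $f$ conjugates the deck action of $\pi_1(\mathcal{Q})$ to the $\rho|_{\pi_1(\mathcal{Q})}$-action, $\rho(\pi_1(\mathcal{Q}))$ acts freely, properly discontinuously and cocompactly on $\Hs^2$; any group containing a properly discontinuous subgroup with finite index acts properly discontinuously, so $\rho(\pi_1(\mathcal{O}))$ does as well. Faithfulness follows since $\ker\rho\cap\pi_1(\mathcal{Q})=1$ makes $\ker\rho$ torsion, and a nontrivial torsion element in the kernel would violate the relations against the faithful restricted holonomy (e.g.\ $rar\inv=a\inv$ would force $\rho(a)=\rho(a)\inv$). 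Then $\Hs^2/\rho(\pi_1(\mathcal{O}))$ is a complete Heisenberg orbifold realizing $\rho$, and the marked identification with $\mathcal{O}$ uses that isomorphisms of these orbifold fundamental groups are induced by diffeomorphisms compatible with the cover. Note that the paper's printed proof establishes only the first two assertions; the preimage description is then cashed out in the following propositions, where each admissible extension $[\rho,R]$, $[\rho,F]$, $[\rho,X]$ is exhibited as the holonomy of an explicit quotient — your part 3 should be repaired along those lines rather than via uniqueness of developing maps.
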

\begin{proof}
Let $\mathcal{O}$ be a Heisenberg orbifold with developing pair $[f,\rho]$, and choose a finite covering $\pi\colon T\to \mathcal{O}$.  
Then by the completeness of $\pi^\ast[f,\rho]\in\mathcal{D}_{\Hs^2}(T)$, the developing map $f$ is a diffeomorphism and $\rho|_{\pi_1(T^2)}$ (hence $\rho$, as $\pi_1(T^2)$ is finite index in $\pi_1(\mathcal{O})$) acts properly discontinuously.
As $\pi_1(T^2)<\pi_1(\mathcal{O})$ is an essential subgroup for all orbifolds covered by the torus, the faithfulness of $\rho|_{\pi_1(T^2)}$ implies faithfulness of $\rho$.
Thus the structure $[f,\rho]$ on $\mathcal{O}$ is complete.
Let $[\phi,\rho]$ be another Heisenberg structure on $\mathcal{O}$ with the same holonomy, then $\phi f\inv:\widetilde{\mathcal{O}}\to\widetilde{\mathcal{O}}$ is $\pi_1(\mathcal{O})$ equivariant and descends to a Heisenberg map $\mathcal{O}\to\mathcal{O}$, inducing the identity on fundamental groups.  
Thus these structures represent the same point in deformation space so projection onto holonomy is an embedding.
\end{proof}

\noindent 
This further restricts the possible topologies of Heisenberg orbifolds.  In particular, any torsion in the fundamental group is represented faithfully by the holonomy so orbifolds may only have corner reflectors and cone points of order two.

\begin{corollary}
If $\mathcal{O}$ is a Heisenberg orbifold, necessarily $\mathcal{O}$ is $T^2$, the Klein bottle $\S^1\widetilde{\times}\S^1$, and the pillowcase $\S^2(2,2,2,2)$ or one of their quotients: the cylinder $\S^1\times I$, the Mobius band $\S^1\widetilde{\times} I$, the square $\D^2(\varnothing;2,2,2,2)$, $\D^2(2,2;\varnothing)$, $\D^2(2; 2,2)$ and $\RP^2(2,2)$, .
\end{corollary}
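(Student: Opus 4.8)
The plan is to combine three ingredients already established in the chapter: (1) the fact, from the previous proposition, that every Heisenberg orbifold embeds faithfully into $\Hom(\pi_1(\mathcal{O}),\Heis)/\Heis_+$ via holonomy, so that torsion in $\pi_1(\mathcal{O})$ must be realized faithfully by elements of $\Heis$; (2) the earlier classification result that every closed Heisenberg orbifold is finitely covered by a Heisenberg torus with holonomy into $\Heis_0$; and (3) the classification of closed 2-orbifolds. The strategy is purely algebraic-topological constraint-chasing: I would first determine which finite-order elements of $\Heis$ exist, then use the torus-cover condition to bound the orbifold Euler characteristic, and finally enumerate the surviving candidates against the 2-orbifold classification.

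First I would pin down the torsion available in $\Heis$. Since $\Heis_0$ is the real Heisenberg group, it is torsion-free (it is a connected nilpotent, hence unipotent, matrix group, so every nontrivial element has infinite order). The only finite-order elements of $\Heis$ therefore live in the two nonidentity components, and a direct computation with the matrices $\smat{\pm1 & a & c\\0 & \pm1 & b\\0&0&1}$ shows that the only elements of finite order have order exactly $2$. Consequently, by the faithfulness of the holonomy on torsion (Proposition \ref{Prop:Orbifold_Def}), any cone point of $\mathcal{O}$ must have order $2$, and any corner reflector must have dihedral isotropy $D_4$, i.e. corner angle $\pi/2$. This immediately eliminates all teardrops, spindles, and any orbifold with a cone or corner point of order $\geq 3$.

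Next I would impose the Euler characteristic constraint. Because $\mathcal{O}$ is finitely covered by a torus $T^2$ with $\chi(T^2)=0$, multiplicativity of the orbifold Euler characteristic forces $\chi(\mathcal{O})=0$. I would then run the surviving list of 2-orbifolds — those whose only cone points have order $2$ and whose only corner reflectors have order $2$ — through the $\chi=0$ condition, using the formula $\chi(S(n_1,\ldots,n_r))=\chi(S)-\sum_i(1-\tfrac{1}{n_i})$ and its mirror-boundary analog (halving the double). The closed locally-orientable solutions are the torus, the Klein bottle $\S^1\widetilde{\times}\S^1$, and the pillowcase $\S^2(2,2,2,2)$; the non-locally-orientable solutions with mirror boundary are exactly their $\Z_2$-quotients, namely the cylinder, the M\"obius band, and the four quotient orbifolds $\D^2(\varnothing;2,2,2,2)$, $\D^2(2,2;\varnothing)$, $\D^2(2;2,2)$, and $\RP^2(2,2)$.

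The main obstacle, and the only genuinely substantive step, is not producing the candidate list but verifying that each listed orbifold \emph{actually admits} a Heisenberg structure — the $\chi=0$ and torsion conditions are necessary but a priori not sufficient. For this I would exhibit, for each orbifold on the list, an explicit extension of a torus (or Klein bottle) holonomy $\rho\colon\pi_1(T^2)\to\Heis_0$ to the larger group $\pi_1(\mathcal{O})$ landing in $\Heis_+$, precisely as anticipated by the phrase in the summary that points of $\mathcal{D}_{\Hs^2}(\mathcal{O})$ are parameterized by extensions of torus holonomies. Concretely, I would fix a conjugacy class in $\mathcal{F}$ realizing the torus, then check that the deck-group quotient action (an order-$2$ symmetry for the $\Z_2$ quotients, realized by an order-$2$ element of $\Heis_+$ such as a conjugate of $\diag(-1,-1,1)$, together with mirror reflections where needed) is compatible with a freely acting or properly discontinuous extension; the completeness is then automatic from Proposition \ref{Prop:Orbifold_Def}. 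Checking that the required group-theoretic extension into $\Heis_+$ exists for each of the seven quotient orbifolds — and that the corresponding Teichm\"uller space is nonempty and of the dimension recorded in the summary table — is where the real work lies, but each case reduces to a short linear-algebra verification of the commutation relations in the presented fundamental group.
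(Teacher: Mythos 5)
Your argument is correct and matches the paper's (largely implicit) reasoning: the paper deduces this corollary exactly as you do, from faithfulness of the holonomy in Proposition \ref{Prop:Orbifold_Def} (so all isotropy has order two, since every finite-order element of $\Heis$ has order two — your matrix computation is the right check), combined with the earlier proposition that every closed Heisenberg orbifold is finitely covered by a torus, which forces $\chi(\mathcal{O})=0$ and reduces the enumeration to the classification of closed 2-orbifolds. One clarification: your third paragraph, verifying that each candidate actually \emph{admits} a Heisenberg structure, is not needed for this corollary, which asserts only necessity (``necessarily $\mathcal{O}$ is\ldots''); that existence work is precisely the content of the subsequent propositions computing $\mathcal{T}_{\Hs^2}(\mathcal{O})$ for each orbifold in the table, so you have proved more than the statement requires.
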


\noindent 
In the remainder of this section, we show that all the above admit Heisenberg structures and compute their deformation spaces.
As with tori, the deformation spaces of the remaining orbifolds can be recovered from the Teichm\"uller spaces of unit area structures by homothety, $\mathcal{D}_{\Hs^2}(\mathcal{O})\cong\R_+\times\mathcal{T}_{\Hs^2}(\mathcal{O})$.

\begin{theorem}
The orbifolds admitting Heisenberg structures and their Teichm\"uller spaces are given by the following table:
	
\begin{table}[h]
\centering
\begin{tabular}{ccc}
\toprule
$\mathcal{O}$ &\hspace{1cm}& $\mathcal{T}_{\Hs^2}(\mathcal{O})$ \\
\midrule
$\S^1\times\S^1$ &\hspace{1cm}& $\R^2\times\S^1$ 
\\
$\S^1\widetilde{\times}\S^1$, $\S^1\times I$, $\S^1\widetilde{\times}I$ &\hspace{1cm}& $\R^2\sqcup\R$  
\\
$\S^2(2,2,2,2)$ &\hspace{1cm}& $\R\times\S^1$ 
\\
$\D^2(2,2;\varnothing),\; \D^2(\varnothing; 2,2,2,2),\;\RP^2(2,2)$ &\hspace{1cm}& $\R\sqcup\R$
\\
$\D^2(2;2,2)$ &\hspace{1cm}& $\R\sqcup\R$
\\
\bottomrule
\end{tabular}
\end{table}
\end{theorem}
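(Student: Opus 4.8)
The strategy is to leverage the already-established classification of Heisenberg tori (the theorem giving $\mathcal{T}_{\Hs^2}(T^2)\cong\mathcal{F}/\Z_2\cong\R^2\times\S^1$) together with \Cref{Prop:Orbifold_Def}, which realizes each orbifold deformation space as a subspace of a torus deformation space cut out by an equivariance condition. Since every orbifold $\mathcal{O}$ in the preceding corollary is finitely covered by one of the three ``maximal'' orbifolds $T^2$, $\S^1\widetilde{\times}\S^1$, or $\S^2(2,2,2,2)$, the plan is: for each $\mathcal{O}$, fix a torus cover $T^2\to\mathcal{O}$, identify $\pi_1(\mathcal{O})$ as an extension of $\pi_1(T^2)=\Z^2$ by a finite group $F$, and then compute which Heisenberg torus holonomies $\rho\colon\Z^2\to\Heis_0$ \emph{extend} to a representation of $\pi_1(\mathcal{O})$. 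By \Cref{Prop:Orbifold_Def}, $\mathcal{T}_{\Hs^2}(\mathcal{O})$ is then the locus in $\mathcal{F}/\Z_2$ of such extendable holonomies, further quotiented by the (now larger) conjugacy and marking ambiguity coming from $\pi_1(\mathcal{O})$.

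First I would organize the orbifolds by their torus cover. The case $\S^1\times\S^1$ is already done. For $\S^1\widetilde{\times}\S^1$ (Klein bottle) and its quotients $\S^1\times I$, $\S^1\widetilde{\times}I$, the deck action is an order-two isometry of $T^2$ acting on $\mathcal{F}$; I would compute the fixed-point locus of the induced involution on $\mathcal{F}/\Z_2$ and check that it splits into a translation-torus piece (giving an $\R$-factor) and a shear piece (giving $\R^2$), accounting for the answer $\R^2\sqcup\R$. For the pillowcase $\S^2(2,2,2,2)=T^2/\Z_2$ with the elliptic involution, the extension condition forces the holonomy to be compatible with a half-turn; since the half-turn is $\diag(-1,-1,1)$-type conjugation already appearing in the torus analysis, the extendable locus should be exactly the shear tori, yielding $\R\times\S^1$. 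The disk and $\RP^2$ quotients $\D^2(2,2;\varnothing)$, $\D^2(\varnothing;2,2,2,2)$, $\RP^2(2,2)$, $\D^2(2;2,2)$ are further $\Z_2$-quotients of the pillowcase or Klein bottle, and each reflection/antipodal generator imposes an additional reality constraint collapsing the $\S^1$ and leaving two components $\R\sqcup\R$.

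The key computational engine in each case is: (i) writing down an explicit presentation of $\pi_1(\mathcal{O})$ using \Cref{Obs:2Orbifold_Pi1}, (ii) translating the extension condition into a matrix equation in $\Heis$ on the generators (torsion elements must map to order-two elements fixing a point, i.e.\ into $\Heis_+\smallsetminus\Heis_0$), and (iii) intersecting the resulting conditions with the coordinate description $\mathcal{F}\subset\mathcal{U}^\star\subset\R^6$ from \Cref{cor:U}. The completeness and embedding statements of \Cref{Prop:Orbifold_Def} guarantee that once the extendable holonomy locus is identified, it \emph{is} the Teichm\"uller space, so no separate existence-of-developing-map argument is needed beyond what was established for tori.

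\textbf{Main obstacle.} The hard part will be the bookkeeping around the two different $\Z_2$ ambiguities and how they interact: the ambient $\Heis_+/\Heis_0\cong\Z_2$ conjugacy (by $\diag(-1,-1,1)$) already quotienting $\mathcal{F}$, versus the deck-group torsion in $\pi_1(\mathcal{O})$. For the non-orientable and reflection cases one must be careful that orientation-reversing symmetries of the orbifold are realized by elements of $\Heis\smallsetminus\Heis_+$, and verify that the induced action on $\mathcal{F}/\Z_2$ is free or has the expected fixed locus; a sign error here would produce the wrong number of connected components. I expect the translation-versus-shear dichotomy to be precisely what distinguishes the connected pieces (e.g.\ the $\R^2\sqcup\R$ splittings), so the crux is showing that shear holonomies and pure-translation holonomies impose genuinely different, non-interchangeable extension constraints for each torsion generator.
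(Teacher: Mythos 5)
Your overall strategy is exactly the paper's: parameterize $\mathcal{D}_{\Hs^2}(\mathcal{O})$ by extensions of torus holonomies via the embedding-into-holonomy proposition, present $\pi_1(\mathcal{O})$ as a finite extension of $\Z^2$, and solve the resulting matrix equations in $\Heis$ case by case. But there is a concrete inversion error at the central case, the pillowcase, and it propagates. You claim the extendable locus in $\mathcal{T}_{\Hs^2}(T^2)$ is the \emph{shear} tori; it is in fact the \emph{translation} tori. The deck group of $T^2\to\S^2(2,2,2,2)$ is generated by an element $r$ with $r a r = a^{-1}$, $r b r = b^{-1}$, and any order-two orientation-preserving element of $\Heis$ lies in the component $\diag(-1,-1,1)\Heis_0$, acting in Lie algebra coordinates by $(\vec{x},\vec{y},\vec{z})\mapsto(\vec{x},-\vec{y},-\vec{z})$, whereas inversion acts by $(\vec{x},\vec{y},\vec{z})\mapsto(-\vec{x},-\vec{y},-\vec{z})$. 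So the condition $r\rho(a)r^{-1}=\rho(a)^{-1}$ forces $\vec{x}=0$: a $\pi$-rotation conjugates translations to their inverses but can never invert a nontrivial shear. Your claim is also internally inconsistent with the target answer: the translation locus $\mathcal{F}\cap V(x_1,x_2)$ is the subset homeomorphic to $\R\times\S^1$, while the shear locus is its complement in $\R^2\times\S^1$, which is not $\R\times\S^1$ — so carrying out your plan as written would produce the wrong table entry.

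Because every orbifold in the table with cone points or corner reflectors is covered by the pillowcase, this error infects the remaining cases: the quotients $\D^2(2,2;\varnothing)$, $\D^2(\varnothing;2,2,2,2)$, and $\RP^2(2,2)$ arise from \emph{axis-aligned translation} tori (each such torus admitting a unique quotient, whence $\R\sqcup\R$), and $\D^2(2;2,2)$ from non-axis-aligned translation tori constrained by the reflection to the curves $\lambda=\pm\tan\theta$ in the translation parameters — not from a ``reality constraint on shears.'' Your heuristic for the Klein bottle and cylinder is closer to correct, but the two components there are not ``translation piece $=\R$, shear piece $=\R^2$'': the $\R^2$ component consists of \emph{all} tori (translation or shear) with one holonomy generator a horizontal translation, i.e.\ the $\theta=0$ slice $\R_+\times\R$ of $\mathcal{T}_{\Hs^2}(T^2)=\R_+\times\R\times\S^1$, and the extra $\R$ is the other family of axis-aligned translation tori; the distinguishing constraint is which reflection component of $\Heis\smallsetminus\Heis_+$ the extension lives in, not the translation/shear dichotomy per se. The computational engine you propose would surface all of this once run honestly, but as stated the proposal's predicted loci are wrong in the cases that matter.
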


\noindent 
Recall that a \emph{translation torus} has holonomy acting purely by translations.
The Teichm\"uller space of translation tori is homeomorphic to $\R_+\times\S^1$, parameterized by rectangular lattices with ratio of generator lengths in $\R_+$ and angle of first vector $\theta\in\S^1$ with the horizontal.
A translation torus is called \emph{axis aligned} if the holonomy contains a translation along the invariant foliation (up to $\Heis_0$ conjugacy such a structure can actually be assumed to have holonomy generated by translations along the coordinate axes).
Within the Teichm\"uller space $\mathcal{T}_{\Hs^2}(T^2)$, the subset of axis-aligned translation tori is homeomorphic to $\R_+\sqcup\R_+$ corresponding to the points of $\mathcal{F}\cap V(x_1,\; x_2,\; y_1y_2)$.

The following figure shows all Heisenberg orbifolds, with arrows representing the finite covers used in the calculation of their deformation spaces.

\begin{figure}
\centering
\begin{tikzcd}
&& \mathsf{T}^2\ar[dr]\ar[d]\ar[dl]\arrow[ddrr, bend left] &&\\
&\mathsf{K}\ar[dl] &\mathsf{Cyl}\ar[lld]\ar[ld]\ar[d, crossing over] &\mathbb{S}^2(2,2,2,2)\ar[dr]\ar[d]\ar[dl]\ar[dll]&\\
\mathsf{Mob} &\mathbb{D}^2(\varnothing,2,2,2,2) 
&\mathbb{D}^2(2,2;\varnothing) &\mathbb{D}^2(2;2,2)&\mathbb{R}\mathsf{P}^2(2,2)
\end{tikzcd}	
\caption{All Heisenberg orbifolds are finitely covered by a Heisenberg torus, and furthermore all with cone points or corner reflectors are covered by the pillowcase $\S^2(2,2,2,2)$.}
\end{figure}
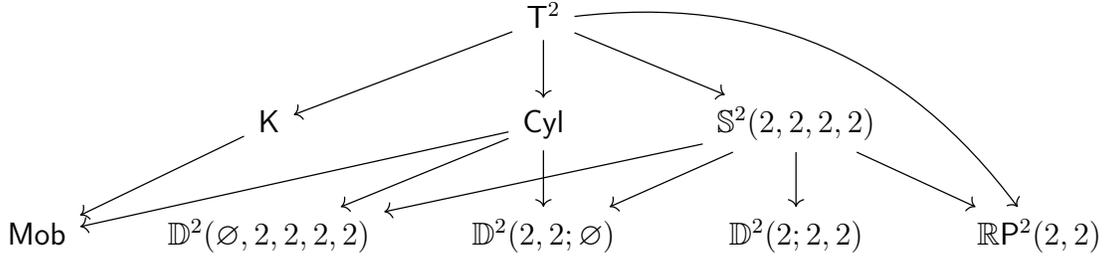

\begin{proposition}
Every Heisenberg structure on the pillowcase $P=\S^2(2,2,2,2)$ is uniquely covered by a translation torus, and so $\mathcal{T}_{\Hs^2}(P)\cong \R\times\S^1$.
\end{proposition}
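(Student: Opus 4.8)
The plan is to exhibit the pillowcase $P=\S^2(2,2,2,2)$ as a quotient of a translation torus by the hyperelliptic involution and then lift the deformation problem upstairs, where it is already solved. Recall from Example \ref{Ex:Torus_Branched_Cover} that $P=T^2/\Z_2$ where $\Z_2$ acts by the order-two ``skewering'' map, so $\pi_1^{\mathsf{orb}}(P)$ is a $\Z_2$-extension of $\pi_1(T^2)=\Z^2$. By Proposition \ref{Prop:Orbifold_Def}, every Heisenberg structure on $P$ is complete and determined by its holonomy $\rho\colon\pi_1^{\mathsf{orb}}(P)\to\Heis$, and $\mathcal{D}_{\Hs^2}(P)$ embeds as the preimage of $\mathcal{D}_{\Hs^2}(T^2)$ under restriction to the index-two subgroup $\Z^2<\pi_1^{\mathsf{orb}}(P)$. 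So the computation reduces to: which holonomies $\rho\colon\Z^2\to\Heis_0$ of torus structures extend over the whole orbifold group, and in how many ways.

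First I would compute $\pi_1^{\mathsf{orb}}(P)$ explicitly. Using the four conepoint loops $\alpha,\beta,\gamma,\delta$ with $\alpha^2=\beta^2=\gamma^2=\delta^2=\alpha\beta\gamma\delta=1$, I would identify the torus subgroup $\Z^2$ and an order-two element $\tau$ generating the deck group of the branched cover, so that $\pi_1^{\mathsf{orb}}(P)=\Z^2\rtimes\langle\tau\rangle$ with $\tau$ acting on $\Z^2$ by $-1$ (the hyperelliptic involution acts as $-I$ on $H_1(T^2)$). The extension problem then becomes: given a torus holonomy $\rho_0\colon\Z^2\to\Heis_0$ lying in the freely-acting locus $\mathcal{F}$, find $R=\rho(\tau)\in\Heis_+$ with $R^2=\id$ (or whatever the conepoint relation forces) and $R\rho_0(v)R\inv=\rho_0(-v)=\rho_0(v)\inv$ for all $v\in\Z^2$. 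The element $\diag(-1,-1,1)$ already appearing in the Teichm\"uller theorem is the natural candidate for $R$: it is the generator of $\Heis_+/\Heis_0\cong\Z_2$ and acts in Lie algebra coordinates by $(\vec{x},\vec{y},\vec{z})\mapsto(\vec{x},-\vec{y},-\vec{z})$. I would check that inverting a $\Heis_0$ element by conjugation by $R$ together with the fixed-point condition (the involution must have a genuine fixed point to descend to conepoints of order two) forces $\rho_0$ to be a \emph{translation} holonomy, i.e.\ $\vec{x}=\vec{0}$ in the coordinates of Section \ref{subsec:Rep_Var_Tori}.

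The key step is thus to show that exactly the translation tori admit such an equivariant involution, and that the extension is unique when it exists. I expect the constraint $R\rho_0(v)R\inv=\rho_0(v)\inv$ to be satisfiable precisely on the subspace where the shear coordinate vanishes, and the requirement that $R$ fix a point of $\Hs^2$ (so that the four conepoints are honest order-two orbifold points rather than translation-type singularities) to pin down $R$ up to the residual $\Heis_0$-conjugacy, giving uniqueness of the covering translation torus. Combined with the earlier identification that the space of unit-area translation tori is homeomorphic to $\R\times\S^1$ (the corollary following the Teichm\"uller theorem, coming from $\mathcal{F}\cap V(x_1,x_2)$), this yields $\mathcal{T}_{\Hs^2}(P)\cong\R\times\S^1$. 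Finally I would confirm that the involution descends to a free action on the relevant factor so that no further quotient collapses the parameter space, matching the stated homeomorphism type.

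The main obstacle I anticipate is bookkeeping the interaction between the two $\Z_2$'s in play: the conjugacy ambiguity $\Heis_+/\Heis_0$ used to define Teichm\"uller space upstairs, and the deck transformation $\tau$ of the branched cover, which happen to be represented by the same matrix $\diag(-1,-1,1)$. I must be careful that the involution realizing the covering is \emph{not} quotiented away in forming $\mathcal{T}_{\Hs^2}(T^2)=\mathcal{F}/\Z_2$, or conversely that I do not double-count; disentangling ``which $\Z_2$ is which'' and verifying the fixed-point/freeness conditions geometrically (via the developing map $f_{\rho_0}$ and its behavior under $R$) is where the real content lies, and is where I would spend the most care.
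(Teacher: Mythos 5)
Your proposal is correct and follows the paper's own argument essentially step for step: the branched double cover reduces the problem to extending a torus holonomy by an $R$ with $R\rho(v)R^{-1}=\rho(v)^{-1}$, the inversion condition kills the shear coordinate (the paper phrases this geometrically --- every orientation-preserving involution in $\Heis$ is a $\pi$-rotation, and rotations conjugate only translations to their inverses --- while your Lie-algebra-coordinate check that $(\vec{x},-\vec{y},-\vec{z})=(-\vec{x},-\vec{y},-\vec{z})$ forces $\vec{x}=0$ is the same computation), and uniqueness of the extension holds because translations commute with $\rho$ and conjugate the rotation center of $R$ to any desired point. The one simplification available: the separate fixed-point condition you impose on $R$ is vacuous, since squaring a general element $\smat{-1&a&c\\0&-1&b\\0&0&1}$ of the coset $\diag(-1,-1,1)\Heis_0$ forces $a=0$, so every orientation-preserving order-two element of $\Heis$ is automatically a $\pi$-rotation about a point of $\Hs^2$, which also disposes of your worry about the ``two $\Z_2$'s'': the deck involution is realized by the full conjugacy class of $\pi$-rotations, not by the single coset representative used to define $\mathcal{T}_{\Hs^2}(T^2)=\mathcal{F}/\Z_2$.
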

\begin{proof}
The twofold branched cover $T\to \S^2(2,2,2,2)=P$ exhibits $\pi_1(P)$ as a $\Z_2=\langle r\rangle$ extension of $\pi_1(T)=\langle a,b\rangle$ with $rar=a\inv$, $rbr=b\inv$.
Thus $\mathcal{D}_{\Hs^2}(P)$ is parameterized by pairs $[\rho, R]$ for $R$ conjugating images under $\rho$ to their inverses.  
Any orientation-preserving element of order two in $\Heis$ is a $\pi$-rotation about some point $p\in\Hs^2$.  Rotations only conjugate translations to their inverses so $\rho$ is the holonomy of a translation torus.  
Given any translation torus, the $\pi$-rotation about any point in the plane provides an extension of $\rho$, and any two are conjugate by conjugacies fixing $\rho$.  Thus restriction provides a bijection from $\mathcal{D}_{\Hs^2}(\S^2(2,2,2,2))$ onto translation tori.
\end{proof}

\begin{proposition}
All Heisenberg Cylinders are quotients of an axis-aligned translation torus, or a shear torus with one generator of the holonomy a horizontal translation.  Thus $\mathcal{T}_{\Hs^2}(\mathsf{Cyl})\cong\R\sqcup \R^2$.
\end{proposition}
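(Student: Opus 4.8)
The plan is to compute the deformation space of the cylinder $\mathsf{Cyl}=\S^1\times I$ by exploiting the fact, established in Proposition \ref{Prop:Orbifold_Def}, that every Heisenberg structure on $\mathsf{Cyl}$ is complete and is determined by its holonomy, and that this deformation space embeds as the preimage of $\mathcal{T}_{\Hs^2}(T^2)$ under restriction along a finite cover $T^2\to\mathsf{Cyl}$. First I would identify the cover and the group-theoretic data: the cylinder (with two mirror-boundary circles) is double-covered by the torus, exhibiting $\pi_1^{\mathsf{orb}}(\mathsf{Cyl})$ as a $\Z_2=\langle r\rangle$ extension of $\pi_1(T^2)=\langle a,b\rangle$, where $r$ is an orientation-\emph{reversing} reflection. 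The key is to record exactly how $r$ acts by conjugation on the generators $a,b$; unlike the pillowcase case (where the order-two element inverts both generators), here the reflection should fix one generator (the one running parallel to the boundary/mirror) and invert the other, so $rar^{-1}=a$, $rbr^{-1}=b^{-1}$ up to a choice of conventions.

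Next I would translate the constraint on holonomies into Lie algebra coordinates. A Heisenberg structure on $\mathsf{Cyl}$ corresponds to a pair $[\rho,R]$ where $\rho\in\mathcal{F}$ is the holonomy of the covering translation/shear torus and $R\in\Heis_+$ is an orientation-reversing involution realizing the reflection, subject to $R\rho(a)R^{-1}=\rho(a)$ and $R\rho(b)R^{-1}=\rho(b)^{-1}$. The central step is to classify which $\rho\in\mathcal{F}$ admit such an $R$. I expect two families to survive: (i) \emph{axis-aligned translation tori}, those $\rho$ lying in $\mathcal{F}\cap V(x_1,x_2,y_1y_2)$ already singled out in the text as homeomorphic to $\R_+\sqcup\R_+$, on which a reflection across an axis conjugates one translation to itself and the other to its inverse; and (ii) \emph{shear tori for which one generator is a horizontal translation}, where the reflection must preserve the invariant horizontal foliation. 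I would carry out the conjugation computation $\diag(\pm1,\pm1,1)\cdot(\vec x,\vec y,\vec z)$ (the Lie-algebra conjugation formula already appears in the torus computation) to pin down precisely which conjugacy classes in $\mathcal{F}/\Z_2$ are fixed by the reflection action, and then verify that, as in the pillowcase argument, the extension $R$ is unique up to conjugacies fixing $\rho$ — so that restriction gives a genuine bijection onto this locus rather than a multi-valued correspondence.

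Finally, I would assemble the topology. The axis-aligned translation-torus family contributes a copy of $\R$ (after passing to unit area, the $\R_+\sqcup\R_+$ collapses appropriately under the residual equivalence), and the shear family with a horizontal translation contributes a copy of $\R^2$; these two loci are disjoint because one consists of translation holonomies and the other genuinely contains a shear. Hence $\mathcal{T}_{\Hs^2}(\mathsf{Cyl})\cong \R\sqcup\R^2$, and by homothety $\mathcal{D}_{\Hs^2}(\mathsf{Cyl})\cong\R_+\times(\R\sqcup\R^2)$.

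The main obstacle I anticipate is the bookkeeping around $\Heis_+$ versus $\Heis_0$ and the orientation-reversing reflection: one must be careful that $r$ lifts to an element of $\Heis_+$ of the correct type (a reflection, not a $\pi$-rotation as in the pillowcase case), check that the resulting involution acts freely and with the claimed fixed-foliation behavior, and correctly track how the two-fold $\Z_2$ ambiguity from $\Heis_+/\Heis_0$ already present in $\mathcal{T}_{\Hs^2}(T^2)$ interacts with the extra reflection symmetry. Disentangling these two $\Z_2$'s to confirm the clean splitting $\R\sqcup\R^2$ — rather than some quotient or gluing of the pieces — is where the real care is required.
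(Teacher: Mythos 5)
Your setup coincides with the paper's own proof: the mirror double $T^2\to\mathsf{Cyl}$ exhibiting $\pi_1(\mathsf{Cyl})$ as a $\Z_2=\langle f\rangle$ extension with $faf=a$, $fbf=b\inv$, the parameterization of cylinder structures by conjugacy classes of pairs $[\rho,F]$, the Lie-algebra conjugation computation, and the check that $F$ is unique up to conjugacies fixing $\rho$ are all exactly the paper's route. Your classification of which $\rho$ extend is also essentially the paper's: solutions $F$ (conjugate to the reflection $\diag(1,-1,1)$) exist for every $\rho$ with $\rho(a)$ a horizontal translation, and when $\rho(a)$ is not horizontal, only for axis-aligned translation tori with $\rho(a)$ vertical, $\rho(b)$ horizontal, and $F=\diag(-1,1,1)$.

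The gap is in your final assembly, where you align the topological components with the dichotomy ``axis-aligned translation versus shear with horizontal generator.'' That is the dichotomy of the proposition's \emph{statement}, but it is not how the Teichm\"uller space decomposes. Since solutions exist for \emph{every} $\rho$ with $\rho(a)$ horizontal --- shear or not --- the corresponding component is the full slice $\{\theta=0\}\cong\R_+\times\R$ of $\mathcal{T}_{\Hs^2}(T^2)\cong\R_+\times\R\times\S^1$, which contains the translation tori with horizontal $\rho(a)$ as its shear-zero locus $\R_+\times\{0\}$. Your shear-only locus is $\R_+\times(\R\smallsetminus\{0\})$, which is disconnected (two half-planes), not a copy of $\R^2$; and your claim that the $\R_+\sqcup\R_+$ of axis-aligned translation tori ``collapses appropriately'' to a single $\R$ is unfounded --- no identification occurs at that stage, since the $\Z_2=\Heis_+/\Heis_0$ ambiguity was already absorbed in forming $\mathcal{T}_{\Hs^2}(T^2)$. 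The correct bookkeeping is: one copy of $\R_+$ (axis-aligned with $\rho(a)$ horizontal) sits \emph{inside} the slice $\R_+\times\R\cong\R^2$, while the other copy (axis-aligned with $\rho(a)$ vertical, the only solutions when $\rho(a)$ is not horizontal) stands alone and supplies the $\R$. Carried out as you describe, your decomposition would yield $(\R\sqcup\R)\sqcup(\R^2\sqcup\R^2)$ rather than $\R\sqcup\R^2$, so the assembly step needs to be rewritten even though the answer you quote is correct.
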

\begin{proof}
The doubling mirror double of a cylinder is a torus, and the corresponding orbifold cover $T\to \mathsf{Cyl}$ exhibits $\pi_1(\mathsf{Cyl})$ as a $\Z_2=\langle f\rangle$ extension of $\pi_1(T)$ with $faf=a$, $fbf=b\inv$.  
Thus $\mathcal{D}_{\Hs^2}(\mathsf{Cyl})$ is parameterized by conjugacy classes of pairs $[\rho, F]$ with $\rho\in\mathcal{D}(T)$ and $F$ satisfying the relations above with respect to $\rho(a)$, $\rho(b)$.  
For each $\rho$ with $\rho(a)$ a horizontal translation, there is a one-parameter family of solutions $F$ to the system, all conjugate via conjugacies fixing $\rho$ to a reflection across the horizontal, $\diag\{1,-1,1\}$.
 Thus there is a unique quotient corresponding to each $\rho\in\mathcal{D}_{\Hs^2}(T)$ with $\rho(a)$ a horizontal translation.
If $\rho(a)$ is not a horizontal translation, the system of equations above only has solutions when $\rho\in\mathcal{D}(T)$ is an axis aligned translation torus with $\rho(a)$ vertical, $\rho(b)$ horizontal and $F=\diag\{-1,1,1\}$. 
Thus the Teichm\"uller space consists of the union of the space of axis-aligned tori with all tori having $\rho(a)$ a horizontal translation.	The space of tori with $\rho(a)$ horizontal identifies with a slice $\R_+\times\R$ of $\mathcal{T}_{\Hs^2}(T^2)=\R_+\times\R\times\S^1$ with fixed $\theta=0\in\S^1$, intersecting the space $\R_+\sqcup\R_+$ of axis-aligned translation tori in one copy of $\R_+$. 
\end{proof}

\begin{proposition}
All Heisenberg Klein bottles are quotients of an axis-aligned translation torus, or a shear torus with one generator of the holonomy a horizontal translation.  Thus $\mathcal{T}_{\Hs^2}(\mathsf{K})\cong\R\sqcup \R^2$.
\label{Prop:KleinBottle}
\end{proposition}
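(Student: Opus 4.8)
The plan is to compute $\mathcal{T}_{\Hs^2}(\mathsf{K})$ by exactly the same covering-space strategy just used for the cylinder in the previous proposition, exploiting \Cref{Prop:Orbifold_Def}: since every Heisenberg orbifold is complete and its deformation space embeds via holonomy, I can realize $\mathcal{D}_{\Hs^2}(\mathsf{K})$ as the set of extensions of translation/shear torus holonomies to the larger group $\pi_1(\mathsf{K})$. First I would record the algebraic input: the orientation double cover $T^2 \to \mathsf{K}$ presents $\pi_1(\mathsf{K})$ as a $\Z_2 = \langle g \rangle$ extension of $\pi_1(T^2) = \langle a,b\rangle$, where the gluing relations are $g a g^{-1} = a$ and $g b g^{-1} = b^{-1}$ (the Klein bottle differs from the cylinder only in that $g$ is the orientation-reversing deck transformation rather than a mirror, but it imposes the same conjugation relations on the torus generators). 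Consequently $\mathcal{D}_{\Hs^2}(\mathsf{K})$ is parameterized by conjugacy classes of pairs $[\rho, G]$, with $\rho \in \mathcal{D}_{\Hs^2}(T^2)$ and $G \in \Heis_+$ an order-two element satisfying $G\rho(a)G^{-1} = \rho(a)$ and $G\rho(b)G^{-1} = \rho(b)^{-1}$.

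The key steps proceed by case analysis on which elements of $\Heis_+$ can realize the required conjugation. I would first note that any order-two orientation-preserving element is a $\pi$-rotation, and any orientation-reversing element of $\Heis_+\smallsetminus\Heis_0$ is (up to conjugacy) a reflection such as $\diag(1,-1,1)$ or $\diag(-1,1,1)$; the constraint that $G$ fix $\rho(a)$ while inverting $\rho(b)$ then forces $\rho(a)$ into a specific form. When $\rho(a)$ is a horizontal translation, I expect a one-parameter family of solutions $G$ all conjugate (via conjugacies fixing $\rho$) to the horizontal reflection $\diag(1,-1,1)$, giving a unique quotient for each such $\rho$; this yields a slice of $\mathcal{T}_{\Hs^2}(T^2) = \R_+\times\R\times\S^1$ at fixed $\theta = 0$, namely a copy of $\R_+\times\R \cong \R^2$. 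When $\rho(a)$ is not a horizontal translation, the system should only be solvable for axis-aligned translation tori (with $\rho(a)$ vertical, $\rho(b)$ horizontal, and $G = \diag(-1,1,1)$), contributing one copy of $\R_+ \cong \R$. Assembling the two cases, and checking that they glue exactly as in the cylinder computation, gives $\mathcal{T}_{\Hs^2}(\mathsf{K}) \cong \R \sqcup \R^2$, with the deformation space recovered as $\mathcal{D}_{\Hs^2}(\mathsf{K}) \cong \R_+\times\mathcal{T}_{\Hs^2}(\mathsf{K})$ by homothety.

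The main obstacle I anticipate is verifying the claim that no \emph{shear} torus whose $\rho(a)$ is non-horizontal can extend, i.e.\ pinning down precisely when the conjugation equations $G\rho(a)G^{-1}=\rho(a)$, $G\rho(b)G^{-1}=\rho(b)^{-1}$ are simultaneously solvable in $\Heis_+$. This is a concrete but delicate computation in Lie-algebra coordinates on $\Heis_0$, using the conjugation formula $\smat{1&g&k\\&1&h\\&&1}.(\vec{x},\vec{y},\vec{z}) = (\vec{x},\vec{y},\vec{z}+g\vec{y}-h\vec{x})$ recorded earlier, together with the action of the reflections by $\diag(\pm1,\mp1,1)$; one must show the inversion requirement on $\rho(b)$ forces the shear component of $\rho(a)$ to vanish except in the axis-aligned case. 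The remaining steps are routine, being structurally identical to \Cref{Prop:KleinBottle}'s cylinder analogue, so I would keep the exposition parallel and merely flag the sign differences introduced by the nonorientable gluing.
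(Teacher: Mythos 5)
There is a genuine gap, and it lies in your algebraic setup. You present $\pi_1(\mathsf{K})$ as a \emph{split} $\Z_2$-extension of $\pi_1(T^2)=\langle a,b\rangle$ by an involution $g$ with $gag\inv=a$, $gbg\inv=b\inv$. But the Klein bottle group is torsion-free: $\pi_1(\mathsf{K})=\langle x,b\mid xbx\inv=b\inv\rangle$, and the orientation double cover corresponds to the subgroup $\langle x^2,b\rangle$. The extension does not split, so the correct extension datum is not an order-two element commuting with $\rho(a)$; it is an element $X=\rho(x)$ satisfying $X^2=\rho(a)$ (with $a:=x^2$) together with $X\rho(b)X\inv=\rho(b)\inv$. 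The relations you wrote ($g^2=1$, $gag\inv=a$) are precisely those of the mirror-double presentation of the \emph{cylinder} from the preceding proposition, so your computation would re-derive $\mathcal{T}_{\Hs^2}(\mathsf{Cyl})$ rather than $\mathcal{T}_{\Hs^2}(\mathsf{K})$; the final answers happen to agree, which masks the error. The squaring condition is not a cosmetic difference: the paper's first step is that orientation-reversing elements of $\Heis$ square to translations, so $X^2=\rho(a)$ forces $\rho(a)\in\mathsf{Tr}$ and launches the case analysis by which orientation-reversing component $X$ lies in, producing glide reflections such as $\smat{-1&0&-\lambda/2\\0&1&0\\0&0&1}$ as the solutions. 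Your setup has no mechanism to produce this constraint, and your anticipated ``delicate computation'' ruling out non-horizontal shear generators would be solving the wrong equations.

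Two further errors compound this. First, you place $G$ in $\Heis_+$; but the Klein bottle is nonorientable, so its holonomy must contain orientation-reversing elements, i.e.\ elements of $\Heis\smallsetminus\Heis_+$ in the components $\diag(-1,1,1)\Heis_0$ and $\diag(1,-1,1)\Heis_0$ — your identification of ``orientation-reversing elements of $\Heis_+\smallsetminus\Heis_0$'' with reflections confuses the components, since $\Heis_+\smallsetminus\Heis_0$ consists of orientation-preserving $\pi$-rotation-type elements. Second, an order-two element in the holonomy image is incompatible with the statement being proved: all Heisenberg structures are complete, so the holonomy of a Klein bottle (a manifold, not a mirrored orbifold) is faithful and acts freely on $\Hs^2$, hence is torsion-free; any involution $G$ in the image would have a fixed point and yield an orbifold quotient, not a Klein bottle. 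Repairing the proof means replacing your pairs $[\rho,G]$ with pairs $[\rho,X]$, $X^2=\rho(a)$, $X\rho(b)X\inv=\rho(b)\inv$, and redoing the two-case analysis by the component of $X$, as the paper does.
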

\begin{proof}
The Klein bottle $K$ has orientation double cover $T\to K$ corresponding to $\pi_1(K)=\langle x,b\mid xbx\inv=b\inv\rangle$ with $\pi_1(T)=\langle x^2,b\rangle$ so $\mathcal{D}(K)$ is parameterized by pairs $[\rho,X]$ for $\rho\in\mathcal{D}_{\Hs^2}(T)$ and $X^2=\rho(a)$ satisfying $X\rho(b)X\inv\rho(b)=I$.  
As orientation reversing elements of $\Heis$ square to translations, $\rho(a)\in\mathsf{Tr}$, and we distinguish two cases depending on the component $X$ lies in.

If $X\in\diag\{-1,1,1\}\Heis_0$ reflects across the vertical and conjugates $\rho(b)\in\Heis_0$ to its inverse, $\rho(b)$ cannot have any vertical translation component, and so preserves the horizontal foliation.  
As $\rho\in\mathcal{D}_{\Hs^2}(K)$, combining with $\rho(a)\in\mathsf{Tr}$ shows $\rho$ is the holonomy of an axis-aligned translation torus,
and there is a unique solution for $X$ up to conjugacy $\tilde{\rho}(X)=\smat{-1 & 0 & 0\\ 0 &1& r/2\\0&0&1}$.  
If $X\in\diag\{1,-1,1\}\Heis_0$ reflects across the horizontal, the only solutions to $X^2=\rho(a)$ are horizontal translations, and $\rho(b)$ must not have horizontal translational component.
There is a one-parameter family of solutions $X$ to the system, all conjugate via conjugacies fixing $\rho$ to a glide reflection across the horizontal, $\smat{-1 &0&-\lambda/2\\0&1&0\\0&0&1}$.
	
\end{proof}

\begin{corollary}
The space of M\"obius bands identifies with the space of Klein bottles or Cylinders, $\mathcal{T}_{\Hs^2}(\mathsf{M})\cong\R\sqcup \R^2$.
\end{corollary}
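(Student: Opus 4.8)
The plan is to exhibit the Möbius band $\mathsf{M}=\S^1\widetilde{\times}I$ as a quotient of a Heisenberg torus, then compute its deformation space by the same covering-space/extension strategy used for the pillowcase, cylinder, and Klein bottle (Propositions following \ref{Prop:Orbifold_Def}). By the corollary preceding this statement, every Heisenberg orbifold is complete and its deformation space embeds in $\Hom(\pi_1(\mathcal{O}),\Heis)/\Heis_+$ via holonomy, so it suffices to identify $\pi_1(\mathsf{M})$, describe the covering relating it to a torus, and count the extensions of torus holonomies compatible with the relevant relations. Concretely, $\mathsf{M}$ has orientation double cover the cylinder $\mathsf{Cyl}$, or (passing to its mirror double / orientation cover) is finitely covered by a torus; either way $\pi_1(\mathsf{M})=\langle a\rangle$ with a $\Z_2$ extension realizing the nonorientable identification, and one parameterizes $\mathcal{D}_{\Hs^2}(\mathsf{M})$ by conjugacy classes of pairs $[\rho,G]$ where $\rho$ is the holonomy of the covering torus and $G$ is the extra generator satisfying the Möbius relation.

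First I would fix the algebraic presentation: realize $\pi_1(\mathsf{M})$ inside $\pi_1(\mathsf{Cyl})$ or $\pi_1(\mathsf{K})$ so that the covering $\mathsf{Cyl}\to\mathsf{M}$ (or $\mathsf{K}\to\mathsf{M}$, or $T^2\to\mathsf{M}$) induces the restriction map $\pi^\ast$ on deformation spaces as in Proposition \ref{Prop:Orbifold_Def}. Next I would solve the extension equations: an orientation-reversing element of $\Heis$ squares into the translation subgroup and acts as a reflection across a horizontal or vertical axis composed with a shear, exactly as catalogued in the proof of Proposition \ref{Prop:KleinBottle}. Splitting into the two cases (reflection across the vertical versus the horizontal) should produce, on the one hand, the axis-aligned translation tori admitting a unique extension up to conjugacy, and on the other hand, the one-parameter families of shear tori whose holonomy has a horizontal translation generator. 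This is precisely the dichotomy that gave $\R\sqcup\R^2$ for the Klein bottle and the cylinder.

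The cleanest route, and the one I would actually commit to, is to argue directly that the defining data for a Heisenberg Möbius band coincides with that of a Klein bottle (or cylinder): both are $\Z_2$-extensions of a torus holonomy by a single orientation-reversing element $G$ with $G^2\in\mathsf{Tr}$ and $G$ conjugating the remaining generator to its inverse (up to the translational-component constraints). Since the system of equations on $(\rho,G)$ is identical to the one analyzed in Proposition \ref{Prop:KleinBottle}, the solution set is in bijection, giving $\mathcal{T}_{\Hs^2}(\mathsf{M})\cong\mathcal{T}_{\Hs^2}(\mathsf{K})\cong\R\sqcup\R^2$. I would make this a genuine homeomorphism of Teichmüller spaces by checking the bijection is continuous in both directions and respects the free $\Z_2=\Heis_+/\Heis_0$ conjugacy quotient, then recover $\mathcal{D}_{\Hs^2}(\mathsf{M})\cong\R_+\times\mathcal{T}_{\Hs^2}(\mathsf{M})$ by the homothety splitting of Observation \ref{obs:Area}.

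The main obstacle I anticipate is bookkeeping rather than conceptual: one must verify that the specific relation in $\pi_1(\mathsf{M})$ (a single band generator whose square or whose conjugation action encodes the nonorientable gluing) produces \emph{exactly} the same constraints on $(\rho,G)$ as the Klein bottle relation $G\rho(b)G^{-1}\rho(b)=I$ with $G^2=\rho(a)$, with no extra or missing components, so that the count of conjugacy classes of solutions genuinely matches. In particular I would be careful that the two connected pieces $\R$ and $\R^2$ arise from the same vertical-reflection versus horizontal-reflection cases, and that no third family (e.g. from shear elements that are not reflections) sneaks in; ruling these out uses that every orientation-reversing element of $\Heis_+\smallsetminus\Heis_0$ is a reflection-plus-shear squaring to a translation, which is already established in the surrounding propositions.
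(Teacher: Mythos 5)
There is a genuine gap, and it is exactly at the step you commit to. You model the M\"obius band as a single $\Z_2$-extension of a torus holonomy --- ``$\pi_1(\mathsf{M})=\langle a\rangle$ with a $\Z_2$ extension,'' data a pair $[\rho,G]$ satisfying the Klein bottle system --- but in this paper's setting $\mathsf{M}=\S^1\widetilde{\times}I$ is a \emph{closed orbifold with mirror boundary}, not a manifold with boundary (otherwise it would not appear in the classification at all). Its mirror double is the Klein bottle and its orientation double cover is the cylinder, so the torus covers it with index \emph{four}, and $\pi_1^{\mathsf{orb}}(\mathsf{M})$ is generated over $\pi_1(T^2)=\langle a,b\rangle$ by \emph{two} extra elements: a reflection $F$ (forced by the mirror boundary) and a glide reflection $X$. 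The correct deformation data is therefore a triple $[\rho,F,X]$ with $[\rho,X]\in\mathcal{D}_{\Hs^2}(\mathsf{K})$, $[\rho,F]\in\mathcal{D}_{\Hs^2}(\mathsf{Cyl})$, and the compatibility relation $FX=XF$; this is how the paper argues. Solving only the Klein bottle equations, as your ``cleanest route'' proposes, parameterizes Klein bottles, not M\"obius bands, and there is no a priori bijection between the two solution sets --- a Klein-bottle extension $X$ of $\rho$ need not admit a commuting mirror $F$.

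Your approach returns the right answer only because of a coincidence that must itself be proved: by the two preceding propositions, the set of torus holonomies extending over $\pi_1(\mathsf{K})$ equals the set extending over $\pi_1(\mathsf{Cyl})$ (axis-aligned translation tori, together with shear tori having a horizontal translation generator, giving $\R\sqcup\R^2$ in each case), and each such $\rho$ extends \emph{uniquely} in both ways, up to conjugacies fixing $\rho$. The paper's proof consists precisely of these two observations: uniqueness of both extensions yields a unique triple $[\rho,F,X]$, hence a unique M\"obius band, over each admissible $\rho$, whence $\mathcal{T}_{\Hs^2}(\mathsf{M})\cong\R\sqcup\R^2$. To repair your argument you would need to (i) replace the pair $[\rho,G]$ by the triple $[\rho,F,X]$ reflecting the index-4 torus cover, (ii) verify that the admissible base sets for $\mathsf{K}$ and $\mathsf{Cyl}$ coincide rather than asserting the systems are identical, and (iii) check that the unique conjugacy classes of $F$ and $X$ contain commuting representatives, so the simultaneous extension actually exists. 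Your final bookkeeping paragraph gestures at the right worry but aims it at the wrong relation: the danger is not an extra component in the Klein bottle system, it is the missing second generator.
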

\begin{proof}
A Heisenberg M\"obius band has mirror double a Klein bottle and orientation double cover an annulus,	 so points of $\mathcal{D}_{\Hs^2}(M)$ correspond to triples $[\rho, F, X]$ for $[\rho, X]\in\mathcal{D}(K)$, $[\rho, F]\in\mathcal{D}(\mathsf{Cyl})$ satisfying $FX=XF$.  
Every $\rho\in\mathcal{D}_{\Hs^2}(T)$ that extends to a representation of $\pi_1(\mathsf{Cyl})$ does so uniquely, and also uniquely extends to a representation of $\pi_1(K)$ and so there is a unique M\"obius band covered by the torus with holonomy $\rho$.
\end{proof}

\begin{proposition}
Each Heisenberg structure on $\mathcal{O}\in\{D^2(2,2;\varnothing), \D^2(\varnothing,2,2,2,2),\RP^2(2,2)\}$ is the quotient of a unique axis-aligned translation torus.
Thus $\mathcal{T}_{\Hs^2}(\mathcal{O})\cong \R_+\sqcup\R_+$.
\end{proposition}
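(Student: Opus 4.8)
The plan is to follow exactly the pattern established in the preceding propositions for the pillowcase, cylinder, and Klein bottle: exhibit each of these three orbifolds as a quotient of the torus, identify the relevant group extension of $\pi_1(T^2)$, and then use \Cref{Prop:Orbifold_Def} to realize $\mathcal{D}_{\Hs^2}(\mathcal{O})$ as the set of extensions of a torus holonomy $\rho$. In each case I would first record the orbifold fundamental group as a $\Z_2$-extension of $\pi_1(T^2)=\langle a,b\rangle$, determine which torus holonomies $\rho$ admit an extension satisfying the defining relations, and then count the extensions up to $\Heis_+$-conjugacy fixing $\rho$. The uniform conclusion will be that the admissible $\rho$ are precisely the axis-aligned translation tori, whose Teichm\"uller space is $\R_+\sqcup\R_+$ as recorded just above the statement (the subset $\mathcal{F}\cap V(x_1,x_2,y_1y_2)$).

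First I would handle $\D^2(2,2;\varnothing)$, the disk with two interior cone points of order two. Its orientation double is the pillowcase $\S^2(2,2,2,2)$, which by the pillowcase proposition is covered by a translation torus; alternatively, taking the mirror double and composing covers exhibits $\pi_1(\D^2(2,2;\varnothing))$ as an extension of $\pi_1(T^2)$ by two commuting involutions — one a $\pi$-rotation (from the cone points) and one a reflection (from the mirror boundary). Since the order-two rotation conjugates translations to their inverses and the reflection forces the translation lattice to align with the invariant foliation, the two relations together force $\rho$ to be axis-aligned. For $\D^2(\varnothing;2,2,2,2)$, the square with four corner reflectors, the mirror double is again the pillowcase and the same analysis applies: the corner reflectors impose reflections across both coordinate directions, again pinning down axis-aligned translation tori. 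For $\RP^2(2,2)$, I would use that $\pi_1(\RP^2(2,2))$ is an extension by an orientation-reversing involution whose square is trivial together with order-two rotations; as orientation-reversing elements of $\Heis_+$ square to translations and rotations conjugate translations to inverses, the constraints again isolate the axis-aligned translation tori. In every case I would verify that, given such a $\rho$, the extending element is unique up to $\Heis_0$-conjugacy fixing $\rho$, exactly as in the pillowcase and Klein bottle arguments, so that the restriction map $\pi^\ast$ is a bijection onto its image.

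The main obstacle I anticipate is bookkeeping rather than conceptual difficulty: each of these three orbifolds has a slightly different presentation, and I must carefully check that the system of matrix equations (a reflection or rotation $R$ with $R^2 = I$ or $R^2\in\mathsf{Tr}$, together with the conjugation relations $R\rho(a)R^{-1}=\rho(a)^{\pm1}$, $R\rho(b)R^{-1}=\rho(b)^{\pm1}$) has solutions \emph{only} for axis-aligned $\rho$, and that the solution set modulo $\Heis_+$-conjugacy is a single point. The potential subtlety is the distinction between the two components $\diag(-1,1,1)\Heis_0$ and $\diag(1,-1,1)\Heis_0$ in which an orientation-reversing generator may lie, exactly the dichotomy that produced the two cases in \Cref{Prop:KleinBottle}; here the two components will account for the two copies of $\R_+$ in $\R_+\sqcup\R_+$. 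Once the extension data is shown to be unique for each admissible $\rho$, \Cref{Prop:Orbifold_Def} gives the embedding onto the axis-aligned locus, and the identification $\mathcal{T}_{\Hs^2}(\mathcal{O})\cong\R_+\sqcup\R_+$ follows immediately from the description of axis-aligned translation tori as $\mathcal{F}\cap V(x_1,x_2,y_1y_2)$.
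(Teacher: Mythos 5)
Your overall strategy---realize each orbifold group as an extension of $\pi_1(T^2)$, solve the resulting matrix equations, and count extensions up to conjugacy---is the machinery of the neighboring propositions and would work, but two of your structural claims are wrong as written, and a literal execution would compute the wrong orbifolds. For $\D^2(2,2;\varnothing)$ you assert the extension is generated by \emph{two commuting involutions}, a $\pi$-rotation $R$ and a reflection $F$. In $\Heis$, conjugating a $\pi$-rotation about $p$ by a reflection $F$ gives the $\pi$-rotation about $F(p)$, so $R$ and $F$ commute exactly when $p$ lies on the mirror axis of $F$; but then $RF$ is a second reflection through $p$, making $p$ a corner reflector rather than an interior cone point. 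That is the configuration of $\D^2(2;2,2)$, which the paper handles in a separate proposition with a different count. For $\D^2(2,2;\varnothing)$ the rotation centers lie off the mirror lines, and $R,F$ commute only modulo the lattice: the correct relation sets $[F,R]$ equal to a specified element of $\rho(\pi_1(T^2))$, read off from an honest presentation. Similarly, for $\RP^2(2,2)$ there is no ``orientation-reversing involution'' in the holonomy at all: an orientation-reversing involution of $\Heis$ is a reflection fixing a line, which would create mirror locus, and $\RP^2(2,2)$ has none. (Also, orientation-reversing elements lie in $\Heis\smallsetminus\Heis_+$, not in $\Heis_+$.) The correct structure is that $\pi_1(\RP^2(2,2))$ is generated by two glide reflections $x,y$ with $\pi_1(T^2)=\langle x^2,y^2\rangle$; their squares are trivial only in the deck group $\pi_1(\mathcal{O})/\pi_1(T^2)$. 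Crucially, the relation you must impose is not merely $x^2\in\mathsf{Tr}$ but $x^2$ equal to a specific lattice generator---precisely the hypothesis $X^2=\rho(a)$ in Proposition \ref{Prop:KleinBottle}, which is what forces each glide to run along a coordinate axis.

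You should also know that the paper proves this proposition without solving any new equations. For the two disk orbifolds it intersects two covering constraints: the pillowcase cover forces the fourfold torus cover to be a translation torus, while both orbifolds are also twofold covered by the cylinder, and the cylinder proposition shows the only translation cylinders are axis-aligned; the intersection is exactly the axis-aligned translation tori. For $\RP^2(2,2)$ it feeds the glide-reflection structure directly into Proposition \ref{Prop:KleinBottle}. Once you repair the two presentations above, your computational route will reproduce the same answer---with the two components of $\R_+\sqcup\R_+$ arising from the two components containing the orientation-reversing generator, as you predict---but the relations must come from actual presentations of these wallpaper-type groups, not from the simplified commutation and involution relations you wrote down.
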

\begin{proof}
These three orbifolds are twofold covered by $\S^2(2,2,2,2)$, and thus fourfold covered by translation tori.
The orbifolds $\D^2(2,2;\varnothing)$ and $\D^2(\varnothing;2,2,2,2)$ are also covered by the annulus, and the only translation annuli are axis aligned.  
Each such axis aligned torus has a unique $\D^2(2,2;\varnothing)$ and $\D^2(\varnothing;2,2,2,2)$ quotient.  
The orbifold $\RP^2(2,2)$ arises as a fourfold quotient of the torus by glide reflections $x,y$ such that $\pi_1(T^2)=\langle x^2,y^2\rangle$.  
As seen in the Proposition \ref{Prop:KleinBottle}, each glide reflection squaring to a generator of $\pi_1(T^2)$ is along an axis of $\R^2$, so in this case the torus cover must be an axis-aligned translation torus.  
Each such admits a unique $\RP^2(2,2)$ quotient.

\end{proof}

\begin{proposition}
The orbifold $\D^2(2;2,2)$ has Teichm\"uller space homeomorphic to $\R\sqcup\R$.
\end{proposition}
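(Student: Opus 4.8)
The plan is to treat $\mathcal{O} = \D^2(2;2,2)$ exactly as in the preceding propositions, realizing its deformation space as a space of extensions of a translation–torus holonomy. The orbifold $\mathcal{O}$ is a disk carrying one interior cone point of order two together with a mirror boundary bearing two order-two corner reflectors; doubling across the mirror locus sends the interior cone point to a pair of order-two cone points and each corner reflector to a single order-two cone point, so the local-orientation double cover of $\mathcal{O}$ is the pillowcase $P = \S^2(2,2,2,2)$. Since $P$ is itself doubly covered by a translation torus, we obtain a chain $\pi_1(T^2) \subg \pi_1(P) \subg \pi_1(\mathcal{O})$ with $\pi_1(T^2)$ of index four in $\pi_1(\mathcal{O})$. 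By Proposition \ref{Prop:Orbifold_Def} every Heisenberg structure on $\mathcal{O}$ is then complete and determined by its holonomy, and $\mathcal{D}_{\Hs^2}(\mathcal{O})$ embeds as the locus of those $\rho \in \Hom(\pi_1(\mathcal{O}),\Heis)/\Heis_+$ whose restriction to $\pi_1(T^2)$ is a torus holonomy.

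Next I would parameterize these extensions. Writing $\pi_1(T^2) = \langle a,b\rangle$, the point group $\pi_1(\mathcal{O})/\pi_1(T^2)$ (expected to be a Klein four-group) is generated by the class of a $\pi$-rotation $R$ coming from the cone point, with $RaR\inv = a\inv$ and $RbR\inv = b\inv$ as in the pillowcase, together with the class of a reflection $F$ coming from the mirror boundary, the product $RF$ being a second orientation-reversing involution. As in the proposition on $\S^2(2,2,2,2)$, the presence of the order-two rotation $R$ forces $\rho$ to be the holonomy of a translation torus, since in $\Heis_+$ a $\pi$-rotation conjugates only translations to their inverses. It therefore remains to solve, up to conjugacy fixing $\rho$, for an orientation-reversing involution $F \in \Heis_+ \smallsetminus \Heis_0$ satisfying the corner-reflector relations against $\rho(a),\rho(b)$ and compatible with $R$.

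This is precisely the computation carried out for the Klein bottle in Proposition \ref{Prop:KleinBottle}, and I would reuse its case division according to whether $F$ reflects across the horizontal or the vertical axis. In each case the reflection relations force $\rho$ to be an \emph{axis-aligned} translation torus --- the locus $\mathcal{F}\cap V(x_1,x_2,y_1y_2)$ of Corollary \ref{cor:U} --- and pin $F$ down uniquely up to conjugacy fixing $\rho$ (a reflection, respectively a glide reflection, across a coordinate axis). Crucially, because the cone point has already eliminated every shear torus, the two-dimensional shear component that appeared for the Klein bottle and the cylinder does \emph{not} occur here; each of the two axis choices contributes a single one-parameter family. Thus $\mathcal{T}_{\Hs^2}(\mathcal{O})$ is a disjoint union of two copies of $\R$.

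The main obstacle I anticipate is the bookkeeping for the extensions: verifying that the two reflection cases exhaust all solutions for $F$ up to conjugacy, that they are genuinely disjoint (not identified by any residual conjugacy fixing $\rho$), and that the compatibility of $R$ with $F$ together with the corner-reflector relations collapse the $\S^1$-factor of $\mathcal{T}_{\Hs^2}(P) \cong \R\times\S^1$ so as to leave exactly a one-dimensional family in each component. Getting the presentation of $\pi_1(\mathcal{O})$ and the precise dihedral corner relations correct is where the care lies; once those are fixed, the solution of the resulting matrix equations in Lie algebra coordinates is routine and parallels the earlier propositions.
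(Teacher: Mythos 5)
Your reduction to extensions of a translation-torus holonomy is sound, and the first half of your argument (doubling across the mirror to the pillowcase $\S^2(2,2,2,2)$, completeness via Proposition \ref{Prop:Orbifold_Def}, the interior rotation $R$ forcing the covering torus to be a translation torus) matches the paper. The genuine gap is in the presentation of $\pi_1(\D^2(2;2,2))$, and hence in the extension problem you actually solve. The orbifold $\D^2(2;2,2)$ is the quotient of the pillowcase by the reflection whose mirror passes through two \emph{opposing} cone points; on the torus this reflection has axis along a diagonal of the lattice and therefore \emph{exchanges} the generators: the correct relations are $faf=b$, $fbf=a$, $frf=r\inv$. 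You instead import the Klein-bottle-type relations, in which $F$ preserves each generator direction (fixing one, inverting the other), and conclude that the covering torus must be axis-aligned. That is exactly backwards: the paper's proof notes explicitly that the covering torus \emph{cannot} be axis-aligned if the mirror is to pass through the cone points of the pillow quotient. With the swap relation, writing the translation directions as $v_\theta$ and $\lambda v_\theta^\perp$, the requirement that an axis-parallel reflection $F\in\diag(-1,1,1)\Heis_0$ (resp.\ $F\in\diag(1,-1,1)\Heis_0$) conjugate $\rho(a)$ to $\rho(b)$ cuts out the curve $\lambda=\tan\theta$ (resp.\ $\lambda=-\tan\theta$) in the $(\theta,\lambda)$ parameter space, each component an interval homeomorphic to $\R$; the axis-aligned locus is precisely where solutions fail to exist. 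Relations of the type you wrote down describe different quotients, namely the orbifolds $\D^2(\varnothing;2,2,2,2)$ and $\D^2(2,2;\varnothing)$ treated in the preceding proposition, whose torus covers are indeed the axis-aligned ones.

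So your final answer $\R\sqcup\R$ agrees with the paper only coincidentally: the two lines you identify (axis-aligned translation tori) are disjoint from the actual holonomy locus (rhombic tori with $\lambda=\pm\tan\theta$, whose generating translations are mirror images of one another across a coordinate axis). To repair the argument you need only correct the presentation of the extension --- in particular the relation $faf=b$, $fbf=a$ forced by the mirror passing through opposing cone points --- and redo the matrix computation; the uniqueness of $F$ up to conjugacies fixing $\rho$ then goes through as you anticipate.
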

\begin{proof}
This orbifold is the quotient of the pillowcase by a reflection passing through two opposing cone points, and thus is fourfold covered by a translation torus.
Algebraically this is an extension of $\pi_1(P)=\langle a,b,r\rangle$ by $\langle f\rangle=\Z_2$ satisfying $faf=b$, $fbf=a$, $frf=r\inv$.
Up to $\Heis_+$ conjugacy we may choose representations for homothety classes of translation tori translating along $v_\theta=\smat{\cos \theta\\\sin\theta}$ and $\lambda v_\theta^\perp=\smat{-\lambda \sin\theta\\\lambda\cos\theta}$ uniquely defined for $\theta\in[0,\pi)$, $\lambda>0$.
The only reflections $F$ representing $f$ are parallel to the $x$ or $y$ axes; so the covering torus $T$ cannot be axis 
aligned for this to pass through the cone points of the pillow quotient.
For $F\in\diag(-1,1,1)\Heis_0$ computing with the relations shows there is a solution if and only if $\theta\in(0,\pi)$ and $\lambda=\tan\theta$.
Similarly, for $F\in\diag(1,-1,1)\Heis_0$, a solution exists for $\theta\in(\pi/2,\pi)$ and $\lambda=-\tan\theta$.
These solutions are unique up to conjugacy and so $\mathsf{T}_{\Hs^2}(\D^2(2;2,2))\cong\R\sqcup\R$.
\end{proof}

\section{Degenerations and Cone Tori}
\label{sec:Heis_Degens_and_Cone_Tori}
\index{Heisenberg Geometry!Cone Tori}
\index{Conemanifolds!Tori}
\index{Hyperbolic Manifolds!Cone Tori}

Unless otherwise specified, $\mathbb{X}$ denotes any one of the constant curvature geometries $\S^2,\E^2$ or $\Hyp^2$ realized as a subgeometry of $\RP^2$ (see Section 2.4) throughout.
Conjugate models will be denoted $C.\mathbb{X}$ for $C\in\GL(3;\R)$.
Recall a collapsing path $[f_t,\rho_t]$ of $\mathbb{X}$ structures degenerates to a Heisenberg structure if 
there is a path $C_t\in\GL(3;\R)$ with $C_t.[f_t,\rho_t]=[C_t f_t,C_t\rho_tC_t\inv]$ converging in the space of developing pairs to $[f_\infty,\rho_\infty]$ with $f_\infty$ an immersion into the affine patch $\Hs^2=\{[x:y:1]\}$ and $\rho_\infty$ with image in $\Heis$.
We may view these rescaled $\mathbb{X}$ structures as geometric structures modeled on the conjugate subgeometry $C_t.\mathbb{X}$, which converge to a Heisenberg structure as $C_t.\mathbb{X}$ itself converges to $\Hs^2$.
The following proposition, a consequence of \cite{CooperDW14} (or a straightforward calculation of conjugacy limits of Lie algebras) describes which conjugacies of $\mathbb{X}\in\{\S^2,\E^2,\Hyp^2\}$ limit to the Heisenberg plane.

\begin{proposition}
Let $\mathbb{X}\in\{\S^2,\Hyp^2\}$ and $C_t\colon [0,\infty)\to \GL(3;\R)$ a path of diagonalizable matrices with eigenvalues $|\lambda_t|>|\mu_t|$.  
Then $C_t.\mathbb{X}$ limits to Heisenberg geometry in 
$\RP^2$ if and only if $|\lambda_t|,|\mu_t|$ and $|\lambda_t/\mu_t|$ all diverge to $\infty$.
For $\mathbb{X}=\E^2$, the divergence $|\lambda_t/\mu_t|\to\infty$ alone is necessary and sufficient.
\end{proposition}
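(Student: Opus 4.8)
The plan is to reduce everything to a computation of conjugacy limits of Lie algebras, as justified by the corollary to Theorem~\ref{thm:CDW_AlgGrps}: since $\mathbb{X}\in\{\S^2,\E^2,\Hyp^2\}$ are subgeometries of $\RP^2$ with algebraic automorphism groups $\SO(3),\Euc(2),\SO(2,1)$ inside the algebraic group $\GL(3;\R)$, the connected conjugacy limit $\lim C_t.\mathbb{X}$ agrees up to local isomorphism with the Lie algebra limit $\lim C_t\mathfrak{g}C_t\inv$ taken in the Grassmannian $\Gr(3,9)$. So the statement becomes: determine precisely when $\lim C_t\mathfrak{g}C_t\inv=\heis$, where $\heis=\smat{0&\ast&\ast\\0&0&\ast\\0&0&0}$ is the Lie algebra of upper-triangular nilpotent matrices. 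I would first reduce to the case where $C_t$ is \emph{diagonal}: since $C_t$ is diagonalizable we may write $C_t=Q_t D_t Q_t\inv$, and because the isomorphism type of the limiting geometry is a conjugacy (hence local-isomorphism) invariant, and $\O(3)$-conjugates of $\heis$ are again upper-triangular nilpotent algebras in a permuted basis, it suffices to study $D_t\mathfrak{g}D_t\inv$ for $D_t=\diag(\lambda_t,\mu_t,\nu_t)$ with eigenvalues ordered by modulus. Projectivizing, we may normalize the smallest eigenvalue, so the relevant data are the ratios $|\lambda_t/\mu_t|$ and $|\mu_t/\nu_t|$ (or equivalently the pairwise ratios appearing in the map $\Psi$ of Chapter~\ref{chp:Orthogonal_Groups}).

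Next I would carry out the explicit basis-vector computation, exactly in the style of the $\Euc(2)$ limit computed in Section~\ref{sec:Hyp_Sph_Transition}. Writing the six off-diagonal basis vectors $e_{ij}$ of $\gl(3;\R)$, conjugation by $D_t=\diag(\lambda_t,\mu_t,\nu_t)$ scales $e_{ij}$ by $\lambda_i^t/\lambda_j^t$. For $\so(2,1)$ and $\so(3)$ the basis consists of the vectors $\lambda_j e_{ij}\mp\lambda_i e_{ji}$ (as recorded in the remark on $\so(J)$), so under conjugation the coordinate $[\lambda_i:\lambda_j]$ governs which of $e_{ij},e_{ji}$ survives in the limit; exactly the $\Psi$-coordinate analysis of Proposition~\ref{prop:Limit_Partitions}. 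The key observation is that the limit lands in $\heis$ precisely when, for every pair $i<j$, the faster-growing eigenvalue forces the limiting line onto the \emph{upper-triangular} generator $e_{ij}$ rather than the lower one $e_{ji}$; this requires a strict ordering $|\lambda_t|\gg|\mu_t|\gg|\nu_t|$ of all three eigenvalues with each pairwise ratio diverging. For $\mathbb{X}=\S^2,\Hyp^2$ the third eigenvalue is determined (the forms are nondegenerate of full rank $3$), and the hypothesis $|\lambda_t|>|\mu_t|$ with the two independent divergence conditions $|\lambda_t|\to\infty$ and $|\lambda_t/\mu_t|\to\infty$ are exactly what pin down the ordering $|\lambda_t|\gg|\mu_t|\gg 1\sim|\nu_t|$ after projective normalization; I expect $|\mu_t|\to\infty$ then follows, or must be folded into the normalization, and this bookkeeping of the three ratios is where care is needed.

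For the Euclidean case the argument is genuinely different and simpler: $\euc(2)=\smat{0&x&y\\-x&0&z\\0&0&0}$ already sits inside the upper two rows, so the bottom row is automatically zero and the only obstruction to reaching $\heis$ is sending the single off-diagonal entry $-x$ below the diagonal to zero in the limit. This is controlled solely by the ratio of the two nontrivial eigenvalues of $C_t$ acting on the $2\times2$ rotational block, so $|\lambda_t/\mu_t|\to\infty$ alone is both necessary and sufficient, with no condition on the absolute sizes needed (the translational part $\R^2$ is already upper-triangular and its scaling cannot destroy the nilpotent limit). I would present these as two short lemmas — one handling $\S^2,\Hyp^2$ together since their Lie algebras differ only by signs that do not affect which basis vector dominates, and one handling $\E^2$ — and then assemble them into the proposition. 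The main obstacle I anticipate is stating cleanly the \emph{necessity} direction: showing that if any pairwise ratio fails to diverge then the limit retains a lower-triangular or block-diagonal generator and hence is a proper (semi-Riemannian or Euclidean) degeneration strictly less degenerate than $\heis$, which amounts to checking that the limit point in $(\RP^1)^{\binom{3}{2}}$ fails to be the cone vertex unless all three ratios blow up — a finite case analysis over the possible orderings and coincidences of eigenvalue magnitudes.
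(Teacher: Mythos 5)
Your overall route is the one the paper itself intends: the text offers no detailed proof of this proposition, remarking only that it is ``a consequence of \cite{CooperDW14} (or a straightforward calculation of conjugacy limits of Lie algebras),'' and your plan fleshes out precisely that calculation in the style of Section \ref{sec:Hyp_Sph_Transition} and the $\Psi$-coordinates of Chapter \ref{chp:Orthogonal_Groups}. Your core diagonal computation is correct: with $D_t=\diag(\lambda_t,\mu_t,1)$, conjugation sends the line $\R(\mu e_{12}\mp\lambda e_{21})$ to the line spanned by $e_{12}\mp(\mu_t/\lambda_t)^2e_{21}$ and similarly for the other two generators, so for $\so(3)$ and $\so(2,1)$ (whose sign difference is immaterial, as you say) the limit is $\heis$ exactly when all three ratios $|\lambda_t/\mu_t|$, $|\lambda_t|$, $|\mu_t|$ diverge, while for $\euc(2)$ the lines $\R e_{13}$, $\R e_{23}$ are already conjugation-invariant and only $|\lambda_t/\mu_t|\to\infty$ matters. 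Your hesitation about whether $|\mu_t|\to\infty$ ``follows or must be folded into the normalization'' is unnecessary: after normalizing the third eigenvalue to $1$ the three stated conditions are exactly the divergence of the three pairwise ratios, and all three are independent (e.g.\ $\lambda_t=t$, $\mu_t=1$ retains the rotation generator $e_{23}-e_{32}$ and yields a Euclidean-type, not Heisenberg, limit). Your subsequence argument for necessity via compactness of $\Gr(3,9)$ is also sound.

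The genuine gap is your reduction from diagonalizable $C_t$ to diagonal $D_t$. Writing $C_t=Q_tD_tQ_t\inv$ by eigendecomposition gives $C_t\mathfrak{g}C_t\inv=Q_t\left(D_t\,(Q_t\inv\mathfrak{g}Q_t)\,D_t\inv\right)Q_t\inv$, and this fails on two counts: the $Q_t$ are neither in the isometry group (so $Q_t\inv\mathfrak{g}Q_t\neq\mathfrak{g}$, and you are no longer conjugating the algebra you computed with) nor confined to a compact set (so they cannot be discarded by passing to a convergent subsequence and a fixed outer conjugacy); moreover your supporting claim that $\O(3)$-conjugates of $\heis$ are upper-triangular nilpotent in a permuted basis is false --- only signed-permutation conjugates are. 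The correct reduction, which the paper carries out in Chapter \ref{chp:Orthogonal_Groups}, uses a polar/Cartan-type decomposition: for $\S^2$ write $C_t=K_tA_tK_t'$ with $K_t,K_t'\in\O(3)$ and $A_t$ diagonal, absorb $K_t'$ into $\O(3)$, and use compactness of $\O(3)$ to subconverge the outer factor $K_t$; for $\Hyp^2$ orthogonally diagonalize the pushed-forward form $C_t^{-T}JC_t\inv$ and rescale, again leaving only a compact outer conjugacy. Note that the diagonal data this produces are singular-value-type quantities rather than the eigenvalues of $C_t$; identifying the two (this is where diagonalizability must actually be used, and where the proposition's own phrasing is loose) requires control of the eigenbasis of $C_t$, so your write-up should either supply that argument or, as the paper does in every subsequent application, state and prove the result for diagonal paths $D_t$ and invoke the $\O(3)$-conjugacy reduction separately.
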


Up to $\O(3)$ conjugacy we may always arrange things so that $C_t.\mathbb{X}\cong D_t.\mathbb{X}$ for $D_t$ a path of diagonal matrices $D_t=\diag(\lambda_t,\mu_t,1)$ with $\lambda_t>\mu_t>1$, and we focus on these \emph{diagonal conjugacy limits}.
In this section, we classify which Heisenberg tori arise as rescaled limits of collapsing constant-curvature geometric structures.
As all constant-curvature tori are Euclidean, we consider the natural generalization of \emph{conemanifold structures} on the torus, which exist in both positive and negative curvature.

\subsection{Constant Curvature Cone Tori}

\begin{definition}
An $\mathbb{X}$ cone-surface is a surface $\Sigma$ with a complete path metric that is the metric completion of an $\mathbb{X}$-structure on the complement of a discrete set.	
\end{definition}

\noindent
An $\mathbb{X}$ cone torus $T$ with cone points $C=\{p_1,\ldots p_n\}$ gives an incomplete $\mathbb{X}$-structure on $T_\star^2=T^2\smallsetminus C$ encoded by a class of developing pairs \cite{CooperHK00}.
The space of all such $\mathbb{X}$ cone tori can be identified with the subset $\mathcal{C}_\mathbb{X}(T^2)\subset\mathcal{D}_\mathbb{X}(T^2_\star)$ with metric completions $T^2$, given the subspace topology under this inclusion.

\begin{definition}
A path $T_t$ of $\mathbb{X}$ cone tori converges projectively if the associated incomplete structures $(f_t,\rho_t)\in\mathcal{D}_{\mathbb{X}}(T^2_\star)$ converge in $\mathcal{D}_{\RP^2}(T_\star^2)$ to a projective structure $(f_\infty,\rho_\infty)$, which can be completed to a projective torus $T$. 
A Heisenberg torus $T$ \emph{regenerates} to $\mathbb{X}$ structures if there is a sequence of $\mathbb{X}$ cone tori converging to $T$ in $\RP^2$.
\end{definition}

%Convergence of developing pairs means that there is a convergent sequence of representatives: that is, 
%a $\Hs^2$ surface $\Sigma$ regenerates into $\mathbb{X}$ if there is a sequence of conjugates $\mathbb{X}_t$ of $\mathbb{X}$ and along with a sequence of $\mathbb{X}_t$ cone tori $\Sigma_t$ converging to $\Sigma$.

Cone tori with a single cone point admit a convenient combinatorial description via marked parallelograms, which provides us substantial control.
A marked $\mathbb{X}$-parallelogram is a quadrilateral $Q\subset \mathbb{X}$ with opposing geodesic sides of equal length, equipped with a a cyclic ordering of the vertices.  
 Such a marked parallelogram is determined by a vertex $v$,the geodesic lengths of the sides adjacent to $v$ and the angle of incidence at $v$.  
The moduli space $\mathcal{P}(\mathbb{X})$ of marked parallelograms nonpositive curvature is $\R_+^2\times(0,\pi)$, and $\left(0,\tfrac{\pi}{2\kappa}\right)^2\times (0,\pi)$ in spherical space of radius $\kappa$.
Just as deformation space of Euclidean tori can be identified with isometry classes of marked parallelograms $\mathcal{P}(\mathbb{E}^2)$ (thought of as $\R_+$ cross the upper half plane), so can the deformation spaces of $\mathbb{H}^2$ and $\mathbb{S}^2$ cone structures.

\begin{proposition}
The map $\mathsf{Glue}\colon\mathcal{P}(\mathbb{X})\to\mathcal{C}_{\mathbb{X}}(T_\star)$ induced by isometrically identifying opposing sides of $Q\in\mathcal{P}(\mathbb{X})$ is a homeomorphism.
\end{proposition}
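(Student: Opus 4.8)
The plan is to show $\mathsf{Glue}$ is a continuous bijection and then upgrade this to a homeomorphism using the natural topologies on both spaces. First I would make precise the gluing construction: given a marked parallelogram $Q\in\mathcal{P}(\mathbb{X})$ with a chosen base vertex $v$, the two pairs of opposing geodesic sides have equal length by definition, so there are unique orientation-preserving isometries $A,B\in\Isom_+(\mathbb{X})$ carrying one side of each pair to the other. These isometries generate a representation $\rho\colon\pi_1(T^2_\star)=\langle a,b\rangle\to\Isom_+(\mathbb{X})$ (the surface relation being absorbed into the single cone point), and developing $Q$ together with its $\rho$-translates produces a developing pair $(f,\rho)\in\mathcal{D}_{\mathbb{X}}(T^2_\star)$. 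I would verify that the metric completion of this structure is exactly $T^2$ with a single cone point arising from the identified vertices, so that the image genuinely lands in $\mathcal{C}_{\mathbb{X}}(T_\star)$.

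For injectivity and surjectivity I would argue via the developing image. Injectivity: if two marked parallelograms glue to equivalent $\mathbb{X}$ cone tori, the equivalence is realized by an isometry of $\mathbb{X}$ intertwining the developing maps and holonomies; since the parallelogram is a canonical fundamental domain cut out by the cone structure (it is the union of two geodesic triangles meeting along the diagonal from the cone point, or more simply the Dirichlet-type domain determined by the vertex), this isometry must carry one marked parallelogram to the other, giving equality in $\mathcal{P}(\mathbb{X})$. Surjectivity is the heart of the matter and I expect it to be the main obstacle: given an arbitrary $\mathbb{X}$ cone torus with one cone point, I must produce a geodesic parallelogram fundamental domain. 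The idea is to cut $T^2$ along two embedded geodesic arcs based at the cone point whose homotopy classes generate $\pi_1$; developing the complement yields a geodesic quadrilateral, and the equal-length condition on opposite sides follows because the side identifications are isometries of $\mathbb{X}$. The delicate point is guaranteeing that such cutting arcs exist, are embedded, and that the resulting quadrilateral is genuinely \emph{convex} (or at least immersed without overlap) in each of the three geometries; in spherical geometry one must also track the radius bound $\left(0,\tfrac{\pi}{2\kappa}\right)$ to ensure the parallelogram does not wrap around.

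Finally I would establish bicontinuity. Continuity of $\mathsf{Glue}$ is straightforward: the generating isometries $A,B$ depend smoothly (indeed real-analytically) on the parallelogram parameters $(\ell_1,\ell_2,\theta)$, so the holonomy and the developing map vary continuously, which is precisely the topology $\mathcal{C}_{\mathbb{X}}(T_\star)$ inherits as a subspace of $\mathcal{D}_{\mathbb{X}}(T^2_\star)$. For continuity of the inverse I would exhibit the parallelogram parameters as continuous functions of the cone structure: the side lengths and vertex angle are recovered from the translation lengths and rotation angles of $\rho(a),\rho(b)$, which depend continuously on the holonomy, and by the Ehresmann--Thurston principle (stated earlier) holonomy varies continuously over $\mathcal{C}_{\mathbb{X}}(T_\star)$. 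Since both spaces are manifolds of the same dimension ($\mathcal{P}(\mathbb{X})$ is $\R_+^2\times(0,\pi)$ or its spherical analogue), invariance of domain provides an alternate route to openness of the inverse, letting me conclude that $\mathsf{Glue}$ is a homeomorphism.
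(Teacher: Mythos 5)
Your overall route is the same as the paper's: define $\mathsf{Glue}$ via the unique orientation-preserving side pairings and invert it by a $\mathsf{Cut}$ map that slices a cone torus along geodesic arcs through the cone point representing the marked generators. The trouble is that you correctly flag the heart of the matter---producing embedded cutting arcs that meet only at the cone point---and then leave it unresolved, and this is precisely the content of the paper's proof. The paper fills it in as follows: since $T$ is a compact path metric space, the marked generators $a,b\in\pi_1(T)$ based at the cone point $p$ can be pulled tight rel $p$ to globally length-minimizing representatives $\alpha,\beta$; these are locally minimizing, hence $\mathbb{X}$-geodesics away from $p$. Because $\alpha,\beta$ generate $\pi_1(T)$ they have algebraic intersection number $1$, and because each is globally minimizing in its pointed homotopy class the complement $T\smallsetminus(\alpha\cup\beta)$ contains no bigons; together these force $\alpha\cap\beta=\{p\}$, so cutting yields an $\mathbb{X}$ parallelogram, with opposite sides of equal length simply because the identifications are isometries. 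Note that no convexity of the quadrilateral is needed anywhere, and the spherical wrapping issue you raise is absorbed into the definition of $\mathcal{P}(\mathbb{X})$ in the spherical case as $\left(0,\tfrac{\pi}{2\kappa}\right)^2\times(0,\pi)$.

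Two smaller points. Your injectivity argument via a ``canonical'' Dirichlet-type domain is shakier than needed: the parallelogram is not canonical for the unmarked structure, but it \emph{is} determined by the marking, since the cut curves are the tight representatives of the marked generators; once $\mathsf{Cut}$ exists, bijectivity follows from $\mathsf{Cut}$ and $\mathsf{Glue}$ being mutually inverse, which is exactly how the paper argues. On continuity of the inverse, your claim that the side lengths are the translation lengths of $\rho(a),\rho(b)$ is wrong in general---the axis of $\rho(a)$ need not pass through the developed cone point---but the repair is easy: the side length is $d_{\mathbb{X}}(v,\rho(a)v)$ for $v$ the developed image of the cone point, and this, like the vertex angle, depends continuously on the developing pair; your invariance-of-domain fallback also works. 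The paper itself is silent on bicontinuity (``easily seen to be inverses''), so your extra care there is a genuine addition rather than a divergence.
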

\begin{proof}
There is a unique orientation preserving isometry sending any oriented line segment in $\mathbb{X}$ to any other of the same length.
Thus marked quadrilateral $Q\subset\mathbb{X}$ determines unique side pairings $A,B\in\Isom_+(\mathbb{X})$ identifying opposing sides.
The quotient is a topologically a torus and inherits an $\mathbb{X}$ structure on the complement of $[v]$.
If $Q'$ is isometric to $Q$ then there is a $g\in\Isom(\mathbb{X})$ with $g.Q=Q'$ so the induced structures are isomorphic and $\mathsf{Glue}$ is well defined.

We may also define a map $\mathsf{Cut}\colon\mathcal{C}_{\mathbb{X}}(T_\star)\mapsto\mathcal{P}(\mathbb{X})$ as follows.
An marked $\mathbb{X}$ cone torus $T$ has generators $a,b\in\pi_1(T)$ based at the cone point, which
may be pulled tight relative $p$ to length minimizing representatives $\alpha,\beta$ as $T$ is a compact path metric space.
These are locally length minimizing, and so $\mathbb{X}$-geodesics away from $p$.
As $a\simeq\alpha,b\simeq \beta$ generate $\pi_1(T)$, $\alpha$ and $\beta$ have algebraic intersection number $1$.  
As each is globally minimizing in its pointed homotopy class, the complement $T\smallsetminus\{\alpha\cup\beta\}$ contains no bigons.  
From this it follows that $\alpha\cap\beta=\{p\}$, and so cutting along $\alpha,\beta$ gives an $\mathbb{X}$ parallelogram $Q$.
These maps are easily seen to be inverses and thus define homeomorphisms $\mathcal{P}(\mathbb{X})\cong\mathcal{C}_{\mathbb{X}}(T_\star)$. 
\end{proof}

\begin{figure}[h!]
\centering
\includegraphics[width=0.8\textwidth]{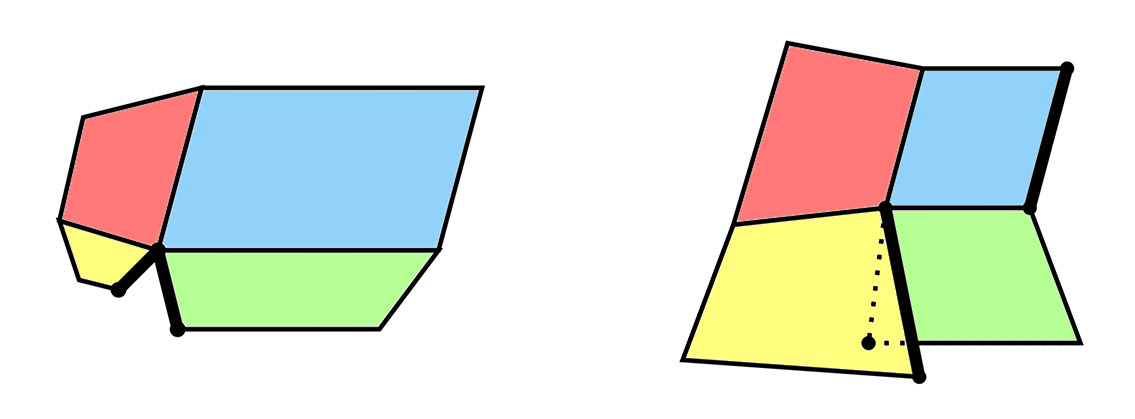}	
\caption{Small portions of the developing map for a hyperbolic and spherical cone torus}
\end{figure}
 
 \noindent
 To study regenerations from this combinatorial perspective, we characterize when a collapsing path in $\mathcal{C}_\mathbb{X}(T_\star)$ converges in $\mathcal{D}_{\RP^2}(T_\star)$ in terms of marked parallelograms.
 
 \begin{proposition}
 Let $\mathbb{X}_t=D_t\mathbb{X}$ be a sequence of conjugate geometries converging to $\Hs^2$ in $\mathcal{S}_{\RP^2}$ and $T_t$ an $\mathbb{X}_t$ cone torus for each $t$ with marked parallelogram $Q_t$.
 Then $T_t$ converges to a Heisenberg torus if and only if there is a choice of embeddings of $Q_t$ into $\mathbb{X}_t\subset\RP^2$ with $Q_t\to Q$ in the Hausdorff space of closed subsets of $\RP^2$ with induced side pairing $A_t,B_t$ converging to $A,B$ in $\PGL(3;\R)$ such that $[A,B]=I$.
 \label{Prop:Quad_Convergence}
 \end{proposition}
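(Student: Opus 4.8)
The plan is to prove both directions by carefully unpacking what it means for a sequence of marked cone tori $T_t$ to converge to a Heisenberg torus, translating everything into statements about the combinatorial data $(Q_t, A_t, B_t)$. Recall that $T_t$ is encoded by its marked parallelogram $Q_t \in \mathcal{P}(\mathbb{X}_t)$, which via $\mathsf{Glue}$ determines side pairings $A_t, B_t \in \Isom_+(\mathbb{X}_t)$ generating the holonomy $\rho_t$, and that by the previous proposition the incomplete $\mathbb{X}_t$-structure on $T^2_\star$ has developing image containing (a lift of) $Q_t$ as a fundamental domain. Convergence of $T_t$ to a Heisenberg torus means precisely that, after the conjugacies $D_t$ already built into the geometries $\mathbb{X}_t = D_t\mathbb{X}$, the developing pairs $(f_t, \rho_t)$ converge in $\mathcal{D}_{\RP^2}(T^2_\star)$ to a limiting pair $(f_\infty, \rho_\infty)$ that completes to a Heisenberg torus $T$.

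For the forward direction, I would assume $T_t \to T$ in $\RP^2$ and extract the geometric data. Since the $Q_t$ are fundamental domains for the developing images, convergence of the developing maps $f_t \to f_\infty$ (uniformly on compact subsets, in the $C^\infty(\tilde{M},X)$ topology defining $\mathsf{Dev}$) forces the embedded copies of $Q_t$ inside $\mathbb{X}_t \subset \RP^2$ to converge in the Hausdorff topology on $\Cl(\RP^2)$ to the fundamental domain $Q$ of $T$. Similarly, holonomy convergence $\rho_t \to \rho_\infty$ is exactly the statement that the generators $A_t = \rho_t(a)$, $B_t = \rho_t(b)$ converge in $\PGL(3;\R)$ to $A = \rho_\infty(a)$, $B = \rho_\infty(b) \in \Heis$; and since $\Heis$ is abelian on its identity component — or more precisely since the limiting torus is a \emph{Heisenberg} torus and we proved earlier that every Heisenberg torus has holonomy into $\Heis_0$ with commuting generators (the representation variety lies in $V(x_1y_2 - x_2y_1)$, forcing $[A,B]=I$) — the commutator relation $[A,B]=I$ holds in the limit. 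For the reverse direction, I would run this argument backward: given embeddings $Q_t \to Q$ Hausdorff-converging with $A_t, B_t \to A, B$ and $[A,B]=I$, I reconstruct the developing maps $f_t$ by analytic continuation of the chart on $Q_t$ via the side pairings (the standard development-from-fundamental-domain construction), show these converge to a limiting immersion $f_\infty$ whose image is the affine patch, and verify $\rho_\infty = \langle A,B\rangle \subset \Heis$ gives a well-defined Heisenberg torus structure, using $[A,B]=I$ to guarantee $\rho_\infty$ is an honest representation of $\pi_1(T^2) = \Z^2$.

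The step I expect to be the main obstacle is controlling the limiting developing map and verifying that $f_\infty$ is genuinely an \emph{immersion} into $\Hs^2 = \{[x:y:1]\}$ rather than a degenerate (collapsing) map. The subtlety is that the parallelograms $Q_t$ live in the shrinking domains of $\mathbb{X}_t$, and as $D_t.\mathbb{X} \to \Hs^2$ the metric is collapsing; one must check that the rescaled quadrilaterals $Q_t$ do \emph{not} Hausdorff-converge to a lower-dimensional set (a segment or point), which is exactly where the requirement $Q_t \to Q$ with $Q$ a genuine (two-dimensional) parallelogram enters, and where the side-pairing convergence $A_t, B_t \to A, B$ with $A, B$ generating a rank-two translation lattice plays its role. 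I would handle this by showing that nondegeneracy of the limit $Q$ together with $[A,B]=I$ guarantees the pairings $A, B$ identify opposite sides of $Q$ to produce a Heisenberg torus, invoking the classification of Heisenberg tori from Section \ref{sec:Def_Space_Tori} and Corollary \ref{cor:Complete} to conclude the limit is a genuine (complete) Heisenberg torus rather than a degenerate structure.
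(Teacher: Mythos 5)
Your proposal is correct and follows essentially the same route as the paper's proof: the forward direction extracts the straightened fundamental parallelograms and side pairings from the convergent developing pairs, and the converse rebuilds the punctured projective tori from the triples $(Q_t,A_t,B_t)$, using $[A,B]=I$ to complete the limit to a torus and the convergence $\Isom(\mathbb{X}_t)\to\Heis$ to place the limiting pairings in $\Heis$. Your explicit attention to nondegeneracy of the Hausdorff limit $Q$ corresponds to the paper's (largely implicit) assumption that $Q$ is a genuine affine parallelogram, so no substantive difference in approach remains.
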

\begin{proof}

Let $(f_t,\rho_t)$ be a convergent sequence of developing pairs for the incomplete structures on $T_\star=T^2\smallsetminus \{\ast\}$ for $\mathbb{X}_t$ cone tori $T_t$.
Choose a generating set $a,b\in\pi_1(T_\star)$ and a basepoint $q\in\widetilde{T_\star}$.
The universal cover $\widetilde{T_\star}$ is tiled by ideal quadrilaterals formed from the lifts of $a,b$.
For each $t$ these can be straightened to geodesics in the  $\mathbb{X}_t$ structure, let $\widetilde{Q_t}\subset\widetilde{T_\star}$ be the geodesic quadrilateral containing $q\in\widetilde{T_\star}$.
Then $f_t(\widetilde{Q}_t)=Q_t\subset\mathbb{X}_t$ is a parallelogram for each $t$, with sides paired by $A_t=\rho_t(a)$, $B_t=\rho_t(b)$.
The convergence of developing pairs then implies $A_t, B_t$ are convergent in $\PGL(3;\R)$  to $A, B$ and $Q_t$ converges to $Q_\infty$, a fundamental domain for the Heisenberg structure $T$ with sides paired by the commuting transformations $A,B$.

Conversely let $Q_t$ be a sequence of $\mathbb{X}_t$ parallelograms convergent in the Hausdorff space $\mathfrak{C}_{\RP^2}$ to an affine parallelogram $Q$.
The triples $(Q_t, A_t, B_t)$ of the quadrilateral with side pairings define $\mathbb{X}_t$ cone tori, and hence $\RP^2$ punctured tori for all $t$.  
As $t\to \infty$ these converge to a punctured torus $T_\infty$ with holonomy in $\Heis$, and so $T_\infty\in\mathcal{D}_{\Hs^2}(T_\star)$.  
As $[A,B]=I$ the limiting holonomy factors through $\Z\oplus \Z$ and so the limiting torus can be completed to a torus $T_\infty$.
That the limits $A,B\in\Heis$ follows from the definition of $\mathbb{X}_t$ converging to $\Hs^2$, so this limiting projective structure canonically strengthens to a Heisenberg structure.
\end{proof}

\section{Regeneration of Tori}
\label{sec:Heis_Regen}
\index{Heisenberg Geometry!Regeneration}
\index{Geometries!Heisenberg!Regeneration}

\subsection{Translation Tori}
\index{Heisenberg Geometry!Regeneration!Translation Tori}

This combinatorial description of cone tori with at most one cone point provides enough control to completely understand the regeneration of translation tori.

%Recall that a translation torus is a Heisenberg torus with holonomy $\rho(\Z^2)\subset\mathsf{Tr}$.

\begin{theorem}
Let $\mathbb{X}\in\{\S^2,\E^2,\Hyp^2\}$ and $\mathbb{X}_t=D_t.\mathbb{X}$ be a sequence of diagonal conjugates converging to $\Hs^2$.
Given any translation torus $T$ there is a sequence of $\mathbb{X}_t$ cone tori with at most one cone point converging to $T$.	
\end{theorem}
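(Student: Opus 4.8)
The plan is to use the combinatorial description of cone tori via marked parallelograms (\cref{Prop:Quad_Convergence}) and explicitly construct, for each translation torus $T$, a sequence of $\mathbb{X}_t$-parallelograms $Q_t$ whose side pairings commute in the limit and whose Hausdorff limit is a fundamental domain for $T$. A translation torus is parameterized by a point of $\mathcal{F}\cap V(x_1,x_2)$, so its holonomy is generated by two translations of the Heisenberg plane; up to conjugacy we may take a developing image which is an affine parallelogram $Q_\infty$ with vertex at the origin and sides given by two vectors $\vec{u},\vec{v}\in\R^2$, paired by the corresponding translations $A_\infty,B_\infty\in\Heis$. The whole task is to realize this affine datum as a limit of genuine $\mathbb{X}_t$ cone-torus data.

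First I would fix the conjugating path: by the proposition preceding this section, after $\O(3)$-conjugacy we may assume $\mathbb{X}_t = D_t.\mathbb{X}$ with $D_t=\diag(\lambda_t,\mu_t,1)$ and $\lambda_t>\mu_t>1$ diverging appropriately (all of $\lambda_t,\mu_t,\lambda_t/\mu_t\to\infty$ in the $\S^2,\Hyp^2$ cases, $\lambda_t/\mu_t\to\infty$ for $\E^2$). Second, I would work backwards: in the \emph{unrescaled} geometry $\mathbb{X}$, the conjugate $D_t\inv Q_\infty$ is a tiny region shrinking toward a point as $t\to\infty$. The strategy is to choose, in $\mathbb{X}$, a small marked geodesic parallelogram $\widehat{Q}_t$ that approximates $D_t\inv Q_\infty$ to high order — concretely, pick the two vertices reached by $\mathbb{X}$-geodesics from the origin whose initial tangent directions and lengths match $D_t\inv\vec{u}, D_t\inv\vec{v}$, and complete to a geodesic parallelogram. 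Since $\mathbb{X}$ agrees with the affine (projective) plane to first order at the origin and these parallelograms are collapsing to a point, $Q_t:=D_t\widehat{Q}_t$ will Hausdorff-converge in $\RP^2$ to $Q_\infty$. Third, I must check the side pairings $A_t,B_t\in\Isom_+(\mathbb{X}_t)$ converge in $\PGL(3;\R)$ to the commuting translations $A_\infty,B_\infty$; this follows because the side-pairing isometries are determined by the edge vectors, and as the parallelograms collapse the isometries degenerate toward their affine (Euclidean-translation) approximations, which commute exactly. Then \cref{Prop:Quad_Convergence} packages this into convergence $T_t\to T$ with a single cone point.

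The main obstacle I anticipate is the \emph{commutator estimate}: I need $[A_t,B_t]\to I$, and more precisely that the limit $[A_\infty,B_\infty]=I$ genuinely holds rather than merely that the commutator is small. For a general cone torus the side pairings need not commute — the holonomy around the cone point is $[A_t,B_t]$, whose deviation from the identity records the cone angle deficit/excess. The content of the theorem is that for translation tori this deficit can be made to vanish in the rescaled limit. I expect this to work because, for a translation limit, the two edge vectors $\vec{u},\vec{v}$ are \emph{parallel} after projecting to the Heisenberg leaf space (the holonomy lands in the abelian translation subgroup), so the leading-order obstruction to commuting is killed by the choice of domain; the residual commutator scales like the area of $\widehat{Q}_t$ times a curvature factor, and the unequal stretching by $D_t$ does not amplify it into the limit. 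Quantifying this — showing the commutator, conjugated by $D_t$, tends to the identity rather than to a nontrivial unipotent element — is the one genuinely delicate calculation, and it is exactly where the translation hypothesis (as opposed to the shear case treated later) is essential. I would carry it out by expanding the side pairings in the Lie algebra $\mathfrak{sl}(3;\R)$, tracking how $\Ad(D_t)$ acts on each bracket term, and confirming the surviving terms are precisely the commuting translational part.
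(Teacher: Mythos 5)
Your framework (marked parallelograms plus the convergence criterion of Proposition~\ref{Prop:Quad_Convergence}, rescaling by $D_t$) is the same as the paper's, but there is a genuine gap, and it sits exactly where you placed your confidence rather than where you placed your worry. The ``delicate commutator calculation'' you anticipate is not needed: in the curved cases $[A_t,B_t]\neq I$ for every finite $t$ (Gauss--Bonnet forces a cone point), and the criterion only asks that the \emph{limits} commute, which is automatic once one proves (i) that $A_t,B_t$ converge in $\PGL(3;\R)$ at all, and (ii) that the limits are the prescribed translations. These two steps are the actual content, and your proposal dispatches them with ``the side-pairing isometries are determined by the edge vectors.'' Step (i) is not obvious, since a sequence of projective transformations pinned down by four point-images can still degenerate; the paper rules this out by showing no triangle of vertex-images collapses, using explicit two-sided bounds on the Euclidean-area distortion of the collapsing isometries $D_t\inv A_t D_t$ near the origin. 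Step (ii) fails as you state it whenever a paired side of the limit parallelogram is horizontal: side pairings in $\Heis_0$ matching the endpoints of a horizontal segment form a one-parameter family (any shear composed with the translation works, since shears preserve horizontals), so the edge vectors do \emph{not} determine the limit, and translation tori with a horizontal generator genuinely occur. The paper closes both steps with a synthetic lemma you are missing --- each $A_t$ preserves the projective line through the $\mathbb{X}_t$-midpoints of the paired sides --- combined with convergence of those midpoints to the Euclidean midpoints and simple transitivity of $\Heis_0$ on pointed non-horizontal lines; your $\Ad(D_t)$ expansion has no comparable control on the axis of $D_t\inv A_t D_t$, so the scaling of the bracket terms you propose to track is undetermined.

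Separately, your approximation scheme for the domain is both unnecessary and fragile. Geodesics of the projective models of $\S^2$ and $\Hyp^2$ are straight segments in the affine patch, and the $\pi$-rotation $R=\diag(-1,-1,1)$ is invariant under diagonal conjugacy, hence lies in $\Isom(\mathbb{X}_t)$ for all $t$; consequently the single origin-centered affine parallelogram $Q_\infty$ is already a geodesic $\mathbb{X}_t$-parallelogram for all large $t$ (this is the paper's Claim 1), and one can hold $Q$ fixed and let only the side pairings vary. By contrast, your $\widehat{Q}_t$ built by matching initial tangent directions and lengths of $D_t\inv\vec{u},D_t\inv\vec{v}$ incurs an endpoint error of order $\mathrm{len}^3$, and after the anisotropic stretch by $D_t$ this error is of order $\lambda_t/\mu_t^3$, which the hypotheses on $(\lambda_t,\mu_t)$ do not force to vanish; ``completing to a geodesic parallelogram'' from two sides at a vertex is also not well defined in nonzero curvature without invoking the central symmetry. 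Finally, note the Euclidean case needs none of this: translations lie in every conjugate $D_t\Isom(\E^2)D_t\inv$ and in $\Heis$, so $(Q,A,B)$ is a \emph{constant} sequence of projective structures that strengthens to the Heisenberg structure in the limit.
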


\begin{proof}[Proof (Euclidean Case):]

Heisenberg tori arise as limits of collapsing families of \emph{smooth} Euclidean tori (there are no Euclidean cone tori with a single cone point, per Gauss-Bonnet).
Let $T$ be a Heisenberg translation torus and $\E_t=D_t.\E^2$ be a sequence of diagonal conjugates of $\E^2$ converging to the Heisenberg plane.  
Choose a fundamental domain $Q$ for $T\subset \Hs^2$, together with side pairings $A,B$ by translations for $T$.
The underlying space for the models $\E^2$, $\E_t$ and $\Hs^2$ in $\RP^2$ are all the entire affine patch $\A^2=\{[x:y:1]\}$; and group $\mathsf{Tr}$ of translations acting on this affine patch is contained in each conjugate $D_t\Isom(\E^2)D_t\inv$ as well as $\Heis$.
Thus $(Q,A,B)$ encodes an $\E_t$-structure $[f,\rho]_{\E_t}$ on $T^2$ for each $t\in\R_+$.
Canonically weakening to projective structures, this is the constant sequence $[f,\rho]_{\RP^2}$ thus clearly convergent.
As $\rho(\Z^2)\subset\mathsf{Tr}< \Heis$, the limit canonically strengthens to the original Heisenberg structure $[f,\rho]_{\Hs^2}$.
\end{proof}

\noindent
Viewed as Euclidean structures in the fixed model $\E^2$, the developing pairs $[D_t\inv f, D_t\inv \rho D_t]$ encode a collapsing collection of tori with one of the generators of the holonomy shrinking much faster than the other.
That is, even after rescaling to unit area structures this path fails to converge in Teichm\"uller space and limits to a point in the Thurston boundary.
The foliation represented by this point can actually be seen in the limiting Heisenberg structure as the invariant foliation pulled back from $dy$ on $\Hs^2$.

The approach for producing translation tori as limits of hyperbolic and spherical cone tori is similar in spirit, but more involved in the details.
Again we take a fundamental domain with side pairings $(Q,A,B)$ for the proposed limit, and view $Q$ as a geometric parallelogram in each of the model geometries $\mathbb{X}_t$. 
Side pairings $A_t,B_t\in\Isom(\mathbb{X}_t)$ are uniquely determined by each $\mathbb{X}_t$ structure on $Q$, and converge to $A,B$ in the limit.

\begin{figure}
\centering
\includegraphics[width=0.75\textwidth]{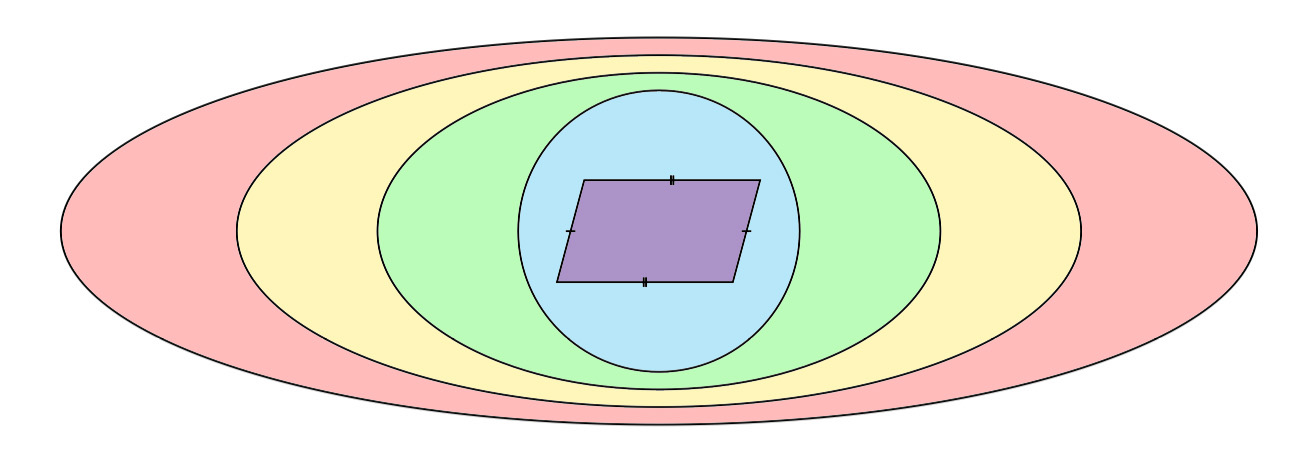}
\caption{A fixed Quadrilateral and various conjugate models of $\Hyp^2$ containing it.}
\end{figure}

\begin{proof}[Proof: Hyperbolic and Spherical Cases]

If $\mathbb{X}\in\{\S^2,\Hyp^2\}$, let $Q$ be an origin-centered fundamental domain for $T$ with side pairings $A,B\in\mathsf{Tr}$.  The existence of a convergent sequence of $\mathbb{X}_t$ cone tori $T_t\to T$ follows from the following facts.

\begin{itemize}
\item[] \textbf{Claim 1:} For large $t$, the quadrilateral $Q$ defines an $\mathbb{X}_t$ parallelogram.
\item[] \textbf{Claim 2:} The side pairing $A_t$ preserves the entire projective line through the $\mathbb{X}_t$ midpoints of paired sides.
\item[] \textbf{Claim 3:} If $Q$ is an $\mathbb{X}_t$ parallelogram for all $t$ and $A_t\in\Isom(\mathbb{X}_t)$ pairs opposing sides, $A_t$ converges as a sequence of projective transformations.
\item[] \textbf{Claim 4:} The $\mathbb{X}_t$ midpoints of the edges of $Q$ converge to the Euclidean midpoints as $t\to\infty$.
\end{itemize}

\noindent
Given that $Q$ defines an $\mathbb{X}_t$ parallelogram, there are unique side pairing transformations $A_t,B_t\in\Isom(\mathbb{X}_t)$ determining an $\mathbb{X}_t$ cone torus.  
By the third claim, these sequences of transformations converge in $\PGL(3,\R)$, and as $\mathbb{X}_t\to\Hs^2$ in fact $A_\infty, B_\infty\in\Heis_0$. 
Recalling the discussion in Section 3, $\Heis_0$ acts simply transitively on the subspace $\mathcal{L}\smallsetminus\mathcal{H}$ of pointed lines, so the limiting transformations are completely determined by their action on  a pair $(p,\ell)$ of a point $p$ on a non-horizontal line $\ell$.  

\begin{figure}
\centering
\includegraphics[width=0.5\textwidth]{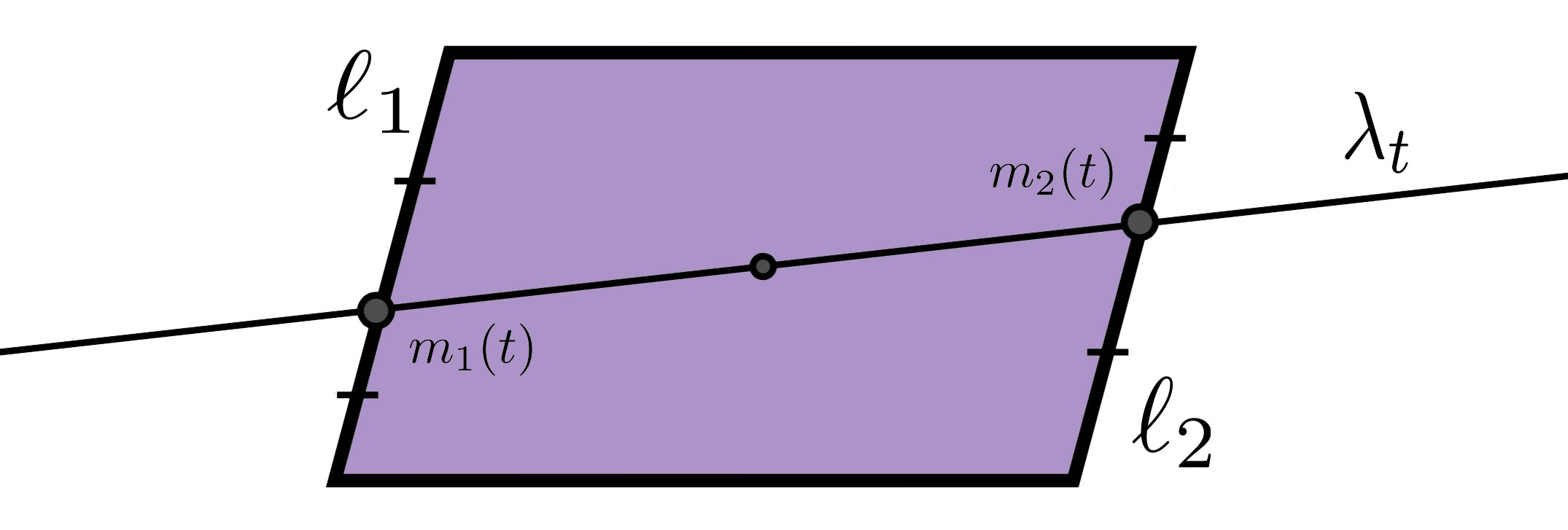}
\end{figure}

\noindent
Let $\ell_1,\ell_2$ be a pair of opposing sides of $Q$, with Euclidean midpoints $m_1,m_2$.  
For each $t$, let $m_1(t)$ and $m_2(t)$ be the $\mathbb{X}_t$ corresponding midpoints, and $\lambda_t$ the projective line connecting them.
The second claim implies $A_t$ preserves $\lambda_t$ and so the fourth fact above implies that $A_\infty$ preserves $\lambda=\overline{m_1m_2}$.
Thus $A_\infty$ sends the pair $(m_1,\ell_1)$ to $(m_2,\ell_2)$, as well as the pair $(m_1,\lambda)$ to $(m_2,\lambda)$.  
At least one of the lines $\ell_1,\lambda$ is non-horizontal, and so this completely determines the behavior of $A_\infty$.  
As this agrees precisely with the action of the original transformation $A$, we have $A_\infty=A$ and similarly for $B$.  
Thus the sequence of cone tori corresponding to the triples $(Q, A_t,B_t)$ converge to the original Heisenberg torus $T$ as $t\to\infty$.
\end{proof}

\noindent
Thus the proof reduces to an argument for the four claims above.  
Throughout its often helpful to switch between the perspectives of a fixed fundamental domain $Q$ in expanding model geometries $\mathbb{X}_t$ and the equivalent picture of shrinking domains $Q_t$ in the fixed model $\mathbb{X}$.

\begin{claim}[1]
Let $Q$ be a affine parallelogram centered at $\vec{0}\in\A^2$ and $\mathbb{X}_t\to\Hs^2$ a sequence of diagonal conjugates of $\mathbb{X}\in\{\S^2,\Hyp^2\}$.  Then for all $t>>0$, $Q$ defines an $\mathbb{X}_t$ parallelogram.	
\end{claim}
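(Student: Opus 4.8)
The plan is to fix the affine parallelogram $Q$ centered at the origin and analyze what happens as the model geometries $\mathbb{X}_t = D_t.\mathbb{X}$ expand to fill the affine patch. The cleanest way to organize this is to switch to the equivalent dual picture: instead of a fixed $Q$ inside growing geometries $\mathbb{X}_t$, I would look at shrinking parallelograms $Q_t = D_t^{-1}.Q$ inside the fixed standard model $\mathbb{X}$. Under this dictionary, the statement ``$Q$ is an $\mathbb{X}_t$ parallelogram'' becomes ``$Q_t$ is an honest geodesic parallelogram in $\mathbb{X}$,'' and since $D_t^{-1}$ contracts everything toward the origin (the eigenvalue ratios diverge, so $Q_t$ collapses onto a neighborhood of $[0:0:1]$), the problem reduces to a local statement about $\mathbb{X}$ near its basepoint.

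First I would recall precisely what ``$\mathbb{X}_t$ parallelogram'' requires: a quadrilateral whose four sides are $\mathbb{X}_t$-geodesic segments, with opposite sides of equal $\mathbb{X}_t$-length. In a projective model, $\mathbb{X}$-geodesics are straight projective line segments (for $\Hyp^2$ in the Klein model and for $\S^2$ via its projective model as recalled in Section 2.4), and this property is conjugacy-invariant, so the sides of $Q$ are automatically $\mathbb{X}_t$-geodesics for every $t$ as long as $Q \subset \mathbb{X}_t$. Hence the genuine content of the claim is twofold: that $Q$ lies inside the domain of $\mathbb{X}_t$ for large $t$, and that the equal-length condition on opposite sides can be arranged (or is automatic from the centering and the symmetry of $Q$). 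The containment $Q \subset \mathbb{X}_t$ is where I expect to spend real care in the hyperbolic case: the domain of $\mathbb{X}_t = D_t.\Hyp^2$ is an ellipse whose semi-axes are $\lambda_t, \mu_t$ (up to the relevant normalization), both of which diverge to $\infty$ by the hypothesis of the previous Proposition characterizing Heisenberg limits. Since $Q$ is a fixed compact subset of the affine patch and these ellipses exhaust $\A^2$, for all sufficiently large $t$ we have $Q \subset \mathbb{X}_t$. For $\S^2$ the domain is all of $\RP^2$, so containment is immediate.

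The main obstacle will be verifying the equal-length condition, that is, that $Q$ really is an $\mathbb{X}_t$-\emph{parallelogram} and not merely an $\mathbb{X}_t$-quadrilateral. Here I would use the symmetry of the setup: $Q$ is centered at $\vec{0}$, meaning it is invariant under the projective map $\vec{x} \mapsto -\vec{x}$, which is $\diag(-1,-1,1)$ in homogeneous coordinates. This involution lies in $\Isom(\mathbb{X}_t)$ for every $t$ (it commutes with the diagonal $D_t$ and is an isometry of the standard model $\mathbb{X}$), and it exchanges each pair of opposite sides of $Q$. Since it is an $\mathbb{X}_t$-isometry carrying one side isometrically onto its opposite, the two opposite sides have equal $\mathbb{X}_t$-length; applying this to both pairs shows $Q$ is an $\mathbb{X}_t$-parallelogram. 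This symmetry argument is the key that makes the claim work for \emph{every} large $t$ rather than requiring a perturbation, and it is the step I would write out most carefully. Finally, I would note that once containment holds the argument is uniform in $t$, so no separate limiting analysis is needed here — that is deferred to Claims 3 and 4.

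\begin{proof}[Sketch of Proof]
Work in the fixed projective models of Section 2.4, where $\mathbb{X}$-geodesics are segments of projective lines; this property is invariant under the projective conjugation defining $\mathbb{X}_t = D_t.\mathbb{X}$, so the projective segments comprising the sides of $Q$ are $\mathbb{X}_t$-geodesics whenever they lie in the domain of $\mathbb{X}_t$.

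For $\mathbb{X} = \Hyp^2$, the domain of $\mathbb{X}_t$ is the origin-centered ellipse $D_t.\mathbb{B}^2$ with semi-axes diverging to $\infty$ by the hypothesis of the preceding Proposition; these ellipses exhaust the affine patch $\A^2$, so the fixed compact quadrilateral $Q$ satisfies $Q \subset \mathbb{X}_t$ for all sufficiently large $t$. For $\mathbb{X} = \S^2$ the domain is all of $\RP^2$ and containment is automatic.

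It remains to check that opposite sides of $Q$ have equal $\mathbb{X}_t$-length. The point reflection $\sigma = \diag(-1,-1,1)$ fixes $\vec{0}$, preserves $Q$ (as $Q$ is origin-centered), and exchanges each pair of opposite sides. Since $\sigma$ commutes with $D_t$ and lies in $\Isom(\mathbb{X})$, it lies in $\Isom(\mathbb{X}_t)$ for every $t$. Thus $\sigma$ carries each side of $Q$ isometrically onto its opposite, forcing opposite sides to have equal $\mathbb{X}_t$-length. Hence $Q$ is an $\mathbb{X}_t$-parallelogram for all $t \gg 0$.
\end{proof}
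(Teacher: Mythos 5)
Your proposal is correct and matches the paper's own argument: both hinge on the point reflection $\diag(-1,-1,1)$ being a diagonal-conjugation-invariant isometry of every $\mathbb{X}_t$ that preserves the origin-centered $Q$ and swaps opposite sides, combined with the observation that the domains $D_t.\Hyp^2$ exhaust the affine patch (with containment automatic for $\S^2$). Your extra remark that the sides are automatically $\mathbb{X}_t$-geodesics, being projective line segments, is a detail the paper leaves implicit but changes nothing substantive.
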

\begin{proof}
The $\pi$-rotation about $\vec{0}\in\mathbb{A}^2$ represented by $R=\diag(-1,-1,1)$ is in $\O(3)\cap\O(2,1)$ and is invariant under diagonal conjugacy.  
Thus for each $t$, $R\in\Isom(\mathbb{X}_t)$.  
As $Q$ is an affine parallelogram with centroid $\vec{0}$, $RQ=Q$ so there is an $\mathbb{X}_t$ isometry exchanging opposing sides of $Q$.  
Thus if $Q\subset \mathbb{X}_t$ it defines an $\mathbb{X}_t$ parallelgoram.  
For $\mathbb{X}=\S^2$ this is always satisfied, and for $\mathbb{X}=\Hyp^2$, the domains $\mathbb{X}_t$ limit to the affine patch and so eventually contain any compact subset.
\end{proof}

\begin{claim}[2]
Let $A\in\Isom(\mathbb{X})$ pair opposing sides of the $\mathbb{X}$ parallelogram $Q$.  Then $A$ preserves the projective line through the midpoints of the paired sides.	
\end{claim}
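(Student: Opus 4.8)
The plan is to exploit the central symmetry of $Q$ already introduced in the proof of Claim 1, and to combine it with the fact that the side pairing is orientation preserving. Write $\ell_1,\ell_2$ for the paired opposing sides, with $\mathbb{X}$-midpoints $m_1,m_2$, and let $\mu=\overline{m_1 m_2}$ be the projective line they span. Since $Q$ is taken origin-centered (as in Claim 1), the $\pi$-rotation $R=\diag(-1,-1,1)$ about the centroid $\vec 0$ lies in $\O(3)\cap\O(2,1)\subset\Isom(\mathbb{X})$ and satisfies $RQ=Q$, interchanging $\ell_1$ and $\ell_2$. Two elementary facts start the argument: first, $A$ is an isometry carrying the geodesic segment $\ell_1$ onto $\ell_2$, so it carries midpoint to midpoint, giving $A(m_1)=m_2$; second, $R$ is likewise an isometry interchanging $\ell_1,\ell_2$, so $R(m_1)=m_2$ and $R(m_2)=m_1$.

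Next I would show that $\mu$ passes through the centroid $\vec 0=[0:0:1]$, so that $R$ preserves $\mu$. Writing $m_1=[a:b:c]$, the relation $m_2=R(m_1)=[-a:-b:c]$ yields
\[
\det\begin{pmatrix} a & b & c\\ -a & -b & c\\ 0 & 0 & 1\end{pmatrix}=0,
\]
so $m_1,m_2,[0:0:1]$ are collinear. Hence $R$ fixes $\vec0\in\mu$ and interchanges $m_1,m_2\in\mu$, whence $R(\mu)=\mu$.

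The heart of the argument is to identify the composition $RA$. Labeling the cyclic vertices $v_1,v_2,v_3,v_4$ with $\ell_1=[v_1 v_2]$, $\ell_2=[v_3 v_4]$, the torus gluing gives $A\colon v_1\mapsto v_4,\ v_2\mapsto v_3$, while central symmetry gives $R\colon v_1\leftrightarrow v_3,\ v_2\leftrightarrow v_4$. Then $RA(v_1)=R(v_4)=v_2$ and $RA(v_2)=R(v_3)=v_1$, so $RA$ preserves $\ell_1$, interchanges its endpoints, and fixes $m_1$. Because $R$ and $A$ are both orientation preserving, so is $RA$; and an orientation-preserving isometry of $\mathbb{X}$ that fixes $m_1$ and reverses a geodesic through $m_1$ can only be the $\pi$-rotation about $m_1$. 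Such a rotation preserves every geodesic through $m_1$, in particular $RA(\mu)=\mu$. Combining this with $R(\mu)=\mu$ and $A=R(RA)$ gives $A(\mu)=R\bigl(RA(\mu)\bigr)=R(\mu)=\mu$, which is the claim.

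The one step that genuinely requires care is the identification of $RA$ as a point reflection rather than a reflection across a geodesic: a line reflection would equally well fix $m_1$ and reverse $\ell_1$, but it would fail to preserve $\mu$ unless $\mu$ happened to be the geodesic perpendicular to $\ell_1$ at $m_1$. This is exactly where I would invoke $A\in\Isom_+(\mathbb{X})$ explicitly—as guaranteed by the $\mathsf{Glue}$ construction—so that $RA$ is forced to be orientation preserving and the reflection alternative is excluded. Everything else is a routine check that isometries preserve geodesic midpoints and that, in the Klein/Beltrami models, $\mathbb{X}$-geodesics are projective line segments so that $m_1,m_2$ and $\mu$ are honest points and a line in $\RP^2$.
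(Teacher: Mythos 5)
Your route is genuinely different from the paper's (which runs a synthetic neutral-geometry argument: congruent triangles force equal vertical angles where $\lambda$ and $A\lambda$ meet the shared edge of $Q$ and $A.Q$ at its midpoint, so the two segments concatenate into a single projective line), and for $\mathbb{X}=\E^2$ and $\mathbb{X}=\Hyp^2$ your argument is correct. But the step you yourself flag as the delicate one fails in the spherical case, because the paper's model of $\S^2$ is the elliptic plane $(\SO(3),\RP^2)$, where \emph{orientation is not available and reflections across geodesics are induced by elements of $\SO(3)$}. Concretely, normalize $m_1=[0:0:1]$ with $\ell_1$ running in the $x$-direction, so $v_{1,2}=[\pm s:0:c]$. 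The element $\diag(1,-1,-1)\in\SO(3)$ fixes $m_1$, swaps $v_1\leftrightarrow v_2$ (since $[s:0:-c]=[-s:0:c]$), and fixes the line $\{x=0\}$ pointwise: it \emph{is} the elliptic reflection across the perpendicular bisector of $\ell_1$, and it preserves only that line and $\ell_1$ among the lines through $m_1$. Since every isometry of $\RP^2$ here comes from $\SO(3)$, "orientation preserving" cannot exclude this alternative, and your identification of $RA$ as the $\pi$-rotation about $m_1$ is unjustified exactly when $\mathbb{X}=\S^2$. The gap is repairable without orientation: $A$ carries the $Q$-side of $\ell_1$ near $m_1$ to the far side of $\ell_2$ (as $Q$ and $A.Q$ share the edge $\ell_2$ with disjoint interiors), while $R$ preserves $Q$ and hence matches sides; so $RA$ \emph{interchanges} the two sides of $\ell_1$ at $m_1$, whereas the perpendicular-bisector reflection preserves them (its differential at $m_1$ in the affine chart is $(x,y)\mapsto(-x,y)$). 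That forces $RA$ to act as $-\mathrm{id}$ on $T_{m_1}$, i.e.\ to be the point rotation, in all three geometries.

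A secondary point of scope: you invoke $RQ=Q$ with $R=\diag(-1,-1,1)$ from the Claim~1 normalization, but the claim as stated has no origin-centered hypothesis, and it is reused later (non-regeneration of shear tori) for arbitrary fundamental-domain parallelograms $Q_t$ of $\mathbb{X}_t$ cone tori. To run your argument in that generality you need the lemma that every constant-curvature quadrilateral with opposite sides equal admits a central symmetry in $\Isom(\mathbb{X})$ about the common midpoint of its diagonals (true, and provable by SSS congruences of the two triangles cut off by a diagonal --- essentially the same congruences the paper's own proof uses), after which you may conjugate that center to the origin. As written, your proof establishes the claim only in the centered setting of the translation-tori theorem; the paper's synthetic proof needs neither the centering nor any orientation dichotomy, which is what it buys over your approach, at the cost of being less structural about \emph{why} the line is preserved (your identification of $RA$ as the point symmetry at $m_1$ is a genuinely cleaner mechanism once the two defects are patched).
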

\begin{center}
\includegraphics[width=0.9\textwidth]{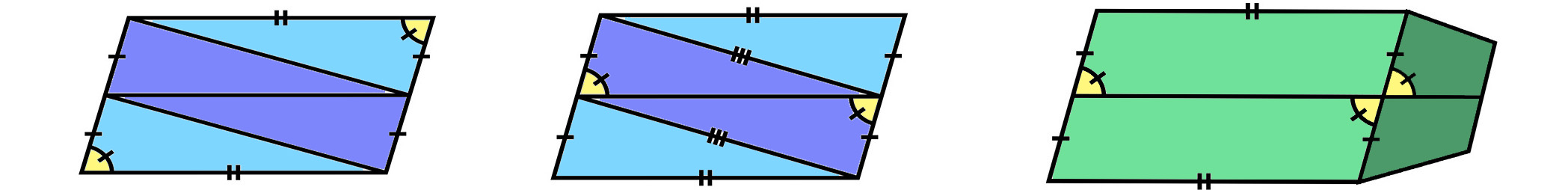}
\end{center}
\vspace{-0.5cm}  
\begin{proof}
We argue in classical axiomatic geometry without assuming the parallel postulate as this applies equally to $\S^2,\Hyp^2$.
Opposite angles of a constant-curvature parallelogram are congruent.
Connect the opposing sides of $Q$ paired by $A_t$ with a line segment $\lambda$ through their midpoints. 
This divides $Q$ into two quadrilaterals, subdivided by their diagonals into four triangles.  
The outer two of these triangles are congruent by side-angle-side, and so the diagonals are congruent.  
Thus the inner two triangles are congruent by side-side-side, meaning the opposite angles made by the edges with the line connecting their midpoints are equal.
Consider $Q$ and its translate $A.Q$.
These share an edge, which is meets the segments $\lambda$ and $A_t\lambda$ at its midpoint $m$.
As $A$ is an isometry, it follows that opposite angles at $m$ are congruent.
Thus $\lambda$ and $A.\lambda$ are segments of a single projective line, so 
$A$ preserves the line extending $\lambda$ as claimed.
\end{proof}

\begin{claim}[3]
The side pairings $A_t, B_t\in\Isom(\mathbb{X})$ converge in $\PGL(3,\R)$.
\end{claim}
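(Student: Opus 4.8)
The plan is to pin down each $A_t$ as the unique orientation-preserving $\mathbb{X}_t$-isometry carrying the oriented side $\ell_1$ of the fixed parallelogram $Q$ onto the opposite oriented side $\ell_2$, record the geometric data it transports, and then extract its $\PGL(3,\R)$-limit by combining precompactness with a uniqueness argument in the Heisenberg limit. First I would use the uniqueness of isometries between equal-length oriented segments (the fact underlying the $\mathsf{Glue}$ homeomorphism) to write $A_t v_1 = w_1$, $A_t v_2 = w_2$, where $v_1,v_2$ are the endpoints of $\ell_1$ and $w_1,w_2$ the matching endpoints of $\ell_2$; these are vertices of the fixed $Q$ and hence points of $\RP^2$ independent of $t$. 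Writing $m(t),m'(t)$ for the $\mathbb{X}_t$-midpoints of $\ell_1,\ell_2$ we also have $A_t m(t)=m'(t)$, and by Claim~2 the map $A_t$ preserves the projective line $\lambda_t$ through these midpoints, acting on the geodesic $\lambda_t$ as a translation. By Claim~4 the midpoints converge to their Euclidean counterparts, $m(t)\to m$, $m'(t)\to m'$ with $m\neq m'$, so $\lambda_t\to\lambda=\overline{mm'}$ and all the data defining $A_t$ converges.

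Next I would establish precompactness of $\{A_t\}$ in $\PGL(3,\R)$. Representing the $A_t$ by norm-normalized matrices, I would rule out a rank drop in any subsequential matrix limit $M$: since $A_t$ preserves $\lambda_t$ nondegenerately (moving $m(t)$ to the distinct point $m'(t)$), a rank-two limit would be forced to have image exactly the line $\lambda$, whereas the vertex images $w_1,w_2$ lie off $\lambda$ (the line $\lambda$ meets $\ell_2$ only at $m'$), a contradiction. Hence $\{A_t\}$ remains in a compact subset of $\PGL(3,\R)$. The guiding mechanism here is the de-conjugated picture $A_t=D_t a_t D_t^{-1}$, where $a_t\in\Isom_+(\mathbb{X})$ is the side pairing of the shrinking parallelogram $D_t^{-1}Q$; as $D_t^{-1}Q$ contracts to $\vec{0}$ one has $a_t\to\id$, and the convergence of $A_t$ is exactly the statement that this $0\cdot\infty$ conjugacy limit balances.

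Then I would identify the limit. Any subsequential limit $A_\infty$ lies in the Chabauty limit $\lim\Isom(\mathbb{X}_t)=\Heis$, and being a limit of orientation-preserving isometries in the connected groups $\Isom_+(\mathbb{X}_t)$ it lies in $\Heis_0$. Passing the transported data to the limit, $A_\infty$ sends the pointed line $(m,\lambda)$ to $(m',\lambda)$ and the pointed line $(v_1,\ell_1)$ to $(w_1,\ell_2)$. Since $Q$ is a nondegenerate parallelogram, at least one of $\lambda,\ell_1$ is non-horizontal, so the simple transitivity of $\Heis_0$ on the space $\mathcal{L}\smallsetminus\mathcal{H}$ of pointed non-horizontal lines (Section~\ref{sec:Heis_Geo}) determines $A_\infty$ uniquely. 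Thus every subsequential limit equals this single element, and together with precompactness this forces $A_t$ to converge in $\PGL(3,\R)$ to an element of $\Heis_0$; the identical argument applies to $B_t$.

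The hard part will be the precompactness step, because as $\mathbb{X}_t\to\Hs^2$ the defining conic of $\mathbb{X}_t$ degenerates onto the line at infinity and the two ideal endpoints of $\lambda_t$ collide into the single point at infinity of $\lambda$ (the parabolic degeneration), so none of the naive projective frames built from this configuration survives to the limit. One must check that this degeneration of the ambient geometry does not drag $A_t$ off to a singular projective transformation. I expect the most robust route is to make the de-conjugation estimate quantitative, linearizing to obtain $a_t=\id+O(\|D_t^{-1}\|)$ with the error concentrated in the directions scaled by $D_t$, so that the conjugation $D_t a_t D_t^{-1}$ provably neither blows up nor collapses; the frame-free precompactness argument above is the cleaner substitute that avoids this calculation.
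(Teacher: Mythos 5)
Your overall architecture is sound and close to the paper's: the paper also fixes the quadrilateral $Q$, uses Claim 2 (invariance of the midpoint line) and Claim 4 (midpoint convergence), and identifies the limit of $A_t$ via the simple transitivity of $\Heis_0$ on pointed non-horizontal lines --- though the paper defers that last identification to the proof of the main theorem rather than folding it into Claim 3. The genuine problem is your precompactness step, which is where the actual content of Claim 3 lives, and your frame-free rank-drop argument does not close. Your claim that a rank-two subsequential limit $M$ ``would be forced to have image exactly the line $\lambda$'' is unjustified: all the data you transport is consistent with a rank-two $M$ whose image is the extension of $\ell_2$ rather than $\lambda$. Concretely, since $v_1,v_2,m$ are \emph{collinear} (all on $\ell_1$) and their images $w_1,w_2,m'$ are collinear on $\ell_2$, the scenario where $\ker M$ is a point of $\lambda\smallsetminus\set{m}$, $\mathrm{im}\,M$ is the line through $w_1,w_2$, and $M$ collapses $\lambda$ to the single point $m'$ satisfies every constraint you record: $A_tv_i=w_i$ fixed, $A_tm(t)=m'(t)\to m'$, and $\lambda_t$-invariance (which only forces $M(\lambda\smallsetminus\ker M)\subset\lambda\cap\mathrm{im}\,M$, a single point, not $\mathrm{im}\,M=\lambda$). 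Geometrically this is exactly the dangerous degeneration: the far side of $A_tQ$, carrying $A_tv_3,A_tv_4$, flattens onto $\ell_2$, and nothing in your data controls those two points. You also never treat rank-one limits; the case $\P\ker M=\ell_1$ kills all the forward information at once (the relations $A_tv_1=w_1$, $A_tv_2=w_2$ give no constraint on $M$ when $v_1,v_2\in\P\ker M$) and is likewise consistent with your recorded data.

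What rules these scenarios out in the paper is precisely the quantitative step you flag as avoidable: the vertices of $Q$ form a projective basis, and convergence in $\PGL(3;\R)$ reduces to showing no triangle $\Delta$ spanned by three vertices has $\Area_{\E^2}(A_t\Delta)/\Area_{\E^2}(\Delta)\to 0$. Since diagonal maps preserve ratios of areas in the affine patch, one de-conjugates to $C_t\in\Isom(\mathbb{X})$ acting on triangles $\Delta_t$ shrinking to $\vec{0}$; by Claim 2, $C_t$ is a hyperbolic translation or rotation with translation length $\tau_t\to 0$, and the explicit distortion bound $\tfrac{1}{(c(\tau)+\ep s(\tau))^3}\leq \Area_{\E^2}(C.S)/\Area_{\E^2}(S)\leq \tfrac{1}{(c(\tau)-\ep s(\tau))^3}$ on $B_{\E^2}(0,\ep)$, with $(c,s)=(\cosh,\sinh)$ or $(\cos,\sin)$, squeezes the ratio to $1$. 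Your closing sentence gestures at this (``$a_t=\id+O(\|D_t\inv\|)$'') but supplies no estimate, and since the substitute argument you prefer is the one that fails, the proposal as written has a genuine gap: you need the area (or an equivalent nondegeneracy) estimate, not just the pointed-line bookkeeping, to keep $A_t$ away from singular projective transformations.
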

\begin{proof}
A projective transformation of $\RP^2$ is completely determined by its values on a projective basis (a collection of four points in general position). 
The vertices $(v_i)$ of $Q$ form a convenient projective basis with images $(A_tv_i)$ completely specifying the transformations $A_t$.
These transformations converge in $\PGL(3;\R)$ if and only if $(A_tv_i)$ limits to a projective basis, which, as the images $A_tv_i$ remain in a bounded neighborhood of $Q$
\footnote{The conjugating path $C_t$ is \emph{expansive}, with eigenvalues $\lambda_t>\mu_t$ each monotonic in $t$.  Then for $\mathbb{X}=\Hyp^2$, its easy to see $A_tQ\subset AQ$, and for $\mathbb{X}=\S^2$, that $A_tQ<A_0 Q$ for all $t>0$.}
is equivalent to no triangle $\Delta\subset Q$ formed by 3 vertices of $Q$ collapsing in the limit.
That is, it suffices to show 
$\Area_{\E^2}(A_t\Delta)/\Area_{\E^2}(\Delta)\not\to 0$.

Diagonal transformations act linearly on the affine patch and do not change ratios of areas, thus we may transform this to the fixed model $\mathbb{X}$ with a collapsing sequence of triangles $\Delta_t$ being moved by transformations $C_t=D_tA_tD_t\inv$.
For large $t$, both $\Delta_t$ and $C_t\Delta_t$ are extremely close to the origin $\vec{0}\in\A^2$ and we may estimate their area ratio analytically.
By claim 2, $C_t$ preserves the geodesic through the midpoints of paired sides, thus is either a hyperbolic in $\Isom(\Hyp^2)$ or rotation in $\Isom(\S^2)$ with axis represented by an ideal point relative the affine patch.
In each of these cases we may bound the distortion of Euclidean area under such an isometry $C$ with translation length $\tau$ within the Euclidean ball $B_{\E^2}(0,\ep)$ of radius $\ep$ as follows:
 
$$\frac{1}{(c(\tau) + \ep s(\tau))^3}\leq \frac{\Area_{\E^2}(X.S)}{\Area_{\E^2}(S)}\leq\frac{1}{(c(\tau)-\ep s(\tau))^3}.$$

\noindent
Where $(c,s)=(\cosh,\sinh)$ for $\mathbb{X}=\Hyp^2$ and $(\cos,\sin)$ for $\mathbb{X}=\S^2$.  
As $t\to\infty$,  $\Delta_t$ collapses to $\vec{0}$ and so the translation length $\tau_t$ of $C_t$ goes to $0$.  
Choosing a sequence $\ep_t\to 0$ such that $\Delta_t\subset B_{\E^2}(0,\ep_t)$ the above bounds squeeze the limiting area of $C_t \Delta_t$ to $\Delta_t$ by $1$, so the area of $A_t\Delta$ does not collapse in the limit.
\end{proof}

\begin{claim}[4]
Let $\ell\subset \A^2$ be a line segment and $\mathbb{X}_t\to\Hs^2$ as above.  Then the $\mathbb{X}_t$ midpoint of $\ell$ converges to the Euclidean midpoint.
\end{claim}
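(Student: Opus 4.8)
The plan is to prove this claim by a direct computation using the explicit diagonal conjugation, exploiting the fact that the statement is purely about midpoints of a \emph{single} line segment, so it reduces to a one-dimensional analysis along that segment. The key observation is that the $\mathbb{X}_t$-midpoint of $\ell$ is the unique point equidistant (in the $\mathbb{X}_t$ metric) from the two endpoints of $\ell$, and this condition can be transported to the fixed model $\mathbb{X}$ via $D_t^{-1}$. Concretely, if $\ell$ has endpoints $p,q$ in the affine patch $\A^2$, then the $\mathbb{X}_t$-midpoint $m_t$ of $\ell$ satisfies $m_t = D_t.\bar m_t$ where $\bar m_t$ is the $\mathbb{X}$-midpoint of the segment $D_t^{-1}\ell$ with endpoints $D_t^{-1}p, D_t^{-1}q$. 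Since $D_t = \diag(\lambda_t,\mu_t,1)$ with $\lambda_t>\mu_t>1$ both diverging, the rescaled segment $D_t^{-1}\ell$ collapses to the origin $\vec 0\in\A^2$.

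First I would set up coordinates so that the relevant $\mathbb{X}$-distance, restricted to the collapsing segment near $\vec 0$, is controlled by its metric at the origin. The point is that in the fixed model $\mathbb{X}\in\{\S^2,\Hyp^2\}$, the Riemannian metric agrees with the Euclidean metric to first order at $\vec 0$ (the origin is the center of the standard ball/sphere model, where the metric tensor is a scalar multiple of the identity). Therefore, as the segment $D_t^{-1}\ell$ shrinks toward $\vec 0$, the $\mathbb{X}$-midpoint condition converges to the \emph{Euclidean} midpoint condition on $D_t^{-1}\ell$. That is, the $\mathbb{X}$-midpoint $\bar m_t$ of $D_t^{-1}\ell$ satisfies $\bar m_t = \tfrac12(D_t^{-1}p + D_t^{-1}q) + o(\|D_t^{-1}\ell\|)$, since the geodesic midpoint of a vanishingly short segment differs from its affine midpoint by a term of higher order in the segment length.

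Next I would push this back up by $D_t$. Because $D_t$ is linear on the affine patch, it preserves affine midpoints exactly: $D_t\big(\tfrac12(D_t^{-1}p + D_t^{-1}q)\big) = \tfrac12(p+q)$, the Euclidean midpoint of $\ell$. The error term $o(\|D_t^{-1}\ell\|)$ gets magnified by $D_t$, whose largest eigenvalue is $\lambda_t\to\infty$, so the main technical content is verifying that the second-order correction in the midpoint, after multiplication by $\lambda_t$, still tends to zero. Here I would use that the deviation of the geodesic midpoint from the affine midpoint is quadratic in segment length, while $\|D_t^{-1}\ell\|$ decays at least as fast as $1/\lambda_t$ in every direction (and faster in directions picked out by $\mu_t$); a careful bookkeeping of the anisotropic scaling shows $\lambda_t \cdot o(\|D_t^{-1}\ell\|^2)\to 0$. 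This balancing of the expansion rate $\lambda_t$ against the quadratic decay of the geodesic-vs-affine midpoint discrepancy is the main obstacle, and it is exactly where the hypothesis $\mathbb{X}_t\to\Hs^2$ (equivalently, the controlled divergence rates of the eigenvalues established in the preceding proposition) does the essential work. Once this estimate is in hand, we conclude $m_t \to \tfrac12(p+q)$, the Euclidean midpoint of $\ell$, completing the proof of the claim and hence of the theorem.
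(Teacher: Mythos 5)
Your overall strategy (pull back to the fixed model by $D_t^{-1}$, compare with the Euclidean midpoint near the origin, push back up by $D_t$) is workable, but the final estimate as you state it is false, and this is a genuine gap. First, a sign of trouble: you have the eigenvalue roles reversed. Since $D_t^{-1}$ acts on the affine patch by $(x,y)\mapsto(x/\lambda_t,\,y/\mu_t)$ with $\lambda_t>\mu_t>1$, the pulled-back segment $D_t^{-1}\ell$ decays only at rate $1/\mu_t$ in general (the faster rate $1/\lambda_t$ occurs only in the first coordinate), not ``at least as fast as $1/\lambda_t$ in every direction.'' Consequently the quantity you need, $\lambda_t\cdot O\bigl(\|D_t^{-1}\ell\|^2\bigr)$, is of size $\lambda_t/\mu_t^2$, and this need not tend to zero under the standing hypotheses $\lambda_t,\mu_t,\lambda_t/\mu_t\to\infty$: take $\lambda_t=t^3$, $\mu_t=t$, so that $\lambda_t/\mu_t^2=t\to\infty$. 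So no amount of ``careful bookkeeping'' closes the argument from the isotropic quadratic bound alone; if the geodesic-vs-affine midpoint discrepancy had a generic direction, the pushed-forward error really could blow up.

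What rescues the position-tracking approach is a structural fact your proposal never invokes: in the projective models used here (Klein model for $\Hyp^2$, gnomonic model for $\S^2$), geodesics of $\mathbb{X}$ \emph{are} Euclidean straight lines, so the $\mathbb{X}$-midpoint of $D_t^{-1}\ell$ lies on the segment itself, and the discrepancy vector is \emph{parallel to the segment}, of magnitude $O(L_t\ep_t^2)$ where $L_t$ is the segment's Euclidean length and $\ep_t$ its distance to the origin. The map $D_t$ stretches that particular direction by exactly $\|q-p\|/L_t$, so the pushed-forward error is $O(\ep_t^2)\to 0$, with $\lambda_t$ never appearing. The paper's proof exploits the same collinearity but sidesteps positions entirely: since the $\mathbb{X}_t$-midpoint lies on $\ell$, it suffices to show $d_{\mathbb{X}_t}(p,m)/d_{\mathbb{X}_t}(m,q)\to 1$ for the Euclidean midpoint $m$; ratios of lengths of collinear segments are invariant under the linear action of $D_t$, and in the fixed model the bi-Lipschitz comparison $\mathsf{Length}_{\E^2}(\ell)/K_\ep\leq \mathsf{Length}_{\mathbb{X}}(\ell)\leq K_\ep\,\mathsf{Length}_{\E^2}(\ell)$ on $B_{\E^2}(0,\ep)$ with $K_\ep\to 1$ pinches that ratio to $1$. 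This multiplicative comparison is immune to the anisotropic magnification your additive error analysis runs into, which is precisely why the paper phrases the claim through distance ratios rather than midpoint coordinates.
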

\begin{proof}

Let $\ell=\overline{pq}$ and $m\in\ell$ be the Euclidean midpoint.
Viewing $\ell$ in $\mathbb{X}_t$, it has $\mathbb{X}_t$ midpoint $y_t$, and to show $y_t\to m$ it suffices to see $d_{\mathbb{X}_t}(p,m)/d_{\mathbb{X}_t}(m,q)\to 1$. 
Ratios of collinear line segment lengths are invariant under linear transformations, so we may choose to view this situation in the fixed model $\mathbb{X}$ for ease of calculation, with a shrinking line segment $\ell_t=\overline{p_tq_t}$ with Euclidean midpoint $m_t$ and $\mathbb{X}$ midpoint $x_t$.

For $\mathbb{X}=\Hyp^2$ a straightforward computation shows the length of any segment $\ell\subset B_{\E^2}(0,\ep)$ is bounded by a multiple of its Euclidean length 
$\mathsf{Length}_{\E^2}(\ell)\leq \mathsf{Length}_{\mathbb{X}}(\ell)\leq K_\ep \mathsf{Length}_{\E^2}(\ell)$ where $K_\ep$ may be chosen\footnote{For hyperbolic space we may choose $K_\ep=1/\sqrt{1-4\ep^2}$ and for the sphere $K_\ep=1/(1+\ep^2)$ with $\ep$ measured in the Euclidean metric on the affine patch} so that $K_\ep>1, \lim_{\ep\to 0} K_\ep=1$.
Similarly pulling back the spherical metric to the affine patch there is such a $K_\ep>1$ with $\mathsf{Length}_{\E^2}(\ell)/K_\ep \leq \mathsf{Length}_{\mathbb{X}}(\ell)\leq \mathsf{Length}_{\E^2}(\ell)$.
We may use this to bound the difference between the $\mathbb{X}$ and Euclidean midpoints of the shrinking segments $\ell_t$.
	
$$\frac{1}{K_\ep}=\frac{d_{\E^2}(p_t,m_t)}{K_\ep d(m_t,q_t)}\leq \frac{d_{\mathbb{X}}(p_t,m_t)}{d_{\mathbb{X}}(m_t,q_t)}=\frac{d_{\mathbb{X}_t}(p,m)}{d_{\mathbb{X}_t}(m,q)}\leq \frac{K_\ep d_{\E^2}(p_t,m_t)}{d_{\E^2}(m_t,q_t)}=K_\ep.$$
As $\mathbb{X}_t\to\Hs^2$, $\ell_t$ collapses to $\vec{0}$ and we may take smaller and smaller $\ep$ so this ratio converges to 1.
\end{proof}

\subsection{Shear Tori}
\index{Heisenberg Geometry!Regeneration!Shear Tori}

Every translation Heisenberg torus arises as a limit of Euclidean, Hyperbolic and Spherical cone tori with at most one cone point.
Translation structures are rather special Heisenberg tori, compromising a codimension-one subset of deformation space.
Here we investigate the generic case, Heisenberg tori with nontrivial shears in their holonomy, and show none regenerate as cone structures with a single cone point.
Shears of the plane fix a single line, and alter the slope of all lines not parallel to this.
All shears in $\Heis$ are parallel, so the holonomy of any shear torus leaves invariant precisely one slope on $\Hs^2$.
This has strong consequences for the distribution of geodesics on Heisenberg orbifolds.

\begin{proposition}
A Heisenberg orbifold $\mathcal{O}$ has a shear in its holonomy if and only if all simple geodesics on $\mathcal{O}$ are pairwise disjoint.
\end{proposition}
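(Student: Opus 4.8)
The plan is to analyze geodesics on a Heisenberg orbifold by lifting to the universal cover and using the developing map, which by completeness (Corollary \ref{cor:Complete} and Proposition \ref{Prop:Orbifold_Def}) is a diffeomorphism onto $\Hs^2 = \A^2$. Since geodesics in $\Hs^2$ are affine straight line segments (a curve $\gamma$ is geodesic iff $\gamma''=0$), a simple closed geodesic on $\mathcal{O}$ lifts to a $\rho(\gamma)$-invariant straight line in the plane, where $\rho(\gamma)\in\Heis_0$ is the holonomy of the corresponding element of $\pi_1(\mathcal{O})$. Because it suffices by the covering arguments of Section \ref{sec:Heis_Orbifolds} to treat the torus case (every Heisenberg orbifold is finitely covered by a Heisenberg torus, and disjointness of geodesics lifts and descends along such covers), I would reduce to classifying which straight lines are preserved by a given $\Heis_0$ element and how these lines sit relative to one another.

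First I would record the elementary geometric fact that an element $h=\smat{1&a&c\\0&1&b\\0&0&1}\in\Heis_0$ acts on the affine patch $\A^2=\{[x:y:1]\}$ by $(x,y)\mapsto(x+ay+c,\,y+b)$, so it is a translation when $a=0$ (the shear parameter) and a genuine shear when $a\neq 0$. A straight line in the plane is invariant under such an $h$ precisely when the direction of the line is fixed by the linear part $\smat{1&a\\0&1}$. When $a=0$ every direction is fixed, so a translation preserves lines of all slopes; when $a\neq 0$ the only fixed direction is the horizontal $\smat{1\\0}$, so a shear preserves only horizontal lines. This is the crux: \emph{all} shears in $\Heis$ are parallel (they share the horizontal fixed direction), hence the holonomy of a shear torus leaves invariant exactly one slope, the horizontal, while a translation torus leaves every slope invariant.

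Next I would translate this into a statement about simple closed geodesics. A nontrivial simple closed geodesic on $T$ corresponds to a primitive element $\gamma\in\pi_1(T)=\Z^2$ whose holonomy $\rho(\gamma)$ preserves a line whose image in $T$ is embedded; the slope of that geodesic equals the fixed direction of $\rho(\gamma)$. For a shear torus, I would argue that if $\rho(\gamma)$ is any element with a shear component then $\gamma$'s geodesic representative must be horizontal, and if $\rho(\gamma)$ is a pure (vertical) translation its geodesic is vertical; in either case the only simple geodesics that can occur are horizontal (parallel to the invariant foliation from $dy$), and parallel straight lines descending to embedded curves on $T$ are disjoint. Conversely, for a translation torus, I would exhibit two generators whose geodesic representatives have distinct, transverse slopes — since translations preserve all directions, the standard generators give geodesics of two different slopes that necessarily cross on $T$ (they have algebraic intersection number one). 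This establishes the biconditional for tori, and I would then promote it to all Heisenberg orbifolds via the finite cover: pairwise disjointness of simple geodesics is inherited both ways under the covering $T\to\mathcal{O}$, and the presence or absence of a shear in $\pi_1(\mathcal{O})$ is detected on the finite-index torus subgroup.

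The main obstacle I anticipate is the careful bookkeeping in the forward direction showing that \emph{every} simple geodesic on a shear torus is horizontal, rather than merely that horizontal ones exist. One must rule out nonhorizontal embedded closed geodesics: the holonomy of the corresponding class would need to preserve a nonhorizontal line, forcing that class to act without shear (a pure translation), and then I would need to verify that in a shear torus the pure-translation elements all share the single vertical slope, so two such geodesics are again parallel. Handling the interaction between the abelian structure of $\pi_1(T)=\Z^2$, which elements carry shear, and which lines are genuinely embedded (as opposed to lifted-but-self-intersecting) is where the argument requires the most attention; the rest follows from the linear-algebra description of $\Heis_0$ and the affine-geodesic characterization of $\Hs^2$.
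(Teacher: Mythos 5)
Your overall strategy---lift to $\Hs^2$ via completeness, identify geodesics with affine lines, and exploit that a shear fixes only the horizontal direction---is the same skeleton the paper uses, but your forward direction contains a genuine error. You assert that the pure-translation elements in the holonomy of a shear torus are \emph{vertical} translations sharing a single vertical slope; this is false, and as written it would refute the very proposition you are proving, since a vertical geodesic and a horizontal geodesic on a torus intersect, so your conclusion ``in either case the only simple geodesics that can occur are horizontal'' does not follow from the two cases that precede it. The missing ingredient is the commutator computation: for $g=\smat{1&a&c\\0&1&b\\0&0&1}$ and $g'=\smat{1&a'&c'\\0&1&b'\\0&0&1}$ one has $[g,g']=\smat{1&0&ab'-a'b\\0&1&0\\0&0&1}$, so an abelian holonomy image forces the parameter vectors $(a,b)$ of all its elements to be pairwise proportional. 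Hence in a shear torus an element with zero shear parameter automatically has zero vertical translation as well: the pure translations are \emph{horizontal}, and their invariant lines are horizontal. (In fact the classes carrying a genuine shear have $b\neq 0$ by freeness of the action, hence preserve no line at all and have no closed geodesic representative---another point your bookkeeping misses, though harmlessly.)

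There is also a scope gap: your method only detects geodesics stabilized by a single holonomy element, i.e.\ \emph{closed} geodesics, while the proposition quantifies over all simple geodesics, which need not close up (lines of irrational slope on a translation torus already descend to non-closed simple geodesics). The paper's proof sidesteps both issues with one more global observation: the full preimage of a simple geodesic in $\Hs^2$ is a $\pi_1(\mathcal{O})$-invariant collection of lines, pairwise disjoint because the geodesic is simple, hence all parallel; applying the shear element of $\pi_1(\mathcal{O})$ to this invariant collection forces the common slope to be horizontal, since a shear alters the slope of every non-horizontal line. No case analysis over which classes carry shear and no closedness hypothesis is needed. Your converse direction (two transverse families of translation-invariant lines descending to intersecting closed geodesics) is correct and matches the paper; to repair the forward direction, either adopt the collection-level argument or add the proportionality computation above and separately handle non-closed simple geodesics.
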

\begin{proof}

Let $\mathcal{O}$ be a shear orbifold and $\gamma$ a simple geodesic on $\mathcal{O}$.  
As $\mathcal{O}$ is covered by a complete torus we identify $\widetilde{\mathcal{O}}$ with $\Hs^2$, and the preimage of $\gamma$ under the covering with a $\pi_1(\mathcal{O})$-invariant collection $\{\tilde{\gamma}\}$ of lines in $\Hs^2$.  
As $\gamma$ is simple these are pairwise disjoint and so parallel in $\A^2$.  
Because $\mathcal{O}$ has a shear structure, some $\alpha\in\pi_1(\mathcal{O})$ acts on $\Hs^2$ by a nontrivial shear, which alters the slope of all non-horizontal lines.  Thus, $\{\tilde{\gamma}\}$ is a subset of the horizontal foliation.  
But this holds for any simple geodesic on $\mathcal{O}$ so any two must each lift to a subset of the horizontal foliation, which are then disjoint or (by $\pi_1(\mathcal{O})$ invariance) equal. 
If the two geodesics lift to disjoint collections then their projections are also disjoint, meaning any two distinct simple geodesics on $T$ cannot intersect.

Conversely assume $\mathcal{O}$ is an orbifold covered by a translation torus $T$ given by the developing pair $(f,\rho)$, for $\rho\colon \Z^2\to \mathsf{Tr}$.  
Then $\rho(e_1)$ and $\rho(e_2)$ are linearly independent translations, each preserving each component of a family of parallel lines descending to closed intersecting geodesics on $T$ and further descend to intersecting geodesics on $\mathcal{O}$.
\end{proof}

\noindent
Hyperbolic, spherical and Euclidean (cone) tori behave quite differently than this.  
Recall that any generators $\langle a,b \rangle=\pi_1(T)$ have geodesic representatives through the cone point and cutting along these gives a constant-curvature parallelogram with side pairings.  
Claim 2 of the previous section shows these side parings must preserve the full projective lines through the midpoints of the paired edges, so these descend to intersecting closed geodesics on $T$.  
The following argument shows this property remains true in the limit.

\begin{theorem}
Let $\mathbb{X}\in\{\S^2,\E^2,\Hyp^2\}$ and $\mathbb{X}_t=D_t \mathbb{X}$ a sequence of conjugate geometries converging to the Heisenberg plane.
Let $T_t$ be a sequence of $\mathbb{X}_t$ cone tori with at most one cone point converging to some Heisenberg torus $T$.
Then $T$ is a translation torus.
\end{theorem}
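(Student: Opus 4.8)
The plan is to show that any Heisenberg torus $T$ arising as a projective limit of $\mathbb{X}_t$ cone tori must have \emph{intersecting} geodesics, since by the preceding proposition this characterizes translation tori (the contrapositive of "has a shear"). First I would invoke Proposition \ref{Prop:Quad_Convergence}: the convergence $T_t \to T$ provides a sequence of marked $\mathbb{X}_t$ parallelograms $Q_t$, embedded in $\mathbb{X}_t \subset \RP^2$, converging in the Hausdorff topology to a limiting affine parallelogram $Q$, with side pairings $A_t, B_t \in \Isom(\mathbb{X}_t)$ converging to commuting $A, B \in \Heis$. These $A, B$ are the holonomy generators of the limit torus $T$.

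The geometric heart of the argument is Claim 2 from Section \ref{sec:Heis_Regen}: for each $t$, the side pairing $A_t$ preserves the full projective line $\lambda_t$ through the $\mathbb{X}_t$-midpoints of the paired sides of $Q_t$, and likewise $B_t$ preserves the projective line $\mu_t$ through the other pair of midpoints. These two lines $\lambda_t, \mu_t$ descend to a pair of closed geodesics on $T_t$ which intersect, precisely because the midpoint lines of a parallelogram cross at its centroid. The crucial observation is that this incidence is a \emph{closed} projective-geometric condition that persists under the limit. Using Claim 4 (the $\mathbb{X}_t$-midpoints converge to the Euclidean midpoints of $Q$), the lines $\lambda_t \to \lambda$ and $\mu_t \to \mu$ converge to the Euclidean midpoint lines of the limiting affine parallelogram $Q$, which still cross at the centroid of $Q$. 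Passing to the limit, $A$ preserves $\lambda$ and $B$ preserves $\mu$, and these project to two distinct closed geodesics on the Heisenberg torus $T$ that intersect transversally at the image of the centroid.

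The final step is to conclude that the existence of this intersecting pair of geodesics forces $T$ to be a translation torus. By the proposition characterizing shear orbifolds, if $T$ had a nontrivial shear in its holonomy, then all simple geodesics on $T$ would lie in the horizontal foliation and be pairwise disjoint. Since we have exhibited two distinct intersecting simple geodesics, $T$ cannot have a shear, so its holonomy lies in the translation subgroup $\mathsf{Tr} < \Heis$; that is, $T$ is a translation torus. The main obstacle I anticipate is verifying carefully that the two limiting midpoint lines $\lambda, \mu$ remain \emph{distinct} and genuinely transverse in the limit, rather than degenerating to a single line or becoming tangent. This requires checking that neither midpoint line collapses into the horizontal foliation and that the parallelogram $Q$ does not degenerate (it does not, since $A, B$ generate a $\Z^2$ acting freely and properly, so $Q$ has nonzero area); the transversality at the centroid then follows from the convergence in Claim 4 and the fact that the diagonals and midpoint-connectors of a nondegenerate parallelogram are in general position.
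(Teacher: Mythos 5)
Your proposal is correct and follows the paper's own proof essentially step for step: both use Proposition \ref{Prop:Quad_Convergence} to reduce to converging marked parallelograms, Claim 2 to get the invariant projective lines through the midpoints of paired sides, Claim 4 (midpoint convergence) to identify the limiting lines, the observation that these lines cross at the centroid to produce intersecting closed geodesics on $T$, and the shear-characterization proposition to conclude $T$ is a translation torus. Your extra care about the limiting midpoint lines staying distinct and transverse is a reasonable refinement, and is handled implicitly in the paper by the nondegeneracy of the limiting affine parallelogram $Q_\infty$.
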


\begin{proof}

By Proposition \ref{Prop:Quad_Convergence} we may represent these structures by a sequence of $\mathbb{X}_t$ parallelograms $(Q_t,A_t,B_t)$ converging to the triple $(Q_\infty,A_\infty, B_\infty)$ describing the Heisenberg torus $T$.

Claim 2 of the previous section implies that for each $t$, the side pairing $A_t$ preserves the projective line $\alpha_t$ connecting the $\mathbb{X}_t$ midpoints of the paired sides. 
As $t\to\infty$ this sequence of lines in $\RP^2$ subconverges to a projective line $\alpha_\infty$.  
Since $A_t(\alpha_t)=\alpha_t$ for all $t$, it follows that $A_\infty(\alpha_\infty)=\alpha_\infty$, so this line is preserved by the limiting action.
By Claim 3, $\alpha_\infty$ passes through the Euclidean midpoints of opposing sides of $Q_\infty$.
Thus $\alpha_\infty$ and $\beta_\infty$ descend to closed geodesics on $T$. 

As $\alpha_t, \beta_t$ intersect $\partial Q_t$ in the $\mathbb{X}_t$ midpoints of opposing sides, they divide $Q_t$ into four congruent quadrilaterals. Thus the lines $\alpha_t, \beta_t$ intersect at the center of mass of $Q_t$.  
It follows that in the limit the lines $\alpha_\infty,\beta_\infty$ intersect at the center of $Q_\infty$ and  the closed geodesics on $T$ given by the projections of $\alpha_\infty,\beta_\infty$ intersect.
As $T$ has intersecting geodesics, $T$ cannot have any shears in its holonomy, and thus is a translation torus.
\end{proof}

%NEWCHAPTER: COMPLEX HYPERBOLIC AND R+R HYPERBOLIC
\chapter{$\mathbb{H}_\C$ and $\mathbb{H}_{\R\oplus\R}$}
\label{chap:HC_and_HRR}

Complex hyperbolic space is a generalization of the usual (real) hyperbolic space, replacing $\R$ with the field $\C$.
In this chapter, we take the standard model of $\Hyp_\C^n$, a subset of $\CP^n$ with automorphisms $\SU(n,1;\C)$ and attempt to further generalize, producing a collection of analogs of hyperbolic space not defined over $\R$ or $\C$, but over a general real algebra $\Lambda$ with involution.
These geometries all contain a copy of $\Hyp_\R^n$ as their real points, arising from the embedding $\R\inject\Lambda$.
Much as complex hyperbolic geometry provides an interesting arena to study the deformation theory of real hyperbolic manifold groups (for example, see \cite{Parker10,Parker16-2,Parker16,Schwartz01-2, Schwartz01}),
the geometries $\Hyp_\Lambda^n$ provide a collection of new such potential deformation theories.

The three simplest geometries arising from this construction (after real hyperbolic space $\Hyp_\R^n$ itself) correspond to the three isomorphism classes of 2-dimensional algebras, namely $\Hyp_\C^n$, $\Hyp_{\R_\ep}^n$, and $\Hyp_{\R\oplus\R}^n$.
We construct each of these in detail below, and focus especially on understanding the new geometries corresponding to $\R_\ep$ and $\R\oplus\R$ as a search did not find discussion of these in the literature.

\section{\bfseries Algebras and Hyperbolic Geometry}

We briefly review the construction of real hyperbolic space.
Minkowski space $\R^{n,1}$ is the vector space $\R^{n+1}$ together with a quadratic form of signature $(n,1)$, for specificity $q(x,y)=\sum_{i=1}^n x_i y_i-x_{n+1}y_{n+1}$.
This quadratic form induces an indefinite norm on $\R^{n,1}$, by $x\mapsto q(x,x)$ whose negative level sets are hyperboloids of two sheets and positive level sets are hyperboloids of one sheet\footnote{When $n=1$ both the positive and negative level sets are hyperbolas of one components in the plane.}, separated by the \emph{lightcone} $\sum_{i=1}^n x_i^2=x_{n+1}^2$.

\begin{figure}
\centering\includegraphics[width=0.5\textwidth]{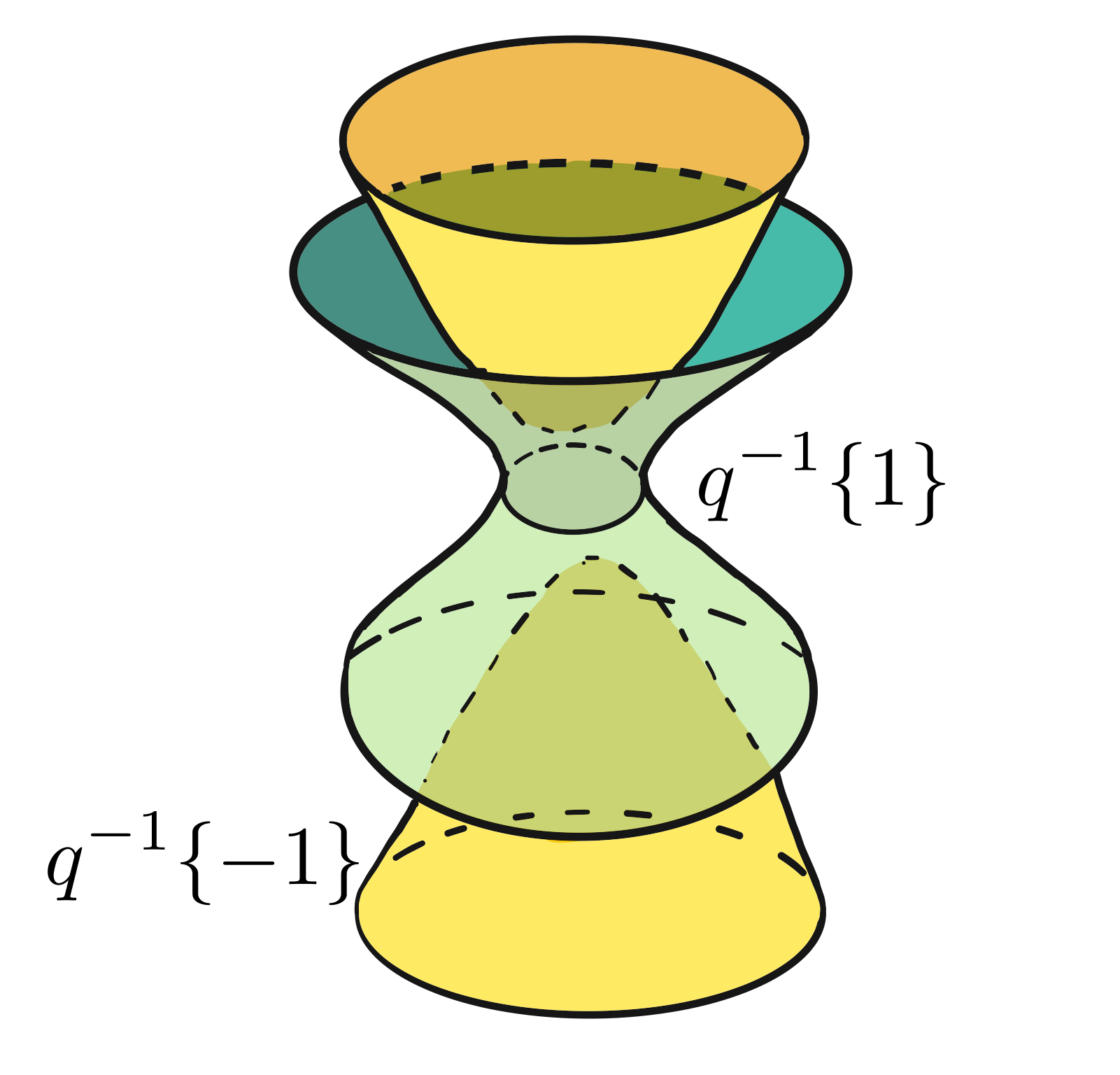}
\caption{The level sets of $q$ in $\R^{2,1}$.}	
\end{figure}

\noindent 
The linear transformations $A\in\GL(n+1;\R)$ which preserve the quadratic form $q$ in the sense that $q(x,y)=q(Ax,Ay)$ form the \emph{indefinite orthogonal group} $\O(n,1;\R)=\{A\in\GL(n+1;\R)\mid A^TQA=Q\}$ for $Q=\diag(I_n,-1)$ the matrix such that $q(x,y)=x^TQy$.
This group has 4 components, with index two orientation preserving subgroup $\SO(n,1;\R)$ and identity component $\SO_0(n,1;\R)$.
The action of $\O(n,1;\R)$ preserves the level sets of $q$ by definition, and in fact restricts to a transitive action on each
\footnote{The action on the lightcone is transitive on the complement of $\vec{0}$.}.
Hyperbolic space can be realized from the action of $\SO(n,1;\R)$ on the negative level sets of $q$ in a variety of models.

\begin{figure}
\centering\includegraphics[width=0.75\textwidth]{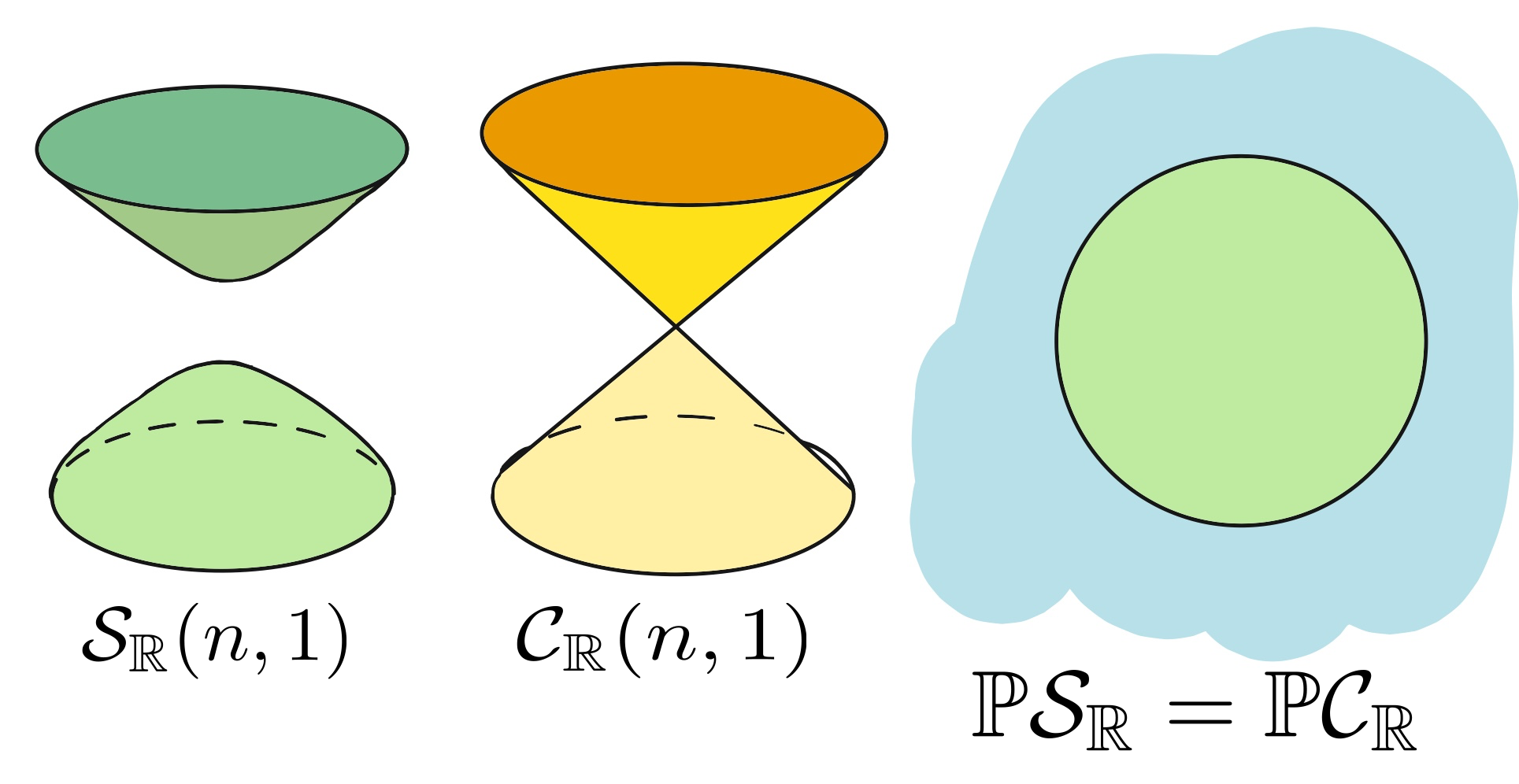}
\caption{The negative cone, the sphere of radius $-1$ in $\R^{2,1}$, and their projectivization in $\RP^2$.}	
\end{figure}

\subsubsection{The Hyperboloid Model}
The sphere of radius negative one $\mathcal{S}_{\R}(n,1)=\{x\in\R^{n,1}\mid q(x,x)=-1\}$ is a hyperboloid of two sheets, and the selection of of a single sheet (say the upper with $x_{n+1}>0$ for specificity) determines a model of hyperbolic $n$ space as a subgeometry (but not an open subgeometry) of $(\GL(n+1;\R),\R^{n+1}\smallsetminus 0)$.
This model, $(SO_0(n,1;\R),\mathcal{S}_{\R}(n,1)\cap \{x_{n+1}>0\})$ is effective, but often less convenient to work with than the \emph{projective models}, which arise as open subgeometries of $\RP^n$.

\subsubsection{Projective Hyperboloid Model}
Instead of selecting a single sheet of the sphere of radius $-1$ we may instead consider its projectivization, an open $n$ ball in $\RP^n$.
This defines the geometry $(\O(n,1;\R),\mathbb{P}\mathcal{S}_{\R}(n,1))$.
This is not an effective presentation (the transformations exchanging sheets of the hyperboloid act trivially) but is naturally effectivized via projectivization, dividing out by the elements $\U(\R)=\{\pm 1\}$ of unit norm\footnote{This nonstandard notation for $\Z_2$ is used in the coming generalizations, where $\U(\Lambda)$ will denote the elements of norm $1$ in $\Lambda$.}
 in $\R$ to give 
 $(\PO(n,1;\R),\mathcal{S}_{\R}(n,1)/\U(\R))$.
Restricting to orientation preserving isometries gives $(\PSO(n,1;\R),\mathcal{S}_{\R}(n,1)/\U(\R))$.

\subsubsection{Projective Cone Model}
Equivalently, as all negative level sets of $q$ are taken to one another by homotheties of $\R^{n+1}$, we may construct this geometry as the projectivization of the entire negative cone $\mathcal{C}_\R(n,1)=\{x\in\R^{n,1}\mid q(x,x)<0\}$ of $q$ giving $(\PO(n,1;\R),\mathcal{C}_\R(n,1)/\R^\times)$ or $(\PSO(n,1;\R),\mathcal{C}_\R(n,1)/\R^\times)$.

\subsection{Real Hyperbolic Space}

All of these constructions give the Klein model of hyperbolic space, and we mention them in detail here only because these three methods of defining $\Hyp_\R^n$ do not agree in various generalizations.
To remove ambiguity moving forwards, we select the projective hyperboloid model as \emph{the default model} of $\Hyp_\R^n$ unless otherwise specified.

\begin{definition}[$\Hyp_\R^n$: Group - Space]
Real hyperbolic space is the geometry given by the action of $\SO(n,1;\R)$ on the projectivized unit sphere of radius $-1$ for $q$ on $\R^{n,1}$; $\Hyp_\R^n=(\SO(n,1;\R),\mathcal{S}_{\R}(n,1)/\U(\R))$.	
\end{definition}

\noindent
We may alternatively encode this geometry in the autmorphism-stabilizer formalism by choosing some $p\in\fam{S}_\R(n,1)$ and computing its projective stabilizer.
A natural choice for the given form $q$ is the basis vector $e_{n+1}=(0,\ldots, 0,1)$, which is the $-1$ eigenvector of $Q$.
An easy computation shows that the stabilizer of $[e_{n+1}]$ in $\mathcal{S}_{\R}(n,1)/\{\pm 1\}$ is $\mathsf{stab}_{\SO(n,1;\R)}[e_{n+1}]=\smat{\SO(n)&\\&1}$.
When there is no worry of ambiguity, we will denote this group by $\SO(n;\R)$ for simplicity.

\begin{definition}[$\Hyp_\R^n$: Automorphism - Stabilizer]
Real hyperbolic space is the geometry given by the pair $\H_\R^n=(\SO(n,1;\R),\SO(n;\R))$.	
\end{definition}

\subsection{The Algebras $\R[\sqrt{-1}],\R[\sqrt{0}]$, and $\R[\sqrt{1}]$}

Up to isomorphism there are three 2-dimensional algebras over $\R$; any such $\Lambda$, viewed as a real vector space can be expressed $\Lambda=\span_\R\{1,u\}$ for $u^2\in \R$ and the isomorphism type of $\Lambda$ depends only on if $u^2<0$, equals 0 or $u^2>0$.
Thus, we focus on adjoining an abstract square root of $-1,0$ and $1$, forming the algebras $\C$, $\R_\ep$ and $\R\oplus\R$.

\begin{definition}
The algebra $\Lambda$ defined by adjoining an abstract square root of $\delta\in\{-1,0,1\}$ to $\R$ is defined by $\Lambda=\R\oplus\lambda \R$  with multiplication $(a+\lambda b)(c+\lambda d)=ac+\delta bd+\lambda(ac+bd)$.
\end{definition}

\noindent
When $\delta=-1$ this is a model of the complex numbers, and we denote $\lambda$ by its traditional name $i$.
When $\delta=0$, this is the so-called \emph{dual numbers} $\R[\ep]/(\ep^2)$, and following convention write elements as $a+\ep b$.
When $\delta=1$, this is isomorphic to $\R\oplus\R$, as can be seen via the decomposition $\R[\sqrt{1}]=\R e_+\oplus \R e_-$ for $e_{\pm}$ the principal idempotents $e_\pm=\tfrac{1}{2}(1\pm \lambda)$.
Each of these algebras admits an analog of complex conjugation defined by $a+\lambda b\mapsto a-\lambda b$, which induces a (not necessarily positive) multiplicative norm $\R[\sqrt{\delta}]\to\R$ given by $z\mapsto z\overline{z}$.
In the coordinates $a+\lambda b$, this norm is expressed $\|a+\lambda b\|=a^2-\delta b^2$.

\begin{figure}
\centering\includegraphics[width=0.75\textwidth]{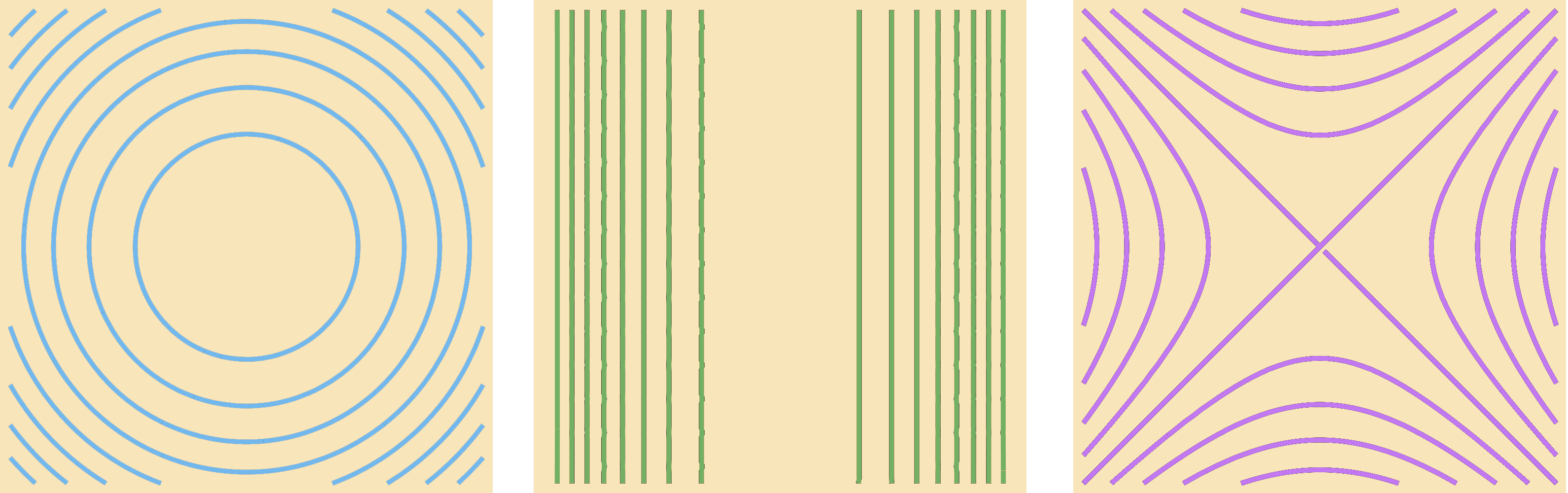}
\caption{The level sets of the norm $z\mapsto z\overline{z}$ on $\C,\R_\ep$ and $\R\oplus\R$ respectively.}
\end{figure}

\noindent
The elements of zero norm are precisely the zero divisors of $\R[\sqrt{\delta}]$, which for $\C$ consists of just $\{0\}$, for $\R_\ep$ the entire line $\ep\R=\{0+\ep x\mid x\in\R\}$ and the lines $\R e_+\cup \R e_-$ for $\R\oplus\R$.
As the norm is multiplicative, the elements of norm $1$ form a group $\U(\R[\sqrt{\delta}])$ under multiplication.
For $\C$, this is the unit circle group $\U(\C)=\S^1$ of complex numbers.
For $\R_\ep$, this is $\R\rtimes \Z_2=\{\pm 1+\ep\R\}$, and for $\R\oplus \R$ it is a pair of hyperboloids asymptoting to $\R e_+\cup \R e_-$.

\begin{figure}
\centering\includegraphics[width=0.75\textwidth]{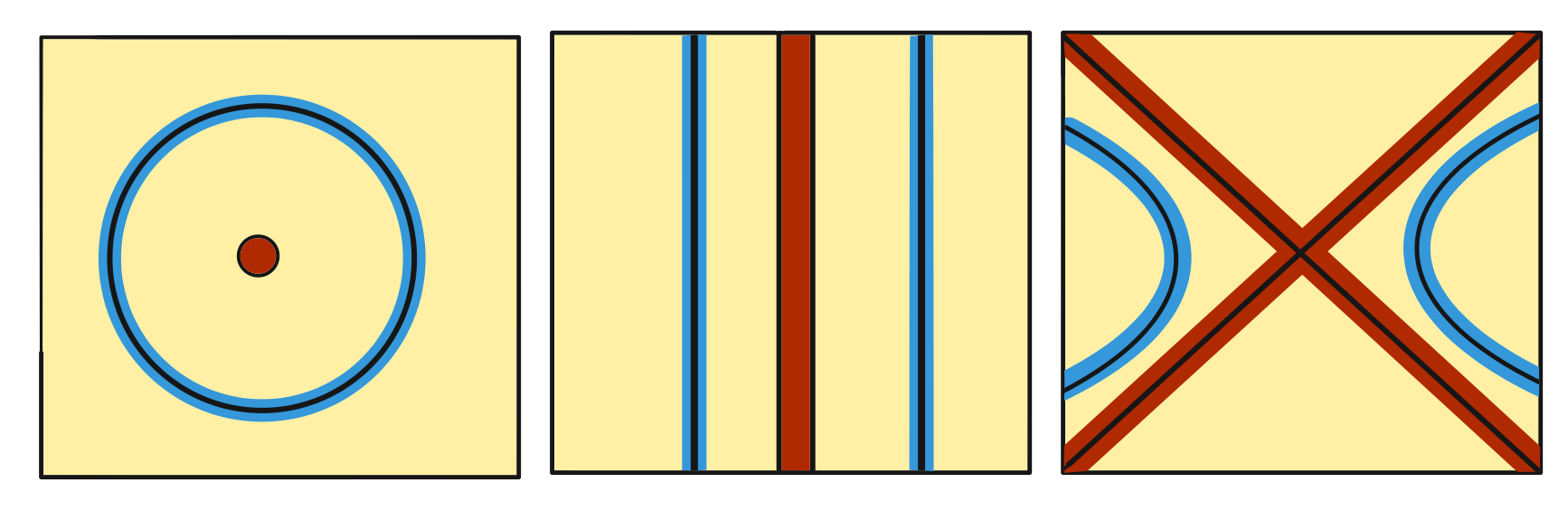}
\caption{The zero divisors (thick) and the group $\U(\Lambda)$ (thick) of $\C, \R_\ep$ and $\R\oplus\R$ respectively.}	
\end{figure}

\section{Complex Hyperbolic Space}
\label{sec:Cplx_Hyp_Space}
\index{Complex Hyperbolic Space}
\index{Geometries!Complex Hyperbolic}

The construction of complex hyperbolic space follows that of $\Hyp_\R^n$ as closely as possible, with $\C$ replacing $\R$.
The construction of $\Hyp_\C^n$ below is more detailed in elementary concepts than necessary, and lacking in many geometric details.
Our goal is to use this as a motivating example for the construction of the geometries $\Hyp_{\R_\ep}^n$ and $\Hyp_{\R\oplus\R}^n$.
For more information on the geometry of complex hyperbolic space, good references include \cite{Parker_Notes, Goldman_CplxHyp} and \cite{Epstein_CplxHyp}.

\noindent
Over the complex numbers, all nondegenerate quadratic forms are equivalent, and the correct generalization of the signature $(n,1)$ form $\sum_{i=1}^n x_iy_i-x_{n+1}y_{n+1}$ is the Hermitian form $q(w,z)=\sum_{i=1}^n w_i \overline{z_i}-w_{n+1}\overline{z_{n+1}}$.
This Hermitian form has matrix representation $Q=\diag(I_n,-1)$, evaluated $q(w,z)=w^TQ\overline{z}$, and the linear maps preserving it form the associated \emph{unitary group}.

\begin{definition}[The Unitary group $\U(n,1;\C)$]
Then the unitary group $\U(n,1;\C)$ is the group of linear transformations of $\C^{n+1}$ preserving $q$: that is $A\in\U(n,1;\C)$ if for all $w,z\in\C^{n+1}$, $q(w,z)=q(Aw,Az)$.
In terms of the matrix $Q=\diag(I_n,-1)$, this is 
$\U(n,1;\C)=\{A\in\M(n+1;\C)\mid A^\dagger Q A=Q\}$, where $A^\dagger=\bar{A}^T$ is the conjugate transpose of $A$.
The special unitary group $\SU(n,1;\C)$ is the subgroup with determinant $1$.
\end{definition}

\subsection{Group - Space Description}

\noindent 
By definition the action of $\U(n,1;\C)$ preserves the level sets of $q$ on $\C^{n+1}$, and similarly to the real hyperbolic case, acts transitively on each\footnote{Again, the action on the zero level set is transitive on the complement of $\vec{0}$.}.
However, the complex analogs of the Hyperboloid Model is not isomorphic to the Projective Hyperboloid or Projective Cone models.
The unit sphere $\mathcal{S}_{\C}(n,1)=\{z\in\C^{n+1}\mid q(z,z)=-1\}$ supports an action of the elements of $\C$ with unit norm $\U(\C)=\{z\in \C\mid z\overline{z}=1\}$ which is a 1-dimensional Lie group, thus the hyperboloid and projective geometries differ in dimension.

The correct analog of hyperbolic space over $\C$ is given by the projective models, and the quotient of $\fam{S}_\C(n,1)$ by this $\U(\C)$ action gives a model of \emph{Complex Hyperbolic Space}, $\Hyp_\C^n=(\U(n,1;\C),\mathcal{S}_{\C}(n,1)/\U(\C))$.
This geometry is not effective, as the scalar matrices $wI$ for $w\in \U(\C)$ act trivially on the projectivization.
A locally effective version can be made by restricting to the special unitary group $\Hyp_\C^n=(\SU(n,1;\C),\mathcal{S}_{\C}(n,1)/\U(\C))$, with automorphism group $n+1$-fold covering the effective version $(\PSU(n,1;\C),\mathcal{S}_{\C}(n,1)/\U(\C))$.
As in the real case, the two projective models (projective hyperboloid and projective cone) remain isomorphic over $\C$.
We may take the domain of $\Hyp_\C^n$ to be the projectivization of the entire negative cone of $q$, $\mathcal{N}_q=\{z\in\C^{n+1}\mid q(z,z)<0\}$, under the quotient by the action of $\C^\times$ instead of just the units $\U(\C)$.
All of these various projective models, effective and non-effective, define models of complex hyperbolic space.
For convenience, we select a single model to work with, unless otherwise specified.

\begin{definition}[$\Hyp_\C^n$: Group - Space]
Complex Hyperbolic space is the geometry given by the action of $\U(n,1;\C)$ on the projectivized unit sphere of radius $-1$ for $q$ in $\C^{n+1}$;
$\Hyp_\C^n=(\U(n,1;\C),\mathcal{S}_{\C}(n,1)/\U(\C))$.
\end{definition}

\subsection{Automorphism-Stabilizer Description}

For the purposes of constructing a transition between the three different analogs of hyperbolic geometry introduced in this chapter, it is most convenient to have available a description of each from the automorphism - stabilizer perspective.
The coordinate basis vectors $e_i\in\C^{n+1}$ are eigenvectors of $Q$, with $e_{n+1}\in\mathcal{S}_{\C}(n,1)$.
Thus the stabilizer of $[e_{n+1}]$ in $\mathcal{S}_{\C}(n,1)/\U(\C)$ gives a natural representation $\H_\C^n=(\U(n,1;\C),\mathsf{stab}_{\U(n,1;\C)}[e_{n+1}])$.

\begin{calculation}
The stabilizer of $[e_{n+1}]$ under the action of $\U(n,1;\C)$ on $\Hyp_\C^n$ is $\smat{\U(n;\C)&\\&\U(\C)}$.
This \emph{unitary stabilizer group} is denoted $\USt(n,1;\C)$.
\end{calculation}
\begin{proof}
Let $A\in \U(n,1;\C)$ be such that $A.[e_{n+1}]=[e_{n+1}]$, that is $A e_{n+1}=u e_{n+1}$ for $u\in\U(\C)$.
As $A\in\U(n,1;\C)$ its columns are orthogonal with respect to the signature $(n,1)$	 Hermitian form $q$, and so in particular the final entry of the first $n$ columns is necessarily $0$.
Thus $A=\smat{B&0\\0&u}$ for some $U\in\M(n;\C)$.
As $A$ is block diagonal, $A^\dagger Q A=Q$ decomposes as $B^\dagger I_n B=I_n$ and $\bar{u}u=1$.
This second condition is just a restatement that $u\in \U(\C)$, and the first condition shows $B\in \U(n;\C)$.
\end{proof}

\begin{comment}
\begin{corollary}
Restricting to the action of $\SU(n,1;\C)$ the stabilizer of $[e_{n+1}]$ is matrices of the form $\smat{\beta \SU(n,1;\C)&\\&u}$ for $u\in\U(\C)$, $\beta^n=u\inv$.
\end{corollary}
\begin{proof}
From the calculation above we know that if $A\in\U(n,1;\C)$ satisfies $A.[e_{n+1}]=[e_{n+1}]$	 then $A=\smat{B&\\&u}$ for $B\in\U(n;\C)$, $u\in \U(\C)$.
The additional requirement that $\det A=1$ becomes $u\det B=1$, and so $\det B=u\inv$.
That is, $B=\beta C$ for $C\in\SU(n;\C)$ and $\beta^n=u\inv$.
\end{proof}
\end{comment}

\begin{definition}[$\mathbb{H}_\C$: Automorphism-Stabilizer]
$\Hyp_\C^n=\left(\U(n,1;\C),\USt(n,1;\C)\right)$.
\end{definition}

\subsection{Properties of $\Hyp_\C^n$}

Complex hyperbolic space is constructed in as close an analogy as possible to real hyperbolic space, and so it is not surprising that the resulting spaces share many similarities.

\begin{calculation}
The domain of $\Hyp_\C^n$ is the open ball $\mathbb{B}^{2n}$ in $\CP^n$.	
\end{calculation}
\begin{proof}
Projectivization identifies $\Hyp_\C^n=\mathcal{S}_{\C}(n,1)/\U(\C)=\mathcal{N}/\C^\times$ with a subset of the complex projective space $\CP^n$.
Clearly for a point $\vec{z}\in\CP^n$ to lie in the negative cone of $q$ the final coordinate must be nonzero, and thus $\Hyp_\C^n$ actually lies in the affine patch $z_{n+1}\neq 0$.
Choosing affine coordinates $z_{n+1}=1$, the form $q$ defines
$\Hyp_\C^n=\{(z_1,\ldots, z_n, 1)\mid \sum_{j=1}^n z_j\bar{z_j}-1<0\}$, which writing $z_j=x_j+i y_j$ gives
$$\Hyp_\C^n=\{(x_1+iy_1,\ldots, x_n+i y_n)\mid x_1^2+y_1^2+\cdots +x_n^2+y_n^2<1\}$$
Which is the interior of the open unit ball in the affine patch $\C^n\subset\CP^n$ as claimed.
\end{proof}

\noindent 
As previously mentioned, complex hyperbolic space contains a copy of real hyperbolic space of half the dimension, arising from the inclusion $\R\subset \C$.

\begin{observation}
The inclusion $\R\subset \C$ realizes $\Hyp_\R^n$ as a half dimensional slice of $\Hyp_\C^n$, with domain the real points $\Hyp_\R^n=\Hyp_\C^n\cap\R^n\subset \C^n$ and automorphism group the real points $\O(n,1;\R)$ of $\U(n,1;\C)$.
\end{observation}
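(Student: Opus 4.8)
The plan is to verify the claimed inclusion directly by tracing through the construction of $\Hyp_\C^n$ and restricting each ingredient to its real points. Recall that $\Hyp_\C^n$ was just computed to have domain the open ball $\mathbb{B}^{2n}=\{(z_1,\dots,z_n)\in\C^n\mid \sum_j z_j\overline{z_j}<1\}$ in the affine patch $z_{n+1}=1$ of $\CP^n$. First I would intersect this domain with the real subspace $\R^n\subset\C^n$, obtained by setting each imaginary part $y_j=0$. Under the substitution $z_j=x_j$, the defining inequality $\sum_j z_j\overline{z_j}<1$ becomes $\sum_j x_j^2<1$, which is precisely the open unit ball in $\R^n$ — the domain of the default (projective hyperboloid) model of $\Hyp_\R^n$. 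Thus the underlying spaces match: $\Hyp_\C^n\cap\R^n=\Hyp_\R^n$ as subsets of the affine patch.

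Next I would identify the automorphism group of the slice. The key point is that the real points of $\U(n,1;\C)$ — meaning those $A$ with all entries real — are exactly the matrices satisfying $A^\dagger Q A=Q$ with $A$ real, and for real $A$ the conjugate transpose $A^\dagger=\overline{A}^T$ coincides with the ordinary transpose $A^T$. Hence the defining condition collapses to $A^T Q A=Q$ with $Q=\diag(I_n,-1)$, which is exactly the defining condition of $\O(n,1;\R)$ from the construction of real hyperbolic space. I would then check that this real subgroup genuinely preserves the real slice: since a real matrix sends real vectors to real vectors and preserves the form $q$ (which restricts to the real signature $(n,1)$ form on $\R^{n+1}$), it maps $\Hyp_\C^n\cap\R^n$ into itself, and the induced action agrees with the standard $\O(n,1;\R)$ action on the Klein ball.

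The final step is to confirm that the dimension count is correct, justifying the phrase \emph{half dimensional}. Here $\Hyp_\C^n$ has real dimension $2n$ while the slice $\Hyp_\R^n$ has real dimension $n$, so the embedding is genuinely half-dimensional; this is immediate from comparing $\mathbb{B}^{2n}$ with $\mathbb{B}^n$. I would also note for completeness that the inclusion of spaces is equivariant with respect to the inclusion $\O(n,1;\R)\inject\U(n,1;\C)$ of groups, so that the pair $(\O(n,1;\R),\Hyp_\R^n)$ sits inside $(\U(n,1;\C),\Hyp_\C^n)$ as a genuine subgeometry in the sense of the earlier definitions.

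I do not anticipate a serious obstacle here, as this is essentially a verification rather than a theorem with hidden content; the one point requiring a little care is the identification $A^\dagger=A^T$ for real matrices, which is what makes the unitary condition degenerate exactly to the orthogonal condition, together with checking that this is an \emph{if and only if} — that every real matrix preserving $q$ lies in $\U(n,1;\C)$ and conversely that the real points of $\U(n,1;\C)$ are precisely $\O(n,1;\R)$. That mild subtlety aside, the argument is a direct unwinding of definitions.
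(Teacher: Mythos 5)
Your proposal is correct and matches the paper's intent exactly: the paper states this as an unproved observation precisely because it amounts to the direct unwinding you give, namely intersecting the ball $\mathbb{B}^{2n}\subset\C^n$ with $\R^n$ to get the Klein ball $\mathbb{B}^n$, and noting that for a real matrix $A$ one has $A^\dagger=A^T$, so the condition $A^\dagger QA=Q$ degenerates in both directions to $A^TQA=Q$, identifying the real points of $\U(n,1;\C)$ with $\O(n,1;\R)$. Your care with the if-and-only-if and the equivariance of the inclusion $(\O(n,1;\R),\mathbb{B}^n)\inject(\U(n,1;\C),\mathbb{B}^{2n})$ is exactly what is implicitly being invoked, so nothing further is needed.
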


\subsection{Low Dimensional Examples}

The space $\Hyp_\C^n$ has dimension $2n$ and so quickly becomes impossible to visualize directly.
Here we focus on the low dimensional examples of $\Hyp_\C^1$ and $\Hyp_\C^2$.
The construction of complex hyperbolic 1-space begins with the Hermitian form $q(z,w)=z_1\bar{w_1}-z_2\bar{w_2}$ on $\C^2$.
The induced norm $z\mapsto q(z,z)=\|z_1\|^2-\|z_2\|^2$ divides $\C^2$ into positive and negative cones, separated by the lightcone $\{z\in\C^2\mid \|z_1\|^2=\|z_2\|^2\}$ which is the cone on the square torus in $\S^3\subset\C^2$.
Projecting first by real homotheties of $\C^2$, the positive and negative unit spheres of $q$ are homeomorphic, each identified with one of the open solid tori in the standard decomposition of $\S^3$.

\begin{figure}
\centering\includegraphics[width=0.65\textwidth]{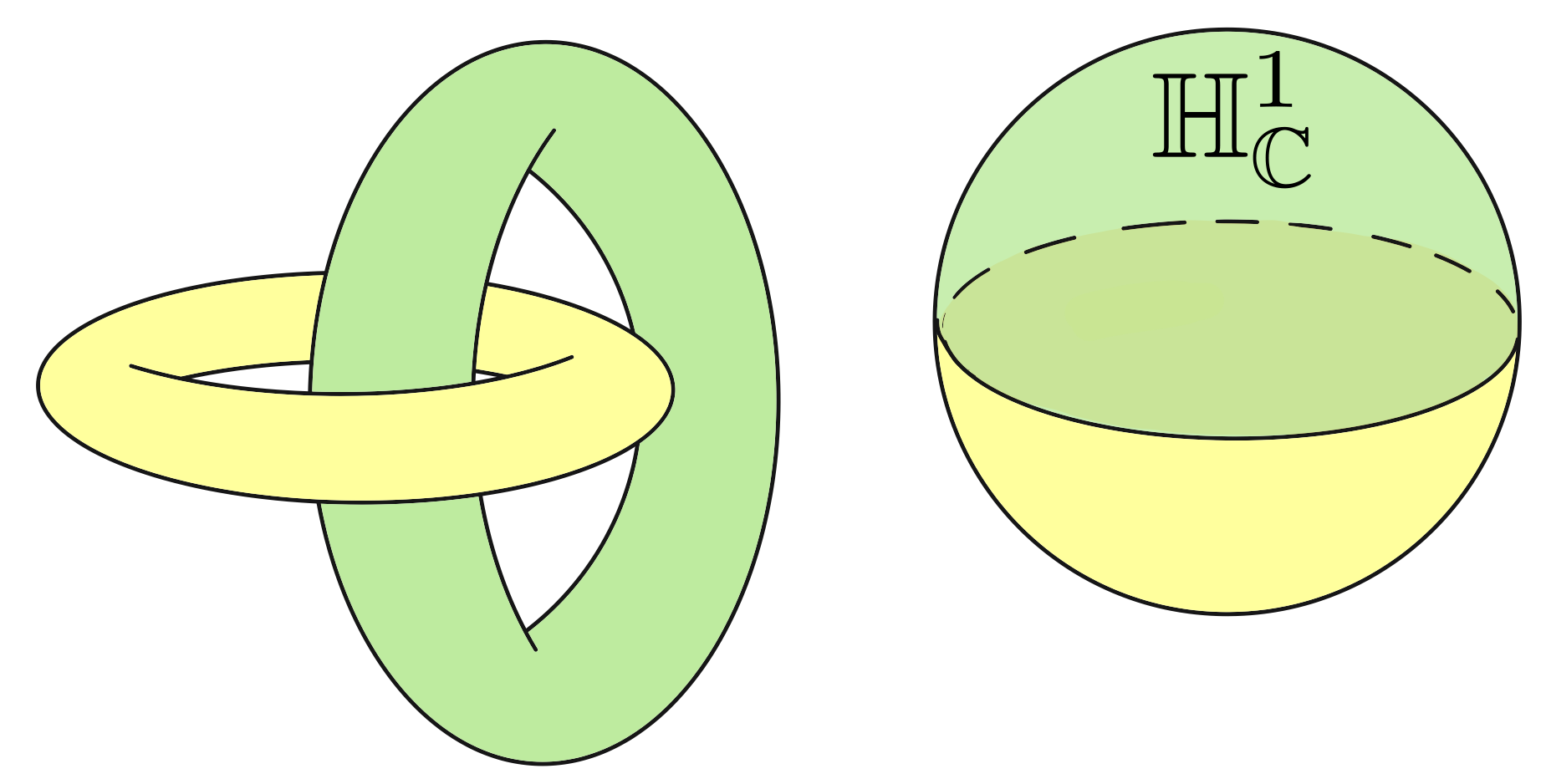}
\caption{The positive, negative and lightcones of $q$ on $\C^2$, intersected with the three sphere (left) form the standard decomposition along two linked solid tori.
The images of these in $\CP^1$ (right).
}	
\end{figure}

\noindent
The action of $\U(\C)$ on $\C^2$ restricts to an action on $\S^3$ tracing out the circles of the Hopf fibration.
In the quotient $\S^3\to \S^3/\U(\C)=\CP^1=\S^2$, each of the positive and negative cones of $q$ project to hemispheres, with the lightcone projecting to the equator.
Each hemisphere gives a model of $\Hyp_\C^1$ when equipped with the action of $\U(1,1;\C)$; though this action is not even locally effective as all diagonal matrices $uI$ act trivially on the projectivization.
A locally effective model takes instead the action of $\SU(1,1;\R)$ on the unit disk, which is conjugate in $\GL(2;\R)$ to $\SL(2;\R)$.

\begin{observation}
Complex Hyperbolic 1-space is isomorphic to real hyperbolic 2-space, and the standard construction of the projective model in $\CP^1$ produces the Poincare disk model of $\Hyp_\R^2$.	
\end{observation}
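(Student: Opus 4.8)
The final statement to prove is the Observation that complex hyperbolic $1$-space $\Hyp_\C^1$ is isomorphic to real hyperbolic $2$-space $\Hyp_\R^2$, realized concretely as the Poincar\'e disk model.

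\textbf{The plan} is to exhibit an explicit isomorphism of geometries in the sense of Definition \ref{Def:Geo_Morphism}, matching both the underlying spaces and the automorphism groups. First I would identify the domain of $\Hyp_\C^1$. Following the calculation that the domain of $\Hyp_\C^n$ is the open ball $\mathbb{B}^{2n}$ in $\CP^n$, the case $n=1$ gives the open unit disk $\mathbb{B}^2 = \set{z \in \C \mid z\bar z < 1}$ in the affine patch $z_2 \neq 0$ of $\CP^1$. This is literally the underlying space of the Poincar\'e disk model $\D^2$ described in the examples of Chapter \ref{chp:Klein_Geo}. Thus the space-level identity map serves as the equivariant map $F$, and no nontrivial diffeomorphism is needed at the level of sets.

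The heart of the argument is the group-level comparison. Here I would pass to the locally effective model with automorphism group $\SU(1,1;\C)$, acting on the projectivized negative cone by linear fractional transformations. The key step is to show $\SU(1,1;\C)$, acting projectively on the disk, coincides with the group defining the Poincar\'e model; indeed the Poincar\'e disk model was \emph{defined} in Chapter \ref{chp:Klein_Geo} as $(\SU(1,1),\D^2)$ with $\SU(1,1)$ acting by linear fractional transformations, so this is almost a tautology once one checks the action of the Hermitian form with matrix $\diag(1,-1)$ reproduces the standard $\SU(1,1)$ action on $\C^2$. I would then invoke the explicit M\"obius conjugation already recorded in the excerpt: the transformation $\smat{i & i \\ -1 & 1}$ conjugates $\SU(1,1)$ to $\SL(2,\R)$ and carries the disk to the upper half plane, so $(\SU(1,1),\D^2) \cong (\SL(2,\R),\R^2_+)$, which is precisely the upper half plane model of $\Hyp_\R^2$. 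Composing these identifications yields the claimed isomorphism $\Hyp_\C^1 \cong \Hyp_\R^2$, and the concrete domain computation shows the construction outputs the Poincar\'e disk rather than some other model.

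\textbf{The main obstacle} is bookkeeping around effectivity rather than any deep geometric fact: the raw group-space model $(\U(1,1;\C),\mathcal{S}_\C(1,1)/\U(\C))$ is not even locally effective, since every scalar $uI$ with $u \in \U(\C)$ acts trivially on the projectivization, so a direct group isomorphism $\U(1,1;\C) \cong \O(2,1;\R)$ fails. The careful move is to restrict to $\SU(1,1;\C)$ and work up to local isomorphism in the sense of Section \ref{subsec:GrpSP_AutStab_Equivalence}, noting $\SU(1,1;\C)$ is conjugate in $\GL(2;\R)$ to $\SL(2;\R) = \PSL(2;\R)$'s double cover, which $2$-to-$1$ covers the effective isometry group $\PSO(2,1;\R)$ of $\Hyp_\R^2$. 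I would therefore state the conclusion as an isomorphism of geometries up to local isomorphism, which is exactly the notion under which the earlier Klein and Poincar\'e models were declared equal, and verify that the Hopf-fibration quotient $\S^3 \to \S^3/\U(\C) = \CP^1$ sends the negative cone onto the open disk so that the dimension count $\dim_\R \Hyp_\C^1 = 2 = \dim_\R \Hyp_\R^2$ holds, ruling out the spurious comparison with the hyperboloid model.
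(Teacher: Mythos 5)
Your proposal is correct and follows essentially the same route as the paper: identify the domain of $\Hyp_\C^1$ as the unit disk in the affine patch of $\CP^1$, note the full $\U(1,1;\C)$ action fails to be (locally) effective because scalars $uI$ with $u\in\U(\C)$ act trivially, and pass to the locally effective model $(\SU(1,1),\D^2)$ acting by linear fractional transformations, which is by definition the Poincar\'e disk model and is conjugate to $\SL(2;\R)$ acting on the upper half plane. Your extra care about local isomorphism versus isomorphism of automorphism groups matches the paper's intent exactly, so nothing further is needed.
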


\noindent
Each geodesic in $\Hyp_\C^1$ is a half-dimensional subgeometry isomorphic to real hyperbolic 1-space.
The particular model of $\Hyp_\R^1\subset\Hyp_\C^1$ given by the embedding $\R\subset\C$ is the projectivization real plane $\{(x,y)\mid x,y\in\R\}\subset\C^2$ intersect the negative cone, giving the diameter of $\Hyp_\C^1$ preserved by $\SO(1,1;\R)$.

\begin{figure}
\centering\includegraphics[width=0.85\textwidth]{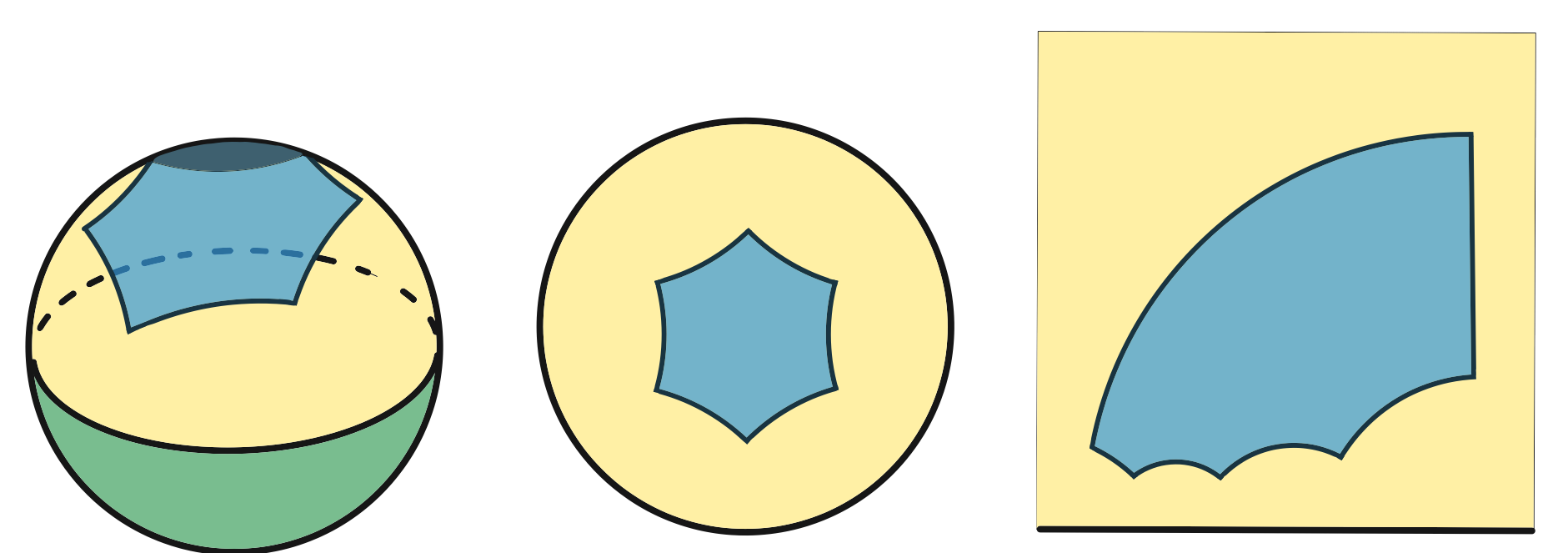}
\caption{Complex Hyperbolic Space $\Hyp_\C^1$ in $\CP^1$, the Poincare Disk model of $\Hyp_\R^2$, and the equivalent upper-half plane model under a M\"obius transformation.}
\end{figure}

\noindent
Complex Hyperbolic 2 space is a genuinely new homogeneous space, constructed from the projectivization of the negative cone of the norm $\|z_1\|^2+\|z_2\|^2-\|z_3\|^2$ on $\C^3$.
In the affine patch $z_3=1$ this appears as the interior of the unit ball $\mathbb{B}^4\subset\C^2$, and the copy of real hyperbolic space given by the inclusion $\R\subset \C$ is the intersection of the totally real plane $\{(x,y)\in\C^2\mid x,y\in\R\}$ with the unit ball.
This totally geodesic subspace naturally identifies with the Klein model of the hyperbolic plane, as geodesics in $\Hyp_\C^2$ between two points of $\Hyp_\R^2$ are the line segments connecting them.
This is not the only copy of $\Hyp_\R^2$ inside of $\Hyp_\C^2$ however: looking at the intersection of $\mathbb{B}^4$ with the complex plane $\{(z,0)\mid z\in \C\}$ in $\C^2$ gives a model of complex hyperbolic $1$-space, which as we saw above is isomorphic to the Poincare disk.
Thus in the metric on $\Hyp_\C^2$ the geodesics in these hyperbolic planes appear to be circular arcs orthogonal to the boundary sphere.
These two types of hyperbolic planes in $\Hyp_\C^n$ are not isometric, but have different curvatures: with complex slices having curvature $-1$ and real slices constant curvature $-1/4$.
These are the extrema of the sectional curvature for $\Hyp_\C^n$, which takes all values in $[-1,-1/4]$.

\begin{figure}
\centering\includegraphics[width=0.65\textwidth]{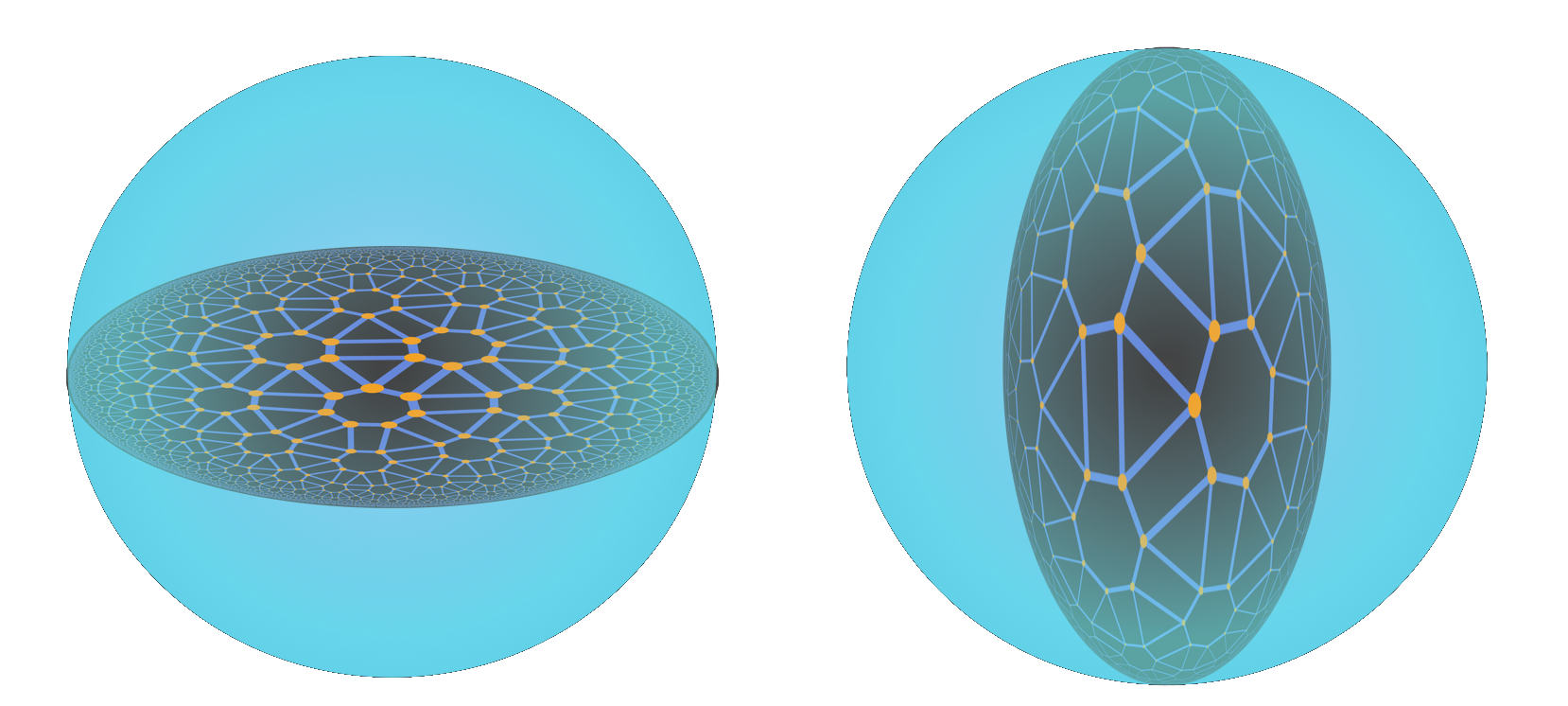}
\caption{Slices of $\Hyp_\C^2$ by totally real planes, and by complex planes give embedded copies of $\Hyp_\R^2$}	
\end{figure}

\section{Hyperbolic Geometry over $\R[\ep]/(\ep^2)$}
\label{sec:Hyp_Lambda_0}
\index{Geometries!$\R_\ep$ Hyperbolic Space}
\index{$\R_\ep$ Hyperbolic Space}

Just as complex hyperbolic space replaces $\R$ with $\C$, here we replace $\C$ with another 2-dimensional real algebra, namely that of the so called \emph{dual numbers} $\R_\ep=\R[\ep]/(\ep^2)$.

\begin{definition}[The Algebra $\R_\ep$]
The algebra $\R_\ep=\R[\sqrt{0}]$ is a two dimensional algebra over $\R$.
Each $z\in\R_\ep$ can be written uniquely as $a+\ep b$ for $\ep^2=0$.
The analog of complex conjugation on $\R_\ep$ negates the epsilon part, $a+ \ep b\mapsto a-\ep b$.
\end{definition}

\noindent
The ring of matrices $\M(n;\R_\ep)$ inherits a notion of adjoint from conjugation on $\R_\ep$, denoted $A\mapsto A^\dagger$ and defined by taking the transpose and component-wise conjugate of all entries.
The involution of $\R_\ep$ given by conjugation also provides a notion of \emph{Hermitian form} and in particular, the form $q(z,w)=\sum_{i=1}^n z_i\overline{w_i}-z_{n+1}\overline{w_{n+1}}$ defined identically to the complex case.
The matrix representation of $q$ is again $Q=\diag(I_n, -1)$ evaluated $q(z,w)=z^TQ \bar{w}$.
The $\R_\ep$ linear transformations preserving $q$ form the analog of the indefinite \emph{unitary group} over $\R_\ep$.

\begin{definition}[The Unitary group $\U(n,1;\R_\ep)$]
Then the unitary group $\U(n,1;\R_\ep)$ is the group of linear transformations of $\R_{\ep}^{n+1}$ preserving $q$: that is $A\in\U(n,1;\R_\ep)$ if for all $w,z\in\R_\ep^{n+1}$, $q(w,z)=q(Aw,Az)$.
In terms of $Q$, this is 
$\U(n,1;\R_\ep)=\{A\in\M(n+1;\R_\ep)\mid A^\dagger Q A=Q\}$.
The special unitary group $\SU(n,1;\R_\ep)$ is the subgroup with determinant $1$.
\end{definition}

\subsection{Group - Space Description}

\noindent 
By definition the action of $\U(n,1;\R_\ep)$ preserves the level sets of $q$ on $\R_\ep^{n+1}$, and similarly to the real hyperbolic case, acts transitively on each\footnote{Again, the action on the zero level set is transitive only on the complement of $\vec{0}$.}.
Like over $\C$, the units $\U(\R_\ep)$ are 1-dimensional so the hyperboloid and projective hyperboloid geometries corresponding to $\U(n,1;\R_\ep)$ are not isomorphic.
The unit sphere $\mathcal{S}_{\R_\ep}(n,1)=\{z\in\R_\ep^{n+1}\mid q(z,z)=-1\}$ supports an action of the elements of $\R_\ep$ with unit norm $\U(\R_\ep)=\{z\in \R_\ep\mid z\overline{z}=1\}$, and the quotient under this action gives a projective model of \emph{$\R_\ep$ Hyperbolic Space}, $\Hyp_{\R_\ep}^n=(\U(n,1;\R_\ep),\mathcal{S}_{\R_\ep}(n,1)/\U(\R_\ep))$.
This geometry is not effective, as the scalar matrices $wI$ for $w\in \U(\R_\ep)$ act trivially on the projectivization.
A locally effective version can be made by restricting to the special unitary group $\Hyp_\C^n=(\SU(n,1;\R_\ep),\mathcal{S}_{\R_\ep}(n,1)/\U(\R_\ep))$.
We may take the domain of $\Hyp_{\R_\ep}^n$ to be the projectivization of the entire negative cone of $q$, $\mathcal{N}_q=\{z\in\R_\ep^{n+1}\mid q(z,z)<0\}$, under the quotient by the action of $\R_\ep^\times$ instead of just the units $\U(\R_\ep)$.
All of these various presentations, effective and non-effective, define models of $\R_\ep$ hyperbolic space.
For convenience, we select a single model to work with, unless otherwise specified.

\begin{definition}[$\Hyp_{\R_\ep}^n$: Group - Space]
$\R_\ep$ Hyperbolic space is the geometry given by the action of $\U(n,1;\R_\ep)$ on the projectivized unit sphere of radius $-1$ for $q$ in $\R_{\ep}^{n+1}$;
$\Hyp_{\R_\ep}^n=(\U(n,1;\R_\ep),\mathcal{S}_{\R_\ep}(n,1)/\U(\R\ep))$.
\end{definition}

\subsection{Automorphism - Stabilizer Description}

To describe $\Hyp_{\R_\ep}^n$ in the automorphism-stabilizer formalism, we must again choose some point in the geometry's domain and compute the corresponding stabilizer.
Because the hermitian form $q$ is identically defined over $\R_\ep^{n+1}$, the element $e_{n+1}=(0,\ldots, 0,1)$ in the standard basis of $\R_{\ep}^{n+1}$ as an $\R_\ep$ module lies in $\fam{S}_{\R_\ep}(n,1)$ and provides a natural choice.

\begin{calculation}
The stabilizer of $[e_{n+1}]$ under the action of $\U(n,1;\R_\ep)$ on $\Hyp_{\R_\ep}^n$ is the unitary stabilizer group  $\USt(n,1;\R_\ep)=\smat{\U(n;\R_\ep)&\\&\U(\R_\ep)}$
\end{calculation}
\begin{proof}
Let $A\in \U(n,1;\R_\ep)$ be such that $A.[e_{n+1}]=[e_{n+1}]$, that is $A e_{n+1}=u e_{n+1}$ for $u\in\U(\R_\ep)$.
As $A\in\U(n,1;\R_\ep)$ its columns are orthogonal with respect to $q$, and so in particular the final entry of the first $n$ columns is necessarily $0$ (as $q((v_1,\ldots v_{n+1}),e_{n+1})=v_{n+1}$).
Thus $A=\smat{B&0\\0&u}$ for some $U\in\M(n;\R_\ep)$.
As $A$ is block diagonal, $A^\dagger Q A=Q$ decomposes as $B^\dagger I_n B=I_n$ and $\bar{u}u=1$.
This second condition is just a restatement that $u\in \U(\R_\ep)$, and the first condition shows $B\in \U(n;\R_\ep)$.
\end{proof}

\begin{definition}[$\mathbb{H}_{\R_\ep}$: Automorphism-Stabilizer]
$\Hyp_{\R_\ep}^n=\left(\U(n,1;\R_\ep),\USt(n,1;\R_\ep)\right)$.

\end{definition}

\subsection{Properties of $\Hyp_{\R_\ep}^n$}

\begin{calculation}
The domain of $\Hyp_{\R_\ep}^n$ is a product $\Hyp_\R^n\times\R^n$ in the affine patch $\R_\ep^n$.	
\end{calculation}
\begin{proof}
For a point $z\in\R_\ep^{n+1}$ to lie on the unit sphere of radius $-1$, necessarily the final coordinate $z_{n+1}$ is nonzero, as restricted to $e_{n+1}^\perp$ the norm induced by $q$ is positive semidefinite.
Thus, up to scaling by some unit $u\in \U(\R_\ep)$ we may assume $z_{n+1}=1$ and construct a model of $\Hyp_{\R_\ep}^n$ within the 'affine patch' $\R_\ep^n$.
\footnote{
It is possible, though we do not go through the trouble here, of defining projective space over $\R_\ep$, where this corresponds precisely with an actual affine coordinate chart $z_{n+1}=1$.}
A point $(z_1,\ldots, z_n, 1)$ lies in $\fam{N}_q$ if $\sum_{i=1}^n \|z_i\|^2-1<0$ with $\|\cdot\|$ the $\R_\ep$ norm $\|x+\ep y\|^2=x^2$.
Thus the points $(x_1+\ep y_1,\ldots x_n+\ep y_n)$ lie in $\H_{\R_\ep}^n$ if and only if $\sum_{i=1}^n x_i^2<1$, or $\vec{x}=(x_1,\ldots, x_n)\in\mathbb{B}^n$.
The \emph{'$\ep$' coordinates} $\vec{y}=(y_1,\ldots,y_n)$ are free to take on arbitrary values.
\end{proof}

\begin{observation}
The embedding $\R\inject\R_\ep$ induces an embedding $\Hyp_\R^n\inject\Hyp_{\R_\ep}^n$, with domain $\mathbb{B}^n\times\{0\}$ in $\H_{\R_\ep}^n=\mathbb{B}^n\times\R^n$.
\end{observation}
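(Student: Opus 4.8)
The claim to prove is the final Observation: the embedding $\R\inject\R_\ep$ induces an embedding $\Hyp_\R^n\inject\Hyp_{\R_\ep}^n$, with domain $\mathbb{B}^n\times\{0\}$ inside $\Hyp_{\R_\ep}^n = \mathbb{B}^n\times\R^n$.

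The plan is to exhibit the embedding explicitly at the level of both the automorphism groups and the underlying spaces, and then verify it is a monomorphism of geometries in the sense of the Group-Space morphism definition. First I would observe that the inclusion of algebras $\R\inject\R_\ep$, sending $a\mapsto a+\ep\cdot 0$, is a ring homomorphism commuting with the respective conjugations (the conjugation on $\R$ is trivial, and $\overline{a+\ep 0}=a-\ep 0 = a$), so it induces an entrywise inclusion $\M(n+1;\R)\inject\M(n+1;\R_\ep)$. Since the Hermitian form $q=\mathrm{diag}(I_n,-1)$ has the \emph{same} matrix representation over both algebras and the inclusion respects $\dagger$, this carries $\O(n,1;\R)=\U(n,1;\R)$ into $\U(n,1;\R_\ep)$: a real matrix $A$ with $A^\dagger Q A = Q$ over $\R$ still satisfies the identical equation when viewed over $\R_\ep$. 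Call this group homomorphism $\iota_G$.

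Next I would construct the map on spaces. Using the explicit models computed just above in the excerpt, the domain of $\Hyp_\R^n$ is $\mathbb{B}^n\times\{0\}$ in the affine patch $\R^n$, and the domain of $\Hyp_{\R_\ep}^n$ is $\mathbb{B}^n\times\R^n$ in the affine patch $\R_\ep^n$ (with coordinates $x_i+\ep y_i$, membership governed by $\sum x_i^2<1$ and the $y_i$ free). The inclusion $\R\inject\R_\ep$ sends a real point $(x_1,\ldots,x_n,1)\in\fam S_\R(n,1)$ to the same tuple viewed in $\R_\ep^{n+1}$, i.e.\ to $(x_1+\ep 0,\ldots,x_n+\ep 0, 1)$, which is precisely the point $(\vec x,\vec 0)\in\mathbb{B}^n\times\R^n$. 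This descends through the quotients by $\U(\R)\subset\U(\R_\ep)$ to a well-defined map $\iota_X$ of the projectivized spaces, with image exactly $\mathbb{B}^n\times\{0\}$. I would verify $\iota_X$ is a smooth embedding (it is the restriction of a linear coordinate inclusion, hence a closed embedding onto a submanifold) and that it sends the chosen basepoint $[e_{n+1}]$ of $\Hyp_\R^n$ to the chosen basepoint $[e_{n+1}]$ of $\Hyp_{\R_\ep}^n$.

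Finally I would check the compatibility conditions making $(\iota_G,\iota_X)$ a monomorphism of geometries: that $\iota_X$ is $\iota_G$-equivariant, and that $\iota_G$ carries the point stabilizer $\SO(n;\R)$ into $\USt(n,1;\R_\ep)$. Equivariance is immediate because both the action and the inclusion are given by the same matrix multiplication, now interpreted over $\R_\ep$; the stabilizer condition follows since $\iota_G$ sends the block form $\mathrm{diag}(\SO(n;\R),1)$ into $\mathrm{diag}(\U(n;\R_\ep),\U(\R_\ep))$, using that a real orthogonal matrix lies in $\U(n;\R_\ep)$ by the same argument as for $\iota_G$ itself. Since $\iota_G$ is injective (it is an inclusion of matrix entries) and $\iota_X$ is an injective immersion, the pair is a monomorphism, realizing $\Hyp_\R^n$ as a subgeometry with the asserted domain. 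The only genuinely delicate point is confirming well-definedness of $\iota_X$ after projectivization, i.e.\ that the $\U(\R)$-orbit of a real point maps into a single $\U(\R_\ep)$-orbit; this is routine since $\U(\R)=\{\pm 1\}\subset\U(\R_\ep)$ and scaling commutes with the algebra inclusion, so I expect no real obstacle, only bookkeeping between the affine-patch description and the projective quotient.
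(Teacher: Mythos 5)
Your proposal is correct, and it is precisely the routine verification the paper has in mind: the paper states this as an unproved Observation immediately after the Calculation identifying the domain of $\Hyp_{\R_\ep}^n$ with $\mathbb{B}^n\times\R^n$, and your argument (entrywise algebra inclusion respecting $\dagger$, hence $\O(n,1;\R)\inject\U(n,1;\R_\ep)$, real points landing at $\vec{y}=0$, equivariance and stabilizer compatibility) is the intended one. The only slip is cosmetic: points of $\fam{S}_\R(n,1)$ satisfy $\sum x_i^2-x_{n+1}^2=-1$ rather than having last coordinate $1$, so your tuple $(x_1,\ldots,x_n,1)$ should be read as the affine-patch representative of the $\U(\R)$-orbit --- exactly the bookkeeping you already flag at the end.
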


\noindent
Further analysis shows that the geometry actually fibers over $\Hyp_\R^n$.

\begin{lemma}
The group $\U(n,1,\R_\ep)$ is an extension of $\O(n,1;\R)$ by the additive group $\R^{n(n+1)/2}$.
\label{lem:limitUnitary}
\end{lemma}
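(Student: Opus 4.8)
The plan is to analyze the structure of a matrix $A \in \U(n,1;\R_\ep)$ by writing each entry in the form $a + \ep b$ and decomposing the defining condition $A^\dagger Q A = Q$ into its \say{real} and \say{epsilon} parts. Writing $A = A_0 + \ep A_1$ where $A_0, A_1 \in \M(n+1;\R)$, and recalling that the conjugation on $\R_\ep$ sends $a + \ep b \mapsto a - \ep b$, I would compute $A^\dagger = A_0^T - \ep A_1^T$. Substituting into $A^\dagger Q A = Q$ and collecting terms by powers of $\ep$ (using $\ep^2 = 0$) yields exactly two equations: the $\ep^0$ part gives $A_0^T Q A_0 = Q$, which says precisely that $A_0 \in \O(n,1;\R)$, and the $\ep^1$ part gives a linear condition relating $A_1$ to $A_0$.

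First I would carry out this substitution carefully. The $\ep^1$ term of $A^\dagger Q A$ is $A_0^T Q A_1 - A_1^T Q A_0$, so the condition becomes
\begin{equation*}
A_0^T Q A_1 - A_1^T Q A_0 = 0.
\end{equation*}
Setting $S = A_0^T Q A_1$, this reads $S - S^T = 0$, i.e. $S$ is symmetric. Since $A_0 \in \O(n,1;\R)$ is invertible, the map $A_1 \mapsto A_0^T Q A_1$ is a linear bijection from $\M(n+1;\R)$ onto itself, so for fixed $A_0$ the admissible $A_1$ are in bijection with the space $\Sym(n+1;\R)$ of symmetric $(n+1)\times(n+1)$ real matrices, which has dimension $\binom{n+2}{2} = (n+1)(n+2)/2$.

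This is where I would pause to reconcile the dimension count with the claimed fiber $\R^{n(n+1)/2}$, and I expect this to be the main obstacle. The naive count gives $(n+1)(n+2)/2$, which exceeds the claimed $n(n+1)/2$ by exactly $n+1$. The discrepancy must come from the fact that the geometry $\Hyp_{\R_\ep}^n$ is defined projectively: one quotients by scalar units $wI$ with $w \in \U(\R_\ep)$, or equivalently the relevant group for the fibration is not all of $\U(n,1;\R_\ep)$ but a version modulo the central $\ep$-scalars, together with the requirement that $A_0^T Q A_1$ be not merely symmetric but lie in the image of the appropriate Lie-algebra-level constraint. Concretely, I would identify the freedom corresponding to $A_1 = \ep$-multiples of $A_0$ itself (the infinitesimal scalars $\ep c \cdot I$ acting as $A_1 = c A_0$), which contributes the symmetric matrix $c\, A_0^T Q A_0 = cQ$; modding out this one-parameter central direction and handling the remaining normalization should bring the count down correctly. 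I would determine the precise reduction by comparing against the stabilizer calculation $\USt(n,1;\R_\ep)$ already established, since the total and fiber dimensions must be consistent with $\dim \Hyp_{\R_\ep}^n = \dim(\mathbb{B}^n \times \R^n) = 2n$.

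Finally, having pinned down that the admissible $A_1$ form an affine space (a torsor) over a fixed real vector space $V$ of dimension $n(n+1)/2$ independent of $A_0$, I would verify that the projection $A_0 + \ep A_1 \mapsto A_0$ is a group homomorphism $\U(n,1;\R_\ep) \to \O(n,1;\R)$ (this follows since multiplication $(A_0 + \ep A_1)(B_0 + \ep B_1) = A_0 B_0 + \ep(\cdots)$ has real part $A_0 B_0$), that it is surjective (lift any $A_0 \in \O(n,1;\R)$ by taking $A_1 = 0$), and that its kernel consists of elements $I + \ep A_1$ with $A_1 \in V$. The last step is to check the kernel is abelian with the additive group structure of $V \cong \R^{n(n+1)/2}$: for $I + \ep A_1$ and $I + \ep B_1$ in the kernel, the product is $I + \ep(A_1 + B_1)$ because the $\ep^2$ cross term vanishes, giving an isomorphism of the kernel with $(V, +)$ and exhibiting $\U(n,1;\R_\ep)$ as the desired extension. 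The computation that the kernel condition $A_1 - A_1^T = 0$ (the specialization of the $\ep^1$ equation at $A_0 = I$, using $Q A_1 = A_1^T Q$) cuts out exactly the claimed dimension is the concrete form of the obstacle above, and resolving the $n+1$ discrepancy there is the crux of the argument.
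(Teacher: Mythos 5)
Your plan is exactly the paper's argument: write $A=A_0+\ep A_1$, equate real and $\ep$-parts of $A^\dagger Q A=Q$ to get $A_0\in\O(n,1;\R)$ together with the symmetry of $A_0^TQA_1$, note that the real part of a product is the product of real parts so $A\mapsto A_0$ is a surjective homomorphism (split by taking $A_1=0$), and identify the kernel $\{I+\ep A_1\}$ with an additive group of matrices using $\ep^2=0$. All of those steps are correct as you wrote them, and your identification of the admissible $A_1$ for fixed $A_0$ with $\Sym(n+1;\R)$, of dimension $(n+1)(n+2)/2$, is the right count.

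The \say{crux obstacle} you set yourself is not real: the discrepancy is a typo in the lemma statement, not a gap in your argument. The paper's own proof concludes with the short exact sequence
$$0\to \R^{(n+1)(n+2)/2}\to\U(n,1;\R_\ep)\to\O(n,1;\R)\to 1,$$
in agreement with your \say{naive} count. The reconciliation you sketch would fail: the lemma concerns the full group $\U(n,1;\R_\ep)$ with no projectivization, and in any case the infinitesimal scalar direction you identify ($A_1=cA_0$, giving $S=cQ$) accounts for only one dimension, while the gap is $n+1$. Your own proposed sanity check settles the matter in favor of the larger fiber: the same block computation gives $\dim\USt(n,1;\R_\ep)=n^2+1$, and $\dim\Hyp_{\R_\ep}^n=2n$ then forces $\dim\U(n,1;\R_\ep)=(n+1)^2=\tfrac{n(n+1)}{2}+\tfrac{(n+1)(n+2)}{2}$, consistent with kernel $(n+1)(n+2)/2$; a kernel of dimension $n(n+1)/2$ would instead yield $\dim\Hyp_{\R_\ep}^n=n-1$. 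One small slip to fix at the end: the kernel condition at $A_0=I$ is $QA_1-A_1^TQ=0$, i.e.\ $QA_1$ symmetric rather than $A_1$ symmetric; since $Q$ is invertible this space is still linearly isomorphic to $\Sym(n+1;\R)$, so the count is unchanged. With the fiber corrected to $\R^{(n+1)(n+2)/2}$, your proof is complete and coincides with the paper's.
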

\begin{proof}
Let $X+\ep Y\in \U(n,1,\R_\ep)$ for $X,Y\in\M(n+1,\R)$.  
Then
$(X+\ep Y)^\ast I_{n,1} (X+\ep Y)=(X^T-\ep Y^T)Q(X+\ep Y)=Q$, 
and expanding using that $\ep^2=0$;
$$X^T QX+\ep(X^TQY-Y^TQX)=Q.$$
Equating real and $\ep$-parts gives $X\in\O(n,1,\R)$ and $X^TQY=Y^TQX$, so $X^TQY$ is symmetric.
The map $\pi:\U(n,1;\R_\ep)\to\O(n,1,\R)$ given by $X+\ep Y\mapsto X$ is actually a surjective homomorphism:
$\pi\left((X+\ep Y)(Z+\ep W)\right)=\pi\left(XZ+\ep(XW+YZ)\right)=XZ=\pi(X+\ep Y)\pi(Z+\ep W)$
It remains to investigate
$\ker\pi=\{I+\ep Y\in\U(n,1,\R_\ep)\}$.
The condition that $X^TQY$ is symmetric reduces to the condition that $QY$ is symmetric,(using that $Q=Q\inv)$ we have map from symmetric matrices to $\ker \pi$ given by 
$S\mapsto I+\ep QY$.
Thinking of the symmetric matrices as an additive group, this is an injective homomorphism as
$Y+Z\mapsto I+\ep(Y+Z)=(I+\ep Y)(I+\ep Z)$.
Thus, we have a short exact sequence
$$0\to \R^{(n+1)(n+2)/2}\to\U(n,1,\R_\ep)\to\O(n,1;\R)\to 1.$$	
\end{proof}

\begin{corollary}
The group homomorphism $\GL(n+1;\R_\ep)\to\GL(n+1;\R)$ dropping the $\ep$-part induces an 
 epimorphism of geometries $(\U(n,1;\R_\ep),\mathsf{USt}(n,1;\R_\ep))\surject (\SO(n,1;\R),\SO(n))$ fibering over real hyperbolic space $\Hyp^n_\R=(\SO(n,1;\R),\SO(n;\R))$.
\end{corollary}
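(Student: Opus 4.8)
The plan is to leverage the algebraic structure theorem already established in Lemma \ref{lem:limitUnitary}, which identifies $\U(n,1;\R_\ep)$ as an extension of $\O(n,1;\R)$ by an abelian group of symmetric matrices, and then promote the ``drop the $\ep$-part'' map on groups to a full morphism of geometries in the Automorphism-Stabilizer formalism. First I would make the map $\pi\colon \U(n,1;\R_\ep)\to\O(n,1;\R)$, $X+\ep Y\mapsto X$ the central object: Lemma \ref{lem:limitUnitary} already shows this is a surjective group homomorphism, so the only genuinely new point at the group level is to cut it down to the special unitary / special orthogonal groups and to check that it carries the point stabilizer into the point stabilizer. For the latter, I would apply $\pi$ to the computed stabilizer $\USt(n,1;\R_\ep)=\smat{\U(n;\R_\ep)&\\&\U(\R_\ep)}$ and observe that dropping the $\ep$-part sends $\U(n;\R_\ep)$ onto $\O(n;\R)$ (by the same real/$\ep$-part splitting argument applied one dimension down) and $\U(\R_\ep)=\{\pm 1+\ep\R\}$ onto $\U(\R)=\{\pm 1\}$. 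Thus $\pi\bigl(\USt(n,1;\R_\ep)\bigr)\subset \smat{\O(n;\R)&\\&\pm 1}$, which is exactly the stabilizer $\SO(n;\R)$ appearing in the Automorphism-Stabilizer description $\Hyp^n_\R=(\SO(n,1;\R),\SO(n;\R))$.

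Next I would verify that $\pi$ is a \emph{submersion} of Lie groups, since the definition of an epimorphism of geometries (Definition in Section \ref{subsec:Grp_Sp_Geometries}, via the fibered-geometry notion) requires a submersion on the group level inducing a submersion on the spaces. Here the short exact sequence
$$0\to \R^{(n+1)(n+2)/2}\to\U(n,1;\R_\ep)\to\O(n,1;\R)\to 1$$
from Lemma \ref{lem:limitUnitary} does the work immediately: $\pi$ is a surjective homomorphism of Lie groups with kernel a vector group, hence a submersion, and its differential at the identity is the projection killing the $\ep$-part on the Lie algebra. With $\pi$ a submersion carrying $\USt(n,1;\R_\ep)$ into $\SO(n;\R)$, the equivalence of categories $\cat{GrpSp}\cong\cat{AutStb}$ (the Equivalence of Perspectives proven in Section \ref{subsec:GrpSP_AutStab_Equivalence}) lets me transport this Automorphism-Stabilizer morphism to an honest equivariant submersion of the homogeneous spaces $\mathcal{S}_{\R_\ep}(n,1)/\U(\R_\ep)\to \mathcal{S}_\R(n,1)/\U(\R)$, i.e.\ an epimorphism of geometries in the Group-Space sense. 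Concretely the induced map on domains is exactly the ``forget $\vec y$'' projection $\mathbb{B}^n\times\R^n\to\mathbb{B}^n$ recorded in the preceding Calculation, which visibly realizes the fibration claim.

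The main obstacle I anticipate is bookkeeping around effectiveness and the $\SO$ versus $\O$/$\U$ discrepancy in the statement, rather than anything deep. The statement as phrased mixes the non-effective automorphism group $\U(n,1;\R_\ep)$ on the source with the group $\SO(n,1;\R)$ on the target, so I would be careful to note that the relevant epimorphism lands in $\O(n,1;\R)$ and that restricting to the identity component (or passing to $\SU$ and $\SO_0$) yields the stated fibering over $\Hyp^n_\R$; since we only care about geometries up to local isomorphism this component ambiguity is harmless. I would also need the elementary fact that the induced map on base spaces is a \emph{surjective} submersion and not merely a submersion onto its image --- this follows because $\pi$ is surjective and carries the basepoint $[e_{n+1}]$ to the basepoint $[e_{n+1}]$, so the equivariant map on $G/K$-type quotients is automatically onto. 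Everything else is the routine matrix computation splitting $A=X+\ep Y$ and reading off real and $\ep$ parts, which I would not belabor.
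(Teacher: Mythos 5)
Your proposal is correct and takes essentially the same route the paper does: the corollary is stated there with no separate proof, as an immediate consequence of Lemma \ref{lem:limitUnitary}, whose surjection $X+\ep Y\mapsto X$ with vector-group kernel is exactly your central object, and your stabilizer computation and submersion check simply make explicit what the paper leaves implicit. Your flagged bookkeeping point is also well taken --- the map genuinely surjects onto $\O(n,1;\R)$ with stabilizer image $\O(n;\R)\times\{\pm 1\}$, so the paper's own statement with $\SO(n,1;\R)$ and $\SO(n)$ is only correct up to components/local isomorphism, precisely as you resolve it.
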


\subsection{Low Dimensional Examples}

The construction of complex hyperbolic one space begins with the Hermitian form $q(z,w)=z_1\bar{w_1}-z_2\bar{w_2}$ on $\R_\ep^2$.
The induced norm $z\mapsto q(z,z)=\|z_1\|^2-\|z_2\|^2$ in coordinates $z=x+\ep y$ is $q(z,z)=x_1^2-x_2^2$, which divides $\R_{\ep}^2$ into positive and negative cones.
The unit sphere of radius $-1$ is cut out by the hyperbola $x_1^2-x_2^2=-1$.

\begin{figure}
\centering\includegraphics[width=0.4\textwidth]{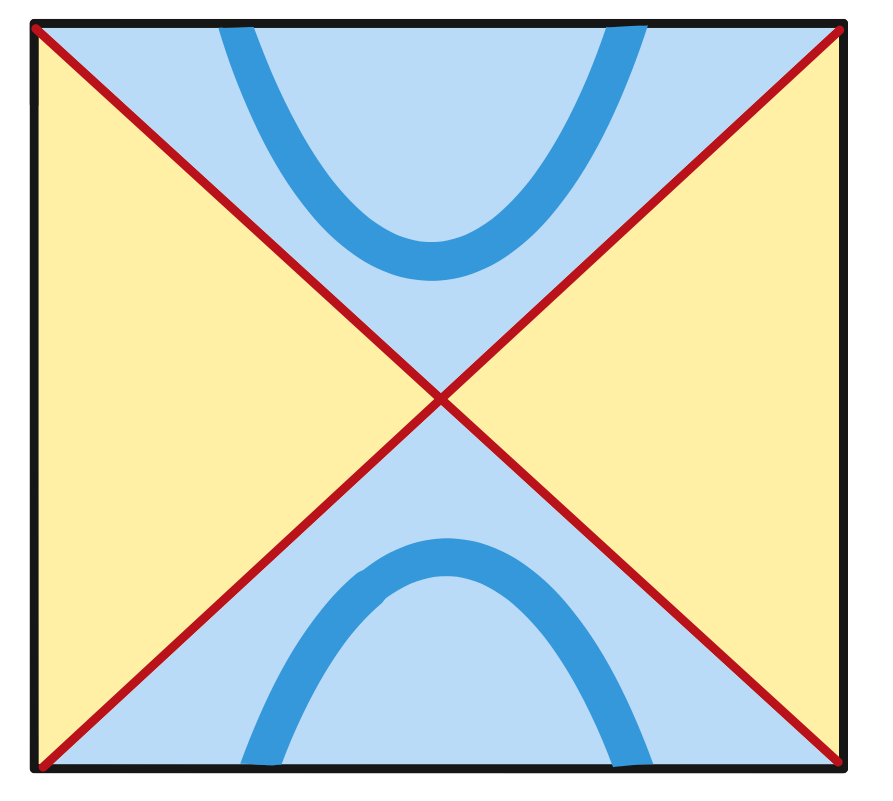}
\caption{The division into positive and negative cones of $q$, projected onto the $(x_1,x_2)$ plane (negative cone top/bottom, positive cone left/right), along with the sphere of radius $-1$.
The value of $q$ is independent of the remaining coordinates $(y_1,y_2)$.}	
\end{figure}

\noindent
The action of $\U(\R_\ep)$ on $\mathcal{S}_{\R_\ep}(n,1)$ takes the point $(x_1+\ep y_1, x_2+\ep y_2)\in\mathcal{S}_{\R_\ep}(n,1)$ to $(\pm 1+\ep u).(x_1+\ep y_1, x_2+\ep y_2)=(\pm x_1+\ep(ux_1\pm y_1,\pm x_2+\ep(ux_2\pm y_2)$.
The quotient by this action identifies each branch of the hyperbola in the $(x_1,x_2)$ plane with each other, and collapses a foliation of lines in the $\vec{y}$ direction to a point.
The result is a hyperbola times $\R$, which projectivizes in the affine patch $z_2=1$ to a strip.
The group $\SU(1,1;\R_\ep)$ is an extension of $\SO(1,1)=\Isom_+(\Hyp_\R^1)$ by $\R^2$, acting by shears perpendicular to $\Hyp_\R^1$ and translations along the $\R$ factor of $\Hyp_{\R_\ep}^1=\mathbb{B}^1\times\R$.

\begin{figure}
\centering\includegraphics[width=0.5\textwidth]{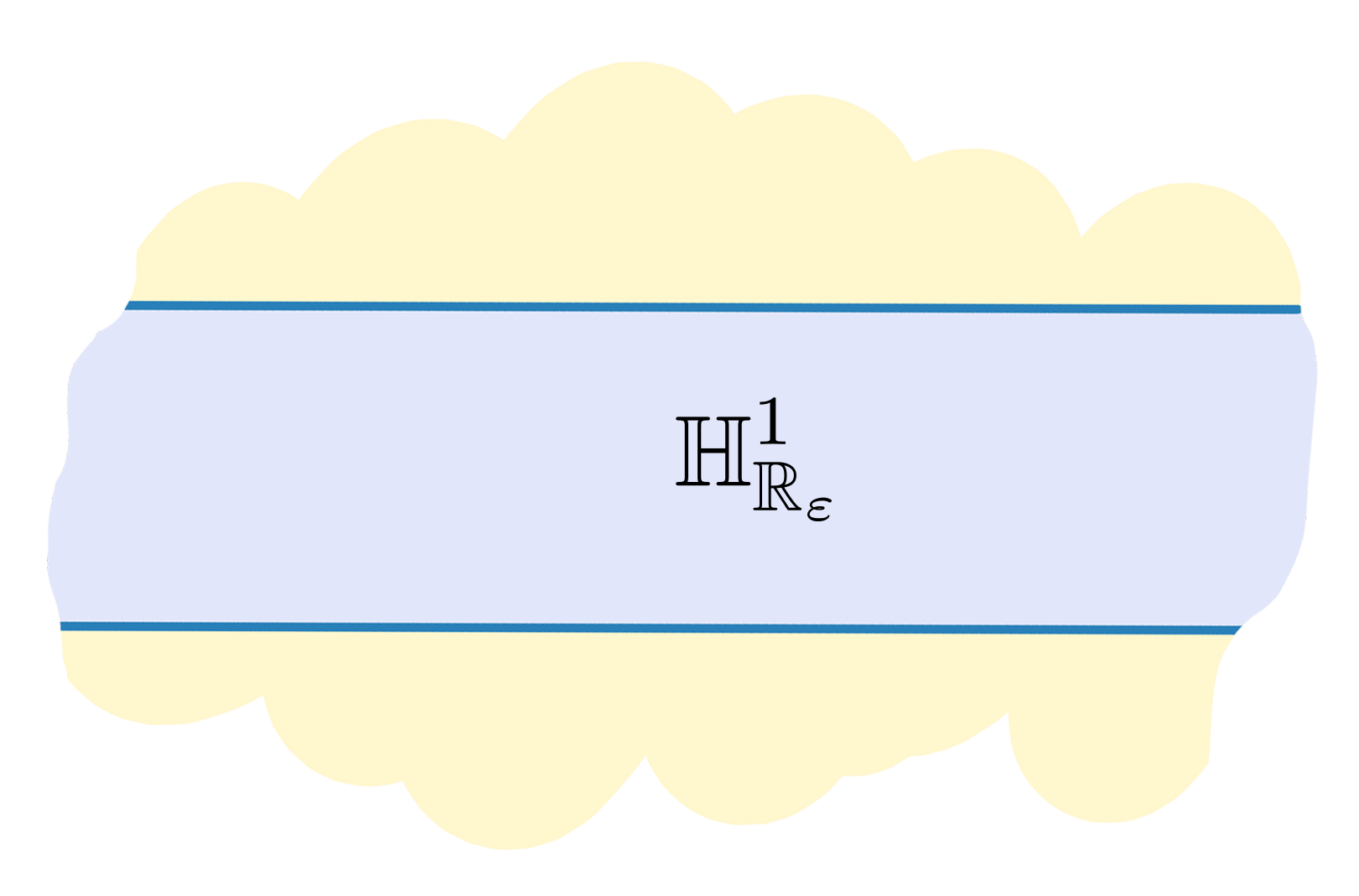}
\caption{The domain of $\Hyp_{\R_\ep}^1$}	
\end{figure}

\begin{observation}
$\Hyp_{\R_\ep}^1$ is equal to Half-Pipe geometry in dimension 2.	
\end{observation}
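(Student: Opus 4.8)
The plan is to prove the observation by exhibiting an explicit isomorphism of geometries between $\Hyp_{\R_\ep}^1$ as constructed above and the standard model of Half-Pipe (co-Minkowski) geometry in dimension two. Since both are $(G,X)$ geometries realized inside $\RP^2$, I will work in the automorphism-stabilizer formalism and use the equivalence of categories $\cat{GrpSp}\cong\cat{AutStb}$ (and the freedom to pass between locally isomorphic and effective/non-effective models) established in Chapter \ref{chp:Klein_Geo}, so that it suffices to match up the automorphism groups together with a point stabilizer, up to local isomorphism.

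First I would recall from Section \ref{sec:Lims_Orthog_Subgeos} and Danciger's work that Half-Pipe geometry in dimension two is exactly the conjugacy limit of $\Hyp_\R^2$ stretching in one direction: its automorphism group is the group $\SO(1,1;\R)\ltimes\R^2$ acting on the strip $\mathbb{B}^1\times\R$, where $\SO(1,1)\cong\Isom_+(\Hyp_\R^1)$ acts on the $\mathbb{B}^1$ factor and the $\R^2$ acts by shears transverse to $\Hyp_\R^1$ together with translations along the $\R$ factor. Next I would invoke Lemma \ref{lem:limitUnitary}, specialized to $n=1$: it shows $\U(1,1;\R_\ep)$ is an extension of $\O(1,1;\R)$ by an additive $\R^{k}$ of symmetric matrices, and passing to $\SU$ and the identity component yields precisely $\SO(1,1;\R)\ltimes\R^2$ as the automorphism group of $\Hyp_{\R_\ep}^1$. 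The preceding calculation in the excerpt already identifies the domain of $\Hyp_{\R_\ep}^1$ with the strip $\mathbb{B}^1\times\R$ and describes the $\SU(1,1;\R_\ep)$ action as shears perpendicular to $\Hyp_\R^1$ plus translations along the $\R$ factor, which is verbatim the Half-Pipe action.

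The key remaining step is to check that the two actions agree as homomorphisms into $\mathsf{Diffeo}(\mathbb{B}^1\times\R)$, i.e. that the extension structures coincide, not merely that the groups are abstractly isomorphic. I would do this by comparing point stabilizers: compute $\USt(1,1;\R_\ep)$ from the calculation above and match it, under the isomorphism of Lemma \ref{lem:limitUnitary}, with the stabilizer of a basepoint in the Half-Pipe strip, verifying that the residual shear-and-translation action on the tangent space is the same in both models. Because both geometries arise as degenerations fixing a codimension-one $\Hyp_\R^1$ and carry the same invariant foliation (the collapsed $\ep$-direction corresponds to the transverse Half-Pipe direction), the equivariant identification of domains is forced once the basepoint stabilizers match.

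The main obstacle I anticipate is the bookkeeping around effectivity and the splitting of the extension: $\U(1,1;\R_\ep)$ acts non-effectively (scalar units act trivially on the projectivization), so I must be careful to quotient correctly and confirm that after effectivization the additive kernel $\R^2$ injects faithfully into the diffeomorphism group with the correct shear-translation action, rather than collapsing. Concretely, the delicate point is showing that the symmetric-matrix kernel from Lemma \ref{lem:limitUnitary} acts by exactly the shears transverse to $\Hyp_\R^1$ and translations along $\R$ that define Half-Pipe, and not by some conjugate or rescaled action; this is a finite explicit computation with $3\times 3$ (projectively $2\times 2$) matrices, but it is where the proof genuinely has content. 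Everything else reduces to the already-established structural results and the observation that the underlying spaces have been matched.
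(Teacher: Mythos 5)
Your proposal is correct and follows essentially the same route as the paper, which offers no separate argument for this observation beyond the computation immediately preceding it: the identification of the domain of $\Hyp_{\R_\ep}^1$ with the strip $\mathbb{B}^1\times\R$ and of $\SU(1,1;\R_\ep)$ (up to local isomorphism) with the extension $\SO(1,1)\ltimes\R^2$ acting by shears transverse to $\Hyp_\R^1$ and translations along the $\R$ factor, matched against Danciger's description of Half-Pipe. Your added care about effectivity and about verifying that the extension structures (not merely the abstract groups) agree is a legitimate tightening of what the paper leaves implicit, but it is the same proof.
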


\noindent
In dimension two, $\Hyp_{\R_\ep}^2$ no longer coincides with Half-Pipe geometry, but can be thought of along similar lines.
$\mathsf{HP}^n$ fibers over $\Hyp_\R^{n-1}$ and has as isometries $\Isom(\Hyp_\R^{n-1})$ together with transformations not preserving the embedded copy of $\Hyp_\R^{n-1}$ but instead encoding \emph{infinitesimal ways} that $\Hyp_\R^{n-1}$ can be \emph{pushed off of itself} inside of $\Hyp_\R^n$.
Similarly, $\Hyp_{\R_\ep}^n$ has a subgroup of isometries preserving the embedded copy of $\Hyp_\R^n$, and the remaining transformations encode \emph{infinitesimal ways to push $\Hyp_\R^n$ off of itself inside of $\Hyp_\C^n$}.
We will justify this observation in the following chapter, when we construct a transition of geometries with $\Hyp_\C^n$ degenerating to $\Hyp_{\R_\ep}^n$ in the limit.

\section{$\R\oplus\R$ Hyperbolic Space}
\label{sec:RR_Hyp_Space}
\index{Geometries! R+R Hyperbolic}
\index{R+R Hyperbolic Space}

In the third iteration of this procedure, we replace $\R$ with the algebra $\R[\sqrt{1}]=\R\oplus\R$.

\begin{definition}[The Algebra $\R\oplus\R$]
The algebra $\R\oplus\R=\R[\sqrt{1}]$ is a two dimensional algebra over $\R$.
Each $z\in\R\oplus\R$ can be written uniquely as $a+\lambda b$ for $\lambda^2=1$.
\end{definition}

\noindent
The ring of matrices $\M(n;\R\oplus\R)$ inherits a notion of adjoint from conjugation on $\R\oplus\R$, denoted $A\mapsto A^\dagger$ and defined by taking the transpose and component-wise conjugate of all entries.
The involution of $\R\oplus\R$ given by conjugation also provides a notion of \emph{Hermitian form} and in particular, the form $q(z,w)=\sum_{i=1}^n z_i\overline{w_i}-z_{n+1}\overline{w_{n+1}}$ defined identically to the complex case.
The matrix representation of $q$ is again $Q=\diag(I_n, -1)$ evaluated $q(z,w)=z^TQ \bar{w}$.
The $\R\oplus\R$ linear transformations preserving $q$ form the analog of the indefinite \emph{unitary group} over $\R\oplus\R$.

\begin{definition}[The Unitary group $\U(n,1;\R\oplus\R)$]
Then the unitary group $\U(n,1;\R\oplus\R)$ is the group of linear transformations of $(\R\oplus\R)^{n+1}$ preserving $q$: that is $A\in\U(n,1;\R\oplus\R)$ if for all $w,z\in\R\oplus\R^{n+1}$, $q(w,z)=q(Aw,Az)$.
In terms of $Q$, this is 
$\U(n,1;\R\oplus\R)=\{A\in\M(n+1;\R\oplus\R)\mid A^\dagger Q A=Q\}$.
The special unitary group $\SU(n,1;\R\oplus\R)$ is the subgroup with determinant $1$.
\end{definition}

\subsection{Group - Space Description}

\noindent 
By definition the action of $\U(n,1;\R\oplus\R)$ preserves the level sets of $q$ on $(\R\oplus\R)^{n+1}$, and similarly to the real hyperbolic case, acts transitively on each nonzero level set.
As expected, over $\R\oplus\R$ the hyperboloid geometry differs from the projective ones, as $\dim U(\R\oplus\R)=1$.
But in contrast to the previous two cases, the two projective geometries are no longer isomorphic!

To construct the projective hyperboloid model,
the unit sphere $\mathcal{S}_{\R\oplus\R}(n,1)=\{z\in(\R\oplus\R)^{n+1}\mid q(z,z)=-1\}$ supports an action of the elements of $\R\oplus\R$ with unit norm $\U(\R\oplus\R)=\{z\in \R\oplus\R\mid z\overline{z}=1\}$, and the quotient under this action gives a model of \emph{$\R\oplus\R$ Hyperbolic Space}, $\Hyp_{\R\oplus\R}^n=(\U(n,1;\R\oplus\R),\mathcal{S}_{\R\oplus\R}(n,1)/\U(\R\oplus\R))$.
This geometry is not effective, as the scalar matrices $wI$ for $w\in \U(\R\oplus\R)$ act trivially on the projectivization.
A locally effective version can be made by restricting to the special unitary group $\Hyp_\C^n=(\SU(n,1;\R\oplus\R),\mathcal{S}_{\R\oplus\R}(n,1)/\U(\R\oplus\R))$.

\begin{figure}
\centering\includegraphics[width=0.65\textwidth]{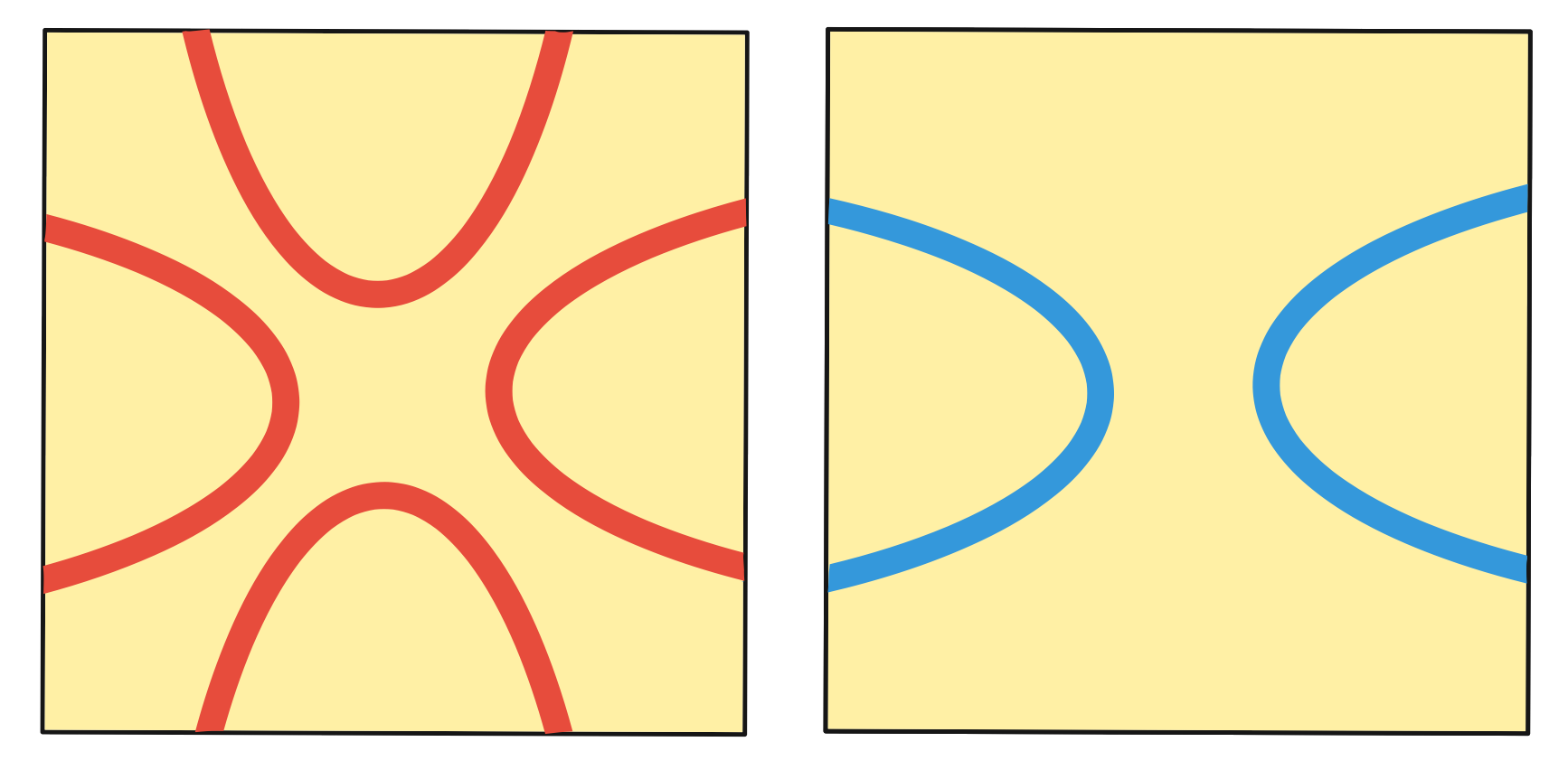}
\caption{$\U(\R\oplus\R)$ and $(\R\oplus\R)^\times)$}	
\end{figure}

\noindent 
This is actually distinct from taking the domain of $\Hyp_{\R\oplus\R}^n$ to be the projectivization of the entire negative cone of $q$, $\mathcal{N}_q=\{z\in(\R\oplus\R)^{n+1}\mid q(z,z)<0\}$, under the quotient by the action of $(\R\oplus\R)^\times$.
The group of units $(\R\oplus\R)^\times$ is the complement of the idempotent axes in $\R\oplus\R$, and thus has four components, two components of elements with positive norm and two with elements of negative norm.
The quotient of the negative cone by the index two subgroup of invertible elements with \emph{positive norm} is indeed isomorphic to the construction above; however the full projectivization is a twofold quotient.
Thus while distinct, both choices produce locally isomorphic geometries and we may freely consider either model when convenient.
Fixing a definition, we continue to utilize the projective hyperboloid model.

\begin{definition}[$\Hyp_{\R\oplus\R}^n$: Group - Space]
\label{def:RRHyp}
Complex Hyperbolic space is the geometry given by the action of $\U(n,1;\R\oplus\R)$ on the projectivized unit sphere of radius $-1$ for $q$ in $(\R\oplus\R)^{n+1}$;
$\Hyp_{\R\oplus\R}^n=(\U(n,1;\R\oplus\R),\mathcal{S}_{\R\oplus\R}(n,1)/\U(\R\oplus\R))$.
\end{definition}

\subsection{Automorphism - Stabilizer Description}

To describe $\Hyp_{\R\oplus\R}^n$ in the automorphism-stabilizer formalism, we must again choose some point in the geometry's domain and compute the corresponding stabilizer.
The standard basis vector $e_{n+1}$ evaluates to $-1$ under the norm induced by $q$, and so $[e_{n+1}]$ is a natural choice of point.

\begin{calculation}
The stabilizer of $[e_{n+1}]$ under the action of $\U(n,1;\R\oplus\R)$ on $\Hyp_{\R\oplus\R}^n$ is the unitary stabilizer group $\USt(n,1;\R\oplus\R)=\smat{\U(n;\R\oplus\R)&\\&\U(\R\oplus\R)}$.
\end{calculation}
\begin{proof}
Let $A\in \U(n,1;\R\oplus\R)$ be such that $A.[e_{n+1}]=[e_{n+1}]$, that is $A e_{n+1}=u e_{n+1}$ for $u\in\U(\R\oplus\R)$.
As $A\in\U(n,1;\R\oplus\R)$ its columns are orthogonal with respect to $q$, and so in particular the final entry of the first $n$ columns is necessarily $0$.
Thus $A=\smat{B&0\\0&u}$ for some $U\in\M(n;\R\oplus\R)$.
As $A$ is block diagonal, $A^\dagger Q A=Q$ decomposes as $B^\dagger I_n B=I_n$ and $\bar{u}u=1$.
This second condition is just a restatement that $u\in \U(\R\oplus\R)$, and the first condition shows $B\in \U(n;\R\oplus\R)$.
\end{proof}

\begin{definition}[$\mathbb{H}_{\R\oplus\R}$: Automorphism-Stabilizer]
$\Hyp_{\R\oplus\R}^n=\left(\U(n,1;\R\oplus\R),\USt(n,1;\R\oplus\R)\right)$.
\end{definition}

\subsection{Properties of $\Hyp_{\R\oplus\R}^n$}

As this homogeneous space does not appear to be treated in the literature, we discuss some basic properties.
The unitary subgroups of $\GL(n;\R\oplus\R)$ share formal similarities with the orthogonal subgroups of $\GL(n;\C)$ in that signature is ill-defined and all unitary groups over $\R\oplus\R$ are isomorphic.

\begin{observation}
The notion of signature is not well-defined on similarity classes as the simple computation below shows.
$$\pmat{1 &\\&\lambda}^\dagger\pmat{1 &\\&1}\pmat{1 &\\&\lambda}=\pmat{1&\\&-\lambda^2}=\pmat{1&\\&-1}$$
\end{observation}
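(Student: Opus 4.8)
The plan is to prove the observation by exhibiting a single explicit change of basis that carries a Hermitian form of one apparent signature to one of another, thereby showing that no signature invariant can survive the equivalence. First I would fix the relevant equivalence relation: over $\R$ or $\C$, two Hermitian matrices $J,J'$ represent the same form in different bases precisely when $J' = M^\dagger J M$ for some invertible $M$, and Sylvester's law of inertia guarantees that the signature is a complete invariant of this $\dagger$-congruence. This is also exactly the relation under which the associated unitary geometries are isomorphic (conjugating $\U(J;A,\sigma)$ by $M$), so it is the correct relation to test. To refute well-definedness of signature over $\R\oplus\R$, it therefore suffices to produce one invertible $M \in \GL(2;\R\oplus\R)$ and one Hermitian $J$ whose congruent image $M^\dagger J M$ has a different real diagonal signature.

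Next I would recall the arithmetic of the involution. Writing $\R\oplus\R = \R[\lambda]/(\lambda^2 = 1)$ with conjugation $a + \lambda b \mapsto a - \lambda b$, we have $\bar\lambda = -\lambda$, and $\lambda$ is a unit with $\lambda^{-1} = \lambda$ and norm $\lambda\bar\lambda = -\lambda^2 = -1$. The candidate change of basis is $M = \diag(1,\lambda)$, which lies in $\GL(2;\R\oplus\R)$ since $\det M = \lambda \in (\R\oplus\R)^\times$; hence $M$ really does implement a legitimate change of coordinates, not merely a singular degeneration.

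The computation itself is then immediate: since $M^\dagger = \diag(1,\bar\lambda) = \diag(1,-\lambda)$, one has $M^\dagger\, \diag(1,1)\, M = \diag(1,-\lambda^2) = \diag(1,-1)$, which is the displayed identity. Both endpoints are genuine Hermitian matrices, as every real diagonal matrix is fixed by $\dagger$. Thus the form one would call ``signature $(2,0)$'' and the form one would call ``signature $(1,1)$'' lie in a single $\dagger$-congruence class, and no signature can be assigned consistently.

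The main (and really only) obstacle is conceptual rather than computational: there is no hard estimate or construction to carry out, so the work lies entirely in stating precisely which equivalence relation ``similarity class'' refers to and in confirming that $M$ is an honest invertible change of basis over $\R\oplus\R$. Once those points are pinned down, the one-line matrix product finishes the argument, and it dovetails with the earlier proposition that $\U(J;\R\oplus\R) \cong \GL(n;\R)$ independently of the nondegenerate $J$, which is precisely the structural shadow of signature being meaningless in this setting.
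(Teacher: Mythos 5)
Your proposal is correct and is essentially the paper's own argument: the observation is proved there by exactly the displayed congruence $\diag(1,\lambda)^\dagger\,\diag(1,1)\,\diag(1,\lambda)=\diag(1,-\lambda^2)=\diag(1,-1)$, using $\bar\lambda=-\lambda$ and $\lambda^2=1$. Your additional checks — that $\diag(1,\lambda)\in\GL(2;\R\oplus\R)$ since $\lambda$ is a unit, and that $\dagger$-congruence is the relation under which the unitary groups $\U(J;A,\sigma)$ are conjugate — are exactly the points the paper leaves implicit and then exploits in the corollaries immediately following.
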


\begin{corollary}
All unitary groups over $\R\oplus\R$ are conjugate to one another, and in particular $\diag(I_{n},\lambda)$ conjugates $\U(n,1;\R\oplus\R)$ to $\U(n+1;\R\oplus\R)$.
\end{corollary}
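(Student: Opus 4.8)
The statement to prove is that $\diag(I_n,\lambda)$ conjugates $\U(n,1;\R\oplus\R)$ to $\U(n+1;\R\oplus\R)$, and more generally that all unitary groups over $\R\oplus\R$ are conjugate. The plan is to reduce this to the already-established fact (the preceding observation) that conjugation by a diagonal matrix with $\lambda$ in the last slot sends the defining Hermitian matrix $Q=\diag(I_n,-1)$ to the matrix $\diag(I_n,-\lambda^2)=\diag(I_n,-1)$—wait, more precisely the computation shows $\diag(1,\lambda)^\dagger\diag(1,1)\diag(1,\lambda)=\diag(1,-1)$, so a single $\lambda$ flips the sign of the corresponding diagonal entry of the form. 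This is the engine of the whole argument.

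First I would record the general principle that conjugating a unitary group changes the preserved form in a controlled way. If $M\in\GL(n+1;\R\oplus\R)$ and $J$ is $\sigma$-Hermitian, then $X\mapsto M^{-1}XM$ carries $\U(J;\R\oplus\R)$ to $\U(M^\dagger J M;\R\oplus\R)$: indeed $X\in\U(M^\dagger JM)$ means $(MXM^{-1})$ preserves $J$, giving the isomorphism $\U(M^\dagger JM)=M^{-1}\U(J)M$. This is the standard change-of-form identity and follows directly from unwinding the definition $X^\dagger JX=J$, using that $\dagger$ is an anti-automorphism of $\M(n+1;\R\oplus\R)$ with $(M^{-1})^\dagger=(M^\dagger)^{-1}$. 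I would state this as a short lemma, since it makes the corollary nearly immediate.

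Then the main computation is to set $M=\diag(I_n,\lambda)$ and verify $M^\dagger Q M = \diag(I_n, \lambda)^\dagger \diag(I_n,-1)\diag(I_n,\lambda)$. Since $\lambda$ is self-conjugate under the coordinate-swap involution only up to sign—here $\overline{\lambda}=-\lambda$ because $\lambda=e_+-e_-$ swaps to $e_--e_+=-\lambda$—the bottom-right entry becomes $\overline{\lambda}\cdot(-1)\cdot\lambda = -\overline{\lambda}\lambda = -\|\lambda\| = -(-1)=1$, using $\|\lambda\|=\lambda\overline{\lambda}=-\lambda^2=-1$. Thus $M^\dagger Q M=\diag(I_n,1)=I_{n+1}$, the form defining $\U(n+1;\R\oplus\R)$. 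By the change-of-form lemma, $M^{-1}\U(n,1;\R\oplus\R)M=\U(M^\dagger Q M;\R\oplus\R)=\U(n+1;\R\oplus\R)$, which is exactly the claim. Iterating this diagonal trick (placing $\lambda$ in any chosen set of slots) flips the sign of arbitrary diagonal entries of the form, so every diagonal signature is conjugate to every other, and since all nondegenerate Hermitian forms over $\R\oplus\R$ diagonalize, all unitary groups over $\R\oplus\R$ are mutually conjugate.

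The main obstacle is bookkeeping the involution correctly: one must be careful that $\dagger$ over $\R\oplus\R$ uses the coordinate-swap conjugation, under which $\lambda$ is \emph{anti}-fixed ($\overline{\lambda}=-\lambda$), and that $\|\lambda\|=-1$ rather than $+1$. It is precisely the negativity of this norm that converts the $-1$ entry of $Q$ into a $+1$, collapsing the signature. I would double-check this sign by also carrying out the computation in the idempotent basis $e_\pm=\tfrac12(1\pm\lambda)$, where $\R\oplus\R\cong\R\times\R$ with swap conjugation $(a,b)\mapsto(b,a)$, to confirm the index-shifting behavior matches. Everything else is the routine change-of-form identity, so no further difficulty is anticipated.
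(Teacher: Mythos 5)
Your proof is correct and takes essentially the same route as the paper: the corollary there is presented as an immediate consequence of the preceding observation's computation $\diag(1,\lambda)^\dagger\, I\, \diag(1,\lambda)=\diag(1,-\lambda^2)=\diag(1,-1)$, which is exactly your change-of-form identity $M\inv\U(J)M=\U(M^\dagger JM)$ combined with $\bar{\lambda}\lambda=-1$, read in the opposite direction (and since $\lambda^2=1$ gives $M=\diag(I_n,\lambda)=M\inv$, the direction of conjugation is immaterial). Your closing remarks on diagonalizing nondegenerate Hermitian forms and rescaling entries by norms correctly fill in the general ``all unitary groups are conjugate'' claim that the paper leaves implicit.
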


\begin{corollary}
The geometry $\Hyp_{\R\oplus\R}^n$ is conjugate to the standard unitary geometry\footnote{This may make you think that the \emph{correct}, or interesting geometries over $\R\oplus\R$ do not come from the unitary, but rather the orthogonal groups.
We study these as well in Chapter \ref{chp:Geos_over_Algs} and show only already-familiar geometries result.  For example, the geometry corresponding to $\O(n,1;\R\oplus\R)$ is $\Hyp_\R^n\times\Hyp_\R^n$.}
$(\SU(n+1;\R\oplus\R),\USt(n+1;\R\oplus\R))$ by $C=\diag(I_n,\lambda)$.	
\end{corollary}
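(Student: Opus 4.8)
The plan is to verify the claimed conjugacy directly. The statement asserts that the geometry $\Hyp_{\R\oplus\R}^n=(\SU(n,1;\R\oplus\R),\USt(n,1;\R\oplus\R))$ is conjugate, via $C=\diag(I_n,\lambda)$, to the standard unitary geometry $(\SU(n+1;\R\oplus\R),\USt(n+1;\R\oplus\R))$. Since a conjugacy of geometries in the Automorphism-Stabilizer formalism is an isomorphism $\Phi$ carrying the automorphism group to the automorphism group and the stabilizer to the stabilizer (see Definition in Section~\ref{subsec:Aut_Stab_Geometries}), it suffices to show that conjugation by $C$ carries $\U(n,1;\R\oplus\R)$ onto $\U(n+1;\R\oplus\R)$ and the point-stabilizer $\USt(n,1;\R\oplus\R)$ onto $\USt(n+1;\R\oplus\R)$.

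First I would invoke the preceding Observation and Corollary: the key algebraic input is that over $\R\oplus\R$ the notion of signature collapses, because $\lambda^2=1$ lets us flip the sign of a diagonal entry of a Hermitian form by conjugation. Concretely, with $Q_{n,1}=\diag(I_n,-1)$ and $Q_{n+1}=\diag(I_n,1)$ the defining forms, the plan is to compute $C^\dagger Q_{n+1} C$ where $C=\diag(I_n,\lambda)$. Since $\bar\lambda=-\lambda$ (conjugation on $\R\oplus\R=\R[\sqrt1]$ sends $a+\lambda b\mapsto a-\lambda b$), one has $\lambda^\dagger\cdot 1\cdot\lambda = (-\lambda)(1)(\lambda)=-\lambda^2=-1$, so $C^\dagger Q_{n+1}C=Q_{n,1}$. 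This is exactly the displayed computation in the Observation above, extended from the $2\times2$ block to the full diagonal. From $C^\dagger Q_{n+1}C=Q_{n,1}$ it follows by the standard change-of-form identity that $A\mapsto C^{-1}AC$ sends $\U(Q_{n,1};\R\oplus\R)=\U(n,1;\R\oplus\R)$ isomorphically onto $\U(Q_{n+1};\R\oplus\R)=\U(n+1;\R\oplus\R)$, and restricts to an isomorphism on the determinant-one subgroups $\SU$.

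Next I would track the stabilizer. The point $[e_{n+1}]\in\Hyp_{\R\oplus\R}^n$ has stabilizer $\USt(n,1;\R\oplus\R)=\smat{\U(n;\R\oplus\R)&\\&\U(\R\oplus\R)}$ by the Calculation above. Under conjugation by $C=\diag(I_n,\lambda)$, a block-diagonal matrix $\smat{B&\\&u}$ is sent to $\smat{B&\\&\lambda^{-1}u\lambda}=\smat{B&\\&u}$, since $\R\oplus\R$ is commutative and $\lambda$ is a unit ($\lambda^{-1}=\lambda$); thus the block-diagonal form is preserved. I would then check that the conjugacy carries $\USt(n,1;\R\oplus\R)$ onto $\USt(n+1;\R\oplus\R)$ by noting that both stabilizers consist of the same block-diagonal unitary matrices, the only change being in which ambient unitary group they sit inside. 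Equivalently, $C^{-1}e_{n+1}$ is (up to a unit) the $(n+1)$st basis vector, so $C$ intertwines the two basepoints and hence their stabilizers.

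The main obstacle will be bookkeeping around non-effectiveness and the choice of model rather than any deep difficulty: one must confirm that the conjugacy descends correctly to the projectivized quotients $\mathcal{S}/\U(\R\oplus\R)$ on each side, i.e. that $C$ induces a well-defined equivariant diffeomorphism of domains and not merely a group isomorphism. Here I would appeal to the Equivalence of Perspectives (the functors $\cat F,\Psi$ of Section~\ref{subsec:GrpSP_AutStab_Equivalence}), which guarantees that an isomorphism in the Automorphism-Stabilizer category automatically yields one in the Group-Space category; this lets me conclude at the level of stabilizers without separately constructing the space-level map. A minor secondary point to address is the distinction, noted in Definition~\ref{def:RRHyp}, between quotienting by $\U(\R\oplus\R)$ versus by the positive-norm units of $(\R\oplus\R)^\times$; since these yield only locally isomorphic geometries and the statement is about conjugacy of the stated pairs, I would simply remark that the computation is insensitive to this choice, as conjugation by $C$ commutes with the central scalar action in either case.
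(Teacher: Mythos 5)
Your proposal is correct and follows essentially the paper's own route: the paper derives this corollary directly from the displayed Observation that $C^\dagger\,\mathrm{diag}(I_n,1)\,C=\diag(I_n,-1)$ (hence the preceding corollary that $\diag(I_n,\lambda)$ conjugates $\U(n,1;\R\oplus\R)$ to $\U(n+1;\R\oplus\R)$), with the stabilizer correspondence left implicit. Your write-up simply makes explicit what the paper suppresses — the block-diagonal check that $C$ fixes $\USt$ pointwise (using commutativity and $\lambda^{-1}=\lambda$) and the remark about the choice of projective model — so there is no substantive difference in method.
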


\noindent 
To avoid the proliferation of negative signs in what follows, we will analyze this conjugate model instead.
As a first observation, 
the level sets of $\sum_i z_i\bar{z_i}$ are cut out in $\R^{2(n+1)}$ as $\sum_i x_i^2-y_i^2$ under the identification $z_i=x_i+\lambda y_i$ so the associated representation of $\SU(n+1;\R\oplus\R)$ has image in $\SO(n+1,n+1)\subg\SL(2n+2;\R)$.
The general linear group itself $\GL(n+1;\R\oplus\R)$ is isomorphic to the direct product $\GL(n+1;\R)\times\GL(n+1;\R)$ via the projections onto $\GL(n+1;\R)$ given by multiplication by the principal idempotents $A\mapsto (Ae_+,Ae_-)$.  
We may use this decomposition to understand $\U(n+1;\R\oplus\R)$.

\begin{proposition}
The group $\U(n+1;\R\oplus\R)$ is abstractly isomorphic to $\GL(n+1;\R)$, and $\SU(n+1;\R\oplus\R)\cong\SL(n+1;\R)$.
\label{prop:U(R+R)_iso_type}
\end{proposition}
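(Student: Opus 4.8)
The plan is to exploit the isomorphism $\GL(n+1;\R\oplus\R)\cong\GL(n+1;\R)\times\GL(n+1;\R)$ induced by the principal idempotents $e_\pm=\tfrac{1}{2}(1\pm\lambda)$, which decompose any matrix $A\in\M(n+1;\R\oplus\R)$ as $A=A_+e_++A_-e_-$ with $A_\pm=Ae_\pm\in\M(n+1;\R)$. First I would record that this map is a ring isomorphism, so that matrix multiplication, the determinant, and the transpose all factor coordinatewise: $\det A=(\det A_+)e_++(\det A_-)e_-$ and $A^T=A_+^Te_++A_-^Te_-$. The key extra ingredient is the behavior of conjugation: since $\sigma(\lambda)=-\lambda$, the involution swaps the two idempotents, $\sigma(e_+)=e_-$. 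Consequently the adjoint $A^\dagger=\sigma(A)^T$ satisfies $(A^\dagger)_+=A_-^T$ and $(A^\dagger)_-=A_+^T$; that is, the $\dagger$-operation \emph{exchanges} the two $\GL(n+1;\R)$ factors while transposing.

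With these facts in place, the unitarity condition $A^\dagger A=I$ (using the identity form, the version conjugate to $\U(n,1;\R\oplus\R)$ from the preceding corollary) becomes, in the idempotent coordinates, the pair of equations $A_-^TA_+=I$ and $A_+^TA_-=I$. These say exactly that $A_-^T=A_+^{-1}$, so $A_-$ is completely determined by $A_+$, with $A_+$ ranging freely over all of $\GL(n+1;\R)$. Thus the map $A\mapsto A_+$ furnishes an isomorphism $\U(n+1;\R\oplus\R)\xrightarrow{\sim}\GL(n+1;\R)$; I would check it is a homomorphism (immediate from coordinatewise multiplication) and a bijection (surjectivity because any $B\in\GL(n+1;\R)$ yields the unitary matrix $Be_++(B^{-1})^Te_-$, injectivity because $A_+=I$ forces $A_-=(A_+^{-1})^T=I$).

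For the special unitary statement, I would impose $\det A=1$. In idempotent coordinates $\det A=(\det A_+)e_++(\det A_-)e_-$, and the value $1\in\R\oplus\R$ is $1\cdot e_++1\cdot e_-$, so $\det A=1$ means $\det A_+=\det A_-=1$. But the unitarity relation already gives $A_-=(A_+^{-1})^T$, whence $\det A_-=(\det A_+)^{-1}$; combined with $\det A_+=1$ this is automatic. Therefore the restriction of the isomorphism above carries $\SU(n+1;\R\oplus\R)$ onto $\{B\in\GL(n+1;\R)\mid\det B=1\}=\SL(n+1;\R)$, giving $\SU(n+1;\R\oplus\R)\cong\SL(n+1;\R)$. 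Since the corollary already established that $C=\diag(I_n,\lambda)$ conjugates $\U(n,1;\R\oplus\R)$ to $\U(n+1;\R\oplus\R)$ (and likewise for the special groups), the analogous statements for the signature-$(n,1)$ groups follow.

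The main obstacle is bookkeeping the interaction of the involution with the idempotent splitting: one must be careful that $\dagger$ is not the coordinatewise transpose but the transpose composed with the factor-swap $\sigma(e_\pm)=e_\mp$, since it is precisely this swap that turns the single equation $A^\dagger A=I$ into the coupled pair $A_-^TA_+=I=A_+^TA_-$ and hence produces $\GL(n+1;\R)$ rather than $\O(n+1;\R)\times\O(n+1;\R)$. I would make sure to verify $\sigma(e_+)=e_-$ explicitly (from $\sigma(\lambda)=-\lambda$) before propagating it through the matrix computations; everything else is routine linear algebra over $\R$.
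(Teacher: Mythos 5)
Your proof is correct and takes essentially the same route as the paper's: decompose $A=Xe_++Ye_-$ along the principal idempotents, observe that conjugation swaps $e_\pm$ so that $A^\dagger A=I$ reduces to $Y^TX=I$, and conclude that $X\mapsto Xe_++X^{-T}e_-$ is an isomorphism $\GL(n+1;\R)\to\U(n+1;\R\oplus\R)$ respecting the coordinatewise determinant. If anything, your treatment of the $\SU$ statement is marginally tidier, since imposing $\det A=1$ coordinatewise forces $\det X=\det Y=1$ directly, whereas the paper's passage through $\det X=1/\det X$ alone would only give $\det X=\pm 1$.
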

\begin{proof}
Let $A\in\U(n+1;\R\oplus\R)$ and write $A=Xe_++Ye_-$ for $X,Y\in\GL(n+1;\R)$.  
Recalling that conjugation on $\R\oplus\R$ transposes the principal idempotents, 
we have $A^\dagger A=(X^Te_-+Y^Te_+)(Xe_++Ye_-)=Y^TXe_++X^TYe_-$ and expanding $e_\pm$ and equating real and $\lambda$-parts of $A^\dagger A=I$ shows $X^TY=I$.
The injection $X\mapsto Xe_++X^{-T}e_-$ from $\GL(n+1;\R)$ to $\U(n+1;\R\oplus\R)$ is easily checked to be a homomorphism, and is surjective by the above computation.
By the orthogonality of the principal idempotents, $\det(Xe_++Ye_-)=\det(X)e_++\det(Y)e_-$, the matrices of real determinant necessarily satisfy $\det(X)=\det(Y)$.  
Applying this to the elements of $\SU(n+1;\R\oplus\R)$ shows $\det(X)=\det(X^{-T})=\frac{1}{\det(X)}$, thus $\det(X)=1$.
\end{proof}

\noindent
It's useful to quickly revisit the point stabilizer with respect to this description.  A matrix $A=Xe_++X^{-T}e_-$ projectively stabilizes the vector $u=ve_++we_-$ if $Au=\alpha u$ for $\alpha=\beta e_++\gamma e_-$ a unit in $\R\oplus\R$.  Writing this out, $Xv=\beta v$ and $X^{_T}w=\gamma w$ so $v$ is an eigenvector of $X$ and $w$ an eigenvector of $X^{-T}$.
The basis vector $e_{n+1}\in(\R\oplus\R)^{n+1}$, is expressed in the $e_+,e_-$ basis as $(0,\ldots, 0)e_++(0,\ldots,0,1)e_-$ provides a convenient choice for computing the stabilizer.

\begin{observation}
Unitary geometry of dimension $2n$ over $\R\oplus\R$	 is given by $(\GL(n+1;\R),\mathsf{Stab})$ for $\mathsf{Stab}=\{X\in\GL(n;\R)\mid e_{n+1}\textrm{ is an eigenvector of}\; X, X^{-T}\}$.
\end{observation}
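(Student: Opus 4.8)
The plan is to compute the stabilizer directly from the group isomorphism $\U(n+1;\R\oplus\R)\cong\GL(n+1;\R)$ established in Proposition \ref{prop:U(R+R)_iso_type}, tracking carefully how the point $[e_{n+1}]\in\Hyp_{\R\oplus\R}^n$ transports under this isomorphism. First I would recall that the isomorphism sends $X\in\GL(n+1;\R)$ to the unitary matrix $A=Xe_+ + X^{-T}e_-$, and that projective stabilization of a vector $u=ve_+ + we_-$ requires $Au=\alpha u$ for a unit $\alpha=\beta e_+ + \gamma e_-\in\U(\R\oplus\R)$. Expanding in the idempotent basis, this decouples into the two real conditions $Xv=\beta v$ and $X^{-T}w=\gamma w$, so that $v$ must be an eigenvector of $X$ and $w$ an eigenvector of $X^{-T}$, with the eigenvalues $\beta,\gamma$ constrained only by $\beta\gamma=1$ (the condition that $\alpha$ be a unit, since the norm is multiplicative in the idempotent coordinates).

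Next I would specialize to the chosen basepoint. Writing $e_{n+1}\in(\R\oplus\R)^{n+1}$ in the $e_+,e_-$ decomposition gives $e_{n+1}=(0,\dots,0)e_+ + (0,\dots,0,1)e_-$, so here $v=0$ and $w$ is the last standard real basis vector. Because $v=0$, the first condition $Xv=\beta v$ is vacuous, and the stabilizer condition reduces entirely to requiring that the last coordinate vector be an eigenvector of $X^{-T}$. A short linear-algebra argument then shows that the last coordinate vector is an eigenvector of $X^{-T}$ if and only if it is an eigenvector of $X^{T}$, which (transposing) is equivalent to the last row of $X$ having the block form of a matrix fixing the coordinate hyperplane spanned by $e_1,\dots,e_n$; equivalently $e_{n+1}$ is an eigenvector of both $X$ and $X^{-T}$ precisely when $X$ has the appropriate upper-block-triangular shape. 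This recovers the asserted description $\mathsf{Stab}=\{X\in\GL(n+1;\R)\mid e_{n+1}\text{ is an eigenvector of }X\text{ and }X^{-T}\}$, where the slight asymmetry in the basepoint (having $v=0$) is absorbed by noting that the eigenvector condition on $X^{-T}$ already forces the matching condition on $X$ via the constraint $\det$-compatibility of the two blocks.

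Finally, to complete the identification as a geometry I would verify that this stabilizer subgroup, together with the automorphism group $\GL(n+1;\R)$, indeed recovers the homogeneous space $\Hyp_{\R\oplus\R}^n$ of Definition \ref{def:RRHyp} up to the conjugacy by $C=\diag(I_n,\lambda)$ used to pass to the standard unitary model; this is immediate from the equivalence of the Group-Space and Automorphism-Stabilizer perspectives (the functors $\cat{F},\Psi$). The main obstacle I anticipate is purely bookkeeping: keeping the two idempotent coordinates straight and correctly interpreting the projective (rather than linear) stabilizer, since the unit $\alpha$ is permitted to act with \emph{different} eigenvalues $\beta$ and $\gamma$ on the two idempotent components subject only to $\beta\gamma=1$. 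One must check that this freedom in $\alpha$ does not enlarge the stabilizer beyond the stated set — i.e. that allowing independent scaling on the $e_+$ and $e_-$ parts is exactly what makes the condition an \emph{eigenvector} condition (any eigenvalue permitted) rather than a fixed-vector condition. Getting this correspondence between the allowed projective scalars and the eigenvalue freedom exactly right is the only genuinely delicate point; the rest is routine.
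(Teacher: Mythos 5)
Your setup is the right one and matches the intended computation---decompose $A=Xe_++X^{-T}e_-$, impose the projective stabilization condition $Au=\alpha u$ with $\alpha=\beta e_++\gamma e_-$, and decouple it into $Xv=\beta v$ and $X^{-T}w=\gamma w$---but your specialization to the basepoint is wrong, and the error propagates fatally. The last coordinate of $e_{n+1}\in(\R\oplus\R)^{n+1}$ is the scalar $1=e_++e_-$, so the idempotent decomposition is $e_{n+1}=e_{n+1}e_++e_{n+1}e_-$: \emph{both} components $v$ and $w$ equal the real basis vector $e_{n+1}$, not $v=0$. The vector your decomposition actually describes, $(0,\ldots,0,e_-)$, has coordinates generating the proper ideal $\R e_-\subset\R\oplus\R$, so it is a generalized zero: it does not represent a point of $(\R\oplus\R)\mathsf{P}^n$ at all, and its stabilizer $\set{X\mid e_{n+1}\textrm{ is an eigenvector of } X^{-T}}$ is strictly larger than the asserted $\mathsf{Stab}$.

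The patch you propose to close this gap is false: the condition that $e_{n+1}$ is an eigenvector of $X^{-T}$ does \emph{not} force it to be an eigenvector of $X$, and there is no ``$\det$-compatibility'' that does so. Concretely, for $n=1$ take $X=\smat{1&1\\0&1}$: then $X^Te_2=e_2$, so $e_2$ is an eigenvector of $X^T$ and hence of $X^{-T}$, while $Xe_2=(1,1)^T$ is not proportional to $e_2$. The two conditions are genuinely independent---being an eigenvector of $X^{-T}$ (equivalently of $X^T$) says the last \emph{row} of $X$ is $(0,\ldots,0,\lambda)$, while being an eigenvector of $X$ says the last \emph{column} is $(0,\ldots,0,\mu)^T$---which is precisely why both appear in the statement. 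With the correct decomposition $v=w=e_{n+1}$ no bridging claim is needed: both eigenvector conditions fall out directly, and the ``delicate point'' you flag at the end also evaporates, because the eigenvalues are automatically compatible: from $X^TX^{-T}=I$ one computes $\beta\gamma=(Xe_{n+1})^T(X^{-T}e_{n+1})=e_{n+1}^Te_{n+1}=1$, so $\alpha=\beta e_++\gamma e_-$ is forced to be a norm-one unit, and the freedom in the projective scalar neither enlarges nor shrinks the stabilizer. Your final step, invoking the $\cat{GrpSp}\cong\cat{AutStb}$ equivalence and the conjugacy by $C=\diag(I_n,\lambda)$ to return to the signature $(n,1)$ model, is fine as stated.
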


\subsection{Low Dimensional Examples}

Hyperbolic space of dimension $1$ over $\R\oplus\R$ is cut out as (a quotient of) the sphere of radius $-1$ with respect to the norm $q(z,z)=\|z_1\|^2-\|z_2\|^2=x_1^2+y_2^2-x_2^2-y_1^2$ for $z_i=x_i+\lambda y_i$.

\begin{observation}
This surface $\mathcal{S}_{\R\oplus\R}(n,1)$ is actually homeomorphic to an open solid torus, as can be seen through the identification with $\SL^-(2;\R)$, the $2x2$ matrices of determinant -1.

$$\det\pmat{ x_1+x_2& y_1+y_2\\y_1-y_2 & x_1-x_2}=x_1^2-x_2^2-y_1^2+y_2^2=-1$$
\end{observation}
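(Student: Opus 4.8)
The claim to prove is that the "sphere of radius $-1$" surface $\mathcal{S}_{\R\oplus\R}(1,1)$ is homeomorphic to an open solid torus, via the stated identification with $\SL^-(2;\R)$, the $2\times 2$ real matrices of determinant $-1$.

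\textbf{The plan.} The statement packages two separate facts: first, that the change of coordinates exhibited in the observation genuinely identifies $\mathcal{S}_{\R\oplus\R}(1,1)$ with $\SL^-(2;\R)$ as topological spaces; and second, that $\SL^-(2;\R)$ is homeomorphic to an open solid torus $\mathbb{B}^2\times\S^1$. I would treat these in turn. For the first part, I would spell out the coordinate identification carefully. Writing a point of $(\R\oplus\R)^2$ as $(z_1,z_2)$ with $z_i=x_i+\lambda y_i$, the norm condition $q(z,z)=z_1\overline{z_1}-z_2\overline{z_2}=-1$ becomes, upon expanding $\|x+\lambda y\|=x^2-y^2$, the equation $x_1^2-y_1^2-(x_2^2-y_2^2)=-1$, i.e. $x_1^2 - x_2^2 - y_1^2 + y_2^2 = -1$. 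The map sending $(x_1,x_2,y_1,y_2)$ to the matrix $\smat{x_1+x_2 & y_1+y_2\\ y_1-y_2 & x_1-x_2}$ is a real-linear isomorphism $\R^4\to\M(2;\R)$ (its matrix is invertible; I would note this is, up to the standard factor, the usual linear iso sending $\R^4$ onto $2\times 2$ matrices), and the determinant computation in the observation shows it carries the quadric $q=-1$ exactly onto $\det=-1$. Being a linear isomorphism of $\R^4$ restricting to a bijection of the two level sets, it is a homeomorphism $\mathcal{S}_{\R\oplus\R}(1,1)\to\SL^-(2;\R)$.

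\textbf{Identifying $\SL^-(2;\R)$ topologically.} The core geometric content is the second part. I would use the right coset (or the diffeomorphism type) structure: $\SL^-(2;\R)$ is a single coset of $\SL(2;\R)$ inside $\GL(2;\R)$, namely $g_0\cdot\SL(2;\R)$ for any fixed $g_0$ of determinant $-1$ (say $g_0=\diag(1,-1)$), so left translation by $g_0$ is a homeomorphism $\SL(2;\R)\to\SL^-(2;\R)$. Hence it suffices to show $\SL(2;\R)$ is homeomorphic to an open solid torus. This is the standard fact that the Iwasawa/polar decomposition $\SL(2;\R)=\SO(2)\cdot A\cdot N$ (or the $KAN$ decomposition) gives a diffeomorphism $\SL(2;\R)\cong \SO(2)\times\R^2\cong \S^1\times\R^2$, which is an open solid torus. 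Concretely I would invoke polar decomposition: every $g\in\SL(2;\R)$ factors uniquely as $g=kp$ with $k\in\SO(2)\cong\S^1$ and $p$ a positive-definite symmetric matrix of determinant $1$, and the space of such $p$ is diffeomorphic to $\R^2$ (it is a convex domain, e.g. via $p=\exp(s)$ for $s$ traceless symmetric), giving $\SL(2;\R)\cong\S^1\times\R^2=\mathbb{B}^2\times\S^1$.

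\textbf{Where the work lies.} None of the individual steps is deep, so the proof is mostly an orchestration: the one place requiring genuine care is verifying that the exhibited linear map really is bijective and restricts to a bijection of level sets (not merely that it preserves the defining polynomial), and the identification of the determinant-one coset with the determinant-$(-1)$ coset so that the solid-torus conclusion for $\SL(2;\R)$ transfers. The main conceptual obstacle I anticipate is being clean about the two different normalizations of the $\R\oplus\R$ norm versus the real quadratic form, making sure the signature $(2,2)$ quadric $x_1^2+y_2^2-x_2^2-y_1^2=-1$ is matched with $\det=-1$ with correct signs; once that bookkeeping is pinned down, the homeomorphism to the open solid torus follows immediately from the standard polar-decomposition description of $\SL(2;\R)$.
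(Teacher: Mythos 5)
Your proposal is correct and follows essentially the same route the paper intends: the displayed determinant identity is precisely the linear change of coordinates $(x_1,x_2,y_1,y_2)\mapsto\smat{x_1+x_2 & y_1+y_2\\ y_1-y_2 & x_1-x_2}$ carrying the quadric $q=-1$ onto $\det=-1$, after which one translates by a fixed determinant $-1$ matrix and invokes the standard polar (or Iwasawa) decomposition $\SL(2;\R)\cong\S^1\times\R^2$. The paper leaves these last two steps implicit, and your write-up supplies exactly the bookkeeping (bijectivity of the linear map on level sets, the coset translation, and the $KAN$/polar identification) that a complete proof requires.
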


\noindent
The action of $\U(\R\oplus\R)$ foliates this copy of $\SL(2;\R)$ with cosets of $\SO(1,1)=\U(\R\oplus\R).\smat{1&0\\0&1}$.
Thus the resulting space $\Hyp_{\R\oplus\R}^1$ is the familiar de Sitter space of dimension two $\mathsf{dS}^2=(\SO(2,1),\SO(1,1))=(\SL(2;\R),\SO(1,1))$, which itself identifies with Anti de Sitter space $\mathsf{AdS}^2$ as a coincidence of low dimensionality.

\begin{figure}
\centering\includegraphics[width=0.65\textwidth]{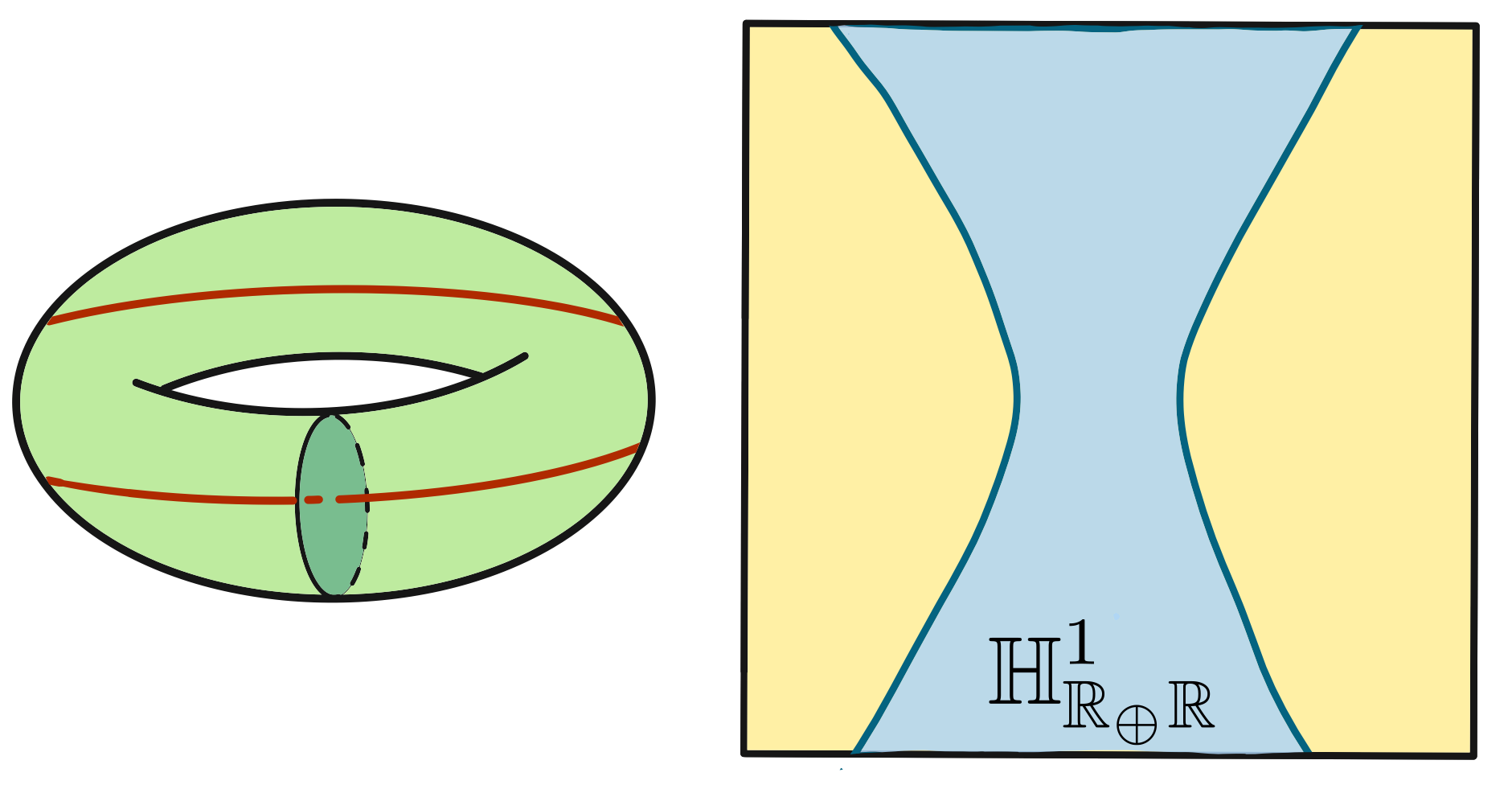}
\caption{The solid torus foliated by cosets of $\SO(1,1)$ and the familiar model of $\mathsf{dS}^2=\mathsf{AdS}^2$ as a subgeometry of $\RP^2$.}	
\end{figure}

\noindent
Again in higher dimensions this connection breaks down, and $\Hyp_{\R\oplus\R}^n$ is not isomorphic to either de Sitter or Anti-de Sitter space of the appropriate dimension.
Instead, $\Hyp_{\R\oplus\R}^n$ identifies in general with another geometry constructed from $\RP^n$ and its dual.
This geometry, \emph{point-hyperplane projective space} is explored on in the next section, and provides the means of using transitional geometry to build a connection between complex hyperbolic and real projective geometry.

\section{Point-Hyperplane Projective Space}
\label{sec:SDRP_Geo}
\index{Geometries!Point-Hyperplane Projective Space}
\index{Projective Geometry! Point-Hyperplane Projective Space}

In this final section, we construct a geometry of a different flavor, built directly from the projective geometry of a real vector space $V$.
The dual space $V^\vee$ is the vector space of linear functionals $V^\vee=\Hom(V,\R)$.
Evaluation provides a natural pairing on $V^\vee\times V\to\R$ by $(\phi,v)\mapsto \phi(v)$.
The action of $\GL(V)$ on $V$ by left multiplication gives a left action on $V^\vee$ respecting the pairing; that is for all $X$ in $\GL(V)$ and all $(\phi,v)$ we have $(X.\phi)(Xv)=\phi(v)$ by precomposition with the inverse.

Expressed in a basis for $V$ and the corresponding dual basis for $V^\vee$, the action of $X\in\GL(V)$ on $V^\vee$ is represented by left multiplication by the inverse transpose $X^{-T}$.
This gives an action of $\GL(V)$ on $V^\vee\times V$ by
$X.(\phi,v)=(X^{-T}\phi,Xv)$.
This action leaves the level sets $\mathcal{L}_c:=\set{(\phi,v)\in V^\vee\times V\;\mid\; \phi(v)=c}$ of the pairing invariant, and in fact acts transitively on them.

\begin{calculation}
Given two vectors $\phi,v$ such that $\phi^Tv=1$	 there is a matrix $X$ such that the first column of $X$ is $v$ and the first row of $X\inv$ is $\phi$.
\end{calculation}
\begin{proof}
Let $Q$ be any invertible matrix with $v$ as the first column.  Then notice that the first row of $Q\inv$ has inner product 1 with $v$ and all other rows are orthogonal to $v$, as $QQ\inv=I$.  The rows of $Q\inv$ (thought of as column vectors) which we will denote $\{r_i\}$ form a basis for $V$, and so we may express $\phi$ in this basis
$\phi=\sum_i \alpha_i r_i$ for $\alpha_i\in\R$.
But as $\phi^Tv=1$, we have
$1=\phi^Tv=\left(\sum_i \alpha_i r_i\right)^Tv=\sum_i \alpha _ir_i^Tv=\alpha_1$.
Thus in coordinates, $\phi=r_1+\alpha_2r_2+\cdots+\alpha_n r_n$.  We now let $A$ be the identity matrix with the first row replaced with the expression of $\phi$ in basis $\{r_i\}$.
Then $AQ\inv$ has as its first row $\phi$, and  $(AQ\inv)\inv=QA\inv$ still has $v$ as its first column.

$$A=\pmat{
1&\alpha_2 &\alpha_3&\cdots &\alpha_n\\
0&1&0&\cdots &0\\
\vdots &\vdots &\vdots &\vdots &\vdots\\
0&\cdots &\cdots &0&1
}$$

\end{proof}

\begin{corollary}
The action of $\GL(V)$ on $V^\vee\times V$ is transitive on the 1-level set of the pairing $(\phi,v)\to \phi(v)$.	
\end{corollary}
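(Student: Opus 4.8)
The final statement is the Corollary asserting that $\GL(V)$ acts transitively on the $1$-level set $\mathcal{L}_1 = \set{(\phi,v)\in V^\vee\times V \mid \phi(v)=1}$. The plan is to deduce this immediately from the preceding Calculation, which does all the genuine work. I would phrase the proof as a short reduction: transitivity of a group action means any two points lie in the same orbit, so it suffices to move an arbitrary pair $(\phi,v)\in\mathcal{L}_1$ to a single fixed reference point, say $(e_1^\vee, e_1)$ consisting of the first basis vector and the first dual basis covector, which indeed satisfies $e_1^\vee(e_1)=1$ and hence lies in $\mathcal{L}_1$.

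First I would recall the action formula $X.(\phi,v)=(X^{-T}\phi, Xv)$ established just above, and note that since $\mathcal{L}_1$ is invariant under this action, it is enough to show every point of $\mathcal{L}_1$ is in the orbit of the reference point. Given $(\phi,v)$ with $\phi(v)=\phi^T v=1$, the Calculation produces a matrix $X\in\GL(V)$ whose first column is $v$ and whose inverse has first row $\phi$. Unwinding what this says in terms of the standard and dual bases: the condition that the first column of $X$ is $v$ means exactly $X e_1 = v$, and the condition that the first row of $X^{-1}$ is $\phi$ means exactly $X^{-T} e_1^\vee = \phi$ (the first row of $X^{-1}$, read as a covector, is the image of the first dual basis vector under $X^{-T}$). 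Therefore $X.(e_1^\vee, e_1) = (X^{-T}e_1^\vee, X e_1) = (\phi, v)$, exhibiting $(\phi,v)$ in the orbit of the reference point.

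Since every point of $\mathcal{L}_1$ lies in the single $\GL(V)$-orbit of $(e_1^\vee,e_1)$, the action is transitive on $\mathcal{L}_1$, which is the claim. I do not anticipate any real obstacle here: the entire content has already been discharged in the Calculation, and this Corollary is a bookkeeping step translating ``there exists a matrix with prescribed first column and prescribed first inverse-row'' into ``the orbit map hits every point of the level set.'' The only mild care needed is the dictionary between the matrix-entry language of the Calculation (first column, first row of the inverse) and the intrinsic action language (images of $e_1$ and $e_1^\vee$), so I would state that correspondence explicitly to keep the argument clean. One could alternatively move between two arbitrary points $(\phi_1,v_1)$ and $(\phi_2,v_2)$ directly by composing one such $X$ with the inverse of another, but routing everything through the fixed reference point is cleaner and makes transitivity transparent.
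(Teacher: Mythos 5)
Your proof is correct and is essentially the paper's own argument: both route transitivity through the reference point $(b_1^\vee,b_1)$, invoke the preceding Calculation to produce $X\in\GL(V)$ with prescribed first column $v$ and first inverse-row $\phi$, and translate this into $X.(b_1^\vee,b_1)=(\phi,v)$. Your explicit remark that the first row of $X^{-1}$ equals $X^{-T}b_1^\vee$ is exactly the dictionary the paper uses, so there is nothing to change.
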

\begin{proof}
Choose a basis for $V$ and take the corresponding dual basis for $V^\vee$.
The points of $\mathcal{L}_1$ are all of the pairs of column vectors $(\phi,v)	$ with $\phi^Tv=1$.  In particular, the vectors $b_1$ and $b_1^\vee$ make this list, both represented as $(1,0,\ldots,0)^T$.  The orbit of the point $(b_1^\vee,b_1)$ is the collection
$\set{(X^{-T}b_1^\vee,Xb_1)\;\mid\; X\in\GL(V)}$.
But $Xb_1$ is simply the first column of $X$ and $X^{-T}b_1^\vee$ is the first column of $X^{-T}$, which is the first row of $X\inv$.  The previous proposition tells us then that if $(\phi,v)$ is any point of $\mathcal{L}_1$ there is some $X$ such that $(X^{-T}b_1^\vee,Xb_1)=(\phi,v)$ and so we are done.
\end{proof}

\subsection{Group Space Description}

By the calculation above, the action of $\GL(V)$ on any nonzero level set of the pairing is transitive, and defines a geometry.
   
\begin{definition}
The \emph{point-hyperplane} geometry of $V$ is given by the Group - Space pair $(\GL(V),\mathcal{L}_1)$ described above.
\end{definition}

\noindent
As in the construction of hyperbolic space, we may view this geometry either as a fixed level set together with the action of $\GL(V)$, or build a model projectively.
The action of $\GL(V)$ on the coordinate-wise projectivization $\pi:V^\vee\times V\to \P(V^\vee)\times \P(V)$ factors through the quotient $\GL(V)\to\PGL(V)$ and so we have a well-defined action 
$\PGL(V)\acts \P(V^\vee)\times \P(V)$,
$[X].\left([\phi],[v]\right):=\left([X^{-T}\phi],[Xv]\right)$.
After projectivization however, the notion of level set for any particular value fails to remain well-defined.

\begin{lemma}
\label{lem:PH_Proj_Domain}
If $r\neq 0$, $\pi\mathcal{L}_1=\pi\mathcal{L}_r$ for $\pi$ the the projectivization $V^\vee\times V\to \P V^\vee\times \P V$.
\end{lemma}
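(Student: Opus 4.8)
The statement to prove is that projectivization collapses all nonzero level sets of the pairing together: $\pi\mathcal{L}_1 = \pi\mathcal{L}_r$ for any $r \neq 0$.

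The plan is to show both inclusions by exhibiting an explicit scaling that moves a point of one level set into another without changing its image under $\pi$. First I would recall that $\pi \colon V^\vee \times V \to \P V^\vee \times \P V$ is the coordinatewise projectivization, so $\pi(\phi, v) = \pi(\phi', v')$ exactly when $[\phi] = [\phi']$ and $[v] = [v']$, i.e. when $\phi' = a\phi$ and $v' = bv$ for some nonzero scalars $a, b \in \R^\times$. The key observation is that scaling the $V$-coordinate (or the $V^\vee$-coordinate) rescales the pairing: if $\phi(v) = c$ then $\phi(bv) = bc$. This is the only computation needed, and it is immediate from bilinearity of the evaluation pairing.

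For the forward inclusion $\pi\mathcal{L}_1 \subseteq \pi\mathcal{L}_r$, I would take a point $(\phi, v) \in \mathcal{L}_1$, so $\phi(v) = 1$, and consider the scaled point $(\phi, rv)$. Then $\phi(rv) = r\phi(v) = r$, so $(\phi, rv) \in \mathcal{L}_r$, while $\pi(\phi, rv) = ([\phi], [rv]) = ([\phi], [v]) = \pi(\phi, v)$ since $r \neq 0$. This shows every point of $\pi\mathcal{L}_1$ lies in $\pi\mathcal{L}_r$. For the reverse inclusion $\pi\mathcal{L}_r \subseteq \pi\mathcal{L}_1$, I would run the symmetric argument: given $(\psi, w) \in \mathcal{L}_r$ with $\psi(w) = r$, the point $(\psi, \tfrac{1}{r}w)$ satisfies $\psi(\tfrac{1}{r}w) = 1$, hence lies in $\mathcal{L}_1$, and has the same image under $\pi$. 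Combining the two inclusions gives the claimed equality.

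There is essentially no main obstacle here — the statement is a one-line consequence of the bilinearity of the pairing together with the definition of projectivization, and the proof is a direct verification rather than anything requiring a clever idea. The only point deserving a moment of care is making sure the scaling factor is nonzero (guaranteed by the hypothesis $r \neq 0$) so that it genuinely descends to an equality of projective classes; this is why the hypothesis $r \neq 0$ appears in the statement. I would keep the writeup to a few sentences and present both inclusions explicitly as above.
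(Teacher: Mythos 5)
Your proof is correct and is essentially the paper's argument: the paper uses the scaling map $\mu_r\colon(\phi,v)\mapsto(r\phi,v)$, which is a homeomorphism $\mathcal{L}_1\to\mathcal{L}_r$ with $\pi\circ\mu_r=\pi$, while you scale the $V$-coordinate instead and spell out the two inclusions separately — an immaterial difference. Both proofs rest on the same one-line observation that rescaling one factor by a nonzero scalar changes the level set but not the projective image.
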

\begin{proof}
Let $(\phi,v)\in\mathcal{L}_1$ and $\pi(\phi,v)=([\phi],[v])$ its image in $\P V^\vee\times \P V$.  Given any $r\in\R^\ast$ we may choose the representative $(r\phi,v)$ of $([\phi],[v])$ and note that this is a point of $\mathcal{L}_r$.  The map $\mu_r:\mathcal{L}_1\to\mathcal{L}_r$ given by $(\phi,v)\to (r\phi,v)$ is clearly a homeomorphism, but following with $\pi$ leaves the projection unchanged: thus $\pi(\mathcal{L}_r)=\pi\circ\mu_r(\mathcal{L}_1)=\pi(\mathcal{L}_1)$.
\end{proof}

\noindent
Thus, $\P V^\vee\times \P V$ decomposes naturally into two subsets: the projectivization of the zero level set, and the nonzero ones.

\begin{corollary}
$\P V^\vee\times \P V=\pi(\mathcal{L}_0)\sqcup \pi(\mathcal{L}_1)$	
\end{corollary}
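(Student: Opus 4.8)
The plan is to establish the two assertions packaged in the disjoint-union symbol separately: first that $\pi(\mathcal{L}_0)$ and $\pi(\mathcal{L}_1)$ together cover $\P V^\vee\times \P V$, and second that they are disjoint. Throughout I would keep in mind that $\pi$ is only defined on the locus $(V^\vee\smallsetminus 0)\times(V\smallsetminus 0)$, since projectivization requires nonzero representatives; this is harmless because every point of $\P V^\vee\times\P V$ has such representatives by definition.

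For coverage, I would take an arbitrary point $([\phi],[v])\in\P V^\vee\times\P V$ and choose any representatives $\phi\in V^\vee\smallsetminus 0$, $v\in V\smallsetminus 0$. Evaluating the pairing gives a scalar $c=\phi(v)\in\R$. If $c=0$ then $(\phi,v)\in\mathcal{L}_0$ and hence $([\phi],[v])\in\pi(\mathcal{L}_0)$. If $c\neq 0$ then $(\phi,v)\in\mathcal{L}_c$, and by Lemma \ref{lem:PH_Proj_Domain} we have $\pi(\mathcal{L}_c)=\pi(\mathcal{L}_1)$, so $([\phi],[v])\in\pi(\mathcal{L}_1)$. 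Either way the point lies in the union, establishing $\P V^\vee\times\P V=\pi(\mathcal{L}_0)\cup\pi(\mathcal{L}_1)$.

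For disjointness, the essential observation is that whether the pairing $\phi(v)$ vanishes is a projective invariant. Concretely, suppose some point lay in $\pi(\mathcal{L}_0)\cap\pi(\mathcal{L}_1)$, so that there are representatives $(\phi_0,v_0)\in\mathcal{L}_0$ and $(\phi_1,v_1)\in\mathcal{L}_1$ with $\pi(\phi_0,v_0)=\pi(\phi_1,v_1)$. Then $\phi_1=a\phi_0$ and $v_1=b v_0$ for some $a,b\in\R^\ast$, whence $\phi_1(v_1)=ab\,\phi_0(v_0)=ab\cdot 0=0$, contradicting $\phi_1(v_1)=1$. Therefore $\pi(\mathcal{L}_0)\cap\pi(\mathcal{L}_1)=\varnothing$ and the union is disjoint.

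I do not anticipate a genuine obstacle here; the statement is essentially a bookkeeping consequence of the fact that rescaling $\phi$ and $v$ multiplies the pairing by a nonzero scalar, so the dichotomy ``$\phi(v)=0$ versus $\phi(v)\neq 0$'' descends cleanly to projective classes. The only point deserving a sentence of care is to make explicit that representatives of a single class in $\P V^\vee\times\P V$ differ by independent nonzero scalars in each factor, which is exactly what makes the vanishing condition well defined and drives the disjointness argument.
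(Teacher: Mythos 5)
Your proof is correct and follows essentially the same route as the paper: coverage via the dichotomy $\phi(v)=0$ versus $\phi(v)\neq 0$ together with Lemma \ref{lem:PH_Proj_Domain} to collapse all nonzero level sets to $\pi(\mathcal{L}_1)$, and disjointness via the projective invariance of the vanishing of the pairing. Your explicit remark that $\pi$ is only defined away from the zero vectors is a small point of care the paper glosses over, but it does not change the argument.
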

\begin{proof}
The evaluation pairing sends each point of $V^\vee\times V$ to a real number and so we may write 
$V^\vee\times V=\bigcup_{r\in\R}\mathcal{L}_r$
Applying $\pi$ to both sides gives
$\pi(V^\vee\times V)=\P V^\vee\times \P V=\pi\left(\bigcup\mathcal{L}_r\right)=\bigcup_{r\in\R}\pi(\mathcal{L}_r)$,
but the proposition above tells us that for all $r\in\R^\ast$, $\pi(\mathcal{L}_r)$ coincide, and so this union is really just 
$\P V^\vee\times \P V=\pi(\mathcal{L}_0)\cup\pi(\mathcal{L}_1)$.
It remains only to see that this union is disjoint.  If $([\phi],[v])\in\pi(\mathcal{L}_0)$ then there is some representative for which $\phi(v)=0$  But this clearly holds for all such representatives as $r\phi(sv)=(rs)\phi(v)=0$ and so if $\psi(w)=1$ then $([\psi],[w])\not\in\pi(\mathcal{L}_0)$.
\end{proof}

\noindent 
This provides a second group-space description of the same geometry:

\begin{definition}
The point-hyperplane geometry of $V$ is given by $(\GL(V), \P V^\vee\times \P V\smallsetminus \pi(\mathcal{L}_0))$, as this complement of the zero locus of the pairing is homeomorphic to $\mathcal{L}_1$ as above.
\end{definition}

\noindent
It is this second description, as a subset of $\P V^\vee\times \P V$, from which the name \emph{point-hyperplane geometry} is derived.
Projective classes of linear functionals are determined by their kernels, which are hyperplanes in $\P V$ under projectivization.
Thus, a point in $\P V^\vee\times\P V$ can be thought of as a pair of a projective point and hyperplane.
The points which evaluate to $0$ under the pairing are exactly the pairs $(\phi,v)$ such that $v\in\ker \phi$, that is $[v]$ lies on the hyperplane determined by $[\phi]$.
This gives a geometric description of the geometry, completely in terms of the intrinsic geometry of $\P V$.
We state this for $V=\R^{n+1}$ below.

\begin{definition}
The \emph{point-hyperplane geometry} of $\RP^n$ has as underlying space the collection of all pairs $(H,p)$ of hyperplanes $H\subset \RP^n$ and points $p\in \RP^n$ such that $p\not\in H$.
The automorphisms of this geometry are the full automorphism group of $\RP^n$.	
\end{definition}

\subsection{Equivalence with $\Hyp_{\R\oplus\R}^n$}

Point-hyperplane projective space seems to be a geometry of a very different flavor than the hyperbolic-analogs that we have been discussing in the rest of this chapter.
The reason for introducing it is, of course, that there is a close relationship - unitary geometry over $\R\oplus\R$ is locally isomorphic to point-hyperplane projective space!
Thus we can learn a lot about $\Hyp^n_{\R\oplus\R}$ from this easier to study model in $\RP^n\times\RP^n$.
Hints of this isomorphism are already out there: unitary groups over $\R\oplus\R$ are isomorphic to the general linear groups over $\R$, and the unit spheres for Hermitian forms over $\R\oplus\R$ are cut out by equations isomorphic to the pairing $\R^n\times(\R^n)^\vee\to\R$ after a linear change of coordinates.
Below, we use the conjugate model $(\U(n;\R\oplus\R),\USt(n;\R\oplus\R))$ for $\Hyp_{\R\oplus\R}^n$ to avoid conjugacy and negative signs everywhere.

\begin{calculation}
The change of coordinates $f\colon \R^n\times\R^n\to\R^n\times\R^n$ 
by $(\phi_i,v_i)=(x_i+y_i,x_i-y_i)$ identifies the unit sphere $\mathcal{S}_q(-1)$ of radius $-1$ for the Hermitian form $q$ on $(\R\oplus\R)^n$ with the level set $\mathcal{L}_1$ of the pairing on $(\R^n)^\vee\times(\R^n)$.
\label{calc:HypRR_Domain_Calc}
\end{calculation}
\begin{proof}
In the coordinates $(\phi,v)$ the $1$ level set of the dual pairing on $\R^n\times\R^n$ is $\phi(v)=\sum_{i=1}^n \phi_i v_i=1$.
In the coordinates $\vec{z}=\vec{x}+\lambda \vec{y}$ on $(\R\oplus\R)^n$, the sphere of radius $-1$ is $\sum_{i=1}^{n-1} x_i^2-y_i^2-(x_{n}^2-y_{n}^2)$.
We define the change of coordinates $f\colon \R^n\times\R^n\to\R^n\times\R^n$ 
by $(\phi_i,v_i)=(x_i+y_i,x_i-y_i)$, taking $\mathcal{L}_1$ to $\mathcal{S}_{\R\oplus\R}(n,1)$.
\end{proof}

\noindent 
This change of coordinates can actually be interpreted wholly internally to the geometry of $\Hyp_{\R\oplus\R}^n$ as taking a point $\vec{x}+\lambda\vec{y}$ and expressing it in terms not of $\{1,\lambda\}$ but the basis of orthogonal idempotents $\{e_+,e_-\}$.

\begin{proposition}
Point-Hyperplane projective geometry is locally isomorphic to the unitary geometry over $\R\oplus\R$.	
\end{proposition}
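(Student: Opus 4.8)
The plan is to establish a local isomorphism by exhibiting a Lie algebra isomorphism carrying the point stabilizer of one geometry into that of the other, invoking the Automorphism-Stabilizer formalism and the definition of local isomorphism from the earlier chapter. By Proposition \ref{prop:U(R+R)_iso_type}, the automorphism group $\U(n;\R\oplus\R)$ is abstractly isomorphic to $\GL(n;\R)$ via the map $A = Xe_+ + X^{-T}e_- \mapsto X$, and this is precisely the automorphism group of point-hyperplane geometry. So the groups match on the nose; the real content is to check that the point stabilizers correspond. I would work in the conjugate model $(\U(n;\R\oplus\R), \USt(n;\R\oplus\R))$ to avoid the proliferation of signs, exactly as set up preceding Calculation \ref{calc:HypRR_Domain_Calc}.

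First I would recall from the Observation following Proposition \ref{prop:U(R+R)_iso_type} that under the identification $A = Xe_+ + X^{-T}e_-$, projective stabilization of $u = ve_+ + we_-$ amounts to $v$ being an eigenvector of $X$ and $w$ an eigenvector of $X^{-T}$. Taking the basepoint $e_{n+1}$, written as $(0,\ldots,0)e_+ + (0,\ldots,0,1)e_-$, its stabilizer is $\{X \in \GL(n;\R) \mid e_{n+1} \text{ is an eigenvector of } X \text{ and of } X^{-T}\}$. On the point-hyperplane side, the stabilizer of a basepoint pair $([\phi],[v]) \in \mathcal{L}_1$ is exactly the set of $X \in \GL(n;\R)$ fixing the line $[v]$ and fixing the hyperplane $[\ker\phi]$, i.e. fixing the projective point $[v]$ and the projective covector $[\phi]$ — which translates to $v$ being an eigenvector of $X$ and $\phi$ an eigenvector of $X^{-T}$. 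I would then use the change of coordinates $f\colon(\phi_i,v_i) = (x_i+y_i, x_i-y_i)$ of Calculation \ref{calc:HypRR_Domain_Calc}, which identifies $\mathcal{L}_1$ with the unit sphere, to match the basepoint $([\phi],[v])$ with $[e_{n+1}]$ and thereby see the two stabilizer conditions are literally the same set of matrices.

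Once the groups are identified by an explicit isomorphism and the two point stabilizers are seen to be the identical subgroup under that isomorphism, the induced map on Lie algebras is an isomorphism carrying $\mathfrak{k}$ to $\mathfrak{c}$, which is exactly the definition of local isomorphism of geometries from the Automorphism-Stabilizer perspective. I would also remark that the change of coordinates $f$ descends to an equivariant identification of the domains $\mathcal{S}_{\R\oplus\R}(n,1)/\U(\R\oplus\R)$ and $\mathcal{L}_1$ up to the residual $\U(\R\oplus\R)$ versus $\R^\times$ quotient discrepancy, which is precisely why the statement asserts \emph{local} rather than global isomorphism. The main obstacle I anticipate is bookkeeping around non-effectiveness: the unitary model carries scalar matrices acting trivially and a one-dimensional unit group, whereas point-hyperplane geometry is phrased with $\GL(V)$ (or $\PGL(V)$) acting, so I must be careful that the abstract isomorphism $\U(n;\R\oplus\R)\cong\GL(n;\R)$ of Proposition \ref{prop:U(R+R)_iso_type} is the correct one to use and that the connected components and central kernels are accounted for — this is exactly the sort of discrepancy (differing by components and covers) that local isomorphism is designed to absorb, so I would lean on the framework of Section \ref{subsec:GrpSP_AutStab_Equivalence} to finesse it rather than tracking every scalar explicitly.
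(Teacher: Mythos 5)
Your argument is correct, but it runs dual to the paper's. The paper stays in the Group--Space formalism: after the identification $\U(n;\R\oplus\R)\cong\GL(n;\R)$ of Proposition \ref{prop:U(R+R)_iso_type}, it computes the action in idempotent coordinates, $(Xe_++X^{-T}e_-).(ve_++we_-)=Xve_++X^{-T}we_-$, recognizes this as the twisted diagonal action defining point-hyperplane geometry, matches the unprojectivized domains via Calculation \ref{calc:HypRR_Domain_Calc}, and then spends the second half of the proof on projectivization bookkeeping --- comparing the quotient of $\mathcal{S}_{\R\oplus\R}(n,1)$ by $\U(\R\oplus\R)$ against the full quotient by $(\R\oplus\R)^\times$, which is where it locates the reason the statement is only \emph{local}. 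You instead work in the Automorphism--Stabilizer formalism: the same group isomorphism, but the content is shifted to matching point stabilizers, with the conclusion drawn from the Lie-algebra definition of local isomorphism. Your route never has to engage with the projectivization analysis at all, and since the stabilizers correspond exactly under the group isomorphism it actually produces an isomorphism of automorphism--stabilizer pairs, from which the claimed local isomorphism follows a fortiori; what the paper's route buys instead is an explicit equivariant identification of the domains and a precise accounting of which quotient model of $\Hyp_{\R\oplus\R}^n$ is being compared to the point-hyperplane domain.

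Two points in your sketch need to be made explicit in a final write-up. First, the claim that the two stabilizer conditions are ``literally the same set of matrices'' requires checking the scalar constraints: on the unitary side $A[u]=[u]$ means $Au=\alpha u$ with $\alpha=\beta e_++\gamma e_-\in\U(\R\oplus\R)$, so $\beta\gamma=1$, whereas the point-hyperplane stabilizer a priori allows independent scalings $Xv=\beta v$, $X^{-T}\phi=\gamma\phi$ with $\beta,\gamma\in\R^\times$. They coincide because both constraints collapse at a basepoint off the null locus: invariance of the pairing gives $\beta\gamma\,\phi(v)=\phi(v)$ with $\phi(v)=1$, and unitarity of $A$ gives $\beta\gamma\,q(u)=q(u)$ with $q(u)\neq 0$, forcing $\beta\gamma=1$ on both sides. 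Second, the basepoint is $e_{n+1}=e_{n+1}e_++e_{n+1}e_-$, not $(0,\ldots,0)e_++(0,\ldots,0,1)e_-$ --- you have inherited a typo from the text. With vanishing $e_+$-part the point would be a generalized zero, the eigenvector condition on the $e_+$ factor would become vacuous, and the resulting stabilizer would be strictly larger than the point-hyperplane one, so the comparison would fail at that (incorrectly written) basepoint.
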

\begin{proof}
Recall from Proposition \ref{prop:U(R+R)_iso_type} that the unitary group $\U(n;\R\oplus\R)$ can be described in the basis of idempotents $\{e_+,e_-\}$ as $\U(n,\R\oplus\R)=\{Xe_++X^{-T}e_-\mid X\in\GL(n;\R)\}$.
Thus, the action of $\U(n;\R\oplus\R)$ on $(\R\oplus\R)^n$ is an action of $\GL(n;\R)$ on $\R^n\times\R^n$.
It's easy to see in coordinates that this action is precisely the same as the twisted diagonal action of $\GL(n;\R)$ on $\R^n\times(\R^n)^\vee$ defining point-hyperplane projective space.
Indeed, let $p=ve_++we_-\in(\R\oplus\R)^n$, and $A=Xe_++X^{-T}e_-\in\U(n;\R\oplus\R)$.
Then $A.p=(Xe_++X^{-T}e_-).(ve_++we_-)=Xve_++X^{-T}we_-$, which is identical to the formula defining the action at the beginning of Section \ref{sec:SDRP_Geo}.
Recalling Calculation \ref{calc:HypRR_Domain_Calc}, not only do both geometries share the same linear action of $\GL(n;\R)$, but the domains (before projectivization) are diffeomorphic.

Thus it remains only to consider the effect of projectivization in both cases.
The norm $x\mapsto x\bar{x}$ on $\R\oplus\R$ is surjective onto $\R$, and the units compromise the four connected components of the coordinate axis complement.
Full projectivization, that is quotienting the unit sphere 
$\mathcal{S}_{\R\oplus\R}(n,1)$ in $(\R\oplus\R)^n$ by the action of $(\R\oplus\R)^\times$ identifies the result with a subset of $\RP^{n-1}\times\RP^{n-1}$ as the action of a unit $u=u_1e_++u_2e_-$ on a point $ve_++we_-$ acts component-wise, so $ve_++we_-$ projectivizes to $[v]e_++[w]e_-$ as $u_1,u_2$ vary independently over the nonzero reals.
This is precisely the domain of point-hyperplane projective space, as Lemma \ref{lem:PH_Proj_Domain} implies here too that the projective image of any nonzero level set of $\sum_i x_i\overline{x_i}$ is the complement of the zero level set in $\RP^P{n-1}\times\RP^{n-1}$.

This is not \emph{precisely} the geometry $\Hyp_{\R\oplus\R}^n$ in Definition \ref{def:RRHyp}, but rather the two-fold quotient of it given by the projective cone model.
This is because, as noted previously, we chose in the definition of $\Hyp_{\R\oplus\R}$ to quotient only by the action of $\U(\R\oplus\R)$, which are the elements of norm $1$.
This is equivalent to quotienting by the action of elements in $(\R\oplus\R)^\times$ of positive norm, instead of the index-2 supergroup of all units.
\end{proof}

%NEWCHAPTER: CONSTRUCTING THE TRANSITION
\chapter{The Transition $\mathbb{H}_{\R[\sqrt{\delta}]}^n$}
\label{chp:HC_To_HRR_Transition}
\index{Limits of Geometries!$\mathbb{H}_\C$}
\index{Geometric Limits $\mathbb{H}_\C$}
\index{Complex Hyperbolic Geometry!Transition}

The geometries $\Hyp_\C^n$, $\Hyp_{\R_\ep}^n$ and $\Hyp_{\R\oplus\R}^n$ defined in Chapter \ref{chap:HC_and_HRR} are deeply related to one another because of a strong relationship between their underlying algebras of definition.
The algebra $\R_\ep$ is a common degeneration of the algebraic structures of $\C$ and $\R\oplus\R$, and this chapter exploits this relationship to show this carries over to the geometries.

\begin{theorem}
The geometry $\Hyp_{\R_\ep}^n$ is a common degeneration of  $\Hyp_\C^n$ and $\Hyp_{\R\oplus\R}^n$.
\label{thm:Hyp_Rep_Cts_Degen}
\end{theorem}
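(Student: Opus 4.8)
The plan is to realize $\Hyp_{\R_\ep}^n$ as the $\delta\to 0$ Chabauty limit, \emph{from both sides}, of the family of geometries $\Hyp_{\Lambda_\delta}^n$ built over the algebras $\Lambda_\delta=\R[\lambda]/(\lambda^2=\delta)$, which are isomorphic to $\C$, $\R_\ep$, and $\R\oplus\R$ according as $\delta<0$, $\delta=0$, $\delta>0$. First I would fix the faithful matrix representations $\iota_\delta\colon\Lambda_\delta\to\M(2;\R)$ given by $\iota_\delta(a+\lambda b)=\smat{a&\delta b\\ b& a}$, so that $\lambda\mapsto\smat{0&\delta\\1&0}$ squares to $\delta I$ and conjugation $a+\lambda b\mapsto a-\lambda b$ is intertwined with $\smat{a&\delta b\\ b&a}\mapsto\smat{a&-\delta b\\ -b&a}$. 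These assemble entrywise into representations $\iota_\delta\colon\M(n+1;\Lambda_\delta)\to\M(2(n+1);\R)$ that depend \emph{smoothly} on $\delta$, and I would record that under $\iota_\delta$ the unitary condition $A^\dagger Q A=Q$ becomes a system of polynomial equations in the real entries whose coefficients are polynomial (indeed affine) in $\delta$; the same holds for the stabilizer condition defining $\USt(n,1;\Lambda_\delta)$.

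The heart of the argument is to prove that $\delta\mapsto \iota_\delta(\U(n,1;\Lambda_\delta))$ and $\delta\mapsto\iota_\delta(\USt(n,1;\Lambda_\delta))$ are continuous into $\Cl(\GL(2(n+1);\R))$. The decisive structural input is that all members of each family have the \emph{same real dimension}: combining Lemma~\ref{lem:limitUnitary} for $\R_\ep$ with the isomorphism $\U(n,1;\R\oplus\R)\cong\GL(n+1;\R)$ of Proposition~\ref{prop:U(R+R)_iso_type} and the standard count over $\C$ gives $\dim\U(n,1;\Lambda_\delta)=(n+1)^2$ for every $\delta$, and likewise $\dim\USt(n,1;\Lambda_\delta)=n^2+1$. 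I would exploit this through the Lie algebra, following the Grassmannian technique of Section~\ref{sec:Space_of_Subgeos}: the algebras $\mathfrak{u}_\delta:=d\iota_\delta\big(\mathfrak{u}(n,1;\Lambda_\delta)\big)$ are the solution spaces of the \emph{linearized} unitary condition, a linear system with coefficients continuous in $\delta$ and solution space of constant dimension, so they trace a continuous path in $\Gr\big((n+1)^2,(2(n+1))^2\big)$. Continuity of $\exp$ then pushes this to continuity of the identity components in $\Cl(\GL(2(n+1);\R))$; the closed (upper-semicontinuous) direction of the Chabauty limit is immediate from continuity of the defining polynomials in $\delta$, while the constant dimension supplies the matching lower bound, exactly as in the algebraic-limit corollary following Theorem~\ref{thm:CDW_AlgGrps}.

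Granting this continuity for both the automorphism-group family and the point-stabilizer family, the pair $\delta\mapsto\big(\iota_\delta\U(n,1;\Lambda_\delta),\,\iota_\delta\USt(n,1;\Lambda_\delta)\big)$ is a continuous path of automorphism-stabilizer pairs in $\Cl(\GL(2(n+1);\R))\times\Cl(\GL(2(n+1);\R))$. By construction its members are (conjugate to) $\Hyp_\C^n$ for $\delta<0$, are $\Hyp_{\R\oplus\R}^n$ for $\delta>0$, and equal $\Hyp_{\R_\ep}^n$ at $\delta=0$; restricting to $\delta\le 0$ and to $\delta\ge 0$ separately exhibits $\Hyp_{\R_\ep}^n$ as a conjugacy limit of each, which is precisely the asserted common degeneration.

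The main obstacle I anticipate is that the component structure is \emph{not} constant across $\delta=0$: $\U(n,1;\C)$ is connected, $\U(n,1;\R\oplus\R)\cong\GL(n+1;\R)$ has two components, and $\U(n,1;\R_\ep)$ is an extension of the four-component group $\O(n,1;\R)$ by a vector group. Thus exponentiating a continuously varying Lie algebra recovers only the identity components, and I must separately control the finitely many non-identity cosets in the limit. I would resolve this either by passing to the \emph{connected geometric limit} of Section~\ref{sec:Space_of_Subgeos}, which reduces the statement to the Lie-algebra claim and suffices because the paper identifies geometries only up to local isomorphism, or, for the sharper full-Chabauty statement, by tracking discrete coset representatives explicitly through $\iota_\delta$ using the structural descriptions (Lemma~\ref{lem:limitUnitary} and Proposition~\ref{prop:U(R+R)_iso_type}) and checking each converges. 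This component bookkeeping, together with the lower-semicontinuity (surjectivity) direction of the Chabauty limit, is where the genuine work lies.
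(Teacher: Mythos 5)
Your proposal is correct and follows essentially the same route as the paper's first proof (Section \ref{sec:HC_To_HRR_Conjugacy}): the entrywise representations $\iota_\delta$ into $\M(2(n+1);\R)$, Chabauty continuity reduced to Lie algebra limits in a Grassmannian, the algebraic-group dimension theorem (Theorem \ref{thm:CDW_AlgGrps}) to upgrade the Lie algebra limit at $\delta=0$, and passage to connected geometric limits / local isomorphism to absorb the changing component structure you rightly flag. The only internal difference is that where you track a constant-dimension solution space of $\delta$-dependent linearized equations, the paper observes the sharper fact (Calculation \ref{calc:Const_Lie_Alg}) that $\mathfrak{u}(n,1;\Lambda_\delta)$ is \emph{literally the same} subspace of $\M(n+1;\R)\oplus\lambda\M(n+1;\R)$ for every $\delta$, since the defining relation $X^\dagger Q+QX=0$ never invokes $\lambda^2=\delta$, so continuity reduces to continuity of $\iota_\delta$ on a fixed subspace; the paper also gives a second, independent proof via 1-parameter families of Lie groupoids in Section \ref{sec:HC_HRR_Family}, which you do not need.
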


First, we make explicit the connection between the algebras bellow.

\begin{definition}
For each $\delta$, the algebra $\Lambda_\delta=\R[\lambda]/(\lambda^2=\delta)$ is a two dimensional algebra over $\R$, isomorphic  to $\C$ when $\delta<0$, to $\R_\ep$ for $\delta=0$ and to $\R\oplus\R$ for $\delta>0$.
\end{definition}

\begin{observation}
The algebraic structure on $\R^2=\R\oplus\lambda_\R$ induced by identification with $\Lambda_\delta$ varies continuously with $\delta$.
\end{observation}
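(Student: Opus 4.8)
The plan is to exhibit the collection $\{\Lambda_\delta\}_{\delta\in\R}$ as a one-parameter family of algebras in the precise sense defined earlier, which makes the phrase ``varies continuously'' literal: continuity of the whole package is then encoded by the smoothness of a single multiplication map. First I would take the total space to be the trivial rank-two real vector bundle $\fam{A}=\R^2\times\R\to\R$, writing each fiber $\fam{A}_\delta=\R^2\times\{\delta\}$ in the standard basis $\{1,\lambda\}$ so that a point is recorded as $a+\lambda b=(a,b)$. The identity section is the constant section $\fam{1}\colon\delta\mapsto(1,0)$.

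The heart of the matter is the multiplication. Since $\Lambda_\delta=\R[\lambda]/(\lambda^2-\delta)$ is generated over $\R$ by $1$ and $\lambda$ subject to the single relation $\lambda^2=\delta$, expanding a product gives $(a+\lambda b)(c+\lambda d)=(ac+\delta bd)+\lambda(ad+bc)$. I would therefore define $\mu\colon\fam{A}\times_\R\fam{A}\to\fam{A}$ in coordinates by
$$\mu\big((a,b),(c,d),\delta\big)=\big(ac+\delta bd,\; ad+bc,\;\delta\big).$$
This formula is polynomial in $(a,b,c,d,\delta)$ --- indeed affine-linear in $\delta$ --- and hence smooth; this polynomial dependence on $\delta$ is the entire content of the observation.

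It then remains to check the fiberwise axioms, which I would dispatch quickly. Bilinearity over $\R$ is immediate from the formula; commutativity follows from its symmetry under $(a,b)\leftrightarrow(c,d)$; and $(1,0)=\fam{1}(\delta)$ is visibly a two-sided identity. Associativity need not be recomputed by hand: each $\mu_\delta$ is by construction the multiplication transported from the quotient ring $\R[\lambda]/(\lambda^2-\delta)$ under the basis identification, and quotient rings are associative. Thus $(\fam{A},\fam{1},\mu)$ satisfies every clause of the definition, so $\Lambda_\R=\{\Lambda_\delta\}$ is a one-parameter family of algebras. For completeness I would recall why the isomorphism type jumps as asserted: the polynomial $\lambda^2-\delta$ has negative, zero, or positive discriminant according as $\delta<0,=0,>0$, yielding the field $\C$, the algebra $\R_\ep$ with its nonzero nilpotent $\lambda$, and (via the idempotents $\tfrac12(1\pm\lambda/\sqrt{\delta})$) the split algebra $\R\oplus\R$, respectively.

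There is no serious obstacle here: the only point requiring any care is confirming that the formal definition of a one-parameter family of algebras is met on the nose --- in particular that the trivial bundle suffices and that the fiberwise algebra axioms hold uniformly in $\delta$ --- and the real ``work'' collapses to the single remark that the structure constant $\lambda^2=\delta$ depends polynomially on the parameter.
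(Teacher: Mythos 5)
Your proof is correct and follows essentially the same route as the paper: both write the multiplication in the basis $\{1,\lambda\}$ as $(a,b)\times_\delta(c,d)=(ac+\delta bd,\,ad+bc)$ and observe that these assemble into a single smooth (polynomial) map $\R^2\times\R^2\times\R\to\R^2$, which is the whole content of the continuity claim. Your extra packaging as a one-parameter family of algebras on the trivial bundle $\R^2\times\R\to\R$, with the fiberwise axiom checks, is exactly how the paper itself later formalizes this when it introduces the family $\Lambda_\R$, so nothing is missing or different in substance.
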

\begin{proof}
Each $\Lambda_\delta$ is a quadratic extension of $\R$, and thus has underlying vector space $\R^2$.
The multiplication of each $\Lambda_\delta$, defined by $\lambda^2=\delta$, is given in these coordinates as follows.
For each $\delta\in\R$ we have$(a,b)\times_\delta (c,d)=(ac+\delta bd, ad+bc)$.
This defines the collection of algebra multiplications as a 1-parameter family of maps $\times_\delta\colon\R^2\times\R^2\to\R^2$, which fit together as $\delta$ varies to a map $\times\colon \R^2\times\R^2\times\R\to\R^2$ defined by $((a,b),(c,d),\delta)\mapsto (a,b)\times_\delta(c,d)$.
\end{proof}

\noindent
This family of algebras was already used in the work of Danciger \cite{Danciger11Ideal} to describe the special case of the transition from $\Hyp^3$ to $\mathsf{AdS}^3$, using the identification $\Isom(\Hyp^3)=\SL(2;\C)$ and $\Isom(\mathsf{AdS}^3)=\SL(2,\R)\times\SL(2;\R)=\SL(2;\R\oplus\R)$.

\section{Notions of Continuity}

Following exactly the methods of Chapter \ref{chap:HC_and_HRR}, it is easy to construct the analogs $\Hyp_{\Lambda_\delta}^n$ of hyperbolic geometry over the algebra $\Lambda_\delta$.

\begin{definition}[$\Hyp_\Lambda^n$: Group - Space]
$\Lambda$ Hyperbolic space is the geometry given by the action of $\U(n,1;\Lambda)$ on the projectivized unit sphere of radius $-1$ for $q$ in $\Lambda^{n+1}$;
$\Hyp_\Lambda^n=(\U(n,1;\Lambda),\mathcal{S}_{\Lambda}(n,1)/\U(\Lambda))$.
\end{definition}

\begin{definition}[$\Hyp_\Lambda^n$: Automorphism - Stabilizer]
Let $\USt(n,1;\Lambda)=\smat{\U(n;\Lambda)&\\&\U(\Lambda)}$.  Then
$$\Hyp_\Lambda^n=\left(\U(n,1;\Lambda),\USt(n,1;\Lambda)\right).$$	
\end{definition}

The first step in proving Theorem \ref{thm:Hyp_Rep_Cts_Degen} is to define what we mean by a \emph{degeneration}, or more generally a \emph{continuous path} of homogeneous spaces in this context.
In the work of Danciger, and further work on transitional geometry by Cooper, Danciger and Wienhard among others, continuity is formalized by embedding all geometries under consideration into the \emph{space of subgeometries} of some large, fixed ambient geometry.
This approach has sufficed thus far in this thesis as well, as all geometries considered have naturally arisen as subgeometries of real projective space.
The problem here is that our geometries $\Hyp_{\Lambda_\delta}^n$ as defined above and studied in Chapter \ref{chap:HC_and_HRR} have each been constructed indpendently, and not as subgeometries of some ambient space\footnote{We could have stopped to define projective space over algebras here, and realized that our models of $\Hyp_{\Lambda_\delta}^n$ actually are all subgeometries of the corresponding projective space $\Lambda_\delta\mathsf{P}^n$.
However this would do nothing to solve the present problem, as these spaces are not constant in $\delta$ and in fact undergo their own geometric transition as $\delta$ passes through $0$.
To utilize the standard notion of continuity given in Chapter \ref{chp:Limits_of_Geos}, we need each $\Hyp_{\Lambda_\delta}^n$ to simultaneously embed in the same ambient space.
}.
As an alternative to attempting to construct some ambient geometry in which all of the $\Hyp_{\Lambda_\delta}^n$ simultaneously embed, it is more useful to take this as an opportunity to consider generalizations of the framework reviewed in Chapter \ref{chp:Limits_of_Geos} to acomodate this, and future situations where there is no canonical ambient geometry.
This chapter provides two potential such generalizations, and shows that in each case, the family $\Hyp_{\Lambda_\delta}^n$ of geometries provides a transition from $\Hyp_{\C}^n$ to $\Hyp_{\R\oplus\R}^n$ through $\Hyp_{\R_\ep}^n$.

\subsection{Representations into an Ambient Lie Group}

The first is a very mild alteration of the usual framework - the main utility of considering a collection of subgeometries of some ambient geometry is so that we may use the Chabauty space of the ambient automorphism and stabilizer subgroups to define continuity.
Recall in that in the Automorphism-Stabilizer formalism, we say a path $(H_t, C_t)$ of subgeometries of $(G,K)$ is continuous if the assignment $t\mapsto (H_t, C_t)$ is continuous into $\Cl(G)\times\Cl(K)$.
Weakening this, we drop the requirement that for all $t$, the stabilizing subgroups $C_t$ are subgroups of some fixed $K<G$, and consider continuity only with respect to a fixed Lie group $G$.

\begin{definition}
Let $G$ be a fixed Lie group, and for each $t$ let  $(H_t, C_t)$ be a Klein geometry in the Automorphism-Stabilizer formalism, with $H_t<G$.
Then $(H_t,C_t)$ is a continuous path of geometries if the map $t\mapsto (H_t,C_t)$ is continuous in the Chabauty space $\Cl(G)\times\Cl(G)$.
\end{definition}

This allows us to speak of continuity of a path of homogeneous geometries, given only embeddings of their automorphism groups into some fixed Lie group $G$.
This is a much easier demand to satisfy, in many instances it is possible to construct simultaneous embeddings into some large enough $\GL(n;\R)$ via linear representations.
Using this formalism, we prove the following in Section \ref{sec:HC_To_HRR_Conjugacy}.

\begin{theorem}
\label{thm:ConjLim_HRR}
For each $\delta$, let $\iota_\delta\colon\GL(n;\Lambda_\delta)\to\GL(2n;\R)$ be the representation arising from thinking of the $\Lambda_\delta$ module $\Lambda_\delta^n$ as a real vector space.
Then the assignments
$$\delta\mapsto\iota_\delta(\U(n,1;\Lambda_\delta)
\hspace{1cm}\delta\mapsto \iota_\delta(\USt(n,1;\Lambda_\delta))$$
are continuous as functions $\R\mapsto \Cl(\GL(2n;\R))$.
\end{theorem}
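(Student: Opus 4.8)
The plan is to make the representation $\iota_\delta$ completely explicit, observe that all the groups involved are cut out by equations whose coefficients are polynomial in $\delta$, and then verify the two halves of the sequential criterion for Chabauty convergence (Definition \ref{def:Chab_Seqs}) separately. Concretely, multiplication by $a+\lambda b\in\Lambda_\delta$ on $\Lambda_\delta\cong\R^2$ in the basis $\{1,\lambda\}$ is the matrix $aI+bJ_\delta$ with $J_\delta=\smat{0&\delta\\1&0}$, so $\iota_\delta$ is the injective map $a+\lambda b\mapsto aI+bJ_\delta$, extended blockwise to $\M(n+1;\Lambda_\delta)\to\GL(2(n+1);\R)$ (the real general linear group realizing these $(n+1)\times(n+1)$ matrices over $\Lambda_\delta$). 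The first structural fact I would record is that conjugation on $\Lambda_\delta$ is intertwined by the fixed matrix $P=\diag(1,-1)$, namely $\iota_\delta(\bar z)=P\,\iota_\delta(z)\,P^{-1}$ for every $\delta$ (since $PJ_\delta P^{-1}=-J_\delta$). Because $\iota_\delta$ has entries linear in the matrix coordinates with coefficients polynomial (degree $\le 1$) in $\delta$, the unitarity relation $A^\dagger Q A=Q$ and the skew-Hermitian relation $X^\dagger Q+QX=0$ become, after applying $\iota_\delta$, polynomial (resp. linear) systems in the real matrix entries whose coefficients are polynomial in $\delta$.

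For one direction of the criterion --- that a finite Chabauty limit of group elements lands in the limit group --- I would argue purely algebraically. If $\delta_k\to\delta_\infty$ and $M_k\in\iota_{\delta_k}(\U(n,1;\Lambda_{\delta_k}))$ converges in $\GL(2(n+1);\R)$ to $M_\infty$, then each defining equation $P_j(\delta_k,M_k)=0$ passes to the limit by continuity, giving $P_j(\delta_\infty,M_\infty)=0$, so $M_\infty\in\iota_{\delta_\infty}(\U(n,1;\Lambda_{\delta_\infty}))$. The identical argument applies verbatim to $\USt$, which is itself a block unitary group. This is the easy half.

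The substance is the reverse direction: every element of the limit group must be approximated by elements of the nearby groups. Here I would pass to Lie algebras. Via $X\mapsto QX$ the algebra $\mathfrak{u}(n,1;\Lambda_\delta)$ is isomorphic to the skew-Hermitian matrices over $\Lambda_\delta$, whose diagonal entries are ``imaginary'' (spanning $\lambda\R$, one real dimension) and whose strict upper triangle is free over the two-dimensional $\Lambda_\delta$; hence $\dim_\R\mathfrak{u}(n,1;\Lambda_\delta)=(n+1)+n(n+1)=(n+1)^2$ for \emph{every} $\delta$. Thus $\iota_\delta(\mathfrak{u}(n,1;\Lambda_\delta))$ is the kernel of a continuously varying linear map $L_\delta$ on the fixed space $\M(2(n+1);\R)$ --- encoding both membership in $\iota_\delta(\M(n+1;\Lambda_\delta))$ and skew-Hermicity --- of constant rank (equivalently, constant kernel dimension $(n+1)^2$), so these kernels vary continuously in the Grassmannian $\Gr((n+1)^2,\M(2(n+1);\R))$. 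Since $\iota_\delta$ intertwines the Lie-theoretic and matrix exponentials, I can then lift: given $X_\infty\in\mathfrak{u}(n,1;\Lambda_{\delta_\infty})$ choose $X_k\in\mathfrak{u}(n,1;\Lambda_{\delta_k})$ with $\iota_{\delta_k}(X_k)\to\iota_{\delta_\infty}(X_\infty)$, and continuity of the real exponential gives $\exp(\iota_{\delta_k}(X_k))\to\exp(\iota_{\delta_\infty}(X_\infty))$. Writing any identity-component element as a finite product of exponentials and approximating each factor handles that component.

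The main obstacle, and the point I would treat most carefully, is that the groups $\U(n,1;\Lambda_\delta)$ do not have the same number of connected components across the transition ($\U(n,1;\C)$ is connected, while $\U(n,1;\R\oplus\R)\cong\GL(n+1;\R)$ and $\U(n,1;\R_\ep)$ are not), so the exponential argument alone cannot reach non-identity components. To finish I would exploit that the component group of the \emph{limit} group is always realized by real $Q$-orthogonal matrices: any $c\in\O(Q;\R)$ satisfies $c^TQc=Q$ independently of $\delta$, so $\iota_\delta(c)$ is a single fixed real matrix lying in $\iota_\delta(\U(n,1;\Lambda_\delta))$ for all $\delta$ at once. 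From the explicit descriptions of Chapter \ref{chap:HC_and_HRR} (for $\R\oplus\R$ the determinant sign, realized by a reflection; for $\R_\ep$ the fibration over $\O(n,1;\R)$ of Lemma \ref{lem:limitUnitary} with connected kernel, whose components are realized by real sign matrices), these real representatives surject onto $\pi_0(\U(n,1;\Lambda_{\delta_\infty}))$. Decomposing a target as $M_\infty=\iota_{\delta_\infty}(c)\cdot(\text{identity-component element})$ and approximating the second factor by the exponential construction while holding $\iota_{\delta_\infty}(c)=\iota_{\delta_k}(c)$ fixed yields the required approximating sequence. The same Lie-algebra-plus-real-representatives scheme applies to $\USt$, completing both conditions and hence the continuity of both maps.
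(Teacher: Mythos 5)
Your proof is correct, but it runs along a genuinely different track than the paper's. The paper's argument is a reduction to conjugacy-limit technology: away from $\delta=0$ it writes $\iota_\delta(\U(n,1;\Lambda_\delta))$ as a continuous conjugation $C_{|\delta|}\,\iota_{\pm1}(\U(n,1;\Lambda_{\pm1}))\,C_{|\delta|}\inv$ of a fixed group, and at the transition point it invokes Cooper--Danciger--Wienhard (Proposition 3.11 of \cite{CooperDW14}, that conjugacy limits of algebraic subgroups of algebraic groups preserve dimension) to conclude that the Chabauty limit may be computed at the Lie algebra level; the Lie-algebra continuity is then checked via the constancy of $\mathfrak{u}(n,1;\Lambda_\delta)$ in the $\{E_{jk},\lambda E_{jk}\}$ coordinates and the continuous, pairwise-orthogonal basis images $R_{jk}, I^\delta_{jk}$. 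You instead verify both halves of the sequential Chabauty criterion directly and uniformly in $\delta$: your ``easy half'' --- that the groups are exactly the solution sets of polynomial systems whose coefficients depend continuously on $\delta$, so limits of solutions are solutions --- replaces the CDW theorem entirely, since it is precisely this closedness that rules out Barber-Pole-type dimension jumps in the limit; and your Lie-algebra step (constant kernel dimension $(n+1)^2$, hence Grassmannian continuity of the kernels of a continuous family of linear systems) is equivalent in content to the paper's explicit-basis computation, combined with the same exponential intertwining $\iota_\delta\circ\exp_\delta=\exp\circ\,\iota_\delta$. What your route buys is twofold: it avoids both the conjugation trick and the appeal to \cite{CooperDW14}, making the proof self-contained and uniform across all $\delta_\infty$; and your $\pi_0$ analysis via fixed real $Q$-orthogonal representatives (which lie in $\U(n,1;\Lambda_\delta)$ for every $\delta$ simultaneously, and which surject onto $\pi_0$ of each limit group by the connected-kernel extension $0\to\R^{(n+1)(n+2)/2}\to\U(n,1;\R_\ep)\to\O(n,1;\R)\to 1$ and the determinant sign in $\U(n,1;\R\oplus\R)\cong\GL(n+1;\R)$) is actually more careful than the paper, which settles the component bookkeeping only implicitly after working ``up to local isomorphism.'' What the paper's route buys is brevity and a reusable template: once the path is recognized as a conjugacy limit of an algebraic group, the heavy lifting is outsourced to a cited theorem. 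One cosmetic remark: you write $\GL(2(n+1);\R)$ where the theorem statement says $\GL(2n;\R)$; since $\U(n,1;\Lambda_\delta)\subset\GL(n+1;\Lambda_\delta)$, your bookkeeping is the accurate one and the discrepancy is inherited from the paper's own notation, not a flaw in your argument.
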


\subsection{1-Parameter Families of Lie Groups}

The second approach is a more radical departure from the existing literature in transitional geometry, and does away with the fixed ambient Lie group $G$.
Indeed, the spirit of the previous definition was that \emph{a continuous path of geometries is a continuous path of automorphism groups together with a continuous path of stabilizer subgroups}, and the ambient group $G$ exists only for convenience, to provide a space in which to formalize this continuity.
The notion of a fiber bundle of groups is too restrictive for the study of transitional geometry, as many interesting transitions involve automorphism groups changing homeomorphism, or even homotopy type along the way.
The correct notion of a parameterized family of groups is formalized through the theory of Lie groupoids, and has already been used Riemannian geometry to understand certain transitions \cite{BettiolPS14} .

\begin{definition}
A \emph{groupoid} is a category where all morphisms are isomorphisms.
That is, a groupoid $G$ consists of a set $\mathsf{Ob}(G)$ of objects, and a set $\mathsf{Mor}(G)$ of morphisms such that each $f\in \mathsf{Mor}(G)$ has an inverse $f\inv\in\mathsf{Mor}(G)$.
\end{definition}

\noindent
A groupoid with one object $\{\star\}$ is a group, with the elements of the group being the morphisms in $\Hom(\star,\star)$.

\begin{definition}
A Lie groupoid is a groupoid $G$ where the set of objects and the set of morphisms both have the structure of smooth manifolds, and the maps $s,t\colon\mathsf{Mor}(G)\to\mathsf{Ob}(G)$ sending a morphism $f$ to its source $s(t)$ and target $t(g)$ are submersions with respect to the given smooth structures.	
\end{definition}

\begin{figure}
\centering\includegraphics[width=0.5\textwidth]{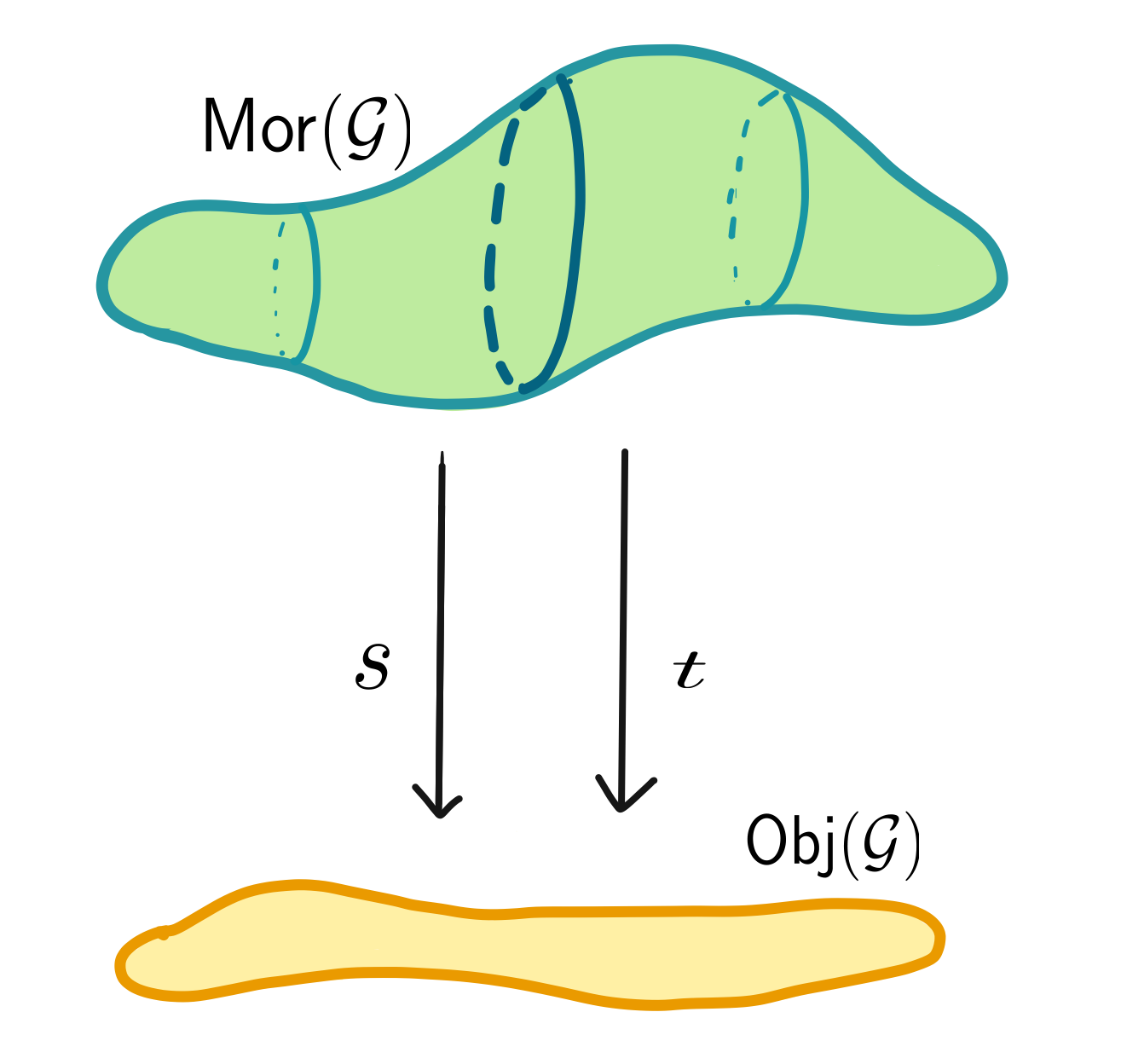}
\caption{A Lie Groupoid, schematically.}	
\end{figure}

\noindent
Similarly, a Lie groupoid with one object is a Lie group $G=\Hom(\star, \star)$, where the source and target maps are both the constant map $G\to\star$.
When the space of objects has a more complex topology, a Lie groupoid is no longer a group, and two morphisms $g,h\in\fam{G}$ can only be composed if the target of one is the source of the other, $t(g)=s(h)$.
Thus, the fibers of the source and target maps, which are smooth submanifolds of $\fam{G}$ by requirement that $s,t$ be submersions, are actually groups only in the case that $s=t$.

\begin{definition}[1 Parameter Family of Groups]
A \emph{one parameter family of Lie groups} is a Lie groupoid $\fam{G}$ with $\mathsf{Ob}(\fam{G})=\R$ and equal source, target maps $s=t\colon G\to\R$.
The fibers $\fam{G}_\delta=s\inv(\delta)=t\inv(\delta)$ each come equipped with the structure of a Lie group, by restricting the composition operation of the groupoid $\fam{G}$.	
\end{definition}

\begin{definition}
A collection $G_\delta<\GL(n;\Lambda_\delta)$ varies continuously if $\bigcup_\delta G_\delta\times\{\delta\}$ is a 1-parameter family of groups.
\end{definition}

\noindent
This provides an ambient space to work in (the bundle of matrix algebras $\M(n;\Lambda_\delta)$) without requiring there be any fixed group or algebra containing each member of the family individually.
Using this formalism, we also show that the geometries $\Hyp_{\Lambda_\delta}^n$ vary continuously.

\begin{theorem}
\label{thm:ParamFam_HRR}
The collection $\fam{U}(n,1;\Lambda_\R)=\bigcup_\delta\in\R \U(n,1;\Lambda_\delta)\times\{\delta\}$, and $\fam{USt}(n,1;\Lambda_\delta)=\bigcup_{\delta\in\R}\USt(n,1;\Lambda_\delta)\times\{\delta\}$ form 1-parameter families of Lie groups, 
when equipped with the subspace topology coming from $\bigcup_{\delta\in\R}\M(n+1;\Lambda_\delta)\times\{\delta\}$.
\end{theorem}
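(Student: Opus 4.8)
The plan is to realize both families as subfamilies cut out of the ambient $1$-parameter family of matrix algebras $\fam{M}(n+1;\Lambda_\R)=\bigcup_\delta \M(n+1;\Lambda_\delta)\times\{\delta\}$ by a single transversality computation that is uniform in $\delta$, and then to restrict the ambient group operations. First I would record the two structural features that make the argument go through. The underlying smooth manifold of $\fam{M}(n+1;\Lambda_\R)$ is the trivial bundle $\R^{2(n+1)^2}\times\R$, and its fiberwise multiplication $\mu$ is polynomial in the real matrix entries and in $\delta$ (coming from $(a,b)\times_\delta(c,d)=(ac+\delta bd,ad+bc)$), so $\mu$ is smooth and this is a $1$-parameter family of algebras. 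Crucially, the conjugation $\sigma_\delta(a+\lambda b)=a-\lambda b$ is $(a,b)\mapsto(a,-b)$ independently of $\delta$; hence the adjoint $A\mapsto A^\dagger=\sigma(A)^T$ is a single $\delta$-independent real-linear involution of $\R^{2(n+1)^2}$, and its fixed-point set $\fam{Herm}(n+1;\Lambda_\R)$ is a trivial vector subbundle of constant rank.

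Next I would extract the general linear family. Since $\Lambda_\delta$ is commutative, $A\in\GL(n+1;\Lambda_\delta)$ exactly when $\det A$ is a unit, i.e. when $N(A,\delta):=\det(A)\,\overline{\det(A)}\neq 0$; as $N$ is a real polynomial in the entries and in $\delta$, the set $\fam{GL}(n+1;\Lambda_\R)=\{(A,\delta):N(A,\delta)\neq 0\}$ is open in the total space of the algebra family, hence a smooth submanifold on which the projection to $\R$ restricts to a submersion, and $\mu$ together with inversion make it a family of groups. Then I would cut out the unitary family as the zero locus of the family morphism $G\colon\fam{GL}(n+1;\Lambda_\R)\to\fam{Herm}(n+1;\Lambda_\R)$, $G(A,\delta)=(A^\dagger Q A-Q,\delta)$, which covers $\id_\R$ and is valued in Hermitian matrices because $(A^\dagger Q A)^\dagger=A^\dagger Q A$ for every $\delta$. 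The heart of the proof is the fiberwise transversality of $G$ to the zero section: at any $A$ with $A^\dagger Q A=Q$ and any Hermitian $H$, the vertical derivative $dG_A(X)=X^\dagger Q A+A^\dagger Q X$ attains $H$ on taking $X=\tfrac12 A Q H$, using only $Q^\dagger=Q$ and $Q^2=I$, relations that hold over every $\Lambda_\delta$. This witness is smooth in $\delta$, so the vertical derivative is fiberwise surjective at every point of $\fam{U}$, uniformly across $\delta$.

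From this uniform transversality a relative submersion / implicit-function argument gives that $\fam{U}(n,1;\Lambda_\R)=G^{-1}(0)$ is a smooth submanifold of $\fam{GL}(n+1;\Lambda_\R)$ of fiber codimension $\dim\fam{Herm}$, and that its projection to $\R$ is a submersion; a dimension count (consistent with \cref{lem:limitUnitary} at $\delta=0$) shows every fiber has real dimension $(n+1)^2$, so the fiber dimension is constant through the degeneration at $\delta=0$. Because each fiber $\U(n,1;\Lambda_\delta)$ is a subgroup of $\GL(n+1;\Lambda_\delta)$, the ambient $\mu$ and inversion restrict to smooth maps $\fam{U}\times_\R\fam{U}\to\fam{U}$ and $\fam{U}\to\fam{U}$, and $\delta\mapsto(I,\delta)$ is a global identity section; equivalently, $\fam{U}$ with $s=t$ the projection is a Lie groupoid over $\R$. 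For the stabilizer family I would run the same machine on the block-diagonal form: $\fam{USt}(n,1;\Lambda_\R)$ is the product of the unitary family of the positive form $I_n$ with the norm-one family $\fam{U}(\Lambda_\R)$, each cut out by the identical transversality computation (over $I_n$ and $I_1$ respectively), and a product of families of groups is again one; here the fiber dimension is the constant $n^2+1$.

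The main obstacle I anticipate is precisely the point where the algebra degenerates, $\delta=0$: there $\Lambda_0=\R_\ep$ acquires nilpotents and $\U(n,1;\Lambda_\delta)$ changes isomorphism type and topology as $\delta$ crosses $0$ (interpolating between $\U(n,1;\C)$ and $\GL(n+1;\R)$), so one cannot argue by local triviality or by exponentiating a Lie algebra of constant type. What rescues the argument is that the transversality witness $X=\tfrac12 A Q H$ and the constancy of $\fam{Herm}$ make no reference to invertibility of $\lambda$ and are literally unchanged at $\delta=0$; verifying carefully that this fiberwise-uniform transversality upgrades to the total-space submersion statement, rather than merely making each fiber a manifold, is the step demanding the most care, and is exactly the content that the general preimage-of-a-section machinery of Part III is designed to package.
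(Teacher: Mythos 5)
Your proposal is correct and takes essentially the same route as the paper: both cut out $\fam{U}(n,1;\Lambda_\R)$ as the preimage of the constant section $Q$ under $(A,\delta)\mapsto (A^\dagger QA,\delta)$ into the $\delta$-independent bundle $\fam{Herm}(n+1;\Lambda_\R)$, verify the map is a submersion on each slice, upgrade slicewise submersivity to the total-space statement (the paper's slicewise-submersion theorem, which is exactly your ``relative submersion'' step), and obtain $\fam{USt}(n,1;\Lambda_\R)$ as the block-diagonal product of $\fam{U}(n;\Lambda_\R)$ with $\fam{U}(\Lambda_\R)$. The only local difference is that you prove fiberwise surjectivity of $X\mapsto X^\dagger QA+A^\dagger QX$ by the explicit witness $X=\tfrac12 AQH$ (valid over every $\Lambda_\delta$, including $\delta=0$), whereas the paper argues by a dimension count identifying the kernel with $(A^\dagger Q)^{-1}\SkHerm(n+1)$ --- a cosmetic, arguably cleaner, variation rather than a different method.
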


\section{The Transition as a Conjugacy Limit}
\label{sec:HC_To_HRR_Conjugacy}
\index{Conjugacy Limit!$\mathbb{H}_\C$ and $\mathbb{H}_{\R\oplus\R}$}

Underlying the algebra $\Lambda_\delta$ is the real vector space $\R\oplus\lambda\R$ where we only remember how to multiply elements of $\Lambda_\delta$ by real scalars.  
Stemming from this if we forget how to multiply by $\lambda$ then $\Lambda_\delta$ modules $\Lambda_\delta^n$ give rise to $2n$-dimensional real vector spaces, $\Lambda_\delta^n=(\R\oplus\lambda\R)^n$.
As the action of $\End(n,\Lambda_\delta)$ on $\Lambda_\delta^n$ is $\Lambda_\delta$ linear, it is clearly $\R$-linear and gives a representation $\End(n,\Lambda_\delta)\to\M(2n,\R)$.

\begin{observation}
The $\R$-linear action of $\Lambda_\delta$ on $\Lambda_\delta$ viewed as the real module $\R^2$	 is $\iota_\delta\colon\Lambda_\delta\to \M(2;\R)$ given by 
 $a+\lambda b\mapsto \smat{a& \delta b\\b&a}$.
\end{observation}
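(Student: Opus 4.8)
The plan is to compute the matrix of the left-multiplication action directly in the standard basis $\{1,\lambda\}$ of $\Lambda_\delta$. First I identify $\Lambda_\delta$ with $\R^2$ as a real vector space by sending $a+\lambda b$ to the column vector $\smat{a\\b}$; under this identification the action of a fixed element $z=a+\lambda b$ is the map $L_z\colon w\mapsto zw$, which is indeed $\R$-linear because real scalars are central in $\Lambda_\delta$ and multiplication in $\Lambda_\delta$ is bilinear. Thus $\iota_\delta(z)$ is by definition the matrix of $L_z$ in this basis, and to identify it I only need to evaluate $L_z$ on the two basis vectors and read off the columns.

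Next I carry out that evaluation. On $1$ I get $L_z(1)=a+\lambda b=\smat{a\\b}$, and on $\lambda$ I use the defining relation $\lambda^2=\delta$ to obtain $L_z(\lambda)=(a+\lambda b)\lambda=a\lambda+b\lambda^2=\delta b+\lambda a=\smat{\delta b\\a}$. Taking these two images as the columns of a matrix yields exactly $\iota_\delta(z)=\smat{a&\delta b\\b&a}$, as claimed. This is the only place $\delta$ enters the formula, occupying the upper-right entry precisely because of the relation $\lambda^2=\delta$; as a consistency check one verifies $\smat{a&\delta b\\b&a}\smat{c\\d}=\smat{ac+\delta bd\\ad+bc}$, the coordinate vector of $z(c+\lambda d)$.

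Finally, to confirm that $\iota_\delta$ is a genuine representation rather than merely a formula, I would check it is an $\R$-algebra homomorphism: it is clearly $\R$-linear and unital, with $\iota_\delta(1)=I_2$, and multiplicativity $\iota_\delta(zw)=\iota_\delta(z)\iota_\delta(w)$ follows either from the general identity $L_{zw}=L_z L_w$ or from a one-line matrix computation against the rule $(a,b)\times_\delta(c,d)=(ac+\delta bd,\,ad+bc)$ recorded earlier. There is no substantive obstacle here—the entire content is the basis computation above—so the only points requiring care are consistently applying $\lambda^2=\delta$ and fixing the column-vector convention once and for all.
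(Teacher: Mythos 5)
Your computation is correct and is exactly the verification the paper leaves implicit: the paper states this as an unproved observation, and the intended justification is precisely your evaluation of left multiplication on the basis $\{1,\lambda\}$ using $\lambda^2=\delta$, reading off the columns $\smat{a\\b}$ and $\smat{\delta b\\a}$. Your consistency check against the multiplication rule $(a,b)\times_\delta(c,d)=(ac+\delta bd,\,ad+bc)$ and the remark that $L_{zw}=L_zL_w$ gives multiplicativity are both sound and add nothing beyond what the paper's later use of $\iota_\delta$ as an algebra homomorphism (e.g.\ in Calculation \ref{calc:Exp_Commuting}) presupposes.
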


\begin{observation}
\label{obs:iota_delta_blockwise}
Viewing $\Lambda_\delta=(\R\oplus\lambda\R)^n$ as the real vector space $\R^{2n}$ the $\R$-linear action of $\M(n;\Lambda_\delta)$ on $\Lambda_\delta^n$ is given by the homomorphism  $\M(n;\Lambda_\delta)\to\M(2n;\R)$ acting component-wise by $\iota_\delta$:
$(A)_{ij}\mapsto \iota_\delta((A)_{ij})$.
\end{observation}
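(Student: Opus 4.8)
The plan is to recognize the statement as the assertion that the real representation $\M(n;\Lambda_\delta)\to\M(2n;\R)$ is nothing more than the entrywise extension of the left-regular representation $\iota_\delta\colon\Lambda_\delta\to\M(2;\R)$ of the underlying algebra. First I would fix a convenient real basis for $\Lambda_\delta^n=(\R\oplus\lambda\R)^n$. Writing $e_1,\dots,e_n$ for the standard $\Lambda_\delta$-module basis, the set $\{e_1,\lambda e_1,e_2,\lambda e_2,\dots,e_n,\lambda e_n\}$ is a real basis, ordered so that the two real coordinates coming from the $i$-th module slot are adjacent. This pairing-by-slot ordering is exactly what makes the block decomposition below literal rather than merely block-conjugate to it.

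Next I would compute the action of a matrix $A\in\M(n;\Lambda_\delta)$ on these basis vectors. Since the action is left multiplication, $A e_j=\sum_i (A)_{ij}\,e_i$ is the $j$-th column of $A$, and likewise $A(\lambda e_j)=\lambda\sum_i (A)_{ij}\,e_i$ because $\Lambda_\delta$ is commutative and the action is $\Lambda_\delta$-linear. Thus the image of the real $2$-plane $\R e_j\oplus\R\lambda e_j$ lands, slot by slot, in the planes $\R e_i\oplus\R\lambda e_i$, with the contribution to the $i$-th slot given precisely by multiplication by the scalar $(A)_{ij}\in\Lambda_\delta$. By the preceding observation, the real $2\times 2$ matrix of this scalar multiplication on $\R e_i\oplus\R\lambda e_i$ is $\iota_\delta((A)_{ij})$. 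Assembling the blocks over all $i,j$ shows that the real matrix of $A$ is exactly $(\iota_\delta((A)_{ij}))_{ij}$, the entrywise application of $\iota_\delta$.

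Finally I would verify that this assignment is an algebra homomorphism, as claimed. Since $\iota_\delta$ is the left-regular representation of $\Lambda_\delta$ it is an $\R$-algebra homomorphism (one checks $\iota_\delta(1)=I$ and $\iota_\delta(\lambda)^2=\delta I=\iota_\delta(\lambda^2)$), and applying any $\R$-algebra homomorphism $\Lambda_\delta\to\M(2;\R)$ entrywise to $n\times n$ matrices yields an $\R$-algebra homomorphism $\M(n;\Lambda_\delta)\to\M(n;\M(2;\R))$; composing with the canonical identification $\M(n;\M(2;\R))\cong\M(2n;\R)$ induced by the pairing-ordered basis gives the desired homomorphism into $\M(2n;\R)$. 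The only genuinely delicate point, and hence the \emph{main obstacle}, is purely bookkeeping: one must confirm that this basis ordering makes the abstract ``matrices of matrices'' identification agree on the nose with the chosen real coordinates on $\R^{2n}$, so that entrywise $\iota_\delta$ really is the action map and not a permuted conjugate of it. No analysis is required, and continuity in $\delta$ is immediate from the polynomial dependence of $\iota_\delta$ on $\delta$, which feeds directly into the continuity results of the following section.
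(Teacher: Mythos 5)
Your proposal is correct and matches the paper's intent exactly: the paper records this as an Observation without written proof, and its accompanying $2\times 2$ example displays precisely the slot-paired (interleaved) basis ordering your argument makes explicit. Your verification — left multiplication on the basis $\{e_i,\lambda e_i\}$, $\Lambda_\delta$-linearity via commutativity, and the entrywise-homomorphism plus $\M(n;\M(2;\R))\cong\M(2n;\R)$ bookkeeping — is just the routine filling-in the paper leaves to the reader.
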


\begin{example}
$$\pmat{
a_1+\lambda a_2 & b_1+\lambda b_2 
\\
 c_1+\lambda c_2 & d_1+\lambda d_2
 }
 \mapsto 
 \pmat{
 \pmat{a_1 &\delta a_2\\a_2 &a_1}  
 & 
 \pmat{b_1 &\delta b_2\\b_2 &b_1}
 \\ 
 \pmat{c_1 &\delta c_2\\c_2& c_1} 
 &
 \pmat{d_1 &\delta d_2\\d_2 &d_1}
 }
 \mapsto
 \pmat{
 a_1 &\delta a_2 & b_1 &\delta b_2
 \\ 
 a_2 &a_1 &b_2 &b_1
 \\
 c_1 & \delta c_2 & d_1 &\delta d_2
 \\
 c_2 & c_1 & d_2 & d_1
 }
 $$	
\end{example}

\begin{remark}
We denote this map by $\iota_\delta\colon \M(n;\Lambda_\delta)\to\M(2n;\R)$ as well.	
For each $\delta$, the matrix algebra $\M(n;\Lambda_\delta)$ embeds into $\M(2n;\R)$, so $\GL(2n;\R)$ can be used as a universal containing group for all of linear groups over $\Lambda_\delta$.

\end{remark}

\noindent
The remainder of this section is devoted to the proof of Theorem \ref{thm:ConjLim_HRR}, using a collection of standard techniques.
First, we note just as the isomorphism type of $\Lambda_\delta$ depends only on the sign of $\delta$; the conjugacy class of $\iota_\delta(\GL(n;\Lambda_\delta))$ does as well.

\begin{proposition}
The images $\iota_\delta(\M(n;\Lambda_\delta))$ are conjugate in $\M(2n;\R)$ iff $\mathsf{sgn}(\delta)=\mathsf{sgn}(\mu)$.
\end{proposition}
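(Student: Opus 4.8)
The plan is to identify each image $\iota_\delta(\M(n;\Lambda_\delta))$ intrinsically as a matrix centralizer, thereby reducing the conjugacy question to elementary linear algebra over $\R$. Writing $J_\delta := \iota_\delta(\lambda I_n)$, the blockwise description of $\iota_\delta$ (Observation \ref{obs:iota_delta_blockwise}, together with $\iota_\delta(\lambda)=\smat{0&\delta\\1&0}$) shows that $J_\delta$ is block-diagonal with $n$ copies of $\smat{0&\delta\\1&0}$, i.e. $J_\delta = I_n\otimes\smat{0&\delta\\1&0}$. A real-linear endomorphism of $\Lambda_\delta^n\cong\R^{2n}$ is $\Lambda_\delta$-linear exactly when it commutes with multiplication by $\lambda$, so $\iota_\delta(\M(n;\Lambda_\delta))$ is contained in the centralizer $Z(J_\delta)$ of $J_\delta$ in $\M(2n;\R)$. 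First I would upgrade this inclusion to an equality by a dimension count: $\M(n;\Lambda_\delta)$ has real dimension $2n^2$, and in each case the centralizer has the same dimension. For $\delta>0$, $J_\delta$ is diagonalizable with the two eigenvalues $\pm\sqrt\delta$ each of multiplicity $n$, so $Z(J_\delta)\cong\M(n;\R)\oplus\M(n;\R)$; for $\delta<0$, the real commutant of this complex-type operator is $\cong\M(n;\C)$; for $\delta=0$, $J_\delta$ is nilpotent of Jordan type $(2^n)$, whose centralizer has dimension $\sum_i(\lambda_i')^2=2n^2$ where $\lambda'=(n,n)$ is the conjugate partition. Hence $\iota_\delta(\M(n;\Lambda_\delta))=Z(J_\delta)$ in all cases.

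For the backward implication, suppose $\mathsf{sgn}(\delta)=\mathsf{sgn}(\mu)$. The key observations are that centralizers are insensitive to nonzero rescaling, $Z(cJ_\delta)=Z(J_\delta)$, and transform equivariantly under conjugation, $Z(PAP^{-1})=PZ(A)P^{-1}$. It therefore suffices to exhibit a scalar $c\neq0$ and a matrix $P\in\GL(2n;\R)$ with $J_\delta = P(cJ_\mu)P^{-1}$, since then $\iota_\delta(\M(n;\Lambda_\delta))=Z(J_\delta)=Z(cJ_\mu) $ conjugated, $=PZ(J_\mu)P^{-1}=P\,\iota_\mu(\M(n;\Lambda_\mu))\,P^{-1}$. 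I would produce $c$ by matching spectra: take $c=\sqrt{\delta/\mu}$ when both are positive (matching eigenvalues $\pm\sqrt\delta$), $c=\sqrt{\delta/\mu}$ when both are negative (matching the conjugate pair $\pm i\sqrt{|\delta|}$, hence the real Jordan form), and $c$ arbitrary when $\delta=\mu=0$ (both nilpotent of type $(2^n)$). In every case $J_\delta$ and $cJ_\mu$ have the same real canonical form and so are conjugate, giving the claim.

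For the forward implication, I would note that conjugate subalgebras of $\M(2n;\R)$ are isomorphic as abstract $\R$-algebras, so it remains only to check that the three algebras $\M(n;\Lambda_\delta)$ are pairwise non-isomorphic according to $\mathsf{sgn}(\delta)$. These are separated by standard structure theory: $\M(n;\C)$ (case $\delta<0$) is simple with two-dimensional center $\C$; $\M(n;\R\oplus\R)\cong\M(n;\R)\oplus\M(n;\R)$ (case $\delta>0$) is semisimple but not simple, having two simple factors; and $\M(n;\R_\ep)$ (case $\delta=0$) has nonzero Jacobson radical, namely its $\ep$-part, and so is not semisimple. No algebra isomorphism can interchange these types, forcing $\mathsf{sgn}(\delta)=\mathsf{sgn}(\mu)$.

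The main obstacle I anticipate is bookkeeping rather than conceptual: verifying the centralizer dimension uniformly, especially in the degenerate case $\delta=0$ where one must carry out the Jordan-type computation $\sum_i(\lambda_i')^2$ for the partition $(2^n)$, and confirming that the rescaling argument produces genuine conjugacy over $\R$ (not merely over $\C$) in the case $\delta<0$ by appealing to equality of \emph{real} canonical forms. Both are routine once the centralizer identification is in place.
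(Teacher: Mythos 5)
Your proof is correct, and on the substantive (backward) direction it takes a genuinely different route from the thesis. The thesis reduces to $n=1$ via the blockwise description of $\iota_\delta$ and exhibits an explicit diagonal conjugator, $C=\diag(1,\sqrt{\mu/\delta})$ repeated blockwise; strictly speaking that $C$ carries the image of $\lambda$ only to a positive scalar multiple of the other image of $\lambda$, which still suffices since the image algebras are spans of $I$ and $\iota(\lambda)$ --- a point your scale-invariance observation $Z(cJ_\mu)=Z(J_\mu)$ handles automatically, where the thesis glosses it. You instead identify the image intrinsically as the commutant $Z(J_\delta)$ of $J_\delta=\iota_\delta(\lambda I_n)$ and reduce the question to conjugating $J_\delta$ to $cJ_\mu$, settled by real canonical forms. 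Note that the commutant identification follows at once from $\End_{\Lambda_\delta}(\Lambda_\delta^n)=\M(n;\Lambda_\delta)$ for the commutative algebra $\Lambda_\delta$ (an $\R$-linear map commuting with multiplication by $\lambda$ commutes with all of $\R[\lambda]$, and endomorphisms of a free module over a commutative ring are matrices), so your dimension count --- correct in all three cases, including the value $2n^2$ for the centralizer of Jordan type $(2^n)$ --- is a verification rather than a necessity. What the thesis's computation buys is an explicit conjugating matrix, which it immediately fixes as the notation $C_\delta$ and reuses in the subsequent Lie-algebra limit arguments; your abstract argument produces no such matrix, but in exchange it explains structurally why the conjugacy holds (commutants of operators with equal real canonical forms are conjugate) and transfers verbatim to other faithful representations. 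Your forward direction is in substance the paper's: both rest on conjugation being an $\R$-algebra isomorphism, with the thesis invoking $\Lambda_\delta\not\cong\Lambda_\mu$ for differing signs (equivalently recoverable from your setup via centers, since $\Lambda_\delta$ is the center of $\M(n;\Lambda_\delta)$), while you separate the matrix algebras directly by simplicity, the number of simple factors, and the Jacobson radical --- equivalent in strength.
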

\begin{proof}
We consider the case $n=1$ of the algebra itself; as this suffices by Observation \ref{obs:iota_delta_blockwise}.
When $\mathsf{sgn}(\delta)\neq\mathsf{sgn}(\mu)$ then $\Lambda_\delta$ is not even isomorphic to $\Lambda_\mu$, and so clearly their respective images in $\M(2;\R)$ are not conjugate.
Thus assume $\mathsf{sgn}(\delta)=\mathsf{sgn}(\mu)$, and consider $1,\lambda$ as elements of each.
The image of $1$ is the identity $I_2\in\M(2;\R)$ under each of $\iota_\delta,\iota_\mu$ but the image of $\lambda$ differs,
$$\iota_\delta(\lambda)=\pmat{0&\delta \\1&0},\hspace{0.5cm}\textrm{and}\hspace{0.5cm}
\iota_\mu(\lambda)=\pmat{0&\mu \\1&0}.
$$
As $\delta,\mu$ are of the same sign, $\mu/\delta$ is positive.
The matrix $C=\smat{1&0\\0&\sqrt{\tfrac{\mu}{\delta}}}$ conjugates $\iota_\delta(\lambda)$ to $\iota_\mu(\lambda)$, and thus by linearity $C\iota_\delta(\Lambda_\delta)C\inv=\iota_\mu(\Lambda_\mu)$.
In higher dimensions, the correct conjugating matrix is simply block diagonal with copies of $C$, or 
$$C=\diag\left(1,\sqrt{\tfrac{\mu}{\delta}},1,\sqrt{\tfrac{\mu}{\delta}},\ldots, 1, \sqrt{\tfrac{\mu}{\delta}}\right)$$. 
\end{proof}

\begin{remark}
We fix the notation $C_\delta=\diag(1,\sqrt{|\delta|})$ and note that for $\delta<0$, $C_\delta$ conjugates the standard embedding of $\C\subset\M(2;\R)$ to $\iota_\delta(\Lambda_\delta)$, and when $\delta>0$ the same $C_\delta$ conjugates the standard embedding of $\R\oplus\R\subset\M(2;\R)$ to $\iota_\delta(\Lambda_\delta)$.
\end{remark}

\begin{corollary}
The Lie groups $\iota_\delta(\GL(n;\Lambda_\delta))$ and $\iota_\mu(\GL(n;\Lambda_\mu))$ are conjugate in $\GL(2n;\R)$ if and only if $\mathsf{sgn}(\delta)=\mathsf{sgn}(\mu)$.
\end{corollary}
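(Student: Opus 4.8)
The plan is to deduce this corollary directly from the corresponding statement about the algebras themselves, which was just proved, using the blockwise description of $\iota_\delta$ in Observation \ref{obs:iota_delta_blockwise}. The key point is that conjugating $\iota_\delta(\GL(n;\Lambda_\delta))$ by the single real matrix $C=\diag(1,\sqrt{\mu/\delta},\ldots,1,\sqrt{\mu/\delta})$ should intertwine the entire matrix group, not just the scalar copies of the algebra, precisely because $\iota_\delta$ acts entry-by-entry.

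First I would dispose of the easy direction. If $\mathsf{sgn}(\delta)\neq\mathsf{sgn}(\mu)$, then $\Lambda_\delta$ and $\Lambda_\mu$ are non-isomorphic as real algebras, so $\GL(n;\Lambda_\delta)$ and $\GL(n;\Lambda_\mu)$ are non-isomorphic as abstract groups (for instance, their centers $\Lambda_\delta^\times$ and $\Lambda_\mu^\times$ are non-isomorphic: $\C^\times$, $\R_\ep^\times$, and $(\R\oplus\R)^\times$ are pairwise distinct). Hence the images cannot be conjugate in $\GL(2n;\R)$, since conjugate subgroups are isomorphic.

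For the forward direction, assume $\mathsf{sgn}(\delta)=\mathsf{sgn}(\mu)$ and let $C$ be the block-diagonal matrix above, built from $n$ copies of $C_0=\diag(1,\sqrt{\mu/\delta})$ aligned with the $n$ module coordinates. The previous proposition already establishes $C\iota_\delta(\Lambda_\delta)C\inv=\iota_\mu(\Lambda_\mu)$ at the level of scalars, i.e. on each $2\times 2$ diagonal block. I would then argue that for an arbitrary matrix $A\in\GL(n;\Lambda_\delta)$, conjugation of $\iota_\delta(A)$ by $C$ acts blockwise: the $(i,j)$ block of $C\iota_\delta(A)C\inv$ is exactly $C_0\,\iota_\delta(A_{ij})\,C_0\inv=\iota_\mu(A_{ij})$, since $C$ is block diagonal with all diagonal blocks equal to $C_0$. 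Therefore $C\iota_\delta(A)C\inv=\iota_\mu(\phi(A))$, where $\phi\colon\GL(n;\Lambda_\delta)\to\GL(n;\Lambda_\mu)$ is the algebra isomorphism $\Lambda_\delta\cong\Lambda_\mu$ applied entrywise. Since $\phi$ is a bijection onto $\GL(n;\Lambda_\mu)$, this gives $C\iota_\delta(\GL(n;\Lambda_\delta))C\inv=\iota_\mu(\GL(n;\Lambda_\mu))$, as desired.

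The only mild subtlety — and the step I would write out most carefully — is verifying that conjugation by a block-diagonal $C$ really does commute with the blockwise structure of $\iota_\delta$, so that $(C\iota_\delta(A)C\inv)_{ij}=C_0(\iota_\delta(A))_{ij}C_0\inv$; this is a routine consequence of block-matrix multiplication but is the one place where the argument genuinely uses that all diagonal blocks of $C$ are identical and of the correct size. No genuine obstacle arises, as the heavy lifting was done in the scalar case already established in the preceding proposition.
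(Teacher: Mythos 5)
Your proposal is correct and, for the forward direction, is essentially the paper's own route: the paper offers no separate proof of this corollary, treating it as immediate from the preceding proposition that $\iota_\delta(\M(n;\Lambda_\delta))$ and $\iota_\mu(\M(n;\Lambda_\mu))$ are conjugate iff $\mathsf{sgn}(\delta)=\mathsf{sgn}(\mu)$, whose proof is exactly your reduction to the $n=1$ case via the blockwise description of $\iota_\delta$. Your handling of the ``only if'' is genuinely different, though. The paper's implicit bridge is linear-algebraic: a conjugacy of the groups would conjugate their real spans, and $\GL(n;\Lambda)$ spans $\M(n;\Lambda)$ (being open and dense), contradicting the proposition at the algebra level. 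You instead use abstract isomorphism invariants --- conjugate subgroups of $\GL(2n;\R)$ are isomorphic (indeed as Lie groups), and the centers $\Lambda_\delta^\times I$ of $\GL(n;\Lambda_\delta)$ are pairwise distinguished across signs, e.g.\ by torsion ($\Z_2$ in $\R_\ep^\times$, $\Z_2^2$ in $(\R\oplus\R)^\times$, all roots of unity in $\C^\times$) or by component count. Both work; yours is more self-contained, since it never needs the span argument the paper leaves unstated.

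One computational slip, which you share with the paper itself: the conjugator is inverted. With $C_0=\diag(1,s)$, $s=\sqrt{\mu/\delta}$, one computes $C_0\,\iota_\delta(a+\lambda b)\,C_0\inv=\smat{a & \delta b/s\\ sb & a}$, which lies in $\iota_\mu(\Lambda_\mu)$ only if $\delta/s=\mu s$, i.e.\ $\delta^2=\mu^2$. The correct choice is $C_0=\diag\bigl(1,\sqrt{\delta/\mu}\bigr)$, and then $C_0\,\iota_\delta(x)\,C_0\inv=\iota_\mu(\phi(x))$ where $\phi(a+\lambda b)=a+\sqrt{\delta/\mu}\,b\,\lambda$ is the algebra isomorphism $\Lambda_\delta\to\Lambda_\mu$ --- so your instinct to insert $\phi$ rather than claim $\iota_\mu(A_{ij})$ literally was right, and your earlier displayed identity $C_0\,\iota_\delta(A_{ij})\,C_0\inv=\iota_\mu(A_{ij})$ is false as written. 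Since conjugacy is symmetric in $\delta$ and $\mu$, this inversion is harmless to the statement, and your blockwise argument (which correctly isolates that all diagonal blocks of $C$ must be equal and aligned with the $\iota_\delta$ blocks) goes through verbatim once $\sqrt{\mu/\delta}$ is replaced by $\sqrt{\delta/\mu}$.
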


\noindent
It will be useful to describe the map $\iota_\delta$ on a basis for $\M(n;\Lambda_\delta)$ to aid in future Lie algebra computations.

\begin{definition}
For each $i,j\in\{1,\ldots, n\}$ let $E_{ij}\in\M(n;\R)$ be the matrix with all zeroes except a $1$ in the $ij^{th}$ position.
Then the collection $\fam{E}=\{E_{ij},\lambda E_{ij}\}_{1\leq i,j\leq n}$ forms a basis for $\M(n;\Lambda_\delta)$.

Define $R_{jk}=E_{2j-1,2k-1}+E_{2j,2k}\in\M(2n,\R)$ to be built out of $2\times 2$ blocks, all zero except for the identity block in the $jk^{th}$ position.
Define $\mathcal{I}_{jk}E_{2j,2k-1}+\delta E_{2j-1,2k}\in\M(2n,\R)$ 
similarly, except with the $jk^{th}$ block given by $\smat{0&\delta\\1&0}$.
For example, consider $R_{23}$ and $I^\delta_{23}$ in $\M(6;\R)$:

$$R_{23}=
\pmat{
0&0&0&0&0&0\\
0&0&0&0&0&0\\
0&0&0&0&1&0\\
0&0&0&0&0&1\\
0&0&0&0&0&0\\
0&0&0&0&0&0
}
\hspace{1cm}
I_{23}^\delta=
\pmat{
0&0&0&0&0&0\\
0&0&0&0&0&0\\
0&0&0&0&0&\delta\\
0&0&0&0&1&0\\
0&0&0&0&0&0\\
0&0&0&0&0&0
}
$$

\end{definition}

\noindent
An easy calculation reveals that these are precisely the images of the basis $\{E_{jk},\lambda E_{jk}\}$ under the representation $\iota_\delta$.

\begin{calculation}
\label{calc:Image_of_M_Lambda_Basis}
$\iota_\delta(E_{jk})=R_{jk}$ and $\iota_\delta(\lambda E_{jk})	=I^\delta_{jk}$.
\end{calculation}

\noindent
Most importantly for our future use, the maps $\R\to\R^{(2n)^2}$ which sends $\delta\mapsto I^\delta_{jk}$ are continuous in $\delta$ and never pass through the zero matrix.
Thus for any fixed collection of $E_{jk}$ and $\lambda E_{jk}$, their images under $\iota_\delta$ span a continuously varying linear subspace of $\M(2n;\R)$ as $\delta$ varies.

\subsection{The Image of $\U(n,1;\Lambda_\delta)$}

The first step in analyzing the continuity of the path $\Hyp_{\Lambda_\delta}^n$ is to study the embeddings of the groups $\U(n,1;\Lambda_\delta)$ themselves.
We begin with the following surprising fact.

\begin{calculation}
\label{calc:Const_Lie_Alg}
For all $\delta$ the Lie algebra $\mathfrak{u}(n,1;\Lambda_\delta)$ is constant as a subset of $\M(n;\R)\oplus\lambda \M(n;\R)$.	
\end{calculation}
\begin{proof}
The elements of $\mathfrak{u}(n,1;\Lambda_\delta)$ are derivatives of paths $A_t\colon I\to \U(n,1;\Lambda_\delta)$ through the identity.
Let $X\in \mathfrak{u}(n,1;\Lambda)$ be the derivative of some path $A_t$ with 
$X=\tfrac{d}{dt}\mid_{t=0}A_t$.
Then as $A_t\in \U(n,1;\Lambda_\delta)$, for all $t$ we have $A_t^\dagger Q A_t=Q$.
Taking the derivative of both sides gives $(A'_t)^\dagger QA_t+A_tQA'	=0$, and evaluating at $t=0$ gives $X^\dagger Q+QX=0$.

Now $Q=\diag(I_n,-1)$ is a real matrix, and so all multiplication occurring in the expression $X^\dagger Q+QX$ is purely between one real number and one element of $\Lambda_\delta$.
Thus, at no point does the fact that $\lambda^2=\delta$ arise in the computation, and the Lie algebra $\mathfrak{u}(n,1;\Lambda_\delta)$ is independent of $\delta$, as a subset of $\M(n,\Lambda_\delta)=\M(n;\R)\oplus\lambda\M(n;\R)$.
\end{proof}

\begin{comment}
\begin{calculation}
In the basis $\{E_{jk},\lambda E_{jk}\}$, the Lie algebra $\mathfrak{u}(n,1;\Lambda_\delta)$ can be expressed
WRITE OUT THE BASIS
\end{calculation}
\end{comment}

\noindent
Not only are the Lie algebras constant \emph{for different $\delta$ of the same sign} but rather $\mathfrak{u}(n,1;\Lambda_\delta)$ is constant \emph{for all $\delta$ in $\R$}.
This may appear counterintuitive as 
the Lie groups $\U(n,1;\Lambda_\delta)$ are clearly not constant; but this 
results from the exponential map,$\exp_\delta\colon\M(n;\Lambda_\delta)\to\M(n,\Lambda_\delta)$, not the Lie algebra, varying with $\delta$.

\begin{definition}
The exponential map $\exp_\delta	\colon \M(n;\Lambda_\delta)\to\M(n;\Lambda_\delta)$ is defined by 
$$\exp_\delta(X)=I+X+\frac{1}{2!}X^2+\cdots +\frac{1}{n!}X^n+\cdots$$
but with matrix multiplication using the multiplicative structure of $\Lambda_\delta$.
\end{definition}

\begin{example}
The 1-dimensional vector subspace $\lambda \R\subset\R\oplus\lambda\R$ is invariant as $\delta\in\R$ varies, but its image under the 
exponential map $\exp_\delta$ is a different subgroup for each $\delta$: in particular, 
$\exp_{-1}(\lambda t)=\cos(t)+\lambda\sin(t)$ and $\exp_{1}(\lambda t)=\cosh(t)+\lambda\sinh(t)$.	
\end{example}

\begin{figure}
\centering
\includegraphics[width=0.6\textwidth]{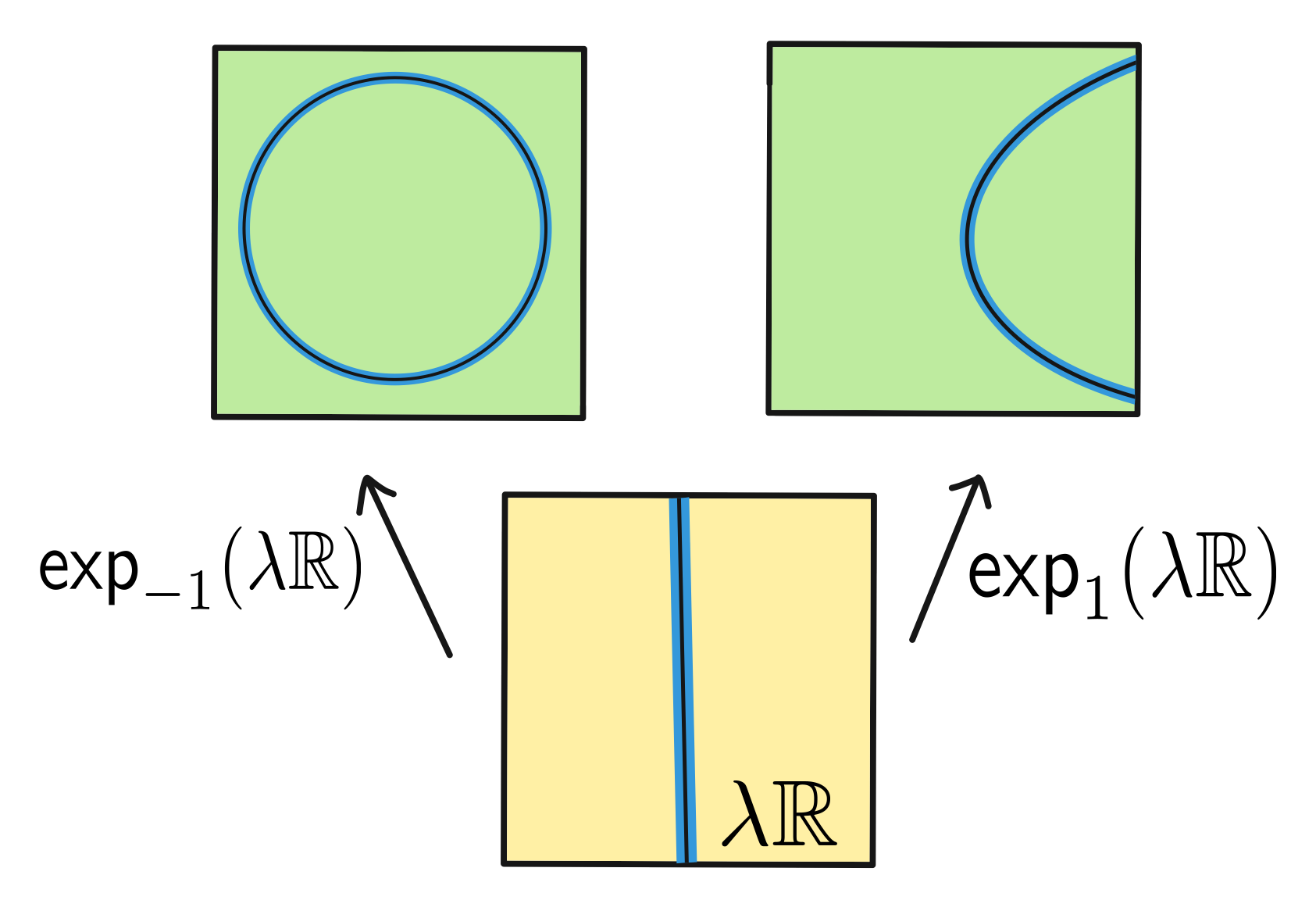}
\caption{The image of the same Lie algebra, $\lambda\R\subset\R\oplus\lambda\R$ under the exponential maps $\exp_{-1}$ and $\exp_{1}$.}	
\end{figure}

\noindent
To relate the matrix exponential of $\M(n;\Lambda_\delta)$ to the standard matrix exponential on $\GL(2n;\R)$ we exploit the fact that the representation $\iota_\delta$ is a homomorphism of algebras.

\begin{calculation}
$\iota_\delta\circ\exp_\delta=\exp\circ\iota_\delta$.	
\label{calc:Exp_Commuting}
\end{calculation}
\begin{proof}
This is a simple computation, showing that for all $N$ the partial sums of each side truncated at the $N^{th}$ degree are equal.
Let $X\in\M(n;\Lambda_\delta)$ be arbitrary.  
On the left hand side, we have
$$(\iota_\delta \exp_\delta(X))_N=\iota_\delta\left(
I+X+\frac{1}{2}X^2+\cdots +\frac{1}{N!}X^N\right)$$
Which, as $\iota_\delta$ is an algebra homomorphism, distributes through to give
$$I+\iota_\delta(X)+\frac{1}{2!}\iota_\delta(X)^2+\cdots+\frac{1}{N!}\iota_\delta(X)^N$$
which is precisely the $N^{th}$ truncation of the right hand side.
Thus, as the two are equal for every partial sum they are equal in the limit, and $\iota_\delta(\exp_\delta(X))=\exp(\iota_\delta(X))$.
\end{proof}

\begin{proposition}
The groups $\iota_\delta(\SU(n,1;\Lambda_\delta))$ and $\iota_\mu(\SU(n,1;\Lambda_\mu))$ are conjugate if and only if $\mathsf{sgn}(\delta)=\mathsf{sgn}(\mu)$.	
\end{proposition}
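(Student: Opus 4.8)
The plan is to prove both implications by transporting algebraic invariants of the underlying algebra $\Lambda_\delta$ through the faithful representation $\iota_\delta$, exactly in the spirit of the preceding proposition for the full matrix algebras $\iota_\delta(\M(n+1;\Lambda_\delta))$.

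For the backward direction, suppose $\mathsf{sgn}(\delta)=\mathsf{sgn}(\mu)$. Then $\lambda_\delta\mapsto\sqrt{\delta/\mu}\,\lambda_\mu$ is well defined (since $\delta/\mu>0$) and gives an $\R$-algebra isomorphism $\Lambda_\delta\cong\Lambda_\mu$ commuting with conjugation; it extends entrywise to a $\dagger$-algebra isomorphism $\psi\colon\M(n+1;\Lambda_\delta)\to\M(n+1;\Lambda_\mu)$. Because $Q=\diag(I_n,-1)$ has real entries it is fixed by $\psi$, so $\psi$ carries the defining relation $A^\dagger QA=Q$ and the condition $\det A=1$ to their counterparts over $\Lambda_\mu$; hence $\psi(\SU(n,1;\Lambda_\delta))=\SU(n,1;\Lambda_\mu)$. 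Finally, the explicit block-diagonal matrix $C$ produced in the previous proposition realizes $\psi$ through the representations, $\iota_\mu(\psi(M))=C\,\iota_\delta(M)\,C^{-1}$ for all $M$, so conjugation by $C$ carries $\iota_\delta(\SU(n,1;\Lambda_\delta))$ onto $\iota_\mu(\SU(n,1;\Lambda_\mu))$.

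For the forward direction I would read off $\mathsf{sgn}(\delta)$ from a conjugation-natural invariant of the image. The natural candidate is the commutant
$$Z_\delta=\End_{\iota_\delta(\SU(n,1;\Lambda_\delta))}\big(\R^{2(n+1)}\big),$$
the real-linear endomorphisms of $\Lambda_\delta^{n+1}$ commuting with the whole group. Conjugacy of the groups forces an isomorphism $Z_\delta\cong Z_\mu$ of real algebras, so it suffices to show $Z_\delta\cong\Lambda_\delta$ and that $\C$, $\R_\ep$, $\R\oplus\R$ are pairwise non-isomorphic as two-dimensional real algebras (they are: only $\R_\ep$ has a nonzero nilpotent, and only $\R\oplus\R$ has zero divisors that are not nilpotent). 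Computing $Z_\delta$ is a Schur-type argument on the standard module $\Lambda_\delta^{n+1}$: for $\delta<0$ it is $\C$-irreducible of complex type; for $\delta>0$ it splits through the idempotents $e_\pm$ into the standard and dual $\SL(n+1;\R)$-modules; and for $\delta=0$ it is the indecomposable self-extension of the standard module by multiplication by $\ep$. In each case the commutant reduces to the scalars $\Lambda_\delta\cdot I$.

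The main obstacle is exactly this commutant computation, and it is where the low-dimensional subtlety sits. The identification $Z_\delta\cong\Lambda_\delta$ needs the standard representation to admit no extra (conjugate-linear) intertwiners, equivalently that it not be self-dual; this holds once $n\ge 2$ but fails at $n=1$, where the exceptional isomorphisms $\SU(1,1;\C)\cong\SU(1,1;\R\oplus\R)\cong\SL(2;\R)$ make all three images conjugate to $g\mapsto g\oplus g$ regardless of the sign of $\delta$. The statement is therefore genuinely one about $n\ge 2$, the $n=1$ collapse being precisely the low-dimensional accident recorded earlier (where $\Hyp^1_{\Lambda_\delta}$ recovers Danciger's $\Hyp^2$, Half-Pipe, and $\mathsf{AdS}^2$). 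I would accordingly restrict to $n\ge 2$, and separately note the $n=1$ coincidence.
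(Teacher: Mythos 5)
Your backward direction is, in substance, the paper's own argument repackaged. The paper also reduces to the conjugacy of the embeddings realized by the block-diagonal matrix $C=\diag(1,\sqrt{\mu/\delta},\ldots,1,\sqrt{\mu/\delta})$, but transports the groups through it infinitesimally: it observes that the Lie algebras $\su(n,1;\Lambda_\delta)$ are literally \emph{constant} as subspaces of $\M(n+1;\R\oplus\lambda\R)$, and then uses the naturality $\iota_\delta\circ\exp_\delta=\exp\circ\,\iota_\delta$ to pull the conjugation through $\langle\exp(\cdot)\rangle$. Your transport through the $\dagger$-algebra isomorphism $\psi$ is a bit cleaner: it moves the whole group at once, whereas the paper's exponential argument literally only conjugates identity components (harmless here, since for nonzero $\delta,\mu$ of equal sign the groups $\SU(n,1;\C)$ and $\SU(n,1;\R\oplus\R)\cong\SL(n+1;\R)$ are connected).

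On the forward direction you have found something real: the paper's proof never addresses it --- it opens with ``let $\delta$ and $\mu$ be of the same sign'' and stops there, and the non-isomorphism argument it uses for the full matrix algebras $\iota_\delta(\M(n+1;\Lambda_\delta))$ does not transfer to the group images. Your commutant argument supplies the missing implication for $n\geq 2$, and your $n=1$ caveat is a correct counterexample to the statement as printed: $\iota_{-1}(\SU(1,1;\C))$ and $\iota_{1}(\SU(1,1;\R\oplus\R))$ are both conjugate in $\GL(4;\R)$ to $\set{X\oplus X \mid X\in\SL(2;\R)}$ (the Cayley transform conjugates $\SU(1,1;\C)$ to $\SL(2;\R)$ inside $\GL(2;\C)$ on one side; on the other, decomposing over the idempotents gives $\set{X\oplus X^{-T}}$ and the self-duality $X^{-T}=wXw\inv$, $w=\smat{0&1\\-1&0}$, of the standard $\SL(2;\R)$-module collapses this to the diagonal), so the ``only if'' genuinely fails at $n=1$. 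One slip: it is not ``all three'' images that coincide --- $\SU(1,1;\R_\ep)$ is solvable, so its image is conjugate to neither of the others; the collapse is only between the two nonzero signs, which already breaks the biconditional. Note also a lighter route to the forward direction when $n\geq 2$: conjugate subgroups are isomorphic as Lie groups, and the three groups are pairwise non-isomorphic ($\SU(n,1;\C)$ and $\SL(n+1;\R)$ have maximal compacts $\U(n)$ and $\SO(n+1)$ of different dimensions once $n\geq 2$, and $\SU(n,1;\R_\ep)$ is not reductive), avoiding the commutant computation entirely. Since the paper only ever invokes the ``if'' direction downstream (for the continuity of $\delta\mapsto\iota_\delta(\SU(n,1;\Lambda_\delta))$), your restriction to $n\geq 2$ corrects the statement without disturbing anything that follows.
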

\begin{proof}
Let $\delta$ and $\mu$ be of the same sign, and let $\mathfrak{g}=\mathfrak{u}(n,1;\Lambda_\delta)=\mathfrak{u}(n,1;\Lambda_\mu)<\M(n,\R\oplus\lambda\R)$.
The connected component of the identity in $\U(n,1;\Lambda_x)$ is the group generated by the exponential image of $\mathfrak{u}(n,1;\Lambda_x)$, and so we have
$$\U(n,1;\Lambda_\delta)_0=\langle \exp_\delta(\mathfrak{g})\rangle
\hspace{1cm}
\U(n,1;\Lambda_\mu)=\langle \exp_\mu(\mathfrak{g})\rangle$$
Thus, the groups $\iota_x(\U(n,1;\Lambda_x)_0)$ are generated by the image of $\exp_x(\mathfrak{g})$ under $\iota_x$:
$$\iota_\delta(\U(n,1;\Lambda_\delta)_0)=\langle \iota_\delta\circ\exp_\delta(\mathfrak{g})\rangle
\hspace{1cm}
\iota_\mu(\U(n,1;\Lambda_\mu)_0)=\langle \iota_\delta\circ\exp_\mu(\mathfrak{g})\rangle
$$
Using Calculation \ref{calc:Exp_Commuting}, we may re-express these as
$$
\iota_\delta(\U(n,1;\Lambda_\delta)_0)=\langle \exp \iota_\delta(\mathfrak{g})\rangle
\hspace{1cm}
\iota_\mu(\U(n,1;\Lambda_\mu)_0)=\langle \exp \iota_\mu(\mathfrak{g})\rangle
$$
But as $\delta$ and $\mu$ are of the same sign, the embeddings $\iota_\delta$ and $\iota_\mu$ are conjugate, so in particular $\iota_\mu(\mathfrak{g})=C\iota_\delta(\mathfrak{g})C\inv$.
This conjugacy pulls out of the exponential map and the 'group generated by' to give
$$\iota_\mu\left(\U(n,1;\Lambda_\mu)_0\right)=\langle \exp( C\iota_\delta(\mathfrak{g})C\inv)\rangle=C\langle \exp\iota_\delta(\mathfrak{g})\rangle C\inv
=C\iota_\delta\left(\U(n,1;\Lambda_\delta)_0\right)C\inv$$
\end{proof}

\noindent
This allows us to study the path $\iota_\delta(\U(n,1;\Lambda_\delta)$ as a conjugacy limit inside of $\GL(2n;\R)$.
That the same holds for the stabilizers is an easy consequence of the following observation.

\begin{observation}
The stabilizer subgroup $\mathsf{USt}(n,1;\Lambda_\delta)$ is block diagonal, with unitary blocks $\U(n;\Lambda_\delta)$ and $\U(1;\Lambda_\delta)$.
By an analogous argument to Calculation \ref{calc:Const_Lie_Alg}, the Lie algebras of each of these are constant as vector subspaces of $\M(n,\R)\oplus\lambda\M(n;\R)$	 and $\R\oplus\lambda\R$ respectively,
 and so $\mathfrak{ust}(n,1;\Lambda_\delta)$ is constant in $\M(n;\Lambda_\delta)$ as a vector subspace, even as $\delta$ varies in $\R$.
\end{observation}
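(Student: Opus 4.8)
The statement has two parts, and I would treat them in turn. \textbf{The block-diagonal description} is really just the observation that the point-stabilizer computation carried out earlier over $\C$, $\R_\ep$ and $\R\oplus\R$ goes through verbatim over a general $\Lambda_\delta$. If $A\in\U(n,1;\Lambda_\delta)$ fixes $[e_{n+1}]$, then writing out $A^\dagger Q A = Q$ and using that the columns of $A$ are $q$-orthogonal forces the last entry of each of the first $n$ columns to vanish, so $A=\smat{B&0\\0&u}$; the block form of $A^\dagger Q A=Q$ then separates into $B^\dagger B = I_n$ and $\bar u u = 1$, identifying the stabilizer with $\smat{\U(n;\Lambda_\delta)&\\&\U(\Lambda_\delta)}$. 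None of this references the relation $\lambda^2=\delta$, so it holds uniformly in $\delta$. Consequently the Lie algebra is the block-diagonal direct sum $\mathfrak{ust}(n,1;\Lambda_\delta)=\mathfrak{u}(n;\Lambda_\delta)\oplus\mathfrak{u}(\Lambda_\delta)$, and it suffices to show each summand is $\delta$-independent.

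\textbf{The constancy of each block} I would prove by the same linearization used in Calculation \ref{calc:Const_Lie_Alg}. For the upper block, an element $X\in\mathfrak{u}(n;\Lambda_\delta)$ is the velocity at the identity of a path $B_t$ with $B_t^\dagger B_t=I$; differentiating and setting $t=0$ gives the skew-Hermitian condition $X^\dagger + X = 0$. Since the conjugate-transpose operation is built from transposition and the conjugation $a+\lambda b\mapsto a-\lambda b$, which is a fixed real-linear involution of $\R\oplus\lambda\R$ that never multiplies two $\lambda$'s together, this equation is a single linear condition on $\M(n;\R)\oplus\lambda\M(n;\R)$ with no occurrence of $\delta$. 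Its solution space --- the skew-Hermitian matrices --- is therefore one fixed subspace for all $\delta$. For the scalar block, $w\in\mathfrak{u}(\Lambda_\delta)$ is the velocity at $z=1$ of a path with $z_t\bar{z_t}=1$; differentiating yields $w+\bar w=0$, that is $w\in\lambda\R$, again independent of $\delta$. Assembling the two blocks shows $\mathfrak{ust}(n,1;\Lambda_\delta)$ is the fixed subspace $\{X\mid X^\dagger=-X\}\oplus\lambda\R$ of $\M(n+1;\R)\oplus\lambda\M(n+1;\R)$.

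The only point I would take care to spell out --- less a genuine obstacle than a potential confusion --- is \emph{why} the $\delta$-dependence disappears even though the groups themselves clearly move with $\delta$. The defining equation $z\bar z=1$ of $\U(\Lambda_\delta)$ is $\delta$-dependent: in coordinates it reads $a^2-\delta b^2=1$, and indeed $\U(\Lambda_\delta)$ is a circle, a pair of lines, or a pair of hyperbolas according to the sign of $\delta$. The resolution is that the $\delta$-carrying term $-\delta b^2$ is quadratic in the imaginary part $b$, which vanishes at the identity $b=0$; so when one linearizes at the identity the $\delta$-term contributes nothing and the tangent space comes out $\delta$-independent. This is precisely the mechanism already present in Calculation \ref{calc:Const_Lie_Alg} --- the relation $\lambda^2=\delta$ only ever enters through products of two imaginary quantities, and such products are second order at the identity --- and emphasizing it here makes transparent why the Lie algebras can be constant while the exponential maps, and hence the groups, genuinely transition.
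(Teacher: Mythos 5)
Your proposal is correct and takes essentially the same route the paper intends: the block-diagonal stabilizer computation is the one already carried out fiberwise over $\C$, $\R_\ep$ and $\R\oplus\R$, and the constancy of each block follows by differentiating the defining equations at the identity exactly as in Calculation \ref{calc:Const_Lie_Alg}, where the key point is that conjugation and transposition never produce a $\lambda\cdot\lambda$ product, so the relation $\lambda^2=\delta$ never enters. Your closing paragraph explaining that the $\delta$-dependence sits in terms quadratic in the imaginary part (hence vanishes under linearization, while the exponential maps still move the groups) is a helpful amplification of the same mechanism, not a different argument.
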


\begin{corollary}
The groups $\iota_\delta(\mathsf{USt}(n,1;\Lambda_\delta))$ and 	$\iota_\mu(\mathsf{USt}(n,1;\Lambda_\mu))$ are conjugate in $\GL(2n;\R)$ if and only if $\mathsf{sgn}(\delta)=\mathsf{sgn}(\mu)$.
\end{corollary}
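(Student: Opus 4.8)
The plan is to prove the corollary by directly mirroring the preceding proposition for $\SU(n,1;\Lambda_\delta)$, replacing the unitary group with its point stabilizer throughout. The key structural input is the observation immediately preceding the statement: the Lie algebra $\mathfrak{ust}(n,1;\Lambda_\delta)$ is \emph{constant} as a vector subspace of $\M(n;\R)\oplus\lambda\M(n;\R)$, independent of $\delta$, exactly as was established for $\mathfrak{u}(n,1;\Lambda_\delta)$ in Calculation \ref{calc:Const_Lie_Alg}. Once this constancy is in hand, the argument is formally identical to the one just given, so the real work is only to verify that each ingredient transfers.

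First I would dispose of the easy direction: if $\mathsf{sgn}(\delta)\neq\mathsf{sgn}(\mu)$, then $\Lambda_\delta$ and $\Lambda_\mu$ are non-isomorphic algebras, so $\iota_\delta(\Lambda_\delta)$ and $\iota_\mu(\Lambda_\mu)$ are non-conjugate in $\M(2;\R)$, and hence no block-diagonal subgroups built over them can be conjugate in $\GL(2n;\R)$. For the substantive direction, assume $\mathsf{sgn}(\delta)=\mathsf{sgn}(\mu)$ and let $\mathfrak{k}=\mathfrak{ust}(n,1;\Lambda_\delta)=\mathfrak{ust}(n,1;\Lambda_\mu)$ be the common stabilizer Lie algebra. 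I would write the identity component of the stabilizer as the group generated by the $\delta$-exponential of $\mathfrak{k}$, namely $\USt(n,1;\Lambda_\delta)_0=\langle\exp_\delta(\mathfrak{k})\rangle$, and apply $\iota_\delta$. Using Calculation \ref{calc:Exp_Commuting}, which gives $\iota_\delta\circ\exp_\delta=\exp\circ\iota_\delta$ (this holds for \emph{any} subalgebra since it is proved for arbitrary $X\in\M(n;\Lambda_\delta)$), I obtain
$$\iota_\delta(\USt(n,1;\Lambda_\delta)_0)=\langle\exp\iota_\delta(\mathfrak{k})\rangle,\qquad \iota_\mu(\USt(n,1;\Lambda_\mu)_0)=\langle\exp\iota_\mu(\mathfrak{k})\rangle.$$
Since $\delta,\mu$ have the same sign, the conjugating matrix $C=\diag(1,\sqrt{\mu/\delta},\dots,1,\sqrt{\mu/\delta})$ satisfies $\iota_\mu(\mathfrak{k})=C\,\iota_\delta(\mathfrak{k})\,C^{-1}$, and pulling the conjugation through $\exp$ and through the "group generated by" operation yields $\iota_\mu(\USt_0)=C\,\iota_\delta(\USt_0)\,C^{-1}$.

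The one genuine subtlety—and what I expect to be the main obstacle—is that the corollary concerns the \emph{full} stabilizer groups, not merely their identity components, whereas the Lie-algebra/exponential argument only directly controls $\USt_0$. I would handle this by noting that $\USt(n,1;\Lambda_\delta)$ is block diagonal with blocks $\U(n;\Lambda_\delta)$ and $\U(1;\Lambda_\delta)$, so its component group is determined by the component structure of these unitary factors over $\Lambda_\delta$, which depends only on $\mathsf{sgn}(\delta)$; the same conjugating matrix $C$, being block diagonal with scalar blocks commuting with the idempotent/imaginary structure, carries representatives of each component of $\iota_\delta(\USt)$ to representatives of the corresponding component of $\iota_\mu(\USt)$. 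Thus $C$ conjugates the whole groups and not just the identity components, completing the proof. This component-level bookkeeping is routine given the explicit block-diagonal form but is the only place where the bare Lie-theoretic argument needs supplementing.
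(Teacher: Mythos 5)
Your main direction is exactly the paper's intended argument: the paper offers no separate proof of this corollary, deriving it from the observation that $\mathfrak{ust}(n,1;\Lambda_\delta)$ is constant as a vector subspace by repeating the proof of the preceding proposition for $\SU(n,1;\Lambda_\delta)$ verbatim (constant Lie algebra, $\iota_\delta\circ\exp_\delta=\exp\circ\iota_\delta$, conjugation by $C$), and your proposal reproduces this faithfully. Your flagging of the identity-component issue is a genuine improvement over the paper, whose proof of the $\SU$ proposition literally only concludes a statement about $\U(n,1;\Lambda_\delta)_0$ and never addresses the other components. Your block-diagonal bookkeeping would close that gap, though as written it is asserted rather than checked; a cleaner route is to observe that conjugation by $C$ implements an algebra isomorphism $\Lambda_\delta\to\Lambda_\mu$ (sending $\lambda\mapsto c\lambda$ with $c^2=\delta/\mu$) which commutes with the conjugation involutions, hence carries the full solution set of $X^\dagger QX=Q$, and its intersection with the block-diagonal subgroup, onto the corresponding sets over $\Lambda_\mu$ --- no component analysis needed. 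One inherited caution: the paper's $C=\diag(1,\sqrt{\mu/\delta})$ has the ratio inverted. Conjugating $\iota_\delta(\lambda)=\smat{0&\delta\\1&0}$ by it yields $\smat{0&\delta\sqrt{\delta/\mu}\\\sqrt{\mu/\delta}&0}$, which lies in $\iota_\mu(\Lambda_\mu)$ only when $|\delta|=|\mu|$; the correct blockwise conjugator is $\diag(1,\sqrt{\delta/\mu})$. You copied the paper's $C$, so your identity $\iota_\mu(\mathfrak{k})=C\iota_\delta(\mathfrak{k})C\inv$ needs this reciprocal fixed.

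The genuine slip is your easy direction, which is a non sequitur: non-conjugacy of $\iota_\delta(\Lambda_\delta)$ and $\iota_\mu(\Lambda_\mu)$ in $\M(2;\R)$ does not by itself preclude conjugacy of particular subgroups ``built over them'' --- the trivial group, or the common real orthogonal block $\O(n)\times\O(1)$, sits inside all three stabilizers regardless of sign. The correct argument is via isomorphism type: conjugate subgroups of the ambient general linear group are isomorphic as Lie groups, and the three stabilizers are pairwise non-isomorphic. For $\delta<0$, $\USt(n,1;\Lambda_\delta)\cong\U(n;\C)\times\U(\C)$ is compact; for $\delta=0$ it is an extension of a compact group by a nontrivial normal unipotent subgroup (the $\ep$-part), hence non-compact and non-reductive; for $\delta>0$ it is isomorphic to $\GL(n;\R)\times\R^\times$, non-compact and reductive. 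Compactness separates the first case from the other two, and the presence of a nontrivial normal unipotent subgroup separates the second from the third (dimension cannot distinguish them: all three have dimension $n^2+1$). This check is quick, but it must be made, since your stated inference is false as a general principle.
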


\subsection{Computing the Conjugacy Limit}

Recall the definition of continuity of \ref{sec:HC_To_HRR_Conjugacy} for Automorphism-Stabilizer geometries whose automorphism groups all embed in a fixed group $G$; phrased here to deal with the specific situation at hand.

\begin{definition}
\label{def:HC_HRR_conj_lim}
If $G_\delta<\GL(n;\Lambda_\delta)$ is a collection of groups, one for each $\delta\in\R$, we say that this collection is continuous if the map $
\delta\mapsto \iota_\delta(G_\delta)$ is continuous as a function $\R\to\Cl(\GL(2n;\R))$.
Further, if $(G_\delta, C_\delta)$ is a geometry of the Automorphism-Stabilizer variety with $G_\delta<\GL(n;\Lambda_\delta)$, we say $(G_\delta, C_\delta)$ is a continuous family of geometries if the map $\delta\mapsto (\iota_\delta(G_\delta),\iota_\delta(C_\delta))$ is continuous as a function $\R\to\Cl(\GL(2n;\R))\times\Cl(\GL(2n;\R))$.
\end{definition}

\noindent
The discussion of the previous section determines the continuity of the assignment $\delta\mapsto (\iota_\delta(\SU(n,1;\Lambda_\delta)),\iota_\delta(\mathsf{USt}(n,1;\Lambda_\delta))$ everywhere except for $\delta=0$.
To see this, note for $\delta\in\R_+$, the assignment $\delta\mapsto C_\delta=\diag(1,\sqrt{\delta}, \ldots, 1, \sqrt{\delta})$ provides a continuous map $\R_+\to \M(2n;\R)$.
Then by the previous discussion

$$\iota_\delta(\SU(n,1;\Lambda_\delta))=C_{|\delta|}\iota_{-1}(\SU(n,1;\C))C_{|\delta|}\inv$$
$$\iota_\delta(\mathsf{USt}(n,1;\Lambda_\delta))=C_{|\delta|}\iota_{-1}(\mathsf{USt}(n,1;\C))C_{|\delta|}\inv,$$

\noindent
 where we identify $\Lambda_{-1}=\C$ and $\iota_{-1}$ is the map sending each entry $a+i b$ to the $2\times 2$ sub-matrix $\smat{a&-b\\b&a}$.
 Similarly, when $\delta>0$ we have
 
 $$\iota_\delta(\SU(n,1;\Lambda_\delta))=C_{\delta}\iota_{1}(\SU(n,1;\R\oplus\R))C_{\delta}\inv$$
$$\iota_\delta(\mathsf{USt}(n,1;\Lambda_\delta))=C_{\delta}\iota_{1}(\mathsf{USt}(n,1;\R\oplus\R))C_{\delta}\inv,$$

\noindent 
where $\Lambda_1=\R\oplus\R$ and $\iota_1$ is the map sending each
entry  $a+\lambda b$ to $\smat{a&b\\b&a}$.
As conjugating a subgroup by a continuous path of matrices results in a continuous path of subgroups, we have:

\begin{corollary}
\label{cor:HC_HRR_Conj_Continuous}
The following maps are continuous into $\Cl(\GL(2n;\R))\times\Cl(\GL(2n;\R))$.
$$f_-\colon\delta\mapsto \left(\iota_\delta(\SU(n,1;\Lambda_\delta)),\iota_\delta(\mathsf{USt}(n,1;\Lambda_\delta))\right)\hspace{1cm} \delta\in\R_-$$
$$f_+\colon \delta\mapsto \left(\iota_\delta(\SU(n,1;\Lambda_\delta)),\iota_\delta(\mathsf{USt}(n,1;\Lambda_\delta))\right)\hspace{1cm} \delta\in\R_+$$
\end{corollary}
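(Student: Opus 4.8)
The plan is to recognize that, away from $\delta = 0$, both maps are assembled entirely out of conjugating a \emph{fixed} pair of subgroups by a continuously varying matrix, so that continuity reduces to the continuity of the conjugation action on the Chabauty space. None of the genuine difficulty of the transition lives here; that is deliberately postponed to the behavior across $\delta = 0$.

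First I would invoke the descriptions of $\iota_\delta(\SU(n,1;\Lambda_\delta))$ and $\iota_\delta(\USt(n,1;\Lambda_\delta))$ established in the preceding discussion. For $\delta \in \R_-$ one has
$$\iota_\delta(\SU(n,1;\Lambda_\delta)) = C_{|\delta|}\,\iota_{-1}(\SU(n,1;\C))\,C_{|\delta|}\inv, \qquad \iota_\delta(\USt(n,1;\Lambda_\delta)) = C_{|\delta|}\,\iota_{-1}(\USt(n,1;\C))\,C_{|\delta|}\inv,$$
where $C_{|\delta|} = \diag(1,\sqrt{|\delta|},\ldots,1,\sqrt{|\delta|})$, and the two groups $\iota_{-1}(\SU(n,1;\C))$, $\iota_{-1}(\USt(n,1;\C))$ are \emph{constant} closed subgroups of $\GL(2n;\R)$, independent of $\delta$. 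The identical description holds on $\R_+$ with $C_\delta$ in place of $C_{|\delta|}$ and the standard models $\iota_1(\SU(n,1;\R\oplus\R))$, $\iota_1(\USt(n,1;\R\oplus\R))$.

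Second I would assemble continuity from three continuous ingredients. (i) The map $\delta \mapsto C_{|\delta|}$ is continuous from $\R_-$ into $\GL(2n;\R)$, since on $\R_-$ the function $\delta \mapsto |\delta|$ is continuous and strictly positive and $\sqrt{\cdot}$ is continuous; hence $C_{|\delta|}$ stays invertible and $\delta \mapsto C_{|\delta|}\inv$ is continuous as well. (ii) The conjugation action $\GL(2n;\R) \times \Cl(\GL(2n;\R)) \to \Cl(\GL(2n;\R))$, $(g,H) \mapsto gHg\inv$, is continuous in the Chabauty topology, as recorded earlier in the excerpt. (iii) A fixed closed subgroup defines a constant, hence continuous, map into $\Cl(\GL(2n;\R))$. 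Composing (i)--(iii) shows each coordinate $\delta \mapsto \iota_\delta(\SU(n,1;\Lambda_\delta))$ and $\delta \mapsto \iota_\delta(\USt(n,1;\Lambda_\delta))$ is continuous on $\R_-$, and the pairing of two continuous maps into a product space is continuous, giving continuity of $f_-$; the same argument on $\R_+$ yields continuity of $f_+$.

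The honest remark is that there is essentially \textbf{no} obstacle here: the corollary is deliberately the \emph{easy} half of the continuity statement, provable precisely because on each open half-line $\R_\pm$ the isomorphism type of $\Lambda_\delta$ is constant, so every group in sight is a literal conjugate of one fixed model. The substantive difficulty --- and the reason $\delta = 0$ is excluded from this statement --- is continuity \emph{across} $\delta = 0$, where $C_\delta$ degenerates ($\sqrt{|\delta|} \to 0$) and the conjugating matrices leave every compact subset of $\GL(2n;\R)$, so one can no longer argue by conjugating a fixed subgroup. That limiting computation, realizing $\iota_0(\SU(n,1;\R_\ep))$ as a genuine conjugacy limit rather than a conjugate, is the real content of Theorem \ref{thm:ConjLim_HRR} and is treated separately.
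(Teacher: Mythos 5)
Your proof is correct and matches the paper's argument essentially verbatim: the paper likewise writes $\iota_\delta(\SU(n,1;\Lambda_\delta))=C_{|\delta|}\,\iota_{\mp 1}(\SU(n,1;\Lambda_{\mp 1}))\,C_{|\delta|}\inv$ (and the same for the stabilizers) on each half-line and concludes by noting that conjugating a fixed subgroup by a continuous path of matrices yields a Chabauty-continuous path of subgroups. Your closing remark is also accurate --- the paper defers the behavior at $\delta=0$ to the Lie algebra limit computation, which is the genuine content of the transition.
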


\noindent This leaves only checking continuity at the transition point, where the associated geometry switches from $\Hyp_\C^n$ to $\Hyp_{\R\oplus\R}^n$ through $\Hyp_{\R_\ep}^n$.

\begin{observation}
In light of the already completed work above, the continuity of the family of geometries $\Hyp_{\Lambda_\delta}^n$ amounts to checking that $\lim_{\delta\to 0^-}f_-(\delta)$ and $\lim_{\delta\to 0^+}f_+(\delta)$ have the same limit in $\Cl(\GL(2n;\R))\times\Cl(\GL(2n;\R))$.
\end{observation}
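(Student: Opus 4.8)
The plan is to recognize that, by Definition \ref{def:HC_HRR_conj_lim}, continuity of the family $\Hyp_{\Lambda_\delta}^n$ is nothing more than continuity of the single map
\[
f\colon \R \to \Cl(\GL(2n;\R))\times\Cl(\GL(2n;\R)), \qquad f(\delta)=\bigl(\iota_\delta(\SU(n,1;\Lambda_\delta)),\,\iota_\delta(\USt(n,1;\Lambda_\delta))\bigr),
\]
and then to whittle this down to the behavior of $f$ near the single point $\delta=0$. Writing $\R=\R_-\sqcup\{0\}\sqcup\R_+$, Corollary \ref{cor:HC_HRR_Conj_Continuous} already supplies that $f$ agrees with the continuous maps $f_-$ and $f_+$ on the \emph{open} half-lines $\R_-$ and $\R_+$. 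Since these are open subsets of $\R$, continuity of $f$ at every $\delta\neq 0$ is immediate. Thus the whole question collapses to continuity of $f$ at $0$, the unique value where the isomorphism type of $\Lambda_\delta$ changes.

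Next I would pass from topological continuity to one-sided limits using metrizability of the target. Each factor $\Cl(\GL(2n;\R))$ is compact and metrizable in the Chabauty topology \cite{Biringer18}, and a finite product of metrizable spaces is metrizable, so the codomain carries a metric. For a map from a subset of the real line into a metric space, continuity at the interior point $0$ is equivalent to the statement that both one-sided limits $L_-=\lim_{\delta\to 0^-}f_-(\delta)$ and $L_+=\lim_{\delta\to 0^+}f_+(\delta)$ exist and satisfy $L_-=L_+=f(0)$. This is the familiar gluing of two continuous branches along a shared endpoint, now legitimate because sequential criteria govern convergence in a metric target.

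The delicate point, and the step I expect to be the real obstacle, is reconciling this gluing condition "$L_-=L_+=f(0)$" with the weaker-sounding condition "$L_-$ and $L_+$ have the same limit" that the Observation states; that is, showing the prescribed value $f(0)$, the $\R_\ep$-geometry, is forced to be the common limit rather than being an independent thing to verify. Here I would exploit the two features that single out $\delta=0$. By (the argument of) Calculation \ref{calc:Const_Lie_Alg} the Lie algebra $\mathfrak g=\mathfrak{su}(n,1;\Lambda_\delta)$ is \emph{constant} in $\delta$, and by Calculation \ref{calc:Image_of_M_Lambda_Basis} the embeddings vary continuously through $\delta=0$ on a basis, since $\iota_\delta(E_{jk})=R_{jk}$ is constant while $\iota_\delta(\lambda E_{jk})=I^\delta_{jk}$ depends continuously on $\delta$. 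Consequently, for any $X\in\mathfrak g$ the curve $g_\delta=\iota_\delta(\exp_\delta X)=\exp(\iota_\delta X)$, where the second equality is Calculation \ref{calc:Exp_Commuting}, lies in $\iota_\delta(\SU(n,1;\Lambda_\delta))$ and converges to $\exp(\iota_0 X)$ as $\delta\to 0^{\pm}$. By the sequential description of the Chabauty topology this exhibits the identity component of $\iota_0(\SU(n,1;\R_\ep))=f(0)$ inside every subsequential limit of $f_\pm$.

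Finally I would pin down the reverse inclusion on identity components by dimension. The conjugacies $C_{|\delta|}$ realize $f_-$ and $f_+$ as genuine \emph{degenerating} conjugacy limits of the algebraic groups $\iota_{-1}(\SU(n,1;\C))$ and $\iota_{1}(\SU(n,1;\R\oplus\R))$ respectively, so Theorem \ref{thm:CDW_AlgGrps} applies: every such limit has the same dimension, forcing its identity component to coincide with that of $f(0)$. This identifies the connected geometric limit from each side with $f(0)$ and shows that the only substantive content left is the mutual agreement $L_-=L_+$ — exactly the condition the Observation isolates, with the common value then necessarily being $f(0)$. The genuine computational work, namely evaluating the two conjugacy limits explicitly and confirming they coincide as closed subgroups, is thereby deferred, and is carried out in the following section.
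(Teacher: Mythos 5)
Your proposal is correct and follows essentially the same route as the paper: the Observation is justified there implicitly by Corollary \ref{cor:HC_HRR_Conj_Continuous} (continuity on the open half-lines) together with metrizability of the Chabauty space, and the point that the common one-sided limit is forced to equal the $\delta=0$ value is established by exactly the tools you invoke --- constancy of the Lie algebra (Calculation \ref{calc:Const_Lie_Alg}), the identity $\iota_\delta\circ\exp_\delta=\exp\circ\,\iota_\delta$ (Calculation \ref{calc:Exp_Commuting}), and the dimension theorem for conjugacy limits of algebraic groups (Theorem \ref{thm:CDW_AlgGrps}). The only difference is that you front-load this forced-value argument into the Observation itself, whereas the paper defers it to the corollaries immediately following; note that both treatments pin down only the identity component by the dimension count, so the equality of the full limits (including components) genuinely rests on the explicit computation deferred to the subsequent section, just as you say.
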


\noindent
To compute these two limits we once again leverage the work of the previous section, which shows individually each of these can be expressed as a conjugacy limit in $\GL(2n;\R)$.
In particular, each of these is a pair of  \emph{conjugacy limits of algebraic groups}.

\begin{lemma}
Let $G<\GL(n;\Lambda_\delta)$ be an algebraic group.
Then $\iota_\delta(G)$ is an algebraic subgroup of $\GL(2n;\R)$.	
\end{lemma}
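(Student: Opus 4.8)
The plan is to exploit that $\iota_\delta$ is an injective $\R$-algebra homomorphism which is moreover $\R$-linear, so that it transports the real-polynomial defining equations of $G$ into real-polynomial equations on $\M(2n;\R)$. Throughout, the key structural fact is that $\iota_\delta$ is linear with linear image, and linear maps preserve the class of polynomial conditions.

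First I would fix the real coordinate description of the ambient variety. Writing each entry of a matrix in $\M(n;\Lambda_\delta)$ as $x_{ij}+\lambda y_{ij}$ with $x_{ij},y_{ij}\in\R$ identifies $\M(n;\Lambda_\delta)$ with $\R^{2n^2}$, and under this identification the multiplication $\times_\delta$ is a real-bilinear (hence polynomial) operation whose coefficients depend on $\delta$. By an \emph{algebraic subgroup} $G<\GL(n;\Lambda_\delta)$ I mean a subgroup cut out by equations polynomial in the real coordinates $(x_{ij},y_{ij})$; equivalently, separating the real and $\lambda$-parts of any relation written with $\Lambda_\delta$-coefficients yields finitely many such real-polynomial relations.

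Next I would record the structure of $\iota_\delta$. By Observation \ref{obs:iota_delta_blockwise} the map $\iota_\delta$ is $\R$-linear and acts blockwise, sending the entry $x_{ij}+\lambda y_{ij}$ to the block $\smat{x_{ij} & \delta y_{ij}\\ y_{ij} & x_{ij}}$. Consequently its image $W=\iota_\delta(\M(n;\Lambda_\delta))$ is the linear subspace of $\M(2n;\R)$ consisting of all block matrices whose $2\times 2$ blocks have this form; being a linear subspace, $W$ is itself a real algebraic subvariety, cut out by the linear equations that equate the appropriate entries within each block and impose the $\delta$-ratio on the off-diagonal entries. Moreover $\iota_\delta$ restricts to a linear isomorphism $\R^{2n^2}\to W$ whose inverse $\iota_\delta^{-1}\colon W\to\R^{2n^2}$ is again linear, recovering each coordinate $x_{ij},y_{ij}$ as a linear function of the entries of a matrix in $W$.

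Then the conclusion follows by pulling back the defining equations. A matrix $M\in\M(2n;\R)$ lies in $\iota_\delta(G)$ if and only if $M\in W$ and $\iota_\delta^{-1}(M)\in G$. The first condition is the linear system defining $W$; for the second, each real polynomial $p$ cutting out $G$ pulls back via the linear map $\iota_\delta^{-1}$ to $p\circ\iota_\delta^{-1}$, which is again a polynomial in the entries of $M$. These two systems exhibit $\iota_\delta(G)$ as a real algebraic subset. Finally, since $\iota_\delta$ is an algebra homomorphism it carries products to products and units to units (as $\iota_\delta(g)\iota_\delta(g^{-1})=\iota_\delta(I_n)=I_{2n}$), so $\iota_\delta(G)$ is a subgroup of $\GL(2n;\R)$. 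The only genuinely delicate point is the bookkeeping in the first step—verifying that ``algebraic over $\Lambda_\delta$'' translates into ``real-algebraic in the $(x_{ij},y_{ij})$''—which rests entirely on the real-bilinearity of $\times_\delta$; once that is granted, every remaining step is the routine observation that linear maps preserve polynomial conditions.
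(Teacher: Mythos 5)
Your proof is correct and follows essentially the same route as the paper: expand the $\Lambda_\delta$-polynomial relations into real and $\lambda$-parts to realize $G$ as a real subvariety of $\R^{2n^2}$, then transport the equations under $\iota_\delta$. If anything, your explicit use of the linearity of $\iota_\delta$ and of its linear inverse on the image $W$ makes precise a step the paper merely asserts---that the image of the subvariety is a subvariety---which holds here exactly because $\iota_\delta$ is a linear embedding onto a linear subspace, not because images of varieties under general algebraic maps are varieties.
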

\begin{proof}
If $G$ is an algebraic subgroup of $\GL(n;\Lambda_\delta)$ then $G$ is cut out by a collection of polynomials $G=V(p_1,\ldots, p_k)$ for $p_i\in \Lambda_\delta(x_{11},\ldots, x_{nn})$.
The substitution $x_{ij}=y_{ij}+\lambda z_{ij}$ converts each $p_m$ into a pair of real polynomials $p_m^\R$ and $p_m^\lambda$ determined by equating real and $\lambda$ parts.
Thus, $G=V(p_1^\R, p_1^\lambda, \ldots, p_k^\R, p_k^\lambda)$ is a real subvariety of $M(n;\R\oplus\lambda\R)=\R^{2n^2}$.

\noindent
As a representative example, consider $p=x_{11}^2+x_{21}^2\in\Lambda_\delta[x_{11},x_{12},x_{21},x_{22}]$	 which is one of the three defining polynomials for $\SO(2;\Lambda_\delta)$.
Substituting and multiplying out using $\lambda^2=\delta$ gives $y_{11}^2+y_{21}^2+\delta(z_{11}^2+z_{21}^2)+2\lambda(y_{11}z_{11}+y_{21}z_{21})=1$, and equating real and imaginary parts gives $p^\R=y_{11}^2+y_{21}^2+\delta(z_{11}^2+z_{21}^2)-1$, $p^\lambda=2(y_{11}z_{11}+y_{21}z_{21})$.

\noindent 
The map $\iota_\delta\colon\M(n;\R\oplus\lambda\R)\to\M(2n;\R)$ is algebraic, and the image of a subvariety in $\M(n;\R\oplus\lambda\R)$ is a subvariety of $\M(2n;\R)$.
It is easy to write down the explicit equations, as each number $y+\lambda z$ is represented by a matrix $\iota_\delta(y+\lambda z)=\smat{u&v\\y&z}$ where $u=z$ and $\delta y=v$.

\end{proof}

\noindent
The group $\U(n,1;\Lambda_\delta)$ is cut out by polynomials in $\Lambda_\delta[x_{11},\ldots, x_{nn}]$, as is easily seen by expanding out the relation $A^\dagger Q A-Q=0$ in coordinates $A=(x_{ij})$; and similarly $\mathsf{USt}(n,1;\Lambda_\delta)$ is algebraic in $\M(n+1;\Lambda_\delta)$.

\begin{corollary}
The groups $\iota_\delta(\SU(n,1;\Lambda_\delta))$ and $\iota_\delta(\mathsf{USt}(n,1;\Lambda_\delta))$ are algebraic subgroups of $\GL(2n;\R)$.	
\end{corollary}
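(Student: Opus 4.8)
The plan is to deduce this corollary directly from the preceding lemma, which guarantees that $\iota_\delta(G)$ is real algebraic whenever $G$ is an algebraic subgroup of $\GL(n;\Lambda_\delta)$. Thus it suffices to exhibit each of $\SU(n,1;\Lambda_\delta)$ and $\USt(n,1;\Lambda_\delta)$ as the zero locus of a finite collection of equations in the matrix entries that become real polynomials after the substitution $x_{ij}=y_{ij}+\lambda z_{ij}$, and then invoke the lemma.

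First I would treat $\U(n,1;\Lambda_\delta)$, cut out by $A^\dagger Q A - Q = 0$ with $A=(x_{ij})$. The one feature not literally covered by the statement of the preceding lemma is the conjugation $\sigma$ hidden in $A^\dagger=\sigma(A)^T$: the entries of $A^\dagger$ are not $\Lambda_\delta$-polynomials in the $x_{ij}$. However, under the substitution $x_{ij}=y_{ij}+\lambda z_{ij}$ one has $\sigma(x_{ij})=y_{ij}-\lambda z_{ij}$, which is real-linear (hence polynomial) in the real coordinates. Consequently the substitution argument of the preceding lemma applies verbatim: expanding $A^\dagger Q A - Q$ and equating real and $\lambda$ parts to zero produces finitely many real polynomial equations in the $(y_{ij},z_{ij})$, so $\U(n,1;\Lambda_\delta)$ is a real algebraic subset of $\M(n+1;\Lambda_\delta)\cong\R^{2(n+1)^2}$. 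Equivalently, in the real picture $\sigma$ is realized as conjugation by the fixed block-diagonal involution $\mathcal S=\diag(1,-1,\ldots,1,-1)$, since $\iota_\delta(\sigma(z))=\mathcal S\,\iota_\delta(z)\,\mathcal S^{-1}$ on each $2\times 2$ block; so $\dagger$ corresponds to a fixed linear operation and $A^\dagger Q A = Q$ becomes an honest quadratic condition on $M=\iota_\delta(A)$.

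Next I would add the remaining defining conditions. For $\SU$ I impose $\det A = 1$; since the $\Lambda_\delta$-determinant is a polynomial in the entries, equating its real and $\lambda$ parts to $(1,0)$ adjoins two more real polynomial equations, so $\SU(n,1;\Lambda_\delta)$ is algebraic. For $\USt(n,1;\Lambda_\delta)$, recall from the stabilizer calculation that it is the intersection of $\U(n,1;\Lambda_\delta)$ with the block-diagonal subgroup $\smat{\U(n;\Lambda_\delta)&\\&\U(\Lambda_\delta)}$; the block-diagonal constraint (vanishing of the off-diagonal blocks) is a system of linear equations in the entries, hence algebraic, and an intersection of algebraic sets is algebraic.

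Finally, applying the preceding lemma to each of these $\Lambda_\delta$-algebraic groups yields that the images $\iota_\delta(\SU(n,1;\Lambda_\delta))$ and $\iota_\delta(\USt(n,1;\Lambda_\delta))$ are algebraic subgroups of the ambient real general linear group, as claimed. I expect the only genuine obstacle to be the bookkeeping for the conjugation $\sigma$ in $\dagger$; once it is observed that $\sigma$ is real-polynomial in the split coordinates (equivalently, inner by the fixed involution $\mathcal S$ in the real representation), the statement reduces to a routine application of the lemma.
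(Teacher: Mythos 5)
Your proposal is correct and takes essentially the same route as the paper: expand the defining relations ($A^\dagger QA-Q=0$, plus $\det A=1$ for $\SU$ and the linear vanishing of off-diagonal blocks for $\USt$) in the split coordinates $x_{ij}=y_{ij}+\lambda z_{ij}$ and invoke the preceding lemma. Your explicit handling of the conjugation is in fact a small tightening of the paper's argument, which silently treats $A^\dagger QA-Q$ as $\Lambda_\delta$-polynomial in the $x_{ij}$ even though $\sigma$ is only $\R$-linear; your observation that $\sigma$ becomes real-polynomial after substitution (equivalently, is inner by $\diag(1,-1,\ldots,1,-1)$ under $\iota_\delta$) is exactly what makes the lemma's substitution argument apply verbatim.
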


\noindent
Thus, as conjugacy limits of algebraic subgroups of an algebraic group, Proposition 3.11 in \cite{CooperDW14} implies that the dimension of the conjugacy limits is the same as the dimension of the groups limiting to them, and thus that up to local isomorphism we may compute the conjugacy limits via the Lie algebra limit at $\delta=0$.
And furthermore, if the Lie algebra limits from both sides agree, the entire path of groups is continuous by Corollary \ref{cor:HC_HRR_Conj_Continuous}.

\begin{corollary}
The map $\delta\mapsto \iota_\delta(\SU(n,1;\Lambda_\delta)_0)$ is continuous if and only if the map $\delta\mapsto \iota_\delta(\su(n,1;\Lambda_\delta))$ is continuous.	
\end{corollary}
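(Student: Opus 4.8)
The plan is to treat this as a transfer principle between the Chabauty space $\Cl(\GL(2n;\R))$ and the Grassmannian of $k$-planes in $\gl(2n;\R)$, where $k=\dim_\R\su(n,1;\Lambda_\delta)$. Write $\mathfrak{g}=\su(n,1;\Lambda_\delta)$; by the argument of Calculation \ref{calc:Const_Lie_Alg} together with the $\delta$-independent trace-zero conditions, $\mathfrak{g}$ is a \emph{fixed} real subspace of $\M(n+1;\R)\oplus\lambda\M(n+1;\R)$. Set $L_\delta=\iota_\delta(\SU(n,1;\Lambda_\delta)_0)$ and $\ell_\delta=\iota_\delta(\mathfrak{g})$. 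The bridge between the two maps is the identity $L_\delta=\langle\exp(\ell_\delta)\rangle$: since $\SU(n,1;\Lambda_\delta)_0$ is generated by $\exp_\delta(\mathfrak{g})$ and $\iota_\delta\exp_\delta=\exp\iota_\delta$ by Calculation \ref{calc:Exp_Commuting}, we get $\iota_\delta\langle\exp_\delta(\mathfrak{g})\rangle=\langle\exp\iota_\delta(\mathfrak{g})\rangle=\langle\exp(\ell_\delta)\rangle$. Dually $\ell_\delta=\mathfrak{lie}(L_\delta)$, and $\dim\ell_\delta=k$ for every $\delta$ because $\iota_\delta$ is injective.

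First I would reduce both continuity statements to the single point $\delta=0$. Corollary \ref{cor:HC_HRR_Conj_Continuous} gives continuity of $\delta\mapsto L_\delta$ on $\R_-$ and $\R_+$, while $\delta\mapsto\ell_\delta$ is visibly continuous there (in fact everywhere) from Calculation \ref{calc:Image_of_M_Lambda_Basis}: a basis of $\mathfrak{g}$ maps to linear combinations of the constant matrices $R_{jk}$ and the matrices $I^\delta_{jk}$ whose entries depend continuously on $\delta$, and these images stay linearly independent by injectivity of $\iota_\delta$. Since $\Cl(\GL(2n;\R))$ and the Grassmannian are compact and metrizable, it then suffices to compare subsequential limits of $L_{\delta_m}$ and $\ell_{\delta_m}$ along arbitrary sequences $\delta_m\to0$.

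For the implication that continuity of $\ell$ forces continuity of $L$, suppose $\ell_{\delta_m}\to\ell_0$ and let $L_\infty$ be any subsequential Chabauty limit of $L_{\delta_m}$. Each $X\in\ell_0$ is a limit $X=\lim X_m$ with $X_m\in\ell_{\delta_m}$, so $\exp(X_m)\in L_{\delta_m}$ converges to $\exp(X)\in L_\infty$; hence $L_0=\langle\exp(\ell_0)\rangle\subseteq(L_\infty)_0$. After passing to a subsequence of fixed sign, $L_{\delta_m}=C_{|\delta_m|}\,\iota_{\pm1}(\SU(n,1;\Lambda_{\pm1})_0)\,C_{|\delta_m|}^{-1}$ is a conjugacy limit of a fixed algebraic group, so Theorem \ref{thm:CDW_AlgGrps} gives $\dim L_\infty=k=\dim L_0$; the inclusion of equidimensional connected groups then forces $(L_\infty)_0=L_0$, which is continuity of $\delta\mapsto L_\delta$ at $0$ up to local isomorphism. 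The reverse implication runs the same machine backwards: given $L_{\delta_m}\to L_0$, pass to a subsequence with $\ell_{\delta_m}\to\ell_\infty$, note $\exp(\ell_\infty)\subseteq L_0$ so $\ell_\infty\subseteq\mathfrak{lie}(L_0)=\ell_0$, and conclude $\ell_\infty=\ell_0$ by the constant dimension count—this is precisely the continuity argument for $\mathfrak{lie}$ on families of conjugate algebraic groups carried out for $\overline{\fam{D}_n}$ in Chapter \ref{chp:Orthogonal_Groups}.

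The main obstacle is controlling the geometric (Chabauty) limit from below rather than from above: the containments $L_0\subseteq L_\infty$ and $\ell_\infty\subseteq\ell_0$ are automatic from the exponential, but $\mathfrak{lie}$ is genuinely discontinuous in general (the Barber Pole Example shows a Lie-algebra limit can drop dimension relative to the geometric limit). What saves the argument is that every group in sight is a conjugacy limit of an \emph{algebraic} group, so Theorem \ref{thm:CDW_AlgGrps} pins the dimension of the limit equal to $k$ and upgrades the one-sided containments to equalities of identity components; this is exactly why the equivalence is phrased, and can only be expected to hold, up to local isomorphism.
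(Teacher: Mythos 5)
Your proposal is correct and takes essentially the same route as the paper, which states this corollary without separate proof as an immediate consequence of the surrounding discussion: having shown $\iota_\delta(\SU(n,1;\Lambda_\delta))$ is a conjugacy limit of an algebraic subgroup of $\GL(2n;\R)$ (via $C_{|\delta|}$), the paper invokes Theorem \ref{thm:CDW_AlgGrps} to fix the dimension of any Chabauty limit and identify its identity component with $\langle\exp(\ell)\rangle$, which is exactly your containment-plus-dimension-count argument, with the reduction to $\delta=0$ handled by Corollary \ref{cor:HC_HRR_Conj_Continuous} in both treatments. Your explicit converse direction reproduces the continuity-of-$\mathfrak{lie}$ argument the paper carries out for $\overline{\fam{D}_n}$, and the caveat you flag---that the equivalence is controlled only at the level of identity components, hence up to local isomorphism---matches the paper's own qualification in the sentence preceding the corollary.
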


\noindent
The continuity of this map follows easily from our previous work.

\begin{lemma}
The maps $\iota_\delta$ induce a continuous map $\R\to \Gr(2 n^2,(2n)^2)$ defined by $\delta\mapsto \iota_\delta(M(n,\Lambda_\delta))$.
\end{lemma}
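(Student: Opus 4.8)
The plan is to realize the assignment $\delta\mapsto\iota_\delta(\M(n;\Lambda_\delta))$ as the composite of a manifestly continuous frame-valued map with the canonical quotient map that defines the topology on the Grassmannian. First I would recall from Calculation \ref{calc:Image_of_M_Lambda_Basis} that $\iota_\delta$ carries the real basis $\{E_{jk},\lambda E_{jk}\}_{1\le j,k\le n}$ of $\M(n;\Lambda_\delta)$ to the vectors $\{R_{jk},I^\delta_{jk}\}$ in $\M(2n;\R)\cong\R^{(2n)^2}$. Since $\iota_\delta$ is $\R$-linear and injective, its image is exactly $\span\{R_{jk},I^\delta_{jk}\}$, a real subspace of dimension at most $2n^2$. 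The matrices $R_{jk}$ are constant in $\delta$, while each $I^\delta_{jk}=E_{2j,2k-1}+\delta E_{2j-1,2k}$ has entries that are constant or linear in $\delta$, so the ordered list
\[
F(\delta)=\left(R_{11},I^\delta_{11},R_{12},I^\delta_{12},\ldots,R_{nn},I^\delta_{nn}\right)
\]
defines a continuous map $F\colon\R\to\left(\R^{(2n)^2}\right)^{2n^2}$.

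The crux of the argument — and really its only substantive point — will be to check that $F(\delta)$ is a linearly independent frame for every $\delta\in\R$, so that the span is genuinely $2n^2$-dimensional and $F$ lands in the open full-rank locus. I would deduce this from the block structure of the target. Distinct index pairs $(j,k)$ occupy disjoint $2\times 2$ blocks of $\M(2n;\R)$, so it suffices to argue within a single block, where $R_{jk}$ is supported on the diagonal entries $(2j-1,2k-1),(2j,2k)$ and $I^\delta_{jk}$ on the anti-diagonal entries $(2j,2k-1),(2j-1,2k)$; these supports are disjoint. Crucially $I^\delta_{jk}$ retains the entry $E_{2j,2k-1}$ for \emph{every} $\delta$, so it is nonzero even at $\delta=0$, where the $\delta E_{2j-1,2k}$ term vanishes. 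Hence all $2n^2$ vectors have pairwise disjoint, nonempty supports and are linearly independent for all $\delta\in\R$; in particular the image really is a point of $\Gr(2n^2,(2n)^2)$, and the map is at least well defined into the stated Grassmannian.

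Continuity is then immediate. Giving $\Gr(2n^2,(2n)^2)$ its usual topology as the quotient of the manifold of full-rank $(2n)^2\times 2n^2$ frames by the right $\GL(2n^2;\R)$ change-of-basis action, the quotient projection $q$ sending a frame to its span is continuous, and $\delta\mapsto\iota_\delta(\M(n;\Lambda_\delta))=q(F(\delta))$ is the composite $q\circ F$ of continuous maps. Equivalently, one may pass through the Plücker embedding: the exterior product of the columns of $F(\delta)$ is a continuous, nowhere-vanishing top-wedge whose projective class is the Plücker coordinate of the span, and continuity follows since the Plücker map is a homeomorphism onto its image, compatible with the description of $\Gr$ via great spheres recalled earlier. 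I do not expect a genuine obstacle here: the lemma is elementary once the constant-dimension check is in hand, and that check is exactly where care is required, since if any $I^\delta_{jk}$ could degenerate the dimension would drop and the map would fail to be continuous (indeed fail to land in a single Grassmannian).
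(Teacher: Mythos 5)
Your proposal is correct and follows essentially the same route as the paper: both express the image as the span of the images $R_{jk}$, $I^\delta_{jk}$ of the basis $\{E_{jk},\lambda E_{jk}\}$, observe that these $2n^2$ vectors are nonzero with pairwise disjoint supports (the paper phrases this as pairwise orthogonality) for every $\delta$ — including $\delta=0$, where $I^0_{jk}=E_{2j,2k-1}\neq 0$ — and conclude that a continuously varying frame of constant rank yields a continuous map into $\Gr(2n^2,(2n)^2)$. Your only addition is to make the last step explicit via the frame-manifold quotient (or Pl\"ucker embedding), which the paper leaves implicit.
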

\begin{proof}
On the basis $\{E_{jk},\lambda E_{jk}\}$ for $\M(n;\Lambda_\delta)$ the map $\iota_\delta$ is expressed $\iota_\delta(E_{jk})=R_{jk}$ and $\iota_\delta(\lambda E_{jk})=I^\delta_{jk}$ by Calculation \ref{calc:Image_of_M_Lambda_Basis}.
Thus,
$$\iota_\delta(\M(n;\Lambda_\delta))=\span_{\R}\left(R_{jk}, I^\delta_{jk}\right)_{1\leq j,k\leq n}=\bigoplus_{1\leq j,k\leq n} \span_\R (R_{jk})\oplus \span_\R (I^\delta_{jk})$$
\noindent 
where the second equality comes from the observation that for all $\delta$, the basis vectors $R_{jk}$ and $I^\delta_{jk}$ are nonzero and orthogonal.
The vectors $R_{jk}$ are independent of $\delta$, and $I^\delta_{jk}$ is a continuous nonzero function of $\delta$ for all $j,k$.
Thus their span is a continuously varying subspace of $\M(2n;\R)$ of dimension $2n^2$.
\end{proof}

\noindent
Recalling that the Lie algebras $\mathfrak{u}(n,1)=\mathfrak{u}(n,1;\Lambda_\delta)$ are constant as vector subspaces of $\M(n;\R+\lambda\R)$, the above argument immediately implies the continuity of their images under $\iota_\delta$.

\begin{corollary}
The restriction of $\iota_\delta$ to the subset $\mathfrak{u}(n,1)\subset \M(n,\R\oplus\lambda\R)$ induces a continuous map $\R\to\Gr(dim,(2n)^2)$ defined by $\delta\mapsto \iota_\delta(\mathfrak{u}(n,1))$.	
\end{corollary}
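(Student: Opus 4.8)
The plan is to deduce this corollary directly from the preceding lemma together with Calculation \ref{calc:Const_Lie_Alg}, treating $\iota_\delta|_{\mathfrak{u}(n,1)}$ as the restriction of a continuously varying family of injective linear maps to a \emph{fixed} subspace. The essential input is that $\mathfrak{u}(n,1):=\mathfrak{u}(n,1;\Lambda_\delta)$ is, by Calculation \ref{calc:Const_Lie_Alg}, one and the same real-linear subspace $W\subset\M(n;\R)\oplus\lambda\M(n;\R)$ for every $\delta$. In particular I may fix a single real basis $\{W_1,\ldots,W_d\}$ of $W$, with $d=\dim_\R\mathfrak{u}(n,1)$ the dimension appearing in the statement, and this basis does not move as $\delta$ varies.

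First I would expand each $W_m$ in the standard basis $\{E_{jk},\lambda E_{jk}\}$ of $\M(n;\Lambda_\delta)$, writing
\[
W_m=\sum_{j,k}a^m_{jk}E_{jk}+b^m_{jk}\,\lambda E_{jk},
\]
where the coefficients $a^m_{jk},b^m_{jk}\in\R$ are independent of $\delta$ precisely because $W$ is independent of $\delta$. Applying the $\R$-linear map $\iota_\delta$ and invoking Calculation \ref{calc:Image_of_M_Lambda_Basis} (so that $\iota_\delta(E_{jk})=R_{jk}$ and $\iota_\delta(\lambda E_{jk})=I^\delta_{jk}$) then gives
\[
\iota_\delta(W_m)=\sum_{j,k}a^m_{jk}R_{jk}+b^m_{jk}\,I^\delta_{jk}.
\]
Since each $R_{jk}$ is constant in $\delta$ and each $I^\delta_{jk}$ is a continuous (indeed linear) function of $\delta$ that never vanishes, each $\iota_\delta(W_m)$ is a continuous $\M(2n;\R)$-valued function of $\delta$. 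Thus the moving frame $\big(\iota_\delta(W_1),\ldots,\iota_\delta(W_d)\big)$ depends continuously on $\delta$.

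The one point that genuinely requires an argument — and which I expect to be the only real obstacle, mild as it is — is that this continuous frame spans a subspace of \emph{constant} dimension, so that $\delta\mapsto\span\{\iota_\delta(W_m)\}$ lands in a single Grassmannian $\Gr(d,(2n)^2)$ and is continuous there. For this I would note that $\iota_\delta$ is a faithful (injective) representation of the algebra $\M(n;\Lambda_\delta)$: the block formula $\iota_\delta(a+\lambda b)=\smat{a & \delta b\\ b & a}$ has trivial kernel for every $\delta$ (the lower-left entry forces $b=0$, hence $a=0$), and this persists blockwise. Consequently $\iota_\delta$ is injective on the fixed subspace $W$, so $\iota_\delta(W_1),\ldots,\iota_\delta(W_d)$ remain linearly independent for all $\delta$ and span a $d$-dimensional image. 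A continuously varying linearly independent frame determines a continuous map into the Grassmannian (e.g.\ via the Pl\"ucker coordinate $\iota_\delta(W_1)\wedge\cdots\wedge\iota_\delta(W_d)$, which is continuous and nowhere zero, or by Gram--Schmidt), and this map is exactly $\delta\mapsto\iota_\delta(\mathfrak{u}(n,1))$. This completes the argument; no new estimates are needed beyond those already established for the preceding lemma.
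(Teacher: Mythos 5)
Your proposal is correct and takes essentially the same route as the paper: both arguments combine the fact that $\mathfrak{u}(n,1;\Lambda_\delta)$ is a fixed subspace of $\M(n;\R)\oplus\lambda\M(n;\R)$ (Calculation \ref{calc:Const_Lie_Alg}) with the continuity of $\delta\mapsto R_{jk},I^\delta_{jk}$ from the preceding lemma. The one step you spell out --- that the frame $\iota_\delta(W_1),\ldots,\iota_\delta(W_d)$ never degenerates in rank, which you get from the injectivity of $\iota_\delta$ for every $\delta$ --- is precisely what the paper compresses into ``the above argument immediately implies,'' and since the lemma's orthogonality observation applies to the distinguished basis $\{R_{jk},I^\delta_{jk}\}$ rather than to an arbitrary basis of the subalgebra, your explicit injectivity argument is, if anything, the more complete justification of that step.
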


\noindent
The same holds for the Lie algebras $\mathfrak{ust}(n,1;\Lambda_\delta)$, as they are likewise constant a a vector subspace of $\M(n;\R+\lambda\R)$.
The space of Lie subalgebras of $\M(n;\Lambda_\delta)=\gl(n;\Lambda_\delta)$ is a union of closed subsets of Grassmannians, and so a continuous path in some Grassmannian, all of which's points are Lie subalgebras, is automatically a continuous path in the space of Lie subalgebras.

\begin{corollary}
The map $\R\to\Cl(\gl(2n;\R))$ given by $\delta \mapsto \iota_\delta(\mathfrak{u}(n,1;\Lambda_\delta))$ is continuous.
Thus the groups $\iota_\delta(\U(n,1;\Lambda_\delta))$ limit to $\iota_0(\U(n,1;\Lambda_0))$ as $\delta\to 0$, and by definition \ref{def:HC_HRR_conj_lim}, the groups $\U(n,1;\Lambda_\delta)$ vary continuously as $\delta$ varies in $\R$.	
 \end{corollary}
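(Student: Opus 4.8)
The plan is to establish continuity of the family $\delta\mapsto\iota_\delta(\U(n,1;\Lambda_\delta))$ by reducing the question to a statement about Lie algebras, where the heavy lifting has already been arranged by the preceding calculations. The key structural observation, from Calculation \ref{calc:Const_Lie_Alg}, is that the Lie algebra $\mathfrak{u}(n,1;\Lambda_\delta)$ is a \emph{fixed} vector subspace of $\M(n;\R\oplus\lambda\R)$, independent of $\delta$; all the $\delta$-dependence of the groups is pushed into the exponential map $\exp_\delta$, which is intertwined with the ordinary exponential on $\M(2n;\R)$ via $\iota_\delta\circ\exp_\delta=\exp\circ\iota_\delta$ (Calculation \ref{calc:Exp_Commuting}).

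First I would invoke the preceding corollary establishing that $\delta\mapsto\iota_\delta(\mathfrak{u}(n,1))$ is a continuous map $\R\to\Gr(d,(2n)^2)$, where $d=\dim\mathfrak{u}(n,1;\Lambda_\delta)$. This follows because $\iota_\delta$ is continuous in $\delta$ and nonzero on the fixed basis vectors $R_{jk}, I^\delta_{jk}$, which remain pairwise orthogonal for every $\delta$, so their span varies continuously. Next I would promote this to continuity in the Chabauty space of Lie subalgebras: since the space of Lie subalgebras of $\gl(2n;\R)$ is a union of closed subsets of Grassmannians (as recalled in Section \ref{sec:Space_of_Subgeos}), and the image of each point is genuinely a subalgebra, the continuous Grassmannian path is automatically continuous into $\Cl(\gl(2n;\R))$. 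This gives $\lim_{\delta\to 0}\iota_\delta(\mathfrak{u}(n,1;\Lambda_\delta))=\iota_0(\mathfrak{u}(n,1;\Lambda_0))$.

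The main obstacle—and the reason the Lie algebra reduction is legitimate rather than merely convenient—is that continuity of Lie algebra limits does not in general imply continuity of the corresponding group limits in $\Cl(\GL(2n;\R))$; the Barber Pole Example \ref{ex:Spiraling_Cyl} shows a connected family of groups whose Chabauty limit strictly contains the exponential of the Lie algebra limit. The step that defeats this obstruction is the observation that $\iota_\delta(\U(n,1;\Lambda_\delta))$ is a conjugacy limit of \emph{algebraic} subgroups of the algebraic group $\GL(2n;\R)$, so Theorem \ref{thm:CDW_AlgGrps} (Proposition 3.11 of \cite{CooperDW14}) forces $\dim$ of the Chabauty limit to equal $\dim\U(n,1;\Lambda_\delta)$. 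Since the exponential of the Lie algebra limit is a subgroup of the Chabauty limit of the same dimension, the two agree up to connected components, and the connected geometric limit is exactly the Lie algebra limit.

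Assembling these pieces: the corollary on continuity of $\delta\mapsto\iota_\delta(\mathfrak{u}(n,1))$ combined with Corollary \ref{cor:HC_HRR_Conj_Continuous} (which already handles $\delta\neq 0$) shows the map $\R\to\Cl(\gl(2n;\R))$ is continuous everywhere, including through the transition point $\delta=0$ where the algebra passes from $\C$ through $\R_\ep$ to $\R\oplus\R$. By the matching of Lie algebra limits from both sides, the groups $\iota_\delta(\U(n,1;\Lambda_\delta))$ converge to $\iota_0(\U(n,1;\Lambda_0))$ as $\delta\to 0$, so by Definition \ref{def:HC_HRR_conj_lim} the collection $\U(n,1;\Lambda_\delta)$ varies continuously over all of $\R$, completing the proof.
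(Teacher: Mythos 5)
Your proposal is correct and takes essentially the same route as the paper: it reduces the group limit to the Lie algebra limit by noting that $\iota_\delta(\U(n,1;\Lambda_\delta))$ is an algebraic subgroup of $\GL(2n;\R)$ so that Theorem \ref{thm:CDW_AlgGrps} rules out the Barber Pole phenomenon, establishes Grassmannian continuity of $\delta\mapsto\iota_\delta(\mathfrak{u}(n,1))$ from the fixed orthogonal basis $R_{jk}, I^\delta_{jk}$ together with the constancy of $\mathfrak{u}(n,1;\Lambda_\delta)$, upgrades this to $\Cl(\gl(2n;\R))$ via closedness of the set of Lie subalgebras in the Grassmannian, and patches at $\delta=0$ using Corollary \ref{cor:HC_HRR_Conj_Continuous}. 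The only difference is expository: you state the Barber Pole obstruction and its defeat explicitly, which the paper handles by the same algebraicity argument.
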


\noindent
Together with the analogous corollary for the stabilizer subgroups, we have successfully constructed a transition of geometries.

\begin{theorem}
The geometries $\Hyp_{\Lambda_\delta}^n$ vary continuously with $\delta$, forming a transition from complex hyperbolic space $\Hyp_\C^n$ to point-hyperplane projective space $\Hyp_{\R\oplus\R}^n$.
\end{theorem}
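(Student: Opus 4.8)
The plan is to assemble the theorem from the three continuity statements already established in this section, treating $\delta < 0$, $\delta = 0$, and $\delta > 0$ as three regimes and then showing they glue continuously. The overall strategy is to verify the hypotheses of the Automorphism-Stabilizer continuity criterion (Definition \ref{def:HC_HRR_conj_lim}): I must show that the map $\delta \mapsto \bigl(\iota_\delta(\U(n,1;\Lambda_\delta)), \iota_\delta(\USt(n,1;\Lambda_\delta))\bigr)$ is continuous as a function $\R \to \Cl(\GL(2n;\R)) \times \Cl(\GL(2n;\R))$. Once this is in hand, continuity of the pair of subgroups is precisely continuity of the family of geometries, since a family of geometries in this formalism \emph{is} a continuously varying automorphism group together with a continuously varying point stabilizer.

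First I would invoke Corollary \ref{cor:HC_HRR_Conj_Continuous} to dispose of continuity away from the transition point: on each of the open half-lines $\R_-$ and $\R_+$ the relevant groups are explicit conjugates $C_{|\delta|} \iota_{\pm 1}(\cdots) C_{|\delta|}\inv$ by the continuously varying diagonal matrices $C_\delta = \diag(1,\sqrt{|\delta|},\ldots,1,\sqrt{|\delta|})$, so the maps $f_-, f_+$ are continuous. This reduces the entire theorem, as recorded in the Observation following Corollary \ref{cor:HC_HRR_Conj_Continuous}, to checking that the one-sided limits $\lim_{\delta \to 0^-} f_-(\delta)$ and $\lim_{\delta \to 0^+} f_+(\delta)$ exist and coincide, and both equal the value at $\delta = 0$, namely the data of $\Hyp_{\R_\ep}^n$.

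The key maneuver for the transition point is to replace the difficult limit in $\Cl(\GL(2n;\R))$ with an easy limit in a Grassmannian, via the Lie algebra. The crucial input is Calculation \ref{calc:Const_Lie_Alg}, which shows that $\mathfrak{u}(n,1;\Lambda_\delta)$ is \emph{constant} as a vector subspace of $\M(n;\R) \oplus \lambda\M(n;\R)$ for all $\delta \in \R$, together with the analogous fact for $\mathfrak{ust}$. Since each $\iota_\delta$ sends the fixed basis $\{E_{jk}, \lambda E_{jk}\}$ to $\{R_{jk}, I^\delta_{jk}\}$ (Calculation \ref{calc:Image_of_M_Lambda_Basis}), where $I^\delta_{jk}$ is a continuous, nowhere-zero function of $\delta$, the image $\iota_\delta(\mathfrak{u}(n,1))$ traces a continuous path in the appropriate Grassmannian, hence a continuous path in $\Cl(\gl(2n;\R))$. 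The final step is the identification of this Lie algebra limit with the actual Chabauty limit of the groups: because $\U(n,1;\Lambda_\delta)$ and $\USt(n,1;\Lambda_\delta)$ are algebraic subgroups of the algebraic group $\GL(2n;\R)$ (cut out by equating real and $\lambda$-parts of the defining polynomial relations), Proposition 3.11 of \cite{CooperDW14} guarantees the conjugacy limit has the same dimension as the groups approaching it, so the connected component of the geometric limit is exactly the exponential of the Lie algebra limit. Thus the one-sided group limits agree with $\iota_0(\U(n,1;\Lambda_0))$ and $\iota_0(\USt(n,1;\Lambda_0))$ from both directions, establishing continuity through $\delta = 0$ and identifying the central member as $\Hyp_{\R_\ep}^n$.

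I expect the main obstacle to be the passage from Lie algebra continuity back to group continuity at $\delta = 0$, and in particular the careful bookkeeping that the algebraicity hypothesis of Theorem \ref{thm:CDW_AlgGrps} genuinely applies. The Barber Pole Example \ref{ex:Spiraling_Cyl} warns that Lie algebra limits can be strictly smaller than Chabauty limits in general, so the argument truly depends on all the groups being algebraic and the limit being a conjugacy limit; I would take care to confirm that $\iota_\delta$ carries algebraic subgroups to algebraic subgroups (which the substitution $x_{ij} = y_{ij} + \lambda z_{ij}$ and the algebraicity of $\iota_\delta$ provide) so that the dimension-preservation conclusion is available. The remaining work — checking that the two one-sided Grassmannian limits of the constant subspace under the continuously varying $\iota_\delta$ actually coincide at $\delta = 0$ — is immediate once continuity of $\delta \mapsto \iota_\delta(\mathfrak{u}(n,1))$ is established, since a genuinely continuous function has matching one-sided limits.
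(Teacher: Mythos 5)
Your proposal is correct and takes essentially the same route as the paper: reduce via Corollary \ref{cor:HC_HRR_Conj_Continuous} to matching the two one-sided limits at $\delta=0$, then use algebraicity of the groups $\iota_\delta(\U(n,1;\Lambda_\delta))$ and $\iota_\delta(\USt(n,1;\Lambda_\delta))$ together with Theorem \ref{thm:CDW_AlgGrps} to replace the Chabauty limit by the Lie algebra limit, which is continuous because the Lie algebras are constant (Calculation \ref{calc:Const_Lie_Alg}) and $\iota_\delta$ varies continuously on the basis $\{E_{jk},\lambda E_{jk}\}$ (Calculation \ref{calc:Image_of_M_Lambda_Basis}). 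The paper assembles exactly these ingredients, including the same appeal to the Barber Pole example to justify why algebraicity is genuinely needed, so there is nothing to add.
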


\section{The Transition as a 1-Parameter Family}
\label{sec:HC_HRR_Family}
\index{Family!1 Parameter}
\index{Lie Groupoid!1-Parameter Family}

We turn next to the second notion of continuity given by Definition CITE, and prove the groups $\U(n,1;\Lambda_\delta)$ naturally fit together to form a $1-$parameter family as $\delta$ varies in $\R$.
In doing so, we need to consider not only 1-parameter families of Lie groups, but also 1-parameter families of algebras, defined presently.

\begin{definition}
A one parameter family of algebras $\fam{A}$ is a real vector bundle $\fam{A}\to\R$ together with a section $\fam{1}\to \fam{A}$ selecting point $\fam{1}(\delta)$ for each vector space $\fam{A}_\delta$, and a smooth map $\mu\colon\fam{A}\times_\R\fam{A}\to\fam{A}$ such that for each $\delta\in\R$ the restriction $\mu_\delta\colon \fam{A}_\delta\times\fam{A}_\delta\to\fam{A}_\delta$ is the multiplication of a real algebra structure on $\fam{A}_\delta$ with identity $\fam{1}(\delta)$.
\end{definition}

\begin{observation}
\label{def:Lambda_R}
The algebras $\Lambda_\delta$ form a 1-parameter family: the vector bundle $\R^3\to\R$ with coordinates $\R^3=\{(x,y,\delta)\}$ and bundle projection $(x,y,\delta)\to\delta$.
The section $\delta\mapsto (1,0,\delta)$ together with the multiplication map $\mu$ defined on the fiber product $\R^3\times_\R\R^3$ by $\mu((x,y,\delta),(z,w,\delta))=(xz+\delta yw, xw+yz,\delta)$ makes each $\R^2_\delta$ isomorphic to $\Lambda_\delta$ under the change of coordinates $(x,y)\mapsto x+\lambda y$.
This family will be denoted $\Lambda_\R$.
\end{observation}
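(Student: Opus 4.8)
The plan is to verify directly that the data $(\R^3\to\R,\ \fam{1},\ \mu)$ satisfies each clause of the definition of a one-parameter family of algebras, reducing every algebraic axiom to the already-established fact that $\Lambda_\delta=\R[\lambda]/(\lambda^2=\delta)$ is an $\R$-algebra for every $\delta$. First I would note that the projection $(x,y,\delta)\mapsto\delta$ exhibits $\R^3$ as the trivial rank-two vector bundle over $\R$, with fiber $\R^2_\delta=\{(x,y)\}$ carrying its standard real vector space structure; the section $\fam{1}\colon\delta\mapsto(1,0,\delta)$ is manifestly smooth, selecting the vector $(1,0)$ in each fiber.

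Next I would check smoothness of the multiplication. The fiber product $\R^3\times_\R\R^3$ is the smooth five-dimensional submanifold consisting of pairs $((x,y,\delta),(z,w,\delta))$ lying over a common $\delta$, and the formula $\mu((x,y,\delta),(z,w,\delta))=(xz+\delta yw,\ xw+yz,\ \delta)$ is polynomial in the coordinates $x,y,z,w,\delta$, hence smooth. By construction $\mu$ covers the identity on the base and restricts on each fiber to a map $\mu_\delta\colon\R^2_\delta\times\R^2_\delta\to\R^2_\delta$.

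The crucial step is to identify $\mu_\delta$ with the multiplication of $\Lambda_\delta$. Under the correspondence $(x,y)\mapsto x+\lambda y$ one computes
$$(x+\lambda y)(z+\lambda w)=xz+\lambda^2\,yw+\lambda(xw+yz)=(xz+\delta yw)+\lambda(xw+yz),$$
whose real and $\lambda$-parts are exactly the first two coordinates of $\mu_\delta$. Thus the linear isomorphism $\R^2_\delta\to\Lambda_\delta$ intertwines $\mu_\delta$ with the algebra product of $\Lambda_\delta$ and sends $(1,0)$ to the unit $1$. Since $\Lambda_\delta$ is an associative unital $\R$-algebra for every $\delta$, transporting its structure back shows that each $\mu_\delta$ is $\R$-bilinear, associative, and has $\fam{1}(\delta)=(1,0)$ as a two-sided identity, verifying the final clause of the definition.

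There is no serious obstacle here: the only points requiring any care are confirming that $\mu$ is genuinely defined on the fiber product, so that only elements lying over the same $\delta$ are ever multiplied, and that the joint smoothness in $\delta$ holds; both are immediate from the polynomial form of $\mu$. All remaining algebra axioms are inherited for free from the algebras $\Lambda_\delta$, so no independent associativity or bilinearity computation is needed. This establishes that $\Lambda_\R$ is a one-parameter family of algebras.
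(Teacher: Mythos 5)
Your proposal is correct and takes essentially the same route as the paper: the paper's own justification (given as the earlier observation that the products $\times_\delta$ fit together into a single smooth map on $\R^2\times\R^2\times\R$) is exactly your computation that $(x+\lambda y)(z+\lambda w)=(xz+\delta yw)+\lambda(xw+yz)$ is polynomial in all five coordinates and restricts fiberwise to the multiplication of $\Lambda_\delta$. Your additional remarks --- that $\mu$ is defined on the fiber product so only elements over a common $\delta$ are multiplied, and that bilinearity, associativity, and the unit $\fam{1}(\delta)=(1,0)$ are transported from $\Lambda_\delta$ --- are accurate and simply make explicit what the paper leaves implicit.
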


\begin{figure}
\centering\includegraphics[width=0.5\textwidth]{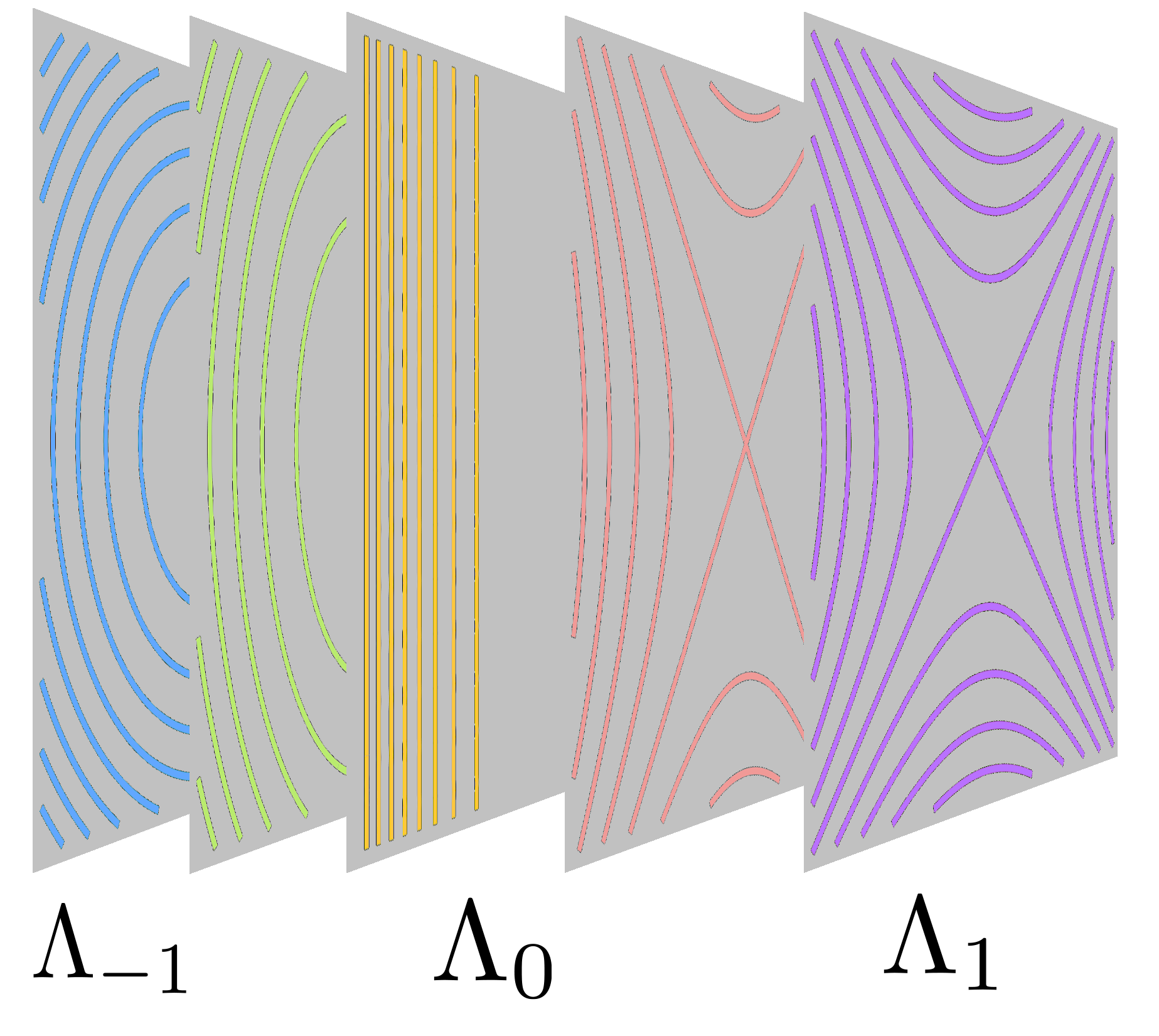}
\caption{The one parameter family of algebras $\Lambda_\R\to\R$, with each showing the level sets of its associated norm.}	
\end{figure}

\noindent
From one family of algebras springs many more: for instance, it is immediate to see that the matrix algebras over a 1-parameter family of algebras also form 1-parameter families.

\begin{corollary}
The matrix algebras $\M(n;\Lambda_\delta)$ form a 1-parameter family as $\delta$ varies in $\R$.
\end{corollary}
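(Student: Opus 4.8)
The plan is to build the one-parameter family of matrix algebras $\M(n;\Lambda_\R)$ directly as a bundle over $\R$ whose fibers are the matrix algebras $\M(n;\Lambda_\delta)$, and to verify the two required pieces of structure: a continuously-varying identity section and a smooth fiberwise multiplication. Since a one-parameter family of algebras is by definition a real vector bundle $\fam{A}\to\R$ equipped with an identity section $\fam{1}\colon\R\to\fam{A}$ and a smooth fiber-product map $\mu\colon\fam{A}\times_\R\fam{A}\to\fam{A}$ restricting to an algebra multiplication on each fiber, this amounts to exhibiting each of these three objects for the matrix case and checking smoothness.

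First I would construct the underlying vector bundle. Recall from Observation \ref{def:Lambda_R} that $\Lambda_\R$ is the trivial bundle $\R^2\times\R\to\R$ with coordinates $(x,y,\delta)$. An $n\times n$ matrix over $\Lambda_\delta$ is an $n^2$-tuple of elements of $\Lambda_\delta$, so the natural total space is the trivial bundle $\fam{M}=(\R^2)^{n^2}\times\R\to\R$, whose fiber over $\delta$ is $(\Lambda_\delta)^{n^2}$; under the identification $(x,y)\mapsto x+\lambda y$ in each slot this fiber is exactly the underlying vector space of $\M(n;\Lambda_\delta)$. This is manifestly a smooth real vector bundle over $\R$, being trivial. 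The identity section is the map $\delta\mapsto I_n$, where the $(i,i)$ entries are the algebra identity $\fam{1}(\delta)=(1,0,\delta)$ from the family $\Lambda_\R$ and all off-diagonal entries are $(0,0,\delta)$; this is smooth in $\delta$ because $\fam{1}$ is.

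Next I would define the fiberwise multiplication $\mu\colon\fam{M}\times_\R\fam{M}\to\fam{M}$ by the usual matrix-product formula $(\mu(A,B))_{ik}=\sum_{j} \mu_{\Lambda}(A_{ij},B_{jk})$, where $\mu_\Lambda\colon\Lambda_\R\times_\R\Lambda_\R\to\Lambda_\R$ is the family multiplication of Observation \ref{def:Lambda_R}, given in coordinates by $\mu_\Lambda((x,y,\delta),(z,w,\delta))=(xz+\delta yw,\,xw+yz,\delta)$. Because $\mu_\Lambda$ is a polynomial (hence smooth) map of its arguments and of $\delta$, and because matrix multiplication is a finite sum of products of entries, the entrywise formula for $\mu$ is again polynomial in the coordinates of $A$, $B$ and in $\delta$; thus $\mu$ is smooth. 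On each fiber $\mu_\delta$ restricts to the multiplication of $\M(n;\Lambda_\delta)$ by construction, and this is an associative unital $\R$-algebra structure with identity $I_n$ precisely because $\mu_\Lambda$ restricts to the algebra structure of $\Lambda_\delta$ with identity $\fam{1}(\delta)$ (associativity, bilinearity and the identity axiom for matrices follow formally from those of the entry algebra). This verifies all the clauses of the definition of a one-parameter family of algebras.

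There is no serious obstacle here: the only thing to watch is that smoothness of $\mu$ genuinely follows from smoothness of $\mu_\Lambda$ together with the fact that matrix multiplication is built from finitely many additions and multiplications of entries, so that no limiting or convergence argument is needed (in contrast to the exponential-map subtleties encountered in Section \ref{sec:HC_To_HRR_Conjugacy}). The mildest point requiring care is bookkeeping the fiber product correctly, i.e.\ that $\mu$ is only defined on pairs $(A,B)$ lying over the same $\delta$, which is automatic since $\mu_\Lambda$ is itself a fiber-product map; once this is noted the result is immediate.
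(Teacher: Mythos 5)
Your proposal is correct and follows essentially the same route as the paper: the paper also takes the (trivial) vector bundle $\Lambda_\R^{n^2}\to\R$, defines the multiplication entrywise by $m\big(((a_{ij}),\delta),((b_{ij}),\delta)\big)=\big(\big(\sum_{k}\mu((a_{ik},\delta),(b_{kj},\delta))\big),\delta\big)$, observes smoothness because the component operations (bundle addition and the family multiplication $\mu$) are smooth, and exhibits the identity section $\delta\mapsto(I_n,\delta)$. Your additional remarks on fiberwise associativity and the fiber-product bookkeeping are fine but add nothing beyond the paper's argument.
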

\begin{proof}
Let $\Lambda_\R$ be the total space of the $1$-parameter family of algebras above, with the underlying structure of a 2-dimensional vector bundle over $\R$.
Then $\Lambda_\R^{n^2}$ is naturally a $2n^2$ dimensional vector bundle over $\R$, and is equipped with a multiplication $m\colon \Lambda_\R^{n^2}\times_\R\Lambda_\R^{n^2}\to\Lambda_\R^{n^2}$ given by the usual for matrix multiplication:
$$m(((a_{ij}),\delta),((b_{ij}),\delta))=\left(\left(\sum_{k=1}^n \mu((a_{ik},\delta),(b_{kj},\delta))\right),\delta\right)$$
Which is smooth as the component operations of vector bundle addition, and multiplication given by $\mu$ are.
The identity section for this multiplication is $\delta\mapsto (I_n,\delta)$ for $I_n$ the real $n\times n$ identity matrix.
We will denote this family $\M(n;\Lambda_\R)\to\R$ from here on.
\end{proof}

\noindent
This family of algebras $\M(n;\Lambda_\R)$ provides a natural setting to consider continuity for the automorphism groups $\U(n,1;\Lambda_\delta)$ intrinsically.
Recalling definition CITE, 
\emph{a collection $G_\delta<\GL(n;\Lambda_\delta)$ varies continuously if $\bigcup_\delta G_\delta\times\{\delta\}$ is a 1-parameter family of groups}.

In this rest of this section, we develop some basic tools for analyzing subsets of $\M(n;\Lambda_\R)$ and determining when they form 1-parameter families of groups.
We will then apply this to the particular families relevant to the transition $\Hyp_{\Lambda_\delta}^n$; namely $\bigcup_\delta \SU(n,1;\Lambda_\delta)$ and $\bigcup_\delta\mathsf{USt}(n,1;\Lambda_\delta)$.

\begin{proposition}
Let $G_\delta<\GL(n;\Lambda_\delta)$ be a Lie subgroup for each $\delta\in\R$.  Then $\fam{G}=\bigcup_\delta G_\delta$ is a 1-parameter family of groups if and only if $\fam{G}$ is a smooth submanifold of $\M(n;\Lambda_\R)$ which is transverse to the fibers $\M(n;\Lambda_\delta)$ of $\M(n;\Lambda_\R)\to\R$.	
\end{proposition}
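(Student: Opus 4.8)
The plan is to reduce the abstract family-of-groups condition to the single geometric fact that $\pi := p|_{\fam{G}}$ is a submersion, where $p\colon \M(n;\Lambda_\R)\to\R$ is the bundle projection of the matrix-algebra family, and then to recognize submersivity as transversality to the codimension-one fibers. I would first record the linear-algebra observation underlying the whole statement: if $S$ is a smooth submanifold of the total space of a submersion $p\colon E\to\R$, then at each $x\in S\cap E_\delta$ the transversality condition $T_xS + T_x E_\delta = T_x E$ is equivalent, since $E_\delta = p\inv(\delta)$ has codimension one and $\ker dp_x = T_x E_\delta$, to $dp_x(T_xS) = T_\delta\R$; that is, $\fam{G}$ is transverse to every fiber exactly when $\pi\colon\fam{G}\to\R$ is a submersion. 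Moreover, when this holds each slice $\pi\inv(\delta) = \fam{G}\cap\M(n;\Lambda_\delta) = G_\delta$ is automatically an embedded submanifold of the fiber of the expected dimension, matching the given Lie subgroup.

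For the direction ($\Leftarrow$), I would take $\fam{G}$ to be a smooth submanifold transverse to the fibers and assemble the structure of a family of groups in the sense of the earlier definition. By the observation above $\pi\colon\fam{G}\to\R$ is a submersion, so $(\fam{G},\R,\pi)$ is an object of $\Fam_\R$ whose fibers $\pi\inv(\delta)=G_\delta$ are Lie groups by hypothesis. The remaining data are all restrictions of the already-smooth structure on $\M(n;\Lambda_\R)\to\R$: the identity section $\delta\mapsto(I_n,\delta)$ lands in $\fam{G}$ because $I_n\in G_\delta$; the fibered matrix multiplication $m\colon\M(n;\Lambda_\R)\timesd\M(n;\Lambda_\R)\to\M(n;\Lambda_\R)$ carries $\fam{G}\timesd\fam{G}$ into $\fam{G}$ because each $G_\delta$ is closed under products. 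The genuine verification here is smoothness of these corestricted maps: I would note that $\fam{G}\timesd\fam{G}$ is a smooth manifold (the fiber product of $\pi$ with itself is transverse, as $\pi$ is a submersion), and that a smooth map into $\M(n;\Lambda_\R)$ whose image lies in the embedded submanifold $\fam{G}$ is smooth into $\fam{G}$, so $\mu=m|_{\fam{G}\timesd\fam{G}}$ and the identity section are smooth. For the inversion $\iota$ I would derive smoothness from that of $\mu$ by applying the implicit function theorem fiberwise to the equation $\mu(g,\cdot)=\fam{e}(\pi(g))$, using that fiber translation is a diffeomorphism (or, where available, simply invoke smoothness of fibered inversion on $\fam{GL}(n;\Lambda_\R)$).

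For the direction ($\Rightarrow$), I would start from the hypothesis that $\fam{G}$, with the operations induced from $\M(n;\Lambda_\R)$, is a $1$-parameter family of groups. Being the total space of a family over $\R$, $\fam{G}$ is a smooth manifold and $\pi\colon\fam{G}\to\R$ is a smooth submersion realizing $\fam{G}$ as an embedded submanifold of $\M(n;\Lambda_\R)$; by the opening observation, submersivity of $\pi$ is precisely transversality of $\fam{G}$ to the fibers, which finishes this direction. The main obstacle, and the point I would treat most carefully, is the compatibility between the intrinsic smooth structure of the family and the subspace structure from $\M(n;\Lambda_\R)$: one must argue that the family structure on the subset $\bigcup_\delta G_\delta$ is the one induced by the ambient embedding, rather than an a priori exotic smooth structure on the same underlying set. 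I would pin this down by requiring, as part of the meaning of ``$\fam{G}$ is a $1$-parameter family of groups,'' that the structure maps be the restrictions of the ambient matrix operations and that each fiberwise inclusion $G_\delta\hookrightarrow\GL(n;\Lambda_\delta)$ be a smooth embedding. Once that identification is secured, both directions collapse onto the single transversality-versus-submersion equivalence recorded at the outset.
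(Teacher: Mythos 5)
Your proposal is correct and follows essentially the same route as the paper: the heart of both arguments is the observation that transversality of $\fam{G}$ to the codimension-one fibers $\M(n;\Lambda_\delta)$ is exactly surjectivity of $d\pi$ restricted to $T\fam{G}$, combined with the fact that the groupoid operations are smooth because they are restrictions of the fibered multiplication and inversion on $\M(n;\Lambda_\R)$ (to the embedded submanifold $\fam{G}$). You are in fact somewhat more careful than the paper on two points it glosses over --- smoothness of the corestricted maps into $\fam{G}$ and the ``only if'' direction, where your explicit convention that the family structure is the one induced by the ambient embedding rules out the exotic-structure ambiguity the paper leaves implicit.
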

\begin{proof}
Let $\fam{G}$ be as described in the proposition.
The multiplication and inversion for each $G_\delta$ are direct restrictions of the multiplication and inversion on $\M(n;\Lambda_\delta)$; each of which given by polynomials in the multiplication of $\Lambda_\delta$ away from the noninvertible locus.
The multiplication of $\M(n;\Lambda_\R)$ is given by a smooth map $\mu\colon \M(n;\Lambda_\R)\times_\R\M(n;\Lambda_\R)\to\M(n;\Lambda_\R)$, induced by the smoothly varying multiplication on $\Lambda_\R$; and similarly inversion is smooth restricted to the subcollection of invertible elements.
As the multiplication and inversion of each $G_\delta$ come from the restriction of multiplication/inversion on $\M(n;\Lambda_\delta)$, the operations of composition and inversion on $\fam{G}=\cup_\delta G_\delta$ are smooth, as restrictions of the corresponding operations on $\M(n;\Lambda_\R)$.
As each element in $\cup_\delta G_\delta$ is invertible by assumption, the collection $\fam{G}$ form the set of morphisms of a groupoid, with objects given by the base space $\R$.
The product of two elements $x,y\in\fam{G}$ is only defined if they lie in the same fiber of the projection map $G_\delta=\pi_|\fam{G}\inv(\delta)$; thus the source and target map of the groupoid $\fam{G}$ are each given by the restriction of the projection $\pi\colon\M(n;\Lambda_\R)\to\R$ to $\fam{G}$.

As the space of objects and morphisms are both smooth manifolds, with smooth composition and inversion, $\fam{G}\to\R$ is a Lie groupoid if this restricted projection remains a submersion.
This follows easily from the assumption that for each $p\in \fam{G}$, the tangent space $T_p\fam{G}$ is transverse to $T_p\M(n;\Lambda_\delta)$, as then $T_p\M(n;\Lambda_\R)=T_p\fam{G}+T_p\M(n;\Lambda_\delta)$ and the projection $d\pi_p$ on all of $T_p\M(n;\Lambda_\R)$ is surjective, but $d\pi_p\M(n;\Lambda_\delta)=0$ as $\M(n;\Lambda_\delta)=\pi\inv(\delta)$.
Thus $(d\pi|_\fam{G})_p\colon T_p\fam{G}\to T_{\pi(p)}\R$ must be surjective and so $\pi$ is a submersion on $\fam{G}$.
\end{proof}

\noindent
This allows us to produce our first example of a 1-parameter family of groups, from the unit spheres with respect to the norm $x\mapsto x\bar{x}$ on the algebras $\Lambda_\delta$,
and furthermore this family is topologically nontrivial as the unit spheres change from circles (when $\delta<0$) to a pair of hyperbolas (when $\delta>0$).

\begin{example}
The elements of norm one, $\bigcup_\delta\U(\Lambda_\delta)=\U(\Lambda_\R)\subset\Lambda_\R$ form a 1-parameter family of groups.	
\label{ex:Units_1_Param_Fam}
\end{example}
\begin{proof}

In the coordinates $(x,y,\delta)$ on the family of algebras $\Lambda_\R$, conjugation $x+\lambda y\mapsto x-\lambda y$ is given by the map $(x,y,\delta)\mapsto (x,-y,\delta)$.
Thus the equation $z\bar{z}=1$ defining $\U(\Lambda_\delta)$ for each $\delta$ cuts out $\U(\Lambda_\R)$ as a subvariety of $\R^3$:
$$\U(\Lambda_\R)=\{(x,y,\delta)\in\R^3\mid x^2-\delta y^2=1\}$$
Thus $\U(\Lambda_\R)$ is a smooth submanifold of $\Lambda_\R$, which we take as the morphisms of a groupoid with objects given by $\R$.
To see $\U(\Lambda_\R)$ is transverse to the vertical foliation $\{\Lambda_\delta\}$ of $\Lambda_\R$, we note that for each point $p\in\U(\Lambda_\R)$ the tangent plane $T_p\U(\Lambda_\R)$ is not vertical, or equivalently the gradient $\nabla (x^2-\delta y^2-1)$ at $p$ is not parallel to the $\delta$ axis.
Calculating, $\nabla(x^2-\delta y^2-1)=(2x, -2y,\delta)$ is parallel to $(0,0,1)$ if and only if $x=y=0$, which occurs for no points of $\U(\Lambda_\R)$.
\end{proof}

\begin{figure}
\centering
\includegraphics[width=0.65\textwidth]{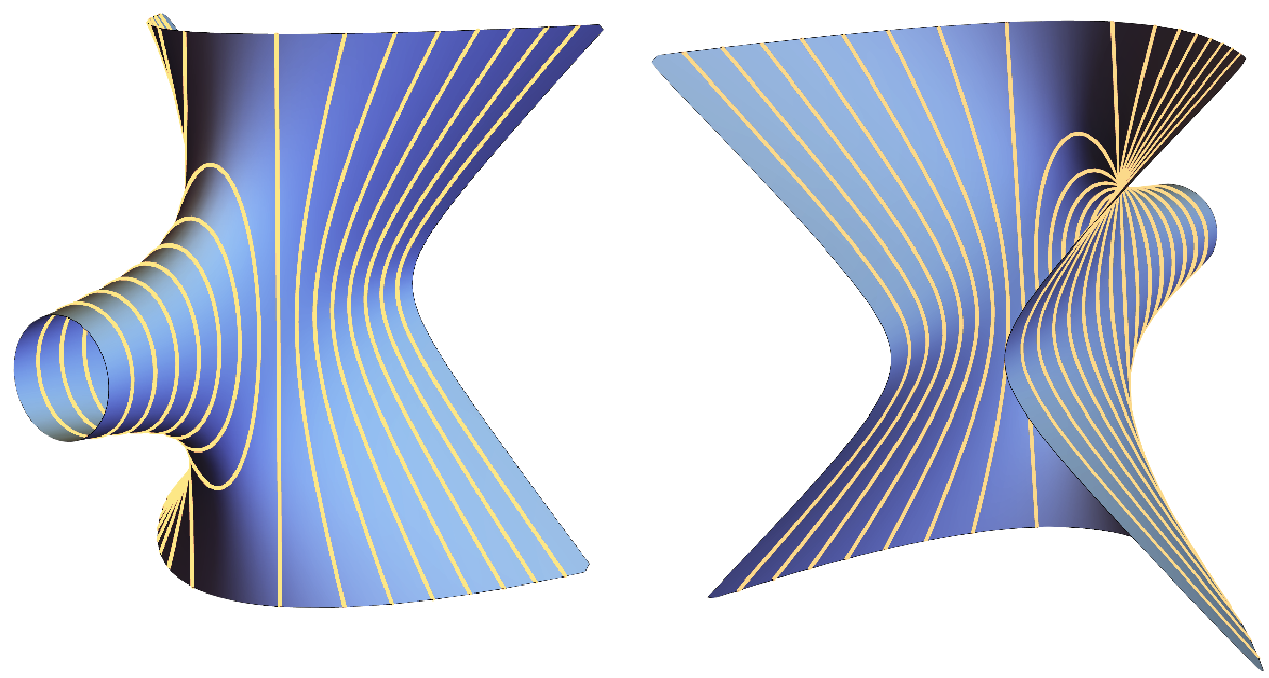}
\caption{The units $\U(\Lambda_\R)$ as a 1-parameter family.
The vertical slices exhibit the transitioning groups, from $\U(\C)\cong\S^1$ to $\U(\R\oplus\R)\cong\R\rtimes\Z_2$.}	
\end{figure}

\noindent
To proceed further, we draw an analogy to smooth topology to produce new 1-parameter families.
Just as smooth manifolds arise as point preimages of smooth submersions, 1-parameter families arise as point preimages of \emph{1-parameter families of submersions.}

\begin{definition}
Let $X$ be a smooth manifold.
A map $\Phi\colon\M(n;\Lambda_\R)\to X$ is a 1-parameter family of submersions if the restriction $\Phi_\delta\colon \M(n;\Lambda_\delta)\to X$ is a submersion for each $\delta\in\R$.
\end{definition}

\begin{theorem}
Let $\Phi\colon	\fam{M}(n;\Lambda_\R)\to X$  be slicewise submersion.
Then for any $x\in X$ the preimage $\Phi\inv(x)\subset\fam{M}(n;\Lambda_\R)$ is a smooth manifold on which the projection $\pi\colon\fam{M}(n;\Lambda_\R)\to\R$ restricts to a smooth submersion.
\label{thm:Slicewise_Submersion}
\end{theorem}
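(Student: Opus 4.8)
The plan is to show that $x$ is a regular value of $\Phi$ by using the slicewise hypothesis \emph{pointwise}, invoke the regular value theorem to obtain smoothness of $\Phi\inv(x)$, and then run a short dimension count to verify that $\pi$ restricts to a submersion on the preimage.

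First I would fix a point $p\in\Phi\inv(x)$ (if the preimage is empty the statement is vacuous) and set $\delta=\pi(p)$. Since $\pi\colon\fam{M}(n;\Lambda_\R)\to\R$ is a vector bundle projection, hence a submersion, its fiber $\M(n;\Lambda_\delta)=\pi\inv(\delta)$ is an embedded submanifold whose tangent space $T_p\M(n;\Lambda_\delta)=\ker d\pi_p$ is a hyperplane of codimension one in $T_p\fam{M}(n;\Lambda_\R)$. The restriction of $d\Phi_p$ to this hyperplane is exactly $d(\Phi_\delta)_p$, which is surjective onto $T_xX$ because each slice $\Phi_\delta$ is a submersion by hypothesis. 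A surjective restriction forces the ambient map to be surjective, so $d\Phi_p$ is onto. As $p$ ranged over all of $\Phi\inv(x)$, the point $x$ is a regular value of $\Phi$, and the regular value theorem produces a smooth embedded submanifold $\Phi\inv(x)\subset\fam{M}(n;\Lambda_\R)$ of codimension $\dim X$. This settles the first assertion.

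For the second assertion I would check that $\pi|_{\Phi\inv(x)}$ is a submersion at each $p$, which, since the target $\R$ is one-dimensional, amounts to asking that $d\pi_p$ be nonzero on $T_p\Phi\inv(x)=\ker d\Phi_p$. Here a dimension count does the work. Writing $m=\dim X$ and $N=\dim\fam{M}(n;\Lambda_\R)=2n^2+1$, surjectivity of $d\Phi_p$ gives $\dim\ker d\Phi_p=N-m$, while surjectivity of $d(\Phi_\delta)_p$ on the fiber (of dimension $N-1=2n^2$) gives $\dim\ker d(\Phi_\delta)_p=(N-1)-m$. Because $\ker d(\Phi_\delta)_p=\ker d\Phi_p\cap\ker d\pi_p$, the kernel $\ker d\Phi_p$ strictly contains its intersection with the vertical hyperplane $\ker d\pi_p$; hence $\ker d\Phi_p\not\subseteq\ker d\pi_p$ and $d\pi_p$ is nonzero on $T_p\Phi\inv(x)$. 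Thus $\pi|_{\Phi\inv(x)}$ is a submersion at $p$. Equivalently, this is exactly the transversality of $\Phi\inv(x)$ to the fibers $\M(n;\Lambda_\delta)$, which is precisely the hypothesis of the earlier characterization of $1$-parameter families as fiber-transverse smooth submanifolds, so point-preimages of slicewise submersions assemble into $1$-parameter families.

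The argument uses no hard analysis; the only points needing care are that the slicewise hypothesis must be applied pointwise rather than globally (we never assume $\Phi$ itself is a submersion a priori), and that the one-dimensionality of the base reduces the submersion condition for $\pi$ to the single strict inequality $\dim\ker d\Phi_p>\dim\ker d(\Phi_\delta)_p$. The main obstacle, to the extent there is one, is purely bookkeeping: keeping straight the three relevant subspaces of $T_p\fam{M}(n;\Lambda_\R)$ — the full tangent space, the vertical hyperplane $\ker d\pi_p$, and the kernel $\ker d\Phi_p$ — and confirming that their codimension-one nesting forces the desired non-containment.
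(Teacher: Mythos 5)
Your proposal is correct, and its first assertion is handled exactly as in the paper: both arguments observe that surjectivity of $d(\Phi_\delta)_p$ on the vertical subspace $\ker d\pi_p$ forces surjectivity of $d\Phi_p$, so $\Phi\inv(x)$ is smooth by the regular value theorem. For the second assertion, however, you take a genuinely different route. The paper augments $\Phi$ to $\widetilde{\Phi}\colon\fam{M}(n;\Lambda_\R)\to X\times\R$, $(A,\delta)\mapsto(\Phi(A),\delta)$, checks that $\widetilde{\Phi}$ is a submersion, and then manufactures explicit local sections of $\pi|_{\Phi\inv(x)}$ by composing a local section $\sigma$ of $\widetilde{\Phi}$ with the curve $c(t)=(x,t)$; the submersion property is then read off from the characterization of submersions as maps admitting local sections through every point. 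You instead argue infinitesimally at each $p\in\Phi\inv(x)$: from $\ker d(\Phi_\delta)_p=\ker d\Phi_p\cap\ker d\pi_p$ and the rank--nullity count $\dim\ker d\Phi_p=N-m>(N-1)-m=\dim\ker d(\Phi_\delta)_p$, you conclude $\ker d\Phi_p\not\subseteq\ker d\pi_p$, which, since the base is one-dimensional, is exactly the submersion condition for $\pi|_{\Phi\inv(x)}$. Your version is shorter and purely linear-algebraic, and it makes transparent that the conclusion is precisely transversality of $\Phi\inv(x)$ to the fibers $\M(n;\Lambda_\delta)$, tying it directly to the earlier fiber-transversality characterization of $1$-parameter families; the trade-off is that it leans on smooth-category facts ($T_p\Phi\inv(x)=\ker d\Phi_p$ and dimension counting), whereas the paper's section-based argument is phrased in the language of local sections that the thesis uses to define families in the first place, and it hands you concrete sections of the restricted projection that are reused in the groupoid arguments that follow (e.g.\ in promoting $\fam{U}(n,1;\Lambda_\R)$ to a Lie groupoid). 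Both proofs are complete; your bookkeeping of the three subspaces is accurate and the strict inequality you flag as the crux is exactly right.
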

\begin{proof}
As $\Phi|_\delta\colon\M(n;\Lambda_\delta)\to X$ is a submersion for all $\delta$, the total map $\Phi$ itself is also a submersion, and hence $\Phi\inv(x)$ is a smooth manifold for each $x\in X$, by the preimage theorem from smooth topology.
Thus it only remains to show the restriction of $\pi$ to $\Phi\inv(x)$ is a smooth submersion.
Create from $\Phi$ the smooth map $\widetilde{\Phi}\colon \M\to X\times \R$ given by $\widetilde{\Phi}((A,\delta))=(\Phi(A),\delta)$.
Observe that $\widetilde{\Phi}$ is still a submersion, as follows.
The tangent space to any point $(x,\delta)\in X\times\R$ factors as a product $T_(x,\delta) X\times\R=T_x X\times T_\delta\R$.
Let $(A,\delta)\in \M(n;\Lambda_\R)$ with $\widetilde{\Phi}(A,\delta)=(x,\delta)$.
The condition that $\Phi$ is a slicewise submersion is exactly that the derivative of $\widetilde{\Phi}$, restricted to $\M(n;\Lambda_\delta)$ is onto the $T_xX$ factor, and the derivative of $\widetilde{\Phi}$ along the path $(A,t)$ is onto the $T_\delta\R$ factor by construction.

To show that the restriction of $\pi\colon\M(n;\Lambda_\R)\to\R$ to $\Phi\inv(x)$ is a submersion, we will use the following equivalent description of submersions: \emph{a smooth map $f\colon M\to X$ is a submersion if and only if through each point $m\in M$ there is a local section $\sigma\colon U\to M$ of $f$ with $f(m)\in U$ and $m\in\sigma(U)$.}
Choose a point $(A,\delta)\in\Phi\inv(x)$, and consider its image $\widetilde{\Phi}(A,\delta)=(x,\delta)\in X\times\R$.
As $\widetilde{\Phi}$ is a submersion, we may use the characterization above to produce a smooth local section $\sigma\colon U\to \M(n;\Lambda_\R)$ with $(A,\delta)$ in the image.
Possibly after shrinking, we may assume $U=V\times (\delta-\ep, \delta+\ep)$ for $V$ a neighborhood of $x\in X$.
Now consider the map $c\colon \R\to X\times\R$ given by $c(t)=(x,t)$ for all $t\in\R$, and the composition $\sigma\circ c$ defined on $(\delta-\ep,\delta+\ep)$.
This is a smooth map as it is a composition of smooth maps, and is a section of the projection map $\pi\colon\M(n;\Lambda_\R)\to\R$ by construction.
But finally, notice that for all $t\in(\delta-\ep,\delta+\ep)$, the point $\sigma\circ c(t)$ lies in $\Phi\inv(x)$, 
as $\sigma$ is a section of $\widetilde{\Phi}$ so $\widetilde{\Phi}\circ\sigma(c(t))=c(t)=(x,t)$ so $\Phi(\sigma(c(t))=x$.
Thus, the restricted projection admits smooth sections through every point $(A,\delta)\in\Phi\inv(x)$, and so it is a submersion by the alternative characterization above.
\end{proof}

\begin{corollary}
If $\fam{G}=\bigcup_\delta G_\delta$ is a collection of Lie subgroups of $\GL(n;\Lambda_\delta)$, then $\fam{G}$ is a 1-Parameter family of groups if $\fam{G}=\Phi\inv(x)$ for some smooth manifold $X$, some 1-parameter family of submersions $\Phi\colon\M(n;\Lambda_\R)\to X$ and some $x\in X$.
\end{corollary}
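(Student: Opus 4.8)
The plan is to assemble the two results just proved — Theorem~\ref{thm:Slicewise_Submersion} (the Slicewise Submersion Theorem) and the Proposition characterizing one-parameter families of groups as smooth submanifolds of $\M(n;\Lambda_\R)$ transverse to the fibers — into a short chain of implications. The corollary asks for nothing beyond this combination, so the argument is essentially bookkeeping once the hypotheses are matched up.

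First I would observe that a one-parameter family of submersions is exactly a slicewise submersion in the sense of the theorem: both require $\Phi_\delta\colon\M(n;\Lambda_\delta)\to X$ to be a submersion for every $\delta$. Applying Theorem~\ref{thm:Slicewise_Submersion} to $\Phi$ at the point $x$ then yields two facts at once: the preimage $\fam{G}=\Phi\inv(x)$ is a smooth submanifold of $\M(n;\Lambda_\R)$, and the bundle projection $\pi\colon\M(n;\Lambda_\R)\to\R$ restricts to a smooth submersion on $\fam{G}$.

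Next I would reconcile this submersion conclusion with the transversality hypothesis demanded by the Proposition. Because the base $\R$ is one-dimensional, at each $p\in\fam{G}$ the tangent space to the fiber is $T_p\M(n;\Lambda_\delta)=\ker(d\pi_p)$, so requiring $\pi|_{\fam{G}}$ to be a submersion at $p$, i.e. that $d\pi_p$ be surjective on $T_p\fam{G}$, is literally the statement $T_p\fam{G}+\ker(d\pi_p)=T_p\M(n;\Lambda_\R)$. That is precisely transversality of $\fam{G}$ to the fiber $\M(n;\Lambda_\delta)$ at $p$. Hence the submersion property delivered by the theorem is exactly the fiberwise transversality needed downstream.

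Finally, since by hypothesis each $G_\delta=\fam{G}\cap\M(n;\Lambda_\delta)$ is a Lie subgroup of $\GL(n;\Lambda_\delta)$, and $\fam{G}$ is now known to be a smooth submanifold transverse to the fibers, the Proposition applies verbatim to conclude that $\fam{G}$ is a one-parameter family of groups. I do not expect a genuine obstacle here; the only step requiring a word of care is the transversality-versus-submersion translation in the middle, and that collapses to a triviality once one uses $\dim\R=1$.
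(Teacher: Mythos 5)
Your proposal is correct and is essentially the argument the paper intends: the corollary is an immediate consequence of Theorem~\ref{thm:Slicewise_Submersion} combined with the earlier Proposition characterizing 1-parameter families as submanifolds transverse to the fibers, and your submersion-to-transversality translation is exactly the computation already carried out (in the reverse direction) inside that Proposition's proof. The only cosmetic remark is that this equivalence needs no appeal to $\dim\R=1$ --- for any submersion $\pi$, surjectivity of $d\pi_p$ on $T_p\fam{G}$ is equivalent to $T_p\fam{G}+\ker(d\pi_p)=T_p\M(n;\Lambda_\R)$ since $\ker(d\pi_p)$ is the fiber tangent space --- so your argument is, if anything, slightly more general than it claims.
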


\begin{figure}
\centering\includegraphics[width=0.85\textwidth]{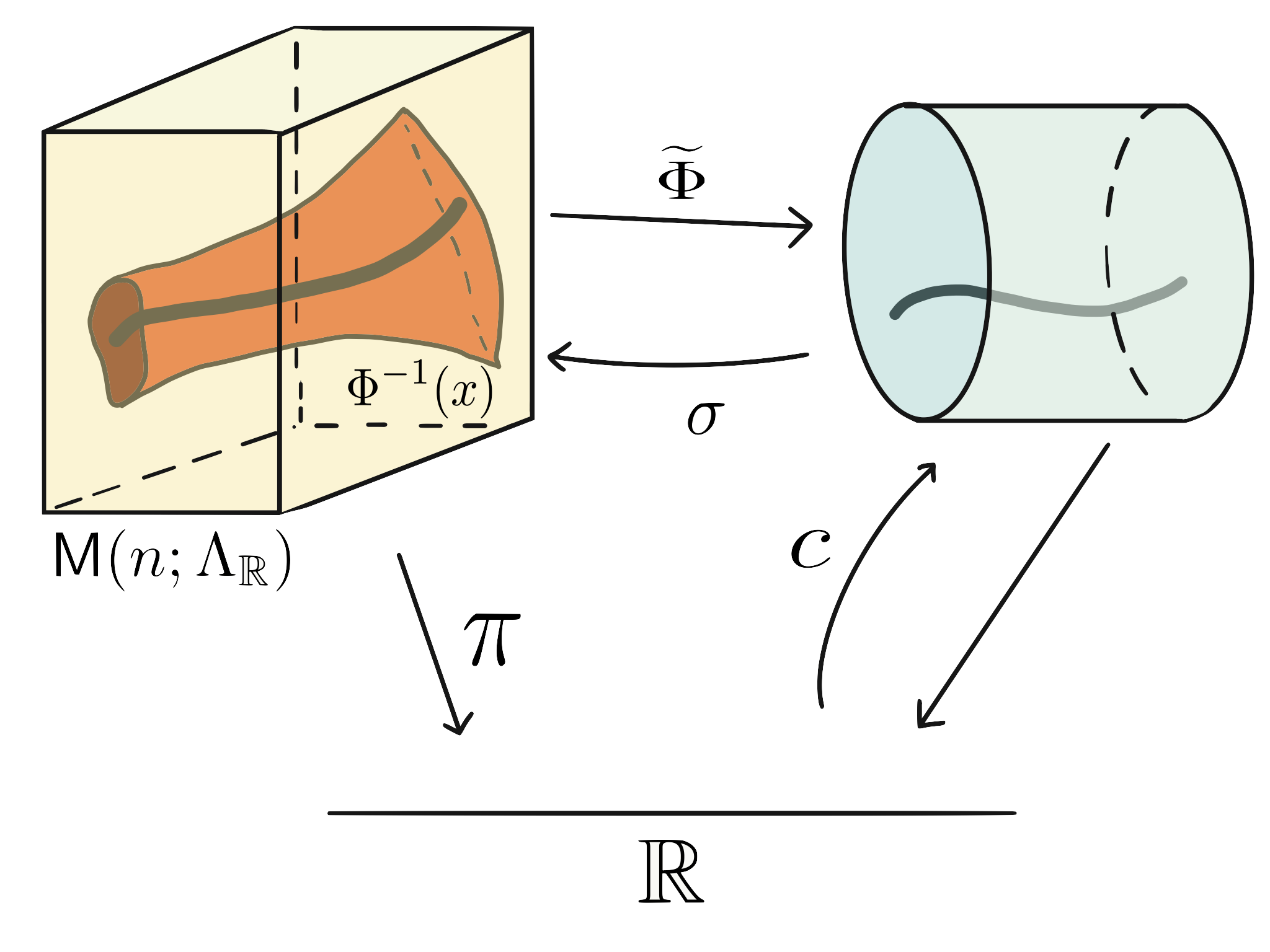}
\caption{Figure illustrating the proposition above and its proof.}
\end{figure}

\subsection{The 1-Parameter Family $\Hyp_{\Lambda}^n$}

From the Automorphism-Stabilizer perspective, we are interested in the families of unitary groups and their point stabilizers.

\begin{definition}
The collection $\fam{U}(n,1;\Lambda_\R)=\bigcup_{\delta\in\R} \U(n,1\Lambda_\delta)$ is the union of automorphism groups for the geometries $\Hyp_{\Lambda_\delta}^n$ in $\M(n;\Lambda_\R)$.
Restricting to $\det=1$ gives the collection of special unitary groups, $\fam{SU}(n,1;\Lambda_\R)$.
\end{definition}

\begin{definition}
The collection $\fam{USt}(n,1;\Lambda_\R)=\bigcup_{\delta\in\R}\mathsf{USt}(n,1;\Lambda_\delta)$ is the union of stabilizers for the geometries $\Hyp_{\Lambda_\delta}^n$ in $\M(n;\Lambda_\R)$.
\end{definition}

\begin{definition}
The collection of hyperbolic geometries is given by the pair $\Hyp_{\Lambda_\R}^n=(\fam{U}(n,1;\Lambda_\R),\fam{USt}(n,1;\Lambda_\R))$.
\end{definition}

\noindent
Using the observations and techniques developed above regarding slicewise submersions, it is quick work to show that each of these collections of groups forms a 1-parameter family, and thus $\Hyp_{\Lambda_\R}^n$ is a 1-parameter family of geometries.
The inspiration for this technique derives from the usual definition of $\U(n,1)=\{A\mid A^\dagger JA=J\}$ over $\R$, as the preimage of $J$ under the map $A\mapsto A^\dagger JA$.
Recall an element of $X\in\M(n+1;\Lambda_\delta)$ is \emph{Hermitian} if $X^\dagger=X$.

\begin{definition}
For each $\delta\in\R$ let $\Herm(n;\Lambda_\delta)\subset\M(n,\Lambda_\delta)$ be the collection of Hermitian matrices,
$\Herm(n;\Lambda_\delta)=\{X\in\M(n;\Lambda_\delta)\mid X^\dagger=X\}$.	
\end{definition}

\noindent
Note that the definition of Hermitian does not involve the multiplication of $\Lambda_\delta$, and so identifying each $\M(n;\Lambda_\delta)$ with $\M(n;\R)\oplus\lambda \M(n;\R)$ as vector spaces, the collections $\Herm(n;\Lambda_\delta)$ are constant in $\delta$.

\begin{remark}
Because $\Herm(n;\Lambda_\delta)$ is constant in $\delta$, we write $\Herm(n)$ when $\delta$ is irrelevant to present discussion, or allowed to vary.
\end{remark}

\noindent
Directly mimicking the standard construction over $\C$, we aim to exhibit each unitary group, and indeed the collection as a whole, as the point preimage of a submersion.

\begin{observation}
The collection $\fam{U}(n,1;\Lambda_\R)$ is the preimage of $J=\diag(I_n,-1)$ under the map $\Phi\colon \M(n+1;\Lambda_\R)\to \Herm(n+1)$ defined by $(A,\delta)\mapsto (A^\dagger JA, \delta)$
\end{observation}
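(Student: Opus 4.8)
The plan is to unwind both sides of the claimed identity directly from the definitions, the stated observation being essentially a bookkeeping identity whose purpose is to feed $\Phi$ into Theorem \ref{thm:Slicewise_Submersion}. By definition $\U(n,1;\Lambda_\delta)=\{A\in\M(n+1;\Lambda_\delta)\mid A^\dagger J A=J\}$ with $J=\diag(I_n,-1)$, and $\fam{U}(n,1;\Lambda_\R)=\bigcup_{\delta}\U(n,1;\Lambda_\delta)\times\{\delta\}$ sits inside $\M(n+1;\Lambda_\R)$. The restriction of $\Phi$ to the fiber over $\delta$ is $\Phi_\delta(A)=A^\dagger J A$, with all products formed using the multiplication $\mu_\delta$ of $\M(n+1;\Lambda_\delta)$ (the extra $\delta$-component of $\Phi$ merely records the base point, exactly as in the map $\widetilde\Phi$ built inside the proof of Theorem \ref{thm:Slicewise_Submersion}). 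Hence $(A,\delta)\in\Phi^{-1}(J)$ precisely when $A^\dagger J A=J$ in $\M(n+1;\Lambda_\delta)$, which is the condition $A\in\U(n,1;\Lambda_\delta)$. The two collections therefore agree fiber by fiber, and so as subsets of $\M(n+1;\Lambda_\R)$.

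First I would record that $\Phi$ genuinely takes values in $\Herm(n+1)$. Since $J$ is real and diagonal we have $J^\dagger=J$, and since $\dagger$ is an involution, $(A^\dagger J A)^\dagger=A^\dagger J^\dagger A=A^\dagger J A$; thus $\Phi(A,\delta)\in\Herm(n+1)$ for every $A$, and $J\in\Herm(n+1)$ is a legitimate point at which to take the preimage. Because the Hermitian condition never invokes $\mu_\delta$, the space $\Herm(n+1;\Lambda_\delta)$ is constant in $\delta$, so the target is a fixed vector space and $\Phi$ fits the template of a slicewise submersion into a manifold $X=\Herm(n+1)$.

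The substantive step, and the reason for phrasing the identity this way, is to check that $\Phi$ is a slicewise submersion along $\Phi^{-1}(J)$, so that Theorem \ref{thm:Slicewise_Submersion} applies. At $A\in\U(n,1;\Lambda_\delta)$ the fiberwise derivative is $V\mapsto V^\dagger J A+A^\dagger J V$, whose image lies in $\Herm(n+1)$. To see surjectivity onto $\Herm(n+1)$, given $H=H^\dagger$ I would set $V=\tfrac12 J(A^{-1})^\dagger H$; then $A^\dagger J V=\tfrac12 A^\dagger J^2 (A^{-1})^\dagger H=\tfrac12 H$, and applying $\dagger$ gives $V^\dagger J A=(A^\dagger J V)^\dagger=\tfrac12 H$, so the two terms sum to $H$. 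With the slicewise submersion property in hand, Theorem \ref{thm:Slicewise_Submersion} shows $\Phi^{-1}(J)=\fam{U}(n,1;\Lambda_\R)$ is a smooth submanifold of $\M(n+1;\Lambda_\R)$ transverse to the fibers $\M(n+1;\Lambda_\delta)$, and the earlier proposition characterizing $1$-parameter families inside $\M(n;\Lambda_\R)$ then upgrades this to the statement that $\fam{U}(n,1;\Lambda_\R)$ is a $1$-parameter family of groups.

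The main obstacle is the surjectivity of this fiberwise derivative over $\Lambda_\delta$, since for $\delta\geq 0$ the algebra $\Lambda_\delta$ has zero divisors and arbitrary elements cannot be inverted. The construction of $V$ is safe because it inverts only $A$ and $J$: here $J^{-1}=J$ as $J^2=I$, and for $A\in\U(n,1;\Lambda_\delta)$ the relation $A^\dagger J A=J$ exhibits the explicit inverse $A^{-1}=J A^\dagger J$, a genuine unit of the finite-dimensional $\R$-algebra $\M(n+1;\Lambda_\delta)$ (in which a one-sided inverse is automatically two-sided). Consequently $V=\tfrac12 J(A^{-1})^\dagger H$ is an honest element of $\M(n+1;\Lambda_\delta)$, and the derivative is surjective in each fiber rather than merely after passing to real points. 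I would take care throughout that no step secretly requires inverting a non-unit, which is the one place where the presence of zero divisors could otherwise break the standard argument.
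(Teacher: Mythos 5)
Your proof of the observation itself coincides with the paper's reading of it: the statement is purely definitional, and your first paragraph — identifying the fiberwise restriction $\Phi_\delta(A)=A^\dagger JA$ and noting that $(A,\delta)\in\Phi\inv(J)$ iff $A\in\U(n,1;\Lambda_\delta)$, with the $\delta$-coordinate mere bookkeeping — is exactly why the paper labels it an Observation with no proof attached. Where you diverge is in the supplementary material: the slicewise-submersion property you verify is not part of this statement but is the content of the paper's next proposition, and there your argument is genuinely different. The paper computes the derivative $X\mapsto X^\dagger JB+B^\dagger JX$ and then argues by a dimension count, identifying $\ker\Phi_\delta=(B^\dagger J)\inv\SkHerm(n+1)$ and using $\dim\M=\dim\Herm+\dim\SkHerm$ to conclude surjectivity onto $\Herm(n+1)$. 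You instead exhibit an explicit preimage: for Hermitian $H$, setting $V=\tfrac12 J(A\inv)^\dagger H$ gives $A^\dagger JV=\tfrac12 A^\dagger J^2(A\inv)^\dagger H=\tfrac12(A\inv A)^\dagger H=\tfrac12 H$ and, applying $\dagger$, $V^\dagger JA=\tfrac12 H$, so the two terms sum to $H$; this checks out, using only $J^2=I$, $J^\dagger=J$, $H^\dagger=H$, and commutativity of $\Lambda_\delta$ for $(XY)^\dagger=Y^\dagger X^\dagger$. Your closing paragraph is also the right caution: over $\Lambda_\delta$ with $\delta\geq 0$ there are zero divisors, and your construction inverts only the units $J$ (with $J\inv=J$) and $A$ (with $A\inv=JA^\dagger J$ read off the unitary relation), so nothing breaks. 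The trade-off: your explicit formula is more elementary and makes the unit-versus-zero-divisor issue transparent, while the paper's kernel identification with $\SkHerm$ is reused structurally elsewhere (e.g., in the proof that the unitary stabilizers form a family, where $\ker\phi=(B^\dagger J)\inv\SkHerm$ is intersected with the stabilizer's tangent space — information your pointwise preimage does not supply). Both you and the paper only establish submersivity along the level set $\Phi\inv(J)$ rather than on all of $\M(n+1;\Lambda_\R)$, so both implicitly invoke the regular-value form of the level-set theorem; this is consistent between the two arguments.
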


\noindent
This map $\Phi$ is smooth as it is defined using the addition and multiplication on $\M(n+1;\Lambda_\R)$.
But moreover it is a 1-parameter family of submersions, as restricting to each slice $\M(n+1;\Lambda_\delta)$ gives the polynomial $\Phi_\delta\colon A\mapsto A^\dagger JA$ cutting out $\U(n,1;\Lambda_\delta)=V(\Phi_\delta(A)-J)$. 

\begin{proposition}
The restriction of $\Phi$ to $\Phi_\delta\colon\M(n+1;\Lambda_\delta)\to\Herm(n,1)$ is a submersion on $\U(n,1;\Lambda_\delta)$ for all $\delta\in\R$.	
\label{prop:Restricted_Projection_Lambda}
\end{proposition}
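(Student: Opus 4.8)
The plan is to show that the smooth map $\Phi_\delta\colon\M(n+1;\Lambda_\delta)\to\Herm(n+1)$ given by $A\mapsto A^\dagger JA$ is a submersion at every point of $\U(n,1;\Lambda_\delta)=\Phi_\delta\inv(J)$. This is the direct analog of the classical fact that $A\mapsto A^\dagger JA$ is a submersion onto the Hermitian matrices along $\U(n,1;\C)$, and the key observation will be that, just as in Calculation \ref{calc:Const_Lie_Alg}, the algebraic structure of $\Lambda_\delta$ (i.e.\ the value of $\lambda^2=\delta$) never actually enters the relevant derivative computation, so the argument is uniform in $\delta$. Concretely, I would first compute the differential of $\Phi_\delta$ at a point $A\in\U(n,1;\Lambda_\delta)$.

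First I would fix $A\in\U(n,1;\Lambda_\delta)$, so that $A^\dagger JA=J$, and differentiate along a path $A+tH$ for $H\in\M(n+1;\Lambda_\delta)$. Since $\dagger$ is $\R$-linear and conjugation-compatible, the product rule gives
\begin{equation*}
(d\Phi_\delta)_A(H)=H^\dagger JA+A^\dagger JH.
\end{equation*}
Writing $K=A^\dagger JH$, and using $A^\dagger JA=J$ to note $H^\dagger JA=(A^\dagger JH)^\dagger=K^\dagger$ (because $J^\dagger=J$), the image becomes $K^\dagger+K$, which is manifestly Hermitian. So the differential sends $H$ to $K+K^\dagger$ where $K=A^\dagger JH$ ranges over all of $\M(n+1;\Lambda_\delta)$ as $H$ does (since $A$ is invertible and $J$ is invertible, $H\mapsto A^\dagger JH$ is a linear isomorphism). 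Thus it suffices to show the symmetrization map $K\mapsto K+K^\dagger$ is surjective onto $\Herm(n+1)$.

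The surjectivity of $K\mapsto K+K^\dagger$ onto $\Herm(n+1)$ is the second key step, and here I would argue it is immediate: given any Hermitian target $S$ with $S^\dagger=S$, the element $K=\tfrac{1}{2}S$ satisfies $K+K^\dagger=\tfrac12 S+\tfrac12 S^\dagger=S$. This requires only that $2$ be invertible in $\Lambda_\delta$, which holds since $\Lambda_\delta$ contains $\R$. Crucially, every step — the product rule, the identity $H^\dagger JA=(A^\dagger JH)^\dagger$, the invertibility of the maps involved, and the splitting $S=\tfrac12 S+\tfrac12 S^\dagger$ — uses only $\R$-linearity of multiplication by the real matrix $J$ and the involution $\dagger$, none of which sees the relation $\lambda^2=\delta$. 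This mirrors exactly the reasoning in Calculation \ref{calc:Const_Lie_Alg}, where the same phenomenon made the Lie algebra independent of $\delta$.

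The main obstacle, such as it is, is bookkeeping rather than conceptual: I must verify that the target $\Herm(n+1)$ is the correct codomain (that $\Phi_\delta$ genuinely lands in the Hermitian matrices, which follows since $(A^\dagger JA)^\dagger=A^\dagger J^\dagger A=A^\dagger JA$), and that the presence of zero divisors in $\Lambda_\delta$ for $\delta\geq 0$ causes no trouble — but since we evaluate only at invertible $A\in\U(n,1;\Lambda_\delta)$ and the linear isomorphism $H\mapsto A^\dagger JH$ needs only invertibility of $A$ and $J$ (both genuine units, not zero divisors), the argument goes through uniformly. I would then conclude by invoking Theorem \ref{thm:Slicewise_Submersion}: since $\Phi$ is a $1$-parameter family of submersions along $\fam{U}(n,1;\Lambda_\R)=\Phi\inv(J)$, the preimage is a smooth submanifold on which the projection $\pi$ restricts to a submersion, which (combined with the groupoid structure) is exactly what is needed to promote $\fam{U}(n,1;\Lambda_\R)$ to a $1$-parameter family of groups.
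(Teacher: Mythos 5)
Your proof is correct, and while it starts from the same differential computation as the paper's proof, it finishes the surjectivity step by a genuinely different route. Both arguments differentiate along the linear path $A+tH$ to get $(d\Phi_\delta)_A(H)=H^\dagger JA+A^\dagger JH$; from there the paper identifies the kernel as $(A^\dagger J)\inv\SkHerm(n+1)$ and concludes by an $\R$-dimension count using the splitting $\M(n+1;\Lambda_\delta)=\Herm\oplus\SkHerm$, whereas you substitute $K=A^\dagger JH$, rewrite the differential as the symmetrization $K\mapsto K+K^\dagger$ precomposed with a linear isomorphism, and exhibit the explicit preimage $K=\tfrac12 S$ (equivalently $H=(A^\dagger J)\inv\tfrac12 S$) of any Hermitian $S$. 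Your version is more constructive and arguably more robust: it produces an explicit right inverse of the differential, never needs to know any dimensions, and so transfers verbatim to any commutative algebra with involution in which $2$ is a unit -- it would in fact streamline the paper's general Lemma \ref{prop:Unitary_Submersion} as well. What the paper's dimension count buys in exchange is the kernel identification $(A^\dagger J)\inv\SkHerm$, which is not a throwaway: it is reused in Lemma \ref{prop:UnitaryStabilizer_Submersion} to show the restriction of $\Phi_J$ to the stabilizer is still a submersion, so your route would need that computation restored there. Two small quibbles: the identity $H^\dagger JA=(A^\dagger JH)^\dagger$ needs only $J^\dagger=J$ and the anti-automorphism property of $\dagger$, not the unitarity relation $A^\dagger JA=J$ that you cite; and your claim that no step ``sees'' $\lambda^2=\delta$ is slightly overstated, since multiplication and inversion in $\M(n+1;\Lambda_\delta)$ certainly use it -- the accurate statement, which is all you need, is that each step is \emph{valid uniformly} in $\delta$, exactly as in Calculation \ref{calc:Const_Lie_Alg}.
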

\begin{proof}
Let $B\in\U(n,1;\Lambda_\delta)$, then for any $X\in\M(n,\Lambda_\delta)$ we may construct the path $B_t=B+tX$ which remains in $\GL(n,\Lambda_\delta)$ for small $t$.  
Computing the derivative we see $\dd{t}|_{t=0}\Phi_\delta(B_t)=X^\dagger JB+B^\dagger JX$, and so $\Phi_J$ is a submersion if $X\mapsto X^\dagger JB+B^\dagger JX$ surjects onto $T_{\Phi_\delta(B)}\Herm(n)=\Herm(n)$.  
This map is $\R$-linear and so we proceed by dimension count, noting $\dim\mathrm{image}\;\Phi_\delta=\dim\M(n,\Lambda_\delta)-\dim\ker\Phi_\delta$.  
The kernel of $\Phi_\delta$ is given by $\ker\Phi_\delta=\set{X\mid X^\dagger JB=-B^\dagger JX}$, which can be expressed $\ker\Phi_\delta=(B^\dagger J)\inv\SkHerm(n)$ for $\SkHerm(n)$ the skew-Hermitian matrices over $\Lambda_\delta$, $\SkHerm(n)=\{A\in\M(n;\Lambda_\delta\mid A^\dagger=-A\}$. 
Thus $\dim\ker\Phi_\delta$ is the dimension of the space of skew-Hermitian matrices, so $\dim\mathrm{image}\;\Phi_\delta=\dim\Herm(n)$ and $(D\Phi_\delta)_B$ is surjective, making $\Phi_\delta$ is a submersion. 
\end{proof}

\noindent
Thus, by Theorem \ref{thm:Slicewise_Submersion} concerning 1-parameter families of submersions, the preimage of any point of $\Herm(n,1)$ is automatically a smooth submanifold of $\M(n+1;\Lambda_\R)$ on which $\pi\colon \M(n+1;\Lambda_\R)\to\R$ restricts to a submersion.

\begin{corollary}
The collection $\fam{U}(n,1;\Lambda_\R)$ is a 1-parameter family of groups.	
\end{corollary}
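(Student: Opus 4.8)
The plan is to recognize this corollary as a direct application of the slicewise-submersion machinery just developed, together with the identification of $\fam{U}(n,1;\Lambda_\R)$ as a single point-preimage. First I would recall the smooth map $\Phi\colon\M(n+1;\Lambda_\R)\to\Herm(n+1)$ given by $(A,\delta)\mapsto(A^\dagger JA,\delta)$ with $J=\diag(I_n,-1)$; since $\Herm(n+1)$ is constant in $\delta$ (a finite-dimensional real vector space), it is a legitimate smooth target manifold. By construction $\fam{U}(n,1;\Lambda_\R)=\Phi\inv(J)$, the union over $\delta$ of the fibers $\U(n,1;\Lambda_\delta)=\Phi_\delta\inv(J)$.

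Next I would invoke Proposition \ref{prop:Restricted_Projection_Lambda}, which shows that each restriction $\Phi_\delta$ is a submersion at every point of $\U(n,1;\Lambda_\delta)$. This supplies precisely the regular-value hypothesis needed: applying Theorem \ref{thm:Slicewise_Submersion} to $\Phi$ at $J\in\Herm(n+1)$ yields that $\Phi\inv(J)=\fam{U}(n,1;\Lambda_\R)$ is a smooth submanifold of $\M(n+1;\Lambda_\R)$ on which the projection $\pi\colon\M(n+1;\Lambda_\R)\to\R$ restricts to a submersion.

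Finally I would feed this into the characterization proved at the start of the subsection: a collection $\bigcup_\delta G_\delta$ of Lie subgroups $G_\delta<\GL(n;\Lambda_\delta)$ forms a $1$-parameter family of groups exactly when it is a smooth submanifold transverse to the fibers $\M(n+1;\Lambda_\delta)$, and transversality to the fibers is equivalent to $\pi$ restricting to a submersion there. Since each fiber $\U(n,1;\Lambda_\delta)$ is already a Lie group whose multiplication and inversion are restrictions of the fibered operations on $\M(n+1;\Lambda_\R)$, the groupoid axioms hold automatically, and we conclude that $\fam{U}(n,1;\Lambda_\R)$ is a $1$-parameter family of groups. The same argument applied to the determinant-one locus gives the corresponding statement for $\fam{SU}(n,1;\Lambda_\R)$, and the block-diagonal case for $\fam{USt}(n,1;\Lambda_\R)$.

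The only point requiring care—which I would flag as a subtlety rather than a genuine obstacle—is the mismatch between the literal hypothesis of Theorem \ref{thm:Slicewise_Submersion} (a global slicewise submersion) and the conclusion of Proposition \ref{prop:Restricted_Projection_Lambda} (submersion only along $\U(n,1;\Lambda_\delta)$). This is harmless: exactly as with the ordinary preimage theorem, only regularity at points of the fiber $\Phi\inv(J)$ is used in the proof of Theorem \ref{thm:Slicewise_Submersion}, so $J$ being a slicewise regular value suffices and the local-section construction there goes through verbatim through points of $\Phi\inv(J)$.
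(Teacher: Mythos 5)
Your proposal is correct and takes essentially the same route as the paper: identify $\fam{U}(n,1;\Lambda_\R)$ as $\Phi\inv(J)$, use Proposition \ref{prop:Restricted_Projection_Lambda} together with Theorem \ref{thm:Slicewise_Submersion} to obtain a smooth manifold on which $\pi\colon\M(n+1;\Lambda_\R)\to\R$ restricts to a submersion, and observe that multiplication and inversion restrict from $\fam{GL}(n+1;\Lambda_\R)$, giving a Lie groupoid with equal source and target maps. The regularity subtlety you flag---that Proposition \ref{prop:Restricted_Projection_Lambda} verifies the submersion condition only along $\U(n,1;\Lambda_\delta)$ rather than on all of $\M(n+1;\Lambda_\delta)$---is genuine but harmless exactly as you argue, and the paper's own proof silently relies on the same fiber-level regularity without comment.
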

\begin{proof}
Take $\fam{U}(n,1;\Lambda_\R)$ to be the set of morphisms, and $\R$ to be the set of objects.
The morphism set additionally has the structure of a smooth manifold, by Proposition \ref{prop:Restricted_Projection_Lambda}.
The group operations of multiplication and inversion are smooth on all of $\fam{GL}(n+1;\Lambda_\R)$, and hence restrict to smooth operations on $\fam{U}(n,1;\Lambda_\R)$, giving $\fam{U}(n,1;\Lambda_\R)$ the structure of a groupoid.
The multiplication of two elements $A,B$ is defined if and only if $A$ and $B$ lie in the same slice $\U(n,1;\Lambda_\delta)$; thus the source and target maps of this groupoid are equal, and given by the restriction of $\pi\colon \M(n+1;\Lambda_\R)\to\R$.
But this restriction is a submersion on $\fam{U}(n,1;\Lambda_\R)$  by Proposition \ref{prop:Restricted_Projection_Lambda} above,
making $\fam{U}(n,1;\Lambda_\R)$ into a Lie groupoid, and a 1-parameter family of groups.
\end{proof}

\noindent
Given now that $\fam{U}(n,1;\Lambda_\R)$ is a 1-parameter family, a similar style argument can be applied to show that $\fam{SU}(n,1;\Lambda_\R)$ is a 1-parameter family as well.
While we have focused thus far in this chapter on the full unitary group (as, without the further $\det=1$ restriction, the arguments of section \ref{sec:HC_To_HRR_Conjugacy} were slightly simpler), in practice it is often better to work with $\SU(n,1;\Lambda_\delta)$ as the action on $\Hyp_{\Lambda_\delta}^n$ is locally effective.

\begin{observation}
As each $\Lambda_\delta$ is commutative, the usual formula for the determinant induces a map $\det_\delta\colon\M(n;\Lambda_\delta)\to\Lambda_\delta$.
The union of these maps provides a map $\det\colon\M(n;\Lambda_\R)\to\M(n;\Lambda_\R)$, which is smooth as it is polynomial in the additional and multiplication of the 1-parameter family $\M(n;\Lambda_\delta)$.	
\end{observation}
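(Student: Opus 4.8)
The plan is to take the content of the observation at face value: the determinant is assembled fiberwise by the Leibniz formula, and its smoothness as a map on the total space is inherited directly from the smoothness of the family multiplication $\mu$ on $\Lambda_\R$. First I would record that for each $\delta$ the algebra $\Lambda_\delta$ is commutative, so that the usual Leibniz expansion
$$\det_\delta(A)=\sum_{\sigma\in S_n}\mathsf{sgn}(\sigma)\prod_{i=1}^n A_{i\sigma(i)},$$
with every product taken in $\Lambda_\delta$ and every sum being vector addition in $\Lambda_\delta$, is a well-defined element of $\Lambda_\delta$; commutativity is precisely what makes the order of the factors $A_{i\sigma(i)}$ immaterial, so $\det_\delta\colon\M(n;\Lambda_\delta)\to\Lambda_\delta$ is unambiguous. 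Taking the union over $\delta$ then defines a single map $\det\colon\M(n;\Lambda_\R)\to\Lambda_\R$ sending $(A,\delta)$ to $(\det_\delta(A),\delta)$, covering the identity on the base $\R$.

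For smoothness I would work in the global coordinates on the total spaces. Writing each entry as $A_{ij}=(x_{ij},y_{ij})$ in the vector-bundle coordinates on $\Lambda_\R$, a point of $\M(n;\Lambda_\R)$ is recorded by the reals $(x_{ij},y_{ij})_{1\le i,j\le n}$ together with the base parameter $\delta$. The scalar product $\mu$ is the polynomial map $\mu((x,y,\delta),(z,w,\delta))=(xz+\delta yw,\,xw+yz,\delta)$ from Observation~\ref{def:Lambda_R}, and fiberwise addition is constant-coefficient vector addition; both are smooth. Each Leibniz summand $\mathsf{sgn}(\sigma)\prod_i A_{i\sigma(i)}$ is obtained by iterating $\mu$ a fixed finite number of times, and $\det$ is the finite sum of these summands, so in coordinates each of the two components of $\det(A,\delta)\in\Lambda_\R$ is a polynomial in the $x_{ij}$, $y_{ij}$ and $\delta$. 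A polynomial map between Euclidean spaces is smooth, so $\det$ is smooth and restricts to $\det_\delta$ on each fiber.

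The hard part will be essentially nonexistent: the entire force of the statement rests on the already-established smoothness of $\mu$, and the only real work is the bookkeeping of the coordinate expansion. The one conceptual point worth flagging is the role of commutativity — over a noncommutative fiber the Leibniz expression would depend on the chosen ordering of factors and would fail to assemble into a single well-defined multiplicative map — but since every $\Lambda_\delta$ is commutative this difficulty does not arise. Finally I would observe that this smooth $\det$ is exactly the ingredient needed to realize $\fam{SU}(n,1;\Lambda_\R)$ as the preimage of the identity section $\fam{1}$ under $\det$, so that the slicewise-submersion machinery of Theorem~\ref{thm:Slicewise_Submersion} applies and yields that $\fam{SU}(n,1;\Lambda_\R)$ is a one-parameter family of groups.
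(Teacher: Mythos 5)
Your proposal is correct and takes essentially the same approach as the paper: the observation carries no separate proof beyond the remark that $\det$ is polynomial in the family's addition and multiplication, and your write-up simply makes that explicit via the Leibniz expansion in the bundle coordinates of $\Lambda_\R$, with commutativity justifying well-definedness exactly as intended. One incidental point: you have silently (and correctly) repaired the codomain, which the paper's statement misprints as $\M(n;\Lambda_\R)$ where it should read $\Lambda_\R$.
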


\begin{lemma}
For each $\delta\in\R$, the map $\det_\delta$ is a submersion $\U(n,1;\Lambda_\delta)\to\U(\Lambda_\delta)$.	
\label{lem:det_submersion}
\end{lemma}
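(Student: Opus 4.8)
The plan is to show that for each fixed $\delta$, the restriction of the determinant map $\det_\delta\colon \U(n,1;\Lambda_\delta)\to\U(\Lambda_\delta)$ is a submersion by the standard strategy of using left-translation to reduce to surjectivity of the derivative at the identity. First I would verify that $\det_\delta$ does indeed map $\U(n,1;\Lambda_\delta)$ into $\U(\Lambda_\delta)$: if $A^\dagger J A=J$ then taking determinants and using that $\det_\delta$ is multiplicative and that $\det_\delta(A^\dagger)=\overline{\det_\delta(A)}$ (which follows from $\det$ being polynomial in the entries and conjugation being a ring involution on $\Lambda_\delta$) gives $\overline{\det_\delta(A)}\det_\delta(A)\det_\delta(J)=\det_\delta(J)$, and since $\det_\delta(J)=-1\in\Lambda_\delta^\times$ we may cancel to conclude $\det_\delta(A)\in\U(\Lambda_\delta)$.

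Next I would exploit the group structure. Both $\U(n,1;\Lambda_\delta)$ and $\U(\Lambda_\delta)$ are Lie groups (the former by the previous corollary, the latter as established in Example \ref{ex:Units_1_Param_Fam}), and $\det_\delta$ is a group homomorphism between them. A homomorphism of Lie groups is a submersion onto its image precisely when its derivative at the identity is surjective onto the Lie algebra of the image; and since left translation by a fixed element is a diffeomorphism intertwining the relevant maps, surjectivity of $(d\det_\delta)_I$ onto $\mathfrak{u}(\Lambda_\delta)=\lambda\R$ (the Lie algebra of $\U(\Lambda_\delta)$) suffices. So the whole problem reduces to a single computation of the differential at the identity.

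The concrete step is therefore to compute $(d\det_\delta)_I$. Recalling the classical formula $\dd{t}\big|_{t=0}\det(I+tX)=\tr(X)$, which holds formally over any commutative algebra since it follows from the Leibniz expansion, I would show that on the tangent space $\mathfrak{u}(n,1;\Lambda_\delta)=\mathfrak{u}(n,1)$ the derivative of $\det_\delta$ is $X\mapsto\tr(X)$. It then remains to check that $\tr$ maps $\mathfrak{u}(n,1)$ onto $\lambda\R$. Since $\mathfrak{u}(n,1)$ consists of matrices $X$ with $X^\dagger J+JX=0$, the diagonal entries are forced to be purely imaginary (of the form $\lambda\R$), and one can simply exhibit a single element—such as $\lambda E_{11}\in\mathfrak{u}(n,1)$—whose trace is a nonzero multiple of $\lambda$, which already spans the one-dimensional target $\lambda\R$. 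This exhibits surjectivity of the differential.

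The main obstacle I anticipate is bookkeeping around the algebra $\Lambda_\delta$ rather than anything deep: one must be careful that all the classical linear-algebra facts (multiplicativity of $\det$, the trace formula for its derivative, the behavior of $\det$ under the $\dagger$ involution) continue to hold verbatim over the commutative ring $\Lambda_\delta$, including in the degenerate case $\delta=0$ where $\Lambda_0=\R_\ep$ has nilpotents. Here the key point is that $\det_\delta(J)=-1$ is a unit regardless of $\delta$, so the cancellation argument never divides by a zero divisor; and the trace computation is purely formal and insensitive to the presence of nilpotents. Once these checks are in place, the submersion conclusion follows uniformly in $\delta$, which is exactly what is needed to feed into the subsequent argument that $\fam{SU}(n,1;\Lambda_\R)$ forms a $1$-parameter family.
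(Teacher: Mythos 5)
Your proof is correct, but it takes a genuinely different route from the paper's. After the membership check $\det_\delta(A)\in\U(\Lambda_\delta)$ (which you and the paper both do via $\det_\delta(A^\dagger)=\overline{\det_\delta(A)}$ and the fact that $\det_\delta(Q)=-1$ is a unit), the paper avoids any infinitesimal computation entirely: it observes that $\det_\delta$ sits in a short exact sequence $1\to\SU(n,1;\Lambda_\delta)\to\U(n,1;\Lambda_\delta)\to\U(\Lambda_\delta)\to 1$ which is right-split by the explicit section $\alpha\mapsto\diag(\alpha,1,\ldots,1)$ (this matrix preserves $Q=\diag(I_n,-1)$ precisely because $\bar{\alpha}\alpha=1$), so that topologically $\U(n,1;\Lambda_\delta)\cong\U(\Lambda_\delta)\times\SU(n,1;\Lambda_\delta)$, and in these coordinates $\det_\delta$ is a projection, hence a submersion. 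You instead use the constant-rank property of Lie group homomorphisms to reduce to the identity, identify $(d\det_\delta)_I$ with $X\mapsto\tr(X)$ on $\mathfrak{u}(n,1;\Lambda_\delta)$, and exhibit $\lambda E_{11}$, which indeed lies in $\mathfrak{u}(n,1;\Lambda_\delta)$ for every $\delta$ since $\overline{\lambda}=-\lambda$, with trace spanning $\lambda\R=T_1\U(\Lambda_\delta)$. The trade-offs: the paper's splitting delivers more than submersivity in one stroke --- surjectivity of $\det_\delta$, the global product structure, and smooth sections of $\det_\delta$ through every point, which is the form of the statement that gets recycled in the subsequent family-level argument; your version is shorter, purely formal over the commutative ring $\Lambda_\delta$ (so the nilpotent case $\delta=0$ costs nothing, as you note), manifestly uniform in $\delta$ because $\mathfrak{u}(n,1;\Lambda_\delta)$ is constant as a subspace of $\M(n+1;\R)\oplus\lambda\M(n+1;\R)$ (Calculation \ref{calc:Const_Lie_Alg}), and would survive in settings where no global splitting section is available. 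One small caution: your phrase ``submersion onto its image'' is looser than what the lemma asserts; what you actually prove --- and what is needed --- is that the differential is surjective onto $T\U(\Lambda_\delta)$ at every point, which follows from your identity computation since the one-dimensional target $T_1\U(\Lambda_\delta)=\lambda\R$ is hit exactly, and no surjectivity of $\det_\delta$ itself is required (your argument does not establish it, whereas the paper's exact sequence does).
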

\begin{proof}
The defining condition of $\U(n,1;\Lambda_\delta)$ implies $\det|_{\U(n,1;\Lambda_\delta)}$ takes values in $\U(\Lambda_\delta)$ as $\det_\delta(A^\dagger J A)=-\det_\delta(A^\dagger)\det_delta(A)=-1$, so $\det_\delta(A^\dagger)=\det_\delta(A)=1$.
Noting that $\det_\delta(A^\dagger)=\overline{\det_\delta A}$ finishes the claim.
Thus, $\det_\delta$ defines the short exact sequence
$1\to\SU(n,1;\Lambda_\delta)\to\U(n,1;\Lambda_\delta)\to\U(\Lambda_\delta)\to 1$.  
This is right-split by the section $\alpha\mapsto \diag(\alpha, 1,\ldots, 1)$ so $\U(n,1;\Lambda_\delta)$ is topologically a product $\U(\Lambda_\delta)\times\SU(n,1;\Lambda_\delta)$.
Under these coordinates the determinant is a projection, thus a smooth submersion.
\end{proof}

\noindent 
In particular this shows $\SU(n,1;\Lambda_\delta)$ is a smooth submanifold of $\U(n,1;\Lambda_\delta)$ (though this was already clear by the closed subgroup theorem).
The codomain of each $\det_\delta$ differs, and so it is not appropriate to ask $\det$ to be a 1-parameter family of submersions as before.
However, recalling Theorem \ref{thm:Slicewise_Submersion}, the first step was to promote a 1-parameter family of submersions $\Phi$ to a submersion between 1-parameter families $\widetilde{\Phi}$.
In this case, $\det$ is already such a map.
To show this, we note the following.

\begin{observation}
Let $\sigma:\R\to\fam{X}$ be a smooth section of a submersion $\pi\colon\fam{X}\to\R$.  
Then for each $x=\sigma(\delta)$ the tangent space $T_x\fam{X}$ decomposes as a direct sum $T_x\fam{X}=T_x\sigma(\R)\oplus T_x\pi\inv\set{\delta}$ into `vertical` and `horizontal' factors.
\end{observation}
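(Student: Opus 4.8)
The plan is to recognize this statement as nothing more than the linear-algebraic splitting of a short exact sequence of tangent spaces, where the splitting is supplied by the section $\sigma$. The whole argument takes place in the single vector space $T_x\fam{X}$, so once the two relevant subspaces are correctly identified, the decomposition is immediate.

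First I would unpack what the hypotheses give at the level of differentials. Since $\pi\colon\fam{X}\to\R$ is a submersion, at $x=\sigma(\delta)$ the map $d\pi_x\colon T_x\fam{X}\to T_\delta\R$ is surjective, and $\delta$ is a regular value; by the regular value theorem the fiber $\pi\inv\set{\delta}$ is an embedded submanifold of $\fam{X}$ with tangent space exactly $T_x\pi\inv\set{\delta}=\ker d\pi_x$. This yields the short exact sequence
$$0\longrightarrow T_x\pi\inv\set{\delta}\longrightarrow T_x\fam{X}\stackrel{d\pi_x}{\longrightarrow} T_\delta\R\longrightarrow 0.$$
Next I would use that $\sigma$ is a section, i.e. $\pi\circ\sigma=\id_\R$. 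Differentiating at $\delta$ by the chain rule gives $d\pi_x\circ d\sigma_\delta=\id_{T_\delta\R}$, so $d\sigma_\delta$ is a right inverse of $d\pi_x$. In particular $d\sigma_\delta$ is injective, and its image is precisely the one-dimensional subspace $T_x\sigma(\R)$ (the tangent line to the image submanifold, which is embedded since $\sigma$ is an immersion onto its image).

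With these two identifications in hand, the decomposition is the standard splitting lemma. For any $v\in T_x\fam{X}$ write
$$v=d\sigma_\delta\bigl(d\pi_x(v)\bigr)+\Bigl(v-d\sigma_\delta\bigl(d\pi_x(v)\bigr)\Bigr);$$
the first summand lies in $\operatorname{image}(d\sigma_\delta)=T_x\sigma(\R)$, and applying $d\pi_x$ to the second summand gives $d\pi_x(v)-d\pi_x(v)=0$ (using $d\pi_x\circ d\sigma_\delta=\id$), so it lies in $\ker d\pi_x=T_x\pi\inv\set{\delta}$. For directness, if $w\in T_x\sigma(\R)\cap T_x\pi\inv\set{\delta}$ then $w=d\sigma_\delta(u)$ for some $u\in T_\delta\R$ and $d\pi_x(w)=0$; but $d\pi_x(d\sigma_\delta(u))=u$, forcing $u=0$ and hence $w=0$. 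Thus $T_x\fam{X}=T_x\sigma(\R)\oplus T_x\pi\inv\set{\delta}$.

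Honestly there is no serious obstacle here: the content is entirely the elementary fact that a one-sided inverse of a surjection splits the domain as image-of-the-section plus kernel. The only points that require genuine (though routine) care are the two geometric identifications — that the tangent space to the fiber is $\ker d\pi_x$ and that the tangent space to $\sigma(\R)$ is $\operatorname{image} d\sigma_\delta$ — both of which are standard consequences of $\pi$ being a submersion and $\sigma$ being an embedded section, and which I would simply cite from \cite{Lee} rather than reprove.
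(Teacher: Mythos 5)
Your proof is correct, and it diverges from the paper's argument only in how spanning is established. The paper's proof makes the same two identifications you do ($T_x\sigma(\R)=\mathrm{image}(d\sigma_\delta)$ and $T_x\pi\inv\set{\delta}=\ker(d\pi_x)$) and proves trivial intersection by exactly your computation with $d\pi_x\circ d\sigma_\delta=\id_{T_\delta\R}$; but it then concludes by a dimension count, noting $\dim\sigma(\R)=\dim\R$ and $\dim\pi\inv\set{\delta}=\dim\fam{X}-\dim\R$, so that trivial intersection alone forces the direct sum. You instead exhibit the splitting explicitly, writing $v=d\sigma_\delta\bigl(d\pi_x(v)\bigr)+\bigl(v-d\sigma_\delta(d\pi_x(v))\bigr)$, which proves spanning without any appeal to dimensions. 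The trade-off is mild but real: the paper's route is a line shorter once the standard dimension facts for embedded submanifolds are recalled, while yours is self-contained at the level of linear algebra, makes the projection operators onto the two factors visible (which is what one actually uses downstream, e.g.\ in Lemma \ref{prop:Fiberwise_Family=Family}), and would survive in settings where the dimension count is unavailable, such as infinite-dimensional or non-equidimensional variants. Both correctly rely on $\sigma$ being an immersion, which as you note follows from the section identity itself rather than needing to be hypothesized.
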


\begin{proposition}
The determinant restricts to a submersion $\U(n,1;\Lambda_\R)\to\U(\Lambda_\R)$.
\end{proposition}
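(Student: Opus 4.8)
The plan is to show that the total determinant map $\det\colon\fam{U}(n,1;\Lambda_\R)\to\U(\Lambda_\R)$ is a submersion by combining the fiberwise information from Lemma \ref{lem:det_submersion} with the fact that both source and target are $1$-parameter families over the same base $\R$, exactly mirroring the promotion of a slicewise submersion $\Phi$ to a genuine submersion $\widetilde{\Phi}$ in the proof of Theorem \ref{thm:Slicewise_Submersion}. The key structural observation is that $\det$ is a morphism of families over $\R$: it commutes with the two projections $\pi_\fam{U}\colon\fam{U}(n,1;\Lambda_\R)\to\R$ and $\pi_\U\colon\U(\Lambda_\R)\to\R$, since $\det_\delta$ maps $\U(n,1;\Lambda_\delta)$ into $\U(\Lambda_\delta)$ and so preserves the value of $\delta$. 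Thus the whole problem splits into a ``horizontal'' direction (along the base $\R$) and a ``vertical'' direction (within a single fiber).

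First I would fix a point $A\in\fam{U}(n,1;\Lambda_\R)$ lying over $\delta\in\R$, with $\det(A)=u\in\U(\Lambda_\delta)$, and decompose the tangent space $T_A\fam{U}(n,1;\Lambda_\R)$ and the target tangent space $T_u\U(\Lambda_\R)$ using the observation immediately preceding the statement: choosing any smooth section $\sigma$ of $\pi_\U$ through $u$ gives $T_u\U(\Lambda_\R)=T_u\sigma(\R)\oplus T_u\U(\Lambda_\delta)$. The vertical factor $T_u\U(\Lambda_\delta)$ is hit by the derivative of $\det$ because the restriction $\det_\delta\colon\U(n,1;\Lambda_\delta)\to\U(\Lambda_\delta)$ is already a submersion by Lemma \ref{lem:det_submersion}; any tangent vector to $\U(n,1;\Lambda_\delta)\subset\fam{U}(n,1;\Lambda_\R)$ mapping onto a prescribed vertical vector serves as a preimage. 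The horizontal factor $T_u\sigma(\R)$ is hit because $\pi_\U\circ\det=\pi_\fam{U}$ and $\pi_\fam{U}$ is a submersion (being the source/target map of the Lie groupoid $\fam{U}(n,1;\Lambda_\R)$); concretely, a vector in $T_A\fam{U}(n,1;\Lambda_\R)$ projecting to a nonzero element of $T_\delta\R$ must map under $d\det_A$ to a vector projecting to the same element of $T_\delta\R$, hence having nonzero horizontal component. Surjectivity of $d\det_A$ then follows since its image meets both summands.

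Rather than tracking the two summands by hand, I would prefer to phrase the argument through the section-lifting criterion used in Theorem \ref{thm:Slicewise_Submersion}: a smooth map is a submersion iff it admits a local section through every point of the domain. Given $A$ over $\delta$ with $\det(A)=u$, I would produce a local section of $\det$ near $u$ by first using the right-split $\alpha\mapsto\diag(\alpha,1,\ldots,1)$ from Lemma \ref{lem:det_submersion} fiberwise, and then verifying that this family of splittings varies smoothly in $\delta$. Since the splitting formula $\diag(\alpha,1,\ldots,1)$ is literally independent of $\delta$ and lands in $\fam{U}(n,1;\Lambda_\R)$ for each $\delta$, the assignment $(\alpha,\delta)\mapsto\diag(\alpha,1,\ldots,1)$ gives a smooth global section of $\det$ over $\U(\Lambda_\R)$, which is more than enough. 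I expect the main obstacle to be bookkeeping: confirming that this $\delta$-independent diagonal splitting is genuinely smooth as a map between the total spaces of the two families (i.e.\ respects the vector-bundle/groupoid structures of $\M(n+1;\Lambda_\R)$ and $\Lambda_\R$), and that it intertwines the two projections correctly. Once that smooth section is in hand, the submersion property is immediate, and the analogous statement for the stabilizer families $\fam{USt}(n,1;\Lambda_\R)$ follows by the same block-diagonal reasoning, completing the verification that $\Hyp_{\Lambda_\R}^n$ is a $1$-parameter family of geometries.
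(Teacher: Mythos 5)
Your first argument (the tangent-space decomposition) is correct and is essentially the paper's own proof: the paper also fixes $X$ over $\delta$, invokes the observation that a section of a family projection splits the tangent space into a horizontal summand and the fiber tangent space, lets Lemma \ref{lem:det_submersion} surject onto the vertical factor, and handles the horizontal factor by pushing a section $\sigma$ of $\pi\colon\fam{U}(n,1;\Lambda_\R)\to\R$ forward to the section $\det\circ\sigma$ of the target -- which is the same computation you perform by citing $\pi_\U\circ\det=\pi_{\fam{U}}$ directly. One wording caution: ``the image meets both summands'' is not in general sufficient for surjectivity; what you actually establish, and what is needed, is that the image \emph{contains} the vertical summand and surjects onto the horizontal one (here $1$-dimensional, so a single vector with nonzero horizontal part suffices once the whole vertical factor is in the image).

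Your preferred reformulation via the section-lifting criterion, however, has a genuine gap as stated. The criterion requires a local section of $\det$ through \emph{every point of the domain} $\fam{U}(n,1;\Lambda_\R)$, but your global section $(\alpha,\delta)\mapsto\diag(\alpha,1,\ldots,1)$ passes only through matrices of that diagonal form; the mere existence of a section does not make a map submersive away from the section's image, so ``the submersion property is immediate'' does not follow. The repair is to translate the section by the group structure of the family: given $A$ over $\delta$ with $\det(A)=u$, choose a section $\tau$ of $\pi_{\fam{U}}$ through $A$ (these exist since $\pi_{\fam{U}}$ is a submersion) and set
\[
s_A(\beta)=\tau\big(\pi_\U(\beta)\big)\cdot\diag\!\left(\det\!\big(\tau(\pi_\U(\beta))\big)\inv\beta,\,1,\ldots,1\right),
\]
defined for $\beta$ in a fiber-saturated neighborhood of $u$. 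All factors lie in the fiber over $\pi_\U(\beta)$, the map is smooth because multiplication and inversion are smooth on the family, $\det\circ s_A=\id$, and $s_A(u)=A$. With this correction your second route does prove the proposition and is arguably more explicit than the paper's tangent-space bookkeeping, but as written it is incomplete.
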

\begin{proof}
Let $X\in\fam{U}(n,1;\Lambda_\R)$ with $\pi(X)=\delta$, we will show that $\det$ is a submersion at $X$.
The projection $\pi\colon\fam{U}(n,1;\Lambda_\R)\to\R$ is a submersion, so choose a section $\sigma\colon V\to\fam{U}(n,1;\Lambda_\R)$ through $X$
(recall a map is a smooth submersion if and only if it admits smooth sections through each point of the domain).
Then $\det\sigma\colon V\to \fam{U}(\Lambda_\delta)$ is a section through $\alpha=\det(X)=\det_\delta(X)$, and so by the observation above $\sigma$ and $\det\circ\sigma$ provide the direct sum decompositions
$T_X\fam{U}(n,1;\Lambda_\R)=T_X\sigma(V)\oplus T_X\U(n,1;\Lambda_\delta)$ and $T_\alpha\fam{U}(\Lambda_\R)=T_\alpha\det\sigma(V)\oplus T_\alpha \U(\Lambda_\delta)$.
Restricting $\det$ to $\sigma(V)$ gives a homeomorphism $\sigma(V)\to\det\sigma(V)$ so $d\det_X|_{T_X\sigma(V)}$ is an isomorphism onto $T_\alpha\det\sigma(V)$.  
By Lemma \ref{lem:det_submersion},
the restriction $\det_\delta\colon\U(n,1;\Lambda_\delta)\to\U(\Lambda_\delta)$ is a submersion, 
thus $d\det_X|_{T_X\U(n,1;\Lambda_\delta)}$ maps onto $T_\alpha\U(\Lambda_\delta)$ so all together $d\det_X\colon T_X\fam{U}(n,1;\Lambda_\R)\to T_\alpha\fam{U}(\Lambda_\R)$ is surjective and $\det$ is a submersion.
\end{proof}

\noindent
Thus, we may use the remainder of Theorem \ref{thm:Slicewise_Submersion} to conclude that $\SU(n,1;\Lambda_\delta)$ is also a 1-parameter family.

\begin{corollary}
The collection $\fam{SU}(n,1;\Lambda_\R)$ is a 1-parameter family of groups.	
\end{corollary}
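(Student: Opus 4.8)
The plan is to realize $\fam{SU}(n,1;\Lambda_\R)$ as the preimage, under the determinant, of the identity section of the unit family $\fam{U}(\Lambda_\R)$, and then to exploit that $\det\colon\fam{U}(n,1;\Lambda_\R)\to\fam{U}(\Lambda_\R)$ is a submersion, as established in the preceding proposition. Concretely, writing $\fam{e}\colon\R\to\fam{U}(\Lambda_\R)$ for the section picking out $1\in\U(\Lambda_\delta)$ in each fibre --- which in the coordinates of Example \ref{ex:Units_1_Param_Fam} is the line $\{(1,0,\delta)\}$ --- one has $\fam{SU}(n,1;\Lambda_\R)=\det\inv(\fam{e}(\R))$, since the condition $\det_\delta(A)=1$ is exactly what cuts out $\SU(n,1;\Lambda_\delta)$ inside each slice $\U(n,1;\Lambda_\delta)$.

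First I would observe that $\fam{e}(\R)$ is a smoothly embedded copy of $\R$ inside $\fam{U}(\Lambda_\R)$: it is a section of the submersion $\bar{\pi}\colon\fam{U}(\Lambda_\R)\to\R$, hence a closed embedded submanifold. Since $\det$ is a submersion it is transverse to $\fam{e}(\R)$, so by the preimage theorem $\fam{SU}(n,1;\Lambda_\R)=\det\inv(\fam{e}(\R))$ is a smooth submanifold of $\fam{U}(n,1;\Lambda_\R)$, and moreover the restriction $\det|_{\fam{SU}}\colon\fam{SU}(n,1;\Lambda_\R)\to\fam{e}(\R)$ is itself a submersion, because the restriction of a submersion to the preimage of a submanifold is a submersion onto that submanifold.

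Next I would identify the source--target map. Because the determinant never moves a matrix off its slice, the projections commute: $\bar{\pi}\circ\det=\pi$ on $\fam{U}(n,1;\Lambda_\R)$. Restricting to $\fam{SU}$ gives $\pi|_{\fam{SU}}=\bar{\pi}|_{\fam{e}(\R)}\circ\det|_{\fam{SU}}$. But $\bar{\pi}|_{\fam{e}(\R)}$ is the inverse of the embedding $\fam{e}$, hence a diffeomorphism $\fam{e}(\R)\to\R$, so composing the submersion $\det|_{\fam{SU}}$ with it shows $\pi|_{\fam{SU}}\colon\fam{SU}(n,1;\Lambda_\R)\to\R$ is a submersion. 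Finally, taking $\fam{SU}(n,1;\Lambda_\R)$ as morphisms and $\R$ as objects, with multiplication and inversion restricted from $\fam{GL}(n+1;\Lambda_\R)$ --- these preserve the condition $\det=1$ and are defined only within a common slice --- equips $\fam{SU}(n,1;\Lambda_\R)$ with the structure of a groupoid whose equal source and target maps are $\pi|_{\fam{SU}}$. As this is a submersion on the smooth morphism manifold, the result is a Lie groupoid, i.e. a $1$-parameter family of groups.

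The only genuinely delicate point --- and the reason this does not follow verbatim from Theorem \ref{thm:Slicewise_Submersion} --- is that $\det$ is not a slicewise submersion into a \emph{fixed} manifold: its codomain $\U(\Lambda_\delta)$ transitions from a circle to a pair of hyperbolas as $\delta$ crosses $0$. The remedy is precisely to treat $\det$ as a submersion between the two $1$-parameter families $\fam{U}(n,1;\Lambda_\R)$ and $\fam{U}(\Lambda_\R)$ and to take the preimage of the identity \emph{section} rather than of a single point; the commuting-projection identity $\bar{\pi}\circ\det=\pi$ is what converts transversality upstairs into the submersion property for $\pi|_{\fam{SU}}$ downstairs.
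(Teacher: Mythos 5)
Your proof is correct and follows essentially the same route as the paper: both realize $\fam{SU}(n,1;\Lambda_\R)$ as the preimage of the identity section under the family-submersion $\det\colon\fam{U}(n,1;\Lambda_\R)\to\fam{U}(\Lambda_\R)$ and then deduce that the restricted projection $\pi|_{\fam{SU}}$ is a submersion, exactly the adaptation of Theorem \ref{thm:Slicewise_Submersion} the paper invokes. The only cosmetic difference is that you obtain the submersion property via the preimage theorem and the factorization $\pi|_{\fam{SU}}=\bar{\pi}|_{\fam{e}(\R)}\circ\det|_{\fam{SU}}$, whereas the paper repeats the local-section construction from the proof of that theorem --- two equivalent formulations of the same fact.
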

\begin{proof}
Similarly to before, the collection $\fam{SU}(n,1;\Lambda_\R)$ is the morphisms of a groupoid with objects $\R$ and source, target the restricted projection $\fam{SU}(n,1;\Lambda_\R)$.
The group operations are automatically smooth as restrictions of the operations on $\fam{GL}(n+1;\Lambda_\R)$, and the projection $\pi$ is a submersion by the arguments of Theorem \ref{thm:Slicewise_Submersion}, making $\fam{SU}(n,1;\Lambda_\R)$ into a Lie groupoid and thus a 1-parameter family of groups.	
\end{proof}

\noindent
This leaves only the collection of stabilizers $\fam{USt}(n,1;\Lambda_\delta)$, which is quick work given all that is done above.

\begin{observation}
Switching $J=\diag(I_{n-1},-1)$ to $J=I_{n}$ in the arguments above gives immediately that $\fam{U}(n;\Lambda_\R)$ and $\fam{SU}(n;\Lambda_\R)$ are 1-parameter families of groups.
Specializing to $n=1$ (or recalling Example \ref{ex:Units_1_Param_Fam}) gives $\fam{U}(\Lambda_\R)$ is a 1-parameter family as well.
\end{observation}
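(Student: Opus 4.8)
The plan is to observe that every ingredient used to handle the indefinite case $J=\diag(I_{n},-1)$ in the previous subsection never depended on the signature of $J$, only on the fact that $J$ is a fixed invertible Hermitian matrix, and that $I_{n}$ is such a matrix. Concretely, I would define the map
\[
\Phi\colon \M(n;\Lambda_\R)\to\Herm(n),\qquad (A,\delta)\mapsto (A^\dagger A,\delta),
\]
which is the exact analogue of the map of the previous subsection with $J=I_{n}$. Since $\Herm(n)$ is constant in $\delta$ (conjugate-transpose does not involve the multiplication of $\Lambda_\delta$), the codomain is a genuine smooth manifold, and $\Phi$ is smooth because it is built from the family multiplication on $\M(n;\Lambda_\R)$ together with the fixed involution $\dagger$. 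By construction $\fam{U}(n;\Lambda_\R)=\Phi\inv(I_{n})$.

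The one point genuinely requiring attention is that $\Phi$ is a slicewise submersion, i.e.\ that each restriction $\Phi_\delta\colon A\mapsto A^\dagger A$ is a submersion at every $B\in\U(n;\Lambda_\delta)=\Phi_\delta\inv(I_n)$. Here I would rerun the computation of Proposition~\ref{prop:Restricted_Projection_Lambda} verbatim: differentiating along $B_t=B+tX$ gives the linear map $X\mapsto X^\dagger B+B^\dagger X$, whose kernel is $(B^\dagger)\inv\SkHerm(n)$ and whose image therefore has dimension $\dim\M(n;\Lambda_\delta)-\dim\SkHerm(n)=\dim\Herm(n)$, so the derivative surjects onto $T_{I_n}\Herm(n)=\Herm(n)$. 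The only modification is that $J=I_n$ replaces $\diag(I_{n-1},-1)$, and since $B^\dagger J=B^\dagger$ is invertible for $B\in\U(n;\Lambda_\delta)$ the identification of the kernel and the dimension count are unaffected. I expect this step to be the \emph{main} (and essentially only) obstacle, but the obstacle is bookkeeping rather than mathematics: the argument must be checked to use $J$ only through its invertibility and Hermitian-ness, which it does.

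With the slicewise submersion in hand, Theorem~\ref{thm:Slicewise_Submersion} makes $\fam{U}(n;\Lambda_\R)=\Phi\inv(I_n)$ a smooth submanifold on which $\pi\colon\M(n;\Lambda_\R)\to\R$ restricts to a submersion; giving it the groupoid structure with objects $\R$, equal source and target equal to this restricted $\pi$, and multiplication and inversion inherited (smoothly) from $\fam{GL}(n;\Lambda_\R)$ exhibits it as a Lie groupoid, hence a $1$-parameter family of groups, exactly as in the corollary treating $\fam{U}(n,1;\Lambda_\R)$. For $\fam{SU}(n;\Lambda_\R)$ I would then repeat the determinant argument: as in Lemma~\ref{lem:det_submersion}, for $B\in\U(n;\Lambda_\delta)$ one has $\overline{\det B}\,\det B=1$ so $\det$ lands in $\fam{U}(\Lambda_\R)$, the section $\alpha\mapsto\diag(\alpha,1,\dots,1)$ splits the resulting sequence, and promoting $\det$ to a submersion between the two families lets Theorem~\ref{thm:Slicewise_Submersion} identify $\fam{SU}(n;\Lambda_\R)=\det\inv(\fam{1})$ as a $1$-parameter family. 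Finally, $\fam{U}(\Lambda_\R)$ follows either by specializing the above to $n=1$, where $\U(1;\Lambda_\delta)=\U(\Lambda_\delta)$, or directly from Example~\ref{ex:Units_1_Param_Fam}, which already established that the norm-one elements form a $1$-parameter family.
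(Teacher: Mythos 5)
Your proposal is correct and takes essentially the same route as the paper: the Observation is proved there exactly by rerunning the slicewise-submersion argument of Proposition \ref{prop:Restricted_Projection_Lambda} with $J=I_n$ (noting, as you do, that the computation uses $J$ only through its being Hermitian and invertible, so $\ker = (B^\dagger)\inv\SkHerm(n)$ and the dimension count are unchanged), then invoking Theorem \ref{thm:Slicewise_Submersion} and the determinant splitting for $\fam{SU}(n;\Lambda_\R)$. Your two ways of handling $\fam{U}(\Lambda_\R)$ — specializing to $n=1$ or citing Example \ref{ex:Units_1_Param_Fam} — are both exactly what the paper suggests.
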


\begin{observation}
Let $\fam{G}\subset	\M(p;\Lambda_\R)$ and $\fam{H}\subset \M(q;\Lambda_\R)$ be 1-parameter families of groups.
Then their block-diagonal product $\fam{G}\times\fam{H}=\smat{\fam{G}&\\&\fam{H}}\subset\M(p+q;\Lambda_\R)$ is a 1-parameter family.
\end{observation}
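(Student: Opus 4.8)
The final statement asserts that the block-diagonal product of two one-parameter families of groups is again a one-parameter family. The plan is to verify the defining conditions of a one-parameter family directly, leveraging the characterization already established: namely that $\fam{G} \subset \M(p;\Lambda_\R)$ is a one-parameter family of groups precisely when it is a smooth submanifold transverse to the fibers $\M(p;\Lambda_\delta)$, on which the projection $\pi\colon\M(p;\Lambda_\R)\to\R$ restricts to a submersion giving each fiber the structure of a Lie group. So the goal is to check that $\fam{G}\times\fam{H}$, realized as block-diagonal matrices inside $\M(p+q;\Lambda_\R)$, satisfies these same conditions.

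First I would set up the block-diagonal embedding $\Delta\colon \M(p;\Lambda_\R)\times_\R \M(q;\Lambda_\R)\to\M(p+q;\Lambda_\R)$ sending a pair $(A,B)$ lying over the same $\delta$ to $\smat{A&\\&B}$, and observe this is a smooth embedding of fiber products compatible with the projections to $\R$. Under this embedding, $\fam{G}\times\fam{H}$ is the image of $\fam{G}\times_\R\fam{H}$, the fiberwise product over the common base $\R$. Since both $\fam{G}$ and $\fam{H}$ are smooth submanifolds transverse to their respective vertical foliations and the projections restrict to submersions, the fiber product $\fam{G}\times_\R\fam{H}$ is a smooth manifold: transversality of each family to its fibers guarantees the two projections $\fam{G}\to\R$ and $\fam{H}\to\R$ are submersions, so their fiber product is cut out cleanly and is itself a smooth manifold that projects submersively to $\R$. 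The block-diagonal embedding $\Delta$ then carries this onto a smooth submanifold of $\M(p+q;\Lambda_\R)$.

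Next I would confirm the groupoid structure. Taking $\fam{G}\times\fam{H}$ as the morphism set and $\R$ as the object set, the source and target maps are both the restriction of $\pi\colon\M(p+q;\Lambda_\R)\to\R$, which agree because multiplication of two block-diagonal elements requires they lie in the same slice $\M(p+q;\Lambda_\delta)$. Multiplication and inversion are smooth, being restrictions of the operations on $\fam{GL}(p+q;\Lambda_\R)$ (block-diagonal matrices multiply blockwise, and the blocks invert within $\fam{G}_\delta$ and $\fam{H}_\delta$ respectively). The fiber over $\delta$ is exactly $\fam{G}_\delta\times\fam{H}_\delta$ as a block-diagonal subgroup, which is a Lie group since it is a direct product of Lie groups. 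That the restricted projection is a submersion follows from the submersivity of the projections for $\fam{G}$ and $\fam{H}$ individually, transported through the fiber-product construction and the embedding $\Delta$.

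The only step requiring genuine care is verifying transversality of $\fam{G}\times_\R\fam{H}$ to the vertical foliation $\{\M(p+q;\Lambda_\delta)\}$, which is equivalent to the restricted projection being a submersion. The main obstacle, though a mild one, is bookkeeping the tangent-space decomposition: at a point $(A,B)$ over $\delta$, I would exhibit a horizontal tangent vector to $\fam{G}\times_\R\fam{H}$ by choosing, via the submersivity of $\pi|_\fam{G}$ and $\pi|_\fam{H}$, local sections $\sigma_\fam{G},\sigma_\fam{H}\colon(\delta-\ep,\delta+\ep)\to\fam{G},\fam{H}$ through $A,B$ and noting that $t\mapsto\Delta(\sigma_\fam{G}(t),\sigma_\fam{H}(t))$ is a section of $\pi$ through $\Delta(A,B)$. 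The existence of such a section through every point is exactly the characterization of a submersion used earlier in the chapter, so this completes the proof that $\fam{G}\times\fam{H}$ is a Lie groupoid over $\R$ with Lie group fibers, hence a one-parameter family of groups.
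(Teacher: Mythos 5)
Your proof is correct and takes essentially the same route as the paper: the paper's one-sentence argument simply observes that $\fam{G}\times\fam{H}$ is the morphism manifold of a Lie groupoid over $\R$ whose (common) source and target map is the fiber product of the submersions $\pi_\fam{G}$ and $\pi_\fam{H}$, which is again a submersion. Your version just fills in the details the paper leaves implicit --- the block-diagonal embedding, blockwise smoothness of the group operations, and the section-pairing argument for submersivity, the last of which mirrors the paper's earlier proof that products exist in $\cat{Fam}_\Delta$.
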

\begin{proof}
Let $\pi_\fam{G}$ and $\pi_\fam{H}$ be the corresponding restricted projection maps.
Then $\fam{G}\times\fam{H}$ is the smooth manifold of morphisms for a Lie groupoid with source, target maps given by the submersion $\pi_\fam{G}\times\pi_\fam{H}\colon\fam{G}\times\fam{H}\to\R$, and thus has the structure of a 1-parameter family of groups.
\end{proof}

\begin{corollary}
The collection of point stabilizers $\fam{USt}(n,1;\Lambda_\R)$ forms a 1-parameter family of groups.	
\end{corollary}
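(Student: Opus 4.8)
The plan is to recognize that the point stabilizer group decomposes as a block-diagonal product of unitary groups, and then invoke the two structural results already established: first, that the various unitary collections $\fam{U}(n;\Lambda_\R)$ and $\fam{U}(\Lambda_\R)$ are one-parameter families of groups, and second, that the block-diagonal product of one-parameter families is again a one-parameter family. This reduces the corollary to an essentially formal assembly of pieces already in hand.

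\smallskip

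First I would recall the explicit description of the stabilizer computed earlier: for each $\delta$, the point stabilizer is
$$\USt(n,1;\Lambda_\delta)=\pmat{\U(n;\Lambda_\delta)&\\&\U(\Lambda_\delta)},$$
so that as a subset of $\M(n+1;\Lambda_\delta)$ it is precisely the block-diagonal product of $\U(n;\Lambda_\delta)$ with $\U(\Lambda_\delta)$. Taking the union over all $\delta\in\R$, the collection $\fam{USt}(n,1;\Lambda_\R)=\bigcup_\delta\USt(n,1;\Lambda_\delta)$ is therefore exactly the block-diagonal product $\fam{U}(n;\Lambda_\R)\times\fam{U}(\Lambda_\R)$ sitting inside $\M(n+1;\Lambda_\R)$.

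\smallskip

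Next I would cite the immediately preceding observation, which notes that replacing the form $J=\diag(I_{n-1},-1)$ by $J=I_n$ in the submersion argument shows that $\fam{U}(n;\Lambda_\R)$ is a one-parameter family of groups, and that specializing to $n=1$ (equivalently, recalling Example \ref{ex:Units_1_Param_Fam} establishing that $\U(\Lambda_\R)$ is a one-parameter family) handles the bottom-right block. With both factors known to be one-parameter families, the observation that the block-diagonal product of two one-parameter families of groups is again a one-parameter family applies verbatim: the product is the morphism manifold of a Lie groupoid over $\R$ whose source and target maps coincide with the restricted projection $\pi_{\fam{U}(n;\Lambda_\R)}\times\pi_{\fam{U}(\Lambda_\R)}$, a submersion. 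Hence $\fam{USt}(n,1;\Lambda_\R)$ is a one-parameter family of groups.

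\smallskip

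I do not anticipate a genuine obstacle here, since every ingredient has already been proven; the only point requiring a moment of care is checking that the block-diagonal \emph{product} construction matches the actual stabilizer on the nose, i.e. that the embedding $\smat{\U(n;\Lambda_\delta)&\\&\U(\Lambda_\delta)}\hookrightarrow\M(n+1;\Lambda_\delta)$ used in the product observation is literally the stabilizer computed in the Automorphism-Stabilizer calculation, rather than merely abstractly isomorphic to it. This is transparent from the explicit matrix form above, so the corollary follows at once from the product observation.
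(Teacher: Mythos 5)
Your proposal is correct and follows the paper's argument essentially verbatim: the paper derives the corollary from precisely the same two preceding observations, namely that $\fam{U}(n;\Lambda_\R)$ and $\fam{U}(\Lambda_\R)$ are 1-parameter families (via the $J=I_n$ substitution and Example \ref{ex:Units_1_Param_Fam}) and that block-diagonal products of 1-parameter families are again 1-parameter families. Your added check that the block-diagonal product literally equals the stabilizer computed earlier, rather than being merely abstractly isomorphic to it, is a sensible point of care but matches what the paper takes for granted from the explicit matrix form of $\USt(n,1;\Lambda_\delta)$.
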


\noindent
Putting this all together proves the main theorem from the context of 1-parameter families.

\begin{theorem}
The geometries $\Hyp_{\Lambda_\R}^n=(\fam{U}(n,1;\Lambda_\R),\fam{USt}(n,1;\Lambda_\R))$ form a 1-parameter family of geometries.	
\end{theorem}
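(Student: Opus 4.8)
The plan is to assemble the final statement directly from the two corollaries already proved in this section, namely that $\fam{U}(n,1;\Lambda_\R)$ is a 1-parameter family of groups and that $\fam{USt}(n,1;\Lambda_\R)$ is a 1-parameter family of groups. By the definition of a family of geometries in the Automorphism-Stabilizer formalism, a pair $(\fam{G},\fam{C})$ consisting of a 1-parameter family of groups $\fam{G}$ together with a closed subfamily $\fam{C}\subg\fam{G}$ constitutes a family of geometries. Since $\Hyp_{\Lambda_\R}^n$ is defined precisely as the pair $(\fam{U}(n,1;\Lambda_\R),\fam{USt}(n,1;\Lambda_\R))$, the entire content of the theorem reduces to verifying the hypotheses of this definition: that both collections are 1-parameter families (already established) and that $\fam{USt}(n,1;\Lambda_\R)$ sits inside $\fam{U}(n,1;\Lambda_\R)$ as a \emph{closed subfamily}.

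First I would invoke the two corollaries to record that $\fam{U}(n,1;\Lambda_\R)$ and $\fam{USt}(n,1;\Lambda_\R)$ are each 1-parameter families of groups, each realized as a Lie groupoid over the base $\R$ with source equal to target equal to the restriction of the projection $\pi\colon\M(n+1;\Lambda_\R)\to\R$. Next I would verify the inclusion at the level of the families: for each fixed $\delta$, the stabilizer $\USt(n,1;\Lambda_\delta)=\smat{\U(n;\Lambda_\delta)&\\&\U(\Lambda_\delta)}$ is by construction a closed subgroup of $\U(n,1;\Lambda_\delta)$ (it is the point stabilizer computed in the calculation of Section \ref{sec:Cplx_Hyp_Space} and its analogs). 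Taking the union over $\delta$, the fiberwise inclusions $\USt(n,1;\Lambda_\delta)\hookrightarrow\U(n,1;\Lambda_\delta)$ assemble into an inclusion of total spaces $\fam{USt}(n,1;\Lambda_\R)\hookrightarrow\fam{U}(n,1;\Lambda_\R)$ that commutes with the two projections to $\R$, so it is a morphism in the relevant category. Because both are embedded smooth submanifolds of $\M(n+1;\Lambda_\R)$ and the subfamily is cut out fiberwise by the closed conditions defining block-diagonal unitary matrices, the image is closed in the total space.

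The one point deserving genuine care—and the step I expect to be the main obstacle—is confirming that $\fam{USt}(n,1;\Lambda_\R)$ is a \emph{subfamily} in the strong sense, meaning not merely a fiberwise subgroup but an honest closed subgroupoid whose own source/target projection is a submersion inherited compatibly from that of $\fam{U}(n,1;\Lambda_\R)$. This follows from the block-product observation already recorded at the end of the section: since $\fam{U}(n;\Lambda_\R)$ and $\fam{U}(\Lambda_\R)$ are 1-parameter families of groups, their block-diagonal product $\smat{\fam{U}(n;\Lambda_\R)&\\&\fam{U}(\Lambda_\R)}$ is again a 1-parameter family, and this product is exactly $\fam{USt}(n,1;\Lambda_\R)$. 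Thus the subfamily carries its own family structure with projection the restriction of $\pi$, and the inclusion into $\fam{U}(n,1;\Lambda_\R)$ respects all the groupoid data over the common base $\R$.

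With the two collections established as 1-parameter families and the inclusion verified to be a closed subfamily, the pair $(\fam{U}(n,1;\Lambda_\R),\fam{USt}(n,1;\Lambda_\R))$ meets the definition of a family of geometries verbatim, completing the proof. I would close by remarking that, combined with the isomorphism types computed in Chapter \ref{chap:HC_and_HRR}, this exhibits $\Hyp_{\Lambda_\R}^n$ as a genuine geometric transition: the members are $\Hyp_\C^n$ for $\delta<0$, $\Hyp_{\R_\ep}^n$ at $\delta=0$, and $\Hyp_{\R\oplus\R}^n$ for $\delta>0$, varying continuously through the single connected base $\R$ despite the discontinuity in isomorphism type.
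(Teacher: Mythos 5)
Your proposal matches the paper's own argument: the theorem there is proved exactly by assembling the corollary that $\fam{U}(n,1;\Lambda_\R)$ is a 1-parameter family with the corollary that $\fam{USt}(n,1;\Lambda_\R)$ is one (obtained, as you do, via the block-diagonal product of the families $\fam{U}(n;\Lambda_\R)$ and $\fam{U}(\Lambda_\R)$), and then invoking the automorphism--stabilizer definition of a family of geometries. Your extra care about the inclusion being a closed subfamily with compatible projection is a correct and slightly more explicit rendering of what the paper leaves implicit in ``putting this all together.''
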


\noindent
The definition of a 1-parameter family of groups suggests a natural notion of a 1-parameter family of spaces (namely, a smooth manifold $\fam{X}$ equipped with a submersion $\pi\colon\fam{X}\to\R$) and so it is natural to consider whether there is a group-space version of this 1-parameter family of geometries $\Hyp_{\Lambda_\R}^n$.
Fixing a $\delta$, we may construct a domain for the geometry $\Hyp_{\Lambda_\delta}^n$ in two ways: abstractly as the coset space $\U(n,1;\Lambda_\delta)/\USt(n,1;\Lambda_\delta)$, or as the quotient of the sphere of radius -1 by the elements of unit norm,$\Hyp_{\Lambda_n}=(\U(n,1;\Lambda),\mathcal{S}_\Lambda(n,1)/\U(\Lambda))$.
Letting $\fam{S}_{\Lambda_\R}(n,1)=\cup_{\delta\in\R}\fam{S}_{\Lambda_\delta}(n,1)\subset\M(n;\Lambda_\R)$,
each of these give natural candidates for a one-parameter family of domains,

$$\Hyp_{\Lambda_\R}^n=\fam{U}(n,1;\Lambda_\R)/\fam{USt}(n,1;\Lambda_\R)
\hspace{1cm}
\Hyp_{\Lambda_\R}^n=\cup_{\delta\in\R}\mathcal{S}_{\Lambda_\R}(n,1)/\fam{U}(\Lambda_\R)$$

\noindent
The inherent difficulty here is that in each case, the family of domains is presented as a family of spaces, \emph{quotiented by the action of a transitioning 1-parameter family of groups}.
It is a subtle issue to determine when the action of a 1-parameter family of groups on a 1-parameter family of spaces admits a quotient in the category of 1-parameter families.
The necessary work to formalize this, and take quotients of 1-parameters of spaces by sufficiently nice actions of 1-parameter families of groups, is one of the motivations for developing the theory of \emph{families of geometries}, undertaken in Part III.

\begin{figure}
\centering
\includegraphics[width=0.5\textwidth]{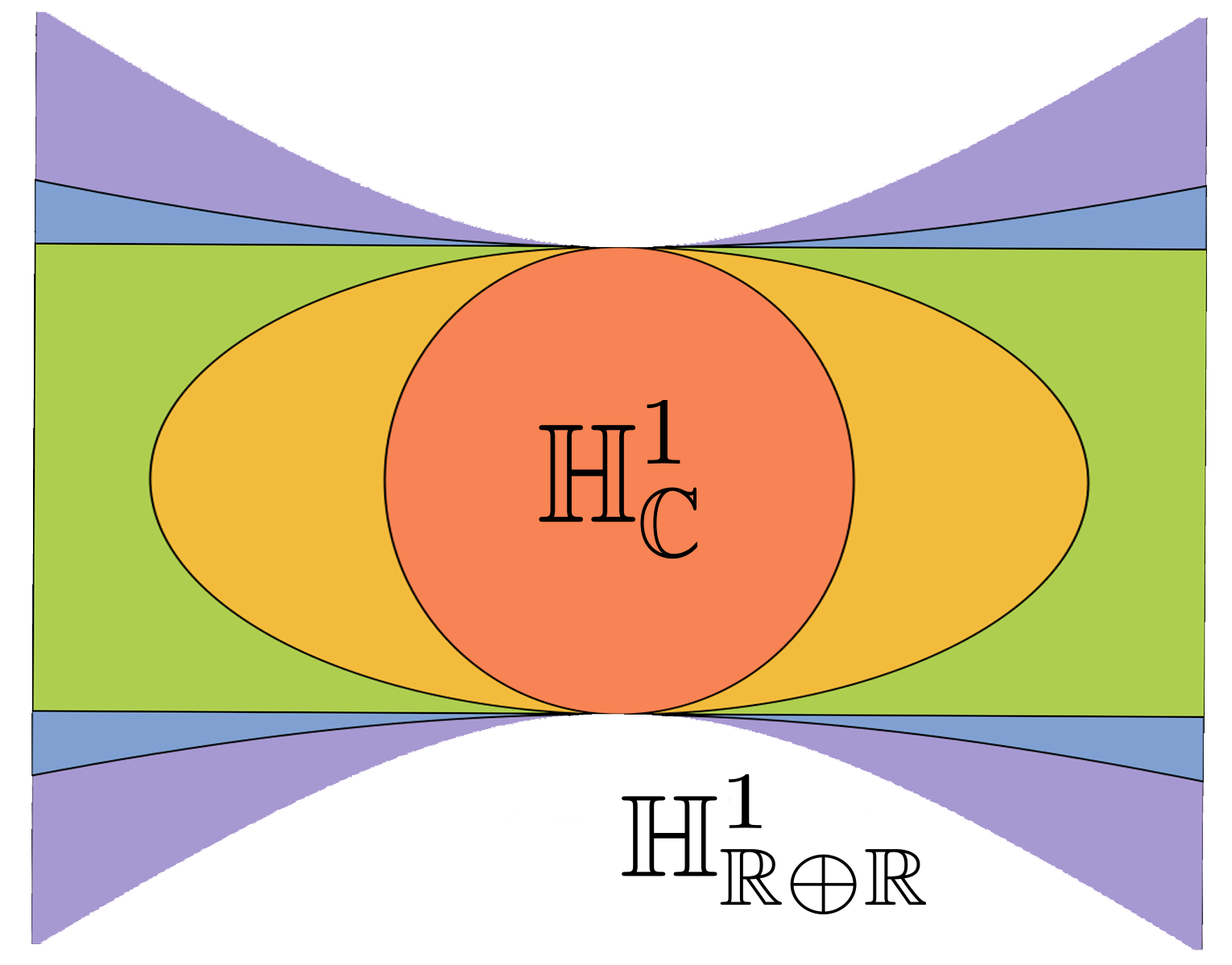}
\caption{The domains for $\Hyp_{\Lambda_\delta}^n$ as $\delta$ varies from $-1$ to $1$.}	
\end{figure}

\part{Families of Geometries}

\thispagestyle{plain}

\textcolor{white}{.}\vspace{2cm}

\vspace{0.5cm}
{\sffamily\bfseries\LARGE Families of Spaces, Groups, Algebras}
This chapter generalizes the abstract notion of continuity introduced in Chapter 9 studying the transition of $\Hyp_{\Lambda}^n$.
Taking inspiration from algebraic geometry and the deformation theory of complex manifolds, we introduce a notion of \emph{families of smooth manifolds} which is appropriate for understanding transitional behavior in geometric topology.
We utilize the resulting \emph{category of families} to define continuous families of more structured objects: such as families of Lie groups, rings, modules, and algebras.

\vspace{0.5cm}
{\sffamily\bfseries\LARGE Constructing Families of Geometries}
Having defined the algebraic and geometric families needed to describe continuously varying geometries in the abstract, this chapter provides a toolset aimed at constructing new families from old.
We study actions of families of groups on families of spaces, their orbits and their stabilizing subgroups on the road to defining families of geometries.
We also consider pullbacks and quotient families; providing conditions under which such operations can be preformed within the category of families.

\vspace{0.5cm}
{\sffamily\bfseries\LARGE Geometries over Algebras}
An immediate use for this new formalism is to extend the results of Chapters 9 and 10 describing the transitioning family of geometries $\Hyp_{\Lambda_\delta}^n$.
Here we consider various classes of geometries (projective geometries, geometries associated to unitary groups, and geometries associated to orthogonal groups) defined over arbitrary real algebras.
We briefly consider some generalities relating geometric properties to algebraic ones, generalizing the connection between $\Hyp_{\R\oplus\R}$ and Point-Hyperplane projective space.

\vspace{0.5cm}
{\sffamily\bfseries\LARGE Applications}
Finally, we provide a sample of applications of this general theory.
As noted above, we focus on generalizing the connection between smoothly varying algebraic and geometric structures, and show that any family of algebras induces families of projective / unitary geometries.
We also give an example application of this theory to the familiar study of subgeometries of $\RP^n$: providing a transition between various subgeometries of projective space which can occur \emph{abstractly}, but not as \emph{embedded subgeometries}.

\clearpage

%NEWCHAPTER: FAMILIES OF SPACES
\chapter{Families of Spaces, Groups, Algebras}
\label{chp:Families_Sp_Grp_Alg}
\index{Family}

This chapter introduces the theory of \emph{families}; smoothly varying collections of spaces, groups, or other gadgets parameterized by a smooth manifold.
A family of spaces, like a fiber bundle, should encode the continuity of its members intrinsically rather than by reference to some other ambient space.
Once we have settled on a good definition for a \emph{family of manifolds parameterized by a manifold}, the rest of the chapter follows easily.
Families of groups, algebras, modules and other objects of interest are all defined by endowing a family of spaces with extra structure.

\section{Families of Spaces}
\label{sec:Fam_Spaces}
\index{Family!Spaces}

A family of manifolds parameterized by the manifold $\Delta$ should be some object $\fam{X}$, decomposed into smooth manifolds $\fam{X}=\bigcup_{\delta\in \Delta}X_\delta$ in a coherent way with respect to the topology of $\Delta$.
To motivate the correct definition, we first look to nearby fields for inspiration.
Most prominently among these is algebraic geometry, which has produced a multitude of definitions and techniques for analyzing continuously varying collections of algebraic objects.

\begin{definition}[Algebro-Geometric Family]
A family is a flat morphism $f\colon X\to Y$ between schemes of finite time.
The members of the family are the fibers of $f$.
\end{definition}

\noindent
The definition of a \emph{flat morphism} in algebraic geometry 
is essentially scheme-theoretic ($f:X\to Y$ is flat if it induces  flat morphisms of rings $f_\star\colon \fam{O}_{Y,f(p)}\to\fam{O}_{X,p}$ on the level of stalks\footnote{
Even Mumford says: "The concept of flatness is a riddle that comes out of algebra, but which technically is the answer to many prayers".})
but itself generalizes more concrete situations, such as
\emph{complex analytic families} of complex manifolds, for reference see \cite{Kodaira}.

\begin{definition}[Complex Analytic Family]
A complex analytic family of compact complex manifolds is given by a domain $B\subset\C^m$ and a set of compact complex manifolds $\{M_t\}_{t\in B}$ such that $\bigcup_{t\in B}M_t=\mathcal{M}$ is a complex manifold equipped with a holomorphic map $\varpi:\mathcal{M}\to B$ such that (1)
$\varpi\inv(t)$ is a complex submanifold of $\mathcal{M}$ for each $t\in B$, (2) $\varpi\inv(t)=M_t$, and (3)
the rank of the Jacobian of $\varpi$ is equal to $m$ at each point of $\mathcal{M}$.
\end{definition}

\noindent
This definition can easily be translated to the real-analytic category or even smooth category, by declaring a family of smooth manifolds to be a smooth manifold $\Delta$ and a smooth manifold $\mathcal{X}$ equipped with a smooth proper submersion $\pi\colon \mathcal{M}\to\Delta$.
This describes the \emph{type} of object we want; as the continuity of the family $\{X_\delta\mid \delta\in\Delta\}$ is given precisely by the fact that all the members fit together to form a smooth manifold, with their location in the family determined by a proper submersion onto the parameter space.
However, there are two problems with this proposed definition. 
Firstly, the members of the family, $\fam{X}_\delta=\pi\inv(\delta)$ are necessarily compact, thus such families cannot hope to capture things like the $\Hyp^2$ to $\E^2$ transition.
Moreover, all manifolds occurring in such a family are homeomorphic, as an immediate corollary of Ehresmann's Fibration Theorem.

\begin{theorem}[Ehresmann's Fibration Theorem]
Let $M$, $N$ be smooth manifolds and $f\colon M\to N$ a proper surjective submersion.  Then $f$ is a locally trivial fibration of $M$ over $N$.	
\end{theorem}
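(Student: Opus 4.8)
The plan is to prove the theorem by the classical method of integrating horizontal lifts of coordinate vector fields, with properness entering exactly once, to guarantee completeness of the relevant flows. Since local triviality is a condition local on the base $N$, I would first fix a point $p\in N$ and a chart $\varphi\colon U\to\R^n$ with $\varphi(p)=0$, shrinking $U$ so that $\varphi(U)$ is an open cube; it then suffices to trivialize $f$ over $U$. On $U$ we have the coordinate vector fields $\partial_1,\ldots,\partial_n$, whose flows are simply translations within the cube.

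The key construction is to lift these fields to $M$. Choosing a Riemannian metric on $M$, the submersion hypothesis makes the vertical distribution $q\mapsto\ker(df_q)$ a smooth subbundle, and its orthogonal complement $H$ maps isomorphically onto $TN$ under $df$. For each $i$ I would let $\widetilde{\partial}_i$ be the unique horizontal vector field that is $f$-related to $\partial_i$, i.e. $df(\widetilde{\partial}_i)=\partial_i$. The central point --- and the only place properness is used --- is that each $\widetilde{\partial}_i$ generates a flow $\Theta^i_t$ defined for all $t$ for which the projected curve remains in the cube. Indeed, an integral curve $\gamma$ of $\widetilde{\partial}_i$ projects under $f$ to a straight coordinate curve $t\mapsto f(\gamma(0))+te_i$, which traverses a compact arc; properness forces $\gamma$ to remain inside the compact set $f\inv(\text{arc})$, so by the escape lemma $\gamma$ cannot reach the boundary of its maximal interval in finite time. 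This completeness is exactly what fails for non-proper submersions such as $\R^2\smallsetminus\{0\}\to\R$, and I expect the careful bookkeeping of this step to be the main obstacle.

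With completeness in hand, I would set $F=f\inv(p)$ and define $\Psi\colon\varphi(U)\times F\to f\inv(U)$ by flowing a fiber point successively along the lifted fields, $\Psi(x,y)=\Theta^n_{x_n}\circ\cdots\circ\Theta^1_{x_1}(y)$. Because each $\Theta^i_t$ advances the $f$-image by $t$ in the $i$-th coordinate and $y\in F$ begins at $0$, one checks $f\circ\Psi=\mathrm{pr}_U$, so $\Psi$ intertwines $f$ with projection to the base. It remains to verify that $\Psi$ is a diffeomorphism; this is routine, as it is smooth being a composition of flows, and an explicit inverse is obtained by flowing backwards, $q\mapsto\bigl(f(q),\,\Theta^1_{-x_1}\circ\cdots\circ\Theta^n_{-x_n}(q)\bigr)$ with $x=\varphi(f(q))$, whose second coordinate lands in $F$ by the same coordinate-tracking computation. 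Thus $\Psi$ is a local trivialization of $f$ over $U$; as $p$ was arbitrary and $f$ is surjective, $f$ is a locally trivial fibration, completing the proof.
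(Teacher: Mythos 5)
Your proposal is correct, and in fact the paper never proves this statement at all: Ehresmann's theorem is quoted there as a known result from smooth topology (its role in the text is only to explain why \emph{proper} submersions are too rigid a notion of family, since all fibers over a connected base would be diffeomorphic). Your argument is the standard one, and you handle the two genuine pressure points properly: properness enters exactly once, to convert the compactness of $f^{-1}(\mathrm{arc})$ into completeness of the horizontal lifts via the escape lemma, and your insistence that $\varphi(U)$ be a cube is what makes the staircase of coordinate flows $\Theta^n_{x_n}\circ\cdots\circ\Theta^1_{x_1}$ stay over the chart, with the reversed-order backward flow giving the inverse even though the lifted fields need not commute. The only cosmetic remark is that surjectivity of $f$ is what guarantees each fiber $F=f^{-1}(p)$ is nonempty, so the trivializations are over all of $N$; otherwise the proof is complete as written.
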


Thus, even ignoring the compactness issue we could not hope to formalize examples such as the $\S^2$ to $\E^2$ transition.
Both of these issues are resolved by relaxing the demand that the map onto parameter space be proper.

\begin{definition}[Family of Smooth Manifolds]
\label{def:Smooth_Families}
A smooth family of manifolds parameterized by a smooth manifold $\Delta$ is a triple $(\fam{X},\Delta,\pi)$ of smooth manifolds $\fam{X},\Delta$ equipped with a smooth submersion $\pi:\mathcal{X}\to\Delta$.
The space $\fam{X}$ is the \emph{total space} and $\Delta$ is the \emph{base} of the family.
The fibers $\fam{X}_\delta:=\pi\inv\set{\delta}$ are the \emph{members} of the family, and are said to vary smoothly over $\Delta$.
\end{definition}

\begin{figure}
\centering\includegraphics[width=0.6\textwidth]{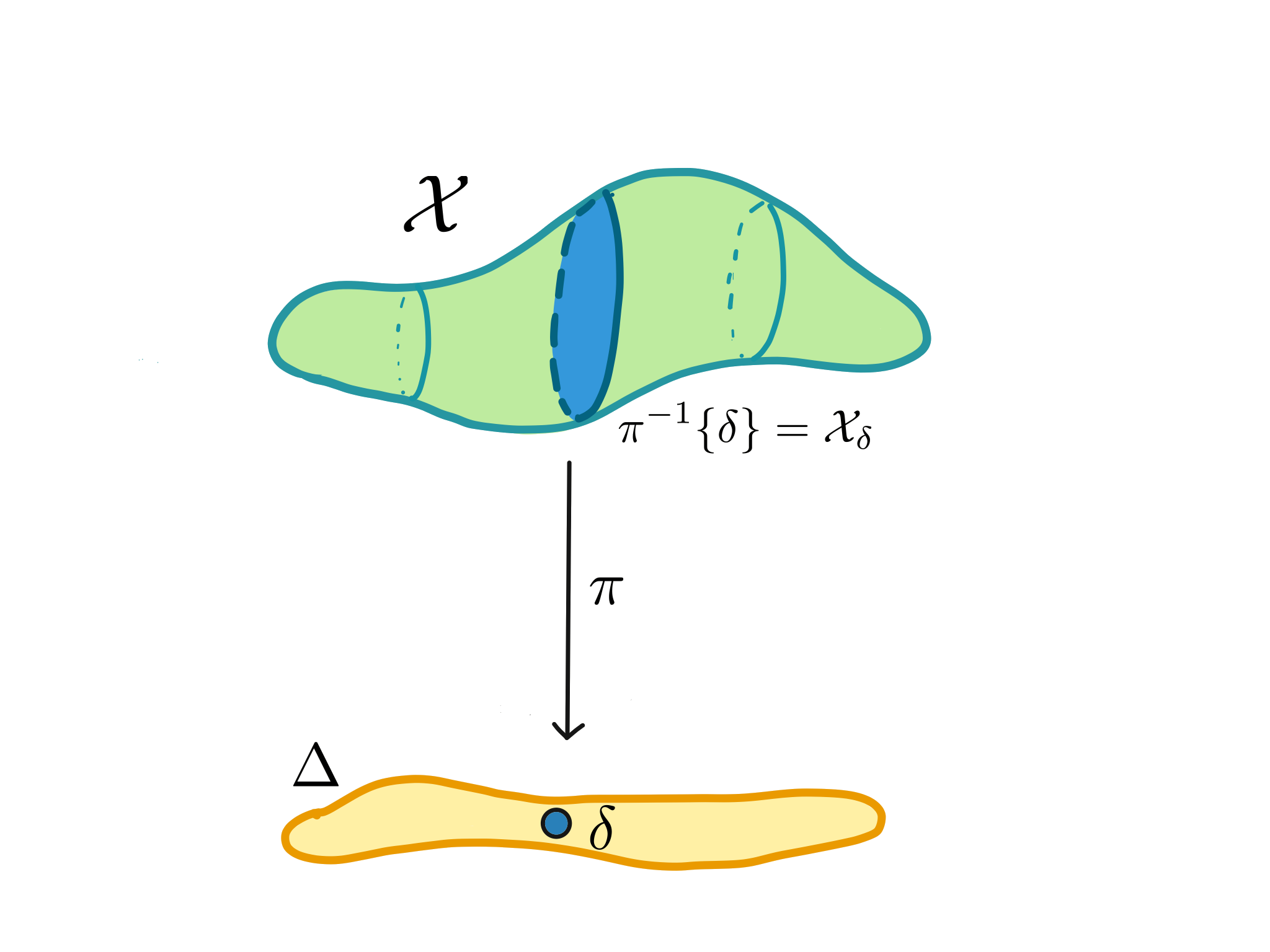}
\caption{A smooth family of manifolds, schematically.}	
\end{figure}

\noindent
A family contains a \emph{transition} if there are non-isomorphic members over a single connected component of the base. An object $X$ \emph{has transitions} if it is a member of a transitioning family.  Otherwise $X$ is \emph{rigid}.
Here we record some basic examples.

\begin{example}
Any manifold is a family of points over itself when equipped with the identity map. Any covering space is a smooth family with fibers dimension zero manifolds.
\end{example}

\begin{example}
\label{ex:C_to_C}
Branched covers are not families as the covering map is not a submersion.
For example, $\pi\colon\C\to\C$ given by $z\mapsto z^2$ does not determine a family of points over $\C$.
\end{example}

\begin{example}
Any product $X\times \Delta$ is a family over $\Delta$.	
Any fiber bundle $E\to B$ with fiber $F$ is a family of copies of $F$ over $B$.	
\end{example}

\noindent
Of course the interesting families are not fiber bundles or even fibrations, and have fibers that change homotopy type.

\begin{example}
\label{ex:First_Example}
Let $\fam{V}=\set{(x,y,t)\in\R^3 \mid x^2+t y^2=1}$ and $\pi\colon\fam{V}\to\R$ be the restriction of the projection map $(x,y,t)\to t$.  Then $\fam{V}\stackrel{\pi}{\to}\R$ is a smooth family, with ellipses as fibers for $t>0$ and hyperboloids for $t<0$.
\end{example}
\begin{proof}
The normal vector $\nabla(x^2+ty^2)=\langle 2x,2ty,y^2\rangle$ to $\fam{V}$ is never parallel to the $t$-axis, and so the coordinate vector field $\partial_t$ on $\R^3$ projects to a nowhere zero vector field on $\fam{V}$, defining a flow $\Phi_s:\fam{V}\to\fam{V}$ which gives sections $\sigma(s)=\Phi_s(x,y,t)$ of $\pi$ through each $(x,y,t)\in\fam{V}$.
Thus, $\pi$ is a submersion when restricted to $\fam{V}$ so $\fam{V}$ is a family.
\end{proof}

\begin{figure}
\centering\includegraphics[width=0.75\textwidth]{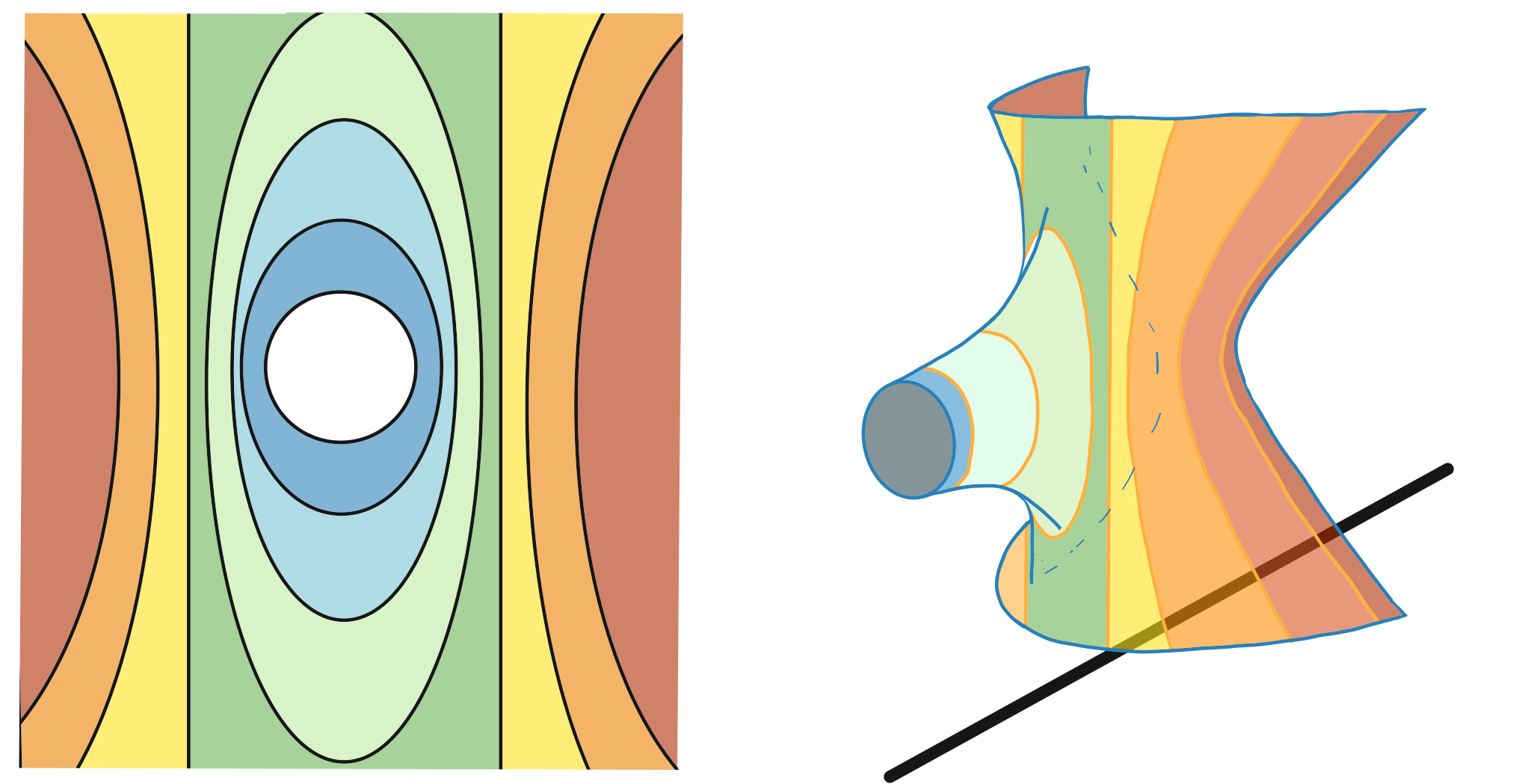}
\caption{The family of conics $\fam{V}=\{(x,y,t)\in\R^3 \mid x^2+t y^2=1\}$, with total space the punctured plane.  On the left, this is realized as $\R^2$ minus the open unit disk, with a projection onto $[0,\infty)$ given by color.  On the right, the same total space is constructed as a subvariety of $\R^3$ with projection onto one of the coordinate axes.}
\end{figure}

\noindent
Topological change in the fibers happens \emph{out at infinity}, and is allowed by the noncompact nature of the total space.

\begin{example}
Consider the smooth manifold $\mathcal{X}$ given by the union of the $x$ axis with the graph of $y=1/x$ in the plane.  
The projection map $\pi\colon (x,y)\mapsto x$ is restricts to a surjective smooth submersion $\mathcal{X}\to \R$, and the preimage of all points is a discrete set with two points except for the singleton above $x=0$.
\end{example}

\noindent
It is often useful to consider \emph{subfamilies} or \emph{restrictions} of a larger family.

\begin{definition}
A \emph{subfamily} $\fam{Y}\to\Delta$ of a family $\pi\colon\fam{X}\to\Delta$ is given by a closed subset $\fam{Y}\subset\fam{X}$ on which the restricted projection map remains a submersion.  
The \emph{restricted family} of $\fam{X}\to\Delta$ corresponding to a subset $D\subset \Delta$ has total space $\fam{X}|_D:=\pi\inv(D)$ equipped with the restricted projection map.
\end{definition}

\begin{figure}
\centering\includegraphics[width=0.6\textwidth]{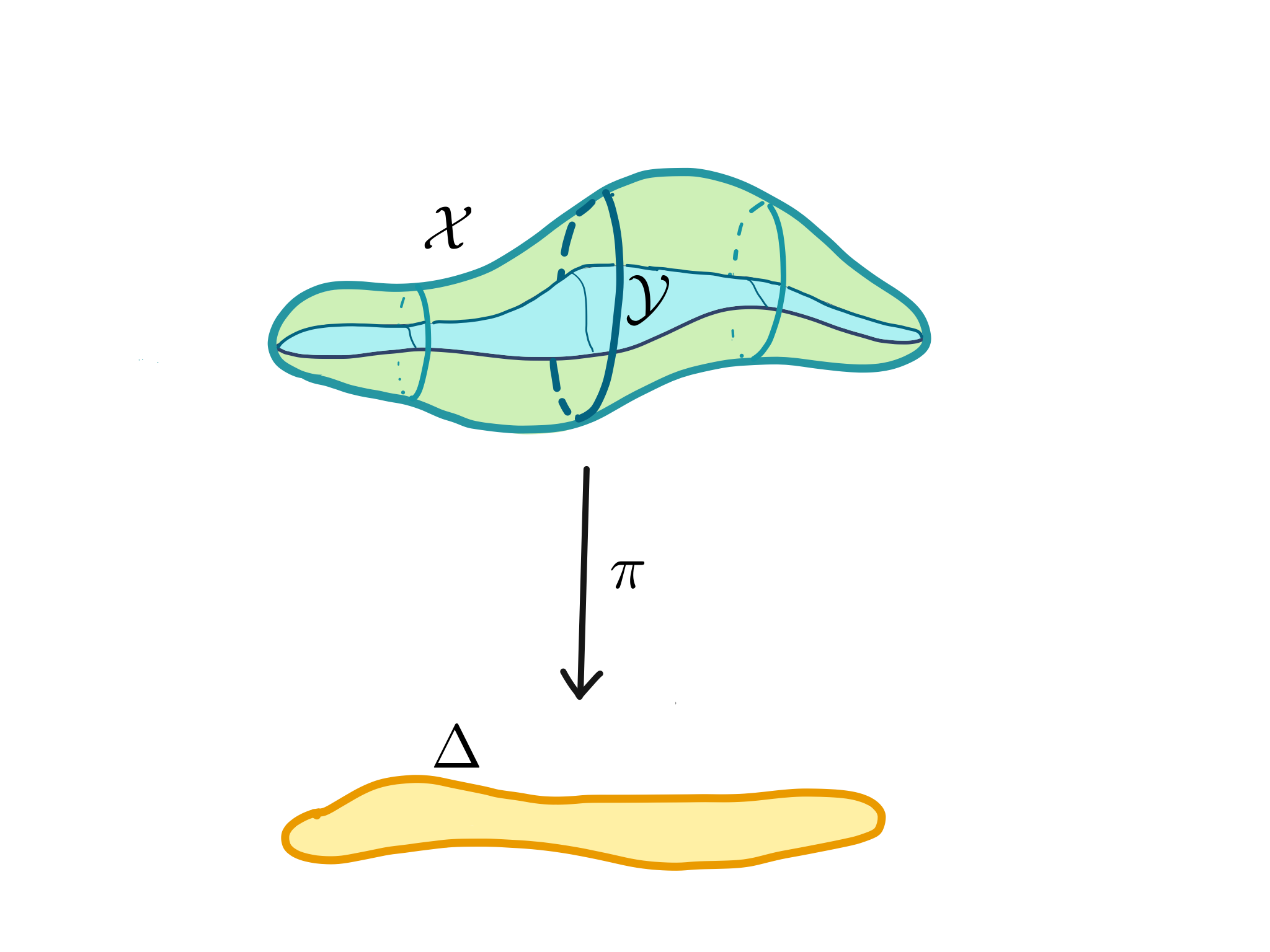}
\caption{A subfamily of a family, schematically.}	
\end{figure}

\noindent
Thus in Example \ref{ex:First_Example} above, $\fam{V}$ is a subfamily of the trivial family $\R^2\times \R\to\R$ with projection map $\pi(x,y,t)=t$.  Any open subset $\fam{U}\subset\fam{X}$ of a family inherits the structure of a family as $\pi|_{\fam{U}}\colon\fam{U}\to\Delta$ still admits local sections, but is not a \emph{subfamily} unless $\fam{U}$ is also closed.

The notion of family can be generalized beyond the smooth category, although in many categories of topological spaces there is not a unique obvious generalization of \emph{submersion}.
In fact, there are two inequivalent notions often called topological submersions, stemming from the fact that in $\mathsf{Diff}$ submersions can be described both as the class of maps admitting local sections, and those which are locally projections $\R^{n+k}\to \R^k$.

\begin{definition}
A map $f\colon X\to Y$ \emph{admits local sections}	if for each $x\in X$ there is an open neighborhood $U\ni f(x)$ and a map $\sigma\colon U\to X$ such that $f\circ\sigma=\id|_U$ and $x\in\sigma(U)$.
\end{definition}

\begin{definition}
A map $f\colon X\to Y$ is \emph{locally a projection} if for each $x\in X$ there is a neighborhood $U$ such and a map 
$\phi\colon U\to \pi(U)\times Z$ such that the following square commutes
\begin{center}
\begin{tikzcd}
U \arrow[r, "\phi"] \arrow[d,"\pi",swap]
& \pi(U)\times Z \arrow[d, "\mathsf{pr}"] \\
\pi(U) \arrow[r,  "\mathsf{id}"]
& \pi(U)
\end{tikzcd}	
\end{center}
\end{definition}

\begin{remark}
A smooth map $f\colon X\to Y$ is a submersion if and only if $f$ admits smooth local sections through each point of the codomain.
Similarly, $f$ is a submersion if and only if it is locally a projection.
\end{remark}

These two generalizations of submersion provide two means of extending Definition \ref{def:Smooth_Families} describing families to other categories.
Being locally a projection is strictly stronger than admitting local sections, and so we refer to these two potential generalizations as \emph{weak families} and \emph{strong families}.
In what follows, $\cat{C}$ denotes a category of topological spaces, for example the category of topological manifolds $\cat{C}=\cat{Man}$, or all locally compact Hausdorff spaces $\cat{C}=\cat{LCH}$.

\begin{definition}
A weak $\cat{C}$-family of spaces is a triple $(\fam{X},\Delta,\pi)$ such that $\pi\colon \fam{X}\to\Delta$ is a $\cat{C}$-morphism admitting $\cat{C}$-local sections.
\end{definition}

\begin{definition}
A strong $\cat{C}$-family of spaces is a triple $(\fam{X},\Delta,\pi)$ such that $\pi\colon\fam{X}\to\Delta$ is a $\cat{C}$-map which is locally a projection.
If additionally a single $Z$ suffices for all points of $\fam{X}$, the family $\fam{X}\to\Delta$ is called a \emph{family locally modeled on $Z$}.
\end{definition}

\begin{figure}
\centering
\includegraphics[width=0.95\textwidth]{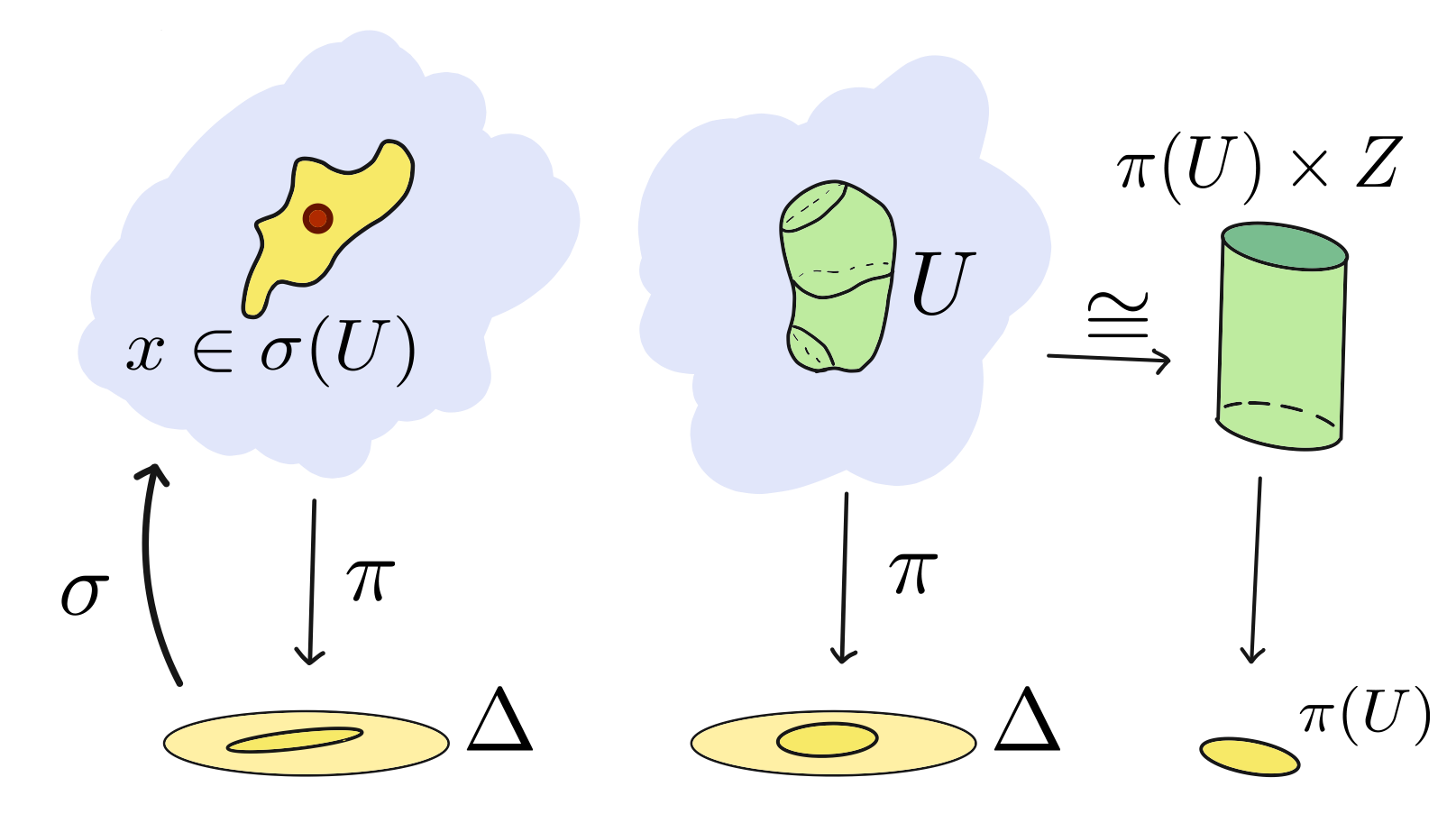}
\caption{Weak families (left), and strong families (right) schematically.}
\end{figure}

\noindent
It is an ongoing project to determine for which topological categories $\cat{C}$ and for which notion of family the various theorems characterizing the theory of smooth families generalize.
In this thesis, when a result is easily proven using the local section condition, we do so; and note that the result then holds for all weak families over all topological categories.
Conversely, when a result crucially uses techniques of smooth topology, we make note of that as well.
In most instances where smoothness is crucial, assuming only that the parameter map is a local projection appears to suffice.
However, the arguments become more technical and so this level of generality is not pursued here.

\subsection{Families and Chabauty Continuity}

We take a brief detour from further developing the theory of families to relate this perspective the the familiar notion of continuity in a Chabauty space.
Any continuous map $f\colon X\to Y$ induces a function, the \emph{fiber map} $f_\ast\colon Y\to\Cl_X$ by $f_\ast(y)=f\inv\set{y}$, and so the fiber map of any  family $\pi\colon\fam{X}\to\Delta$ is a function from the base into the Chabauty space of $\fam{X}$.

\begin{lemma}
\label{lem:Chab_Cts}
Let $\fam{X}\stackrel{\pi}{\to}\Delta$ be a continuous map of Hausdorff spaces.  Then the induced map $\pi_\ast:\Delta\to\Cl_\fam{X}$ is continuous if and only if $\pi$ is open.
\end{lemma}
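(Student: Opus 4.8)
The plan is to prove the biconditional by unwinding the definition of the Chabauty topology via the subbasic open sets $\fam{O}_{K,U}$, showing that continuity of the fiber map $\pi_\ast$ corresponds precisely to openness of $\pi$. Recall from Definition~\ref{def:Chab_Seqs} and the hit-and-miss description that $\Cl_\fam{X}$ is generated by sets of the form $\fam{O}_{K,U}=\{Z\in\Cl(\fam{X})\mid Z\cap U\neq\varnothing,\ Z\cap K=\varnothing\}$ for $K$ compact and $U$ open in $\fam{X}$. Since both $\Delta$ and $\Cl_\fam{X}$ are reasonable (metrizable) spaces, it suffices to check continuity of $\pi_\ast$ against these subbasic sets, i.e.\ to examine when $\pi_\ast\inv(\fam{O}_{K,U})$ is open in $\Delta$.

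First I would treat the \say{hit} condition. A point $\delta$ satisfies $\pi_\ast(\delta)\cap U\neq\varnothing$ exactly when the fiber $\pi\inv\{\delta\}$ meets $U$, which is equivalent to $\delta\in\pi(U)$. Thus $\pi_\ast\inv(\{Z\mid Z\cap U\neq\varnothing\})=\pi(U)$. Requiring this to be open for every open $U\subset\fam{X}$ is exactly the statement that $\pi$ is an open map. This already shows the forward direction: if $\pi_\ast$ is continuous, then each such preimage is open, so $\pi$ is open.

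For the converse, assume $\pi$ is open; I must show $\pi_\ast$ is continuous, which means handling both the hit and the \say{miss} conditions. The hit part is immediate from the computation above. For the miss condition, a point $\delta$ satisfies $\pi_\ast(\delta)\cap K=\varnothing$ iff the fiber over $\delta$ avoids the compact set $K$, i.e.\ $\delta\notin\pi(K)$; so $\pi_\ast\inv(\{Z\mid Z\cap K=\varnothing\})=\Delta\smallsetminus\pi(K)$. Here I would invoke that $\pi$ is continuous and $K$ compact, together with the Hausdorff hypothesis on $\Delta$, to conclude $\pi(K)$ is compact hence closed, so its complement is open. Intersecting the two open sets gives that $\pi_\ast\inv(\fam{O}_{K,U})=\pi(U)\cap(\Delta\smallsetminus\pi(K))$ is open, establishing continuity.

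The main obstacle I anticipate is the miss condition: the clean identity $\pi_\ast\inv(\{Z\mid Z\cap K=\varnothing\})=\Delta\smallsetminus\pi(K)$ relies on $\pi(K)$ being closed, which uses compactness of $K$ and the Hausdorff property of $\Delta$ but \emph{not} openness of $\pi$. I should be careful that this step is symmetric and does not secretly require more than stated—in particular that the Chabauty topology here is the one-point-compactification version from the earlier proposition, so that the noncompact total space causes no trouble with the subbasic sets indexed by compact $K$. I would double-check that the subbasis truly generates the topology (so that verifying continuity on subbasic opens suffices) and that the hit computation $\pi(U)$ correctly accounts for fibers, including possibly empty ones where $\delta\notin\pi(\fam{X})$; since $\pi$ is typically surjective in the family setting this is a minor point, but worth a remark.
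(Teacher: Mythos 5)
Your proof is correct and takes essentially the same route as the paper: both directions are verified on the hit-and-miss subbasis, with your global identity $\pi_\ast\inv(\fam{O}_{K,U})=\pi(U)\cap(\Delta\smallsetminus\pi(K))$ being exactly the computation the paper performs pointwise (separating $\delta$ from the compact set $\pi(K)$ and intersecting with $\pi(U)$), and your forward direction is the paper's observation that $\pi_\ast\inv(\fam{O}_{\varnothing,U})=\pi(U)$. The only cosmetic difference is that you package the miss condition as ``$\pi(K)$ is compact, hence closed in the Hausdorff space $\Delta$,'' which is the same fact the paper invokes implicitly.
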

\begin{proof}
First assume $\pi\colon\fam{X}\to\Delta$ is open.  Let $\fam{O}_{K,U}$ be a subbasic open set for the Chabauty topology on $\fam{X}$ and $\delta\in\pi_\ast\inv\{\fam{O}_{K,U}\}$ for $K\subset \fam{X}$ compact, $U\subset\fam{X}$ open.  
As $\pi(K)$ is a compact subset of $\Delta$ not containing $\delta$ there is some open $V\ni\delta$ disjoint from $\pi(K)$.  
As $\pi$ is open, $W=V\cap\pi(U)$ is an open neighborhood of $\delta$.  
Note $W\subset\pi_\ast\inv\set{\fam{O}_{K,U}}$ as if $\eta\in W$ then $\eta\not\in\pi(K)$ so $K\cap\pi_\ast(\eta)=\varnothing$ and $\eta\in\pi(U)$ so $\pi_\ast(\eta)\cap U\neq\varnothing$.  Thus $\pi_\ast\inv\set{\fam{O}_{K,U}}$ is open and $\pi_\ast$ is continuous.

Conversely, assume the continuity of $\pi_\ast$, and let $U\subset\fam{X}$ be open.  
The subbasic open set $\fam{O}_{\varnothing,U}$ contains all closed sets of $\fam{X}$ intersecting $U$, and so $\pi_\ast\inv\set{\fam{O}_{\varnothing,U}}=\pi(U)$.  Thus $\pi(U)$ is open.
\end{proof}

\begin{corollary}
Any family $\pi\colon\fam{X}\to\Delta$ has members $\fam{X}_\delta=\pi_\ast(\delta)$ varying continuously in the Chabauty space of $\fam{X}$.
\end{corollary}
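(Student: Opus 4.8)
The plan is to deduce this immediately from the preceding \Cref{lem:Chab_Cts}, so the only real content is to verify its hypotheses for an arbitrary family $\pi\colon\fam{X}\to\Delta$. The lemma says that the fiber map $\pi_\ast\colon\Delta\to\Cl_\fam{X}$ is continuous precisely when $\pi$ is open, so it suffices to observe that the defining submersion of a family is an open map.

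First I would recall that for a smooth family (\Cref{def:Smooth_Families}) the parameter map $\pi$ is by definition a smooth submersion, and every submersion is an open map. This is the standard fact that a submersion is locally a projection $\R^{n+k}\to\R^k$, and coordinate projections are open; openness is local on the source, so $\pi$ itself is open. For the more general weak and strong families the same conclusion holds for an even more elementary reason: a map admitting local sections is automatically open. Indeed, if $U\subset\fam{X}$ is open and $\delta\in\pi(U)$, pick $x\in U$ with $\pi(x)=\delta$ and a local section $\sigma\colon V\to\fam{X}$ through $x$ defined on a neighborhood $V\ni\delta$; then $\sigma\inv(U)$ is an open neighborhood of $\delta$ contained in $\pi(U)$, since $\pi\circ\sigma=\id_V$ forces $\sigma\inv(U)\subset\pi(\sigma\inv(U))\subset\pi(U)$. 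Hence $\pi(U)$ is open.

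With $\pi$ open established, \Cref{lem:Chab_Cts} gives at once that $\pi_\ast\colon\Delta\to\Cl_\fam{X}$ is continuous, provided the total space $\fam{X}$ and base $\Delta$ are Hausdorff, which holds for manifolds (and which I would note as the standing hypothesis inherited from the definition of a family). Since $\pi_\ast(\delta)=\pi\inv\set{\delta}=\fam{X}_\delta$ by definition of the fiber map, this is exactly the assertion that the members $\fam{X}_\delta$ vary continuously in the Chabauty space of $\fam{X}$.

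There is no genuine obstacle here — the statement is a corollary in the strict sense, and the entire argument is the one-line observation that submersions (and, more generally, maps with local sections) are open, followed by citing the lemma. The only point requiring any care is bookkeeping about which notion of family is in play: I would phrase the openness argument via local sections so that the corollary covers weak $\cat{C}$-families uniformly, and merely remark that the smooth and strong cases follow a fortiori since being locally a projection is stronger than admitting local sections.
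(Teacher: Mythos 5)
Your proof is correct and is exactly the argument the paper intends: the corollary is stated without proof precisely because it is immediate from Lemma \ref{lem:Chab_Cts} once one observes that family projections (submersions, or more generally any map admitting local sections) are open, which is the observation you supply, together with the Hausdorffness of manifolds. The only blemish is the notational slip $\sigma\inv(U)\subset\pi(\sigma\inv(U))$ --- since $\sigma\inv(U)\subset\Delta$ you mean $\sigma\inv(U)=\pi\bigl(\sigma(\sigma\inv(U))\bigr)\subset\pi(U)$ --- but the intended argument is clearly right.
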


The examples of continuously varying groups which were constructed in Chapters \ref{chp:Limits_of_Geos}, \ref{chp:Orthogonal_Groups} and \ref{chp:Heisenberg_Plane} can be recast as families: with the path of groups $t\mapsto H_t<G$ forming the subfamily $\fam{H}=\bigcup_{t\in\R}H_t\times\{t\}$ of $G\times\Delta\to\Delta$.
In switching to the formalism of families, we might wonder if we have inadvertently introduced anything \emph{new} in this context: are there subfamilies of $G\times\Delta$ whose members do not vary continuously in $\Cl(G)$? 
Below we see the answer is no.

\begin{proposition}
\label{prop:WeakCts_Families}
Fix a space $Y$.  Then the Chabauty continuous maps $\Delta\to\Cl_Y$ are in $1:1$ correspondence with the subsets $\fam{X}\subset Y\times\Delta$ onto which  $\mathsf{pr}_\Delta\colon Y\times\Delta\to\Delta$ restricts to an open map $\mathsf{pr}_\fam{X}$.
\end{proposition}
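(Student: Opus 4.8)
The plan is to make the correspondence explicit and then match up the two regularity conditions. To a map $f\colon\Delta\to\Cl_Y$ I associate its \emph{total space} $\fam{X}_f=\{(y,\delta)\in Y\times\Delta\mid y\in f(\delta)\}=\bigcup_{\delta}f(\delta)\times\{\delta\}$, and to a subset $\fam{X}\subset Y\times\Delta$ I associate its \emph{fiber map} $\phi_\fam{X}(\delta)=\{y\mid(y,\delta)\in\fam{X}\}$. A direct check gives $\phi_{\fam{X}_f}=f$ and $\fam{X}_{\phi_\fam{X}}=\fam{X}$, so these are mutually inverse operations on sets; the entire content is therefore to see that continuity of $f$ corresponds to openness of the restricted projection. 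I first note that $\phi_\fam{X}$ takes values in $\Cl_Y$ exactly when each fiber $\fam{X}\cap(Y\times\{\delta\})$ is closed in its slice; this is implied by $\fam{X}$ being closed in $Y\times\Delta$, and since I will show that the graphs $\fam{X}_f$ of continuous maps are always closed, the bijection is between Chabauty continuous maps and the \emph{closed} subsets with open restricted projection $\mathsf{pr}_\fam{X}$.

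For the forward direction I would show that if $f$ is continuous then $\fam{X}_f$ is closed and $\mathsf{pr}_\fam{X}=\mathsf{pr}_\Delta|_{\fam{X}_f}$ is open. Closedness follows from the sequential description of the Chabauty topology (Definition \ref{def:Chab_Seqs}), using that $\Cl_Y$ is metrizable for the spaces at hand: if $(y_n,\delta_n)\to(y_0,\delta_0)$ with $y_n\in f(\delta_n)$, then $f(\delta_n)\to f(\delta_0)$ forces $y_0\in f(\delta_0)$. Openness is a short basis computation: a basic open subset of $\fam{X}_f$ has the form $\fam{X}_f\cap(U\times V)$ with $U\subset Y$ and $V\subset\Delta$ open, and its image is $V\cap\{\delta\mid f(\delta)\cap U\neq\varnothing\}=V\cap f^{-1}(\fam{O}_{\varnothing,U})$, which is open since $f$ is continuous.

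For the reverse direction I would take $\fam{X}$ closed with $\mathsf{pr}_\fam{X}$ open and verify that $\phi_\fam{X}$ is continuous by pulling back the subbasis $\fam{O}_{K,U}$. The ``hit'' condition is handled as above: $\{\delta\mid\phi_\fam{X}(\delta)\cap U\neq\varnothing\}=\mathsf{pr}_\Delta(\fam{X}\cap(U\times\Delta))$ is open because $\mathsf{pr}_\fam{X}$ is open. The ``miss'' condition $\{\delta\mid\phi_\fam{X}(\delta)\cap K=\varnothing\}$ is the complement of $\mathsf{pr}_\Delta(\fam{X}\cap(K\times\Delta))$, and this is the one genuinely delicate point: I would argue the image is closed by combining the closedness of $\fam{X}$ with the properness of the projection $K\times\Delta\to\Delta$ along the compact factor $K$ (a compact-factor projection is a closed map), so that the closed set $\fam{X}\cap(K\times\Delta)$ has closed image. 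This properness step is the main obstacle, and it is exactly the mechanism already used in Lemma \ref{lem:Chab_Cts}, where compactness is taken in the total space rather than in $Y$; it is also where closedness of $\fam{X}$ (not merely of its fibers) is essential, since a naive translation using a compact $K\subset Y$ fails because $K\times\Delta$ is noncompact. Combining the two directions with the inverse relations $\phi_{\fam{X}_f}=f$ and $\fam{X}_{\phi_\fam{X}}=\fam{X}$ then yields the asserted bijection.
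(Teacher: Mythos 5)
Your proof is correct, and your forward direction (a continuous $f$ yields a closed total space with open restricted projection) coincides with the paper's argument almost verbatim: the same sequential closedness check, and the same basis computation identifying $\mathsf{pr}_\Delta(\fam{X}_f\cap(U\times V))$ with $V\cap f\inv(\fam{O}_{\varnothing,U})$. The reverse direction is where you genuinely diverge. The paper applies Lemma \ref{lem:Chab_Cts} to the open map $\mathsf{pr}_\fam{X}\colon\fam{X}\to\Delta$ to get a continuous map $\Delta\to\Cl_\fam{X}$, and then asserts that postcomposing with the projection onto $Y$ gives continuity into $\Cl_Y$; that last step is glossed, and it is not innocent: the ``miss'' subbasic set $\fam{O}_{K,\varnothing}\subset\Cl_Y$ pulls back under the slice projection to the condition $Z\cap(K\times\Delta)=\varnothing$, and $K\times\Delta$ is not compact, so this is not visibly a Chabauty-open condition upstairs in $\Cl_\fam{X}$. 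Your direct subbasis verification fills exactly this gap: the hit condition comes from openness of $\mathsf{pr}_\fam{X}$, and the miss condition from the standard fact that projection along a compact factor $K\times\Delta\to\Delta$ is a closed map, applied to the closed set $\fam{X}\cap(K\times\Delta)$ --- which also makes transparent where closedness of the total space $\fam{X}$ (and not merely of its fibers) is used. You were likewise right to flag that the statement says ``subsets'' while the bijection only holds for \emph{closed} subsets with open restricted projection; the paper's own proof silently assumes closedness, and indeed a non-closed $\fam{X}$ (e.g.\ $\{(y,\delta)\mid\delta\neq 0\}\cup\{(0,0)\}$ in $\R\times\R$) can have open restricted projection while its fiber map fails Chabauty continuity. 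In short, both arguments are sound in the metrizable, locally compact setting the paper works in; the paper's buys brevity by recycling Lemma \ref{lem:Chab_Cts}, while yours is the more self-contained and slightly more careful of the two.
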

\begin{proof}
Let $\fam{X}$ be a closed subset $\fam{X}\subset Y\times\Delta$ such that $\mathsf{pr}_\Delta|_{\fam{X}}$ is open.  
By Lemma \ref{lem:Chab_Cts} the map $\iota\mathsf{pr}|_{\fam{X}\ast}\colon\Delta\to\Cl_\fam{X}\inject\Cl_{Y\times\Delta}$ is continuous.  
As each of the closed sets lives in a single fiber $Y\times\delta$, following with the projection onto $Y$ gives a continuous map $\Delta\to\Cl_Y$.

Given a Chabauty continuous map $\phi:\Delta\to\Cl_Y$ one may construct the subset $\fam{X}=\set{(x,\delta)\mid x\in\phi(\delta)}\subset Y\times\Delta$.  We show $\fam{X}$ is closed. If $\set{x_i}\subset \fam{X}$ converges to $x_\infty$ in $Y\times\Delta$ with $x_i\in\phi(\delta_i)$ then $\delta_i\to\delta_\infty$ by the continuity of $\mathsf{pr}$ and $\phi(\delta_i)\to\phi(\delta_\infty)$ by the continuity of $\phi$.  Thus $x_\infty\in\phi(\delta_\infty)$ so $x_\infty\in\fam{X}$.  

To see that the restriction $\pi$ of $\mathsf{pr}\colon X\times \Delta\to\Delta$ to $\fam{X}$ is open, note that any open $U\subset\fam{X}$ is of the form $\tilde{U}\cap\fam{X}$ for $\tilde{U}$ open in $\fam{X}\times\Delta$, and for our purposes we may without loss of generality assume $\tilde{U}=V\times W$ for $V\subset X$, $W\subset \Delta$ open.  
Then $\pi(U)=\set{\delta\mid \pi(u)=\delta, u\in U}=\set{\delta\mid \exists (v,\delta) \textrm{ such that } (v,\delta)\in U}$.  But $(v,\delta)\in U=(V\times W)\cap\fam{X}$ implies $\delta\in W$ and $v\in \phi(\delta)\cap V$ so we may re-express this set as $\pi(U)=\set{\delta\in W\mid V\cap\phi(\delta)\neq \varnothing}$.  
This is precisely the set $W\cap\phi\inv(\fam{O}_{V,\varnothing})$ however, for $\fam{O}_{V,\varnothing}=\set{Z\in\Cl_X\mid Z\cap V\neq\varnothing}$ a basic open set of $\Cl_X$.  
This is open as $\phi$ is continuous, so $\pi(U)$ is open as required.
\end{proof}

\begin{corollary}
The fibers of any subfamily $\fam{X}$ of $Y\times \Delta\to\Delta$ vary continuously in the Chabauty space of $Y$.
\end{corollary}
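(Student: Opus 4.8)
The plan is to recognize this corollary as an immediate application of Proposition \ref{prop:WeakCts_Families}. First I would unwind the definition of subfamily: a subfamily $\fam{X}$ of the trivial family $Y\times\Delta\to\Delta$ is a closed subset $\fam{X}\subset Y\times\Delta$ on which the restricted projection $\mathsf{pr}_\fam{X}\colon\fam{X}\to\Delta$ is still a submersion. Identifying the fiber $\fam{X}_\delta=(\mathsf{pr}_\fam{X})\inv(\delta)$, which sits inside the slice $Y\times\{\delta\}$, with the corresponding closed subset of $Y$, the statement to be proved is exactly that the fiber map $\delta\mapsto\fam{X}_\delta$ is continuous into $\Cl_Y$.

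The one substantive step is to observe that the submersion $\mathsf{pr}_\fam{X}$ is an \emph{open} map. By the Remark characterizing submersions, $\mathsf{pr}_\fam{X}$ is locally a projection; since coordinate projections are open and local homeomorphisms preserve open sets, $\mathsf{pr}_\fam{X}$ is open. (In the weaker setting of $\cat{C}$-families the same conclusion follows directly, since any map admitting local sections is open.) With this in hand, $\fam{X}$ is a closed subset of $Y\times\Delta$ whose projection to $\Delta$ is open, so it satisfies the hypotheses of Proposition \ref{prop:WeakCts_Families} and therefore arises from a Chabauty-continuous map $\Delta\to\Cl_Y$. By construction that map sends $\delta$ to $\fam{X}_\delta$, which is the claim.

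Alternatively, I would bypass the proposition and apply Lemma \ref{lem:Chab_Cts} directly to the open map $\mathsf{pr}_\fam{X}$, obtaining continuity of $(\mathsf{pr}_\fam{X})_\ast\colon\Delta\to\Cl_\fam{X}$, and then postcompose with the map $\Cl_\fam{X}\to\Cl_Y$ induced by the composite $\fam{X}\hookrightarrow Y\times\Delta\to Y$; this is legitimate precisely because each fiber is contained in a single slice $Y\times\{\delta\}$. I expect no real obstacle here: the only points requiring care are the openness of the submersion and the two harmless identifications of a fiber as sitting in $Y\times\{\delta\}$ versus in $Y$, both of which are routine.
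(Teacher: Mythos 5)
Your proposal is correct and is exactly the argument the paper intends: the corollary is stated as an immediate consequence of Proposition \ref{prop:WeakCts_Families}, and your key step (a subfamily's restricted projection is a submersion, submersions are open, so the proposition applies) is precisely the intended derivation. Your alternative route via Lemma \ref{lem:Chab_Cts} followed by projection onto $Y$ is not really different either --- it reproduces the first paragraph of the paper's own proof of Proposition \ref{prop:WeakCts_Families}, including the same justification that each fiber lies in a single slice $Y\times\{\delta\}$.
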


\begin{comment}

\noindent
This is one way of formalizing the fact that families capture the idea of continuity abstractly.
But probably the most natural formulation follows below, showing that families are collections of spaces which are continuous in the sense that if you manage to continuously map the family into a single space $Y$, then the images of the members vary continuously in $Y$.

\begin{proposition}
Let $\fam{X}\to\Delta$ be a family of spaces, and $\Phi\colon \fam{X}\to Y$ a map into some space $Y$, such that the image of each slice is closed and homeomorphic to the original $\Phi(\fam{X}_\delta)\cong\fam{X}_\delta$.
Then the induced map $\Delta\to\Cl(Y)$ given by $\delta\mapsto\Phi(\fam{X}_\delta)$ is continuous.
\end{proposition}
\begin{proof}
Define from the data of $\Phi$ and $Y$ the map $\widetilde{\Phi}\colon\fam{X}\to Y\times\Delta$ given by $\widetilde{\Phi}(x)=(\Phi(x),\pi(x))$.
Then $\widetilde{\Phi}(\fam{X})$ is a closed subfamily of $Y\times\Delta$,
CHECK THIS IS THE RIGHT CONDITION FOR THIS DEF OF FAMILY
and applying the result above gives the conclusion.
\end{proof}

\end{comment}

\section{The Category of Families}
\label{sec:Cat_of_Fams}
\index{Family!Category}

It is often useful not to study single families in isolation, but rather consider the entire \emph{category of families}.

\begin{definition}
The category $\Fam$ has as objects all smooth families $\pi\colon \fam{X}\to\Delta$ and morphisms $(\fam{X},\Delta,\pi)\to(\fam{X}',\Delta',\pi')$ 
are pairs $(\Phi,\phi)\in\Hom_{\mathsf{Diff}}(\fam{X},\fam{X}')\times\Hom_{\mathsf{Diff}}(\Delta,\Delta')$ making the relevant square commute.
\begin{center}
\begin{tikzcd}
\mathcal{X}\arrow[r,"\Phi"]\arrow[d,"\pi"]&\mathcal{X}'\arrow[d,"\pi'"]\\
\Delta\arrow[r,"\phi"] &\Delta'	
\end{tikzcd}
\end{center}
\end{definition}

\noindent
This category is at times the relevant object to consider (for instance, when constructing pullbacks of families) though more often we will be interested in the subcategories defined by fixing a base smooth manifold $\Delta$.
In analogy to bundles, we do not take the full subcategory of families with base $\Delta$, but rather only the morphism pairs of the form $(\Phi,\id_\Delta)$.

\begin{definition}
The category $\Fam_\Delta$ has as objects all families $\pi_\fam{X}\colon\fam{X}\to\Delta$, with morphisms $\phi\in\Hom(\fam{X}\labelarrow{\pi_\fam{X}}\Delta, \fam{Y}\labelarrow{\pi_\fam{Y}}\Delta)$ given by maps $\phi\in C^\infty(\fam{X},\fam{Y})$ such that $\pi_\fam{X}=\pi_\fam{Y}\phi$.
\begin{center}
\begin{tikzcd}
\mathcal{X}\arrow[rr,"\phi"]\arrow[dr,"\pi_\fam{X}"]&&\mathcal{Y}\arrow[dl,"\pi_\fam{Y}"]\\
&\Delta &	
\end{tikzcd}
\end{center}
\end{definition}

\noindent
When there is no ambiguity within $\Fam_\Delta$, families will often be referenced simply via their total space $\fam{X}$.
We begin our discussion of $\Fam_\Delta$ by considering some basic results about the category, which we will have use for when constructing new families, or defining families of algebraic gadgets.

\begin{observation}
The category $\cat{Fam}_\Delta$ has as initial object the empty family $\varnothing\to\Delta$ and final object the trivial family $\Delta\stackrel{\id}{\to}\Delta$. 	
\end{observation}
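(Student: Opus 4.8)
The statement to prove is that in $\cat{Fam}_\Delta$, the empty family $\varnothing\to\Delta$ is initial and the trivial family $\Delta\stackrel{\id}{\to}\Delta$ is terminal. The plan is to verify the two universal properties directly from the definition of morphisms in $\cat{Fam}_\Delta$: recall that a morphism $\fam{X}\to\fam{Y}$ over $\Delta$ is a smooth map $\phi\colon\fam{X}\to\fam{Y}$ with $\pi_\fam{Y}\circ\phi=\pi_\fam{X}$. So for each claim I must exhibit, for an arbitrary family $\pi_\fam{X}\colon\fam{X}\to\Delta$, a unique such $\phi$ in the appropriate direction.

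First I would treat the initial object. For any family $\fam{X}\to\Delta$, a morphism $\varnothing\to\fam{X}$ is a smooth map from the empty manifold commuting with the projections. The empty map is the unique function out of $\varnothing$, it is vacuously smooth, and the triangle commutes vacuously; hence there is exactly one morphism $\varnothing\to\fam{X}$, so $\varnothing\to\Delta$ is initial. (I should note in passing that $\varnothing\to\Delta$ genuinely is an object of $\cat{Fam}_\Delta$, since the empty map is trivially a submersion.)

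Next I would treat the terminal object. A morphism $\fam{X}\to(\Delta\stackrel{\id}\to\Delta)$ is a smooth map $\phi\colon\fam{X}\to\Delta$ satisfying $\id_\Delta\circ\phi=\pi_\fam{X}$, i.e. $\phi=\pi_\fam{X}$. Thus the only candidate is $\phi=\pi_\fam{X}$ itself, which is smooth and trivially commutes; so there is exactly one morphism $\fam{X}\to(\Delta\stackrel{\id}\to\Delta)$, proving $\Delta\stackrel{\id}\to\Delta$ is terminal. Again I would remark that $\Delta\stackrel{\id}\to\Delta$ is a legitimate object, as $\id_\Delta$ is a submersion.

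This is an entirely formal verification, so there is no serious obstacle; the only points requiring any care are the bookkeeping ones of confirming that $\varnothing\to\Delta$ and $\id_\Delta$ satisfy the submersion requirement of Definition \ref{def:Smooth_Families} (so that they are objects at all), and observing that the forced map $\phi=\pi_\fam{X}$ into the terminal object is exactly the family's own projection. I expect the whole argument to occupy only a few lines.
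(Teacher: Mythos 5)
Your verification is correct, and it is exactly the routine check the paper leaves implicit by stating this as an unproved Observation: the empty map is the unique (vacuously smooth, vacuously submersive) morphism out of $\varnothing$, and the condition $\id_\Delta\circ\phi=\pi_\fam{X}$ forces $\phi=\pi_\fam{X}$ into the trivial family. Your care in noting that both candidates are genuine objects of $\cat{Fam}_\Delta$ (i.e.\ their projections are submersions) is the only nontrivial bookkeeping point, and you handled it.
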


\begin{observation}
Monomorphisms in $\Fam_\Delta$ are injections $\phi:\fam{X}\to\fam{Y}$, and epimorphisms are surjections.	
\end{observation}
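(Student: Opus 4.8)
The plan is to split the statement into its four constituent implications and prove each separately, with the two ``forward'' directions being purely formal and the two converses carrying all of the geometric content. The forward directions follow by cancellation. If $\phi\colon\fam X\to\fam Y$ is injective on points and $\phi\psi_1=\phi\psi_2$ for morphisms $\psi_i\colon\fam Z\to\fam X$ over $\Delta$, then evaluating at each $z$ gives $\phi(\psi_1(z))=\phi(\psi_2(z))$, whence $\psi_1(z)=\psi_2(z)$, so injective morphisms are monic. Dually, if $\phi$ is surjective and $\psi_1\phi=\psi_2\phi$ for $\psi_i\colon\fam Y\to\fam Z$, then every $y\in\fam Y$ is $\phi(x)$ for some $x$ and $\psi_1(y)=\psi_1\phi(x)=\psi_2\phi(x)=\psi_2(y)$, so surjections are epic. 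Neither step uses smoothness beyond the fact that composition in $\Fam_\Delta$ is composition of underlying maps.

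For the converse ``monic $\Rightarrow$ injective'' I would argue by contraposition. Suppose $\phi(x_1)=\phi(x_2)$ with $x_1\ne x_2$; since $\phi$ commutes with the projections, the two points necessarily lie in a common fiber $\fam X_{\delta_0}$. I would then manufacture a test family $\fam Z\to\Delta$ and two distinct morphisms into $\fam X$ that $\phi$ fails to separate. The natural candidate is the fibre product $\fam X\times_{\fam Y}\fam X$ with its two projections $p_1,p_2$: these satisfy $\phi p_1=\phi p_2$ by construction and are distinguished by the point $(x_1,x_2)$, so $p_1\ne p_2$ would exhibit the failure of the mono property, \emph{provided} $\fam X\times_{\fam Y}\fam X$ is itself an object of $\Fam_\Delta$. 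When transversality fails and this fibre product is not a manifold, I would fall back on a local model: using that $\pi_{\fam X}$ is a submersion, take disjoint section-slices through $x_1,x_2$ over a small ball $B\ni\delta_0$ (open subsets, hence legitimate objects by the remark following Definition \ref{def:Smooth_Families}), and lift a local section of $\pi_{\fam Y}$ through $\phi(x_1)$ to two sections landing in the respective slices.

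For the converse ``epic $\Rightarrow$ surjective'' I would again use contraposition, via a line-doubling construction. If $y_0\notin\phi(\fam X)$, consider the family $\fam Y\times\R\to\Delta$ given by $(y,t)\mapsto\pi_{\fam Y}(y)$, together with the two sections $\psi_1(y)=(y,0)$ and $\psi_2(y)=(y,f(y))$, where $f$ is a smooth function vanishing on $\phi(\fam X)$ with $f(y_0)>0$. Both are morphisms over $\Delta$, they agree after precomposition with $\phi$, yet they differ at $y_0$, so $\phi$ is not epic.

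The main obstacle is exactly the point where smoothness clashes with the naive set-theoretic picture, and it surfaces in this last step: a smooth (indeed merely continuous) $f$ vanishing on $\phi(\fam X)$ must vanish on its closure, so the bump construction only detects non-surjectivity when $y_0$ lies in the interior of the complement of the image. Since a general morphism $\phi$ need not have locally closed image, ``surjective'' here must be read with this caveat in mind — dense-image morphisms are epic in the smooth category — and the cleanest honest route is to prove the statement first for morphisms with closed image and then record the dense-image phenomenon as a separate remark. The monomorphism converse carries an analogous but milder obstacle, namely guaranteeing that the chosen test object is genuinely a smooth family over $\Delta$, which the local-section fallback above is designed to circumvent.
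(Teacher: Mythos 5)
Your two forward inclusions are correct and essentially forced, and your analysis of the epimorphism converse is a genuine improvement on the statement itself: the paper records this observation without proof, and as you note the bump-function construction only separates points outside the \emph{closure} of the image, so what your argument actually proves is that the epimorphisms of $\Fam_\Delta$ are exactly the morphisms with dense image (dense image gives epi by continuity into a Hausdorff target; non-dense image gives the two sections $y\mapsto(y,0)$ and $y\mapsto(y,f(y))$ of $\fam{Y}\times\R\to\Delta$, which is a legitimate test object since $\pi_{\fam{Y}}$ composed with the first projection is a submersion). Your instinct to prove the statement for closed-image morphisms and record the dense-image phenomenon separately is the right repair on that side.

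The genuine gap is on the monomorphism side, and it is not the ``milder'' analogue you suggest --- it is fatal. Your fallback step asks to lift a local section of $\pi_{\fam{Y}}$ through $\phi(x_1)$ to two sections through $x_1$ and $x_2$; but lifting through $\phi$ requires $\phi$ to be (locally) a submersion near those points, which is nowhere assumed --- a morphism in $\Fam_\Delta$ is merely a smooth map commuting with the projections. And no repair can succeed, because the converse is false: take $\Delta=\R$, $\fam{X}=\fam{Y}=\R^2$ with first-coordinate projection, and $\phi(\delta,v)=(\delta,\delta v)$. This collapses the entire fiber over $\delta=0$ to a point, so it is not injective; yet it \emph{is} a monomorphism. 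Indeed any morphism $\psi\colon\fam{Z}\to\fam{X}$ has the form $\psi(z)=(\pi_{\fam{Z}}(z),f(z))$, so $\phi\psi_1=\phi\psi_2$ forces $\pi_{\fam{Z}}(z)\bigl(f_1(z)-f_2(z)\bigr)=0$ for all $z$; since $\pi_{\fam{Z}}$ is a submersion it is an open map, hence $\pi_{\fam{Z}}\inv(\R\smallsetminus\{0\})$ is dense in $\fam{Z}$, and $f_1=f_2$ there and therefore everywhere by continuity. The obstruction is precisely the same density phenomenon you identified for epimorphisms --- test morphisms in $\Fam_\Delta$ detect behavior only up to dense sets, on both sides --- so the observation is true only after weakening both halves: injections are monic and surjections are epic, but a mono may collapse fibers over a nowhere-dense set of parameters, and an epi need only have dense image. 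Your fibre-product test object also fails here for exactly the reason you feared: in this example $\fam{X}\times_{\fam{Y}}\fam{X}=\{(\delta,v,w)\mid \delta(v-w)=0\}$ is a union of two planes in $\R^3$, not a manifold, and its component $\{\delta=0\}$ maps to $\Delta$ non-submersively, so it is not an object of $\Fam_\Delta$ and cannot witness the failure of the mono property.
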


\noindent
Products in $\cat{Fam}_\Delta$ are given by the pullback of the projection maps, and coproduct by disjoint union of the total spaces.  
The category $\cat{Fam}_\Delta$ has all finite products and coproducts, verified below.
In both cases we show the result is a family by verifying that the projection admits local sections - thus this result remains true for all weak families in all topological categories.

\begin{lemma}
The product of $\fam{X}$ and $\fam{Y}$ in $\cat{Fam}_\Delta$ is has total space $\fam{X}\timesd\fam{Y}$ the pullback of the projections $\pi_\fam{X}$, $\pi_\fam{Y}$ and projection $\pi\colon\fam{X}\timesd\fam{Y}\to\Delta$ the diagonal of the pullback square.
\end{lemma}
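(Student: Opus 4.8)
The plan is to verify the universal property of the categorical product directly. First I would construct the candidate object: let $\fam{X}\timesd\fam{Y}=\{(x,y)\in\fam{X}\times\fam{Y}\mid \pi_\fam{X}(x)=\pi_\fam{Y}(y)\}$ be the fiber product of the two projection maps, equipped with the map $\pi=\pi_\fam{X}\circ\mathsf{pr}_\fam{X}=\pi_\fam{Y}\circ\mathsf{pr}_\fam{Y}$ sending $(x,y)$ to their common image in $\Delta$. Before this can even be called an object of $\cat{Fam}_\Delta$, I must check it actually \emph{is} a family, i.e.\ that $\pi$ admits local sections. Given $(x,y)$ with $\pi(x,y)=\delta$, I would use the fact that $\pi_\fam{X}$ and $\pi_\fam{Y}$ are submersions to produce local sections $\sigma_\fam{X}\colon U\to\fam{X}$ and $\sigma_\fam{Y}\colon U\to\fam{Y}$ through $x$ and $y$ respectively over a common neighborhood $U\ni\delta$; then $\eta\mapsto(\sigma_\fam{X}(\eta),\sigma_\fam{Y}(\eta))$ is a local section of $\pi$ through $(x,y)$, since both components agree on their $\Delta$-image by construction.

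Next I would verify the universal property. The two projections $\mathsf{pr}_\fam{X}\colon\fam{X}\timesd\fam{Y}\to\fam{X}$ and $\mathsf{pr}_\fam{Y}\colon\fam{X}\timesd\fam{Y}\to\fam{Y}$ are morphisms in $\cat{Fam}_\Delta$ because $\pi_\fam{X}\circ\mathsf{pr}_\fam{X}=\pi=\pi_\fam{Y}\circ\mathsf{pr}_\fam{Y}$ by definition of the fiber product. Given any family $\fam{Z}\to\Delta$ together with morphisms $f\colon\fam{Z}\to\fam{X}$ and $g\colon\fam{Z}\to\fam{Y}$ in $\cat{Fam}_\Delta$, I would define $\langle f,g\rangle\colon\fam{Z}\to\fam{X}\timesd\fam{Y}$ by $z\mapsto(f(z),g(z))$. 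This lands in the fiber product precisely because $\pi_\fam{X}(f(z))=\pi_\fam{Z}(z)=\pi_\fam{Y}(g(z))$, using that $f$ and $g$ are morphisms over $\Delta$. Smoothness of $\langle f,g\rangle$ follows from the smoothness of its components together with the universal property of the fiber product in $\cat{Diff}$, and it is a morphism over $\Delta$ since $\pi\circ\langle f,g\rangle=\pi_\fam{X}\circ f=\pi_\fam{Z}$. Uniqueness is immediate: any morphism $h$ with $\mathsf{pr}_\fam{X}\circ h=f$ and $\mathsf{pr}_\fam{Y}\circ h=g$ must satisfy $h(z)=(f(z),g(z))$.

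The main subtlety, rather than an obstacle, is the smoothness of the total space $\fam{X}\timesd\fam{Y}$ itself. In $\cat{Diff}$ the fiber product of two maps need not be a smooth manifold in general, but it \emph{is} when at least one of the maps is a submersion (transversality is automatic). Since both $\pi_\fam{X}$ and $\pi_\fam{Y}$ are submersions here, the maps are transverse and $\fam{X}\timesd\fam{Y}$ is a smooth embedded submanifold of $\fam{X}\times\fam{Y}$ of the expected dimension; this is exactly the diagonal-pullback-square construction referenced in the statement. I would cite the standard transversality result for this step rather than reprove it. For weak families in a general topological category the manifold structure is not needed and the local-section argument of the first paragraph suffices on its own, so the product construction goes through verbatim there as well.
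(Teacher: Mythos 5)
Your proposal is correct and takes essentially the same approach as the paper: the paper likewise treats the universal property as immediate from the pullback diagram and devotes its entire argument to your first paragraph, pairing local sections $\sigma_\fam{X}$, $\sigma_\fam{Y}$ over a common neighborhood of $\delta$ to produce a local section of $\pi\colon\fam{X}\timesd\fam{Y}\to\Delta$. The details you add beyond this --- the explicit verification of the universal property via $\langle f,g\rangle$ and the transversality argument endowing $\fam{X}\timesd\fam{Y}$ with a smooth structure because both projections are submersions --- are exactly the steps the paper leaves implicit, and you handle them correctly.
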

\begin{proof}
It is immediate from the diagram describing the universal property of products that if $\fam{X},\fam{Y}\in\cat{Fam}_\Delta$ have a product, it is given by the pullback $\fam{X}\timesd\fam{Y}$.  
Thus we need only show the projection $\fam{X}\timesd\fam{Y}\to\Delta$ admits local sections.  
Let $(x,y)\in\fam{X}\timesd\fam{Y}$, that is $\pi_\fam{X}(x)=\pi_\fam{Y}(y)=\delta$.  
We may choose sections $\sigma_\fam{X}$ and $\sigma_\fam{Y}$ through $x,y$ respectively, simultaneously defined on a sufficiently small neighborhood $U\ni\delta$.  
Then $\sigma\colon U\to\fam{X}\timesd\fam{Y}$ given by $t\mapsto (\sigma_\fam{X}(t),\sigma_\fam{Y}(t))$ is a section through $(x,y)$.
\end{proof}

\begin{lemma}
The coproduct of $\fam{X}$ and $\fam{Y}$ in $\cat{Fam}_\Delta$ has total space the disjoint union of spaces $\fam{X}\sqcup\fam{Y}$ and projection map the union of maps $\pi=\pi_\fam{X}\cup\pi_\fam{Y}$.
\end{lemma}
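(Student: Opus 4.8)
The plan is to verify the universal property of the coproduct directly, which splits into two tasks: first, confirming that the proposed object $\fam{X}\sqcup\fam{Y}\labelarrow{\pi}\Delta$ is actually a family (i.e.\ lives in $\cat{Fam}_\Delta$), and second, checking it satisfies the universal property with respect to the coproduct inclusions. Just as in the companion lemma for products, I would establish the family condition by exhibiting local sections, so that the argument applies verbatim to weak $\cat{C}$-families in any topological category.

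First I would note that $\fam{X}\sqcup\fam{Y}$ is a smooth manifold as the disjoint union of smooth manifolds, and that $\pi=\pi_\fam{X}\cup\pi_\fam{Y}$ is well-defined and smooth on each summand. To see $\pi$ admits local sections, take any point $p\in\fam{X}\sqcup\fam{Y}$; without loss of generality $p\in\fam{X}$, with $\pi_\fam{X}(p)=\delta$. Since $\pi_\fam{X}\colon\fam{X}\to\Delta$ is a family there is a section $\sigma\colon U\to\fam{X}$ of $\pi_\fam{X}$ with $U\ni\delta$ and $p\in\sigma(U)$; composing with the inclusion $\fam{X}\inject\fam{X}\sqcup\fam{Y}$ gives a section of $\pi$ through $p$. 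Thus $\pi$ admits local sections everywhere and $\fam{X}\sqcup\fam{Y}\to\Delta$ is a family. The natural inclusions $\iota_\fam{X}\colon\fam{X}\inject\fam{X}\sqcup\fam{Y}$ and $\iota_\fam{Y}\colon\fam{Y}\inject\fam{X}\sqcup\fam{Y}$ are smooth and satisfy $\pi\circ\iota_\fam{X}=\pi_\fam{X}$, $\pi\circ\iota_\fam{Y}=\pi_\fam{Y}$, hence are morphisms in $\cat{Fam}_\Delta$.

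For the universal property, suppose $\fam{Z}\labelarrow{\pi_\fam{Z}}\Delta$ is any family equipped with morphisms $f\colon\fam{X}\to\fam{Z}$ and $g\colon\fam{Y}\to\fam{Z}$ in $\cat{Fam}_\Delta$. Since $\fam{X}$ and $\fam{Y}$ are disjoint, the set-theoretic map $h\colon\fam{X}\sqcup\fam{Y}\to\fam{Z}$ defined by $h|_\fam{X}=f$ and $h|_\fam{Y}=g$ is the unique function with $h\circ\iota_\fam{X}=f$, $h\circ\iota_\fam{Y}=g$; it is smooth because its restriction to each clopen summand is smooth. It is a morphism in $\cat{Fam}_\Delta$ since $\pi_\fam{Z}\circ h$ agrees with $\pi_\fam{X}$ on $\fam{X}$ and with $\pi_\fam{Y}$ on $\fam{Y}$, which together give $\pi_\fam{Z}\circ h=\pi$. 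Uniqueness is forced on each summand by the two inclusion relations, completing the verification.

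I do not anticipate a genuine obstacle here, as the argument is entirely formal; the only point requiring the slightest care is that smoothness (and the local-section property) are checked summand by summand, which is legitimate precisely because $\fam{X}$ and $\fam{Y}$ are open-and-closed in their disjoint union, so no compatibility along an overlap need be checked. This same observation is what makes the result immediately transport to the weak-family setting in an arbitrary topological category $\cat{C}$.
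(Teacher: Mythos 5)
Your proof is correct and follows the same route as the paper's: the paper likewise takes the disjoint union with $\pi=\pi_\fam{X}\sqcup\pi_\fam{Y}$, notes that a disjoint union of maps admitting local sections admits local sections, and observes the universal property holds. You have simply spelled out in full the universal-property check that the paper leaves as an observation, which is a faithful (and slightly more careful) rendering of the same argument.
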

\begin{proof}
Given two families $\fam{X},\fam{Y}$ over $\Delta$ we define the family $\pi_\fam{X}\sqcup\pi_\fam{Y}\colon\fam{X}\sqcup\fam{Y}\to\Delta$, and observe that this satisfies the universal property of the coproduct.  
Furthermore $\fam{X}\sqcup\fam{Y}$ is an object of $\cat{Fam}_\Delta$ as the disjoint union of maps admitting local sections also admits local sections.
\end{proof}

\section{Families of Groups}
\label{sec:Fam_Grp}
\index{Family!Groups}

The families described so far have been purely topological,  capturing only what it means for the topological structure of the fibers to vary continuously over the base.  In many applications it is important to keep track of how additional data, such as an algebraic structure varies continuously, and so in this section we lay out the necessary formalism.

It is easy to give a "good definition" of a family of groups: it should be a family of spaces, where each space has the additional structure of a group, and the group operation varies continuously over the base.
A particularly succinct way to construct this, which generalizes easily to the definition of other algebraic gadgets, is through the notion of group objects in a category, which we review below.
A group may be defined as a pointed set $e\in G$ together with morphisms $\mu\colon G\times G\to G$ and $\iota:G\to G$ satisfying commutative diagrams encoding the axioms of multiplication and inversion.  

\begin{definition}
A group is a set $G$ together with a chosen element $e\colon\{\star\}\to G$ and a  pair of maps $\mu\colon G\times G\to G$, $\iota\colon G\to G$ satisfying the following axioms:
\begin{enumerate}
\item $\mu$ is associative: $\mu\circ (\mu\times \id_G)=\mu\circ (\id_G\times\mu)$
\item $e$ is the multiplicative identity for $\mu$: as maps $\mu(e(-),-)=\mu(-,e(-))=\id_G$.
\item $\iota$ is an inverse for multiplication: if $\delta\colon G\to G\times G$ is the diagonal map $g\mapsto (g,g)$ then $\iota\times \id_G\circ\delta=\id_G\times \iota\circ\delta=\id_G$.
\end{enumerate}
\label{def:GrpAxioms}
\end{definition}

\noindent
This definition of group carefully avoids mentioning any particular elements of $G$ (the identity element is even encoded via a map $e\colon \{\star\}\to G$) and instead formalizes the group operations in terms of commutative diagrams satisfied by $e,\mu,\iota$.
Given a category $\cat{C}$ other than set, we may directly port this definition of a group to define a \emph{group object} of $\cat{C}$: that is, an object $G\in \cat{C}$ together with morphisms $e,\mu,\iota$ acting like the identity element, multiplication and inversion.

\begin{definition}
Let $\cat{C}$ be a category with finite products, and a terminal object $\star\in\cat{C}$.
Then a \emph{group object} in $\cat{C}$ is a quadruple $(G,e,\iota,\mu)$ for $G\in\cat{C}$ an object, and $e\in\Hom_\cat{C}(\star,G)$, $\iota\in\Hom_\cat{C}(G,G)$ and $\mu\in\Hom_\cat{C}(G\times G,G)$ satisfying the group axioms of Definition \ref{def:GrpAxioms}.

\end{definition}

\begin{example}
A group is a group object in $\mathsf{Set}$.
A topological group is a group object in $\mathsf{Top}$.
A Lie group is a group object in $\mathsf{Diff}$.
An abelian group is a group object in the category of groups.	
\end{example}

\subsection{Group Objects in $\Fam_\Delta$}

\begin{definition}[Family of Groups]
A family of groups over $\Delta$ is a group object in $\cat{Fam}_\Delta$.  
\end{definition}

\noindent
Recalling that a morphism of families $\fam{X}\to\fam{Y}$ has to commute with the projections $\fam{X}\to\Delta$, $\fam{Y}\to\Delta$, we see that a commutative diagram of families is satisfied by a collection of maps \emph{if and only if} that same commutative diagram is satisfied by the restriction of the maps to the fibers over each $\delta\in \Delta$ satisfy the same diagram.
This gives a convenient, constructive definition of the group objects of $\Fam_\Delta$.

\begin{definition}
A family of groups is a family $\fam{G}\to\Delta$ together with maps $\mu\colon\fam{G}\timesd\fam{G}\to\fam{G}$ and $\iota\colon\fam{G}\to\fam{G}$ and a global section $\fam{e}\colon\Delta\to\fam{G}$ such that each fiber $\pi\inv\set{\delta}=G_\delta$ has the structure of a group with multiplication $\mu|_{G_\delta\times G_\delta}$, inversion $\iota|_{G_\delta}$ and identity $\fam{e}(\delta)$	
\end{definition}

\noindent
Thus a group object in $\cat{Fam}_\Delta$ is a Lie groupoid with $\cat{Mor}=\fam{G}$, $\cat{Obj}=\Delta$ and the both source and target maps given by $\pi$.  In particular this is a Lie groupoid with $s=t$ over $\Delta$, referred to as a \emph{bundle of groups} in \cite{BettiolPS14} directly generalizing our earlier construction of \emph{one parameter families of groups} from Section \ref{sec:HC_HRR_Family}.

\begin{example}
\label{ex:SO(1,t)}
Any fiber bundle of groups $\fam{G}\to\Delta$ is a group object in $\cat{Fam}_\Delta$.  
But transitions may also occur.  For instance the groups $\SO\smat{1&0\\0&t}$ form a subfamily of $\GL(2;\R)\times\R \to \R$ as $t$ varies, transitioning from $\SO(1,1)$ to $\SO(2)$.  (The underlying topological family in this example is homeomorphic to that in \cref{ex:First_Example}).
\end{example}

\noindent
Much as the space of closed subgroups of a group $G$ is a closed subset of the space of closed subsets, the collection of families of subgroups of a family of groups $\fam{G}$ is closed in a suitable sense.  
More precisely, the following lemma proves useful.

\begin{lemma}
Let $\fam{G}\to\Delta$ be a family of groups and $\fam{H}\to\Delta$ a subfamily.  Then if $\Omega\subset\Delta$ is a dense open subset and $\fam{H}|_\Omega$ is a family of groups, all members of $\fam{H}$ are groups.
\label{lem:Fams_of_Groups}
\end{lemma}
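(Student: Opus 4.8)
I need to prove that if $\fam{H} \to \Delta$ is a subfamily of a family of groups $\fam{G} \to \Delta$, and if $\fam{H}|_\Omega$ is a family of groups over a dense open subset $\Omega \subset \Delta$, then \emph{every} member $\fam{H}_\delta$ is a group (not just those over $\Omega$).

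\textbf{The plan.} The key point is that being a subgroup is a \emph{closed condition}: membership of the identity, closure under multiplication, and closure under inversion are all detectable by convergence of sequences, and the operations $\mu, \iota$ and the section $\fam{e}$ on $\fam{G}$ are globally defined and continuous. So the strategy is to fix a boundary point $\delta_0 \in \Delta \smallsetminus \Omega$, approach it by a sequence $\delta_n \in \Omega$ (possible by density of $\Omega$), and use the continuity of the ambient structure maps together with the fact that $\fam{H}$ is closed in $\fam{G}$ to transport the group axioms from the fibers $\fam{H}_{\delta_n}$ to the fiber $\fam{H}_{\delta_0}$. I would carry this out in three steps, one for each of the defining data $\fam{e}, \mu, \iota$.

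\textbf{First,} the identity. Since $\fam{e}\colon\Delta\to\fam{G}$ is a global section (a continuous map) and $\fam{e}(\delta_n) \in \fam{H}_{\delta_n} \subset \fam{H}$ for the $\delta_n \in \Omega$, continuity gives $\fam{e}(\delta_0) = \lim_n \fam{e}(\delta_n)$, and since $\fam{H}$ is closed in $\fam{G}$ (it is a subfamily, hence a closed subset), the limit $\fam{e}(\delta_0)$ lies in $\fam{H}$, i.e.\ in $\fam{H}_{\delta_0}$. \textbf{Second,} closure under multiplication. Given $g, h \in \fam{H}_{\delta_0}$, I would use that $\pi|_{\fam{H}}$ is a submersion to choose local sections $\sigma_g, \sigma_h$ of $\fam{H}$ through $g$ and $h$ defined on a common neighborhood of $\delta_0$; evaluating at points $\delta_n \in \Omega$ produces elements $\sigma_g(\delta_n), \sigma_h(\delta_n) \in \fam{H}_{\delta_n}$ whose product $\mu(\sigma_g(\delta_n), \sigma_h(\delta_n))$ lies in $\fam{H}_{\delta_n} \subset \fam{H}$ because $\fam{H}_{\delta_n}$ is a group. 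Taking $n \to \infty$ and using continuity of $\mu$ together with closedness of $\fam{H}$ shows $\mu(g,h) = \lim_n \mu(\sigma_g(\delta_n), \sigma_h(\delta_n)) \in \fam{H}_{\delta_0}$. \textbf{Third,} closure under inversion follows identically, applying $\iota$ to a single local section $\sigma_g$ and using continuity of $\iota$ and closedness of $\fam{H}$. Associativity is automatic since $\mu$ is inherited from $\fam{G}$, where it is already associative fiberwise.

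\textbf{The main obstacle} is the multiplication step, specifically ensuring that for arbitrary $g, h \in \fam{H}_{\delta_0}$ one can \emph{realize} them as limits of elements of the $\fam{H}_{\delta_n}$ over which the product is already known to live in $\fam{H}$. This is exactly where I need the submersion property of $\pi|_{\fam{H}}$ rather than the mere existence of the closed set $\fam{H}$: it guarantees the local sections $\sigma_g, \sigma_h$ through each prescribed point of the boundary fiber, so that the approximating sequences can be chosen to converge to the given $g, h$. With those sections in hand the rest is a routine continuity-and-closedness argument. I would remark at the end that the proof uses only that $\fam{H}$ is closed, that $\pi|_{\fam{H}}$ admits local sections, and that $\fam{e}, \mu, \iota$ are continuous, so the statement holds for weak families in any topological category, not merely the smooth one.
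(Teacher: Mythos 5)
Your proof is correct and is essentially the paper's own argument: the paper likewise takes a boundary point of $\Omega$, chooses local sections of $\fam{H}$ through arbitrary elements of the fiber, observes that their product (and inverse) lands in $\fam{H}$ over the dense set $\Omega$ where the fibers are groups, and concludes by continuity of $\mu,\iota$ and closedness of $\fam{H}$ that the group operations close up in the boundary fiber. Your sequence-based phrasing, the explicit check that $\fam{e}(\delta_0)\in\fam{H}_{\delta_0}$, and the closing remark about weak families are harmless refinements of the same idea.
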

\begin{proof}
Let $\delta\in\partial{\Omega}$ and $x,y\in \fam{H}_\delta$.  Choosing sections $\sigma_x$, $\sigma_y$ through them, we may their product $\sigma_x\cdot\sigma_y$ is well defined in $\fam{G}$ and lies in $\fam{H}$ on the open dense subset $\fam{H}|_\Omega$.  
But $\fam{H}$ is closed so in fact the image of $\sigma_x\cdot\sigma_y$ lies fully in $\fam{H}$.
In particular, $(\sigma_x\cdot\sigma_y)(\delta)=xy$ so $xy\in\fam{H}_\delta$.
Similarly, as inversion is continuous on $\fam{G}$ the section $(\sigma_x)\inv$ has image in $\fam{H}$ so $x\inv\in\fam{H}_\delta$.  
Thus $\fam{H}_\delta$ has the structure of a group.
\end{proof}

\noindent
Homomorphisms between families of groups are morphisms in $\Fam_\Delta$ which restrict fiberwise to homomorphisms of the member groups, defining the subcategory of \emph{families of groups} over $\Delta$.  
More abstractly, the sheaf of local sections of $\fam{G}\to\Delta$ is a groupoid where $\sigma\colon U\to\fam{G}$ can be multiplied by $\tau\colon V\to\fam{G}$ to produce $\sigma\cdot\tau\colon U\cap V\to\fam{G}$ when the domains overlap.  A homomorphism $\fam{G}\to\fam{H}$ in $\Fam_\Delta$ is then a morphism which induces a groupoid homomorphism on the sheaves of sections.  This defines the subcategory of \emph{families of groups} over $\Delta$.	

\begin{definition}
Fix a smooth manifold $\Delta$.  The category $\mathsf{Grp}_\Delta$ of families of groups over $\Delta$	 has as objects the families of groups $\pi\colon\fam{G}\to\Delta$ and morphisms $\Phi\colon\fam{G}\to\fam{H}$ the morphisms of families which restrict fiberwise to group homomorphisms $\fam{G}_\delta\to\fam{H}_\delta$.
\end{definition}

\section{Families of Algebraic Gadgets}

The example of groups provides a template for defining families of algebraic objects.  
Given an algebraic gadget $A$, a \emph{family of $A$s} is given by an \emph{$A$-object in $\Fam_\Delta$}.  
Two others that will be important to us are families of rings and modules, which lead to families of algebras and vector spaces.

\begin{definition}
\label{def:Ring_Object}
A family of rings over $\Delta$ is a ring object in $\Fam_\Delta$.
Unpacking this, a family of rings is given by the data of a family $\fam{R}\to\Delta$ together with morphisms $\fam{R}\timesd\fam{R}\labelarrow{\mu,\alpha}\fam{R}$ for multiplication, addition and sections $\fam{0},\fam{1}\colon\Delta\to\fam{R}$ that give each fiber the structure of a ring.
\end{definition}

\begin{definition}
\label{def:Module_Object}
Given a family of rings $\fam{R}\to\Delta$, a \emph{family of $\fam{R}$-modules} is a family of abelian groups $\fam{M}\to\Delta$ together with an \emph{action map} $\fam{R}\timesd\fam{M}\to\fam{M}$ fiberwise equipping $\fam{M}_\delta$ with the structure of an $\fam{R}_\delta$ module.
\end{definition}

\noindent
A \emph{family of fields} is simply a family of rings where each fiber is actually a field.
Fields (and consequently vector spaces) provide examples of rigid objects in the category of families, which follows directly from the classification of locally compact connected fields.

\noindent
{\bfseries\sffamily Fact:}
The only connected locally compact topological fields are $\R$ an $\C$.

\begin{corollary}
Connected fields are rigid.
\end{corollary}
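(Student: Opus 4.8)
The plan is to combine the classification Fact with the elementary observation that the fibers of a submersion have locally constant dimension, reducing the whole statement to the fact that $\R$ and $\C$ are distinguished by their real dimension. First I would set up the situation: let $K$ be a connected locally compact field and suppose it occurs as a member of some family of fields $\pi\colon\fam{F}\to\Delta$, say $K=\fam{F}_{\delta_0}$, where we may restrict to the connected component of $\delta_0$ and so assume $\Delta$ is connected. By the Fact, $K\cong\R$ or $K\cong\C$, and these are distinguished both as real manifolds (dimensions $1$ and $2$) and as fields. Thus it suffices to show that every member $\fam{F}_\delta$ is again isomorphic to $K$, so that $\fam{F}$ exhibits no transition through $K$ and $K$ is rigid.

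The first step is a dimension count. Since $\pi$ is a submersion between manifolds, each fiber $\fam{F}_\delta$ is a closed embedded submanifold of constant dimension $d=\dim\fam{F}-\dim\Delta$ over the connected base $\Delta$. Because $K=\fam{F}_{\delta_0}$ is $\R$ or $\C$, we have $d\in\{1,2\}$; in particular every fiber is a positive-dimensional, locally compact topological field, so $d\geq 1$ rules out the degenerate $0$-dimensional case.

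The crux — and the step I expect to require the most care — is to show that each such fiber is automatically \emph{connected}, so that the Fact applies to it directly. This genuinely needs an argument, since connectivity of fibers is \emph{not} locally constant in families: the conic family $\{x^2+ty^2=1\}$ already transitions from a connected ellipse to a disconnected hyperbola. Here I would exploit the field structure of the fibers rather than any continuity argument. For a positive-dimensional locally compact topological field $F$, let $F_0$ be the connected component of $0$ in the additive group. Since $F$ is a manifold it is locally connected, so $F_0$ is an \emph{open} additive subgroup; and since multiplication by any $a\in F^\times$ is a self-homeomorphism fixing $0$, it preserves the component of $0$, giving $aF_0=F_0$. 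Hence $F_0$ is an additive subgroup closed under multiplication by every element of $F$, i.e. an ideal of the field $F$, and it is nonzero because $\dim F\geq 1$. A field has no nonzero proper ideals, so $F_0=F$ and $F$ is connected.

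With this in hand the conclusion is immediate. Every fiber $\fam{F}_\delta$ is a connected locally compact field of dimension $d$, hence isomorphic to $\R$ if $d=1$ and to $\C$ if $d=2$ by the Fact, and in either case isomorphic to $K$. Therefore no family of fields containing a connected member can have non-isomorphic members over a single connected component of the base: connected fields admit no transitions, and so are rigid.
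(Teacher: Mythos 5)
Your proof is correct, and its skeleton matches the paper's: invoke the classification Fact, then use invariance of fiber dimension in a smooth family to conclude that no fiber can switch between $\R$ and $\C$ over a connected base. Where you genuinely diverge is the connectivity step. The paper's proof simply declares each fiber $\fam{F}_\delta$ ``connected by assumption,'' which is only legitimate if one reads the family as a family of \emph{connected} fields; as you correctly observe, rigidity of a connected field $K$ requires ruling out families that contain $K$ but whose \emph{other} members are a priori arbitrary, and fiber connectivity is not locally constant (the paper's own conic family $V(x^2+ty^2-1)$ is the standard counterexample). Your ideal-theoretic lemma closes this gap cleanly: in a positive-dimensional fiber, local connectedness makes the additive component $F_0$ of $0$ an open subgroup, multiplication by any $a\in F^\times$ is a self-homeomorphism fixing $0$ (its inverse being multiplication by $a^{-1}$, so no continuity of inversion is needed), hence $F_0$ is a nonzero ideal and $F_0=F$. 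This means every fiber is automatically a connected locally compact field, so the Fact applies to all of them, not just to $K$. In short, your argument proves a strictly stronger statement than the paper's proof as written --- any family of fields with a positive-dimensional member over a connected base is a family of copies of $\R$ or of $\C$ --- and in doing so repairs an implicit assumption in the paper; the paper's version buys brevity at the cost of that unstated hypothesis.
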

\begin{proof}
Let $\fam{F}\to\Delta$ be a family of fields.
Then for each $\delta\in\Delta$, the field $\fam{F}_\delta$ has underlying space a smooth manifold, which is locally compact, and connected by assumption.
Thus $\fam{F}_\delta\cong\R$ or $\fam{F}_\delta\cong\C$.
As the dimension of fibers of a smooth family is invariant, the isomorphism type of the fibers of $\fam{F}\to\Delta$ is constant, and the family contains no transitions.
\end{proof}

\noindent
Thus the study of families containing $\R$ or $\C$ is the same as the study of \emph{families of $\R$ or $\C$}.  
Note this does not imply that all families are trivial, for instance the $\C$ bundle over $\S^1$ twisted by the Galois action $z\mapsto \bar{z}$ is nontrivial. 
A family of vector spaces $\fam{V}\to\Delta$ is a family of modules over a family of fields, and by the above any family that contains a real or complex vector space is actually an entire family of real or complex vector spaces.
And, as the parameter map is a submersion all fibers are of the same dimension, and thus isomorphic.

\begin{corollary}
Vector spaces are rigid.	
\end{corollary}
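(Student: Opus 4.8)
The plan is to reduce the rigidity of vector spaces entirely to the rigidity of fields, already established in the immediately preceding corollary, together with the dimension-invariance built into the definition of a family. First I would recall that, by Definition~\ref{def:Module_Object}, a family of vector spaces $\fam{V}\to\Delta$ is a family of modules over some family of fields $\fam{F}\to\Delta$, so that each member $\fam{V}_\delta$ is a finite-dimensional vector space over the field $\fam{F}_\delta$. Since rigidity concerns only members lying over a single connected component of the base, there is no loss in assuming $\Delta$ is connected.

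Restricting to this connected base, the corollary that connected fields are rigid shows that all the fibers $\fam{F}_\delta$ are isomorphic to a single field $K\in\{\R,\C\}$; in particular the real dimension $\dim_\R \fam{F}_\delta$ is constant over $\Delta$. At the same time, the projection $\pi\colon\fam{V}\to\Delta$ is a submersion, so over the connected base the fibers $\fam{V}_\delta$ all have the same dimension as smooth manifolds. Writing $\dim_\R \fam{V}_\delta=(\dim_\R K)(\dim_K \fam{V}_\delta)$ and using that both $\dim_\R K$ and $\dim_\R \fam{V}_\delta$ are constant, I would conclude that the field-dimension $\dim_K \fam{V}_\delta$ is constant as well. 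Finite-dimensional vector spaces over a fixed field are classified up to isomorphism by their dimension, so all members $\fam{V}_\delta$ are isomorphic, and the family admits no transition.

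The only genuine subtlety, and the step I would be most careful with, is the passage from the fiberwise isomorphisms $\fam{F}_\delta\cong K$ supplied by field rigidity to a statement uniform enough to compare vector-space dimensions across distinct fibers. This is precisely where connectedness of $\Delta$ is essential: it guarantees that the locally constant isomorphism type of the underlying field cannot jump, so that a single $K$ serves for every member simultaneously. Beyond this point the argument is a pure dimension count requiring no analytic input, the only substantive facts being the submersivity of the parameter map (which forces constant fiber dimension) and the classification of connected locally compact fields underlying the field-rigidity corollary.
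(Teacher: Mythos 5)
Your proposal is correct and follows essentially the same route as the paper: reduce to the rigidity of connected fields, then use that the parameter map is a submersion (so fibers over a connected base have constant dimension) to conclude all members are isomorphic vector spaces. Your explicit dimension count $\dim_\R \fam{V}_\delta=(\dim_\R K)(\dim_K \fam{V}_\delta)$ merely spells out a detail the paper leaves implicit, and is a welcome clarification for the complex case.
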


\noindent
However, even more is true: any family of vector spaces is locally trivial topologically, and thus families of vector spaces are precisely vector bundles.

\begin{proposition}
All families of real \& complex vector spaces are locally trivial.
\label{prop:VS_Trivial}
\end{proposition}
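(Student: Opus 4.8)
The plan is to show that the total space $\fam{V}$ is, locally over the base, diffeomorphic to a product by a map which is linear on each fibre; this is exactly the assertion that $\fam{V}\to\Delta$ is a vector bundle, and hence locally trivial. First I would reduce to the situation of a fixed scalar field. By the preceding discussion a family of vector spaces is a family of modules over a family of fields, and since connected fields are rigid, over each connected component of $\Delta$ the scalar field is a fixed $k\in\{\R,\C\}$; fix it and work near a point $\delta_0\in\Delta$. Let $n=\dim_k\fam{V}_{\delta_0}$ and choose a basis $v_1,\ldots,v_n$ of the fibre $\fam{V}_{\delta_0}$. Because $\pi$ is a submersion, the fibres all have the same real dimension, so $\dim\fam{V}=\dim\Delta+(\dim_\R k)\,n$.

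Next I would build the candidate trivialization. Since $\pi\colon\fam{V}\to\Delta$ admits local sections, I can choose smooth sections $\sigma_i\colon U\to\fam{V}$ on a neighbourhood $U\ni\delta_0$ with $\sigma_i(\delta_0)=v_i$. Using the smooth addition and scalar-multiplication maps of the family of modules, define $\Phi\colon U\times k^n\to\fam{V}|_U$ by $\Phi(\delta,a)=\sum_i a_i\,\sigma_i(\delta)$. This map is smooth, it satisfies $\pi\circ\Phi=\mathsf{pr}_U$, and on each fibre it restricts to the linear map $a\mapsto\sum_i a_i\sigma_i(\delta)$; in particular $\Phi(\delta_0,\cdot)$ is a linear isomorphism onto $\fam{V}_{\delta_0}$.

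The main obstacle is to show that $\Phi(\delta,\cdot)$ stays a linear isomorphism for $\delta$ near $\delta_0$, that is, that the sections remain linearly independent. One cannot simply cite ``openness of linear independence,'' since that presupposes the very local triviality we are trying to establish, so I would argue by a normalization and compactness argument. If independence failed along a sequence $\delta_k\to\delta_0$, there would exist unit vectors $a^k\in k^n$ with $\Phi(\delta_k,a^k)=\fam{0}(\delta_k)$; passing to a convergent subsequence $a^k\to a$ (the unit sphere in $k^n$ being compact), continuity of $\Phi$ and of the zero section $\fam{0}$ would force $\Phi(\delta_0,a)=\fam{0}(\delta_0)$ with $a\neq 0$, contradicting injectivity of $\Phi(\delta_0,\cdot)$. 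Hence on a smaller neighbourhood $U'$ of $\delta_0$ each $\Phi(\delta,\cdot)$ is injective, and being a linear map between spaces of equal dimension it is a linear isomorphism, so $\Phi\colon U'\times k^n\to\fam{V}|_{U'}$ is a fibre-preserving, fibrewise-linear bijection.

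Finally I would upgrade this bijection to a diffeomorphism by checking that $d\Phi$ is an isomorphism at every point $(\delta,a)$ with $\delta\in U'$. Its restriction to the vertical directions $k^n$ is the linear isomorphism $\Phi(\delta,\cdot)$ (independent of $a$ by fibrewise linearity), while the identity $\pi\circ\Phi=\mathsf{pr}_{U'}$ shows that $d\pi$ carries the image of the horizontal directions isomorphically onto $T_\delta U'$; a dimension count then gives that $d\Phi_{(\delta,a)}$ is bijective. Thus $\Phi$ is an injective local diffeomorphism between manifolds of equal dimension, hence an open embedding, and since its image is the open set $\fam{V}|_{U'}$ it is a diffeomorphism onto $\fam{V}|_{U'}$. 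This $\Phi$ is the desired local trivialization, establishing that $\fam{V}\to\Delta$ is locally trivial. I expect the compactness step guaranteeing persistence of linear independence to be the one genuine subtlety; the rest is routine once $\Phi$ is known to be a fibrewise isomorphism.
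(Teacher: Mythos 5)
Your proof is correct and follows essentially the same route as the paper's: you choose local sections through a basis of the fibre, form the fibrewise-linear evaluation map, and prove persistence of injectivity by the same unit-sphere compactness argument the paper uses to show $\ker\Psi(\cdot,t)=\{\vec{0}\}$ on a neighbourhood, after which injectivity plus equal dimensions gives a fibrewise isomorphism. The only difference is that you explicitly upgrade the fibrewise-linear bijection to a diffeomorphism by checking $d\Phi$ is invertible, a step the paper leaves implicit when it declares $\Psi$ an isomorphism of vector space families --- a sensible addition rather than a genuine deviation.
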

\begin{proof}
Let $\mathbb{F}\in\set{\R,\C}$ and let $\fam{V}\to\Delta$ be a family of finite dimensional $\F$-vector spaces.
Because the dimension of the member of a smooth family is an invariant of the family, $\dim \fam{V}_\delta=m$ for all $\delta\in\Delta$.
Choosing $\delta\in\Delta$ fix a basis $\set{b_i}$ for $ \fam{V}_\delta$ and sections $b_i(\cdot)\colon U_i\to\fam{V}$ through $b_i=b_i(\delta)$ all defined on the neighborhood $\delta\in U=\cap_i U_i$. 
From these we can construct the map of families $\Psi\colon\mathbb{F}^m\times U\to\fam{V}|_U$ given by $((a_i),t)\mapsto \sum_i a_i b_i(t)$, which is a $\cat{C}$-map as it uses only the sections and addition, scalar multiplication operations from the family.  
Define the kernel of $\Psi$ to be 
$\ker\Psi=\set{(x,t)\in\mathbb{F}^m\times U\mid \Psi(x,t)=0}$.

Note that $\ker\Psi$ is the inverse image of the zero section of $\fam{V}|_U$ under $\Psi$, and so is closed as $\Psi$ is continuous.  
Additionally, $\ker\Psi(\cdot,\delta)=\{\vec{0}\}$ as $\{b_i(\delta)\}$ is a basis for $\fam{V}_\delta$.  
We claim that there is a neighborhood $W\ni\delta$ such that for all $t\in W$ it also holds that $\ker\Psi(\cdot, t)=\{\vec{0}\}$.  
Given this, the map $\mathbb{F}^m\times W\to \fam{V}|_W$ is injective on each fiber, and thus also surjective as each fiber is dimension $m$.  
Thus $\Psi$ is an isomorphism of vector space families, so $\fam{V}$ is trivial over $W$.

Thus it remains only to prove the claim.  Assume for the sake of contradiction that this is not the case, so every neighborhood of $\delta$ contains points where $\Psi$ is not injective.  
Let $\{W_n\}$ be a collection of neighborhoods of $\delta$ such that $\cap_n W_n=\{\delta\}$.  
Let $\S$ be the unit sphere in $\mathbb{F}^m$.
In each $W_n$ there is some $t_n$ with $\ker\Psi(\cdot,t_n)\neq\{0\}$, and so $\ker\Psi(\cdot,t_n)\cap\S\neq\varnothing$ as it is a linear subspace.  
Pick an element $x_n\in\ker\Psi(\cdot,t_n)\cap \S$ for each $n$.  Now let $K\ni \delta$ be a compact neighborhood.  
Then $K$ contains infinitely many of the $W_n$, and hence $\S\times K$ contains infinitely many of the $x_n$.  
This sequence converges $x_n\to x_\infty$ by the compactness of $K$, and projecting onto $U$ has $t_n\to\delta$ so $x_\infty\in\R^m\times\{\delta\}$.  
By the continuity of $\Psi$ together with the fact that $\Psi(x_n,t_n)=0$ shows $\Psi(x_\infty,\delta)=0$.  
But this means $\ker\Psi(\cdot, \delta)\neq\{0\}$, a contradiction.
\end{proof}

\section{Families of Algebras}
\label{sec:Fam_Algs}
\index{Family!Algebras}

As the theory of vector spaces and fields yields no interesting transitions, we expand our scope and look to the theory of modules over families of algebras.

\begin{definition}
A family of algebras is an algebra object in $\Fam_\Delta$.  That is, a family of vector spaces $\fam{A}\to\Delta$ over a family of fields $\fam{F}\to\Delta$ equipped with a bilinear operation $\mu\colon\fam{A}\timesd\fam{A}\to\fam{A}$ giving each fiber $\fam{A}_\delta$ the structure of and $\fam{F}_\delta$ algebra.
\end{definition}

But by the rigidity results above, in the smooth category we have a much simpler description.

\begin{corollary}
A family of  $\mathbb{F}$-algebras over $\Delta$ is given by the data of a $\mathbb{F}$-vector bundle $\fam{A}\to\Delta$ together with a multiplication $\mu\colon\fam{A}\timesd\fam{A}\to\fam{A}$ giving each fiber the structure of a $\mathbb{F}$-algebra.
\end{corollary}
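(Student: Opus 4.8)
The plan is to observe that this corollary is an immediate consequence of the two rigidity results just established, together with the fact that morphisms and commutative diagrams in $\cat{Fam}_\Delta$ are detected fiberwise. Almost all of the genuine content has already been carried out in Proposition \ref{prop:VS_Trivial}; what remains is bookkeeping to unpack the phrase ``algebra object in $\cat{Fam}_\Delta$.''

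First I would recall that, by definition, a family of $\F$-algebras is a family of vector spaces $\fam{A}\to\Delta$ over a family of fields, equipped with a bilinear multiplication $\mu\colon\fam{A}\timesd\fam{A}\to\fam{A}$. Since we work over a fixed connected field $\F\in\set{\R,\C}$, the rigidity of connected fields shows the family of scalars is a family of copies of the single field $\F$, so the scalar data carries no transitional information. Next I would apply Proposition \ref{prop:VS_Trivial} to the underlying family of vector spaces $\fam{A}\to\Delta$: it is locally trivial, which is exactly the statement that $\fam{A}\to\Delta$ is an $\F$-vector bundle over $\Delta$.

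It then remains to match the multiplication data. Here I would invoke the observation that a commutative diagram of families holds if and only if it holds after restriction to each fiber $\pi\inv\set{\delta}$. Consequently the categorical axioms making $(\fam{A},\mu)$ an algebra object---the $\F$-bilinearity of $\mu$ together with the associativity, unit, and distributivity diagrams---are equivalent to the single requirement that $\mu|_{\fam{A}_\delta\times\fam{A}_\delta}$ equip each fiber $\fam{A}_\delta$ with the structure of an $\F$-algebra. This gives one direction. For the converse, any $\F$-vector bundle is in particular a family of vector spaces over the constant field $\F$, and a fiberwise $\F$-algebra multiplication $\mu$ satisfies the algebra-object diagrams (again by the fiberwise criterion), so such data assembles into an algebra object in $\cat{Fam}_\Delta$. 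The two constructions are mutually inverse, establishing the claimed equivalence.

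I do not expect a serious obstacle: the substantive work is the rigidity of the scalar field and the local triviality of vector-space families, both already in hand. The only point requiring care is that ``algebra object in $\cat{Fam}_\Delta$'' be unpacked correctly---namely, that the diagrammatic definition is equivalent to a pointwise-in-$\delta$ algebra structure---which is precisely what the fiberwise detection of commutative diagrams in $\cat{Fam}_\Delta$ provides.
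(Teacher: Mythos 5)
Your proposal is correct and matches the paper's (implicit) argument: the corollary is stated in the text as an immediate consequence of the rigidity of connected fields and Proposition \ref{prop:VS_Trivial} (local triviality of families of vector spaces), with the multiplication data matched fiberwise exactly as you do via the fiberwise detection of commutative diagrams in $\cat{Fam}_\Delta$. Nothing in your bookkeeping deviates from the paper's route, and no gap is present.
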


\noindent
Restricting the action to $\R\times\Delta\subset\fam{A}$, a family $\fam{M}$ of $\fam{A}$-modules has an underlying family of vector spaces, which by Proposition \ref{prop:VS_Trivial} is a vector bundle over $\Delta$.  Thus families of algebras and their modules remain locally trivial topologically.  However the algebraic structure is allowed to vary in much more interesting ways through the allowance of zero divisors, leading to an abundance of interesting transitions.

\begin{example}
Let $\pi\colon\R^3\to\R$ be the projection onto the last coordinate be the trivial $\R^2$ bundle over $\R$, and equip $\R^3$ with the multiplication map $\mu:\R^3\times_{\R}\R^3\to\R^3$ given by $(x,y,z)\times (x',y',z)=(xy+zx'y',xy'+yx',z)$.  This defines an algebra multiplication on each fiber of $\pi$ such that $\pi\inv\set{-1}\cong\C$ and $\pi\inv\set{1}\cong\R\oplus\R$.  
\end{example}

\subsection{Families of Lie Algebras}

As we the study of families of geometries will involve a significant number of families of Lie groups, we take a brief moment to introduce their infinitesimal counterparts: families of Lie algebras.

\begin{definition}
A family of Lie algebras $\fam{g}\to\Delta$ is a Lie algebra object in $\cat{Fam}_\Delta$.  That is, it is a family of vector spaces equipped with a bilinear map $[\cdot,\cdot]\colon\fam{g}\timesd\fam{g}\to\fam{g}$ giving each fiber the structure of a Lie algebra.
\end{definition}

\noindent
Note that the Lie algebra objects of $\cat{Fam}_\Delta$ are also known as \emph{weak Lie algebra bundles} in the literature \cite{Coppersmith77}.

\begin{proposition}
Every smooth family of groups $\fam{G}\to\Delta$ has a corresponding smooth family of Lie algebras $\fam{g}\to\Delta$.	
\end{proposition}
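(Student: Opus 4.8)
The plan is to construct the family of Lie algebras fiberwise as tangent spaces along the identity section, and then verify that these tangent spaces assemble into a smooth subbundle of the vertical tangent bundle of $\fam{G}$, carrying a smoothly varying bracket. Concretely, let $\fam{e}\colon\Delta\to\fam{G}$ be the identity section of the family of groups $\fam{G}\to\Delta$. Since $\pi\colon\fam{G}\to\Delta$ is a submersion, its fibers $\fam{G}_\delta$ are smooth submanifolds of $\fam{G}$, and I would define the total space of the candidate family of Lie algebras to be the vertical tangent bundle restricted to the identity section, namely $\fam{g}=\ker(d\pi)|_{\fam{e}(\Delta)}$, with fiber $\fam{g}_\delta=T_{\fam{e}(\delta)}\fam{G}_\delta$. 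The projection $\fam{g}\to\Delta$ is the composite of the bundle projection with $\fam{e}^{-1}$, and I expect this to be a smooth vector bundle of constant rank equal to the fiber dimension of $\fam{G}\to\Delta$, since $\fam{e}$ is a smooth section and $\ker(d\pi)$ is a smooth subbundle of $T\fam{G}$ by the submersion hypothesis.

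The main content is producing the bracket as a morphism $[\cdot,\cdot]\colon\fam{g}\timesd\fam{g}\to\fam{g}$ in $\cat{Fam}_\Delta$ and checking it restricts fiberwise to the Lie bracket of $\fam{g}_\delta=\mathfrak{lie}(\fam{G}_\delta)$. The approach I would take is to build a family of left-invariant vector fields: given a vertical tangent vector $X\in\fam{g}_\delta$, left-translation within the single group $\fam{G}_\delta$ (using the restriction of $\mu$ to the fiber) spreads $X$ to a left-invariant vector field on $\fam{G}_\delta$, tangent to the fiber. Because $\mu\colon\fam{G}\timesd\fam{G}\to\fam{G}$ is a smooth map of families, this left-translation operation varies smoothly in $\delta$, so a smooth section of $\fam{g}$ over $\Delta$ gives rise to a smooth vertical vector field on $\fam{G}$ which is fiberwise left-invariant. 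The Lie bracket of two such fiberwise-left-invariant vertical vector fields is again vertical and fiberwise left-invariant (the ordinary bracket of vector fields tangent to a foliation by the fibers stays tangent to the fibers), and evaluating at the identity section recovers a section of $\fam{g}$. Declaring this to be $[\cdot,\cdot]$ on sections, and checking it is $C^\infty(\Delta)$-bilinear and hence induced by a bundle map $\fam{g}\timesd\fam{g}\to\fam{g}$, gives the desired morphism, which by construction restricts on each fiber to the usual bracket on $\mathfrak{lie}(\fam{G}_\delta)$.

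I would then verify that the fiberwise conditions (bilinearity, antisymmetry, Jacobi) hold, which is immediate since each $\fam{g}_\delta$ is already the Lie algebra of the genuine Lie group $\fam{G}_\delta$ and the bracket restricts to the honest bracket there; no new identities need to be checked, only that the globally constructed bracket agrees fiberwise with the known one. Finally I would confirm that $[\cdot,\cdot]$ is a morphism in $\cat{Fam}_\Delta$, i.e. that it commutes with the projections to $\Delta$, which is automatic because the construction is entirely vertical and fiber-preserving.

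The hard part will be the smoothness of the bracket as a bundle map, rather than merely its fiberwise definition: one must argue that the family of left-invariant extensions depends smoothly on the base point $\delta$, so that the bracket of two smooth sections is again smooth. This is exactly the kind of step that relies on smoothness of $\mu$ as a map of families (not just fiberwise smoothness), and is the precise analogue of the standard construction of the Lie algebra of a single Lie group carried out uniformly over $\Delta$; I expect it to go through by working in local trivializations of $\fam{G}\to\Delta$ and of $\ker(d\pi)$, where left-translation and the vector-field bracket both become smoothly parameterized versions of their classical counterparts, but it is where the technical care is needed. One subtlety worth flagging is that this argument uses the submersion structure in an essential way (to guarantee $\ker(d\pi)$ is a smooth subbundle and the fibers foliate $\fam{G}$), so the statement as proved is genuinely a \emph{smooth} families result, matching the hypothesis.
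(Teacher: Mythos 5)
Your proposal is correct and follows essentially the same route as the paper: both take $\fam{g}$ to be the vertical tangent bundle $\ker(d\pi)$ pulled back along the identity section, and both obtain the bracket by extending sections of $\fam{g}$ to fiberwise left-invariant vertical vector fields on $\fam{G}$ (using smoothness of $\mu$ as a map of families), taking the ordinary vector-field bracket, and evaluating along the identity section. The smoothness subtlety you flag is exactly the step the paper handles by pushing sections forward via the diffeomorphisms induced by local sections through each point $g$.
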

\begin{proof}
Let $\fam{G}$ be a family of groups in $\Fam_\Delta$. 
Then $\fam{G}$ is a smooth manifold with tangent bundle $T\fam{G}$.
The family projection $\pi\colon\fam{G}\to\Delta$ is a smooth submersion, defining the sub-bundle $T^\pi\fam{G}=\bigcup_{\delta\in\Delta}T\pi\inv(\delta)\subset T\fam{G}$ consisting of the tangent bundles to each $\fam{G}_\delta$.
The tangent spaces at the identity $\fam{e}_\delta$ of each fiber $\fam{G}_\delta$ form the pullback bundle $\fam{g}:=\fam{e}^\ast (T^\pi\fam{G})\to\Delta$, which each inherit a natural Lie algebra structure arising from $\fam{G}_\delta$.
Thus it only remains to show that these Lie algebra structures vary continuously over $\Delta$.  

Let $\delta\in\Delta$ and $v,w\in\mathfrak{g}_\delta$.  Then let $\sigma,\tau\colon U\to\fam{g}$ be sections of $\fam{g}\to\Delta$ through $v,w$ respectively.  Define the vector fields $V, W$ as the left-invariant vector fields generated by $\sigma, \tau$: for any $g\in \fam{G}_t\subset\fam{G}|_U$, $V(g)$ is equal to the pushforward of $\sigma(t)$ by the derivative of the homeomorphism induced by some section $\alpha$ of $\fam{g}\to\Delta$ through $g$ and similarly for $W$.  Then $[V,W]$ is the vector field defined by $[V,W](f)=V(W(f))-W(V(f))$ for $f\in C^\infty(\fam{G}|_U)$ and $[v,w]=[V,W]_p$, so the Lie bracket structure arises from a continuous construction on vector fields of $\fam{G}$.
\end{proof}

\noindent
Going the other direction, and integrating a family of Lie algebras into a family of Lie groups is much more delicate, related to difficult problems in the theory of Lie groupoids.
Partial results in this direction will be treated in Chapter \ref{chp:Constructing_Fams}.

%NEWCHAPTER: CONSTRUCTING FAMILIES
\chapter{Constructing Families of Geometries}
\label{chp:Constructing_Fams}

This chapter builds up the theory of families, providing techniques for constructing families of spaces, groups.
The main tools will be description of \emph{actions of families}, the construction of \emph{pullbacks, exponentials} and \emph{quotients}.
This provides the necessary language to define \emph{families of geometries} and develop their basic theory.

\section{Pullbacks}
\label{sec:Pullback_Families}
\index{Family!Pullback}
\index{Pullback Families}

One of the most important constructions for future applications is the \emph{pullback} of families along morphisms, which we define and study below.

\begin{definition}
Let $\fam{X}\to\Delta$ be a family, and $f\colon D\to\Delta$	 be a morphism.  
Then the pullback family $f^\star\fam{X}\to D$ has total space $\fam{X}\timesd D=\set{(x,d)\mid f(d)=\pi(x)}$ and projection 
$f^\star\fam{X}=\fam{X}\timesd D\stackrel{\pi^\star}{\longrightarrow} D$
defined by $(x,d)\mapsto d$.
\end{definition}

\begin{lemma}
The projection map $\pi^\star\colon f^\star\fam{X}\to D$ in the definition above admits local sections.
\label{lem:Pullbacks_Admit_Sections}
\end{lemma}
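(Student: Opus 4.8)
The plan is to show that the projection $\pi^\star\colon f^\star\fam{X}\to D$ admits a local section through each point of its total space, which (by the remark in Section \ref{sec:Fam_Spaces} characterizing submersions) is exactly what is needed to conclude $f^\star\fam{X}\to D$ is a family. Note the pullback total space $f^\star\fam{X}=\fam{X}\timesd D$ is a fiber product of smooth manifolds along a submersion $\pi$ and a smooth map $f$, so it is itself a smooth manifold; the only content of the lemma is that the diagonal projection $\pi^\star$ is a submersion, which I establish constructively by lifting sections of $\pi$.

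First I would fix an arbitrary point $(x,d)\in f^\star\fam{X}$, so that by definition $\pi(x)=f(d)=\delta\in\Delta$. Since $\pi\colon\fam{X}\to\Delta$ is a family, it admits a local section $\sigma\colon U\to\fam{X}$ defined on some open neighborhood $U\ni\delta$ with $\sigma(\delta)=x$ and $\pi\circ\sigma=\id_U$. The key step is then to transport this along $f$: consider the open neighborhood $V=f\inv(U)\ni d$ in $D$, and define $\tau\colon V\to f^\star\fam{X}$ by
$$\tau(v)=\bigl(\sigma(f(v)),\,v\bigr).$$
I would check first that $\tau$ is well-defined, i.e.\ that its image lands in $\fam{X}\timesd D$: indeed $\pi(\sigma(f(v)))=f(v)$ by the section property, so the fiber-product condition is satisfied. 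The map $\tau$ is smooth as a composite of the smooth maps $\sigma$, $f$, and the identity. Finally $\pi^\star(\tau(v))=v$ directly from the definition of $\pi^\star$ as projection onto the second factor, and $\tau(d)=(\sigma(f(d)),d)=(\sigma(\delta),d)=(x,d)$, so $\tau$ is a local section through the chosen point.

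This argument uses only the existence of local sections and basic smoothness, so I expect no real obstacle: the construction is essentially forced, as the pullback is designed precisely so that sections of $\pi$ pull back to sections of $\pi^\star$. The one point worth a sentence of care is confirming that $V=f\inv(U)$ is a genuinely open neighborhood of $d$ (immediate from continuity of $f$ and $d\in f\inv(U)$) and that $f^\star\fam{X}$ carries a smooth structure making $\pi^\star$ and the other projection smooth — but since the fiber product is taken along the submersion $\pi$, transversality guarantees $\fam{X}\timesd D$ is an embedded submanifold of $\fam{X}\times D$ and the projections restrict to smooth maps, so this is standard. I would remark that because the proof only invokes local sections, the same statement holds verbatim for weak families in any topological category, which then justifies stating the subsequent \emph{Existence of Pullback Families} theorem at full generality.
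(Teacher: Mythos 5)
Your proof is correct and follows essentially the same route as the paper's: given $(x,d)$ with $\pi(x)=f(d)$, take a local section $\sigma$ of $\pi$ through $x$ and lift it to $v\mapsto(\sigma(f(v)),v)$ on $f\inv(U)$, which is precisely the section $f^\star\sigma=(\sigma f,\id_D)$ constructed in Lemma \ref{lem:Pullbacks_Admit_Sections}. Your added remarks on smoothness of the fiber product and on the argument carrying over verbatim to weak families match the paper's surrounding discussion as well.
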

\begin{proof}
Let $(x,d)\in\fam{X}\timesd D$.  
Then $\delta=f(d)=\pi(x)$ so $x\in\fam{X}_\delta$.  
As $\fam{X}\to\Delta$ is a family let $\sigma:V\to\fam{X}$ be a local section of $\pi$ through $x$.  
Pulling back gives a map $\sigma\circ f:f\inv\{V\}\to\fam{X}$ from which the map $f^\star\sigma=(\sigma f,\id_D):f\inv\{V\}\to\fam{X}\times D$ can be created.  
As $\pi(\sigma(f(d)))=f(d)$, the map $F$ has image in $\fam{X}\timesd D$, and $\pi^\star\circ (f^\star\sigma)(d)=\pi^\star(\sigma(f(d)),d)=d$ so $f^\star\sigma$ is a section.  
Finally noting $f^\star(d)=(\sigma(f(d)),d)=(\sigma(\delta),d)=(x,d)$ shows $f^\star\sigma$ is a section of $\pi^\star$ through $(x,d)$.
\end{proof}

\begin{figure}
\centering\includegraphics[width=0.7\textwidth]{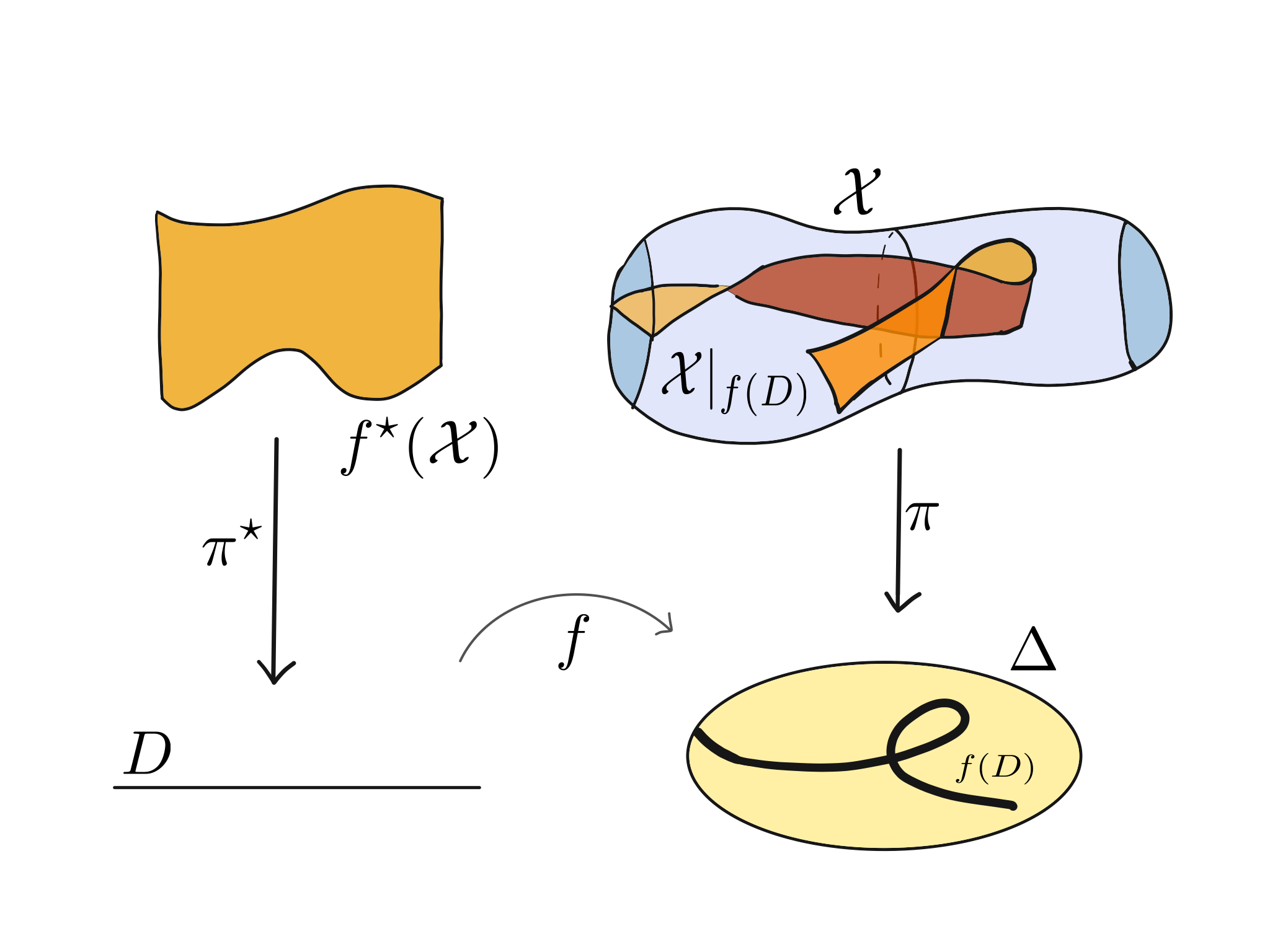}
\caption{Schematically illustrating pullback families.}	
\end{figure}

\noindent
Thus, $\fam{X}\timesd D\to D$ is an object of $\Fam_D$ whenever the fibered product $\fam{X}\timesd D$ exists.
This always holds in the smooth category, as the pullback is along a submersion $\pi\colon\fam{X}\to\Delta$;
but the result above applies to a wide number of topological categories for weak families as well.  
The ubiquity of the pullback construction in applications arises from the fact that it is not only defined on objects, but in fact determines a functor, coherently pulling all families defined over one base back to another.

\begin{observation}
A morphism $D\labelarrow{f}\Delta$ induces a functor $\Fam_\Delta\labelarrow{f^\star}\Fam_D$.
\label{obs:Pullback_Functor}
\end{observation}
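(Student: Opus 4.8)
The plan is to verify the two functor axioms: that $f^\star$ respects identity morphisms and composition of morphisms. Since the statement to prove (Observation \ref{obs:Pullback_Functor}) asserts that $f^\star$ is a genuine functor, I must describe its action on both objects and morphisms, and then check functoriality.

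First I would record the action on objects, which is already given: $f^\star$ sends a family $\fam{X}\to\Delta$ to the pullback family $f^\star\fam{X}=\fam{X}\timesd D\to D$, which by Lemma \ref{lem:Pullbacks_Admit_Sections} is indeed an object of $\Fam_D$. Next I would define the action on morphisms. Given a morphism $\phi\in\Hom(\fam{X}\labelarrow{\pi_\fam{X}}\Delta,\fam{Y}\labelarrow{\pi_\fam{Y}}\Delta)$, i.e.\ a smooth map $\phi\colon\fam{X}\to\fam{Y}$ with $\pi_\fam{Y}\circ\phi=\pi_\fam{X}$, I would define $f^\star\phi\colon f^\star\fam{X}\to f^\star\fam{Y}$ on total spaces by $(x,d)\mapsto(\phi(x),d)$. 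The key point to check here is that this is well-defined, i.e.\ that $(\phi(x),d)$ actually lies in $\fam{Y}\timesd D$: this holds because $\pi_\fam{Y}(\phi(x))=\pi_\fam{X}(x)=f(d)$, using the defining relation of $\phi$ as a morphism of families. I would also verify $f^\star\phi$ is a morphism in $\Fam_D$, which is immediate since $\pi^\star_\fam{Y}\circ f^\star\phi(x,d)=\pi^\star_\fam{Y}(\phi(x),d)=d=\pi^\star_\fam{X}(x,d)$, and smoothness follows as $f^\star\phi$ is the restriction of $\phi\times\id_D$.

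Having defined the action on objects and morphisms, the functoriality axioms are then routine diagram-chases that I would state but not belabor. For identities, $f^\star(\id_\fam{X})$ sends $(x,d)\mapsto(\id_\fam{X}(x),d)=(x,d)$, so $f^\star(\id_\fam{X})=\id_{f^\star\fam{X}}$. For composition, given $\fam{X}\labelarrow{\phi}\fam{Y}\labelarrow{\psi}\fam{Z}$ in $\Fam_\Delta$, I would compute $f^\star(\psi\circ\phi)(x,d)=((\psi\circ\phi)(x),d)$ and compare with $(f^\star\psi\circ f^\star\phi)(x,d)=f^\star\psi(\phi(x),d)=(\psi(\phi(x)),d)$; these agree, so $f^\star(\psi\circ\phi)=f^\star\psi\circ f^\star\phi$.

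I do not anticipate a serious obstacle here, as the result is essentially formal once the action on morphisms is pinned down correctly. The only subtlety worth highlighting is the well-definedness check for $f^\star\phi$ described above, where one genuinely uses the compatibility of $\phi$ with the projection maps; everything else is bookkeeping with the universal property of the fibered product. If one wished to be maximally slick, I would instead invoke the standard fact that pullback along a fixed map is functorial in any category with the relevant fibered products, and note that the fibered products $\fam{X}\timesd D$ all exist in $\Fam_D$ (by Lemma \ref{lem:Pullbacks_Admit_Sections} they remain objects of $\Fam_D$), so that $f^\star$ is simply the restriction of the base-change functor; but the hands-on verification above is more transparent and self-contained.
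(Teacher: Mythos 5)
Your proposal is correct and matches the paper's proof: both define $f^\star\phi$ on total spaces by $(x,d)\mapsto(\phi(x),d)$ and observe that this respects composition (the paper leaves identities, well-definedness, and smoothness implicit, which you spell out). Your extra checks — in particular that $\pi_\fam{Y}(\phi(x))=\pi_\fam{X}(x)=f(d)$ so the image lands in $\fam{Y}\timesd D$ — are exactly the details the paper's ``obviously'' compresses, so nothing genuinely new or missing on either side.
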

\begin{proof}
If $\Phi\colon\fam{X}\to\fam{Y}$ is a morphism of families over $\Delta$ and $f\in\Hom(D,\Delta)$ then $f^\star\Phi\colon f^\star\fam{X}\to f^\star\fam{Y}$ defined by $(f^\star\Phi)(x,d)=(\Phi(x),d)$ is a morphism of families.  This assignment obviously respects composition, as $f^\star(\Phi\circ\Psi)=(f^\star\Phi)\circ(f^\star\Psi)$ and so the operation of pullback defines a functor $\Fam_\Delta\to\Fam_D$
\end{proof}

\noindent
The notions of subfamily and restricted family can be phrased categorically. 
A \emph{(open) subfamily} $\fam{Y}\subset\fam{X}$ in $\Fam_\Delta$ is a pair $(\fam{Y},\iota)$ of a family together with a monomorphism $\fam{Y}\labelarrow{\iota}\fam{X}$.  
When in addition $\iota$ is a closed map, $\fam{Y}$ is a \emph{subfamily} of $\fam{X}$.  
A \emph{restricted family} $\fam{X}|_U$ is the pullback of $\fam{X}\to\Delta$ along the inclusion $\iota\colon U\inject \Delta$.  
A \emph{restricted subfamily} $\fam{Y}|_U$ of $\fam{X}$ is then the combination of these, the pullback along the inclusion $U\inject \Delta$ of the monomorphic image of $\fam{Y}\inject\fam{X}$.

\begin{observation}
\label{obs:Pullback_Eqns}
Let $\fam{X},\fam{Y}$ be objects in $\Fam_\Delta$ and $\Phi\colon\fam{X}\to\fam{Y}$ a morphism of families which is a smooth submersion.
We may then think of $\Phi$ as equipping $\fam{X}$ with the structure of a family over $\fam{Y}$.
Then the pullback $\sigma^\star\fam{X}$ of $\fam{X}\to\fam{Y}$ along any section $\sigma\colon U\to\fam{Y}$ is naturally a restricted subfamily of $\fam{X}\to\Delta$.
\end{observation}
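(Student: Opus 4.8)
The goal is to unwind the definitions and show that the pullback $\sigma^\star\fam{X}$, which a priori is a family over $U$ built from $\fam{X}$ viewed as a family over $\fam{Y}$, is canonically identified with a closed subfamily of $\fam{X}\to\Delta$ restricted to $U$. The plan is to produce an explicit monomorphism $\sigma^\star\fam{X}\inject\fam{X}$ covering the inclusion $U\inject\Delta$, verify it is a morphism in $\cat{Fam}$, and then check that its image is closed so that it qualifies as a (restricted) subfamily rather than merely an open subfamily.

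First I would write out the total space of $\sigma^\star\fam{X}$ explicitly. Viewing $\Phi\colon\fam{X}\to\fam{Y}$ as a submersion makes $\fam{X}$ an object of $\cat{Fam}_\fam{Y}$, so by the construction in \Cref{sec:Pullback_Families} the pullback along $\sigma\colon U\to\fam{Y}$ has total space
\[
\sigma^\star\fam{X}=\set{(x,d)\in\fam{X}\times U\mid \Phi(x)=\sigma(d)},
\]
with projection $(x,d)\mapsto d$ onto $U$. That this is a genuine family over $U$ is immediate from \Cref{lem:Pullbacks_Admit_Sections}, since the relevant pullback is taken along the submersion $\Phi$. Next I would define the candidate map $j\colon\sigma^\star\fam{X}\to\fam{X}$ by $j(x,d)=x$, the restriction of the projection $\fam{X}\times U\to\fam{X}$, and check it is injective: if $j(x,d)=j(x',d')$ then $x=x'$, hence $\sigma(d)=\Phi(x)=\Phi(x')=\sigma(d')$, and since $\sigma$ is a section of $\fam{Y}\to\Delta$ it is injective, giving $d=d'$. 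Thus $j$ is a monomorphism onto its image. To see $j$ is a morphism of families over the inclusion $U\inject\Delta$, observe that the base map sends $d\in U$ to $\sigma(d)$'s image $\pi_\fam{Y}(\sigma(d))=d\in\Delta$, so the relevant square commutes and $j$ lands in $\fam{X}|_U=\pi_\fam{X}\inv(U)$.

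The main obstacle, and the step requiring the most care, is confirming that the image $j(\sigma^\star\fam{X})$ is \emph{closed} in $\fam{X}|_U$, which is exactly what distinguishes a subfamily from a merely open subfamily. For this I would identify the image intrinsically: $x\in j(\sigma^\star\fam{X})$ iff $\Phi(x)=\sigma(\pi_\fam{X}(x))$, i.e.\ the image is the locus in $\fam{X}|_U$ where the two maps $\Phi$ and $\sigma\circ\pi_\fam{X}$ (both valued in $\fam{Y}$) agree. Since $\fam{Y}$ is Hausdorff (being a smooth manifold) and both $\Phi$ and $\sigma\circ\pi_\fam{X}$ are continuous, this equalizer locus is closed in $\fam{X}|_U$, establishing the closedness of the image. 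Finally I would verify that the projection $\pi_\fam{X}$ restricts to a submersion on this image — this follows because $j$ is, via the section structure from \Cref{lem:Pullbacks_Admit_Sections}, a diffeomorphism onto its image intertwining the two projections — so the image is a subfamily in the sense of \Cref{sec:Fam_Spaces}. Putting these together exhibits $\sigma^\star\fam{X}$ as the restricted subfamily $j(\sigma^\star\fam{X})=\fam{X}|_U\cap\Phi\inv(\sigma(U))$ of $\fam{X}\to\Delta$, as claimed.
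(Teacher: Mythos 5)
Your proposal is correct, and its core step is the paper's proof verbatim: the paper's argument consists entirely of noting that $\sigma^\star\fam{X}\to U$ is a family and that the projection $\mathsf{pr}\colon(x,u)\mapsto x$ is injective on $\sigma^\star\fam{X}$, using exactly your chain $\sigma(u)=\Phi(x)=\Phi(y)=\sigma(v)$ and the injectivity of the section $\sigma$. Where you go beyond the paper is in the verifications you flag as "the main obstacle": the paper stops at the monomorphism, which by its own definitions only certifies an \emph{open} subfamily, whereas the distinction between subfamily and open subfamily is precisely closedness of the inclusion. Your identification of the image as the equalizer $\set{x\in\fam{X}|_U\mid \Phi(x)=\sigma(\pi_\fam{X}(x))}$, closed because $\fam{Y}$ is Hausdorff, together with the check that $\pi_\fam{X}$ restricts to a submersion there (via $j$ being a diffeomorphism onto its image intertwining the projections), supplies exactly what the paper's two-line proof leaves implicit. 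So rather than a different route, yours is the same route carried to completion; the paper buys brevity at the cost of an unstated closedness check, and your version makes the statement's "subfamily" (as opposed to merely "open subfamily") claim fully justified.
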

\begin{proof}
The pullback $\sigma^\star\fam{X}\to U$ is a family over $U$, and so it suffices to show that the projection map $\mathsf{pr}\colon\fam{X}\times U\to\fam{X}$, $(x,u)\mapsto x$ is a monomorphism when restricted to $\sigma^\star\fam{X}$.  But if $\mathsf{pr}(x,u)=\mathsf{pr}(y,v)$ then $x=y$ so $\sigma(u)=\Phi(x)=\Phi(y)=\sigma(v)$ and hence $u=v$ as $\sigma$ is injective.
\end{proof}

\subsection{Constructing Pullbacks}
\label{subsec:Construction_Pullback}

As seen in Observation \ref{obs:Pullback_Eqns}, when an equation $\fam{X}\labelarrow{\Phi}\fam{Y}$ between families over $\Delta$  gives $\fam{X}$ the structure of a family over $\fam{Y}$, the pullback along any section of $\fam{Y}\to\Delta$ captures the solutions to $\Phi=\sigma$ as a subfamily of $\fam{X}$.  
Many natural objects can be defined as the solution sets to such equations (point stabilizers are $\set{g\mid g.x=x}$, orthogonal groups are $\set{A\mid A^TJA=J}$ etc) and so understanding when a map $\Phi\in\Hom_{\Fam_\Delta}(\fam{X},\fam{Y})$ actually gives a family $\Phi\colon\fam{X}\to\fam{Y}$ will be of substantial use.

\noindent
Family projections in the smooth category are given by submersions, and the techniques of smooth topology provide us with some useful checks for when a map between families is actually submersive.

%THIS STUFF SHOULD WORK FOR STONG FAMILIES GENERALLY, BUT WITH WORSE ARGUMENTS.

\begin{lemma}
Let $\sigma:\Delta\to\fam{X}$ be a section of $\pi\colon\fam{X}\to\Delta$.  
Then for each $x=\sigma(\delta)$ the tangent space $T_x\fam{X}$ decomposes as a direct sum $T_x\fam{X}=T_x\sigma(\Delta)\oplus T_x\pi\inv\set{\delta}$.
\end{lemma}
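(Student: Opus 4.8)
The plan is to establish the direct sum decomposition $T_x\fam{X}=T_x\sigma(\Delta)\oplus T_x\pi\inv\set{\delta}$ by exhibiting both summands as subspaces of $T_x\fam{X}$, showing their intersection is trivial, and verifying a dimension count forces the sum to be all of $T_x\fam{X}$. First I would note that since $\pi$ is a submersion (this is the defining property of a family in Definition \ref{def:Smooth_Families}), the fiber $\pi\inv\set{\delta}$ is an embedded submanifold of $\fam{X}$, so $T_x\pi\inv\set{\delta}$ is genuinely a linear subspace of $T_x\fam{X}$, equal to $\ker(d\pi_x)$. Its dimension is $\dim\fam{X}-\dim\Delta$. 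Similarly, since $\sigma$ is a smooth section, $\sigma(\Delta)$ is an embedded submanifold (as $\sigma$ is an immersion, being a one-sided inverse to the submersion $\pi$), and $T_x\sigma(\Delta)$ is a subspace of $T_x\fam{X}$ of dimension $\dim\Delta$.

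The key relation to exploit is that $\pi\circ\sigma=\id_\Delta$, so differentiating at $\delta$ gives $d\pi_x\circ d\sigma_\delta=\id_{T_\delta\Delta}$. The central step is to use this identity to show the two subspaces intersect trivially. If $v\in T_x\sigma(\Delta)\cap T_x\pi\inv\set{\delta}$, then $v=d\sigma_\delta(w)$ for some $w\in T_\delta\Delta$, and also $d\pi_x(v)=0$ since $v$ is tangent to the fiber; but then $w=d\pi_x(d\sigma_\delta(w))=d\pi_x(v)=0$, so $v=0$. This shows $T_x\sigma(\Delta)\cap T_x\pi\inv\set{\delta}=\set{0}$.

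Finally I would close the argument with dimensions: the two subspaces have dimensions $\dim\Delta$ and $\dim\fam{X}-\dim\Delta$ respectively, summing to $\dim\fam{X}=\dim T_x\fam{X}$. Since they meet only at the origin, their internal sum is direct and has the full dimension $\dim T_x\fam{X}$, hence equals $T_x\fam{X}$. This gives the desired decomposition $T_x\fam{X}=T_x\sigma(\Delta)\oplus T_x\pi\inv\set{\delta}$.

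I do not anticipate a serious obstacle here, as this is an essentially routine application of the rank-nullity theorem combined with the splitting induced by a section of a submersion; the one point requiring a little care is confirming that $\sigma$ is an immersion so that $T_x\sigma(\Delta)$ really has dimension $\dim\Delta$, but this follows immediately from the chain rule applied to $\pi\circ\sigma=\id_\Delta$, which forces $d\sigma_\delta$ to be injective. The only subtlety worth flagging is that the statement is purely pointwise and linear-algebraic, so no global or smoothness-theoretic input beyond the submersion and section properties is needed.
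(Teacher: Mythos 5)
Your proof is correct and follows essentially the same route as the paper's: identify $T_x\sigma(\Delta)=\mathrm{img}(d\sigma_\delta)$ and $T_x\pi\inv\set{\delta}=\ker(d\pi_x)$, use $d\pi_x\circ d\sigma_\delta=\id_{T_\delta\Delta}$ to see the intersection is trivial, and conclude by counting dimensions. Your added remark that $d\sigma_\delta$ is injective by the chain rule is a slightly more careful justification of what the paper simply asserts via ``$\sigma$ is an embedding,'' but the argument is the same.
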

\begin{proof}
Note $T_x\sigma(\Delta)=\mathrm{img}(d\sigma_\delta)$ and $T_x\pi\inv\set{\delta}=\ker(d\pi_x)$.
As $\sigma$ is an embedding $\pi$ a submersion, $\dim \sigma(\Delta)=	\dim\Delta$ and $\dim\pi\inv\set{\delta}=\dim\fam{X}-\dim\Delta$ respectively.  
Thus the tangent spaces to these submanifolds direct sum to $T_x\fam{X}$ iff $\mathrm{img}(d\sigma_\delta)\cap\ker(d\pi_x)=\set{0}$.  
But $\pi\sigma=\id_\Delta$ so $d\pi_x\circ d\sigma_\delta=\id_{T_\delta\Delta}$, so if $v=d\sigma_\delta(w)\in\ker\pi_x$ then $d\pi_x d\sigma_\delta(w)=0$ so $w$, and hence $v=0$.  
Thus the tangent spaces intersect trivially.
\end{proof}

\noindent
Given a decomposition of the tangent space to a point in the codomain of a smooth map, one can check the map is a submersion by checking that its differential is onto each subspace in the decomposition.  
This gives a \emph{fiberwise} check for when a map between families is a submersion.

\begin{lemma}
Let $\Phi\colon\fam{X}\to\fam{Y}$ be a map of smooth families over $\Delta$.  
Then $\Phi$ gives $\fam{X}$ the structure of a family over $\fam{Y}$ if for each $\delta\in\Delta$ the restriction $\Phi_\delta\colon\fam{X}_\delta\to\fam{Y}_\delta$ is a family. 
\label{prop:Fiberwise_Family=Family}
\end{lemma}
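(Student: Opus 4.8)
The plan is to reduce the claim to a pointwise check about the differential of $\Phi$ and then invoke the two preceding lemmas about the decomposition of tangent spaces of families along sections. The statement asserts that if each restriction $\Phi_\delta\colon\fam{X}_\delta\to\fam{Y}_\delta$ is a submersion (a family), then $\Phi\colon\fam{X}\to\fam{Y}$ is itself a submersion, hence equips $\fam{X}$ with the structure of a family over $\fam{Y}$. Since $\Phi$ is already a morphism in $\Fam_\Delta$, it commutes with the two projections $\pi_\fam{X}\colon\fam{X}\to\Delta$ and $\pi_\fam{Y}\colon\fam{Y}\to\Delta$, and this compatibility is exactly what will let me combine the vertical (fiberwise) surjectivity with the horizontal (base) direction.

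First I would fix a point $x\in\fam{X}$ with $\pi_\fam{X}(x)=\delta$ and set $y=\Phi(x)$, so $\pi_\fam{Y}(y)=\delta$ as well. I would produce a section $\sigma\colon U\to\fam{Y}$ of $\pi_\fam{Y}$ through $y$ on a neighborhood $U\ni\delta$ (available since $\fam{Y}\to\Delta$ is a family), and apply the tangent-space decomposition lemma to write
\[
T_y\fam{Y}=T_y\sigma(U)\oplus T_y\fam{Y}_\delta,
\]
the sum of the horizontal piece tangent to the section and the vertical piece tangent to the fiber. To show $\Phi$ is a submersion at $x$ it suffices to show $d\Phi_x$ surjects onto each summand. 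Surjectivity onto the vertical summand $T_y\fam{Y}_\delta$ is immediate from the hypothesis: since $\Phi_\delta\colon\fam{X}_\delta\to\fam{Y}_\delta$ is a submersion, $d(\Phi_\delta)_x=d\Phi_x|_{T_x\fam{X}_\delta}$ already hits all of $T_y\fam{Y}_\delta$.

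The horizontal direction is where the compatibility $\pi_\fam{Y}\circ\Phi=\pi_\fam{X}$ does the work, and I expect this to be the main (though mild) obstacle. The idea is that pushing a base direction up into $\fam{X}$ via a section of $\pi_\fam{X}$ and then applying $\Phi$ recovers the base direction in $\fam{Y}$, so the section-tangent summand $T_y\sigma(U)$ lies in the image of $d\Phi_x$. Concretely I would choose a section $\tau\colon V\to\fam{X}$ of $\pi_\fam{X}$ through $x$, consider the curve $\Phi\circ\tau$, and use $\pi_\fam{Y}\circ\Phi\circ\tau=\id_V$ to see that $d(\Phi\circ\tau)$ maps $T_\delta\Delta$ isomorphically onto a complement of $T_y\fam{Y}_\delta$, which must therefore span $T_y\sigma(U)$ modulo the vertical summand. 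Combining the two surjectivity statements gives $d\Phi_x(T_x\fam{X})=T_y\fam{Y}$, so $\Phi$ is a submersion at the arbitrary point $x$, completing the proof that $\Phi$ makes $\fam{X}$ a family over $\fam{Y}$.
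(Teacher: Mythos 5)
Your proof is correct and takes essentially the same approach as the paper: both arguments verify that $d\Phi_x$ surjects onto the vertical summand $T_y\fam{Y}_\delta$ (immediate from the fiberwise submersion hypothesis) and onto a horizontal complement obtained by pushing a section of $\pi_{\fam{X}}$ forward along $\Phi$ (valid since $\pi_{\fam{Y}}\circ\Phi=\pi_{\fam{X}}$), invoking the same tangent-space decomposition lemma. The only cosmetic difference is that the paper decomposes $T_y\fam{Y}$ directly with respect to the pushed-forward section $\Phi\circ\sigma$, whereas you decompose with respect to an arbitrary section of $\pi_{\fam{Y}}$ and then note that the image of $d(\Phi\circ\tau)$ is a complement of the vertical summand --- logically the same argument.
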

\begin{proof}
Let $x\in\fam{X}$ with $\pi_\fam{X}(x)=\delta$ and choose a section $\sigma\colon U\to\fam{X}$ through $x$.  Then $\Phi\sigma\colon U\to \fam{Y}$ is a section through $y=\Phi(x)$, and so by the above lemma $\sigma$ and $\Phi\sigma$ provide the direct sum decompositions
$T_x\fam{X}=T_x\sigma(U)\oplus T_x\fam{X}_\delta$ and $T_y\fam{Y}=T_y\Phi\sigma(U)\oplus T_y \fam{Y}_\delta.$
Restricting $\Phi$ to $\sigma(U)$ gives a homeomorphism $\sigma(U)\to\Phi\sigma(U)$ so $d\Phi_x|_{T_x\sigma(U)}$ is an isomorphism onto $T_y\Phi\sigma(U)$.  
But by assumption the restriction $\Phi_\delta\colon\fam{X}_\delta\to\fam{Y}_\delta$ is a map of families, 
and so a submersion, thus $d\Phi_x|_{T_x\fam{X}_\delta}$ maps onto $T_y\fam{Y}_\delta$ so all together $d\Phi_x\colon T_x\fam{X}\to T_y\fam{Y}$ is surjective.  
Thus $\Phi$ is a submersion so $\Phi\colon\fam{X}\to\fam{Y}$ is a smooth family. 
\end{proof}

\begin{corollary}
If $\fam{X}\labelarrow{\Phi}\fam{Y}$ fiberwise gives families $\fam{X}_\delta\to \fam{Y}_\delta$ then given any $f:\Delta\to\fam{Y}$ the solution space of $\Phi(\cdot)=f(\delta)$, denoted $\Sigma(\Phi=f)$, is a family over $\Delta$.	
\end{corollary}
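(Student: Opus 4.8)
The plan is to deduce this corollary directly from the preceding Lemma \ref{prop:Fiberwise_Family=Family} together with the pullback construction developed in Section \ref{sec:Pullback_Families}. The statement to prove is that if $\Phi\colon\fam{X}\to\fam{Y}$ is a morphism in $\Fam_\Delta$ whose fiberwise restrictions $\Phi_\delta\colon\fam{X}_\delta\to\fam{Y}_\delta$ are all families (smooth submersions), and $f\colon\Delta\to\fam{Y}$ is any section, then the solution space $\Sigma(\Phi=f)=\set{x\in\fam{X}\mid \Phi(x)=f(\pi_\fam{X}(x))}$ carries the structure of a family over $\Delta$.

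First I would invoke Lemma \ref{prop:Fiberwise_Family=Family} to promote $\Phi$ from a mere morphism of families over $\Delta$ to a genuine smooth submersion $\Phi\colon\fam{X}\to\fam{Y}$; this is exactly the content of that lemma, and it lets me regard $\fam{X}$ as a family over the new base $\fam{Y}$. Next I would observe that $f\colon\Delta\to\fam{Y}$ is a section of $\pi_\fam{Y}\colon\fam{Y}\to\Delta$ (since $f$ is asserted to be a map with $\Phi(\cdot)=f(\delta)$ making sense fiberwise, meaning $\pi_\fam{Y}\circ f=\id_\Delta$). I would then form the pullback $f^\star\fam{X}$ of the family $\fam{X}\to\fam{Y}$ along $f$, which by the Existence of Pullback Families (and Lemma \ref{lem:Pullbacks_Admit_Sections}) is again a family, now over $\Delta$.

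The key step is then to identify this pullback with the solution space. By Observation \ref{obs:Pullback_Eqns}, the pullback $f^\star\fam{X}$ of $\fam{X}\to\fam{Y}$ along a section $f$ is naturally a restricted subfamily of $\fam{X}\to\Delta$, realized through the projection $\mathsf{pr}\colon\fam{X}\times\Delta\to\fam{X}$, $(x,\delta)\mapsto x$. Concretely, $f^\star\fam{X}=\set{(x,\delta)\mid \Phi(x)=f(\delta)}$, and its image under $\mathsf{pr}$ is precisely $\set{x\mid \Phi(x)=f(\pi_\fam{X}(x))}=\Sigma(\Phi=f)$, since the pullback condition forces $\delta=\pi_\fam{X}(x)$. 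Thus $\Sigma(\Phi=f)$ is the monomorphic image of the family $f^\star\fam{X}$, hence itself a family over $\Delta$, with projection the restriction of $\pi_\fam{X}$.

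I do not anticipate a serious obstacle here, as the corollary is essentially a repackaging of three already-established facts; the only point requiring a little care is the bookkeeping in Observation \ref{obs:Pullback_Eqns} to confirm that the section $f$ indeed induces a \emph{closed} monomorphic image (so that $\Sigma(\Phi=f)$ is a genuine subfamily rather than merely an open one), which follows because $\Sigma(\Phi=f)=\Phi^{-1}(f(\Delta))$ is closed as the preimage of the closed image of a section under the continuous map $\Phi$. The mild subtlety worth flagging is ensuring that the fiberwise-submersion hypothesis on $\Phi$ is genuinely needed only to apply Lemma \ref{prop:Fiberwise_Family=Family}; once $\Phi$ is known to be a submersion globally, everything else is formal manipulation of pullbacks.
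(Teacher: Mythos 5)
Your proof is correct and takes essentially the same route the paper intends: Lemma \ref{prop:Fiberwise_Family=Family} promotes $\Phi$ to a genuine family projection $\fam{X}\to\fam{Y}$, and pulling back along the section $f$ (Lemma \ref{lem:Pullbacks_Admit_Sections} together with Observation \ref{obs:Pullback_Eqns}) identifies $\Sigma(\Phi=f)$ with a subfamily of $\fam{X}\to\Delta$. Your explicit check that $f(\Delta)$ is closed, so that the solution space is a genuine closed subfamily, is a point the paper leaves implicit and is handled correctly.
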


\begin{figure}
\centering\includegraphics[width=0.65\textwidth]{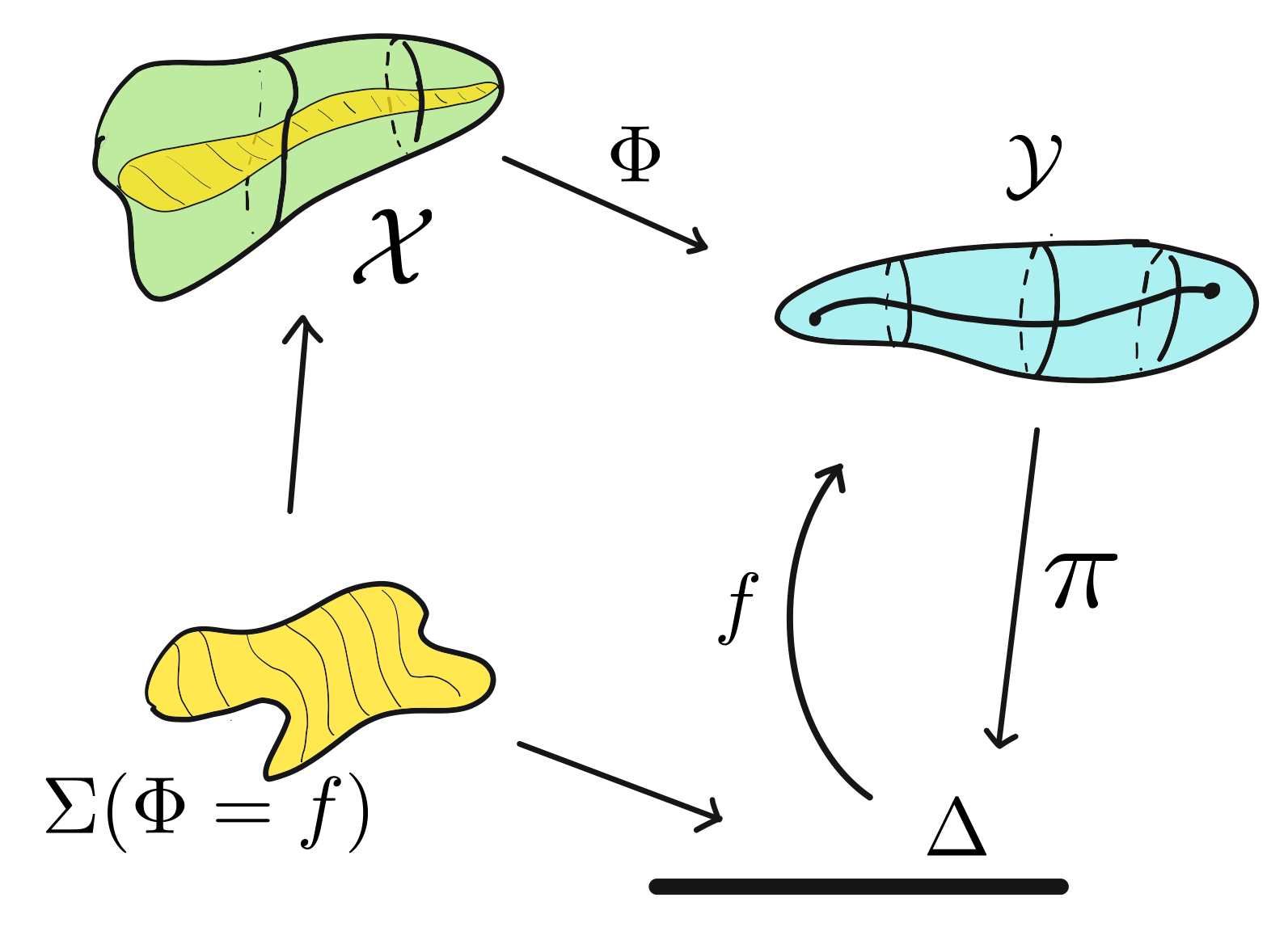}
\caption{A pullback family as a solution to an equation in $\Fam_\Delta$.}	
\end{figure}

\noindent
We can use this to more quickly prove certain subfamilies are families; for example we recall the elements of unit norm int he $\C$ to $\R\oplus\R$ transition.

\begin{example}
Let $\Lambda_\R$ be the family of algebras from Definition \ref{def:Lambda_R} and $\nu$ be the norm map sending a point $x\in \Lambda_\delta$ to $\nu(x)=(x\bar{x},\delta)$ in the trivial $\R$ family $\fam{R}=\R\times\R\to\R$.
Then the pullback of the constant section $\fam{1}\colon\R\to\fam{R}$ sending $\delta\mapsto(1,\delta)$ is the family of units $\U(\Lambda_\R)$.

\end{example}

\noindent
Assuming $\fam{X}\labelarrow{\Phi}\fam{Y}$ is a family is necessary to solve the equations posed by \emph{all} sections $\sigma\colon\Delta\to\fam{Y}$. 
Given a \emph{specific} section $\sigma$, the pullback $\sigma^\ast\fam{X}\to\Delta$ exists under much weaker conditions, however we will not require such refined analysis in this work.

\begin{observation}
If $\fam{X}\to\fam{Y}$ is a family and $\Delta\to\fam{Y}$ a section with $d\Phi(T\fam{X})$ containing $d\sigma(T\Delta)$ then the pullback family exists.	
\end{observation}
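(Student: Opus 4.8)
The plan is to turn the pullback into a preimage and then reduce to the submersion case of Observation~\ref{obs:Pullback_Eqns}, keeping careful track of exactly where full submersivity of $\Phi$ was used there. First I would record the relevant geometry of the section. Since $\sigma\colon\Delta\to\fam{Y}$ is a section of the submersion $\pi_\fam{Y}$, it is a smooth embedding, so $S:=\sigma(\Delta)\subset\fam{Y}$ is an embedded submanifold of dimension $\dim\Delta$ and $\pi_\fam{Y}|_S\colon S\to\Delta$ is a diffeomorphism inverting $\sigma$. As in the computation preceding the statement, the fibered product defining the pullback is
\[
\sigma^\star\fam{X}=\fam{X}\timesd\Delta=\{(x,\delta)\mid \Phi(x)=\sigma(\delta)\}\cong\Phi^{-1}(S),
\]
the last identification using that $\Phi$ is a morphism over $\Delta$, so $\Phi(x)=\sigma(\delta)$ forces $\delta=\pi_\fam{Y}(\Phi(x))=\pi_\fam{X}(x)$; under it the pullback projection $\pi^\star$ corresponds to $\pi_\fam{X}|_{\Phi^{-1}(S)}$. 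Thus the whole question reduces to showing that $\Phi^{-1}(S)$ is a family over $\Delta$ via $\pi_\fam{X}$.

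Next I would carry out the tangent-space bookkeeping, which is precisely what the containment hypothesis is tailored for, and which shows that the projection is submersive \emph{granting} the manifold structure. Fix $(x_0,\delta_0)$ with $y_0:=\Phi(x_0)=\sigma(\delta_0)$. Because $\pi_\fam{Y}\circ\sigma=\id_\Delta$, the differential $d(\pi_\fam{Y})_{y_0}$ restricts to an isomorphism $T_{y_0}S\to T_{\delta_0}\Delta$, and because $\pi_\fam{Y}\circ\Phi=\pi_\fam{X}$ we have $d(\pi_\fam{X})_{x_0}=d(\pi_\fam{Y})_{y_0}\circ d\Phi_{x_0}$. Given $w\in T_{\delta_0}\Delta$, the hypothesis $d\Phi_{x_0}(T_{x_0}\fam{X})\supseteq d\sigma_{\delta_0}(T_{\delta_0}\Delta)=T_{y_0}S$ yields $v\in T_{x_0}\fam{X}$ with $d\Phi_{x_0}(v)=d\sigma_{\delta_0}(w)$, whence $d(\pi_\fam{X})_{x_0}(v)=d(\pi_\fam{Y})_{y_0}(d\sigma_{\delta_0}(w))=w$. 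Equivalently, along $\Phi^{-1}(S)$ the containment says exactly that the projection of the expected tangent space $\{(v,w)\mid d\Phi_{x_0}(v)=d\sigma_{\delta_0}(w)\}$ onto $T_{\delta_0}\Delta$ is surjective. So once $\Phi^{-1}(S)$ is known to be a smooth submanifold with this tangent space, $\pi_\fam{X}|_{\Phi^{-1}(S)}$ is a submersion, and local sections follow verbatim from Lemma~\ref{lem:Pullbacks_Admit_Sections}, exhibiting $\sigma^\star\fam{X}\to\Delta$ as an object of $\Fam_\Delta$.

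The step I expect to be the genuine obstacle is establishing that $\Phi^{-1}(S)$ is a smooth submanifold cut out with the expected tangent space in the first place: this is the transverse preimage theorem for $\Phi$ and the embedded $S$, and it needs $\Phi$ to be transverse to $S$ along $\Phi^{-1}(S)$, i.e. $\operatorname{im} d\Phi_x+T_{\Phi(x)}S=T_{\Phi(x)}\fam{Y}$. The containment hypothesis alone does not supply this; indeed, since containment gives $\operatorname{im} d\Phi_x\supseteq T_{\Phi(x)}S$, transversality together with containment is equivalent to $\operatorname{im} d\Phi_x=T_{\Phi(x)}\fam{Y}$, so the honest content of the weakening of Observation~\ref{obs:Pullback_Eqns} is that $\Phi$ need only be submersive \emph{along} $\Phi^{-1}(\sigma(\Delta))$ rather than on all of $\fam{X}$. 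I would therefore run the argument under that reading: the transverse preimage theorem makes $\Phi^{-1}(S)$ a submanifold with the expected tangent space, the containment portion of the hypothesis upgrades the base projection to a submersion by the computation of the second paragraph, and the identification $\sigma^\star\fam{X}\cong\Phi^{-1}(S)$ then realizes the pullback as a family over $\Delta$, mirroring and generalizing Observation~\ref{obs:Pullback_Eqns} and the fiberwise criterion of Lemma~\ref{prop:Fiberwise_Family=Family}.
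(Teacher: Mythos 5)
The paper records this observation without proof (the preceding sentence explicitly declines ``such refined analysis''), so there is no in-paper argument to compare against; judged on its own, your reduction $\sigma^\star\fam{X}\cong\Phi^{-1}(S)$ and your tangent-space computation are correct, and your diagnosis is the genuinely important content: the containment hypothesis alone is \emph{not} sufficient, so the observation as literally written is false. Note also that the other reading of the hypothesis --- that $\Phi\colon\fam{X}\to\fam{Y}$ is itself a family, i.e.\ a submersion --- makes the containment condition automatic and the observation a vacuous special case of the corollary before it, so your interpretation is the only non-trivial one. One thing worth adding: your proposal asserts insufficiency only by observing that containment fails to deliver the hypothesis of the transverse preimage theorem, which by itself does not rule out the conclusion holding anyway. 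An explicit counterexample closes this: take $\fam{X}=\R^2\times\R\to\R$ with coordinates $(u,v,\delta)$, $\fam{Y}=\R\times\R\to\R$ with coordinates $(y,\delta)$, let $\Phi(u,v,\delta)=(u^2+v^2-\delta^2,\delta)$, and let $\sigma$ be the zero section. At every point of $\Phi^{-1}(\sigma(\Delta))$ the containment $d\sigma(T\Delta)\subseteq d\Phi(T\fam{X})$ holds --- away from the origin $d\Phi$ is surjective, and at the origin $d\Phi(\partial_\delta)=\partial_\delta=d\sigma(\partial_\delta)$ --- yet $\Phi^{-1}(\sigma(\Delta))=\{u^2+v^2=\delta^2\}$ is the double cone, which is not a manifold at the apex, so no pullback family exists in the paper's sense.

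Under your repaired reading --- $\Phi$ submersive along $\Phi^{-1}(\sigma(\Delta))$, which given containment is exactly transversality to $S=\sigma(\Delta)$, as you correctly compute --- the proof is complete and correct: the preimage theorem (which only requires transversality at points of the preimage) gives the submanifold structure with tangent spaces $d\Phi^{-1}(TS)$, and your containment argument then shows the restricted $d(\pi_\fam{X})$ surjects onto $T\Delta$, so the projection is a submersion. Two small remarks: the final appeal to Lemma~\ref{lem:Pullbacks_Admit_Sections} is unnecessary (and not quite verbatim, since that lemma pulls sections of $\pi_\fam{X}$ back along a map into $\Delta$, whereas here one would want local sections of $\Phi$); once the restricted projection is known to be a smooth submersion between manifolds, local sections exist automatically. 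And since submersivity is an open condition, requiring it only at the points of $\Phi^{-1}(\sigma(\Delta))$ is indeed the natural minimal strengthening matching the paper's claim that pullbacks along a \emph{specific} section exist under much weaker conditions than $\Phi$ being a family.
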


\section{Exponentials}
\label{sec:Exponential}

\noindent
We saw in Section \ref{sec:Fam_Algs} that every family of Lie groups has associated to it a family of Lie algebras, with underlying space the pullback of the vertical tangent bundle to $\fam{G}$ with respect to $\pi\colon\fam{G}\to\Delta$ under the identity section $\fam{e}\colon\Delta\to\fam{G}$.
The inverse problem of integrating families of Lie algebras into families of Lie groups has been studied under other names \cite{DouadyL66} (recall, a family of Lie algebras is a \emph{weak Lie algebra bundle} and a family of Lie groups is a \emph{Lie groupoid with equal source and target}) and is quite technically delicate: such an integrated family does not always exist if the Lie groupoid is required to be Hausdorff \cite{Coppersmith77}!  

\noindent
Here we concern ourselves with a more concrete question: given a family of groups $\fam{G}\to\Delta$ and a \emph{subfamily} $\fam{h}\to\Delta$ of its corresponding Lie algebra family, when does the exponential of $\fam{h}$ have the structure of a family of groups?

\begin{proposition}
Let $\fam{G}\to\Delta$ be a family of Lie groups with Lie algebra bundle $\fam{g}$, and exponential map $\fam{exp}\in\Hom_{\Fam_\Delta}(\fam{g},\fam{G})$.  
If $\fam{h}\to\Delta$ is a subfamily of $\fam{g}$, let $\fam{H}$ denote the collection of groups generated by the exponential $\langle\fam{exp}(\fam{h})\rangle\subset\fam{G}$.
Then the projection map $\pi\colon\fam{G}\to\Delta$, restricted to $\fam{H}$, admits local sections.
\label{prop:Exp_Family}
\end{proposition}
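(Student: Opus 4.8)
The goal is to show that $\fam{H} = \langle \fam{exp}(\fam{h})\rangle$, the collection of groups generated fiberwise by exponentiating the subalgebra family $\fam{h}$, admits local sections of the projection $\pi$. The plan is to work locally near an arbitrary point $h \in \fam{H}_\delta$ and produce a smooth section through it, using that each fiber $\fam{H}_\delta$ is the connected subgroup of $\fam{G}_\delta$ with Lie algebra $\fam{h}_\delta$, together with the smoothness of the family exponential map $\fam{exp}\in\Hom_{\Fam_\Delta}(\fam{g},\fam{G})$.

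First I would reduce to the case of producing a section through an element in the image of the exponential. Since $\fam{H}_\delta = \langle \exp_\delta(\fam{h}_\delta)\rangle$, any $h\in\fam{H}_\delta$ can be written as a finite product $h = \exp_\delta(X_1)\cdots\exp_\delta(X_k)$ with each $X_i\in\fam{h}_\delta$. If I can build a local section through each individual factor $\exp_\delta(X_i)$ and show that fiberwise multiplication of sections stays inside $\fam{H}$, then the pointwise product of these sections is a section through $h$, because multiplication $\mu\colon\fam{G}\timesd\fam{G}\to\fam{G}$ is a morphism in $\Fam_\Delta$ and preserves fibers. So the crux is the single-factor case.

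For a single factor, I would choose a local section $\sigma\colon U\to\fam{h}$ of the subfamily $\fam{h}\to\Delta$ through the point $X_i\in\fam{h}_\delta$, which exists since $\fam{h}$ is a subfamily (hence a family, so $\pi|_{\fam{h}}$ admits local sections by Definition \ref{def:Smooth_Families}). Composing with the family exponential gives $\fam{exp}\circ\sigma\colon U\to\fam{G}$, which is a morphism of families, hence satisfies $\pi\circ(\fam{exp}\circ\sigma) = \id_U$ and so is a local section of $\pi$. By construction its value at $\delta$ is $\exp_\delta(X_i)$, and its value at each $t\in U$ is $\exp_t(\sigma(t))\in\fam{exp}(\fam{h}_t)\subset\fam{H}_t$, so the section lands in $\fam{H}$ throughout. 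Taking the fiberwise product of the $k$ such sections (all arranged to be defined on a common neighborhood by intersecting domains) yields $\tau = \mu\circ(\fam{exp}\sigma_1,\ldots,\fam{exp}\sigma_k)$, a smooth section of $\pi$ with $\tau(\delta) = h$ and $\tau(t)\in\fam{H}_t$ for all $t$.

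The main obstacle I anticipate is the continuity, in the parameter $\delta$, of the \emph{number of factors} needed to express a given element of $\fam{H}_\delta$, and more subtly the assertion that the pointwise product $\tau(t)$ genuinely lies in the generated subgroup $\fam{H}_t$ for nearby $t$ rather than merely in $\fam{G}_t$. The first issue I would sidestep by noting that I only need sections through a \emph{given} element $h$, so the integer $k$ is fixed once $h$ is chosen and the factorization is fixed. The second is immediate once each factor section lands in $\fam{exp}(\fam{h}_t)\subseteq\fam{H}_t$, since $\fam{H}_t$ is closed under the group multiplication inherited from $\fam{G}_t$. I should be mildly careful that the local sections $\sigma_i$ of $\fam{h}$ can be chosen on a common domain $U$, which follows by intersecting finitely many neighborhoods, and that $\mu$ restricts to the fibered product so all factors multiply within a single fiber. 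With these points addressed, the construction delivers a local section of $\pi|_{\fam{H}}$ through any $h\in\fam{H}$, establishing the claim.
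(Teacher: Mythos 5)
Your proof is correct and follows essentially the same route as the paper's: factor the given element as a finite product of exponentials, lift each factor $X_i$ via a local section of $\fam{h}\to\Delta$, push forward through $\fam{exp}$, and multiply the resulting sections on a common neighborhood using the smoothness of $\mu$ on $\fam{G}$. Your additional remarks — fixing the integer $k$ by fixing the factorization, and noting that each factor section lands in $\fam{exp}(\fam{h}_t)\subset\fam{H}_t$ so the product stays in $\fam{H}$ — make explicit points the paper's proof leaves implicit, but the argument is the same.
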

\begin{proof}
Let $A\in\langle \fam{exp(h)}\rangle$ with $\pi(A)=\delta$.  
Then $A=A_1\cdots A_n$ for $A_i\in\exp(\mathfrak{h}_\delta)$, and so $A_i=\exp(X_i)$ for some $X_i\in\mathfrak{h}_\delta$. 
As $\fam{h}\to\Delta$ is a family by assumption, there are local sections $\sigma_i\colon U_i\to \fam{h}$ with $\sigma_i(\delta)=X_i$, which exponentiate to sections $\tau_i=\fam{exp}\circ\sigma_i$ through $A_i$ as $\fam{exp}$ is smooth.  Using that multiplication is smooth on the entire family $\fam{G}$, the product of these is a smooth section $\tau=\prod_{i=1}^n\tau_i$ defined on the neighborhood $\delta\in\cap_i U_i$.  Evaluating at $\delta$ shows $\tau(\delta)=A$ and so $\pi\colon\langle \fam{exp(h)}\rangle\to\Delta$ admits local sections.
\end{proof}

\noindent
Unfortunately, whether or not the resulting collection actually forms a \emph{subfamily} of $\fam{G}$ is quite delicate; the Barber Pole of Example \ref{ex:Spiraling_Cyl} comes back yet again.

\begin{example}
\label{ex:Spiraling_Cyl}
Let $G=\S^1\times\R$ and consider the trivial family $G\times\R\to\R$ with corresponding trivial abelian Lie algebra family $\R^2\times\R\to\R$.  Let $\mathfrak{h}_t\subg\R^2$ be the one dimensional Lie algebra $\mathfrak{h}_t=\R(\cos t,\sin t)$ with exponential $H_t=\set{(e^{i s \cos t},s\sin t)\mid s\in\R}$.  Then the collection $\fam{H}=\bigcup_{t\in\R} H_t\times\set{t}$ is \emph{not} a subfamily of $G\times\R$, as the groups $H_t$ are not even Chabauty continuous in $G$.  Indeed as $t\to 0$ the geometric limit of the $H_t$ is the entire cylinder $\S^1\times\R$, but the group $H_{t}=\set{(e^{is},0)\mid s\in\R}$ is just the $\S^1$ factor.
\end{example}

\begin{example}
Consider the trivial family $\GL(2;\R)\times\R\to\R$ and for each $t\in\R$ let 	$\fam{H}_t=\SO(\diag(t,1))$.  Then $\fam{H}\to\R$ is a smooth family of Lie groups transitioning from the two component group $\SO(1,1)$ to the one-component group $\SO(2)$.  
But as the exponential of the Lie algebra family contains only the connected component of the identity in each slice, $\langle \fam{exp(h)}\rangle$ is an open subset of $\fam{H}$ and not a subfamily. 	
\end{example}

\noindent
Resolving this in general is a future goal
 of this research.  However even with this limited understanding the following gives an easily checkable condition for when a collection of subgroups actually forms a subfamily.
 
 \begin{figure}
 \centering
 \includegraphics[width=0.65\textwidth]{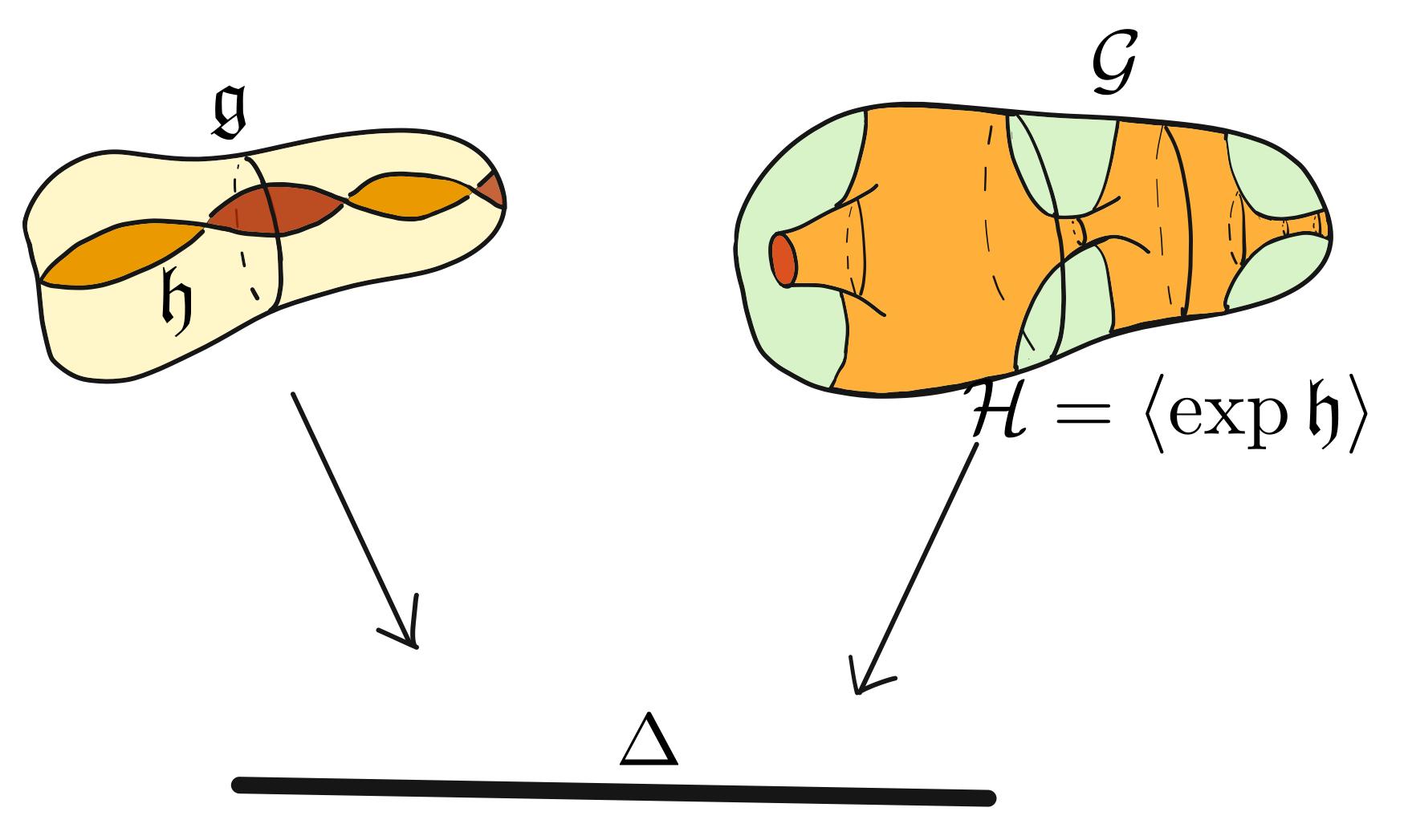}
 \caption{A subfamily $\fam{h}<\fam{g}$ and its exponential.}	
 \end{figure}

\begin{proposition}
Let $\fam{H}\subset\fam{G}$ be a closed submanifold such that each fiber $\fam{H}_\delta$ is a group, and the Lie algebras $\fam{h}\to\Delta$ form a subfamily of $\fam{g}\to\Delta$.
If each $\fam{H}_\delta$ is connected, then $\fam{H}$ is a subfamily of $\fam{G}$.  

If some $\fam{H}_\delta$ are disconnected, then under the additional assumption at least one point of each connected component is contained in the image of a section $\sigma\colon U\to\fam{H}$, the collection $\fam{H}$ is also a subfamily of $\fam{G}$.
\label{prop:Exponential_Check}
\end{proposition}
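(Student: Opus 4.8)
The plan is to show that the restricted projection $\pi\colon\fam{H}\to\Delta$ is a submersion, which (since $\fam{H}$ is already assumed to be a closed submanifold with each fiber a group) is precisely what is needed to conclude $\fam{H}$ is a subfamily of $\fam{G}$. By Lemma \ref{prop:Fiberwise_Family=Family} and the tangent space decomposition lemmas of Section \ref{subsec:Construction_Pullback}, submersivity of $\pi|_\fam{H}$ at a point $A\in\fam{H}_\delta$ follows once we exhibit a smooth local section of $\pi$ through $A$. So the whole argument reduces to constructing, for each $A\in\fam{H}$, a section $\sigma\colon U\to\fam{H}$ with $A\in\sigma(U)$.

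First I would dispose of the identity component. By Proposition \ref{prop:Exp_Family}, since $\fam{h}\subset\fam{g}$ is a subfamily, the collection $\langle\fam{exp}(\fam{h})\rangle$ admits local sections through each of its points. Because each $\fam{H}_\delta$ is a connected group whose Lie algebra is $\fam{h}_\delta$, the identity component is exactly $\fam{H}_\delta=(\fam{H}_\delta)_0=\langle\exp(\fam{h}_\delta)\rangle$; hence in the connected case $\fam{H}=\langle\fam{exp}(\fam{h})\rangle$ and the sections produced by Proposition \ref{prop:Exp_Family} already land in $\fam{H}$, proving $\pi|_\fam{H}$ is a submersion and $\fam{H}$ is a subfamily. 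This settles the first assertion directly.

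For the disconnected case the idea is to reduce an arbitrary point to the identity component by left translation within the fiber. Suppose $A\in\fam{H}_\delta$ lies in a component $C$. By the extra hypothesis there is a section $\tau\colon U\to\fam{H}$ whose value $g_0=\tau(\delta)$ lies in the same component $C$ as $A$. Then $g_0^{-1}A$ lies in the identity component $(\fam{H}_\delta)_0=\langle\exp(\fam{h}_\delta)\rangle$, so by the connected-case argument there is a section $\rho\colon V\to\langle\fam{exp}(\fam{h})\rangle\subset\fam{H}$ through $g_0^{-1}A$. The plan is then to define $\sigma=\mu\circ(\tau,\rho)$ on $U\cap V$, i.e. the fiberwise product $\sigma(t)=\tau(t)\cdot\rho(t)$, using that multiplication $\mu\colon\fam{G}\timesd\fam{G}\to\fam{G}$ is smooth. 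This $\sigma$ is smooth, satisfies $\pi\circ\sigma=\id$, has image in $\fam{H}$ (since each fiber $\fam{H}_t$ is a group and both factors lie in $\fam{H}_t$), and satisfies $\sigma(\delta)=g_0\cdot(g_0^{-1}A)=A$. Thus $\pi|_\fam{H}$ admits a section through every point and is a submersion.

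The main obstacle I anticipate is verifying that $\sigma(t)=\tau(t)\rho(t)$ genuinely stays inside $\fam{H}$ for all $t$ near $\delta$, and more subtly that the component structure behaves controllably — one must be careful that $\rho(t)$ remains in the identity component of $\fam{H}_t$ so that the translate lands in the intended region, and that no Barber-Pole-type failure (Example \ref{ex:Spiraling_Cyl}) sneaks in. This is exactly where the closedness of $\fam{H}$ as a submanifold and the hypothesis that $\fam{h}$ is a genuine \emph{subfamily} (not merely a fiberwise collection of Lie algebras) do the work: closedness guarantees the product section cannot escape $\fam{H}$, while the subfamily condition on $\fam{h}$ is what rules out the discontinuous jump in dimension that the Barber Pole exhibits. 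I would therefore spend the most care confirming that these two hypotheses together force $\sigma(U\cap V)\subset\fam{H}$, after which the submersion conclusion, and hence the subfamily conclusion, is immediate.
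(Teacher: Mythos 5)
Your proposal is correct and takes essentially the same route as the paper: the connected case is dispatched by Proposition \ref{prop:Exp_Family} plus closedness, and the disconnected case by using the hypothesized section to translate $A$ into the identity component and then multiplying the two sections, exactly as in the paper's proof. Your closing worry about the product section escaping $\fam{H}$ is resolved by the observation you yourself make (each fiber $\fam{H}_t$ is a group containing both factors), so no additional argument is needed there.
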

\begin{proof}
In the case that $\fam{H}_\delta$ is connected then $\langle \exp(\mathfrak{h}_\delta)\rangle=\fam{H}_\delta$ and so the result follows immediately from Proposition \ref{prop:Exp_Family} and the additional assumption that $\fam{H}$ is closed.
In the case that $\fam{H}$ has disconnected slices, we need to slightly modify the argument of Proposition \ref{prop:Exp_Family} to show that the restricted projection continues to admit local sections.  Let $A\in\fam{H}_\delta$, and let $B$ be a point in the same component lying in the image of a section $\sigma\colon U\to\fam{H}$. Then $B\inv A$ is in the connected component of the identity, and so by the previous proposition there is a section $\tau\colon V\to\fam{H}$ through $B\inv A$.  Multiplying by the section through $B$ gives a section $\sigma\cdot\tau\colon U\cap V\to\fam{H}$ through $A$.
\end{proof}

\noindent
The additional hypothesis that each component of each fiber group contains at least one point contained in the image of a local section may seem rather contrived, but it is quite common and easily checkable in practice.  In particular, when considering conjugacy limits there are \emph{global} sections through any points of the original group invariant under the conjugation action.

\section{Actions of Families}
\label{sec:Family_Actions}
\index{Family!Action}
\index{Group Action!Families}

Just as the definition of homogeneous spaces requires the notion of group actions, defining \emph{families} of homogeneous spaces requires a notion of \emph{families} of group actions.

\begin{definition}
An action of $\fam{G}$ on $\fam{X}$ in $\Fam_\Delta$ is given by a morphism $\alpha:\fam{G}\timesd\fam{X}\to\fam{X}$ denoted $\alpha(g,x)=g.x$ such that $\alpha(\fam{e},\cdot)=\id_{\fam{X}}$ and $g.(h.(-))=gh.(-)$ as maps $\fam{X}_\delta\to \fam{X}_\delta$, for all $g,h\in\fam{G}_\delta$.
We may think of this as saying \say{$\alpha$ fiberwise determines an action of $\fam{G}_\delta$ on $\fam{X}_\delta$.}  
\end{definition}

\begin{figure}
\centering\includegraphics[width=0.65\textwidth]{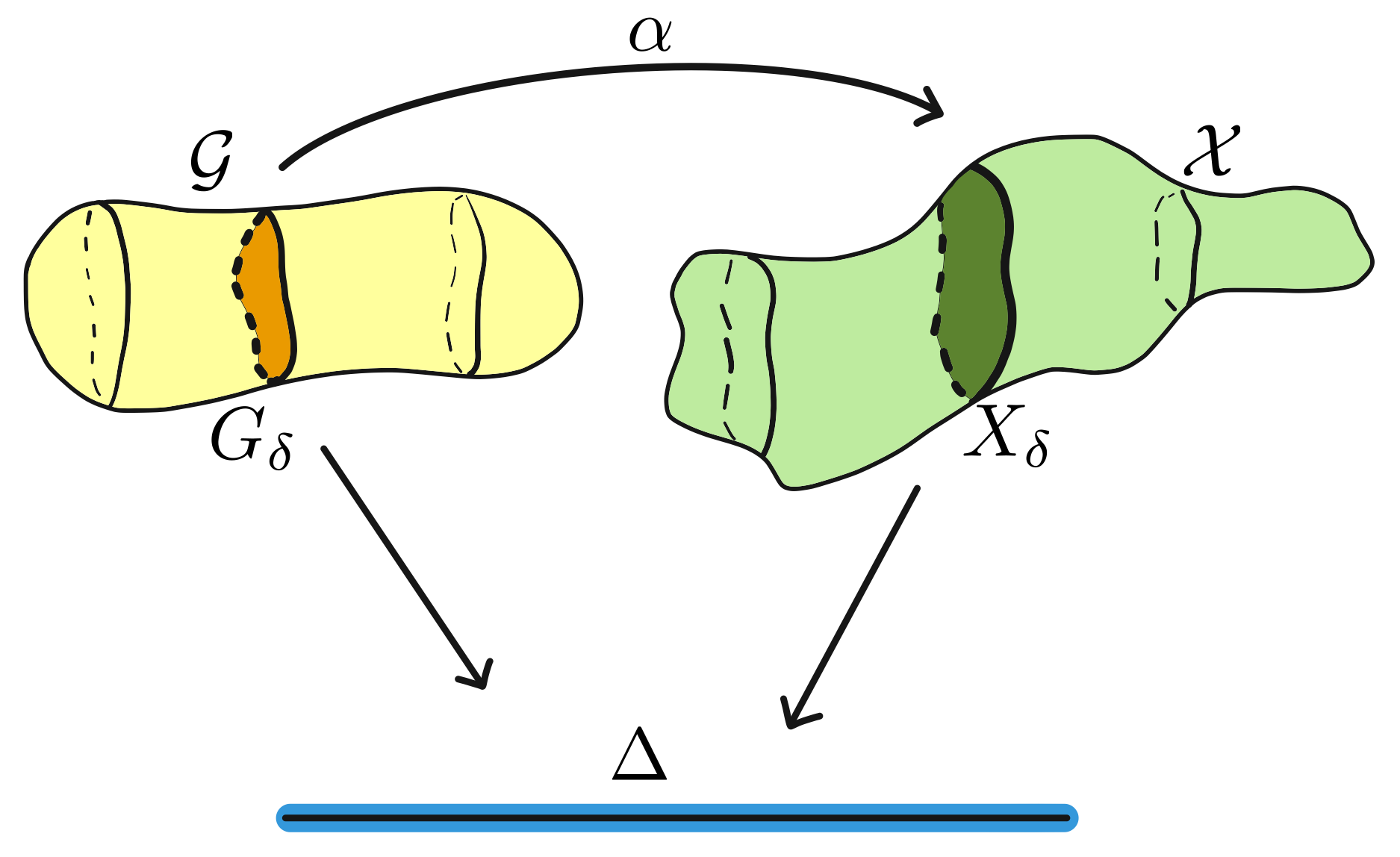}
\caption{A family of group actions.}	
\end{figure}

\begin{definition}
An action $\fam{G}\acts\fam{X}$ is \emph{proper} if the map $\fam{G}\timesd\fam{X}\to\fam{X}\timesd\fam{X}$ defined by $(g,x)\mapsto (x,g.x)$ is a proper map.  
An action $\fam{G}\acts\fam{X}$ is \emph{free} if $g.x=x\implies g\in e(\Delta)$; or equivalently $G_\delta\acts X_\delta$ freely for all $\delta$. 
\end{definition}

\noindent
As an example that will be important later, the action of right translation of a family of subgroups $\fam{H}<\fam{G}$ given by $(g,h)\mapsto gh$ is a free, and also proper as shown below.

\begin{proposition}
\label{prop:Subgroup_Translation}
Let $\fam{G}\to\Delta$ be a family and $\fam{H}\subg\fam{G}$ a subgroup family.  Then the action of $\fam{H}$ on $\fam{G}$ by translation is proper.
\end{proposition}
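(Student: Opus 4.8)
The plan is to reduce the statement about a family to a fiberwise statement combined with a properness check on the total spaces, exploiting the definition of properness given just before the statement. Recall that the action of $\fam{H}$ on $\fam{G}$ by translation is $(h,g)\mapsto hg$ (or $gh$, depending on left/right convention), and properness of this action means the map $\Theta\colon \fam{H}\timesd\fam{G}\to\fam{G}\timesd\fam{G}$ given by $(h,g)\mapsto (g,hg)$ is a proper map. So the whole proof amounts to proving $\Theta$ is proper, i.e.\ that preimages of compact sets are compact.

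First I would set up $\Theta$ carefully and note that it lands in the fiber product $\fam{G}\timesd\fam{G}$ rather than $\fam{G}\times\fam{G}$, since both $g$ and $hg$ lie over the same $\delta\in\Delta$; this is automatic because $\fam{H}\timesd\fam{G}$ only contains pairs $(h,g)$ with $\pi(h)=\pi(g)$. The key observation is that $\Theta$ admits a continuous \emph{inverse} onto its image: from the output pair $(g,hg)$ one recovers $h=(hg)g^{-1}$, which is a continuous operation because multiplication and inversion on $\fam{G}$ are smooth (this is exactly the group-object structure of $\fam{G}\to\Delta$ from Section \ref{sec:Fam_Grp}). Indeed $\Theta$ is injective, and the image of $\Theta$ is precisely the set $\{(g_1,g_2)\in\fam{G}\timesd\fam{G}\mid g_2 g_1^{-1}\in\fam{H}\}$, which is closed in $\fam{G}\timesd\fam{G}$ because $\fam{H}$ is a \emph{closed} subfamily and $g_2 g_1^{-1}$ depends continuously on $(g_1,g_2)$. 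So $\Theta$ is a continuous injection onto a closed subset, with continuous inverse given by $(g_1,g_2)\mapsto (g_2 g_1^{-1}, g_1)$.

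The main step is then to show a homeomorphism onto a closed subset is proper. Let $K\subset\fam{G}\timesd\fam{G}$ be compact; then $\Theta^{-1}(K)=\Theta^{-1}(K\cap\mathrm{img}\,\Theta)$, and since $\mathrm{img}\,\Theta$ is closed, $K\cap\mathrm{img}\,\Theta$ is compact. Because $\Theta$ restricts to a homeomorphism $\fam{H}\timesd\fam{G}\to\mathrm{img}\,\Theta$, the preimage $\Theta^{-1}(K\cap\mathrm{img}\,\Theta)$ is the continuous image of a compact set under $\Theta^{-1}$, hence compact. Thus $\Theta$ is proper, establishing that the translation action is proper.

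The step I expect to be the main obstacle — or at least the one requiring the most care — is verifying that $\mathrm{img}\,\Theta$ is genuinely closed in $\fam{G}\timesd\fam{G}$, since this is where the hypothesis that $\fam{H}$ is a \emph{closed} subfamily (as opposed to merely an open or abstract subgroup family) is essential. One must confirm the map $(g_1,g_2)\mapsto g_2 g_1^{-1}$ is well-defined and continuous on the fiber product $\fam{G}\timesd\fam{G}$ (both arguments share a base point, so the product makes sense fiberwise) and that $\fam{H}$ being closed in $\fam{G}$ pulls back to a closed condition. A secondary subtlety is that the homeomorphism-onto-closed-subset argument uses that $\Theta^{-1}$ is continuous globally, not just on the image; I would phrase this by noting $(g_1,g_2)\mapsto(g_2g_1^{-1},g_1)$ is a globally defined smooth map on $\fam{G}\timesd\fam{G}$ whose restriction to $\mathrm{img}\,\Theta$ inverts $\Theta$. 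With that in hand the properness is immediate and no fiberwise case analysis is needed.
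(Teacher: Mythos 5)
Your proof is correct, but it takes a genuinely different route from the paper's, so a comparison is worthwhile. The paper verifies properness of the map $\alpha(g,h)=(g,gh)$ directly by a sequential compactness argument: given a sequence $(g_i,h_i)$ in the preimage of a compact set $K$, it subconverges the image pairs $(g_i,g_ih_i)$ in $K$, recovers $h_i=g_i\inv(g_ih_i)$ using continuity of inversion and multiplication as family morphisms (the same algebraic identity underlying your inverse map), invokes closedness of $\fam{H}$ to keep the limit $h_\infty$ in $\fam{H}$, and finally appeals to metrizability of the total spaces to upgrade sequential compactness of $\alpha\inv(K)$ to compactness. You instead observe that $\Theta$ is a closed topological embedding --- injective, with image $F\inv(\fam{H})$ closed because $F(g_1,g_2)=g_2g_1\inv$ is a globally defined continuous map on $\fam{G}\timesd\fam{G}$ and $\fam{H}$ is closed, and with continuous inverse given by restricting $(g_1,g_2)\mapsto(g_2g_1\inv,g_1)$ --- and then apply the general fact that a closed embedding is proper. (The left/right translation convention mismatch between your $\Theta$ and the paper's $\alpha$ is immaterial.) Both arguments consume exactly the same hypotheses, namely smoothness of $\mu$ and $\iota$ on the family and closedness of the subfamily $\fam{H}$, but yours buys two things: it eliminates the metrizability step entirely, so it applies verbatim to weak families in arbitrary topological categories (a level of generality the paper explicitly tracks elsewhere), and it isolates a reusable lemma rather than re-proving it inline with subsequences. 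The paper's version, by contrast, is self-contained and requires no auxiliary topological fact beyond subsequence extraction in a metrizable space. The obstacle you flagged --- closedness of $\mathrm{img}\,\Theta$ --- is precisely where the hypothesis that $\fam{H}$ is a \emph{closed} subfamily enters in both proofs, and your handling of it is correct.
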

\begin{proof}
We need to show that the corresponding map $\alpha:\fam{G}\timesd\fam{H}\to\fam{G}\timesd\fam{G}$ given by $(g,h)\mapsto (g, gh)$ is a proper map.  
Let $K\subset\fam{G}\timesd\fam{G}$ be compact with
$\alpha\inv(K)=\set{(g,h)\in\fam{G}\timesd\fam{H}\;\mid\; (g,gh)\in K}$.
Choose a sequence $(g_i,h_i)\in\alpha\inv(K)$, 
then $(g_i, g_ih_i)\in K$ subconverges $(g_{i_k},g_{i_k}h_{i_k})\to p$.  
Projecting onto each factor shows $g_{i_k}\to g_\infty$ and $g_{i_k}h_{i_k}\to k$ and so $p=(g_\infty, k)\in K$.  

Inversion is a morphism $\fam{G}\to\fam{G}$, so $g_{i_k}\inv$ converges to $g_\infty\inv$, and  $(g_{i_k}\inv, g_{i_k}h_{i_k})$ converges in $\fam{G}\timesd\fam{G}$ to $(g_\infty\inv,k)$.  
But multiplication is a morphism so $\mu(g_{i_k}\inv, g_{i_k}h_{i_k})=g_{i_k}\inv g_{i_k}h_{i_k}=h_{i_k}$ converges to $h_\infty=g_\infty\inv k\in\fam{G}$.  
As $\fam{H}$ is a subfamily, it is closed and $h_\infty\in\fam{H}$.
Thus, $(g_{i_k},h_{i_k})\to (g_\infty,h_\infty)\in \fam{G}\timesd\fam{H}$.  
But in fact $\alpha(g_\infty, h_\infty)=(g_\infty,g_\infty h_\infty)=(g_\infty, g_\infty g_\infty\inv k)=(g_\infty, k)\in K$ so $(g_\infty, h_\infty)\in \alpha\inv(K)$.  
Thus this space is sequentially compact, and hence compact as the total space / base, being smooth manifolds, are metrizable.
\end{proof}
 
In the usual theory of group actions, a group element $g\in G$ induces a diffeomorphism $X\to X$.
For families of actions, it is not individual elements but rather the \emph{sections of $\fam{G}\to\Delta$} which fulfill this role.

\begin{lemma}
Given an action $\fam{G}\acts\fam{X}$ and a local section $\sigma:W\to\fam{G}$ of $\fam{G}\to\Delta$, the induced map $\hat{\sigma}:\fam{X}|_W\to\fam{X}|_W$ given by
$\hat{\sigma}(x)=\sigma(\pi_\fam{X}(x)).x$
is a diffeomorphism.	
\end{lemma}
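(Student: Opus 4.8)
The plan is to show $\hat\sigma$ is a well-defined smooth map with a smooth inverse, all constructed within the category of families. The key observation driving everything is that the section $\sigma\colon W\to\fam{G}$ selects, for each $\delta\in W$, a single group element $\sigma(\delta)\in\fam{G}_\delta$, and the action $\alpha$ restricts on each fiber to the genuine group action $\fam{G}_\delta\acts\fam{X}_\delta$. So $\hat\sigma$ acts fiberwise by the diffeomorphism $x\mapsto \sigma(\delta).x$ of $\fam{X}_\delta$, and the content of the lemma is that these fiberwise diffeomorphisms assemble smoothly and invertibly over the base.

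First I would check that $\hat\sigma$ is well-defined and maps $\fam{X}|_W$ to itself, staying over the base. For $x\in\fam{X}|_W$ with $\pi_\fam{X}(x)=\delta\in W$, the element $\sigma(\delta)$ lies in $\fam{G}_\delta$, so the pair $(\sigma(\delta),x)$ lies in the fibered product $\fam{G}\timesd\fam{X}$ and $\alpha(\sigma(\delta),x)=\sigma(\delta).x$ is defined and lies in $\fam{X}_\delta$. Hence $\pi_\fam{X}(\hat\sigma(x))=\delta=\pi_\fam{X}(x)$, so $\hat\sigma$ is a morphism over $W$, i.e. a morphism in $\cat{Fam}_W$. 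Smoothness follows since $\hat\sigma$ is the composite $x\mapsto (\sigma(\pi_\fam{X}(x)),x)\mapsto \alpha(\sigma(\pi_\fam{X}(x)),x)$ of the smooth maps $\pi_\fam{X}$, $\sigma$, the pairing into $\fam{G}\timesd\fam{X}$, and the action morphism $\alpha$.

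Next I would construct the inverse. The natural candidate is the map induced by the pointwise inverse section $\sigma^{-1}\colon W\to\fam{G}$ defined by $\sigma^{-1}(\delta)=\iota(\sigma(\delta))$, where $\iota\colon\fam{G}\to\fam{G}$ is the family inversion morphism; this is smooth as a composite of $\sigma$ and $\iota$, and it is again a section of $\fam{G}\to\Delta$ over $W$ since $\iota$ preserves fibers. Writing $\tau=\widehat{\sigma^{-1}}$, I would verify $\tau\circ\hat\sigma=\id$ and $\hat\sigma\circ\tau=\id$ by working fiberwise: on $\fam{X}_\delta$ these compose to $x\mapsto \sigma(\delta)^{-1}.(\sigma(\delta).x)$ and $x\mapsto \sigma(\delta).(\sigma(\delta)^{-1}.x)$, which reduce to the identity using the action axioms $g.(h.(-))=gh.(-)$ and $\alpha(\fam{e},-)=\id_{\fam{X}}$ together with $\sigma(\delta)^{-1}\sigma(\delta)=\fam{e}(\delta)$. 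Since a morphism of families is determined by its restrictions to fibers, these fiberwise identities give the global identities on $\fam{X}|_W$.

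I do not anticipate a serious obstacle here; the lemma is essentially a bookkeeping exercise confirming that the familiar \say{acting by a group element is a diffeomorphism} statement survives the passage to families once one replaces \emph{elements} by \emph{sections}. The only point requiring slight care is that $\hat\sigma$ is defined only over $W$ (not all of $\Delta$), since $\sigma$ is merely a local section; but this is exactly why the statement is phrased for $\fam{X}|_W$, and the inverse $\tau$ is likewise defined over $W$, so $\hat\sigma$ is a diffeomorphism of $\fam{X}|_W$ as claimed.
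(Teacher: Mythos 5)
Your proof is correct and matches the paper's argument essentially verbatim: both establish smoothness by expressing $\hat\sigma$ as a composite of family morphisms, and both invert it via the section $\iota\circ\sigma$ using the action axioms. Your explicit check of both composites is a minor elaboration of the paper's one-sided verification, not a different approach.
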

\begin{proof}
Let $\sigma:W\to\fam{G}$ be a local section of $\fam{G}\to\Delta$ and $\fam{X}|_W$ the corresponding restriction of $\fam{X}$.  Then $\hat{\sigma}\in\mathsf{End}(\fam{X}|_W)$ as it is expressible as a composition of morphisms,
$\hat{\sigma}(x)=\alpha(\sigma\circ\pi_\fam{G}(\cdot )),\cdot)$, so it suffices to show $\hat{\sigma}$ is invertible.  
As inversion is given by a morphism $\iota\in\mathsf{End}(\fam{G})$, the composition $\iota\circ\sigma$ is a section inducing $\hat{\iota\circ\sigma}\in\mathsf{End}(\fam{X}|_W)$, and $(\hat{\iota\circ\sigma}\hat{\sigma})(x)=\iota(\sigma(\delta(x))).\sigma(\delta(x)).x=x$.
\end{proof}

\begin{figure}
\centering\includegraphics[width=0.65\textwidth]{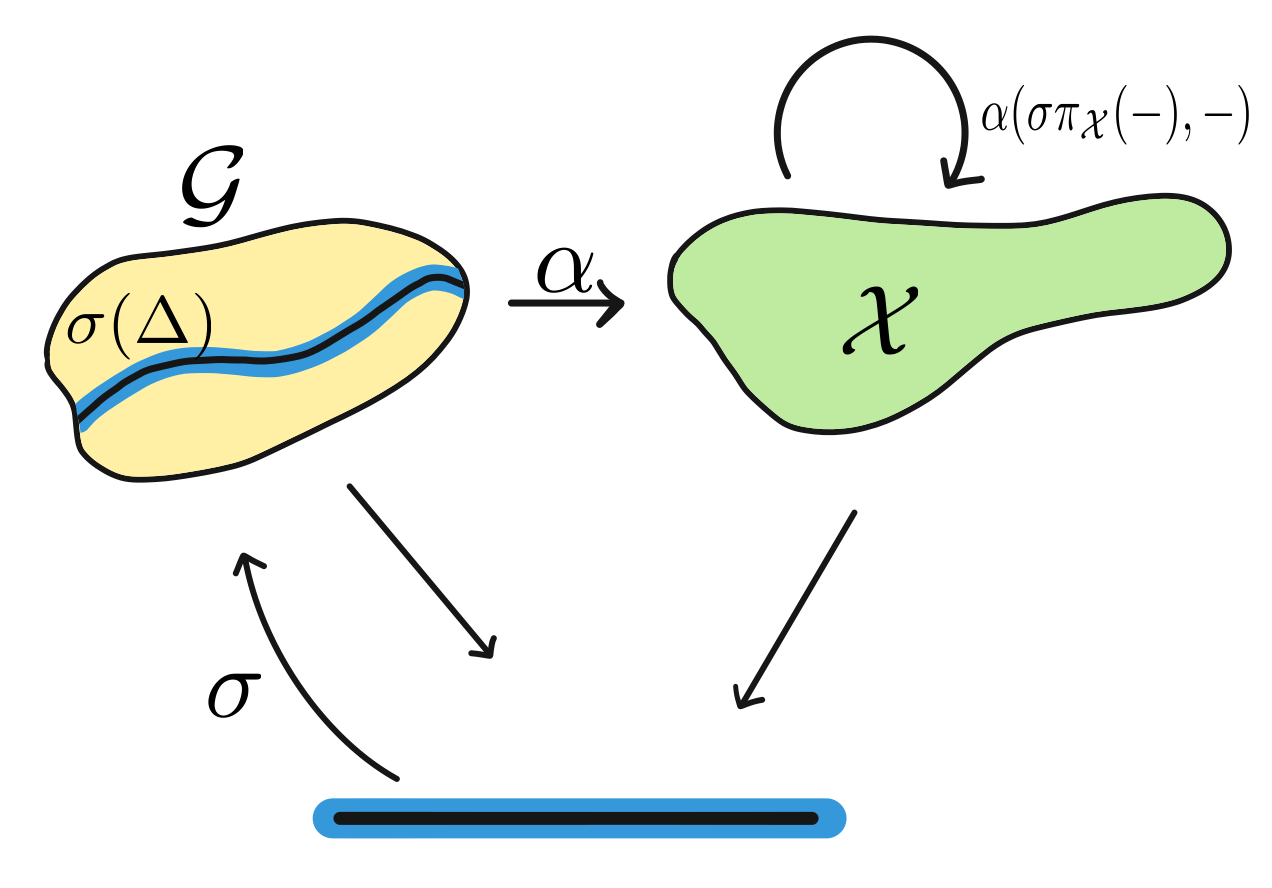}
\caption{The diffeomorphism induced by a section.}
\end{figure}

\noindent
Family actions are intimately related to the standard theory of group actions.
Indeed, actions of the trivial $G$-family over $\Delta$ on families $\fam{X}\to\Delta$ are precisely given by the data of a $G$ action on the total space $\fam{X}$. 
And for nice enough actions, this also works in reverse as seen in Lemma \ref{lem:Quotient_Fam_Action} below.

\begin{lemma}
Let $\fam{G}=G\times\Delta\to\Delta$ be a trivial family of groups and $\fam{X}\to\Delta$ a family.  Then the projection $\fam{G}\to G$ naturally pairs any family action $\fam{G}\acts\fam{X}$ with a standard group action $G\acts \fam{X}$.
\label{lem:Grp_Action_to_Family_Action}
\end{lemma}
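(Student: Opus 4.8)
The statement to prove is Lemma~\ref{lem:Grp_Action_to_Family_Action}: given a \emph{trivial} family of groups $\fam{G}=G\times\Delta\to\Delta$ and a family $\fam{X}\to\Delta$, the projection $\fam{G}\to G$ pairs any family action $\fam{G}\acts\fam{X}$ with an ordinary group action $G\acts\fam{X}$ on the total space. The plan is to exhibit the ordinary action explicitly and verify the two group-action axioms, using the triviality of $\fam{G}$ to make sense of ``the same $g\in G$ acting over every fiber.'' First I would set up notation: write the family action as a morphism $\alpha\colon\fam{G}\timesd\fam{X}\to\fam{X}$ in $\Fam_\Delta$, and recall that a point of $\fam{G}=G\times\Delta$ is a pair $(g,\delta)$ with the fiber $\fam{G}_\delta$ canonically identified with $G\times\{\delta\}\cong G$. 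The key observation is that the fibered product simplifies: since $\fam{G}$ is trivial, $\fam{G}\timesd\fam{X}=\{((g,\delta),x)\mid \delta=\pi_\fam{X}(x)\}$ is canonically diffeomorphic to $G\times\fam{X}$ via $((g,\pi_\fam{X}(x)),x)\mapsto (g,x)$.

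\textbf{Construction.} Using this identification I would define the candidate ordinary action $\beta\colon G\times\fam{X}\to\fam{X}$ by
\[
\beta(g,x)=\alpha\bigl((g,\pi_\fam{X}(x)),\,x\bigr).
\]
This is smooth because it is the composite of $\alpha$ with the smooth inverse of the diffeomorphism $\fam{G}\timesd\fam{X}\cong G\times\fam{X}$ above; equivalently, for fixed $g$ it is the map induced by the global section $\sigma_g\colon\Delta\to\fam{G}$, $\delta\mapsto(g,\delta)$, and such section-induced maps are diffeomorphisms of $\fam{X}$ (indeed of each restriction) by the lemma preceding this statement.

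\textbf{Verification of the axioms.} Two things must be checked. For the identity axiom, the identity section of the trivial family is $\fam{e}(\delta)=(e,\delta)$, so $\beta(e,x)=\alpha((e,\pi_\fam{X}(x)),x)=\bigl(\alpha(\fam{e},\cdot)\bigr)(x)=x$, using $\alpha(\fam{e},\cdot)=\id_\fam{X}$ from the definition of a family action. For compatibility with multiplication, I would fix $g,h\in G$ and $x\in\fam{X}$ with $\delta=\pi_\fam{X}(x)$, and note that $\beta(h,x)=h.x$ lies again in $\fam{X}_\delta$ because the $\fam{G}_\delta$-action preserves the fiber $\fam{X}_\delta$ (the map $\alpha$ is a morphism in $\Fam_\Delta$, hence commutes with the projections to $\Delta$). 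Therefore $\pi_\fam{X}(h.x)=\delta$ and
\[
\beta\bigl(g,\beta(h,x)\bigr)=g.(h.x)=(gh).x=\beta(gh,x),
\]
where the middle equality is exactly the fiberwise associativity $g.(h.(-))=gh.(-)$ on $\fam{X}_\delta$ guaranteed by the definition of a family action, and where the product $gh$ in $G$ corresponds under triviality to the groupoid composition $(g,\delta)(h,\delta)=(gh,\delta)$ in $\fam{G}_\delta$.

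\textbf{Main obstacle.} There is no deep difficulty here; the content of the lemma is essentially bookkeeping, and the only point requiring genuine care is the bijective correspondence claim implicit in the word ``pairs.'' The real work is to confirm that this construction is inverse to the obvious reverse direction---namely, that an ordinary action $G\acts\fam{X}$ which is fiber-preserving (i.e.\ $\pi_\fam{X}(g.x)=\pi_\fam{X}(x)$) reassembles into a family action of $\fam{G}=G\times\Delta$ via $\alpha((g,\delta),x)=g.x$, and that these two passages are mutually inverse. I expect the subtle step to be verifying that an arbitrary ordinary $G$-action arising this way is automatically fiber-preserving precisely when it comes from a family action, so that the correspondence is a genuine bijection rather than merely a one-way construction; this hinges on the fact that morphisms in $\Fam_\Delta$ commute with the base projections, which forces $G$ to act within fibers. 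Once that compatibility is spelled out, smoothness of both directions follows from smoothness of $\alpha$ and of the triviality diffeomorphism, completing the proof.
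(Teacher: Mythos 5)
Your proposal is correct and follows essentially the same route as the paper: the paper's proof consists precisely of defining $\alpha(g,x)=\tilde{\alpha}((g,\pi(x)),x)$ via the projection $\fam{G}\timesd\fam{X}\to G\times\fam{X}$ and asserting this is an action, which is your construction of $\beta$. Your explicit verification of the identity and associativity axioms (using fiber-preservation from the morphism property), and your remark on the reverse direction, simply fill in details the paper leaves implicit --- the latter is in fact handled separately by the paper's subsequent lemma on fiber-preserving actions inducing family actions.
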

\begin{proof}
Let $\tilde{\alpha}\colon\fam{G}\timesd\fam{X}\to\fam{X}$ be the family action and $\mathsf{pr}\colon\fam{G}\timesd\fam{X}\to G\times X$ the projection $((g,\delta),x)\mapsto (g,x)$ of $G\times\Delta$ to $G$ on the first coordinate.  
Then $\alpha\colon G\times\fam{X}\to\fam{X}$ defined by $\alpha(g,x)=\tilde{\alpha}((g,\pi(x)),x)$ defines an action of $G$ on $\fam{X}$. 
\end{proof}

\begin{lemma}
Let $G$ be a group acting on a space $X$ and assume that the orbit map $\piO\colon x\mapsto G.x$ is a submersion onto the orbit space $X/G=\fam{O}$.  
Then the $G$ action on $X$ induces an action of the trivial family $G\times\fam{O}\to\fam{O}$ on $X\to\fam{O}$ in $\Fam_\fam{O}$.
\label{lem:Quotient_Fam_Action}
\end{lemma}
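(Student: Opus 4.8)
The plan is to construct the candidate family action explicitly and then verify it satisfies the two action axioms, with most of the work going into checking that the construction actually lands in the category $\Fam_{\fam{O}}$. First I would set up the data. The orbit map $\piO\colon X\to\fam{O}=X/G$ is a submersion by hypothesis, so $(X,\fam{O},\piO)$ is an object of $\Fam_{\fam{O}}$ by Definition \ref{def:Smooth_Families}; this is the family on which the trivial group family will act. The fiber over a point $[x]\in\fam{O}$ is $\piO\inv\{[x]\}=G.x$, a single $G$-orbit. Thus each member of the family is acted upon transitively by $G$, which is exactly the structure I want to promote to a family action of $G\times\fam{O}\to\fam{O}$.

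Next I would define the action map. The natural candidate is $\beta\colon (G\times\fam{O})\timesd X\to X$ given on the fibered product by $\beta\big((g,[x]),y\big)=g.y$, defined precisely when $[x]=\piO(y)$, i.e. when $y\in G.x$. I must check three things: that $\beta$ is well-defined and lands in $X$ (immediate, since $g.y\in G.(G.x)=G.x$, so $\piO(g.y)=\piO(y)=[x]$ and $\beta$ respects the projections to $\fam{O}$); that $\beta$ is smooth as a map of families (it is the restriction to the fibered product of the composite of the original action $G\times X\to X$ with the evident projection, hence a $\cat{C}$-morphism); and that the two action axioms hold. The identity axiom $\beta((e,[x]),y)=e.y=y$ and the associativity axiom $\beta\big((g,[x]),\beta((h,[x]),y)\big)=g.(h.y)=(gh).y=\beta((gh,[x]),y)$ both follow directly from the fact that the original $G\acts X$ is a genuine group action, restricted fiberwise.

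The step I expect to be the genuine obstacle is verifying that the domain $(G\times\fam{O})\timesd X$ and the map $\beta$ actually constitute an action \emph{in} $\Fam_{\fam{O}}$, rather than merely a fiberwise-defined family of maps. By the product computation of Section \ref{sec:Cat_of_Fams}, the fibered product $(G\times\fam{O})\timesd X$ over $\fam{O}$ exists and is an object of $\Fam_{\fam{O}}$, since the trivial family $G\times\fam{O}\to\fam{O}$ and the family $X\labelarrow{\piO}\fam{O}$ both have projections admitting local sections; I would invoke this to see that $\beta$ is a morphism of families and that its source is the correct fibered product appearing in the definition of a family action. The cleanest route is to observe that this is essentially the converse of Lemma \ref{lem:Grp_Action_to_Family_Action}: there a family action of a trivial $G$-family was shown to yield an ordinary $G$-action on the total space, and here I am running the correspondence backwards, using the hypothesis that $\piO$ is a submersion precisely to guarantee that $X\to\fam{O}$ is a legitimate family so that the reverse passage is available.

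Finally I would confirm the restriction clause: for each $[x]\in\fam{O}$, the restricted map $\beta_{[x]}\colon G\times\piO\inv\{[x]\}\to\piO\inv\{[x]\}$ is exactly the transitive action of $G$ on the orbit $G.x$, so $\beta$ fiberwise determines the group action $G\acts X_{[x]}$, which is what is required of a family action. Assembling these pieces gives the induced action of $G\times\fam{O}\to\fam{O}$ on $X\to\fam{O}$ in $\Fam_{\fam{O}}$, completing the argument.
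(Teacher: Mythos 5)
Your proposal is correct and follows essentially the same route as the paper: both define the action map on the fibered product $(G\times\fam{O})\timesd X\to X$ by $((g,O),y)\mapsto g.y$, verify it respects the projections to $\fam{O}$ (since $g.y$ stays in the orbit of $y$), and observe that the action axioms hold because the map is fiberwise just the original $G$-action restricted to a single orbit. The extra care you take with the existence of the fibered product and smoothness is consistent with, and slightly more explicit than, the paper's argument.
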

\begin{proof}
Let $\alpha\colon G\times X\to X$ be the action map and $\fam{G}=G\times\fam{O}$.
Then the map $\tilde{\alpha}\colon\fam{G}\times_\fam{O}X\to X$ defined by $\tilde{\alpha}((g,O),x)=\alpha(g,x)$ is a morphism of families as $\pi((g,O),x)=O$ implies $x\in O$ and so $gx\in O$ lies in the same $G$ orbit, thus $\piO\tilde{\alpha}((g,O),x)=O$ so $\piO\tilde{\alpha}=\pi$.
But $\tilde{\alpha}$ obviously satisfies the axioms of a group action fiberwise, as it is just the original action of $G$ restricted to a single orbit.
\end{proof}

\noindent
Viewing this at a higher level of abstraction, we may think of the group action $G\acts X$ as a family of group actions over a point $\set{\star}$.  Then the family $G\times X/G$ is the pullback of $G\to\set{\star}$ over the constant map $X/G\to\set{\star}$.  This suggests a generalization, taking families of actions to \emph{families of families} of actions.

\begin{proposition}
If $\fam{G}\acts\fam{X}$ in $\Fam_\Delta$ such that the projection to the orbit space $\fam{X}\mapsto\fam{X}/\fam{G}$ is a family, then $\fam{G}\acts\fam{X}$ induces an action of a family of groups on the family of orbits $\fam{X}\to\fam{X}/\fam{G}$.
\end{proposition}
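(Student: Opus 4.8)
The plan is to lift the action $\fam{G}\acts\fam{X}$ fiberwise to the orbit spaces, mimicking Lemma \ref{lem:Quotient_Fam_Action} but now working relative to the base $\Delta$ rather than over a point. The key observation is that the hypothesis gives us two families at once: the original $\pi_\fam{X}\colon\fam{X}\to\Delta$ and the orbit projection $\piO\colon\fam{X}\to\fam{X}/\fam{G}$, which is assumed to be a family. I would first set up the precise statement of what is to be produced: the orbit space $\fam{X}/\fam{G}$ is itself a family over $\Delta$ (via the induced map $\bar\pi\colon\fam{X}/\fam{G}\to\Delta$ with $\pi_\fam{X}=\bar\pi\circ\piO$), and I want to construct an action of a family of groups, namely the pullback $\bar\pi^\star\fam{G}\to\fam{X}/\fam{G}$, on the family $\piO\colon\fam{X}\to\fam{X}/\fam{G}$ in the category $\Fam_{\fam{X}/\fam{G}}$.

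First I would verify that $\bar\pi\colon\fam{X}/\fam{G}\to\Delta$ is a family, i.e. that $\bar\pi$ admits local sections (or is a submersion). This follows because $\piO$ and $\pi_\fam{X}$ are both families: given a point of $\fam{X}/\fam{G}$, pick a preimage in $\fam{X}$, use a local section of $\pi_\fam{X}$ through it, and push it forward by $\piO$; the composite is a local section of $\bar\pi$. Next, using Observation \ref{obs:Pullback_Functor}, the pullback $\bar\pi^\star\fam{G}\to\fam{X}/\fam{G}$ is a family of groups, since pulling back a group object along $\bar\pi$ yields a group object (the group operations pull back, fiber by fiber, because the fiber of $\bar\pi^\star\fam{G}$ over a point $O\in\fam{X}/\fam{G}$ with $\bar\pi(O)=\delta$ is just $\fam{G}_\delta$).

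Then I would define the action map. A point of $\bar\pi^\star\fam{G}\timesd\fam{X}$ (fibered product now taken over $\fam{X}/\fam{G}$) is a triple $(g,O,x)$ with $\bar\pi(O)=\pi_\fam{G}(g)$ and $\piO(x)=O$, so $g\in\fam{G}_\delta$ and $x\in\fam{X}_\delta$ lie over the same $\delta$; I set $\tilde\alpha((g,O),x)=g.x$ using the original $\alpha$. This is a morphism in $\Fam_{\fam{X}/\fam{G}}$ precisely because $\alpha$ preserves $\fam{G}$-orbits, so $\piO(g.x)=\piO(x)=O$, giving $\piO\circ\tilde\alpha=\pi^\star$. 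The action axioms ($\fam{e}$ acts as identity, compatibility with $\mu$) hold fiberwise since they are inherited directly from $\fam{G}\acts\fam{X}$ over $\Delta$. Finally, I would package the conclusion: by the Quotient Family Theorem's hypotheses being met, $\fam{X}\labelarrow{\piO}\fam{X}/\fam{G}\labelarrow{\bar\pi}\Delta$ exhibits $\fam{X}$ as a family of families, and the action just constructed is the desired action of a family of groups on the family of orbits.

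The main obstacle I anticipate is the bookkeeping of which base the fibered products and pullbacks are taken over — the statement silently shifts the base from $\Delta$ to $\fam{X}/\fam{G}$, and one must be careful that $\bar\pi^\star\fam{G}\timesd\fam{X}$ means the fibered product over $\fam{X}/\fam{G}$, not over $\Delta$. The genuinely nontrivial content is confirming that $\bar\pi$ is well-defined and a family; everything else is a diagram-chase verifying that the morphisms already constructed over $\Delta$ descend to $\fam{X}/\fam{G}$. I expect no hard analysis here, only the verification that the submersion hypotheses compose correctly, which is exactly the kind of fiberwise check enabled by Lemma \ref{prop:Fiberwise_Family=Family}.
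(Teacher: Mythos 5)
Your proposal is correct and is essentially the paper's own argument: the paper likewise forms $\fam{G}^\star:=\bar\pi^\star\fam{G}\to\fam{X}/\fam{G}$ and lets it act fiberwise on $\piO\colon\fam{X}\to\fam{X}/\fam{G}$, observing that the base has simply been enlarged from $\Delta$ to $\fam{X}/\fam{G}$; your verification that $\bar\pi$ admits local sections by pushing sections of $\pi_\fam{X}$ forward through $\piO$ is exactly the paper's earlier topological preliminary. The only blemish is your closing appeal to the Quotient Family Theorem, which is unnecessary (and its proper-free hypotheses are not assumed here) since the proposition's hypothesis already grants that $\piO$ is a family.
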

\begin{proof}
Let $\bar{\pi}\colon\fam{X}/\fam{G}\to\Delta$ be the family projection.  
Then $\fam{G}\to\Delta$ pulls back to $\fam{G}^\star:=\bar{\pi}^\star\fam{G}\to\fam{X}/\fam{G}$, where the fiber over an orbit $O$ is $\fam{G}_\delta$ for $\delta=\bar{\pi}(O)$ the basepoint over which the orbit lies.  
The action of $\fam{G}^\star$ on $\fam{X}$ is defined fiberwise by the action of $\fam{G}$ on $\fam{X}$; we have simply enlarged the base from $\Delta$ to $\fam{X}/\fam{G}$.  
%But as each member of $\fam{X}\to\fam{X}/\fam{G}$ is a $\fam{G}$ orbit, the new action is fiberwise transitive by definition and thus determines a family of geometries.
\end{proof}

\noindent
In both these cases, the original space has been replaced with the \emph{family of orbits}, and has converted a (family) group action into a (family of) \emph{fiberwise transitive} family actions.  This will have important consequences in the coming theory of geometries, allowing us to construct families of geometries from group and family actions.

\subsection{Stabilizers of Actions}

\noindent
If $\fam{G}\acts\fam{X}$ is an action of families over $\Delta$, for each $\delta\in\Delta$ and $x_\delta\in\fam{X}_\delta$ the stabilizer subgroup $\mathsf{stab}_{\fam{G}_\delta}(x_\delta)\subg\fam{G}_\delta$ consists of all elements fixing $x$.  Stabilizers play an important role in the theory of families of geometries to come, so it is necessary to be familiar with some of the subtleties.

\begin{definition}
Let $\fam{G}\acts\fam{X}$ be an action of families over $\Delta$, and $\fam{x}\colon\Delta\to\fam{X}$ a section of the projection map.  Then the \emph{stabilizer collection} of the action is $\fam{stab_G(x)}=\bigcup_{\delta\in\Delta}\mathsf{stab}_\fam{G}(\fam{x}(\delta))$.
\end{definition}

\noindent
It is not always true that the stabilizer collection of a group action is a smooth family, as can be seen in even simple cases such as Example \ref{ex:RP_dS_LightCone} below.
However we will see in \ref{sec:Fams_of_Geos} that stabilizer families of \emph{fiberwise transitive} actions are smooth families for any choice of section, which will be crucial in relating two distinct notions of \emph{family of geometries} to come.

\begin{example}
Consider the standard projective action of $\SO(2,1)$ on $\RP^2$, and produce from this the constant family of groups $\SO(2,1)\times\R\to\R$ acting on the constant family of spaces $\R^3\times\R\to\R$.
Let $\gamma\colon\R\to\RP^2\times\R$ be a section of this projection map such that $\gamma(t)$ is inside t$\H^2\subset\RP^2\times\{t\}$ for $t<0$, $\gamma(0)$ lies on the projectivized lightcone and $\gamma(t)$ is outside the lightcone for $t>0$.
Then the stabilizers of $\gamma(t)$ are one dimensional for $t\neq 0$ but $\mathsf{stab}_{\SO(2,1)}(\gamma(0))$ is $2$ dimensional, so $\mathsf{stab}(\gamma)$ is not a smooth family of groups.
\label{ex:RP_dS_LightCone}
\end{example}

This jump in dimension of the stabilizing subgroup is because of projectivization: if instead of computing the projective stabilizers of $\gamma(t)$ we computed the point stabilizers of some lift $\tilde{\gamma}(t)$, we see in Chapter \ref{chp:Applications}
that these form a smooth family.

\section{Quotients}
\label{sec:Quotient_Families}
\index{Family!Quotient}
\index{Quotient Families}

An action of families $\fam{G}\acts\fam{X}$ gives rise to the \emph{orbit relation} on $\fam{X}$ where $x\sim x'$ if $g.x=x'$ for some $g\in\fam{G}$.  
The quotient is the \emph{orbit space} $\fam{X}/\fam{G}$.  
This orbit space can be badly behaved in general, and so it is of interest to determine which actions have reasonable quotients.
A result of great importance to us is the Quotient Family Theorem, which gives sufficient conditions for the quotient $\fam{X}/\fam{G}$ to be a family in $\Fam_\Delta$. 

\begin{theorem}[Quotient Family Theorem]
Let $\fam{G}\acts\fam{X}$ be a proper free action in $\Fam_\Delta$.  
Then $\fam{X}\labelarrow{\pi}\Delta$ factors as $\fam{X}\labelarrow{\piO}\fam{X}/\fam{G}\labelarrow{\bar{\pi}}\Delta$ with $\fam{X}/\fam{G}\to\Delta$ in $\Fam_\Delta$, as a family of families $\piO\colon\fam{X}\to\fam{X}/\fam{G}$ and $\bar{\pi}\colon\fam{X}/\fam{G}\to\Delta$.
\end{theorem}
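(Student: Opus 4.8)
The plan is to mimic the proof of the classical Quotient Manifold Theorem \cite{Lee} fiberwise, while tracking the family structure provided by the base $\Delta$. The strategy is to first establish that $\fam{X}/\fam{G}$ is a smooth manifold, then produce the two factoring maps $\piO$ and $\bar{\pi}$ as submersions, and finally verify that the composite $\bar{\pi}\circ\piO = \pi$ so that both maps are genuine objects in the appropriate family categories. First I would observe that since $\fam{G}\acts\fam{X}$ is a proper free action in $\Fam_\Delta$, the action is in particular a proper free action of the Lie groupoid $\fam{G}$ on the total space $\fam{X}$; but more usefully, restricting to each slice, the fiber group $\fam{G}_\delta$ acts properly and freely on the fiber $\fam{X}_\delta$. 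This uses that properness of the family action (the map $\fam{G}\timesd\fam{X}\to\fam{X}\timesd\fam{X}$, $(g,x)\mapsto(x,g.x)$ is proper) restricts to properness of each slice action, and that freeness is defined fiberwise. By the classical Quotient Manifold Theorem, each orbit space $\fam{X}_\delta/\fam{G}_\delta$ is then a smooth manifold and the slice orbit map is a smooth submersion.

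Next I would globalize this slicewise picture. The orbit relation on $\fam{X}$ respects the projection $\pi$ (orbits lie within single fibers, since $g.x$ and $x$ share a base point as $\fam{G}\timesd\fam{X}$ only pairs elements over the same $\delta$), so $\pi$ descends to a well-defined map $\bar{\pi}\colon\fam{X}/\fam{G}\to\Delta$ with fibers $\bar{\pi}\inv(\delta) = \fam{X}_\delta/\fam{G}_\delta$. The main work is to show $\fam{X}/\fam{G}$ carries a smooth structure making $\piO\colon\fam{X}\to\fam{X}/\fam{G}$ a submersion. Here I would apply the Quotient Manifold Theorem directly to the proper free action of $\fam{G}$ on the total space $\fam{X}$ viewed as an ordinary (not fiberwise) action — properness and freeness of the family action are exactly the hypotheses needed — to conclude that $\fam{X}/\fam{G}$ is a smooth manifold and $\piO$ a smooth submersion with $\piO\inv(\piO(x))$ the $\fam{G}$-orbit of $x$. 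This simultaneously handles the manifold structure and the first factoring map.

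The hard part will be showing that $\bar{\pi}\colon\fam{X}/\fam{G}\to\Delta$ is itself a submersion, so that $\fam{X}/\fam{G}\to\Delta$ is an object of $\Fam_\Delta$. For this I would use the local-section characterization of submersions: given a point $[x]\in\fam{X}/\fam{G}$ with $\bar{\pi}([x])=\delta$, I take a local section $\sigma\colon U\to\fam{X}$ of the original family projection $\pi$ through $x$ (which exists since $\pi$ is a submersion), and then $\piO\circ\sigma\colon U\to\fam{X}/\fam{G}$ is a smooth section of $\bar{\pi}$ through $[x]$, because $\bar{\pi}\circ\piO\circ\sigma = \pi\circ\sigma = \id_U$. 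This shows $\bar{\pi}$ admits local sections everywhere and hence is a submersion, so $\fam{X}/\fam{G}\to\Delta$ is a family. Finally, the identity $\pi = \bar{\pi}\circ\piO$ gives the asserted factorization, and $\piO\colon\fam{X}\to\fam{X}/\fam{G}$ is simultaneously a family in $\Fam_{\fam{X}/\fam{G}}$ (by the classical theorem applied to the total-space action) while $\bar{\pi}$ is a family in $\Fam_\Delta$, realizing $\fam{X}$ as a \emph{family of families} as claimed. I expect the delicate point requiring care is checking compatibility of the two smooth structures on $\fam{X}/\fam{G}$ — the one inherited as an ordinary quotient manifold and the one making $\bar{\pi}$ fiber over $\Delta$ — which should follow from uniqueness of the quotient smooth structure, but will need the fiberwise and global applications of the Quotient Manifold Theorem to be reconciled explicitly.
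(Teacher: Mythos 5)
There is a genuine gap at the center of your argument. In your second step you propose to conclude that $\fam{X}/\fam{G}$ is a smooth manifold and $\piO$ a submersion by ``applying the Quotient Manifold Theorem directly to the proper free action of $\fam{G}$ on the total space $\fam{X}$ viewed as an ordinary (not fiberwise) action.'' But there is no such ordinary action: $\fam{G}$ is not a Lie group. It is a family of groups (a Lie groupoid with equal source and target), whose multiplication is defined only on the fiber product $\fam{G}\timesd\fam{G}$, and whose action map is defined only on $\fam{G}\timesd\fam{X}$ --- an element $g\in\fam{G}_\delta$ simply does not act on points of $\fam{X}_{\delta'}$ for $\delta'\neq\delta$. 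The classical theorem of \cite{Lee} presupposes a single Lie group acting globally, so its hypotheses are not met, and the smooth structure on $\fam{X}/\fam{G}$ you invoke does not exist by this route. Your slicewise observation (each $\fam{G}_\delta\acts\fam{X}_\delta$ is proper and free, so each $\fam{X}_\delta/\fam{G}_\delta$ is a manifold) is correct but only yields a disjoint union of quotient manifolds; it says nothing about how they fit together smoothly over $\Delta$, which is precisely the content of the theorem. The ``delicate point'' you defer to the end --- reconciling the two smooth structures --- is therefore not a compatibility check but the entire proof.

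The paper fills this gap by redoing the Quotient Manifold Theorem's chart construction from scratch in the family setting: local frames of the Lie algebra family $\fam{g}\to\Delta$ are exponentiated to flows on $\fam{X}$, producing a smooth integrable distribution tangent to the orbits; Frobenius is applied twice to obtain $\fam{G}$-adapted charts $\phi\colon U\to I^k\times I^\ell\times I^m$ that simultaneously flatten the fibers of $\pi$ and the $\fam{G}$-orbits; properness gives Hausdorffness of the orbit space and lets one shrink charts so each orbit meets a chart in a single slice; induced charts on $\fam{X}/\fam{G}$ are then shown to be smoothly compatible (this step needs a translation lemma using local sections of $\fam{G}\to\Delta$ to re-center charts within an orbit), and only in these coordinates do $\piO$ and $\bar{\pi}$ become visibly submersive, as the projections $(x,y,z)\mapsto(x,y)$ and $(x,y)\mapsto x$. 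Two parts of your proposal do survive intact: the observation that orbits lie in single fibers so $\pi$ descends to $\bar{\pi}$, and the local-section argument that $\piO\circ\sigma$ is a section of $\bar{\pi}$, which is exactly how the paper handles the topological part of $\bar{\pi}$. If you wanted a shortcut avoiding adapted charts, the honest alternative would be a Godement-style quotient criterion applied to the orbit relation $\mathrm{img}(\fam{G}\timesd\fam{X}\to\fam{X}\timesd\fam{X})$, but verifying that this relation is a closed embedded submanifold with submersive projections amounts to essentially the same work.
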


\begin{figure}
\centering\includegraphics[width=0.95\textwidth]{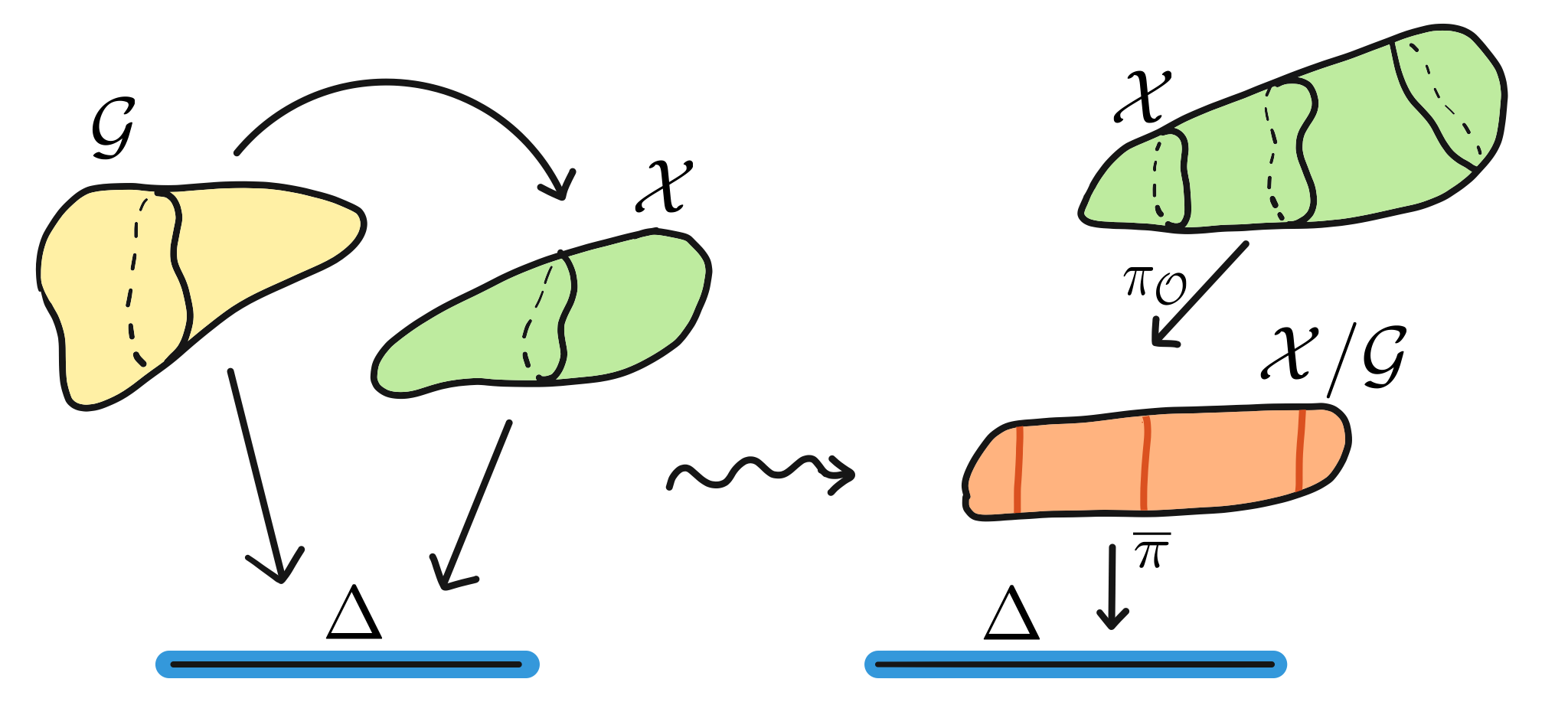}
\caption{The quotient family theorem provides sufficient conditions to take a quotient in the category of families.}	
\end{figure}

\noindent
This is easily the most technical result of Part III, but the proof is a rather straightforward generalization of the Quotient Manifold theorem of smooth topology, with no particularly enlightening new insights.

\begin{theorem}[Quotient Manifold Theorem]
Let $X$ be a smooth manifold and $G$ a Lie group.
Then the orbit space $X/G$ of any proper free action of $G$ on $X$ is a smooth manifold, and the projection $X\to X/G$ is a smooth submersion.
\end{theorem}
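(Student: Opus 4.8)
The plan is to follow the standard slice-theorem proof from smooth topology (as in \cite{Lee}), establishing first the point-set properties of the quotient and then building a smooth atlas out of slices transverse to the orbits. Throughout, write $\piO\colon X\to X/G$ for the orbit map, $k=\dim G$, and $n=\dim X$.

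First I would record the two point-set consequences of properness and freeness. The map $\piO$ is open, since for any open $U\subset X$ the saturation $\piO\inv(\piO(U))=\bigcup_{g\in G} g\cdot U$ is open. Properness says the map $\Theta\colon G\times X\to X\times X$, $(g,x)\mapsto (x,g\cdot x)$, is proper; as $X\times X$ is locally compact Hausdorff, $\Theta$ is closed, so its image, the orbit relation $\mathcal{R}=\{(x,x')\mid x'\in G\cdot x\}$, is closed in $X\times X$. A quotient by an open quotient map whose orbit relation is closed is Hausdorff, and openness of $\piO$ together with second countability of $X$ makes $X/G$ second countable. Thus $X/G$ is a second countable Hausdorff space, and it remains only to produce a compatible smooth structure.

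The key geometric step is the construction of a slice at each point. Fix $p\in X$. Freeness makes the orbit map $\theta_p\colon G\to X$, $g\mapsto g\cdot p$, an injective immersion, and properness upgrades it to a proper embedding, so $G\cdot p$ is a closed embedded $k$-submanifold. I would choose an embedded submanifold $S\ni p$ of dimension $n-k$ with $T_pX=T_p(G\cdot p)\oplus T_pS$. Then $\Psi\colon G\times S\to X$, $(g,s)\mapsto g\cdot s$, has invertible differential at $(e,p)$, so by the inverse function theorem it is a local diffeomorphism there; shrinking $S$, I may assume $\Psi$ restricts to a diffeomorphism of a neighborhood of $(e,p)$ onto its image and that $S$ meets $G\cdot p$ only at $p$.

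The main obstacle, and the one place properness is used essentially, is promoting this local slice to a \emph{global} one: after shrinking $S$, every orbit meets $S$ in at most one point. Suppose not; then there are distinct $s_i,s_i'\in S$ with $s_i'=g_i\cdot s_i$ and $s_i,s_i'\to p$. Properness of $\Theta$ yields a convergent subsequence $g_i\to g$ with $g\cdot p=p$, whence $g=e$ by freeness; but then $(g_i,s_i)$ and $(e,s_i')$ both lie near $(e,p)$ and satisfy $\Psi(g_i,s_i)=\Psi(e,s_i')=s_i'$, contradicting local injectivity of $\Psi$. With this single-slice property in hand, $\Psi\colon G\times S\to G\cdot S$ is an injective local diffeomorphism, hence a diffeomorphism onto the saturated open set $G\cdot S$, and $S\cong (G\cdot S)/G$. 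I would then take the maps $\piO|_S\colon S\to X/G$ as charts: each is a homeomorphism onto an open subset of $X/G$, the transition maps are smooth because on overlaps two slice representatives of the same orbit differ by a group element depending smoothly on the point (read off from the inverse of $\Psi$), and in these adapted coordinates $\piO$ becomes the projection $(g,s)\mapsto s$, manifestly a smooth submersion. This atlas endows $X/G$ with the required smooth structure compatible with $\piO$.
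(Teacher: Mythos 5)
Your proposal is correct, and it is the standard slice-theorem proof from \cite{Lee} --- the very source this thesis cites when quoting the theorem. It does, however, take a genuinely different route from the argument actually carried out here: the thesis does not prove the Quotient Manifold Theorem directly, but its proof of the Quotient Family Theorem specializes to it at $\Delta=\{\star\}$, and there the chart construction is different. The point-set portion matches yours step for step (openness of $\piO$, properness forcing the orbit relation to be closed, hence a Hausdorff quotient), and so does the final shrinking step, where properness plus freeness guarantees that a sufficiently small chart meets each orbit at most once. But where you choose a slice $S$ transverse to a single orbit and apply the inverse function theorem to $\Psi\colon(g,s)\mapsto g\cdot s$, the thesis instead observes that the tangent spaces $T_p(\fam{G}.p)$ assemble into a smooth integrable distribution of rank $\dim G$ (framed by the fundamental vector fields of a local frame for the Lie algebra) and invokes the Frobenius theorem to produce flat charts in which the orbits appear as parallel coordinate slices; the charts on the quotient are then obtained by projecting off the orbit coordinates. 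Your slice construction buys a cleaner local product structure: $\Psi$ is a diffeomorphism $G\times S\cong G\cdot S$, exhibiting $\piO$ locally as a trivial projection, with smoothness of the transition maps read off directly from $\Psi\inv$. The distribution/Frobenius formulation buys generality: in the family setting one must simultaneously flatten two nested foliations --- the fibers $\fam{X}_\delta$ of the base projection and the $\fam{G}$-orbits inside them --- which the flat-chart formalism handles by applying Frobenius twice, whereas your argument would require a parameterized slice construction over the base. Both routes establish the theorem.
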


\noindent
The main use of this result is to prove that the family-theoretic analogs of Group-Space perspective and Automorphism-Stabilizer perspective on families of geometries are equivalent, which allows us to switch perspectives at will.
In general, the Quotient Family Theorem provides one of the main tools, along with products and pullbacks, of creating new families from old; however we have few independent uses of it in this thesis, and by restricting oneself to the Automorphism Stabilizer perspective; this section may be safely skimmed or skipped on a first read through.

\subsection{Topological Preliminaries}

\noindent
The propositions required to prove the Quotient Family theorem roughly divide into two parts: those of a (non-smooth) topological nature, and those dealing crucially with smooth topology.
The topological results here record basic facts about the quotient space $\fam{X}/\fam{G}$ and the associated projections $\fam{X}\to\fam{X}/\fam{G}$ and $\fam{X}/\fam{G}\to\Delta$.
The remainder of this section is devoted to the rather substantial work involved in the smooth category. 
Note that in the case $\Delta=\{\star\}$ the smooth case implies the quotient manifold theorem, which is already quite technical in the details.  
The proof of this theorem (particularly the proof in \cite{Lee}) provides a guidepost for the argument below.

 \begin{proposition}
Let $\fam{G}$ be a family of groups and $\fam{X}$ a family of spaces both over $\Delta$.  
Then for any action of $\fam{G}$ on $\fam{X}$, the projection $\pi:\fam{X}\to\Delta$ factors through the orbit map $\pi_\fam{O}:\fam{X}\to\fam{X}/\fam{G}$ to $\bar{\pi}\colon\fam{X}/\fam{G}\to\Delta$, which admits local sections through each point of the domain.
\end{proposition}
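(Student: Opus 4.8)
The statement has two parts: first, that the map $\pi\colon\fam{X}\to\Delta$ factors through the orbit map $\piO\colon\fam{X}\to\fam{X}/\fam{G}$, and second, that the resulting induced map $\bar\pi\colon\fam{X}/\fam{G}\to\Delta$ admits local sections. The factorization is essentially automatic, while the existence of local sections is where the work lies. Crucially, this proposition lives in the \emph{topological} (non-smooth) preliminary portion of the argument, so I would prove it using only the local-section characterization of family projections, making no appeal to submersions, properness, or freeness. This keeps the result at maximal generality (valid for weak families in any topological category) and lets it serve as a lemma for the full Quotient Family Theorem.

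\textbf{Factorization.} First I would observe that the action $\fam{G}\acts\fam{X}$ is an action of families over $\Delta$, so by the definition of a family action the map $\alpha\colon\fam{G}\timesd\fam{X}\to\fam{X}$ is fibered: $g.x$ lies in the same fiber $\fam{X}_\delta$ as $x$ whenever $\pi(g)=\pi(x)=\delta$. Consequently every $\fam{G}$-orbit is contained in a single fiber $\fam{X}_\delta$, and so $\pi$ is constant on orbits. Since $\piO\colon\fam{X}\to\fam{X}/\fam{G}$ is the quotient by the orbit relation, the universal property of the quotient topology yields a unique continuous map $\bar\pi\colon\fam{X}/\fam{G}\to\Delta$ with $\pi=\bar\pi\circ\piO$. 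This is the desired factorization.

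\textbf{Local sections of $\bar\pi$.} Given a point $O\in\fam{X}/\fam{G}$ with $\bar\pi(O)=\delta$, I would choose any representative $x\in\fam{X}$ with $\piO(x)=O$, so $\pi(x)=\delta$. Because $\fam{X}\to\Delta$ is a family, there is a local section $\sigma\colon U\to\fam{X}$ of $\pi$ defined on a neighborhood $U\ni\delta$ with $\sigma(\delta)=x$. The plan is then to set $\tau:=\piO\circ\sigma\colon U\to\fam{X}/\fam{G}$. This is continuous as a composite, satisfies $\bar\pi\circ\tau=\bar\pi\circ\piO\circ\sigma=\pi\circ\sigma=\id_U$, and passes through $O$ since $\tau(\delta)=\piO(\sigma(\delta))=\piO(x)=O$. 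Hence $\tau$ is a local section of $\bar\pi$ through $O$, which is exactly what is required.

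\textbf{Main obstacle.} The argument above is short precisely because the proposition asks only for \emph{local sections} and not for $\bar\pi$ to be a smooth submersion or for $\fam{X}/\fam{G}$ to be a smooth family; the genuine difficulty (showing $\fam{X}/\fam{G}$ is a manifold on which $\bar\pi$ is submersive) is deferred to the smooth part of the Quotient Family Theorem, where properness and freeness enter. The only point that genuinely needs care here is verifying that the orbit relation is well-defined fiberwise so that the factorization through $\piO$ is legitimate; I would be careful to cite the fibered nature of $\alpha$ from the definition of an action of families rather than assume it. Everything else reduces to the universal property of the quotient topology and the composition of continuous maps, so no smooth machinery is invoked.
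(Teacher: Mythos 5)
Your proposal is correct and follows essentially the same route as the paper's proof: the factorization comes from $\pi$ being constant on orbits (since the family action preserves fibers), and local sections of $\bar{\pi}$ are produced by composing a local section $\sigma$ of $\pi$ through a representative $x$ with the orbit map, checking $\bar{\pi}(\piO\sigma)=\pi\sigma=\id_U$. Your added remarks about why this argument stays purely topological and defers the smooth difficulties to later are consistent with how the paper organizes the Quotient Family Theorem.
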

\begin{proof}
First note that the projection $\pi:\fam{X}\to\Delta$ factors through the orbit map $\pi_\fam{O}:\fam{X}\to\fam{X}/\fam{G}$ as $\pi$ is constant on orbits (the action of $\fam{G}$ on $\fam{X}$ preserves the fibers of $\pi$).
Let $\fam{G}.x\in\fam{X}/\fam{G}$, and let $\sigma:U\to\fam{X}$ be a local section of $\fam{X}\to\Delta$ through $x$.  
Then if $\piO:\fam{X}\to\fam{X}/\fam{G}$ is the orbit map, $\piO\sigma:U\to\fam{X}/\fam{G}$ is a continuous map with $\fam{G}.x\in\piO\sigma(U)$.  
Furthermore $\bar{\pi}(\piO\sigma)=(\bar{\pi}\piO)\sigma=\pi\sigma=\mathsf{id}_U$ so $\piO\sigma$ is a local section of $\bar{\pi}$.  	
\end{proof}

\begin{proposition}
Let $\fam{G}\acts\fam{X}$ be a proper action of family of groups on family of spaces.  Then the orbit space $\fam{X}/\fam{G}$ is locally compact Hausdorff.
\label{prop:Hausd_Orbit_Space}
\end{proposition}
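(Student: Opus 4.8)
The plan is to show the two standard topological consequences of a proper action carry over to the family setting: Hausdorffness follows because the orbit relation is a closed equivalence relation, and local compactness is then automatic once we know the orbit map is open and the total space is a manifold.

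First I would record that the orbit map $\piO\colon\fam{X}\to\fam{X}/\fam{G}$ is open. This is the familiar argument: for any open $U\subset\fam{X}$, the saturation $\piO^{-1}(\piO(U))=\bigcup_{\sigma}\hat{\sigma}(U)$ is a union of images of $U$ under the diffeomorphisms $\hat{\sigma}$ induced by local sections of $\fam{G}\to\Delta$ (using the lemma of Section \ref{sec:Family_Actions}), hence open; since $\fam{X}/\fam{G}$ carries the quotient topology this forces $\piO(U)$ to be open. Openness of $\piO$ immediately gives that $\fam{X}/\fam{G}$ is second countable, inheriting a countable base from $\fam{X}$.

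Next I would prove Hausdorffness via the properness hypothesis. Consider the map $\Theta\colon\fam{G}\timesd\fam{X}\to\fam{X}\timesd\fam{X}$ given by $(g,x)\mapsto(x,g.x)$, which is proper by assumption. The image of $\Theta$ is exactly the orbit relation $R=\set{(x,x')\mid x'\in\fam{G}.x}\subset\fam{X}\timesd\fam{X}$. A proper map into a locally compact Hausdorff space is closed, and $\fam{X}\timesd\fam{X}$ is a closed submanifold of $\fam{X}\times\fam{X}$ (a smooth manifold), hence locally compact Hausdorff; therefore $R$ is closed in $\fam{X}\timesd\fam{X}$. The standard fact that an open equivalence relation with closed graph has Hausdorff quotient then applies: given two distinct orbits $\fam{G}.x\neq\fam{G}.x'$, the pair $(x,x')\notin R$, so by closedness of $R$ there are open $U\ni x$, $U'\ni x'$ with $(U\times U')\cap R=\varnothing$; since $\piO$ is open, $\piO(U)$ and $\piO(U')$ are disjoint open neighborhoods separating the two orbits. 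Thus $\fam{X}/\fam{G}$ is Hausdorff.

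Finally, local compactness is a formal consequence: $\fam{X}$ is a smooth manifold and hence locally compact, $\piO$ is an open continuous surjection, and the continuous open image of a locally compact space in a Hausdorff space is locally compact. The main obstacle I anticipate is purely bookkeeping rather than conceptual: one must be careful that $R$ is genuinely the image of the properness map $\Theta$ landing in $\fam{X}\timesd\fam{X}$ (not the full product $\fam{X}\times\fam{X}$), since the action only relates points in a common fiber $\fam{X}_\delta$; confirming $\fam{X}\timesd\fam{X}$ is itself locally compact Hausdorff so that the ``proper implies closed'' principle applies is the one step deserving genuine care. Everything else reduces to the openness of $\piO$ established at the outset.
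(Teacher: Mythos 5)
Your proof is correct and follows essentially the same route as the paper's: Hausdorffness comes from exhibiting the orbit relation as the image of the proper map $(g,x)\mapsto(x,g.x)$ and invoking that a proper map into a locally compact Hausdorff space is closed, while local compactness follows from openness of $\piO$. If anything you are slightly more careful than the paper, which works directly in $\fam{X}\times\fam{X}$ and quotes the ``quotient Hausdorff iff relation closed'' criterion without flagging that it requires openness of the quotient map --- a point your explicit separation argument, and your attention to $R$ living in the closed subset $\fam{X}\timesd\fam{X}$, handles cleanly.
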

\begin{proof}

Let $\mathcal{G}.x\in\fam{X}/\fam{G}$, and let $K$ be a compact neighborhood of $x\in\fam{X}$.  Then as $\piO$ is an open map, $\piO(K)$ is a compact neighborhood of $\piO(x)=\fam{G}.x\in\fam{X}/\fam{G}$, so the orbit space is locally compact.
Recall that a quotient $Z/\sim$ is Hausdorff if and only if the equivalence relation $\sim$ is a closed subset of $Z\times Z$. Thus to show $\fam{X}/\fam{G}$ is Hausdorff it suffices to show the collection $\set{(x,g.x)\;\mid\; \pi_\fam{X}(x)=\pi_\fam{G}(g)}$ is closed in $\fam{X}\times\fam{X}$.  Note that this is simply the image of the action map $\fam{G}\timesd\fam{X}\to\fam{X}\times\fam{X}$ given by $(g,x)\mapsto (x,g.x)$ which is a proper map by assumption.  But $\fam{X}$ is LCH so $\fam{X}\times\fam{X}$ is, and any continuous proper map into an LCH space is closed, so the orbit relation is closed and we are done.
\end{proof}

\begin{proposition}
Let $\fam{G}\acts\fam{X}$ be any action of a family of groups on a family of spaces.  Then the orbit map $\piO\colon\fam{X}\to\fam{X}/\fam{G}$ is open.
\end{proposition}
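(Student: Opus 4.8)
The plan is to show that the image under $\piO$ of any open set $U\subset\fam{X}$ is open by exhibiting it as a union of open sets, using the fact that translation by sections of $\fam{G}\to\Delta$ acts by homeomorphisms. The key observation is that $\piO^{-1}(\piO(U))$ is precisely the saturation of $U$ under the orbit relation, i.e. the set of all points lying in the same $\fam{G}$-orbit as some point of $U$. Since $\piO$ is a quotient map, $\piO(U)$ is open in $\fam{X}/\fam{G}$ if and only if this saturation $\piO^{-1}(\piO(U))$ is open in $\fam{X}$. So the whole problem reduces to showing that the saturation of an open set is open.

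First I would fix $x\in\piO^{-1}(\piO(U))$ and $\delta=\pi(x)$. By definition there is some $y\in U$ and some $g\in\fam{G}_\delta$ with $g.y=x$. The goal is to produce an open neighborhood of $x$ contained in the saturation. Here I would invoke the diffeomorphism induced by a section: since $\fam{G}\to\Delta$ admits local sections, choose a local section $\sigma\colon W\to\fam{G}$ through $g$ (so $\sigma(\delta)=g$), which by the lemma in Section~\ref{sec:Family_Actions} induces a homeomorphism $\hat\sigma\colon\fam{X}|_W\to\fam{X}|_W$ defined by $\hat\sigma(z)=\sigma(\pi(z)).z$. The restriction $U\cap\fam{X}|_W$ is open, so $\hat\sigma(U\cap\fam{X}|_W)$ is open in $\fam{X}|_W$, hence open in $\fam{X}$ (as $\fam{X}|_W$ is open for $W$ open). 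Moreover $x=\hat\sigma(y)\in\hat\sigma(U\cap\fam{X}|_W)$, and every point of $\hat\sigma(U\cap\fam{X}|_W)$ is of the form $\sigma(\pi(z)).z$ for $z\in U$, hence lies in the same orbit as the point $z\in U$, so it belongs to the saturation.

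Assembling these, each $x$ in the saturation has an open neighborhood $\hat\sigma(U\cap\fam{X}|_W)$ contained in the saturation, so $\piO^{-1}(\piO(U))$ is open, and therefore $\piO(U)$ is open. The main subtlety, and the step I would be most careful about, is ensuring the section can be chosen through the specific group element $g$ realizing the relation $x\sim y$: this is exactly what the local-section property of $\fam{G}\to\Delta$ guarantees, and it is essential that the induced map $\hat\sigma$ act as a genuine homeomorphism of the restricted total space rather than merely a continuous map, which is why the lemma establishing that sections induce diffeomorphisms is the load-bearing ingredient here. No properness or freeness of the action is needed for this particular statement, only that $\fam{G}\to\Delta$ is a family and that the action is by morphisms of families.
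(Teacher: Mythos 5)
Your proof is correct and follows essentially the same route as the paper's: both reduce openness of $\piO(U)$ to openness of the saturation $\piO\inv\piO(U)=\fam{G}.U$ via the quotient topology, then exhibit an open neighborhood of each saturated point by choosing a local section of $\fam{G}\to\Delta$ through the witnessing group element and applying the induced homeomorphism of the restricted family to an open neighborhood in $U$. Your closing remark that neither properness nor freeness is needed is also consistent with the paper, which states the proposition for an arbitrary action.
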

\begin{proof}
Let $U$ be open in $\fam{X}$, then we want to show that $\piO(U)$ is open in $\fam{X}/\fam{G}$.  But as $\fam{X}/\fam{G}$ is equipped with the quotient topology, this is open iff $\piO\inv\piO(U)$ is open in $\fam{X}$.
$$\piO\inv\piO(U)=\set{x\;\mid\; \piO(x)\in\piO(U)}=\set{g.u\;\mid\;\pi_\fam{G}(g)=\pi_\fam{X}(u),\; u\in U}=\fam{G}.U$$
Let $g.u\in\piO\inv\piO(U)$ be arbitrary.  As $\fam{G}\to\Delta$ is a family, choose a local section $\sigma:V\to \fam{G}$ of the projection through $g$.  Then $\pi_\fam{X}\inv(V)$ is an open subset of $\fam{X}$ on which  $\sigma(V)$ acts via a homeomorphism.  Let $W=U\cap\piO\inv(V)$. Then $W$ is an open set containing $u$ and  $\sigma(V).W$ is an open set containing $g.u$ contained in $\fam{G}.U=\piO\inv\piO(U)$, so $\piO$ is an open map.
\end{proof}

\subsection{$\fam{G}$-Adapted Charts}

\noindent
The first step is the construction of a particularly nice atlas of charts for $\fam{X}$, which not only clearly separate fibers of $\fam{X}\to\Delta$ but also $\fam{G}$ orbits.  We call such charts $\fam{G}$-adapted charts.

\begin{definition}
Given a smooth $\fam{G}$ action on $\fam{X}$, a chart $(U,\phi)$ on $\fam{X}$ is said to be $\fam{G}$-\emph{adapted} if 
$\phi:U\to I^k\times I^\ell\times I^m$ is a homeomorphism onto a cube such that
\begin{itemize}
\item The fibers of $\delta$ are precisely $\{x\}\times I^{\ell+m}$ in coordinates all fixed $x\in I^k$.
\item The fibers of $\pi_\mathcal{O}$ are precisely $\{x,y\}\times I^m$ for all fixed $x,y\in I^{k+\ell}$.
\end{itemize}

\noindent
That is, the coordinate chart is sliced into parallel copies of $I^{\ell+m}$ each representing the intersection of some $M_\delta$ with $U$, and these are further sliced into parallel copies of $I^m$ representing the intersection of $\mathcal{G}$ orbits with $U$.

\end{definition}

\begin{figure}
\centering\includegraphics[width=0.5\textwidth]{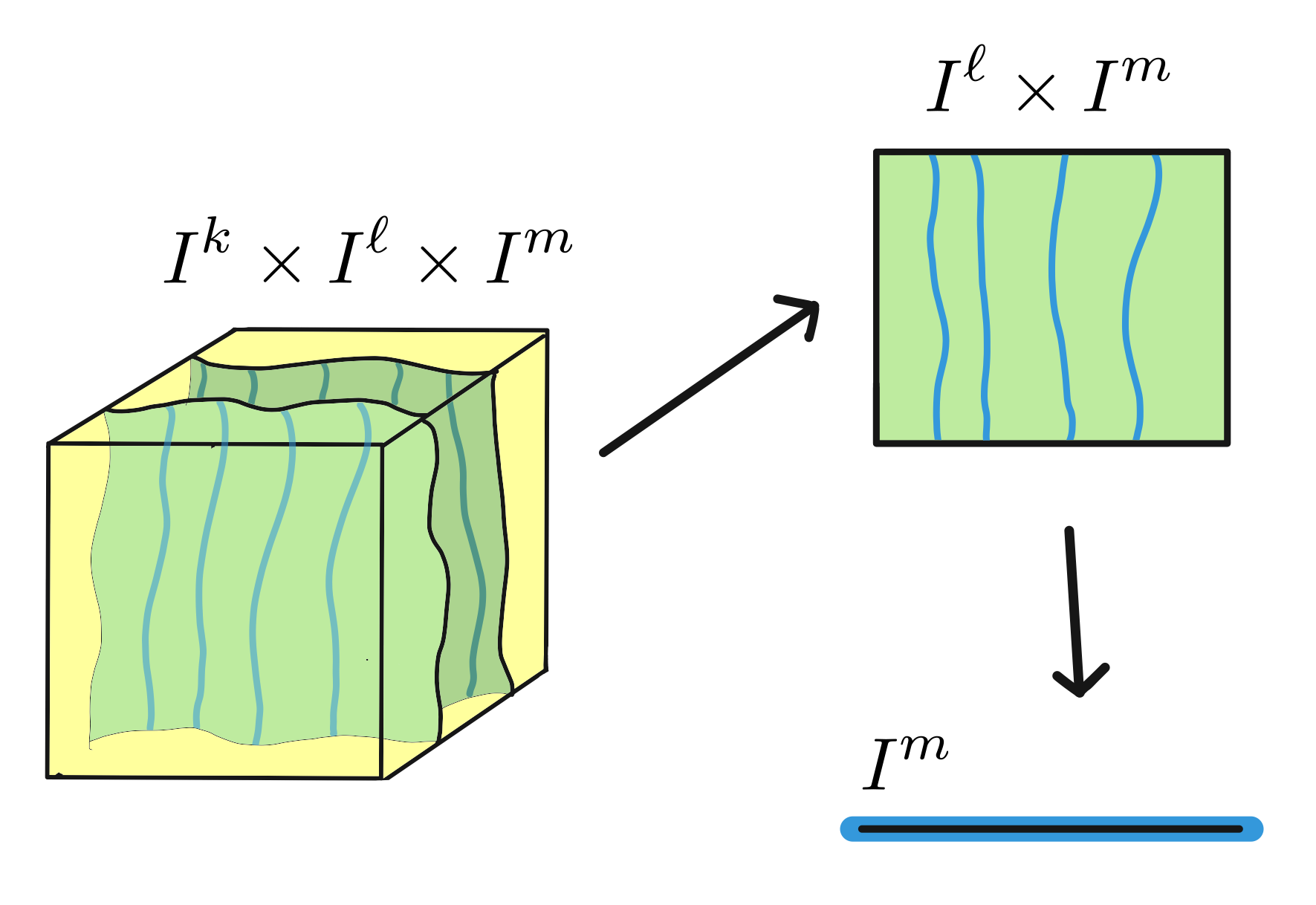}
\caption{A $\fam{G}$ adapted chart for $\fam{X}$ allows both the projection onto the orbit space and the further projection onto the base to be realized as smooth submersions.}
\end{figure}

\noindent
A $\fam{G}$-adapted chart $U$ is said to be \emph{centered at $p\in U$} if $\phi(p)=(0,0,0)$.	
It simplifies things to be able to restrict our attention to coordinate charts centered at specific points of interest; below we show that any chart can be easily modified to be centered at any point in its domain.

\begin{lemma}
\label{lem:Recentering_Charts}
If $(U,\phi)$ is a $\fam{G}$-adapted coordinate chart and $p\in U$ then there is another $\fam{G}$-adapted chart $(U,\psi)$ centered at $p$.
\end{lemma}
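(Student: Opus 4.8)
The plan is to produce the recentered chart by post-composing the given chart $\phi$ with a translation of the ambient cube $I^k\times I^\ell\times I^m$. The key observation is that the two defining properties of a $\fam{G}$-adapted chart are preserved by translations of the cube that respect the product structure, so a single affine adjustment simultaneously fixes the centering condition without disturbing the slicing by fibers of $\delta$ and fibers of $\piO$.

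First I would set $\phi(p)=(a,b,c)\in I^k\times I^\ell\times I^m$. The natural candidate is $\psi=\tau\circ\phi$ where $\tau$ is the translation $\tau(x,y,z)=(x-a,y-b,z-c)$. The immediate technical wrinkle is that $\tau$ need not map $I^k\times I^\ell\times I^m$ into itself, so strictly speaking one should first shrink $U$ to a subcube containing $p$ on which the translated image again lands inside a standard cube (or, equivalently, rescale after translating). I would phrase this as: replace $U$ by a smaller $\fam{G}$-adapted chart $U'\ni p$ whose image under $\phi$ is a subcube of $I^k\times I^\ell\times I^m$ centered near $(a,b,c)$, then translate. Since restricting a $\fam{G}$-adapted chart to an open subcube of the product form $J^k\times J^\ell\times J^m$ preserves adaptedness (the fiber and orbit slices restrict to slices of the same product form), this reduction is harmless.

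The heart of the argument is then checking that $\psi=\tau\circ\phi$ is again $\fam{G}$-adapted and centered at $p$. Centering is immediate: $\psi(p)=\tau(a,b,c)=(0,0,0)$. For adaptedness, note that $\tau$ preserves the product decomposition — it sends the slice $\{x\}\times I^{\ell+m}$ to $\{x-a\}\times I^{\ell+m}$ and the slice $\{x,y\}\times I^m$ to $\{x-a,y-b\}\times I^m$. Therefore the preimages under $\psi$ of the coordinate-$x$ slices are exactly the preimages under $\phi$ of the coordinate-$x$ slices, which by hypothesis are the fibers of $\delta$; and similarly the coordinate-$(x,y)$ slices pull back to the fibers of $\piO$. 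Thus both bulleted conditions transfer verbatim from $\phi$ to $\psi$.

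I do not expect any genuine obstacle here; the statement is essentially a bookkeeping lemma, and the only thing requiring care is the boundary/codomain issue that a translation moves the cube off of the standard $I^k\times I^\ell\times I^m$. The cleanest writeup handles this once at the outset by passing to a subcube (so that the translated image is again contained in a cube, which we can rescale to $I^{k+\ell+m}$ if we insist on the normalized form), and then the verification of the two slicing conditions is a one-line consequence of $\tau$ respecting the product structure. So the main ``step'' is really just articulating the reduction to a subcube cleanly enough that the translation is well-defined as a homeomorphism onto a standard cube.
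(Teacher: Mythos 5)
Your core verification is the same as the paper's: both arguments come down to the observation that a coordinate-wise self-map of the model cube preserves which coordinates are fixed and which are free, hence carries $\delta$-slices to $\delta$-slices and $\piO$-slices to $\piO$-slices, so adaptedness transfers verbatim. The difference is in how the codomain issue is handled, and here your route proves something slightly weaker than the statement. The lemma asserts a recentered chart $(U,\psi)$ \emph{on the same domain} $U$, whereas your shrink-to-a-subcube-and-rescale step only produces an adapted chart centered at $p$ on some smaller $U'\ni p$. The paper avoids shrinking entirely with a one-line trick: conjugate the translation through the coordinate-wise homeomorphism $h\colon I\to\R$, $h(x)=\tfrac{x}{1-x^2}$. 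Writing $H_N$ for $h$ applied in each of the $N=k+\ell+m$ coordinates, the chart
\begin{equation*}
\psi \;=\; H_N^{-1}\bigl(H_N\circ\phi(\cdot)-H_N(\phi(p))\bigr)
\end{equation*}
is a homeomorphism from all of $U$ onto the full cube $I^k\times I^\ell\times I^m$, is centered at $p$ since $H_N$ fixes the origin, and is still $\fam{G}$-adapted because both $H_N$ and the translation act coordinate-wise. Since translation is a self-homeomorphism of $\R^N$ (unlike of $I^N$), transporting it through $H_N$ removes the boundary problem you flagged, with no restriction of domain and no case analysis about subcubes. Your version would in fact suffice for the paper's downstream use (smooth compatibility of the induced charts on $\fam{X}/\fam{G}$ is a local check, so recentering on a smaller neighborhood of $p$ is enough), but as a proof of the lemma as stated you should either adopt the conjugation trick or weaken the statement to a chart on a sub-neighborhood.
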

\begin{proof}
Let $h:I\to\R$ be the homeomorphism given by  $x\mapsto \frac{x}{1-x^2}$ and $H_N:I^N\to\R^N$ be the same map coordinate-wise.  Then if $N=k+\ell+m$ we may consider the point $H_N\circ\phi(p)\in\R^N$, and create a new chart $U\to\R^N$ via 
$H_N\circ\phi(\cdot)-H_N\circ\phi(p)$
This new chart is clearly centered at $p$, and as $H_N$ fixes the origin, so is the chart 
$\psi=H_N\inv(H_N\circ\phi(\cdot)-H_N\circ\phi(p))$.
This chart continues to be $\fam{G}$-adapted as if $\Pi$ is any hyperplane defined by fixing some coordinates then neither applying $H_N$ nor translation affects which coordinates are free and which are constants.
\end{proof}

\noindent
The technical hurdle to overcome is now to show that such $\fam{G}$-adapted charts actually exist when the given action is free and proper.

\begin{proposition}[$\fam{G}$-adapted charts for $\fam{X}$]
\label{prop:G_Adapted}
Let $\fam{G}$ act freely \& properly on $\fam{X}$.  Then for each $p\in \fam{X}$ there is a $\fam{G}$-adapted chart centered at $p$.	
\end{proposition}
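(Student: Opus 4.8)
The plan is to build the $\fam{G}$-adapted chart in two nested stages, first adapting to the projection $\pi\colon\fam{X}\to\Delta$ and then refining within each fiber to adapt to the orbit projection $\piO$, leveraging the smoothness of the action and the freeness/properness hypotheses. First I would use the fact that $\pi\colon\fam{X}\to\Delta$ is a smooth submersion to produce, around any $p\in\fam{X}$, a submersion chart: a coordinate neighborhood $(U_0,\phi_0)$ with $\phi_0\colon U_0\to I^k\times I^{\ell+m}$ such that $\pi$ in these coordinates is the projection onto the first $I^k$ factor. This immediately arranges the first condition of a $\fam{G}$-adapted chart (the fibers of $\delta$ are the slices $\{x\}\times I^{\ell+m}$), and after applying Lemma \ref{lem:Recentering_Charts} we may assume it is centered at $p$.

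Next I would work inside a single fiber $\fam{X}_\delta$. Since $\fam{G}\acts\fam{X}$ is a family action, it restricts fiberwise to a smooth action of the Lie group $\fam{G}_\delta$ on the manifold $\fam{X}_\delta$, and this restricted action is free and proper (properness of the restriction follows from properness of the total action, since the fiber inclusion is a closed embedding). This is exactly the hypothesis of the classical Quotient Manifold Theorem. The key point is to apply the standard slice construction: choose a section transverse to the orbit $\fam{G}_\delta.p$ inside $\fam{X}_\delta$, which the freeness and properness guarantee exists, giving coordinates on $\fam{X}_\delta$ near $p$ in which the orbits are the slices $\{y\}\times I^m$. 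Combining the $I^k$ direction from the base submersion chart with the transverse-and-orbit splitting of the fiber direction yields the desired product structure $I^k\times I^\ell\times I^m$.

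The hard part will be ensuring that the fiberwise slice construction can be carried out \emph{smoothly in the base parameter} $\delta$, i.e.\ that the transverse slices chosen in each $\fam{X}_\delta$ assemble into a genuine smooth chart on the total space $\fam{X}$ rather than merely a fiberwise-continuous one. To handle this I would pick the orbit directions by differentiating the action: the orbit tangent spaces $T_p(\fam{G}_\delta.p)$ form a smooth subbundle of the vertical tangent bundle $T^\pi\fam{X}$ (this uses freeness, so the infinitesimal action is injective, and the smoothness of $\alpha\colon\fam{G}\timesd\fam{X}\to\fam{X}$), and I would choose a complementary smooth subbundle and exponentiate (or integrate flows of the section-induced diffeomorphisms of the previous section) to obtain a smooth family of transverse slices. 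Properness is then invoked to shrink the chart so that distinct slices meet distinct orbits, guaranteeing the orbit fibers of $\piO$ are exactly the $\{x,y\}\times I^m$ slices. A final application of Lemma \ref{lem:Recentering_Charts} recenters the resulting chart at $p$, completing the construction.

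Once $\fam{G}$-adapted charts are established, the remaining assertions of the Quotient Family Theorem — that $\fam{X}/\fam{G}$ is a smooth manifold with $\piO$ and $\bar\pi$ smooth submersions — follow by the usual argument: the adapted charts descend to charts on $\fam{X}/\fam{G}$ (using Proposition \ref{prop:Hausd_Orbit_Space} for the Hausdorff and local compactness prerequisites and the openness of $\piO$ to check compatibility of overlaps), and the two projections read off as coordinate projections in these charts, hence are submersions, exhibiting $\fam{X}\to\fam{X}/\fam{G}\to\Delta$ as the claimed iterated family.
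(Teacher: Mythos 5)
Your proposal is correct, and it reaches the adapted charts by a genuinely different mechanism than the paper's proof. The paper works foliation-first: it shows that the orbit tangent spaces $p\mapsto T_p(\fam{G}.p)$ form a smooth, totally integrable distribution (built from a local frame of the Lie algebra family $\fam{g}\to\Delta$, exponentiated to flows), applies the Frobenius theorem once to flatten the orbit foliation, then applies Frobenius a \emph{second} time --- after projecting off the orbit coordinates --- to simultaneously flatten the foliation by the fibers $\fam{X}_\delta$, and only at the end shrinks the chart, via the fiberwise argument from Lee's Quotient Manifold Theorem together with continuity of the orbits over a compact base neighborhood, so that each orbit meets the chart in a single slice. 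You reverse the order and swap the middle tool: you straighten $\pi$ first with a submersion normal form, then run a parametrized slice-theorem construction, choosing a smooth complement to the orbit subbundle inside the vertical tangent bundle and integrating it to a smooth family of transverse slices, with the action map $(g,s)\mapsto g.s$ supplying the chart. Both arguments rest on the same two pillars --- the smoothness of the orbit distribution (your observation that freeness makes the infinitesimal action injective is exactly the content of the paper's first lemma) and the properness-based shrinking at the end --- so the skeleton coincides; the difference is the flattening step. The Frobenius route spares the paper from verifying that the parametrized action map $\fam{G}\timesd S\to\fam{X}$ is a local diffeomorphism (which your route does need, via the inverse function theorem and the dimension count $\dim(\fam{G}\timesd S)=k+\ell+m$, using smooth coordinates on $\fam{G}$ near the identity section), and it treats the orbit and fiber foliations symmetrically; your slice route stays closer to Lee's original proof, makes the group coordinates in the $I^m$ direction explicit, and correctly isolates the one genuinely new difficulty in the family setting --- smooth dependence of the slices on $\delta$ --- which you resolve in the same spirit as the paper, by working with the smooth orbit subbundle rather than with pointwise slices. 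Your closing paragraph on descending the charts to $\fam{X}/\fam{G}$ goes beyond the statement at hand but matches the paper's subsequent development.
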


\noindent
This follows from the lemmas below, which are modeled directly off the approach to the Quotient Manifold Theorem given in \cite{Lee}.

\begin{observation}
As $\fam{G}\to\Delta$ is a submersion, each member $\fam{G}_\delta$ is of the same dimension.  By the assumption that the $\fam{G}$ action is free, each orbit in $\fam{X}_\delta$ is diffeomorphic to $\fam{G}_\delta$, in particular $\fam{X}$ is a disjoint union of equidimensional submanifolds $\fam{G}.p$.
\end{observation}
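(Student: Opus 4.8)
The plan is to dispatch the three assertions in turn, each following from standard facts once the family hypotheses are unwound, and to work throughout in the free proper setting inherited from Proposition \ref{prop:G_Adapted}. The first assertion concerns only the submersion $\pi\colon\fam{G}\to\Delta$: since $\pi$ is a submersion, every $\delta\in\Delta$ is a regular value, so by the regular value (submersion) theorem each member $\fam{G}_\delta=\pi\inv\set{\delta}$ is an embedded submanifold of $\fam{G}$ of codimension $\dim\Delta$. Hence $\dim\fam{G}_\delta=\dim\fam{G}-\dim\Delta$ is independent of $\delta$, which is the first claim.

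For the second assertion I would first record that the family action is fiber-preserving: since $\alpha\colon\fam{G}\timesd\fam{X}\to\fam{X}$ is a morphism in $\Fam_\Delta$, any $(g,x)$ with $\pi_\fam{G}(g)=\pi_\fam{X}(x)=\delta$ satisfies $\pi_\fam{X}(g.x)=\delta$, so each orbit $\fam{G}.p$ coincides with the fiberwise orbit $\fam{G}_\delta.p$ lying inside $\fam{X}_\delta$. The key step is then to verify that the restricted action $\fam{G}_\delta\acts\fam{X}_\delta$ is itself free and proper. Freeness is immediate, as the definition of a free family action is already stated fiberwise. For properness I would restrict the total proper map $\Theta\colon\fam{G}\timesd\fam{X}\to\fam{X}\timesd\fam{X}$, $(g,x)\mapsto(x,g.x)$: given a compact $K\subset\fam{X}_\delta\times\fam{X}_\delta$, any $(g,x)\in\Theta\inv(K)$ has $x\in\fam{X}_\delta$ and therefore $g\in\fam{G}_\delta$, so $\Theta\inv(K)$ is automatically contained in $\fam{G}_\delta\times\fam{X}_\delta$ and equals the preimage under the fiberwise action map; properness of $\Theta$ then forces it to be compact.

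With the fiberwise action now identified as a free, proper, smooth action of the Lie group $\fam{G}_\delta$ on the manifold $\fam{X}_\delta$, the second assertion is exactly the standard consequence that for such an action the orbit map $g\mapsto g.x$ is a proper injective immersion, hence an embedding, and thus a diffeomorphism of $\fam{G}_\delta$ onto the embedded orbit $\fam{G}_\delta.x$. Combining the two parts yields the third: every orbit $\fam{G}.p$ is diffeomorphic to $\fam{G}_{\pi_\fam{X}(p)}$, which has the $\delta$-independent dimension $\dim\fam{G}-\dim\Delta$, so all orbits are equidimensional embedded submanifolds; since the orbits partition $\fam{X}$, this gives the disjoint-union description.

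I expect the only genuinely delicate point to be the restriction of properness from the total family action to the fiberwise action, together with the attendant invocation of the proper-free-action theorem to upgrade the injective immersion to an embedding (properness, rather than freeness alone, is what rules out dense immersed orbits). The dimension bookkeeping and the fiber-preservation step are routine.
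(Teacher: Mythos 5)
Your proof is correct, and it follows the same route the paper's observation sketches: submersion fibers are equidimensional by the regular value theorem, the action preserves fibers because the action map lives in $\Fam_\Delta$, and orbit maps of the fiberwise actions identify each orbit with $\fam{G}_\delta$. The one place you genuinely improve on the paper's wording is the diffeomorphism step: the observation attributes ``each orbit is diffeomorphic to $\fam{G}_\delta$'' to freeness alone, but freeness only makes the orbit map $g\mapsto g.x$ a smooth bijection onto the orbit, which (via equivariance and the constant-rank theorem) is an injective immersion; it is properness --- which you correctly restrict from the total map $\fam{G}\timesd\fam{X}\to\fam{X}\timesd\fam{X}$ to each fiber by noting that the preimage of a compact $K\subset\fam{X}_\delta\times\fam{X}_\delta$ automatically lands in $\fam{G}_\delta\times\fam{X}_\delta$, a closed subset --- that upgrades this to an embedding onto a closed submanifold. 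This matters for what follows: the distribution lemma in the paper takes tangent planes $T_p\fam{G}.p$, and embeddedness (or at least a clean immersed structure) is what licenses that. Your fiberwise-properness restriction and the dimension bookkeeping $\dim\fam{G}.p=\dim\fam{G}-\dim\Delta$ are exactly right, so the proposal both verifies the observation and quietly patches its glib attribution to freeness.
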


\noindent
Say the dimension of each group, and thus each orbit is $d$.  Then the assignment of each $p\in \fam{X}$ to the tangent plane of the $\fam{G}$ orbit through $p$ determines a section of $\Gr(d,T\fam{X})\to\fam{X}$.  

\begin{lemma}
The map $\fam{X}\to\Gr(d,T\fam{X})$ given by $p\mapsto T_p\fam{G}.p$ defines a smooth $d$-dimensional distribution $\fam{D}$ on $\fam{X}$.	
\end{lemma}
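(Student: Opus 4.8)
The goal is to verify that the assignment $p \mapsto T_p(\fam{G}.p)$ genuinely defines a smooth rank-$d$ distribution on $\fam{X}$; the only content is \emph{smoothness}, since by the preceding observation each orbit is a $d$-dimensional embedded submanifold (because the $\fam{G}$ action is free and each $\fam{G}_\delta$ has constant dimension $d$), so the tangent space $T_p(\fam{G}.p)$ is a well-defined $d$-plane in $T_p\fam{X}$ for every $p$. A distribution is smooth precisely when it is locally spanned by smooth vector fields, so the plan is to produce, in a neighborhood of an arbitrary point $p_0 \in \fam{X}$, a collection of $d$ smooth vector fields on $\fam{X}$ that are everywhere tangent to the orbits and linearly independent at each point.

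\textbf{Key steps.} First I would work locally near a chosen $p_0$ with $\pi_\fam{G}(p_0)=\delta_0$. Choose a basis $X_1,\dots,X_d$ of the Lie algebra $\fam{g}_{\delta_0}$ of $\fam{G}_{\delta_0}$, which sits inside the Lie algebra family $\fam{g}\to\Delta$ constructed in Section~\ref{sec:Fam_Grp}. Using that $\fam{g}\to\Delta$ is a family of vector spaces, hence a vector bundle by Proposition~\ref{prop:VS_Trivial}, I would extend each $X_i$ to a smooth local section $\sigma_i\colon W\to\fam{g}$ of $\fam{g}\to\Delta$ defined on a neighborhood $W$ of $\delta_0$, with $\sigma_i(\delta_0)=X_i$. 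Exponentiating via the family exponential $\fam{exp}\in\Hom_{\Fam_\Delta}(\fam{g},\fam{G})$ gives, for each small parameter $t$, local sections $t\mapsto \fam{exp}(t\sigma_i(\cdot))\colon W\to\fam{G}$ of $\fam{G}\to\Delta$ through the identity. Each such section induces, via the action $\alpha\colon\fam{G}\timesd\fam{X}\to\fam{X}$, a one-parameter family of smooth self-maps of the restricted family $\fam{X}|_W$; differentiating in $t$ at $t=0$ produces smooth vector fields $V_1,\dots,V_d$ on $\fam{X}|_{W}$. Concretely, $V_i(x) = \tfrac{d}{dt}\big|_{t=0}\,\fam{exp}\!\big(t\,\sigma_i(\pi_\fam{X}(x))\big).x$, which is smooth because it is a composition of the smooth maps $\sigma_i$, $\fam{exp}$, and $\alpha$, followed by a $t$-derivative of a smooth family of maps.

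\textbf{Tangency and independence.} Next I would observe that each $V_i$ is tangent to the orbits: for fixed $x$ with $\pi_\fam{X}(x)=\delta$, the curve $t\mapsto \fam{exp}(t\sigma_i(\delta)).x$ stays inside the single orbit $\fam{G}_\delta.x$, so its velocity lies in $T_x(\fam{G}_\delta.x) = T_x(\fam{G}.x)$. At the central fiber $\delta_0$ the vectors $V_1(x),\dots,V_d(x)$ are exactly the infinitesimal generators of the $\fam{G}_{\delta_0}$-action corresponding to the basis $X_1,\dots,X_d$, and since the action is free these are linearly independent and span $T_x(\fam{G}_{\delta_0}.x)$. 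Linear independence is an open condition, so after shrinking $W$ (and the fiber directions) the $V_i$ remain pointwise independent throughout a neighborhood of $p_0$; since each orbit through a nearby point is also $d$-dimensional and the $V_i$ are tangent to it, they must span $T_x(\fam{G}.x)$ there as well. Hence $\fam{D} = \operatorname{span}\{V_1,\dots,V_d\}$ locally, exhibiting $\fam{D}$ as a smooth rank-$d$ distribution.

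\textbf{Main obstacle.} The delicate point is the persistence of linear independence and correct rank \emph{as $\delta$ varies}, i.e.\ ensuring that the orbits through nearby points in \emph{other} fibers $\fam{X}_\delta$ are still $d$-dimensional and that the same $d$ exponentiated sections continue to span their tangent spaces. This relies on the constancy of $\dim\fam{G}_\delta$ (guaranteed since $\pi\colon\fam{G}\to\Delta$ is a submersion) together with freeness of the action on every fiber; the freeness globally across $W$ is what prevents a collapse of the orbit dimension, and it is exactly where the hypothesis that $\fam{G}\acts\fam{X}$ is free does the essential work. Once rank $d$ is secured uniformly, smoothness of $\fam{D}$ follows formally from smoothness of the spanning fields, completing the argument.
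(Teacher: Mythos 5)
Your proposal is correct and takes essentially the same route as the paper: both extend a basis of $\fam{g}_{\delta_0}$ to local sections of the Lie algebra bundle (using its local triviality), exponentiate these to flows on $\fam{X}|_W$ via the family exponential and action map, and differentiate at $t=0$ to obtain smooth vector fields tangent to the orbits that locally frame $\fam{D}$. The only cosmetic difference is how pointwise independence is secured: the paper gets it everywhere on the neighborhood directly from freeness, since each $Y_i(p)$ is the pushforward of $X_i$ under the diffeomorphism $\fam{G}_\delta\to\fam{G}_\delta.p$, whereas you establish it on the central fiber and propagate it by openness of linear independence together with the dimension count that $d$ independent vectors tangent to a $d$-dimensional orbit must span its tangent space --- both are valid.
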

\begin{proof}
We show that about each $p\in\fam{X}$ there is a smoothly varying local frame for $\fam{D}$.	Let $\delta=\pi(p)$ and $U\ni\delta$ a neighborhood about which the Lie algebra family $\fam{g}\to\Delta$ of $\fam{G}\to\Delta$ is locally trivial.  Via this trivialization we choose a local frame for $\fam{g}|U$ via maps $X_i\colon U\to \fam{g}$.

Any such $X_i$ determines a smooth flow on $\fam{X}|_U$, $\Phi_i\colon\R\times\fam{X}|_U\to\fam{X}|_U$ via $(t,p)\mapsto \exp(tX_{\pi(p)}).p$ and thus to a smooth vector field on $\fam{X}|_U$ by differentiation $Y_i\colon\fam{X}|_U\to T\fam{X}|_U$, $p\mapsto \dd{t}|_{t=0}\exp(tX_{\pi(p)}).p$.
Another description of the vector fields $Y_i$ is as follows: each $X_i$ determines a left-invariant vector field on $\fam{G}$, and the action on $\fam{G}$, and for each $p\in \fam{X}$ the vector $Y_i(p)$ is the pushforward of $X_i$ under the homeomorphism of $\fam{G}_\delta$ with $\fam{G}_\delta.p$.

Thus as $X_i\neq 0$, $Y_i(p)\neq 0$ for all $p\in\fam{X}|_U$ and the vectors $\set{Y_i(p)}$ are linearly independent in $T_p\fam{X}$.  Moreover as $\set{X_i}$ is a basis for $T_e\fam{G}_\delta$, the $\fam{Y}_i$ are a basis for $T_p\fam{G}_\delta.p$ and so this gives a smoothly varying local frame for $\fam{D}$ over $U$.

\end{proof}

\begin{lemma}
There are flat charts for the distribution $\fam{D}$.	
\end{lemma}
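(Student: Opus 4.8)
The plan is to prove this via the Frobenius theorem, so that the real content lies in verifying that $\fam{D}$ is involutive; once involutivity is in hand, flat charts exist by the standard local Frobenius theorem applied to the smooth manifold $\fam{X}$.

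First I would recall the local frame $\{Y_i\}$ for $\fam{D}$ produced in the previous lemma, where each $Y_i$ is the fundamental vector field $p\mapsto \dd{t}|_{t=0}\exp(tX_{\pi(p)}).p$ attached to a local frame $\{X_i\}$ for the Lie algebra family $\fam{g}|_U$. To check involutivity it suffices to show that $[Y_i,Y_j]$ lies in $\span\{Y_1,\ldots,Y_d\}$ at each point. Fiberwise, over a fixed $\delta\in\Delta$, the assignment $X\mapsto Y$ sending an element of $\fam{g}_\delta$ to its fundamental vector field on $\fam{X}_\delta$ is the infinitesimal form of the $\fam{G}_\delta$-action and is a Lie algebra anti-homomorphism; hence $[Y_i,Y_j]$ is, up to sign, the fundamental vector field of $[X_i,X_j]_{\fam{g}_\delta}$. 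Since $\fam{g}\to\Delta$ is a family of Lie algebras, each $\fam{g}_\delta$ is closed under its bracket, so $[X_i,X_j]=\sum_k c_{ij}^k(\delta)X_k$ for structure functions $c_{ij}^k$ on $U\subset\Delta$; pulling these back along $\pi$ (they are constant on orbits, as $\pi$ is) yields $[Y_i,Y_j]=\pm\sum_k (c_{ij}^k\circ\pi)Y_k$, which lies in $\fam{D}$. With involutivity established I would invoke Frobenius: about each $p\in\fam{X}$ there is a chart $(U,\phi)$, $\phi\colon U\to I^n$, in which $\fam{D}$ is spanned by the first $d$ coordinate vector fields, so the slices obtained by fixing the final $n-d$ coordinates are integral manifolds of $\fam{D}$, i.e. pieces of $\fam{G}$-orbits. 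This is precisely a flat chart for $\fam{D}$.

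I expect the main obstacle to be the bookkeeping in the involutivity computation: one must confirm that the structure functions, which a priori vary over the base $\Delta$, push forward consistently to functions on $\fam{X}$ that remain constant along the leaves of $\fam{D}$, so that the identity $[Y_i,Y_j]=\pm\sum_k (c_{ij}^k\circ\pi)Y_k$ holds as an equality of vector fields on $\fam{X}|_U$ and not merely pointwise along a single orbit. An alternative that sidesteps the bracket computation altogether is to observe that, since the action is free and proper, each orbit $\fam{G}.p$ is an embedded submanifold that is an integral manifold of $\fam{D}$ through each of its points; the orbits therefore furnish a foliation of $\fam{X}$ whose tangent distribution is exactly $\fam{D}$, and the associated foliation charts are the desired flat charts.

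Finally, since every orbit is contained in a single fiber $\fam{X}_\delta=\pi^{-1}(\delta)$, I would note that these flat charts can be chosen subordinate to the fibration $\pi$ — refining them so that the vertical distribution $\ker d\pi$ is simultaneously flattened, which is possible because $\fam{D}\subseteq\ker d\pi$ is a nested pair of involutive distributions. This compatibility is exactly what is needed to assemble the $\fam{G}$-adapted charts of Proposition \ref{prop:G_Adapted} in the following step.
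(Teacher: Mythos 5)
Your proof is correct, and in fact it contains two arguments: your primary route and your ``alternative'' --- and the alternative is precisely the paper's own proof. The paper does not compute any brackets: it simply observes that $\fam{D}$ was \emph{defined} as the tangent planes to the orbits $\fam{G}.p$, so the orbits are themselves integral manifolds through every point, hence $\fam{D}$ is totally integrable, and the flat charts come from the (integrability $\Rightarrow$ flat charts) form of the Frobenius theorem. Your main argument instead verifies involutivity directly: fiberwise the map $X\mapsto Y_X$ is a Lie algebra (anti-)homomorphism, so $[Y_i,Y_j]=\pm\sum_k (c_{ij}^k\circ\pi)Y_k$ with structure functions pulled back from the base; since the $Y_i$ are $\pi$-vertical and both sides restrict correctly to each fiber, the identity holds as vector fields on $\fam{X}|_U$, and Frobenius applies. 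This computation is sound --- your worry about the structure functions is resolved exactly as you say, because $c_{ij}^k\circ\pi$ is constant on fibers and brackets of fiber-tangent fields restrict to fibers --- but it is more work than needed here: the orbit-based observation buys the conclusion for free from the way $\fam{D}$ was constructed, whereas your computation would be the right tool if one only had the frame $\{Y_i\}$ without knowing a priori that integral manifolds exist. Your closing remark about simultaneously flattening the nested pair $\fam{D}\subseteq\ker d\pi$ correctly anticipates the paper's \emph{next} lemma, which does exactly this by projecting off the orbit directions and applying Frobenius a second time; it is not needed for the present statement.
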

\begin{proof}
By the proposition above, $\fam{D}$ is a smooth distribution on $\fam{X}$.  As $\fam{D}$ was defined as the tangent planes to the smooth manifolds $\fam{G}.p$, these are integrable surfaces for the distribution, so $\fam{D}$ is \emph{totally integrable}.  An application of the Frobenius theorem from smooth topology then says that flat charts exist.
\end{proof}

\noindent
Flat charts for $\fam{D}$ are consist of neighborhoods $V_p\ni p$ for each $p\in \fam{X}$ and homeomorphisms $\psi_p\colon V_p\to I^{k+\ell}\times I^{n}$ such that the orbits of $\fam{G}$ appear after the homeomorphism as slices $\mathsf{const}\times I^k$.  

\begin{lemma}
The homeomorphisms for the flat chart may be taken so that $	\psi\colon V_p\mapsto I^k\times I^\ell \times I^n$ and the fibers of $\pi\colon\fam{X}\to\Delta$ are unions of slices $\set{x}\times I^{\ell+m}$ and the fibers of $\piO\colon \fam{X}\to\fam{X}/\fam{G}$ are unions of $\set{x,y}\times I^m$.
\end{lemma}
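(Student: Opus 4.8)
The plan is to upgrade the existing flat chart for the distribution $\fam{D}$ (which already realizes $\fam{G}$-orbits as slices) into a chart that \emph{simultaneously} realizes both the $\pi$-fibers and the $\piO$-fibers as coordinate slices. The key structural fact I would exploit is that the two foliations are nested: each $\fam{G}$-orbit lies inside a single $\pi$-fiber $\fam{X}_\delta$, since the action preserves the fibers of $\pi$. So the foliation by $\fam{G}$-orbits refines the foliation by $\pi$-fibers, and I want a chart in which the orbit-slices $\set{x}\times\set{y}\times I^m$ group together into the fiber-slices $\set{x}\times I^{\ell+m}$.

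First I would start from the flat chart $\psi_p\colon V_p\to I^{k+\ell}\times I^m$ produced by Frobenius, in which $\fam{G}$-orbits appear as $\mathsf{const}\times I^m$. The remaining freedom is to rechart the transverse $I^{k+\ell}$ factor. Observe that $\pi$ descends to a well-defined submersion $\bar\pi$ on this transverse slice, because two points in the same orbit have the same $\pi$-image, so $\pi$ is constant along the $I^m$ fibers and factors through the local orbit space $I^{k+\ell}$. Since $\bar\pi\colon I^{k+\ell}\to\Delta$ is a submersion (composition of $\pi$ restricted to a transversal with the chart), I would apply the local submersion normal form (the rank theorem from smooth topology) to choose coordinates $(x,y)\in I^k\times I^\ell$ on the transversal in which $\bar\pi$ becomes the projection $(x,y)\mapsto x$ onto the first $k$ coordinates, identified with a chart on $\Delta$. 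Pulling these transverse coordinates back through $\psi_p$ and keeping the $I^m$ orbit-coordinate gives a homeomorphism $\psi\colon V_p\to I^k\times I^\ell\times I^m$ with the desired property: the fibers of $\pi$ are the sets where $x$ is fixed, namely $\set{x}\times I^{\ell+m}$, and the fibers of $\piO$ are where both $x,y$ are fixed, namely $\set{x,y}\times I^m$. Finally, recentering at $p$ via Lemma \ref{lem:Recentering_Charts} makes this a $\fam{G}$-adapted chart centered at $p$, completing Proposition \ref{prop:G_Adapted}.

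The step I expect to be the main obstacle is verifying cleanly that $\pi$ genuinely descends to a \emph{submersion} on the local transversal $I^{k+\ell}$, so that the rank theorem applies. This requires knowing that the orbit-slices of the flat chart are exactly $\fam{G}$-orbits (not merely contained in them) on a possibly smaller neighborhood, so that the local orbit space is well-modeled by $I^{k+\ell}$; here properness and freeness of the action are what guarantee that shrinking $V_p$ makes the plaques coincide with genuine orbit pieces, a standard but delicate point inherited from the proof in \cite{Lee}. Once that identification is secured, the descent of $\pi$ and the application of the submersion normal form are routine, and the nesting of the two foliations makes the compatibility of the two slice structures automatic.
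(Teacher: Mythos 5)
Your proposal is correct and is essentially the paper's proof: both start from the Frobenius flat chart for the orbit distribution and use the nesting of the orbits inside the $\pi$-fibers to rechart the transverse $I^{k+\ell}$ factor, the paper's second application of Frobenius to the projected foliation being interchangeable with your submersion normal form for the descended map $\bar{\pi}$, since that projected foliation is precisely the fiber foliation of $\bar{\pi}$. One small remark: the obstacle you flag is not actually needed at this step --- each plaque $\set{(x,y)}\times I^m$ is a connected integral manifold of the orbit distribution, so $\pi$ is constant on it merely because plaques are \emph{contained} in orbits, and the finer statement that each orbit meets the chart in a single slice is deferred to the subsequent shrinking lemma, which is where properness and freeness enter.
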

\begin{proof}
The fibers of $\pi$ are the members $\fam{X}_\delta$ of $\fam{X}$, and their tangent spaces form a totally integrable distribution on $\fam{X}$.  As the orbits of $\fam{G}$ are contained in the fibers $\fam{X}_\delta$, in the coordinates $I^{k+\ell}\times I^m$ from the proposition above these surfaces are constant in the $I^m$ direction, and projecting this off gives a totally integrable distribution on $I^{k+\ell}$ , which again by Frobenius admits flat charts $\phi\colon I^{k+\ell}\to I^k\times I^\ell$ such that single the leaves are of the form $\set{x}\times I^\ell$. 
Then the composition $V_p\mapsto I^k\times I^\ell \times I^m$ given by $w\mapsto (\phi(\pi_{I^{k+\ell}}(\psi(p)),\pi_{I^{m}}(\psi(p))$ is such a chart.
\end{proof}

\noindent
Now our work is almost done, we need only shrink the neighborhood $V_p$ such that each orbit passes through only once.

\begin{lemma}
We may choose a neighborhood $V_p$ of $p\in\fam{X}$ such that each $\fam{X}_\delta$ and each $\fam{G}$ orbit passes through $V_p$ at most once.
\end{lemma}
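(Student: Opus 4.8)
The goal is to shrink the flat chart $V_p$ constructed in the previous lemma so that it is a genuine $\fam{G}$-adapted chart in the sense required by Proposition \ref{prop:G_Adapted}: the existing chart already separates fibers of $\pi$ and contains each $\fam{G}$-orbit in a union of slices $\{x,y\}\times I^m$, but a priori a single orbit could re-enter the chart along several such slices. The plan is to use properness of the action to excise these extra intersections, exactly mirroring the standard argument in the Quotient Manifold Theorem in \cite{Lee}, adapted to respect the family structure.

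First I would argue that if no such shrinking were possible, there would be sequences of points $q_i, q_i'$ in $V_p$ with $q_i\to p$ and $q_i'\to p$ lying on the \emph{same} $\fam{G}$-orbit but on distinct $I^m$-slices, so that $q_i' = g_i.q_i$ for some $g_i\in\fam{G}$ with $g_i$ not eventually mapping the slice of $q_i$ to itself. Because $q_i$ and $q_i'$ lie on the same orbit they have the same image under $\pi$, so the $g_i$ all lie in a single fibre direction compatibly with $\pi_\fam{G}(g_i)=\pi_\fam{X}(q_i)$, and the pair $(q_i, q_i')=(q_i, g_i.q_i)$ converges to $(p,p)$ in $\fam{X}\timesd\fam{X}$. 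Here is where properness enters: the preimage of a compact neighborhood of $(p,p)$ under the map $\fam{G}\timesd\fam{X}\to\fam{X}\timesd\fam{X}$, $(g,x)\mapsto(x,g.x)$, is compact, so the sequence $g_i$ subconverges to some $g_\infty\in\fam{G}$ with $g_\infty.p=p$. By freeness of the action, $g_\infty\in\fam{e}(\Delta)$, i.e. $g_\infty$ is the identity in its fibre.

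Next I would use continuity of the action together with the flat-chart structure to derive a contradiction. Since $g_i\to g_\infty=\fam{e}(\delta)$ and the action map is continuous, for large $i$ the element $g_i$ moves the slice through $q_i$ by an arbitrarily small amount; but within the flat chart the distinct $I^m$-slices of a single orbit are separated by a fixed positive distance in the transverse $I^{k+\ell}$-directions (the leaves of $\fam{D}$ are the $\fam{G}$-orbit pieces, and two genuinely distinct slices differ in the $I^\ell$ transverse coordinates). This contradicts $q_i$ and $q_i'=g_i.q_i$ both converging to $p$ while sitting on distinct slices. Hence after shrinking $V_p$ to a suitable sub-cube, each $\fam{G}$-orbit meets $V_p$ in at most one slice $\{x,y\}\times I^m$, and trivially each $\fam{X}_\delta$ meets it in at most one union of slices. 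Re-centering via Lemma \ref{lem:Recentering_Charts} then yields the desired $\fam{G}$-adapted chart centered at $p$, completing the proof of Proposition \ref{prop:G_Adapted}.

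The main obstacle I anticipate is the bookkeeping ensuring the subconvergent $g_i$ really force the two slices to coincide: one must be careful that the limit $g_\infty$ being the fibrewise identity translates into the nearby $g_i$ preserving slices \emph{for large $i$}, which requires that the partition of $V_p$ into $I^m$-slices varies continuously and that freeness is used on the nose rather than just at $p$. This is precisely the step where smoothness (via the flat charts from Frobenius) and properness interact, and it is the family-theoretic analogue of the delicate separation step in the classical theorem; everything else is a routine transcription. With this lemma in hand, $\fam{G}$-adapted charts exist everywhere, and the charts push forward under $\piO$ to give smooth charts on $\fam{X}/\fam{G}$ exhibiting $\bar\pi\colon\fam{X}/\fam{G}\to\Delta$ as a family, which is what the Quotient Family Theorem demands.
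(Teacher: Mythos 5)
Your skeleton is the right one and is genuinely the family-level version of the classical argument: sequences $q_i,q_i'\to p$ lying on a common orbit but in distinct slices, properness of the map $\fam{G}\timesd\fam{X}\to\fam{X}\timesd\fam{X}$ forcing $g_i$ to subconverge to some $g_\infty$ with $g_\infty.p=p$, and freeness identifying $g_\infty$ with the identity section. The gap is in your final step. The claim that \emph{distinct $I^m$-slices of a single orbit are separated by a fixed positive distance} is not available in the form you use it. For one fixed orbit intersected with the compact closure of the chart this separation does hold (properness gives only finitely many slices), but your contradiction hypothesis only produces, for each shrinking neighborhood, \emph{some} pair of points on \emph{some} orbit: the orbits through $q_i$ vary with $i$, and the slice-separation constant of the orbit through $q_i$ can degenerate as $i\to\infty$, since slices of nearby orbits can be arbitrarily close together. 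So $g_i\to\fam{e}$ does not by itself contradict $q_i$ and $q_i'=g_i.q_i$ lying in distinct slices. You flag exactly this as the ``main obstacle'' and then declare it routine transcription; it is in fact the one step that requires an argument, and as written the proof does not close.

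The standard repair is connectivity along the orbit rather than metric separation: shrink $V_p$ and choose a neighborhood $W$ of the identity section of $\fam{G}$ so that $W.\overline{V_p}$ stays inside the flat chart $U$; since $g_i\to\fam{e}(\delta)$, for large $i$ one may write $g_i=\exp(X_i)$ with $X_i$ small in the Lie algebra family, and the entire orbit segment $t\mapsto\exp(tX_i).q_i$ for $t\in[0,1]$ then remains in $U$. Hence $q_i$ and $g_i.q_i$ lie in the same connected component of the intersection of their orbit with $U$, i.e.\ in the same slice --- the desired contradiction, with no uniform separation needed. It is worth noting the paper takes a different route entirely: it invokes the quotient-manifold-theorem argument of \cite{Lee} \emph{fiberwise}, producing in each $\fam{X}_u$ a neighborhood through which every orbit passes once, and then uses continuity of the family of groups (hence of orbits) to choose a uniformly small neighborhood diffeomorphic to $\mathbb{B}^n\times V$ over a compact neighborhood $V$ of $\pi(p)$. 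Your direct total-space sequence argument, once repaired as above, is arguably the cleaner of the two, since the properness of the family action established earlier in the chapter feeds straight into it and no separate uniformization over $\Delta$ is needed; the treatment of the fibers $\fam{X}_\delta$, which both you and the paper dispatch as trivial, is indeed unproblematic.
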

\begin{proof}
This is easy for the $\fam{X}_\delta$: every member $\fam{X}_\delta$ passes through $\fam{X}|_U$ only once for any open $U\subset\Delta$.  
Within each $\fam{X}_\delta$, this follows directly from the argument for the quotient manifold theorem in \cite{Lee}.
Thus it only remains to show that we can do both of these at once.
Choose a point $p$ and a neighborhood $U\ni\pi(p)$.  
For each $u\in U$ use the argument in the quotient manifold theorem to produce a sufficiently small neighborhood in $\fam{X}_u$, through which each orbit passes only once.
Because the groups in the family, and thus their orbits, vary continuously, over a compact neighborhood $V$ of $\pi(p)$ we may choose a uniformly small neighborhood of $p$ diffeomorphic to $\mathbb{B}^n\times V$ such that the intersection with each $\fam{X}_u$ is an $n$-ball through which each orbit passes once.
\end{proof}

\noindent
Thus we have finished the proof of Proposition \ref{prop:G_Adapted}; $\fam{G}$-Adapted charts exist.

\subsection{The Orbit Space $\fam{X}/\fam{G}$}

\noindent
Using $\fam{G}$-adapted charts makes it particularly easy to understand the local topology of $\fam{X}/\fam{G}$.

\begin{proposition}
The orbit space $\fam{X}/\fam{G}$ is a topological manifold of dimension $k+\ell$.	
\label{prop:Orbits_are_Top_Man}
\end{proposition}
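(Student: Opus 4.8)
The goal is to show that $\fam{X}/\fam{G}$ is a topological manifold of dimension $k+\ell$, where $k=\dim\Delta$ and $\ell$ is the fiber dimension of the quotient. The strategy is to exploit the $\fam{G}$-adapted charts whose existence was just established in Proposition \ref{prop:G_Adapted}: such a chart simultaneously trivializes both the projection $\pi\colon\fam{X}\to\Delta$ and the orbit map $\piO\colon\fam{X}\to\fam{X}/\fam{G}$, and by construction isolates a single copy of each orbit inside its domain. This last property is exactly what makes the orbit map injective on a $\fam{G}$-adapted chart, so that $\piO$ descends to a homeomorphism between a slice of the chart and a neighborhood in the quotient.

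\textbf{Key steps.} First I would record the two topological facts already proven that supply the point-set hypotheses of a manifold: the orbit space $\fam{X}/\fam{G}$ is locally compact Hausdorff by Proposition \ref{prop:Hausd_Orbit_Space}, and second-countability descends from $\fam{X}$ through the open surjection $\piO$ (an open continuous image of a second-countable space is second-countable). So it remains only to produce Euclidean charts. Second, I would fix $q=\piO(p)\in\fam{X}/\fam{G}$ and take a $\fam{G}$-adapted chart $(U,\phi)$ centered at $p$, with $\phi\colon U\to I^k\times I^\ell\times I^m$, where the fibers of $\piO$ appear as the slices $\{x,y\}\times I^m$. Third, consider the composite $\piO|_U$ followed by the obvious identification: because each $\fam{G}$-orbit meets $U$ in exactly one slice $\{x,y\}\times I^m$, the map $U\to I^k\times I^\ell$ given by $\phi$ followed by projection off the last $m$ coordinates is constant on orbits and injective on the set of orbits meeting $U$. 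This induces a continuous bijection $\psi\colon \piO(U)\to I^k\times I^\ell$. Fourth, I would check $\psi$ is a homeomorphism: $\piO$ is an open map, so $\piO(U)$ is open in $\fam{X}/\fam{G}$, and the inverse of $\psi$ is the composite $I^k\times I^\ell \hookrightarrow I^k\times I^\ell\times\{0\}\xrightarrow{\phi^{-1}}U\xrightarrow{\piO}\piO(U)$, which is continuous. Hence $(\piO(U),\psi)$ is a chart identifying a neighborhood of $q$ with the open cube $I^k\times I^\ell$, giving dimension $k+\ell$.

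\textbf{Main obstacle.} The genuinely substantial work — constructing the $\fam{G}$-adapted charts and arranging that each orbit (and each fiber of $\pi$) passes through the chart only once — has already been carried out in Proposition \ref{prop:G_Adapted} and its supporting lemmas, so the present proposition is mostly bookkeeping. The one point deserving care is verifying that $\psi^{-1}$ is well-defined and continuous, i.e.\ that the inclusion of the slice $I^k\times I^\ell\times\{0\}$ is a genuine section of $\piO$ over $\piO(U)$; this rests precisely on the single-orbit-intersection property of the adapted chart, and I would state that explicitly rather than leave it implicit. A secondary check is that the chart transition maps land in the right category, but since here we only claim a \emph{topological} manifold, the transition maps are automatically homeomorphisms and no smooth compatibility need be verified at this stage (that is deferred to the subsequent smoothing step establishing $\fam{X}/\fam{G}\to\Delta$ as a family).
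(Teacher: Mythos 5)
Your proposal is correct and follows the paper's own argument almost step for step: the point-set hypotheses (local compactness, Hausdorffness, second countability) are cited from the earlier propositions, and the chart is built from a $\fam{G}$-adapted chart $(U,\phi)$ centered at a representative $p$, with the zero slice $Z=\phi\inv(I^k\times I^\ell\times\{0\})$ mapped bijectively onto the open set $\piO(U)$ via exactly the single-slice-per-orbit property you flag as the crucial point. The only divergence is the final homeomorphism check: the paper restricts to a compact neighborhood of the origin and invokes the continuous-bijection-from-compact-to-Hausdorff trick, whereas you exhibit the explicit continuous inverse $(x,y)\mapsto \piO\bigl(\phi\inv(x,y,0)\bigr)$, which is a slightly cleaner verification that yields the homeomorphism on all of $\piO(U)$ at once rather than only near the center.
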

\begin{proof}

The quotient space $\fam{X}/\fam{G}$ is Hausdorff by Proposition  \ref{prop:Hausd_Orbit_Space}, and is second countable as $\fam{X}$ was.  
To see that $\fam{X}/\fam{G}$ is locally Euclidean, let $q=\fam{G}.p$ be an arbitrary point of $\fam{X}/\fam{G}$.  
Choosing the representative $p$ of the orbit we let $(U,\phi)$ be a $\fam{G}$-adapted chart for $\fam{X}$ centered at $p$ with $\phi(U)=I^k\times I^\ell\times I^m$.  
Let $V=\piO(U)$ and note that $q\in V$, which is open as $\piO$ is an open map.  
The coordinates on $U$ are given by triplets $(x,y,z)$ for $x\in I^k$, $y\in I^\ell$ and $z\in I^m$.  
Let $Z\subset U$ be the points with third coordinate zero, $Z=\{(x,y,0)\in U\}$.  Then its easy to see that $\piO$ is a bijection $Z\to V$ using the properties of an adapted chart.  
If $\mathcal{G}.\xi\in V=\piO(U)$ then there is some $\eta\in \mathcal{G}.\xi\cap U$, in coordinates $\eta=(x,y,z)$ and so $(x,y,0)\in U$ as well.  
But the fact that $U$ is $\fam{G}$-adapted means that $(x,y,0)$ and $(x,y,z)$ lie in the same $\fam{G}$ orbit, so $\piO(x,y,0)=\piO(x,y,z)=\mathcal{G}.\xi$ so $\piO$ is surjective. 
Injectivity is clear as if $(x,y,0)$ and $(u,v,0)$ map to the same point under $\piO$ then they are in the same $\fam{G}$-orbit, but since $U$ is $\fam{G}$-adapted, $\fam{G}$-orbits intersect $U$ only in single slices of the form $(x,y)\times I^k$ and so in fact $x=u$, $y=v$.

And now we've almost finished!  
Clearly $Z$ is homeomorphic to $I^k\times I^\ell$ and so we just need that the continuous bijection above is a homeomorphism in a neighborhood of $p=(0,0,0)$.  
But restricting to any compact neighborhood of the origin gives us a continuous bijection from a compact space to a Hausdorff space and so we are done.  
\end{proof}

\noindent
Piecing the last three lemmas together shows $\fam{X}/\fam{G}$ is a topological manifold. 

\begin{proposition}
\label{M/G_Is_Manifold}
If $\fam{G}\acts\fam{X}$ is a proper free action the orbit space $\fam{X}/\fam{G}$ is a topological manifold.	
\end{proposition}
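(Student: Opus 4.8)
The statement to prove is that for a proper free action $\fam{G}\acts\fam{X}$ of a family of groups on a family of spaces, the orbit space $\fam{X}/\fam{G}$ is a topological manifold. The plan is to assemble the three properties required of a topological manifold—Hausdorff, second countable, and locally Euclidean—by citing the topological preliminaries and the existence of $\fam{G}$-adapted charts already established in this section.

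First, I would record that $\fam{X}/\fam{G}$ is Hausdorff. This follows immediately from Proposition \ref{prop:Hausd_Orbit_Space}, which shows that the properness of the action forces the orbit relation to be a closed subset of $\fam{X}\times\fam{X}$, so the quotient is (locally compact) Hausdorff. Second countability is inherited from $\fam{X}$: since $\fam{X}$ is second countable (being a smooth manifold) and the orbit map $\piO\colon\fam{X}\to\fam{X}/\fam{G}$ is a continuous open surjection, the images of a countable base for $\fam{X}$ form a countable base for $\fam{X}/\fam{G}$.

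The substantive content—local Euclideanness—is exactly what Proposition \ref{prop:Orbits_are_Top_Man} provides. That proposition, in turn, rests on the existence of $\fam{G}$-adapted charts for $\fam{X}$ centered at any point (Proposition \ref{prop:G_Adapted}), which is where the freeness and properness hypotheses do their real work: freeness guarantees that the orbits are equidimensional embedded submanifolds so that they assemble into a smooth totally integrable distribution $\fam{D}$, and properness (via the recentering and shrinking lemmas) lets us shrink charts so that each orbit and each fiber $\fam{X}_\delta$ meets a chart domain at most once. Given such a chart $\phi\colon U\to I^k\times I^\ell\times I^m$ centered at a representative $p$ of a point $q=\fam{G}.p$, one checks that $\piO$ restricts to a continuous bijection from the slice $Z=\{(x,y,0)\in U\}$ onto the open set $V=\piO(U)\ni q$, and that this bijection is a homeomorphism on compact neighborhoods of the origin (compact-to-Hausdorff). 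This exhibits a neighborhood of $q$ homeomorphic to $I^k\times I^\ell\subset\R^{k+\ell}$.

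So the proof is essentially a citation-and-assembly argument, and I expect the writeup to be short. The main obstacle is not in this final step but has already been dispatched in the construction of $\fam{G}$-adapted charts; here I only need to verify that the three defining conditions of a topological manifold hold simultaneously and that the local dimension $k+\ell$ is well-defined and constant, which follows since the ambient dimension of $\fam{X}$ and the orbit dimension $d=m$ are both constant (the latter by freeness together with $\fam{G}\to\Delta$ being a submersion). The concrete writeup is as follows.

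\begin{proof}
We verify that $\fam{X}/\fam{G}$ satisfies the three defining properties of a topological manifold.

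By Proposition \ref{prop:Hausd_Orbit_Space}, the properness of the action implies that the orbit space $\fam{X}/\fam{G}$ is locally compact Hausdorff. Since $\fam{X}$ is a smooth manifold it is second countable, and the orbit map $\piO\colon\fam{X}\to\fam{X}/\fam{G}$ is a continuous open surjection; hence the images under $\piO$ of a countable base for $\fam{X}$ form a countable base for $\fam{X}/\fam{G}$, so the quotient is second countable as well.

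Finally, $\fam{X}/\fam{G}$ is locally Euclidean by Proposition \ref{prop:Orbits_are_Top_Man}: the existence of $\fam{G}$-adapted charts for $\fam{X}$ (Proposition \ref{prop:G_Adapted}), which is guaranteed precisely because the action is free and proper, shows that each point $q=\fam{G}.p\in\fam{X}/\fam{G}$ has a neighborhood homeomorphic to $I^k\times I^\ell\subset\R^{k+\ell}$. The dimension $k+\ell$ is constant, since both the dimension of $\fam{X}$ and the common orbit dimension $d=m$ (equal to $\dim\fam{G}_\delta$, using freeness and that $\fam{G}\to\Delta$ is a submersion) are constant.

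Being a second countable Hausdorff locally Euclidean space, $\fam{X}/\fam{G}$ is a topological manifold.
\end{proof}
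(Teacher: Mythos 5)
Your proof is correct and follows essentially the same route as the paper, which likewise assembles Hausdorffness from Proposition \ref{prop:Hausd_Orbit_Space}, second countability from $\fam{X}$, and local Euclideanness from the $\fam{G}$-adapted charts via the slice-to-quotient homeomorphism of Proposition \ref{prop:Orbits_are_Top_Man}. The paper's version of this proposition is in fact just the one-line observation that the preceding lemmas piece together, so your citation-and-assembly writeup matches its intent exactly.
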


\noindent
To begin discussing the smooth properties of $\fam{X}/\fam{G}$, we need first to produce a candidate smooth atlas.  To do so we will look a little closer at the argument in \cref{prop:Orbits_are_Top_Man}, and produce actual charts. 

\begin{lemma}
Any $\fam{G}$-adapted chart $(U,\phi)$ gives rise to a chart $(V,\eta)$ on $\fam{X}/\fam{G}$ with $V=\piO(U)$ and $\eta:V\to I^{k+\ell}$ such that $\eta\circ\piO=\pi_{12}\circ\phi$.
\end{lemma}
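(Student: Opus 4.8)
The plan is to show that a $\fam{G}$-adapted chart on $\fam{X}$ descends to a chart on the orbit space $\fam{X}/\fam{G}$, using the structure of the adapted chart to realize the quotient map as a coordinate projection. Let $(U,\phi)$ be the given $\fam{G}$-adapted chart, with $\phi\colon U\to I^k\times I^\ell\times I^m$ a homeomorphism onto the cube, where the fibers of $\pi$ are the slices $\set{x}\times I^{\ell+m}$ and the fibers of $\piO$ are the slices $\set{x,y}\times I^m$. Write $\pi_{12}\colon I^k\times I^\ell\times I^m\to I^{k+\ell}$ for the projection onto the first two factors. First I would set $V=\piO(U)$, which is open in $\fam{X}/\fam{G}$ since $\piO$ is an open map (proven above).

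The heart of the argument is to verify that $\pi_{12}\circ\phi\colon U\to I^{k+\ell}$ is constant on the fibers of $\piO$, so that it factors through the quotient to give a well-defined map $\eta\colon V\to I^{k+\ell}$ with $\eta\circ\piO=\pi_{12}\circ\phi$. This is immediate from the adapted chart condition: two points of $U$ in the same $\fam{G}$-orbit lie in a common slice $\set{x,y}\times I^m$, hence differ only in the third coordinate and so have the same image under $\pi_{12}\circ\phi$. By the universal property of the quotient topology, $\eta$ is continuous. Next I would establish that $\eta$ is a homeomorphism onto its image. For this I would reuse the argument of Proposition \ref{prop:Orbits_are_Top_Man}: the subset $Z=\set{(x,y,0)\in U}$ maps homeomorphically onto $V$ under $\piO$ (this is exactly the continuous bijection $Z\to V$ constructed there, which is a homeomorphism after restricting to a compact neighborhood of the basepoint), and $\pi_{12}\circ\phi$ restricts to a homeomorphism $Z\to I^{k+\ell}$. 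Composing the inverse of $\piO|_Z$ with $\pi_{12}\circ\phi|_Z$ exhibits $\eta$ as a homeomorphism onto the open cube $I^{k+\ell}$.

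I expect the main obstacle to be the same subtlety that already appears in Proposition \ref{prop:Orbits_are_Top_Man}: the map $\eta$ is a continuous bijection, but continuity of its inverse is not automatic and requires the local compactness argument (restricting to a compact neighborhood of the basepoint and invoking that a continuous bijection from a compact space to a Hausdorff space is a homeomorphism). I would lean on the fact that $\fam{X}/\fam{G}$ is Hausdorff (Proposition \ref{prop:Hausd_Orbit_Space}) to close this gap, exactly as in the cited proposition, rather than reprove it from scratch. The relation $\eta\circ\piO=\pi_{12}\circ\phi$ is then the defining equation that makes these charts compatible with the family projections, and it holds by construction. Once this is in place, the collection of such $(V,\eta)$ as $(U,\phi)$ ranges over $\fam{G}$-adapted charts forms the desired atlas, and the identity $\eta\circ\piO=\pi_{12}\circ\phi$ is precisely what is needed downstream to check that the transition maps between two such charts are smooth and that both $\piO$ and $\bar\pi$ are submersions.
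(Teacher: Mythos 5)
Your construction of $\eta$ matches the paper's in every respect but one: well-definedness on the fibers of $\piO$ is exactly the adaptedness condition, the identity $\eta\circ\piO=\pi_{12}\circ\phi$ holds by construction, and bijectivity onto $I^{k+\ell}$ follows because $\fam{G}$-orbits meet $U$ in single slices $\set{(x,y)}\times I^m$. The step that does not go through as you cite it is continuity of $\eta\inv$. The compact-neighborhood argument of \cref{prop:Orbits_are_Top_Man} shows only that the continuous bijection $\piO|_Z\colon Z\to V$ is a homeomorphism on a compact neighborhood of the \emph{basepoint}; that sufficed there because being locally Euclidean is a local statement, but the lemma requires $\eta$ to be a homeomorphism on \emph{all} of $V$. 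A continuous bijection that is a homeomorphism near one point need not be one globally (compare $[0,2\pi)\to\S^1$), and even running the compactness argument at an arbitrary $q\in Z$ leaves you needing to know that the image $\piO(K)$ of a compact neighborhood is a neighborhood of $\piO(q)$ in $V$ --- which is not automatic from compactness and Hausdorffness alone.

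The paper closes exactly this gap with a global openness argument rather than compactness: for $W\subset Z$ open, $\piO(W)=\piO\left(\set{(x,y,z)\in U\mid (x,y,0)\in W}\right)$, and the right-hand set is open in $U$ as the preimage of $W$ under the continuous retraction $(x,y,z)\mapsto (x,y,0)$; since $\piO$ is open on $\fam{X}$, this makes $\piO|_Z$ an open map, hence a homeomorphism onto $V$, and then $\eta=\pi_{12}\circ\phi\circ(\piO|_Z)\inv$ is a composition of homeomorphisms. An even shorter repair is available from your own setup: once your quotient argument gives that $\eta$ is continuous and bijective, its inverse has the explicit formula $\eta\inv(x,y)=\piO\left(\phi\inv(x,y,0)\right)$, which is visibly continuous, so neither compactness nor openness is needed at all. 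With either patch your proof is complete; as written, the appeal to the earlier local compactness argument is the gap.
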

\begin{proof}
Let $(U,\phi)$ be a $\fam{G}$-adapted chart and $Z\subset U$ the points which have third coordinate zero under $\phi$.  
We have already seen $\piO$ is a bijection on $Z$, but it is also an open map as if $W\subset Z$ is open, the projection of $W$ is the same as the projection of $\set{(x,y,z)\in U\;\mid\; (x,y,0)\in W}$ which is open in $U$ (its the preimage of $W$ under the continuous projection $U\to I^k\times I^\ell\times\{0\}$) and $\piO$ is an open map and thus a homeomorphism.  
Let $\sigma: V\to Z$ be the inverse, which is a section of $\piO$.

Define the map $\eta:V\to I^k\times I^\ell$ by taking $\piO(x,y,z)\mapsto (x,y)$.  This is well defined due to the fact that $U$ is $\fam{G}$-adapted: if $(x,y,z)$ and $(u,v,w)$ are in the same $\fam{G}$ orbit then $(x,y)=(u,v)$ as orbits are $I^m$-slices. We may actually express $\eta$ as a composition of known maps here $\eta=\pi_{\scriptscriptstyle XY}\phi\sigma$ for $\sigma=\piO|_V\inv $,$\phi$ the original chart map, and $\pi_{\scriptscriptstyle 12}\!$ the projection $I^{k+\ell+m}\to I^{k+\ell}$ removing the $z$-coordinate.  Because $\sigma:V\to Z$ is a homeomorphism and $\pi_{\scriptscriptstyle 12}\phi$ is a homeomorphism when restricted to $Z$, this gives us $\eta:V\to I^{k}\times I^\ell$ is a homeomorphism.  

That this chart $\eta$ satisfies the claimed property $\eta\circ\piO=\pi_{12}\circ\phi$ is easy to see.  Indeed if $p\in U$ let $\mathcal{O}=\piO(p)\in V$ then $\eta(\mathcal{O})=\pi_{12}\circ\phi\circ \sigma(\mathcal{O})$ and $\sigma(\mathcal{O})$ is the point in $\mathcal{O}$ with third coordinate zero with respect to $\phi$.  This is in the same orbit as $p$ and so it has the same two first coordinates as $p$ (this is part of the definition of a well-adapted chart) and so $\pi_{12}\circ\phi(\sigma(\mathcal{O}))=\pi_{12}\circ\phi(p)$ and thus $\eta\circ \piO(p)=\pi_{12}\circ\phi(p)$ as claimed.
	
\end{proof}

\noindent
We call the chart $(V,\eta)$ constructed from $(U,\phi)$ the \emph{induced chart} on $\fam{X}/\fam{G}$. 

\begin{observation}
The equation $\pi_{12}\circ\phi=\eta\circ\piO$ gives a convenient description of $\eta$.  We have $\eta(\mathcal{O})=(x,y)$ if and only if one point (and hence all points by $\fam{G}$-adaptivity) of $\mathcal{O}\cap U$ have first two coordinates $(x,y)$ with respect to $\phi$.
\end{observation}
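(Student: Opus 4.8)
The plan is to read off the claimed pointwise description of $\eta$ directly from the defining relation $\pi_{12}\circ\phi=\eta\circ\piO$ proven in the preceding lemma, with the $\fam{G}$-adapted structure of $(U,\phi)$ used only to justify that the recipe is well posed.

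First I would fix an orbit $\mathcal{O}\in V=\piO(U)$ and pick any representative $p\in\mathcal{O}\cap U$; this intersection is nonempty exactly because $\mathcal{O}$ lies in $\piO(U)$. Since $\piO(p)=\mathcal{O}$, evaluating the defining relation at $p$ yields $\eta(\mathcal{O})=\eta(\piO(p))=\pi_{12}(\phi(p))$, which is by definition the pair formed from the first two coordinates of $\phi(p)$. This immediately gives both directions of the equivalence: if $p$ has first two coordinates $(x,y)$ with respect to $\phi$ then $\eta(\mathcal{O})=(x,y)$, and conversely if $\eta(\mathcal{O})=(x,y)$ then the chosen $p$ satisfies $\pi_{12}(\phi(p))=(x,y)$, so $p$ has first two coordinates $(x,y)$.

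The one subtlety I would address is independence of the representative, i.e.\ the passage between the \say{one point} and \say{all points} formulations. This is precisely where $\fam{G}$-adaptedness is invoked: an adapted chart meets each $\fam{G}$-orbit in a single slice $\{x,y\}\times I^m$ in the $\phi$-coordinates, so any two points of $\mathcal{O}\cap U$ agree in their first two coordinates. Hence $\pi_{12}\circ\phi$ is constant on $\mathcal{O}\cap U$ and the two formulations coincide. I do not expect any genuine obstacle here: the entire content is carried by the already-established equation $\pi_{12}\circ\phi=\eta\circ\piO$ together with the slice structure of adapted charts, and the observation merely repackages these into an explicit rule for evaluating $\eta$ on orbits.
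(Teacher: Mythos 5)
Your argument is correct and is exactly the reasoning the paper intends: the observation is stated without a separate proof precisely because it follows by evaluating the identity $\pi_{12}\circ\phi=\eta\circ\piO$ at any representative $p\in\mathcal{O}\cap U$, with $\fam{G}$-adaptedness ensuring $\pi_{12}\circ\phi$ is constant on $\mathcal{O}\cap U$ so the choice of representative is immaterial. Your explicit handling of the \say{one point versus all points} equivalence is the only subtlety, and you resolve it just as the paper's chart construction does.
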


\begin{lemma}
\label{Vertically_Translated_Charts}
Let $(U,\phi)$ be a $\fam{G}$-adapted coordinate chart for $\fam{X}$ and let $W=\delta(U)$, and $\sigma:W\to \fam{G}$ a section of $\fam{G}\to\Delta$.  
Then $\sigma$ induces a homeomorphism $\hat{\sigma}:\fam{X}|_W\to\fam{X}|_W$ and from this and $(U,\phi)$ we may produce a new coordinate chart
 $(\hat{U},\hat{\phi})=(\hat{\sigma}\inv(U),\hat{\sigma}\circ\phi)$.  Then the induced charts $(V,\eta)$ and $(\hat{V},\hat{\eta})$ are equal.	 
\end{lemma}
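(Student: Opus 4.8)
The plan is to show that two induced charts on $\fam{X}/\fam{G}$, coming from $\fam{G}$-adapted charts related by a vertical translation, are literally the same map. The key observation is that the induced chart $(V,\eta)$ depends only on the $\fam{G}$-orbit data of the chart $(U,\phi)$, not on which representative point within each orbit the chart coordinates happen to single out. Since $\hat{\sigma}$ translates along $\fam{G}$-orbits (it sends each $x$ to $\sigma(\pi_\fam{X}(x)).x$, staying within the same orbit), it preserves precisely the data that $\eta$ records.

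First I would verify that $(\hat{U},\hat{\phi})$ is again a $\fam{G}$-adapted chart. Since $\hat{\sigma}\colon\fam{X}|_W\to\fam{X}|_W$ is a diffeomorphism that preserves both the fibers of $\pi$ (it acts within each $\fam{X}_\delta$, as $\pi_\fam{X}(\hat\sigma(x))=\pi_\fam{X}(x)$) and the fibers of $\piO$ (it moves points within their own $\fam{G}$-orbit), the slicing structure of the chart is carried to an identical slicing structure. Concretely, a slice $\{x,y\}\times I^m$ representing an orbit-intersection is sent to another such slice, and a slice $\{x\}\times I^{\ell+m}$ representing $\fam{X}_\delta\cap U$ is sent to another such slice, so $\hat\phi$ remains $\fam{G}$-adapted. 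This gives a legitimate induced chart $(\hat V,\hat\eta)$ with $\hat V=\piO(\hat U)$.

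Next I would show $\hat V=V$. Because $\hat\sigma$ preserves $\fam{G}$-orbits, the set of orbits meeting $\hat U=\hat\sigma^{-1}(U)$ is the same as the set of orbits meeting $U$: an orbit meets $\hat\sigma^{-1}(U)$ iff its $\hat\sigma$-image (which is the same orbit) meets $U$. Hence $\piO(\hat U)=\piO(U)$, i.e. $\hat V=V$. Then I would check $\hat\eta=\eta$ using the defining relation $\hat\eta\circ\piO=\pi_{12}\circ\hat\phi=\pi_{12}\circ\hat\sigma\circ\phi$ together with the earlier observation characterizing $\eta$: namely $\eta(\fam{O})=(x,y)$ iff some (equivalently every, by $\fam{G}$-adaptivity) point of $\fam{O}\cap U$ has first two coordinates $(x,y)$ under $\phi$. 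Since applying $\hat\sigma$ only moves a point along its orbit, and the first two coordinates $(x,y)$ are constant along orbit-slices in a $\fam{G}$-adapted chart, we have $\pi_{12}\circ\hat\sigma\circ\phi=\pi_{12}\circ\phi$ on $U$. Therefore $\hat\eta\circ\piO=\eta\circ\piO$, and since $\piO$ is surjective onto $V=\hat V$, we conclude $\hat\eta=\eta$.

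The only mildly delicate point — the step I expect to require the most care — is confirming that the first two coordinates are genuinely orbit-invariant after the translation, i.e. that $\pi_{12}\circ\hat\sigma=\pi_{12}$ as functions on $U$ restricted to the relevant slices. This is exactly the content of the $\fam{G}$-adapted condition (orbits are the $\{x,y\}\times I^m$ slices, so the $I^{k+\ell}$-coordinates are orbit constants) combined with the fact that $\hat\sigma$ maps each point into its own orbit; I would spell this out by noting that for $p\in U$, the point $\hat\sigma(p)$ lies in the same $\fam{G}$-orbit as $p$, hence in the same orbit-slice, hence shares its $(x,y)$ coordinates. Everything else is a formal consequence of the universal property of $\piO$ (surjectivity) and the already-established bijectivity of induced charts, so no new computation is needed.
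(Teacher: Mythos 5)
Your proof is correct and follows essentially the same route as the paper's: both arguments rest on the two facts that $\hat{\sigma}$ moves each point within its own $\fam{G}$-orbit (so $\hat{V}=\piO(\hat{U})=\piO(U)=V$) and that the first two coordinates of a $\fam{G}$-adapted chart are orbit constants, whence $\hat{\eta}\circ\piO=\pi_{12}\circ\hat{\phi}$ forces $\hat{\eta}=\eta$. One small caution on wording: in general $p\in\hat{U}$ while $\hat{\sigma}(p)\in U$, so $p$ and $\hat{\sigma}(p)$ need not lie in a common chart and cannot literally ``share $(x,y)$ coordinates''; the precise statement, as in the paper, is that the $\hat{\phi}$-coordinates of $p$ equal the $\phi$-coordinates of $\hat{\sigma}(p)$ by the definition $\hat{\phi}=\phi\circ\hat{\sigma}$, and orbit-constancy of the first two coordinates is then invoked within each chart separately.
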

\begin{proof}
We first show that $V=\hat{V}$.  
Recall that $V=\piO(U)$ and $\hat{V}=\piO(\hat{U})$, and assume $\mathcal{O}\in V$.  
Then there is some $p\in U$ such that $\fam{G}.p=\mathcal{O}$, but $p\in U$ means $\hat{\sigma}\inv(p)\in\hat{U}$ and so $\fam{G}.\hat{\sigma}\inv(p)\in\hat{V}$.  
But $\hat{\sigma}\inv(p)=\sigma(\delta(p))\inv.p\in\fam{G}.p$ and so $\fam{G}.\hat{\sigma}\inv(p)=\mathcal{O}$ thus $V\subset \hat{V}$.  
Similarly we show $\hat{V}\subset V$.  

To see that $\eta=\hat{\eta}$, let $\mathcal{O}\in V=\hat{V}$ be a $\fam{G}-orbit$, and say $\hat{\eta}(\mathcal{O})=(x,y)$.  
Then by the above observation describing the induced coordinate maps we have that all points of $\mathcal{O}\cap \hat{U}$ have first two coordinates $(x,y)$ with respect to $\hat{\phi}$ and so in particular there is some $q\in \hat{U}\cap\mathcal{O}$ with $\hat{\phi}(q)=(x,y,0)$.  
But then $\hat{\phi}(q)=\phi\circ\hat{\sigma}(q)=\phi(\sigma(\delta(q)).q)=(x,y,0)$ meaning that $\sigma(\delta(q)).q$ has coordinates with first coordinates $(x,y)$ with respect to $\phi$.  
Thus all points of $\mathcal{O}\cap U$ do and so $\eta(\mathcal{O})=(x,y)$, showing $\eta=\hat{\eta}$.
\end{proof}

\begin{proposition}
The charts $(V,\eta)$ constructed from $\fam{G}$-adapted charts $(U,\phi)$ give $\fam{X}/\fam{G}$ the structure of a smooth manifold.
\end{proposition}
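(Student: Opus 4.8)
The plan is to verify the three conditions required for a smooth atlas: that the induced charts $(V,\eta)$ cover $\fam{X}/\fam{G}$, that each $\eta$ is a homeomorphism onto an open subset of $I^{k+\ell}\cong\R^{k+\ell}$, and crucially that the transition maps between overlapping induced charts are smooth. The first two conditions are essentially already established by the preceding lemmas: Proposition \ref{prop:G_Adapted} guarantees a $\fam{G}$-adapted chart centered at any point, so every point of $\fam{X}/\fam{G}$ lies in some $V=\piO(U)$, and the induced chart lemma shows each $\eta\colon V\to I^{k+\ell}$ is a homeomorphism. So the real content is smoothness of transition maps.

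First I would set up the overlap situation. Suppose $(V_1,\eta_1)$ and $(V_2,\eta_2)$ are induced from $\fam{G}$-adapted charts $(U_1,\phi_1)$ and $(U_2,\phi_2)$, with $\mathcal{O}\in V_1\cap V_2$. The subtlety is that the two charts are built from sections $Z_1\subset U_1$, $Z_2\subset U_2$ of the orbit map lying in \emph{different} fibers of $\fam{X}$, so I cannot directly compare them using the (smooth) transition map $\phi_2\circ\phi_1\inv$ on $\fam{X}$, which only relates points lying in the \emph{same} orbit when that orbit meets both $U_1$ and $U_2$. The key device to bridge this gap is Lemma \ref{Vertically_Translated_Charts}: I would use a local section $\sigma$ of $\fam{G}\to\Delta$ to translate $U_1$ via the induced diffeomorphism $\hat\sigma$ of $\fam{X}$ so that the translated chart $(\hat U_1,\hat\phi_1)$ has its zero-section $Z$ meeting the \emph{same} orbits inside $U_2$, while producing the \emph{same} induced chart $\eta_1$ on the quotient. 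Concretely, I would choose, for a representative $p\in\mathcal{O}\cap U_1$, a section carrying $p$ into the same fiber as the corresponding representative in $U_2$, arranging that the two sections $Z$ and $Z_2$ lie in a common region where $\phi_2\circ\hat\phi_1\inv$ is defined and smooth.

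The main computation is then to express the transition map $\eta_2\circ\eta_1\inv$ as a composition of smooth maps. Using the defining relations $\pi_{12}\circ\phi_i=\eta_i\circ\piO$ together with the section $\sigma_i\colon V_i\to Z_i$ inverting $\piO$ on the zero-slice, I would write $\eta_2\circ\eta_1\inv=\pi_{12}\circ\phi_2\circ\sigma_2\circ\eta_2\inv$ and trace how a point $(x,y)\in\eta_1(V_1\cap V_2)$ is carried to its image: lift via $\sigma_1$ into $Z_1\subset\fam{X}$, apply the smooth $\fam{X}$-transition $\phi_2\circ\phi_1\inv$ after the vertical translation by $\hat\sigma$ that realizes it as a genuine overlap map on $\fam{X}$, then project off the $I^m$-factor by the smooth linear $\pi_{12}$. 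Each ingredient—$\sigma_i$ (a smooth section built from an adapted chart), $\hat\sigma$ (smooth by the section-induced-diffeomorphism lemma), $\phi_2\circ\phi_1\inv$ (smooth since $\fam{X}$ is a smooth manifold), and $\pi_{12}$ (a coordinate projection)—is smooth, so the composition is smooth, and by symmetry its inverse is too.

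The hard part will be keeping the bookkeeping of which fibers and which orbits the various charts see, and verifying that the vertical translation from Lemma \ref{Vertically_Translated_Charts} can always be chosen consistently over the overlap $V_1\cap V_2$ rather than just pointwise; that is, that the section $\sigma$ aligning the two zero-slices varies smoothly over the overlap and does not force a shrinking of the domain that destroys openness. I expect this to reduce, as in the classical Quotient Manifold Theorem of \cite{Lee}, to the observation that the independence of induced charts under vertical translation (Lemma \ref{Vertically_Translated_Charts}) lets me replace $\phi_1$ by a translate whose zero-slice lands in $U_2$, at which point the adapted-chart structure forces the transition map to ignore the $I^m$ coordinate entirely and act smoothly on the $(x,y)$ coordinates. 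Once the transition maps are shown smooth, the atlas of induced charts defines the smooth structure, and by construction $\eta\circ\piO=\pi_{12}\circ\phi$ exhibits $\piO$ as locally the smooth projection $I^{k+\ell+m}\to I^{k+\ell}$, completing the identification of $\fam{X}/\fam{G}$ as a smooth manifold.
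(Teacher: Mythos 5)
Your proposal follows the paper's own proof essentially verbatim: after noting that covering and homeomorphism are already established, you reduce via Lemma \ref{Vertically_Translated_Charts} (together with recentering) to two $\fam{G}$-adapted charts with a genuine overlap in $\fam{X}$, and then use adaptedness to observe that the first $k+\ell$ coordinates of the transition map $F=\tilde{\phi}\circ\phi\inv$ are independent of the $I^m$ coordinate, so the quotient transition is the smooth map $\hat{F}(x,y)$ — exactly the paper's computation. The one point you flag as "the hard part," choosing the aligning section consistently over all of $V_1\cap V_2$, dissolves because smoothness is a local condition: the paper simply recenters at representatives $p,\tilde{p}$ of each individual orbit in the overlap, takes a local section through the $g$ with $g.p=\tilde{p}$, and checks smoothness pointwise (and note the representatives of a common orbit in the two zero-slices lie in the same fiber of $\fam{X}\to\Delta$, differing only within the orbit, not in different fibers as you suggest).
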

\begin{proof}
Let $(V,\eta)$ and $(\tilde{V},\tilde{\eta})$ be two adapted charts for $\fam{X}/\fam{G}$, arising from the charts $(U,\phi)$ and $(\widetilde{U},\widetilde{\phi})$.  We first consider the case that both $U$ and $\tilde{U}$ are centered at the same point $p$.  
Writing the $\fam{G}$-adapted coordinates on each respectively as $(x,y,z)$ and $(\tilde{x},\tilde{y},\tilde{z})$ we recall that by the definition of adapted chart, two points of $U$ lie in the same $\fam{G}$ orbit iff their first two coordinates are identical, and same for $\tilde{U}$.  
The transition map $U\to \tilde{U}$ 
is given by some smooth map 
$F:I^N\to I^N$,$$(x,y,z)\mapsto F(x,y,z)=(F_1(x,y,z),F_2(x,y,z),F_3(x,y,z))=(\tilde{x},\tilde{y},\tilde{z})$$
For $F_i$ smooth maps defined on a neighborhood of the origin.
As both coordinate charts are $\fam{G}$-adapted, fixing $x,y$ and letting $z$ vary traces out points in the same $\fam{G}$ orbit, and so their $\tilde{U}$ representations have constant $\tilde{x},\tilde{y}$ and varying $\tilde{z}$.  That is, the coordinate functions $F_1$ and $F_2$ are independent of $z$ and so
$F(x,y,z)=(\hat{F}(x,y), F_3(x,y,z))$ 
for $\hat{F}:I^{k+\ell}\to I^{k+\ell}$ a smooth map in a neighborhood of $(0,0)$.  

Now we turn our attention to the transition map $V\to\tilde{V}$ given by $\tilde{\eta}\eta\inv$.  This takes a point $(x,y)$ to $\fam{G}.\phi\inv(x,y,0)\in V$ and then returns via $\tilde{\eta}$.  Writing this out

$$\tilde{\eta}\circ\eta\inv(x,y)=\tilde{\eta}\left(\fam{G}.\phi\inv(x,y,0)\right)=\pi_{12}\circ\tilde{\phi}\left(\phi\inv(x,y,0)\right)$$
$$=\pi_{12}\circ\left(\tilde\phi\circ\phi\inv\right)(x,y,0)=\pi_{12}\circ F(x,y,0)=\pi_{12}(\hat{F}(x,y),F_3(x,y))=\hat{F}(x,y)$$

Where here we have used $\tilde{\eta}$ applied to a $\fam{G}$-orbit is equal to $\pi_{12}\tilde{\phi}$ applied to a representative in $\tilde{U}$.  But we already know $\hat{F}$ is smooth, and so these charts are smoothly compatible!

We now have to consider the general case, where we have two charts $(V,\eta)$ and $(\tilde{V},\tilde{\eta})$ and $q\in V\cap \tilde{V}$.  Let $(U,\phi)$ and $(\tilde{U},\tilde{\phi})$ be corresponding charts for $\fam{X}$, and $p,\tilde{p}$ points with $\piO(p)=\piO(\tilde{p})=q$.  We can easily modify the charts so that they are centered at $p$ and $\tilde{p}$ respectively (Proposition \ref{Recentering_Charts}), and so we assume this is the case.  Since $p$ and $\tilde{p}$ are in the same $\fam{G}$-orbit, there is some $g\in\fam{G}$ such that $g.p=\tilde{p}$.

We can use this $g$ to produce a modified chart centered at $p$ which still induces $(\tilde{V},\tilde{\eta})$.  Recall that from $g\in \fam{G}$ we can produce a local section of $\fam{G}\to\Delta$, $s:W\to\fam{G}$ such that $s(W)\ni g$ (Proposition \ref{Local_Sections}).  Then following Proposition \ref{Vertically_Translated_Charts} we produce the chart $(\hat{U},\hat{\phi})=(\hat{s}\inv \tilde{U},\tilde{\phi}\circ\hat{s})$ which induces the same chart as $(\tilde{U},\tilde{\phi})$ on $\fam{X}/\fam{G}$.  We note that this new chart is centered at $p$ as 
$$\hat{\phi}(p)=\tilde{\phi}\circ\hat{s}(p)=\tilde{\phi}(s(\delta(p)).p)=\tilde{\phi}(g.p)=\tilde{\phi}(\tilde{p})=0$$
Where $s(\delta(p))=g$ as $g\in s(W)$ by design and as $g.p$ is defined, $\delta(g)=\delta(p)$ so $s(\delta(p))\in G_{\delta(g)}$ but as this is a section there can only be one such point, namely $g$.  Thus, we now have two $\fam{G}$-adapted charts centered at $p$, and so by the work above we know the associated transition map for $V\to\hat{V}$, given by $\hat{\eta}\eta\inv$ is smooth.  But $\hat{V}=\tilde{V}$ and $\hat{\eta}=\tilde{\eta}$ and so we are done!
  \end{proof}

\noindent
We will call this the \emph{induced} smooth structure on $\fam{X}/\fam{G}$.

\subsection{The Families $\fam{X}/\fam{G}\to\Delta$ and $\fam{X}\to\fam{X}/\fam{G}$}

\noindent
We are now in a position to show the main result of the quotient family theorem, that $\fam{X}/\fam{G}$ is an object in $\Fam_\Delta$.

\begin{proposition}
The map $\delta:\fam{X}\to\Delta$ induces a surjective smooth submersion $\overline{\delta}:\fam{X}/\fam{G}\to \Delta$.
\end{proposition}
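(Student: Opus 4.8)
The plan is to show that the factorization $\fam{X}\labelarrow{\piO}\fam{X}/\fam{G}\labelarrow{\bar\delta}\Delta$ of the original family projection realizes $\bar\delta$ as a smooth submersion. We already know from the earlier topological work that $\delta$ factors through $\piO$, since the $\fam{G}$ action preserves the fibers of $\delta\colon\fam{X}\to\Delta$, and that the induced map $\bar\delta$ admits local sections through every point of its domain. The surjectivity of $\bar\delta$ is then immediate: $\delta$ is surjective (it is a family projection, hence a submersion), and $\delta=\bar\delta\circ\piO$ forces $\bar\delta$ to be surjective as well. So the only substantive content is \emph{smoothness} and the \emph{submersion} property, which I would verify in local coordinates using the $\fam{G}$-adapted charts already constructed.

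First I would fix a point $\fam{G}.p\in\fam{X}/\fam{G}$ and choose a $\fam{G}$-adapted chart $(U,\phi)$ for $\fam{X}$ centered at $p$, with $\phi\colon U\to I^k\times I^\ell\times I^m$, together with the induced chart $(V,\eta)$ on $\fam{X}/\fam{G}$ satisfying $\eta\circ\piO=\pi_{12}\circ\phi$, where $\pi_{12}$ is projection onto the first two factors. By the defining property of a $\fam{G}$-adapted chart, the fibers of $\delta$ are unions of slices $\{x\}\times I^{\ell+m}$; equivalently, in these coordinates $\delta$ depends only on the first $I^k$ factor. I would make this explicit by writing $\delta$ in the chart as a smooth map $\delta_{\mathrm{loc}}\colon I^k\to\Delta$ (composing with a chart on $\Delta$), noting that the $I^k$ coordinates parameterize the members $\fam{X}_\delta$ transversally. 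Since the first two coordinates $(x,y)\in I^k\times I^\ell$ of $\eta$ include the $I^k$-block that controls $\delta$, the composition $\bar\delta\circ\eta^{-1}\colon I^{k+\ell}\to\Delta$ is exactly $(x,y)\mapsto\delta_{\mathrm{loc}}(x)$, which is smooth and has surjective differential (it is a projection onto $I^k$ followed by the submersion $\delta_{\mathrm{loc}}$). This simultaneously establishes smoothness of $\bar\delta$ and the fact that its differential is surjective at $\fam{G}.p$.

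The key bookkeeping step, and the one most likely to require care, is confirming the coordinate description $\bar\delta\circ\eta^{-1}(x,y)=\delta_{\mathrm{loc}}(x)$ rigorously from the relation $\eta\circ\piO=\pi_{12}\circ\phi$. The point is that $\bar\delta\circ\piO=\delta$, so $\bar\delta(\eta^{-1}(x,y))=\bar\delta(\piO(\phi^{-1}(x,y,0)))=\delta(\phi^{-1}(x,y,0))$, and the $\fam{G}$-adaptedness guarantees that this last value depends only on $x$. I expect the main obstacle to be purely notational: keeping the three coordinate blocks $I^k,I^\ell,I^m$ and their roles (base direction, orbit-space-but-not-base direction, orbit direction) straight, and verifying that the chart maps on $\Delta$ and $\fam{X}/\fam{G}$ are compatible so that the composite is genuinely a smooth map between manifolds rather than merely a continuous one. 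Once the local formula is pinned down, smoothness follows because it is a composition of smooth maps, and the submersion property follows because the differential surjects in each chart, completing the proof that $\bar\delta\colon\fam{X}/\fam{G}\to\Delta$ is a surjective smooth submersion, hence an object of $\Fam_\Delta$.
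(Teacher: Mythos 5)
Your proposal is correct and takes essentially the same route as the paper: both descend $\delta$ through $\piO$ (with surjectivity immediate), and then verify smoothness and the submersion property in a $\fam{G}$-adapted chart $(U,\phi)$ with induced chart $(V,\eta)$, where the relation $\eta\circ\piO=\pi_{12}\circ\phi$ shows $\bar{\delta}$ is locally the projection $(x,y)\mapsto x$ onto the base block. Your explicit bookkeeping via $\delta_{\mathrm{loc}}$ and the computation $\bar{\delta}(\eta\inv(x,y))=\delta(\phi\inv(x,y,0))$ merely spells out what the paper states in one line.
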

\begin{proof}
From $\fam{X}$ we have the projection $\delta$ onto the base and $\piO$ onto the orbit space.  
Since each $\fam{G}$-orbit is contained in a single slice $M_\delta\subset\fam{X}$ the coordinate $\delta$ is constant on the fibers of $\piO$, and so by Proposition \ref{Pass_Through_Quotient}, $\delta$ descends to a unique smooth map $\overline{\delta}:\fam{X}/\fam{G}\to\Delta$

\begin{center}
\begin{tikzcd}
\fam{X}\arrow[d,"\piO"']\arrow[dr,"\delta"] &\\
\fam{X}/\fam{G}\arrow[r,dashed,"\overline{\delta}"']&\Delta
\end{tikzcd}
\end{center}

This is clearly surjective as $\delta$ is, and so it only remains to see $\bar{\delta}$ is a submersion.  
Let $p\in\fam{X}$ be arbitrary, and let $(U,\phi)$ be a $\fam{G}$-adapted coordinate chart centered at $p$.  Let $(V,\eta)$ be the corresponding coordinate chart for $\fam{X}/\fam{G}$ centered at $\fam{G}.p$.  
On $U$ the projection $\delta$ looks like the map $(x,y,z)\mapsto x$ and $\piO$ looks like $(x,y,z)\to (x,y)$.  Thus on $V$ the map $\overline{\delta}$ looks like $(x,y)\mapsto x$, which is clearly a submersion.
\end{proof}

\noindent
The existence of $\fam{G}$-adapted charts for $\fam{X}$ gives even more: $\fam{X}\to\fam{X}/\fam{G}$ is a family.

\begin{proposition}
With respect to the original smooth structure on $\fam{X}$ and the induced smooth structure on $\fam{X}/\fam{G}$, the orbit projection $\piO$ is a smooth surjective submersion.	
\end{proposition}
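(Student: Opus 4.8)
The plan is to verify that the orbit projection $\piO\colon\fam{X}\to\fam{X}/\fam{G}$ is a smooth surjective submersion by working entirely in the $\fam{G}$-adapted charts constructed in Proposition \ref{prop:G_Adapted}, exactly as was done to prove that $\bar{\delta}\colon\fam{X}/\fam{G}\to\Delta$ is a submersion. Surjectivity is immediate, since every orbit $\fam{G}.p$ is the image of any of its representatives. The content is smoothness and the submersion property, both of which are purely local statements that can be checked one chart at a time.

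First I would fix an arbitrary point $p\in\fam{X}$ and choose a $\fam{G}$-adapted chart $(U,\phi)$ centered at $p$, with $\phi\colon U\to I^k\times I^\ell\times I^m$. Let $(V,\eta)$ be the induced chart on $\fam{X}/\fam{G}$ centered at $\fam{G}.p$, which satisfies the key relation $\eta\circ\piO=\pi_{12}\circ\phi$ established in the lemma producing induced charts (here $\pi_{12}$ is the projection $I^{k+\ell+m}\to I^{k+\ell}$ deleting the last coordinate). The strategy is then to read off the local coordinate representation of $\piO$. Precomposing with $\phi\inv$ and postcomposing with $\eta$, the relation $\eta\circ\piO=\pi_{12}\circ\phi$ says exactly that in these coordinates $\piO$ is the map
$$
\eta\circ\piO\circ\phi\inv\colon (x,y,z)\longmapsto (x,y),
$$
the standard coordinate projection $I^{k+\ell+m}\to I^{k+\ell}$. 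This map is smooth and is manifestly a submersion, its differential being surjective at every point. Since $p$ was arbitrary and every point of $\fam{X}$ lies in such an adapted chart, $\piO$ is a smooth submersion everywhere.

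The only subtlety to address carefully is that the coordinate representation above is genuinely the representation of $\piO$ in the smooth structures that have already been fixed: the original smooth structure on $\fam{X}$ (through the chart $\phi$) and the \emph{induced} smooth structure on $\fam{X}/\fam{G}$ (through the chart $\eta$). This is precisely what the induced-chart relation $\eta\circ\piO=\pi_{12}\circ\phi$ guarantees, so no further compatibility check is needed; the smooth compatibility of the induced atlas was already verified in the proposition establishing the induced smooth structure. I do not expect a serious obstacle here — all the genuinely hard work was front-loaded into the construction of $\fam{G}$-adapted charts and the induced charts on the quotient. The main thing to be careful about is simply to invoke the correct relation between $\phi$, $\eta$, and $\piO$ rather than recomputing anything, so that the argument reduces cleanly to the observation that a coordinate projection is a submersion. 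This completes the proof of the Quotient Family Theorem, since together with the earlier propositions it exhibits $\fam{X}\labelarrow{\piO}\fam{X}/\fam{G}\labelarrow{\bar{\pi}}\Delta$ as the desired factorization with both maps families.
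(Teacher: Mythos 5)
Your proposal is correct and follows essentially the same route as the paper: fix a $\fam{G}$-adapted chart $(U,\phi)$ centered at an arbitrary point, pass to the induced chart $(V,\eta)$ on $\fam{X}/\fam{G}$, and observe via the relation $\eta\circ\piO=\pi_{12}\circ\phi$ that $\piO$ is locally the coordinate projection $(x,y,z)\mapsto(x,y)$, hence a smooth submersion, with surjectivity holding by definition. Your writeup is in fact slightly more careful than the paper's, since you explicitly cite the induced-chart relation to justify that this coordinate representation is taken with respect to the already-fixed smooth structures rather than asserting it silently.
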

\begin{proof}
Let $p\in\fam{M}$ and $U$ be a $\fam{G}$-adapted chart centered at $p$, with $(V,\eta)$ the induced chart on $\fam{X}/\fam{G}$.  Then with respect to these coordinates, the map $\piO$ is expressed as $(x,y,z)\mapsto(x,y)$ which is clearly a smooth submersion.	The map $\piO$ is surjective by definition, so we are done.
\end{proof}

\section{Families of Geometries}
\label{sec:Fams_of_Geos}

A \emph{family of Klein geometries over 	$\Delta$} is given by a pair $(\fam{G},\fam{X})$ of groups $\fam{G}\to\Delta$ acting fiberwise-transitively on the spaces $\fam{X}\to\Delta$.  
Much as in the classical case, we will see that in making things precise there is both a Group-Space and Automorphism-Stabilizer perspective, and that these two perspectives are equivalent.
\begin{comment}
In order to carry with as many familiar modes of thought as possible from the usual theory of homogeneous spaces, we further talk about the notion of effectiveness and local isomorphism of families.
\end{comment}

\subsection{Group-Space \& Automorphism-Stabilizer}

\begin{definition}[Group-Space]
A \emph{family of Klein geometries over 	$\Delta$} is given by a triple $(\fam{G},(\fam{X},\fam{x}))$ of a family of groups $\fam{G}\to\Delta$ acting fiberwise-transitively on a family of spaces $\fam{X}\to\Delta$ over the same base, equipped with a global section $\fam{x}\colon \Delta\to \fam{X}$ choosing a basepoint in each fiber.  
A morphism of geometries $\Phi\colon (\fam{G},(\fam{X},\fam{x}))\to(\fam{G}',(\fam{X}',\fam{x}'))$ is given by a family homomorphism $\phi_{\fam{Grp}}\colon\fam{G}\to\fam{G}'$ together with an equivariant map $\phi_{\fam{Sp}}\colon\fam{X}\to\fam{X}'$ such that $\phi_{\fam{Sp}}\circ\fam{x}=\fam{x}'$.
The category of such geometries is denoted $\cat{GrpSp}$.
\end{definition}

\begin{figure}
\centering\includegraphics[width=0.75\textwidth]{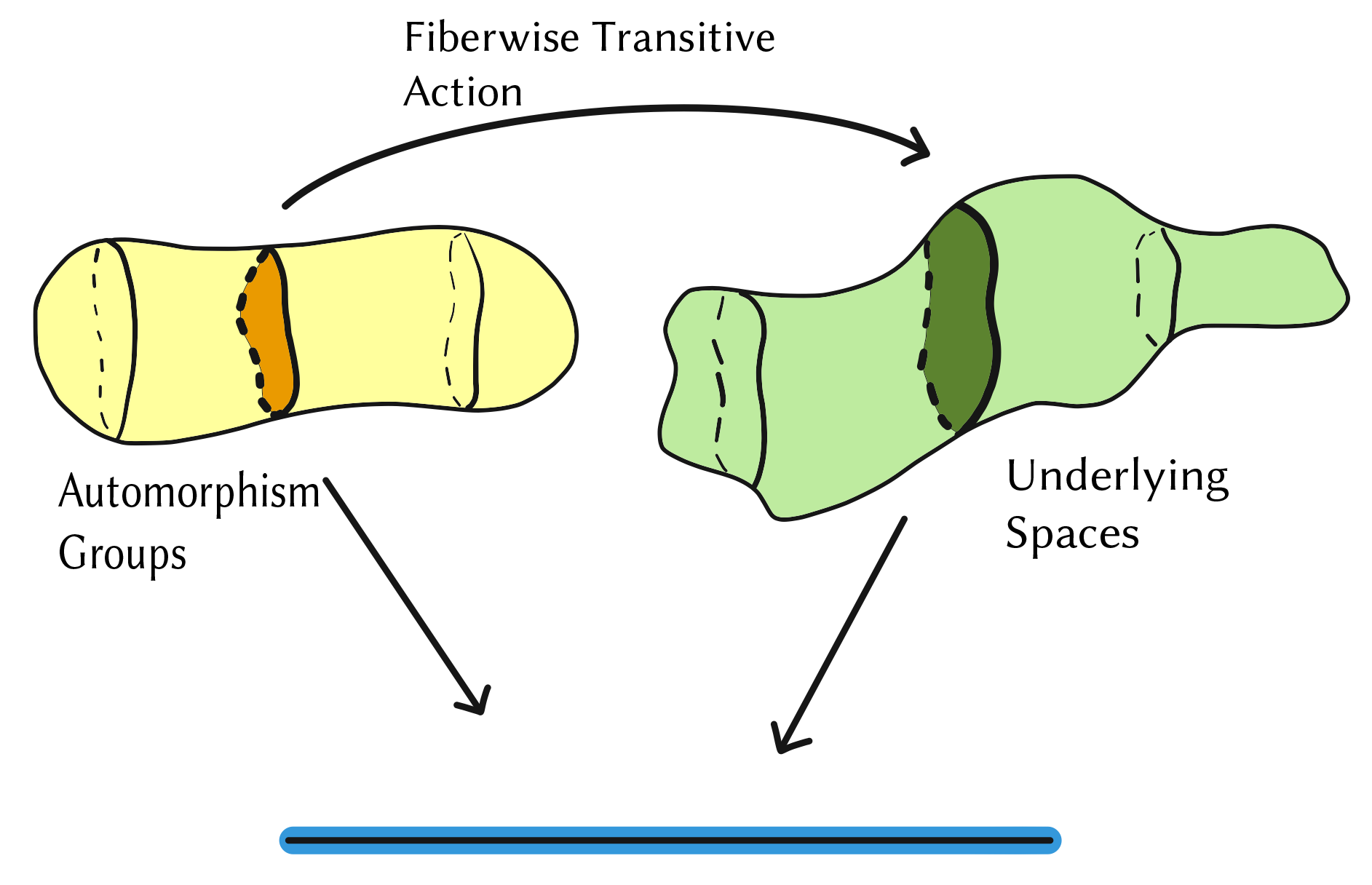}
\caption{A family of geometries from the Group-Space perspective.}	
\end{figure}

\noindent
 This generalizes the group-space viewpoint on Klein geometries.  Alternatively, we may wish to generalize the group-stabilizer perspective, which encodes homogeneous spaces purely group-theoretically.

\begin{definition}[Automorphism-Stabilizer]
A family of Klein geometries over $\Delta$ is given by a pair $(\fam{G,C})$ of a family of groups $\fam{G}\to\Delta$ and a closed subfamily $\fam{C}\subg\fam{G}$.	  
A morphism $\Phi:(\fam{H,K})\to (\fam{G,C})$ is a homomorphism of families $\Phi:\fam{H}\to\fam{G}$ with $\Phi(\fam{K})\subset\fam{C}$.
The category of these geometries is denoted $\cat{AutStb}$.
\end{definition}

\begin{figure}
\centering\includegraphics[width=0.5\textwidth]{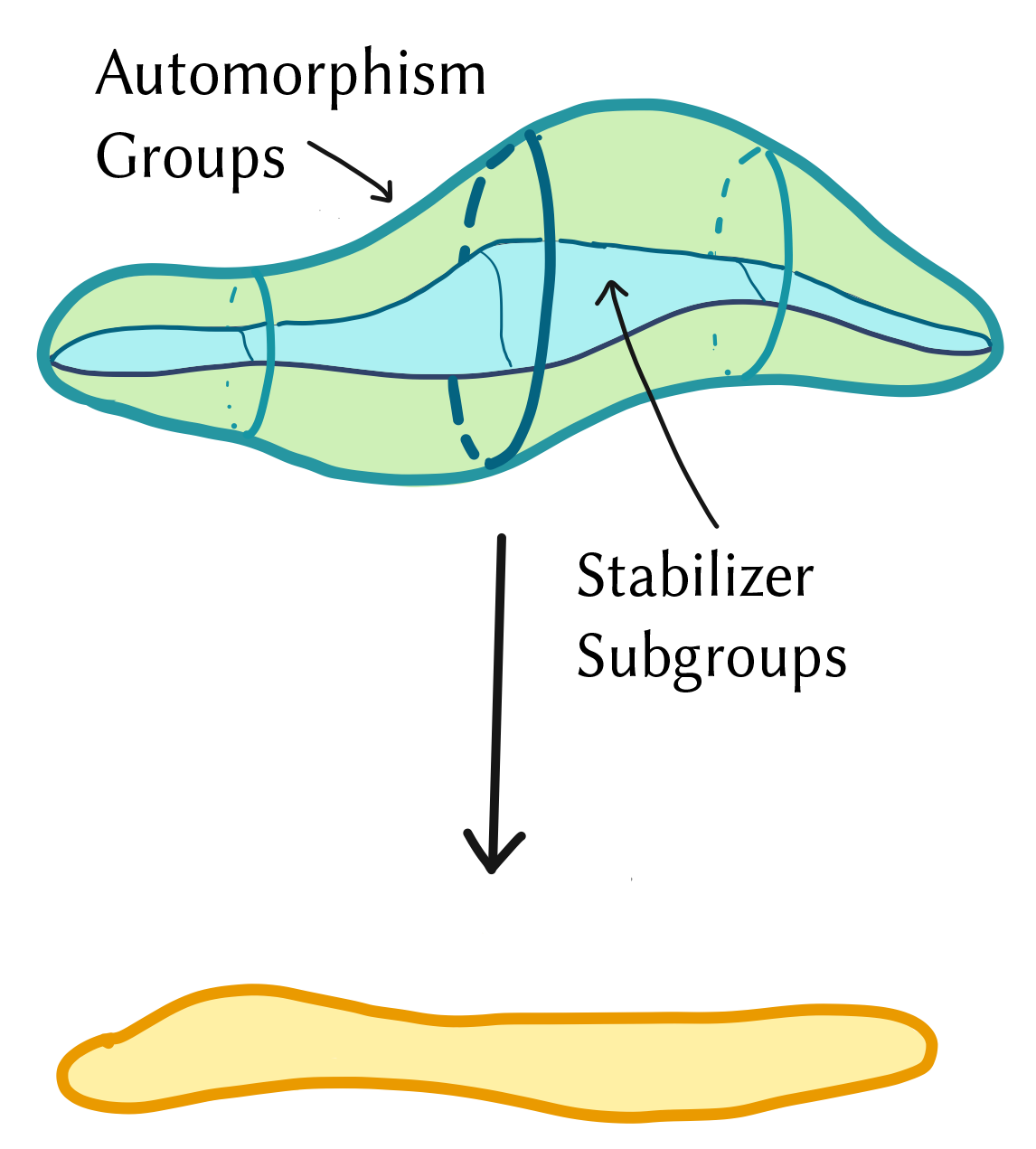}
\caption{A family of geometries from the Automorphism-Stabilizer perspective.}	
\end{figure}

\noindent
Many other definitions from the theory of Klein geometries have obvious analogs.  The \emph{kernel collection} of a family is the subset $\fam{ker}\subset\fam{G}$ of elements which act trivially on their respective members.  A family $(\fam{G},\fam{X})$ is \emph{effective} if its kernel is the trivial family $\fam{e}\subg \fam{G}$, and locally effective if $\fam{ker}$ is discrete in each fiber.  In the group-stabilizer framework, the kernel of $(\fam{G},\fam{K})$ is the \emph{core} of $\fam{K}$ in $\fam{G}$: fiberwise equal to $\mathsf{core}_{\fam{G}_\delta}(\fam{K}_\delta)$.

\begin{definition}
An \emph{embedding of a family of geometries} is given by a monomorphism $(\fam{H},\fam{Y})\stackrel{\iota}{\inject}(\fam{G},\fam{X})$.  If in addition $\iota(\fam{H})$ is a subfamily of $\fam{G}$ and $\iota(\fam{Y})$ is open in $\fam{X}$, it is said to be a family of \emph{subgeometries} of $(\fam{G},\fam{X})$. 	
\end{definition}

\begin{definition}
 A \emph{fibration} of $(\fam{G},\fam{X})$ over $(\fam{H},\fam{Y})$ is given by an epimorphism $(\fam{G,X})\surject(\fam{H,Y})$.
 \end{definition}

\subsection{Equivalence of Perspectives}

The theory of Klein geometries begins begins from these definitions with the identification of the important natural transformations relating them.  
The equivalence of categories between the group-stabilizer and (pointed) group-space viewpoints, together with the forgetful functor from pointed geometries to their non-pointed counterparts allows us to freely pass between these notions at will.
Techniques for producing pullbacks in the smooth category and the quotient family theorem give tools to build up this theory of families of geometries.

\begin{observation}
Deleting the basepoint section $(\fam{G,(X,x)})\mapsto (\fam{G,X})$ is a forgetful functor from the category of pointed to non-pointed families of geometries.	
\end{observation}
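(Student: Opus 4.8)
The plan is to treat this as the routine verification that a forgetful assignment satisfies the functor axioms, once the target category has been pinned down. First I would make precise the \emph{category of non-pointed families of geometries}: its objects are pairs $(\fam{G},\fam{X})$ of a family of groups $\fam{G}\to\Delta$ acting fiberwise-transitively on a family of spaces $\fam{X}\to\Delta$, and its morphisms $(\fam{G},\fam{X})\to(\fam{G}',\fam{X}')$ are pairs $(\phi_{\fam{Grp}},\phi_{\fam{Sp}})$ of a family homomorphism $\phi_{\fam{Grp}}\colon\fam{G}\to\fam{G}'$ together with a $\phi_{\fam{Grp}}$-equivariant map $\phi_{\fam{Sp}}\colon\fam{X}\to\fam{X}'$ --- exactly the data of a $\cat{GrpSp}$-morphism with the basepoint condition $\phi_{\fam{Sp}}\circ\fam{x}=\fam{x}'$ omitted. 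Composition is componentwise, and one checks (as is standard, and inherited from $\Fam_\Delta$) that the componentwise composite of two such morphisms is again a family homomorphism paired with an equivariant map, so that this is a bona fide category.

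On objects, the functor sends $(\fam{G},(\fam{X},\fam{x}))$ to $(\fam{G},\fam{X})$, discarding the section $\fam{x}$. This lands in the target category because fiberwise transitivity of the $\fam{G}$-action on $\fam{X}$ is part of the retained data and makes no reference to the basepoint. On morphisms, the functor sends $\Phi=(\phi_{\fam{Grp}},\phi_{\fam{Sp}})$ to the same pair, now regarded as a non-pointed morphism; this is well defined precisely because every condition defining a $\cat{GrpSp}$-morphism \emph{other} than $\phi_{\fam{Sp}}\circ\fam{x}=\fam{x}'$ is exactly a defining condition of a non-pointed morphism, and the one equation we drop is the one being forgotten.

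It then remains to verify the two functor axioms. The identity morphism on $(\fam{G},(\fam{X},\fam{x}))$ is $(\id_{\fam{G}},\id_{\fam{X}})$, which maps to $(\id_{\fam{G}},\id_{\fam{X}})$, the identity on $(\fam{G},\fam{X})$; and since both composition laws act componentwise on the underlying pairs of maps, the image of a composite equals the composite of the images. I expect no genuine obstacle here: the only point requiring care is the preliminary one of fixing the target category so that forgetting the basepoint-preservation equation does not spoil well-definedness of composition, and this is immediate once one observes that the remaining morphism conditions --- being a family homomorphism and being equivariant --- are themselves closed under componentwise composition. Faithfulness then comes for free, as the functor is the identity on underlying map-pairs, though faithfulness is not required for the statement.
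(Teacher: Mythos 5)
Your proposal is correct and matches the paper's approach, which disposes of the observation in one line (forget the section on objects, morphisms are untouched, hence a functor); you simply spell out the implicit target category and the componentwise functor axioms in more detail. No gap and no divergence --- the extra care in fixing the non-pointed category is harmless but not needed beyond what the paper already assumes.
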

\begin{proof}
The action on objects is simply to forget the global section of points.  This has no effect on the morphisms, and automatically determines a functor.
\end{proof}

We turn now to showing the equivalence of the $\cat{GrpSp}$ and $\cat{AutStb}$ perspectives. One direction, constructing the family of spaces for a group-space geometry out of a family of automorphism groups and corresponding family of stabilizer subgroups, is a direct application of the Quotient Family Theorem.

\begin{lemma}
The map $\cat{F}:\cat{AutStb}\to\cat{GrpSp}$ sending a group-stabilizer geometry $(\fam{G,K})$ to the group-space geometry $\fam{(G,(G/K,K))}$ defines a functor.
\end{lemma}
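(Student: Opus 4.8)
The claim is that $\cat{F}$, sending $(\fam{G},\fam{K})$ to $(\fam{G},(\fam{G}/\fam{K},\fam{K}))$, is a well-defined functor $\cat{AutStb}\to\cat{GrpSp}$. First I would verify that the object assignment lands in $\cat{GrpSp}$; this is where the real work lives, and it is almost entirely an invocation of the machinery already assembled. Given a group-stabilizer geometry $(\fam{G},\fam{K})$ with $\fam{K}\subg\fam{G}$ a closed subfamily, I would consider the action of $\fam{K}$ on $\fam{G}$ by right translation. By Proposition \ref{prop:Subgroup_Translation} this action is proper, and it is free since translation by a nonidentity element has no fixed points. The Quotient Family Theorem then applies directly: the orbit space $\fam{G}/\fam{K}$ is an object of $\cat{Fam}_\Delta$, with the projection $\piO\colon\fam{G}\to\fam{G}/\fam{K}$ a family of families and $\bar\pi\colon\fam{G}/\fam{K}\to\Delta$ the base projection. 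This gives the family of spaces $\fam{X}=\fam{G}/\fam{K}$.

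\textbf{Transitivity, basepoint, and action.} Next I would equip $\fam{G}/\fam{K}$ with the required fiberwise-transitive $\fam{G}$-action and global section. The left-translation action of $\fam{G}$ on itself descends through $\piO$ to an action $\fam{G}\timesd(\fam{G}/\fam{K})\to\fam{G}/\fam{K}$, since left and right translation commute; concretely $g.(h\fam{K})=(gh)\fam{K}$, which is well-defined and a morphism in $\cat{Fam}_\Delta$ because it is built from the composition of the smooth multiplication $\mu$ on $\fam{G}$ with the smooth submersion $\piO$. Fiberwise this restricts to the usual transitive action of $\fam{G}_\delta$ on $\fam{G}_\delta/\fam{K}_\delta$, so the action is fiberwise transitive. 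The basepoint section $\fam{x}\colon\Delta\to\fam{G}/\fam{K}$ is $\piO\circ\fam{e}$, the image of the identity section of $\fam{G}$; this is smooth and selects the coset $\fam{K}_\delta$ in each fiber, and its stabilizer is exactly $\fam{K}$. Thus $(\fam{G},(\fam{G}/\fam{K},\fam{x}))$ is a bona fide object of $\cat{GrpSp}$.

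\textbf{Action on morphisms and functoriality.} Finally I would define $\cat{F}$ on morphisms. Given $\Phi\colon(\fam{H},\fam{K})\to(\fam{G},\fam{C})$, that is a family homomorphism $\Phi\colon\fam{H}\to\fam{G}$ with $\Phi(\fam{K})\subg\fam{C}$, I set $\cat{F}(\Phi)=(\Phi,\bar\Phi)$ where $\bar\Phi\colon\fam{H}/\fam{K}\to\fam{G}/\fam{C}$ is induced on quotients by $h\fam{K}\mapsto\Phi(h)\fam{C}$. This is well-defined precisely because $\Phi(\fam{K})\subg\fam{C}$, and it is a morphism of families since it descends $\Phi$ through the two orbit projections (using the universal property of the quotient family, i.e. that a fiberwise-constant-on-cosets morphism factors smoothly through $\piO$). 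One checks $\bar\Phi$ is $\Phi$-equivariant and carries the basepoint section of $\fam{H}/\fam{K}$ to that of $\fam{G}/\fam{C}$, so $\cat{F}(\Phi)$ is a morphism in $\cat{GrpSp}$. Preservation of identities and composition is then immediate from the formula.

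\textbf{Expected obstacle.} The only genuinely substantive point is confirming that the descended left action and the descended morphism $\bar\Phi$ are \emph{smooth morphisms of families} rather than merely fiberwise maps; everything else is formal. I expect this to follow cleanly from the fact that $\piO$ is a submersion (so maps out of $\fam{G}/\fam{K}$ are smooth iff their precompositions with $\piO$ are) together with smoothness of $\mu$ and $\Phi$ on the total spaces, exactly paralleling the classical argument in the non-family lemma proved earlier in the excerpt. No new technical difficulty beyond the Quotient Family Theorem itself should arise.
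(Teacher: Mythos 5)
Your proposal is correct and follows essentially the same route as the paper: invoke Proposition \ref{prop:Subgroup_Translation} for properness (freeness being immediate) of the translation action of $\fam{K}$ on $\fam{G}$, apply the Quotient Family Theorem to get $\fam{G}/\fam{K}\in\Fam_\Delta$, descend left translation for the fiberwise-transitive action, take the image of the identity section as the basepoint, and define $\cat{F}(\Phi)=(\Phi,\bar\Phi)$ with $\bar\Phi(h\fam{K})=\Phi(h)\fam{C}$, well-defined since $\Phi(\fam{K})\subset\fam{C}$. Your additional care about smoothness of the descended maps (via the submersion $\piO$) and your consistent notation are, if anything, slightly more thorough than the paper's own write-up, which elides those checks.
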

\begin{proof}

As $\fam{K}$ is a closed subfamily of $\fam{G}$, \cref{prop:Subgroup_Translation} shows that left translation by $\fam{K}$ on $\fam{G}$ is a free and proper action.  
Thus by the quotient family theorem, $\fam{G}/\fam{K}$ is a smooth family over $\Delta$.  
The action of $\fam{G}$ on $\fam{G}/\fam{K}$ is just the usual action of $\fam{G}$ on itself followed by the quotient map, which is fiberwise transitive as $\fam{G}_\delta$ acts transitively on itself.  
The natural inclusion $\fam{K}\inject \fam{G}/\fam{K}$ (equivalently, the projection of the identity section $\fam{e}$) provides the section of points.
Given a morphism $\Phi:(\fam{K,H})\to (\fam{C,G})$ we define $\cat{F}(\Phi)=(\Phi, \bar{\Phi})$ where $\bar{\Phi}(g\fam{C}_\delta)=\Phi(g)\fam{K}_\delta$.
This is $\Phi$-equivariant and well-defined as $\Phi(\fam{C})\subset\fam{K}$.  %Note $\bar{\Phi}(\fam{C})=\fam{K}$ as required.

\end{proof}
The connection between the group-space and group-stabilizer viewpoints is more subtle in the theory of families, as it was noted in Section \ref{sec:Family_Actions} that the collection of stabilizers of an arbitrary family action need not always form a subfamily.
Thus, creating a geometry $(\fam{G},\fam{stab}_\fam{G}(\fam{x}))$ from a geometry $(\fam{G},(\fam{X},\fam{x}))$ is delicate, and potentially problematic\footnote{In fact, working with weak families in the continuous category, one cannot always do this.}
The proposition below shows that these concerns only materialize for non-fiberwise transitive actions, so families of geometries always have families of stabilizing subgroups.

\begin{proposition}
\label{prop:Stabilizer_Subfamily}
Let $(\fam{G,X})$ be a family of geometries in the smooth category.  
Then the point stabilizers $\mathsf{stab}_{\fam{G}_{\pi(x)}}(x)$ form a family over $\fam{X}$.	
\label{prop:Stabilizer_Families_of_Family}
\end{proposition}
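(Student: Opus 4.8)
The statement asserts that for a family of geometries $(\fam{G},\fam{X})$ over $\Delta$ (in the smooth category), the point stabilizers assemble into a family of groups over the total space $\fam{X}$. The key hypothesis, which distinguishes this from the cautionary Example \ref{ex:RP_dS_LightCone}, is that the $\fam{G}$-action is \emph{fiberwise transitive}. My approach is to exhibit the union of stabilizers as the preimage of a section under a suitable fiberwise-submersive map, so that the machinery of Section \ref{sec:Pullback_Families} (specifically Lemma \ref{prop:Fiberwise_Family=Family} and the pullback construction) applies. First I would reframe the base: rather than working over $\Delta$, I work over $\fam{X}$ itself, pulling $\fam{G}$ back along $\pi_\fam{X}$ to obtain $\fam{G}^\star = \pi_\fam{X}^\star\fam{G}\to\fam{X}$, whose fiber over $x$ is $\fam{G}_{\pi(x)}$. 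This is a family by Observation \ref{obs:Pullback_Functor} / Lemma \ref{lem:Pullbacks_Admit_Sections}.

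\textbf{Key steps.} Define the evaluation-type morphism $\Phi\colon \fam{G}^\star \to \fam{X}$ by $\Phi(g,x) = g.x$, which lands in the same $\pi_\fam{X}$-fiber as $x$ (since $\fam{G}$ preserves fibers), so $\Phi$ is a morphism in $\cat{Fam}_\fam{X}$ from $\fam{G}^\star$ to the diagonal family $\fam{X}\timesd_\fam{X}\fam{X}\cong\fam{X}$. The stabilizer collection is exactly the solution set of the equation $\Phi(g,x) = x$, i.e. the preimage of the identity section of $\fam{X}\to\fam{X}$. By the pullback-as-solution-set philosophy of Section \ref{subsec:Construction_Pullback}, this preimage is a family over $\fam{X}$ provided $\Phi$ is a \emph{fiberwise} family, which by Lemma \ref{prop:Fiberwise_Family=Family} reduces to checking that each restriction $\Phi_x\colon \fam{G}_{\pi(x)}\to \fam{X}_{\pi(x)}$, namely the orbit map $g\mapsto g.x$, is a submersion onto the fiber $\fam{X}_{\pi(x)}$. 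This is precisely where fiberwise transitivity enters: for each $\delta$ the single group $\fam{G}_\delta$ acts transitively on the single manifold $\fam{X}_\delta$, and a transitive smooth action of a Lie group on a smooth manifold has submersive orbit maps (this is the classical fact underlying the homogeneous-space structure, as used throughout Chapter \ref{chp:Klein_Geo}). Hence each $\Phi_x$ is a submersion, $\Phi$ is a fiberwise family, and the stabilizer collection $\bigcup_{x} \mathsf{stab}_{\fam{G}_{\pi(x)}}(x)$ is a subfamily of $\fam{G}^\star\to\fam{X}$; that it is a family \emph{of groups} follows since the group operations on $\fam{G}^\star$ restrict fiberwise to the subgroup structure, exactly as in Lemma \ref{lem:Fams_of_Groups}.

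\textbf{The main obstacle.} The delicate point is \emph{not} the fiberwise submersivity but the passage from fiberwise data to a genuine smooth family over $\fam{X}$ — i.e. ensuring the restricted projection $\bigcup_x\mathsf{stab}(x)\to\fam{X}$ is itself submersive, not merely that each fiber is a manifold of locally constant dimension. In the failed Example \ref{ex:RP_dS_LightCone} the stabilizer dimension \emph{jumps}, so the collection is not even a family; the content of the present proposition is that fiberwise transitivity rigidly forces the stabilizer dimension to be constant, equal to $\dim\fam{G}_\delta - \dim\fam{X}_\delta$, which is locally constant over $\Delta$ since both $\fam{G}\to\Delta$ and $\fam{X}\to\Delta$ are submersions with fibers of invariant dimension. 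I would make this explicit via the short exact sequence $1\to\mathsf{stab}_{\fam{G}_\delta}(x)\to\fam{G}_\delta\to\fam{X}_\delta\to 1$ arising from the submersive orbit map, establishing constant rank, and then invoke Lemma \ref{prop:Fiberwise_Family=Family} exactly as above to conclude that the tangent-space direct-sum decomposition globalizes. Once constant rank is secured, the argument is routine; the subtlety I must guard against is precisely the phenomenon that projectivization or a degenerate limit could destroy transitivity — but transitivity is assumed here by hypothesis, so the conclusion follows.
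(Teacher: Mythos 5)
Your proof is correct, and structurally it is the paper's proof with the base relocated: your $\fam{G}^\star=\pi_{\fam{X}}^\star\fam{G}$ has total space $\fam{G}\timesd\fam{X}$, and once the basepoint is recorded alongside $\Phi$ you have exactly the paper's map $\alpha\colon(g,x)\mapsto(x,g.x)$, with the same key lemma (Lemma \ref{prop:Fiberwise_Family=Family}) and the same pullback along the diagonal. The genuine difference is which base you hand to that lemma. The paper works over $\Delta$, so its fiberwise check is that $G\times X\to X\times X$, $(g,x)\mapsto(x,gx)$, is a submersion for each fixed geometry $(G,X)$, which forces a two-factor tangent analysis: $T_xX$ is hit via the graph of $dL_g$ and $T_{gx}X$ via the orbit map. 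You work over $\fam{X}$, where the fibers of $\fam{G}^\star\to\fam{X}$ freeze the $x$-coordinate, so your fiberwise check collapses to the single classical fact that the orbit map $g\mapsto g.x$ of a transitive smooth action is a submersion --- the same fact the paper needs anyway for its second factor, via the quotient manifold theorem. Your route is thus mildly more economical; the paper's version establishes the slightly stronger statement that $\alpha$ is a family over all of $\fam{X}\timesd\fam{X}$, usable for pullback along arbitrary maps rather than only along sections. Two phrasings in your write-up should be repaired, though neither affects the argument. First, ``$\Phi$ is a morphism in $\cat{Fam}_{\fam{X}}$ to the diagonal family $\fam{X}\times_{\fam{X}}\fam{X}\cong\fam{X}$'' is not literally correct: $g.x$ need not lie over $x$ under the identity projection, so the target must be the pullback family $\pi_{\fam{X}}^\star\fam{X}=\fam{X}\timesd\fam{X}\to\fam{X}$ (first-factor projection) with the map $(g,x)\mapsto(x,g.x)$, and your ``identity section'' is then the diagonal $x\mapsto(x,x)$; every subsequent step you take (fiberwise restriction being the orbit map $\fam{G}_{\pi(x)}\to\fam{X}_{\pi(x)}$, preimage of the section being the stabilizer collection) already presupposes this correct reading. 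Second, the ``short exact sequence $1\to\mathsf{stab}_{\fam{G}_\delta}(x)\to\fam{G}_\delta\to\fam{X}_\delta\to 1$'' should be stated as the fiber bundle $\mathsf{stab}(x)\to\fam{G}_\delta\to\fam{X}_\delta$, since $\fam{X}_\delta$ carries no group structure; the dimension count you want, $\dim\mathsf{stab}(x)=\dim\fam{G}_\delta-\dim\fam{X}_\delta$, follows from submersivity of the orbit map alone.
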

\begin{proof}
The action of $\fam{G}$ on $\fam{X}$ is given by the map $\fam{G}\timesd\fam{X}\to\fam{X}$, $(g,x)\mapsto g.x$.  We will consider the associated map $\alpha\colon\fam{G}\timesd\fam{X}\to\fam{X}\timesd\fam{X}$ given by $(g,x)\mapsto(x,gx)$.

Assume temporarily that $\alpha$ gives $\fam{G}\timesd\fam{X}$ the structure of a family over $\fam{X}\timesd \fam{X}$.  
Pulling this family back via the diagonal map $\delta\colon\fam{X}\to\fam{X}\timesd\fam{X}$ gives a family $\fam{S}\to \fam{X}$ consisting of the elements $\fam{S}=\set{((g,x),y)\mid (x,gx)=(y,y)}$ that is, the fiber above $x\in\fam{X}$ is the stabilizer subgroup $\mathsf{stab}_{G_{\pi(x)}}(x)$.  
Thus it suffices to show that $\alpha$ is a family projection.

Since $\alpha$ is a smooth map of families by \cref{prop:Fiberwise_Family=Family}, this follows if $\alpha$ is a map of families fiberwise, or equivalently for any fixed smooth geometry $(G,X)$ the map $G\times X\to X\times X$ given by $(g,x)\mapsto (x,gx)$ is a submersion.  
Fix a particular $(g,x)\in G\times X$.
As the tangent space to the image decomposes as a product $T_{(x,gx)}X\times X=T_xX\times T_{gx}X$, it is enough to show that $d\alpha_{(g,x)}$ is onto each factor.  

Fixing $g$, we consider the restricted map $\alpha(g,\cdot)\colon \set{g}\times X\to X\times X$ sends $x$ to $(gx,x)$, and so the derivative is the graph of $L_g$ (left multiplication by $g$) in $T_{x}X\times T_{gx}X$. 
Fixing $x$, we consider the map $\alpha(\cdot,x)\colon G\times\set{x}\to X\times X$, which is constant on the first factor and is the orbit map $G\to X$, $g\mapsto g.x$ on the second.  
This map factors through the projection onto the coset space $G\to G/\mathsf{stab}(x)$ to a diffeomorphism $G/\mathsf{stab}(x)\to X$ as the action is transitive.  
But the projection onto the coset space is a submersion by the quotient manifold theorem, so $\alpha(\cdot, x)$ is onto $\set{0}\times T_{gx}X$.
Noting that  $\set{(v,L_g(v))\mid v\in T_xX}$ and $\set{(0,w)\mid w\in T_xX}$ sum to all of $T_{(x,gx)}X\times X$ finishes the argument.

\end{proof}

\begin{corollary}
The stabilizer family $\fam{stab}_\fam{G}\to\fam{X}$ with fiber $\fam{stab}_{\fam{G}}(x)$ above each $x\in X$ pulls back along any section $\fam{x}\colon\Delta\to\fam{X}$ to give a smooth family of point stabilizers $\fam{stab}_\fam{G}(\fam{x})\to\Delta$.
Thus, every pair $(\fam{G},(\fam{X,x}))$	 is canonically associated to a pair $(\fam{G},\fam{stab}_\fam{G}(\fam{x}))$.
\end{corollary}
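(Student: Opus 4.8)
The statement to prove is a corollary, so the plan is to deduce it directly from Proposition \ref{prop:Stabilizer_Subfamily} (the preceding proposition) combined with the theory of pullback families developed in Section \ref{sec:Pullback_Families}. The proposition has just established that the collection of point stabilizers assembles into a genuine smooth family $\fam{stab}_\fam{G}\to\fam{X}$ over the total space $\fam{X}$, whose fiber over a point $x\in\fam{X}$ is the subgroup $\mathsf{stab}_{\fam{G}_{\pi(x)}}(x)\subg\fam{G}_{\pi(x)}$. The corollary asks to restrict this down to a family over the base $\Delta$ along a chosen basepoint section $\fam{x}\colon\Delta\to\fam{X}$.

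First I would observe that the basepoint section $\fam{x}\colon\Delta\to\fam{X}$ is a morphism in the smooth category (it is a section of the family projection $\pi\colon\fam{X}\to\Delta$, hence in particular a smooth map $\Delta\to\fam{X}$). Then I would invoke the Existence of Pullback Families result: pullbacks always exist along any smooth morphism, and $\fam{x}$ induces a functor $\fam{x}^\star\colon\cat{Fam}_\fam{X}\to\cat{Fam}_\Delta$. Applying this functor to the stabilizer family $\fam{stab}_\fam{G}\to\fam{X}$ yields the pullback family $\fam{x}^\star(\fam{stab}_\fam{G})\to\Delta$, which is automatically an object of $\cat{Fam}_\Delta$ by Lemma \ref{lem:Pullbacks_Admit_Sections}. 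I would then identify this pullback explicitly: by the definition of pullback, its total space is
$$\fam{x}^\star(\fam{stab}_\fam{G})=\{(s,\delta)\mid s\in\fam{stab}_\fam{G},\; \pi_{\fam{stab}}(s)=\fam{x}(\delta)\},$$
so the fiber over $\delta\in\Delta$ is exactly the stabilizer $\mathsf{stab}_{\fam{G}_\delta}(\fam{x}(\delta))$. This is precisely the stabilizer collection $\fam{stab}_\fam{G}(\fam{x})=\bigcup_{\delta\in\Delta}\mathsf{stab}_{\fam{G}_\delta}(\fam{x}(\delta))$, now equipped with the structure of a smooth family over $\Delta$.

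To finish I would note that this family of stabilizers is in fact a \emph{family of groups}, not merely a family of spaces: the multiplication and inversion morphisms on $\fam{G}$ restrict fiberwise to each $\mathsf{stab}_{\fam{G}_\delta}(\fam{x}(\delta))$, and since pullback is functorial and the group-object structure maps pull back coherently, the resulting family inherits a group-object structure in $\cat{Fam}_\Delta$. The canonical association claimed in the second sentence then follows immediately: the assignment $(\fam{G},(\fam{X},\fam{x}))\mapsto(\fam{G},\fam{stab}_\fam{G}(\fam{x}))$ sends a Group-Space datum to an Automorphism-Stabilizer datum, with the subfamily $\fam{stab}_\fam{G}(\fam{x})\subg\fam{G}$ being closed because point stabilizers of a smooth action are closed subgroups in each fiber.

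Since the heavy lifting (that stabilizers of a fiberwise-transitive action form a smooth family over $\fam{X}$) was already carried out in Proposition \ref{prop:Stabilizer_Subfamily}, I do not expect any serious obstacle here; the corollary is essentially a bookkeeping application of the pullback machinery. The one point deserving mild care is verifying that the pullback family is genuinely a \emph{subfamily} of $\fam{G}$ over $\Delta$ (i.e.\ that the natural map $\fam{stab}_\fam{G}(\fam{x})\to\fam{G}$ is a closed embedding whose image is a closed subset on which $\pi$ restricts to a submersion), rather than merely a family abstractly mapping to $\fam{G}$. This follows because the composite of the pullback projection with the canonical map $\fam{stab}_\fam{G}\to\fam{G}$ lands inside $\fam{G}$ fiberwise as the genuine point-stabilizer subgroups, which are closed, so the image is closed and the needed submersion property is inherited from the fact that $\fam{x}^\star(\fam{stab}_\fam{G})\to\Delta$ is already a family.
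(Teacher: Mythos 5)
Your proposal is correct and follows essentially the same route as the paper: the paper likewise obtains $\fam{stab}_\fam{G}(\fam{x})\to\Delta$ by applying the pullback machinery of Section \ref{sec:Pullback_Families} to the stabilizer family from Proposition \ref{prop:Stabilizer_Subfamily} along the basepoint section, citing Observation \ref{obs:Pullback_Eqns} for the fact that the projection into $\fam{G}$ is an embedding realizing it as a (closed) subfamily. Your final paragraph simply re-derives by hand what that observation supplies, so no substantive difference remains.
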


\noindent
This suggests the definition of a functor from group-space to group-stabilizer geometries in the smooth category.

\begin{lemma}
In the smooth category, the map $\Psi\colon \cat{GrpSp}\to\cat{AutStb}$ sending a geometry $(\fam{G,(X,x)})$ to $(\fam{G},\fam{stab}_\fam{G}(\fam{x}))$ defines a functor.	
\end{lemma}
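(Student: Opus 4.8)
The plan is to verify the two functor axioms: that $\Psi$ is well-defined on objects and morphisms, and that it respects identities and composition. The groundwork for well-definedness on objects is already complete. By Corollary to Proposition \ref{prop:Stabilizer_Subfamily}, for any group-space geometry $(\fam{G},(\fam{X},\fam{x}))$ the collection $\fam{stab}_\fam{G}(\fam{x})$ is a genuine smooth family of groups over $\Delta$, obtained by pulling back the stabilizer family $\fam{stab}_\fam{G}\to\fam{X}$ along the basepoint section $\fam{x}$. Moreover each fiber $\mathsf{stab}_{\fam{G}_\delta}(\fam{x}(\delta))$ is a closed subgroup of $\fam{G}_\delta$ (it is the stabilizer of a point under a smooth action, hence closed by the closed subgroup theorem fiberwise), so $\fam{stab}_\fam{G}(\fam{x})\subg\fam{G}$ is a closed subfamily. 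Thus $(\fam{G},\fam{stab}_\fam{G}(\fam{x}))$ is a legitimate object of $\cat{AutStb}$.

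The remaining work is on morphisms. Given a morphism $\Phi=(\phi_\fam{Grp},\phi_\fam{Sp})\colon(\fam{G},(\fam{X},\fam{x}))\to(\fam{H},(\fam{Y},\fam{y}))$ in $\cat{GrpSp}$, I would define $\Psi(\Phi)=\phi_\fam{Grp}$, simply forgetting the space component. The one nontrivial thing to check is that $\phi_\fam{Grp}$ is actually a morphism in $\cat{AutStb}$, i.e. that $\phi_\fam{Grp}\bigl(\fam{stab}_\fam{G}(\fam{x})\bigr)\subset\fam{stab}_\fam{H}(\fam{y})$. This is a fiberwise statement, so I would fix $\delta$ and $g\in\mathsf{stab}_{\fam{G}_\delta}(\fam{x}(\delta))$, so that $g.\fam{x}(\delta)=\fam{x}(\delta)$. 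Using the defining equivariance property of a $\cat{GrpSp}$ morphism, namely $\phi_\fam{Sp}(g.p)=\phi_\fam{Grp}(g).\phi_\fam{Sp}(p)$ together with the basepoint condition $\phi_\fam{Sp}\circ\fam{x}=\fam{y}$, one computes
$$\phi_\fam{Grp}(g).\fam{y}(\delta)=\phi_\fam{Grp}(g).\phi_\fam{Sp}(\fam{x}(\delta))=\phi_\fam{Sp}(g.\fam{x}(\delta))=\phi_\fam{Sp}(\fam{x}(\delta))=\fam{y}(\delta),$$
so $\phi_\fam{Grp}(g)\in\mathsf{stab}_{\fam{H}_\delta}(\fam{y}(\delta))$ as required. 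This mirrors exactly the corresponding step in the proof of the classical lemma stating $\Psi\colon\cat{Grp\text{-}Sp}\to\cat{AutStb}$ is a functor earlier in Chapter \ref{chp:Klein_Geo}.

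Finally, functoriality is immediate: since $\Psi$ acts on morphisms by projecting onto the group component, and composition of $\cat{GrpSp}$ morphisms composes the group components (and independently the space components), we have $\Psi(\Phi'\circ\Phi)=(\phi'_\fam{Grp}\circ\phi_\fam{Grp})=\Psi(\Phi')\circ\Psi(\Phi)$, and $\Psi$ sends the identity $(\id_\fam{G},\id_\fam{X})$ to $\id_\fam{G}$. I expect no serious obstacle here; the only subtlety worth flagging is the reliance on the smooth category, which is essential because Proposition \ref{prop:Stabilizer_Families_of_Family} uses the Quotient Manifold Theorem (via the submersivity of the orbit map) to guarantee the stabilizers assemble into a smooth family — in the weak continuous setting the stabilizer collection can fail to be a family, as noted in Example \ref{ex:RP_dS_LightCone}, so the hypothesis of fiberwise transitivity built into the definition of a $\cat{GrpSp}$ geometry is precisely what makes this functor exist.
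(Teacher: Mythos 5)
Your proof is correct and follows essentially the same route as the paper: well-definedness on objects via pulling back the stabilizer family of Proposition \ref{prop:Stabilizer_Subfamily} along the basepoint section, and on morphisms by projecting to the group component and checking $\phi_\fam{Grp}\bigl(\fam{stab}_\fam{G}(\fam{x})\bigr)\subset\fam{stab}_\fam{H}(\fam{y})$ from equivariance together with $\phi_\fam{Sp}\circ\fam{x}=\fam{y}$. Your explicit verification of identities and composition, and your closing remark on why smoothness (via fiberwise transitivity) is essential, are correct additions that the paper leaves implicit.
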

\begin{proof}
By the previous proposition, the entire collection of point stabilizers forms a family over $\fam{X}$.  Pulling this back along the section $\fam{x}\colon\Delta\to\fam{X}$ gives a family $\fam{x}^\star\fam{S}\to\Delta$ for which the projection into $\fam{G}\to\Delta$ is an embedding by \cref{obs:Pullback_Eqns}.  Thus $(\fam{G},\fam{stab_G(x)})$ is a geometry of the group-stabilizer variety.
Recalling that a morphism $\Phi:\fam{(G,(X,x))}\to\fam{(H,(Y,y))}$ consists of a group homomorphism $\Phi_{\cat{Grp}}$ and an equivariant map $\Phi_{\cat{Sp}}$ 
between the spaces, the image $\Psi(\Phi)=\Phi_{\cat{Grp}}$ is simply the group homomorphism, which is well-defined as $\Phi_\cat{Sp}\circ\fam{x}=\fam{y}$ together 
with equivariance implies that $\Phi_\cat{Grp}(\fam{stab(x)})\subset\fam{stab}(\fam{y})$.   
\end{proof}

\noindent
To finish our understanding of the family-theoretic analog of (2), we show that in the smooth category these pair up to form an equivalence of categories.
This proof is identical in structure to Proposition CITE, we have merely replaced the relevant categories of geometries with the categories of families of geometries.

\begin{proposition}
In smooth categories $\cat{Diff}$-$\Fam_\Delta$, the functors $\cat{F},\Psi$ above define an equivalence of categories $\cat{GrpSp}\cong\cat{AutStb}$.	
\end{proposition}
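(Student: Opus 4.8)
The plan is to mimic exactly the proof of the classical equivalence (the unlabeled Proposition at the end of Section~\ref{subsec:GrpSP_AutStab_Equivalence}), promoting each step from single geometries to families over $\Delta$. First I would record that the composition $\Psi\circ\cat{F}$ is the identity on $\cat{AutStb}$: starting from $(\fam{G},\fam{K})$, the functor $\cat{F}$ produces $(\fam{G},(\fam{G}/\fam{K},\fam{K}))$, and $\Psi$ then returns $(\fam{G},\fam{stab}_\fam{G}(\fam{K}))$; since $\fam{K}$ acts on $\fam{G}/\fam{K}$ with stabilizer of the identity coset exactly $\fam{K}$ fiberwise, this recovers the original pair. (The fact that this stabilizer collection really is a subfamily is guaranteed by Proposition~\ref{prop:Stabilizer_Subfamily}, since the $\fam{G}$-action on $\fam{G}/\fam{K}$ is fiberwise transitive.) So the only real content is exhibiting a natural isomorphism $\eta\colon\id_{\cat{GrpSp}}\Rightarrow \cat{F}\circ\Psi$.

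The composition $\cat{F}\circ\Psi$ sends $(\fam{G},(\fam{X},\fam{x}))$ to $\left(\fam{G},(\fam{G}/\fam{stab}_\fam{G}(\fam{x}),\fam{stab}_\fam{G}(\fam{x}))\right)$. I would define $\eta|_{(\fam{G},\fam{X})}=(\id_\fam{G},\xi_{(\fam{G},\fam{X})})$ exactly as in the classical case, where $\xi_{(\fam{G},\fam{X})}\colon\fam{X}\to\fam{G}/\fam{stab}_\fam{G}(\fam{x})$ assigns to a point $p\in\fam{X}_\delta$ the coset $g\,\fam{stab}_{\fam{G}_\delta}(\fam{x}(\delta))$ for any $g\in\fam{G}_\delta$ with $g.\fam{x}(\delta)=p$. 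The key points to verify are: (i) $\xi$ is a well-defined morphism in $\cat{GrpSp}$ — well-definedness fiberwise is the usual orbit-stabilizer bijection, and smoothness across $\Delta$ follows because fiberwise transitivity makes each $\xi_{(\fam{G},\fam{X})}$ a diffeomorphism $\fam{X}_\delta\to\fam{G}_\delta/\fam{stab}_{\fam{G}_\delta}(\fam{x}(\delta))$, with the total map smooth by the quotient family structure of $\fam{G}/\fam{stab}_\fam{G}(\fam{x})$ from the Quotient Family Theorem; (ii) $\eta$ is an isomorphism, which holds because each $\xi_{(\fam{G},\fam{X})}$ is a fiberwise diffeomorphism and hence a diffeomorphism of total spaces; and (iii) naturality, i.e.\ the square
\begin{center}
\begin{tikzcd}
\left(\fam{G},(\fam{X},\fam{x})\right)
\arrow{d}[swap]{(\Phi_\cat{Grp},\Phi_\cat{Sp})}
\arrow{rr}{(\id_\fam{G},\xi_{(\fam{G},\fam{X})})}
&& \left(\fam{G},(\fam{G}/\fam{stab}_\fam{G}(\fam{x}),\fam{stab}_\fam{G}(\fam{x}))\right)
\arrow{d}{(\Phi_\cat{Grp},\bar{\Phi}_\cat{Grp})}
\\
\left(\fam{H},(\fam{Y},\fam{y})\right)
\arrow{rr}[swap]{(\id_\fam{H},\xi_{(\fam{H},\fam{Y})})}
&& \left(\fam{H},(\fam{H}/\fam{stab}_\fam{H}(\fam{y}),\fam{stab}_\fam{H}(\fam{y}))\right)
\end{tikzcd}
\end{center}
commutes. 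Naturality reduces to the identity $\bar{\Phi}_\cat{Grp}\circ\xi_{(\fam{G},\fam{X})}=\xi_{(\fam{H},\fam{Y})}\circ\Phi_\cat{Sp}$, which I would check fiberwise by the same one-line computation as in the classical proof: for $p=g.\fam{x}(\delta)$ one gets $\bar{\Phi}_\cat{Grp}(g\,\fam{stab}_\fam{G}(\fam{x}))=\Phi_\cat{Grp}(g)\,\fam{stab}_\fam{H}(\fam{y})$, while going the other way $\Phi_\cat{Sp}(p)=\Phi_\cat{Grp}(g).\fam{y}(\delta)$ has $\xi$-image the same coset.

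I expect the main obstacle to be the smoothness of $\xi$ as a map of total spaces (as opposed to its evident fiberwise bijectivity). The cleanest route is to avoid choosing $g$'s pointwise: the orbit map $\fam{G}\to\fam{X}$, $g\mapsto g.\fam{x}(\pi(g))$, is a fiberwise-transitive family morphism, hence by the Quotient Family Theorem it descends through $\fam{G}\to\fam{G}/\fam{stab}_\fam{G}(\fam{x})$ to a smooth map $\fam{G}/\fam{stab}_\fam{G}(\fam{x})\to\fam{X}$ whose fiberwise inverse is exactly $\xi$; smoothness of $\xi$ then follows because a fiberwise diffeomorphism between families that is smooth in one direction and fiberwise invertible is a diffeomorphism (each $\xi_\delta$ is a diffeomorphism and the inverse family map is smooth by the quotient construction). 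Once this regularity is in hand, everything else is a direct transcription of the classical argument, so the proposition follows.
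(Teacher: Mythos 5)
Your proposal is correct and follows the paper's proof essentially verbatim: the same observation that $\Psi\circ\cat{F}=\id_{\cat{AutStb}}$, the same natural transformation $\eta=(\id,\xi)$ with $\xi$ sending $p=g.\fam{x}(\delta)$ to the coset $g\,\fam{stab}_\fam{G}(\fam{x}(\delta))$, and the same fiberwise naturality computation. Your extra paragraph verifying smoothness of $\xi$ (descending the orbit map through the quotient via the Quotient Family Theorem and upgrading the fiberwise diffeomorphism to a diffeomorphism of total spaces) is a welcome refinement of a point the paper's proof leaves implicit, not a different approach.
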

\begin{proof}
The composition $\Psi \cat{F}$ is the identity on $\cat{AutStb}$, and the composition $F\Psi$ takes the geometry 
$(\fam{G},(\fam{X},\fam{x}))$ to 
$(\fam{G},(\fam{G}/\fam{stab}_\fam{G}(\fam{x}),\fam{stab}_\fam{G}(\fam{x})))$.  

The collection of maps $\eta|_{(\fam{G},\fam{X})}\colon \left(\fam{G},(\fam{X,x})\right) \to \left( \fam{G}, (\fam{G/Stab_G(x),Stab_G(x)})\right) $ given by 
$\eta=(\mathsf{id}_\fam{G},\xi_{(\fam{G},\fam{X})})$ where $\xi_{(\fam{G},\fam{X})}(p)=g\mathsf{Stab}_\fam{G}(x)$ if $\mathsf{Stab}_\fam{G}(p)=g\mathsf{Stab}_\fam{G}(x)g\inv$ forms a natural transformation from $\mathsf{id}_{\cat{GrpSp}}$ to $\cat{F}\Psi$.   

 To see this it suffices to check that $\bar{\Phi_\cat{Grp}}\circ\xi_{(\fam{G},\fam{X})}=\xi_{(\fam{H},\fam{Y})}\circ\Phi_\cat{Sp}$.  
Let $p\in\fam{X}_\delta$ and $g\in\fam{G}_\delta$ be such that $g.x(\delta)=p$.  Then $\xi_{(\fam{G},\fam{X})}(p)=g \mathsf{Stab}_\fam{G}(x(\delta))$ and $\bar{\Phi_\cat{Grp}}(g\mathsf{Stab}_\fam{G}(x(\delta)))=\Phi_\cat{Grp}(g)\mathsf{Stab}_\fam{H}(\fam{y}(\delta)))$.  Computing the other way around we find $\Phi_\cat{Sp}(p)=\Phi_\cat{Sp}(g.\fam{x}_\delta)=\Phi_\cat{Grp}(g)\Phi_\cat{Sp}(\fam{x}_\delta)=\Phi_\cat{Grp}(g)\fam{y}_\delta$ and $\xi_{(\fam{H},\fam{Y})}(\Phi_\cat{Grp}(g)\fam{y}_\delta))=\Phi_\cat{Grp}(g)\mathsf{Stab}_\fam{H}(\fam{y}_\delta)$.

\begin{center}
\begin{tikzcd}
\left(\fam{G},(\fam{X,x})\right) 
\arrow{d}[swap]{(\Phi_{\cat{Grp}},\Phi_{\cat{Sp}})}
\arrow{rr}{(\mathsf{id}_\fam{G},\xi_{(\fam{G},\fam{X})})}
&& 
\left( \fam{G}, (\fam{G/Stab_G(x),Stab_G(x)})\right) 
\arrow{d}{(\Phi_\cat{Grp},\bar{\Phi_\cat{Grp}})} 
\\
\left(\fam{H},(\fam{Y,y})\right)
\arrow{rr}[swap]{(\mathsf{id}_\fam{H},\xi_{(\fam{H},\fam{Y})})}
&& \left( \fam{H}, (\fam{H/Stab_H(y),Stab_H(y)})\right) 
\end{tikzcd}
\end{center}
	
\end{proof}

\subsection{Hyperbolic To Euclidean Transition}

As a first example of these definitions, we formalize the familiar transition from hyperbolic to spherical geometry through Euclidean, not as a conjugacy limit but as a family.
We begin by constructing the family of spaces.

\begin{proposition}
The variety $\fam{V}=V(tx^2+ty^2+z^2-1)\subset\R^4$ equipped with the restricted projection onto the $t$-coordinate is a family of spaces over $\R$.
\end{proposition}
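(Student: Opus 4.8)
The goal is to verify that $\fam{V}=V(tx^2+ty^2+z^2-1)\subset\R^4$, with $\pi$ the restriction of the coordinate projection $(x,y,z,t)\mapsto t$, is a smooth family of spaces over $\R$ in the sense of Definition \ref{def:Smooth_Families}. By that definition there are two things to establish: that $\fam{V}$ is a smooth manifold, and that $\pi$ restricted to $\fam{V}$ is a smooth submersion. I would handle these in that order. The strategy mirrors Example \ref{ex:First_Example} almost verbatim, the only new feature being that the coefficient $t$ multiplies \emph{two} of the squared variables rather than one; this changes nothing essential because the defining polynomial still depends on $t$ through a term ($z^2$, in fact through the constant $-1$ alongside) whose gradient never points in the pure $t$-direction away from the zero locus.

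\textbf{Step 1: $\fam{V}$ is a smooth manifold.} Let $F(x,y,z,t)=tx^2+ty^2+z^2-1$. I would compute the gradient
\begin{equation*}
\nabla F=\left(2tx,\;2ty,\;2z,\;x^2+y^2\right)
\end{equation*}
and check that $\nabla F\neq \vec{0}$ at every point of $\fam{V}=F^{-1}(0)$. Suppose $\nabla F=\vec 0$ at a point of $\fam V$. Then $z=0$ and $x^2+y^2=0$, forcing $x=y=z=0$; but then $F=-1\neq 0$, so this point is not on $\fam{V}$. Hence $0$ is a regular value of $F$, and by the regular value theorem (preimage theorem) $\fam{V}$ is a smooth $3$-dimensional submanifold of $\R^4$, which is in particular second-countable and Hausdorff.

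\textbf{Step 2: $\pi|_{\fam V}$ is a submersion.} Following the method of Example \ref{ex:First_Example}, I would show that the coordinate vector field $\partial_t$ on $\R^4$ projects to a nowhere-vanishing tangent vector field along $\fam{V}$, which suffices since producing a local section through each point then shows $\pi$ admits local sections, equivalently is a submersion. Concretely, $\partial_t$ is tangent to $\fam V$ precisely when $\langle \nabla F,\partial_t\rangle=0$, i.e. $x^2+y^2=0$; and $\partial_t$ is parallel to $T_p\fam V^{\perp}=\R\,\nabla F$ precisely when $\nabla F$ is a scalar multiple of $(0,0,0,1)$, i.e. when $tx=ty=z=0$. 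The cleaner route is the latter: the projection $d\pi_p$ fails to be onto only if $T_p\fam V\subset\ker d\pi_p=\{t=\text{const}\}$, which happens iff $\nabla F(p)\parallel(0,0,0,1)$, forcing $z=0$ and $tx=ty=0$. Combined with $x^2+y^2\neq 0$ (which holds on $\fam V$ whenever $\nabla F$ points in the $t$-direction, since otherwise $x=y=0,z=0$ gives $F=-1$), one sees the fourth component $x^2+y^2$ of $\nabla F$ is nonzero, so $\nabla F$ is \emph{not} parallel to $(0,0,0,1)$; hence $d\pi_p$ is surjective for every $p\in\fam V$. Therefore $\pi|_{\fam V}$ is a surjective smooth submersion and $\fam V\to\R$ is a family.

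\textbf{Main obstacle.} There is no serious obstacle here; the statement is a routine instance of the preimage-theorem technique already used in Example \ref{ex:First_Example}, and the whole content is the two transversality checks above. The only point requiring a moment's care is that, unlike the one-parameter term in Example \ref{ex:First_Example}, here both $x^2$ and $y^2$ are scaled by $t$, so I would make sure the argument that $\nabla F$ never points purely in the $t$-direction uses the $z^2$ (equivalently the nonvanishing of $x^2+y^2$ together with $z=0$ on the bad locus) rather than naively expecting a single $y^2$ term; the verification is otherwise immediate. I would also remark that the fibers are ellipsoids (spheres after rescaling) for $t>0$, a single point pair degenerating at $t=0$ to the pair of planes $z=\pm1$, and hyperboloids of one sheet for $t<0$, exhibiting the $\S^2\leftrightarrow\E^2\leftrightarrow\Hyp^2$ transition at the level of underlying spaces, though this geometric commentary is not needed for the proof.
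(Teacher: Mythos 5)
Your approach is the same as the paper's: realize $\fam{V}$ as a smooth submanifold and check that $\nabla F$, for $F=tx^2+ty^2+z^2-1$, is nowhere parallel to the $t$-axis along $\fam{V}$, exactly the method of Example \ref{ex:First_Example}. You are in fact more careful than the paper, whose proof merely asserts that $\fam{V}$ is a smooth subvariety (your regular-value check is the justification actually needed) and which records the last component of the gradient as $1$ rather than $x^2+y^2$ --- a typo your computation silently corrects.

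One sentence of your Step 2 is stated backwards, though everything you derive before it suffices. Having assumed $\nabla F(p)\parallel(0,0,0,1)$ and deduced $z=0$, $tx=ty=0$, and $x^2+y^2\neq 0$, you conclude that ``the fourth component $x^2+y^2$ is nonzero, so $\nabla F$ is not parallel to $(0,0,0,1)$'' --- but a nonzero fourth component together with vanishing first three components is exactly parallelism, not its negation. The correct finish is one line from your own facts: $tx=ty=0$ with $(x,y)\neq(0,0)$ forces $t=0$, and then $F(p)=z^2-1=-1\neq 0$, contradicting $p\in\fam{V}$; equivalently, on $\fam{V}$ with $z=0$ one has $t(x^2+y^2)=1$, so $t\neq 0$ and hence $x=y=0$, a contradiction. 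Finally, a cosmetic correction to your closing remark: for $t<0$ the fibers are hyperboloids of \emph{two} sheets, not one --- this matters for the paper's subsequent use of the family, since it is the two-sheeted hyperboloids whose $\Z_2$ quotients give the models of $\Hyp^2$ in the transition.
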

\begin{proof}
This is just the higher dimensional analog of Example \ref{ex:First_Example}.
$\fam{V}$ is a smooth subvariety, and hence a smooth submanifold of $\R^4$.
The normal vector to $\fam{V}$ in $\R^4$ is given by $\nabla(tx^2+ty^2+z^2-1)=(2tx, 2ty,2z, 1)$ is nowhere parallel to the $t$ axis, so the tangent spaces to $\fam{V}$	are transverse to the foliation $\R^3\times\{t\}$, and the restricted projection is a submersion.
\end{proof}

\noindent
The family $\fam{V}$ has members transitioning from hyperboloids of 2 sheets for $t<0$ to ellipsoids for $t>0$ through a pair of parallel planes at $t=0$.
Each of these slices admits a free $\Z_2$ action sending a point to its antipode, and so $\fam{V}$ admits a free and proper action of $\fam{Z}=\Z_2\times\R\to\R$.
By the quotient family theorem, the quotient $\fam{X}=\fam{V}/\fam{Z}$ is a smooth family of subsets of $\RP^2$ over $\R$.

\begin{figure}
\centering\includegraphics[width=0.75\textwidth]{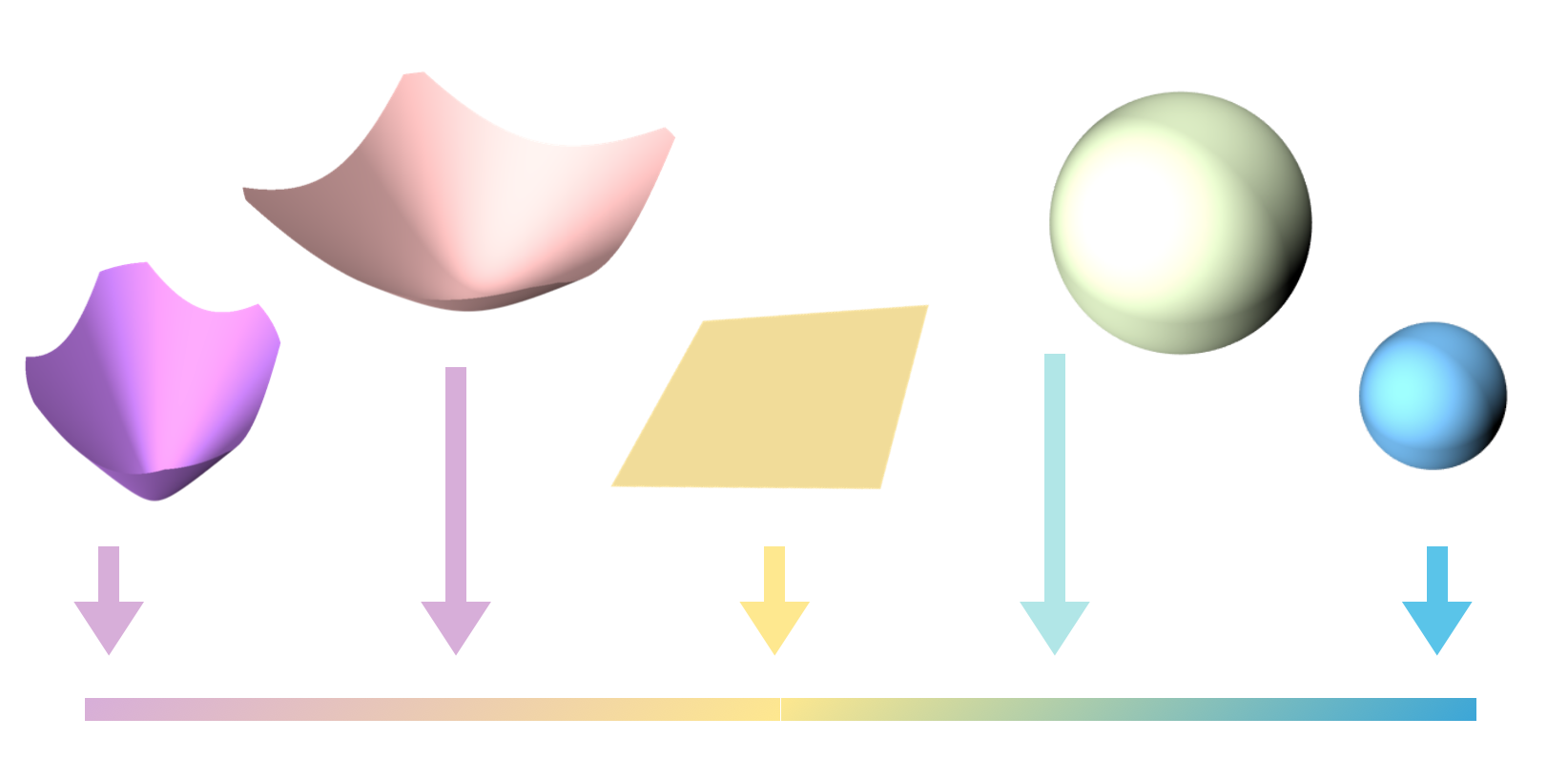}
\caption{Family of spaces for the $\Hyp^n\to\S^n$ transition.}	
\end{figure}

\noindent
Now we turn to the family of groups.
For each $t\neq 0$, the surface $\fam{V}_t$ is a quadratic hypersurface in $\R^3\times\{t\}$, and the group of linear transformations preserving it forms the orthogonal group $\O(\diag(t,t,1))$.

\begin{proposition}
Let $\fam{G}\subset\GL(3,\R)\times\R$ be	 the collection of groups 
$$\fam{G}=\bigcup_{t\in\R_-}\SO(\diag(t,t,1))\times\{t\}\cup \Euc(2)\times\{0\}\cup\bigcup_{t\in\R_+}\SO(\diag(t,t,1))\times\{t\}$$
Then $\fam{G}$ is a family of groups equipped with the restricted projection from $\GL(3;\R)\times\R\to\R$.
\end{proposition}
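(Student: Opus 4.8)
The statement asserts that the collection $\fam{G}$ consisting of $\SO(\diag(t,t,1))$ for $t\neq 0$ together with $\Euc(2)$ at $t=0$ forms a family of groups, sitting inside the trivial family $\GL(3;\R)\times\R\to\R$. I would prove this by exhibiting $\fam{G}$ as the exponential of a subfamily of Lie algebras, so that Proposition \ref{prop:Exponential_Check} (\emph{Closed Exponentials are Families}) applies directly. The key observation is that the computation already appears in Section \ref{sec:Hyp_Sph_Transition}: the conjugate Lie algebras $C_t\so(3)C_t\inv$ and $C_t\so(2,1)C_t\inv$ vary continuously and limit to $\euc(2)$ as $t\to 0$. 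Here the relevant family is precisely that union of Lie algebras, reparameterized by $t$ rather than approached as a conjugacy limit.

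\textbf{Key steps.} First I would identify the candidate Lie algebra family. For each $t\in\R$ set
$$
\fam{g}_t=\span\left\{
\pmat{0&1&0\\-1&0&0\\0&0&0},\;
\pmat{0&0&1\\0&0&0\\-t&0&0},\;
\pmat{0&0&0\\0&0&1\\0&-t&0}
\right\}.
$$
For $t\neq 0$ this is $\so(\diag(t,t,1))$ (after clearing the scaling, exactly as in the Remark giving $\so(J)$ for diagonal $J$), and at $t=0$ it is $\euc(2)$. I would check that $\fam{g}=\bigcup_t \fam{g}_t\times\{t\}$ is a subfamily of the trivial Lie algebra family $\gl(3;\R)\times\R\to\R$: the three spanning matrices depend smoothly on $t$ and remain linearly independent for all $t$ (their nonzero entries never collide), so they furnish a global smooth frame, and $\fam{g}$ is a smooth rank-$3$ subbundle, hence a subfamily. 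Second, I would verify the fiberwise group and closedness hypotheses of Proposition \ref{prop:Exponential_Check}: each $\fam{G}_t$ is genuinely a group ($\SO$ of a nondegenerate form, or $\Euc(2)$), and $\fam{G}$ is a closed subset of $\GL(3;\R)\times\R$, since membership is cut out by the polynomial condition $A^T\diag(t,t,1)A=\diag(t,t,1)$ together with $\det A=1$ for $t\neq 0$, and the limiting equations at $t=0$ are the closure of these. Third, I would address the disconnectedness caveat: $\SO(1,1)$ (occurring for $t<0$) is disconnected, so I must produce, through each connected component of each fiber, at least one point lying on a global section. The element $\diag(-1,-1,1)=R$ lies in every $\SO(\diag(t,t,1))$ and in $\Euc(2)$, giving a constant (hence global) section meeting the non-identity component of each fiber; combined with the identity section $\fam{e}$ this covers both components, so the hypothesis of Proposition \ref{prop:Exponential_Check} is met and $\fam{G}=\langle\fam{exp}(\fam{g})\rangle$ is a subfamily.

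\textbf{Main obstacle.} The delicate point, as the Barber Pole Example \ref{ex:Spiraling_Cyl} warns, is that exponentiating a perfectly good subfamily of Lie algebras need \emph{not} yield a subfamily of groups; one must rule out the fibers jumping in dimension or failing to be Chabauty-continuous in the limit $t\to 0$. This is exactly why verifying the closedness of $\fam{G}$ and the existence of sections through every component is essential rather than cosmetic. I expect the genuinely substantive work to be confirming that no extra limiting elements appear: that $\lim_{t\to 0}\SO(\diag(t,t,1))$ contributes nothing beyond $\Euc(2)$, so that $\fam{G}$ as written is already closed and the restricted projection is a submersion. Since the underlying groups are algebraic and the limit is a conjugacy-type limit of algebraic groups, Theorem \ref{thm:CDW_AlgGrps} guarantees the dimension of the limit equals that of the generic fiber, which prevents the pathological dimension jump and secures that $\fam{g}=\fam{lie}(\fam{G})$ throughout; this is the lemma-level fact on which the whole argument turns.
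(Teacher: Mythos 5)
Your strategy---exhibit the Lie algebra subbundle explicitly and feed it to Proposition \ref{prop:Exponential_Check}, with constant diagonal matrices as sections through the components---is exactly the engine the paper uses, but you run it directly on $\fam{G}$ while the paper runs it on the contragredient family, and that difference is fatal to your argument. Proposition \ref{prop:Exponential_Check} requires the candidate to be a \emph{closed} submanifold, and $\fam{G}$ as written is not closed in $\GL(3;\R)\times\R$: the matrix $\sigma=\diag(1,-1,-1)$ satisfies $\sigma^T\diag(t,t,1)\sigma=\diag(t,t,1)$ and $\det\sigma=1$ for every $t$, so $(\sigma,t)\in\fam{G}$ for all $t\neq 0$, yet the limit point $(\sigma,0)$ is not in $\fam{G}$ since $\sigma\notin\Euc(2)$. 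Your closedness argument via the defining polynomials also fails on its own terms: at $t=0$ the vanishing locus of $A^T\diag(t,t,1)A=\diag(t,t,1)$, $\det A=1$ is $\SO(\diag(0,0,1))$, the six-dimensional group of determinant-one matrices of the form $\smat{B&v\\0&\pm 1}$, which strictly contains $\Euc(2)$---the paper parenthetically flags precisely this fact as the reason these equations cannot be used on $\fam{G}$ itself. Nor can Theorem \ref{thm:CDW_AlgGrps} rescue the step: it controls the \emph{dimension} of the conjugacy limit, not its component count, and here the Chabauty limit of $\SO(\diag(t,t,1))$ is indeed three-dimensional but has two components, namely $\Euc(2)\sqcup\sigma\Euc(2)$. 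Relatedly, your choice of section is mistaken: $R=\diag(-1,-1,1)$ is joined to the identity inside every fiber by the rotations $\smat{\cos\theta&-\sin\theta&0\\\sin\theta&\cos\theta&0\\0&0&1}$, so it never meets the non-identity component of the $t<0$ fibers; the constant sections that do, such as $\diag(1,-1,-1)$, are exactly the witnesses that $\fam{G}$ fails to be closed.

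The paper's proof dodges all of this with one move you are missing: apply the smooth automorphism $A\mapsto A^{-T}$ of the ambient family. For $t\neq 0$ one has $\SO(\diag(t,t,1))^{-T}=\SO(\diag(1,1,t))$, and the contragredient family \emph{is} uniformly cut out, for all $t$ including $t=0$, by $A^T\diag(1,1,t)A=\diag(1,1,t)$ and $\det A=1$; this is a nonsingular variety, hence a closed smooth submanifold, its Lie algebras $\so(\diag(1,1,t))$ form a continuous family (your frame computation transposes verbatim), and the constant sections $\diag(\pm1,\pm1,\pm1)$ meet every component, so Proposition \ref{prop:Exponential_Check} applies; transporting back through $A\mapsto A^{-T}$ finishes. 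One caveat worth recording, which the paper itself glosses over: the zero fiber of this closed family is $\SO(\diag(1,1,0))$, a two-component group whose contragredient strictly contains $\Euc(2)$, so to land on $\fam{G}$ exactly as stated one should add at the end that deleting the relatively closed extra component over $t=0$ leaves an open subset of a family, which is again a family of groups---though no longer a closed subfamily of $\GL(3;\R)\times\R$, consistent with the non-closedness observed above.
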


\begin{proof}
Applying the contragredient automorphism $A\mapsto A^{-T}$ to each member of $\GL(3;\R)\times\R\to\R$ gives a smooth automorphism of the family, taking $\fam{G}$ to $\fam{G}^{-T}=\bigcup_{t\in\R}G_t^{-T}\times\{t\}$.
We show that this collection forms a family directly; and then applying once more then contragredient automorphism gives the same result for the original $\fam{G}$.
The reason for this seemingly strange approach is just notational:
$G_t^{-T}=\SO(t,t,1)^{-T}=\SO(1,1,t)$ for $t\neq 0$, and when $t=0$ the group $\SO(1,1,0)$ is precisely the contragredient Euclidean group $\Euc(2)^{-T}$, and so the entire family $\fam{G}^{-T}$ can be succinctly described as $\fam{G}_t^{-T}=\SO(\diag(1,1,t))$ regardless of the value of $t$ (this does not hold in the original case, as $\SO(\diag(0,0,1))$ strictly contains the Euclidean group).
Now $\fam{G}^{-T}$, is a nonsingular subvariety of $\M(3;\R)\times\R$\footnote{$\fam{G}^{-T}=\bigcup_{t\in\R}\SO(\diag(1,1,t))\times\{t\}$ is cut out by the equations $(X^T\diag(1,1,t)X,t)=(\diag(1,1,t),t),\det X=1$ for $X=(x_{ij})_{1\leq i,j,\leq 3}$.} and thus a closed smooth submanifold.

The Lie algebras $\so(\diag(1,1,1,t))$ form a continuous family of Lie algebras in $\M(3;\R)\times\R$ as follows immediately from computation.
And while the number of components of $\SO(\diag(1,1,t))$ changes along the transition (from 2 when $t<0$ to 1 when $t>0$), each component always contains one of the matrices of the form $\diag(\pm 1, \pm 1, \pm 1)$, so by Proposition \ref{prop:Exponential_Check}, $\fam{SO}(\diag(1,1,t))$ is a family of groups.
Thus so is its contragredient image, $\fam{G}$.
\end{proof}

\noindent
The action of $G_t$ on $\fam{X}_t$ is transitive, and so $(\fam{G},\fam{X})$ is a family of Klein geometries.  
To get a family of pointed geometries, it suffices to choose any $\gamma\colon\R\to\RP^2$ such that $\gamma(t)\in\fam{X}_t$; for instance $\gamma(t)=[0:0:1]$.

%NEWCHAPTER: FAMILIES OF GEOMETRIES
%\input{Families/Fams_of_Geos}

%NEWCHAPTER: GEOS OVER ALGS
\chapter{Geometries Over Algebras}
\label{chp:Geos_over_Algs}
\index{Real Algebras!Geometries}
\index{Geometries!Real Algebras}

Hyperbolic geometry arises as subgeometry of $\RP^n$ through the familiar Klein model.  Generalizing this picture to $\CP^n$ gives produces the geometry of complex hyperbolic space, and the further generalization of Chapter \ref{chap:HC_and_HRR}, extended this to yet two more geometries, analogs of hyperbolic space over $\R_\ep$ and $\R\oplus\R$.
This chapter is continue in this direction, and generalize this to both other choices of algebras, and other familiar geometries.
In particular, over an arbitrary finite dimensional real (associative) algebra $A$, we will define an associated projective geometry $A\mathsf{P}^n$, as well as analogs of the classical unitary and orthogonal together with their corresponding geometries.

\section{Real Algebras}
\label{sec:Real_Algs}
\index{Real Algebras}

A commutative algebra over $\R$ is a real vector space $A$ equipped with a bilinear multiplication $\mu\colon A\times A\to A$. 
An algebra $A$ is commutative if $\mu(a,b)=\mu(b,a)$ for all $a,b$; 
$A$ topological if $\mu$ is continuous, and of category $\cat{C}$ if $\mu\in\Hom_\cat{C}(A^2,A)$.  
An element $a\in A$ is a left zero divisor if $\mu(a,\cdot)$ has a nontrivial kernel, and a left unit if $\mu(a,\cdot)$ is an isomorphism, analogously for right zero divisors and units.
As convention, when not specified we will always mean \emph{left} zero divisor and \emph{left} units.
Let $A^\times$ denote the set of units, and $A_Z$ the set of zero divisors. 
If $A$ is finite dimensional then $A=A^\times\sqcup A_Z$.   

\begin{lemma}
The zero divisors $A_Z\subset A$ of a topological algebra form a closed subset.	
\end{lemma}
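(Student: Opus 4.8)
The plan is to show that $A_Z$ is closed by exhibiting it as the preimage of a closed set under a continuous map, exploiting the finite-dimensionality of $A$ and the fact that for finite-dimensional algebras an element fails to be a unit precisely when it is a zero divisor. First I would fix a basis for $A$ and identify $A$ with $\R^m$ as a topological vector space, so that the bilinear multiplication $\mu$ becomes a continuous (indeed polynomial) map $\R^m\times\R^m\to\R^m$. For each fixed $a\in A$, left multiplication $L_a=\mu(a,\cdot)\colon A\to A$ is a linear endomorphism of $A$, and the assignment $a\mapsto L_a$ is a continuous (linear) map $A\to\End(A)$, since $\mu$ is continuous and bilinear. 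Concretely, the matrix entries of $L_a$ in the chosen basis are linear in the coordinates of $a$.

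The key observation is that, in a finite-dimensional algebra, $a$ is a (left) zero divisor if and only if $L_a$ is not invertible, equivalently $\det(L_a)=0$. This is exactly the dichotomy $A=A^\times\sqcup A_Z$ recorded just above the statement: a linear endomorphism of a finite-dimensional vector space is injective iff it is an isomorphism, so $L_a$ having nontrivial kernel is equivalent to $L_a$ being non-invertible. Therefore I would define the map
\[
D\colon A\longrightarrow \R, \qquad D(a)=\det(L_a).
\]
Since $a\mapsto L_a$ is continuous and $\det\colon\End(A)\to\R$ is continuous (polynomial in the matrix entries), the composite $D$ is continuous. The zero divisors are then precisely $A_Z=D^{-1}(\{0\})$, the preimage of the closed singleton $\{0\}\subset\R$, which is closed.

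The main obstacle, such as it is, is bookkeeping about left versus right and the role of finite-dimensionality rather than any deep difficulty. I would note explicitly that the equivalence ``$a$ is a zero divisor $\iff \det L_a=0$'' uses finiteness of $\dim A$; in infinite dimensions injectivity need not imply surjectivity and the argument would break. I would also remark that $0\in A_Z$ (so the set is nonempty and the statement is not vacuous), and that the same argument applied to right multiplication $R_a=\mu(\cdot,a)$ shows the right zero divisors form a closed set, so no commutativity hypothesis is needed for the closedness claim. With these points addressed, the proof is a one-line application of continuity of $D$ together with the finite-dimensional invertibility dichotomy.
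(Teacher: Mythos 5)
Your proof is correct, but it takes a genuinely different route from the paper's. The paper argues by sequential compactness: given zero divisors $z_i\to z$ with witnesses $w_i\neq 0$ satisfying $z_iw_i=0$, it uses the scale-invariance of the witness to normalize the $w_i$ to the unit sphere (equivalently, passes to $\RP^{N-1}$), extracts a convergent subsequence by compactness, and concludes $zw=0$ from continuity of $\mu$. You instead note that, by the paper's own definition, $a\in A_Z$ exactly when $L_a=\mu(a,\cdot)$ has nontrivial kernel, which in finite dimensions means $\det L_a=0$; since $a\mapsto L_a$ is linear in $a$, the function $D(a)=\det L_a$ is polynomial, and $A_Z=D^{-1}(0)$ is closed. (You frame this via the dichotomy $A=A^\times\sqcup A_Z$, but you don't actually need it: ``nontrivial kernel iff vanishing determinant'' is the definition applied directly.) Your argument is shorter, avoids subsequence extraction, and proves strictly more — $A_Z$ is a real algebraic subvariety, so $A^\times$ is Zariski-open, not merely open — and it would also transfer cleanly to the families setting, where $\det L_a$ is smooth on the total space of $\fam{A}\to\Delta$ and the same preimage argument gives the analogue of the paper's Proposition on $\fam{A}^\times$ without choosing trivializing neighborhoods; the paper instead reruns its compactness-of-witnesses argument there. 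What the paper's softer approach buys is independence from any matrix representation: it uses only continuity of multiplication and compactness of the projectivized witness space. Your caveat about finite-dimensionality is apt and applies to both proofs — the lemma's hypothesis ``topological algebra'' should be read with the section's standing finite-dimensionality assumption, as the paper's own proof opens with ``Let $A$ be an $N$-dimensional algebra over $\R$.''
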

\begin{proof}
Let $A$ be an $N$-dimensional algebra over $\R$ and $\{z_i\}\subset A_Z$ be a sequence of zero divisors, converging to $z\in A$.  For each $z_i$ there is some $w_i\in A\setminus\set{0}$ with $z_iw_i=0$, and in fact all scalar multiples $\R^\times w_i=[w_i]$ satisfy this as well.  The sequence $\{[w_i]\}$ subconverges in $(A\smallsetminus \set{0})/\R^\times\cong\RP^{N-1}$ to $[w]$ by compactness, so choose representatives $w_i\to w$ (say, on the unit sphere).  Then as $w_iz_i=0$, continuity of multiplication forces $wz=0$ so $z\in A_Z$.  
\end{proof}

\begin{corollary}
As every element of a finite dimensional algebra is either a zero divisor or unit, the units $A^\times\subset A$ are an open subset.
\end{corollary}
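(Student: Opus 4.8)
The final statement is the \emph{Corollary} asserting that the units $A^\times$ of a finite-dimensional real algebra form an open subset of $A$. The plan is to derive this immediately from the preceding Lemma, which establishes that the zero divisors $A_Z\subset A$ form a closed subset of $A$, together with the structural fact (noted in the surrounding text) that a finite-dimensional algebra decomposes as a disjoint union $A = A^\times \sqcup A_Z$.

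First I would invoke the Lemma directly: since $A$ is finite dimensional, the set $A_Z$ of zero divisors is closed in $A$. Next I would recall the dichotomy for finite-dimensional algebras, namely that every element is either a unit or a zero divisor, so that $A = A^\times \sqcup A_Z$ is a disjoint decomposition and consequently $A^\times = A \smallsetminus A_Z$. Taking complements of a closed set yields an open set, so $A^\times$ is open. This is the entire content of the argument, and no calculation is required beyond citing the two facts.

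There is essentially no obstacle here, which is consistent with the statement being labelled a \emph{Corollary} rather than a standalone result: the only substantive work — the closedness of $A_Z$ under passing to limits via compactness of the projectivized sphere of witnesses $[w_i]$ — has already been carried out in the Lemma. The one point worth stating carefully is \emph{why} $A = A^\times \sqcup A_Z$ holds, since this relies on finite-dimensionality (in infinite dimensions an element can fail to be both a unit and a zero divisor). Here the justification is that left multiplication $\mu(a,\cdot)\colon A\to A$ is a linear endomorphism of a finite-dimensional vector space, so it is injective if and only if it is surjective; thus $a$ is a unit exactly when $\mu(a,\cdot)$ has trivial kernel, and a (left) zero divisor exactly when it has nontrivial kernel, and these two cases are mutually exclusive and exhaustive. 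I would therefore phrase the proof as a single sentence: by the Lemma $A_Z$ is closed, by finite-dimensionality $A^\times = A\smallsetminus A_Z$, hence $A^\times$ is open.
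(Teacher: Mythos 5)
Your proposal is correct and matches the paper exactly: the corollary is deduced as the complement of the closed set $A_Z$ (the preceding lemma) using the dichotomy $A=A^\times\sqcup A_Z$ for finite-dimensional algebras. Your added justification of the dichotomy via injectivity $\Leftrightarrow$ surjectivity of the linear map $\mu(a,\cdot)$ is a nice touch the paper leaves implicit, but the argument is the same.
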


\noindent
If $A$ is a smooth algebra the group of units $A^\times$ is an open subset, thus a submanifold, and so $A^\times$ is a Lie group.  Furthermore any closed subgroup of $A^\times$ is a Lie subgroup.
An \emph{involution} on an algebra $A$ is an element $\sigma\in\mathsf{End}(A)$ of order two 
%(DO WE WANT IT TO BE AN ANTI-HOMOMORPHISM WHEN A IS NOT COMMUTATIVE?).  
The action of $\sigma$ on the underlying vector space satisfies $\sigma^2-1=0$ and decomposes $A$ as a direct sum of the $+1$ and $-1$ eigenspaces, $A=\mathsf{Fix}(\sigma)\oplus\mathsf{Neg}(\sigma)$.  
For any choice of $j\in\mathsf{Fix}(\sigma)$ the involution provides a map $\phi_j\colon A\to \mathsf{Fix}(\sigma)$ given by $\phi_j(x)=\sigma(x)jx$.  
When $j=1$ this map is multiplicative, thus a group homomorphism called the \emph{norm}, $x\mapsto\sigma(x)x$.  
The preimage of $\set{1}$ is the 1-dimensional \emph{unitary group}, $\U(A):=\set{\alpha\in A^\times\mid \sigma(\alpha)\alpha=1}$.  

Given a real algebra $A$ the matrix algebras $\M(n;A)$ are given by imposing matrix multiplication on the spaces $A^{n^2}$.  As this multiplication is built directly out of that of $A$, the matrix algebras are $\cat{C}$-algebras iff $A$ is, respectively.  
An involution $\sigma\colon A\to A$ extends via component-wise application to $\M(n;A)$ and induces an involution analogous to the conjugate transpose, $X^\dagger=\sigma(X)^T$.  The decomposition of $\M(n,A)$ corresponding to $\dagger$ determines the \emph{Hermitian}  $\mathsf{Fix}(\dagger)=\Herm(n;A,\sigma)$ and \emph{skew-Hermitian} $\mathsf{Neg}(\dagger)=\SkHerm(n;A,\sigma)$ matrices.
For commutative algebras $A$, the usual formula for the determinant provides a map $\det\colon \M(n,A)\to A$.  Cramer's shows $B\in\M(n;A)$ is invertible iff $\det(B)$ is.  As $\det$ is polynomial in the matrix entries, $\det\in\Hom_\cat{C}(\M(n,A),A)$ and inversion (given by the matrix of cofactors) is a $\cat{C}$-morphism on the complement of $\det\inv\set{A_Z}$.

Thus $\GL(n,A)=\det\inv\set{A^\times}$, which is an
 open subset (thus submanifold) of the $\M(n,A)$.
The group operations of multiplication and inversion are $\cat{C}$-morphisms on $\GL(n,A)$, providing the structure of a $\cat{C}$-group.  
The determinant provides a group homomorphism $\det\colon\GL(n,A)\to A^\times$ and preimages of subgroups give important subgroups of $\GL(n;A)$.  As our interest is particularly in the smooth category, the following provides a method of producing Lie subgroups.

\begin{proposition}
Let $A$ be smooth and commutative, then $\det\colon\GL(n;A)\to A^\times$ is a submersion.
\label{prop:det_is_submersion}
\end{proposition}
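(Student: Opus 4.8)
The plan is to show that the differential $d(\det)_B$ is surjective onto $T_{\det B}A^\times$ at every point $B\in\GL(n;A)$. Since $A^\times$ is an open submanifold of the vector space $A$, we may canonically identify $T_{\det B}A^\times$ with $A$ itself, so it suffices to prove that for each $B\in\GL(n;A)$, the linear map $d(\det)_B\colon T_B\GL(n;A)\to A$ is onto. Again identifying $T_B\GL(n;A)$ with $\M(n;A)$ (as $\GL(n;A)$ is open in $\M(n;A)$), the task reduces to producing, for each target direction $v\in A$, a tangent vector $X\in\M(n;A)$ with $d(\det)_B(X)=v$.

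First I would compute $d(\det)_B$ directly by differentiating along a path. For $X\in\M(n;A)$ consider the path $B_t=B+tX$, which lies in $\GL(n;A)$ for small $t$ since invertibility is an open condition. The key computation is the Jacobi-type formula $\tfrac{d}{dt}\big|_{t=0}\det(B+tX)=\det(B)\,\tr(B^{-1}X)$, which holds over any commutative algebra because the determinant is a polynomial in the entries built from the commutative multiplication of $A$, and the standard cofactor-expansion derivation never uses anything beyond commutativity and the ring axioms. This identifies $d(\det)_B(X)=\det(B)\,\tr(B^{-1}X)$.

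Given this formula, surjectivity is immediate. Fix any $v\in A$ that I wish to hit. Since $B\in\GL(n;A)$ the element $\det(B)$ is a unit, so set $w=\det(B)^{-1}v\in A$. Choosing the simple tangent vector $X=B\cdot\diag(w,0,\ldots,0)$ gives $B^{-1}X=\diag(w,0,\ldots,0)$, hence $\tr(B^{-1}X)=w$ and $d(\det)_B(X)=\det(B)\,w=v$. Thus $d(\det)_B$ is onto, and since $B$ was arbitrary, $\det$ is a submersion.

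The main obstacle I anticipate is purely bookkeeping: justifying the Jacobi formula over a general commutative algebra rather than a field, so one must be careful that $\det(B)$ being a \emph{unit} (not merely nonzero) is exactly what makes $\det(B)^{-1}$ available, and that the cofactor manipulations are valid in $A$. This is precisely where commutativity of $A$ is essential, which is why the hypothesis is included; I would remark that without commutativity the determinant itself is not well-defined as stated. The smoothness hypothesis on $A$ guarantees that $\det$ and the path $B_t$ are genuinely smooth so that the differential exists and the submersion conclusion is meaningful in $\cat{Diff}$.
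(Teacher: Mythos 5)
Your proof is correct and takes essentially the same route as the paper: both differentiate $\det$ along a linear path through $B$, obtain a Jacobi-type formula of the form $\det(B)\cdot\tr(\,\cdot\,)$, and use that $\det(B)\in A^\times$ to hit any prescribed $v\in A$. The only cosmetic difference is that the paper uses the path $B_t=(I+tX)B$, so the derivative is $\tr(X)\det(B)$ with no appeal to $B^{-1}$, and then takes the scalar matrix $X_\alpha=\tfrac{\alpha}{n\det(B)}I$ where you take $X=B\,\mathsf{diag}(w,0,\ldots,0)$.
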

\begin{proof}
Let $B\in\GL(n;A)$, then for each $X\in\M(n;A)$ the path $B_t=(I+tX)B$ passes through $B$ and $\dd{t}|_{t=0}\det(B_t)=\tr(X)\det(B)$ so for any $\alpha\in A$ the choice $X_\alpha=\frac{\alpha}{n\det(B)}I$ shows the derivative surjects onto $A=T_{\det B}A^\star$.
\end{proof}

\begin{corollary}
If $A$ is a smooth commutative algebra and $G\subg A^\times$ a closed subgroup, then $\det\inv\set{G}$ is a Lie subgroup of $\GL(n;A)$.	In particular the closed subgroup $\set{1}\subg A^\times$ corresponds to the special linear group $\SL(n;A)=\det\inv\set{1}$.
\end{corollary}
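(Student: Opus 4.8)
The plan is to deduce this corollary directly from Proposition \ref{prop:det_is_submersion} together with the closed subgroup theorem and the elementary fact that preimages of submersions are submanifolds. The key observation is that $\det\inv\set{G}$ is simultaneously a subgroup (because $\det$ is a group homomorphism) and a smooth submanifold (because $\det$ is a submersion onto $A^\times$ and $G$ is a closed submanifold of $A^\times$), and a subgroup of a Lie group which is also an embedded submanifold is automatically a Lie subgroup.

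First I would record that $\det\colon\GL(n;A)\to A^\times$ is a group homomorphism, which follows from multiplicativity of the determinant for the commutative algebra $A$ (noted in Section \ref{sec:Real_Algs}). Since $G\subg A^\times$ is a subgroup, $\det\inv\set{G}$ is a subgroup of $\GL(n;A)$: it contains the identity, and is closed under multiplication and inversion because $\det$ respects these operations. Second, I would invoke Proposition \ref{prop:det_is_submersion}, which guarantees $\det$ is a submersion. Because $G$ is a closed subgroup of the Lie group $A^\times$, by the closed subgroup theorem $G$ is an embedded Lie subgroup, hence a closed embedded submanifold of $A^\times$. The preimage of an embedded submanifold under a submersion is an embedded submanifold of the domain (the standard transversality/regular-value-type argument, since a submersion is transverse to every submanifold of its codomain). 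Thus $\det\inv\set{G}$ is a closed embedded submanifold of $\GL(n;A)$.

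Combining these two facts, $\det\inv\set{G}$ is an abstract subgroup of $\GL(n;A)$ which is also a closed embedded submanifold; any such subset is a Lie subgroup, since the ambient group operations restrict to smooth maps on an embedded submanifold and the subgroup is closed under them. For the special case, the singleton $\set{1}\subg A^\times$ is trivially a closed subgroup, and its preimage is by definition $\SL(n;A)=\det\inv\set{1}$, so this is a Lie subgroup as claimed.

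I do not anticipate a serious obstacle here, since all the hard analytic content is already packaged in Proposition \ref{prop:det_is_submersion}; the main point to be careful about is that $G$ being merely \emph{closed} (rather than assumed a priori to be a submanifold) is promoted to an embedded Lie subgroup via the closed subgroup theorem before taking the preimage. One should also confirm that the codomain point set $G$ is genuinely contained in $A^\times$ so that $\det\inv\set{G}$ lands inside $\GL(n;A)=\det\inv\set{A^\times}$ and the submersion hypothesis applies on the right domain; this is immediate from the hypothesis $G\subg A^\times$.
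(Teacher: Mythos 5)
Your proposal is correct and is essentially the argument the paper intends: the corollary sits immediately after Proposition \ref{prop:det_is_submersion} precisely so that $\det\inv\set{G}$ can be recognized as a closed embedded submanifold (preimage of the embedded Lie subgroup $G\subg A^\times$ under a submersion) that is simultaneously a subgroup, hence a Lie subgroup, with $\SL(n;A)=\det\inv\set{1}$ as the special case. Your explicit care in promoting the merely-closed $G$ to an embedded Lie subgroup via the closed subgroup theorem before taking the preimage is exactly the right bookkeeping, and no gap remains.
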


\section{Projective Geometries}
\label{sec:Proj_Geos}
\index{Real Algebras!Projective Geometries}
\index{Geometries!Projective Geometry over Algebras}

Classically, projective geometry is given by the projectivization of the linear action of $\GL(n,\F)$ on $\F^n$.
Taking the group-space viewpoint, this is the action of $\GL(n;\F)$ on the projective space $\mathbb{F}\mathsf{P}^{n-1}=(\F^n\smallsetminus 0)/\F^\times$.
Taking the automorphism-stabilizer viewpoint, the geometry of projective space corresponds to the pair $(\GL(n,\F),\mathsf{stab})$ with $\mathsf{stab}$
the stabilizer of a projective point in $\P(\F^n)$ which realizes projective space as the quotient $\P(\F^n)=\GL(n,\F)/\mathsf{stab}$.

The geometry corresponding to $(\GL(n,F),\mathsf{Stab}[p])$ is independent of the choice of point $p\neq 0$ for projective geometry over a field $\F$, but this does not remain true for a general algebra $A$.
We will say that a point $p\in A^n$ is \emph{good} if the point stabilizer is of minimal dimension, and \emph{bad} otherwise.
One way to choose good points is as follows.
For a point $p\in A^n$ let $I_p\subg A$ be the ideal generated by its coordinates, $I_p=\langle p_1,\ldots p_n\rangle$.  
Note that for any $X\in\GL(n;A)$ the ideals $I_p$ and $I_{Xp}$ are identical and so this is an invariant of $\GL(n;A)$ orbits. 
Conversely if $I_p=I_q$ then each $q_i$ is a $A$-linear combination of the $p_i$ so $q=Xp$, so in fact the ideal $I_p$ determines the orbit.
 Generically, $I_p=A$ and strictly smaller ideals appear only when no coordinate (and no linear combination of the coordinates) is a unit.
 Such points are \emph{bad}, the generic case are the \emph{good points}.
 
 We may also take the group-space perspective, and try to define an analog of projective space over an algebra directly.
 Here, the bad points are the analog of $\vec{0}\in\F^n$, points on which the action of the units $A^\times$ is not free.
As the analogs of zero, we denote this collection by $Z(A^n)$.  The points of $A^n\setminus Z(A^n)$ constitute a single $\GL(n,\A)$ orbit, and so have isomorphic point stabilizers. 

\begin{definition}
The projective space $\mathsf{AP}^n$ is the quotient of $A^n\smallsetminus Z(A^n)$ by the left action $a.(v_i)=(av_i)$ of $A^\times$.
\end{definition}

 \begin{definition}
 Let $A$ be a finite dimensional commutative algebra over $\R$, and $n\in\N$.  Then $\mathsf{St}(n;A)$ is the stabilizer of $(0,\cdots, 0,1)$ under the linear action of $\GL(n,A)$ on $A^n$.
 $$\mathsf{St}(n;A)=\left\{ \pmat{X &\vec{0}\\\vec{v}&\alpha}
\mid \alpha\in A^\star, v\in A^{n-1}, X\in\GL(n-1;A)\right\}.$$
 \end{definition}

\noindent
We denote the intersection $\mathsf{St}(n;A)\cap\SL(n;\A)=\mathsf{SSt}(n;A)$.
Note that $(0,\cdots, 0,1)\in A^n\setminus Z(A^n)$ for any algebra $A$, and so we may use $\St(n;A)$ to define projective geometry generally.

\begin{definition}
The $(n-1)$ dimensional projective geometry over $A$ is given by the pair $(G,K)=(\GL(n;A),\St(n;A))$,
The effective version of this geometry is given by projectivization, $(\P\St(n;A), \PGL(n;A))$ and 
another convenient incarnation is $(\SL(n;A),\mathsf{SSt}(n;A))$ when $A$ is commutative.
The projective space $\mathsf{AP}^{n-1}=\P(A^n)$ is defined as the coset space $\GL(n;A)/\St(n;A)$.
\end{definition}

\noindent
Alternatively, from the group-space perspective, we have the following equivalent definition.

\begin{definition}
The $n-1$ dimensional projective geometry over $A$ has domain $\mathsf{AP}^{n-1}=(A^n\smallsetminus Z(A^n))/\sim$ for 
$\vec{v}\sim\vec{w}$ if there is an $a\in A^\times$ such that $a\vec{v}=\vec{w}$.  The (non-effective) automorphism group is $\GL(n;A)$.
\end{definition} 

\noindent
To see that smooth algebras define \emph{smooth} projective geometries, we need to show that $\mathsf{AP}^{n-1}$ is a smooth manifold, or equivalently that $\St(n;A)$ is a Lie subgroup of $\GL(n,A)$.  
This second fact is immediate from the closed subgroup theorem as $\St(n;A)$ is the intersection of a linear subspace of $\M(n;A)$ with $\GL(n;A)$; however we give an explicit argument which will be used in the generalization to families.

\begin{proposition}
The map $\GL(n;A)\to A^{n-1}$ projecting onto the first $n-1$ entries of the last column is a submersion.	
\end{proposition}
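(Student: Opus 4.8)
The statement asserts that the map
\[
\Pi\colon\GL(n;A)\to A^{n-1},\qquad X\mapsto\bigl((X)_{1,n},(X)_{2,n},\ldots,(X)_{n-1,n}\bigr),
\]
extracting the top $n-1$ entries of the last column, is a submersion. Since we are in the smooth category, the cleanest route is to exhibit, through each point $B\in\GL(n;A)$, an explicit family of paths whose velocity vectors hit every element of $T_{\Pi(B)}A^{n-1}\cong A^{n-1}$. This mirrors the technique already used in Proposition \ref{prop:det_is_submersion}, where left-multiplication paths $B_t=(I+tX)B$ were differentiated to show $\det$ is submersive. I would follow the same outline: produce convenient paths, differentiate, and read off surjectivity of the derivative directly, rather than invoking the closed subgroup theorem (which gives the conclusion about $\St(n;A)$ but not the explicit submersion needed for the family-theoretic generalization to come).

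\textbf{Key steps.} First I would fix $B\in\GL(n;A)$ and note that the last column $\vec{b}=B e_n\in A^n$ has at least one unit-generated coordinate, since $B$ is invertible; more usefully, $\vec b\notin Z(A^n)$. The goal is, given an arbitrary target velocity $\vec{w}=(w_1,\ldots,w_{n-1})\in A^{n-1}$, to build a path $B_t$ in $\GL(n;A)$ with $B_0=B$ and $\tfrac{d}{dt}\big|_{t=0}\Pi(B_t)=\vec w$. The natural choice is $B_t=(I+tE)B$ for a suitable $E\in\M(n;A)$, so that $B_t$ stays in $\GL(n;A)$ for small $t$ and $\Pi(B_t)$ depends on $E$ through the last column $B_t e_n=(I+tE)\vec b=\vec b+tE\vec b$. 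Differentiating gives $\tfrac{d}{dt}\big|_{t=0}\Pi(B_t)=\bigl((E\vec b)_1,\ldots,(E\vec b)_{n-1}\bigr)$, so I must choose $E$ with $(E\vec b)_i=w_i$ for $i=1,\ldots,n-1$. Since $\vec b\notin Z(A^n)$, some $A$-linear combination of its coordinates is a unit; after a preliminary right-multiplication normalization (or by working with the invariant that $I_{\vec b}=A$, using that $B\in\GL(n;A)$ forces the ideal generated by the coordinates of $\vec b$ to be all of $A$), I can solve $(E\vec b)_i=w_i$ by taking $E$ supported in the appropriate rows, with entries expressed through the coefficients witnessing $1\in I_{\vec b}$. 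This makes $d\Pi_B$ surjective onto $A^{n-1}$, which is exactly submersivity.

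\textbf{The main obstacle.} The genuinely delicate point is the solvability of $(E\vec b)_i=w_i$ over a general algebra $A$ with zero divisors: one cannot simply divide by a coordinate of $\vec b$ as one would over a field. The argument hinges on the fact that $\vec b=Be_n$ is the image of a good point under an invertible matrix, so $I_{\vec b}=I_{e_n}=A$ and hence $1$ is an $A$-linear combination $\sum_j c_j b_j$ of the coordinates of $\vec b$. I expect the crux of the write-up to be packaging this combination into an explicit $E$: roughly, to hit the $i$-th target $w_i$ one places the row $(w_i c_1,\ldots,w_i c_n)$ in the $i$-th row of $E$, so that $(E\vec b)_i=w_i\sum_j c_j b_j=w_i$. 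One must check this $E$ lies in $\M(n;A)$ (immediate) and that the resulting velocity indeed has the prescribed first $n-1$ entries with no unwanted coupling between rows (also immediate, since distinct rows of $E$ contribute to distinct coordinates of $E\vec b$). With that linear-algebra-over-$A$ lemma in hand, surjectivity of $d\Pi_B$ is automatic, and since $B$ was arbitrary, $\Pi$ is a submersion.
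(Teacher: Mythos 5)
Your proof is correct, but it takes a genuinely different and more laborious route than the paper's. The paper perturbs $B$ \emph{additively}: for a target velocity $v\in A^{n-1}$ it uses the path $B_t=B+t\smat{0&\vec{v}\\ \vec{0}&0}$, which stays in $\GL(n;A)$ for small $t$ because $\GL(n;A)=\det\inv(A^\times)$ is open in $\M(n;A)$, and since $\pi$ is the restriction of an ($A$-)linear surjection $\M(n;A)\to A^{n-1}$ to this open set, the derivative along the path is literally $v$ and surjectivity of $(D\pi)_B$ is immediate --- no equation over $A$ ever has to be solved. Your multiplicative ansatz $B_t=(I+tE)B$ shifts the problem from ``prescribe the last column's derivative directly'' to ``solve $(E\vec{b})_i=w_i$,'' and that is where all your work goes: you must know $1\in I_{\vec{b}}$, which you correctly extract from invertibility of $B$ (the last row of $B\inv$ supplies coefficients $c_j$ with $\sum_j c_j b_j=1$), and then assemble $E$ row by row. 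That lemma is fine --- and, to your credit, your construction avoids division entirely, so it genuinely survives zero divisors --- but it is overcoming an obstacle the additive proof never encounters; your ``main obstacle'' is created by the choice of path, not by the statement. What your version buys is that tangent vectors are realized by left translation, entirely inside the group; that viewpoint would be forced on you if $\GL(n;A)$ did not sit openly inside a linear space, but here it does, and the paper exploits exactly that. (Your approach does parallel the paper's proof that $\det$ is a submersion, where multiplicative paths $B_t=(I+tX)B$ are genuinely appropriate because $\det$ is not linear; for a coordinate projection they are unnecessary.)
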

\begin{proof}
Let $\pi\colon\GL(n;A)\to A^{n-1}$ be the projection map $(X_{ij})\mapsto (X_{1,n},\ldots, X_{n-1,n})$.  Then for any $B\in\GL(n;A)$ and $v\in A^{n-1}\cong T_{\pi(B)}A^{n-1}$ the path $B_t=B+t\smat{O&\vec{v}\\\vec{0}&0}$ has $\dd{t}|_{t=0}\pi(B_t)=v$ so $(D\pi)_B$ is surjective.
\end{proof}

\section{Unitary Geometries}
\label{sec:Unitary_Geos}
\index{Unitary!Groups over Algebras}
\index{Geometries!Unitary Geometries over Algebras}
\index{Real Algebras!Unitary Groups}

Fix an algebra with involution $(A,\sigma)$ and a nondegenerate $J\in\Herm(n;A,\sigma)$.  
A matrix $X$ is said to \emph{preserve} $J$ if $X^\dagger JX=J$. 
 The map $\Phi_J\colon\M(n;A)\to\Herm(n;A,\sigma)$ given by $X\mapsto X^\dagger JX$ defines the \emph{generalized unitary group} for $J$.

\begin{definition}
The generalized unitary group $\U(J,A,\sigma)=\Phi_J\inv\set{J}$ consists of the matrices preserving $J$: $\U(J;A,\sigma)=\set{X\mid X^\dagger JX=J}$.
\end{definition}

\noindent
The map $\Phi_J$ is a $\cat{C}$-morphism as it is built out of algebra operations and the involution.  
Thus in particular $\U(J;A,\sigma)$ is a closed subgroup of $\GL(n;A)$.  
In the case that $A$ is a smooth algebra, this is enough to conclude the unitary groups are Lie groups.  
However the following direct argument will prove useful later on.

\begin{lemma}
The map $\Phi_J\colon \GL(n;A)\to\Herm(n;A,\sigma)$ is a submersion when $A$ is a smooth algebra.	
\label{prop:Unitary_Submersion}
\end{lemma}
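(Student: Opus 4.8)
The plan is to mimic the proof of Proposition \ref{prop:det_is_submersion} and the analogous submersion results already established, constructing an explicit curve through each point of $\U(J;A,\sigma)$ whose velocity realizes an arbitrary tangent vector to $\Herm(n;A,\sigma)$. Since $\Phi_J$ is a $\cat{C}$-morphism (being built out of the algebra multiplication and the involution $\sigma$), it suffices to check surjectivity of the differential $(D\Phi_J)_B$ at each $B\in\GL(n;A)$. First I would fix $B\in\GL(n;A)$ and, for a matrix $X\in\M(n;A)$, consider the path $B_t = (I+tX)B$, which passes through $B$ at $t=0$ and remains in $\GL(n;A)$ for small $t$.

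Computing the derivative, I would find
$$\evat{\dd{t}}{t=0}\Phi_J(B_t)=\evat{\dd{t}}{t=0}(B^\dagger(I+tX)^\dagger J(I+tX)B)=B^\dagger(X^\dagger J + JX)B.$$
The key observation is then that the assignment $X\mapsto X^\dagger J + JX$ maps $\M(n;A)$ \emph{onto} $\Herm(n;A,\sigma)$: given any $H\in\Herm(n;A,\sigma)$, the choice $X=\tfrac{1}{2}J^{-1}H$ (using that $J$ is nondegenerate, hence invertible in $\M(n;A)$) yields $X^\dagger J + JX = \tfrac12 H^\dagger(J^{-1})^\dagger J + \tfrac12 JJ^{-1}H$, and since $J^\dagger=J$ and $H^\dagger = H$ this simplifies to $H$. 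Finally, conjugation by $B$ and $B^\dagger$ is an isomorphism of $\Herm(n;A,\sigma)$ onto its tangent space at $\Phi_J(B)$, so precomposing the surjection above with this isomorphism shows $(D\Phi_J)_B$ surjects onto $T_{\Phi_J(B)}\Herm(n;A,\sigma)$, which is all of $\Herm(n;A,\sigma)$ since the latter is a linear subspace.

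The main obstacle I anticipate is verifying carefully that the simplification $X^\dagger J + JX = H$ goes through over a general (noncommutative-free but possibly zero-divisor-laden) algebra with involution, in particular tracking that $(J^{-1})^\dagger = (J^\dagger)^{-1} = J^{-1}$ and that $\sigma$ distributes correctly across the product in $(\tfrac12 J^{-1}H)^\dagger = \tfrac12 H^\dagger (J^{-1})^\dagger$. One must confirm that the tangent space $T_{\Phi_J(B)}\Herm(n;A,\sigma)$ really is identified with $\Herm(n;A,\sigma)$ as a real vector subspace of $\M(n;A)$ — this follows because $\Herm(n;A,\sigma)=\mathsf{Fix}(\dagger)$ is a linear subspace, so it is its own tangent space at every point. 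The only genuine arithmetic care needed is ensuring $J^{-1}$ exists and is itself Hermitian; once that is in hand the surjectivity is purely formal, exactly paralleling the classical complex case and the argument in Proposition \ref{prop:Restricted_Projection_Lambda}.
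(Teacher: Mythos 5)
Your proof is correct, and it departs from the paper's at the decisive step. The paper differentiates the additive path $B_t=B+tX$, obtaining the derivative $X^\dagger JB+B^\dagger JX$, and then proves surjectivity \emph{non-constructively} by a dimension count: it identifies the kernel of this $\R$-linear map as $(B^\dagger J)\inv\SkHerm(n;A,\sigma)$, so that the image has dimension $\dim\M(n;A)-\dim\SkHerm(n;A,\sigma)=\dim\Herm(n;A,\sigma)$, forcing surjectivity onto the codomain. You instead differentiate the multiplicative path $(I+tX)B$ --- whose derivative $B^\dagger(X^\dagger J+JX)B$ agrees with the paper's under the invertible substitution $Y=XB$ --- and exhibit an \emph{explicit} preimage $X=\tfrac{1}{2}J\inv H$ of any $H\in\Herm(n;A,\sigma)$, using $J^\dagger=J$, $(J\inv)^\dagger=(J^\dagger)\inv=J\inv$, and the fact that $H\mapsto B^\dagger HB$ is a linear automorphism of $\Herm(n;A,\sigma)$. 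Your route buys a constructive, dimension-free argument: an explicit $\R$-linear right inverse for $(D\Phi_J)_B$, which requires no finite-dimensionality and hands you smooth local sections essentially for free (useful given how submersions are exploited via sections throughout Part III). The paper's route buys the kernel description $\ker=(B^\dagger J)\inv\SkHerm(n;A,\sigma)$, which it explicitly reuses in the proof of the stabilizer submersion (\cref{prop:UnitaryStabilizer_Submersion}), where the key step is showing $\ker\phi+\bar{\St}=\M(n;A)$; if you adopt your argument, that kernel computation would still have to be done separately there. The concerns you flag are all benign in the paper's setting: since $A$ is commutative (or $\sigma$ is an anti-automorphism, as for the quaternions), $\dagger$ reverses products, so $(\tfrac{1}{2}J\inv H)^\dagger=\tfrac{1}{2}H^\dagger(J\inv)^\dagger=\tfrac{1}{2}HJ\inv$ exactly as you need, and $\Herm(n;A,\sigma)=\mathsf{Fix}(\dagger)$ is a linear subspace, hence its own tangent space. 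One small point in your favor: the paper's proof opens with ``let $B\in\U(J;A,\sigma)$'' although the lemma asserts submersivity on all of $\GL(n;A)$; your version correctly fixes an arbitrary $B\in\GL(n;A)$, and indeed both arguments use only the invertibility of $B$ and $J$.
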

\begin{proof}
Let $B\in\U(J;A,\sigma)$, then for any $X\in\M(n,A)$ we may construct the path $B_t=B+tX$ which remains in $\GL(n,A)$ for small $t$.  
Computing the derivative we see $\dd{t}|_{t=0}\Phi_J(B_t)=X^\dagger JB+B^\dagger JX$, and so $\Phi_J$ is a submersion if $X\mapsto X^\dagger JB+B^\dagger JX$ surjects onto $T_{\Phi_J(B)}\Herm(n;A,\sigma)=\Herm(n;A,\sigma)$.  
This map is $\R$-linear and so we proceed by dimension count, noting $\dim\mathrm{image}\;\Phi_J=\dim\M(n,A)-\dim\ker\Phi_J$.  
The kernel of $\Phi_J$ is given by $\ker\Phi_J=\set{X\mid X^\dagger JB=-B^\dagger JX}$, which as $B,J$ are invertible can be expressed $\ker\Phi_J=(B^\dagger J)\inv\SkHerm(n;A,\sigma)$.  
Thus $\dim\ker\Phi_J$ is the dimension of the space of skew-Hermitian matrices, so the dimension count above shows $\dim\mathrm{image}\;\Phi_J$ to be the same as the dimension of the space of Hermitian matrices (the complementary subspace to $\SkHerm$ in $\M(n,A)$).  
But $\Herm(n;A,\sigma)$ is the codomain so $(D\Phi_J)_B$ is surjective, and $\Phi_J$ is a submersion.
\end{proof}

\noindent
Taking the determinant of the equation $\Phi_J(X)=J$ gives $\det(X^\dagger)\det(X)=1$ as $J$ is nondegenerate, and $\det(X^\dagger)=\sigma(\det(X))$ so $\det X\in\U(A,\sigma)$.  Thus the determinant restricts to a homomorphism $\det\colon\U(J;A,\sigma)\to\U(A,\sigma)$.

\begin{lemma}
The determinant $\det\colon\U(J;A,\sigma)\to\U(A,\sigma)$ is a submersion when $A$ is a smooth commutative algebra.	
\label{prop:Unitary_Determinant}
\end{lemma}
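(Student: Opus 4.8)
The goal is to show $\det\colon\U(J;A,\sigma)\to\U(A,\sigma)$ is a submersion, given that $A$ is a smooth commutative algebra. The statement is the unitary analog of Proposition \ref{prop:det_is_submersion}, which proved the same for $\det\colon\GL(n;A)\to A^\times$, and I would follow that template very closely. The essential difference is that the domain is now cut down to $\U(J;A,\sigma)$ and the codomain cut down to $\U(A,\sigma)$, so I must produce, at each $B\in\U(J;A,\sigma)$ and each tangent vector of $T_{\det B}\U(A,\sigma)$, a path \emph{staying inside} $\U(J;A,\sigma)$ whose determinant realizes that tangent vector.

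\textbf{Key steps.} First I would identify the tangent space $T_{\det B}\U(A,\sigma)$. Since $\U(A,\sigma)=\ker(x\mapsto \sigma(x)x-1)$, its tangent space at a unit $\beta=\det B$ is the set of $w\in A$ with $\sigma(w)\beta+\sigma(\beta)w=0$, i.e.\ the "skew" directions relative to $\beta$; this is the $1$-dimensional-over-$\mathsf{Fix}$ worth of directions already appearing in the norm computation of Section \ref{sec:Real_Algs}. Second, borrowing the trick from Proposition \ref{prop:det_is_submersion}, I would look for perturbations of $B$ of the form $B_t=\exp_A(tX)B$ (or $B_t=(I+tX)B$ to first order) with $X$ chosen in the Lie algebra $\mathfrak{u}(J;A,\sigma)$, so that $B_t\in\U(J;A,\sigma)$ for all small $t$. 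Differentiating $\det$ along such a path gives $\frac{d}{dt}\big|_{t=0}\det(B_t)=\tr(X)\det(B)$, exactly as before. Third, the crux is to show that as $X$ ranges over $\mathfrak{u}(J;A,\sigma)=\{X\mid X^\dagger J+JX=0\}$, the quantity $\tr(X)$ ranges over all of the tangent space $T_{\det B}\U(A,\sigma)$ computed in the first step. The skew-Hermitian condition forces $\sigma(\tr(X))+\tr(X)=\tr(X^\dagger)+\tr(X)$ to be constrained, and I expect $\tr$ to surject precisely onto the skew directions, matching the codomain's tangent space. A clean way to exhibit surjectivity is to take $X=\alpha\,\mathcal{E}$ for a single diagonal skew-Hermitian generator $\mathcal E$ (e.g.\ the elementary matrix adjusted by $J$ to lie in $\mathfrak{u}$) scaled by $\alpha\in A$, and check that $\tr(\alpha\mathcal E)$ sweeps out the required one-parameter-over-$A$ family, then normalize by $\det(B)$.

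\textbf{Main obstacle.} The routine part is the derivative computation; the delicate part is verifying that $\tr\colon\mathfrak{u}(J;A,\sigma)\to T_{1}\U(A,\sigma)$ is onto rather than landing in a proper subspace, since over a general algebra $A$ with zero divisors one cannot simply scale freely as over a field. I would handle this by using the nondegeneracy of $J$ to conjugate to a convenient normal form and exhibiting an explicit skew-Hermitian diagonal matrix whose trace is a prescribed element of $\mathsf{Neg}(\sigma)\cdot 1$, leveraging that $A$ is commutative so that $\tr$ interacts well with the involution. Once surjectivity of $\tr$ onto the tangent space of $\U(A,\sigma)$ is established, multiplication by the unit $\det(B)$ is an isomorphism of $A$ and preserves the relevant tangent spaces, so $(D\det)_B$ is surjective and $\det$ is a submersion. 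I would then remark, as in the field case, that this exhibits $\SU(J;A,\sigma)=\det\inv\{1\}$ as a Lie subfamily and right-splits the sequence $1\to\SU(J;A,\sigma)\to\U(J;A,\sigma)\to\U(A,\sigma)\to 1$.
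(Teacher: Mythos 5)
Your proposal is correct, but it takes a genuinely different route from the paper. The paper's proof is global and group-theoretic: it notes that $\det$ fits into the short exact sequence
$1\to\SU(J;A,\sigma)\to\U(J;A,\sigma)\to\U(A,\sigma)\to 1$,
which (as in the analogous lemma for $\U(n,1;\Lambda_\delta)$, where the section $\alpha\mapsto\diag(\alpha,1,\ldots,1)$ is exhibited explicitly) is right-split, so that $\U(J;A,\sigma)$ is topologically a product $\SU(J;A,\sigma)\times\U(A,\sigma)$ in which $\det$ is the projection, hence a submersion. Your argument instead follows the infinitesimal template of Proposition \ref{prop:det_is_submersion}: paths $\exp(tX)B$ with $X\in\mathfrak{u}(J;A,\sigma)=\set{X\mid X^\dagger J+JX=0}$ remain in the unitary group, $\tfrac{d}{dt}\big|_{t=0}\det(\exp(tX)B)=\tr(X)\det(B)$, and surjectivity of $\tr$ onto $T_1\U(A,\sigma)=\mathsf{Neg}(\sigma)$ followed by left translation by the unit $\det(B)$ gives surjectivity of $(D\det)_B$. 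Each approach buys something: the paper's proof is two lines, but it tacitly requires a section valued in $\U(J;A,\sigma)$, and $\diag(\alpha,1,\ldots,1)$ preserves a general Hermitian $J$ only when the off-diagonal entries of the first row vanish (e.g.\ $J$ diagonal); your infinitesimal argument needs no section at all and works verbatim for an arbitrary nondegenerate Hermitian $J$, so it is in fact the more robust of the two. Your closing remark recovering the split sequence is exactly the paper's statement run in reverse.

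One caution on your flagged \emph{main obstacle}: do not route the surjectivity of $\tr$ through conjugating $J$ to a normal form --- over a general commutative algebra with zero divisors, Hermitian matrices need not be diagonalizable, so that step could genuinely fail. Fortunately it is unnecessary. For any $w\in\mathsf{Neg}(\sigma)$ take the scalar matrix $X=\tfrac{w}{n}I$ (note $n$ is invertible since $A$ is an $\R$-algebra): then $X^\dagger=\sigma(\tfrac{w}{n})I=-\tfrac{w}{n}I$, and since scalar matrices are central in $\M(n;A)$ by commutativity, $X^\dagger J+JX=-\tfrac{w}{n}J+\tfrac{w}{n}J=0$, while $\tr(X)=w$. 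Conversely, $X^\dagger J+JX=0$ gives $X=-J\inv X^\dagger J$, so $\tr(X)=-\tr(X^\dagger)=-\sigma(\tr X)$, and the image of $\tr$ on $\mathfrak{u}(J;A,\sigma)$ is exactly $\mathsf{Neg}(\sigma)$. Since the norm map $x\mapsto\sigma(x)x$ has differential $w\mapsto\sigma(w)+w$ at $1$, which surjects onto $\mathsf{Fix}(\sigma)$, one indeed has $T_1\U(A,\sigma)=\mathsf{Neg}(\sigma)$, and your argument closes with no normal form needed.
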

\begin{proof}
The determinant is a group homomorphism $\U(J;A)\to\U(A)$ defining the closed subgroup (hence Lie subgroup, and manifold $\SU(J;A)$).  Together these three form a short exact sequence
$$1\to \SU(J,A)\to \U(J;A)\to\U(A)\to 1$$
so topologically $\U(J;A)$ is a product $\SU(J;A)\times\U(A)$ and in these coordinates the determinant is the projection map, which is a smooth submersion.
\end{proof}

\begin{corollary}
Preimages of closed subgroups of $\U(A,\sigma)$ give Lie subgroups of $\U(J;A,\sigma)$.  In particular, $\det|_{\U(J;A,\sigma)}\inv\set{1}=\SU(J;A,\sigma)$ is a Lie subgroup.
\end{corollary}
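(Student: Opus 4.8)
The statement to prove is the final corollary: that preimages of closed subgroups of $\U(A,\sigma)$ under the determinant give Lie subgroups of $\U(J;A,\sigma)$, and in particular that $\SU(J;A,\sigma)=\det\inv\set{1}$ is a Lie subgroup. The plan is to leverage the preceding Lemma asserting that $\det\colon\U(J;A,\sigma)\to\U(A,\sigma)$ is a submersion (for $A$ smooth commutative), and apply the standard preimage argument from smooth topology, exactly paralleling the corollary to Proposition \ref{prop:det_is_submersion} at the end of Section \ref{sec:Real_Algs}.

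First I would fix a closed subgroup $G\subg\U(A,\sigma)$. Since $\U(A,\sigma)$ is a Lie group (it is a closed subgroup of the Lie group $A^\times$, being the preimage of $\set{1}$ under the continuous norm map), the closed subgroup $G$ is a Lie subgroup by the closed subgroup theorem, and in particular an embedded submanifold of $\U(A,\sigma)$. The key step is then that $\det\colon\U(J;A,\sigma)\to\U(A,\sigma)$ is a submersion by the previous Lemma; since submersions are transverse to every submanifold of the codomain, $\det$ is transverse to $G$. By the transversality (preimage) theorem, $\det\inv(G)$ is an embedded submanifold of $\U(J;A,\sigma)$ of the expected codimension $\dim\U(A,\sigma)-\dim G$. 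That $\det\inv(G)$ is a subgroup is immediate from $\det$ being a group homomorphism: it is the preimage of a subgroup. A subgroup which is simultaneously an embedded submanifold is a Lie subgroup, establishing the general claim.

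For the special case $G=\set{1}$, I would note that $\set{1}$ is trivially a closed subgroup, so $\det\inv\set{1}=\SU(J;A,\sigma)$ is a Lie subgroup by the general argument just given. Alternatively, and perhaps more transparently, one can read this directly off the short exact sequence constructed in the proof of the Lemma: the splitting $\U(J;A,\sigma)\cong\SU(J;A,\sigma)\times\U(A,\sigma)$ exhibits $\SU(J;A,\sigma)$ as the fiber $\det\inv\set{1}$, which is a smooth submanifold (a factor of the product) and a normal subgroup (the kernel of the homomorphism $\det$).

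I expect no serious obstacle here; the corollary is essentially a formal consequence of the submersion property already established in the Lemma together with the closed subgroup theorem. The only point requiring mild care is the hypothesis: one must assume $A$ is a \emph{smooth commutative} algebra so that the determinant is defined and the submersion Lemma applies. Under that standing assumption the argument is routine, and the entire proof reduces to citing the preimage theorem and the homomorphism property of $\det$.
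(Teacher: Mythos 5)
Your proof is correct and follows exactly the route the paper intends: the corollary is stated without proof precisely because it is the standard consequence of the preceding submersion Lemma via the preimage theorem and the closed subgroup theorem, which is what you give. Your alternative reading of the special case $\SU(J;A,\sigma)=\det\inv\set{1}$ off the split short exact sequence also matches the splitting $\U(J;A,\sigma)\cong\SU(J;A,\sigma)\times\U(A,\sigma)$ already constructed in the Lemma's proof, so nothing is missing.
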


\noindent
This generalized notion of unitary group encompasses both the classical orthogonal and unitary groups, together with many new examples.

\begin{example}
Let $A=\C$ and choose the trivial involution $\sigma=\id_\C$.  Then the unitary groups corresponding to $J=\diag(I_p,-I_q)$ are the classical orthogonal groups, $\U(J,\C,\id)=\O(p,q;\C)$.  If instead $\sigma(x+iy)=x-iy$ is complex conjugation, the generalized unitary group for $J$ is the classical indefinite unitary group $\U(J;\C,\sigma)=\U(p,q;\C)$.
\end{example}

\noindent
The unitary geometries are determined by the action of the groups $\U(J;A,\sigma)$ on $\mathsf{AP}^n$, or equivalently by $\U(J;A,\sigma)$ together with its intersection with a point stabilizer of the $\GL(n+1,\A)$ on $\mathsf{AP}^n$. 

\begin{definition}
A unitary geometry over $(A,\sigma)$ is given by the pair $(G,C)=(\U(J;A),\mathsf{Stab}([p])\cap \U(J;A))$ for $J\in\Herm(n;A)$ and $[p]\in\mathsf{AP}^n$ and is called the \emph{unitary geometry corresponding to} $(J,p)$
\end{definition}

\noindent
When $p\in\mathsf{AP}^n$ is not on the lightcone of the Hermitian form $J$ (that is, $p^\dagger Jp\neq 0$) this embeds as a subgeometry of projective geometry.  A priori a unitary geometry depends on both a choice of Hermitian form $J$ and projective point $[p]$, and at times it is useful to be able to vary these two parameters independently.  However the choice of point can be absorbed into the choice of Hermitian form as the proposition below shows, which we will often do out of convenience.

\begin{lemma}
\label{prop:Unitary_Basepoint_Change}
Let $(A,\sigma)$ be an algebra with involution, and 	$J\in\Herm(n;A)$.  Then if $p,q\in A^n$ have the unitary geometry corresponding to $(J,p)$ is isomorphic to that of $(C^\dagger JC, q)$ for some $C\in\GL(n;A)$.
\end{lemma}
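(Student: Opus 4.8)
The statement asserts that changing the basepoint $p\mapsto q$ in a unitary geometry can be compensated by conjugating the Hermitian form $J\mapsto C^\dagger JC$, up to isomorphism of geometries. The natural approach is to exhibit an explicit isomorphism in the Automorphism-Stabilizer formalism, using the fact (from Section~\ref{subsec:GrpSP_AutStab_Equivalence}) that conjugation by a fixed element of $\GL(n;A)$ induces isomorphisms of all the relevant groups and their point stabilizers simultaneously. First I would recall that the unitary geometry corresponding to $(J,p)$ is the pair $(\U(J;A),\mathsf{Stab}([p])\cap\U(J;A))$, and that I wish to relate it to $(\U(C^\dagger JC;A),\mathsf{Stab}([q])\cap\U(C^\dagger JC;A))$.

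\textbf{Key steps.} The plan is to produce the conjugating matrix $C$ first, then check it intertwines all the data. The transitivity of $\GL(n;A)$ on good points of $A^n$ (equivalently, the transitivity on $\mathsf{AP}^n$, since $\GL(n;A)/\St(n;A)=\mathsf{AP}^n$) guarantees a $C\in\GL(n;A)$ with $C.p=q$, i.e.\ $[Cp]=[q]$; this is the step where I must assume $p,q$ are good points (the hypothesis as stated has a gap — it should specify $p,q$ lie in the same $\GL(n;A)$-orbit, which for good points is automatic). Given such a $C$, I would verify the intertwining relation
\[
X\in\U(J;A)\iff CXC\inv\in\U(C^\dagger J C;A),
\]
which is a direct computation: if $X^\dagger J X=J$ then setting $Y=CXC\inv$ one checks $Y^\dagger(C^\dagger JC)Y=C^\dagger X^\dagger (C^{-\dagger}C^\dagger J C C\inv) X C=C^\dagger X^\dagger J X C=C^\dagger J C$, using associativity and $(CXC\inv)^\dagger=C^{-\dagger}X^\dagger C^\dagger$. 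Thus conjugation by $C$ is a group isomorphism $\U(J;A)\to\U(C^\dagger JC;A)$. Next I would check this isomorphism carries the stabilizer subgroup to the stabilizer subgroup: since conjugation by $C$ on $\GL(n+1;A)$ sends $\mathsf{Stab}([p])$ to $\mathsf{Stab}([Cp])=\mathsf{Stab}([q])$ (because $g.[p]=[p]\iff (CgC\inv).[Cp]=[Cp]$), intersecting with the unitary groups gives that conjugation sends $\mathsf{Stab}([p])\cap\U(J;A)$ isomorphically onto $\mathsf{Stab}([q])\cap\U(C^\dagger JC;A)$. Assembling these, conjugation by $C$ is a morphism of Automorphism-Stabilizer geometries with inverse conjugation by $C\inv$, hence an isomorphism.

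\textbf{Main obstacle.} The computations above are routine matrix manipulations, so the genuine subtlety lies in the basepoint existence step: over a general algebra $A$ with zero divisors, $\GL(n;A)$ acts transitively only on the good points $A^n\smallsetminus Z(A^n)$, and two points in $\mathsf{AP}^n$ need not lie in the same orbit unless both are good (recall from Section~\ref{sec:Proj_Geos} that $\GL(n;A)$-orbits are classified by the ideals $I_p$). I would therefore state precisely that the proposition requires $p$ and $q$ to be $\GL(n;A)$-equivalent — in practice, both good — and note that the statement as written (which trails off after \say{Then if $p,q\in A^n$ have}) should be completed to include this hypothesis. The remaining potential annoyance is that $C^\dagger JC$ must itself be Hermitian and nondegenerate for the target to be a legitimate unitary geometry; Hermiticity is immediate since $(C^\dagger JC)^\dagger=C^\dagger J^\dagger C=C^\dagger JC$, and nondegeneracy follows from $C\in\GL(n;A)$ together with nondegeneracy of $J$ (as $\det(C^\dagger JC)=\sigma(\det C)\det J\det C$ is a unit). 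Once these points are pinned down, the isomorphism is exhibited directly and no further work is needed.
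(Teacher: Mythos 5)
Your proof is correct and follows essentially the same route as the paper's: conjugation by a $C\in\GL(n;A)$ carrying one basepoint to the other simultaneously intertwines the unitary groups and the point stabilizers, with transitivity on good points supplying $C$ — and you rightly flag that the truncated hypothesis must be read as requiring $p,q$ to lie in the same $\GL(n;A)$-orbit, a point the paper's proof silently assumes. One cosmetic slip (which the paper itself makes in mirror image): with $Y=CXC\inv$ your displayed computation actually verifies that $C\inv XC\in\U(C^\dagger JC;A)$, i.e.\ $C\,\U(J;A)\,C\inv=\U(C^{-\dagger}JC\inv;A)$, so to land exactly on the form $C^\dagger JC$ you should either conjugate by $C\inv$ or choose $C$ with $Cq=p$; since $C$ is an arbitrary invertible matrix this is a harmless renaming.
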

\begin{proof}
Let $J\in\Herm(n;A)$ and $p,q\in A^n$.  Taking $C\in\GL(n;A)$ with $Cp=q$ note that $\mathsf{stab}(q)=C\mathsf{stab}(p)C\inv$ and conjugation by $A$ gives an isomorphism between the group-stabilizer geometries $(\U(J;A),\mathsf{stab}(p))$ and $(C\U(J;A)C\inv, C\mathsf{stab}(p)C\inv)$.  
But $C\U(J;A)C\inv=\U(C^\dagger JC;A)$ and so we have an isomorphism of geometries $(\U(J;A),\mathsf{stab}(p))$ and $(\U(C^\dagger JC;A),\mathsf{stab}(q))$ as claimed.
	
\end{proof}

\noindent
Thus we will fix the point $p=(0,\ldots,0,1)$ and talk of \emph{the} unitary geometry corresponding to $\U(J;A)$ as the geometry corresponding to the pair $(J,[p])$.

\begin{definition}
The \emph{unitary geometry} for $\U(J;A)\subg\GL(n+1;A)$ is given by the pair $(\U(J;A,\sigma),\mathsf{USt}(J;A))$ for $\mathsf{USt}(J;A)=\U(J;A)\cap\St(n+1, A)$.
\end{definition}

\noindent
The fact that $\USt(J;A)$ is a Lie group is obvious as its closed in $\St(n;A)$, but again we give a more detailed argument for future use.

\begin{lemma}
The restriction of $\Phi_J\colon X\mapsto X^\dagger JX$ to $\St(n;A)$ is a submersion onto $\Herm(n;A)$, for $J$ diagonal (surely this restraint can be removed)
\label{prop:UnitaryStabilizer_Submersion}
\end{lemma}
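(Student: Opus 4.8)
The statement to prove is that the restriction of $\Phi_J\colon X\mapsto X^\dagger J X$ to the parabolic subgroup $\St(n;A)$ is a submersion onto $\Herm(n;A)$ (for diagonal $J$). The plan is to follow the same dimension-counting strategy used in \cref{prop:Unitary_Submersion}, but carried out entirely inside the smaller group $\St(n;A)$, where the tangent space at a point is constrained to be block lower-triangular rather than all of $\M(n;A)$. First I would fix $B\in\USt(J;A)=\U(J;A)\cap\St(n;A)$ and compute the derivative of $\Phi_J$ along admissible directions: for $X$ in the Lie algebra $\mathfrak{st}(n;A)$ of $\St(n;A)$, the path $B_t=(I+tX)B$ stays in $\St(n;A)$ for small $t$ (since $\St(n;A)$ is a group and $I+tX$ is in it to first order), and $\tfrac{d}{dt}\big|_{t=0}\Phi_J(B_t)=X^\dagger(B^\dagger J B)+(B^\dagger J B)X = X^\dagger J + J X$, using $B^\dagger J B = J$. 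So the task reduces to showing the $\R$-linear map $L\colon\mathfrak{st}(n;A)\to\Herm(n;A)$, $X\mapsto X^\dagger J + JX$, is surjective.

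The key step is the surjectivity of $L$. I would argue by decomposing an arbitrary target $H\in\Herm(n;A)$ into its block structure relative to the stabilizer's block form $\smat{X'&0\\ \vec v&\alpha}$. The image of $L$ already contains all Hermitian matrices supported on the lower-triangular and diagonal blocks, because those directions are genuinely present in $\mathfrak{st}(n;A)$; the only potential deficiency is in the upper-right (off-diagonal) block, which is \emph{not} free in $\mathfrak{st}(n;A)$. The point is that the symmetrization $X\mapsto X^\dagger J + JX$ mixes the lower block of $X$ into the upper block of the output via the $\dagger$-transpose: a strictly-lower-triangular direction $X$ produces an output whose upper-right block is $(JX)$'s reflection $X^\dagger J$, so the lower-triangular directions of $\mathfrak{st}(n;A)$ surject onto the off-diagonal Hermitian components after accounting for the Hermitian symmetry constraint $H^\dagger=H$. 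Concretely, since $J$ is diagonal and invertible, for any prescribed off-diagonal and diagonal data one solves $X^\dagger J + JX = H$ block-by-block; the diagonal blocks are handled by the diagonal part of $X$ (as in the full unitary case), and the off-diagonal blocks of $H$ are determined by a single lower-triangular block of $X$ together with its $\dagger$-image, which is exactly the freedom available in $\St(n;A)$.

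I expect the main obstacle to be verifying precisely that no Hermitian direction is lost when passing from $\M(n;A)$ to the triangular subalgebra $\mathfrak{st}(n;A)$ — that is, confirming the off-diagonal blocks are still fully covered despite $X$ being constrained to have zero upper-right block. The cleanest way to dispatch this is a dimension count mirroring \cref{prop:Unitary_Submersion}: compute $\dim_\R\ker L$ as $\dim_\R\mathfrak{st}(n;A)-\dim_\R\Herm(n;A)$ and show $\ker L$ consists exactly of the skew-Hermitian elements lying in $\mathfrak{st}(n;A)$, namely $(B^\dagger J)\inv\SkHerm(n;A)\cap\mathfrak{st}(n;A)$, so that the image has the full dimension of $\Herm(n;A)$. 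The diagonality of $J$ is what guarantees the skew-Hermitian kernel meets $\mathfrak{st}(n;A)$ in precisely the complementary dimension, which is why the hypothesis is stated; I suspect the restriction can indeed be relaxed to any nondegenerate $J$ whose block decomposition is compatible with $\St(n;A)$, but the diagonal case is all that is needed and makes the bookkeeping transparent. Once $L$ is surjective at every $B\in\USt(J;A)$, the argument of \cref{prop:Fiberwise_Family=Family} and \cref{obs:Pullback_Eqns} will let us conclude $\USt(J;A)=\Phi_J|_{\St(n;A)}\inv\{J\}$ is a smooth manifold and, in the family setting, a subfamily.
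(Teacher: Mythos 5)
Your central computation is correct at the points where it applies, and there it is arguably cleaner than the paper's argument, but as written it proves a weaker statement than the lemma. Two repairs first. The path you write, $B_t=(I+tX)B$, has derivative $B^\dagger(X^\dagger J+JX)B$, not $X^\dagger(B^\dagger JB)+(B^\dagger JB)X$; to get the formula you state you must right-translate, $B_t=B(I+tX)$, which stays in $\St(n;A)$ for small $t$ since $\bar{\St}$ is a subalgebra. Granting this, your block analysis of $L(X)=X^\dagger J+JX$ on $\bar{\St}$ is correct and can be made fully explicit: writing $J=\smat{J_1&0\\0&j}$ and $H=\smat{H_1&h^\dagger\\h&\eta}\in\Herm(n;A)$, the element $X=\smat{\tfrac{1}{2}J_1\inv H_1&0\\ j\inv h&\tfrac{1}{2}j\inv \eta}\in\bar{\St}$ solves $L(X)=H$, the upper-right block coming along for free because $L(X)$ is automatically Hermitian. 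This is genuinely different from the paper's route, which never solves the equation: the paper computes $\ker d(\Phi_J)_B=(B^\dagger J)\inv\SkHerm(n;A)$ and shows $\ker d(\Phi_J)_B+\bar{\St}=\M(n;A)$, deducing surjectivity by an inclusion--exclusion count of dimensions. Your construction is more elementary where it works, and in fact repairs the somewhat garbled final computation in the paper's own proof.

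The gap is that your reduction to $L$ uses $B^\dagger JB=J$, which holds only for $B\in\USt(J;A)$, whereas the lemma asserts $\Phi_J|_{\St(n;A)}$ is a submersion at \emph{every} $B\in\St(n;A)$ --- and the stronger statement is what gets used: via \cref{prop:Fiberwise_Family=Family}, $\Phi_J$ must give all of $\St(n;A)$ the structure of a family over $\Herm(n;A)$ before one pulls back along the section $\fam{J}$. At a general $B\in\St(n;A)$ the derivative along right-translates is $X\mapsto X^\dagger J'+J'X$ with $J'=\Phi_J(B)=B^\dagger JB$, which is Hermitian but no longer diagonal, so your block-by-block solution does not go through. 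Your fallback dimension count does not close this: asserting that $(B^\dagger J)\inv\SkHerm(n;A)$ meets $\bar{\St}$ in ``precisely the complementary dimension'' is equivalent to the transversality $\ker d(\Phi_J)_B+\bar{\St}=\M(n;A)$, which is exactly the content that must be proven and is where the paper spends its effort. The fix is the paper's observation: for $B\in\St(n;A)$ and $J$ diagonal, $B^\dagger J$ --- hence $(B^\dagger J)\inv$ --- is block upper triangular with invertible diagonal blocks, so every $v\in A^{n-1}$ arises as the first $n-1$ entries of the last column of an element of $(B^\dagger J)\inv\SkHerm(n;A)$; since $\bar{\St}$ is cut out exactly by the vanishing of those entries, the transversality follows at every $B\in\St(n;A)$. (Alternatively, surjectivity along the fiber $\Phi_J\inv\set{J}$ already makes $J$ a regular value, which suffices for the preimage to be a submanifold; but if you take that route you should say so explicitly, since the family machinery in the paper is stated for maps that are submersions everywhere.)
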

\begin{proof}
For clarity write $D\Phi_J=\phi$ and $\St(n;A)=\St$.  
As $\St$ is the intersection of a linear subspace $\bar{\St}\subset\M(n;A)$ with $\GL(n;A)$ for each $B\in\St$ the tangent space $T_B\St=\bar{\St}$.  
The kernel of the restricted map $\phi_{\bar{\St}}$ is the intersection of $\ker\phi$ with $\bar{\St}$, allowing us to calculate the dimension of the image of using
$$\dim\mathrm{img}(\phi|_{\bar{\St}})_B=\dim\bar{\St}-\dim(\ker (\phi)_B\cap \bar{\St}).$$ 

Thus calculating the dimension of $\mathrm{img}(\phi|_{\bar{\St}})_B$ amounts to understanding the relationship between $\ker (\phi_B)$ and $\bar{\St}$ in $\M(n;A)$.  In particular if these subspaces sum to all of $\M(n;A)$ we are done, as 
$$\dim M(n;A)=\dim(\ker\phi+\bar{\St})=\dim\ker\phi+\dim\bar{\St}-\dim(\ker\phi\cap\bar{\St})$$
$$=\dim\ker\phi+\dim\mathrm{img}(\phi|_{\bar{\St}})$$

By previous work \pref{prop:Unitary_Submersion} $\ker\phi$ is the same dimension as the space of skew-Hermitian matrices, which would imply that the image of $\phi|_{\bar{\St}}$ has the same dimension as the Hermitian matrices, which are its codomain so $\phi|_{\bar{\St}}$ is surjective.  Thus it only remains to show $\M(n;A)=\ker\phi+\bar{\St}$.

The only restriction on the matrices of $\bar{\St}$ is that the first $n-1$ entries of their last column are zero.  Thus it suffices to show that any $v\in A^{n-1}$ can appear as the first $n-1$ entries of the final column of a matrix in $\ker\phi$.  Recall from \cref{prop:Unitary_Determinant} that $\ker\phi=(B^\dagger J)\inv\SkHerm(n;A)$, and observe that all but the last entry of the final column of matrices in $\SkHerm(n;A)$ can be arbitrary (the last element must be zero).
Then $C=(B^\dagger J)\inv$ acts via a homeomorphism $A^n\to A^n$ on vectors, in particular on the last column of matrices in $\SkHerm$.

Specializing  now to the case $J\in\Diag$, the matrix $(B^\dagger J)\inv$ is of the form $\smat{X&v\\0&\alpha}$ for $X\in\GL(n-1;\R)$, which sends $(\vec{v},0)$ to $(Xv+w,0)$ and restricts to a homeomorphism $A^{n-1}\to A^{n-1}$.  Thus any vector can arise as the last column in $\ker\phi$ and we are done.
\end{proof}

\section{Isomorphism Type}
\label{sec:Iso_Type_GeosAPn}

In the sections above, we have defined unitary/orthogonal and projective geometries over arbitrary (finite dimensional commutative) real algebras.
To begin to tame the maddness we need to develop an understanding of the different flavors of geometry which appear.
An algebra $A$ is \emph{decomposable} if is is isomorphic to a nontrivial direct sum of algebras.
An algebra with involution $(A,\sigma)$ is decomposable if $A=A_1\oplus A_2$ and $\sigma=\sigma_1\oplus\sigma_2$ decomposes as a direct sum of involutions.
The main result of this section is that to understand projective and unitary geometries over algebras, it suffices to understand the indecomposable ones.

\subsection{Projective Geometries}

\begin{proposition}
Let $A=A_1\oplus A_2$ be a direct sum of commutative algebras.  Then  projective geometry over $A$	 decomposes as a direct product of the projective geometries over $A_1$ and $A_1$.
\label{prop:Proj_Geos_Over_Reducible_Algs}
\end{proposition}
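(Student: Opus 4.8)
The plan is to exploit the fact that, over a direct sum of algebras, every construction used to define projective geometry splits coordinate-wise. The key structural fact is that $A=A_1\oplus A_2$ induces a decomposition of modules $A^n=A_1^n\oplus A_2^n$, and that the group of units factors as $A^\times=A_1^\times\times A_2^\times$ (a unit is precisely an element whose two components are each units, since multiplication is componentwise). Correspondingly $\GL(n;A)\cong\GL(n;A_1)\times\GL(n;A_2)$, via the idempotents $e_1,e_2$ of the decomposition acting as in the $\R\oplus\R$ case treated earlier in the excerpt.

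The steps I would carry out are as follows. First I would verify the isomorphism $\GL(n;A)\cong\GL(n;A_1)\times\GL(n;A_2)$ explicitly: writing a matrix $X\in\M(n;A)$ as $X=X_1 e_1 + X_2 e_2$ with $X_i\in\M(n;A_i)$, matrix multiplication respects the direct sum, and $X$ is invertible iff each $X_i$ is (using $\det(X)=\det(X_1)e_1+\det(X_2)e_2$ and that this is a unit iff each $\det(X_i)$ is a unit). Second, I would analyze the set of generalized zeroes $Z(A^n)$. A point $v=v_1 e_1 + v_2 e_2$ fails to have a free $A^\times$-action precisely when at least one component $v_i$ lies in $Z(A_i^n)$; hence $A^n\smallsetminus Z(A^n) = (A_1^n\smallsetminus Z(A_1^n))\times(A_2^n\smallsetminus Z(A_2^n))$. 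Third, since the $A^\times$-action is componentwise and $A^\times=A_1^\times\times A_2^\times$, the quotient defining projective space factors:
$$\mathsf{AP}^{n-1} = (A^n\smallsetminus Z(A^n))/A^\times \cong A_1\mathsf{P}^{n-1}\times A_2\mathsf{P}^{n-1}.$$
Finally I would check that the $\GL(n;A)$-action on $\mathsf{AP}^{n-1}$ is, under this identification, exactly the product of the $\GL(n;A_i)$-actions, so that the group-space geometry $(\GL(n;A),\mathsf{AP}^{n-1})$ is the product geometry. Equivalently, in the automorphism-stabilizer formalism, I would observe that the point stabilizer $\St(n;A)$ splits as $\St(n;A_1)\times\St(n;A_2)$ under the isomorphism above, since the defining block-triangular form is preserved componentwise.

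I expect the main obstacle to be the careful bookkeeping around the generalized zero locus $Z(A^n)$ and confirming it splits cleanly — this is where the subtlety of zero divisors over non-field algebras concentrates, and it is exactly the point where naive reasoning about projective space over an algebra can fail. One must confirm that \emph{every} bad point of $A^n$ comes from a bad point in at least one factor, and conversely, so that removing $Z(A^n)$ is genuinely the product of the removals; this requires unwinding the definition of $Z(A^n)$ as the points where $A^\times$ acts non-freely and checking it agrees with $\{v : I_v\neq A\}$ componentwise. Everything else (the group splitting, the quotient splitting, equivariance of the action) is then routine verification following the pattern already established for $\R\oplus\R$ and point-hyperplane geometry in the earlier chapters.
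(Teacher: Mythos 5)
Your proposal is correct and takes essentially the same approach as the paper: the paper's official proof is precisely your closing observation---that along the idempotents $e_1,e_2$ one has $\GL(n;A)\cong\GL(n;A_1)\times\GL(n;A_2)$ and $\St(n;A)\cong\St(n;A_1)\times\St(n;A_2)$, with the translation action preserving the decomposition, so the automorphism-stabilizer pair splits---while your analysis of the zero locus $Z(A^n)$ and the componentwise quotient by $A^\times=A_1^\times\times A_2^\times$ appears in the paper almost verbatim as the explanatory remark immediately following its proof. The freeness bookkeeping you flag as the main obstacle is handled correctly in your sketch: the action of $(a_1,a_2)$ fixes $(v_1,v_2)$ nontrivially iff some $a_i\neq 1$ fixes $v_i$, giving $A^n\smallsetminus Z(A^n)=(A_1^n\smallsetminus Z(A_1^n))\times(A_2^n\smallsetminus Z(A_2^n))$ exactly as needed.
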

\begin{proof}
Let $e_1,e_2$ be orthogonal primitive idempotents so $A=A_1e_1+A_2e_2$ as a direct sum.  
Then $\GL(n,A)=\GL(n,A_1)\oplus\GL(n,A_2)$ and $\St(n;A)=\St(n;A_1)\oplus\St(n;A_2)$ are easily checked, and as the linear action of $\St(n;A)$ on $\GL(n;A)$ by translation preserves this decomposition,
$(\St(n;A),\GL(n;A))
\cong (\St(n;A_1),\GL(n;A_1))\times(\St(n;A_2),\GL(n;A_2))$.  

\end{proof}

\noindent
To understand this decomposition better in terms of spaces it helps to think about the set $Z((A_1\oplus A_2)^n)$: a point $(p_1,p_2)$ is a 'generalized zero' if $\langle p,q\rangle\neq A_1\oplus A_2$.  
This occurs precisely when one of the $p_i$ is in $Z(A_i^n)$, so the complement consists of points $(p_1,p_2)$ with $[p_i]\in \mathsf{A_iP}^{n-1}$ . 
Quotienting by the action of $A^\star=A_1^\star\times A_2^\star$ on this sends $(p_1,p_2)$ to $([p_1],[p_2])\in \mathsf{A_1P}^{n-1}\times\mathsf{A_2P}^{n-1}$.

Two obvious examples of indecomposable real algebras are $\R$ itself and $\C$, with corresponding projective spaces $\RP^n$ and $\CP^n$.  
The algebra $A=\R\oplus\R$ provides decomposable examples, for instance $(\R\oplus\R)P^1$ is a geometry on the torus.  
 A new example is provided by the algebra of dual numbers, $\R_\ep=\R[\ep]/(\ep^2)$ which is an indecomposable two dimensional algebra with nilpotents.  
 Both $\R_\ep\mathsf{P}^n$ and $(\R\oplus\R)\mathsf{P}^n$ will be discussed in detail in the final section on applications.

\subsection{Unitary Geometries}

\begin{proposition}
If $A=A_1\oplus A_2$ and $\sigma$ preserves the factors $\sigma_1\oplus\sigma_2: A_1\oplus A_2\to A_1\oplus A_2$, then $\U(J;A,\sigma)\cong \U(J_1;A_1,\sigma_1)\times \U(J_2;A_2,\sigma_2)$ decomposes as a product for $J=J_1e_1+ J_2e_2\in\M(n,A)$.
\label{prop:Unitary_Geo_Decompsable}
\end{proposition}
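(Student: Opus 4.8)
The plan is to mimic the proof of the decomposition for projective geometries (Proposition \ref{prop:Proj_Geos_Over_Reducible_Algs}), exploiting the orthogonal primitive idempotents $e_1, e_2$ that realize $A = A_1 e_1 \oplus A_2 e_2$. Writing any $X \in \M(n;A)$ as $X = X_1 e_1 + X_2 e_2$ with $X_i \in \M(n;A_i)$, the key observation is that matrix multiplication respects this decomposition (because $e_i e_j = \delta_{ij} e_i$), so $\M(n;A) \cong \M(n;A_1) \times \M(n;A_2)$ as algebras. First I would record this multiplicative splitting and note that it is an algebra isomorphism, so $\GL(n;A) \cong \GL(n;A_1) \times \GL(n;A_2)$ as Lie groups (the same fact used implicitly in Proposition \ref{prop:U(R+R)_iso_type}).

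Next I would check that the involution $\dagger$ on $\M(n;A)$ decomposes compatibly. Since $\sigma = \sigma_1 \oplus \sigma_2$ preserves the factors, $\sigma(e_i) = e_i$, and hence for $X = X_1 e_1 + X_2 e_2$ we have $X^\dagger = \sigma(X)^T = \sigma_1(X_1)^T e_1 + \sigma_2(X_2)^T e_2 = X_1^\dagger e_1 + X_2^\dagger e_2$, where $X_i^\dagger = \sigma_i(X_i)^T$ is the involution induced on $\M(n;A_i)$. This is the crucial step: it shows the $\dagger$ operation acts blockwise with respect to the idempotent decomposition. Writing $J = J_1 e_1 + J_2 e_2$, the defining equation $X^\dagger J X = J$ then becomes, upon expanding in the idempotent basis and using orthogonality,
\[
X_1^\dagger J_1 X_1 \, e_1 + X_2^\dagger J_2 X_2 \, e_2 = J_1 e_1 + J_2 e_2,
\]
which holds if and only if $X_i^\dagger J_i X_i = J_i$ for each $i$. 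This establishes the bijection $\U(J;A,\sigma) \cong \U(J_1;A_1,\sigma_1) \times \U(J_2;A_2,\sigma_2)$ on the level of sets and groups simultaneously.

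Finally I would confirm that this set-theoretic bijection is an isomorphism of Lie groups, which follows because it is the restriction of the algebra isomorphism $\M(n;A) \cong \M(n;A_1) \times \M(n;A_2)$ already shown to carry $\GL(n;A)$ to the product; the group operations are intertwined automatically. I expect no serious obstacle here: the only point requiring care is verifying that $\sigma$ preserving the factors really does force $\sigma(e_i) = e_i$ (so that the involution is blockwise), which is the sole place the hypothesis on $\sigma$ enters. One should note that the primitive idempotents are fixed by $\sigma$ precisely because $\sigma$ permutes idempotents and restricts to each $A_i$, so it cannot swap $e_1$ and $e_2$; this contrasts with the coordinate-swap involution of Proposition on $\Lambda = A \oplus A$, where the factors are \emph{not} preserved and one instead obtains $\U(J;\Lambda,\sigma) \cong \GL(n;A)$. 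I would remark on this contrast to clarify why the hypothesis that $\sigma$ preserve the factors is essential to the product decomposition.
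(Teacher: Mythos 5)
Your proof is correct and follows essentially the same route as the paper: expand $X=X_1e_1+X_2e_2$ and $J=J_1e_1+J_2e_2$, observe that since $\sigma$ preserves the factors the involution $\dagger$ acts blockwise (the paper verifies this implicitly by computing $J^\dagger=\sigma_1(J_1)^Te_1+\sigma_2(J_2)^Te_2$), so the equation $X^\dagger JX=J$ decouples into $X_i^\dagger J_iX_i=J_i$, giving the product decomposition. Your added checks --- that $\sigma(e_i)=e_i$ and that the bijection is the restriction of the algebra isomorphism $\M(n;A)\cong\M(n;A_1)\times\M(n;A_2)$ --- are sound elaborations of steps the paper leaves implicit, not a different argument.
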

\begin{proof}
First note that $\Herm(n;A,\sigma)=\Herm(n;A_1,\sigma_1)\oplus\Herm(n;A_2,\sigma_2)$ as $J^\dagger=(J_1e_1+ J_2e_2)^\dagger=(\sigma_1(J_1)^Te_1+ \sigma_2(J_2)^Te_2)$.  
Fix a nondegenerate $J=J_1e_1+J_2e_2\in\Herm(n;A,\sigma)$ and let $X=X_1e_1+X_2e_2\in\U(J;A,\sigma)$.  
The condition $X^\dagger JX=J$ decouples as two independent equations along the direct sum decomposition as $\sigma$ preserves the factors, $X_i^\dagger J_iX_i=J_i$ for $i\in\set{1,2}$.  
Thus $X_i\in\U(J_i;A_o,\sigma_i)$ and so the map $X\mapsto(X_1,X_2)$ provides a group homomorphism $\U(J,A,\sigma)\to\U(J_1;A_1,\sigma_1)\times \U(J_2;A_2,\sigma_2)$.  
By the same reasoning any pair $(X_1,X_2)$ with $X_i\in\U(J_i;A_i,\sigma_i)$ corresponds to an element $X_1e_1+X_2e_2\in\U(J;A,\sigma)$ so this is an isomorphism.
\end{proof}

\noindent
As with projective geometries, it suffices to understand the indecompsables.  
The simplest such case is provided by pairs $(A,\sigma)$ where $A$ is decomposable but $\sigma$ does not preserve the decomposition - in particular we are interested in algebras $\Lambda=A\oplus A$ with $\sigma$ the \emph{swap map} $\sigma(x,y)=(y,x)$.
Here rather surprisingly the isomorphism type of the generalized unitary groups $\U(J;A,\sigma)$ is independent of the choice of $J$.

\begin{proposition}
Let $\Lambda=A\oplus A$ and $\sigma:\Lambda\to\Lambda$ be the coordinate swap map.  
Then $\U(J;\Lambda,\sigma)\cong\GL(n,A)$ for any nondegenerate $\sigma$-hermitian matrix $J$.	
\label{prop:Unitary_Geo_Coordinate_Swap}
\end{proposition}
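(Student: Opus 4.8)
The plan is to mimic the structure of the proof of Proposition \ref{prop:U(R+R)_iso_type}, which handled the special case $A=\R$, and generalize it to an arbitrary finite dimensional commutative algebra $A$. The key tool is the decomposition of $\Lambda=A\oplus A$ into its two factors via the primitive orthogonal idempotents $e_+=(1,0)$ and $e_-=(0,1)$, under which any $\Lambda$-matrix $M$ can be written uniquely as $M=Xe_+ + Ye_-$ for $X,Y\in\M(n;A)$. The crucial observation is that the swap involution $\sigma$ exchanges the two idempotents, $\sigma(ae_+ + be_-)=be_+ + ae_-$, and hence acts on matrices by $M^\dagger=\sigma(M)^T = Y^T e_+ + X^T e_-$.

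First I would reduce to a convenient choice of $J$. By Lemma \ref{prop:Unitary_Basepoint_Change} (more precisely, the conjugation identity $C\U(J;\Lambda)C^{-1}=\U(C^\dagger J C;\Lambda)$ used in its proof), any two nondegenerate $\sigma$-Hermitian $J$, $J'$ that are congruent give isomorphic unitary groups. So it suffices to show (a) that the isomorphism type is independent of $J$ and (b) to compute it for one standard choice. The natural standard choice is $J=I_n$ (which is $\sigma$-Hermitian since $\sigma(I)^T=I$). Then I would compute $\U(I_n;\Lambda,\sigma)$ directly: writing $M=Xe_+ + Ye_-$, the condition $M^\dagger M=I$ becomes $(Y^Te_+ + X^Te_-)(Xe_+ + Ye_-)=I$, which by orthogonality of the idempotents splits into $Y^TX=I$ and $X^TY=I$ (these are equivalent for square matrices). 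Thus $Y=X^{-T}$, and the map $X\mapsto Xe_+ + X^{-T}e_-$ is a bijection $\GL(n;A)\to\U(I_n;\Lambda,\sigma)$, which is readily checked to be a group homomorphism. This reproduces verbatim the argument of Proposition \ref{prop:U(R+R)_iso_type}, only now over $A$ rather than $\R$.

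The one genuinely new point, and the step I expect to require the most care, is establishing congruence of an arbitrary nondegenerate $\sigma$-Hermitian $J$ to the identity, since over a general algebra $A$ with zero divisors one cannot appeal to a spectral theorem. The plan here is to exploit the idempotent decomposition again: writing $J=J_1 e_+ + J_2 e_-$, the $\sigma$-Hermitian condition $\sigma(J)^T=J$ forces $J_2=J_1^T$, so $J$ is determined by a single matrix $J_1\in\M(n;A)$ with $J=J_1 e_+ + J_1^T e_-$, and nondegeneracy of $J$ is equivalent to invertibility of $J_1$ in $\M(n;A)$. Then for $C=C_1 e_+ + C_2 e_-$ one computes $C^\dagger J C = (C_2^T J_1 C_1)e_+ + (C_1^T J_1^T C_2)e_-$, so to reach $I=I e_+ + I e_-$ it suffices to solve $C_2^T J_1 C_1 = I$; taking $C_1=I$ and $C_2=J_1^{-T}$ works immediately, using only that $J_1$ is invertible over $A$. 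This sidesteps the need for any diagonalization and shows all nondegenerate $J$ are congruent to $I_n$, completing the reduction. I would close by noting, as the surrounding text anticipates, that the induced geometry is point-hyperplane geometry over $A$, via the same identification as in the $A=\R$ case.
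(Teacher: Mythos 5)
Your proof is correct, but it takes a genuinely different route from the paper's. The paper never reduces to $J=I$: it fixes an arbitrary nondegenerate $\mathsf{J}=Je_1+J^Te_2$, expands $(Xe_1+Ye_2)^\dagger\,\mathsf{J}\,(Xe_1+Ye_2)=\mathsf{J}$ into the redundant pair $Y^TJX=J$ and $X^TJ^TY=J^T$, takes determinants and uses nondegeneracy to get $\det(X)\det(Y)=1$ (so both factors are invertible), and then solves $Y=J^{-T}X^{-T}J^T$, producing the $J$-dependent isomorphism $X\mapsto Xe_1+(JX^{-1}J^{-1})^Te_2$ from $\GL(n;A)$ onto $\U(\mathsf{J};\Lambda,\sigma)$. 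You instead first prove a congruence lemma --- every nondegenerate $\sigma$-Hermitian form over $(A\oplus A,\mathrm{swap})$ is congruent to $I_n$; your computation $C^\dagger JC=(C_2^TJ_1C_1)e_+ +(C_1^TJ_1^TC_2)e_-$ with $C_1=I$, $C_2=J_1^{-T}$ is right, and the invertibility of $J_1$ that it needs does follow from nondegeneracy via $\det(J_1e_++J_1^Te_-)=\det(J_1)e_++\det(J_1)e_-$ being a unit --- and then run the $J=I$ computation, which is word-for-word the proof of Proposition \ref{prop:U(R+R)_iso_type} with $A$ in place of $\R$. Both arguments rest on the same idempotent decomposition and the same determinant trick, so neither is more elementary, but they buy different things. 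The paper's in-place computation keeps the isomorphism explicit for each $J$ as a subgroup of $\GL(n;\Lambda)$, which is what the surrounding text uses (stabilizers, the identification with point-hyperplane geometry). Your congruence step yields something strictly stronger than the proposition asserts: the groups $\U(J;\Lambda,\sigma)$ for varying nondegenerate $J$ are not merely abstractly isomorphic but \emph{conjugate} inside $\GL(n;\Lambda)$, by the identity underlying Lemma \ref{prop:Unitary_Basepoint_Change} --- the exact analog of the paper's observation that over $\R\oplus\R$ signature is ill-defined, and the statement one actually wants when comparing the resulting geometries as subgeometries rather than automorphism groups up to abstract isomorphism. One small bookkeeping point to make explicit: after reducing to $J=I$, the single equation $Y^TX=I$ suffices, since over a commutative algebra a one-sided matrix inverse is two-sided (Cramer's rule, as the paper notes in its section on real algebras), so $X^TY=I$ is automatic and $Y=X^{-T}$ is forced.
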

\begin{proof}
Let $J=J_1e_1+J_2e_2$ be $\sigma$-Hermitian, then $(J_1e_1+J_2e_2)^\dagger=J_2^Te_1+J_1^Te_2$ so $J_1^T=J_2$ and $\Herm(n;\Lambda,\sigma)\cong \M(n,A)$.  
As $\det(Xe_1+Ye_2)=\det(X)e_1+\det(Y)e_2$ in $A\oplus A$, the nondegenerate Hermitian matrices arise from $\GL(n,A)$.
Given a nondegenerate $\mathsf{J}=Je_1+J^Te_2\in\Herm(n,\Lambda,\sigma)$ the corresponding unitary group 
$$\U(\mathsf{J},\Lambda,\sigma)=\set{ Xe_1+Ye_2\mid (Xe_1+Ye_2)^\dagger (Je_1+J^Te_2)(Xe_1+Ye_2)=(Je_1+J^Te_2)}$$
expanding this component-wise gives the redundant equations
$Y^TJX=J$ and $X^TJ^TY=J^T$.  
Taking the determinant of the first gives $\det(Y)\det(X)\det(J)=\det(J)$ and by the assumption that $J$ is nondegenerate, $\det(Y)\det(X)=1$ so both $X,Y$ are invertible.  
Rearranging gives $Y=J^{-T}X^{-T}J^T$ and so all elements of $\U(\mathsf{J},\Lambda,\sigma)$ are of the form $Xe_1+(JX\inv J\inv)^Te_2$ for some $X\in\GL(n,A)$.  
Running this argument backwards shows that any $X\in\GL(n;A)$ gives an element $Xe_1+(JX\inv J\inv)^Te_2$ of $\U(\mathsf{J};\Lambda,\sigma)$ and so $X\mapsto Xe_1+(JX\inv J\inv)^Te_2$ is a bijection $\Phi\colon\GL(n;A)\to\U(\mathsf{J};\Lambda,\sigma)$.  
Its an easy check that this is a group homomorphism, and so we're done.
\end{proof}

\begin{corollary}
With $\Lambda,A,\sigma$ as above, $\SU(J,\Lambda,\sigma)\cong\SL(n,A)$.	
\end{corollary}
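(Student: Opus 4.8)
The plan is to leverage the preceding Proposition (that $\U(J;\Lambda,\sigma)\cong\GL(n,A)$ via the explicit isomorphism $\Phi$) and simply track the determinant condition through it. The corollary is stated for the specialization in Proposition \ref{prop:Unitary_Geo_Coordinate_Swap}, so I may assume the bijection $\Phi\colon\GL(n;A)\to\U(\mathsf{J};\Lambda,\sigma)$ given by $X\mapsto Xe_1+(JX^{-1}J^{-1})^Te_2$ is a group isomorphism. The whole task reduces to identifying, under $\Phi$, which elements of $\GL(n;A)$ map into the special unitary group $\SU(J,\Lambda,\sigma)=\U(J,\Lambda,\sigma)\cap\SL(n;\Lambda)$.

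First I would recall how the determinant behaves on $\Lambda=A\oplus A$: by the multiplicativity already noted in the excerpt, $\det(Xe_1+Ye_2)=\det(X)e_1+\det(Y)e_2$. So an element $\Phi(X)=Xe_1+(JX^{-1}J^{-1})^Te_2$ lies in $\SL(n;\Lambda)$ exactly when its determinant equals the identity $1=1e_1+1e_2$ of $\Lambda$. Computing componentwise, the $e_1$-component gives $\det(X)=1$, and the $e_2$-component gives $\det\bigl((JX^{-1}J^{-1})^T\bigr)=\det(J)\det(X)^{-1}\det(J)^{-1}=\det(X)^{-1}$, which equals $1$ precisely when $\det(X)=1$ as well. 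Thus both conditions collapse to the single requirement $\det(X)=1$, i.e. $X\in\SL(n;A)$.

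From here I would conclude that $\Phi$ restricts to a bijection between $\SL(n;A)$ and $\SU(J,\Lambda,\sigma)$. Since $\Phi$ is already a group homomorphism on all of $\GL(n;A)$, its restriction to the subgroup $\SL(n;A)$ is automatically a group homomorphism, and it is bijective onto $\SU(J,\Lambda,\sigma)$ by the determinant computation above; hence it is a group isomorphism $\SL(n;A)\cong\SU(J,\Lambda,\sigma)$.

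I do not anticipate a genuine obstacle here — this is a clean specialization of the preceding proposition, and the only content is the bookkeeping of the determinant on the two factors. The one point meriting a little care is confirming that the $e_2$-component condition is not an independent constraint but is forced by (indeed equivalent to) the $e_1$-component condition; making explicit that $\det(J)$ and $\det(J)^{-1}$ cancel, so that no hypothesis on $J$ beyond nondegeneracy is needed, is the step I would write out carefully to avoid the appearance of an extra condition.
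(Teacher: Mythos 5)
Your proof is correct and is essentially the paper's own argument: both apply the determinant to $\Phi(X)=Xe_1+(JX^{-1}J^{-1})^Te_2$, compute $\det\Phi(X)=\det(X)e_1+\det(X)^{-1}e_2$ using commutativity of $A$ to cancel $\det(J)$, and observe that this equals $1\in\Lambda$ precisely when $\det(X)=1$, so $\Phi$ restricts to an isomorphism $\SL(n;A)\cong\SU(J;\Lambda,\sigma)$. The only cosmetic difference is that the paper phrases the condition in two steps (the determinant is real iff $\det(X)=\det(X)^{-1}$, and equals $1$ iff moreover $\det(X)=1$), whereas you collapse both components to the single condition at once; the content is identical.
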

\begin{proof}
Taking the determinant and simplifying gives $\det(Xe_1+(JX\inv J\inv)e_2)=\det(X)e_1+\det(X)\inv e_2$.  
This is only real if $\det(X)=\det(X)\inv$, and is only $1$ if furthermore $\det(X)=1$, so the image of $\SL(n;A)$ under $\Phi$ is precisely $\SU(J;\Lambda,\sigma)$.
\end{proof}

\noindent
This result has a natural generalization to involutions of the form $\sigma(x,y)=(\phi(y),\tau(x))$ for $\phi,\tau$ involutions of $A$.  
Recall the equalizer of two maps $f,g:X\to X$ is $\mathsf{Eq}(f,g)=\set{x\mid f(x)=g(x)}$.

\begin{proposition}
Let $\Lambda=A\oplus A$ and $\sigma\colon\Lambda\to\Lambda$ 	be of the form $\sigma(x,y)=(\phi(y),\psi(x))$ for $\phi,\psi$ involutions of $A$.  
Then $\U(J;\Lambda,\sigma)\cong \mathsf{Eq}(\Phi,\Psi)\cap\GL(n,A)$ for $\Phi,\Psi$ the extensions of $\phi,\psi$ to $\M(n,\A)$ respectively.
\end{proposition}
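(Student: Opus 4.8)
The plan is to generalize the argument of Proposition \ref{prop:Unitary_Geo_Coordinate_Swap}, where the swap map case $\sigma(x,y)=(y,x)$ was handled. First I would unpack the $\dagger$ operation induced by this more general $\sigma$. Writing an element of $\M(n;\Lambda)$ as $Xe_1+Ye_2$ for $X,Y\in\M(n;A)$, the involution $\sigma(x,y)=(\phi(y),\psi(x))$ extends component-wise to $\M(n;\Lambda)$ and then $(Xe_1+Ye_2)^\dagger=\sigma(Xe_1+Ye_2)^T$. I expect this to come out as $\Phi(Y)^Te_1+\Psi(X)^Te_2$, where $\Phi,\Psi$ denote the component-wise extensions of $\phi,\psi$ to $\M(n;A)$. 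The key point is that $\sigma$ sends the $e_1$-component to the $e_2$-component and vice versa (just as in the swap case), but now twisted by $\phi$ and $\psi$ respectively.

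Next I would determine $\Herm(n;\Lambda,\sigma)$ and pick a nondegenerate $\sigma$-Hermitian $J=J_1e_1+J_2e_2$. The Hermitian condition $J^\dagger=J$ should force $\Phi(J_2)^T=J_1$ and $\Psi(J_1)^T=J_2$, so $J$ is determined by a single invertible matrix together with a compatibility relation. Then I would write out the defining equation $(Xe_1+Ye_2)^\dagger J (Xe_1+Ye_2)=J$ component-wise. Because $\sigma$ interchanges the two idempotent factors, the two resulting equations should couple $X$ and $Y$, letting me solve for $Y$ in terms of $X$ exactly as in the swap-map proof: $Y$ is expressed as some conjugate of a $\phi$/$\psi$-twist of $X$. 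Substituting this back into the remaining equation is where the equalizer condition should emerge — the leftover constraint will say that two natural $A$-matrices built from $X$ via $\Phi$ and $\Psi$ must agree, i.e. $X\in\mathsf{Eq}(\Phi,\Psi)$, with invertibility coming from the determinant argument as before.

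The main obstacle I anticipate is bookkeeping the interaction of the twists $\phi,\psi$ with the matrix transpose and the fixed form $J$ so that the final constraint genuinely reduces to $\mathsf{Eq}(\Phi,\Psi)$ rather than to some $J$-conjugated variant of it. In the pure swap case ($\phi=\psi=\id$) the coupled equations were redundant and collapsed cleanly; here I must check that the two equations are consistent and that, after eliminating $Y$, the residual condition on $X$ is precisely $\Phi(X)=\Psi(X)$ intersected with $\GL(n;A)$. I would verify this by carefully tracking how $J$ can be normalized or absorbed (invoking Lemma \ref{prop:Unitary_Basepoint_Change} to reduce to a convenient $J$ if the constraint appears $J$-dependent), and confirming that the assignment $X\mapsto Xe_1+Ye_2$ is a group homomorphism and a bijection onto $\U(J;\Lambda,\sigma)$. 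Once the elimination is shown to land exactly in the equalizer, the isomorphism $\U(J;\Lambda,\sigma)\cong \mathsf{Eq}(\Phi,\Psi)\cap\GL(n;A)$ follows, and the swap-map proposition is recovered as the special case $\phi=\psi=\id$, where $\mathsf{Eq}(\mathrm{id},\mathrm{id})=\M(n;A)$.
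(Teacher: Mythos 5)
Your plan is correct and is essentially the paper's own proof: the same idempotent decomposition $Xe_1+Ye_2$, the same computation $(Xe_1+Ye_2)^\dagger=\Phi(Y)^Te_1+\Psi(X)^Te_2$ forcing $\Herm(n;\Lambda,\sigma)=\{(\Phi(J)^T,J)\mid J\in\mathsf{Eq}(\Phi,\Psi)\}$, and the same elimination of $Y$ from the two component equations followed by the converse construction $X\mapsto (X,J^{-1}\Phi(X)^{-T}J)$. The obstacle you flag dissolves exactly as you hoped: solving each equation for $Y$ gives $J^{-1}\Psi(X)^{-T}J=Y=J^{-1}\Phi(X)^{-T}J$, so the conjugations by $J$ cancel and the residual condition is precisely $\Phi(X)=\Psi(X)$, with no normalization of $J$ (and no appeal to the basepoint-change lemma) needed.
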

\begin{proof}
	Proceeding similarly to above, note that $(J_1,J_2)\in\Herm(n,\Lambda,\sigma)$ if $(J_1,J_2)^\dagger=(\phi(J_2)^T,\psi(J_1)^T)=(J_1,J_2)$, so $\phi(J_2)^T=J_1$, $\psi(J_1)^T=J_2$.  
	Applying $\psi$ to the second equation gives $\psi^2(J_1)^T=J_1^T=\psi(J_2)$ and comparing with the transpose of the first gives $J_1^T=\phi(J_2)=\psi(J_2)$ thus $J_2\in\mathsf{Eq}(\phi,\psi)$ and $\Herm(n;\Lambda,\sigma)=\set{(\phi(J)^T,J)\mid J\in\mathsf{Eq}(\phi,\psi)}$.
	
Fix a nondegenerate $\mathsf{J}=(\phi(J)^T,J)\in\Herm(n,\Lambda,\sigma)$ and let $(X,Y)\in\U(\mathsf{J};\Lambda)$.  
Then $(X,Y)^\dagger (\phi(J)^T,J)(X,Y)=(\phi(J)^T,J)$ which expands component-wise to the two equations
$\Phi(Y)^T\Phi(J)^TX=\Phi(J)^T$ and  $\Psi(X)^TJY=J$.
Taking the determinant of both equations and using that $J$ is nondegenerate gives that $X$ and $Y$ are invertible, playing around with the equations gives two ways to solve for $Y$,  $J\inv \Psi(X)^{-T}J=Y=J\inv\Phi(X)^{-T}J$.  
Thus $\Psi(X)=\Phi(X)$ so $X\in\mathsf{Eq}(\Phi,\Psi)$.

In fact, given any $X\in\GL(n,A)\cap\mathsf{Eq}(\Phi,\Psi)$ the matrix $(X,J\inv \Phi(X)^{-T}J)$ is an element of $\U(\mathsf{J},\Lambda)$ as is easily checked, so the map $f\colon \GL(n;A)\cap\mathsf{Eq}(\Phi,\Psi)\to \U(\mathsf{J};\Lambda,\sigma)$ is a bijection.  
That $f$ is a group homomorphism follows immediately from writing down $f(X)f(Y)$ and $f(XY)$.

\end{proof}

\noindent
There's a potentially useful perspective to take on this result.  The collection $\mathsf{Eq}(\phi,\psi)$ is a subalgebra of $A$ on which $\phi=\psi$ restricts to an involution.  
We can think of both $\phi$ and $\psi$ as extensions of this involution to $A$.
 In this light, $\mathsf{Eq}(\Phi,\Psi)=\M(n,\mathsf{Eq}(\phi,\psi))$ and $\mathsf{Eq}(\Phi,\Psi)\cap\GL(n;A)=\GL(n,\mathsf{Eq}(\phi,\psi))$. 
  Thus we may more succinctly write the result above as 
 $$\U(J;\Lambda,\sigma)=\GL(n;\mathsf{Eq}(\phi,\psi))$$

\subsection{Specific Examples}

We briefly mention some elementary examples that have shown up throughout this dissertation (or will show up in the following chapter!).
When $A=\R$ we recover the usual geometries $\RP^n$ and the pseudo-Riemannian geometries $X(p,q)$ associated to the orthogonal groups $\O(p,q;\R)$ of Chapter \ref{chp:Orthogonal_Groups}.
When $A=\C$, we recover complex projective geometry $\CP^n$, the geometry of the complex orthogonal group $\O(n;\C)$ (remember, all orthogonal groups are conjuate over $\C$) and the complex unitary geometries of $\U(p,q;\C)$, including complex hyperbolic space.

When $A=\R\oplus\R$, Proposition \ref{prop:Proj_Geos_Over_Reducible_Algs} implies that the associated projective geometries $(\R\oplus\R)\mathsf{P}^n\cong \RP^n\times\RP^n$ are products of real projective space with itself.
Likewise, Proposition \ref{prop:Unitary_Geo_Decompsable} to analyze the orthogonal groups, and associated orthogonal geometries over $\R\oplus\R$: they similarly turn out to be products $\O(p,q;\R\oplus\R)\cong\O(p,q;\R)\times\O(p,q;\R)$.
The unitary geometries over $\R\oplus\R$ with respect to the coordinate swap map are all isomorphic to point-hyperplane projective space, as first noticed in Chapter \ref{chap:HC_and_HRR}.

As a non-commutative example, we quickly mention the quaternions: as a divsion ring there are no surprises in defining quaternionic projective geometries, and identically to $\C$ all quaternionic orthogonal groups are conjugate.  
The generalized unitary groups over the quaternions with respect to quaternionic conjugation are the compact symplectic groups, and in particular $\U(n,1;\H)$ is the automorphisms of quaternionic hyperbolic space.

%NEWCHAPTER:APPLICATIONS
\chapter{Applications}
\label{chp:Applications}

In this chapter we give some basic applications of the theory of families of geometries, producing many new examples of geometric transitions.
In particular, we focus on generalizations of the transition from $\Hyp_\C^n$ to $\Hyp_{\R\oplus\R}^n$, showing that any given family of algebras produces corresponding families of projective, unitary and orthogonal geometries.
We then turn briefly to another application, and study transitions that occur from a group action on a space, when we may interpret the collection of orbits as a smoothly transitioning family of spaces.
This will, among other things, provide a means of transitioning between Hyperbolic and de Sitter geometry, which does not arise within an ambient projective geometry.

\section{Families of Real Algebras}
\label{sec:Families_Real_Alg}

Recall that a family of algebras may be thought of as a vector bundle together with a map $\mu:\fam{A}\timesd\fam{A}\to\fam{A}$ restricting slicewise to the multiplication of an algebra structure on $\fam{A}_\delta$.

\begin{proposition}
The units $\fam{A}^\times\to\Delta$ of a family of algebras form a family.
\label{prop:Fam_Units_Open}
\end{proposition}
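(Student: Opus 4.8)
The statement asserts that if $\fam{A}\to\Delta$ is a family of algebras, then the subset $\fam{A}^\times\subset\fam{A}$ of invertible elements (those $a\in\fam{A}_\delta$ which are units in the algebra $\fam{A}_\delta$) forms a family over $\Delta$. The plan is to verify the two defining properties of a family in the sense of Definition \ref{def:Smooth_Families}: that $\fam{A}^\times$ is a smooth manifold (equivalently, an open submanifold of the total space $\fam{A}$), and that the restricted projection $\pi|_{\fam{A}^\times}\colon\fam{A}^\times\to\Delta$ remains a submersion. Since the single-algebra analog is already recorded in Section \ref{sec:Real_Algs} (the units of a finite dimensional algebra are open, and hence a Lie group), the essential content here is to upgrade the openness fiberwise to openness in the total space, uniformly over the base.

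First I would show $\fam{A}^\times$ is open in $\fam{A}$. The natural approach is to exhibit the complement, the generalized zero divisors $\fam{A}_Z=\bigcup_\delta (\fam{A}_\delta)_Z$, as a closed subset of $\fam{A}$. For each $\delta$, an element $a\in\fam{A}_\delta$ is a zero divisor precisely when left multiplication $\mu_\delta(a,\cdot)\colon\fam{A}_\delta\to\fam{A}_\delta$ fails to be invertible, i.e. when the determinant of this linear endomorphism vanishes. The family multiplication $\mu\colon\fam{A}\timesd\fam{A}\to\fam{A}$ is smooth and restricts fiberwise to the algebra multiplication, and by Proposition \ref{prop:VS_Trivial} the underlying vector bundle $\fam{A}\to\Delta$ is locally trivial; working in a local trivialization $\fam{A}|_U\cong\R^N\times U$, left multiplication defines a smooth map $a\mapsto L_a\in\End(\R^N)$, and composing with $\det$ yields a smooth function $\Phi\colon\fam{A}|_U\to\R$ whose zero set is exactly $\fam{A}_Z\cap\fam{A}|_U$. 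Thus $\fam{A}_Z$ is the zero set of a (locally defined) continuous function, hence closed, and $\fam{A}^\times$ is open. Being an open subset of the smooth manifold $\fam{A}$, it is a smooth manifold of the same dimension.

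Next I would verify the submersion condition. Since $\fam{A}^\times$ is an \emph{open} submanifold of $\fam{A}$, for any $p\in\fam{A}^\times$ the tangent space $T_p\fam{A}^\times$ equals $T_p\fam{A}$, so $d(\pi|_{\fam{A}^\times})_p = d\pi_p$ is surjective because $\pi\colon\fam{A}\to\Delta$ was already a submersion (a family). Equivalently, in the local-section formulation that applies to weak families in any topological category: given $p\in\fam{A}^\times_\delta$, choose a local section $\sigma\colon U\to\fam{A}$ of $\pi$ through $p$; since $\fam{A}^\times$ is open and $\sigma$ is continuous with $\sigma(\delta)=p\in\fam{A}^\times$, after shrinking $U$ we have $\sigma(U)\subset\fam{A}^\times$, so $\sigma$ restricts to a local section of $\pi|_{\fam{A}^\times}$ through $p$. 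Hence $\pi|_{\fam{A}^\times}$ admits local sections through every point and is a family.

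\textbf{Main obstacle.} The only nontrivial point is establishing openness uniformly over $\Delta$, and here the subtlety is that invertibility in $\fam{A}_\delta$ must be characterized by a condition that varies smoothly in both $a$ and $\delta$. The clean way to do this is via the determinant of the left-regular representation $L_a$ in a local trivialization of the vector bundle, which is where Proposition \ref{prop:VS_Trivial} (local triviality of families of vector spaces) is indispensable; without it one cannot coherently speak of $\det L_a$ as a smooth function on the total space. Once openness is secured, the submersion property is essentially free, since passing to an open subset of the total space never destroys the existence of local sections. I expect the whole argument to be short, with the determinant-of-left-multiplication construction being the one genuinely load-bearing idea.
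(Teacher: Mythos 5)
Your proof is correct, but its load-bearing step is genuinely different from the paper's. The paper also reduces to showing the zero-divisor locus $\fam{A}_Z$ is closed, but proves this by a sequential compactness argument rather than a determinant: given zero divisors $z_i\to z$, it uses the local triviality of Proposition \ref{prop:VS_Trivial} to choose witnesses $w_i$ with $z_iw_i=0$, normalized (via the real-scaling invariance of the witness condition) to lie in the image of the unit sphere over a compact trivializing neighborhood; a convergent subsequence $w_i\to w\neq 0$ then gives $zw=0$ by continuity of $\mu$, so $z\in\fam{A}_Z$. This is the single-algebra lemma of Section \ref{sec:Real_Algs} run uniformly in the base. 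You instead characterize membership in $\fam{A}^\times$ by nonvanishing of $\det L_a$ for the left-regular representation in a local trivialization, so that $\fam{A}_Z$ is locally the zero set of one smooth function, polynomial in the fiber coordinates. Both arguments use finite-dimensionality (yours through the determinant, the paper's through compactness of the sphere) and both lean on Proposition \ref{prop:VS_Trivial} in the same way. Your route buys a bit more structure: it exhibits $\fam{A}^\times$ as the complement of a locally cut-out real-algebraic subvariety and avoids choosing witnesses and extracting subsequences; the paper's route uses less machinery and is the direct family-parameterized form of its earlier fiberwise lemma, and since $\det\circ L$ is still continuous for merely topological multiplication, neither approach is strictly more general than the other. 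Your closing observation, that the submersion condition is then free because open subsets of families are families (shrink a local section into the open set), is exactly the remark the paper records after defining subfamilies, and which it invokes implicitly when this proposition is applied to $\fam{GL}(n;\fam{A})$.
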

\begin{proof}
We will show $\fam{A}^\times\subset\fam{A}$ is open, which if $\fam{A}_Z$ is the collection of zero divisors of $\fam{A}$, is equivalent to showing $\fam{A}_Z$ is closed.
Let $\set{z_i}$ be a sequence of zero divisors in $\fam{A}_Z$ converging to $z\in\fam{A}$.  Write $\pi(z)=\delta$, and $\pi(z_i)=\delta_i$ for convenience. Forgetting the multiplicative structure $\fam{A}\to\Delta$ is a family of real vector spaces, and so by \cref{prop:VS_Trivial} we may choose a compact trivializing neighborhood $\delta\in U$ and $h\colon U\times\R^n\to\fam{A}|_U$ a trivialization.  The set $\fam{A}_Z$ is invariant under real scaling, so we may choose a $w_i\in h(\S^{n-1}\times\set{\delta_i})$ for each $z_i$ such that $z_iw_i=0$.  Thus $\set{w_i}\subset h(\S^{n-1}\times U)$ is a subset of a compact space, subconverging $w_i\to w$.  As $w_iz_i=0$ for all $i$, $zw=0$ by continuity of multiplication so $z$ is a zero divisor.
\end{proof}

\noindent 
An involution is a map of families $\fam{A}\stackrel{\sigma}{\to}\fam{A}$ squaring to the identity and restricting slicewise to an algebra involution.  
On each algebra $\fam{A}_\delta$, the restricted involution $\sigma_\delta$ gives a direct sum decomposition $\fam{A}_\delta=\mathsf{Fix}(\sigma_\delta)\oplus\mathsf{Neg}(\sigma_\delta)$.  
The maps $\Phi_\pm\colon \alpha\mapsto \alpha\pm\sigma(\alpha)$ are the projections onto the factors of this direct sum decomposition.  

\begin{proposition}
Let $\fam{A}\to\Delta$ be a family of algebras with involution $\fam{A}\stackrel{\sigma}{\to}\fam{A}$.  
Then the collections $\fam{Fix}(\sigma)=\set{\alpha\in\fam{A}\mid \alpha=\sigma(\alpha)}$ and $\fam{Neg}(\sigma)=\set{\alpha\in\fam{A}\mid \sigma(\alpha)=-\alpha}$ are subfamilies of $\fam{A}\to\Delta$.
\label{prop:Fix_Family}
\end{proposition}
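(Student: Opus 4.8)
The plan is to exhibit each of $\fam{Fix}(\sigma)$ and $\fam{Neg}(\sigma)$ as the fixed-point/negative-point locus of the involution $\sigma$ and argue that the relevant projection $\fam{A}\to\Delta$ restricts to a submersion on each. The cleanest route is to realize these collections as images of the smooth idempotent maps $\Phi_\pm\colon\fam{A}\to\fam{A}$ given by $\Phi_\pm(\alpha)=\tfrac{1}{2}(\alpha\pm\sigma(\alpha))$, which are morphisms in $\cat{Fam}_\Delta$ (they are built from the vector bundle operations and $\sigma$, hence smooth and fiber-preserving). Since $\sigma_\delta^2=\id$, on each fiber $\Phi_+$ is the linear projection onto $\mathsf{Fix}(\sigma_\delta)$ along $\mathsf{Neg}(\sigma_\delta)$, and $\Phi_-$ the complementary projection, so $\fam{Fix}(\sigma)=\mathsf{im}(\Phi_+)=\ker(\Phi_-)$ and $\fam{Neg}(\sigma)=\mathsf{im}(\Phi_-)=\ker(\Phi_+)$.

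First I would observe that $\fam{Fix}(\sigma)$ and $\fam{Neg}(\sigma)$ are closed subsets of $\fam{A}$, being preimages of the closed zero section under the continuous maps $\Phi_-$ and $\Phi_+$ respectively. Next, because $\fam{A}\to\Delta$ is a family of vector spaces (a vector bundle, by \cref{prop:VS_Trivial}), I would work in a local trivialization $\fam{A}|_U\cong U\times\R^N$ over a neighborhood $U$ of any chosen $\delta$. The key point is that the involution $\sigma$ varies smoothly, so in such a trivialization $\sigma$ is given by a smooth family of linear involutions $\sigma_t\in\GL(\R^N)$ with $\sigma_t^2=I$. The fixed and negative eigenspaces of an involution are complementary, and the rank of $\Phi_{+,t}=\tfrac12(I+\sigma_t)$ is constant in $t$: an involution has eigenvalues $\pm1$ only, and since $\sigma_t$ depends continuously on $t$ while its eigenvalues are confined to the discrete set $\{\pm1\}$, the dimension $\dim\mathsf{Fix}(\sigma_t)=\tfrac12(N+\tr\sigma_t)$ is a continuous integer-valued function, hence locally constant. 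Thus over a connected $U$ the subbundle $\fam{Fix}(\sigma)|_U\subset\fam{A}|_U$ has constant fiber dimension.

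With constant rank established, I would conclude that $\fam{Fix}(\sigma)$ is a smooth subbundle of $\fam{A}\to\Delta$, i.e.\ a family of vector spaces over $\Delta$, and in particular the restriction of the bundle projection $\pi\colon\fam{A}\to\Delta$ to $\fam{Fix}(\sigma)$ remains a submersion: local sections of $\fam{Fix}(\sigma)\to\Delta$ can be produced by pushing forward local sections of $\fam{A}\to\Delta$ through the projection $\Phi_+$, exactly as in the argument of \cref{prop:VS_Trivial}. Hence $\fam{Fix}(\sigma)\to\Delta$ is a subfamily in the sense of \cref{sec:Fam_Spaces}; the identical argument with $\Phi_-$ in place of $\Phi_+$ handles $\fam{Neg}(\sigma)$.

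\textbf{Main obstacle.} The one subtle point, and the step I expect to require the most care, is the constancy of the rank of $\Phi_{\pm,t}$ across the base. Everything else is a routine application of the trivialization and submersion machinery already set up in the chapter. The rank-constancy is what rules out the pathology where $\mathsf{Fix}(\sigma_\delta)$ could jump dimension (as genuinely happens for stabilizer families in the non-fiberwise-transitive setting of \cref{ex:RP_dS_LightCone}); here it cannot, precisely because $\sigma$ is an \emph{involution}, forcing the spectrum into $\{\pm1\}$ and making $\dim\mathsf{Fix}$ a locally constant integer. I would present this eigenvalue argument explicitly, since it is exactly the feature that makes $\fam{Fix}(\sigma)$ and $\fam{Neg}(\sigma)$ well-behaved families rather than merely closed subsets.
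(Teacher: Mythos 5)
Your proof is correct, and it takes a genuinely different route from the paper's. The paper also starts from the maps $\Phi_\pm$, but runs entirely through the chapter's categorical machinery: it observes that $\fam{Fix}(\sigma)=\Phi_-\inv\set{\fam{0}(\Delta)}$ is the preimage of the zero section, notes that $\Phi_-$ restricts fiberwise to the linear projection $\fam{A}_\delta\to\mathsf{Neg}(\sigma_\delta)$, invokes the fiberwise-submersion criterion (\cref{prop:Fiberwise_Family=Family}) to give $\fam{A}$ the structure of a family over $\fam{Neg}(\sigma)$, and then realizes $\fam{Fix}(\sigma)$ as the pullback of this family along the zero section via \cref{obs:Pullback_Eqns}. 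You instead trivialize locally (via \cref{prop:VS_Trivial}) and argue by hand that $\fam{Fix}(\sigma)$ is a smooth subbundle, producing local sections of the restricted projection by pushing sections of $\fam{A}\to\Delta$ through the idempotent $\Phi_+$. What your route buys is precisely the point you flag as the main obstacle: the local constancy of $\dim\mathsf{Fix}(\sigma_\delta)=\tfrac12\left(N+\tr\sigma_\delta\right)$, forced by the spectrum of an involution lying in $\set{\pm 1}$. The paper leaves this tacit — indeed, to apply \cref{prop:Fiberwise_Family=Family} with codomain $\fam{Neg}(\sigma)$ one already needs $\fam{Neg}(\sigma)$ to carry the structure of a smooth family, which is exactly what your rank-constancy observation supplies — so your more elementary argument both avoids the pullback formalism and makes explicit a step the paper's version implicitly relies on. The paper's approach, in exchange, is coordinate-free and exercises the pullback-along-a-section technique that recurs throughout the chapter (e.g.\ in the unitary-family theorems), whereas yours is self-contained linear algebra over a trivializing chart; both hinge on the same projections $\Phi_\pm$, and your factor of $\tfrac12$ versus the paper's $\alpha\pm\sigma(\alpha)$ is immaterial since the kernels and images agree.
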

\begin{proof}
We detail the argument for $\fam{Fix}(\sigma)$, the remaining case is argued analogously.
  We define $\Phi_-(\alpha)=\alpha-\sigma(\alpha)$ on and note that $\fam{Fix}(\sigma)=\Phi_-\inv\set{\fam{0}(\Delta)}$ is the preimage of the zero section.  
  Restricted to any fiber, $\Phi_-$ is the projection $A\to\mathsf{Neg}(\sigma)$ described previously.  
  Thus when $\fam{A}\to\Delta$ is a smooth family of algebras, the restriction of $\Phi_-$ to each fiber is a smooth submersion.  
  Applying \cref{prop:Fiberwise_Family=Family}, if a smooth map of families is a submersion fiber-wise, it is itself a submersion, and thus gives $\fam{A}$ the structure of a smooth family over $\fam{Neg}(\sigma)$.  
We may then apply \cref{obs:Pullback_Eqns} to pull this family back along the zero section $\fam{0}\colon\Delta\to\fam{Neg}(\sigma)\subset\fam{A}$ to get a family $\fam{0}^\star\fam{A}\to \Delta$.
The elements of $\fam{0}^\star\fam{A}$ satisfy $\Phi_-(\alpha)=\fam{0}_{\pi(\alpha)}$ or $\alpha-\sigma(\alpha)=0$.  Thus $\fam{0}^\star\fam{A}=\fam{Fix}(\sigma)$.
\end{proof}

\noindent
A family $\fam{A}\to\Delta$ gives rise to a family of matrix algebras $\fam{M}(n,\fam{A})\to\Delta$, constructed on the underlying space $\fam{A}^{n^2}\to\Delta$ by imposing matrix multiplication.  
An involution $\sigma$ on $\fam{A}$ can be promoted to an involution $\dagger\colon\fam{M}(n;\fam{A})\to\fam{M}(n;\fam{A})$ given by $X^\dagger=\sigma(X)^T$.  
Applying \cref{prop:Fix_Family} to $\dagger$ gives the families $\fam{Fix}(\dagger)=\fam{Herm}(n;\fam{A},\sigma)$ and $\fam{Neg}(\dagger)=\fam{SkHerm}(n;A,\sigma)$ of Hermitian and skew-Hermitian matrices, respectively. 
The usual formula for the determinant provides a $\cat{C}$-map of families $\fam{det}\colon\fam{M}(n,\fam{A})\to\fam{A}$.

Two families which we use to illustrate the theory are as follows.

\begin{definition}
The family $\Lambda_\R$ of 2-dimensional algebras over $\R$ from Chapter \ref{chp:HC_To_HRR_Transition}, $\Lambda_\delta=\R[\lambda]/(\lambda^2=\delta)$ transitioning from $\C$ when $\delta<0$ to $\R\oplus\R$ when $\delta>0$.
\end{definition}

\begin{definition}
A \emph{quaternion algebra} over $\R$ is a four dimensional noncommutative real algebra defined by two real parameters $a,b\in\R$.
The multiplication on $\R^4=\R\{1,i,j,k\}$ is defined so that $i^2=a$ and $j^2=b$ together with $ij=-ji=k$.
When $a=b=-1$ this recovers the usual quaternions.
\end{definition}

\begin{definition}
The family $\fam{H}\to\R^2$ of quaternion algebras has total space $\fam{H}=\R\{1,i,j,k\}\times\R^2$ and multiplication on each $\fam{H}_(a,b)=\R\{1,i,j,k\}$ is defined such that $i^2=a$ and $j^2=b$.
This is a continuous family of algebras transitioning from the usual quaternions when $a,b<0$ to the algebra of $2\times 2$ matrices when either $a$ or $b$ is $>0$.
\end{definition}

\begin{comment}
\begin{figure}
\centering\includegraphics[width=0.5\textwidth]{img}
\caption{Parameter space of $2D$ and quaternion algebras over $\R$.}	
\end{figure}
\end{comment}

\section{Families of Projective Geometries}
\label{sec:Families_Proj_Geo}

Given a smooth family of algebras, constructing a smooth family of geometries it amounts to showing that the given automorphism and stabilizer groups vary smoothly along with the algebra.

\begin{proposition}
Let $\fam{A}\to\Delta$ be a smooth family of algebras.  Then $\fam{GL}(n,\fam{A})\to\Delta$ is a family of Lie groups.	
\end{proposition}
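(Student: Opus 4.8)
The plan is to show that $\fam{GL}(n,\fam{A})$ is an open subfamily of the family of matrix algebras $\fam{M}(n,\fam{A})\to\Delta$, and that it carries the structure of a family of groups in the sense of Definition \ref{def:Ring_Object}. First I would recall that $\fam{M}(n,\fam{A})\to\Delta$ is a smooth family of algebras, built on the vector bundle $\fam{A}^{n^2}\to\Delta$ by imposing matrix multiplication, with a $\cat{C}$-map of families $\fam{det}\colon\fam{M}(n,\fam{A})\to\fam{A}$ given by the usual determinant formula (this is smooth, being polynomial in the entries and the fiberwise multiplication $\mu$). By Cramer's rule, a matrix $X\in\fam{M}(n,\fam{A}_\delta)$ is invertible if and only if $\det(X)\in\fam{A}_\delta^\times$, so $\fam{GL}(n,\fam{A})=\fam{det}^{-1}(\fam{A}^\times)$.

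Next I would establish that $\fam{GL}(n,\fam{A})$ is an open subset of $\fam{M}(n,\fam{A})$ on which the projection remains a submersion, hence a family of spaces. By Proposition \ref{prop:Fam_Units_Open} the units $\fam{A}^\times\subset\fam{A}$ form an open subfamily; since $\fam{det}$ is continuous, $\fam{GL}(n,\fam{A})=\fam{det}^{-1}(\fam{A}^\times)$ is open in the total space $\fam{M}(n,\fam{A})$. An open subset of a family inherits the family structure, because the restriction of a submersion to an open set still admits local sections through each point. Thus $\fam{GL}(n,\fam{A})\to\Delta$ is a smooth family of spaces whose fibers are exactly the Lie groups $\GL(n,\fam{A}_\delta)$ described in Section \ref{sec:Real_Algs}.

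Finally I would verify the group-object axioms in $\cat{Fam}_\Delta$. The multiplication map $\mu\colon\fam{M}(n,\fam{A})\timesd\fam{M}(n,\fam{A})\to\fam{M}(n,\fam{A})$ restricts to a morphism $\fam{GL}(n,\fam{A})\timesd\fam{GL}(n,\fam{A})\to\fam{GL}(n,\fam{A})$ (the product of invertibles is invertible, as $\det$ is multiplicative on each commutative fiber), and it is smooth as a restriction of the smooth multiplication on the matrix algebra family. The identity section $\fam{e}\colon\Delta\to\fam{GL}(n,\fam{A})$ is $\delta\mapsto(I_n,\delta)$, which is the identity section of the ring family $\fam{M}(n,\fam{A})$ and lands in $\fam{GL}(n,\fam{A})$ since $\det(I_n)=\fam{1}(\delta)\in\fam{A}_\delta^\times$. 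For inversion, the matrix of cofactors gives $X^{-1}=\det(X)^{-1}\,\mathrm{adj}(X)$; the adjugate is polynomial in the entries, hence a $\cat{C}$-morphism, and on $\fam{GL}(n,\fam{A})$ the scalar $\det(X)^{-1}$ is well-defined and smooth because inversion in $\fam{A}$ is a $\cat{C}$-morphism on the units (the units being an open subfamily by Proposition \ref{prop:Fam_Units_Open}). Each of these maps restricts fiberwise to the corresponding operation making $\GL(n,\fam{A}_\delta)$ a group, so $\fam{GL}(n,\fam{A})\to\Delta$ is a group object in $\cat{Fam}_\Delta$, i.e.\ a family of Lie groups.

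The main obstacle I anticipate is the smoothness of inversion: one must confirm that fiberwise inversion in the algebra $\fam{A}_\delta$ glues to a globally smooth operation on the total space $\fam{A}^\times$, and hence that the cofactor formula yields a smooth inverse on $\fam{GL}(n,\fam{A})$. This reduces to the statement that inversion on $\fam{A}^\times$ is a $\cat{C}$-morphism of families, which follows because on the open subfamily $\fam{A}^\times$ the multiplication $\mu$ is a smooth submersion in each variable (left multiplication by a unit is a linear isomorphism of the fiber), so the implicit function theorem applied fiberwise, together with the local-section characterization of submersions, gives a smooth family-theoretic inverse. Once this is in hand, everything else is a routine translation of the classical construction of $\GL(n,A)$ into the language of $\cat{Fam}_\Delta$.
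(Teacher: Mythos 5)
Your argument is essentially sound, but the route you chose through the determinant quietly imports a hypothesis the statement does not make: commutativity of the fibers. In the paper's own setup the determinant map $\fam{det}\colon\fam{M}(n,\fam{A})\to\fam{A}$ and Cramer's rule are only available for \emph{commutative} algebras, so your identification $\fam{GL}(n,\fam{A})=\fam{det}^{-1}(\fam{A}^\times)$ and your inversion formula $X^{-1}=\det(X)^{-1}\,\mathrm{adj}(X)$ both fail for noncommutative fibers. This matters here, because the proposition is stated for arbitrary smooth families of algebras and is later applied to the noncommutative quaternion family $\fam{H}\to\R^2$ (to get the transition of quaternionic projective space). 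The paper's proof sidesteps the determinant entirely: since $\fam{M}(n,\fam{A})\to\Delta$ is itself a smooth family of algebras, one simply observes $\fam{GL}(n;\fam{A})=\fam{M}(n,\fam{A})^\times$ and applies Proposition \ref{prop:Fam_Units_Open} \emph{directly to the matrix algebra family}, so the units form an open subset and the restricted projection is still a submersion. That is the whole argument, and it is insensitive to commutativity.

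The repair for your version is exactly this one-line substitution, and your closing paragraph already contains the right tool for the remaining issue: smoothness of inversion should be argued via the implicit function theorem (left multiplication by a unit is a linear isomorphism of the fiber, so $\mu(x,y)=\fam{1}$ defines $y$ smoothly in $x$), not via the adjugate. With those two changes your proof goes through in full generality. One further remark: your explicit verification of the group-object axioms (smooth restricted multiplication, identity section $\delta\mapsto(I_n,\delta)$, smooth inversion) is more thorough than the paper, which leaves these as immediate restrictions of the smooth operations on $\fam{M}(n,\fam{A})$; that extra care is welcome, but reserve the determinant-based machinery (the submersion $\fam{det}\colon\fam{GL}(n,\fam{A})\to\fam{A}^\times$ and the subfamily $\fam{SL}(n;\fam{A})$) for the commutative setting where the paper actually states it.
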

\begin{proof}
The general linear family is the units of the matrix algebra $\fam{GL}(n;\fam{A})=\fam{M}(n,\fam{A})^\star$ and so is an open subset by \cref{prop:Fam_Units_Open}. 
Thus the restricted projection map gives $\fam{GL}(n;\fam{A})$ the structure of a smooth family.
\end{proof}

\begin{proposition}
\label{prop:det_GLN_family}
Let $\fam{A}\to\Delta$ be a smooth family of commutative algebras and $\fam{det}\colon\fam{M}(n,\fam{A})\to\Delta$ the determinant map.  Then $\fam{GL}(n,\fam{A})\stackrel{\fam{det}}{\to}\fam{A}^\times$ is a family.
\end{proposition}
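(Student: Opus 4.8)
The plan is to show that $\fam{det}\colon\fam{GL}(n;\fam{A})\to\fam{A}^\times$ is a family in the sense of Definition \ref{def:Smooth_Families}; that is, a smooth map admitting local sections. The strategy directly parallels the proof of Proposition \ref{prop:det_is_submersion} over a single algebra, promoted to the family setting using the fiberwise criterion of \cref{prop:Fiberwise_Family=Family}. First I would observe that both $\fam{GL}(n;\fam{A})$ and $\fam{A}^\times$ are families over $\Delta$: the former by the preceding proposition and the latter by \cref{prop:Fam_Units_Open}. The determinant provides a $\cat{C}$-map of families $\fam{det}\colon\fam{M}(n;\fam{A})\to\fam{A}$ (noted in Section \ref{sec:Families_Real_Alg}), which restricts to a fiber-preserving map $\fam{GL}(n;\fam{A})\to\fam{A}^\times$ since $\det$ carries units to units in each commutative $\fam{A}_\delta$. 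Because $\fam{det}$ is built from the addition and multiplication of the family, it is smooth, and it clearly commutes with the projections to $\Delta$, so it is a morphism in $\Fam_\Delta$.

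The main content is to verify that $\fam{det}$ gives $\fam{GL}(n;\fam{A})$ the structure of a family over $\fam{A}^\times$. By \cref{prop:Fiberwise_Family=Family}, it suffices to check that the restriction to each fiber, $\fam{det}_\delta\colon\GL(n;\fam{A}_\delta)\to\fam{A}_\delta^\times$, is a family, i.e.\ a submersion admitting local sections. But this is exactly the content of Proposition \ref{prop:det_is_submersion}: for a smooth commutative algebra $A$, the map $\det\colon\GL(n;A)\to A^\times$ is a submersion. I would reproduce the one-line computation there: given $B\in\GL(n;\fam{A}_\delta)$ and any target tangent vector $\alpha\in\fam{A}_\delta = T_{\det B}\fam{A}_\delta^\times$, the path $B_t=(I+tX_\alpha)B$ with $X_\alpha=\tfrac{\alpha}{n\det(B)}I$ satisfies $\tfrac{d}{dt}\big|_{t=0}\fam{det}(B_t)=\tr(X_\alpha)\det(B)=\alpha$, so the fiberwise differential is surjective. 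This establishes the fiberwise-submersion hypothesis of \cref{prop:Fiberwise_Family=Family}, from which the total map $\fam{det}\colon\fam{GL}(n;\fam{A})\to\fam{A}^\times$ is itself a submersion, hence a family.

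The step I expect to require the most care is the application of \cref{prop:Fiberwise_Family=Family} itself, since that proposition is stated for maps between two families over the same base $\Delta$, and here the correct reading is that $\fam{det}$ equips $\fam{GL}(n;\fam{A})$ with the structure of a family over the \emph{total space} $\fam{A}^\times$, which in turn is a family over $\Delta$. I would want to confirm that $\fam{det}$ is a morphism of families over $\Delta$ (that $\pi_{\fam{A}^\times}\circ\fam{det}=\pi_{\fam{GL}}$), which holds because $\det$ does not change the base coordinate, and then that the fiberwise maps $\fam{det}_\delta$ are precisely the single-algebra determinants $\det\colon\GL(n;\fam{A}_\delta)\to\fam{A}_\delta^\times$. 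Once these identifications are in place the tangent-space direct-sum argument of \cref{prop:Fiberwise_Family=Family} applies verbatim, decomposing $T_B\fam{GL}(n;\fam{A})$ into the image of a local section of $\pi_{\fam{GL}}$ and the fiber tangent space $T_B\GL(n;\fam{A}_\delta)$, and checking surjectivity of $d\fam{det}_B$ onto each summand of the corresponding decomposition of $T_{\det B}\fam{A}^\times$. This yields the claim that $\fam{GL}(n;\fam{A})\labelarrow{\fam{det}}\fam{A}^\times$ is a family.
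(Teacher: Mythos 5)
Your proof is correct and takes essentially the same route as the paper: both apply the fiberwise criterion of \cref{prop:Fiberwise_Family=Family} to reduce the claim to the single-algebra statement of \cref{prop:det_is_submersion} that $\det\colon\GL(n;A)\to A^\times$ is a submersion. Your extra care about how \cref{prop:Fiberwise_Family=Family} is being invoked---as equipping $\fam{GL}(n;\fam{A})$ with the structure of a family over the total space $\fam{A}^\times$---is exactly the intended reading, which the paper leaves implicit.
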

\begin{proof}
The determinant is a map of families $\fam{M}(n,\fam{A})\to\fam{A} $ over $\Delta$, so by \cref{prop:Fiberwise_Family=Family} it is a submersion if its restriction to the vertical slices are.  But this is the content of \cref{prop:det_is_submersion}, $\GL(n,A)\to A^\times$ is a submersion for any smooth algebra $A$.  
\end{proof}

\begin{corollary}
The groups $\fam{SL}(n;\fam{A})$ are a subfamily of $\fam{GL}(n;\fam{A})\to\Delta$ when $\fam{A}\to\Delta$ is commutative.
\end{corollary}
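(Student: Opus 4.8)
The plan is to exhibit $\fam{SL}(n;\fam{A})$ as a point-preimage family exactly as $\fam{U}(n,1;\Lambda_\R)$ and $\fam{SU}(n,1;\Lambda_\R)$ were handled in Chapter \ref{chp:HC_To_HRR_Transition}, but now working over an arbitrary smooth commutative family $\fam{A}\to\Delta$ rather than the specific $\Lambda_\R$. The key observation is that the determinant assembles into a morphism of families $\fam{det}\colon\fam{GL}(n;\fam{A})\to\fam{A}^\times$, and $\fam{SL}(n;\fam{A})$ is precisely the fiber of this morphism over the identity section. By Proposition \ref{prop:det_GLN_family} this morphism is a family (i.e. $\fam{det}$ is a submersion on $\fam{GL}(n;\fam{A})$), so the theory of pullbacks from Section \ref{sec:Pullback_Families} applies directly.

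First I would recall that $\fam{GL}(n;\fam{A})$ is a family of Lie groups over $\Delta$, and that the unit section $\fam{1}\colon\Delta\to\fam{A}^\times$ (the restriction of the algebra identity section $\fam{1}\colon\Delta\to\fam{A}$, which lands in $\fam{A}^\times$ since the identity is always a unit) is a smooth section of the projection $\fam{A}^\times\to\Delta$. Next I would invoke Observation \ref{obs:Pullback_Eqns}: since $\fam{det}\colon\fam{GL}(n;\fam{A})\to\fam{A}^\times$ gives $\fam{GL}(n;\fam{A})$ the structure of a family over $\fam{A}^\times$, the pullback $\fam{1}^\star\fam{GL}(n;\fam{A})$ of this family along the section $\fam{1}$ is naturally a restricted subfamily of $\fam{GL}(n;\fam{A})\to\Delta$. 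The total space of this pullback is exactly $\{X\in\fam{GL}(n;\fam{A})\mid \fam{det}(X)=\fam{1}(\pi(X))\}$, which fiberwise over $\delta$ is $\{X\in\GL(n;\fam{A}_\delta)\mid \det_\delta(X)=1\}=\SL(n;\fam{A}_\delta)$. Thus $\fam{SL}(n;\fam{A})=\bigcup_\delta \SL(n;\fam{A}_\delta)$ is realized as a subfamily.

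To complete the argument I would check the two remaining points. First, that $\fam{SL}(n;\fam{A})$ is genuinely a \emph{subfamily} and not merely an open subset: since it is the preimage $\fam{det}\inv(\fam{1}(\Delta))$ of a closed set under a continuous map, it is closed in $\fam{GL}(n;\fam{A})$, and the restricted projection admits local sections by Lemma \ref{lem:Pullbacks_Admit_Sections} applied to the pullback construction. Second, that each fiber is a group: the slicewise determinant $\det_\delta$ is a homomorphism $\GL(n;\fam{A}_\delta)\to\fam{A}_\delta^\times$, so its kernel $\SL(n;\fam{A}_\delta)$ is a subgroup, and the group operations on $\fam{SL}(n;\fam{A})$ are smooth as restrictions of those on $\fam{GL}(n;\fam{A})$. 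I do not anticipate a genuine obstacle here, since all the heavy lifting — both the submersivity of $\fam{det}$ and the existence of pullbacks along family morphisms — was already established; the only mild subtlety is ensuring commutativity of $\fam{A}$ is in force throughout (it is needed for $\fam{det}$ to be a well-defined morphism into $\fam{A}$ and for Proposition \ref{prop:det_is_submersion}), which is exactly the hypothesis stated in the corollary.
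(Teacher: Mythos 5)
Your argument is correct and follows the paper's own route exactly: both invoke the fact that $\fam{det}\colon\fam{GL}(n;\fam{A})\to\fam{A}^\times$ is a family (Proposition \ref{prop:det_GLN_family}) and realize $\fam{SL}(n;\fam{A})$ as the pullback $\fam{1}^\star\fam{GL}(n;\fam{A})$ along the identity section. Your additional checks of closedness and the fiberwise group structure are details the paper leaves implicit, but they are consistent with its framework (Observation \ref{obs:Pullback_Eqns} and Lemma \ref{lem:Pullbacks_Admit_Sections}) and do not constitute a different approach.
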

\begin{proof}
By the previous proposition, $\fam{GL}(n;\fam{A})\stackrel{\fam{det}}{\to}\fam{A}^\times$ is a family, and let $\fam{1}\colon\Delta\to\fam{A}$ be the identity section.  Then the pullback $\fam{1}^\star\fam{GL}(n;\fam{A})\to\Delta$ embeds as the subfamily $\fam{SL}(n;\fam{A})\to\Delta$.
\end{proof}

\noindent
Thus, it remains only to show that the stabilizer subgroups vary smoothly.

\begin{proposition}
	The stabilizer groups 
	$$\fam{St}(n+1;\fam{A})=\set{\pmat{X& \vec{0}\\\vec{v}&\alpha}\mid X\in\fam{GL}(n;\fam{A}),\vec{v}\in\fam{A}^n;\alpha\in\fam{A}^\times}$$
	are a subfamily of $\fam{GL}(n;\fam{A})$.
\end{proposition}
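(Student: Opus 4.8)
The statement asserts that the collection $\fam{St}(n+1;\fam{A})$ of block-lower-triangular matrices (with an invertible upper-left $n\times n$ block, a free middle row $\vec{v}\in\fam{A}^n$, and an invertible lower-right scalar $\alpha\in\fam{A}^\times$) forms a subfamily of $\fam{GL}(n+1;\fam{A})$. The guiding strategy is to realize $\fam{St}(n+1;\fam{A})$ as the solution set of an equation in $\Fam_\Delta$, exactly as was done for the classical case in Section \ref{sec:Proj_Geos}, and then apply the machinery of slicewise submersions (\cref{prop:Fiberwise_Family=Family} together with \cref{obs:Pullback_Eqns}). First I would fix the standard basepoint $p=(0,\ldots,0,1)\in\fam{A}^{n+1}$, which is a good point in every fiber $\fam{A}_\delta^{n+1}$ (its final coordinate is the unit $\fam{1}(\delta)$), so that $\fam{St}(n+1;\fam{A}_\delta)$ really is the $\GL(n+1;\fam{A}_\delta)$-stabilizer of $[p]$ in the sense used throughout the chapter.

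The key step is to exhibit a map of families whose point preimage is $\fam{St}(n+1;\fam{A})$. Following the single-algebra argument preceding \cref{prop:det_is_submersion}, I would consider the projection
$$\Pi\colon \fam{GL}(n+1;\fam{A})\to\fam{A}^n$$
sending a matrix $(X_{ij})$ to the first $n$ entries of its last column, namely $(X_{1,n+1},\ldots,X_{n,n+1})$. This is a morphism in $\Fam_\Delta$ (it is built entirely from coordinate projections and so commutes with the base projections), and $\fam{St}(n+1;\fam{A})=\Pi^{-1}(\fam{0})$, the preimage of the zero section $\fam{0}\colon\Delta\to\fam{A}^n$. To apply \cref{obs:Pullback_Eqns} and conclude the preimage is a subfamily, I must check that $\Pi$ gives $\fam{GL}(n+1;\fam{A})$ the structure of a family over $\fam{A}^n$. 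By \cref{prop:Fiberwise_Family=Family} it suffices to verify this slicewise, i.e.\ that for each fixed $\delta$ the restriction $\Pi_\delta\colon\GL(n+1;\fam{A}_\delta)\to\fam{A}_\delta^n$ is a submersion. But this is precisely the content of the proposition stated just before \cref{prop:det_is_submersion}: given $B\in\GL(n+1;A)$ and any target vector $v\in A^n$, the path $B_t=B+t\bigl(\begin{smallmatrix}O&\vec{v}\\\vec{0}&0\end{smallmatrix}\bigr)$ satisfies $\tfrac{d}{dt}\big|_{t=0}\Pi_\delta(B_t)=v$, so $(D\Pi_\delta)_B$ is surjective.

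Having established that $\Pi$ is a slicewise submersion, \cref{prop:Fiberwise_Family=Family} upgrades $\Pi$ to an honest family $\Pi\colon\fam{GL}(n+1;\fam{A})\to\fam{A}^n$, and then \cref{obs:Pullback_Eqns} identifies the pullback $\fam{0}^\star\fam{GL}(n+1;\fam{A})$ along the zero section as a restricted subfamily of $\fam{GL}(n+1;\fam{A})\to\Delta$; this pullback is exactly $\fam{St}(n+1;\fam{A})$. It only remains to confirm that $\fam{St}(n+1;\fam{A})$ is closed in $\fam{GL}(n+1;\fam{A})$, which is immediate since it is the preimage of the (closed) zero section under the continuous map $\Pi$, and that each member $\fam{St}(n+1;\fam{A}_\delta)$ really has the claimed block form with $\alpha\in\fam{A}_\delta^\times$ and $X\in\GL(n;\fam{A}_\delta)$ — this follows by expanding the invertibility condition via the determinant, using that $\det$ on the block-triangular matrix factors as $\det(X)\cdot\alpha$, so invertibility of the whole matrix forces both $\det(X)$ and $\alpha$ to lie in $\fam{A}_\delta^\times$.

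\textbf{Anticipated obstacle.} The genuinely delicate point is not the submersion computation, which transcribes verbatim from the classical case, but rather ensuring that the fiberwise description of $\fam{St}$ is uniform enough across $\delta$ that the zero-set really is a subfamily and not merely a fiberwise-group collection that fails to vary smoothly (the recurring danger illustrated by the Barber Pole, Example \ref{ex:Spiraling_Cyl}, and by the jump phenomena in Section \ref{sec:Family_Actions}). This is precisely why I route the argument through \cref{prop:Fiberwise_Family=Family} and \cref{obs:Pullback_Eqns} rather than assembling the groups by hand: the slicewise-submersion criterion guarantees the total projection is a submersion, which rules out any dimension jump in the fibers of $\Pi$ and hence any failure of smoothness of $\fam{St}(n+1;\fam{A})$ over $\Delta$. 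The only residual care needed is that $\fam{A}^\times$ varies as a genuine (open) subfamily, so that the lower-right entry $\alpha$ ranges over units fiberwise — but this is already supplied by \cref{prop:Fam_Units_Open}.
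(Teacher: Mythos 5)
Your proposal is correct and follows essentially the same route as the paper: the paper likewise exhibits $\fam{St}(n+1;\fam{A})$ as the pullback of the zero section $\fam{0}\colon\Delta\to\fam{A}^n$ under the projection of $\fam{GL}(n+1;\fam{A})$ onto the first $n$ entries of the last column, using the fiberwise-submersion criterion to promote that projection to a family map. Your additional checks (closedness, and the fiberwise block-form characterization via $\det = \det(X)\cdot\alpha$) are consistent with, and slightly more explicit than, the paper's argument, which also remarks in passing that $\fam{St}(n+1;\fam{A})$ is abstractly the product family $\fam{GL}(n;\fam{A})\timesd\fam{A}^n\timesd\fam{A}^\times$.
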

\begin{proof}
The choices of elements $X,\vec{v}$ and $\alpha$ are independent, so topologically $\fam{St}(n+1;\fam{A})=\fam{GL}(n;\fam{A})\timesd\fam{A}^n\timesd\fam{A}^\times$ is a product of families  and so is abstractly a family.  In line with previous arguments however the map $\fam{GL}(n+1;\fam{A})\to\fam{A}^n$ sending each matrix to the $n$ first elements of the last column is a submersion as it is one fiberwise \pref{prop:Fiberwise_Family=Family} and so the pullback of the zero section $\fam{0}\colon\Delta\to\fam{A}^n$ is a subfamily of $\fam{GL}(n+1;\fam{A})$ easily seen to be $\fam{St}(n+1;\fam{A})$. 
\end{proof}

\noindent 
By similar reasoning, when $\fam{A}\to\Delta$ is commutative we can see that the collection $
\fam{SSt}(n;\fam{A})=\fam{St}(n;\fam{A})\cap\fam{SL}(n;\fam{A})$ is a subfamily of the special linear family.

\begin{theorem}
A smooth family of algebras $\fam{A}\to\Delta$ determines a smooth family of projective geometries $\fam{A}\mathsf{P}^n\to\Delta$ for each $n\in\N$. 	
\label{thm:Proj_Geo_Families}
\end{theorem}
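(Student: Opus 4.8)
The plan is to leverage the equivalence of perspectives established earlier: by the Equivalence of Perspectives on Homogeneous Families, it suffices to exhibit the family $\fam{A}\mathsf{P}^n$ in the Automorphism-Stabilizer formalism, that is, to produce a family of groups $\fam{G}\to\Delta$ together with a closed subfamily $\fam{C}\subg\fam{G}$. The natural choice is $\fam{G}=\fam{GL}(n+1;\fam{A})$ and $\fam{C}=\fam{St}(n+1;\fam{A})$. Almost all the work has already been done by the three propositions immediately preceding the statement. First I would invoke the proposition that $\fam{GL}(n+1;\fam{A})\to\Delta$ is a family of Lie groups, which follows from $\fam{GL}(n+1;\fam{A})=\fam{M}(n+1;\fam{A})^\times$ being an open subfamily via the openness of units (Proposition \ref{prop:Fam_Units_Open}). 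Second, I would cite the proposition showing $\fam{St}(n+1;\fam{A})$ is a subfamily of $\fam{GL}(n+1;\fam{A})$, obtained as the pullback of the zero section along the submersion recording the first $n$ entries of the last column.

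With these two facts in hand, the pair $(\fam{GL}(n+1;\fam{A}),\fam{St}(n+1;\fam{A}))$ is by definition a family of Klein geometries in the Automorphism-Stabilizer category $\cat{AutStb}$. Applying the functor $\cat{F}\colon\cat{AutStb}\to\cat{GrpSp}$ then produces the corresponding Group-Space family whose total space of domains is the quotient $\fam{GL}(n+1;\fam{A})/\fam{St}(n+1;\fam{A})$. The key point here is that the Quotient Family Theorem applies: the subfamily $\fam{St}(n+1;\fam{A})$ acts on $\fam{GL}(n+1;\fam{A})$ by left (or right) translation, and by the Translation is a Proper Free Action proposition this action is proper and free, so the quotient is again an object of $\Fam_\Delta$ with smooth projection $\bar\pi\colon\fam{GL}(n+1;\fam{A})/\fam{St}(n+1;\fam{A})\to\Delta$. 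This quotient is precisely $\fam{A}\mathsf{P}^n$, whose fibers are the projective spaces $\fam{A}_\delta\mathsf{P}^n$ as desired.

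The one genuinely new verification I would need to supply is fiberwise transitivity, ensuring that $\fam{GL}(n+1;\fam{A})_\delta=\GL(n+1;\fam{A}_\delta)$ acts transitively on $(\fam{A}_\delta^{n+1}\smallsetminus Z(\fam{A}_\delta^{n+1}))/\fam{A}_\delta^\times$ with point stabilizer exactly $\St(n+1;\fam{A}_\delta)$. This is essentially the content of the definition of $\mathsf{AP}^n$ from the earlier section on projective geometries over algebras: the good points form a single $\GL(n+1;A)$-orbit, and the stabilizer of the distinguished point $[0:\cdots:0:1]$ is $\St(n+1;A)$ by direct computation. Since these facts hold fiber by fiber with the same underlying algebraic structure, transitivity and the stabilizer identification persist across the family.

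\textbf{Main obstacle.} I expect the subtle point to be confirming that the fiberwise quotients $\fam{A}_\delta^{n+1}\smallsetminus Z(\fam{A}_\delta^{n+1})$ assemble into a genuine family of spaces over $\Delta$ without unexpected degeneration—in particular that the generalized-zero locus $Z(\fam{A}^{n+1})$ varies in a controlled way so that its complement remains a smooth family on which $\fam{A}^\times$ acts properly and freely. Rather than analyze this locus directly, the cleaner route (which I would take) is to sidestep it entirely by working purely in $\cat{AutStb}$ and letting the Quotient Family Theorem manufacture the domain family $\fam{GL}(n+1;\fam{A})/\fam{St}(n+1;\fam{A})$ abstractly; properness and freeness of the translation action are then automatic, and the equivalence of categories guarantees the resulting Group-Space object is the intended family of projective geometries. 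Thus the only real care needed is checking that $\St(n+1;\fam{A})$ is genuinely \emph{closed} in $\fam{GL}(n+1;\fam{A})$ as required for the subfamily structure, which follows since it is cut out as a pullback of the closed zero section.
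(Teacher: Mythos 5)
Your proposal is correct and follows essentially the same route as the paper: the theorem there is likewise assembled from the immediately preceding propositions that $\fam{GL}(n+1;\fam{A})\to\Delta$ is a family of Lie groups and that $\fam{St}(n+1;\fam{A})$ is a closed subfamily, with the domain family then manufactured abstractly as the quotient $\fam{GL}(n+1;\fam{A})/\fam{St}(n+1;\fam{A})$ via the Quotient Family Theorem and the equivalence $\cat{GrpSp}\cong\cat{AutStb}$. Your choice to sidestep the generalized-zero locus $Z(\fam{A}^{n+1})$ by working entirely in the Automorphism-Stabilizer formalism is precisely the paper's stated strategy, so no gap remains.
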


\noindent
This has a lot of instances, one for each family of algebras.
In particular, it applies to the $\C\to\R\oplus\R$ transition utilized extensively in Chapters \ref{chap:HC_and_HRR} and \ref{chp:HC_To_HRR_Transition}.

\begin{corollary}
The projective spaces $\Lambda_\delta\mathsf{P}^n$ form a continuous family of geometries, transitioning from $\CP^n$ to $(\R\oplus\R)\mathsf{P}^n\cong\RP^n\times\RP^n$.	
\end{corollary}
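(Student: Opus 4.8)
The plan is to deduce this corollary directly from Theorem~\ref{thm:Proj_Geo_Families}, which has already done all the heavy lifting. The one-parameter family of algebras $\Lambda_\R\to\R$ was constructed in Chapter~\ref{chp:HC_To_HRR_Transition} (Observation~\ref{def:Lambda_R}) and shown there to be a smooth family of algebras: the total space is $\R^3$ with the bundle projection $(x,y,\delta)\mapsto\delta$, the identity section is $\delta\mapsto(1,0,\delta)$, and the multiplication map $\mu((x,y,\delta),(z,w,\delta))=(xz+\delta yw,\,xw+yz,\,\delta)$ is smooth and restricts fiberwise to the algebra structure of $\Lambda_\delta$. Since $\Delta=\R$ is connected, any transition exhibited here is a genuine transition in the sense of the family framework.

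First I would invoke Theorem~\ref{thm:Proj_Geo_Families} with $\fam{A}=\Lambda_\R$ to conclude immediately that for each $n$ there is a smooth family of projective geometries $\Lambda_\R\mathsf{P}^n\to\R$, whose member over $\delta$ is by construction the projective geometry $\Lambda_\delta\mathsf{P}^n$. This is the entire content of the first assertion; no further work beyond citing the theorem is required, since the theorem produces the family $\fam{GL}(n+1;\Lambda_\R)$, its stabilizer subfamily $\fam{St}(n+1;\Lambda_\R)$, and hence (via the equivalence of the $\cat{GrpSp}$ and $\cat{AutStb}$ perspectives, applying the Quotient Family Theorem) the family of domains.

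Next I would identify the members at the two ends and at a representative interior point. For $\delta<0$ we have $\Lambda_\delta\cong\C$, so the member is $\CP^n$; for $\delta>0$ we have $\Lambda_\delta\cong\R\oplus\R$, so the member is $(\R\oplus\R)\mathsf{P}^n$. To complete the identification of the positive end I would cite Proposition~\ref{prop:Proj_Geos_Over_Reducible_Algs}, which gives $(\R\oplus\R)\mathsf{P}^n\cong\RP^n\times\RP^n$ by decomposing along the orthogonal idempotents $e_\pm$. Since $\CP^n$ is connected while $\RP^n\times\RP^n$ has a different topology (for instance different fundamental group and cohomology), these members are non-isomorphic, so the family genuinely transitions.

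The only point requiring a sentence of care, rather than pure citation, is that the transitional value $\delta=0$ is included: the member there is $\Lambda_0\mathsf{P}^n=\R_\ep\mathsf{P}^n$, and the force of Theorem~\ref{thm:Proj_Geo_Families} is precisely that the family structure persists smoothly \emph{across} $\delta=0$ where the algebra degenerates to the dual numbers. I do not expect any genuine obstacle here, since all the analytic difficulty---showing $\fam{GL}(n+1;\fam{A})$, $\fam{St}(n+1;\fam{A})$ are families and that the quotient exists in $\cat{Fam}_\Delta$---was absorbed into the proof of Theorem~\ref{thm:Proj_Geo_Families} and the Quotient Family Theorem. The corollary is thus a matter of specializing $\fam{A}=\Lambda_\R$ and reading off the endpoint isomorphism types.
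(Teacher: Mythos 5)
Your proposal is correct and matches the paper's route exactly: the paper treats this corollary as an immediate specialization of Theorem~\ref{thm:Proj_Geo_Families} to the family $\Lambda_\R$ constructed earlier, with the identification $(\R\oplus\R)\mathsf{P}^n\cong\RP^n\times\RP^n$ supplied by Proposition~\ref{prop:Proj_Geos_Over_Reducible_Algs}. Your added remarks --- checking non-isomorphism of the endpoint members via $\pi_1$ and noting that the member over $\delta=0$ is $\R_\ep\mathsf{P}^n$ --- are sound and, if anything, slightly more explicit than the paper itself.
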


\noindent
In dimension 1, this provides a transition from the geometry of $\CP^1$ to the torus with an action of $\SL(2;\R)\times\SL(2;\R)$.
Interpreting these as the boundary of $\Hyp^3$ and $\mathsf{AdS}^3$ respectively, this gives an alternative means of constructing the transition of Danciger \cite{Danciger11} in dimension 3.

\begin{figure}
\centering
\includegraphics[width=0.85\textwidth]{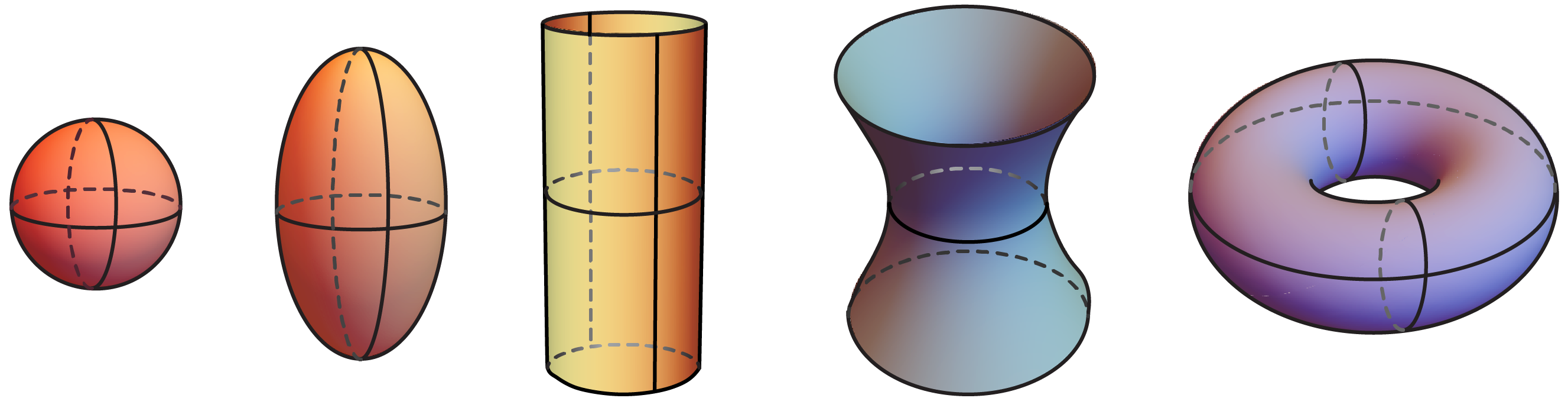}
\caption{The transition $\CP^1$ to $(\R\oplus\R)\mathsf{P}^1$.}	
\end{figure}

\begin{corollary}
Applying Theorem \ref{thm:Proj_Geo_Families} to the family $\fam{H}\to\R^2$ of real quaternion algebras gives a transition of quaternionic projective space to projective space defined over $\M(2;\R)$.
It is an interesting future direction to consider what these transitions look like, and in particular analyze $\M(2;\R)\mathsf{P}^n$.
\end{corollary}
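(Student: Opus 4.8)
The final statement is a Corollary asserting that applying Theorem~\ref{thm:Proj_Geo_Families} to the family $\fam{H}\to\R^2$ of quaternion algebras yields a transition of quaternionic projective space to projective space over $\M(2;\R)$. The plan is to verify that $\fam{H}\to\R^2$ genuinely satisfies the hypotheses of Theorem~\ref{thm:Proj_Geo_Families}, and then to identify the members of the resulting family of projective geometries at the relevant parameter values.

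\textbf{Plan of proof.} First I would confirm that $\fam{H}\to\R^2$ is a \emph{smooth} family of algebras in the sense of Section~\ref{sec:Fam_Algs}. The total space $\fam{H}=\R\{1,i,j,k\}\times\R^2$ is a trivial rank-$4$ real vector bundle over $\R^2$, and the section $\fam{1}\colon(a,b)\mapsto(1,(a,b))$ selects the identity in each fibre. The multiplication $\mu\colon\fam{H}\timesd\fam{H}\to\fam{H}$ is determined by $i^2=a$, $j^2=b$, $ij=-ji=k$, so written out on basis coordinates each structure constant is a polynomial in $(a,b)$; hence $\mu$ is smooth and restricts fibrewise to the algebra multiplication of the quaternion algebra with parameters $(a,b)$. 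This places $\fam{H}\to\R^2$ squarely in the hypotheses of Theorem~\ref{thm:Proj_Geo_Families}, which then immediately produces the smooth family of projective geometries $\fam{H}\mathsf{P}^n\to\R^2$ for each $n$.

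\textbf{Identifying the members.} Next I would pin down the isomorphism type of the fibre algebra $\fam{H}_{(a,b)}$ as a function of $(a,b)$, which is the content that makes this a genuine \emph{transition}. The standard fact about quaternion algebras over $\R$ is the dichotomy: $\fam{H}_{(a,b)}$ is a division algebra precisely when both $a<0$ and $b<0$ (isomorphic to the Hamilton quaternions $\H$), and is isomorphic to the matrix algebra $\M(2;\R)$ whenever it is not a division algebra, i.e.\ whenever $a>0$ or $b>0$. I would verify the split case by exhibiting an explicit isomorphism: when, say, $a>0$, the element $i$ has $i^2=a>0$, so $i$ has real eigenvalues $\pm\sqrt{a}$ and the algebra contains a nontrivial idempotent $\tfrac12(1+i/\sqrt{a})$, which forces the splitting $\fam{H}_{(a,b)}\cong\M(2;\R)$; concretely one can send $1,i,j,k$ to suitable $2\times2$ matrices realizing the relations. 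Combining this with the fibrewise construction of $\fam{H}\mathsf{P}^n$ gives that for $(a,b)$ in the third quadrant the member is quaternionic projective space $\H\mathsf{P}^n$, while for $(a,b)$ with a positive coordinate the member is $\M(2;\R)\mathsf{P}^n$. Since both parameter regions lie in the single connected base $\R^2$, these non-isomorphic members witness a transition in the sense of Definition (of a family containing a transition).

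\textbf{Anticipated obstacle.} The main delicacy is not smoothness — that is routine polynomiality — but rather the fact that $\fam{H}$ is a family of \emph{noncommutative} algebras, whereas Theorem~\ref{thm:Proj_Geo_Families} and the supporting results of Section~\ref{sec:Families_Proj_Geo} (notably Proposition~\ref{prop:det_GLN_family} and the corollary extracting $\fam{SL}$) were proved under a \emph{commutativity} hypothesis, since they rely on the determinant $\fam{det}\colon\fam{M}(n;\fam{A})\to\fam{A}$. For a purely projective statement this is not fatal: the construction of $\fam{GL}(n;\fam{A})=\fam{M}(n;\fam{A})^\times$ via Proposition~\ref{prop:Fam_Units_Open} and of the stabilizer family $\fam{St}(n+1;\fam{A})$ both go through verbatim in the noncommutative setting, as neither invokes the determinant. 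The honest thing to do is therefore to note explicitly that the non-special versions of Theorem~\ref{thm:Proj_Geo_Families} (the geometry $(\fam{GL}(n;\fam{A}),\fam{St}(n;\fam{A}))$ and the space $\fam{A}\mathsf{P}^n$) require only that $\fam{A}\to\Delta$ be smooth, not commutative, and that for noncommutative division-algebra fibres one works with the noncommutative projective space $\mathsf{AP}^{n}$ exactly as defined in Section~\ref{sec:Proj_Geos}; the $\fam{SL}/\fam{SSt}$ refinements are simply omitted here. I would flag this explicitly as the point requiring care, and conclude that the resulting family $\fam{H}\mathsf{P}^n\to\R^2$ transitions between $\H\mathsf{P}^n$ and $\M(2;\R)\mathsf{P}^n$, leaving the finer geometric analysis of $\M(2;\R)\mathsf{P}^n$ as the indicated future direction.
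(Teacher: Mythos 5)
Your proposal is correct and takes essentially the same approach as the paper, which states this corollary without further argument as an immediate application of Theorem \ref{thm:Proj_Geo_Families}, the fibrewise dichotomy (Hamilton quaternions $\H$ for $a,b<0$, $\M(2;\R)$ when either parameter is positive) having already been asserted when the family $\fam{H}\to\R^2$ was defined. Your explicit caveat that commutativity is needed only for the $\fam{det}$/$\fam{SL}$ refinements, while the $\fam{GL}(n;\fam{A})$ and $\fam{St}(n+1;\fam{A})$ constructions underlying $\fam{A}\mathsf{P}^n$ go through verbatim in the noncommutative setting, is a legitimate and careful point that the paper itself addresses only informally (via its closing remark that quaternionic projective geometries pose ``no surprises'').
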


\section{Families of Unitary Geometries}

Given a nondegenerate section $\fam{J}\colon\Delta\to\fam{Herm}(n;\fam{A},\sigma)$, one can define for each $\delta$ the unitary group $\U(\fam{J}_\delta;\fam{A}_\delta,\sigma_\delta)\subg\GL(n;\fam{A}_\delta)$.  The union of these is the \emph{generalized unitary family} corresponding to $\fam{J}$ over $\Delta$.  We check here immediately that this is indeed a family.

\begin{proposition}
Let $(\fam{A},\sigma)\to\Delta$ be a family of algebras and $\fam{J}\colon\Delta\to\fam{Herm}(n;\fam{A},\sigma)$ a smooth nondegenerate section.  
Then $\fam{U(J;A)}$ is a smooth subfamily of $\fam{GL}(n;\fam{A})$.
\end{proposition}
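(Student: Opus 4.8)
The plan is to mimic the strategy used throughout this chapter for projective geometries: exhibit $\fam{U(J;A)}$ as the preimage of a section under a map of families that is a fiberwise submersion, and then invoke the tools already developed. Concretely, I would consider the map $\Phi\colon\fam{GL}(n;\fam{A})\to\fam{Herm}(n;\fam{A},\sigma)$ of families over $\Delta$ defined fiberwise by $\Phi(X)=X^\dagger\fam{J}_{\pi(X)}X$. This is a $\cat{C}$-morphism of families because it is built entirely from the multiplication $\mu$ on $\fam{M}(n;\fam{A})$, the involution $\sigma$ (which yields $\dagger$), and the smooth section $\fam{J}$; moreover its image lies in $\fam{Herm}(n;\fam{A},\sigma)$, which is a subfamily of $\fam{M}(n;\fam{A})$ by \cref{prop:Fix_Family} applied to $\dagger$.

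First I would check that $\Phi$ gives $\fam{GL}(n;\fam{A})$ the structure of a family over $\fam{Herm}(n;\fam{A},\sigma)$. By \cref{prop:Fiberwise_Family=Family}, a smooth map of families is a submersion provided its restriction to each fiber is a submersion. But the restriction of $\Phi$ to the fiber over $\delta$ is exactly the map $\Phi_{\fam{J}_\delta}\colon\GL(n;\fam{A}_\delta)\to\Herm(n;\fam{A}_\delta,\sigma_\delta)$ studied in \cref{prop:Unitary_Submersion}, which is a submersion whenever $\fam{A}_\delta$ is a smooth algebra. Since each fiber of a smooth family of algebras is a smooth algebra, this hypothesis is met for every $\delta$, and so $\Phi$ is a submersion and $\fam{GL}(n;\fam{A})\stackrel{\Phi}{\to}\fam{Herm}(n;\fam{A},\sigma)$ is a family.

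Next I would pull back along the section $\fam{J}$ itself. The section $\fam{J}\colon\Delta\to\fam{Herm}(n;\fam{A},\sigma)$ picks out the Hermitian form in each fiber, and by \cref{obs:Pullback_Eqns} the pullback $\fam{J}^\star\fam{GL}(n;\fam{A})\to\Delta$ is naturally a restricted subfamily of $\fam{GL}(n;\fam{A})\to\Delta$, whose fiber over $\delta$ is precisely $\{X\in\GL(n;\fam{A}_\delta)\mid X^\dagger\fam{J}_\delta X=\fam{J}_\delta\}=\U(\fam{J}_\delta;\fam{A}_\delta,\sigma_\delta)$. The embedding into $\fam{GL}(n;\fam{A})$ furnished by the pullback is exactly the claimed inclusion of $\fam{U(J;A)}$ as a subfamily, and closedness follows since each unitary group is the preimage of a point under the continuous map $\Phi$. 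This establishes that $\fam{U(J;A)}$ is a smooth subfamily of $\fam{GL}(n;\fam{A})$.

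The only genuine subtlety — and the step I would be most careful about — is verifying that $\Phi$ is \emph{fiberwise} a submersion \emph{uniformly enough} to apply \cref{prop:Fiberwise_Family=Family}; that is, the dimension-count argument in \cref{prop:Unitary_Submersion} relies on $\dim\SkHerm(n;\fam{A}_\delta,\sigma_\delta)$ being constant in $\delta$, which is guaranteed here because $\fam{Herm}$ and $\fam{SkHerm}$ are vector subbundles (hence of locally constant rank) by \cref{prop:Fix_Family} together with the rigidity of vector-space families (\cref{prop:VS_Trivial}). Provided one notes this constancy, the argument goes through cleanly, and the companion claim — that $\fam{SU(J;A)}$ is a subfamily of $\fam{U(J;A)}$ — follows by the same pattern: the determinant restricts to a fiberwise submersion $\fam{U(J;A)}\to\fam{U}(\fam{A},\sigma)$ by \cref{prop:Unitary_Determinant}, so pulling back the identity section $\fam{1}\colon\Delta\to\fam{U}(\fam{A},\sigma)$ exhibits $\fam{SU(J;A)}$ as the desired subfamily.
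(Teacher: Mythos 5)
Your proposal is correct and follows essentially the same route as the paper's own proof: the same map $\Phi(X)=X^\dagger\fam{J}_{\pi(X)}X$, the same fiberwise-submersion argument via \cref{prop:Unitary_Submersion} and \cref{prop:Fiberwise_Family=Family}, and the same pullback along the section $\fam{J}$. Your added remarks---the constancy of $\dim\SkHerm(n;\fam{A}_\delta,\sigma_\delta)$ guaranteed by \cref{prop:Fix_Family} and \cref{prop:VS_Trivial}, and the closedness of the pullback---are sound refinements of details the paper leaves implicit.
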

\begin{proof}
The map of families $\Phi_\fam{J}\colon\fam{GL}(n;\fam{A})\to\fam{Herm}(n;\fam{A})$ given by $X\mapsto X^\dagger \fam{J}_{\pi(X)}X$ is a smooth map, and by \cref{prop:Unitary_Submersion} is fiber-wise a submersion.  
Thus by \cref{prop:Fiberwise_Family=Family} actually gives $\fam{GL}(n;\fam{A})$ the structure of a family over $\fam{Herm}(n;\fam{A})$.  
The section $\fam{J}$ then gives a pullback family $\fam{J}^\star \fam{GL}(n;\fam{A})$ over $\Delta$, which selects out those matrices in $\fam{GL}(n;\fam{A})$ such that $\Phi(X)=J_{\pi(X)}$. That is, $X^\dagger \fam{J}_{\pi(X)}X=\fam{J}_{\pi(X)}$, which is the definition of $\fam{U(J,A)}$.

\begin{center}
\begin{tikzcd}
\mathcal{J}^\star\mathcal{GL}(n;\mathcal{A}) \arrow[rr, hook] \arrow[d, two heads] &  & \mathcal{GL}(n;\mathcal{A}) \arrow[d, "\Phi_\mathcal{J}", two heads] \\
\Delta \arrow[rr, "\mathcal{J}"', hook]                                            &  & \mathcal{Herm}(n;\mathcal{A})                                       
\end{tikzcd}
\end{center}

\end{proof}

\noindent
Recalling \cref{prop:Unitary_Determinant} that for a fixed smooth algebra $\det\colon\U(J;A)\to \U(A)$ is a submersion, applying \cref{prop:Fiberwise_Family=Family} as above shows the determinant gives $\fam{U(J,A)}$ the structure of a family over $\fam{U(A)}$.  Pulling back along the identity section gives the family of special unitary groups.

\begin{corollary}
The special unitary groups $\fam{SU(J;A)}$ are a subfamily of $\fam{U(J;A)}$.
\end{corollary}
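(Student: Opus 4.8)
The plan is to mimic the argument just used for projective geometries, namely to exhibit the special unitary family as the preimage of the identity section under a determinant map of families, after first establishing that the determinant is a submersion on the unitary family. The final statement claims that $\fam{SU(J;A)}$ is a subfamily of $\fam{U(J;A)}$, and the preceding corollary (that $\fam{U(J;A)}$ itself is a subfamily of $\fam{GL}(n;\fam{A})$) together with \cref{prop:Unitary_Determinant} provides exactly the ingredients needed.

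First I would observe that the determinant assembles into a morphism of families $\fam{det}\colon \fam{U(J;A)}\to\fam{U(A)}$ over $\Delta$. This uses that for a fixed smooth commutative algebra $A$ the determinant restricts to a homomorphism $\U(J;A,\sigma)\to\U(A,\sigma)$, as established just before \cref{prop:Unitary_Determinant}; the union of these over $\delta\in\Delta$ is a smooth map because $\fam{det}\colon\fam{M}(n;\fam{A})\to\fam{A}$ is a $\cat{C}$-map of families, and its image lands in the unit-norm family $\fam{U(A)}$ fiberwise. Here I should note that $\fam{U(A)}=\fam{U}(\fam{A},\sigma)$ is itself a family of groups, which follows from applying the norm construction: it is the pullback of the identity section under $\alpha\mapsto\sigma(\alpha)\alpha$, exactly as in the example of $\fam{U}(\Lambda_\R)$ in Section~\ref{sec:Fam_Algs}.

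Next I would upgrade this fiberwise-submersion statement to a genuine submersion of families. By \cref{prop:Unitary_Determinant}, for each fixed $\delta$ the restricted map $\det_\delta\colon\U(\fam{J}_\delta;\fam{A}_\delta)\to\U(\fam{A}_\delta)$ is a submersion. Then \cref{prop:Fiberwise_Family=Family} applies: a smooth morphism of families that is fiberwise a submersion gives the domain the structure of a family over the codomain. Thus $\fam{det}\colon\fam{U(J;A)}\to\fam{U(A)}$ is a family projection. Finally I would pull back along the identity section $\fam{1}\colon\Delta\to\fam{U(A)}$; by \cref{obs:Pullback_Eqns} the resulting pullback $\fam{1}^\star\fam{U(J;A)}\to\Delta$ embeds as a subfamily of $\fam{U(J;A)}$, and its fibers are precisely $\{X\mid \det X=\fam{1}(\delta)\}=\SU(\fam{J}_\delta;\fam{A}_\delta)$, so this subfamily is $\fam{SU(J;A)}$.

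I do not anticipate a serious obstacle, since every step is a direct transcription of the projective-geometry argument (compare the corollary deducing $\fam{SL}(n;\fam{A})$ from \cref{prop:det_GLN_family}). The one point requiring a little care is verifying that $\fam{U(A)}$ is a legitimate family of groups serving as a valid codomain and that the identity section is smooth into it; this is where the commutativity hypothesis on $\fam{A}$ is silently used (so that $\det$ is defined and multiplicative). As long as that is in place, \cref{prop:Fiberwise_Family=Family} and \cref{obs:Pullback_Eqns} do all the work, and the conclusion follows with no new calculation.
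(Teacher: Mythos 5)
Your proposal is correct and follows exactly the paper's own route: cite \cref{prop:Unitary_Determinant} for the fiberwise submersion, apply \cref{prop:Fiberwise_Family=Family} to make $\fam{det}\colon\fam{U(J;A)}\to\fam{U(A)}$ a family projection, and pull back along the identity section. Your extra remark verifying that $\fam{U(A)}$ is itself a family of groups (via the norm map, as in the $\fam{U}(\Lambda_\R)$ example) is a small point the paper leaves implicit, but it is the same argument.
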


\noindent
Unitary geometries are defined via a pair $(\U(J;A),\USt(J;A))$, and so given a family of algebras $(\fam{A},\sigma)\to\Delta$ and a smooth section $\fam{J}\colon\Delta\to\fam{Herm}^\times(n;\fam{A})$ the corresponding collection of geometries is given by $(\fam{U(J;A)},\fam{USt(J;A)})$ for $\fam{USt(J;A)}=\fam{St}(n;\fam{A})\cap\fam{U(J;A)}$. 
 As we have already studied the unitary families, to see this is a smooth family of geometries it suffices to show that the stabilizers form a subfamily of $\fam{U(J;A)}$.
 
 \begin{proposition}
 The unitary stabilizers $\fam{USt(J;A)}$ form a subfamily of $\fam{U(J;A)}\to\Delta$.	
 \end{proposition}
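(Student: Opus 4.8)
The plan is to mimic the approach used to prove the analogous statements for $\fam{U(J;A)}$ and $\fam{SL}(n;\fam{A})$, namely to exhibit $\fam{USt(J;A)}$ as the pullback of a section along a slicewise submersion of families. The key input I would reach for is \cref{prop:UnitaryStabilizer_Submersion}, which establishes that for a fixed smooth algebra $A$ and diagonal $J$, the restriction of $\Phi_J\colon X\mapsto X^\dagger JX$ to the stabilizer subgroup $\St(n;A)$ is a submersion onto $\Herm(n;A)$. Since we have already shown that $\fam{St}(n+1;\fam{A})$ is a subfamily of $\fam{GL}(n+1;\fam{A})$, I would work inside this subfamily rather than all of $\fam{GL}$.

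First I would restrict the map of families $\Phi_{\fam{J}}\colon\fam{GL}(n+1;\fam{A})\to\fam{Herm}(n+1;\fam{A})$, given by $X\mapsto X^\dagger\fam{J}_{\pi(X)}X$, to the subfamily $\fam{St}(n+1;\fam{A})$, obtaining a smooth map of families $\Phi_{\fam{J}}|_{\fam{St}}\colon\fam{St}(n+1;\fam{A})\to\fam{Herm}(n+1;\fam{A})$. Next I would invoke \cref{prop:Fiberwise_Family=Family}: a smooth map of families over $\Delta$ is itself a family projection provided its restriction to each fiber is a submersion. The fiberwise restriction of $\Phi_{\fam{J}}|_{\fam{St}}$ over $\delta$ is precisely the map $X\mapsto X^\dagger\fam{J}_\delta X$ on $\St(n+1;\fam{A}_\delta)$, which is a submersion onto $\Herm(n+1;\fam{A}_\delta)$ by \cref{prop:UnitaryStabilizer_Submersion} (assuming, as in that proposition, that the section $\fam{J}$ is taken diagonal). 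Thus $\Phi_{\fam{J}}|_{\fam{St}}$ gives $\fam{St}(n+1;\fam{A})$ the structure of a smooth family over $\fam{Herm}(n+1;\fam{A})$.

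Finally I would pull this family back along the section $\fam{J}\colon\Delta\to\fam{Herm}(n+1;\fam{A})$. By \cref{obs:Pullback_Eqns}, since the map $\Phi_{\fam{J}}|_{\fam{St}}$ is a submersion of families, the pullback $\fam{J}^\star\bigl(\fam{St}(n+1;\fam{A})\bigr)$ is a restricted subfamily of $\fam{St}(n+1;\fam{A})$, hence a subfamily of $\fam{GL}(n+1;\fam{A})$. Its total space consists of those $X\in\fam{St}(n+1;\fam{A})$ satisfying $\Phi_{\fam{J}}(X)=\fam{J}_{\pi(X)}$, i.e. $X^\dagger\fam{J}_{\pi(X)}X=\fam{J}_{\pi(X)}$, which is exactly the defining condition for $\fam{USt(J;A)}=\fam{St}(n+1;\fam{A})\cap\fam{U(J;A)}$. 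This identifies the pullback with $\fam{USt(J;A)}$ and completes the proof.

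The main obstacle I anticipate is the diagonality hypothesis in \cref{prop:UnitaryStabilizer_Submersion}, whose statement explicitly flags (\say{surely this restraint can be removed}) that it is only proven for diagonal $J$. Since the applications section restricts attention to \emph{diagonal} sections $\fam{J}\colon\Delta\to\fam{Herm}(n;\fam{A})$, I would simply carry that hypothesis along and state the theorem for diagonal $\fam{J}$, which is exactly the form in which the Unitary Geometries theorem is quoted in the Summary of Results. The only other point requiring care is confirming that $\fam{St}(n+1;\fam{A})$ being a subfamily (which we just established) legitimately lets me replace $\fam{GL}$ by $\fam{St}$ as the ambient family when applying \cref{prop:Fiberwise_Family=Family}, but this is immediate since restriction of a smooth map of families to a subfamily is again a smooth map of families.
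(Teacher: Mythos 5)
Your proof is correct, but it is the mirror image of the paper's argument rather than the same one. The paper cuts $\fam{USt(J;A)}$ out of the ambient family $\fam{U(J;A)}$: it takes the map $\Psi\colon\fam{U(J;A)}\to\fam{A}^{n-1}$ recording the first $n-1$ entries of the last column, observes that it is fiberwise a submersion, and pulls back along the zero section $\fam{0}\colon\Delta\to\fam{A}^{n-1}$; you instead cut the same intersection out of $\fam{St}(n+1;\fam{A})$ by restricting $\Phi_{\fam{J}}$ and pulling back along the section $\fam{J}$. Both routes rest on exactly the same geometric content of \cref{prop:UnitaryStabilizer_Submersion} --- that $\ker(D\Phi_J)_B=(B^\dagger J)\inv\SkHerm(n;A,\sigma)$ and the tangent space to $\St$ together span all of $\M(n;A)$, i.e.\ $\U(J;A)$ and $\St$ meet transversally --- so neither is more general, and if anything your citation is the more literal one: that lemma as stated concerns $\Phi_J|_{\St}$, whereas the paper extracts from its proof the dual submersion statement for $\Psi|_{\fam{U(J;A)}}$. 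What the paper's orientation buys is that the conclusion lands verbatim, since the pullback is manifestly a subfamily of $\fam{U(J;A)}$, which is what the proposition asserts; your construction produces a subfamily of $\fam{St}(n+1;\fam{A})$, hence of $\fam{GL}(n+1;\fam{A})$, so you owe one closing line: since $\fam{USt(J;A)}$ is contained in the closed subset $\fam{U(J;A)}$, and the subfamily condition involves only closedness and the restricted projection to $\Delta$, a subfamily of $\fam{GL}(n+1;\fam{A})$ lying inside $\fam{U(J;A)}$ is automatically a subfamily of $\fam{U(J;A)}\to\Delta$. Your explicit handling of the diagonality hypothesis is appropriate and matches the paper, whose own proof silently inherits the same restriction by invoking the same lemma.
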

 \begin{proof}
 Let $\Psi\colon\fam{U(J;A)}\to\fam{A}^{n-1}$ be the map sending each matrix to the first $n-1$ entries of its last column.  
 This is a map of families over $\Delta$ and \cref{prop:UnitaryStabilizer_Submersion} shows that it is fiberwise a submersion, thus in fact gives $\fam{U(J;A)}$ the structure of a family over $\fam{A}^{n-1}$.  
 Pulling this family back over the zero section $\fam{0}\colon\Delta\to\fam{A}^{n-1}$ gives the family $\fam{0}^\star\fam{U(J;A)}\to\Delta$ with total space the intersection $\fam{U(J;A)}\cap\fam{St}(n;\fam{A})$.  
 \end{proof}
 
 \begin{theorem}
 \label{thm:Unitary_Families}
 Given a smooth family of algebras $\fam{A}\to\Delta$ and a "constant" section $\fam{J}\colon\Delta\to\fam{Herm}(n;\fam{A})$, $\delta\mapsto (J,\delta)$, there is a corresponding smooth family of unitary geometries $(\fam{U(J,A)},\fam{UST(J;A)})$.	
 \end{theorem}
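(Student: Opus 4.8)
The plan is to assemble Theorem \ref{thm:Unitary_Families} entirely from the component families already established in this section, exactly as the final statement suggests. The data of a family of unitary geometries is a pair $(\fam{U(J,A)},\fam{USt(J;A)})$ consisting of a family of automorphism groups together with a closed subfamily of stabilizers, in the $\cat{AutStb}$ formalism of Section \ref{sec:Fams_of_Geos}. Two of the three ingredients are already in hand: the preceding proposition shows $\fam{U(J;A)}\to\Delta$ is a smooth subfamily of $\fam{GL}(n;\fam{A})$, and the proposition immediately above shows $\fam{USt(J;A)}$ is a subfamily of $\fam{U(J;A)}$. So the bulk of the work is done, and what remains is to package these into an object of $\cat{AutStb}$ and confirm the hypotheses are genuinely met.

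First I would verify the standing hypotheses carry through. The section $\fam{J}\colon\Delta\to\fam{Herm}(n;\fam{A})$ must be nondegenerate in the sense required by the earlier submersion lemmas (\cref{prop:Unitary_Submersion}, \cref{prop:UnitaryStabilizer_Submersion}), which is where the ``constant'' hypothesis $\delta\mapsto(J,\delta)$ enters: a constant $J$ is automatically a smooth section, and taking it diagonal lets us invoke \cref{prop:UnitaryStabilizer_Submersion} for the stabilizer step, whose proof is stated only for diagonal $J$. I would then simply cite the two propositions to produce $\fam{U(J;A)}\subg\fam{GL}(n;\fam{A})$ as a family of groups and $\fam{USt(J;A)}\subg\fam{U(J;A)}$ as a closed subfamily. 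Closedness of $\fam{USt(J;A)}$ in $\fam{U(J;A)}$ follows because it is cut out as the pullback of the zero section under the column-projection map $\Psi$, and the pullback of a closed embedding is closed; this is the one point worth stating explicitly rather than leaving implicit.

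The definition of a family of Klein geometries in the Automorphism-Stabilizer perspective requires precisely a family of groups $\fam{G}\to\Delta$ together with a closed subfamily $\fam{C}\subg\fam{G}$. Setting $\fam{G}=\fam{U(J;A)}$ and $\fam{C}=\fam{USt(J;A)}$ gives exactly such a pair, so $(\fam{U(J,A)},\fam{USt(J;A)})$ is by definition a family of unitary geometries over $\Delta$. If one wishes to exhibit the associated family of homogeneous spaces in the $\cat{GrpSp}$ form, I would then invoke the equivalence of categories $\cat{GrpSp}\cong\cat{AutStb}$, whose forward functor $\cat{F}$ applies the Quotient Family Theorem to the free proper translation action of $\fam{USt(J;A)}$ on $\fam{U(J;A)}$ (Proposition \ref{prop:Subgroup_Translation}) to produce the family of domains $\fam{U(J;A)}/\fam{USt(J;A)}\to\Delta$.

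I do not anticipate a serious obstacle here, since this theorem is the capstone that merely collects the technical submersion lemmas proved earlier; the genuine difficulty was front-loaded into establishing that $\Phi_\fam{J}$ and the column-projection $\Psi$ are \emph{fiberwise} submersions, allowing \cref{prop:Fiberwise_Family=Family} to promote them to submersions of families. The only mild subtlety to flag is the scope of the diagonal hypothesis in \cref{prop:UnitaryStabilizer_Submersion}: to keep the theorem honest I would either restrict to diagonal $\fam{J}$ (matching the parenthetical ``diagonal'' remarks in the summary statements) or note that the general case requires removing that restriction from the stabilizer submersion lemma, which is precisely the gap the author themselves marks as open. Stating the theorem for diagonal constant $\fam{J}$ sidesteps this cleanly.
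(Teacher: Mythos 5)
Your proposal matches the paper's treatment exactly: the theorem is stated there as the immediate assembly of the two preceding propositions (that $\fam{U(J;A)}$ is a smooth subfamily of $\fam{GL}(n;\fam{A})$ and that $\fam{USt(J;A)}$ is a subfamily of $\fam{U(J;A)}$) into an Automorphism-Stabilizer pair, with the $\cat{GrpSp}$ incarnation available via the equivalence of categories, and the paper offers no further argument beyond this. Your flagging of the diagonal restriction in the stabilizer submersion lemma is also consonant with the paper, which itself carries the parenthetical ``diagonal'' caveat and marks the removal of that restriction as an open point.
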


\noindent 
This theorem immediately implies the transition of Chapter \ref{chp:HC_To_HRR_Transition}, among other things.
 
 \begin{corollary}
 There is a transition $\Hyp_{\C}^n$ to $\Hyp_{\R\oplus\R}^n$ through $\Hyp_{\R_\ep}^n$ by considering the signature $(n,1)$ unitary geometries over $\Lambda_\R$.	
 \end{corollary}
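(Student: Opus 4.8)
The final corollary asserts a transition from $\Hyp_\C^n$ to $\Hyp_{\R\oplus\R}^n$ through $\Hyp_{\R_\ep}^n$, realized by viewing these as the signature $(n,1)$ unitary geometries over the family of algebras $\Lambda_\R$. The plan is to apply Theorem \ref{thm:Unitary_Families} directly to the specific family of algebras $\Lambda_\R$ together with a suitable constant Hermitian section, and then to identify the three named isomorphism types at $\delta < 0$, $\delta = 0$, and $\delta > 0$.

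\textbf{Setup and the main application.} First I would recall from Observation \ref{def:Lambda_R} that $\Lambda_\R \to \R$ is a smooth family of two-dimensional real algebras, with total space $\R^3 = \{(x,y,\delta)\}$ and the smoothly varying multiplication $\mu((x,y,\delta),(z,w,\delta)) = (xz + \delta yw, xw + yz, \delta)$, so that $\Lambda_\delta \cong \C$ for $\delta < 0$, $\Lambda_\delta \cong \R_\ep$ for $\delta = 0$, and $\Lambda_\delta \cong \R \oplus \R$ for $\delta > 0$. Next I would equip this family with the involution $\sigma \colon \Lambda_\R \to \Lambda_\R$ given fiberwise by conjugation $x + \lambda y \mapsto x - \lambda y$, i.e. $(x,y,\delta)\mapsto(x,-y,\delta)$; this is a smooth map of families squaring to the identity and restricting on each fiber to the algebra involution used throughout Chapter \ref{chap:HC_and_HRR}. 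I would then take the constant Hermitian section $\fam{J}\colon \Delta \to \fam{Herm}(n+1;\Lambda_\R)$ defined by $\delta \mapsto (\diag(I_n,-1),\delta)$, which is nondegenerate in each fiber since $\det(\diag(I_n,-1)) = -1$ is a unit in every $\Lambda_\delta$. With this data in hand, Theorem \ref{thm:Unitary_Families} applies verbatim and produces a smooth family of unitary geometries $(\fam{U}(\fam{J};\Lambda_\R), \fam{USt}(\fam{J};\Lambda_\R))$ over $\R$. By the definition of a family of geometries and the equivalence of the group-space and automorphism-stabilizer perspectives, this is exactly a family of Klein geometries over $\R$, and so contains a transition precisely when its members over a connected base are non-isomorphic.

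\textbf{Identifying the members.} The remaining work is bookkeeping to match the members of this family with the three geometries named in the statement. For each fixed $\delta$, the fiber geometry is $(\U(\diag(I_n,-1);\Lambda_\delta,\sigma), \USt(\diag(I_n,-1);\Lambda_\delta,\sigma))$, which is by definition the unitary geometry of signature $(n,1)$ over $\Lambda_\delta$. By the explicit algebra isomorphisms $\Lambda_\delta \cong \C, \R_\ep, \R\oplus\R$ for $\delta < 0, =0, >0$ respectively, these are precisely the geometries $\Hyp_{\Lambda_\delta}^n$ constructed in the automorphism-stabilizer formalism at the start of Chapter \ref{chp:HC_To_HRR_Transition}, and hence equal to $\Hyp_\C^n$, $\Hyp_{\R_\ep}^n$, and $\Hyp_{\R\oplus\R}^n$. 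That the isomorphism types genuinely differ across $\delta = 0$ follows from the structural results already established: Proposition \ref{prop:U(R+R)_iso_type} shows $\SU(n,1;\R\oplus\R) \cong \SL(n+1;\R)$ with $\Hyp_{\R\oplus\R}^n$ locally isomorphic to point-hyperplane projective geometry, whereas $\Hyp_\C^n$ has automorphism group $\SU(n,1;\C)$ and underlying domain the ball $\mathbb{B}^{2n}$, so these are not isomorphic for $n \geq 1$.

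\textbf{The main obstacle.} The genuinely substantive content is entirely packaged inside Theorem \ref{thm:Unitary_Families}, which I am invoking, so the proof of the corollary itself is short. The one point requiring a little care — and which I expect to be the only real obstacle — is verifying that the constant section $\fam{J}$ is admissible in the sense required by that theorem, namely that $\diag(I_n,-1)$ is genuinely $\sigma$-Hermitian and nondegenerate \emph{for every} $\delta$ simultaneously, not just at a generic fiber. Since $\diag(I_n,-1)$ is a real diagonal matrix, $\sigma(\diag(I_n,-1))^T = \diag(I_n,-1)$ independent of the multiplication on $\Lambda_\delta$, so Hermiticity holds uniformly; and its determinant is $-1$, a unit in each $\Lambda_\delta$ including the fiber $\R_\ep$ where zero divisors are present, so nondegeneracy also holds uniformly. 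With that check made, the corollary is immediate. I would close by remarking that this recovers, cleanly and abstractly, the transition constructed by two more hands-on arguments (conjugacy limits in $\GL(2n;\R)$ and one-parameter families of groups) in Sections \ref{sec:HC_To_HRR_Conjugacy} and \ref{sec:HC_HRR_Family}, now as a one-line consequence of the general machinery.
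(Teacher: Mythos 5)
Your proposal is correct and matches the paper's route exactly: the paper states this corollary as an immediate consequence of Theorem \ref{thm:Unitary_Families} applied to the family $\Lambda_\R$ with the conjugation involution and the constant signature-$(n,1)$ Hermitian section, which is precisely your argument (including the uniform Hermiticity and nondegeneracy check for $\diag(I_n,-1)$, which the paper treats as implicit). Your closing remark that this abstractly recovers the two hands-on constructions of Sections \ref{sec:HC_To_HRR_Conjugacy} and \ref{sec:HC_HRR_Family} is also the paper's stated point.
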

 
 \noindent 
 But recalling that over $\R\oplus\R$ the signature of a unitary group is not well-defined and all unitary geometries are isomorphic (in fact, they are all isomorphic to point-hyperplane projective space); we also have the following corollary.
 
 \begin{corollary}
 Given any $(p,q)$; there is a transition from the pseudo-Riemannian unitary geometry of signature $(p,q)$ over $\C$ to Point-Hyperplane projective space.	
 \end{corollary}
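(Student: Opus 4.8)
The plan is to reuse the machinery of Theorem~\ref{thm:Unitary_Families} applied to the family $\Lambda_\R$ of algebras, exactly as in the preceding corollary, but now with a Hermitian form of arbitrary signature rather than signature $(n,1)$. First I would set $n=p+q$ and take the \emph{constant} real matrix $\fam{J}\colon\R\to\fam{Herm}(n;\Lambda_\R)$ given by $\delta\mapsto(\diag(I_p,-I_q),\delta)$. Since $\diag(I_p,-I_q)$ has real entries it is fixed by the conjugation $a+\lambda b\mapsto a-\lambda b$ on each $\Lambda_\delta$, so $\fam{J}$ genuinely lands in $\fam{Herm}(n;\Lambda_\R)$, and it is nondegenerate for every $\delta$. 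Theorem~\ref{thm:Unitary_Families} then immediately produces a smooth family of unitary geometries $(\fam{U(J;\Lambda_\R)},\fam{USt(J;\Lambda_\R)})$ over $\R$.

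The remaining work is entirely in identifying the two ends of this family. For $\delta<0$ I would use the identification $\Lambda_\delta\cong\C$ via $\lambda\mapsto\sqrt{|\delta|}\,i$, under which the conjugation of $\Lambda_\delta$ becomes ordinary complex conjugation; hence the member over $\delta$ is $\U(\diag(I_p,-I_q);\C,\sigma)=\U(p,q;\C)$ acting on the corresponding domain in $\CP^{n-1}$, i.e.\ the pseudo-Riemannian unitary geometry of signature $(p,q)$ over $\C$. For $\delta>0$ I would use the idempotent decomposition $e_\pm=\tfrac12(1\pm\lambda/\sqrt{\delta})$ to identify $\Lambda_\delta\cong\R\oplus\R$; the key computation here is that in the idempotent coordinates $a+\lambda b=(a+b\sqrt{\delta})e_++(a-b\sqrt{\delta})e_-$, so conjugation sends the coordinates to $(a-b\sqrt{\delta},\,a+b\sqrt{\delta})$ and therefore becomes exactly the coordinate swap map. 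Proposition~\ref{prop:Unitary_Geo_Coordinate_Swap} then applies verbatim and shows that, \emph{regardless of the signature $(p,q)$ of $\fam{J}$}, the member over any $\delta>0$ is isomorphic to point-hyperplane projective geometry over $\R$.

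The main conceptual obstacle — and really the whole point of the statement — is this signature-independence at the positive end: over $\R\oplus\R$ with the swap involution the notion of signature is not well defined (as recorded in the observation preceding Proposition~\ref{prop:U(R+R)_iso_type}), so the entire one-parameter family of signatures $(p,q)$ with $p+q=n$ collapses to the single geometry of point-hyperplane projective space. Once this is in hand, the transition statement follows: the family interpolates continuously (through the $\Lambda_{\ep}$-geometry at $\delta=0$) between $\U(p,q;\C)$-geometry and point-hyperplane projective geometry, and these two ends are non-isomorphic, so by the definition of a transition given in Chapter~\ref{chp:Families_Sp_Grp_Alg} the family genuinely contains a transition. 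I do not expect any new analytic difficulties beyond those already dispatched in Theorem~\ref{thm:Unitary_Families}; the content is purely the bookkeeping of the two involution identifications together with the invocation of Proposition~\ref{prop:Unitary_Geo_Coordinate_Swap}.
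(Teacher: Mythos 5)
Your proposal is correct and follows essentially the same route as the paper: apply Theorem~\ref{thm:Unitary_Families} to the family $\Lambda_\R$ with the constant section $\fam{J}=\diag(I_p,-I_q)$, then invoke the signature-independence of unitary geometries over $\R\oplus\R$ (Proposition~\ref{prop:Unitary_Geo_Coordinate_Swap} together with the identification with point-hyperplane geometry) at the $\delta>0$ end. Your explicit idempotent computation showing the conjugation becomes the coordinate swap is exactly the bookkeeping the paper leaves implicit, and your identification of the two ends matches the paper's one-line justification.
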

 
 \noindent
 Letting the involution in the definition of generalized unitary groups be trivial, we may consider the families of orthogonal geometries along the transition as well.
 In this case, signature is meaningless over $\C$, and all orthogonal geometries are isomorphic.
 
 \begin{definition}
 The $n$ dimensional orthogonal geometry over $\C$ is given by the pair $(\SU(n+1;\C),\mathsf{USt}(n+1;\C))$.	
 \end{definition}
 
 \noindent 
 Over $\R\oplus\R$, the trivial involution defining the orthogonal groups implies that they all split as a product: $\O(p,q;\R\oplus\R)\cong\O(p,q;\R)\times\O(p,q;\R)$, and the corresponding geometry is the product of the pseudo-Riemannian homogeneous geometry of signature $(p,q)$ with itself.
 Together with the above this gives another class of transitions between homogeneous spaces.
 
 \begin{corollary}
 For every $(p,q)$ there is a transition between the product geometry of $(\O(p,q),X_{p,q})$ with itself, and the $(p+q-1)$-dimensional complex orthogonal geometry.
 \end{corollary}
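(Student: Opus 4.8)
The plan is to realize this transition as a single application of Theorem~\ref{thm:Unitary_Families} with the involution $\sigma$ taken to be trivial, using the family of algebras $\Lambda_\R\to\R$ from Chapter~\ref{chp:HC_To_HRR_Transition}. Fix a diagonal real Hermitian form $J=\diag(I_p,-I_q)\in\Herm(p+q;\R)$, and observe that since $\Herm(n;\Lambda_\delta)$ is a constant vector subspace of $\M(n;\R)\oplus\lambda\M(n;\R)$ as $\delta$ varies (by the same reasoning as Calculation~\ref{calc:Const_Lie_Alg}), the assignment $\delta\mapsto(J,\delta)$ is a genuine smooth constant section $\fam{J}\colon\R\to\fam{Herm}(p+q;\Lambda_\R)$. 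This $J$ is nondegenerate in every fiber because $\det_\delta J=\pm 1\in\Lambda_\delta^\times$ regardless of $\delta$. With the trivial involution, the generalized unitary groups $\U(J;\Lambda_\delta,\id)$ are precisely the orthogonal groups $\O(p,q;\Lambda_\delta)$, so Theorem~\ref{thm:Unitary_Families} directly yields a smooth family of (orthogonal) geometries $(\fam{SU(J;\Lambda_\R)},\fam{UST(J;\Lambda_\R)})$ over $\R$.

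The next step is to identify the members of this family on each side of the transition. For $\delta<0$ we have $\Lambda_\delta\cong\C$, and since all nondegenerate symmetric bilinear forms over $\C$ are equivalent, $\O(p,q;\C)\cong\O(p+q;\C)$; hence the member geometry is the $(p+q-1)$-dimensional complex orthogonal geometry $(\SU(p+q;\C),\mathsf{USt}(p+q;\C))$ as in the definition preceding the statement. For $\delta>0$ we have $\Lambda_\delta\cong\R\oplus\R$ with $\sigma$ trivial preserving the two factors, so Proposition~\ref{prop:Unitary_Geo_Decompsable} applies with $\sigma_1=\sigma_2=\id$ and $J=J e_++J e_-$, giving $\O(J;\R\oplus\R)\cong\O(p,q;\R)\times\O(p,q;\R)$. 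Tracking the stabilizer subfamily through the same idempotent decomposition identifies the corresponding geometry as the product of $(\O(p,q),\mathsf{X}(p,q))$ with itself. Thus $\delta<0$ and $\delta>0$ give non-isomorphic members over the connected base $\R$, which is exactly the assertion of a transition between these two geometries.

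The main obstacle I anticipate is not the continuity of the family---that is handed to us by Theorem~\ref{thm:Unitary_Families}---but rather the bookkeeping needed to verify that the \emph{stabilizer} subfamily limits correctly, i.e. that $\fam{UST(J;\Lambda_\R)}$ really does specialize to $\mathsf{USt}(p+q;\C)$ on one side and to the diagonal product stabilizer $\mathsf{St}\times\mathsf{St}$ on the other, and that the resulting pair is the stabilizer of a legitimate good point in each fiber. Concretely, one must check that the chosen basepoint section (say $\delta\mapsto[e_{p+q}]$) stays off the generalized zero locus $Z(\Lambda_\delta^{p+q})$ uniformly in $\delta$, so that the point-hyperplane/decomposition analysis of Chapter~\ref{chp:Geos_over_Algs} applies fiberwise and the stabilizer dimension does not jump as in Example~\ref{ex:RP_dS_LightCone}. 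Since $e_{p+q}$ satisfies $e_{p+q}^\dagger J e_{p+q}=-1\neq 0$ in every fiber, it never lies on the lightcone and the unitary geometry embeds as a subgeometry of projective space throughout, so this obstacle is manageable; the argument is essentially an application of Proposition~\ref{prop:Stabilizer_Subfamily} combined with the fiberwise identifications above.

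\begin{remark}
The statement quantifies over all $(p,q)$, and the construction above produces a distinct transitioning family for each fixed signature, so the corollary follows uniformly rather than requiring a separate argument per signature.
\end{remark}
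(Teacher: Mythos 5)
Your proposal is correct and takes essentially the same route as the paper: the paper also obtains this corollary by applying Theorem~\ref{thm:Unitary_Families} to the family $\Lambda_\R$ with the trivial involution, noting that signature is meaningless over $\C$ (so the $\delta<0$ fibers give the complex orthogonal geometry) and invoking Proposition~\ref{prop:Unitary_Geo_Decompsable} to split $\O(p,q;\R\oplus\R)\cong\O(p,q;\R)\times\O(p,q;\R)$ for $\delta>0$. Your additional verification that the stabilizer subfamily specializes correctly and that the basepoint section $\delta\mapsto[e_{p+q}]$ stays off the lightcone is more careful than the paper's brief treatment, but it does not change the substance of the argument.
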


\noindent 
As a specific example, even just thinking on the level of automorphism groups the transition $\SO(2;\Lambda_\delta)$ is interesting.

\begin{example}
The transition from $\O(2;\C)$ to $\O(2;\R\oplus\R)$ is topologically a transition from two cylinders to four tori, two of them 'coming in from infinity':	
\end{example}

\begin{figure}
\centering
\includegraphics[width=0.6\textwidth]{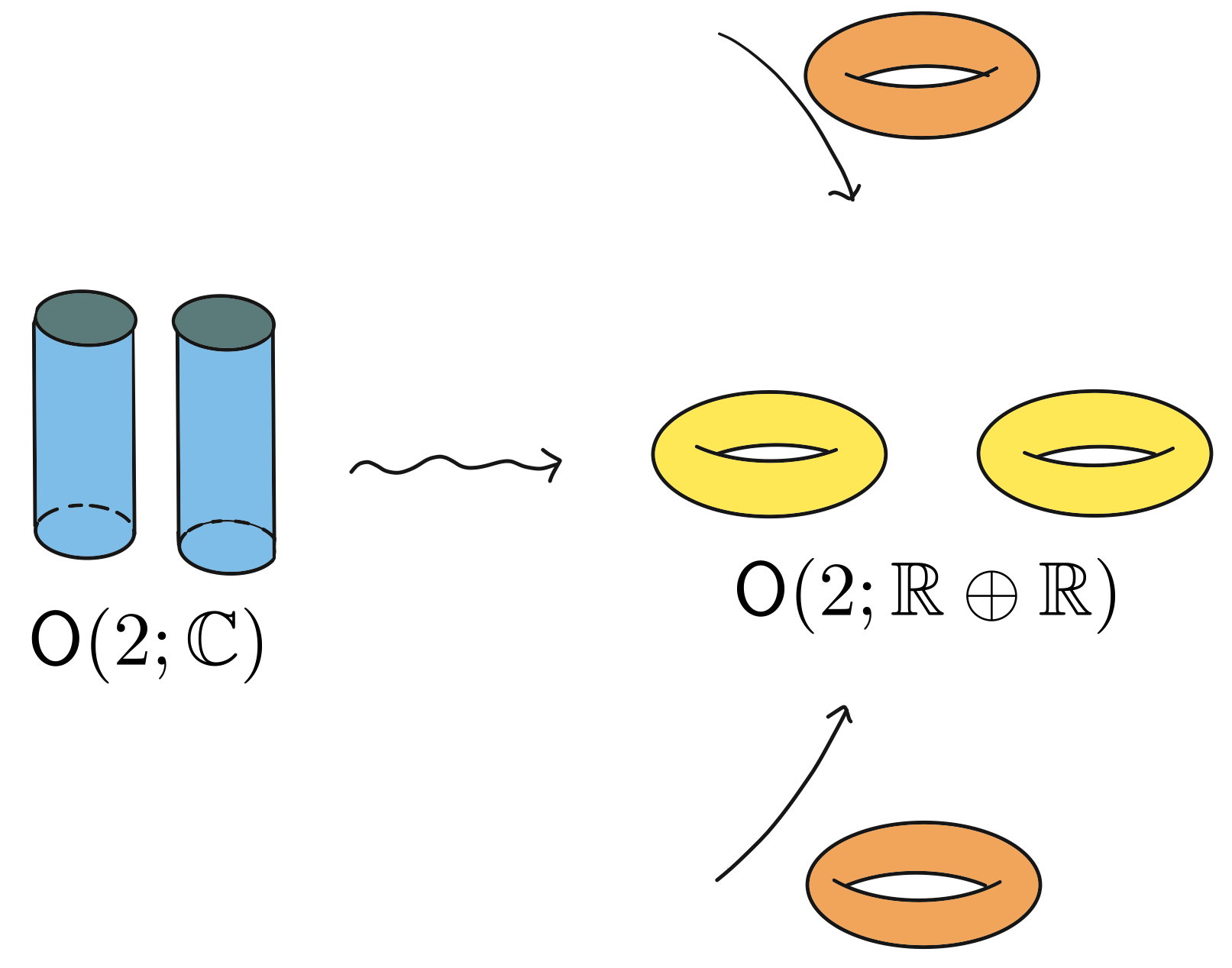}
\caption{The transition of orthogonal groups $\O(2;\C)$ to $\O(2;\R\oplus\R)$.}
\end{figure}

\noindent 
We may perform a similar analysis over the family $\fam{H}$ of quaternion algebras.
Understanding the unitary and orthogonal geometries defined over $\M(2;\R)$ is a topic of current research.

\begin{corollary}
There is a transition of quaternionic hyperbolic geometry to the signature $(n,1)$ unitary geometry over $\M(2;\R)$.	
\end{corollary}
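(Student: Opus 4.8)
The plan is to invoke Theorem \ref{thm:Unitary_Families} directly, applied to the family of quaternion algebras $\fam{H}\to\R^2$ rather than the $2$-dimensional commutative family $\Lambda_\R$. The key point is that $\fam{H}\to\R^2$ is a smooth family of (noncommutative) algebras transitioning from the usual quaternions $\H$ (when $a,b<0$) to the matrix algebra $\M(2;\R)$ (when $a>0$ or $b>0$), so a suitable one-parameter restriction will connect these two fibers. First I would choose a smooth path $\gamma\colon\R\to\R^2$ with $\gamma(t)=(a(t),b(t))$ such that $a(t),b(t)<0$ for $t<0$ and, say, $a(t)>0$ for $t>0$, passing through the transition locus at $t=0$; pulling back $\fam{H}$ along $\gamma$ (using Observation \ref{obs:Pullback_Functor}) gives a one-parameter family of algebras $\gamma^\star\fam{H}\to\R$ with $\gamma^\star\fam{H}_{-1}\cong\H$ and $\gamma^\star\fam{H}_{1}\cong\M(2;\R)$. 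Equipping each fiber with quaternionic conjugation (extended smoothly over the family as an involution $\sigma$ of families in the sense preceding Proposition \ref{prop:Fix_Family}) and choosing the constant section $\fam{J}\colon\R\to\fam{Herm}(n+1;\gamma^\star\fam{H},\sigma)$ given by the signature $(n,1)$ form $J=\diag(I_n,-1)$, Theorem \ref{thm:Unitary_Families} produces a smooth family of unitary geometries $(\fam{U(J;\gamma^\star H)},\fam{USt(J;\gamma^\star H)})$ over $\R$.

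The main content is then to identify the fibers. Over $\gamma^\star\fam{H}_{-1}\cong\H$ the generalized unitary group $\U(J;\H,\sigma)$ is by definition the automorphism group of quaternionic hyperbolic space $\Hyp_\H^n$ (as recorded in the final paragraph of Chapter \ref{chp:Geos_over_Algs}), with $\USt(J;\H)$ its point stabilizer, so this fiber is exactly quaternionic hyperbolic geometry. Over $\gamma^\star\fam{H}_{1}\cong\M(2;\R)$ the fiber is the signature $(n,1)$ unitary geometry over $\M(2;\R)$, which is precisely the target geometry named in the statement. Thus the family $(\fam{U(J;\gamma^\star H)},\fam{USt(J;\gamma^\star H)})\to\R$ is, by definition of a transition, a transition from $\Hyp_\H^n$ to the unitary geometry over $\M(2;\R)$, establishing the corollary.

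The one genuine subtlety, and the step I expect to be the main obstacle, is that Theorem \ref{thm:Unitary_Families} and the supporting submersion lemmas (Propositions \ref{prop:Unitary_Submersion}, \ref{prop:Unitary_Determinant}, \ref{prop:UnitaryStabilizer_Submersion}) were stated and proved under a \emph{commutativity} hypothesis on the algebra, which is used crucially in the determinant arguments (the formula $\det\colon\M(n;A)\to A$ and the splitting of $\det$ over $\U(J;A)\to\U(A)$ rely on commutativity). Over the quaternions and over $\M(2;\R)$ the algebras are noncommutative, so I would need to verify that the relevant submersion statements survive. The cleanest route is to avoid $\det$ entirely: the proof that $\fam{U(J;A)}$ is a subfamily rests only on $\Phi_\fam{J}\colon X\mapsto X^\dagger\fam{J}X$ being a fiberwise submersion onto $\fam{Herm}$, and the dimension count in Lemma \ref{prop:Unitary_Submersion} (computing $\ker\Phi_J=(B^\dagger J)\inv\SkHerm$) uses only that $B,J$ are invertible and that $\SkHerm,\Herm$ are complementary real subspaces of $\M(n;A)$ — none of which requires commutativity. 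The same is true for the stabilizer submersion argument of Lemma \ref{prop:UnitaryStabilizer_Submersion}. So the plan is to observe that these lemmas hold verbatim for any associative real algebra (commutative or not), that only the special-unitary/determinant refinements genuinely need commutativity, and that the corollary uses only the full unitary geometry $(\fam{U(J;A)},\fam{USt(J;A)})$ and not $\fam{SU}$. A careful restatement confirming the noncommutative generality of the submersion lemmas is the real work; once that is in hand the corollary follows immediately from Theorem \ref{thm:Unitary_Families}.
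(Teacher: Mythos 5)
Your proposal is correct and takes essentially the paper's own route: the paper states this corollary as an immediate consequence of the theorem that unitary geometries vary continuously, applied to the quaternion family $\fam{H}\to\R^2$ with the conjugation involution and the constant section $J=\diag(I_n,-1)$, identifying the fibers over $(a,b)$ with $a,b<0$ as $\Hyp_\H^n$ and those over the split locus as the signature $(n,1)$ unitary geometry over $\M(2;\R)$ (a pullback along a path is not even needed, since the paper's definition of a transition only asks for non-isomorphic members over a connected base). Your closing caveat about commutativity is a genuine and correct refinement of a point the paper passes over silently: the determinant-based lemmas do require commutativity, but the submersion arguments establishing that $\fam{U(J;A)}$ and $\fam{USt(J;A)}$ are subfamilies rest only on invertibility of $B^\dagger J$ and the complementary decomposition $\M(n;A)=\Herm\oplus\SkHerm$, so they hold for any associative algebra with anti-involution, and the corollary uses only the full unitary family and not $\fam{SU}$.
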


\section{Varying the Basepoint}

Intuitively, the pointed geometry $(G,(X,x))$ is the homogeneous space $(G,X)$ viewed from $x$, and the question \emph{what does $(G,X)$ look like from infinity} can be interpreted as \emph{what pointed limit geometries arise as the basepoint is moved into an end of $X$}?

From the group-stabilizer viewpoint, its clear for general reasons that a limiting geometry exists.  
Indeed, the pointed geometries with automorphism group $G$ depend only on the stabilizer $K\subg G$ and so can be thought of as points in the Chabauty space $\Cl_G$.  
If $(G,X)$ is such a geometry, and $x_t\in X$ is a path of points leaving every compact set,
the corresponding stabilizer groups $K_t=\mathsf{stab}_G(x_t)$ subconverge in $\Cl_G$ by compactness to a closed subgroup $C$, and thus a limiting geometry $(G,C)$.

Restricting our attention to the orthogonal and unitary groups we can concretely understand such limiting geometries and realize them as transitions between pairs of well known classical geometries.
A motivating example to keep in mind is the hyperbolic plane $\Hyp^2$ thought of as a subgeometry of $\RP^2$.  The quadratic form defining $\Hyp^2$ has signature $(2,1)$ dividing $\RP^2$ into the hyperbolic plane and an open Mobius band, separated a circle (the projectivization of the null cone).  
Much as the action of $\SO(2,1)$ on the disk gives hyperbolic space, its action on Mobius band gives the other projective geometry with automorphism group $\SO(2,1)$, a Lorentzian geometry called \emph{de Sitter space}.  
Any path of points $x_t$ remaining in the disk give models of hyperbolic space $(\SO(2,1), \mathbb{D}^2, x_t)$ and any points in the Mobius band give models of de Sitter space $(\SO(2,1),\mathsf{Mob},x_t)$.  
Throughout the rest of this section we focus on families of points crossing between the two.

More generally, if $G$ is any orthogonal or unitary subgroup of $\GL(n;\R)$ or $\GL(n;\C)$  the associated quadratic / hermitian form defines a positive and negative cone, whose projectivizations $X_+$ and $X_-$ are the domains for the two projective geometries $(G,X_+)$, $(G,X_-)$ with automorphism group $G$.  The isomorphism type of the geometries depend on the signature $(p,q)$ of the form: $X_+$ is not isomorphic to $X_-$ unless $p=q$.  The main theorem of this section provides a transition between these geometries.

\begin{theorem}
There is a transition from $(G,X_+)$ to $(G,X_-)$ for any orthogonal or unitary group $G$.	
\end{theorem}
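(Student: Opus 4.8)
The plan is to construct the transition directly as a family of pointed geometries in the group-space formalism, using the technique of the previous section: vary the basepoint continuously through the separating lightcone and show the resulting collection of orbits assembles into a smooth family over an interval. Fix the orthogonal or unitary group $G$ preserving a nondegenerate form $q$ of signature $(p,q)$ on $V=\R^n$ or $\C^n$, and let $\mathcal{C}_+,\mathcal{C}_-$ be the positive and negative cones, with $X_\pm=\mathbb{P}\mathcal{C}_\pm$. First I would choose a smooth path $v_t\in V$, $t\in(-1,1)$, with $q(v_t,v_t)$ negative for $t<0$, zero at $t=0$, and positive for $t>0$ — for instance $v_t$ a straight segment crossing the lightcone transversally. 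The key point is that the $G$-orbit of $[v_t]$ in $\mathbb{P}V$ is $X_-$ for $t<0$ and $X_+$ for $t>0$, so the union $\fam{X}=\bigcup_t G.[v_t]\times\{t\}$ is the candidate family of spaces, with $\fam{x}(t)=([v_t],t)$ the canonical basepoint section and $\fam{G}=G\times(-1,1)$ the (constant) family of automorphism groups acting fiberwise transitively.

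The cleaner route, which I would pursue in parallel, is to work in the automorphism-stabilizer formalism and invoke the equivalence of perspectives (the Equivalence of Perspectives on Homogeneous Families) to transport back. Here the family of geometries is $(\fam{G},\fam{C})$ with $\fam{C}=\bigcup_t \mathsf{stab}_G([v_t])\times\{t\}$. The content is exactly that $\fam{C}$ is a smooth subfamily of $\fam{G}$, i.e. that the projective stabilizers vary smoothly across $t=0$. As Example \ref{ex:RP_dS_LightCone} warns, the \emph{projective} stabilizers jump in dimension at the lightcone, so $\fam{C}$ as literally written is \emph{not} a smooth family. The fix, foreshadowed in that example and in Section \ref{sec:Fams_of_Geos}, is to lift to the linear action: replace projective stabilizers with point stabilizers $\mathsf{stab}_G(v_t)$ of the chosen lifts $v_t\in V\smallsetminus\{0\}$, which do have constant dimension. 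First I would show that $G$ acts on $V\smallsetminus\{0\}$ with the lifted cones $\mathcal{C}_\pm$ (and the punctured lightcone) as orbits, so that by Proposition \ref{prop:Stabilizer_Subfamily} the point stabilizers assemble into a smooth family over the orbit; pulling back along the section $t\mapsto v_t$ via the Quotient Family Theorem machinery then yields a genuine smooth family $\fam{stab}_G(\fam{v})\to(-1,1)$. The projectivized family of spaces is then recovered as a quotient family, $\fam{X}=\fam{V}/\fam{U}$ where $\fam{U}$ is the family of unit-norm scalars, exactly as $\Hyp_\Lambda^n$ was built from $\mathcal{S}_\Lambda/\U(\Lambda)$ in Chapter \ref{chp:HC_To_HRR_Transition}.

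The main obstacle is the behavior precisely at $t=0$: the lift $v_0$ is a null vector, its $G$-orbit is the punctured lightcone (a single orbit, transitive since $G$ acts transitively on null directions of a fixed nonzero level — one must check $p,q\geq 1$, which holds since both $X_+$ and $X_-$ are nonempty), and I must verify that $\mathsf{stab}_G(v_0)$ sits in the family with the correct dimension and that the parameter map remains a submersion there. Concretely this reduces to a transversality check that the map $\Phi\colon G\times(-1,1)\to V$, $(g,t)\mapsto g.v_t$, is a slicewise submersion in the sense of Theorem \ref{thm:Slicewise_Submersion}; the fiberwise surjectivity $d(g\mapsto g.v_t)$ onto $T_{v_t}(G.v_t)$ is automatic from transitivity on each orbit, and transversality of $v_t$ to the lightcone supplies the remaining base direction. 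Granting this, Theorem \ref{thm:Slicewise_Submersion} delivers that $\Phi^{-1}$-data forms a family over the base, the stabilizers form a subfamily, and hence $(\fam{G},\fam{C})\in\cat{AutStb}$ is a smooth family of geometries whose members are $(G,X_-)$ for $t<0$ and $(G,X_+)$ for $t>0$. Finally I would note this is a genuine transition: by the remark that $X_+\not\cong X_-$ unless $p=q$, non-isomorphic members occur over the single connected base $(-1,1)$, so $(G,X_+)$ and $(G,X_-)$ are connected by a family exactly as claimed.
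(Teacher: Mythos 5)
Your proposal is correct, and its geometric core is the same as the paper's: both arguments evade the dimension jump of projective stabilizers at the lightcone (Example \ref{ex:RP_dS_LightCone}) by lifting to the linear action on $\F^{n+1}\smallsetminus\set{0}$, where the $G$-orbits are the level sets of the form and the transitional member is the unprojectivized null cone. The packaging differs, though. The paper stays in the group-space formalism: since $q_J$ is a submersion, the orbit space is (an interval of) $\R$, and \cref{lem:Grp_Action_to_Family_Action} promotes the $G$-action in one stroke to a fiberwise transitive family action on $\F^{n+1}\smallsetminus\set{0}\to\fam{O}$, with no choice of path or basepoint. You instead restrict to a transversal path $v_t$ and argue in $\cat{AutStb}$, proving smoothness of the lifted point-stabilizer family via \cref{prop:Stabilizer_Subfamily} and transporting back through the equivalence of perspectives; this is heavier, but it is precisely the mechanism the paper deploys immediately afterward for the pointed corollary over $\F\mathsf{P}^n$, so you have in effect proved the theorem and its corollary simultaneously. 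One place your version is sharper: quotienting by the family of unit-norm scalars $\U(\F)$ (a free proper action, so the Quotient Family Theorem applies) makes your nonzero fibers literally $X_\pm$, whereas the paper's fibers are the level sets themselves --- double or circle covers of $X_\pm$ --- with the identification under projectivization left implicit. Two small repairs to your write-up: the map $\Phi(g,t)=g.v_t$ is not a submersion into $V$ (its image is a hypersurface), so the slicewise-submersion check must target the family of orbits $\fam{V}$ rather than the ambient $V$, which is exactly what \cref{prop:Stabilizer_Subfamily} accomplishes; and the transitivity you need at $t=0$ is that $G$ acts transitively on the punctured null cone, which Witt's theorem supplies when $p,q\geq 1$ --- an assumption the paper also makes silently when it asserts that the level sets are precisely the orbits.
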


\begin{proof}
Fix an orthogonal or unitary group $G\subg\GL(n+1;\F)$ and consider its linear action on $\F^{n+1}$.  
The group preserves a quadratic / Hermitian form $J$, and the level sets of $J$ are precisely the orbits of $G$ on $\F^{n+1}\smallsetminus\vec{0}$ (the origin is fixed by the linear action).  
In fact, the map $q_J\colon \F^{n+1}\smallsetminus\set{0}\to\R$ is a submersion, and gives $\F^{n+1}\smallsetminus\set{0}$ the structure of a family over $\R$ (if $J$ has signature $(n,0)$ this only maps onto $\R^+$).
As these level sets are the $G$ orbits $\fam{O}=(\F^{n+1}\smallsetminus \set{0})/G$, we are exactly in the situation of \cref{lem:Grp_Action_to_Family_Action}, and the action of $G$ on $\mathbb{F}^{n+1}\smallsetminus \set{0}$ 
induces an action of families $G\times \fam{O}$ on $\F^{n+1}\smallsetminus\set{0}\to\fam{O}$.   
This provides a transition from $(G,X_-)$ to $(G,X_+)$ as non-pointed geometries, because the negative level sets of $q_J$  projectivize to $X_-$ and similarly $\P q_J\inv(\R_+)=X_+$.\end{proof}

The level sets of $q_J$ foliate the complement of $\vec{0}$, each determining a geometry when equipped with the action of $G$.  
The transition occurs passing through the zero level set, which is the null cone of the form (of course, there is no nontrivial transition for signature $(n,0)$).
Thus the geometry $(G,X_+)$ transitions to $(G,X_-)$ through the geometry associated to the $G$ action on the \emph{non-projectivized lightcone} $X_0=\set{v\neq 0\mid q_J(v)=0}$.

\begin{figure}
\centering
\includegraphics[width=0.75\textwidth]{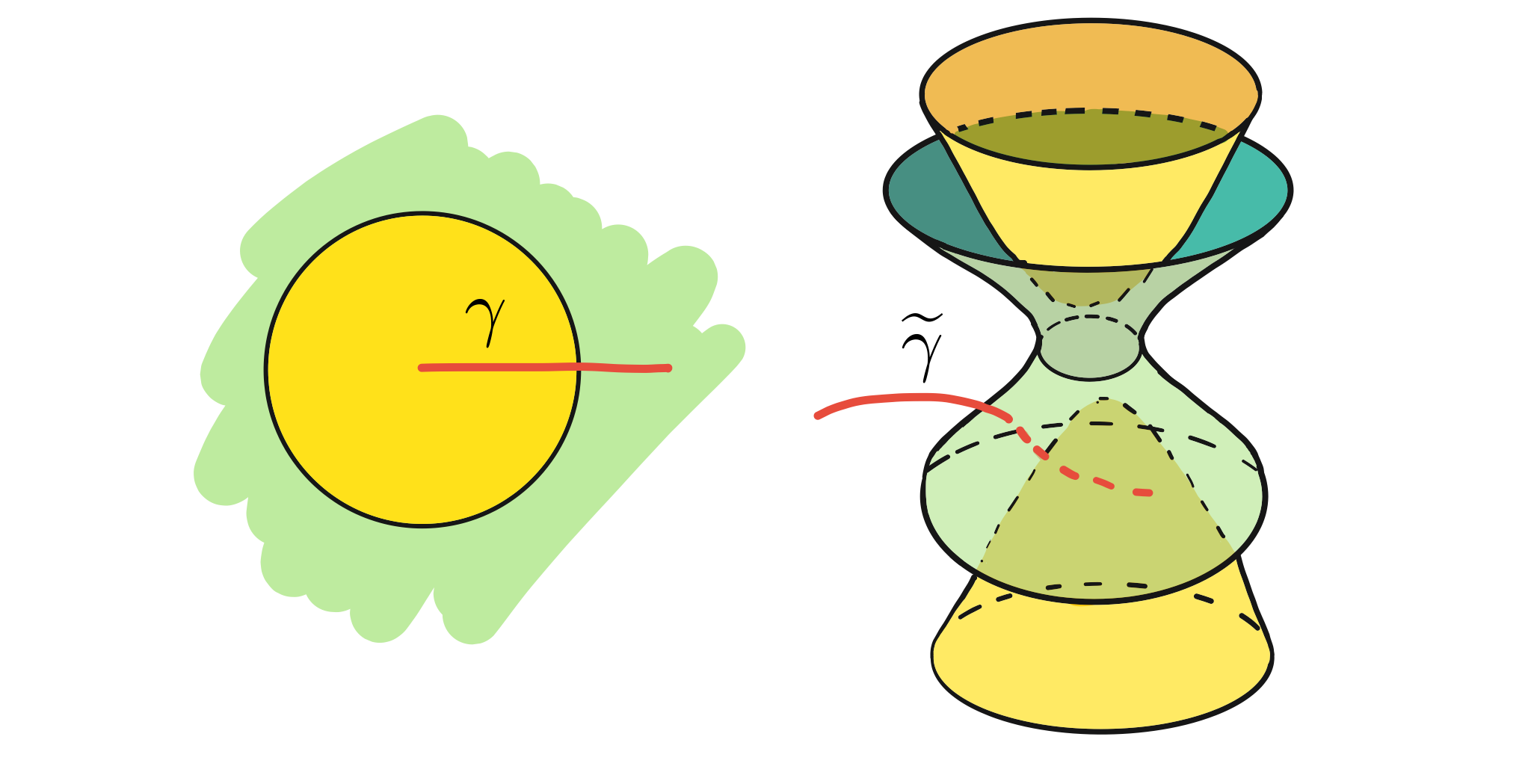}
\caption{Points in $\RP^n$, lifts to $\R^{n+1}$ and the associated stabilizers.}
\end{figure}

\begin{corollary}
To each classical orthogonal / unitary group	 there corresponds a family of pointed geometries with base $\mathbb{F}\mathsf{P}^n$.
\end{corollary}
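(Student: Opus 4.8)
The final statement to prove is:

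\begin{corollary*}
To each classical orthogonal / unitary group there corresponds a family of pointed geometries with base $\mathbb{F}\mathsf{P}^n$.
\end{corollary*}

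The plan is to package the construction from the preceding theorem into the language of \emph{families of pointed geometries} with base manifold $\mathbb{F}\mathsf{P}^n$ rather than base $\fam{O}$. The essential observation is that the theorem already produces all the moving pieces: a group $G\subg\GL(n+1;\F)$ acting on $\F^{n+1}\smallsetminus\set{0}$ with the quadratic/Hermitian form $q_J$ realizing $\F^{n+1}\smallsetminus\set{0}$ as a family of orbits over $\fam{O}=(\F^{n+1}\smallsetminus\set{0})/G$. What remains is to reparametrize this data over $\mathbb{F}\mathsf{P}^n$ and supply a global section of basepoints, so that the resulting object is a family of geometries in the Group-Space sense of Definition on page for $\cat{GrpSp}$, then invoke the equivalence of perspectives to obtain the Automorphism-Stabilizer description.

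First I would set up the base. The quotient map $\pi_{\mathbb{F}}\colon\F^{n+1}\smallsetminus\set{0}\to\mathbb{F}\mathsf{P}^n$ identifying lines is a smooth submersion, and so exhibits $\F^{n+1}\smallsetminus\set{0}$ as a (trivial-fibered) family of punctured lines over $\mathbb{F}\mathsf{P}^n$. I would take as total space of groups the constant family $G\times\mathbb{F}\mathsf{P}^n\to\mathbb{F}\mathsf{P}^n$. The linear action of $G$ on $\F^{n+1}\smallsetminus\set{0}$ does \emph{not} descend fiberwise to this base, since $G$ moves lines; instead the natural move is to take the family of \emph{stabilizers}: over each point $[v]\in\mathbb{F}\mathsf{P}^n$ place the subgroup $\mathsf{stab}_G([v])$ of the projective action, or better the point-stabilizer $\mathsf{stab}_G(\tilde v)$ of a chosen lift. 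The picture accompanying the corollary (points in $\RP^n$, lifts to $\R^{n+1}$, associated stabilizers) makes clear that the intended construction is this stabilizer family. Concretely, I would build the subgeometry of $(G,\F^{n+1}\smallsetminus\set{0})$ over each orbit via the action-of-families machinery of the theorem, and then pull back along a section $\mathbb{F}\mathsf{P}^n\to\fam{O}$ induced by a choice of lift to read it as a family over projective space.

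The key steps, in order, are: (1) verify that the assignment $[v]\mapsto \mathsf{stab}_G(\tilde v)$ is a \emph{smooth subfamily} of $G\times\mathbb{F}\mathsf{P}^n$; this is where I expect the main obstacle. As Example \ref{ex:RP_dS_LightCone} warns, projective stabilizers jump in dimension across the lightcone, so one cannot use projective stabilizers directly. The remedy, noted in that discussion and reinforced by the corollary's figure, is to work with point stabilizers of a chosen lift $\tilde v$: Proposition \ref{prop:Stabilizer_Subfamily} shows that the point stabilizers of the transitive linear action $G\acts\F^{n+1}\smallsetminus\set{0}$ assemble into a genuine smooth family over $\F^{n+1}\smallsetminus\set{0}$, because the action is fiberwise transitive on each orbit (each level set of $q_J$ is a single $G$-orbit). (2) Choose a smooth global section $s\colon\mathbb{F}\mathsf{P}^n\to\F^{n+1}\smallsetminus\set{0}$ of lifts — locally this exists by definition of the projective bundle, though globally one may only have it on charts; I would either work locally over a trivializing cover or use that the statement concerns the existence of a family, allowing the stabilizer family to live over $\F^{n+1}\smallsetminus\set{0}$ and be pulled back. (3) Pull back the stabilizer family of Proposition \ref{prop:Stabilizer_Subfamily} along $s$ to obtain a closed subfamily $\fam{C}\subg G\times\mathbb{F}\mathsf{P}^n$. (4) Apply the functor $\cat{F}\colon\cat{AutStb}\to\cat{GrpSp}$ to $(G\times\mathbb{F}\mathsf{P}^n,\fam{C})$, or equivalently invoke the Quotient Family Theorem, to produce the family of spaces with its global basepoint section, completing the family of pointed geometries. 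The genuine content is entirely (1): once stabilizers are known to form a smooth subfamily — which is exactly what Proposition \ref{prop:Stabilizer_Subfamily} delivers for fiberwise transitive actions — the rest is the formal apparatus of pullbacks (Section \ref{sec:Pullback_Families}) and the equivalence of perspectives already established.
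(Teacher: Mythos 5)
Your core step is the same as the paper's: apply Proposition \ref{prop:Stabilizer_Subfamily} to the fiberwise transitive linear action of $G$ on the level sets of $q_J$, obtaining a smooth family of point stabilizers (of lifts, not of projective points --- your appeal to Example \ref{ex:RP_dS_LightCone} to rule out projective stabilizers is exactly right) with base $\F^{n+1}\smallsetminus\set{0}$. The gap is in how you move the base down to $\F\mathsf{P}^n$. Your step (2) --- pulling back along a smooth global section $s\colon\F\mathsf{P}^n\to\F^{n+1}\smallsetminus\set{0}$ --- fails: such a section is precisely a nowhere-vanishing section of the tautological line bundle over $\F\mathsf{P}^n$, which is nontrivial for both $\F=\R$ (nonorientable, $w_1\neq 0$) and $\F=\C$ ($c_1\neq 0$), so no global $s$ exists, as you half-suspect. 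Your fallback of working over a trivializing cover can be repaired, but only via the one fact you never state: since the action is linear, $gx=x$ implies $g(\alpha x)=\alpha(gx)=\alpha x$, so $\mathsf{stab}_G(\alpha x)=\mathsf{stab}_G(x)$ for every $\alpha\in\F^\times$. Without this, two local sections differ on overlaps by a smooth scalar function, the pulled-back stabilizer families a priori disagree there, and the glued object is not well defined; the whole construction would appear to depend on the choice of lift.

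This scale-invariance is exactly the paper's mechanism, and it makes the section machinery unnecessary: the stabilizer family over $\F^{n+1}\smallsetminus\set{0}$ is constant along the fibers of the projectivization, hence descends directly through the submersion $\F^{n+1}\smallsetminus\set{0}\to\F\mathsf{P}^n$ to a family $\fam{stab}\to\F\mathsf{P}^n$, giving the claimed family $(G\times\F\mathsf{P}^n,\fam{stab})$ in $\Fam_{\F\mathsf{P}^n}$ in the Automorphism-Stabilizer formalism. Once you insert the invariance observation, your outline closes up; your final step (4), applying $\cat{F}$ or the Quotient Family Theorem to recover a pointed Group-Space family, is harmless extra work --- the paper simply stops at the $\cat{AutStb}$ presentation, which already witnesses the corollary.
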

\begin{proof}
Given the smooth family of geometries $(G\times\R,\mathbb{F}^{n+1}\smallsetminus\set{0})$ above, \cref{prop:Stabilizer_Families_of_Family} shows that the collection of point stabilizers form a family with base the total space of the geometry, here $\mathbb{F}^{n+1}\smallsetminus\set{0}$. 
Thus we have a family of pointed geometries (given in the point stabilizer formalism) with base $\F^{n+1}\smallsetminus\set{0}$.  
As the action on $\F^{n+1}$ is linear however, the point stabilizer assigned to $x$ and $\alpha x$ are equal for all $\alpha\in\F^\times$, to this descends to a family $\fam{stab}\to\F\mathsf{P}^n$.
This induces the claimed family of geometries $(G\times\F\mathsf{P}^n,\fam{stab})$ in $\Fam_{\F\mathsf{P}^n}$.
\end{proof}

\noindent
The resulting family is \emph{almost} a family of projective geometries, in the sense that for $[x]\in\F\mathsf{P}^n$ with $q_J(x)\neq 0$ the member above $[x]$ is isomorphic to $(G,X_+)$ or $(G,X_-)$.  
However for points $[x]$ lying on the null cone, the geometry is \emph{not} a projective geometry as the domain is the unprojectivized cone.
Thus these are examples of transitions between two projective geometries, which do not occur \emph{through} projective geometries.

\begin{figure}
\centering
\includegraphics[width=0.55\textwidth]{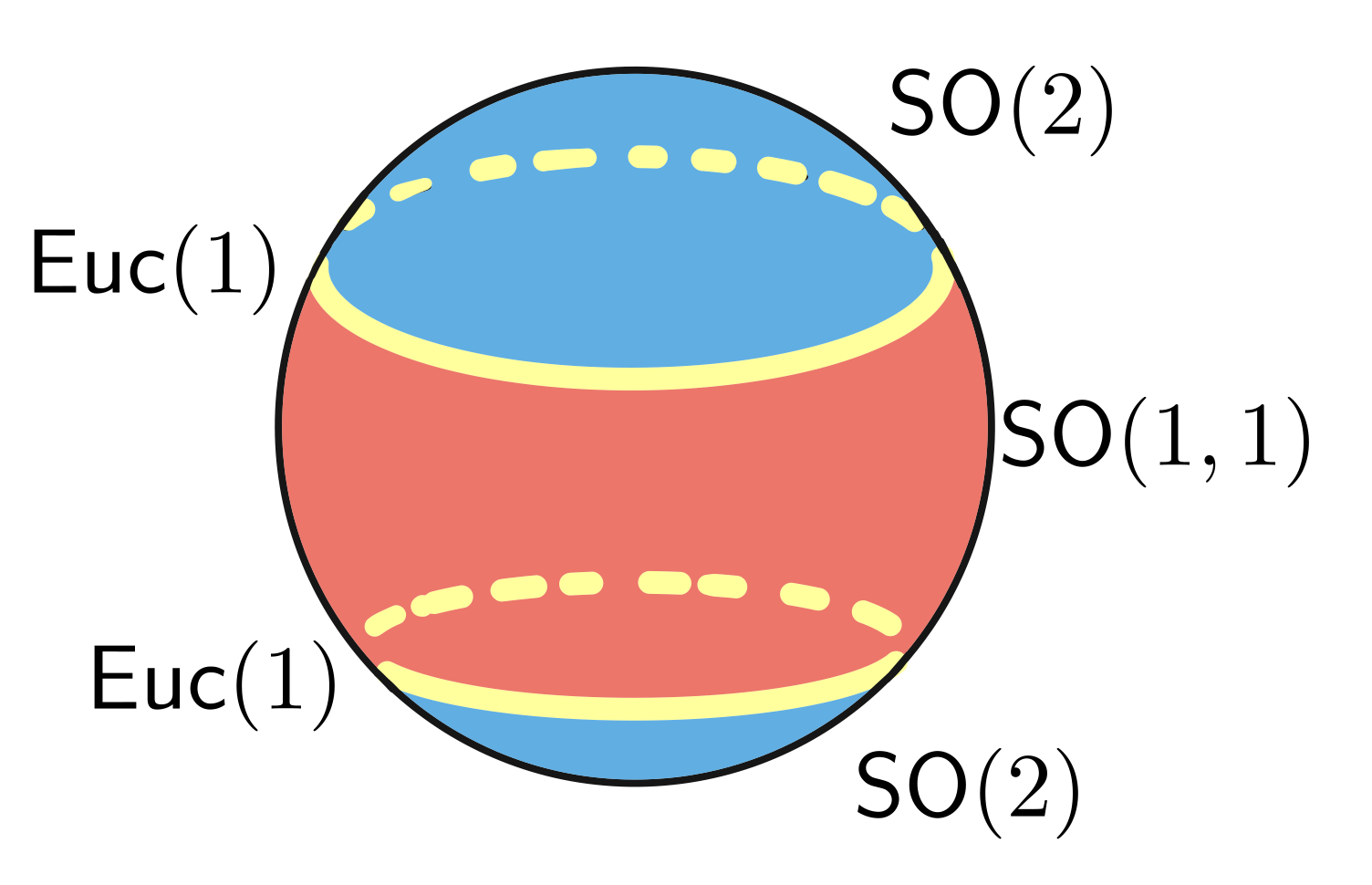}
\caption{The point stabilizers for the action of $\SO(2,1)$ on $\R^3\smallsetminus \{0\}$, as a family over $\RP^2$.}	
\end{figure}

\subsection{The Hyperbolic - de Sitter Transition}

In all dimensions, the null cone for the $(n,1)$ form divides $\RP^n$ into an $n$ ball and its complement; the action of $\SO(n,1)$ on $\mathbb{D}^n$ defines the Klein model of $\Hyp^n$ and on the complement a projective model of de Sitter space $\mathsf{d}\mathbb{S}^n$.  
Here we briefly discuss the transitional geometry in this case.  The lightcone of the $(n,1)$ form projectivizes to $\S^{n-1}\subset\RP^n$ forming the common boundary to $\Hyp^n$ and $\mathsf{d}\mathbb{S}^n$.  
The action of $\SO(n,1)$ on $\S^{n-1}$ determines a model of conformal geometry (the isometries of hyperbolic space determine conformal transformations of the ideal boundary), and so realizing the null cone as the canonical line bundle to the projective $\S^{n-1}\subset\RP^n$, the light cone geometry is just the geometry of the canonical line bundle to the conformal sphere.

\begin{corollary}
There is a transition from $\Hyp^n$ to $\mathsf{d}\S^n$ through the geometry of the canonical line bundle to the conformal $n-1$ sphere.	
\end{corollary}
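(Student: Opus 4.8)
The final statement is an immediate specialization of the general transition theorem just proved (the transition from $(G,X_+)$ to $(G,X_-)$ for any orthogonal or unitary group $G$), together with an identification of the intermediate light-cone geometry. The plan is therefore to apply that theorem with $G=\SO(n,1)$ and then unpack what the three members of the resulting family are, paying special attention to the member sitting over the null cone.

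First I would take the quadratic form $J$ of signature $(n,1)$ on $\R^{n+1}$ and invoke the preceding theorem directly: its proof exhibits $\R^{n+1}\smallsetminus\{\vec 0\}$ as a family over $\R$ via the submersion $q_J\colon v\mapsto v^TJv$, with $\SO(n,1)$ acting fiberwise so that the negative level sets projectivize to the Klein ball $\Hyp^n$ and the positive level sets projectivize to its complement $\mathsf{d}\S^n$. Thus the only genuinely new content is the identification of the member over $q_J=0$. I would observe that the zero level set is the (non-projectivized) null cone $X_0=\{v\neq 0\mid q_J(v)=0\}$, and that $X_0$ is precisely the total space of the canonical ($\R^\times$) line bundle over its projectivization $\P X_0=\S^{n-1}\subset\RP^n$, with the bundle projection $X_0\to\S^{n-1}$ being the restriction of $\R^{n+1}\smallsetminus\{0\}\to\RP^n$.

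Next I would identify the induced action. The linear action of $\SO(n,1)$ on $\R^{n+1}$ preserves $X_0$ and commutes with real scaling, so it descends to an action on $\S^{n-1}=\P X_0$; this descended action is the standard conformal action of $\SO(n,1)$ on the ideal boundary sphere of hyperbolic space (the isometries of $\Hyp^n$ act by conformal transformations of $\S^{n-1}$). Hence the geometry $(\SO(n,1),X_0)$ is exactly the geometry of the canonical line bundle over the conformal $(n-1)$-sphere, equivariant with respect to this conformal action, which is what the statement asserts. The transition claim then follows since $q_J$ is a submersion onto $\R$, so $\Hyp^n=\P q_J^{-1}(\R_-)$ and $\mathsf{d}\S^n=\P q_J^{-1}(\R_+)$ are joined continuously through $\P$-free light-cone member over $0$, all as subfamilies of the family of pointed geometries over $\RP^n$ constructed in the preceding corollary.

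I expect no serious obstacle here, since the heavy lifting (that $q_J$ is a slicewise submersion and that Lemma~\ref{lem:Grp_Action_to_Family_Action} converts the linear action into a family action) is already done in the general theorem. The one point requiring a little care — and the place I would be most careful — is the claim that the null-cone member really is the \emph{canonical line bundle} geometry rather than merely some $\SO(n,1)$-space fibering over $\S^{n-1}$: I would verify that the total space $X_0$ is $\SO(n,1)$-equivariantly diffeomorphic to the tautological line bundle minus its zero section over $\S^{n-1}\subset\RP^n$, and that the structure group of this bundle together with the conformal action on the base packages exactly into the asserted geometry. This is a short equivariant-bundle check rather than a computation, so the bulk of the argument is simply citing the transition theorem and naming its members.
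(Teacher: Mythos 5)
Your proposal is correct and follows essentially the same route as the paper: specialize the general $(G,X_+)\to(G,X_-)$ transition theorem to $G=\SO(n,1)$, identify the projectivized negative and positive cones with $\Hyp^n$ and $\mathsf{d}\S^n$, and recognize the member over the zero level set as the null cone realized as the canonical line bundle over the conformal sphere $\S^{n-1}\subset\RP^n$ with the descended $\SO(n,1)$ action being the standard conformal one. Your explicit flagging of the equivariant identification of $X_0$ with the tautological bundle minus its zero section is, if anything, slightly more careful than the paper's brief remark.
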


\begin{figure}
\centering
\includegraphics[width=0.5\textwidth]{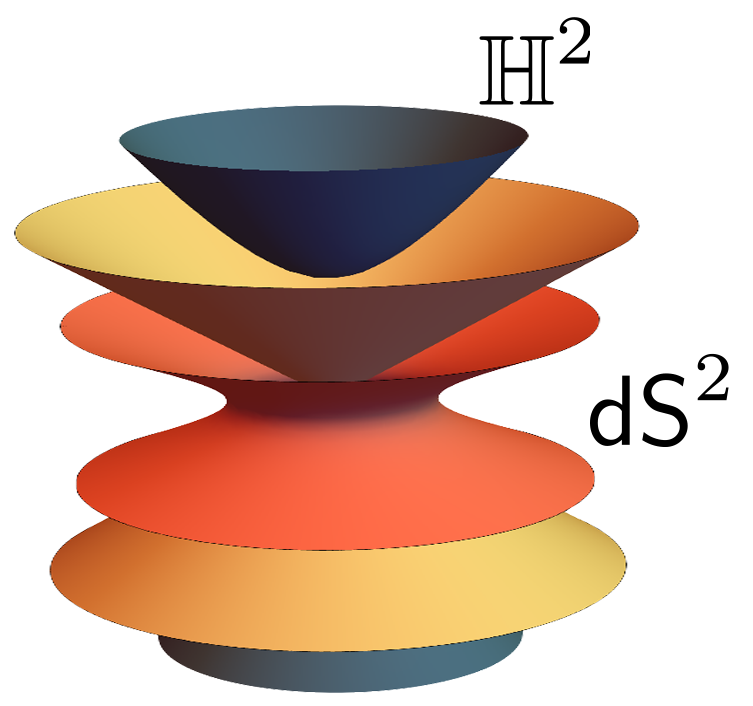}
\caption{The natural embedding of this family as a subset of $\R^3$.}
\end{figure}

%*******************************************
%BACKMATTER
%*******************************************
\backmatter

%SETTING UP APPENDICIES
%\addcontentsline{toc}{part}{Appendices}%including it in the table of contents
%\part*{Appendices}%make it so this part isn't numbered
%\appendix %tell memoir that what follows are appendices
%\chapterstyle{appendix}%make it so the chapters are called "Appendix" instead of "Chapter"

 \clearpage
 % main index

%\pagestyle{frontmatterstyle}
%\renewcommand{\preindexhook}{\vskip\onelineskip}
\printindex

\bibliography{Refs}
\bibliographystyle{plain}

\end{document}